\newtheorem{theorem}{Theorem}[section]
\newtheorem{corollary}[theorem] {Corollary}
\newtheorem{definition}[theorem]{Definition}
\newtheorem{example}[theorem]{Example}
\newtheorem{lemma} [theorem]{Lemma}
\newtheorem{proposition}[theorem]{Proposition}
\newtheorem{remark}[theorem]{Remark}
\renewcommand{\baselinestretch}{1.5} 
\numberwithin{equation}{section}
\begin{document}
\thispagestyle{empty} 
\newgeometry{top=1.2in,inner=1.5in,outer=1in}
\begin{center}
	\doublespacing {\large{\bf {METRIC, SCHAUDER AND OPERATOR-VALUED
				FRAMES}}}
	\par ~
	\par ~
	\singlespacing
	\par {\large {Thesis}} \vspace{0.5cm}
	\par {\large{Submitted in partial fulfillment of the requirements for the degree of }} \vspace{1cm}
	\par {\large{DOCTOR OF PHILOSOPHY}}\vspace{0.5cm}
	\par \large{{by}}\vspace{0.5cm}
	\par \large{{{\fontsize{12}{1em}\selectfont MAHESH KRISHNA K}}}
	\vskip 2cm
\end{center}
\par ~
\begin{center}
	\includegraphics[width=2.5 in, height=2.5 in]{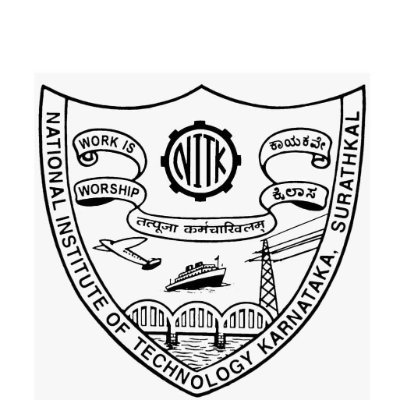}
\end{center}
\begin{center}
	\par ~
	\par{{\fontsize{12}{1em}\selectfont DEPARTMENT OF MATHEMATICAL AND COMPUTATIONAL SCIENCES}}
	\par ~\noindent{{{\fontsize{12}{1em}\selectfont NATIONAL INSTITUTE OF TECHNOLOGY KARNATAKA}}}
	\par \noindent{{{\fontsize{12}{1em}\selectfont SURATHKAL, MANGALURU - 575 025}}}
	\par {\fontsize{12}{1em}\selectfont FEBRUARY $2022$}
\end{center}
\leavevmode\newpage
\thispagestyle{empty}
\leavevmode\newpage
\thispagestyle{empty}
\def\baselinestretch{1}

\par~
\par~
\par~
\par~
\par~
\par~
\par~
\par~
\par~
\begin{center}
	{\LARGE Dedicated to the memory of}\\ 
	\vspace{1cm}
	{\LARGE \bf { {\em John von Neumann}}}\\
	\vspace{.1cm}
	({28 December 1903 - 08 February 1957})
	
\end{center}
\par~
\par~
\leavevmode\newpage
\thispagestyle{empty}
\leavevmode\newpage
\thispagestyle{empty}
\def\baselinestretch{1}
\par~
\begin{center}
	\textbf{{\fontsize{16}{1em}\selectfont DECLARATION}} \\
	\textit{By the Ph.D. Research Scholar}
\end{center}

\par I hereby declare that the research thesis entitled
\textbf{METRIC, SCHAUDER AND OPERATOR-VALUED
	FRAMES} which is being
submitted to the \textbf{National Institute of Technology Karnataka,
	Surathkal} in partial fulfillment of the requirements for the award
of the Degree of \textbf{Doctor of Philosophy} in
\textbf{Mathematical and Computational Sciences} is a
\textit{bonafide report of the research work carried out by me}. The material contained in this research thesis has not been submitted to any University or Institution for the award of any degree.

\par~
\par~
\par~
\par~
\par~
\par~

\begin{flushright}
	Place: NITK, Surathkal \hfill MAHESH KRISHNA K\\ 
	Date:  $\quad$    February 2022 \hfill 165106MA16F02\\
	Department of MACS\\
	NITK, Surathkal
\end{flushright}
\leavevmode\newpage
\thispagestyle{empty}

\leavevmode\newpage
\thispagestyle{empty}
\def\baselinestretch{0.1}
\par~
\begin{center}
	\textbf{{\fontsize{16}{1em}\selectfont CERTIFICATE}}
\end{center}

\par This is to \textit{certify} that the research thesis entitled
\textbf{METRIC, SCHAUDER AND OPERATOR-VALUED
	FRAMES} submitted by
\textbf{Mr. \; MAHESH \; KRISHNA \; K} (Register Number : 165106MA16F02) as the record of the research work carried out by him is \textit{accepted as the research thesis submission} in partial fulfillment of the requirements for the award of degree of \textbf{Doctor of Philosophy}.
\par~
\par~
\par~
\par~
\begin{flushright}
	Place: NITK, Surathkal \hfill \textbf{Dr. P. Sam Johnson}\\ 
	Date:   $\quad$    February 2022 \hfill Research Guide\\
	\hfill Department of MACS\\
	\hfill NITK Surathkal, Karnataka, India\\
	\par~
	\hspace{2cm}
\end{flushright}

\par~
\par~
\par~
\par~
\begin{flushright}
	\textbf{Chairman - DRPC}\\
	\hspace{3cm}
	
	(Signature with Date and Seal)
\end{flushright}
\leavevmode\newpage
\thispagestyle{empty}
\leavevmode\newpage
\thispagestyle{empty}

\def\baselinestretch{0.1}
\begin{center}
	{\onehalfspacing \section*{\LARGE{ACKNOWLEDGEMENTS}}}
\end{center}
I begin by remembering my guide Dr. P. Sam Johnson. It is because of him of whatever epsilon today I am.\\
\indent The kind reply by Prof. Victor Kaftal, University of Cincinnati, Ohio developed confidence in me to do research. I am thankful to Prof. Victor Kaftal very much. Aslo, I thank Prof. P. G. Casazza, Missouri University, USA, Prof. Radu Balan, University of Maryland, USA,  Prof. D. Freeman, St. Louis University, USA, Prof. Orr Moshe Shalit, Technion, Israel, Prof. Terence Tao, University of California, Los Angeles, USA and Prof. B. V. Rajarama Bhat, ISI Bangalore for their replies to my mails. I am thankful to all of them.\\
I am happy to thank my  friends Ramu G., Vinoth A.,   Kanagaraj K., Palanivel R., Niranjan P.K., Sumukha S., Saumya Y. M., Shivarama K. N., Sachin M., Mahesh Mayya, Chaitanya G. K., Manasa M., Rashmi K., Megha P. for their help in my life at NITK.\\
\indent My Research Progress Assessment Committee (RPAC) members Prof. Subhash C. Yaragal, Department of Civil Engineering and  Prof. B. R. Shankar, Department of MACS  helped me at various stages of research. I thank the RPAC members.\\
\indent I would like to thank former heads of MACS department Prof. Santhosh George, Prof. B. R. Shankar, Prof. S. S. Kamath and the present head Dr. R. Madhusudhan for their help at different times. Further, I thank all faculty members of MACS department and office staff.\\
\indent It was in MTTS  where I started thinking seriously about results.  I am  thankful to  Prof. S.  Kumaresan, Central University of Hyderabad for the initiation of MTTS which transformed Mathematics of India to a greater extent. \\
\indent The concept of Advanced Training in Mathematics Schools (ATM Schools) is a very useful initiative of National Centre for Mathematics (NCM) India which helps a lot for Ph.D. Scholars. I am fortunate to attend a dozen of these schools. Thanks to initiators and organizers of various ATM schools.\\
\indent Ph.D. course work papers of mine were given by Dr. Murugan V., Mrs. Sujatha D. Achar,  Prof. B. R. Shankar and Prof. A. H. Sequeira. I am thankful to them.\\ 
\indent When I was teaching at Centre for Post-Graduate Studies and Research, St. Philomena College at  Puttur, my classmates and colleagues   Vaishnavi C. and Prasad H. M. helped at various times. My students also helped a lot understand the subject better. It is necessary to remember all of them.\\
\indent Prof. M. S. Balasubramani, Prof. S. Parameshwara Bhatta,  Mohana K. S., Dr. Chandru Hegde taught me at Mangalore University. I particularly remember them for their teaching.\\
\indent I end the acknowledgements by remembering my father Narasimha D. K., my mother Sundari V., my sisters Shraddha K., Pavithra K.,  Pavana K., my  brother late Ganesh K. R. and my primary teacher Nataliya K.

\vspace{3cm}
\noindent Place: NITK, Surathkal \hfill MAHESH KRISHNA K
\\Date: 17 February 2022
\thispagestyle{empty}

\bigskip\medskip



\leavevmode\newpage



\pagenumbering{roman}
\addcontentsline{toc}{section}{ABSTRACT}
\def\baselinestretch{0.1}
\pagenumbering{roman}
\begin{center}
	{\onehalfspacing \section*{\LARGE{ABSTRACT}}}
\end{center}
Notion of  frames and Bessel sequences for metric spaces have  been introduced.   This notion is related  with the notion  of Lipschitz free Banach spaces. \ It is proved that   every  separable metric space admits a metric $\mathcal{M}_d$-frame. Through Lipschitz-free Banach spaces it is showed  that there is a correspondence
between  frames for metric spaces  and  frames for subsets of  Banach spaces. Several characterizations of metric frames are obtained.  Stability results  are also presented. Non linear multipliers are introduced and studied. This notion is connected with the notion of Lipschitz compact operators. Continuity properties of multipliers are discussed.

For a subclass of approximated Schauder frames for Banach spaces, characterization result is derived using standard Schauder basis for standard sequence spaces. Duals of a subclass of approximate Schauder frames are completely described. Similarity of this class is characterized and interpolation result is derived using orthogonality. A dilation result is obtained. A  new identity is derived for Banach spaces which admit a homogeneous semi-inner product.  Some stability results are obtained for this class.

A generalization of operator-valued frames for Hilbert spaces are introduced  which unifies all the known generalizations of frames for Hilbert spaces. This notion has been studied in depth by imposing factorization property of the frame operator. Its duality, similarity and orthogonality are addressed. Connections between this notion and unitary representations of groups and group-like unitary systems are derived. Paley-Wiener theorem for this class are derived.

\vspace{1cm}

\noindent {\small \textbf{Keywords: Frame, Riesz basis, Bessel sequence, Lipschitz function, multiplier, operator-valued frame, metric space. } }


\restoregeometry
\leavevmode\newpage 

\tableofcontents


\newpage
\pagenumbering{arabic}
{\onehalfspacing \chapter{INTRODUCTION}\label{chap1} }
\vspace{0.5cm}
{\onehalfspacing \section{GENERAL INTRODUCTION}
A vector in  a vector space is usually obtained as a linear  combination of elements of a basis for the vector space. Thus a vector is fully known if we know the coefficients in the representation of it using basis elements.   However, as dimension of the space increases, it is difficult to get the coefficients. Hence we look for nice spaces and certain bases which give coefficients of a given vector easily. For this purpose, Hilbert spaces and orthonormal bases become a very handy tool to obtain representation of a vector. \\
Orthonormal bases for Hilbert spaces have practical disadvantages. Since each coefficient in the expansion is very important, a small error in one of the coefficient leads to significant variation in the resultant vector and the actual vector. Thus we seek a  collection in Hilbert space which gives representation as well as a small change in coefficient need not effect much to the  original vector. This is where the theory of frames becomes important.\\
Historically it was \cite{GABOR}, who first studied representation of functions using translations and modulations of a single function (\cite{FEICHTINGERSTROHMERBOOK, FEICHTINGERSTROHMERBOOK2, GROCHENIGBOOK, FEICHTINGERLUEFWERTHER, FEICHTINGERKOZEKLUEF, SONDERGAARD}). In 1947, Sz. Nagy studied sequences which are close to orthonormal bases using Paley-Wiener type results (\cite{NAGYEXP}). Modern definition of frames was set by \cite{DUFFIN1}  in the study of sequences of type $ \{e^{i\lambda_nx}\}_{n\in \mathbb{Z}}, \lambda_n \in \mathbb{C}, x\in \left(-r, r \right), r>0$. After this work, \cite{YOUNG}   made an account of frames in his book `An introduction to nonharmonic Fourier series'.\\
Paper of Daubechies, Grossmann, and Meyer (\cite{MEYER1}) triggered the  area of frames for Hilbert spaces. Later, the paper of \cite{BENEDETTOFICKUS} influenced the development of frame theory for finite dimensional Hilbert spaces. Today theory of frames find its uses in many areas such as wireless communication (\cite{STROHMERAPP}), signal processing (\cite{MALLAT}), image processing (\cite{DONOHOELADOPTIMAL}), sampling theory (\cite{BSAMPLING}), filter banks (\cite{FICKUSMASSAR}), psycho acoustics (\cite{BALAZSPSYC}), quantum design (\cite{BODMANNHAASDESIGN}), quantum channels (\cite{HANJUSTE}), quantum optics (\cite{JAMIOIKOWSKI}), quantum measurement (\cite{ELDARFORNEYMEASUREMENT}), numerical approximation (\cite{ADCOCKHUYBRECHS}), Sigma-Delta quantization (\cite{BENEDETTOPOWELL}), coding (\cite{STROHMERHEATHCODING}) and graph theory (\cite{BODMANNPAULSEN}). For a comprehensive look on the theory of frames, we refer (\cite{CHRISTENSEN}, \cite{HANLARSON}, \cite{HANKORNELSONLARSON}, \cite{CASAZZABOOK}, \cite{WALDRONBOOK}, \cite{HEILBOOK}, \cite{OKOUDJOUBOOK}, \cite{PESENSON}).

Since many spaces appearing both in theoretical and practicals are Banach spaces which may not be Hilbert spaces, there is a need for extending the notion of frames to Banach spaces.   This was first done by
 \cite{GROCHENIG}. After the study of several function spaces (\cite{FEICHTINGERCHOOSE}),  Gr\"{o}chenig first studied the notion of an atomic decomposition for 
Banach spaces and then defined the notion of a Banach frame. 
\cite{FEICHTINGGERGROCHENIG1, FEICHTINGGERGROCHENIG2, FEICHTINGGERGROCHENIG3} in 90's   developed
a  theory of atomic decompositions and frames for a large class of function spaces such as modulation spaces (\cite{FEICHTINGERLOOKING})  and coorbit  spaces (\cite{BERGE}), via,
group representations and projective representations.\\
Abstract study of atomic decompositions and frames for Banach spaces started from the
fundamental paper (\cite{CASAZZAHANLARSONFRAMEBANACH}). Further study and variations of the  frames for Banach spaces are done in \cite{CARANDOLASSALLESCHMIDBERG}, \cite{TEREKHIN2010}, \cite{TEREKHIN2009}, \cite{TEREKHIN2004}, \cite{FORNASIERALPHA}, \cite{CASAZZACHRISTENSENSTOEVA}, \cite{STOEVA2009}, \cite{STOEVA2012}, \cite{ALDROUBISLANTED}, \cite{GROCHENIGLOCALIZATION} and so on.

{\onehalfspacing \section{ORTHONORMAL BASES, RIESZ BASES, FRAMES AND\\ BESSEL SEQUENCES FOR HILBERT SPACES}
In the study of integral equations, Hilbert studied the space  of square integrable sequences (\cite{BLANCHARD}). Later, John \cite{VONNEUMANN} formulated the notion of Hilbert spaces.
\begin{definition}(cf. \cite{LIMAYE})
A vector space $\mathcal{H}$ over $\mathbb{K}$ ($\mathbb{R}$ or $\mathbb{C}$)	is said to be a \textbf{Hilbert space} if there exists a map $\langle \cdot, \cdot \rangle:\mathcal{H} \times \mathcal{H}\to \mathbb{K}$  such that the following axioms hold. 
\begin{enumerate}[label=(\roman*)]
	\item $\langle h, h \rangle \geq 0$, $ \forall h \in \mathcal{H}$.
	\item If $h \in \mathcal{H}$ is such that $\langle h, h \rangle = 0$, then $h=0$.
	\item $\langle h, h_1 \rangle =\overline{\langle h_1, h \rangle}$, $ \forall h, h_1 \in \mathcal{H}$.
	\item $\langle \alpha h+ h_1, h_2 \rangle =\alpha \langle h, h_2 \rangle+\langle h_1, h_2 \rangle$, $ \forall h, h_1, h_2 \in \mathcal{H}$,  $\forall \alpha \in \mathbb{K}$. 
	\item $\mathcal{H}$ is complete with respect to the norm $\|h\|\coloneqq \sqrt{\langle h, h \rangle}$.
\end{enumerate}		
\end{definition}
We now mention two important  examples of Hilbert spaces.
\begin{example}
 \begin{enumerate}[label=(\roman*)](cf. \cite{LIMAYE}) 
	\item Let $n\in \mathbb{N}$. The space $\mathbb{K}^n$ equipped with the inner product 
	\begin{align*}
	\langle (a_k)_{k=1}^n, (b_k)_{k=1}^n\rangle \coloneqq \sum_{k=1}^na_k\overline{b_k}, \quad \forall(a_k)_{k=1}^n, (b_k)_{k=1}^n \in \mathbb{K}^n
	\end{align*}
	is a finite dimensional separable Hilbert space. 
	\item The space $\ell^2(\mathbb{N}) \coloneqq \{\{a_n\}_n: a_n \in \mathbb{K}, \forall n \in \mathbb{N}, \sum_{n=1}^{\infty}|a_n|^2<\infty\}$ equipped with the inner product 
	\begin{align*}
	\langle \{a_n\}_n, \{b_n\}_n\rangle \coloneqq \sum_{n=1}^{\infty}a_n\overline{b_n}, \quad \forall \{a_n\}_n, \{b_n\}_n \in  \ell^2(\mathbb{N})
	\end{align*}
	is an infinite dimensional  separable Hilbert space. The space $\ell^2(\mathbb{N})$ is known as the standard separable Hilbert space.
\end{enumerate}
\end{example}
Throughout this thesis, we assume that all of Hilbert spaces are separable.
Among all kinds of sets in a Hilbert space, orthonormal sets	are the easiest to handle, whose definition reads as follows.
\begin{definition}(cf. \cite{LIMAYE})
A	collection $\{\tau_n\}_{n}$ in  a Hilbert space $\mathcal{H}$ is called  an \textbf{orthonormal set} in  $\mathcal{H}$ if $\langle \tau_j, \tau_k\rangle =\delta_{j,k}, \forall j,k \in \mathbb{N}$.
\end{definition}
Following theorem, known as Gram-Schmidt orthonormalization (cf. \cite{LEON100}) shows that a linearly independent sequence of vectors can be converted into an orthonormal set such that at each stage of conversion the spaces spanned by the original set and transformed set are the  same.
\begin{theorem}(cf. \cite{LIMAYE}) (\textbf{Gram-Schmidt orthonormalization})
Let $\{\tau_n\}_{n}$ be a linearly independent subset of 	$\mathcal{H}$. Define $\omega_1\coloneqq \tau_1$, $\rho_1\coloneqq \omega_1/{\|\omega_1\|}$ and 
\begin{align*}
\omega_n\coloneqq \tau_n-\sum_{k=1}^{n-1}\langle \tau_n, \rho_k\rangle \rho_k, \quad \rho_n\coloneqq \frac{\omega_n}{\|\omega_n\|},\quad \forall n \geq 2.
\end{align*}
Then $\{\rho_n\}_{n}$ is  orthonormal and 
\begin{align*}
\operatorname{span}\{\rho_k\}_{k=1}^n=\operatorname{span}\{\tau_k\}_{k=1}^n,\quad \forall n \geq 1.
\end{align*}
\end{theorem}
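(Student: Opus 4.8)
The plan is to argue by induction on $n$, establishing simultaneously the two assertions --- that $\{\rho_k\}_{k=1}^n$ is orthonormal and that $\operatorname{span}\{\rho_k\}_{k=1}^n=\operatorname{span}\{\tau_k\}_{k=1}^n$ --- since each of them feeds into the other at the next stage. First I would dispose of the base case $n=1$: linear independence of $\{\tau_n\}_n$ forces $\tau_1\neq 0$, so $\omega_1=\tau_1$ is nonzero, $\rho_1=\omega_1/\|\omega_1\|$ is well defined with $\|\rho_1\|=1$, and trivially $\operatorname{span}\{\rho_1\}=\operatorname{span}\{\tau_1\}$.

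For the inductive step, assume both assertions hold at level $n-1$. The first thing to verify is that the recursion makes sense, i.e. $\omega_n\neq 0$: if $\omega_n=0$ then the defining relation gives $\tau_n=\sum_{k=1}^{n-1}\langle\tau_n,\rho_k\rangle\rho_k\in\operatorname{span}\{\rho_k\}_{k=1}^{n-1}=\operatorname{span}\{\tau_k\}_{k=1}^{n-1}$ by the induction hypothesis, contradicting the linear independence of $\{\tau_n\}_n$. Hence $\rho_n=\omega_n/\|\omega_n\|$ is well defined and $\|\rho_n\|=1$. Next, for $1\leq j\leq n-1$ I would compute, using the orthonormality of $\{\rho_k\}_{k=1}^{n-1}$ and linearity of the inner product in the first slot,
\begin{align*}
\langle\omega_n,\rho_j\rangle=\langle\tau_n,\rho_j\rangle-\sum_{k=1}^{n-1}\langle\tau_n,\rho_k\rangle\langle\rho_k,\rho_j\rangle=\langle\tau_n,\rho_j\rangle-\langle\tau_n,\rho_j\rangle=0,
\end{align*}
and dividing by $\|\omega_n\|$ yields $\langle\rho_n,\rho_j\rangle=0$. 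Combined with $\|\rho_n\|=1$ and the induction hypothesis, this shows $\{\rho_k\}_{k=1}^n$ is orthonormal.

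For the span identity, rearranging the defining relation gives $\rho_n=\|\omega_n\|^{-1}\bigl(\tau_n-\sum_{k=1}^{n-1}\langle\tau_n,\rho_k\rangle\rho_k\bigr)$, so $\rho_n\in\operatorname{span}\bigl(\{\rho_k\}_{k=1}^{n-1}\cup\{\tau_n\}\bigr)$, which by the induction hypothesis is contained in $\operatorname{span}\{\tau_k\}_{k=1}^n$; hence $\operatorname{span}\{\rho_k\}_{k=1}^n\subseteq\operatorname{span}\{\tau_k\}_{k=1}^n$. Conversely, $\tau_n=\|\omega_n\|\rho_n+\sum_{k=1}^{n-1}\langle\tau_n,\rho_k\rangle\rho_k\in\operatorname{span}\{\rho_k\}_{k=1}^n$, and together with the induction hypothesis this gives the reverse inclusion, closing the induction. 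I do not expect any genuine obstacle here --- the argument is a routine induction --- but the one point requiring care is precisely that the two claims must be transported together: the non-vanishing of $\omega_n$, which is needed even to \emph{define} $\rho_n$, relies on the span identity at level $n-1$, while the span identity at level $n$ in turn uses that $\rho_n$ has been legitimately constructed.
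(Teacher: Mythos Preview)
Your proof is correct and is the standard induction argument for Gram--Schmidt. The paper itself does not supply a proof of this statement---it is quoted as a classical result with a reference to \cite{LIMAYE}---so there is no approach in the paper to compare against.
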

One of the most important inequalities associated with an orthonormal sequence is the Bessel's inequality. It is a generalization of Cauchy-Schwarz inequality. 
\begin{theorem}(cf. \cite{LIMAYE}) (\textbf{Bessel's inequality})
If $\{\tau_n\}_{n}$  is  an orthonormal set in  $\mathcal{H}$, then the series $\sum_{n=1}^\infty|\langle h, \tau_n\rangle |^2$ converges for all $h \in \mathcal{H}$ and 
\begin{align*}
\sum_{n=1}^\infty|\langle h, \tau_n\rangle |^2\leq \|h\|^2, \quad \forall h \in \mathcal{H}.
\end{align*}	
\end{theorem}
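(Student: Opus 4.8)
The plan is to reduce the infinite statement to a uniform bound on finite partial sums and then pass to the limit using monotone convergence for real series. Fix $h \in \mathcal{H}$ and $N \in \mathbb{N}$, and set $h_N \coloneqq \sum_{n=1}^{N}\langle h, \tau_n\rangle \tau_n$. The first step is to compute $\|h - h_N\|^2$ by expanding the inner product bilinearly and using orthonormality $\langle \tau_j, \tau_k\rangle = \delta_{j,k}$. Since $\langle h_N, h\rangle = \sum_{n=1}^{N}|\langle h, \tau_n\rangle|^2$ (after conjugation bookkeeping via axiom (iii)) and $\|h_N\|^2 = \sum_{n=1}^{N}|\langle h, \tau_n\rangle|^2$ as well, the cross terms collapse and one obtains the Pythagorean-type identity
\begin{align*}
\|h - h_N\|^2 = \|h\|^2 - \sum_{n=1}^{N}|\langle h, \tau_n\rangle|^2.
\end{align*}

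The second step is the key inequality: since $\|h - h_N\|^2 \geq 0$ by axiom (i), the identity immediately gives $\sum_{n=1}^{N}|\langle h, \tau_n\rangle|^2 \leq \|h\|^2$ for every $N$. Thus the partial sums of the nonnegative series $\sum_{n=1}^{\infty}|\langle h, \tau_n\rangle|^2$ form a nondecreasing sequence bounded above by $\|h\|^2$. By the monotone convergence theorem for sequences of real numbers, the series converges, and its sum — being the supremum of the partial sums — also satisfies $\sum_{n=1}^{\infty}|\langle h, \tau_n\rangle|^2 \leq \|h\|^2$. This is exactly the claimed inequality.

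There is essentially no obstacle here; the only point requiring minor care is the expansion of $\|h - h_N\|^2$ when $\mathbb{K} = \mathbb{C}$, where one must track complex conjugates correctly: $\langle h, h_N\rangle = \sum_{n=1}^{N}\overline{\langle h, \tau_n\rangle}\langle h, \tau_n\rangle$ using conjugate-linearity in the second slot, which is real and equals $\sum_{n=1}^{N}|\langle h, \tau_n\rangle|^2$, matching $\overline{\langle h_N, h\rangle}$ as required by axiom (iii). Once this is handled, the identity and the passage to the limit are routine. I would remark that this argument also shows $h_N$ is the orthogonal projection of $h$ onto $\operatorname{span}\{\tau_1,\dots,\tau_N\}$, which foreshadows the later discussion of frame expansions, but that observation is not needed for the inequality itself.
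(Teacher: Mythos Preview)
Your proof is correct and is the standard textbook argument for Bessel's inequality. The paper does not supply its own proof of this statement --- it is stated as a background result with a reference to \cite{LIMAYE} --- so there is nothing to compare against; your argument via the Pythagorean identity $\|h - h_N\|^2 = \|h\|^2 - \sum_{n=1}^{N}|\langle h, \tau_n\rangle|^2$ and monotone convergence is exactly what one would find in that reference.
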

Next theorem characterizes convergence of series in a Hilbert space with that of sequence of scalars.
\begin{theorem}(cf. \cite{LIMAYE}) (\textbf{Riesz-Fisher theorem})
Let $\{a_n\}_{n}$ be a sequence of scalars and $ \{\tau_n\}_{n}$ be an  orthonormal set in   $\mathcal{H}$. Then 
\begin{align*}
\sum_{n=1}^\infty a_n\tau_n \text{ converges in } \mathcal{H} \text{ if and only if } \sum_{n=1}^\infty|a_n|^2 \text{ converges in } \mathbb{R}.
\end{align*}
\end{theorem}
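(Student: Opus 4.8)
The plan is to reduce both convergence assertions to the Cauchy criterion and then invoke completeness of the respective spaces. Write $s_N \coloneqq \sum_{n=1}^N a_n\tau_n$ for the partial sums of the vector series in $\mathcal{H}$, and $t_N \coloneqq \sum_{n=1}^N |a_n|^2$ for the partial sums of the scalar series in $\mathbb{R}$. The single identity that drives the whole argument is the Pythagorean relation obtained from orthonormality: for $M > N$, expanding the norm via the inner product and using $\langle \tau_j,\tau_k\rangle=\delta_{j,k}$ to kill all cross terms gives
\begin{align*}
\|s_M-s_N\|^2=\left\|\sum_{n=N+1}^{M}a_n\tau_n\right\|^2=\sum_{n=N+1}^{M}|a_n|^2=t_M-t_N=|t_M-t_N|,
\end{align*}
where the last equality holds because $|a_n|^2\geq 0$, so $\{t_N\}_N$ is nondecreasing.

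For the forward implication, I would argue as follows. If $\sum_{n=1}^\infty a_n\tau_n$ converges in $\mathcal{H}$, then $\{s_N\}_N$ is convergent, hence Cauchy; by the displayed identity $|t_M-t_N|=\|s_M-s_N\|^2\to 0$ as $M,N\to\infty$, so $\{t_N\}_N$ is Cauchy in $\mathbb{R}$ and therefore converges by completeness of $\mathbb{R}$, i.e.\ $\sum_{n=1}^\infty|a_n|^2$ converges. Conversely, if $\sum_{n=1}^\infty|a_n|^2$ converges, then $\{t_N\}_N$ is Cauchy, so given $\varepsilon>0$ there is $N_0$ with $|t_M-t_N|<\varepsilon^2$ whenever $M>N\geq N_0$; the identity then forces $\|s_M-s_N\|<\varepsilon$, so $\{s_N\}_N$ is Cauchy in $\mathcal{H}$ and converges by completeness of $\mathcal{H}$ (axiom (v) in the definition of a Hilbert space).

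I do not expect a genuine obstacle here; the proof is short once the right identity is isolated. The only points that merit care are the bookkeeping in the norm expansion above — verifying that the double sum $\sum_{j,k}a_j\overline{a_k}\langle\tau_j,\tau_k\rangle$ collapses to $\sum_n|a_n|^2$ over the relevant index range — and the elementary but essential remark that, for a series of nonnegative reals, ``convergent'' is equivalent to ``having Cauchy partial sums''. It is precisely this last observation that lets the single identity $\|s_M-s_N\|^2=|t_M-t_N|$ serve both directions of the equivalence simultaneously.
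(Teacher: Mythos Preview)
Your proof is correct and is the standard argument: the Pythagorean identity $\|s_M-s_N\|^2=t_M-t_N$ from orthonormality reduces both directions to the Cauchy criterion, and completeness of $\mathcal{H}$ and $\mathbb{R}$ finishes. The paper itself does not supply a proof of this theorem --- it is stated with a citation to Limaye --- so there is nothing to compare against, but what you have written is exactly the classical proof one finds in that reference.
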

Next theorem shows that given any  orthonormal set and an element in a Hilbert space, the inner product of the element with the members of an  orthonormal set can be non zero at most  countably many times. 
\begin{theorem}(cf. \cite{LIMAYE})
Let $ \{\tau_n\}_{n}$ be an orthonormal set in   $\mathcal{H}$ and $h \in \mathcal{H}$. Then the set $E_h\coloneqq \{\tau_n: \langle h, \tau_n\rangle \neq 0, n \in \mathbb{N}\}$ is either finite or countable.	
\end{theorem}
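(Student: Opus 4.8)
The plan is to exhibit $E_h$ as a countable union of finite sets, using Bessel's inequality as the sole quantitative input. First I would fix $h \in \mathcal{H}$ and, for each $k \in \mathbb{N}$, introduce the truncated set
\begin{align*}
E_{h,k} \coloneqq \left\{\tau_n : |\langle h, \tau_n\rangle|^2 > \tfrac{1}{k},\ n \in \mathbb{N}\right\}.
\end{align*}
The key observation is that each $E_{h,k}$ is finite: if $\tau_{n_1}, \dots, \tau_{n_m}$ are distinct members of $E_{h,k}$, then by Bessel's inequality
\begin{align*}
\frac{m}{k} < \sum_{j=1}^{m} |\langle h, \tau_{n_j}\rangle|^2 \leq \sum_{n=1}^{\infty} |\langle h, \tau_n\rangle|^2 \leq \|h\|^2,
\end{align*}
so $m < k\|h\|^2$, which bounds the cardinality of $E_{h,k}$ by $\lfloor k\|h\|^2 \rfloor$ (interpreting this as $0$ when $h = 0$).

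Next I would verify the set identity $E_h = \bigcup_{k=1}^{\infty} E_{h,k}$. The inclusion $\supseteq$ is immediate since $\tfrac1k > 0$ forces $\langle h, \tau_n\rangle \neq 0$. For $\subseteq$, if $\langle h, \tau_n\rangle \neq 0$ then $|\langle h, \tau_n\rangle|^2 > 0$, and by the Archimedean property there is $k \in \mathbb{N}$ with $|\langle h, \tau_n\rangle|^2 > \tfrac1k$, so $\tau_n \in E_{h,k}$. Hence $E_h$ is a countable union of finite sets, and therefore $E_h$ is either finite or countably infinite, which is the assertion.

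I do not expect a genuine obstacle here; the only point requiring care is the bookkeeping — making sure Bessel's inequality is applied to a finite subcollection (so that the partial sum is unambiguously dominated by the full convergent series guaranteed by the Bessel theorem quoted above) and that the Archimedean/countable-union step is stated cleanly. A one-line remark could be added noting that since the orthonormal set is already indexed by $\mathbb{N}$ the conclusion is formally automatic, but the argument above is the one that survives verbatim when the indexing set is uncountable, so it is the proof worth recording.
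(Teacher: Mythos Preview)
Your proof is correct, and it is the standard argument for this result. Note, however, that the paper does not supply its own proof here: the theorem is quoted from \cite{LIMAYE} and stated without demonstration, so there is nothing to compare against beyond confirming that your argument is the textbook one.

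Your closing remark is well taken and worth emphasizing: as the paper states the theorem, the orthonormal set is already indexed by $\mathbb{N}$, so $E_h$ is a subset of a countable set and the conclusion is immediate. The Bessel-inequality argument you give is the substantive proof needed when the orthonormal set is indexed by an arbitrary (possibly uncountable) set, which is the form in which this result is usually of interest.
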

A natural analogue of basis for finite dimensional vector spaces to that of infinite dimensional Hilbert spaces is the notion of Schauder basis and orthonormal basis. 
\begin{definition}(cf. \cite{CHRISTENSEN})\label{ONBDEFINITIONOLE}
A	collection $ \{\tau_n\}_{n}$ in  $\mathcal{H}$ is called
\begin{enumerate}[label=(\roman*)]
\item a  \textbf{Schauder basis} for $\mathcal{H}$ if for each  $h \in \mathcal{H}$, there exists a unique collection $\{a_n(h)\}_{n }$ of scalars such that $\sum_{n=1}^\infty a_n(h)\tau_n$ converges in  $\mathcal{H}$ and  $h=\sum_{n=1}^\infty a_n(h)\tau_n$.
\item an  \textbf{orthonormal  basis} for $\mathcal{H}$ if it is a Schauder basis for $\mathcal{H}$ and it is orthonormal.
\end{enumerate}
\end{definition}
We now give various examples of orthonormal bases for Hilbert spaces.
\begin{example}(cf. \cite{CHRISTENSEN})
\begin{enumerate}[label=(\roman*)]
\item 	Define $e_n\coloneqq\{\delta_{n,k}\}_{k}$, where $\delta_{\cdot,\cdot}$ is the Kronecker delta. Then $\{e_n\}_{n }$ is an orthonormal basis for $\ell^2(\mathbb{N})$. This is known as the standard orthonormal basis for $\ell^2(\mathbb{N})$.
\item Define 
	\begin{align*}
	\mathcal{L}^2[0,1]\coloneqq	\left\{f:[0,1]\to \mathbb{C} \text{ is measurable and } \int_{0}^{1}|f(x)|^2\,dx<\infty\right\}
\end{align*}
equipped with the inner product 
\begin{align*}
	\langle f, g \rangle \coloneqq \int_{0}^{1}f(x)\overline{g(x)}\,dx.
\end{align*}
 Let $n \in \mathbb{Z}$. Define $e_n: [0,1] \ni x \mapsto e^{2\pi inx}\in \mathbb{C}$. Then $\{e_n\}_{n=-\infty}^\infty$ is an orthonormal basis for $\mathcal{L}^2[0,1]$. 
\item  (\textbf{Gabor basis}) Define 
	\begin{align*}
	\mathcal{L}^2(\mathbb{R})\coloneqq	\left\{f:\mathbb{R}\to \mathbb{C} \text{ is measurable and } \int_{-\infty}^{\infty}|f(x)|^2\,dx<\infty\right\}
\end{align*}
equipped with the inner product 
\begin{align*}
	\langle f, g \rangle \coloneqq \int_{-\infty}^{\infty}f(x)\overline{g(x)}\,dx.
\end{align*}
Let $ \chi_{[0,1]}$ be the characteristic function on $[0,1]$. For $j, k \in \mathbb{Z}$, define $f_{j,k}(x)\coloneqq e^{2\pi i jx} \chi_{[0,1]}(x-k), \forall x \in \mathbb{R}$. Then  $\{f_{j,k}\}_{j, k \in \mathbb{Z}}$ is an orthonormal basis for $\mathcal{L}^2(\mathbb{R})$.
\item  (\textbf{Haar system}) Let $\psi$ be the Haar function defined on $\mathbb{R}$ by 
\begin{align*}
  \psi(x) \coloneqq\left\{
\begin{array}{ll}
1 & \operatorname{  if } ~0 \leq x<\frac{1}{2} \\
-1 & \operatorname{  if } ~\frac{1}{2}\leq x \leq 1 \\
0 & \operatorname{ otherwise} . 
\end{array} 
\right. 
\end{align*}
For $j, k \in \mathbb{Z}$, let $\psi_{j,k}(x)\coloneqq 2^{j/2}\psi(2^jx-k), \forall x \in \mathbb{R}$. Then  $\{\psi_{j,k}\}_{j, k \in \mathbb{Z}}$ is an orthonormal basis for $\mathcal{L}^2(\mathbb{R})$.
 \item (cf. \cite{CHRISTENSEN}) For $a, b  >0$, define  
\begin{align*}
	T_a:\mathcal{L}^2(\mathbb{R}) \ni f \mapsto T_af \in  \mathcal{L}^2(\mathbb{R}), \quad T_af:\mathbb{R} \mapsto (T_af) (x)\coloneqq f(x-a) \in \mathbb{C}
\end{align*}
and 
\begin{align*}
	E_b:\mathcal{L}^2(\mathbb{R}) \ni f \mapsto E_bf \in  \mathcal{L}^2(\mathbb{R}), \quad E_bf:\mathbb{R} \mapsto (E_bf) (x)\coloneqq e^{2 \pi i bx} f(x)\in \mathbb{C}.
\end{align*}
Let $g:\mathbb{R}\to \mathbb{C}$ be a continuous function with compact support. Then, for any $a,b>0$,   $  \{E_{mb}T_{na}g\}_{n,m \in \mathbb{Z}}$ is not an orthonormal    basis for $\mathcal{L}^2(\mathbb{R})$. 
\end{enumerate}	
\end{example}
Following theorem shows that given an orthonormal set, we can check whether it is an orthonormal basis by checking several equivalent conditions rather appealing to  Definition \ref{ONBDEFINITIONOLE}, which is difficult in many cases.
\begin{theorem}(cf. \cite{CHRISTENSEN})\label{CHARORTHONORMALINTRO}
Let $ \{\tau_n\}_{n}$ be an orthonormal set in   $\mathcal{H}$. The following are equivalent.
\begin{enumerate}[label=(\roman*)]
\item $ \{\tau_n\}_{n}$ is an orthonormal basis for  $\mathcal{H}$.
\item (\textbf{Fourier expansion}) $h= \sum_{n=1}^\infty\langle h, \tau_n\rangle \tau_n, \forall h \in \mathcal{H}$.
\item (\textbf{Parseval identity for the inner product}) $\langle h, g\rangle= \sum_{n=1}^\infty\langle h, \tau_n\rangle \langle  \tau_n, g\rangle , \forall h, g \in \mathcal{H}$.
\item (\textbf{Parseval identity for the norm}) $\|h\|^2=\sum_{n=1}^\infty|\langle h, \tau_n\rangle |^2, h \in \mathcal{H}.$
\item $\overline{\operatorname{span}} \{\tau_n\}_{n}=\mathcal{H}$.
\item If $h \in \mathcal{H}$ is such that $\langle h, \tau_n\rangle = 0, \forall n \in \mathbb{N}$, then $h=0$.
\end{enumerate}	
\end{theorem}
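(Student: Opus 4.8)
The plan is to establish the six conditions are equivalent via the single cycle of implications $(i)\Rightarrow(ii)\Rightarrow(iii)\Rightarrow(iv)\Rightarrow(v)\Rightarrow(vi)\Rightarrow(i)$, chosen so that no appeal to the orthogonal projection theorem is needed. The only analytic inputs are Bessel's inequality, the Riesz--Fisher theorem, and the continuity of the inner product (a consequence of the Cauchy--Schwarz inequality), which lets one move $\langle\cdot,\tau_m\rangle$ inside a convergent series.

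For $(i)\Rightarrow(ii)$, write $h=\sum_{n}a_n(h)\tau_n$ using the Schauder basis property, pair both sides with $\tau_m$, pass the inner product through the series by continuity, and use $\langle\tau_n,\tau_m\rangle=\delta_{n,m}$ to read off $a_m(h)=\langle h,\tau_m\rangle$. For $(ii)\Rightarrow(iii)$, substitute the Fourier expansion of $h$ into $\langle h,g\rangle$ and again pull the inner product through the series. For $(iii)\Rightarrow(iv)$, set $g=h$. Finally $(v)\Rightarrow(vi)$ is immediate: if $\langle h,\tau_n\rangle=0$ for all $n$ then $h$ is orthogonal to $\operatorname{span}\{\tau_n\}_n$, hence---by continuity---to its closure $\mathcal{H}$, so $\langle h,h\rangle=0$ and $h=0$.

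The two implications carrying the real content are $(iv)\Rightarrow(v)$ and $(vi)\Rightarrow(i)$, and both run through the same device. Given $h\in\mathcal{H}$, Bessel's inequality gives $\sum_n|\langle h,\tau_n\rangle|^2\le\|h\|^2<\infty$, so by the Riesz--Fisher theorem the series $g\coloneqq\sum_n\langle h,\tau_n\rangle\tau_n$ converges in $\mathcal{H}$; computing $\langle g,\tau_m\rangle$ termwise yields $\langle h-g,\tau_m\rangle=0$ for every $m$. Under $(iv)$, Parseval for the norm applied to $h-g$ forces $\|h-g\|=0$, so $h=g\in\overline{\operatorname{span}}\{\tau_n\}_n$, and since $h$ was arbitrary this gives $(v)$. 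Under $(vi)$, the same orthogonality relations give $h=g=\sum_n\langle h,\tau_n\rangle\tau_n$; uniqueness of the coefficients follows by pairing with $\tau_m$ exactly as in $(i)\Rightarrow(ii)$, so $\{\tau_n\}_n$ is a Schauder basis and, being orthonormal, an orthonormal basis, which is $(i)$.

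The only real obstacle is justifying the interchange of inner product and infinite summation and ensuring the auxiliary series $\sum_n\langle h,\tau_n\rangle\tau_n$ converges; once Bessel's inequality and the Riesz--Fisher theorem are invoked this is routine, and the remaining implications are purely formal manipulations.
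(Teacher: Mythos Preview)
Your proof is correct. The paper itself does not prove this theorem: it is stated in the introductory background chapter with the attribution ``(cf.\ \cite{CHRISTENSEN})'' and no proof is given, as it is a standard result recalled for later use. There is therefore nothing in the paper to compare your argument against; your cycle $(i)\Rightarrow(ii)\Rightarrow(iii)\Rightarrow(iv)\Rightarrow(v)\Rightarrow(vi)\Rightarrow(i)$, driven by Bessel's inequality and the Riesz--Fisher theorem, is a clean and complete treatment of the result.
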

\begin{theorem}(cf. \cite{CHRISTENSEN})
A Hilbert 	space is separable if and only if it has a countable orthonormal basis.
\end{theorem}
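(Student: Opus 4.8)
The plan is to establish the two implications separately, using the Gram--Schmidt procedure for one direction and the characterization of orthonormal bases in Theorem~\ref{CHARORTHONORMALINTRO} to close both.

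For the forward implication, suppose $\mathcal{H}$ is separable and fix a countable dense subset $\{x_n\}_n$. First I would thin this set out to a linearly independent subset with the same linear span: scanning $x_1, x_2, \dots$ in order, retain $x_n$ exactly when $x_n \notin \operatorname{span}\{x_1, \dots, x_{n-1}\}$. This yields a finite or countably infinite linearly independent subsequence $\{y_k\}_k$ with $\operatorname{span}\{y_k\}_k = \operatorname{span}\{x_n\}_n$, and therefore $\overline{\operatorname{span}}\{y_k\}_k = \overline{\{x_n\}_n} = \mathcal{H}$. Applying the Gram--Schmidt orthonormalization to $\{y_k\}_k$ produces an orthonormal set $\{\rho_k\}_k$ with $\operatorname{span}\{\rho_k\}_{k=1}^m = \operatorname{span}\{y_k\}_{k=1}^m$ for every $m$, hence $\overline{\operatorname{span}}\{\rho_k\}_k = \overline{\operatorname{span}}\{y_k\}_k = \mathcal{H}$. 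By the equivalence of (i) and (v) in Theorem~\ref{CHARORTHONORMALINTRO}, $\{\rho_k\}_k$ is an orthonormal basis for $\mathcal{H}$, and it is countable.

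For the converse, suppose $\{\tau_n\}_n$ is a countable orthonormal basis for $\mathcal{H}$. Let $\mathbb{D} \coloneqq \mathbb{Q}$ if $\mathbb{K} = \mathbb{R}$ and $\mathbb{D} \coloneqq \mathbb{Q} + i\mathbb{Q}$ if $\mathbb{K} = \mathbb{C}$, and let $D$ be the set of all finite sums $\sum_{n=1}^{N} a_n \tau_n$ with $N \in \mathbb{N}$ and $a_1, \dots, a_N \in \mathbb{D}$; then $D$ is countable. To see that $D$ is dense, fix $h \in \mathcal{H}$ and $\varepsilon > 0$. By the Fourier expansion (Theorem~\ref{CHARORTHONORMALINTRO}(ii)) there is $N$ with $\bigl\| h - \sum_{n=1}^{N} \langle h, \tau_n \rangle \tau_n \bigr\| < \varepsilon/2$; choosing $a_n \in \mathbb{D}$ so that $\bigl( \sum_{n=1}^{N} |a_n - \langle h, \tau_n \rangle|^2 \bigr)^{1/2} < \varepsilon/2$ and invoking the Pythagorean identity for the orthonormal system $\{\tau_n\}_{n=1}^N$ gives $\bigl\| h - \sum_{n=1}^{N} a_n \tau_n \bigr\| < \varepsilon$. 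Thus $D$ is a countable dense subset of $\mathcal{H}$, so $\mathcal{H}$ is separable.

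The only slightly delicate point is the thinning step in the forward direction --- checking that the retained subsequence is genuinely linearly independent while still spanning the same subspace --- but this is a routine inductive construction, and the finite-dimensional case is covered automatically since the construction then terminates. Everything else is a direct application of Gram--Schmidt and of Theorem~\ref{CHARORTHONORMALINTRO}, so I do not anticipate any real obstacle.
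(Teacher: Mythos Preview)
Your argument is correct and follows the standard textbook route. Note, however, that the paper does not actually supply a proof of this statement: it is recorded as a background fact with a reference (cf.\ \cite{CHRISTENSEN}) and no argument is given, so there is no in-paper proof to compare against. Your proof would serve perfectly well as the omitted justification.
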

Hilbert spaces have the remarkable property that every bounded linear functional from the space to the scalar field is given by the inner product with a unique element in the space.
\begin{theorem}(cf. \cite{LIMAYE}) (\textbf{Riesz representation theorem}) 
Let $f:	\mathcal{H}\ni \to \mathbb{K}$ be a bounded linear functional. Then there exists a unique $\tau_f\in \mathcal{H}$ such that 
\begin{align*}
f(h)=\langle h, \tau_f \rangle , \quad \forall h  \in \mathcal{H}\quad \text{ and } \quad \|f\|=\|\tau_f\|.
\end{align*}
\end{theorem}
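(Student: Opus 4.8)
The plan is to handle the zero functional separately and then, assuming $f\neq 0$, build $\tau_f$ from a single vector orthogonal to the kernel of $f$. If $f=0$ we simply take $\tau_f=0$, and both conclusions are immediate. So suppose $f\neq 0$ and set $N\coloneqq\ker f=\{h\in\mathcal{H}:f(h)=0\}$. Since $f$ is bounded, hence continuous, $N$ is a closed subspace of $\mathcal{H}$, and since $f\neq 0$ it is a proper subspace. The one nontrivial input needed here is that a proper closed subspace of a Hilbert space has a nonzero orthogonal complement; this is where completeness of $\mathcal{H}$ is used, through the nearest-point projection onto $N$. Granting this, I would choose $z\in N^{\perp}$ with $z\neq 0$, noting that $f(z)\neq 0$ because $z\notin N$.

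Next I would exploit the following algebraic observation: for every $h\in\mathcal{H}$ the vector $f(h)z-f(z)h$ lies in $N$, since applying $f$ to it gives $f(h)f(z)-f(z)f(h)=0$. Hence it is orthogonal to $z$, so
\begin{align*}
0=\langle f(h)z-f(z)h,\,z\rangle=f(h)\,\|z\|^2-f(z)\,\langle h,z\rangle,
\end{align*}
which rearranges to $f(h)=\bigl\langle h,\;\overline{f(z)}\,\|z\|^{-2}z\bigr\rangle$ for all $h\in\mathcal{H}$. Thus $\tau_f\coloneqq\overline{f(z)}\,\|z\|^{-2}z$ is the required element.

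For uniqueness, if $\langle h,\tau\rangle=\langle h,\tau'\rangle$ for all $h\in\mathcal{H}$, then $\langle h,\tau-\tau'\rangle=0$ for all $h$; taking $h=\tau-\tau'$ forces $\|\tau-\tau'\|^2=0$, so $\tau=\tau'$. For the norm identity I would use the Cauchy--Schwarz inequality: $|f(h)|=|\langle h,\tau_f\rangle|\le\|\tau_f\|\,\|h\|$ gives $\|f\|\le\|\tau_f\|$, while evaluating at $h=\tau_f$ gives $\|\tau_f\|^2=f(\tau_f)\le\|f\|\,\|\tau_f\|$, hence $\|\tau_f\|\le\|f\|$ (and trivially so when $\tau_f=0$, i.e.\ when $f=0$). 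Combining the two inequalities yields $\|f\|=\|\tau_f\|$.

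The main obstacle — indeed essentially the only substantive point — is establishing that $N^{\perp}\neq\{0\}$ for the proper closed subspace $N$; the rest is bookkeeping with the inner product. If the orthogonal decomposition theorem is not taken as available, I would instead argue directly that among all $h$ with $f(h)=1$ there is one of minimal norm, using completeness together with the parallelogram law, and then take $z$ to be a scalar multiple of that minimizer.
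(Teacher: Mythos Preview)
Your argument is correct and is the standard textbook proof of the Riesz representation theorem: reduce to the nonzero case, pick a nonzero $z\in(\ker f)^\perp$ using the orthogonal decomposition of a Hilbert space, and read off $\tau_f$ from the identity $f(h)z-f(z)h\in\ker f$; uniqueness and the norm equality follow exactly as you say. The paper does not supply its own proof of this theorem---it is quoted as background with a reference to Limaye---so there is nothing further to compare; your write-up is precisely the kind of proof one finds in the cited source.
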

Riesz representation theorem opens the door to the following definition. 
\begin{definition}(cf. \cite{LIMAYE})
	Let $T:\mathcal{H} \to \mathcal{H}_0$ be a bounded linear operator. The  unique bounded linear operator  $T^*:\mathcal{H}_0 \to \mathcal{H}$ such that 
	\begin{align*}
	\langle T h, h_0 \rangle =\langle h, T^*h_0\rangle , \quad \forall h  \in \mathcal{H}, ~\forall h_0 \in \mathcal{H}_0
	\end{align*}
is called as the \textbf{adjoint} of $T$.
\end{definition}
Hilbert spaces are studied along with various kinds of operators. These are defined as follows. 
\begin{definition}(cf. \cite{LIMAYE})
	Let $T:\mathcal{H} \to \mathcal{H}_0$ be a bounded linear operator. The operator $T$ is said to be 
	\begin{enumerate}[label=(\roman*)]
		\item \textbf{invertible} if there exists a bounded linear operator $S:\mathcal{H}_0 \to \mathcal{H}$ such that $ST=I_\mathcal{H}$ and $TS=I_{\mathcal{H}_0}$.
		\item  \textbf{isometry} if $\|Th\|=\|h\|, $ $ \forall h \in \mathcal{H}$.
		\item \textbf{unitary}  if $TT^*=I_{\mathcal{H}_0}$, $T^*T=I_\mathcal{H}$.
\end{enumerate}
\end{definition}
\begin{definition}(cf. \cite{LIMAYE})
	Let $T:\mathcal{H} \to \mathcal{H}$ be a bounded linear operator. 
	\begin{enumerate}[label=(\roman*)]
		\item Operator $T$ is said to be a \textbf{normal} operator  if $TT^*=T^*T$.
		\item Operator $T$ is said to be a \textbf{projection} if $T^2=T=T^*$.
		\item Operator $T$ is said to be a \textbf{self-adjoint} operator if $T=T^*$.
		\item Operator $T$ is said to be a \textbf{positive} operator if $T=T^*$ and $\langle Th, h \rangle \geq 0$, $ \forall h \in \mathcal{H}$.
	\end{enumerate}
\end{definition}
\begin{theorem}(cf. \cite{LIMAYE})
	Let $\mathcal{H}$ be a separable Hilbert space. 
	\begin{enumerate}[label=(\roman*)]
		\item If $\mathcal{H}$ is finite dimensional,  then $\mathcal{H}$ is isometrically  isomorphic to $\mathbb{K}^n$, for some $n$.
		\item If $\mathcal{H}$ is infinite dimensional, then    $\mathcal{H}$ is isometrically  isomorphic to $\ell^2(\mathbb{N})$.
	\end{enumerate} 
\end{theorem}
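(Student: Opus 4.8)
The plan is to exhibit, in each case, one concrete isometric isomorphism built from an orthonormal basis, so that the whole argument reduces to the facts about orthonormal bases already recorded above. First I would invoke the theorem that a separable Hilbert space has a countable orthonormal basis $\{\tau_n\}_n$: in case (i) this basis is finite, say $\{\tau_1,\dots,\tau_n\}$ (take any basis of the finite-dimensional vector space $\mathcal{H}$ and apply Gram--Schmidt orthonormalization), and in case (ii) it is indexed by $\mathbb{N}$. Fix such a basis.

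Next I would define the candidate map $U\colon \mathcal{H}\to \ell^2(\mathbb{N})$ (respectively $U\colon\mathcal{H}\to\mathbb{K}^n$) by $Uh\coloneqq \{\langle h,\tau_k\rangle\}_k$. Bessel's inequality guarantees that the sequence of Fourier coefficients is square-summable, so $U$ indeed takes values in the stated space, and linearity of $U$ is immediate from linearity of $\langle\cdot,\cdot\rangle$ in its first argument. The Parseval identity for the norm, from the characterization theorem for orthonormal bases, gives $\|Uh\|^2=\sum_k|\langle h,\tau_k\rangle|^2=\|h\|^2$ for all $h\in\mathcal{H}$; hence $U$ is a linear isometry, and in particular injective. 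If one wants the inner product preserved verbatim rather than just the norm, this follows from the Parseval identity for the inner product, $\langle h,g\rangle=\sum_k\langle h,\tau_k\rangle\langle\tau_k,g\rangle$, or by polarization.

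For surjectivity I would appeal to the Riesz--Fisher theorem: given $\{a_k\}_k\in\ell^2(\mathbb{N})$, the series $\sum_k a_k\tau_k$ converges in $\mathcal{H}$ to some element $h$, and using continuity of the inner product together with orthonormality one computes $\langle h,\tau_m\rangle=a_m$ for every $m$, so $Uh=\{a_k\}_k$. Thus $U$ is a surjective linear isometry, i.e.\ an isometric isomorphism, and its inverse is automatically an isometry as well. In case (i) the identical argument applies with all sums finite, so convergence is automatic and no appeal to Riesz--Fisher is needed; this proves the two statements uniformly.

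The only step that is not purely formal is surjectivity, and it rests entirely on the completeness of $\mathcal{H}$, which is exactly what the Riesz--Fisher theorem packages. Everything else---well-definedness, linearity, and the isometry property---is a direct consequence of Bessel's inequality and the Parseval identities quoted above, so I do not anticipate any genuine obstacle beyond correctly invoking Riesz--Fisher and the existence of a countable orthonormal basis.
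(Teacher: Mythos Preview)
Your proof is correct and is the standard argument. Note, however, that the paper does not give its own proof of this statement: it is recorded as a background result with a citation to \cite{LIMAYE}, so there is nothing to compare against beyond saying that your approach is exactly the classical one (orthonormal basis plus Parseval plus Riesz--Fisher) that one finds in such references.
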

Orthonormal bases have the nice property that given a single orthonormal basis, we can generate all of them just by acting unitary operators.
\begin{theorem}(cf. \cite{CHRISTENSEN})
Let $ \{\tau_n\}_{n}$ be an orthonormal basis for  $\mathcal{H}$. Then the set of all orthonormal bases  for  $\mathcal{H}$ are precisely the families $\{U\tau_n\}_{n}$, where  $U\in \mathcal{B}(\mathcal{H})$ is unitary. 
\end{theorem}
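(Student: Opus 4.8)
The plan is to prove the two inclusions separately: first that every family $\{U\tau_n\}_n$ with $U$ unitary is an orthonormal basis, and then that every orthonormal basis of $\mathcal{H}$ arises in this way.

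For the first direction, suppose $U\in\mathcal{B}(\mathcal{H})$ is unitary. Orthonormality of $\{U\tau_n\}_n$ is immediate from $U^*U=I_\mathcal{H}$, since $\langle U\tau_j, U\tau_k\rangle=\langle \tau_j, U^*U\tau_k\rangle=\langle \tau_j,\tau_k\rangle=\delta_{j,k}$. To upgrade this to "orthonormal basis" I would invoke Theorem \ref{CHARORTHONORMALINTRO}, specifically the equivalence of conditions (i) and (v): it suffices to check that $\overline{\operatorname{span}}\{U\tau_n\}_n=\mathcal{H}$. As $U$ is a surjective isometry, it is a homeomorphism of $\mathcal{H}$ onto itself, hence commutes with closures; combining this with linearity gives $\overline{\operatorname{span}}\{U\tau_n\}_n=U\big(\overline{\operatorname{span}}\{\tau_n\}_n\big)=U(\mathcal{H})=\mathcal{H}$, where the middle equality uses that $\{\tau_n\}_n$ is already an orthonormal basis (Theorem \ref{CHARORTHONORMALINTRO} again).

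For the converse, let $\{\sigma_n\}_n$ be an arbitrary orthonormal basis of $\mathcal{H}$. I would define $U$ by prescribing it on the Fourier expansion: for $h=\sum_n\langle h,\tau_n\rangle\tau_n$ set $Uh:=\sum_n\langle h,\tau_n\rangle\sigma_n$. The defining series converges by the Riesz--Fisher theorem, because $\sum_n|\langle h,\tau_n\rangle|^2=\|h\|^2<\infty$ by the Parseval identity for $\{\tau_n\}_n$; the Parseval identity applied to $\{\sigma_n\}_n$ then yields $\|Uh\|^2=\sum_n|\langle h,\tau_n\rangle|^2=\|h\|^2$, so $U$ is a well-defined linear isometry. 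It is surjective, since any $g=\sum_n\langle g,\sigma_n\rangle\sigma_n$ equals $U\big(\sum_n\langle g,\sigma_n\rangle\tau_n\big)$. A surjective linear isometry of a Hilbert space is unitary: the isometry property gives $U^*U=I_\mathcal{H}$ by polarization, and surjectivity together with injectivity (automatic for an isometry) makes $U$ invertible, so $U^*=U^{-1}$ and hence $UU^*=I_\mathcal{H}$ as well. Finally $U\tau_m=\sum_n\langle \tau_m,\tau_n\rangle\sigma_n=\sigma_m$ for every $m$, so $\{\sigma_n\}_n=\{U\tau_n\}_n$.

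The orthonormality computations and the convergence bookkeeping are routine; the one point deserving care — and which I would single out as the crux — is verifying that the operator $U$ in the converse is genuinely well defined and bounded before one is even entitled to speak of its adjoint. This is precisely where the Parseval identity and the Riesz--Fisher theorem for the given basis $\{\tau_n\}_n$ do the real work, after which passing from "surjective isometry" to "unitary" is standard.
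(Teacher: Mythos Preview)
Your proof is correct and entirely standard. Note, however, that the paper does not actually supply a proof of this theorem: it is quoted as a background result from \cite{CHRISTENSEN}, so there is no ``paper's own proof'' to compare against; your argument is precisely the textbook one that the citation points to.
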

	
Hilbert spaces have another nice property that closed subspaces decompose the original space.
\begin{theorem}(cf. \cite{LIMAYE}) (\textbf{Orthogonal complement theorem})
If $\mathcal{W}$ is a closed subspace of  	$\mathcal{H}$, then $\mathcal{H}=\mathcal{W}\oplus \mathcal{W}^\perp$, where $\mathcal{W}^\perp$ is the orthogonal complement of $\mathcal{W}$ in $\mathcal{H}$.
\end{theorem}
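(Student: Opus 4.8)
The plan is to establish the decomposition through the classical nearest-point argument, which is where both the inner-product structure (via the parallelogram law) and the completeness of $\mathcal{H}$ are really used. First I would fix $h \in \mathcal{H}$ and set $d \coloneqq \inf_{w \in \mathcal{W}}\|h - w\|$, then pick a minimizing sequence $\{w_n\}_n$ in $\mathcal{W}$ with $\|h - w_n\| \to d$. Applying the parallelogram identity to the vectors $h - w_n$ and $h - w_m$ gives
\begin{align*}
\|w_n - w_m\|^2 = 2\|h - w_n\|^2 + 2\|h - w_m\|^2 - 4\left\|h - \tfrac{w_n + w_m}{2}\right\|^2 ,
\end{align*}
and since $\mathcal{W}$ is a subspace, $\tfrac{w_n + w_m}{2} \in \mathcal{W}$, so the subtracted term is at most $-4d^2$; hence the right-hand side tends to $2d^2 + 2d^2 - 4d^2 = 0$ as $m, n \to \infty$, and $\{w_n\}_n$ is Cauchy. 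By completeness of $\mathcal{H}$ it converges, and by closedness of $\mathcal{W}$ its limit $p$ lies in $\mathcal{W}$, with $\|h - p\| = d$ by continuity of the norm.

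Next I would check that $h - p \in \mathcal{W}^\perp$. For a fixed $w \in \mathcal{W}$ with $w \neq 0$ and any scalar $\alpha$, the vector $p + \alpha w$ lies in $\mathcal{W}$, so minimality of $d$ together with expansion of the inner product yields
\begin{align*}
\|h - p\|^2 \leq \|h - p - \alpha w\|^2 = \|h - p\|^2 - 2\operatorname{Re}\left(\overline{\alpha}\langle h - p, w\rangle\right) + |\alpha|^2\|w\|^2 .
\end{align*}
Taking $\alpha = \langle h - p, w\rangle / \|w\|^2$ reduces the inequality to $0 \leq -|\langle h - p, w\rangle|^2 / \|w\|^2$, which forces $\langle h - p, w\rangle = 0$; as the case $w = 0$ is trivial, this shows $h - p \in \mathcal{W}^\perp$. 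Writing $h = p + (h - p)$ with $p \in \mathcal{W}$ and $h - p \in \mathcal{W}^\perp$ then gives $\mathcal{H} = \mathcal{W} + \mathcal{W}^\perp$.

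To finish, I would argue the sum is direct: if $x \in \mathcal{W} \cap \mathcal{W}^\perp$, then $\langle x, x\rangle = 0$, so the definiteness axiom (ii) in the definition of a Hilbert space gives $x = 0$, whence $\mathcal{W} \cap \mathcal{W}^\perp = \{0\}$ and $\mathcal{H} = \mathcal{W} \oplus \mathcal{W}^\perp$. The only genuinely delicate point is the existence of the nearest point $p$: this is exactly where the parallelogram law (making the minimizing sequence Cauchy) and the completeness of $\mathcal{H}$ (making it converge) cannot be dispensed with, and closedness of $\mathcal{W}$ is then needed only to return the limit to $\mathcal{W}$. Everything after that is a short computation.
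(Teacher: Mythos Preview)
Your argument is the standard nearest-point/parallelogram-law proof and is correct as written. Note, however, that the paper does not supply its own proof of this theorem at all: it is stated as a background result with a reference (cf.\ Limaye), so there is nothing in the paper to compare your approach against beyond observing that what you have given is exactly the classical argument one finds in that reference.
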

First level of generalization of orthonormal basis is that of Riesz basis. These are defined as follows.
\begin{definition}(cf. \cite{CHRISTENSEN})\label{RIESZBASISDEFINITION}
A	collection $\{\omega_n\}_{n}$ in  $\mathcal{H}$ is called	a \textbf{Riesz basis} for $\mathcal{H}$ if there exist an orthonormal basis  $ \{\tau_n\}_{n}$ for $\mathcal{H}$ and an invertible $T\in \mathcal{B}(\mathcal{H})$ such that $\omega_n=T\tau_n, \forall n$.
\end{definition}
As written by \cite{SIMONBOOK}, the origin of the term "Riesz basis" is unknown.
By taking $T$ as  the identity operator, we easily see that every orthonormal basis is a Riesz basis.
\begin{example}
\begin{enumerate}[label=(\roman*)]
\item Let $\{\lambda_n\}_{n=1}^\infty$ be a sequence of scalars such that there exist $a,b>0$ with $a\leq |\lambda_n|\leq b, \forall n \in \mathbb{N}$. Then $\{\lambda_ne_n\}_{n=1}^\infty$ is a Riesz basis  for $\ell^2(\mathbb{N})$, since it is image of the standard orthonormal basis $\{e_n\}_{n=1}^\infty$ under the invertible operator $T:\ell^2(\mathbb{N}) \ni \{x_n\}_{n=1}^\infty \mapsto \{\lambda_nx_n\}_{n=1}^\infty \in \ell^2(\mathbb{N})$. We note further that if $|\lambda_n|\neq1,$ for at least one $n$,  then $\{\lambda_ne_n\}_{n=1}^\infty$ is not an orthonormal  basis for $\ell^2(\mathbb{N})$.
\item (\cite{KADEC}) (\textbf{Kadec 1/4 theorem}) Let $ \{\lambda_n\}_{n\in \mathbb{Z}}$ be a sequence of reals  such that 
\begin{align*}
\sup_{n \in \mathbb{N}}\left|\lambda_n-n\right|< \frac{1}{4},\quad \forall n\in \mathbb{Z}.
\end{align*}
Define $f_n:\left(-\pi, \pi \right) \ni x \mapsto e^{i\lambda_nx} \in \mathbb{C}$, $\forall n\in \mathbb{Z}$. Then 
$  \{f_n\}_{n\in \mathbb{Z}}$ is a Riesz basis for $\mathcal{L}^2(-\pi, \pi)$.  It was shown that 1/4 is the optimal constant (\cite{LEVINSON}, cf. \cite{CHRISTENSENBULLETIN}).
\item (\cite{CASSAZAEVERYSUM}) (\textbf{Kalton-Casazza theorem})
 A linear combination of two orthonormal bases is a Riesz basis.
 \item (cf. \cite{CHRISTENSEN}) For $a, b  >0$, define  
 \begin{align*}
 	T_a:\mathcal{L}^2(\mathbb{R}) \ni f \mapsto T_af \in  \mathcal{L}^2(\mathbb{R}), \quad T_af:\mathbb{R} \mapsto (T_af) (x)\coloneqq f(x-a) \in \mathbb{C}
 \end{align*}
 and 
 \begin{align*}
 	E_b:\mathcal{L}^2(\mathbb{R}) \ni f \mapsto E_bf \in  \mathcal{L}^2(\mathbb{R}), \quad E_bf:\mathbb{R} \mapsto (E_bf) (x)\coloneqq e^{2 \pi i bx} f(x)\in \mathbb{C}.
 \end{align*}
 Let $g:\mathbb{R}\to \mathbb{C}$ be a continuous function with compact support. Then, for any $a,b>0$,   $  \{E_{mb}T_{na}g\}_{n,m \in \mathbb{Z}}$ is not a Riesz   basis for $\mathcal{L}^2(\mathbb{R})$. 
\end{enumerate}	
\end{example}
\begin{remark}
Let  $ \{\tau_n\}_{n}$ be an orthonormal basis for  $\mathcal{H}$. Since an invertible map preserves the cardinality, it follows that for each    $n\in \mathbb{N}$ , the set  $\{\tau_j\}_{j=1}^n$ can not be a Riesz basis for  $\mathcal{H}$.

\end{remark}
Like orthonormal basis, Riesz basis will also give a series representation of every element in a Hilbert space.
\begin{theorem}(cf. \cite{CHRISTENSEN}) \label{RIESZISAFRAME}
Let $ \{\tau_n\}_{n}$ be a Riesz basis for $\mathcal{H}$. 
\begin{enumerate}[label=(\roman*)]
\item There exists a unique collection $\{\omega_n\}_{n}$ in $\mathcal{H}$ such that 
\begin{align}\label{RBEXPANSION}
h=\sum_{n=1}^\infty\langle h, \omega_n\rangle \tau_n, \quad \forall h \in \mathcal{H}.
\end{align}
Moreover, $\{\omega_n\}_{n}$ is a Riesz basis for $\mathcal{H}$ and the series in Eq. (\ref{RBEXPANSION}) converges unconditionally for all $h \in \mathcal{H}$.
\item There exist $a,b>0$ such that $a \|h\|^2\leq \sum_{n=1}^\infty|\langle h, \tau_n\rangle |^2 \leq b\|h\|^2, \forall h \in \mathcal{H}.$
\end{enumerate}	
\end{theorem}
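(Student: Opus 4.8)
The strategy is to reduce everything to the orthonormal case by means of the invertible operator furnished by Definition \ref{RIESZBASISDEFINITION}. So I would begin by fixing an orthonormal basis $\{e_n\}_n$ for $\mathcal{H}$ and an invertible $T \in \mathcal{B}(\mathcal{H})$ with $\tau_n = Te_n$ for every $n$. For part (ii), note that $\langle h, \tau_n\rangle = \langle T^*h, e_n\rangle$, so the Parseval identity for the norm (Theorem \ref{CHARORTHONORMALINTRO}(iv)), applied to $T^*h$, gives
\[
\sum_{n=1}^\infty |\langle h, \tau_n\rangle|^2 = \|T^*h\|^2 \qquad \text{for all } h \in \mathcal{H}.
\]
Since $T$ (hence $T^*$) is invertible, $\|T^*h\|^2 \le \|T\|^2\|h\|^2$ and, from $\|h\| = \|(T^*)^{-1}(T^*h)\| \le \|(T^*)^{-1}\|\,\|T^*h\| = \|T^{-1}\|\,\|T^*h\|$, also $\|T^*h\|^2 \ge \|T^{-1}\|^{-2}\|h\|^2$; this yields (ii) with $a = \|T^{-1}\|^{-2}$ and $b = \|T\|^2$.

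For part (i) I would put $\omega_n := (T^{-1})^* e_n$. Since $(T^{-1})^*$ is an invertible element of $\mathcal{B}(\mathcal{H})$, Definition \ref{RIESZBASISDEFINITION} immediately shows $\{\omega_n\}_n$ is a Riesz basis. To get the expansion, fix $h$, apply the Fourier expansion (Theorem \ref{CHARORTHONORMALINTRO}(ii)) to the vector $T^{-1}h$ to write $T^{-1}h = \sum_n \langle T^{-1}h, e_n\rangle e_n$, and then apply the bounded operator $T$ term by term; using $\langle T^{-1}h, e_n\rangle = \langle h, (T^{-1})^*e_n\rangle = \langle h, \omega_n\rangle$ this becomes $h = \sum_n \langle h, \omega_n\rangle \tau_n$. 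For unconditional convergence I would argue at the level of the orthonormal expansion of $T^{-1}h$: for any permutation $\pi$, the series $\sum_n |\langle T^{-1}h, e_{\pi(n)}\rangle|^2$ converges (its terms are nonnegative), so by the Riesz--Fisher theorem the rearranged series $\sum_n \langle T^{-1}h, e_{\pi(n)}\rangle e_{\pi(n)}$ converges in $\mathcal{H}$, and pairing the limit with each $e_m$ identifies it as $T^{-1}h$ via Theorem \ref{CHARORTHONORMALINTRO}(vi); applying the continuous operator $T$ transfers the unconditional convergence to $\sum_n \langle h, \omega_n\rangle \tau_n$.

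It remains to prove uniqueness of $\{\omega_n\}_n$, for which I would first record that $\{\tau_n\}_n$ is a Schauder basis with unique coefficients: surjectivity of the expansion is exactly what was proved above, and if $\sum_n c_n \tau_n = 0$ then $T\bigl(\sum_n c_n e_n\bigr) = 0$, hence $\sum_n c_n e_n = 0$ by injectivity of $T$, forcing all $c_n = 0$ because $\{e_n\}_n$ is a Schauder basis. Consequently, if $\{\omega_n'\}_n$ is another family with $h = \sum_n \langle h, \omega_n'\rangle \tau_n$ for all $h$, the uniqueness of Schauder coefficients gives $\langle h, \omega_n\rangle = \langle h, \omega_n'\rangle$ for every $h$ and $n$, so $\omega_n = \omega_n'$. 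I do not anticipate a real obstacle; the only thing demanding care is the bookkeeping with $T$, $T^*$, $T^{-1}$ and $(T^{-1})^*$, and making sure the ``unique collection'' assertion is genuinely backed by the Schauder-basis property rather than assumed.
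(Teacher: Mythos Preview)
Your proof is correct: the reduction to the orthonormal case via the invertible operator $T$ from Definition \ref{RIESZBASISDEFINITION} handles both parts cleanly, and your treatment of unconditional convergence and uniqueness is sound. The paper itself does not prove this theorem---it is stated as a known result with a citation to \cite{CHRISTENSEN}---so there is no proof in the paper to compare against; your argument is essentially the standard one found in that reference.
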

Next result says that there is a characterization of Riesz basis which is free from orthonormal basis. It also gives a tool to check whether a collection is a Riesz basis  for a given Hilbert space. To state the result, we need two definitions.
\begin{definition}(cf. \cite{CHRISTENSEN})
A sequence $ \{\tau_n\}_{n}$ in a Hilbert space $\mathcal{H}$	is said to be \textbf{complete} if $\overline{\text{span}}_{n\in \mathbb{N}}\{\tau_n\}=\mathcal{H}$.
\end{definition}
\begin{definition}(cf. \cite{CHRISTENSEN})
A sequence $ \{\omega_n\}_{n}$ in a Hilbert space $\mathcal{H}$	is said to be \textbf{biorthogonal} to a sequence $ \{\tau_n\}_{n}$ in  $\mathcal{H}$ if $\langle  \omega_n, \tau_m\rangle =\delta_{n,m}$ for all $n,m$.
\end{definition}
\begin{theorem}(cf. \cite{CHRISTENSEN, HEILBOOK, STOEVACHARAFOURIER}) \label{RIESZBASISTHM}
For a sequence $ \{\tau_n\}_{n}$ in  $\mathcal{H}$, the following are equivalent.
\begin{enumerate}[label=(\roman*)]
\item $ \{\tau_n\}_{n}$ is a Riesz basis for $\mathcal{H}$.
\item $\overline{\operatorname{span}} \{\tau_n\}_{n}=\mathcal{H}$ and there exist $a,b>0$ such that for every finite subset $\mathbb{S}$ of $\mathbb{N}$, 
\begin{align}\label{USEFULRESULT}
a\sum_{n \in \mathbb{S}}|c_n|^2\leq \left\|\sum_{n \in \mathbb{S}}c_n\tau_n\right\|^2 \leq  b \sum_{n \in \mathbb{S}}|c_n|^2, \quad\forall c_n \in \mathbb{K}.
\end{align}
\item $ \{\tau_n\}_{n}$ is  complete in    $\mathcal{H}$ and the  operator given  by the \textbf{infinite  Gram matrix} $[\langle \tau_m, \tau_n\rangle ]_{1\leq n, m <\infty}$ defined by 
\begin{align*}
\ell^2(\mathbb{N})\ni \{c_m\}_{m} \mapsto \left\{\sum_{m=1}^\infty \langle \tau_n, \tau_m \rangle c_m\right\}_n\in \ell^2(\mathbb{N})
\end{align*}
is a bounded invertible operator on $\ell^2(\mathbb{N})$.
\item $\{\tau_n\}_{n}$ is a bounded unconditional Schauder basis for $\mathcal{H}$.
\item $\{\tau_n\}_{n}$ is a Schauder basis for $\mathcal{H}$ such that $\sum_{n=1}^{\infty}c_n\tau_n$ converges in $\mathcal{H}$ if and only if $\sum_{n=1}^{\infty}|c_n|^2<\infty$.
\end{enumerate}	
\end{theorem}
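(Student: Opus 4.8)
The plan is to prove the five statements mutually equivalent along the cycle $(i)\Rightarrow(ii)\Leftrightarrow(iii)\Rightarrow(i)$ together with $(i)\Rightarrow(iv)\Rightarrow(v)\Rightarrow(i)$; the unifying object is the synthesis map $\{c_n\}_n\mapsto\sum_n c_n\tau_n$, the point being that $\{\tau_n\}_n$ is a Riesz basis exactly when this map is (a version of) a change of orthonormal basis. Throughout I fix, using separability, an orthonormal basis $\{e_n\}_n$ of $\mathcal H$ (the finite-dimensional case is elementary linear algebra and I suppress it).

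First, $(i)\Rightarrow(ii)$: write $\tau_n=Ue_n$ with $U\in\mathcal B(\mathcal H)$ invertible. For a finite $\mathbb S\subseteq\mathbb N$ and scalars $(c_n)$, the Pythagorean identity gives $\big\|\sum_{n\in\mathbb S}c_ne_n\big\|^2=\sum_{n\in\mathbb S}|c_n|^2$, and inserting this into $\|U^{-1}\|^{-1}\|x\|\le\|Ux\|\le\|U\|\|x\|$ with $x=\sum_{n\in\mathbb S}c_ne_n$ produces \eqref{USEFULRESULT} with $a=\|U^{-1}\|^{-2}$ and $b=\|U\|^{2}$; completeness of $\{\tau_n\}_n$ follows because $U$ is surjective and continuous. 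The equivalence $(ii)\Leftrightarrow(iii)$ is a reformulation: a direct computation (using $\langle\tau_n,\tau_m\rangle=\overline{\langle\tau_m,\tau_n\rangle}$) shows that on finitely supported sequences the operator in $(iii)$ is the positive operator $G$ with quadratic form $c\mapsto\big\|\sum_m c_m\tau_m\big\|^2$; so \eqref{USEFULRESULT} is precisely the assertion $aI\le G\le bI$ on a dense subspace, which is equivalent to $G$ extending to a bounded, self-adjoint, invertible operator with spectrum in $[a,b]$, and the completeness hypothesis appears in both statements. Finally $(ii)\Rightarrow(i)$: the upper bound in \eqref{USEFULRESULT} lets the assignment $e_n\mapsto\tau_n$ extend from $\operatorname{span}\{e_n\}_n$ to some $T\in\mathcal B(\mathcal H)$; the lower bound passes to the closure, so $T$ is bounded below, hence injective with closed range; that range contains the complete family $\{\tau_n\}_n$ and is therefore all of $\mathcal H$; by the bounded inverse theorem $T$ is invertible with $\tau_n=Te_n$, which is $(i)$.

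Next, $(i)\Rightarrow(iv)$: an orthonormal basis is a bounded unconditional Schauder basis, and a bounded invertible operator sends a bounded unconditional Schauder basis to one of the same kind (continuity preserves unconditional convergence, invertibility gives uniqueness of coefficients, and $\|U^{-1}\|^{-1}\le\|\tau_n\|\le\|U\|$). For $(v)\Rightarrow(i)$: the ``if'' half of $(v)$ makes the synthesis map $T\colon\ell^2(\mathbb N)\to\mathcal H$, $\{c_n\}_n\mapsto\sum_n c_n\tau_n$, everywhere defined; using continuity of the coordinate functionals of the basis and uniqueness of Schauder expansions one checks that $T$ has closed graph, hence is bounded; $T$ is injective by uniqueness, and surjective because every $h=\sum_n a_n(h)\tau_n$ has $\{a_n(h)\}_n\in\ell^2$ by the ``only if'' half of $(v)$; so $T$ is a bounded bijection, hence invertible, and after the unitary identification $\ell^2(\mathbb N)\cong\mathcal H$ it carries an orthonormal basis onto $\{\tau_n\}_n$, giving $(i)$.

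The one genuinely substantial step — and the point where I expect the difficulty to concentrate — is $(iv)\Rightarrow(v)$, that is, that a bounded unconditional basis of a Hilbert space is a Riesz basis; it is the only place requiring Hilbert geometry beyond the Pythagorean identity and it does not seem to reduce to a single estimate, but to a randomization. The plan is: by unconditionality and the uniform boundedness principle, the sign-change operators $M_\varepsilon\colon\sum_n a_n(h)\tau_n\mapsto\sum_n\varepsilon_n a_n(h)\tau_n$ ($\varepsilon_n\in\{-1,1\}$) are well defined and uniformly bounded, say $\sup_\varepsilon\|M_\varepsilon\|=K<\infty$ (see \cite{HEILBOOK}). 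Averaging over random signs and using $\mathbb E[\varepsilon_n\varepsilon_m]=\delta_{n,m}$ gives $\sum_n|a_n(h)|^2\|\tau_n\|^2=\mathbb E\,\|M_\varepsilon h\|^2\le K^2\|h\|^2$ for every $h$; since $\inf_n\|\tau_n\|>0$, this controls $\sum_n|a_n(h)|^2$ by a multiple of $\|h\|^2$, which, applied to $h=\sum_n c_n\tau_n$ when the series converges, yields the ``only if'' half of $(v)$. For the ``if'' half, apply the same averaging to a finite partial sum: some deterministic sign choice does at least as well as the average, so together with the bound on $M_\varepsilon$ one gets $\big\|\sum_{n\in F}c_n\tau_n\big\|^2\le K^2\big(\sup_n\|\tau_n\|^2\big)\sum_{n\in F}|c_n|^2$, whence the partial sums of $\sum_n c_n\tau_n$ are Cauchy once $\sum_n|c_n|^2<\infty$. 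With this the cycle is closed and, combined with $(i)\Leftrightarrow(ii)\Leftrightarrow(iii)$, all five conditions are equivalent.
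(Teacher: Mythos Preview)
The paper does not supply a proof of this theorem; it is quoted from the standard references \cite{CHRISTENSEN, HEILBOOK, STOEVACHARAFOURIER} as background in the introductory chapter. Your argument is essentially the textbook one and is correct in outline: the synthesis map $T\colon\ell^2(\mathbb N)\to\mathcal H$, $Te_n=\tau_n$, carries the whole weight, and you correctly isolate $(iv)\Rightarrow(v)$ via Rademacher averaging as the only nontrivial implication.

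One point to tighten: in the complex case the operator $G$ of $(iii)$, with $(Gc)_n=\sum_m\langle\tau_n,\tau_m\rangle c_m$, is \emph{not} $T^*T$ but its entrywise conjugate $JT^*TJ$ (where $J$ is coordinatewise complex conjugation on $\ell^2(\mathbb N)$), so its quadratic form is $\|T\bar c\|^2$, not $\|Tc\|^2$. For instance, with $\tau_1=e_1$, $\tau_2=ie_1$ and $c=(1,i)$ one gets $\langle Gc,c\rangle=4$ while $\big\|\sum_m c_m\tau_m\big\|^2=0$. This does not damage the equivalence --- $J$ is an anti-unitary bijection, so $G$ is bounded and invertible if and only if $T^*T$ is --- but your sentence ``the operator in $(iii)$ is the positive operator $G$ with quadratic form $c\mapsto\big\|\sum_m c_m\tau_m\big\|^2$'' is literally false over $\mathbb C$ and should be adjusted (either argue via $G=JT^*TJ$, or identify the Gram operator with $T^*T$ directly, which is the usual convention in the cited sources).
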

\begin{remark}\label{COUNTERRIESZ}
Using Inequality \ref{USEFULRESULT}, we can show that certain collection of vectors  is  not a Riesz basis. As an illustration, let $\{e_n\}_{n=1}^\infty$ be the standard orthonormal basis for $\ell^2(\mathbb{N})$. We claim that $\{e_1\}\cup\{e_n\}_{n=1}^\infty$ is not a Riesz basis for $\ell^2(\mathbb{N})$. Suppose the claim fails, then we get an $a>0$ such that first inequality in Inequality \ref{USEFULRESULT} holds. By taking $1$ and $-1$ we see that 
\begin{align*}
a(|1|^2+|-1|^2)\leq \|1\cdot e_1+(-1)\cdot e_1\|^2=0 ~\Rightarrow a=0,
\end{align*}
which is a contradiction. Hence $\{e_1\}\cup\{e_n\}_{n=1}^\infty$ can  not be a Riesz basis for $\ell^2(\mathbb{N})$.
\end{remark}
Theorem \ref{RIESZBASISTHM} leads to the following definition. 
\begin{definition}(cf. \cite{CHRISTENSEN})
A sequence $ \{\omega_n\}_{n}$ in a Hilbert space $\mathcal{H}$	is said to be a  \textbf{Riesz sequence}  for   $\mathcal{H}$	if there exist $a,b>0$ such that for every finite subset $\mathbb{S}$ of $\mathbb{N}$, 
\begin{align*}
	a\sum_{n \in \mathbb{S}}|c_n|^2\leq \left\|\sum_{n \in \mathbb{S}}c_n\tau_n\right\|^2 \leq  b \sum_{n \in \mathbb{S}}|c_n|^2, \quad\forall c_n \in \mathbb{K}.
\end{align*}
\end{definition}
Theorem \ref{RIESZBASISTHM} says that every Riesz basis is a Riesz sequence. It is easy to see that a Riesz sequence need not be a Riesz basis. Following theorem gives another characterization of Riesz basis which is also free from orthonormal basis. It also helps to check whether a collection is a Riesz basis.
\begin{theorem}(cf. \cite{GOHBERGKREIN})\label{LORCH} (\textbf{Kothe-Lorch theorem})
A sequence 	$\{\tau_n\}_{n}$ is a Riesz basis for $\mathcal{H}$ if and only if the following three conditions hold.
\begin{enumerate}[label=(\roman*)]
\item  $\{\tau_n\}_{n}$ is an unconditional  Schauder basis for $\mathcal{H}$.
 \item  $0<\operatorname{inf}_{n \in \mathbb{N}}\|\tau_n\|\leq \operatorname{sup}\limits_{n \in \mathbb{N}}\|\tau_n\|<\infty. $
\end{enumerate}	
\end{theorem}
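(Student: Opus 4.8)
The plan is to reduce everything to Theorem~\ref{RIESZBASISTHM}, whose part (ii) characterizes Riesz bases through completeness together with the two-sided estimate \ref{USEFULRESULT} on finite linear combinations. For the forward implication, suppose $\{\tau_n\}_n$ is a Riesz basis. By Definition~\ref{RIESZBASISDEFINITION} there are an orthonormal basis $\{\rho_n\}_n$ for $\mathcal{H}$ and an invertible $T\in\mathcal{B}(\mathcal{H})$ with $\tau_n=T\rho_n$ for all $n$. An orthonormal basis is an unconditional Schauder basis (Fourier expansion combined with the unconditional summability of $\sum|\langle h,\rho_n\rangle|^2$), and applying the bounded invertible $T$ term by term transports this property to $\{\tau_n\}_n$, which gives (i). For (ii), from $\|\tau_n\|=\|T\rho_n\|\le\|T\|$ and $\|\tau_n\|=\|T\rho_n\|\ge\|T^{-1}\|^{-1}\|\rho_n\|=\|T^{-1}\|^{-1}$ we obtain $0<\|T^{-1}\|^{-1}\le\inf_n\|\tau_n\|\le\sup_n\|\tau_n\|\le\|T\|<\infty$.

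For the converse, assume (i) and (ii). Since a Schauder basis is complete, $\overline{\operatorname{span}}\{\tau_n\}_n=\mathcal{H}$, so by Theorem~\ref{RIESZBASISTHM}(ii) it suffices to produce $a,b>0$ with $a\sum_{n\in\mathbb{S}}|c_n|^2\le\big\|\sum_{n\in\mathbb{S}}c_n\tau_n\big\|^2\le b\sum_{n\in\mathbb{S}}|c_n|^2$ for every finite $\mathbb{S}\subseteq\mathbb{N}$ and all scalars $c_n$. First I would extract a finite unconditional constant $K\ge1$: a Banach--Steinhaus argument applied to the block projections associated with the unconditional basis yields $K^{-1}\big\|\sum_{n\in\mathbb{S}}c_n\tau_n\big\|\le\big\|\sum_{n\in\mathbb{S}}\varepsilon_nc_n\tau_n\big\|\le K\big\|\sum_{n\in\mathbb{S}}c_n\tau_n\big\|$ for every sign pattern $\varepsilon\in\{-1,+1\}^{\mathbb{S}}$ (the complex case reduces to real signs by the standard comparison of unconditional constants). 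Averaging the identity $\big\|\sum_{n\in\mathbb{S}}\varepsilon_nc_n\tau_n\big\|^2=\sum_{n,m\in\mathbb{S}}\varepsilon_n\varepsilon_mc_n\overline{c_m}\langle\tau_n,\tau_m\rangle$ over all $2^{|\mathbb{S}|}$ sign patterns annihilates the off-diagonal terms and leaves $\sum_{n\in\mathbb{S}}|c_n|^2\|\tau_n\|^2$; combining with the previous estimate gives $K^{-2}\sum_{n\in\mathbb{S}}|c_n|^2\|\tau_n\|^2\le\big\|\sum_{n\in\mathbb{S}}c_n\tau_n\big\|^2\le K^2\sum_{n\in\mathbb{S}}|c_n|^2\|\tau_n\|^2$. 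Finally (ii) lets me replace $\|\tau_n\|^2$ by $m^2\coloneqq(\inf_n\|\tau_n\|)^2>0$ and $M^2\coloneqq(\sup_n\|\tau_n\|)^2<\infty$, producing \ref{USEFULRESULT} with $a=m^2/K^2$ and $b=K^2M^2$; Theorem~\ref{RIESZBASISTHM} then identifies $\{\tau_n\}_n$ as a Riesz basis.

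The one nontrivial step is the extraction of a single finite unconditional constant $K$ valid uniformly over all finite sets and all sign patterns --- this is where completeness of $\mathcal{H}$ and the uniform boundedness principle are genuinely used, and it is also the only delicate point, since everything after it is the elementary orthogonality computation above. A shorter but less self-contained route would be to observe that (i) and (ii) together say precisely that $\{\tau_n\}_n$ is a bounded unconditional Schauder basis and to invoke Theorem~\ref{RIESZBASISTHM}(iv) directly; I would prefer to spell out the sign-averaging argument so that the equivalence stands on its own.
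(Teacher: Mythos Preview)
Your proof is correct; the paper itself does not supply a proof of this theorem --- it is stated as a known result with a reference to Gohberg--Krein --- so there is no proof in the paper to compare against. Your sign-averaging argument is the standard one and works as written, and the shortcut you mention at the end (invoking Theorem~\ref{RIESZBASISTHM}(iv) directly, since conditions (i) and (ii) together say exactly that $\{\tau_n\}_n$ is a bounded unconditional Schauder basis) is precisely the equivalence already recorded in the paper, so either route is legitimate.
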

\begin{remark}
 Since the collection $\{e_1\}\cup\{e_n\}_{n=1}^\infty$ in Remark \ref{COUNTERRIESZ}  is not a Schauder basis for $\ell^2(\mathbb{N})$,  using Theorem \ref{LORCH}, we again conclude that it  is not a  Riesz basis. However, the collection $\{e_1\}\cup\{e_n\}_{n=1}^\infty$ satisfies conditions (ii) and (iii) in Theorem \ref{LORCH}.
\end{remark}

It is clear that whenever we perturb an orthonormal basis, we may not get an orthonormal basis. However, it is a classical theorem of Paley and Wiener which says whenever we perturb an orthonormal basis, we get a Riesz basis.
\begin{theorem}(\cite{PALEYWIENER}, cf. \citealp{YOUNG}) \label{PALEYWIENERTHEOREM} (\textbf{Paley-Wiener theorem}) 
Let $  \{\tau_n\}_{n}$ be an orthonormal basis  for  $\mathcal{H} $. If  $ \{\omega_n\}_{n}$  in $\mathcal{H} $ is such that  there exists $ 0< \alpha <1 $  and for every $m=1,2,\dots, $
\begin{align*}
\left\|\sum\limits_{n=1}^mc_n(\tau_n-\omega_n) \right\|\leq \alpha \left(\sum\limits_{n=1}^m|c_n|^2\right)^\frac{1}{2},\quad \forall c_n \in \mathbb{K}, 
\end{align*}
then $ \{\omega_n\}_{n}$ is  a Riesz basis for $\mathcal{H} $.
\end{theorem}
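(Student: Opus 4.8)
The plan is to realize the assignment $\tau_n\mapsto\omega_n$ as a bounded invertible operator on $\mathcal{H}$ and then quote Definition \ref{RIESZBASISDEFINITION} verbatim. First I would work on the dense subspace $\mathcal{D}\coloneqq\operatorname{span}\{\tau_n\}_n$, which is dense in $\mathcal{H}$ by the equivalence in Theorem \ref{CHARORTHONORMALINTRO}. Define a linear map $U:\mathcal{D}\to\mathcal{H}$ by
\[
U\left(\sum_{n=1}^m c_n\tau_n\right)\coloneqq\sum_{n=1}^m c_n(\tau_n-\omega_n);
\]
this is well defined since $\{\tau_n\}_n$ is a basis, so the coefficients $c_n$ attached to a given element of $\mathcal{D}$ are unique. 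Because $\{\tau_n\}_n$ is orthonormal, $\big\|\sum_{n=1}^m c_n\tau_n\big\|^2=\sum_{n=1}^m|c_n|^2$, so the hypothesis says exactly that $\|Ux\|\le\alpha\|x\|$ for all $x\in\mathcal{D}$. Hence $U$ is bounded on $\mathcal{D}$ with $\|U\|\le\alpha$, and it extends uniquely to a bounded operator on all of $\mathcal{H}$, still called $U$, with $\|U\|\le\alpha<1$.

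Next I would put $T\coloneqq I_{\mathcal{H}}-U$. By construction $T\tau_n=\tau_n-(\tau_n-\omega_n)=\omega_n$ for every $n$. Since $\|U\|\le\alpha<1$, the Neumann series $\sum_{k=0}^\infty U^k$ converges in operator norm to a bounded operator $S$ with $ST=TS=I_{\mathcal{H}}$, so $T$ is invertible, i.e.\ $T\in\mathcal{B}(\mathcal{H})$ is invertible. Therefore $\{\omega_n\}_n=\{T\tau_n\}_n$ is the image of the orthonormal basis $\{\tau_n\}_n$ under an invertible bounded operator, which is precisely the requirement in Definition \ref{RIESZBASISDEFINITION} for $\{\omega_n\}_n$ to be a Riesz basis for $\mathcal{H}$.

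The only step demanding genuine care — the "main obstacle," modest as it is — is the passage from the family of per-$m$ inequalities in the hypothesis to a bona fide operator-norm bound for $U$ on the whole dense subspace $\mathcal{D}$: one must note that every element of $\mathcal{D}$ is a \emph{finite} combination $\sum_{n=1}^m c_n\tau_n$ for some $m$, so the stated estimates already cover all of $\mathcal{D}$, and then invoke orthonormality to turn $\alpha(\sum_{n=1}^m|c_n|^2)^{1/2}$ into $\alpha\|x\|$ so that the bounded extension is legitimate. Once $\|U\|<1$ is in hand, the invertibility of $T=I-U$ via the Neumann series and the final appeal to the definition are routine.
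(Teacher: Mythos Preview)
Your proof is correct and is essentially the classical Neumann-series argument for this result. Note, however, that the paper does not actually supply its own proof of Theorem \ref{PALEYWIENERTHEOREM}: it is stated as a background result with citation to \cite{PALEYWIENER} and \cite{YOUNG}, so there is no in-paper proof to compare against --- your argument matches the standard one found in those references.
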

Next level of generalization of Riesz basis for Hilbert spaces is the notion of frame.
\begin{definition}(\cite{DUFFIN1})\label{OLE}
A collection $  \{\tau_n\}_{n}$ in  a Hilbert space $ \mathcal{H}$ is said to be a \textbf{frame} for $\mathcal{H}$ if there exist $ a, b >0$ such that
\begin{equation}\label{SEQUENTIALEQUATION1}
\text{ (\textbf{Frame inequalities}) } \quad a\|h\|^2\leq\sum_{n=1}^\infty|\langle h, \tau_n \rangle|^2 \leq b\|h\|^2  ,\quad \forall h \in \mathcal{H}.
\end{equation}
Constants $ a$ and $ b$ are called as \textbf{lower frame bound} and \textbf{upper frame bound}, respectively. Supremum (resp. infimum) of the set of all lower (resp. upper) frame bounds is called \textbf{optimal lower frame bound} (resp. \textbf{optimal upper frame bound}). If the optimal frame bounds are equal, then the frame is called as  \textbf{tight frame}. A tight frame whose optimal frame bound is one is termed as  \textbf{Parseval frame}.
\end{definition}
As recorded by \cite{CHEBIRA}, and \cite{HEIL},  the reason for using the term ``frame" is unknown.

Note that in Definition \ref{OLE} we indexed the frame by natural numbers. Since the convergence of series in Definition \ref{OLE} is unconditional, any rearrangement of a frame is again a frame. We also note that Definition \ref{OLE} can be formulated for arbitrary indexing set $\mathbb{J}$. In this case, by the convergence of the series we mean the convergence of the net obtained by the set inclusion, on the collections of all finite subsets of $\mathbb{J}$.

From (ii) in Theorem \ref{RIESZISAFRAME} we can conclude that every Riesz basis is a frame. On the other hand, every frame can be written as a finite union of Riesz sequences (which is called as \textbf{Feichtinger conjecture}  (\cite{CASAZZACHRISTENSENLINDNERVERSHYNIN})) and is known as \textbf{Marcus-Spielman-Srivastava Theorem} (cf. \cite{CASAZZATREMAIN2016, CASAZZAEDIDIN}). We now give various examples of frames.
\begin{example}\label{FRAMEEXAMPLES}
\begin{enumerate}[label=(\roman*)] 
\item Let $\{e_n\}_{n=1}^\infty$ be an orthonormal basis for $\mathcal{H}$ and  let  $m \in \mathbb{N}$. Then the collection $\{e_1, \dots, e_m\}\cup\{e_n\}_{n=1}^\infty$ is a frame for $\mathcal{H}$ with bounds 1 and 2. In fact, for all $h \in 
\mathcal{H}$, 
\begin{align*}
1 \cdot\|h\|^2=\sum_{n=1}^\infty|\langle h, e_n \rangle|^2\leq\sum_{k=1}^m|\langle h, e_k \rangle|^2 + \sum_{n=1}^\infty|\langle h, e_n \rangle|^2 \leq 2\sum_{n=1}^\infty|\langle h, e_n \rangle|^2=2\|h\|^2.  
\end{align*}
In particular, the collection in Remark \ref{COUNTERRIESZ} is a frame for $\ell^2(\mathbb{N})$. 
\item (cf. \cite{CHRISTENSEN}) (\textbf{Harmonic frame}) Let $m, n \in \mathbb{N}$, $n\leq m$ and $\omega_1,\dots, \omega_m$ be the distinct $m^\text{th}$ roots of unity. Define 
\begin{align*}
\eta_k\coloneqq\frac{1}{\sqrt{m}}(\omega_1^k, \dots, \omega_n^k), \quad  1\leq k \leq m.
\end{align*}
Then $\{\eta_k\}_{k=1}^m$ is a Parseval frame for $\mathbb{C}^n$.
\item (cf. \cite{CHEBIRA}, \cite{SHOR}) (\textbf{Mercedes-Benz frame} or \textbf{Peres-Wooters states})  $\{(0,1), (-\sqrt{3}/2, -1/2),(\sqrt{3}/2, -1/2) \}$ is a tight frame for $\mathbb{R}^2$ with bound $3/2$.
\item (cf. \cite{HANKORNELSONLARSON}) For $n\geq3$, the collection $\{(\cos(2 \pi j/n), \sin (2 \pi j/n)\}_{j=0}^{n-1}$ is a tight frame for $\mathbb{R}^2$. It has to be noted that we can not take $n=2$ because $\{(1,0), (-1,0)\}$ is not a frame for $\mathbb{R}^2$.
\item (cf. \cite{CHRISTENSEN}) (\textbf{Gabor frame} or \textbf{Weyl-Heisenberg frame}) Let $g$ be the Gaussian defined by $g:\mathbb{R}\ni x \mapsto g(x)\coloneqq e^{-x^2} \in \mathbb{R}$ and let $a,b>0$. Define $f_{n,m}:\mathbb{R}\ni x \mapsto e^{2\pi imbx}g(x-na)\in \mathbb{R},$ $\forall n, m\in \mathbb{Z}$. Then 
$  \{f_{n,m}\}_{n,m \in \mathbb{Z}}$ is a (Gabor) frame for $\mathcal{L}^2(\mathbb{R})$ if and only if $ab<1$. Moreover, if $  \{f_{n,m}\}_{n,m \in \mathbb{Z}}$ is a frame for $\mathcal{L}^2(\mathbb{R})$, then $ab=1$ if and only if $  \{f_{n,m}\}_{n,m \in \mathbb{Z}}$ is a Riesz basis for $\mathcal{L}^2(\mathbb{R})$.
\item (\cite{DUFFIN1}) Let $ \{\lambda_n\}_{n\in \mathbb{Z}}$ be a sequence of scalars  such that there are constants $d, L,\delta>0$ satisfying
\begin{align*}
\left|\lambda_n-\frac{n}{d}\right|\leq L,\quad \forall n\in \mathbb{Z}\quad  \text{ and } \quad |\lambda_n-\lambda_m|\geq \delta,\quad \forall n, m\in \mathbb{Z}, n\neq m.
\end{align*}
Let $0<r<d \pi$ and define $f_n:\left(-r, r \right) \ni x \mapsto e^{i\lambda_nx} \in \mathbb{C}$, $\forall n\in \mathbb{Z}$. Then 
$  \{f_n\}_{n\in \mathbb{Z}}$ is a frame for $\mathcal{L}^2(-r, r)$.
\item  (cf. \cite{FEICHTINGERSTROHMERBOOK2}) For $a, b  >0$, define  
\begin{align*}
	T_a:\mathcal{L}^2(\mathbb{R}) \ni f \mapsto T_af \in  \mathcal{L}^2(\mathbb{R}), \quad T_af:\mathbb{R} \mapsto (T_af) (x)\coloneqq f(x-a) \in \mathbb{C}
\end{align*}
and 
\begin{align*}
	E_b:\mathcal{L}^2(\mathbb{R}) \ni f \mapsto E_bf \in  \mathcal{L}^2(\mathbb{R}), \quad E_bf:\mathbb{R} \mapsto (E_bf) (x)\coloneqq e^{2 \pi i bx} f(x)\in \mathbb{C}.
\end{align*}
For $c>0$,  let $ \chi_{[0,c]}$ be the characteristic function on $[0,c]$. Then for $a\leq c \leq 1$, $  \{E_mT_{na} \chi_{[0,c]}\}_{n,m \in \mathbb{Z}}$ is a (Gabor)  frame for $\mathcal{L}^2(\mathbb{R})$   (this is a particular case of the celebrated $abc$-problem for Gabor systems (\cite{DAISUN})).
\item (\cite{JANSSENSTROHMER2002}) Let $T_a$ and $E_b$ be the operators in (vii). Let $g(x)\coloneqq \cosh (\pi x) =\frac{2}{e^{\pi x}+e^{-\pi x}}$, $\forall x \in \mathbb{R}$. Then,  for $ab<1$, $  \{E_{mb}T_{na}g\}_{n,m \in \mathbb{Z}}$ is a (Gabor)  frame for $\mathcal{L}^2(\mathbb{R})$.
\item  (\cite{JANSSEN2003}) Let $T_a$ and $E_b$ be the operators in (vii). Let $g(x)\coloneqq e^{-|x|}$, $\forall x \in \mathbb{R}$. Then,  for $ab<1$, $  \{E_{mb}T_{na}g\}_{n,m \in \mathbb{Z}}$ is a (Gabor)  frame for $\mathcal{L}^2(\mathbb{R})$.
\item (\cite{JANSSEN1996}) Let $T_a$ and $E_b$ be the operators in (vii). Let $g(x)\coloneqq e^{-x} \chi_{[0,\infty)}(x)$, $\forall x \in \mathbb{R}$. Then  $  \{E_{mb}T_{na}g\}_{n,m \in \mathbb{Z}}$ is a (Gabor)  frame for $\mathcal{L}^2(\mathbb{R})$ if and only if $ab\leq 1$. 
\item (\cite{CASSAZAEVERYSUM}) (\textbf{Kalton-Casazza theorem})
Every frame is a  sum of three orthonormal bases.
\item (cf. \cite{CHRISTENSEN}) (\textbf{Wavelet frame}) Let $0<b<0.0084$. Define the \textbf{Mexican hat} function 
\begin{align*}
	\psi(x)\coloneqq \frac{2}{\sqrt{3}}\pi^{\frac{-1}{4}}(1-x^2)e^{\frac{-x^2}{2}}, \quad \forall x \in \mathbb{R}.
\end{align*}
For $j,k \in \mathbb{Z}$, define 
\begin{align*}
	\psi_{j,k}(x)\coloneqq \psi(2^jx-kb), \quad \forall x \in \mathbb{R}.
\end{align*}
Then $\{2^\frac{j}{2}\psi_{j,k}\}_{j,k\in \mathbb{Z}}$ is a (wavelet)  frame for $\mathcal{L}^2(\mathbb{R})$.
\item (\cite{STROHMERHEATHCODING, BENEDETTOKOLESAR})  (\textbf{Grassmannian frame})  The $n$-equally spaced lines in $\mathbb{R}^2$, namely  $\{(\cos( \pi j/n),  \sin ( \pi j/n)\}_{j=0}^{n-1}$ is a (Grassmannian) frame for $\mathbb{R}^2$.
\item (\cite{BENEDETTOFICKUS}) (\textbf{Group frame}) Vertices of each of (five) \textbf{Platonic solids} is a tight frame for $\mathbb{R}^3$.
\item (cf. \cite{WALDRONBOOK}) (\textbf{Equiangular  frame}) For each $d\in \mathbb{N}$,  $d+1$ vertices of the regular simplex in $\mathbb{R}^d$ is an (equiangular) frame for $\mathbb{R}^d$.
\end{enumerate}	
\end{example}
We now give various examples which are  not frames.
\begin{example}
\begin{enumerate}[label=(\roman*)] 
\item (cf. \cite{CHRISTENSEN}) If $\{\tau_n\}_{n=1}^\infty$ is an orthonormal basis for $\mathcal{H}$, then $\{\tau_n+\tau_{n+1}\}_{n=1}^\infty$ is  not a frame for $\mathcal{H}$.
\item (cf. \cite{BACHMANNARICIBECKENSTEIN}) If $\{\tau_n\}_{n=1}^\infty$ is an orthonormal basis for $\mathcal{H}$, then $\{\frac{\tau_n}{n}\}_{n=1}^\infty$ is  not a frame for $\mathcal{H}$.
\item (cf. \cite{CHRISTENSEN}) If $\{\tau_n\}_{n=-\infty}^\infty$ is a Riesz   basis for $\mathcal{H}$, then $\{\tau_n+\tau_{n+1}\}_{n=-\infty}^\infty$ is  not a frame for $\mathcal{H}$.
\item (cf. \cite{CHRISTENSEN}) For $n\in \mathbb{Z}$, define $T_n:\mathcal{L}^2(\mathbb{R}) \ni f \mapsto  T_nf \in \mathcal{L}^2(\mathbb{R})$, $ T_nf:\mathbb{R} \ni x \mapsto f(x-n) \in \mathbb{C}$. Then for any $\phi \in \mathcal{L}^2(\mathbb{R})$, $  \{T_n\phi\}_{n \in \mathbb{Z}}$ is not a frame for $ \mathcal{L}^2(\mathbb{R})$.
\item   (\cite{CHRISTENSENHASANNASABRASHIDI, ALDROUBI2017})  Let $\mathcal{H} $ be an infinite dimensional Hilbert space and $T: \mathcal{H}\to \mathcal{H}$ be a bounded linear operator which is unitary or compact. Then for every $\tau \in \mathcal{H}$,  $\{T^n\tau\}_{n=0}^\infty$ is not a frame for $\mathcal{H}$ (this is a particular case of dynamical sampling (\cite{ALDROUBICABRELLIMOLTERTANG, ALDROUBICABRELLICAKMAK})).
\item (cf. \cite{FEICHTINGERSTROHMERBOOK2}) Let $T_a$ and $E_b$ be the operators in (vii) of Example \ref{FRAMEEXAMPLES}. For $c>0$,  let $ \chi_{[0,c]}$ be the characteristic function on $[0,c]$. Then for $c\leq a $ or $a>1$,  $  \{E_mT_{na} \chi_{[0,c]}\}_{n,m \in \mathbb{Z}}$ is not a   frame for $\mathcal{L}^2(\mathbb{R})$.
\end{enumerate}
\end{example}
There is a simple criterion to check for frames in finite dimensional Hilbert spaces. This reads as follows.
\begin{theorem}(cf. \cite{HANKORNELSONLARSON})\label{FINITEDIMESIONALCHARAC}
A finite set of vectors for a finite dimensional Hilbert space is a frame if and only if it spans the space.	
\end{theorem}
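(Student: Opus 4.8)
The plan is to prove both implications directly, with finite-dimensionality entering only through compactness of the unit sphere. Let $\mathcal{H}$ be a finite dimensional Hilbert space and $\{\tau_n\}_{n=1}^N$ a finite collection in $\mathcal{H}$.

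First I would dispatch the easy direction. If $\{\tau_n\}_{n=1}^N$ is a frame with lower bound $a>0$ but fails to span $\mathcal{H}$, then $\overline{\operatorname{span}}\{\tau_n\}_{n=1}^N \neq \mathcal{H}$, so by the orthogonal complement theorem there is a nonzero $h$ with $\langle h, \tau_n\rangle = 0$ for every $n$; the lower frame inequality then gives $0 < a\|h\|^2 \leq \sum_{n=1}^N |\langle h,\tau_n\rangle|^2 = 0$, a contradiction. Hence a frame must span.

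For the converse, suppose $\{\tau_n\}_{n=1}^N$ spans $\mathcal{H}$. The upper bound is automatic and requires nothing special: by the Cauchy--Schwarz inequality, $\sum_{n=1}^N |\langle h,\tau_n\rangle|^2 \leq \big(\sum_{n=1}^N \|\tau_n\|^2\big)\|h\|^2$ for all $h \in \mathcal{H}$, so $b \coloneqq \sum_{n=1}^N \|\tau_n\|^2$ is an upper frame bound (this already shows every finite collection is a Bessel sequence). For the lower bound, define $\phi:\mathcal{H}\to[0,\infty)$ by $\phi(h) \coloneqq \sum_{n=1}^N|\langle h,\tau_n\rangle|^2$. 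This map is continuous, and the unit sphere $S \coloneqq \{h\in\mathcal{H}: \|h\|=1\}$ is compact because $\mathcal{H}$ is finite dimensional, so $\phi$ attains its infimum over $S$ at some $h_0\in S$; set $a \coloneqq \phi(h_0)$. If $a=0$ then $\langle h_0,\tau_n\rangle=0$ for every $n$, so $h_0$ is orthogonal to $\operatorname{span}\{\tau_n\}_{n=1}^N = \mathcal{H}$, forcing $h_0=0$ and contradicting $\|h_0\|=1$; therefore $a>0$. Since $\phi$ is homogeneous of degree $2$, for any $h\neq 0$ we get $\phi(h)/\|h\|^2 = \phi(h/\|h\|)\geq a$, i.e. $a\|h\|^2 \leq \phi(h)$, and this holds trivially for $h=0$ as well. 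Combining, $a\|h\|^2 \leq \sum_{n=1}^N|\langle h,\tau_n\rangle|^2 \leq b\|h\|^2$ for all $h\in\mathcal{H}$, so $\{\tau_n\}_{n=1}^N$ is a frame.

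The only real content is the compactness argument producing a strictly positive lower bound; everything else is bookkeeping. To the extent there is a subtlety, it is recognizing that finiteness of the index set handles the upper inequality for free, whereas finite dimensionality of $\mathcal{H}$ is exactly what makes the infimum over $S$ positive rather than merely nonnegative. An alternative route for the lower bound would be to observe that the analysis operator $h\mapsto(\langle h,\tau_n\rangle)_{n=1}^N$ is injective precisely when the $\tau_n$ span (by the same orthogonality argument), and an injective linear map between finite dimensional spaces is bounded below; but the sphere-compactness argument is the most self-contained, so that is the one I would write up.
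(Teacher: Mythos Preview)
Your proof is correct. The paper does not supply its own proof of this statement; it is quoted as a known result with a citation to \cite{HANKORNELSONLARSON}, so there is nothing to compare your argument against beyond noting that your compactness-of-the-sphere approach is the standard one and works without issue.
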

\begin{remark}
\begin{enumerate}[label=(\roman*)]
\item Theorem \ref{FINITEDIMESIONALCHARAC} gives a very useful algebraic criterion for checking whether a finite set of vectors is a frame for a finite dimensional space rather verifying the analytic condition  (frame inequality) which is harder in many cases.
\item Theorem \ref{FINITEDIMESIONALCHARAC} does not tell that a frame for a finite dimensional Hilbert space is finite. A finite dimensional Hilbert space can have a frame with infinitely many elements. For example, $\{\frac{1}{n}\}_{n=1}^\infty$ is a tight frame for $\mathbb{C}$ (as a vector space over itself), because
\begin{align*}
\sum_{n=1}^\infty\left|\left\langle h, \frac{1}{n} \right\rangle\right|^2=\sum_{n=1}^\infty\left|\frac{h}{n}\right|^2=\frac{\pi^2}{6}|h|^2,\quad \forall h \in \mathbb{C}.
\end{align*}
\item Suppose $\operatorname{dim}(\mathcal{H})=n$. From Theorem \ref{FINITEDIMESIONALCHARAC} we see that a spanning set having at least $n+1$ elements is a frame for $\mathcal{H}$ but not a Riesz basis for $\mathcal{H}$.
\item A spanning set need not be a frame. For instance, $\{n\}_{n=1}^\infty$ spans $\mathbb{C}$ but $\sum_{n=1}^\infty|\left\langle 1, n \right\rangle|^2$ $=\sum_{n=1}^\infty n^2=\infty.$ Hence $\{n\}_{n=1}^\infty$ is not a frame for  $\mathbb{C}$.
\end{enumerate}	
\end{remark}
Following theorem is the most  important result in the theory of frames.
\begin{theorem}(\cite{DUFFIN1}, \cite{CHRISTENSEN}, \cite{HANLARSON})\label{MOSTIMPORTANT}
Let $  \{\tau_n\}_{n}$  be a frame for $\mathcal{H}$ with bounds $a$ and $b$. Then	
\begin{enumerate}[label=(\roman*)]
\item $\overline{\operatorname{span}} \{\tau_n\}_{n}=\mathcal{H}$.
\item The map $\theta_\tau:\mathcal{H}\ni h \mapsto \theta_\tau h \coloneqq \{\langle h, \tau_n\rangle\}_{n}\in \ell^2(\mathbb{N}) $ is a well-defined bounded linear operator. Further, $\sqrt{a}\|h\|\leq \|\theta_\tau h\|\leq \sqrt{b}\|h\|, \forall h \in \mathcal{H}$. In particular, $\theta_\tau$ is injective and its range is closed.
\item The map $ S_\tau:\mathcal{H} \ni h \mapsto S_\tau h\coloneqq \sum_{n=1}^\infty\langle h, \tau_n\rangle \tau_n \in \mathcal{H}$ is a  well-defined bounded linear positive invertible operator. Further, 
\begin{align*}
a\|h\|^2\leq \langle S_\tau h, h\rangle\leq b\|h\|^2, \quad \forall h \in \mathcal{H}, \quad  \quad a\|h\|\leq \|S_\tau h\|\leq b\|h\|, \quad \forall h \in \mathcal{H}.
\end{align*}
\item (\textbf{General Fourier expansion} or \textbf{frame decomposition})
\begin{align}\label{GFE}
h=\sum_{n=1}^\infty\langle h, \tau_n\rangle S_\tau^{-1}\tau_n=\sum_{n=1}^\infty\langle h, S_\tau^{-1}\tau_n\rangle \tau_n, \quad \forall h \in \mathcal{H}.
\end{align} 
\item $\theta_\tau^* ( \{a_n\}_{n})=\sum_{n}a_n \tau_n, \forall \{a_n\}_{n} \in \ell^2(\mathbb{N})$. In particular, $\theta_\tau^*e_n=\tau_n,  \forall n \in \mathbb{N}$.
\item  $ S_\tau$ factors as $S_\tau=\theta_\tau^*\theta_\tau$.
\item $\theta_\tau^*$ is surjective.
\item $\|S_\tau^{-1}\|^{-1}$ is the optimal lower frame bound and  $\|S_\tau\|=\|\theta_\tau\|^2$ is the optimal upper frame bound.
\item $P_\tau\coloneqq\theta_\tau S_\tau^{-1}\theta_\tau^*$ is an orthogonal  projection onto $\theta_\tau(\mathcal{H}).$
\item $  \{\tau_n\}_{n}$ is Parseval if and only if $\theta_\tau$ is an isometry if and only if $\theta_\tau\theta_\tau^*$ is a projection.
\item $ \{S_\tau^{-1}\tau_n\}_{n}$  is a frame for $\mathcal{H}$ with bounds $b^{-1}$ and $a^{-1}$.
\item $ \{S_\tau^{-1/2}\tau_n\}_{n}$  is a Parseval frame for $\mathcal{H}$. 
\item (\textbf{Best approximation}) If $ h \in \mathcal{H}$ has representation  $ h=\sum_{n=1}^\infty c_n\tau_n,$ for some scalar sequence  $ \{c_n\}_{n}\in \ell^2(\mathbb{N})$,  then 
$$ \sum\limits_{n=1}^\infty |c_n|^2 =\sum\limits_{n=1}^\infty |\langle h, S_\tau^{-1}\tau_n\rangle|^2+\sum\limits_{n=1}^\infty | c_n-\langle h, S_\tau^{-1}\tau_n\rangle|^2. $$
\end{enumerate}	
\end{theorem}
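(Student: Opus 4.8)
The plan is to construct everything from two operators — the analysis operator $\theta_\tau$ and the frame operator $S_\tau=\theta_\tau^*\theta_\tau$ — and then to read off the assertions essentially in the order in which they are stated, since each one feeds the next.

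First I would verify (ii). The upper inequality in Eq.~(\ref{SEQUENTIALEQUATION1}) says precisely that $\{\langle h,\tau_n\rangle\}_n\in\ell^2(\mathbb{N})$ with $\|\theta_\tau h\|^2=\sum_{n=1}^\infty|\langle h,\tau_n\rangle|^2\le b\|h\|^2$, so $\theta_\tau$ is a well-defined bounded linear map with $\|\theta_\tau\|\le\sqrt b$, while the lower inequality gives $\sqrt a\,\|h\|\le\|\theta_\tau h\|$, hence $\theta_\tau$ is injective and, being bounded below, has closed range. Standard Hilbert-space duality (a bounded operator has closed range iff its adjoint does, and $\operatorname{ran}(\theta_\tau^*)=\ker(\theta_\tau)^\perp$ once $\operatorname{ran}(\theta_\tau)$ is closed) then gives that $\theta_\tau^*$ is surjective, which is (vii); alternatively, once (iii) is available, $h=\theta_\tau^*(\theta_\tau S_\tau^{-1}h)$ for every $h$. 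For the formula in (v) I would compute, for $\{a_n\}_n\in\ell^2(\mathbb{N})$ and $h\in\mathcal{H}$, that $\langle\theta_\tau^*\{a_n\}_n,h\rangle=\langle\{a_n\}_n,\theta_\tau h\rangle=\sum_{n=1}^\infty a_n\overline{\langle h,\tau_n\rangle}=\langle\sum_{n=1}^\infty a_n\tau_n,h\rangle$, where convergence of $\sum_n a_n\tau_n$ is guaranteed by boundedness of $\theta_\tau^*$, not assumed; taking $\{a_n\}_n=e_k$ yields $\theta_\tau^*e_k=\tau_k$. Assertion (i) is then immediate: if $h\perp\tau_n$ for every $n$ then $\theta_\tau h=0$, so $h=0$.

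Next I would define $S_\tau$ by $S_\tau h=\sum_{n=1}^\infty\langle h,\tau_n\rangle\tau_n$ as in the statement and observe $S_\tau=\theta_\tau^*\theta_\tau$ (since $\theta_\tau h=\{\langle h,\tau_n\rangle\}_n$ and $\theta_\tau^*$ is the sum operator of (v)), which both establishes that $S_\tau$ is a well-defined bounded operator and proves (vi). From $\langle S_\tau h,h\rangle=\|\theta_\tau h\|^2=\sum_{n=1}^\infty|\langle h,\tau_n\rangle|^2$ we get $S_\tau=S_\tau^*\ge0$ and, via Eq.~(\ref{SEQUENTIALEQUATION1}), $aI\le S_\tau\le bI$; in particular $S_\tau\ge aI>0$ is invertible and the norm estimates on $S_\tau$ follow from the spectral theorem for positive operators, proving (iii). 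For (iv) I would write $h=S_\tau^{-1}S_\tau h=\sum_{n=1}^\infty\langle h,\tau_n\rangle S_\tau^{-1}\tau_n$ and $h=S_\tau S_\tau^{-1}h=\sum_{n=1}^\infty\langle S_\tau^{-1}h,\tau_n\rangle\tau_n=\sum_{n=1}^\infty\langle h,S_\tau^{-1}\tau_n\rangle\tau_n$, using self-adjointness of $S_\tau^{-1}$. Assertion (viii) follows since for a positive operator $\|S_\tau\|=\sup_{\|h\|=1}\langle S_\tau h,h\rangle$ equals the optimal upper bound, $\|S_\tau^{-1}\|^{-1}$ is the smallest point of the spectrum of $S_\tau$, i.e.\ the optimal lower bound, and $\|S_\tau\|=\|\theta_\tau^*\theta_\tau\|=\|\theta_\tau\|^2$; (ix) is checked directly by $P_\tau^2=\theta_\tau S_\tau^{-1}(\theta_\tau^*\theta_\tau)S_\tau^{-1}\theta_\tau^*=\theta_\tau S_\tau^{-1}\theta_\tau^*=P_\tau$, $P_\tau^*=P_\tau$, with $\operatorname{ran}P_\tau=\theta_\tau(\mathcal{H})$ because $\theta_\tau^*$ is injective on $\theta_\tau(\mathcal{H})$; and (x) is the chain: $\{\tau_n\}_n$ Parseval $\iff\|\theta_\tau h\|=\|h\|$ for all $h$ $\iff\theta_\tau$ is an isometry $\iff S_\tau=\theta_\tau^*\theta_\tau=I$, whence $\theta_\tau\theta_\tau^*=P_\tau$ is a projection (and the converse).

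Finally, (xi)--(xiii) come from the same calculus. The frame operator of $\{S_\tau^{-1}\tau_n\}_n$ is $h\mapsto\sum_{n=1}^\infty\langle h,S_\tau^{-1}\tau_n\rangle S_\tau^{-1}\tau_n=S_\tau^{-1}h$, so $\sum_{n=1}^\infty|\langle h,S_\tau^{-1}\tau_n\rangle|^2=\langle S_\tau^{-1}h,h\rangle$ lies between $b^{-1}\|h\|^2$ and $a^{-1}\|h\|^2$, giving (xi); likewise the frame operator of $\{S_\tau^{-1/2}\tau_n\}_n$ is $S_\tau^{-1/2}S_\tau S_\tau^{-1/2}=I$, giving (xii). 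For the best-approximation identity (xiii), given $h=\sum_{n=1}^\infty c_n\tau_n=\theta_\tau^*\{c_n\}_n$ with $\{c_n\}_n\in\ell^2(\mathbb{N})$, I would note that $\theta_\tau^*\{c_n\}_n=h=S_\tau S_\tau^{-1}h=\theta_\tau^*\theta_\tau S_\tau^{-1}h=\theta_\tau^*\{\langle h,S_\tau^{-1}\tau_n\rangle\}_n$, so $\{c_n-\langle h,S_\tau^{-1}\tau_n\rangle\}_n\in\ker\theta_\tau^*=\operatorname{ran}(\theta_\tau)^\perp$, whereas $\{\langle h,S_\tau^{-1}\tau_n\rangle\}_n=\theta_\tau(S_\tau^{-1}h)\in\operatorname{ran}\theta_\tau$; the Pythagorean theorem in $\ell^2(\mathbb{N})$ applied to $\{c_n\}_n=\{\langle h,S_\tau^{-1}\tau_n\rangle\}_n+\{c_n-\langle h,S_\tau^{-1}\tau_n\rangle\}_n$ then produces the stated decomposition of $\sum_{n=1}^\infty|c_n|^2$. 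I do not anticipate a deep obstacle: every step is either a direct unravelling of the frame inequalities or a textbook fact about bounded operators and the spectral theorem. The one point that deserves care is the circle of facts ``bounded below $\Rightarrow$ closed range $\Rightarrow$ $\theta_\tau^*$ surjective $\Rightarrow$ $\ker\theta_\tau^*=\operatorname{ran}(\theta_\tau)^\perp$'', since (v), (vii), (ix) and especially the orthogonality in (xiii) all hinge on it, together with the habit of justifying convergence of series such as $\sum_n a_n\tau_n$ through boundedness of $\theta_\tau^*$ rather than taking it for granted.
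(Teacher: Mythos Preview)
Your proof is correct and follows the standard textbook route (construct $\theta_\tau$, set $S_\tau=\theta_\tau^*\theta_\tau$, read off $aI\le S_\tau\le bI$, and deduce everything else); the paper itself states this theorem without proof, citing it as background from Duffin--Schaeffer, Christensen, and Han--Larson, so there is nothing to compare against. One cosmetic point: in (ix) your justification ``$\theta_\tau^*$ is injective on $\theta_\tau(\mathcal{H})$'' is not quite the right reason for $\operatorname{ran}P_\tau=\theta_\tau(\mathcal{H})$; the clean argument is that $P_\tau\theta_\tau h=\theta_\tau S_\tau^{-1}S_\tau h=\theta_\tau h$, so $P_\tau$ fixes $\theta_\tau(\mathcal{H})$ pointwise.
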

Theorem \ref{MOSTIMPORTANT} says several things. First, it says that every vector in the Hilbert
space admits an expansion, called as general Fourier expansion, similar to
Fourier
expansion coming from an orthonormal basis for a Hilbert space. Second, it
says that coefficients in the expansion of a vector need not be unique. This is
particularly important in applications, since loss in the information of a
vector is less if some of the coefficients are missing. Third, given a frame,
it naturally generates other frames. Fourth, a frame gives a bounded linear
injective operator from the less known inner product on the Hilbert space $\mathcal{H}$ to
the well known standard inner product on the standard separable Hilbert space $\ell^2(\mathbb{N})$.
Frame inequality now clearly says that there is a comparison of norms between $\mathcal{H}$ and
the standard Hilbert space $\ell^2(\mathbb{N})$. Fifth, a frame  embeds $\mathcal{H}$ in $\ell^2(\mathbb{N})$
through the bounded linear  operator $\theta_\tau$. Sixth,  whenever a Hilbert 
space admits a frame it becomes an image of a surjective operator $\theta_\tau^*$ from
the $\ell^2(\mathbb{N})$ to it.
An easy observation from Theorem \ref{MOSTIMPORTANT} is that for an infinite dimensional Hilbert space, a finite collection of vectors can not  be a frame.

The operators $\theta_\tau$, $\theta_\tau^*$ and $S_\tau$ in Theorem \ref{MOSTIMPORTANT} are called as \textbf{analysis operator}, \textbf{synthesis  operator} and \textbf{frame operator}, respectively (cf. \cite{CHRISTENSEN}).

Dilation theory usually tries to extend  operator on Hilbert space to larger Hilbert space which are easier to handle as well as well-understood and study the original operator  as a slice of it (\cite{LEVYSHALIT, ARVESON, NAGY}). As long as frame theory for Hilbert spaces is considered,  following theorem is known as   Naimark-Han-Larson dilation theorem. This was proved independently by  \cite{HANLARSON} and by   \cite{KASHINKULIKOVA}.   History of  this theorem is nicely presented in the paper (\cite{CZAJA}).
 \begin{theorem}(\cite{HANLARSON, KASHINKULIKOVA}) \label{DILATIONTHEOREMHILBERTSPACE} (\textbf{Naimark-Han-Larson dilation theorem})
 A collection $  \{\tau_n\}_{n}$ in  $\mathcal{H}$ is a 
 \begin{enumerate}[label=(\roman*)]
 \item  frame for $\mathcal{H}$ if and only if the exist a Hilbert space   $\mathcal{H}_1 \supseteq \mathcal{H}$,  a Riesz basis $ \{\omega_n\}_{n}$ for  $\mathcal{H}_1 $ and a projection $P:\mathcal{H}_1 \to \mathcal{H}$ such that $\tau_n=P\omega_n, \forall n \in \mathbb{N}$.
 \item Parseval frame for $\mathcal{H}$ if and only if the exist a Hilbert space   $\mathcal{H}_1 \supseteq \mathcal{H}$,  an orthonormal  basis $ \{\omega_n\}_{n}$ for  $\mathcal{H}_1 $ and an orthogonal  projection $P:\mathcal{H}_1 \to \mathcal{H}$ such that $\tau_n=P\omega_n, \forall n \in \mathbb{N}$.
 \end{enumerate}	
 \end{theorem}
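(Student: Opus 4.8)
The plan is to prove part~(ii) first and then deduce part~(i) from it, using that a Riesz basis is by definition an invertible image of an orthonormal basis. In each part the ``if'' direction is a one-line computation: given $\mathcal{H}_1\supseteq\mathcal{H}$, a Riesz basis (resp.\ orthonormal basis) $\{\omega_n\}_n$ for $\mathcal{H}_1$, and the orthogonal projection $P$ of $\mathcal{H}_1$ onto $\mathcal{H}$ with $\tau_n=P\omega_n$, Theorem~\ref{RIESZISAFRAME}(ii) (resp.\ Theorem~\ref{CHARORTHONORMALINTRO}) makes $\{\omega_n\}_n$ a frame for $\mathcal{H}_1$ with bounds $a,b$ (resp.\ a Parseval frame); then, since $P^*=P$ and $Ph=h$ for all $h\in\mathcal{H}$, one has $\sum_n|\langle h,\tau_n\rangle|^2=\sum_n|\langle Ph,\omega_n\rangle|^2=\sum_n|\langle h,\omega_n\rangle|^2$, which lies in $[a\|h\|^2,b\|h\|^2]$ (resp.\ equals $\|h\|^2$); hence $\{\tau_n\}_n$ is a frame (resp.\ Parseval frame) for $\mathcal{H}$.

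For the ``only if'' direction of~(ii), let $\{\tau_n\}_n$ be a Parseval frame. The crucial point is that, by Theorem~\ref{MOSTIMPORTANT}(x), ``Parseval'' means exactly that the analysis operator $\theta_\tau:\mathcal{H}\to\ell^2(\mathbb{N})$ is an isometry, so $\mathcal{H}$ can be identified with the closed subspace $\theta_\tau(\mathcal{H})$ of $\ell^2(\mathbb{N})$, under which $\tau_n$ corresponds to $\theta_\tau\tau_n$. Then I would take $\mathcal{H}_1:=\ell^2(\mathbb{N})$, $\omega_n:=e_n$ (the standard orthonormal basis), and $P:=\theta_\tau\theta_\tau^*$, which by Theorem~\ref{MOSTIMPORTANT}(ix) (with $S_\tau=I$ for a Parseval frame) is the orthogonal projection of $\ell^2(\mathbb{N})$ onto $\theta_\tau(\mathcal{H})$; using $\theta_\tau^*e_n=\tau_n$ from Theorem~\ref{MOSTIMPORTANT}(v), this gives $P\omega_n=\theta_\tau\theta_\tau^*e_n=\theta_\tau\tau_n$, i.e.\ the image of $\tau_n$. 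To make $\mathcal{H}$ literally a subspace of $\mathcal{H}_1$ one may instead set $\mathcal{H}_1:=\mathcal{H}\oplus\ker\theta_\tau^*$, $\omega_n:=\tau_n\oplus Qe_n$ with $Q$ the orthogonal projection of $\ell^2(\mathbb{N})$ onto $\ker\theta_\tau^*$; decomposing a finitely supported sequence $\{x_n\}_n=u+v$ with $u\in\ker\theta_\tau^*$ and $v\in(\ker\theta_\tau^*)^\perp=\theta_\tau(\mathcal{H})$ and using the isometry gives $\|\sum_n x_n\omega_n\|^2=\|\theta_\tau^*v\|^2+\|u\|^2=\|v\|^2+\|u\|^2=\sum_n|x_n|^2$, so $\{\omega_n\}_n$ is orthonormal, and a short orthogonality argument (if $(h,u)\perp\omega_n$ for all $n$ then $\theta_\tau h+u=0$ in $\ell^2(\mathbb{N})$, whence $h=0$ and $u=0$) shows it is complete; the orthogonal projection onto $\mathcal{H}\oplus 0$ then returns $\tau_n$.

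For the ``only if'' direction of~(i), let $\{\tau_n\}_n$ be a frame with frame operator $S_\tau$. By Theorem~\ref{MOSTIMPORTANT}(xii), $\{S_\tau^{-1/2}\tau_n\}_n$ is a Parseval frame, so part~(ii) yields $\mathcal{H}_1\supseteq\mathcal{H}$, an orthonormal basis $\{\mu_n\}_n$ for $\mathcal{H}_1$, and the orthogonal projection $P$ of $\mathcal{H}_1$ onto $\mathcal{H}$ with $S_\tau^{-1/2}\tau_n=P\mu_n$. Since $S_\tau$ is positive and invertible (Theorem~\ref{MOSTIMPORTANT}(iii)), $S_\tau^{1/2}$ is bounded and invertible on $\mathcal{H}$; I would extend it to a bounded invertible operator $\widetilde{S}:=S_\tau^{1/2}\oplus I$ on $\mathcal{H}_1=\mathcal{H}\oplus(\mathcal{H}_1\ominus\mathcal{H})$ and set $\omega_n:=\widetilde{S}\mu_n$. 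Then $\{\omega_n\}_n$ is a Riesz basis for $\mathcal{H}_1$ by Definition~\ref{RIESZBASISDEFINITION}, and since $P\widetilde{S}=S_\tau^{1/2}P$ (check on $\mathcal{H}$ and on $\mathcal{H}_1\ominus\mathcal{H}$, which $P$ annihilates), $P\omega_n=S_\tau^{1/2}P\mu_n=S_\tau^{1/2}S_\tau^{-1/2}\tau_n=\tau_n$.

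The only genuinely substantive step is the ``only if'' part of~(ii): one has to exhibit an ambient Hilbert space carrying an orthonormal basis that projects onto $\{\tau_n\}_n$, and the whole argument hinges on the identification of $\mathcal{H}$ with $\theta_\tau(\mathcal{H})\subseteq\ell^2(\mathbb{N})$, which is available precisely because a Parseval frame makes $\theta_\tau$ isometric, with $\theta_\tau\theta_\tau^*$ serving as the companion orthogonal projection. Once this is in hand, (i) follows by the routine device of conjugating the Parseval dilation by an invertible extension of $S_\tau^{1/2}$; the residual points --- self-adjointness of $P$ and separability of $\mathcal{H}_1$ --- are automatic.
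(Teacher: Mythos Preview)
Your proposal is correct. Both the ``if'' computations and the ``only if'' constructions are sound; in particular your verification that $\{\omega_n\}_n=\{\tau_n\oplus Qe_n\}_n$ is an orthonormal basis of $\mathcal{H}\oplus\ker\theta_\tau^*$ in the Parseval case is clean, and the passage from (ii) to (i) via $\widetilde{S}=S_\tau^{1/2}\oplus I$ is the standard Han--Larson device.

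The paper's route is genuinely different. It does not treat the Parseval case first: instead it proves a dilation theorem for $p$-approximate Schauder frames in Banach spaces (Theorem~\ref{DILATIONTHEOREMPASF}), constructing $\mathcal{X}_1=\mathcal{X}\oplus(I_{\ell^p}-P_{f,\tau})(\ell^p)$ and $\omega_n=\tau_n\oplus(I_{\ell^p}-P_{f,\tau})e_n$ directly for a general frame, and then specialises to $p=2$ in the subsequent Corollary. To check that the resulting $\{\omega_n\}_n$ is a Riesz basis, the paper does not use Definition~\ref{RIESZBASISDEFINITION} (image of an orthonormal basis under an invertible map) but rather its own operator-theoretic characterisation Theorem~\ref{RIESZBASISCHAROURS}: it verifies by a lengthy computation that $\theta_\omega S_\omega^{-1}\theta_\omega^*=I_{\ell^2}$. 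Amusingly, the concrete dilation space and vectors coincide with yours (since $(I-P_\tau)(\ell^2)=\ker\theta_\tau^*$ and your $\widetilde{S}\mu_n$ unwinds to $\tau_n\oplus(I-P_\tau)e_n$), but the logical path is inverted: you reduce to Parseval and exploit isometry, while the paper attacks the general frame head-on and pays for it with heavier bookkeeping. Your approach is the classical one and is more self-contained for this statement; the paper's gains generality (the Banach-space $p$-ASF theorem) at the cost of a longer verification in the Hilbert case, and only addresses the ``only if'' direction of part~(i) explicitly.
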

In order to construct an element of the Hilbert space using frames using Equation \ref{GFE}, we have to first determine inverse of the frame operator which is difficult in general. Thus we seek a way to approximate an element using a sequence which does not involve calculating inverse of frame operator. This is given in the following theorem. 
 \begin{proposition}(\cite{DUFFIN1}) (\textbf{Frame algorithm})
 Let $ \{\tau_n\}_{n=1}^\infty$ be a frame for  $ \mathcal{H}$	with bounds $a$ and  $b$. For  $ h \in \mathcal{H}$ define 
 $$ h_0\coloneqq0, \quad h_n\coloneqq h_{n-1}+\frac{2}{a+b}S_{\tau}(h-h_{n-1}), \quad\forall n \geq1.$$
 Then 
 $$ \|h_n-h\|\leq \left(\frac{b-a}{b+a}\right)^n\|h\|, \quad\forall n \geq1.$$
 In particular, $h_n\to h$ as $n\to \infty$.
 \end{proposition}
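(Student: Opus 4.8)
The plan is to track the sequence of error vectors $e_n := h - h_n$ and to show that it is produced by iterating a single self-adjoint contraction on $h$. Since $S_\tau$ is a bounded linear operator by part (iii) of Theorem~\ref{MOSTIMPORTANT}, the recursion defines each $h_n$ unambiguously by induction. Subtracting the defining recursion for $h_n$ from $h$ gives
$$e_n = (h-h_{n-1}) - \frac{2}{a+b}\,S_\tau(h-h_{n-1}) = \Bigl(I - \tfrac{2}{a+b}S_\tau\Bigr)e_{n-1},$$
so, writing $V := I - \tfrac{2}{a+b}S_\tau$ and noting $e_0 = h - h_0 = h$, we obtain $e_n = V^n h$, and therefore $\|h_n - h\| = \|V^n h\| \le \|V\|^n\,\|h\|$.

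The second step is to estimate $\|V\|$. By part (iii) of Theorem~\ref{MOSTIMPORTANT}, $S_\tau$ is positive (hence self-adjoint) and satisfies $a\|g\|^2 \le \langle S_\tau g, g\rangle \le b\|g\|^2$ for all $g$, i.e.\ $aI \le S_\tau \le bI$ in the order of self-adjoint operators. Multiplying by the positive scalar $\tfrac{2}{a+b}$ and then rearranging the two inequalities $S_\tau \ge aI$ and $S_\tau \le bI$ separately yields
$$-\frac{b-a}{a+b}\,I \;\le\; V \;\le\; \frac{b-a}{a+b}\,I.$$
Since $V$ is self-adjoint, $\|V\| = \sup_{\|g\|=1}|\langle Vg,g\rangle| \le \frac{b-a}{a+b}$ (alternatively, $\sigma(S_\tau)\subseteq[a,b]$ forces $\sigma(V)\subseteq[\tfrac{a-b}{a+b},\tfrac{b-a}{a+b}]$, and a self-adjoint operator has norm equal to its spectral radius). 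Combining this with the first step gives $\|h_n - h\| \le \left(\frac{b-a}{b+a}\right)^n\|h\|$.

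For the final assertion, recall that a lower frame bound satisfies $a>0$ and one always has $a \le b$, so $0 \le \frac{b-a}{b+a} < 1$; hence the right-hand side tends to $0$ and $h_n \to h$ as $n\to\infty$.

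The argument is essentially bookkeeping, and I do not expect a genuine obstacle: the only nontrivial inputs are the operator sandwich $aI \le S_\tau \le bI$ (immediate from Theorem~\ref{MOSTIMPORTANT}) and the elementary fact that a self-adjoint operator lying between $-cI$ and $cI$ has norm at most $c$. The one place to be careful is the set-up of the error recursion: it is $h$, not $h_n$, that plays the role of the fixed point of the iteration map $x \mapsto x + \tfrac{2}{a+b}S_\tau(h-x)$, and the initialization $e_0 = h$ is precisely what produces the clean geometric bound.
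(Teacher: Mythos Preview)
Your proof is correct and is the standard argument for the frame algorithm. The paper does not supply its own proof of this proposition---it is stated as a classical result with a citation to Duffin--Schaeffer---so there is nothing to compare against beyond noting that your derivation (error recursion $e_n = V e_{n-1}$ with $V = I - \tfrac{2}{a+b}S_\tau$, then $\|V\|\le \tfrac{b-a}{b+a}$ from the operator sandwich $aI\le S_\tau\le bI$) is exactly the textbook route.
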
 
Given a collection $\{\tau_n\}_n$, in general, it is difficult to find $a$ and $b$ such that the two inequalities in (\ref{SEQUENTIALEQUATION1}) hold. Therefore, it is natural to ask whether there is a characterization for frame without using frame bounds. Orthonormal bases are the simplest and easiest sequences we can handle in a Hilbert space, so one can attempt to obtain characterization using orthonormal bases. Since every separable Hilbert space is isometrically isomorphic to the standard Hilbert space $\ell^2(\mathbb{N})$ and the standard unit vectors $\{e_n\}_n$ form an orthonormal basis for $\ell^2(\mathbb{N})$, one can further ask whether frames can be characterized using $\{e_n\}_n$. This question was answered affirmatively by  \cite{HOLUB} as follows.
\begin{theorem}(\cite{HOLUB})\label{HOLUBTHEOREM} (\textbf{Holub's theorem})
	A sequence $\{\tau_n\}_n$ in  $\mathcal{H}$ is a
	frame for $\mathcal{H}$	if and only if there exists a surjective bounded linear operator $T:\ell^2(\mathbb{N}) \to \mathcal{H}$ such that $Te_n=\tau_n$, for all $n \in \mathbb{N}$.
\end{theorem}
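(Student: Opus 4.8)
The plan is to prove both implications directly, using Theorem~\ref{MOSTIMPORTANT} for the forward direction and the open mapping theorem for the converse.

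First, suppose $\{\tau_n\}_n$ is a frame for $\mathcal{H}$. By parts (ii), (v) and (vii) of Theorem~\ref{MOSTIMPORTANT}, the synthesis operator $\theta_\tau^*:\ell^2(\mathbb{N})\to\mathcal{H}$ is a bounded surjective linear operator satisfying $\theta_\tau^* e_n=\tau_n$ for all $n\in\mathbb{N}$. Thus $T\coloneqq\theta_\tau^*$ does the job, and this direction requires nothing beyond what has already been established.

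For the converse, assume $T:\ell^2(\mathbb{N})\to\mathcal{H}$ is bounded, linear and surjective with $Te_n=\tau_n$ for all $n$. I would first pin down the adjoint $T^*:\mathcal{H}\to\ell^2(\mathbb{N})$, which is bounded with $\|T^*\|=\|T\|$. Computing the $n$-th coordinate, $\langle T^*h,e_n\rangle=\langle h,Te_n\rangle=\langle h,\tau_n\rangle$, so $T^*h=\{\langle h,\tau_n\rangle\}_n\in\ell^2(\mathbb{N})$; in particular $\sum_{n=1}^\infty|\langle h,\tau_n\rangle|^2$ converges and equals $\|T^*h\|^2$. Boundedness of $T^*$ then immediately yields the upper frame inequality $\sum_{n=1}^\infty|\langle h,\tau_n\rangle|^2=\|T^*h\|^2\le\|T\|^2\|h\|^2$ for all $h\in\mathcal{H}$.

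It remains to produce a lower frame bound, and this is the crux of the argument. Since $T$ is a bounded surjection between Banach spaces, the open mapping theorem gives $\delta>0$ such that the ball $\{y\in\mathcal{H}:\|y\|<\delta\}$ is contained in the image of the open unit ball of $\ell^2(\mathbb{N})$ under $T$. Hence for every $h\in\mathcal{H}$,
\begin{align*}
\|T^*h\|=\sup_{\|x\|\le1}|\langle T^*h,x\rangle|=\sup_{\|x\|\le1}|\langle h,Tx\rangle|\ge\sup_{\|y\|<\delta}|\langle h,y\rangle|=\delta\|h\|,
\end{align*}
so that $\sum_{n=1}^\infty|\langle h,\tau_n\rangle|^2=\|T^*h\|^2\ge\delta^2\|h\|^2$ for all $h\in\mathcal{H}$. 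Combining the two estimates, $\{\tau_n\}_n$ satisfies the frame inequalities \eqref{SEQUENTIALEQUATION1} with $a=\delta^2$ and $b=\|T\|^2$, hence is a frame. The only genuinely nontrivial ingredient is the passage from surjectivity of $T$ to $T^*$ being bounded below, which is precisely where the open mapping theorem (equivalently, the closed range theorem) enters; everything else is routine bookkeeping with adjoints.
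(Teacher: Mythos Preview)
Your proof is correct. Note, however, that the paper does not actually supply its own proof of this statement: Holub's theorem is quoted as a cited result from \cite{HOLUB}, so there is no argument in the paper to compare against. Your approach---taking $T=\theta_\tau^*$ for the forward direction via Theorem~\ref{MOSTIMPORTANT}, and using the open mapping theorem to get $T^*$ bounded below for the converse---is the standard one and is entirely sound.
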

There is a slight variation of Theorem \ref{HOLUBTHEOREM} given by  \cite{CHRISTENSEN}. 
\begin{theorem}(\cite{CHRISTENSEN})\label{OLECHA}
	Let $\{\omega_n\}_n$ be an orthonormal basis for   $\mathcal{H}$. Then  a sequence $\{\tau_n\}_n$ in  $\mathcal{H}$ is a
	frame for $\mathcal{H}$	if and only if there exists a surjective bounded linear operator $T:\mathcal{H} \to \mathcal{H}$ such that $T\omega_n=\tau_n$, for all $n \in \mathbb{N}$.	
\end{theorem}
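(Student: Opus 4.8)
The plan is to reduce the statement to Holub's theorem (Theorem \ref{HOLUBTHEOREM}) by transporting everything through the canonical unitary identifying $\mathcal{H}$ with $\ell^2(\mathbb{N})$. Since $\{\omega_n\}_n$ and the standard basis $\{e_n\}_n$ of $\ell^2(\mathbb{N})$ are both orthonormal bases, the assignment $e_n\mapsto\omega_n$ extends to a unitary $U\colon\ell^2(\mathbb{N})\to\mathcal{H}$; this is the fact (quoted earlier in the excerpt) that all orthonormal bases are unitary images of a fixed one, or just Riesz--Fisher together with Parseval's identity. With $U$ available, both implications become short.

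For the forward direction, suppose $\{\tau_n\}_n$ is a frame for $\mathcal{H}$. By Theorem \ref{MOSTIMPORTANT} the synthesis operator $\theta_\tau^*\colon\ell^2(\mathbb{N})\to\mathcal{H}$ is bounded, surjective, and satisfies $\theta_\tau^*e_n=\tau_n$ for all $n$. I would then set $T\coloneqq\theta_\tau^*U^{-1}\colon\mathcal{H}\to\mathcal{H}$ (equivalently $\theta_\tau^*U^{*}$): it is bounded as a composition of bounded operators, surjective as a composition of surjections, and $T\omega_n=\theta_\tau^*U^{-1}\omega_n=\theta_\tau^*e_n=\tau_n$, which is exactly what is required.

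For the converse, suppose $T\colon\mathcal{H}\to\mathcal{H}$ is bounded and surjective with $T\omega_n=\tau_n$ for all $n$. Then $S\coloneqq TU\colon\ell^2(\mathbb{N})\to\mathcal{H}$ is bounded, surjective, and $Se_n=T\omega_n=\tau_n$, so Theorem \ref{HOLUBTHEOREM} immediately yields that $\{\tau_n\}_n$ is a frame. If one prefers to avoid citing Holub here, the frame bounds can be exhibited directly: for every $h\in\mathcal{H}$ one has $\langle h,\tau_n\rangle=\langle h,T\omega_n\rangle=\langle T^{*}h,\omega_n\rangle$, so Parseval's identity for $\{\omega_n\}_n$ (Theorem \ref{CHARORTHONORMALINTRO}) gives $\sum_{n}|\langle h,\tau_n\rangle|^{2}=\|T^{*}h\|^{2}$; the upper bound $b=\|T\|^{2}$ is then immediate, and the lower bound $a=c^{2}$ follows once one knows a surjective bounded operator has adjoint bounded below, i.e.\ $\|T^{*}h\|\ge c\|h\|$ for some $c>0$ and all $h$.

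The argument is essentially bookkeeping, so there is no substantial obstacle; the only point needing care is the assertion, used in the direct route for the converse, that surjectivity of $T$ forces $T^{*}$ to be bounded below — a consequence of the open mapping theorem — and invoking Holub's theorem instead sidesteps it entirely, since that theorem already packages this fact. I would therefore present the proof via Holub's theorem for brevity and record the explicit bounds $a=c^{2}$, $b=\|T\|^{2}$ as a remark.
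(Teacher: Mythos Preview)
Your proof is correct. The paper does not supply its own proof of this statement; it is recorded as a background result from \cite{CHRISTENSEN}, so there is nothing to compare against beyond noting that your reduction to Holub's theorem via the unitary $U\colon e_n\mapsto\omega_n$ is exactly the natural argument and matches the standard one.
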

Given a frame $\{\tau_n\}_n$  for   $\mathcal{H}$ we now consider the frame $\{S_\tau^{-1}\tau_n\}_n$. This frame satisfies Equation (\ref{GFE}). However, in general there may be other frames satisfying the Equation (\ref{GFE}) like $\{S_\tau^{-1}\tau_n\}_n$. This  leads to the notion of dual frames as stated below. 
\begin{definition}(cf. \cite{CHRISTENSEN})
	Let $\{\tau_n\}_n$ be a frame for $\mathcal{H}$. A  frame  $\{\omega_n\}_n$ for $  \mathcal{H}$ is said to be a \textbf{dual} frame for $\{\tau_n\}_n$ if 
	\begin{align*}
	h=\sum_{n=1}^\infty \langle h, \omega_n\rangle \tau_n=\sum_{n=1}^\infty
	\langle h, \tau_n\rangle \omega_n, \quad \forall h \in
	\mathcal{H}.
	\end{align*}
\end{definition}
Just like characterization of frames, given a frame,  we  seek a description of each of its dual frame. This problem was solved by   \cite{LI} in the following two lemmas and a theorem. 
\begin{lemma}(\cite{LI})\label{LILEMMA1}
	Let $\{\tau_n\}_n$ be a frame for $\mathcal{H}$ and $\{e_n\}_n$ be the standard orthonormal basis for 	$\ell^2(\mathbb{N})$. Then a frame $\{\omega_n\}_n$ is a dual frame for $\{\tau_n\}_n$ if and only if 
	\begin{align*}
	\omega_n=U e_n, \quad \forall n \in \mathbb{N},
	\end{align*}
	where $U:\ell^2(\mathbb{N})\to \mathcal{H}$ is a bounded left-inverse of $\theta_\tau$.
\end{lemma}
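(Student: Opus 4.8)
The plan is to recognize that the operator $U$ in the statement must be the synthesis operator $\theta_\omega^*$ of the candidate dual frame $\{\omega_n\}_n$, and then to translate the two defining identities of a dual frame into the single operator equation $U\theta_\tau = I_\mathcal{H}$ together with its adjoint. All the ingredients are already available in Theorem \ref{MOSTIMPORTANT}: for any frame, the analysis operator $\theta_\tau$ and synthesis operator $\theta_\tau^*$ are bounded, $\theta_\tau^*\{a_n\}_n = \sum_n a_n\tau_n$, and $\theta_\tau^* e_n = \tau_n$.

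For the forward implication, I would take a dual frame $\{\omega_n\}_n$ and set $U := \theta_\omega^*$. By Theorem \ref{MOSTIMPORTANT} this is a bounded linear operator $\ell^2(\mathbb{N})\to\mathcal{H}$ with $U e_n = \theta_\omega^* e_n = \omega_n$. Now for $h\in\mathcal{H}$ one has $\theta_\tau h = \{\langle h,\tau_n\rangle\}_n$, so $U\theta_\tau h = \theta_\omega^*\{\langle h,\tau_n\rangle\}_n = \sum_n \langle h,\tau_n\rangle\omega_n$, which equals $h$ precisely because $\{\omega_n\}_n$ is a dual frame. Hence $U\theta_\tau = I_\mathcal{H}$, i.e. $U$ is a bounded left-inverse of $\theta_\tau$, and $\omega_n = U e_n$, as required. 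Only one of the two dual-frame identities is needed here.

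For the converse, suppose $\{\omega_n\}_n$ is a frame with $\omega_n = U e_n$ for some bounded left-inverse $U$ of $\theta_\tau$. The key step is to check $U = \theta_\omega^*$: given $\{a_n\}_n\in\ell^2(\mathbb{N})$, we have $\{a_n\}_n = \sum_n a_n e_n$ with convergence in $\ell^2(\mathbb{N})$, so by linearity and continuity of $U$, $U\{a_n\}_n = \sum_n a_n U e_n = \sum_n a_n \omega_n = \theta_\omega^*\{a_n\}_n$. Then $U\theta_\tau = I_\mathcal{H}$ becomes $\theta_\omega^*\theta_\tau = I_\mathcal{H}$, which unravels to $\sum_n \langle h,\tau_n\rangle\omega_n = h$ for all $h\in\mathcal{H}$. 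Taking adjoints, $\theta_\tau^*\theta_\omega = (\theta_\omega^*\theta_\tau)^* = I_\mathcal{H}$, i.e. $\sum_n \langle h,\omega_n\rangle\tau_n = h$ for all $h\in\mathcal{H}$; thus both identities in the definition of a dual frame hold, and $\{\omega_n\}_n$ is a dual frame for $\{\tau_n\}_n$.

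There is no real obstacle in this argument; the only point deserving attention is the identification $U = \theta_\omega^*$ in the converse, which relies on continuity of $U$ and the (unconditional) convergence of $\sum_n a_n e_n$ to $\{a_n\}_n$ in $\ell^2(\mathbb{N})$. Everything else is a mechanical rewriting of the dual-frame equations as operator identities and the use of adjoints to pass between the two forms.
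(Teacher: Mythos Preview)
Your proof is correct. The paper itself does not prove this lemma---it is stated in the introductory chapter as a known result of Li and carries no proof in the text. Your argument (identify $U$ with the synthesis operator $\theta_\omega^*$, rewrite the dual-frame identities as $\theta_\omega^*\theta_\tau = I_\mathcal{H}$, and recover the second identity by taking adjoints) is exactly the approach the paper later uses for its own Banach-space analogue, Lemma~\ref{ASFLEMMA1}, so your proposal is fully in line with the paper's methods.
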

\begin{lemma}(\cite{LI})\label{LILEMMA2}
	Let $\{\tau_n\}_n$ be a frame for $\mathcal{H}$. Then $L:\ell^2(\mathbb{N})\to \mathcal{H}$ 	is a bounded left-inverse of $\theta_\tau$ if and only if 
	\begin{align*}
	L=S_\tau^{-1}\theta_\tau^*+V(I_{\ell^2(\mathbb{N})}-\theta_\tau S_\tau^{-1} \theta_\tau^*),
	\end{align*}
	where $V:\ell^2(\mathbb{N})\to \mathcal{H}$ is a bounded operator.
\end{lemma}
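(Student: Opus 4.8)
The plan is to prove the two implications directly, using only parts (vi) and (ix) of Theorem~\ref{MOSTIMPORTANT}: that $S_\tau=\theta_\tau^*\theta_\tau$, and that $P_\tau=\theta_\tau S_\tau^{-1}\theta_\tau^*$ is the orthogonal projection of $\ell^2(\mathbb{N})$ onto the closed subspace $\theta_\tau(\mathcal{H})$. First I would record the elementary fact that $S_\tau^{-1}\theta_\tau^*$ is itself a bounded left-inverse of $\theta_\tau$, since $(S_\tau^{-1}\theta_\tau^*)\theta_\tau=S_\tau^{-1}(\theta_\tau^*\theta_\tau)=S_\tau^{-1}S_\tau=I_\mathcal{H}$, with $S_\tau^{-1}$ and $\theta_\tau^*$ bounded; this is the ``particular solution'' around which every left-inverse will be described.

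For the \emph{if} direction, assume $L=S_\tau^{-1}\theta_\tau^*+V(I_{\ell^2(\mathbb{N})}-\theta_\tau S_\tau^{-1}\theta_\tau^*)$ for some bounded $V:\ell^2(\mathbb{N})\to\mathcal{H}$. Then $L$ is bounded, being a sum and composition of bounded operators. Composing on the right with $\theta_\tau$ and using $\theta_\tau^*\theta_\tau=S_\tau$,
\begin{align*}
L\theta_\tau=S_\tau^{-1}\theta_\tau^*\theta_\tau+V\theta_\tau-V\theta_\tau S_\tau^{-1}\theta_\tau^*\theta_\tau=I_\mathcal{H}+V\theta_\tau-V\theta_\tau S_\tau^{-1}S_\tau=I_\mathcal{H},
\end{align*}
so $L$ is a bounded left-inverse of $\theta_\tau$.

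For the \emph{only if} direction, let $L$ be any bounded left-inverse of $\theta_\tau$, so $L\theta_\tau=I_\mathcal{H}$. The one thing to supply is a witness $V$, and the quickest choice is $V=L$: using $L\theta_\tau=I_\mathcal{H}$,
\begin{align*}
S_\tau^{-1}\theta_\tau^*+L(I_{\ell^2(\mathbb{N})}-\theta_\tau S_\tau^{-1}\theta_\tau^*)=S_\tau^{-1}\theta_\tau^*+L-(L\theta_\tau)S_\tau^{-1}\theta_\tau^*=S_\tau^{-1}\theta_\tau^*+L-S_\tau^{-1}\theta_\tau^*=L,
\end{align*}
which is precisely the asserted form. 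Conceptually this works because $L-S_\tau^{-1}\theta_\tau^*$, being a difference of two left-inverses of $\theta_\tau$, annihilates the range $\theta_\tau(\mathcal{H})$ and therefore factors through $I_{\ell^2(\mathbb{N})}-P_\tau$; indeed $L(I_{\ell^2(\mathbb{N})}-P_\tau)=L-L\theta_\tau S_\tau^{-1}\theta_\tau^*=L-S_\tau^{-1}\theta_\tau^*$. It is worth remarking that $V$ is by no means unique.

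There is no serious obstacle: both directions reduce to a short operator identity once parts (vi) and (ix) of Theorem~\ref{MOSTIMPORTANT} are in hand. The only point needing a moment's care is not over-engineering the \emph{only if} direction---one is tempted to construct $V$ by restricting to $\theta_\tau(\mathcal{H})^\perp$ via the orthogonal decomposition $\ell^2(\mathbb{N})=\theta_\tau(\mathcal{H})\oplus\theta_\tau(\mathcal{H})^\perp$, but the left-inverse $L$ itself already serves, so the argument stays purely computational and the closedness of $\theta_\tau(\mathcal{H})$ (part (ii) of Theorem~\ref{MOSTIMPORTANT}) is only needed implicitly through the existence of $P_\tau$.
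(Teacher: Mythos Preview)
Your proof is correct and follows essentially the same approach as the paper's proofs of the analogous Lemmas~\ref{ASFLEMMA2} and~\ref{SECONDLEMMA}: verify the \emph{if} direction by composing with $\theta_\tau$ and using $\theta_\tau^*\theta_\tau=S_\tau$, and for the \emph{only if} direction simply take $V\coloneqq L$ and check the identity directly. The extra commentary about $P_\tau$ and the range decomposition is fine intuition but not needed for the argument.
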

\begin{theorem}(\cite{LI})\label{LITHM}
	Let $\{\tau_n\}_n$ be a frame for $\mathcal{H}$. Then a frame $\{\omega_n\}_n$ is a dual frame for $\{\tau_n\}_n$ if and only if 
	\begin{align*}
	\omega_n=S_\tau^{-1} \tau_n+\rho_n-\sum_{k=1}^{\infty}\langle S_\tau^{-1} \tau_n, \tau_k\rangle \rho_k, \quad \forall  n \in \mathbb{N}, 
	\end{align*}
	where 	$\{\rho_n\}_n$ is a  sequence in  $\mathcal{H}$ such that there exists  $b>0$ satisfying 
	\begin{align*}
	\sum_{n=1}^{\infty}|\langle h, \rho_n\rangle |^2\leq b \|h\|^2, \quad \forall h \in \mathcal{H}.
	\end{align*}
\end{theorem}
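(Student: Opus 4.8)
The plan is to obtain the theorem as a corollary of Lemmas \ref{LILEMMA1} and \ref{LILEMMA2}, using only the elementary fact that a sequence $\{\rho_n\}_n$ in $\mathcal{H}$ admits a Bessel bound if and only if it has the form $\{Ve_n\}_n$ for a bounded operator $V\colon\ell^2(\mathbb{N})\to\mathcal{H}$ (with $V$ then being the synthesis operator $\{a_n\}_n\mapsto\sum_n a_n\rho_n$). Throughout, the single computational input I would use is Theorem \ref{MOSTIMPORTANT}(v), namely $\theta_\tau^*e_n=\tau_n$, from which $S_\tau^{-1}\theta_\tau^*e_n=S_\tau^{-1}\tau_n$ and $\theta_\tau S_\tau^{-1}\theta_\tau^*e_n=\theta_\tau S_\tau^{-1}\tau_n=\{\langle S_\tau^{-1}\tau_n,\tau_k\rangle\}_k$.

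First I would treat the ``only if'' direction. Given a dual frame $\{\omega_n\}_n$, Lemma \ref{LILEMMA1} produces a bounded left-inverse $U$ of $\theta_\tau$ with $\omega_n=Ue_n$, and Lemma \ref{LILEMMA2} writes $U=S_\tau^{-1}\theta_\tau^*+V(I_{\ell^2(\mathbb{N})}-\theta_\tau S_\tau^{-1}\theta_\tau^*)$ for some bounded $V\colon\ell^2(\mathbb{N})\to\mathcal{H}$. Setting $\rho_n\coloneqq Ve_n$, boundedness of $V$ gives $\sum_n|\langle h,\rho_n\rangle|^2=\|V^*h\|^2\le\|V\|^2\|h\|^2$, which is the required Bessel condition. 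Evaluating $Ue_n$ with the identities above, and pushing the bounded operator $V$ through the $\ell^2$-convergent expansion $\theta_\tau S_\tau^{-1}\tau_n=\sum_k\langle S_\tau^{-1}\tau_n,\tau_k\rangle e_k$, yields $\omega_n=S_\tau^{-1}\tau_n+\rho_n-\sum_{k=1}^\infty\langle S_\tau^{-1}\tau_n,\tau_k\rangle\rho_k$, which is the asserted formula.

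For the converse, starting from a Bessel sequence $\{\rho_n\}_n$ with bound $b$, I would let $V$ be its synthesis operator (bounded, with $Ve_n=\rho_n$), define $U\coloneqq S_\tau^{-1}\theta_\tau^*+V(I_{\ell^2(\mathbb{N})}-\theta_\tau S_\tau^{-1}\theta_\tau^*)$, and invoke Lemma \ref{LILEMMA2} to see that $U$ is a bounded left-inverse of $\theta_\tau$. Since $U\theta_\tau=I_\mathcal{H}$, the operator $U$ is surjective, so by Holub's theorem (Theorem \ref{HOLUBTHEOREM}) the sequence $\{Ue_n\}_n$ is a frame for $\mathcal{H}$; the same computation as before identifies $Ue_n$ with the vector $\omega_n$ on the right-hand side of the displayed formula. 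Then Lemma \ref{LILEMMA1} applied to the frame $\{\omega_n\}_n=\{Ue_n\}_n$ and the bounded left-inverse $U$ gives that $\{\omega_n\}_n$ is a dual frame for $\{\tau_n\}_n$.

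I expect no serious obstacle, since Lemmas \ref{LILEMMA1} and \ref{LILEMMA2} carry the analytic weight; the only point needing a line of justification is the interchange $V\bigl(\sum_k\langle S_\tau^{-1}\tau_n,\tau_k\rangle e_k\bigr)=\sum_k\langle S_\tau^{-1}\tau_n,\tau_k\rangle\rho_k$, which is immediate from continuity of $V$ once one records that $\{\langle S_\tau^{-1}\tau_n,\tau_k\rangle\}_k=\theta_\tau S_\tau^{-1}\tau_n$ indeed lies in $\ell^2(\mathbb{N})$. The rest is bookkeeping in the expression $Ue_n$.
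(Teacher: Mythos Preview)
Your proposal is correct and is exactly the intended derivation: the paper states Theorem \ref{LITHM} as a cited result without proof, but its own Banach-space generalization (Theorem \ref{ALLDUAL}) is proved by precisely this route---combining the analogues of Lemmas \ref{LILEMMA1} and \ref{LILEMMA2} and then reading off the formula by evaluating at $e_n$. Your handling of both directions, including the appeal to Holub's theorem to confirm that $\{Ue_n\}_n$ is a frame in the converse, matches the structure the paper follows.
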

We again consider the frame $\{S_\tau^{-1}\tau_n\}_n$. Note that this frame is obtained by the action of an invertible operator $S_\tau^{-1}$ to the original frame $\{\tau_n\}_n$. This leads to the question: what are all the frames which are obtained by operating an invertible operator to the  given frame? This naturally brings us to the following definition. 
\begin{definition}(\cite{BALAN})\label{SIMILARDEFHILBERT}
	Two frames $\{\tau_n\}_n$ and $\{\omega_n\}_n$ for $  \mathcal{H}$ are said to be \textbf{similar} or \textbf{equivalent} if there exists a bounded invertible operator $T:\mathcal{H} \to \mathcal{H}$ such that
	\begin{align}\label{SIMILARITYEQUATION}
	\omega_n=T \tau_n, \quad\forall n \in \mathbb{N}.
	\end{align} 
\end{definition}
Given frames $\{\tau_n\}_n$ and $\{\omega_n\}_n$, it is rather difficult to check whether they are similar because one has to get an invertible operator and verify Equation (\ref{SIMILARITYEQUATION}) for every natural number. Thus it is better if there is a characterization which does not  involve natural numbers and involves only operators. Further, it is natural to ask whether there is a formula for the operator $T$ which gives similarity. This was done by  \cite{BALAN} and independently by  \cite{HANLARSON} which states as follows. 
\begin{theorem}(\cite{BALAN, HANLARSON})\label{BALANCHARSIM}
	For two frames $\{\tau_n\}_n$ and $\{\omega_n\}_n$ for $  \mathcal{H}$, the following are equivalent.	
	\begin{enumerate}[label=(\roman*)]
		\item $\{\tau_n\}_n$ and $\{\omega_n\}_n$ are similar, i.e., there exists a bounded invertible operator $T:\mathcal{H} \to \mathcal{H}$ such that $\omega_n=T \tau_n$, $\forall n \in \mathbb{N}$.
		\item $\theta_\omega=\theta_\tau T$,  for some bounded invertible operator $T:\mathcal{H} \to \mathcal{H}$.
		\item $ P_\omega=P_\tau$.
	\end{enumerate}
	If one of the above conditions is satisfied, then the invertible operator in (i) and (ii) is unique and is given by $T=S_\tau^{-1}\theta_\tau^*\theta_\omega$.
\end{theorem}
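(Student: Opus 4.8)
The plan is to prove the chain (i) $\Leftrightarrow$ (ii), (ii) $\Rightarrow$ (iii), (iii) $\Rightarrow$ (ii), so that the three statements are equivalent, and then to read off the explicit formula and uniqueness of the intertwining operator from the construction used in (iii) $\Rightarrow$ (ii). Throughout I would use freely the properties of $\theta_\tau$, $\theta_\tau^*$, $S_\tau=\theta_\tau^*\theta_\tau$ and $P_\tau=\theta_\tau S_\tau^{-1}\theta_\tau^*$ recorded in Theorem \ref{MOSTIMPORTANT} (and likewise for $\omega$): that $S_\tau$ is invertible, $\theta_\tau$ is injective, $\theta_\tau^* e_n=\tau_n$, and $P_\tau$ is the orthogonal projection of $\ell^2(\mathbb{N})$ onto $\theta_\tau(\mathcal{H})$. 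For (i) $\Rightarrow$ (ii): if $\omega_n=T\tau_n$ for all $n$ with $T\in\mathcal{B}(\mathcal{H})$ invertible, then for $h\in\mathcal{H}$ one has $\theta_\omega h=\{\langle h,T\tau_n\rangle\}_n=\{\langle T^*h,\tau_n\rangle\}_n=\theta_\tau(T^*h)$, i.e. $\theta_\omega=\theta_\tau T^*$ with $T^*$ invertible, which is (ii). Conversely, if $\theta_\omega=\theta_\tau T$ with $T$ invertible, taking adjoints gives $\theta_\omega^*=T^*\theta_\tau^*$, and evaluating at $e_n$ gives $\omega_n=\theta_\omega^*e_n=T^*\theta_\tau^*e_n=T^*\tau_n$ with $T^*$ invertible, which is (i). (With the convention for $\theta_\tau$ used here, the operator realizing (i) is the adjoint of the one realizing (ii); the displayed formula is the one for (ii).)

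For (ii) $\Rightarrow$ (iii): from $\theta_\omega=\theta_\tau T$ with $T$ invertible, hence surjective, one gets $\theta_\omega(\mathcal{H})=\theta_\tau(T(\mathcal{H}))=\theta_\tau(\mathcal{H})$. Since $P_\tau$ and $P_\omega$ are the orthogonal projections of $\ell^2(\mathbb{N})$ onto $\theta_\tau(\mathcal{H})$ and $\theta_\omega(\mathcal{H})$ respectively (Theorem \ref{MOSTIMPORTANT}(ix)), and these closed subspaces coincide, the two projections are equal, so $P_\tau=P_\omega$.

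The substantive step is (iii) $\Rightarrow$ (ii). Assuming $P_\tau=P_\omega$, I would take the candidate $T:=S_\tau^{-1}\theta_\tau^*\theta_\omega\in\mathcal{B}(\mathcal{H})$. Using $\theta_\tau S_\tau^{-1}\theta_\tau^*=P_\tau$ and $P_\omega\theta_\omega=\theta_\omega$ (the range of $\theta_\omega$ lies in the subspace onto which $P_\omega$ projects), one gets $\theta_\tau T=P_\tau\theta_\omega=P_\omega\theta_\omega=\theta_\omega$, which is the relation in (ii). For invertibility I would exhibit the two-sided inverse $R:=S_\omega^{-1}\theta_\omega^*\theta_\tau$: using $S_\omega=\theta_\omega^*\theta_\omega$, $\theta_\tau S_\tau^{-1}\theta_\tau^*=P_\tau=P_\omega$, and $\theta_\omega^*P_\omega=\theta_\omega^*$, one computes $RT=S_\omega^{-1}\theta_\omega^*(\theta_\tau S_\tau^{-1}\theta_\tau^*)\theta_\omega=S_\omega^{-1}\theta_\omega^*P_\omega\theta_\omega=S_\omega^{-1}\theta_\omega^*\theta_\omega=S_\omega^{-1}S_\omega=I_\mathcal{H}$, and a symmetric computation using $\theta_\tau^*P_\tau=\theta_\tau^*$ gives $TR=I_\mathcal{H}$. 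Hence $T$ is bounded invertible, which closes the cycle.

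Finally, the formula is exactly the operator $T=S_\tau^{-1}\theta_\tau^*\theta_\omega$ constructed above, and uniqueness is immediate: if $\theta_\omega=\theta_\tau T_1=\theta_\tau T_2$ then $\theta_\tau(T_1-T_2)=0$, and injectivity of $\theta_\tau$ (Theorem \ref{MOSTIMPORTANT}(ii)) forces $T_1=T_2$; likewise any two bounded operators sending $\tau_n$ to $\omega_n$ for every $n$ agree on $\overline{\operatorname{span}}\{\tau_n\}_n=\mathcal{H}$ (Theorem \ref{MOSTIMPORTANT}(i)), hence coincide. I expect the only real obstacle to be the invertibility argument in (iii) $\Rightarrow$ (ii): one has to anticipate that the inverse of $S_\tau^{-1}\theta_\tau^*\theta_\omega$ is $S_\omega^{-1}\theta_\omega^*\theta_\tau$, and then push the single hypothesis $P_\tau=P_\omega$ through the cancellations $\theta S^{-1}\theta^*=P$, $\theta^*\theta=S$ and $\theta^*P=\theta^*$; everything else is formal manipulation with the operators furnished by Theorem \ref{MOSTIMPORTANT}.
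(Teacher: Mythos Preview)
Your proof is correct and follows essentially the same route as the paper's own arguments for the generalizations of this result (Theorems \ref{SEQUENTIALSIMILARITY} and \ref{RIGHTSIMILARITY}): the same candidate $T=S_\tau^{-1}\theta_\tau^*\theta_\omega$ is produced for (iii) $\Rightarrow$ (ii), and invertibility is checked by exhibiting $S_\omega^{-1}\theta_\omega^*\theta_\tau$ as the two-sided inverse via the identities $\theta S^{-1}\theta^*=P$ and $\theta^*P=\theta^*$. The only cosmetic difference is in (ii) $\Rightarrow$ (iii): you argue geometrically (equal ranges of $\theta_\tau$ and $\theta_\omega$ force equal orthogonal projections), whereas the paper computes $P_\omega$ algebraically from $\theta_\omega=\theta_\tau T$ and $S_\omega=T^*S_\tau T$ and cancels; both are immediate. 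Your observation that the operators in (i) and (ii) are adjoints of one another under the stated convention for $\theta_\tau$ is accurate and worth flagging.
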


For a given subset $\mathbb{M} $ of $ \mathbb{N}$, set $
S_\mathbb{M} :\mathcal{H} \ni h \mapsto \sum_{n\in \mathbb{M}} \langle h, \tau_n\rangle\tau_n\in
\mathcal{H}$. Because of Inequalities (\ref{SEQUENTIALEQUATION1}), $
S_\mathbb{M} $
is a well-defined bounded positive operator (which may not be invertible). Let $\mathbb{M}^\text{c}$ denote the complement of $\mathbb{M}$ in $\mathbb{N}$. 
Casazza, Edidin, and Kutyniok derived following identities for frames for Hilbert spaces (\cite{BALACASAZZAEDIDINKUTYNIOKFIRST, BALANSIGNAL}). 
\begin{theorem}(\cite{BALACASAZZAEDIDINKUTYNIOK, BALACASAZZAEDIDINKUTYNIOKFIRST}) (\textbf{Frame identity})  \label{CASAZZAGENERAL}
	Let $\{\tau_n\}_n$ be a  frame for  $\mathcal{H}$. Then for every $\mathbb{M} \subseteq \mathbb{N}$, 	
	\begin{align*}
	\sum_{n\in \mathbb{M}}|\langle h, \tau_n\rangle|^2-\sum_{n=1}^\infty|\langle S_\mathbb{M}h, \tilde{\tau}_n\rangle|^2=\sum_{n\in \mathbb{M}^\text{c}}|\langle h, \tau_n\rangle|^2-\sum_{n=1}^\infty|\langle S_{\mathbb{M}^\text{c}}h, \tilde{\tau}_n\rangle|^2,\quad \forall h \in \mathcal{H}.
	\end{align*}	
\end{theorem}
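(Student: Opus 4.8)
The plan is to rewrite each of the four sums in the claimed identity as a quadratic form built from the frame operator $S_\tau$ together with the partial operators $S_\mathbb{M}$ and $S_{\mathbb{M}^\text{c}}$, and then to reduce everything to one short operator identity. Recall from the paragraph preceding the statement that $S_\mathbb{M}$ and $S_{\mathbb{M}^\text{c}}$ are bounded positive operators on $\mathcal{H}$, that $S_\mathbb{M} + S_{\mathbb{M}^\text{c}} = S_\tau$, and that $\sum_{n\in\mathbb{M}}|\langle h,\tau_n\rangle|^2 = \langle S_\mathbb{M}h,h\rangle$ (and similarly over $\mathbb{M}^\text{c}$); here $\tilde{\tau}_n \coloneqq S_\tau^{-1}\tau_n$ is the canonical dual frame.

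First I would record the auxiliary fact that the canonical dual frame $\{\tilde{\tau}_n\}_n$, which is a frame by Theorem \ref{MOSTIMPORTANT}(xi), has frame operator $S_\tau^{-1}$: indeed for $g\in\mathcal{H}$, $\sum_n\langle g,S_\tau^{-1}\tau_n\rangle S_\tau^{-1}\tau_n = S_\tau^{-1}\bigl(\sum_n\langle S_\tau^{-1}g,\tau_n\rangle\tau_n\bigr) = S_\tau^{-1}S_\tau S_\tau^{-1}g = S_\tau^{-1}g$. Hence $\sum_{n=1}^\infty|\langle g,\tilde{\tau}_n\rangle|^2 = \langle S_\tau^{-1}g,g\rangle$ for every $g$, the series converging because $\{\tilde{\tau}_n\}_n$ is Bessel. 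Applying this with $g = S_\mathbb{M}h$ and then with $g = S_{\mathbb{M}^\text{c}}h$, the left-hand side of the asserted identity becomes $\langle S_\mathbb{M}h,h\rangle - \langle S_\tau^{-1}S_\mathbb{M}h,S_\mathbb{M}h\rangle$ and the right-hand side becomes $\langle S_{\mathbb{M}^\text{c}}h,h\rangle - \langle S_\tau^{-1}S_{\mathbb{M}^\text{c}}h,S_{\mathbb{M}^\text{c}}h\rangle$.

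It then suffices to prove the operator identity $S_\mathbb{M} - S_\mathbb{M}S_\tau^{-1}S_\mathbb{M} = S_{\mathbb{M}^\text{c}} - S_{\mathbb{M}^\text{c}}S_\tau^{-1}S_{\mathbb{M}^\text{c}}$ on $\mathcal{H}$: pairing it against $h$ and using that $S_\mathbb{M}$ and $S_{\mathbb{M}^\text{c}}$ are self-adjoint turns it into equality of the two reduced expressions above. To establish it I would substitute $S_{\mathbb{M}^\text{c}} = S_\tau - S_\mathbb{M}$ and expand, using $S_\tau^{-1}S_\tau = S_\tau S_\tau^{-1} = I$: one gets $S_{\mathbb{M}^\text{c}}S_\tau^{-1}S_{\mathbb{M}^\text{c}} = (S_\tau - S_\mathbb{M})S_\tau^{-1}(S_\tau - S_\mathbb{M}) = S_\tau - 2S_\mathbb{M} + S_\mathbb{M}S_\tau^{-1}S_\mathbb{M}$, whence $S_{\mathbb{M}^\text{c}} - S_{\mathbb{M}^\text{c}}S_\tau^{-1}S_{\mathbb{M}^\text{c}} = (S_\tau - S_\mathbb{M}) - (S_\tau - 2S_\mathbb{M} + S_\mathbb{M}S_\tau^{-1}S_\mathbb{M}) = S_\mathbb{M} - S_\mathbb{M}S_\tau^{-1}S_\mathbb{M}$, as desired.

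There is no real obstacle in this argument: every series in sight converges by the Bessel property, so one never has to worry about rearrangement or interchange of limits, and the substantive content is the one-line cancellation in the last step. The only point needing a little care is the self-adjointness bookkeeping --- $S_\mathbb{M}$, $S_{\mathbb{M}^\text{c}}$, $S_\tau$, and $S_\tau^{-1}$ are all self-adjoint --- which is precisely what lets $\sum_n|\langle S_\mathbb{M}h,\tilde{\tau}_n\rangle|^2$ be read as the symmetric form $\langle S_\tau^{-1}S_\mathbb{M}h,S_\mathbb{M}h\rangle$ and makes the cross terms in the expansion collapse cleanly.
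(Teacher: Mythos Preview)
Your proof is correct and follows essentially the same route as the paper. The paper does not prove Theorem~\ref{CASAZZAGENERAL} directly (it is cited), but it proves the Banach-space generalization, Theorem~\ref{OURGENERAL}, by the same mechanism: reduce both sides to quadratic forms and establish the underlying operator identity. The only cosmetic difference is that the paper first isolates the algebraic fact $U+V=I \Rightarrow U-V=U^2-V^2$ as Lemma~\ref{LEMMA} and applies it with $U=S_\tau^{-1}S_\mathbb{M}$, $V=S_\tau^{-1}S_{\mathbb{M}^\text{c}}$, whereas you substitute $S_{\mathbb{M}^\text{c}}=S_\tau-S_\mathbb{M}$ and expand directly; the two computations are the same up to a left multiplication by $S_\tau$.
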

\begin{theorem}(\cite{BALACASAZZAEDIDINKUTYNIOK, BALACASAZZAEDIDINKUTYNIOKFIRST}) (\textbf{Parseval frame identity}) \label{SECOND}
	Let $\{\tau_n\}_n$ be a Parseval frame for  $\mathcal{H}$. Then for every $\mathbb{M} \subseteq \mathbb{N}$, 
	\begin{align*}
	\sum_{n\in \mathbb{M}}|\langle h, \tau_n\rangle|^2-\left\|\sum_{n\in \mathbb{M}}\langle h, \tau_n\rangle \tau_n\right\|^2=\sum_{n\in \mathbb{M}^\text{c}}|\langle h, \tau_n\rangle|^2-\left\|\sum_{n\in \mathbb{M}^\text{c}}\langle h, \tau_n\rangle \tau_n\right\|^2,\quad \forall h \in \mathcal{H}.
	\end{align*}
\end{theorem}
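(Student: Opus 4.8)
The plan is to rewrite both sides of the asserted identity as quadratic forms of bounded operators and then use the fact that a Parseval frame has frame operator equal to the identity. Write $\theta_\tau$ and $S_\tau$ for the analysis and frame operators of $\{\tau_n\}_n$ as in Theorem \ref{MOSTIMPORTANT}. Since $\{\tau_n\}_n$ is Parseval, $S_\tau=\theta_\tau^*\theta_\tau=I_{\mathcal{H}}$, and the expansion $h=\sum_{n=1}^\infty\langle h,\tau_n\rangle\tau_n$ converges unconditionally for every $h\in\mathcal{H}$. Fix $\mathbb{M}\subseteq\mathbb{N}$. As noted just before the statement, the partial frame operators $S_\mathbb{M}$ and $S_{\mathbb{M}^{\text{c}}}$ are well-defined, bounded and positive, in particular self-adjoint, and splitting the unconditionally convergent series over $\mathbb{M}$ and $\mathbb{M}^{\text{c}}$ yields the operator identity $S_\mathbb{M}+S_{\mathbb{M}^{\text{c}}}=S_\tau=I_{\mathcal{H}}$.

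Next I would express each term of the identity through $S_\mathbb{M}$. A direct computation gives $\langle S_\mathbb{M}h,h\rangle=\sum_{n\in\mathbb{M}}|\langle h,\tau_n\rangle|^2$, and, using that $S_\mathbb{M}$ is self-adjoint, $\left\|\sum_{n\in\mathbb{M}}\langle h,\tau_n\rangle\tau_n\right\|^2=\|S_\mathbb{M}h\|^2=\langle S_\mathbb{M}^2h,h\rangle$. Hence the left-hand side of the asserted identity equals $\langle(S_\mathbb{M}-S_\mathbb{M}^2)h,h\rangle$, and likewise the right-hand side equals $\langle(S_{\mathbb{M}^{\text{c}}}-S_{\mathbb{M}^{\text{c}}}^2)h,h\rangle$, for all $h\in\mathcal{H}$.

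It then suffices to verify the operator identity $S_\mathbb{M}-S_\mathbb{M}^2=S_{\mathbb{M}^{\text{c}}}-S_{\mathbb{M}^{\text{c}}}^2$. Substituting $S_{\mathbb{M}^{\text{c}}}=I_{\mathcal{H}}-S_\mathbb{M}$ and expanding, $S_{\mathbb{M}^{\text{c}}}-S_{\mathbb{M}^{\text{c}}}^2=(I_{\mathcal{H}}-S_\mathbb{M})-(I_{\mathcal{H}}-S_\mathbb{M})^2=S_\mathbb{M}-S_\mathbb{M}^2$, which is exactly what is needed; pairing with $h$ on both sides finishes the proof.

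There is no deep obstacle here: once the two auxiliary facts are in place the argument is bookkeeping. The two points deserving a word of care are (i) the identity $S_\mathbb{M}+S_{\mathbb{M}^{\text{c}}}=I_{\mathcal{H}}$, which relies on the unconditional convergence of the Parseval expansion so that the series may be split over $\mathbb{M}$ and its complement, and (ii) the use of self-adjointness of $S_\mathbb{M}$ to convert the norm term $\|S_\mathbb{M}h\|^2$ into the quadratic form $\langle S_\mathbb{M}^2h,h\rangle$. Alternatively, the statement can simply be read off from Theorem \ref{CASAZZAGENERAL}: for a Parseval frame the canonical dual satisfies $\tilde\tau_n=S_\tau^{-1}\tau_n=\tau_n$, and the Parseval identity for the norm gives $\sum_{n=1}^\infty|\langle S_\mathbb{M}h,\tilde\tau_n\rangle|^2=\|S_\mathbb{M}h\|^2$, and similarly for $\mathbb{M}^{\text{c}}$, so the present statement is the specialization of Theorem \ref{CASAZZAGENERAL} to the Parseval case.
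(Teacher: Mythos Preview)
Your proof is correct and follows the standard approach; the paper does not supply its own proof of this theorem (it is quoted from \cite{BALACASAZZAEDIDINKUTYNIOK, BALACASAZZAEDIDINKUTYNIOKFIRST}), but the argument you give is precisely the one underlying the paper's Lemma \ref{LEMMA} and its Banach-space generalization in Theorem \ref{PASFID}. Your closing remark that the identity is the Parseval specialization of Theorem \ref{CASAZZAGENERAL} is also exactly right.
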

Theorem \ref{SECOND} has applications. It was applied to get the  following remarkable lower estimate for Parseval frames.
\begin{theorem}(\cite{BALACASAZZAEDIDINKUTYNIOK, GAVRUTA})\label{THIRD}
	Let $\{\tau_n\}_n$ be a Parseval frame for  $\mathcal{H}$. Then for every $\mathbb{M} \subseteq \mathbb{N}$, 
	\begin{align*}
	\sum_{n\in \mathbb{M}}|\langle h, \tau_n\rangle|^2+\left\|\sum_{n\in \mathbb{M}^\text{c}}\langle h, \tau_n\rangle \tau_n\right\|^2&=\sum_{n\in \mathbb{M}^\text{c}}|\langle h, \tau_n\rangle|^2+\left\|\sum_{n\in \mathbb{M}}\langle h, \tau_n\rangle \tau_n\right\|^2\\
	&\geq \frac{3}{4}\|h\|^2,\quad \forall h \in \mathcal{H}.
	\end{align*}
	Further, the bound 3/4 is optimal.
\end{theorem}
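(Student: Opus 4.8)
The plan is to reduce the displayed inequality to a one‑line operator computation exploiting the Parseval hypothesis, and then to read off optimality from an explicit two‑vector frame.

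First I would dispose of the equality of the two sides: it is merely a rearrangement of the Parseval frame identity (Theorem \ref{SECOND}). Moving $\big\|\sum_{n\in\mathbb{M}}\langle h,\tau_n\rangle\tau_n\big\|^2$ to the left and $\big\|\sum_{n\in\mathbb{M}^\text{c}}\langle h,\tau_n\rangle\tau_n\big\|^2$ to the right of that identity yields
\[
\sum_{n\in\mathbb{M}}|\langle h,\tau_n\rangle|^2+\Big\|\sum_{n\in\mathbb{M}^\text{c}}\langle h,\tau_n\rangle\tau_n\Big\|^2=\sum_{n\in\mathbb{M}^\text{c}}|\langle h,\tau_n\rangle|^2+\Big\|\sum_{n\in\mathbb{M}}\langle h,\tau_n\rangle\tau_n\Big\|^2 ,
\]
so it suffices to bound the left‑hand side below by $\tfrac34\|h\|^2$.

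Next I would rewrite that left‑hand side as an operator quadratic form. With $S:=S_\mathbb{M}$ (well defined, bounded, positive, and self-adjoint by the remark preceding Theorem \ref{CASAZZAGENERAL}) one has $\sum_{n\in\mathbb{M}}|\langle h,\tau_n\rangle|^2=\langle Sh,h\rangle$ and $\big\|S_{\mathbb{M}^\text{c}}h\big\|^2=\langle S_{\mathbb{M}^\text{c}}^2h,h\rangle$; and since $\{\tau_n\}_n$ is Parseval the frame decomposition gives $S_\tau=I$, hence $S_\mathbb{M}+S_{\mathbb{M}^\text{c}}=I$, i.e. $S_{\mathbb{M}^\text{c}}=I-S$. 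Therefore the left‑hand side equals $\langle\big(S+(I-S)^2\big)h,h\rangle=\langle(S^2-S+I)h,h\rangle$. The key identity is the completion of the square $S^2-S+I=(S-\tfrac12 I)^2+\tfrac34 I$; since $(S-\tfrac12 I)^2$ is positive, the left‑hand side equals $\big\|(S-\tfrac12 I)h\big\|^2+\tfrac34\|h\|^2\ge\tfrac34\|h\|^2$. The only technical points are the self-adjointness of $S_\mathbb{M}$ and convergence of the series defining $S_\mathbb{M}h$, both immediate from the Bessel/Parseval bound, so I do not expect a genuine obstacle here — the whole content is the square‑completion.

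Finally, for optimality I would exhibit equality. Take $\mathcal{H}=\mathbb{C}$ with the Parseval frame $\tau_1=\tau_2=\tfrac1{\sqrt2}$ (padding with zero vectors if an $\mathbb{N}$‑indexed family is insisted upon) and $\mathbb{M}=\{1\}$; then $S_\mathbb{M}=\tfrac12 I$, so $(S_\mathbb{M}-\tfrac12 I)h=0$ and the left‑hand side equals exactly $\tfrac34|h|^2$ for every $h$. Hence no constant larger than $3/4$ can work, so $3/4$ is optimal. (A two‑dimensional variant, e.g. $\tau_1=(1,0)$, $\tau_2=\tau_3=(0,\tfrac1{\sqrt2})$ in $\mathbb{C}^2$ with $\mathbb{M}=\{1,2\}$ and $h=(0,1)$, achieves equality as well.)
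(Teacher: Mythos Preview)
Your proof is correct and follows essentially the same approach as the paper (and the cited references): rewrite the left-hand side as $\langle (S_\mathbb{M}+(I-S_\mathbb{M})^2)h,h\rangle$ and complete the square to $(S_\mathbb{M}-\tfrac12 I)^2+\tfrac34 I$. The paper's own proof of the Banach-space analogue (Theorem~\ref{LOWER234}) uses the equivalent identity $S_\mathbb{M}^2+S_{\mathbb{M}^\text{c}}^2=2(S_\mathbb{M}-\tfrac12 I)^2+\tfrac12 I$ together with $2(S_\mathbb{M}+S_{\mathbb{M}^\text{c}}^2)=I+S_\mathbb{M}^2+S_{\mathbb{M}^\text{c}}^2$, but this is just a cosmetic rearrangement of the same square-completion; your route is in fact slightly more direct, and your explicit optimality example is fine.
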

As another application, Theorem \ref{THIRD} was used in the study of Parseval frames with finite excesses (\cite{BAKICBERIC, BALANCASAZZAHEIL12}). \\
Like duality, there is another notion called as orthogonality for frames for Hilbert spaces. This was first introduced by  \cite{BALANTHESIS} in his Ph.D. thesis  and further  studied by  \cite{HANLARSON}. 
\begin{definition}(\cite{BALANTHESIS, HANLARSON})
		Let $\{\tau_n\}_n$ be a frame for $\mathcal{H}$. A  frame  $\{\omega_n\}_n$ for $  \mathcal{H}$ is said to be an \textbf{orthogonal}  frame for $\{\tau_n\}_n$ if 
	\begin{align*}
	0=\sum_{n=1}^\infty \langle h, \omega_n\rangle \tau_n=\sum_{n=1}^\infty
	\langle h, \tau_n\rangle \omega_n, \quad \forall h \in
	\mathcal{H}.
	\end{align*}
\end{definition}  
Remarkable property of orthogonal frames is that we can interpolate as well as  we can take direct sum of them to get new frames. These are illustrated in the following two results.
\begin{proposition}(\cite{HANKORNELSONLARSON, HANLARSON})
	Let $  \{\tau_n\}_n $ and $ \{\omega_n\}_n$ be  two Parseval  frames for     $\mathcal{H}$ which are  orthogonal. If $C,D \in \mathcal{B}(\mathcal{H})$ are such that $ C^*C+D^*D=I_\mathcal{H}$, then  $\{C\tau_n+D\omega_n\}_{n}$ is a  Parseval frame for   $\mathcal{H}$. In particular,  if scalars $ c,d,e,f$ satisfy $|c|^2+|d|^2 =1$, then $ \{c\tau_n+d\omega_n\}_{n} $ is   a Parseval frame.
\end{proposition}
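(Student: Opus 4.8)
The plan is to argue entirely through analysis operators, reducing the claim to a one-line computation of the frame operator of the candidate system $\mu_{n}:=C\tau_{n}+D\omega_{n}$.

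First I would restate the two hypotheses operator-theoretically. Write $\theta_{\tau},\theta_{\omega}\colon\mathcal{H}\to\ell^{2}(\mathbb{N})$ for the analysis operators of $\{\tau_{n}\}_{n}$ and $\{\omega_{n}\}_{n}$. That both are Parseval frames means, by Theorem~\ref{MOSTIMPORTANT}, that $\theta_{\tau}$ and $\theta_{\omega}$ are isometries, i.e., $\theta_{\tau}^{*}\theta_{\tau}=\theta_{\omega}^{*}\theta_{\omega}=I_{\mathcal{H}}$. Using $\theta_{\tau}^{*}(\{a_{n}\}_{n})=\sum_{n}a_{n}\tau_{n}$ (again Theorem~\ref{MOSTIMPORTANT}), the orthogonality relations $0=\sum_{n}\langle h,\omega_{n}\rangle\tau_{n}=\sum_{n}\langle h,\tau_{n}\rangle\omega_{n}$, valid for all $h\in\mathcal{H}$, translate into $\theta_{\tau}^{*}\theta_{\omega}=0$ and $\theta_{\omega}^{*}\theta_{\tau}=0$ (each is the adjoint of the other, so either one alone suffices).

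Next I would identify the analysis operator of $\{\mu_{n}\}_{n}$. For $h\in\mathcal{H}$ and each $n$,
\[
\langle h,\mu_{n}\rangle=\langle h,C\tau_{n}\rangle+\langle h,D\omega_{n}\rangle=\langle C^{*}h,\tau_{n}\rangle+\langle D^{*}h,\omega_{n}\rangle ,
\]
so $(\langle h,\mu_{n}\rangle)_{n}=\theta_{\tau}(C^{*}h)+\theta_{\omega}(D^{*}h)\in\ell^{2}(\mathbb{N})$; thus $\{\mu_{n}\}_{n}$ is a Bessel sequence with analysis operator $\theta_{\mu}=\theta_{\tau}C^{*}+\theta_{\omega}D^{*}$. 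Its frame operator is then
\[
S_{\mu}=\theta_{\mu}^{*}\theta_{\mu}=(C\theta_{\tau}^{*}+D\theta_{\omega}^{*})(\theta_{\tau}C^{*}+\theta_{\omega}D^{*})=C\theta_{\tau}^{*}\theta_{\tau}C^{*}+C\theta_{\tau}^{*}\theta_{\omega}D^{*}+D\theta_{\omega}^{*}\theta_{\tau}C^{*}+D\theta_{\omega}^{*}\theta_{\omega}D^{*}.
\]
The two middle terms vanish by orthogonality and the Parseval identities collapse the outer two, leaving $S_{\mu}=CC^{*}+DD^{*}$, which equals $I_{\mathcal{H}}$ by the hypothesis on $C$ and $D$. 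Hence $\sum_{n}|\langle h,\mu_{n}\rangle|^{2}=\langle S_{\mu}h,h\rangle=\|h\|^{2}$ for every $h\in\mathcal{H}$, i.e., $\{\mu_{n}\}_{n}$ is a Parseval frame. The scalar statement is the special case $C=c\,I_{\mathcal{H}}$, $D=d\,I_{\mathcal{H}}$, for which $CC^{*}+DD^{*}=(|c|^{2}+|d|^{2})I_{\mathcal{H}}=I_{\mathcal{H}}$.

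I do not expect a real obstacle: the argument is bookkeeping with adjoints. The two points needing care are the order of composition --- operators on $\mathcal{H}$ act before the analysis map, forcing $\theta_{\mu}=\theta_{\tau}C^{*}+\theta_{\omega}D^{*}$ rather than $C^{*}\theta_{\tau}+D^{*}\theta_{\omega}$ --- and keeping track of $CC^{*}$ versus $C^{*}C$ while simplifying $S_{\mu}$. If one prefers to avoid operator language, the same computation can be done directly: expand $\lvert\langle C^{*}h,\tau_{n}\rangle+\langle D^{*}h,\omega_{n}\rangle\rvert^{2}$, sum over $n$, apply the Parseval identities to the two square terms, and observe that the cross term vanishes since $\sum_{n}\langle C^{*}h,\tau_{n}\rangle\,\overline{\langle D^{*}h,\omega_{n}\rangle}=\langle\theta_{\omega}^{*}\theta_{\tau}C^{*}h,D^{*}h\rangle=0$, arriving at $\|C^{*}h\|^{2}+\|D^{*}h\|^{2}=\|h\|^{2}$.
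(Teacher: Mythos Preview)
Your approach is the standard one and your computation is correct right up to the last step: you correctly obtain $\theta_\mu=\theta_\tau C^*+\theta_\omega D^*$ and hence $S_\mu=CC^*+DD^*$. The paper does not supply its own proof of this proposition (it is quoted from the cited references), so there is nothing further to compare against.

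There is, however, a genuine mismatch you flag (``keeping track of $CC^*$ versus $C^*C$'') but then gloss over. The hypothesis as stated is $C^*C+D^*D=I_\mathcal{H}$, whereas what your computation delivers is $S_\mu=CC^*+DD^*$; these are not the same condition. A concrete failure: take $D=0$ and let $C$ be any non-unitary isometry, say the unilateral shift on $\ell^2(\mathbb{N})$. Then $C^*C+D^*D=I$ holds, but $CC^*+DD^*=CC^*\neq I$, and indeed $\{C\tau_n\}_n$ is not Parseval since $\sum_n|\langle h,C\tau_n\rangle|^2=\|C^*h\|^2$, which vanishes for $h=e_1\perp C(\ell^2)$. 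So the proposition as written is false for general operators; the correct hypothesis is $CC^*+DD^*=I_\mathcal{H}$, and your argument proves precisely that corrected statement. (The confusion likely stems from the operator-valued analogue quoted later in the paper, where the operators act on the right, $A_nC+B_nD$, and there the condition $C^*C+D^*D=I$ is the right one.) The scalar special case is unaffected either way, since $(cI)(cI)^*=(cI)^*(cI)=|c|^2I$.
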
 
\begin{proposition}(\cite{HANKORNELSONLARSON, HANLARSON})
	If $  \{\tau_n\}_n $ and $ \{\omega_n\}_n$ are   orthogonal frames  for $\mathcal{H}$, then  $\{\tau_n\oplus \omega_n\}_{n}$ is a  frame for  $ \mathcal{H}\oplus \mathcal{H}.$    Further, if both $  \{\tau_n\}_n $ and $ \{\omega_n\}_n$ are  Parseval, then $\{\tau_n\oplus \omega_n\}_{n}$ is Parseval.
\end{proposition}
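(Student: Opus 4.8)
The plan is to read off everything from the frame inequalities, after first translating orthogonality into the language of the analysis operators. Write $\theta_\tau,\theta_\omega\colon\mathcal{H}\to\ell^2(\mathbb{N})$ for the analysis operators of the two frames; by Theorem \ref{MOSTIMPORTANT} each is bounded and its adjoint is the corresponding synthesis operator $\theta_\tau^*,\theta_\omega^*$. The first thing I would record is that the defining identities of an orthogonal pair, $\sum_{n}\langle h,\omega_n\rangle\tau_n=0$ and $\sum_{n}\langle h,\tau_n\rangle\omega_n=0$ for all $h\in\mathcal{H}$, say exactly $\theta_\tau^*\theta_\omega=0=\theta_\omega^*\theta_\tau$ (each being the adjoint of the other). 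Only the identity $\sum_{n}\langle h,\tau_n\rangle\omega_n=0$ will actually be used below.

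Next, fix $(h,g)\in\mathcal{H}\oplus\mathcal{H}$ and expand the frame sum for the candidate sequence. Since $\langle (h,g),\tau_n\oplus\omega_n\rangle=\langle h,\tau_n\rangle+\langle g,\omega_n\rangle$,
\begin{align*}
\sum_{n=1}^\infty\big|\langle (h,g),\tau_n\oplus\omega_n\rangle\big|^2
&=\sum_{n=1}^\infty|\langle h,\tau_n\rangle|^2+\sum_{n=1}^\infty|\langle g,\omega_n\rangle|^2\\
&\quad+2\operatorname{Re}\sum_{n=1}^\infty\langle h,\tau_n\rangle\overline{\langle g,\omega_n\rangle}.
\end{align*}
The first two series are finite by the Bessel (upper frame) bounds, and the cross series converges absolutely by Cauchy--Schwarz in $\ell^2(\mathbb{N})$, so each term above makes sense. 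I would then rewrite the cross term as $\sum_{n}\langle h,\tau_n\rangle\overline{\langle g,\omega_n\rangle}=\big\langle\sum_{n}\langle h,\tau_n\rangle\omega_n,\,g\big\rangle$, which is legitimate because $\sum_{n}\langle h,\tau_n\rangle\omega_n=\theta_\omega^*(\theta_\tau h)$ converges in $\mathcal{H}$ and the inner product is continuous. By orthogonality this equals $\langle 0,g\rangle=0$, so the cross term drops out and $\sum_{n}|\langle (h,g),\tau_n\oplus\omega_n\rangle|^2=\sum_{n}|\langle h,\tau_n\rangle|^2+\sum_{n}|\langle g,\omega_n\rangle|^2$.

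Finally I would insert the frame inequalities for the two frames: with bounds $a_1,b_1$ for $\{\tau_n\}_n$ and $a_2,b_2$ for $\{\omega_n\}_n$, the right-hand side lies between $a_1\|h\|^2+a_2\|g\|^2$ and $b_1\|h\|^2+b_2\|g\|^2$, hence between $\min\{a_1,a_2\}\,\|(h,g)\|^2$ and $\max\{b_1,b_2\}\,\|(h,g)\|^2$; that is precisely the frame condition for $\{\tau_n\oplus\omega_n\}_n$ in $\mathcal{H}\oplus\mathcal{H}$. For the Parseval addendum, $a_1=b_1=a_2=b_2=1$ forces the surviving sum to equal $\|h\|^2+\|g\|^2=\|(h,g)\|^2$, so the dilated sequence is Parseval. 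There is essentially no obstacle here; the only point that needs a word of care is the convergence/interchange justifying that the cross term equals $\big\langle\sum_{n}\langle h,\tau_n\rangle\omega_n,\,g\big\rangle$, and that is immediate from boundedness of $\theta_\omega^*$ together with continuity of $\langle\cdot,\cdot\rangle$.
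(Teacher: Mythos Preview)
Your proof is correct. The paper does not supply its own proof of this proposition---it is stated in the introductory chapter with a citation---so there is nothing to compare against directly. That said, the paper does prove the analogous result for weak operator-valued frames (Chapter~\ref{chap5}), and there the route is to compute the frame operator of the direct-sum sequence and observe $S_{\tau\oplus\omega}=S_\tau\oplus S_\omega$; your argument via the frame inequalities is the natural sequential-frame counterpart, obtained by taking the quadratic form of that same operator identity. Both approaches hinge on the cross terms vanishing by orthogonality, and neither has any advantage over the other here.
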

Recall that Paley-Wiener theorem \ref{PALEYWIENERTHEOREM}  says that sequences which are close to orthonormal  bases are  Riesz bases. Since a frame will also give a series representation,  it is natural to ask whether a sequence close to frame is a frame. This was first derived by   \cite{PALEY1} which showed that sequences which are quadratically close to frames are again frames. 
\begin{theorem}(\cite{PALEY1})\label{FIRSTPER} (\textbf{Christensen's quadratic perturbation})
	Let $ \{\tau_n\}_{n=1}^\infty$ be a frame for  $\mathcal{H} $ with bounds $ a$ and $b$. If  $ \{\omega_n\}_{n=1}^\infty$  in $\mathcal{H} $ satisfies
	$$ c \coloneqq\sum_{n=1}^{\infty}\|\tau_n-\omega_n\|^2<a,$$
	then $ \{\omega_n\}_{n=1}^\infty$ is  a frame for $\mathcal{H} $  with bounds $a\left(1-\sqrt{\frac{c}{a}}\right)^2 $ and $b\left(1+\sqrt{\frac{c}{b}}\right)^2.$
\end{theorem}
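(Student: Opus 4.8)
The plan is to derive both frame inequalities for $\{\omega_n\}_{n=1}^\infty$ from the triangle inequality in $\ell^2(\mathbb{N})$ (Minkowski's inequality), applied for each fixed $h\in\mathcal{H}$ to the pointwise identity $\langle h,\omega_n\rangle=\langle h,\tau_n\rangle-\langle h,\tau_n-\omega_n\rangle$.

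First I would control the ``error'' sequence. For any $h\in\mathcal{H}$, Cauchy--Schwarz gives $|\langle h,\tau_n-\omega_n\rangle|^2\le\|h\|^2\|\tau_n-\omega_n\|^2$, hence
$$\sum_{n=1}^\infty|\langle h,\tau_n-\omega_n\rangle|^2\le\|h\|^2\sum_{n=1}^\infty\|\tau_n-\omega_n\|^2=c\,\|h\|^2<\infty.$$
Thus $\{\langle h,\tau_n-\omega_n\rangle\}_n$ lies in $\ell^2(\mathbb{N})$, and so does $\{\langle h,\tau_n\rangle\}_n$ by the upper frame bound for $\{\tau_n\}_n$; consequently $\{\langle h,\omega_n\rangle\}_n\in\ell^2(\mathbb{N})$ as well, and Minkowski's inequality applies to all three sequences.

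For the upper bound I would estimate
$$\left(\sum_{n=1}^\infty|\langle h,\omega_n\rangle|^2\right)^{1/2}\le\left(\sum_{n=1}^\infty|\langle h,\tau_n\rangle|^2\right)^{1/2}+\left(\sum_{n=1}^\infty|\langle h,\tau_n-\omega_n\rangle|^2\right)^{1/2}\le\sqrt{b}\,\|h\|+\sqrt{c}\,\|h\|,$$
and squaring yields the claimed upper bound $b\bigl(1+\sqrt{c/b}\bigr)^2\|h\|^2$. For the lower bound I would use the reverse triangle inequality together with the lower frame bound for $\{\tau_n\}_n$:
$$\left(\sum_{n=1}^\infty|\langle h,\omega_n\rangle|^2\right)^{1/2}\ge\left(\sum_{n=1}^\infty|\langle h,\tau_n\rangle|^2\right)^{1/2}-\left(\sum_{n=1}^\infty|\langle h,\tau_n-\omega_n\rangle|^2\right)^{1/2}\ge\sqrt{a}\,\|h\|-\sqrt{c}\,\|h\|.$$
Since $c<a$, the coefficient $\sqrt{a}-\sqrt{c}$ is strictly positive, so squaring is legitimate and gives $a\bigl(1-\sqrt{c/a}\bigr)^2\|h\|^2\le\sum_{n=1}^\infty|\langle h,\omega_n\rangle|^2$.

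There is no genuine obstacle here; the only points needing care are the order of operations --- one must first verify that all three scalar sequences are square-summable before invoking Minkowski --- and the observation that the hypothesis $c<a$ is precisely what makes the lower estimate nontrivial and what justifies squaring the difference $\sqrt{a}-\sqrt{c}$. Once both frame inequalities hold with strictly positive bounds, $\{\omega_n\}_{n=1}^\infty$ is a frame for $\mathcal{H}$; its completeness is automatic from the lower inequality.
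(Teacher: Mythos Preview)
Your proof is correct and is the standard elementary argument for this result: control the perturbation sequence via Cauchy--Schwarz, then apply the triangle and reverse triangle inequalities in $\ell^2(\mathbb{N})$.

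The paper does not give a direct proof of this theorem; it is stated in the introductory chapter as a cited result. The only proof route the paper offers is indirect: Remark~\ref{OURCOROLLARY} observes that Theorem~\ref{FIRSTPER} (together with Theorems~\ref{SECONDPER} and~\ref{OLECAZASSA}) is a special case of the general operator-theoretic perturbation result Theorem~\ref{OURPERTURBATION} for p-approximate Schauder frames in Banach spaces, which in turn rests on the Casazza--Christensen--Kalton--van~Eijndhoven lemma (Theorem~\ref{cc1}) about invertibility of perturbed operators. Your Minkowski/Cauchy--Schwarz argument is considerably more elementary and delivers the exact stated bounds immediately, without passing through the Banach-space frame-operator machinery; the paper's route has the advantage of placing the result inside a broader framework but is correspondingly less direct and does not obviously recover the sharp constants $a(1-\sqrt{c/a})^2$ and $b(1+\sqrt{c/b})^2$ without additional bookkeeping.
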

 Three months later, Christensen generalized Theorem \ref{FIRSTPER}. 
 \begin{theorem}(\cite{PALEY2})\label{SECONDPER} (\textbf{Christensen perturbation})
 	Let $ \{\tau_n\}_{n=1}^\infty$ be a frame for  $\mathcal{H} $ with bounds $ a$ and $b$.  If  $ \{\omega_n\}_{n=1}^\infty$  in $\mathcal{H} $ is  such that there exist $ \alpha, \gamma \geq0$ with $\alpha+\frac{\gamma}{\sqrt{a}}< 1 $ and
 	$$\left\|\sum_{n=1}^{m}c_n(\tau_n-\omega_n) \right\|\leq \alpha\left\|\sum_{n=1}^{m}c_n\tau_n\right \|+\gamma \left(\sum_{n=1}^{m}|c_n|^2\right)^\frac{1}{2},  \quad\forall c_1,  \dots, c_m \in \mathbb{K}, m=1, 2,\dots, $$
 	then $ \{\omega_n\}_{n=1}^\infty$ is  a frame for $\mathcal{H} $  with bounds $a\left(1-(\alpha+\frac{\gamma}{\sqrt{a}})\right)^2 $ and $b\left(1+(\alpha+\frac{\gamma}{\sqrt{b}})\right)^2.$
 \end{theorem}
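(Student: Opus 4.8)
The plan is to realise $\{\omega_n\}_{n=1}^\infty$ as the image of the standard orthonormal basis of $\ell^2(\mathbb{N})$ under a bounded operator that is a small perturbation of the synthesis operator $\theta_\tau^*$, and to keep track of the constants so as to recover the stated bounds. \emph{Step 1 (the perturbation operator).} On the dense subspace of finitely supported sequences I would define $U\colon\{c_n\}_n\mapsto\sum_n c_n(\tau_n-\omega_n)$. The hypothesis together with $\big\|\sum_n c_n\tau_n\big\|=\|\theta_\tau^*\{c_n\}_n\|\le\sqrt b\,\big(\sum_n|c_n|^2\big)^{1/2}$ (Theorem \ref{MOSTIMPORTANT}) gives $\|U\{c_n\}_n\|\le\alpha\|\theta_\tau^*\{c_n\}_n\|+\gamma\big(\sum_n|c_n|^2\big)^{1/2}\le(\alpha\sqrt b+\gamma)\big(\sum_n|c_n|^2\big)^{1/2}$, so $U$ extends to a bounded operator on $\ell^2(\mathbb{N})$ and, by continuity, the first of these inequalities persists for every $\{c_n\}_n\in\ell^2(\mathbb{N})$.

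\emph{Step 2 (upper bound).} Set $\theta_\omega^*:=\theta_\tau^*-U$, a bounded operator with $\theta_\omega^*e_n=\omega_n$ and $\|\theta_\omega^*\|\le\|\theta_\tau^*\|+\alpha\|\theta_\tau^*\|+\gamma\le\sqrt b\big(1+\alpha+\tfrac{\gamma}{\sqrt b}\big)$. Hence $\{\omega_n\}_n$ is Bessel; since its analysis operator is $(\theta_\omega^*)^*$, we get $\sum_n|\langle h,\omega_n\rangle|^2\le b\big(1+(\alpha+\tfrac{\gamma}{\sqrt b})\big)^2\|h\|^2$ for all $h\in\mathcal H$, which is the upper frame inequality.

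\emph{Step 3 (lower bound, the crux).} By Theorem \ref{MOSTIMPORTANT}, $S_\tau=\theta_\tau^*\theta_\tau$ is positive invertible with $\theta_\tau^*\theta_\tau S_\tau^{-1}=I_{\mathcal H}$, and $\{S_\tau^{-1}\tau_n\}_n$ is a frame with upper bound $a^{-1}$, so $\theta_\tau S_\tau^{-1}h=\{\langle h,S_\tau^{-1}\tau_n\rangle\}_n$ satisfies $\|\theta_\tau S_\tau^{-1}h\|\le\tfrac{1}{\sqrt a}\|h\|$. Put $T:=\theta_\omega^*\theta_\tau S_\tau^{-1}=I_{\mathcal H}-U\theta_\tau S_\tau^{-1}$. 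Applying Step 1 to $\{c_n\}_n=\theta_\tau S_\tau^{-1}h$, for which $\theta_\tau^*\{c_n\}_n=h$, gives $\|U\theta_\tau S_\tau^{-1}h\|\le\alpha\|h\|+\tfrac{\gamma}{\sqrt a}\|h\|=:\mu\|h\|$ with $\mu<1$; hence $T$ is invertible and $\|T^{-1}\|\le(1-\mu)^{-1}$. Taking adjoints, $T^*=S_\tau^{-1}\theta_\tau^*\theta_\omega$ is invertible with the same norm bound, and since $\|S_\tau^{-1}\theta_\tau^*\|=\|\theta_\tau S_\tau^{-1}\|\le\tfrac{1}{\sqrt a}$ we obtain $\|h\|=\|(T^*)^{-1}T^*h\|\le\tfrac{1}{\sqrt a\,(1-\mu)}\|\theta_\omega h\|$, i.e. $\sum_n|\langle h,\omega_n\rangle|^2\ge a\big(1-(\alpha+\tfrac{\gamma}{\sqrt a})\big)^2\|h\|^2$. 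Together with Step 2 this finishes the proof.

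The part I expect to be the real work is Step 3: the idea is to compose the still-unknown synthesis map $\theta_\omega^*$ with the canonical right inverse $\theta_\tau S_\tau^{-1}$ of $\theta_\tau^*$ and to recognise the composite $T$ as lying within distance $\mu<1$ of the identity, so that a Neumann-series argument yields both invertibility and the sharp constant. Alternatively, once $T$ is known to be invertible one could note that $\theta_\omega^*$ is surjective and invoke Theorem \ref{HOLUBTHEOREM} to conclude that $\{\omega_n\}_n$ is a frame, but that route does not by itself produce the explicit bounds.
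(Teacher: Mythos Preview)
Your proof is correct and follows essentially the same strategy the paper uses when it derives this result as the $\beta=0$ case of Theorem~\ref{OURPERTURBATION} (see Remark~\ref{OURCOROLLARY}): one bounds the perturbed synthesis operator, then substitutes $\{c_n\}_n=\theta_\tau S_\tau^{-1}h$ into the perturbation hypothesis to show that the composite $\theta_\omega^*\theta_\tau S_\tau^{-1}$ lies within distance $\alpha+\gamma/\sqrt a<1$ of $I_{\mathcal H}$, hence is invertible with the stated norm bound. The only cosmetic difference is that the paper invokes Theorem~\ref{cc1} (which is needed for the general three-parameter case) where you use the Neumann series directly, which is entirely adequate here since $\beta=0$.
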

 After two years, Casazza and Christensen further extended Theorem \ref{SECONDPER}. 
\begin{theorem}(\cite{CASAZZACHRISTENSTENPERTURBATION})\label{OLECAZASSA} (\textbf{Casazza-Christensen perturbation})
	Let $ \{\tau_n\}_{n=1}^\infty$ be a frame for  $\mathcal{H} $ with bounds $ a$ and $b$.  If  $ \{\omega_n\}_{n=1}^\infty$  in $\mathcal{H} $ is  such that there exist $ \alpha, \beta, \gamma \geq0$ with $ \max\{\alpha+\frac{\gamma}{\sqrt{a}}, \beta\}<1$ and
	\begin{align*}
	\left\|\sum_{n=1}^{m}c_n(\tau_n-\omega_n) \right\|&\leq \alpha\left\|\sum_{n=1}^{m}c_n\tau_n\right \|+\gamma \left(\sum_{n=1}^{m}|c_n|^2\right)^\frac{1}{2}+\beta\left\|\sum_{n=1}^{m}c_n\omega_n\right \|, \\
	&  \quad \quad\forall c_1,  \dots, c_m \in \mathbb{K}, m=1, 2, \dots,
	\end{align*}
	then $ \{\omega_n\}_{n=1}^\infty$ is a frame for $\mathcal{H} $  with bounds $a\left(1-\frac{\alpha+\beta+\frac{\gamma}{\sqrt{a}}}{1+\beta}\right)^2 $ and $b\left(1+\frac{\alpha+\beta+\frac{\gamma}{\sqrt{b}}}{1-\beta}\right)^2.$
\end{theorem}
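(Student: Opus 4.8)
The plan is to establish two facts about $\{\omega_n\}_{n=1}^\infty$: that its synthesis map $c\mapsto\sum_n c_n\omega_n$ is a bounded operator on $\ell^2(\mathbb{N})$ (equivalently, $\{\omega_n\}_{n=1}^\infty$ is Bessel, which yields the upper frame bound), and that this operator is surjective onto $\mathcal{H}$. Granting both, Holub's theorem (Theorem~\ref{HOLUBTHEOREM}, or Theorem~\ref{OLECHA} applied to $\{e_n\}_n$) immediately gives that $\{\omega_n\}_{n=1}^\infty$ is a frame, and the quantitative versions of the two facts produce the two displayed bounds. Write $T\coloneqq\theta_\tau^{*}$ for the synthesis operator of the frame $\{\tau_n\}_{n=1}^\infty$ and let $\Theta$ denote the (a priori only finitely defined) map $e_n\mapsto\omega_n$; once $\Theta$ is known to be bounded the hypothesised inequality extends by density to $\|Tc-\Theta c\|\le\alpha\|Tc\|+\gamma\|c\|+\beta\|\Theta c\|$ for all $c\in\ell^2(\mathbb{N})$, and the boundedness itself comes from the inequality for finite sums, so there is no circularity.

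\emph{Upper bound.} For finitely supported $c$, using $\|Tc\|\le\sqrt b\,\|c\|$ (Bessel bound of $\{\tau_n\}$) and the hypothesis, $\|\Theta c\|\le\|Tc\|+\|Tc-\Theta c\|\le(1+\alpha)\|Tc\|+\gamma\|c\|+\beta\|\Theta c\|$, so $(1-\beta)\|\Theta c\|\le\big((1+\alpha)\sqrt b+\gamma\big)\|c\|$ and hence $\|\Theta c\|\le\sqrt b\big(1+\tfrac{\alpha+\beta+\gamma/\sqrt b}{1-\beta}\big)\|c\|$. Thus $\Theta$ extends to a bounded operator with $\Theta e_n=\omega_n$, i.e.\ $\{\omega_n\}_{n=1}^\infty$ is Bessel with bound $b\big(1+\tfrac{\alpha+\beta+\gamma/\sqrt b}{1-\beta}\big)^2$, the claimed upper frame bound.

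\emph{Surjectivity and lower bound.} Since $\{\tau_n\}$ is a frame, Theorem~\ref{MOSTIMPORTANT} gives $S_\tau=\theta_\tau^{*}\theta_\tau$ positive and invertible with $\|S_\tau^{-1}\|\le a^{-1}$, so $T^{\dagger}\coloneqq\theta_\tau S_\tau^{-1}$ is a bounded right inverse of $T$ ($TT^{\dagger}=I_{\mathcal{H}}$) with $\|T^{\dagger}\|\le a^{-1/2}$. I would then interpolate: for $t\in[0,1]$ set $\Theta_t\coloneqq(1-t)T+t\Theta$ (the synthesis operator of $\{(1-t)\tau_n+t\omega_n\}_n$) and $\Phi_t\coloneqq\Theta_tT^{\dagger}\colon\mathcal{H}\to\mathcal{H}$. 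From $(T-\Theta_t)c=t(T-\Theta)c$ and $t\Theta c=\Theta_tc-(1-t)Tc$ one gets $\|(T-\Theta_t)c\|\le\big(t\alpha+(1-t)\beta\big)\|Tc\|+t\gamma\|c\|+\beta\|\Theta_tc\|$; taking $c=T^{\dagger}h$ (so $Tc=h$, $\|c\|\le a^{-1/2}\|h\|$, $\Theta_tc=\Phi_th$, $\Phi_th-h=-(T-\Theta_t)c$) and using $\|h\|\le\|\Phi_th\|+\|\Phi_th-h\|$ yields
\[
\|\Phi_th\|\ \ge\ \frac{1-\big(t(\alpha+\gamma/\sqrt a)+(1-t)\beta\big)}{1+\beta}\,\|h\|,\qquad h\in\mathcal{H}.
\]
The numerator is $1$ minus a convex combination of $\alpha+\gamma/\sqrt a<1$ and $\beta<1$, hence positive, so $\Phi_t$ is bounded below for every $t$; each $\Phi_t$ is therefore injective with closed range, $t\mapsto\Phi_t$ is norm-continuous, and $\Phi_0=TT^{\dagger}=I_{\mathcal{H}}$. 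By constancy of the (semi-)Fredholm index along this path, $\Phi_1=\Theta T^{\dagger}$ has index $0$, so being injective it is invertible, and the $t=1$ estimate gives $\|\Phi_1^{-1}\|\le\frac{1+\beta}{1-\alpha-\gamma/\sqrt a}$. Hence $\Theta$ is surjective (as $\Theta T^{\dagger}=\Phi_1$ is), so $\{\omega_n\}_{n=1}^\infty$ is a frame by Theorem~\ref{HOLUBTHEOREM}; and for any $h$, $c\coloneqq T^{\dagger}\Phi_1^{-1}h$ satisfies $\Theta c=h$ and $\|c\|\le M\|h\|$ with $M\coloneqq a^{-1/2}\frac{1+\beta}{1-\alpha-\gamma/\sqrt a}$, whence $\|h\|^2=\langle c,\theta_\omega h\rangle\le M\|h\|\,\|\theta_\omega h\|$ and therefore $\sum_{n=1}^\infty|\langle h,\omega_n\rangle|^2=\|\theta_\omega h\|^2\ge M^{-2}\|h\|^2=a\big(1-\tfrac{\alpha+\beta+\gamma/\sqrt a}{1+\beta}\big)^2\|h\|^2$, the claimed lower frame bound.

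\emph{Main obstacle.} The delicate step is the interpolation argument: verifying that $\Phi_t$ stays bounded below for \emph{all} $t\in[0,1]$ — which rests on the identity recasting the hypothesis for $\Theta_t$ and on the observation that the controlling quantity $t(\alpha+\gamma/\sqrt a)+(1-t)\beta$ stays $<1$ by convexity — and then invoking continuity of the Fredholm index to pass from $\Phi_0=I$ to the invertibility of $\Phi_1$. One could instead try to build a right inverse of $\Theta$ directly by iterated residual correction through $T^{\dagger}$, but then isolating the \emph{sharp} contraction ratio $\tfrac{\alpha+\beta+\gamma/\sqrt a}{1+\beta}$ is exactly the hard point: the crude triangle-inequality bookkeeping only delivers $\tfrac{\alpha+\beta+\gamma/\sqrt a}{1-\beta}$, which need not be $<1$ and gives a strictly weaker theorem. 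This extra difficulty is entirely due to the term $\beta\|\sum_n c_n\omega_n\|$, which refers back to the perturbed system itself and is absent from Theorems~\ref{FIRSTPER} and~\ref{SECONDPER}, where a plain Neumann-series argument suffices; everything else here — the density extension, the upper-bound estimate, and the deduction of the frame property from surjectivity — is routine.
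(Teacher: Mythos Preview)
Your proof is correct, including the explicit verification of both frame bounds. The route, however, differs from the paper's. In the paper this theorem is obtained (see Remark~\ref{OURCOROLLARY}) as a corollary of Theorem~\ref{OURPERTURBATION}, whose proof proceeds exactly as you do up to the estimate
\[
\|h-\Phi_1 h\|\le\Big(\alpha+\tfrac{\gamma}{\sqrt a}\Big)\|h\|+\beta\|\Phi_1 h\|,\qquad h\in\mathcal H,
\]
with $\Phi_1=\Theta T^{\dagger}=S_{f,\omega}S_{f,\tau}^{-1}$, and then simply invokes Theorem~\ref{cc1} (the Casazza--Christensen--Kalton--van Eijndhoven perturbation lemma) with $A=I_{\mathcal H}$ and $B=\Phi_1$: since $\max\{\alpha+\gamma/\sqrt a,\beta\}<1$, that lemma gives $\Phi_1$ invertible together with the bound $\|\Phi_1^{-1}\|\le\tfrac{1+\beta}{1-\alpha-\gamma/\sqrt a}$, which is exactly what you need. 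Your homotopy $\Phi_t=\Theta_tT^{\dagger}$ combined with constancy of the semi-Fredholm index is in effect a proof of Theorem~\ref{cc1} in this special case; it is valid, but the interpolation is not the unavoidable ``main obstacle'' you describe---the $t=1$ estimate alone already feeds directly into cc1. The trade-off: the paper's route is shorter and uses only an elementary Banach-space lemma, while yours is self-contained at the price of importing Fredholm index theory. Your discussion of why a naive Neumann series fails (yielding only the ratio $(\alpha+\beta+\gamma/\sqrt a)/(1-\beta)$) is accurate and is precisely the reason a Hilding-type result such as Theorem~\ref{cc1} is the right tool here.
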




We next consider Bessel sequences which is next level of generalization of frames.
\begin{definition}(cf. \cite{CHRISTENSEN})
A collection $  \{\tau_n\}_{n}$ in  a Hilbert space $ \mathcal{H}$ is said to be a \textbf{Bessel sequence} for $\mathcal{H}$ if there exists a real constant  $  b >0$ such that
\begin{equation}\label{BESSEL INEQUALITY}
\text{ (\textbf{General Bessel's inequality}) } \quad \sum_{n=1}^\infty|\langle h, \tau_n \rangle|^2 \leq b\|h\|^2  ,\quad \forall h \in \mathcal{H}.
\end{equation}
Constant  $ b$ is called as  a Bessel bound for $  \{\tau_n\}_{n}$.	
\end{definition}
Inequality \ref{SEQUENTIALEQUATION1} is stronger than  Inequality \ref{BESSEL INEQUALITY}. Hence every frame is a Bessel sequence. Using Cauchy-Schwarz inequality, it follows that every finite set of vectors is a Bessel sequence (cf. \cite{CHRISTENSEN}). Now  using Theorem \ref{FINITEDIMESIONALCHARAC}, we  get  plenty of Bessel sequences which are not frames (in finite dimensions). As an example in infinite dimensions, we claim that $\{e_2,e_3, \dots\}$ is a Bessel sequence for $\ell^2(\mathbb{N})$ but not a frame. Clearly $\{e_2,e_3, \dots\}$ satisfies Inequality \ref{BESSEL INEQUALITY}. If this is a frame, let $a>0$ be such that first inequality in \ref{SEQUENTIALEQUATION1} holds. Then by taking $h=e_1$, we get $a\|e_1\|^2\leq \sum_{n=2}^{\infty}|\langle e_1, e_n \rangle|^2=0 ~ \Rightarrow a=0,$ which is a contradiction.

\begin{example}
\begin{enumerate}[label=(\roman*)]
\item If $ \{\tau_n\}_{n} $  is a frame for  $\mathcal{H}$, then for every subset $\mathbb{S}$ of $\mathbb{N}$,  $\{\tau_n\}_{n \in \mathbb{S}} $  is a Bessel sequence for  $\mathcal{H}$, because $\sum_{n \in \mathbb{S}}|\langle h, \tau_n \rangle|^2 \leq \sum_{n=1}^\infty|\langle h, \tau_n \rangle|^2 , \forall h \in \mathcal{H}.$
\item (cf. \cite{CHRISTENSEN}) Let $g\in \mathcal{L}^2(\mathbb{R})$ be bounded, compactly supported function and  $a,b>0$. Define $f_{n,m}:\mathbb{R}\ni x \mapsto e^{2\pi imbx}g(x-na)\in \mathbb{R},$ $\forall n, m\in \mathbb{Z}$. Then 
$  \{f_{n,m}\}_{n,m \in \mathbb{Z}}$ is a Bessel sequence for $\mathcal{L}^2(\mathbb{R})$. 
\item (cf. \cite{CHRISTENSEN}) If $\{\tau_n\}_{n=1}^\infty$ is an orthonormal basis for $\mathcal{H}$, then $\{\tau_n+\tau_{n+1}\}_{n=1}^\infty$ is a Bessel sequence  for $\mathcal{H}$ (but not a frame for $\mathcal{H}$).
\end{enumerate}
\end{example}
\begin{theorem}(cf. \cite{CHRISTENSEN})\label{OLEBESSELCHARACTERIZATION12}
A collection $  \{\tau_n\}_{n}$ is  a Bessel sequence for $\mathcal{H}$	if and only if the map $\ell^2(\mathbb{N}) \ni  \{\tau_n\}_{n} \mapsto \sum_{n=1}^\infty a_n\tau_n \in \mathcal{H}$ is a well-defined bounded-linear operator. Moreover, if $  \{\tau_n\}_{n}$ is  a Bessel sequence for $\mathcal{H}$, then the operator $ \mathcal{H} \ni h \mapsto  \sum_{n=1}^\infty\langle h, \tau_n\rangle \tau_n \in \mathcal{H}$ is positive.
\end{theorem}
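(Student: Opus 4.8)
The plan is to prove both implications by the standard duality argument, and then to recognize the operator $h\mapsto\sum_n\langle h,\tau_n\rangle\tau_n$ as $TT^*$, where $T$ denotes the synthesis map. For the forward implication, suppose $\{\tau_n\}_n$ is a Bessel sequence with bound $b$. The first step is the finite-sum estimate: for any finite subset $\mathbb{S}\subseteq\mathbb{N}$ and scalars $\{a_n\}_{n\in\mathbb{S}}$,
\[
\left\|\sum_{n\in\mathbb{S}}a_n\tau_n\right\|=\sup_{\|h\|\le 1}\left|\left\langle\sum_{n\in\mathbb{S}}a_n\tau_n,h\right\rangle\right|\le\sup_{\|h\|\le 1}\left(\sum_{n\in\mathbb{S}}|a_n|^2\right)^{1/2}\left(\sum_{n\in\mathbb{S}}|\langle h,\tau_n\rangle|^2\right)^{1/2}\le\sqrt{b}\left(\sum_{n\in\mathbb{S}}|a_n|^2\right)^{1/2},
\]
where I use the expression of the norm as a supremum of inner products against unit vectors, the Cauchy--Schwarz inequality in $\ell^2(\mathbb{S})$, and Inequality~\eqref{BESSEL INEQUALITY}. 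Applying this with $\mathbb{S}=\{k+1,\dots,m\}$ shows that, for $\{a_n\}_n\in\ell^2(\mathbb{N})$, the partial sums $\sum_{n=1}^{m}a_n\tau_n$ form a Cauchy sequence in $\mathcal{H}$ and hence converge by completeness; thus $T\{a_n\}_n\coloneqq\sum_n a_n\tau_n$ is well-defined, it is clearly linear, and letting $m\to\infty$ in the estimate gives $\|T\{a_n\}_n\|\le\sqrt{b}\,\|\{a_n\}_n\|$, so $T$ is bounded.

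For the converse, assume $T:\ell^2(\mathbb{N})\to\mathcal{H}$, $T\{a_n\}_n=\sum_n a_n\tau_n$, is a well-defined bounded linear operator; in particular $Te_n=\tau_n$ for every $n$. I would compute its adjoint: for $h\in\mathcal{H}$ the $n$-th coordinate of $T^*h$ is $\langle T^*h,e_n\rangle=\langle h,Te_n\rangle=\langle h,\tau_n\rangle$, so $T^*h=\{\langle h,\tau_n\rangle\}_n\in\ell^2(\mathbb{N})$. Since $\|T^*\|=\|T\|$, this yields $\sum_n|\langle h,\tau_n\rangle|^2=\|T^*h\|^2\le\|T\|^2\|h\|^2$ for all $h\in\mathcal{H}$, i.e. $\{\tau_n\}_n$ is Bessel with bound $\|T\|^2$.

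For the ``moreover'' statement, the point is that the operator $S:h\mapsto\sum_n\langle h,\tau_n\rangle\tau_n$ is precisely $TT^*$. Indeed, granting the equivalence just proved, $T$ is bounded and $T^*h=\{\langle h,\tau_n\rangle\}_n\in\ell^2(\mathbb{N})$; applying the first part to this sequence, the series $\sum_n\langle h,\tau_n\rangle\tau_n=T(T^*h)$ converges, so $S=TT^*$ is well-defined and bounded. Then $S^*=(TT^*)^*=TT^*=S$ and $\langle Sh,h\rangle=\langle TT^*h,h\rangle=\|T^*h\|^2\ge 0$ for all $h\in\mathcal{H}$, hence $S$ is positive. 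I do not expect a genuine obstacle here; the only steps needing a little care are the passage from the finite-sum bound to convergence of $\sum_n a_n\tau_n$ (and checking that the bound survives the limit) and the coordinatewise identification of $T^*h$, both of which are routine.
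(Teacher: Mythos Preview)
Your proof is correct and is the standard argument (duality for the forward direction, adjoint computation for the converse, and $S=TT^*$ for positivity). The paper does not supply its own proof of this statement; it is recorded as a known result with a reference to \cite{CHRISTENSEN}, so there is nothing to compare against beyond noting that your argument is essentially the one found there.
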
	
Since  a Bessel sequence need not be a frame, it is natural to ask the following question: Given a Bessel sequence, can we add extra elements to it so that the resulting sequence is a frame? Answer is positive. This result was obtained by \cite{LISUN}.
\begin{theorem}(\cite{LISUN})\label{BESSELEXPANSIONHILBERT}
	Every Bessel sequence in a Hilbert space can be expanded to a tight frame. Moreover,  we can expand a Bessel sequence in infinitely many ways to tight frames.
\end{theorem}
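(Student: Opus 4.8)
The plan is to realize the ``gap operator'' $bI_{\mathcal{H}}-S_{\tau}$ as the frame operator of an auxiliary Bessel sequence, via an operator square root. Let $\{\tau_n\}_{n}$ be a Bessel sequence for $\mathcal{H}$ with Bessel bound $b$. By Theorem \ref{OLEBESSELCHARACTERIZATION12} the operator $S_{\tau}\colon\mathcal{H}\ni h\mapsto\sum_{n=1}^{\infty}\langle h,\tau_n\rangle\tau_n\in\mathcal{H}$ is well defined, bounded and positive, and the Bessel inequality reads $\langle S_{\tau}h,h\rangle=\sum_{n=1}^{\infty}|\langle h,\tau_n\rangle|^2\le b\|h\|^2$, i.e.\ $0\le S_{\tau}\le bI_{\mathcal{H}}$. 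Hence $T\coloneqq bI_{\mathcal{H}}-S_{\tau}$ is a bounded positive operator with $0\le T\le bI_{\mathcal{H}}$, so it has a unique positive square root $T^{1/2}$.

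Next I would build a Bessel sequence whose frame operator is exactly $T$. Fix an orthonormal basis $\{e_k\}_{k}$ for $\mathcal{H}$ (countable, since $\mathcal{H}$ is separable) and set $\mu_k\coloneqq T^{1/2}e_k$. Using that $T^{1/2}$ is self-adjoint together with the Parseval identity for $\{e_k\}_{k}$,
\begin{align*}
\sum_{k=1}^{\infty}|\langle h,\mu_k\rangle|^2=\sum_{k=1}^{\infty}|\langle T^{1/2}h,e_k\rangle|^2=\|T^{1/2}h\|^2=\langle Th,h\rangle\le b\|h\|^2,\quad\forall h\in\mathcal{H},
\end{align*}
so $\{\mu_k\}_{k}$ is a Bessel sequence, and moreover $S_{\mu}h=\sum_{k=1}^{\infty}\langle T^{1/2}h,e_k\rangle T^{1/2}e_k=T^{1/2}\big(\sum_{k=1}^{\infty}\langle T^{1/2}h,e_k\rangle e_k\big)=T^{1/2}\big(T^{1/2}h\big)=Th$ for every $h$, i.e.\ $S_{\mu}=T$. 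Now consider the (still countable) family $\{\tau_n\}_{n}\cup\{\mu_k\}_{k}$: since each part is Bessel, its frame operator is $S_{\tau}+S_{\mu}=S_{\tau}+(bI_{\mathcal{H}}-S_{\tau})=bI_{\mathcal{H}}$, so $\sum_{n}|\langle h,\tau_n\rangle|^2+\sum_{k}|\langle h,\mu_k\rangle|^2=b\|h\|^2$ for all $h\in\mathcal{H}$. This is a tight frame with frame bound $b$ containing $\{\tau_n\}_{n}$, which proves the first assertion.

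For the ``infinitely many ways'' part I would observe that the only feature of $b$ used above is $S_{\tau}\le bI_{\mathcal{H}}$, which persists with any real $c\ge b$ in place of $b$. So for each $c\ge b$ the operator $T_c\coloneqq cI_{\mathcal{H}}-S_{\tau}$ is positive, $\{\mu^{(c)}_k\coloneqq T_c^{1/2}e_k\}_{k}$ is Bessel, and $\{\tau_n\}_{n}\cup\{\mu^{(c)}_k\}_{k}$ is a tight frame with bound $c$; distinct $c$ yield tight frames with distinct (optimal) frame bounds, hence genuinely different expansions (one could also vary the orthonormal basis $\{e_k\}_{k}$). I do not expect a real obstacle here: the only points needing care are the bookkeeping that the union of the two countable Bessel families is again a Bessel sequence, so that its frame operator is indeed $S_{\tau}+S_{\mu}$, and that when $T$ fails to be injective some appended vectors $\mu_k$ may vanish, which is harmless.
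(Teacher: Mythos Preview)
Your argument is correct and is essentially the standard construction: realize $bI_{\mathcal{H}}-S_{\tau}$ as the frame operator of $\{T^{1/2}e_k\}_k$ and adjoin this sequence to $\{\tau_n\}_n$. The paper itself does not supply a proof of this theorem---it is quoted as a background result from \cite{LISUN}---but your approach is precisely the one used there, so there is nothing to contrast.
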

\cite{LISUN} further observed that  if a Bessel sequence for a Hilbert space  can be expanded finitely to get a tight frame, then the number of elements added can not be small. Precise statement reads as follows.
\begin{theorem}(\cite{LISUN})\label{BESSELNUMBERHILBERT}
	Let $  \{\tau_n\}_{n}$ be  a Bessel sequence for $\mathcal{H}$. If  $  \{\tau_n \}_{n}\cup \{\omega_k\}_{k=1}^N $ is a $\lambda$-tight frame  for $\mathcal{H}$, then 
	\begin{align}\label{LISUNNUMBER}
	N\geq \dim (\lambda I_\mathcal{H}-S_{\tau}) (\mathcal{H}).
	\end{align}
Further, the Inequality (\ref{LISUNNUMBER}) can not be improved.
\end{theorem}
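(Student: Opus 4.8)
The plan is to turn the tight-frame hypothesis into a single operator identity and then read off a rank estimate. First I would observe that saying $\{\tau_n\}_{n}\cup\{\omega_k\}_{k=1}^N$ is a $\lambda$-tight frame for $\mathcal{H}$ means precisely that
\begin{align*}
\lambda\|h\|^2=\sum_{n=1}^\infty|\langle h,\tau_n\rangle|^2+\sum_{k=1}^N|\langle h,\omega_k\rangle|^2=\langle S_\tau h,h\rangle+\Big\langle \sum_{k=1}^N\langle h,\omega_k\rangle\omega_k,\,h\Big\rangle,\quad\forall h\in\mathcal{H}.
\end{align*}
Writing $S_\omega h\coloneqq\sum_{k=1}^N\langle h,\omega_k\rangle\omega_k$ (a bounded positive operator, being a finite sum of positive rank-one operators), and using that $\lambda I_\mathcal{H}-S_\tau-S_\omega$ is self-adjoint and has vanishing quadratic form by the display above, I would conclude the operator identity $S_\omega=\lambda I_\mathcal{H}-S_\tau$.

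Second, I would note that $\operatorname{Ran}(S_\omega)\subseteq\operatorname{span}\{\omega_1,\dots,\omega_N\}$, since $S_\omega h$ is always a linear combination of $\omega_1,\dots,\omega_N$; hence $\dim S_\omega(\mathcal{H})\le N$. Combined with $S_\omega=\lambda I_\mathcal{H}-S_\tau$ this yields $\dim(\lambda I_\mathcal{H}-S_\tau)(\mathcal{H})\le N$, which is the claimed inequality; it also shows in passing that $\lambda I_\mathcal{H}-S_\tau$ is automatically a positive operator of finite rank.

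Third, for the optimality assertion I would start from a Bessel sequence $\{\tau_n\}_{n}$ for which $\lambda I_\mathcal{H}-S_\tau\ge 0$ has finite rank $m\coloneqq\dim(\lambda I_\mathcal{H}-S_\tau)(\mathcal{H})$ --- the simplest concrete instance being $\mathcal{H}=\ell^2(\mathbb{N})$, $\{\tau_n\}_{n}=\{e_2,e_3,\dots\}$ and $\lambda=1$, where $I_\mathcal{H}-S_\tau=\langle\cdot,e_1\rangle e_1$ has rank $1$ and adjoining $\omega_1=e_1$ restores the orthonormal basis, a Parseval frame. In general, by the spectral theorem for positive finite-rank operators write $\lambda I_\mathcal{H}-S_\tau=\sum_{j=1}^m\mu_j\langle\cdot,e_j\rangle e_j$ with $\mu_j>0$ and $\{e_j\}_{j=1}^m$ orthonormal, and set $\omega_j\coloneqq\sqrt{\mu_j}\,e_j$ for $1\le j\le m$. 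Then $\sum_{j=1}^m\langle h,\omega_j\rangle\omega_j=(\lambda I_\mathcal{H}-S_\tau)h$, so the frame operator of $\{\tau_n\}_{n}\cup\{\omega_j\}_{j=1}^m$ is $S_\tau+(\lambda I_\mathcal{H}-S_\tau)=\lambda I_\mathcal{H}$; thus this family is a $\lambda$-tight frame obtained by adjoining exactly $m$ vectors, so the lower bound $N\ge m$ is attained and cannot be improved.

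The computations are essentially routine; the only points needing care are the passage in Step 1 from the scalar identity to the operator identity $S_\omega=\lambda I_\mathcal{H}-S_\tau$ (which uses that a self-adjoint operator with zero quadratic form is zero), and the realization in Step 3, where one must ensure the spectral decomposition of $\lambda I_\mathcal{H}-S_\tau$ is taken with multiplicities, so that precisely $m=\dim(\lambda I_\mathcal{H}-S_\tau)(\mathcal{H})$ vectors are adjoined, not fewer.
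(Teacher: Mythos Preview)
Your proof is correct and follows essentially the same route as the paper: the paper (in its Banach-space analogue, Theorem~\ref{NUMBERINEQUALITY}, since the Hilbert-space statement itself is only cited) writes the frame operator of the enlarged system as $\lambda I=S_\tau+S_\omega$, reads off $S_\omega=\lambda I-S_\tau$, and bounds $\dim S_\omega(\mathcal{H})\le N$. Your treatment of optimality via the spectral decomposition of $\lambda I_\mathcal{H}-S_\tau$ is in fact more complete than the paper's, which only exhibits a single rank-one example (Example~\ref{EXAMPLEOME}) to show the bound is sharp.
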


\section{RIESZ BASES, FRAMES AND BESSEL SEQUENCES FOR BANACH SPACES}
Definition of Riesz basis, as given in Definition \ref{RIESZBASISDEFINITION} requires the notion of inner product. Due to the lack of inner product in a  Banach space, Definition \ref{RIESZBASISDEFINITION} can not be carried over to Banach spaces. However, Theorem \ref{RIESZBASISTHM} allows to define Riesz basis for Banach spaces as follows. 
 \begin{definition}(\cite{ALDROUBISUNTANG})\label{RIESZBASISDEFINITIONBANACHSPACE}
	Let $1<q<\infty$ and $\mathcal{X}$ be a Banach space. A collection 	$\{\tau_n\}_{n}$  in $\mathcal{X}$ is said to be a 
	\begin{enumerate}[label=(\roman*)]
		\item \textbf{q-Riesz sequence} for $\mathcal{X}$ if there exist $a,b>0$ such that for every finite subset $\mathbb{S}$ of  $\mathbb{N}$, 
		\begin{align}\label{RIESZSEQUENCEINEQUALITY}
		a \left(\sum_{n \in \mathbb{S}}|c_n|^q\right)^\frac{1}{q}\leq \left\|\sum_{n \in \mathbb{S}}c_n\tau_n\right\|\leq b \left(\sum_{n \in \mathbb{S}}|c_n|^q\right)^\frac{1}{q}, \quad \forall c_n \in \mathbb{K}.
		\end{align}
		\item \textbf{q-Riesz basis} for $\mathcal{X}$ if it is a q-Riesz sequence  for
		$\mathcal{X}$ and $\overline{\operatorname{span}}\{\tau_n\}_{n}=\mathcal{X}$.
	\end{enumerate}
\end{definition}
\begin{example}
	Let $\{e_n\}_{n}$ be the standard Schauder basis for $\ell^p(\mathbb{N})$ and $A:\ell^p(\mathbb{N})\to \ell^p(\mathbb{N})$ be a bounded linear invertible operator. Then it follows that $\{Ae_n\}_{n}$ is a p-Riesz basis for $\ell^p(\mathbb{N})$.
\end{example}
Like Theorem \ref{RIESZISAFRAME}, we have a similar result for p-Riesz basis.
\begin{theorem}(\cite{CHRISTENSENSTOEVA})
	Let $\{f_n\}_{n}$ be a q-Riesz basis for $\mathcal{X}^*$ and let $p$ be the conjugate index of $q$. Then there exists a unique p-Riesz basis $\{\tau_n\}_{n}$  for  $\mathcal{X}$ such that 
	\begin{align*}
	x=\sum_{n=1}^{\infty}f_n(x)\tau_n, \quad \forall x \in \mathcal{X} \quad \text{ and } \quad f=\sum_{n=1}^{\infty}f(\tau_n)f_n, \quad \forall f \in \mathcal{X}^*.
	\end{align*}
\end{theorem}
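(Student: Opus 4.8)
The plan is to recognise $\{f_n\}_n$ as the image of the standard unit vector basis of $\ell^q(\mathbb{N})$ under an isomorphism, and then to produce $\{\tau_n\}_n$ as the image of the standard unit vector basis under the associated adjoint map. First I would introduce the synthesis operator $T\colon\ell^q(\mathbb{N})\to\mathcal{X}^*$, $T\{c_n\}_n\coloneqq\sum_{n=1}^\infty c_nf_n$. The upper estimate in Inequality~(\ref{RIESZSEQUENCEINEQUALITY}) makes the partial sums Cauchy, so $T$ is a well-defined bounded linear operator with $Te_n=f_n$; passing to the limit in the lower estimate gives $\|T\{c_n\}_n\|\ge a\,\|\{c_n\}_n\|_{\ell^q}$, so $T$ is bounded below, hence injective with closed range, and since $\overline{\operatorname{span}}\{f_n\}_n=\mathcal{X}^*$ that range is all of $\mathcal{X}^*$. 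By the open mapping theorem $T$ is an isomorphism, so $\mathcal{X}^*\cong\ell^q(\mathbb{N})$; as $1<q<\infty$ the space $\ell^q(\mathbb{N})$ is reflexive, hence $\mathcal{X}^*$ is reflexive, hence $\mathcal{X}$ is reflexive, and I may identify $\mathcal{X}^{**}$ with $\mathcal{X}$ canonically. Here $(\ell^q(\mathbb{N}))^*=\ell^p(\mathbb{N})$ with the usual pairing, under which the standard unit vectors correspond and satisfy $\langle e_m,e_n\rangle=\delta_{m,n}$.

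Next I would pass to the adjoint $S\coloneqq T^*\colon\mathcal{X}^{**}=\mathcal{X}\to(\ell^q(\mathbb{N}))^*=\ell^p(\mathbb{N})$, again an isomorphism. For $x\in\mathcal{X}$ the $n$-th coordinate of $Sx$ is $\langle T^*x,e_n\rangle=\langle x,Te_n\rangle=f_n(x)$, so $Sx=\{f_n(x)\}_n$. I then set $\tau_n\coloneqq S^{-1}e_n\in\mathcal{X}$ (with $e_n$ now the standard basis of $\ell^p(\mathbb{N})$). Since $\{e_n\}_n$ is a $p$-Riesz basis for $\ell^p(\mathbb{N})$ and $S^{-1}$ is an isomorphism of $\ell^p(\mathbb{N})$ onto $\mathcal{X}$, the argument of the Example following Definition~\ref{RIESZBASISDEFINITIONBANACHSPACE} shows at once that $\{\tau_n\}_n$ is a $p$-Riesz basis for $\mathcal{X}$ (with bounds $\|S\|^{-1}$ and $\|S^{-1}\|$, and total span).

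It then remains to verify the two identities. Biorthogonality is immediate: $f_n(\tau_m)=\langle\tau_m,Te_n\rangle=\langle S\tau_m,e_n\rangle=\langle e_m,e_n\rangle=\delta_{n,m}$. For $x\in\mathcal{X}$, write $Sx=\{f_n(x)\}_n=\sum_{n=1}^\infty f_n(x)e_n$ in $\ell^p(\mathbb{N})$ and apply the bounded operator $S^{-1}$ to get $x=S^{-1}Sx=\sum_{n=1}^\infty f_n(x)\tau_n$. Dually, for $f\in\mathcal{X}^*$ put $\{d_n\}_n\coloneqq T^{-1}f$, so $f=\sum_{n=1}^\infty d_nf_n$, and $d_n=\langle e_n,T^{-1}f\rangle=\langle S\tau_n,T^{-1}f\rangle=\langle\tau_n,TT^{-1}f\rangle=\langle\tau_n,f\rangle=f(\tau_n)$, giving $f=\sum_{n=1}^\infty f(\tau_n)f_n$. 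For uniqueness, if $\{\tau_n'\}_n$ is any $p$-Riesz basis with $x=\sum_{n=1}^\infty f_n(x)\tau_n'$ for all $x\in\mathcal{X}$, then substituting $x=\tau_m$ and using $f_n(\tau_m)=\delta_{n,m}$ yields $\tau_m=\tau_m'$.

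The only genuinely delicate point I anticipate is the observation that the hypothesis already forces $\mathcal{X}$ to be reflexive: this is precisely what guarantees that the coordinate functionals of $\{f_n\}_n$, which a priori are elements of $\mathcal{X}^{**}$, actually lie inside $\mathcal{X}$, so that $\{\tau_n\}_n$ can be taken in $\mathcal{X}$ as claimed. Beyond that, the work is to keep the two dualities $\ell^p(\mathbb{N})\leftrightarrow\ell^q(\mathbb{N})$ and $\mathcal{X}\leftrightarrow\mathcal{X}^*$ consistently aligned through $T$ and $T^*$; everything else is routine bookkeeping with bounded operators and convergent series.
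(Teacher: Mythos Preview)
The paper does not supply a proof of this theorem: it is quoted from \cite{CHRISTENSENSTOEVA} as background in the introductory survey and left without argument, so there is no in-paper proof to compare against.

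Your proposal is a complete and correct proof. The crucial step, which you identify explicitly, is that the existence of a $q$-Riesz basis for $\mathcal{X}^*$ forces $\mathcal{X}^*\cong\ell^q(\mathbb{N})$ and hence $\mathcal{X}$ to be reflexive; without this observation one could only place the biorthogonal system $\{\tau_n\}_n$ in $\mathcal{X}^{**}$, and the statement as written would be unjustified. Once reflexivity is in hand, the rest is exactly the standard duality computation you carry out, transporting the Schauder expansions of $\ell^p$ and $\ell^q$ through the isomorphisms $T$ and $T^*$. The uniqueness argument via biorthogonality is also clean. Nothing is missing.
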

By realizing that the functional $\mathcal{H} \ni h \mapsto \langle h, \tau_n\rangle \in \mathbb{K}$ is bounded linear, Definition \ref{OLE} leads to the following in Banach spaces.
\begin{definition}(\cite{ALDROUBISUNTANG, CHRISTENSENSTOEVA})\label{FRAMEDEFINITIONBANACH}
	Let $1<p<\infty$ and $\mathcal{X}$ be a Banach space. 
	\begin{enumerate}[label=(\roman*)]
		\item A collection 	$\{f_n\}_{n}$ of bounded linear functionals in $\mathcal{X}^*$ is said to be a  \textbf{p-frame} for $\mathcal{X}$ if there exist $a,b>0$ such that 	
		\begin{align*}
		a\|x\|\leq \left(\sum_{n=1}^{\infty}|f_n(x)|^p\right)^\frac{1}{p}\leq b\|x\|,\quad \forall x \in \mathcal{X}.
		\end{align*}
		If $a$ can take the value 0,
		then we say $\{f_n\}_{n}$ is a p-Bessel sequence for $\mathcal{X}$.
		\item A collection 	$\{\tau_n\}_{n}$  in $\mathcal{X}$ is said to
		be a \textbf{p-frame} for $\mathcal{X}^*$ if there exist $a,b>0$ such that 
		\begin{align*}
		a\|f\|\leq \left(\sum_{n=1}^{\infty}|f(\tau_n)|^p\right)^\frac{1}{p}\leq b\|f\|,\quad \forall f \in \mathcal{X}^*.
		\end{align*}
	\end{enumerate}	
\end{definition}
\begin{example}
	\begin{enumerate}[label=(\roman*)]
		\item 	Let $\{e_n\}_{n}$ be the standard Schauder basis for $\ell^p(\mathbb{N})$,   $\{\zeta_n\}_{n}$ be the coordinate functionals associated to  $\{e_n\}_{n}$ and $A:\ell^p(\mathbb{N})\to \ell^p(\mathbb{N})$ be a bounded linear invertible operator. Then it follows that $\{\zeta_nA\}_{n}$ is a p-frame for $\ell^p(\mathbb{N})$.
		\item (\cite{ALDROUBISUNTANG}) Let $1\leq p <\infty$ and  $a \in \mathbb{R}$. Define  
		\begin{align*}
		T_a:\mathcal{L}^p(\mathbb{R}) \ni f \mapsto T_af \in  \mathcal{L}^p(\mathbb{R}), \quad T_af:\mathbb{R} \mapsto (T_af) (x)\coloneqq f(x-a) \in \mathbb{C}.
		\end{align*}
		Define  
		\begin{align*}
	W\coloneqq\left\{f:\mathbb{R} \to \mathbb{C}\bigg| \sup_{x \in \mathbb{R}}\sum_{k\in \mathbb{Z}} |T_kf(x)|<\infty\right\} 
		\end{align*}
		and for $1<p<\infty$, $\phi \in W$, 
		\begin{align*}
		S_p\coloneqq \left\{\sum_{k \in \mathbb{Z}}c_kT_k\phi \bigg|\{c_k\}_{k \in \mathbb{Z}}\in \ell^p(\mathbb{Z})\right\}.
		\end{align*}
	\end{enumerate}
Then $S_p$ is a closed subspace of $\mathcal{L}^p(\mathbb{R})$ and $\{T_k\phi\}_{k \in \mathbb{Z}}$ is a p-frame for $\mathcal{L}^p(\mathbb{R})$.
\end{example}
Like Theorem \ref{OLEBESSELCHARACTERIZATION12}, we have a similar result for Banach spaces.
 \begin{theorem}(\cite{CHRISTENSENSTOEVA})\label{pFRAMECHAR}
	Let $\mathcal{X}$ be  a Banach space and $\{f_n\}_{n}$ be a sequence in
	$\mathcal{X}^*$.
	\begin{enumerate}[label=(\roman*)]
		\item $\{f_n\}_{n}$ is a p-Bessel sequence for 
		$\mathcal{X}$ with bound $b$ if and only if 
		\begin{align}\label{BASSELOPERATORCHARACTERIZATION}
		T: \ell^q (\mathbb{N}) \ni \{a_n\}_{n} \mapsto  \sum_{n=1}^\infty a_nf_n \in \mathcal{X}^*
		\end{align}
		is a well-defined (hence bounded) linear operator and $\|T\|\leq b$ (where $q$ is the conjugate
		index of $p$).
		\item If $\mathcal{X}$ is reflexive, then $\{f_n\}_{n}$ is a p-frame 
		for $\mathcal{X}$ if and only if the operator $T$ in
		(\ref{BASSELOPERATORCHARACTERIZATION}) is surjective.
	\end{enumerate}
\end{theorem}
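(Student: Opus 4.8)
The plan is to identify $T$ with the Banach-space adjoint of the ``analysis'' map $U\colon\mathcal{X}\to\ell^p(\mathbb{N})$, $Ux=\{f_n(x)\}_n$, and then read everything off from the classical duality between lower boundedness and surjectivity of an operator.

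For part (i), the forward implication is a plain Hölder estimate. If $\{f_n\}_n$ is $p$-Bessel with bound $b$, then for $\{a_n\}_n\in\ell^q(\mathbb{N})$ and $M<N$ one bounds $\left\|\sum_{n=M+1}^{N}a_nf_n\right\|_{\mathcal{X}^*}=\sup_{\|x\|\le 1}\left|\sum_{n=M+1}^{N}a_nf_n(x)\right|$ above by $b\left(\sum_{n=M+1}^{N}|a_n|^q\right)^{1/q}$, so the partial sums form a Cauchy sequence in the complete space $\mathcal{X}^*$; passing to the limit shows $T$ is well defined with $\|T\{a_n\}_n\|\le b\|\{a_n\}_n\|_q$. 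For the converse, observe first that an everywhere-defined $T$ is automatically bounded, being the pointwise limit of the bounded operators $\{a_n\}_n\mapsto\sum_{n=1}^{N}a_nf_n$, so Banach--Steinhaus applies. To recover the Bessel bound one tests $T$ against the finitely supported vector with entries $c_n\coloneqq\overline{f_n(x)}\,|f_n(x)|^{p-2}$ for $n\le N$ with $f_n(x)\neq0$ and $c_n\coloneqq0$ otherwise: since $(p-1)q=p$ this gives $\sum_{n=1}^{N}|f_n(x)|^p=(T\{c_n\}_n)(x)\le b\|x\|\left(\sum_{n=1}^{N}|f_n(x)|^p\right)^{1/q}$, and dividing (when the sum is nonzero) and letting $N\to\infty$ yields $\left(\sum_{n=1}^{\infty}|f_n(x)|^p\right)^{1/p}\le b\|x\|$.

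For part (ii), note that ``$T$ is a surjective operator'' presupposes $T$ is well defined, so by (i) $\{f_n\}_n$ is $p$-Bessel and $U$ is a bounded linear map into $\ell^p(\mathbb{N})$; a direct computation with the pairing $\langle\ell^p,\ell^q\rangle$ shows $(U^*\{a_n\}_n)(x)=\sum_n a_nf_n(x)=(T\{a_n\}_n)(x)$, i.e.\ $T=U^*$ under $(\ell^p)^*\cong\ell^q$. The $p$-frame property of $\{f_n\}_n$ is exactly that $U$ is bounded \emph{and} bounded below: the upper half is equivalent, by (i), to $T$ being a bounded operator, and the lower half is equivalent, by the standard theorem that a bounded operator between Banach spaces is bounded below if and only if its adjoint is surjective, to $U^*=T$ being surjective. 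Reflexivity enters when one runs the converse through second adjoints: $T^*\colon\mathcal{X}^{**}\to(\ell^q)^*\cong\ell^p(\mathbb{N})$ computes to $T^*\widehat{x}=\{f_n(x)\}_n$, so once $\mathcal{X}^{**}$ is identified with $\mathcal{X}$ we get $T^*=U$, whence $T$ surjective $\Rightarrow T^*=U$ bounded below $\Rightarrow$ the lower $p$-frame inequality, which together with $p$-Besselness makes $\{f_n\}_n$ a $p$-frame.

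I expect the hard part to be care rather than ideas: justifying ``well defined $\Rightarrow$ bounded'' via uniform boundedness, keeping track of which dual space each adjoint acts on (and using $(\ell^p)^*=\ell^q$ isometrically for $1<p<\infty$), and invoking reflexivity at exactly the point where $\mathcal{X}^{**}$ must be replaced by $\mathcal{X}$ so that $T^*$ becomes the analysis operator. The duality lemma ``bounded below $\iff$ adjoint surjective'' is the engine of part (ii), and it needs only Hahn--Banach in one direction and the open mapping theorem in the other, so nothing beyond the classical theorems already in hand is required.
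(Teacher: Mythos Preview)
The paper does not supply its own proof of this theorem; it is quoted from Christensen--Stoeva without argument, so there is nothing in the paper to compare against. Your proof is correct in all essentials.

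One comment on part~(ii): your first argument already settles the equivalence completely, and it does not use reflexivity. Once $T$ is well defined you have $T=U^*$, and the standard duality lemma (a bounded operator between Banach spaces is bounded below if and only if its adjoint is surjective) holds for arbitrary Banach spaces via Hahn--Banach in one direction and the open mapping theorem in the other, exactly as you say. So the $p$-frame condition is equivalent to $T$ being a well-defined surjection with no reflexivity hypothesis. Your second paragraph, passing to $T^*$ and invoking $\mathcal{X}^{**}\cong\mathcal{X}$, is therefore redundant: it gives an alternative route to the converse that does require reflexivity, but you have already reached the conclusion without it. In short, you have proved slightly more than the statement asks for, and the sentence ``Reflexivity enters when \dots'' is misleading---reflexivity never needs to enter if you stick with your first argument.
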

Rather working on p-frames, one can consider a general notion of frames, which are generalizations of $\ell^p (\mathbb{N})$ spaces. For this, we need the notion of BK-space (Banach scalar valued sequence
space or Banach coordinate space).
\begin{definition} (cf. \cite{BANASMURSALEEN})
A sequence space $\mathcal{X}_d$ is said to be  a  \textbf{BK-space} if it is a Banach space and all the coordinate functionals are continuous, i.e.,
whenever $\{x_n\}_n$ is a sequence in $\mathcal{X}_d$ converging to $x \in \mathcal{X}_d$, then each coordinate of 
$x_n$ converges to each coordinate of $x$.
\end{definition}
Familiar sequence spaces like  $\ell^p(\mathbb{N})$, $c(\mathbb{N})$ (space of convergent sequences) and $c_0(\mathbb{N})$ (space of  sequences converging to zero) are examples of  BK-spaces. We now recall an example of a sequence space which is not a BK-space.
\begin{example}
	The space $\mathcal{X}_d\coloneqq\{\{x_n\}_{n=0}^\infty: x_n \in \mathbb{K}, \forall n \in \mathbb{N}\cup \{0\}\}$ equipped with the metric 
	\begin{align*}
		d(\{x_n\}_{n=0}^\infty, \{y_n\}_{n=0}^\infty)\coloneqq\sum_{n=0}^{\infty} \frac{1}{2^n}\frac{|x_n-y_n|}{1+|x_n-y_n|}, \quad \forall \{x_n\}_{n=0}^\infty, \{y_n\}_{n=0}^\infty \in \mathcal{X}_d
	\end{align*}
	is not a BK-space.
\end{example}
\begin{definition}(\cite{CASAZZACHRISTENSENSTOEVA})\label{XDFRAME}
	Let $\mathcal{X}$ be a Banach space and $\mathcal{X}_d$  be  an associated  BK-space. A collection $\{f_n\}_n$  in  $\mathcal{X}^*$ is said to be a   \textbf{$\mathcal{X}_d$-frame} 
	for $\mathcal{X}$ if the following holds.
	\begin{enumerate}[label=(\roman*)]
		\item $\{f_n(x)\}_n \in \mathcal{X}_d$, for  each $x \in \mathcal{X}$. 
		\item There exist $a,b>0$ such that $	a\|x\|\leq \|\{f_n(x)\}_n\|\leq b\|x\|,  \forall x \in \mathcal{X}.$
	\end{enumerate}
	Constants $a$ and $b$ are called as $\mathcal{X}_d$-frame bounds. 
\end{definition}
	
\begin{definition}(\cite{CASAZZACHRISTENSENSTOEVA})
	Let $\mathcal{X}$ be a Banach space and $\mathcal{X}_d$  be  an associated  BK-space. A collection $\{\tau_n\}_n$ in $\mathcal{X}$ is said to be a   \textbf{$\mathcal{X}_d$-frame} 
	for $\mathcal{X}$ if the following holds.
	\begin{enumerate}[label=(\roman*)]
		\item $\{f(\tau_n)\}_n \in \mathcal{X}_d$, for  each $f \in \mathcal{X}^*$. 
		\item There exist $a,b>0$ such that $	a\|f\|\leq \|\{f(\tau_n)\}_n\|\leq b\|f\|,  \forall f \in \mathcal{X}^*.$
	\end{enumerate}
\end{definition}
	
\begin{definition}(\cite{GROCHENIG})\label{BANACHFRAMEDEF}
	Let $\mathcal{X}$ be a Banach space and $\mathcal{X}_d$  be  an associated  BK-space. Let $\{f_n\}_n$ be a collection in $\mathcal{X}^*$ and $S:\mathcal{X}_d \to \mathcal{X}$ 
	be a bounded linear operator.
	The pair $(\{f_n\}_n, S)$ is said to be a  \textbf{Banach frame} for $\mathcal{X}$  
	if the following holds.
	\begin{enumerate}[label=(\roman*)]
		\item $\{f_n(x)\}_n \in \mathcal{X}_d$, for  each $x \in \mathcal{X}$. 
		\item There exist $a,b>0$ such that 
		$
		a\|x\|\leq \|\{f_n(x)\}_n\|\leq b\|x\|,  \forall x \in \mathcal{X}.
		$
		\item $S(\{f_n(x)\}_n)=x$, for  each $x \in \mathcal{X}$.
	\end{enumerate}
	Constants $a$ and $b$ are called as  \textbf{lower Banach frame bound} and  \textbf{upper Banach frame bound}, respectively. The operator $S$ is called as \textbf{reconstruction operator} and the operator $\theta_f:\mathcal{X} \ni x \mapsto \theta_f(x)\coloneqq \{f_n(x)\}_n\in  \mathcal{X}_d$ is called as \textbf{analysis operator}. 
\end{definition}
\begin{example}
\begin{enumerate}[label=(\roman*)]
	\item Let $\{\tau_n\}_n$ be a frame for a Hilbert space $\mathcal{H}$ with bounds $a$ and $b$. Let $f \in  \mathcal{H}^* $. Let $h_f \in \mathcal{H}$ be such that $f(h) =\langle h, h_f\rangle $, $\forall h \in \mathcal{H}$ and $\|f\|=\|h_f\|$. Then 
	\begin{align*}
	a\|f\|^2=a\|h_f\|^2\leq \sum_{n=1}^\infty |\langle h_f, \tau_n\rangle |^2= \sum_{n=1}^\infty|f(\tau_n)|^2\leq b\|h_f\|^2=b\|f\|^2.
	\end{align*}Therefore $\{\tau_n\}_n$ is an $\ell^2(\mathbb{N})$-frame for $\mathcal{H}$.
	\item   Let $\{\tau_n\}_n$ be a frame for Hilbert space $\mathcal{H}$.  We  define $f_n (h)\coloneqq \langle h,  \tau_n\rangle $, $ \forall h \in \mathcal{H}$, $ \forall n$ and $S\coloneqq S_\tau^{-1}\theta_\tau^*$. Then $(\{f_n\}_n, S)$ is  a Banach frame for $\mathcal{H}$.
	\item  Let $\{\tau_n\}_n$ be a frame for $\mathcal{H}$.  We  define $f_n (h)\coloneqq \langle h,  S_\tau^{-1}\tau_n\rangle $, $ \forall h \in \mathcal{H}$, $ \forall n$ and $S\coloneqq \theta_\tau^*$. Then $(\{f_n\}_n, S)$ is  a Banach frame for $\mathcal{H}$.
	\item (\cite{CASAZZACHRISTENSENSTOEVA}) Let $\{\tau_n\}_n$ be an orthonormal basis   for a Hilbert space $\mathcal{H}$. We define 
	\begin{align*}
	\mathcal{X}_d\coloneqq \{\{\langle h,  \tau_n+\tau_{n+1}\rangle\}_n:h \in \mathcal{H} \}=\{\{  a_n+a_{n+1}\}_n:\{a\}_n \in \ell^2(\mathbb{N}) \}
	\end{align*}
	equipped with the norm 
	\begin{align*}
	\|\{  a_n+a_{n+1}\}_n\|\coloneqq \left(\sum_{n=1}^{\infty}|a_n|^2\right)^\frac{1}{2}.
	\end{align*}
	Then $\mathcal{X}_d$ is a BK-space.  Define $f_n:\mathcal{H} \ni h \mapsto \langle h,  \tau_n+\tau_{n+1}\rangle \in \mathbb{K}$, $\forall n$, $T: \mathcal{H} \ni h \mapsto \{f_n(h)\}_n \in \mathcal{X}_d$ and set $S\coloneqq T^{-1}$. Then $(\{f_n\}_n, S)$ is a  Banach frame for $\mathcal{H}$. However, $\{\tau_n+\tau_{n+1}\}_n$ is not a frame for $\mathcal{H}$.
\end{enumerate}	
\end{example}
For Hilbert spaces it follows immediately that every Hilbert space has a frame because separable spaces have
orthonormal bases and an orthonormal basis is a frame. 
Using Hahn-Banach theorem, the following result was proved in (\cite{CASAZZAHANLARSONFRAMEBANACH}).
\begin{theorem}(\cite{CASAZZAHANLARSONFRAMEBANACH})\label{BANACHFRAMEEXISTSSEPARABLE}
	Every separable Banach space admits a Banach frame.
\end{theorem}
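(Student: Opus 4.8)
The plan is to use separability to extract countably many norming functionals via Hahn--Banach, and then to manufacture the auxiliary BK-space directly from the analysis map so that it becomes an isometry; with that choice all three clauses of Definition \ref{BANACHFRAMEDEF} hold with both bounds equal to $1$.

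First I would fix a countable set $\{x_n\}_n$ dense in the unit sphere $S_{\mathcal{X}}=\{x\in\mathcal{X}:\|x\|=1\}$, which exists since $\mathcal{X}$ is separable, and for each $n$ use the Hahn--Banach theorem to pick $f_n\in\mathcal{X}^*$ with $\|f_n\|=1$ and $f_n(x_n)=1$. The one estimate that matters is $\sup_n|f_n(x)|=\|x\|$ for all $x\in\mathcal{X}$: the bound $\le$ is immediate from $\|f_n\|=1$, and for $\ge$, given $x\neq0$ put $y:=x/\|x\|$ and, for $\varepsilon>0$, choose $m$ with $\|y-x_m\|<\varepsilon$, so that $f_m(y)\ge f_m(x_m)-\|y-x_m\|>1-\varepsilon$; letting $\varepsilon\downarrow 0$ gives $\sup_n|f_n(y)|\ge 1$, i.e. $\sup_n|f_n(x)|\ge\|x\|$. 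In particular $\{f_n\}_n$ separates the points of $\mathcal{X}$.

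Next I would take the auxiliary space to be the range of the analysis map,
\[
\mathcal{X}_d:=\{\{f_n(x)\}_n: x\in\mathcal{X}\}\subseteq\mathbb{K}^{\mathbb{N}},\qquad \big\|\{f_n(x)\}_n\big\|:=\sup_n|f_n(x)|.
\]
By the estimate above this norm coincides with $\|x\|$, so $\mathcal{X}_d$ is isometric to $\mathcal{X}$ and hence complete, and each coordinate map $\{f_n(x)\}_n\mapsto f_m(x)$ is bounded by $1$, so $\mathcal{X}_d$ is a BK-space. Since $\{f_n\}_n$ separates points, $S:\mathcal{X}_d\to\mathcal{X}$, $S(\{f_n(x)\}_n):=x$, is a well-defined linear map, and $\|S(\{f_n(x)\}_n)\|=\|x\|=\|\{f_n(x)\}_n\|$ shows $S$ is bounded (in fact an isometric isomorphism with the reconstruction property built in). It then remains only to read off the three conditions of Definition \ref{BANACHFRAMEDEF}: (i) $\{f_n(x)\}_n\in\mathcal{X}_d$ by construction; (ii) $\|x\|\le\|\{f_n(x)\}_n\|\le\|x\|$, so both Banach-frame bounds are $1$; (iii) $S(\{f_n(x)\}_n)=x$ by definition of $S$. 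Hence $(\{f_n\}_n,S)$ is a Banach frame for $\mathcal{X}$.

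I do not expect a genuine obstacle; the only delicate point is the norming identity $\sup_n|f_n(x)|=\|x\|$, which is precisely where countability (from separability) and the Hahn--Banach extension of norm-attaining functionals are both used, and where one sees that no reflexivity or further structure on $\mathcal{X}$ is needed. If one preferred the BK-space to be one of the ``standard'' sequence spaces, one could compose the analysis map with the isometric embedding $\mathcal{X}_d\hookrightarrow\ell^\infty(\mathbb{N})$, but Definition \ref{BANACHFRAMEDEF} only asks for \emph{some} associated BK-space, so the construction above suffices as stated.
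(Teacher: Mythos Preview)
Your proof is correct and follows essentially the approach the paper attributes to \cite{CASAZZAHANLARSONFRAMEBANACH}: use Hahn--Banach on a countable dense set to produce norming functionals, then take the BK-space to be the range of the analysis map with the inherited $\ell^\infty$-norm. The paper does not reproduce the proof of this particular theorem, but its proof of the metric analogue (Theorem~\ref{METRICFRAMEEXISTS}) carries out exactly the same strategy with McShane extension in place of Hahn--Banach, confirming that your route is the intended one.
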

The notion of atomic decomposition is studied along with the notion of frames for Banach spaces. This is defined as follows.
\begin{definition}(\cite{GROCHENIG})\label{ATOMICDECOMPODEFI}
	Let $\mathcal{X}$ be a Banach space and $\mathcal{X}_d$  be  an associated  BK-space. Let $\{f_n\}_n$ be a collection in $\mathcal{X}^*$ and 
	$\{\tau_n\}_n$ be a collection in $\mathcal{X}$. The pair $(\{f_n\}_n, \{\tau_n\}_n)$ is said to be an  \textbf{atomic decomposition} for $\mathcal{X}$ 
	if the following holds.
	\begin{enumerate}[label=(\roman*)]
		\item $\{f_n(x)\}_n \in \mathcal{X}_d$, for  each $x \in \mathcal{X}$. 
		\item There exist $a,b>0$ such that 
		$
		a\|x\|\leq \|\{f_n(x)\}_n\|\leq b\|x\|,  \forall x \in \mathcal{X}.
		$
		\item $x=\sum_{n=1}^\infty f_n(x)\tau_n$, for  each $x \in \mathcal{X}$.
	\end{enumerate}
	Constants $a$ and $b$ are called as  \textbf{lower atomic  bound} and  \textbf{upper  atomic bound},  respectively.
\end{definition}
\begin{example}
	\begin{enumerate}[label=(\roman*)]
		\item Let $\{\tau_n\}_n$ be a frame for $\mathcal{H}$. By defining $f_n (h)\coloneqq \langle h, S_\tau^{-1} \tau_n\rangle $, $ \forall h \in \mathcal{H}$, $ \forall n$, the pair $(\{f_n\}_n, \{\tau_n\}_n)$ satisfies all the conditions of  Definition \ref{ATOMICDECOMPODEFI} and hence it is an atomic decomposition for $\mathcal{H}$ .
		\item Let $\{\tau_n\}_n$ be a frame for $\mathcal{H}$. By defining $f_n (h)\coloneqq \langle h, \tau_n\rangle $, $ \forall h \in \mathcal{H}$,  and $\omega_n \coloneqq S_\tau^{-1}\tau_n$, $ \forall n$, the pair $(\{f_n\}_n, \{\omega_n\}_n)$ satisfies all the conditions of  Definition \ref{ATOMICDECOMPODEFI} and hence it is an atomic decomposition for $\mathcal{H}$.
	\end{enumerate}	
\end{example}
Now we make a detailed observation that the notions of atomic decompositions and Banach frames are completely different. Definition \ref{ATOMICDECOMPODEFI} of atomic decomposition demands the expression of every element of a Banach space as a convergent series in the same Banach space using a collection of bounded linear functionals on the space and a collection of elements  in the same Banach space. On the other hand, Definition \ref{BANACHFRAMEDEF} of Banach frame demands the expression of every element of a Banach space using a bounded linear operator from the BK-space to the Banach space and a collection of bounded linear functionals on the same space without bothering about a converging series expansion. Theorem \ref{BANACHFRAMEEXISTSSEPARABLE} guarantees the existence of Banach frame for every separable Banach space. However, the following remarkable result gives a lot of conditions on Banach spaces to admit an atomic decomposition.
\begin{theorem}(\cite{CASAZZAHANLARSONFRAMEBANACH,PELCZYNSKI, JOHNSONROSENTHALZIPPIN})
	For a Banach space $\mathcal{X}$, the following are equivalent.
\begin{enumerate}[label=(\roman*)]
	\item $\mathcal{X}$ has an atomic decomposition.
	\item $\mathcal{X}$ has a finite dimensional expansion of identity.
	\item $\mathcal{X}$ is complemented in a Banach space with a Schauder basis.
	\item $\mathcal{X}$ has bounded approximation property.
\end{enumerate}	
\end{theorem}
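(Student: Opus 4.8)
The plan is to prove the equivalences by running the cyclic chain $(i)\Rightarrow(ii)\Rightarrow(iv)\Rightarrow(iii)\Rightarrow(i)$, treating three of the arrows directly and invoking the Pe\l czy\'nski / Johnson--Rosenthal--Zippin machinery for the fourth. For $(i)\Rightarrow(ii)$: given an atomic decomposition $(\{f_n\}_n,\{\tau_n\}_n)$ with $x=\sum_{n=1}^\infty f_n(x)\tau_n$, I would set $A_nx\coloneqq f_n(x)\tau_n$; each $A_n$ is a rank-one bounded operator on $\mathcal{X}$ (boundedness from $f_n\in\mathcal{X}^*$) and $\sum_{n=1}^N A_nx\to x$ for every $x$, which is exactly a finite-dimensional expansion of the identity, even by rank-one operators. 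For $(ii)\Rightarrow(iv)$: if the $A_n$ are finite-rank with $S_N\coloneqq\sum_{n=1}^N A_n\to I_{\mathcal{X}}$ strongly, the uniform boundedness principle gives $\lambda\coloneqq\sup_N\|S_N\|<\infty$; a routine $\varepsilon/3$ estimate against a finite net of a compact set $K$ then produces, for each $\varepsilon>0$, an $N$ with $\|S_Nx-x\|<\varepsilon$ for all $x\in K$, and since each $S_N$ is finite-rank with $\|S_N\|\le\lambda$ this is precisely the $\lambda$-bounded approximation property.

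For $(iii)\Rightarrow(i)$: assume $V\colon\mathcal{X}\to Z$ is an isomorphism onto a complemented subspace of a Banach space $Z$ having a Schauder basis $\{z_n\}_n$ with biorthogonal functionals $\{z_n^*\}_n$ and basis constant $K$, and let $Q\colon Z\to V(\mathcal{X})$ be a bounded projection. I would take $\mathcal{X}_d$ to be the BK-space of all scalar sequences $(a_n)_n$ for which $\sum_n a_nz_n$ converges in $Z$, normed by $\sup_N\|\sum_{n\le N}a_nz_n\|_Z$, and set $f_n\coloneqq z_n^*\circ V\in\mathcal{X}^*$ and $\tau_n\coloneqq V^{-1}Qz_n\in\mathcal{X}$. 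Expanding $Vx=\sum_n z_n^*(Vx)z_n$ in $Z$, applying the bounded operators $Q$ and then $V^{-1}$ term by term yields $x=\sum_n f_n(x)\tau_n$, while the basis inequalities give $\|V^{-1}\|^{-1}\|x\|\le\|(f_n(x))_n\|_{\mathcal{X}_d}\le K\|V\|\,\|x\|$; together with $\{f_n(x)\}_n\in\mathcal{X}_d$ this exhibits $(\{f_n\}_n,\{\tau_n\}_n)$ as an atomic decomposition for $\mathcal{X}$.

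The main obstacle is $(iv)\Rightarrow(iii)$, which is the hard direction and is the substance of the theorems of Pe\l czy\'nski and of Johnson, Rosenthal and Zippin. The plan there is: starting from finite-rank operators $T_n$ with $\|T_n\|\le\lambda$ and $T_n\to I_{\mathcal{X}}$ uniformly on compacta, pass to a subsequence and perturb so that the differences $R_n\coloneqq T_n-T_{n-1}$ (with $T_0=0$) are finite-rank with uniformly bounded partial sums $\sum_{n\le N}R_n\to I_{\mathcal{X}}$ strongly and with $\|R_nR_m\|$ summable off the diagonal; then build a Banach space $W$ out of the finite-dimensional ranges $R_n(\mathcal{X})$, suitably normed so that $W$ carries a finite-dimensional decomposition in which $\mathcal{X}$ embeds complementably via $x\mapsto(R_nx)_n$ and $(w_n)_n\mapsto\sum_n w_n$. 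Finally, refining this finite-dimensional decomposition of $W$ by concatenating an algebraic basis of each finite-dimensional block turns it into a genuine Schauder basis for $W$, giving $(iii)$. The delicate points are the perturbation and subsequence lemmas needed to make the blocks behave like an unconditional-enough decomposition and to control the norm on $W$, and for these I would cite \cite{PELCZYNSKI} and \cite{JOHNSONROSENTHALZIPPIN} rather than reproduce the construction.
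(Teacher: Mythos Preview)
Your proposal is correct. The paper does not prove this theorem at all; it merely states it with citations to \cite{CASAZZAHANLARSONFRAMEBANACH}, \cite{PELCZYNSKI}, and \cite{JOHNSONROSENTHALZIPPIN} and moves on, so your sketch already goes well beyond what the paper provides. Your cycle $(i)\Rightarrow(ii)\Rightarrow(iv)\Rightarrow(iii)\Rightarrow(i)$ is the standard route, the three elementary arrows are handled cleanly, and you correctly isolate $(iv)\Rightarrow(iii)$ as the substantive Pe\l czy\'nski/Johnson--Rosenthal--Zippin input that one cites rather than reproduces.
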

There is a  close relationship between atomic decomposition and Banach frames for certain classes of Banach spaces. These are exhibited in the following theorems.
\begin{theorem}\cite{CASAZZAHANLARSONFRAMEBANACH}
	Let $\mathcal{X}$ be a Banach space, $\mathcal{X}_d$ be a BK-space, $\{f_n\}_n$ be a collection in $\mathcal{X}^*$ and $S:\mathcal{X}_d \to \mathcal{X}$ be a bounded linear operator. If the canonical  unit vectors $\{e_n\}_n$ are in $\mathcal{X}_d$, then the following are equivalent.
	\begin{enumerate}[label=(\roman*)]
		\item $(\{f_n\}_n, S)$ is a Banach frame for $\mathcal{X}$  and $\{e_n\}_n$  is a Schauder basis for  $\mathcal{X}_d$.
		\item $(\{f_n\}_n, \{Se_n\}_n)$ is an atomic decomposition  for $\mathcal{X}$.
	\end{enumerate}
\end{theorem}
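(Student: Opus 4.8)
The plan is to prove the two implications separately, exploiting that conditions (i) and (ii) in Definition \ref{BANACHFRAMEDEF} of a Banach frame and in Definition \ref{ATOMICDECOMPODEFI} of an atomic decomposition are literally the same statements once we set $\tau_n=Se_n$ and let $S$ play the role of the reconstruction operator. Hence in both directions only condition (iii) and the Schauder basis property of $\{e_n\}_n$ carry content, and both reduce to comparing, for a fixed $x\in\mathcal{X}$, the truncations $\sum_{n=1}^{N}f_n(x)e_n\in\mathcal{X}_d$ with the sequence $\theta_f(x)=\{f_n(x)\}_n$ and with their images under $S$.

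For (i)$\Rightarrow$(ii), fix $x\in\mathcal{X}$. By condition (i) of the Banach frame, $\theta_f(x)\in\mathcal{X}_d$, so since $\{e_n\}_n$ is a Schauder basis for $\mathcal{X}_d$ we may write $\theta_f(x)=\sum_{n=1}^{\infty}c_ne_n$ with convergence in $\mathcal{X}_d$ for unique scalars $c_n$. Applying the $m$-th coordinate functional, which is continuous because $\mathcal{X}_d$ is a BK-space, to the partial sums identifies $c_m$ with the $m$-th coordinate of $\theta_f(x)$, i.e. $c_m=f_m(x)$; hence $\sum_{n=1}^{N}f_n(x)e_n\to\theta_f(x)$ in $\mathcal{X}_d$. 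Applying the bounded operator $S$ and using the reconstruction identity $S\theta_f=I_{\mathcal{X}}$ from condition (iii) of the Banach frame gives $x=S\theta_f(x)=\sum_{n=1}^{\infty}f_n(x)Se_n$ in $\mathcal{X}$, which is condition (iii) of the atomic decomposition. With the shared conditions (i) and (ii), this makes $(\{f_n\}_n,\{Se_n\}_n)$ an atomic decomposition.

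For (ii)$\Rightarrow$(i), conditions (i) and (ii) are again immediate, so it remains to produce the reconstruction identity $S(\{f_n(x)\}_n)=x$ and to show $\{e_n\}_n$ is a Schauder basis for $\mathcal{X}_d$. Both follow once one knows that $\sum_{n=1}^{N}f_n(x)e_n\to\theta_f(x)$ in $\mathcal{X}_d$ for each $x$ and that every $y=\{y_n\}_n\in\mathcal{X}_d$ is the $\mathcal{X}_d$-limit of $\sum_{n=1}^{N}y_ne_n$: applying $S$ to the former and using the atomic decomposition identity $x=\sum_nf_n(x)Se_n$ yields $S\theta_f(x)=x$, while the latter, together with uniqueness of coefficients (forced by the BK-space coordinate functionals exactly as above), is the Schauder basis property. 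The standard route is a uniform boundedness argument: the reconstruction truncations $R_N\colon\mathcal{X}\to\mathcal{X}$, $R_Nx:=\sum_{n=1}^{N}f_n(x)Se_n$, factor as $R_N=S\circ T_N\circ\theta_f$, where $T_Ny:=\sum_{n=1}^{N}y_ne_n$ is finite-rank and bounded on $\mathcal{X}_d$ (coordinate functionals are continuous) and $\theta_f$ is bounded by condition (ii); by the atomic decomposition $R_N\to I_{\mathcal{X}}$ pointwise, so $\sup_N\|R_N\|<\infty$ by Banach--Steinhaus. Using the lower bound $\|x\|\le a^{-1}\|\theta_f(x)\|$ this transports to a uniform bound on $T_N$ along the closed subspace $\theta_f(\mathcal{X})$, and a density argument (with $\operatorname{span}\{e_n\}_n$ dense in $\mathcal{X}_d$, which is what makes $\mathcal{X}_d$ the sequence space genuinely associated with the system) promotes the pointwise convergence from $\operatorname{span}\{e_n\}_n$ to all of $\mathcal{X}_d$.

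The main obstacle is exactly this last direction: upgrading the merely pointwise reconstruction formula of the atomic decomposition to the genuine Banach-frame identity $S\theta_f=I_{\mathcal{X}}$ and to the Schauder basis property. The two delicate points are the uniform boundedness of the truncation operators $T_N$ on $\mathcal{X}_d$, obtained above by carrying the uniform bound on $R_N$ across the embedding $\theta_f$, and the bookkeeping that any $\{e_n\}_n$-expansion of an element has the coordinates as its coefficients, which is a clean consequence of continuity of coordinate functionals on a BK-space.
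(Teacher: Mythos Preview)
The paper does not prove this theorem; it is quoted from \cite{CASAZZAHANLARSONFRAMEBANACH} without proof, so there is nothing to compare against and I evaluate your argument on its own.

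Your (i)$\Rightarrow$(ii) direction is fine: the BK-property identifies the basis coefficients of $\theta_f(x)$ with $f_n(x)$, and applying $S$ gives the atomic expansion.

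The (ii)$\Rightarrow$(i) direction has two genuine gaps. First, the ``transport'' of the uniform bound from $R_N$ to $T_N|_{\theta_f(\mathcal{X})}$ does not go through. From $\|R_Nx\|\le C\|x\|$ and $\|x\|\le a^{-1}\|\theta_f(x)\|$ you only obtain $\|S\,T_N\theta_f(x)\|\le Ca^{-1}\|\theta_f(x)\|$, i.e.\ a bound on $S\circ T_N$ on $\theta_f(\mathcal{X})$, not on $T_N$ itself; to pull the bound back through $S$ you would need $S$ to be bounded below on the range of the $T_N$, which is not part of the hypotheses. Second, the density of $\operatorname{span}\{e_n\}$ in $\mathcal{X}_d$ is simply assumed in your parenthetical remark, not proved; nothing in the stated hypotheses (merely that $e_n\in\mathcal{X}_d$) or in the definition of an atomic decomposition forces this, and without it you cannot conclude that $\{e_n\}_n$ is a Schauder basis for $\mathcal{X}_d$. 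Both points are exactly where the content of (ii)$\Rightarrow$(i) lies, so as written that direction is not established.
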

\begin{theorem}(\cite{CASAZZAHANLARSONFRAMEBANACH, PELCZYNSKI, JOHNSONROSENTHALZIPPIN})
Let $\mathcal{X}$	be a Banach space and $\mathcal{X}_d$ be a BK-space. Let $\{f_n\}_n$ be a collection in $\mathcal{X}^*$ and  $S:\mathcal{X}_d \to \mathcal{X}$ be a bounded linear operator.  If the canonical  unit vectors $\{e_n\}_n$ are in $\mathcal{X}_d$, then the following are equivalent.
\begin{enumerate}[label=(\roman*)]
	\item There exists a BK-space $\mathcal{X}_d$ such that $(\{f_n\}_n, \{\tau_n\}_n)$ is an atomic decomposition  for $\mathcal{X}$.
	\item There exists a BK-space $\mathcal{Y}_d$ which has canonical unit vectors $\{e_n\}_n$ as Schauder basis and a bounded linear operator $S:\mathcal{Y}_d \to \mathcal{X}$ such that $(\{f_n\}_n, S)$ is a Banach frame for $\mathcal{X}$. Further, $S$ can be taken as a projection and $Se_n=\tau_n$, for all $n \in \mathbb{N}$.
	\end{enumerate}
\end{theorem}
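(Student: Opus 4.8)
The plan is to prove the two implications separately; essentially all of the substance is in (i) $\Rightarrow$ (ii).

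For (ii) $\Rightarrow$ (i) there is nothing beyond unwinding definitions. Let $\mathcal{Y}_d$ and $S$ be as in (ii) and put $\mathcal{X}_d\coloneqq\mathcal{Y}_d$. For $x\in\mathcal{X}$, condition (i) of the Banach frame definition gives $\{f_n(x)\}_n\in\mathcal{Y}_d$, condition (ii) gives $a\|x\|\le\|\{f_n(x)\}_n\|_{\mathcal{Y}_d}\le b\|x\|$, and because $\{e_n\}_n$ is a Schauder basis of the BK-space $\mathcal{Y}_d$ we have $\{f_n(x)\}_n=\sum_{n=1}^\infty f_n(x)e_n$ in $\mathcal{Y}_d$; applying the bounded operator $S$ and using $Se_n=\tau_n$ together with condition (iii), that is $S(\{f_n(x)\}_n)=x$, yields $x=\sum_{n=1}^\infty f_n(x)\tau_n$. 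These are exactly conditions (i)--(iii) in the definition of an atomic decomposition, so $(\{f_n\}_n,\{\tau_n\}_n)$ is an atomic decomposition for $\mathcal{X}$ relative to $\mathcal{Y}_d$.

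For (i) $\Rightarrow$ (ii), the only information I will use from the atomic decomposition is that $f_n\in\mathcal{X}^*$, $\tau_n\in\mathcal{X}$, and $x=\sum_{n=1}^\infty f_n(x)\tau_n$ for every $x$. Assuming first that $\tau_n\ne0$ for all $n$, I set
\[
\mathcal{Y}_d\coloneqq\Big\{\{c_n\}_n:\ \textstyle\sum_{n=1}^\infty c_n\tau_n\ \text{converges in}\ \mathcal{X}\Big\},\qquad \big\|\{c_n\}_n\big\|_{\mathcal{Y}_d}\coloneqq\sup_{m\ge1}\Big\|\sum_{n=1}^m c_n\tau_n\Big\|_{\mathcal{X}}.
\]
Then I would verify, in order: (1) $\|\cdot\|_{\mathcal{Y}_d}$ is a norm, and from $\tau_n\ne0$ one reads off $|c_n|\le 2\|\{c_k\}_k\|_{\mathcal{Y}_d}/\|\tau_n\|$, so the coordinate functionals are continuous and $\mathcal{Y}_d$ is a BK-space; it is complete because $\{c_n\}_n\mapsto\big(\sum_{n\le m}c_n\tau_n\big)_m$ is an isometry of $\mathcal{Y}_d$ onto a closed subspace of the Banach space of norm-convergent $\mathcal{X}$-valued sequences with the supremum norm. (2) $S\colon\mathcal{Y}_d\to\mathcal{X}$, $S(\{c_n\}_n)\coloneqq\sum_{n=1}^\infty c_n\tau_n$, is well defined, linear, $\|S\|\le1$, and $Se_n=\tau_n$. (3) The analysis map $\theta\colon x\mapsto\{f_n(x)\}_n$ takes values in $\mathcal{Y}_d$ since $\sum_n f_n(x)\tau_n=x$ converges, and $\|x\|=\lim_m\|\sum_{n\le m}f_n(x)\tau_n\|\le\|\theta x\|_{\mathcal{Y}_d}$; moreover $T_mx\coloneqq\sum_{n\le m}f_n(x)\tau_n$ defines bounded operators converging pointwise to $I_{\mathcal{X}}$, so the uniform boundedness principle gives $C$ with $\|\theta x\|_{\mathcal{Y}_d}=\sup_m\|T_mx\|\le C\|x\|$. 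By (1)--(3), $e_n\in\mathcal{Y}_d$ and $(\{f_n\}_n,\{\tau_n\}_n)=(\{f_n\}_n,\{Se_n\}_n)$ is an atomic decomposition for $\mathcal{X}$ relative to $\mathcal{Y}_d$; the preceding theorem (the equivalence of Banach frames and atomic decompositions when the canonical unit vectors lie in the coefficient space) then yields at once that $(\{f_n\}_n,S)$ is a Banach frame for $\mathcal{X}$ and that $\{e_n\}_n$ is a Schauder basis for $\mathcal{Y}_d$ --- the latter is also direct, since the partial-sum projections are contractions on $\mathcal{Y}_d$ and $\|\{c_n\}_n-\sum_{n\le m}c_ne_n\|_{\mathcal{Y}_d}=\sup_{k>m}\|\sum_{m<n\le k}c_n\tau_n\|\to0$, uniqueness of expansions being immediate from the BK property. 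Finally $S\theta=I_{\mathcal{X}}$ forces $\theta S$ to be an idempotent bounded operator on $\mathcal{Y}_d$ with range $\theta(\mathcal{X})$ isomorphic to $\mathcal{X}$, so, identifying $\mathcal{X}$ with this complemented subspace, $S$ is a projection, and $Se_n=\tau_n$ by construction. If some $\tau_n$ vanish I would instead take the norm $\sup_m\|\sum_{n\le m}c_n\tau_n\|_{\mathcal{X}}+\sup_m|c_m|r_m$ with weights $r_m\coloneqq 2^{-m}\big(1+\|e_m^*\|_{\mathcal{X}_d^*}\big)^{-1}$ ($e_m^*$ the $m$-th coordinate functional of $\mathcal{X}_d$) and restrict $\mathcal{Y}_d$ to those $\{c_n\}_n$ with also $|c_m|r_m\to0$: the extra term keeps every $e_n$ nonzero, the smallness of the $r_m$ keeps $\theta$ mapping into $\mathcal{Y}_d$, and (1)--(3) are unaffected.

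The crux is the choice of $\mathcal{Y}_d$: it must be small enough that the formal reconstruction $\{c_n\}_n\mapsto\sum c_n\tau_n$ is a bounded operator and $\{e_n\}_n$ is an honest Schauder basis, yet large enough to carry the whole range of the analysis map with a two-sided norm bound. The ``supremum of partial sums in $\mathcal{X}$'' norm is engineered to do both simultaneously: it makes the partial-sum projections contractive, while the convergence of the defining series $\sum_n f_n(x)\tau_n=x$ places $\{f_n(x)\}_n$ in $\mathcal{Y}_d$ with lower constant $1$ for free, leaving only the upper bound, which is precisely where the uniform boundedness principle enters. The degenerate case $\tau_n=0$ is a bookkeeping nuisance rather than a genuine obstacle.
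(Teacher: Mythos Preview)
The paper does not supply its own proof of this theorem; it is stated in the background chapter with citations to Casazza--Han--Larson, Pe{\l}czy\'nski, and Johnson--Rosenthal--Zippin. Your argument is correct and follows the standard route: the space $\mathcal{Y}_d$ you construct via the sup-of-partial-sums norm is exactly the space appearing in the paper's cited Lemma (from Casazza--Christensen--Stoeva), which already records that $\mathcal{Y}_d$ is a Banach space and that the canonical unit vectors form a Schauder basis. Your treatment of the degenerate case $\tau_n=0$ by adding a weighted sup term is in the same spirit as the $\mathcal{Z}_d$-component construction the paper uses later when proving its own reconstruction theorem. The one place to tighten is the ``$S$ can be taken as a projection'' clause: $S$ as you define it maps $\mathcal{Y}_d\to\mathcal{X}$, so it is not literally a projection; what you mean (and what the cited references do) is that after identifying $\mathcal{X}$ with the complemented subspace $\theta(\mathcal{X})\subseteq\mathcal{Y}_d$ via the isomorphism $\theta$, the map $\theta S$ is a bounded projection of $\mathcal{Y}_d$ onto this copy of $\mathcal{X}$ sending $e_n$ to $\theta(\tau_n)$. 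State that identification explicitly.
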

Next theorem shows that there is a relation between atomic decompositions and projections.
\begin{theorem}(\cite{CASAZZAHANLARSONFRAMEBANACH})
Let $\mathcal{X}$	be a Banach space, $\{\tau_n\}_n$ be a collection  in $\mathcal{X}$, $\{f_n\}_n$ be a collection in $\mathcal{X}^*$ and  $S:\mathcal{X}_d \to \mathcal{X}$ be a bounded linear operator. Let $\{e_n\}_n$ be the standard unit vectors in $\mathcal{X}_d$. Then the following are equivalent.
\begin{enumerate}[label=(\roman*)]
	\item There is a BK-space $\mathcal{X}_d$ such that $(\{f_n\}_n, \{\tau_n\}_n)$ is  an atomic decomposition for $\mathcal{X}$.
	\item There is a Banach space $\mathcal{Z}$ with a Schauder basis $\{\omega_n\}_n$ such that  $\mathcal{X} \subseteq \mathcal{Z}$ and there is a bounded linear projection $P:\mathcal{Z}\to \mathcal{X} $ such that $P\omega_n=\tau_n,$ $ \forall n \in \mathbb{N}$. 
\end{enumerate}	
\end{theorem}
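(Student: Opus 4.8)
The plan is to prove the two implications separately, using the immediately preceding theorem as a black box for the forward direction and the coordinate functionals of the Schauder basis $\{\omega_n\}_n$ for the reverse one. For (i) $\Rightarrow$ (ii): starting from an atomic decomposition $(\{f_n\}_n,\{\tau_n\}_n)$ for $\mathcal{X}$ with respect to some BK-space, the preceding theorem supplies a BK-space $\mathcal{Y}_d$ on which $\{e_n\}_n$ is a Schauder basis together with a bounded operator $S:\mathcal{Y}_d\to\mathcal{X}$ satisfying $Se_n=\tau_n$ for every $n$ and such that $(\{f_n\}_n,S)$ is a Banach frame for $\mathcal{X}$. In particular the analysis operator $\theta_f:\mathcal{X}\to\mathcal{Y}_d$, $\theta_f x=\{f_n(x)\}_n$, is bounded and bounded below by the frame inequalities, so $\theta_f(\mathcal{X})$ is a closed subspace of $\mathcal{Y}_d$ linearly isomorphic to $\mathcal{X}$, and $S\theta_f=I_{\mathcal{X}}$. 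I would then set $\mathcal{Z}:=\mathcal{Y}_d$ and $\omega_n:=e_n$, identify $\mathcal{X}$ with $\theta_f(\mathcal{X})\subseteq\mathcal{Z}$, and put $P:=\theta_f S:\mathcal{Z}\to\theta_f(\mathcal{X})$. The routine checks that remain are that $P$ is bounded, that $P^2=\theta_f(S\theta_f)S=\theta_f S=P$ so $P$ is a projection, that $Pz=z$ for $z\in\theta_f(\mathcal{X})$ so its range is exactly $\mathcal{X}$, and that $P\omega_n=\theta_f Se_n=\theta_f\tau_n$, i.e. $\tau_n$ under the identification; these give (ii).

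For (ii) $\Rightarrow$ (i): let $\{g_n\}_n\subseteq\mathcal{Z}^*$ be the coordinate functionals of the basis $\{\omega_n\}_n$, so that $z=\sum_{n=1}^\infty g_n(z)\omega_n$ for every $z\in\mathcal{Z}$ with uniformly bounded partial-sum projections, and set $f_n:=g_n|_{\mathcal{X}}\in\mathcal{X}^*$. For $x\in\mathcal{X}\subseteq\mathcal{Z}$, applying the bounded projection $P$ term by term to the expansion of $x$ yields $x=Px=\sum_{n=1}^\infty g_n(x)P\omega_n=\sum_{n=1}^\infty f_n(x)\tau_n$, which is condition (iii) of an atomic decomposition. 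For the remaining conditions I would take as the BK-space the coordinate-sequence space $\mathcal{X}_d:=\{\{g_n(z)\}_n:z\in\mathcal{Z}\}$ normed by $\|\{g_n(z)\}_n\|:=\|z\|_{\mathcal{Z}}$; this is a well-defined norm since $z\mapsto\{g_n(z)\}_n$ is injective and onto $\mathcal{X}_d$, making $\mathcal{X}_d$ isometric to $\mathcal{Z}$, and each coordinate functional on $\mathcal{X}_d$ is bounded because each $g_m\in\mathcal{Z}^*$, so $\mathcal{X}_d$ is a BK-space containing every $e_n=\{g_k(\omega_n)\}_k$. Then $\{f_n(x)\}_n=\{g_n(x)\}_n\in\mathcal{X}_d$ and $\|\{f_n(x)\}_n\|=\|x\|_{\mathcal{Z}}$, which is equivalent to $\|x\|_{\mathcal{X}}$ since $\mathcal{X}$, being the range of the bounded idempotent $P$, is a closed subspace of $\mathcal{Z}$; this supplies conditions (i) and (ii), so (i) holds.

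The step I expect to carry the real weight is the very first move in (i) $\Rightarrow$ (ii): upgrading an arbitrary witnessing BK-space to one on which the canonical unit vectors form a Schauder basis, equivalently forcing $\sum_{n=1}^N f_n(x)e_n\to\{f_n(x)\}_n$ in the BK-norm. The bare axioms of an atomic decomposition do not yield this, and providing it is exactly the substance of the cited Casazza--Han--Larson / Pe\l czy\'nski / Johnson--Rosenthal--Zippin circle of results; once that is granted, the rest is bookkeeping with the embedding $\theta_f$ and the identity $S\theta_f=I_{\mathcal{X}}$. A secondary point worth stating explicitly is that "$\mathcal{X}\subseteq\mathcal{Z}$" is meant up to the isomorphism $\theta_f$, so one either renorms $\mathcal{X}$ as a subspace of $\mathcal{Z}$ or reads the atomic/frame bounds as the distortion of that isomorphic embedding; with that convention the biconditional reads cleanly.
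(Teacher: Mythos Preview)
The paper does not supply its own proof of this theorem; it is stated as a cited result from \cite{CASAZZAHANLARSONFRAMEBANACH} with no accompanying argument. Your proposal is correct and is precisely the standard proof: for (i) $\Rightarrow$ (ii) you invoke the immediately preceding theorem to upgrade the BK-space to one with $\{e_n\}_n$ as a Schauder basis, then embed $\mathcal{X}$ via $\theta_f$ and take $P=\theta_f S$; for (ii) $\Rightarrow$ (i) you restrict the coordinate functionals of $\{\omega_n\}_n$ and push the basis expansion through $P$. Your closing remarks correctly identify the only nontrivial step (the Pe\l czy\'nski/Johnson--Rosenthal--Zippin input hidden in the preceding theorem) and the convention that ``$\mathcal{X}\subseteq\mathcal{Z}$'' is to be read up to the isomorphic embedding $\theta_f$. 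There is nothing to compare against in the paper itself.
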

We mention here before passing that it is known,  one can simultaneously construct Banach frames and atomic decompositions for certain classes of Banach spaces such as coorbit spaces (\cite{GROCHENIG}), $\alpha$-modulation spaces (\cite{FORNASIERALPHA}), decomposition spaces (\cite{BORUPNIELSEN}), homogeneous spaces (\cite{DAHLKESTEIDLTESCHKE}), weighted coorbit spaces (\cite{DAHLKESTEIDLTESCHKE1}), generalized coorbit spaces (\cite{DAHLKEFORNASIERRAUHUTTESCHKE}), inhomogeneous function spaces (\cite{RAUHUTULLRICH}) and Bergman spaces (\cite{CHRISTENSENGROCHENIGOLAFSSON}).

Using approximation property for Banach spaces, \cite{CASAZZARECONSTRUCTION} proved that  $\mathcal{X}_d$-frame for a Banach space need not admit representation of every element of the Banach space. With regard to this, the following result gives information when it is possible to express element of the Banach space using series. 
 \begin{theorem}(\cite{CASAZZACHRISTENSENSTOEVA})\label{CASAZZASEPARABLECHARACTERIZATION}
	Let   $\mathcal{X}_d$ be a BK-space and 
	$\{f_n\}_n$ be an $\mathcal{X}_d$-frame for $\mathcal{X}$. Let $ \theta_f:\mathcal{X} \ni x \mapsto \{f_n(x)\}_n \in \mathcal{X}_d$ (this map is a well-defined  linear bounded below operator). 
	The following are equivalent. 
	\begin{enumerate}[label=(\roman*)]
		\item $ \theta_f(\mathcal{X})$ is complemented in $\mathcal{X}_d$.
		\item The operator $\theta_f^{-1}:\theta_f(\mathcal{X}) \rightarrow \mathcal{X}$ can be extended to a bounded linear operator $T_f: \mathcal{X}_d \rightarrow \mathcal{X}.$
		\item There exists a bounded linear operator $S: \mathcal{X}_d \rightarrow \mathcal{X}$ such that $(\{f_n\}_n, S)$ is a Banach frame for  $\mathcal{X}$.
	\end{enumerate}
	Also, the condition 
	\begin{enumerate}[label=(\roman*)]\addtocounter{enumi}{3}
		\item  There exists a sequence $\{\tau_n\}_n$ in $\mathcal{X}$ such that $\sum_{n=1}^\infty a_n \tau_n$ is convergent in $\mathcal{X}$
		for all $\{a_n\}_n$  in $\mathcal{X}_d$ and  $x=\sum_{n=1}^\infty f_n(x) \tau_n, \forall x \in \mathcal{X}.$
	\end{enumerate}
	implies each of (i)-(iii). If we also assume that the canonical unit vectors $\{e_n\}_n$ form a Schauder basis for $\mathcal{X}_d$,
	(iv) is equivalent to the above (i)-(iii) and to the following condition (v).
	\begin{enumerate}[label=(\roman*)]\addtocounter{enumi}{4}
		\item There exists an $\mathcal{X}_d^*$-Bessel sequence  $\{\tau_n\}_n\subseteq \mathcal{X}\subseteq \mathcal{X}^{**}$ for 
		$\mathcal{X}^*$ such that $x=\sum_{n=1}^\infty f_n(x) \tau_n, $ $ \forall x \in \mathcal{X}.$
	\end{enumerate}
	If the canonical unit vectors $\{e_n\}_n$ form a Schauder basis for both $\mathcal{X}_d$ and $\mathcal{X}_d^*$, then (i)-(v) is equivalent
	to the following condition (vi).
	\begin{enumerate}[label=(\roman*)]\addtocounter{enumi}{5}
		\item There exists an $\mathcal{X}_d^*$-Bessel sequence  $\{\tau_n\}_n\subseteq \mathcal{X}\subseteq \mathcal{X}^{**}$ for 
		$\mathcal{X}^*$ such that $f=\sum_{n=1}^\infty f(\tau_n) f_n, $ $ \forall f \in \mathcal{X}^*.$
	\end{enumerate}
	In each of the cases (v) and (vi), $\{\tau_n\}_n$ is actually an $\mathcal{X}_d^*$-frame for $\mathcal{X}^*$. 
\end{theorem}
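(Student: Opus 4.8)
The backbone of the argument is the analysis operator $\theta_f\colon\mathcal{X}\to\mathcal{X}_d$, $x\mapsto\{f_n(x)\}_n$, which by the $\mathcal{X}_d$-frame inequalities is linear, bounded and bounded below, hence injective with closed range $\theta_f(\mathcal{X})$; consequently $\theta_f^{-1}\colon\theta_f(\mathcal{X})\to\mathcal{X}$ is a bounded operator. The plan is to settle the operator-theoretic core (i)$\Leftrightarrow$(ii)$\Leftrightarrow$(iii) first. For (i)$\Rightarrow$(ii): if $P\colon\mathcal{X}_d\to\mathcal{X}_d$ is a bounded projection onto $\theta_f(\mathcal{X})$, set $T_f:=\theta_f^{-1}P$; conversely, if $T_f$ extends $\theta_f^{-1}$ then $\theta_fT_f$ is a bounded idempotent with range $\theta_f(\mathcal{X})$, which gives (ii)$\Rightarrow$(i). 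For (ii)$\Leftrightarrow$(iii): the extension condition on $T_f$ is literally the reconstruction identity $S\bigl(\{f_n(x)\}_n\bigr)=x$ for $S:=T_f$, and conversely the restriction of any reconstruction operator $S$ to $\theta_f(\mathcal{X})$ equals $\theta_f^{-1}$, so $S$ extends it.

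Next I would treat the series statements. For (iv)$\Rightarrow$(iii): given $\{\tau_n\}_n$, define $S\colon\mathcal{X}_d\to\mathcal{X}$ by $S(\{a_n\}_n)=\sum_{n=1}^\infty a_n\tau_n$; the partial-sum maps $\{a_n\}_n\mapsto\sum_{n=1}^N a_n\tau_n$ are bounded because each coordinate functional on the BK-space $\mathcal{X}_d$ is continuous, and they converge pointwise on $\mathcal{X}_d$ by hypothesis, so the uniform boundedness principle makes $S$ bounded; then $S\theta_f=\operatorname{id}_\mathcal{X}$ by the expansion $x=\sum f_n(x)\tau_n$, and $(\{f_n\}_n,S)$ is a Banach frame. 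If moreover $\{e_n\}_n$ is a Schauder basis for $\mathcal{X}_d$, I close the loop (iii)$\Rightarrow$(iv) by putting $\tau_n:=Se_n$: for $\{a_n\}_n\in\mathcal{X}_d$ one has $\{a_n\}_n=\sum a_ne_n$ in $\mathcal{X}_d$, hence $\sum a_n\tau_n=S(\{a_n\}_n)$ converges in $\mathcal{X}$, and taking $a_n=f_n(x)$ yields $x=S\theta_f(x)=\sum f_n(x)\tau_n$.

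For the duality conditions I would pass to adjoints, using that when $\{e_n\}_n$ is a basis for $\mathcal{X}_d$ one may identify $\mathcal{X}_d^*$ with a BK-sequence space via $\varphi\mapsto\{\varphi(e_n)\}_n$, with pairing $\langle\{a_n\}_n,\{c_n\}_n\rangle=\sum a_nc_n$. For the operator $S$ of (iii)/(iv), $(S^*f)(e_n)=f(Se_n)=f(\tau_n)$, so $\{f(\tau_n)\}_n=S^*f\in\mathcal{X}_d^*$ with norm at most $\|S\|\,\|f\|$: this is exactly the statement that $\{\tau_n\}_n\subseteq\mathcal{X}\subseteq\mathcal{X}^{**}$ is an $\mathcal{X}_d^*$-Bessel sequence for $\mathcal{X}^*$, hence (iv)$\Rightarrow$(v); conversely (v)$\Rightarrow$(iv) follows from the Bessel--synthesis characterization (the Banach-space counterpart of Theorem~\ref{OLEBESSELCHARACTERIZATION12}; cf.\ Theorem~\ref{pFRAMECHAR}), which makes $\{a_n\}_n\mapsto\sum a_n\tau_n$ a bounded operator $\mathcal{X}_d\to\mathcal{X}$ whose composition with $\theta_f$ is the identity. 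When $\{e_n\}_n$ is a basis for $\mathcal{X}_d^*$ as well, the synthesis map $\{c_n\}_n\mapsto\sum c_nf_n$ is a bounded operator $\mathcal{X}_d^*\to\mathcal{X}^*$ (boundedness and norm-convergence of the series both come from the estimate $\bigl\|\sum_{n=M}^N c_nf_n\bigr\|\le b\,\bigl\|\sum_{n=M}^N c_ne_n\bigr\|_{\mathcal{X}_d^*}$, which is the adjoint of $\theta_f$ read off coordinatewise), and transposing the identity $x=\sum f_n(x)\tau_n$ against $f\in\mathcal{X}^*$ turns it into $f=\sum f(\tau_n)f_n$ in $\mathcal{X}^*$; this gives (v)$\Leftrightarrow$(vi). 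For the concluding assertion, the Bessel bound already supplies the upper frame inequality for $\{\tau_n\}_n$, while pairing $f(x)=\sum f_n(x)f(\tau_n)$ together with $\|\{f_n(x)\}_n\|_{\mathcal{X}_d}\le b\|x\|$ yields $\|f\|\le b\,\|\{f(\tau_n)\}_n\|_{\mathcal{X}_d^*}$, the lower frame inequality.

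The most delicate part is the bookkeeping surrounding $\mathcal{X}_d^*$: justifying that it behaves as a BK-sequence space with $\{e_n\}_n$ a (Schauder) basis and the pairing written above, and tracking norm- versus coordinatewise-convergence when transferring the series identities back and forth between $\mathcal{X}$ and $\mathcal{X}^*$. By contrast, the equivalences (i)--(iii) and the implication (iv)$\Rightarrow$(iii) are routine once $\theta_f$ is in hand.
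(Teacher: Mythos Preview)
The paper does not contain a proof of this theorem; it is quoted as background from Casazza, Christensen and Stoeva and stated without argument. Your sketch is along the standard lines of that reference and is essentially correct: the equivalences (i)--(iii) via the projection $\theta_f T_f$ and the extension $\theta_f^{-1}P$, the implication (iv)$\Rightarrow$(iii) via uniform boundedness, and (iii)$\Rightarrow$(iv) by $\tau_n:=Se_n$ under the Schauder-basis assumption are exactly the intended moves.

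One small point worth tightening: in (v)$\Rightarrow$(iv) you invoke Theorem~\ref{pFRAMECHAR}, but that result is stated only for $\ell^p$-spaces, not for a general BK-space $\mathcal{X}_d$ with Schauder basis. What you actually need is that an $\mathcal{X}_d^*$-Bessel bound on $\{\tau_n\}_n$ forces convergence and boundedness of $\{a_n\}_n\mapsto\sum a_n\tau_n$ as a map $\mathcal{X}_d\to\mathcal{X}$; this follows from the pairing estimate $\bigl|f\bigl(\sum_{n=M}^N a_n\tau_n\bigr)\bigr|\le\bigl\|\sum_{n=M}^N a_ne_n\bigr\|_{\mathcal{X}_d}\,\|\{f(\tau_n)\}_n\|_{\mathcal{X}_d^*}$ together with the basis property of $\{e_n\}_n$ in $\mathcal{X}_d$, and it is cleaner to say so directly. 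The same remark applies to your handling of (vi), where the norm estimate $\bigl\|\sum_{n=M}^N c_nf_n\bigr\|\le b\,\bigl\|\sum_{n=M}^N c_ne_n\bigr\|_{\mathcal{X}_d^*}$ is correct but deserves a one-line justification via $\theta_f^*$ rather than an appeal to the $\ell^p$ case.
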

We end this section by mentioning a perturbation theorem for Banach frames.
\begin{theorem}(\cite{CHRISTENSENHEIL})\label{PERTURBATIONBANACH12} (\textbf{Christensen-Heil perturbation})
Let $(\{f_n\}_{n}, S)$ be a
Banach  frame for a Banach space    $\mathcal{X}$.  Let $\{g_n\}_{n}$ be a collection in 
$\mathcal{X}^*$ satisfying the following.
\begin{enumerate}[label=(\roman*)]
	\item There exist $\alpha, \gamma\geq 0$ such that 
	\begin{align*}
	\|\{(f_n-g_n)(x)\}_n\|\leq \alpha \|\{f_n(x)\}_n\|+\gamma \|x-y\|, ~ \forall x, y \in  \mathcal{X}.
	\end{align*}
	\item $\alpha \|\theta_f\|+\gamma\leq \|S\|^{-1}.$
\end{enumerate}
Then there exists a reconstruction operator $T$ such that $(\{f_n\}_{n}, T)$ is  a
Banach frame for     $\mathcal{X}$  with bounds 
$
\|S\|^{-1}-(\alpha\|\theta_f\|+\gamma) $ and $ 
\|\theta_f\|+(\alpha\|\theta_f\|+\gamma).$
\end{theorem}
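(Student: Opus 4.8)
The plan is to move the Banach frame structure from $\{f_n\}_n$ onto the perturbed family $\{g_n\}_n$ by a Neumann series argument, exactly as one shows that a small perturbation of an invertible operator remains invertible. Write $\theta_f\colon\mathcal{X}\to\mathcal{X}_d$, $x\mapsto\{f_n(x)\}_n$, for the analysis operator of the given frame; by Definition \ref{BANACHFRAMEDEF} it is bounded (with finite norm $\|\theta_f\|$), bounded below, and satisfies $S\theta_f=I_\mathcal{X}$. Set $\theta_g\colon x\mapsto\{g_n(x)\}_n$. The first task is to see that $\theta_g$ is a legitimate analysis map: hypothesis (i) presupposes $\{(f_n-g_n)(x)\}_n\in\mathcal{X}_d$, so $\{g_n(x)\}_n=\{f_n(x)\}_n-\{(f_n-g_n)(x)\}_n\in\mathcal{X}_d$ for every $x$, which is condition (i) of Definition \ref{BANACHFRAMEDEF} for $\{g_n\}_n$. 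Moreover (i) shows that $R\coloneqq\theta_f-\theta_g$ is a bounded operator $\mathcal{X}\to\mathcal{X}_d$ with $\|R\|\le\alpha\|\theta_f\|+\gamma$, whence $\|\{g_n(x)\}_n\|=\|\theta_g x\|\le(\|\theta_f\|+\|R\|)\|x\|\le\bigl(\|\theta_f\|+(\alpha\|\theta_f\|+\gamma)\bigr)\|x\|$, giving the asserted upper bound.

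Next I would construct the reconstruction operator. On $\mathcal{X}$ one has $S\theta_g=S\theta_f-SR=I_\mathcal{X}-SR$, and by (ii) $\|SR\|\le\|S\|\,\|R\|\le\|S\|(\alpha\|\theta_f\|+\gamma)\le 1$; since the claimed lower bound $\|S\|^{-1}-(\alpha\|\theta_f\|+\gamma)$ must be positive for a genuine frame, in fact $\|SR\|<1$. Hence $U\coloneqq S\theta_g$ is invertible in $\mathcal{B}(\mathcal{X})$ with $\|U^{-1}\|\le\bigl(1-\|S\|(\alpha\|\theta_f\|+\gamma)\bigr)^{-1}$ by the Neumann series. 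Put $T\coloneqq U^{-1}S\colon\mathcal{X}_d\to\mathcal{X}$; it is bounded and linear, and $T\theta_g=U^{-1}(S\theta_g)=U^{-1}U=I_\mathcal{X}$, i.e.\ $T(\{g_n(x)\}_n)=x$ for all $x\in\mathcal{X}$, which is condition (iii) of Definition \ref{BANACHFRAMEDEF}.

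It then remains to extract the lower frame inequality, and this comes for free from the reconstruction identity: for $x\in\mathcal{X}$, $\|x\|=\|T\theta_g x\|\le\|T\|\,\|\theta_g x\|\le\|U^{-1}\|\,\|S\|\,\|\{g_n(x)\}_n\|\le\frac{\|S\|}{1-\|S\|(\alpha\|\theta_f\|+\gamma)}\,\|\{g_n(x)\}_n\|$, so that $\bigl(\|S\|^{-1}-(\alpha\|\theta_f\|+\gamma)\bigr)\|x\|\le\|\{g_n(x)\}_n\|$. Together with the upper estimate from the first step, this shows $(\{g_n\}_n,T)$ is a Banach frame for $\mathcal{X}$ with precisely the stated bounds. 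There is no genuinely hard step; the points demanding care are the bookkeeping that turns $\|R\|\le\alpha\|\theta_f\|+\gamma$ and the Neumann estimate for $\|U^{-1}\|$ into exactly the asserted constants, and the borderline of hypothesis (ii) — one must use that $\|S\|(\alpha\|\theta_f\|+\gamma)<1$ holds strictly (which is forced by the positivity of the lower bound) so that $U=S\theta_g$ is truly invertible.
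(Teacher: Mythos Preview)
Your proof is correct and follows exactly the standard Neumann-series strategy. The paper does not supply its own proof of this cited background result, but it does prove the metric-space analogue (Theorem~\ref{STABILITYMA}) by precisely the same steps: bound $\theta_f-\theta_g$, show $\|I_\mathcal{X}-S\theta_g\|<1$, invert $S\theta_g$ in the Banach algebra, and set $T\coloneqq(S\theta_g)^{-1}S$; so your approach coincides with the paper's method. Your remark about needing strict inequality in (ii) is apt, and you have also silently corrected the evident typos in the statement (the stray $y$ in condition~(i) and the conclusion reading $\{f_n\}_n$ rather than $\{g_n\}_n$).
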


\section{MULTIPLIERS FOR BANACH SPACES}
Let $\{\lambda_n\}_n \in \ell^\infty(\mathbb{N})$ and  $\{\tau_n\}_n$, $\{\omega_n\}_n$ be
sequences in a Hilbert space $\mathcal{H}$. For $x,y \in \mathcal{H}$, the
operator  defined by  	$\mathcal{H}\ni h \mapsto \langle h, y\rangle x\in \mathcal{H}$ is denoted by $x\otimes \overline{y}$ .

The study of operators of the form 
\begin{align}\label{FIRST EQUATION}
\sum_{n=1}^{\infty}\lambda_n (\tau_n\otimes \overline{\omega_n})
\end{align}
began with  \cite{SCHATTEN}, in connection with the study of compact
operators. Schatten studied the operator in (\ref{FIRST EQUATION}) whenever
$\{\tau_n\}_n$ and  $\{\omega_n\}_n$ are orthonormal sequences in a Hilbert space
$\mathcal{H}$. He showed that  if $\{\lambda_n\}_n \in
\ell^\infty(\mathbb{N})$ and $\{\tau_n\}_n$, $\{\omega_n\}_n$ are orthonormal sequences
in a Hilbert space $\mathcal{H}$, then the map in (\ref{FIRST EQUATION}) is a well-defined bounded linear
operator. Later,  operators in (\ref{FIRST EQUATION}) are studied mainly in
connection with Gabor analysis (\cite{FEICHTINGERNOWAK}, \cite{BENEDETTOPFANDER},
\cite{DORFLERTORRESANI}, \cite{GIBSONLAMOUREUXMARGRAVE}, \cite{CORDEROGROCHENIG}, \cite{SKRETTINGLAND}). This was generalized by 
\cite{BALAZSBASIC} who replaced orthonormal sequences by Bessel sequences.

Let $\{f_n\}_n$
be a sequence in  the  dual space $\mathcal{X}^*$ of a Banach
space $\mathcal{X}$ and $\{\tau_n\}_n$ be a sequence in a Banach space
$\mathcal{Y}$. The operator $\tau \otimes f$ is defined by  	$\tau
\otimes f:\mathcal{X}\ni x \mapsto f(x)\tau\in \mathcal{Y}$. It was   \cite{RAHIMIBALAZSMUL} who extended the operator in (\ref{FIRST
	EQUATION}) from Hilbert spaces to Banach spaces. For a Banach space
$\mathcal{X}$ and dual space $\mathcal{X}^*$, they considered the operators (called as \textbf{multipliers}) of the form 
\begin{align}\label{SECOND EQUATION}
 M_{\lambda,f, \tau} \coloneqq\sum_{n=1}^{\infty}\lambda_n (\tau_n\otimes f_n).
\end{align}
Rahimi and Balazs  studied the operator in (\ref{SECOND EQUATION}), whenever
$\{\tau_n\}_n$ is a p-Bessel sequence  for
$\mathcal{X}^*$ and $\{f_n\}_n$ is a q-Bessel sequence for $\mathcal{X}$ ($q$ is the 
conjugate index of $p$). Besides theoretical importance, multipliers also play
important role in Physics, signal processing, acoustics (\cite{BALAZSSTOEVAMULAPP}).

Fundamental result obtained by  \cite{RAHIMIBALAZSMUL}   is the
following. In this section, $q$ denotes the 
conjugate index of $p$.
\begin{theorem}(\cite{RAHIMIBALAZSMUL})\label{RAHIMIBALAZS}
	Let $\{f_n\}_{n}$   be a
	p-Bessel sequence  for  a Banach space $\mathcal{X}$ with bound $b$ and
	$\{\tau_n\}_{n}$   be a
	q-Bessel sequence  for the dual of  a Banach space $\mathcal{Y}$ with bound
	$d$.  If
	$\{\lambda_n\}_n \in \ell^\infty(\mathbb{N})$, then the map
	\begin{align*}
	T: \mathcal{X} \ni x \mapsto \sum_{n=1}^{\infty}\lambda_n (\tau_n\otimes
	f_n) x \in \mathcal{Y}
	\end{align*}
	is a well-defined bounded linear   operator and  
	 $\|T\|\leq bd\|\{\lambda_n\}_n\|_\infty.$
\end{theorem}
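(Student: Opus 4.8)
The plan is to realize $T$ as a composition of three bounded maps
\[
\mathcal{X}\;\xrightarrow{\;\theta_f\;}\;\ell^p(\mathbb{N})\;\xrightarrow{\;D_\lambda\;}\;\ell^p(\mathbb{N})\;\xrightarrow{\;U_\tau\;}\;\mathcal{Y},
\]
where $\theta_f\colon x\mapsto\{f_n(x)\}_n$ is the analysis map of $\{f_n\}_n$, $D_\lambda\colon\{c_n\}_n\mapsto\{\lambda_nc_n\}_n$ is the diagonal multiplication by $\{\lambda_n\}_n$, and $U_\tau\colon\{c_n\}_n\mapsto\sum_n c_n\tau_n$ is a synthesis map for $\{\tau_n\}_n$, and then to bound each factor. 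First I would note that, by the definition of a $p$-Bessel sequence, for each $x\in\mathcal{X}$ one has $\{f_n(x)\}_n\in\ell^p(\mathbb{N})$ with $\|\{f_n(x)\}_n\|_p\le b\|x\|$, so $\theta_f$ is a well-defined bounded linear operator with $\|\theta_f\|\le b$; and $D_\lambda$ is plainly bounded on $\ell^p(\mathbb{N})$ with $\|D_\lambda\|\le\|\{\lambda_n\}_n\|_\infty$.

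The real content is the synthesis step, for which I would use the dual side of the characterization in Theorem \ref{pFRAMECHAR}. Since $\{\tau_n\}_n$ is a $q$-Bessel sequence for $\mathcal{Y}^*$ with bound $d$, for any finitely supported scalar sequence $\{c_n\}_n$ and any $g\in\mathcal{Y}^*$, Hölder's inequality gives
\[
\Bigl|g\Bigl(\sum_n c_n\tau_n\Bigr)\Bigr|=\Bigl|\sum_n c_n\,g(\tau_n)\Bigr|\le\Bigl(\sum_n|c_n|^p\Bigr)^{1/p}\Bigl(\sum_n|g(\tau_n)|^q\Bigr)^{1/q}\le d\,\|g\|\,\Bigl(\sum_n|c_n|^p\Bigr)^{1/p},
\]
so by the Hahn--Banach theorem $\bigl\|\sum_n c_n\tau_n\bigr\|\le d\,\|\{c_n\}_n\|_p$. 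Applying this estimate to the tails $\sum_{n=M}^{N}c_n\tau_n$ of a general $\{c_n\}_n\in\ell^p(\mathbb{N})$ and invoking completeness of $\mathcal{Y}$ shows that $\sum_n c_n\tau_n$ converges in $\mathcal{Y}$ and that $U_\tau$ is a well-defined bounded linear operator with $\|U_\tau\|\le d$.

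Putting the three estimates together, $T=U_\tau\circ D_\lambda\circ\theta_f$ is a well-defined bounded linear operator (in particular $\sum_{n=1}^{\infty}\lambda_n(\tau_n\otimes f_n)x=\sum_{n=1}^{\infty}\lambda_n f_n(x)\tau_n$ converges in $\mathcal{Y}$ for every $x\in\mathcal{X}$), and
\[
\|T\|\le\|U_\tau\|\,\|D_\lambda\|\,\|\theta_f\|\le d\,\|\{\lambda_n\}_n\|_\infty\,b=bd\,\|\{\lambda_n\}_n\|_\infty,
\]
which is the claimed bound. I do not anticipate a genuine obstacle: the only point that needs care is that $\mathcal{Y}$ is merely a Banach space (not assumed reflexive), so one cannot appeal to any frame-type surjectivity statement but only to the one-sided synthesis estimate, which is exactly what the Hölder-plus-Hahn--Banach argument above supplies. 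A secondary bookkeeping matter is keeping the conjugate indices straight — the analysis side exploits $p$-summability of $\{f_n(x)\}_n$ while the synthesis side pairs this against $q$-summability of $\{g(\tau_n)\}_n$ — but this is routine.
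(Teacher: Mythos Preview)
Your proof is correct. The paper itself does not prove this theorem (it is quoted as a known result from \cite{RAHIMIBALAZSMUL}), but it does prove the Lipschitz generalization in Theorem~\ref{DEFINITIONEXISTENCE}, and comparing against that: the paper's argument is a direct computation, bounding $\bigl\|\sum_{k=n}^{m}\lambda_k f_k(x)\tau_k\bigr\|$ by taking the supremum over $\phi$ in the unit ball of the dual, applying H\"older's inequality, and invoking the $q$-Bessel bound, which simultaneously gives convergence (via the Cauchy criterion) and the norm estimate. Your factorization $T=U_\tau\circ D_\lambda\circ\theta_f$ packages exactly the same ingredients more structurally --- the H\"older-plus-Hahn--Banach step you isolate for $U_\tau$ is precisely the core of the paper's direct estimate --- so the two approaches are essentially the same, with yours making the operator-theoretic decomposition explicit while the paper keeps everything inside a single inequality chain.
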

By varying only the symbol in a multiplier, we get a bounded linear operator which has the nice property stated in the following theorem.
\begin{proposition}(\cite{RAHIMIBALAZSMUL})
	Let $\{f_n\}_{n}$ be a
	p-Bessel sequence  for  a Banach space $\mathcal{X}$  with non-zero
	elements, $\{\tau_n\}_{n}$   be a  q-Riesz sequence  for 
	$\mathcal{Y}$ and let $\{\lambda_n\}_n \in
	\ell^\infty(\mathbb{N})$. Then the mapping 
	\begin{align*}
	T:\ell^\infty(\mathbb{N})\ni \{\lambda_n\}_n \mapsto M_{\lambda,f, \tau}
	\in \mathcal{B}(\mathcal{X}, \mathcal{Y})
	\end{align*}
	is a well-defined injective bounded linear operator.	
\end{proposition}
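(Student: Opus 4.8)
The plan is to dispatch well-definedness (that $M_{\lambda,f,\tau}$ genuinely lies in $\mathcal{B}(\mathcal{X},\mathcal{Y})$), linearity and boundedness quickly from Theorem~\ref{RAHIMIBALAZS}, and to concentrate the work on injectivity. For the first three: by the upper inequality in Definition~\ref{RIESZBASISDEFINITIONBANACHSPACE} together with H\"older duality on finite sums, the $q$-Riesz sequence $\{\tau_n\}_n$ for $\mathcal{Y}$ is in particular a Bessel sequence for $\mathcal{Y}^*$; hence Theorem~\ref{RAHIMIBALAZS} applies with the given $p$-Bessel sequence $\{f_n\}_n$ and shows $M_{\lambda,f,\tau}\in\mathcal{B}(\mathcal{X},\mathcal{Y})$ together with $\|M_{\lambda,f,\tau}\|\le bd\,\|\{\lambda_n\}_n\|_\infty$, where $b,d$ are the respective bounds. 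This estimate says exactly that $T$ is bounded with $\|T\|\le bd$, and linearity of $T$ is immediate from the defining series~(\ref{SECOND EQUATION}), which for fixed $x$ may be added and scaled termwise.

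The real content is injectivity of $T$, which by linearity reduces to the implication $M_{\lambda,f,\tau}=0\Rightarrow\{\lambda_n\}_n=0$. Assume therefore that $\sum_{n=1}^{\infty}\lambda_n f_n(x)\tau_n=0$ in $\mathcal{Y}$ for every $x\in\mathcal{X}$. The idea is to recover each $\lambda_m$ by hitting this expansion with a functional on $\mathcal{Y}$ biorthogonal to $\{\tau_n\}_n$ at the slot $m$. To produce such a functional, fix $m$ and use \emph{only} the lower inequality of Definition~\ref{RIESZBASISDEFINITIONBANACHSPACE}: for every finite $\mathbb{S}\subseteq\mathbb{N}\setminus\{m\}$ and all scalars $c_n$, applying that inequality to the index set $\mathbb{S}\cup\{m\}$ with coefficient $1$ at the index $m$ and coefficient $-c_n$ at $n\in\mathbb{S}$ yields
\begin{align*}
\left\|\tau_m-\sum_{n\in\mathbb{S}}c_n\tau_n\right\|\ge a\left(1+\sum_{n\in\mathbb{S}}|c_n|^q\right)^{1/q}\ge a,
\end{align*}
where $a>0$ is the lower Riesz bound. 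Hence $\tau_m$ lies at distance at least $a$ from $\overline{\operatorname{span}}\{\tau_n:n\neq m\}$, and the Hahn--Banach separation theorem furnishes, after normalization, a functional $\phi_m\in\mathcal{Y}^*$ with $\phi_m(\tau_n)=\delta_{m,n}$ for all $n$. Evaluating $\phi_m$ on the convergent series and using its continuity and linearity,
\begin{align*}
0=\phi_m(0)=\phi_m\left(\sum_{n=1}^{\infty}\lambda_n f_n(x)\tau_n\right)=\sum_{n=1}^{\infty}\lambda_n f_n(x)\,\phi_m(\tau_n)=\lambda_m f_m(x),\qquad\forall x\in\mathcal{X}.
\end{align*}
Since $\{f_n\}_n$ has non-zero elements, there is $x_m\in\mathcal{X}$ with $f_m(x_m)\neq 0$; substituting $x=x_m$ forces $\lambda_m=0$. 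As $m$ was arbitrary, $\{\lambda_n\}_n=0$, so $T$ is injective.

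I expect the crux to be the middle step — manufacturing the biorthogonal functionals $\phi_m$. Definition~\ref{RIESZBASISDEFINITIONBANACHSPACE} supplies the Riesz inequalities only for \emph{finite} coefficient sets, so one must extract $\phi_m$ from that finite information (the distance estimate above does precisely this, using just the lower bound and one extra index) and then be entitled to push $\phi_m$ inside the \emph{infinite} expansion $\sum_n\lambda_n f_n(x)\tau_n$, which is legitimate only because that series has already been shown to converge in $\mathcal{Y}$. It is also worth flagging that the hypothesis ``$\{f_n\}_n$ has non-zero elements'' is indispensable in the last line: if $f_m=0$ for some $m$, then $\lambda_m$ can be altered without changing $M_{\lambda,f,\tau}$, and $T$ fails to be injective.
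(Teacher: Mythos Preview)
Your proof is correct. The paper does not prove this particular proposition (it is cited from \cite{RAHIMIBALAZSMUL} without proof), but it does prove the Lipschitz analogue, Proposition~\ref{INJECTIVE}, and that argument transfers verbatim to the present linear setting. There the injectivity step is handled differently and more directly: starting from $\sum_{n=1}^\infty(\lambda_n-\mu_n)f_n(x)\tau_n=0$, the paper applies the lower $q$-Riesz inequality~(\ref{RIESZSEQUENCEINEQUALITY}) to this sum (implicitly passing from finite to infinite index sets via the already-established convergence of the series) to obtain $a\bigl(\sum_n|(\lambda_n-\mu_n)f_n(x)|^q\bigr)^{1/q}\le 0$, so every coefficient $(\lambda_n-\mu_n)f_n(x)$ vanishes, and then uses $f_n\neq 0$ exactly as you do. Your Hahn--Banach route via biorthogonal functionals $\phi_m$ is a genuinely different and valid alternative: it is a bit longer but makes explicit why only the \emph{finite} Riesz lower bound is needed (your distance estimate $\|\tau_m-\sum_{n\in\mathbb{S}}c_n\tau_n\|\ge a$), whereas the paper's one-line appeal to the inequality on the infinite sum hides a limiting argument that your closing remark correctly anticipates. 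The paper's approach is shorter; yours isolates a biorthogonal system, which is a reusable by-product.
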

From the spectral theory of compact operators in Hilbert spaces, it easily follows that the symbol of compact operator converges to zero.  Following is a general result for Banach spaces.
\begin{proposition}(\cite{RAHIMIBALAZSMUL})\label{RAHIMIBALAZSMULTIPLIERCOMPACT}
	Let $\{f_n\}_{n}$  be
	a  p-Bessel sequence  for  a Banach space $\mathcal{X}$ with bound $b$
	and $\{\tau_n\}_{n}$    be a  q-Bessel sequence 
	for $\mathcal{Y}$ with bound $d$. 	If
	$\{\lambda_n\}_n \in c_0(\mathbb{N})$, then $M_{\lambda,f, \tau}$ is a 
	compact operator.
\end{proposition}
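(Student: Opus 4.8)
The plan is to exhibit $M_{\lambda,f,\tau}$ as the limit, in the operator norm of $\mathcal{B}(\mathcal{X},\mathcal{Y})$, of a sequence of finite-rank operators, and then to invoke the standard fact that the space $\mathcal{K}(\mathcal{X},\mathcal{Y})$ of compact operators is norm-closed in $\mathcal{B}(\mathcal{X},\mathcal{Y})$. First, for each $N\in\mathbb{N}$ put $M_N\coloneqq\sum_{n=1}^{N}\lambda_n(\tau_n\otimes f_n)$; this is a finite sum, so there is no convergence issue, and since $(M_Nx)=\sum_{n=1}^{N}\lambda_n f_n(x)\tau_n$ lies in $\operatorname{span}\{\tau_1,\dots,\tau_N\}$ for every $x\in\mathcal{X}$, the operator $M_N$ has finite rank, in particular is compact.

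Next I would estimate the tail. Define $\lambda^{(N)}=\{\lambda^{(N)}_n\}_n$ by $\lambda^{(N)}_n\coloneqq 0$ for $n\le N$ and $\lambda^{(N)}_n\coloneqq\lambda_n$ for $n>N$; then $\lambda^{(N)}\in\ell^\infty(\mathbb{N})$ with $\|\lambda^{(N)}\|_\infty=\sup_{n>N}|\lambda_n|$, and by construction $M_{\lambda,f,\tau}-M_N=M_{\lambda^{(N)},f,\tau}$. Applying Theorem \ref{RAHIMIBALAZS} with the same $p$-Bessel sequence $\{f_n\}_n$ (bound $b$), the same $q$-Bessel sequence $\{\tau_n\}_n$ (bound $d$) and the symbol $\lambda^{(N)}$, we obtain that $M_{\lambda,f,\tau}-M_N$ is a well-defined bounded operator with
\[
\|M_{\lambda,f,\tau}-M_N\|\le bd\,\|\lambda^{(N)}\|_\infty=bd\sup_{n>N}|\lambda_n|.
\]
Since $\{\lambda_n\}_n\in c_0(\mathbb{N})$, the right-hand side tends to $0$ as $N\to\infty$, so $M_N\to M_{\lambda,f,\tau}$ in $\mathcal{B}(\mathcal{X},\mathcal{Y})$.

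Finally, each $M_N$ is compact and $\mathcal{K}(\mathcal{X},\mathcal{Y})$ is closed in $\mathcal{B}(\mathcal{X},\mathcal{Y})$ with respect to the operator norm, so the norm limit $M_{\lambda,f,\tau}$ is compact, as claimed. There is no serious obstacle here: this is the familiar argument showing that a norm-convergent series of finite-rank operators defines a compact operator. The only two points one must be careful about are that the truncated-symbol operator $M_{\lambda,f,\tau}-M_N$ is genuinely a multiplier covered by Theorem \ref{RAHIMIBALAZS} — which holds because $\lambda^{(N)}$ still lies in $\ell^\infty(\mathbb{N})$ and the sequences $\{f_n\}_n$, $\{\tau_n\}_n$ are unchanged (alternatively, deleting the first $N$ terms of a $p$-Bessel, resp. $q$-Bessel, sequence again yields a $p$-Bessel, resp. $q$-Bessel, sequence) — and the closedness of the ideal of compact operators, which is classical.
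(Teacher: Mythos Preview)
Your proof is correct and is exactly the standard argument. The paper states this proposition as a cited result without proof, but when it proves the Lipschitz analogue (Proposition~\ref{MULTIPLIERISCOMPACT}) it uses precisely your strategy: truncate to the finite-rank partial sum, bound the tail by $bd\sup_{n>m}|\lambda_n|$ via the multiplier norm estimate, and conclude by closedness of the compact operators.
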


Following theorem shows that multipliers behave nicely with respect to  change in its parameters. These are known as continuity properties of multipliers in the literature.
\begin{theorem}(\cite{RAHIMIBALAZSMUL})\label{RAHIMIBALAZSMULTIPLIERCONTINUITY}
	Let $\{f_n\}_{n}$  be a
	 p-Bessel sequence  for  $\mathcal{X}$ with bound $b$,
	$\{\tau_n\}_{n}$   be a  q-Bessel sequence  for 
 $\mathcal{Y}$ with bound $d$ and
	$\{\lambda_n\}_n \in \ell^\infty(\mathbb{N})$. 	Let $k \in \mathbb{N}$ and let 
	$\lambda^{(k)}=\{\lambda_1^{(k)},\lambda_2^{(k)}, \dots \}$,
	$\lambda=\{\lambda_1,\lambda_2, \dots \}$, 
	$\tau^{(k)}=\{\tau_1^{(k)}, \tau_2^{(k)}, \dots\}$,
	$\tau_n^{k} \in \mathcal{X}$, $\tau=\{\tau_1, \tau_2, \dots\}$. Assume that for
	each $k$, $\lambda^{(k)}\in \ell^\infty(\mathbb{N})$  and
	$\tau^{(k)}$  is  a  q-Bessel sequence  for
	$\mathcal{Y}$.
	\begin{enumerate}[label=(\roman*)]
		\item If $\lambda^{(k)} \to \lambda $ as $k \rightarrow \infty $ in p-norm,  then
		\begin{align*}
		\|M_{\lambda^{(k)},f, \tau}-M_{\lambda,f, \tau}\| \to 0 \quad  \text{ as } \quad k \to \infty.
		\end{align*}
		\item If $\{\lambda_n\}_n \in \ell^p(\mathbb{N})$ and  $\sum_{n=1}^{\infty}\|\tau_n^{(k)}-\tau_n\|^q \to 0 \text{ as }  k \to \infty $, then 
		\begin{align*}
		\|M_{\lambda, f, \tau^{(k)}}-M_{\lambda,f, \tau}\| \to 0 \quad \text{ as } \quad k \to \infty.
		\end{align*}
	\end{enumerate}
\end{theorem}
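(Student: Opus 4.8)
The plan is to obtain both statements directly from Theorem \ref{RAHIMIBALAZS}, using only the fact that the multiplier $M_{\mu,f,\sigma}$ is linear in the symbol $\mu$ and in the sequence $\sigma$ separately. Recall that, by Theorem \ref{RAHIMIBALAZS}, if $\{f_n\}_n$ is $p$-Bessel for $\mathcal{X}$ with bound $b$, $\sigma=\{\sigma_n\}_n$ is $q$-Bessel for $\mathcal{Y}$ with bound $d_\sigma$, and $\mu\in\ell^\infty(\mathbb{N})$, then $M_{\mu,f,\sigma}x=\sum_{n=1}^\infty\mu_n f_n(x)\sigma_n$ converges in $\mathcal{Y}$ for each $x\in\mathcal{X}$ and $\|M_{\mu,f,\sigma}\|\le b\,d_\sigma\,\|\mu\|_\infty$. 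Comparing the convergent defining series term by term gives $M_{\mu,f,\sigma}-M_{\nu,f,\sigma}=M_{\mu-\nu,f,\sigma}$ and $M_{\mu,f,\sigma}-M_{\mu,f,\rho}=M_{\mu,f,\sigma-\rho}$, with $\sigma-\rho:=\{\sigma_n-\rho_n\}_n$, whenever the right-hand sides are again admissible in the sense of Theorem \ref{RAHIMIBALAZS} (which they are, since $\mu-\nu\in\ell^\infty(\mathbb{N})$ and $\sigma-\rho$ is again $q$-Bessel).

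For (i), I would write $M_{\lambda^{(k)},f,\tau}-M_{\lambda,f,\tau}=M_{\lambda^{(k)}-\lambda,f,\tau}$. Since $\lambda^{(k)}\to\lambda$ in the $p$-norm and $\|c\|_\infty\le\|c\|_p$ for every scalar sequence $c$, we have $\lambda^{(k)}-\lambda\in\ell^\infty(\mathbb{N})$ with $\|\lambda^{(k)}-\lambda\|_\infty\to 0$. Applying the norm estimate above with $\sigma=\tau$ (of $q$-Bessel bound $d$) gives
\[
\|M_{\lambda^{(k)},f,\tau}-M_{\lambda,f,\tau}\|=\|M_{\lambda^{(k)}-\lambda,f,\tau}\|\le b\,d\,\|\lambda^{(k)}-\lambda\|_\infty\le b\,d\,\|\lambda^{(k)}-\lambda\|_p\longrightarrow 0,
\]
which is exactly the claim; in fact $\ell^\infty$-convergence of $\lambda^{(k)}$ to $\lambda$ would already suffice here.

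For (ii), I would write $M_{\lambda,f,\tau^{(k)}}-M_{\lambda,f,\tau}=M_{\lambda,f,\tau^{(k)}-\tau}$ with $\tau^{(k)}-\tau=\{\tau_n^{(k)}-\tau_n\}_n$, and the heart of the matter is to control the $q$-Bessel constant of this difference family. For any $g\in\mathcal{Y}^*$ one has $|g(\tau_n^{(k)}-\tau_n)|\le\|g\|\,\|\tau_n^{(k)}-\tau_n\|$, so, setting $d_k:=\left(\sum_{n=1}^\infty\|\tau_n^{(k)}-\tau_n\|^q\right)^{1/q}$,
\[
\left(\sum_{n=1}^\infty|g(\tau_n^{(k)}-\tau_n)|^q\right)^{1/q}\le\|g\|\left(\sum_{n=1}^\infty\|\tau_n^{(k)}-\tau_n\|^q\right)^{1/q}=\|g\|\,d_k,
\]
which shows that $\tau^{(k)}-\tau$ is $q$-Bessel with bound $d_k$, and $d_k\to 0$ by hypothesis. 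Since $\{\lambda_n\}_n\in\ell^p(\mathbb{N})\subseteq\ell^\infty(\mathbb{N})$, Theorem \ref{RAHIMIBALAZS} applies and yields $\|M_{\lambda,f,\tau^{(k)}}-M_{\lambda,f,\tau}\|\le b\,d_k\,\|\{\lambda_n\}_n\|_\infty\to 0$. I do not expect a genuine obstacle: the analytic weight is carried entirely by Theorem \ref{RAHIMIBALAZS}, and the only steps requiring care are the bilinearity bookkeeping (valid precisely because every series involved converges, again by Theorem \ref{RAHIMIBALAZS}) and the elementary passage from the hypothesis $\sum_n\|\tau_n^{(k)}-\tau_n\|^q\to 0$ to a $q$-Bessel bound for $\{\tau_n^{(k)}-\tau_n\}_n$, which costs only the factor $\|g\|$.
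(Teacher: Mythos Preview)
Your proposal is correct. The paper itself does not prove Theorem~\ref{RAHIMIBALAZSMULTIPLIERCONTINUITY} (it is quoted from \cite{RAHIMIBALAZSMUL} as background), but it does prove the Lipschitz generalization, Theorem~\ref{MULTIPLIERISWELL}, and that proof is the natural point of comparison.

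For part~(i) the two arguments coincide: both reduce to the linearity $M_{\lambda^{(k)},f,\tau}-M_{\lambda,f,\tau}=M_{\lambda^{(k)}-\lambda,f,\tau}$ and the estimate $\|M_{\mu,f,\tau}\|\le bd\|\mu\|_\infty\le bd\|\mu\|_p$. For part~(ii) there is a genuine, if minor, difference. The paper's proof of Theorem~\ref{MULTIPLIERISWELL} unfolds the norm as a supremum over the dual ball and applies H\"older's inequality directly, producing the bound $b\,\|\lambda\|_p\,d_k$ with $d_k=(\sum_n\|\tau_n^{(k)}-\tau_n\|^q)^{1/q}$. You instead observe that $\{\tau_n^{(k)}-\tau_n\}_n$ is itself $q$-Bessel with bound $d_k$ (since $|g(\tau_n^{(k)}-\tau_n)|\le\|g\|\,\|\tau_n^{(k)}-\tau_n\|$), and then invoke Theorem~\ref{RAHIMIBALAZS} as a black box to get $b\,d_k\,\|\lambda\|_\infty$. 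Your route is more modular and, as you implicitly note, shows that the hypothesis $\lambda\in\ell^p(\mathbb{N})$ in~(ii) is stronger than necessary: $\lambda\in\ell^\infty(\mathbb{N})$ already suffices. The paper's direct computation trades this observation for self-containment.
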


\section{LIPSCHITZ OPERATORS AND LIPSCHITZ COMPACT \\
	OPERATORS}
We
first  recall the definition of Lipschitz function. 
\begin{definition}(cf. \cite{WEAVER})
	Let $\mathcal{M}$, 
$\mathcal{N}$ be  metric spaces. A function $f:\mathcal{M}  \rightarrow
\mathcal{N}$ is said to be \textbf{Lipschitz} if there exists $b> 0$ such that 
\begin{align*}
d(f(x), f(y)) \leq b\, d(x,y), \quad \forall x, y \in \mathcal{M}.
\end{align*}
\end{definition}
One of the most important results in the study of Lipschitz functions is the following (which is similar to the Banach-Mazur theorem for Banach spaces which shows that every separable Banach space embeds isometrically in the Banach space of continuous functions on $[0,1]$ (cf. \cite{ALBIACKALTON})).
\begin{theorem} (cf. \cite{KALTONLANCIEN, AHARONI}) (\textbf{Aharoni's theorem})  \label{AHARONITHEOREM} 
If $\mathcal{M}$ is  a  separable metric space, then there exists a function $f: \mathcal{M} \to c_0(\mathbb{N})$ and a constant $b>0$ such that 
\begin{align*}
d(x,y)\leq \|f(x)-f(y)\|\leq b \, d(x,y), \quad \forall x, y \in \mathcal{M}.
\end{align*}	
\end{theorem}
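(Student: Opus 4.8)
The plan is to construct the embedding scale by scale out of truncated distance functions, assemble a single map, and then spend the real effort making sure its image lands in $c_0(\mathbb N)$ rather than merely in $\ell^\infty(\mathbb N)$. First I would fix a basepoint $o\in\mathcal M$ and, for each $k\in\mathbb Z$, use Zorn's lemma to pick a maximal $2^{-k}$-separated set $N_k\subseteq\mathcal M$; separability of $\mathcal M$ forces each $N_k$ to be countable, so the index set $\Gamma=\{(k,p):k\in\mathbb Z,\ p\in N_k\}$ is countable and may be identified with $\mathbb N$. For $(k,p)\in\Gamma$ put
\begin{align*}
\varphi_{k,p}(x)=\max\bigl(0,\,2^{-k}-d(x,p)\bigr),\qquad x\in\mathcal M,
\end{align*}
a $1$-Lipschitz function with $0\le\varphi_{k,p}\le 2^{-k}$ supported on the open ball $B(p,2^{-k})$, and tentatively set $f(x)=(\varphi_{k,p}(x))_{(k,p)\in\Gamma}$.

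The two metric inequalities are then straightforward. Since each $\varphi_{k,p}$ is $1$-Lipschitz one gets $\|f(x)-f(y)\|_\infty\le d(x,y)$ for all $x,y$. For the reverse inequality, given $x\ne y$ write $d=d(x,y)$, pick $k$ with $2^{-k}\le d<2^{-k+1}$, and by maximality of $N_{k+2}$ choose $p\in N_{k+2}$ with $d(x,p)<2^{-k-2}$; then $\varphi_{k+1,p}(x)>2^{-k-2}$ whereas $d(y,p)>2^{-k-1}$ forces $\varphi_{k+1,p}(y)=0$, so $\|f(x)-f(y)\|_\infty>2^{-k-2}>\tfrac18\,d(x,y)$. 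Rescaling $f$ by the factor $8$ yields a map satisfying $d(x,y)\le\|f(x)-f(y)\|_\infty\le 8\,d(x,y)$.

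The one genuinely delicate point — and the whole content of the theorem — is that, as defined, $f$ maps into $\ell^\infty(\mathbb N)$ but \emph{not} into $c_0(\mathbb N)$: when $\mathcal M$ is unbounded, for every large scale $k$ (i.e. as $k\to-\infty$) there is always at least one $p\in N_k$ close to a fixed $x$ with $\varphi_{k,p}(x)\approx 2^{-k}\to\infty$, so infinitely many coordinates of $f(x)$ stay large. To repair this I would (i) first treat \emph{bounded} $\mathcal M$, where $k$ ranges only over $k\ge k_0$ and $\varphi_{k,p}(x)\le 2^{-k}\to 0$, and replace the single maximal net at each scale by a $\sigma$-uniformly-discrete open cover of $\mathcal M$ by $2^{-k}$-balls — such covers exist because every metric space is paracompact and admits $\sigma$-discrete bases (Stone's theorem) — keeping only coordinates indexed within each discrete layer, so that for fixed $x$ and fixed $k$ only finitely many coordinates exceed any prescribed $\varepsilon$; and (ii) pass from the bounded case to the general one by decomposing $\mathcal M$ into the annuli $A_m=\{x:m\le d(x,o)\le m+1\}$, embedding each $A_m$ into a copy of $c_0$ with a uniform distortion via (i), and gluing these countably many embeddings by a Lipschitz partition of unity subordinate to $\{A_{m-1}\cup A_m\cup A_{m+1}\}_m$ together with the extra coordinate $x\mapsto d(x,o)$; the partition of unity kills each block's coordinates outside a bounded set and thereby restores membership in $c_0(\mathbb N)$ globally.

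I expect the bi-Lipschitz estimates themselves to be routine; the main obstacle is precisely steps (i)–(ii), i.e. the topological bookkeeping needed to force the image into $c_0$ instead of $\ell^\infty$. This is exactly where one must invoke the finer structure of metric spaces — paracompactness and the existence of $\sigma$-discrete refinements, as opposed to merely the Lindelöf property available from separability — and it is what makes Aharoni's theorem (cf. \cite{AHARONI, KALTONLANCIEN}) nontrivial.
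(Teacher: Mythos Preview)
The paper does not prove this theorem: it is stated with the citation (cf.\ \cite{KALTONLANCIEN, AHARONI}) and used as a black box, so there is no paper-side argument to compare against. I will therefore assess your sketch on its own.

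Your bi-Lipschitz estimates for the map $f(x)=(\varphi_{k,p}(x))_{(k,p)}$ are correct and standard; the whole difficulty, as you rightly identify, is forcing the image into $c_0(\mathbb N)$. However, your proposed repair (i) does not work as written. Even in a \emph{bounded} separable metric space, at a fixed scale $k$ a single point $x$ can lie in infinitely many balls $B(p,2^{-k})$ with $p\in N_k$ and $\varphi_{k,p}(x)$ bounded away from $0$: for instance, in the closed unit ball of $\ell^1$ with $2^{-k}=\tfrac12$, the points $p_n=\tfrac13 e_n$ are pairwise $\tfrac23$-separated (hence can lie in $N_k$) and satisfy $\varphi_{k,p_n}(0)=\tfrac16$ for every $n$. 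Replacing the maximal net by a $\sigma$-discrete open cover does not cure this: discreteness guarantees that $x$ meets at most one member of each discrete layer, but there are \emph{countably many} layers at each scale, so $x$ can still have infinitely many coordinates of size $\sim 2^{-k}$ at a single scale $k$, and these do not tend to $0$. Invoking Stone's theorem gives you $\sigma$-discreteness but no control on the number of layers, which is exactly what you would need.

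Your gluing step (ii) is reasonable in spirit, but it rests entirely on having a uniform-distortion embedding of bounded spaces into $c_0$, which is precisely the unresolved point. The known proofs (Aharoni's original, and the simplifications by Assouad and later authors) use a genuinely different device to force decay of the coordinates --- roughly, a careful enumeration coupling the scale index with the net index so that, for each $x$, only finitely many coordinates of a given magnitude survive --- and this is not captured by paracompactness alone. So the gap is real: steps (i)--(ii) as outlined do not close the $c_0$ problem.
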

\begin{definition}(cf. \cite{WEAVER})
	A 	metric space $\mathcal{M}$ with a reference point which is usually denoted by 0 is called as a \textbf{pointed metric space}. In this case, we write  $(\mathcal{M}, 0)$ is a pointed metric space.
\end{definition}
Note that every metric space is a pointed metric space by fixing any point of the space.

Just like norm of linear operator, a reasonable measure of a Lipschitz function can be defined. This is exhibited in the next definition.
\begin{definition}(cf. \cite{WEAVER})
	Let	$\mathcal{X}$ be a Banach space.
	\begin{enumerate}[label=(\roman*)]
		\item Let $\mathcal{M}$ be a  metric space. The collection 	$\operatorname{Lip}(\mathcal{M}, \mathcal{X})$
		is defined as $\operatorname{Lip}(\mathcal{M}, \mathcal{X})\coloneqq \{f:f:\mathcal{M} 
		\rightarrow \mathcal{X}  \operatorname{ is ~ Lipschitz} \}.$ For $f \in \operatorname{Lip}(\mathcal{M}, \mathcal{X})$, the \textbf{Lipschitz number}
		is defined as 
		\begin{align*}
		\operatorname{Lip}(f)\coloneqq \sup_{x, y \in \mathcal{M}, ~x\neq
			y} \frac{\|f(x)-f(y)\|}{d(x,y)}.
		\end{align*}
		\item Let $(\mathcal{M}, 0)$ be a pointed metric space. The collection 	$\operatorname{Lip}_0(\mathcal{M}, \mathcal{X})$
		is defined as $\operatorname{Lip}_0(\mathcal{M}, \mathcal{X})\coloneqq \{f:f:\mathcal{M} 
		\rightarrow \mathcal{X}  \operatorname{ is ~ Lipschitz ~ and } f(0)=0\}.$
		For $f \in \operatorname{Lip}_0(\mathcal{M}, \mathcal{X})$, the \textbf{Lipschitz norm}
		is defined as 
		\begin{align*}
		\|f\|_{\operatorname{Lip}_0}\coloneqq \sup_{x, y \in \mathcal{M}, x\neq
			y} \frac{\|f(x)-f(y)\|}{d(x,y)}.
		\end{align*}
	\end{enumerate}
\end{definition}
It is well-known that given two Banach spaces $\mathcal{X}$ and $\mathcal{Y}$, the collection of all bounded linear maps from $\mathcal{X}$ to $\mathcal{Y}$ is a Banach space with respect to operator-norm. A similar result holds for base point preserving Lipschitz maps from pointed metric spaces to Banach spaces.
\begin{theorem}(cf. \cite{WEAVER})\label{LIPISABANACHALGEBRA}
	Let	$\mathcal{X}$ be a Banach space.
	\begin{enumerate}[label=(\roman*)]
		\item If $\mathcal{M}$ is a  metric space, then 	$\operatorname{Lip}(\mathcal{M},
		\mathcal{X})$ is a semi-normed vector  space with respect to  the semi-norm  $\operatorname{Lip}(\cdot)$.
		\item If $(\mathcal{M}, 0)$ is a pointed metric space, then 	$\operatorname{Lip}_0(\mathcal{M},
		\mathcal{X})$ is a Banach space with respect to  the  norm
		$\|\cdot\|_{\operatorname{Lip}_0}$. Further, $\operatorname{Lip}_0(\mathcal{X})\coloneqq\operatorname{Lip}_0(\mathcal{X},
		\mathcal{X})$ is a unital Banach algebra. In particular, if $T \in \operatorname{Lip}_0(\mathcal{X})$ satisfies $
		\|T-I_\mathcal{X}\|_{\operatorname{Lip}_0}<1,$ 
		then $T $ is invertible and $T^{-1} \in \operatorname{Lip}_0(\mathcal{X})$.
	\end{enumerate}
\end{theorem}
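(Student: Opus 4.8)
The plan is to dispatch the three assertions in turn, the bulk of the genuine work being completeness of $\operatorname{Lip}_0(\mathcal{M},\mathcal{X})$ and the invertibility criterion.

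For (i) I would first check that $\operatorname{Lip}(\mathcal{M},\mathcal{X})$ is closed under the pointwise operations: if $\operatorname{Lip}(f),\operatorname{Lip}(g)<\infty$, then $\|(f+g)(x)-(f+g)(y)\|\le(\operatorname{Lip}(f)+\operatorname{Lip}(g))\,d(x,y)$ and $\|(\alpha f)(x)-(\alpha f)(y)\|=|\alpha|\,\|f(x)-f(y)\|$ for all $x,y\in\mathcal{M}$, so $f+g$ and $\alpha f$ are again Lipschitz, and $\operatorname{Lip}(\mathcal{M},\mathcal{X})$ is a linear subspace of the space of all maps $\mathcal{M}\to\mathcal{X}$. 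Dividing those two inequalities by $d(x,y)$ and taking suprema gives $\operatorname{Lip}(f+g)\le\operatorname{Lip}(f)+\operatorname{Lip}(g)$ and $\operatorname{Lip}(\alpha f)=|\alpha|\operatorname{Lip}(f)$, which together with $\operatorname{Lip}(f)\ge0$ is exactly the definition of a seminorm. It is only a seminorm, not a norm, since every constant map has Lipschitz number $0$.

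For (ii) the extra hypothesis $f(0)=0$ upgrades the seminorm to a norm: if $\|f\|_{\operatorname{Lip}_0}=0$ then $f$ is constant, and $f(0)=0$ forces $f\equiv0$. Completeness is the step that requires care. Given a $\|\cdot\|_{\operatorname{Lip}_0}$-Cauchy sequence $\{f_n\}_n$, I would use $f_n(0)=f_m(0)=0$ to write $\|f_n(x)-f_m(x)\|=\|(f_n-f_m)(x)-(f_n-f_m)(0)\|\le\|f_n-f_m\|_{\operatorname{Lip}_0}\,d(x,0)$, so $\{f_n(x)\}_n$ is Cauchy in $\mathcal{X}$ and converges to some $f(x)$, with $f(0)=0$. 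Fixing $\varepsilon>0$ and $N_\varepsilon$ with $\|f_n-f_m\|_{\operatorname{Lip}_0}\le\varepsilon$ for $n,m\ge N_\varepsilon$, one has $\|(f_n-f_m)(x)-(f_n-f_m)(y)\|\le\varepsilon\,d(x,y)$; letting $m\to\infty$ and using continuity of the norm gives $\|(f_n-f)(x)-(f_n-f)(y)\|\le\varepsilon\,d(x,y)$, hence $f_n-f\in\operatorname{Lip}_0(\mathcal{M},\mathcal{X})$ with $\|f_n-f\|_{\operatorname{Lip}_0}\le\varepsilon$ for $n\ge N_\varepsilon$; then $f=f_n-(f_n-f)\in\operatorname{Lip}_0(\mathcal{M},\mathcal{X})$ and $f_n\to f$. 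For the algebra claim (with $\mathcal{M}=\mathcal{X}$ and base point $0\in\mathcal{X}$), composition is associative, maps $0$ to $0$, satisfies $\|S\circ T\|_{\operatorname{Lip}_0}\le\|S\|_{\operatorname{Lip}_0}\|T\|_{\operatorname{Lip}_0}$ straight from the definition, and has unit $I_\mathcal{X}$ with $\|I_\mathcal{X}\|_{\operatorname{Lip}_0}=1$ for $\mathcal{X}\ne\{0\}$; I would remark that the word algebra is to be read here in the sense appropriate to composition, which distributes over pointwise addition on one side only, as in \cite{WEAVER}.

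The main point is the invertibility criterion, which I would establish by the contraction mapping principle rather than a Neumann series, the latter being unavailable since composition is nonlinear. Put $S:=I_\mathcal{X}-T\in\operatorname{Lip}_0(\mathcal{X})$, so $c:=\|S\|_{\operatorname{Lip}_0}<1$, i.e.\ $\|S(x)-S(x')\|\le c\,\|x-x'\|$. For fixed $y\in\mathcal{X}$ the map $\Phi_y:\mathcal{X}\ni x\mapsto S(x)+y$ is a $c$-contraction on the complete metric space $\mathcal{X}$, hence has a unique fixed point, call it $T^{-1}(y)$; since $x-S(x)=y$ means precisely $T(x)=y$, this defines a two-sided set-theoretic inverse of $T$, and $T(0)=0$ gives $T^{-1}(0)=0$. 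Finally, if $x_i=T^{-1}(y_i)$, then $x_1-x_2=S(x_1)-S(x_2)+(y_1-y_2)$, so $\|x_1-x_2\|\le c\,\|x_1-x_2\|+\|y_1-y_2\|$, that is $\|T^{-1}(y_1)-T^{-1}(y_2)\|\le(1-c)^{-1}\|y_1-y_2\|$; thus $T^{-1}\in\operatorname{Lip}_0(\mathcal{X})$ with $\|T^{-1}\|_{\operatorname{Lip}_0}\le\bigl(1-\|T-I_\mathcal{X}\|_{\operatorname{Lip}_0}\bigr)^{-1}$. I expect the only mildly delicate bookkeeping to be the order of limits in the completeness argument; everything else is a direct unwinding of definitions together with a single application of Banach's fixed-point theorem.
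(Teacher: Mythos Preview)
The paper does not prove this theorem; it is quoted as a known result from \cite{WEAVER} and used later (notably in the proof of Theorem~\ref{STABILITYMA}) only through the invertibility criterion. Your argument is correct throughout: the seminorm and completeness verifications are the standard ones, and your use of the Banach fixed-point theorem for the invertibility claim is exactly the right device here, since a Neumann-series argument is indeed unavailable for nonlinear composition. Your parenthetical caveat that ``algebra'' must be read in the one-sided, composition sense is also apt---$\operatorname{Lip}_0(\mathcal{X})$ is not an algebra in the strict bilinear sense, only a unital Banach monoid under composition; this is the usage in \cite{WEAVER} and is all the paper requires.
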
	
In the study of Lipschitz functions it is natural to shift from metric space to the setting of Banach spaces and use functional analysis tools on Banach spaces. This is achieved through the following theorem. 
\begin{theorem}(cf. \cite{WEAVER, KALTON, ARENSEELLS})\label{POINTEDSPLITS}
	Let $(\mathcal{M},0)$ be a pointed metric space. Then there exists a Banach space  $\mathcal{F}(\mathcal{M})$ and an isometric embedding $e:\mathcal{M} \to \mathcal{F}(\mathcal{M})$ satisfying
	the following  universal property: for each Banach space $\mathcal{X}$ and each $f \in \operatorname{Lip}_0(\mathcal{M}, \mathcal{X})$, there is a unique bounded linear operator 
	$T_f :\mathcal{F}(\mathcal{M})\to \mathcal{X} $ such that $T_fe=f$, i.e., the following diagram commutes.
	
	\begin{center}
		\[
		\begin{tikzcd}
		\mathcal{M} \arrow[d,"e"] \arrow[dr,"f"]\\
		\mathcal{F}(\mathcal{M}) \arrow[r,"T_f"] & \mathcal{X}
		\end{tikzcd}
		\]
	\end{center}
	Further, $\|T_f\|=\|f\|_{\operatorname{Lip}_0}$. This property characterizes the pair $(\mathcal{F}(\mathcal{M}),e)$ uniquely upto isometric isomorphism. Moreover, the map 
	$\operatorname{Lip}_0(\mathcal{M}, \mathcal{X})\ni f \mapsto T_f \in \mathcal{B}(\mathcal{F}(\mathcal{M}),\mathcal{X}) $  is an isometric isomorphism.
\end{theorem}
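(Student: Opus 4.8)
The plan is to realize $\mathcal{F}(\mathcal{M})$ concretely as a subspace of the dual of the scalar Lipschitz space $\operatorname{Lip}_0(\mathcal{M})\coloneqq\operatorname{Lip}_0(\mathcal{M},\mathbb{K})$, which is a Banach space by Theorem \ref{LIPISABANACHALGEBRA}. For each $x\in\mathcal{M}$ let $\delta_x\in\operatorname{Lip}_0(\mathcal{M})^*$ be the evaluation functional $\delta_x(g)\coloneqq g(x)$; it is bounded since $|g(x)|=|g(x)-g(0)|\le\|g\|_{\operatorname{Lip}_0}d(x,0)$, and $\delta_0=0$ because $g(0)=0$ for every $g\in\operatorname{Lip}_0(\mathcal{M})$. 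I would define $\mathcal{F}(\mathcal{M})$ to be the closed linear span of $\{\delta_x:x\in\mathcal{M}\}$ in $\operatorname{Lip}_0(\mathcal{M})^*$ and set $e(x)\coloneqq\delta_x$. To see $e$ is an isometric embedding, the easy inequality is $\|\delta_x-\delta_y\|=\sup_{\|g\|_{\operatorname{Lip}_0}\le1}|g(x)-g(y)|\le d(x,y)$; for the reverse inequality one tests against $g_y\colon z\mapsto d(z,y)-d(0,y)$, which lies in $\operatorname{Lip}_0(\mathcal{M})$ with $\|g_y\|_{\operatorname{Lip}_0}\le1$ by the triangle inequality and satisfies $g_y(x)-g_y(y)=d(x,y)$.

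Next I would construct $T_f$. Given $f\in\operatorname{Lip}_0(\mathcal{M},\mathcal{X})$, define $T_f$ on the dense linear span of $\{\delta_x\}$ by $T_f\big(\sum_i a_i\delta_{x_i}\big)\coloneqq\sum_i a_i f(x_i)$. To show this is well defined and bounded, fix $\phi\in\mathcal{X}^*$ with $\|\phi\|\le1$; then $\phi\circ f\in\operatorname{Lip}_0(\mathcal{M})$ with $\|\phi\circ f\|_{\operatorname{Lip}_0}\le\|f\|_{\operatorname{Lip}_0}$, whence
\begin{align*}
\Big|\phi\Big(\sum_i a_i f(x_i)\Big)\Big|=\Big|\Big\langle\sum_i a_i\delta_{x_i},\,\phi\circ f\Big\rangle\Big|\le\|f\|_{\operatorname{Lip}_0}\Big\|\sum_i a_i\delta_{x_i}\Big\|.
\end{align*}
Taking the supremum over such $\phi$ gives $\big\|\sum_i a_i f(x_i)\big\|\le\|f\|_{\operatorname{Lip}_0}\big\|\sum_i a_i\delta_{x_i}\big\|$, which simultaneously yields well-definedness (apply it when the combination is $0$) and the bound $\|T_f\|\le\|f\|_{\operatorname{Lip}_0}$ after extending $T_f$ by continuity to $\mathcal{F}(\mathcal{M})$. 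Clearly $T_fe=f$, and uniqueness of $T_f$ follows because $\{\delta_x\}$ has dense span. The reverse estimate $\|f\|_{\operatorname{Lip}_0}\le\|T_f\|$ is immediate from $\|f(x)-f(y)\|=\|T_f(\delta_x-\delta_y)\|\le\|T_f\|d(x,y)$, so $\|T_f\|=\|f\|_{\operatorname{Lip}_0}$.

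For the final two assertions: the map $f\mapsto T_f$ is linear and, by the above, isometric; it is surjective because for $T\in\mathcal{B}(\mathcal{F}(\mathcal{M}),\mathcal{X})$ the composite $f\coloneqq T\circ e$ lies in $\operatorname{Lip}_0(\mathcal{M},\mathcal{X})$ (it kills $0$ and is Lipschitz with constant $\le\|T\|$) and satisfies $T_f=T$ since both agree on $e(\mathcal{M})$, hence on $\mathcal{F}(\mathcal{M})$. Uniqueness of $(\mathcal{F}(\mathcal{M}),e)$ up to isometric isomorphism is the standard universal-property argument: if $(\mathcal{G},e')$ has the same property, apply the universal property of $\mathcal{F}(\mathcal{M})$ to $e'\in\operatorname{Lip}_0(\mathcal{M},\mathcal{G})$ and that of $\mathcal{G}$ to $e$ to get contractions $T\colon\mathcal{F}(\mathcal{M})\to\mathcal{G}$, $S\colon\mathcal{G}\to\mathcal{F}(\mathcal{M})$ with $Te=e'$ and $Se'=e$; uniqueness in the two universal properties (applied against the identity maps) forces $ST=I$ and $TS=I$, and since $\|T\|,\|S\|\le1$ both are onto isometries.

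I expect the main obstacle to be the proof that $e$ is isometric — concretely, producing the norm-one Lipschitz function $g_y$ that witnesses $\|\delta_x-\delta_y\|\ge d(x,y)$ (a McShane-type extension of $d(\cdot,y)$ would also do) — together with the bookkeeping needed to keep every construction base-point compatible, namely $\delta_0=0$, $e'(0)=0$, and $\|e'\|_{\operatorname{Lip}_0}=1$, which is exactly what makes the comparison maps in the uniqueness step contractions. The remaining steps are routine density and duality arguments.
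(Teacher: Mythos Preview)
The paper does not supply its own proof of this theorem; it is stated with the citation ``(cf.\ \cite{WEAVER, KALTON, ARENSEELLS})'' and used as a known result from the literature, so there is no in-paper argument to compare against. Your proof is correct and is precisely the standard construction one finds in those references: realize $\mathcal{F}(\mathcal{M})$ as the closed linear span of the evaluation functionals $\delta_x$ inside $\operatorname{Lip}_0(\mathcal{M})^*$, verify the isometric embedding via the test function $g_y(z)=d(z,y)-d(0,y)$, define $T_f$ on finite combinations and bound it by pairing against $\phi\circ f$ for $\phi\in\mathcal{X}^*$, then run the usual universal-property uniqueness argument. All steps are sound, including the well-definedness/boundedness of $T_f$ (your duality estimate handles both at once) and the contraction bookkeeping in the uniqueness step.
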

The space $\mathcal{F}(\mathcal{M})$ is known as \textbf{Arens-Eells space} or  \textbf{Lipschitz-free 	Banach space} (\cite{GODEFROYSURVEY}). Theorem \ref{POINTEDSPLITS} tells that in order to `find' the space $\operatorname{Lip}_0(\mathcal{M}, \mathcal{X})$, we can find first $\mathcal{F}(\mathcal{M})$ and then $\mathcal{B}(\mathcal{F}(\mathcal{M}),\mathcal{X}) $. In particular, $\operatorname{Lip}_0(\mathcal{M}, \mathbb{K})$ is isometrically isomorphic to  $\mathcal{F}(\mathcal{M})^*$. For this reason $\mathcal{F}(\mathcal{M})$ is also called as \textbf{predual} of metric space $\mathcal{M}$. The bounded linear  operator $T_f$ is called as \textbf{linearization} of $f$. We now mention some examples of Lipschitz-free spaces for certain metric spaces.
\begin{example}
	\begin{enumerate}[label=(\roman*)] (cf. \cite{WEAVER}, \cite{DUBEITYMCHATYN})
		\item If $\mathbb{R}$ is considered with usual metric, then $\mathcal{F}(\mathbb{R}) \cong \mathcal{L}^1(\mathbb{R})$.
		\item If  $\mathcal{M}$ is any  separable metric tree, then $\mathcal{F}(\mathcal{M})\cong \mathcal{L}^1([0,1])$.
		\item If  $\mathcal{M}$ is any  set equipped with the metric $d(x,y)\coloneqq 2$ whenever $x,y \in \mathcal{M}$ with $x\neq y$ and $d(x,x)\coloneqq 0$, $\forall x \in \mathcal{M}$, then $\mathcal{F}(\mathcal{M})\cong \ell^1(\mathcal{M})$.
		\item 	If $\mathbb{N}$ is considered with usual metric, then $\mathcal{F}(\mathbb{N})\cong \ell^1(\mathbb{N})$.
		\item $\mathcal{F}(\ell^1(\mathbb{N}))\cong \mathcal{L}^1(\mathbb{R})$.
	\end{enumerate}
\end{example}
In the theory of bounded linear operators between Banach spaces, an operator is
said to be compact if the image of the unit ball under the operator is
precompact  (\cite{FABIAN}). Linearity of the operator now gives various
characterizations of
compactness and plays an important role in rich theories such as theory of integral equations, spectral
theory, theory of Fredholm operators, operator algebra (C*-algebra),
K-theory, Calkin algebra, (operator) ideal theory,  approximation
properties of Banach spaces, Schauder basis theory. Lack of linearity is a
hurdle when one tries to define compactness of non-linear maps. This hurdle was
successfully crossed in the paper (\cite{JIMENEZSEPUILCREMOISES}) which began the study of Lipschitz compact
operators. 
\begin{definition}(\cite{JIMENEZSEPUILCREMOISES})
	If $\mathcal{M}$ is a  metric space and $\mathcal{X}$ is a Banach space, then the  \textbf{Lipschitz image} of a Lipschitz map (also called as \textbf{Lipschitz operator}) $f:\mathcal{M}\rightarrow \mathcal{X}$ is defined as the set 
	\begin{align}\label{LIPSCHITZIMAGE}
	\left\{\frac{f(x)-f(y)}{d(x,y)}:x, y \in \mathcal{M}, x\neq y\right\}.
	\end{align}
\end{definition}
We observe that whenever an operator is linear, the set in (\ref{LIPSCHITZIMAGE}) is simply the image
of the unit sphere.
\begin{definition}(\cite{JIMENEZSEPUILCREMOISES})\label{LIPSCHITZCOMPACTDEFINITION}
	If $(\mathcal{M}, 0)$ is a pointed metric space and $\mathcal{X}$ is a
	Banach space, then a Lipschitz map  $f:\mathcal{M}\rightarrow \mathcal{X}$ such
	that $f(0)=0$ is said to be \textbf{Lipschitz compact} if its Lipschitz image is
	relatively compact  in $\mathcal{X}$, i.e., the closure of the set in 
	(\ref{LIPSCHITZIMAGE}) is compact  in $\mathcal{X}$.
\end{definition}
As showed in (\cite{JIMENEZSEPUILCREMOISES}), there is a large collection of Lipschitz compact operators. To state this, first we need a definition.
\begin{definition}(\cite{CHENZHENG})
	Let $(\mathcal{M}, 0)$ be a pointed metric space and $\mathcal{X}$ be a Banach space. A Lipschitz operator $f:\mathcal{M}\rightarrow \mathcal{X}$ such that $f(0)=0$ is said to be \textbf{strongly Lipschitz p-nuclear} ($1\leq p <\infty$) if there exist operators $A \in \mathcal{B}(\ell^p(\mathbb{N}), \mathcal{X})$, $g \in \operatorname{Lip}_0(\mathcal{M},
	\ell^\infty(\mathbb{N}))$ and a diagonal operator $M_\lambda \in \mathcal{B}(\ell^\infty(\mathbb{N}), \ell^p(\mathbb{N}))$ induced by a sequence $\lambda \in \ell^p(\mathbb{N})$ such that $f=AM_\lambda g$, i.e., the following diagram commutes.
\begin{center}
	\[
	\begin{tikzcd}
	\mathcal{M} \arrow[r, "f"]\arrow[d, "g"]& \mathcal{X} \\
	\ell^\infty(\mathbb{N})\arrow[r, "M_\lambda" ]& \ell^p(\mathbb{N})\arrow[u, "A"
	]
	\end{tikzcd}	
	\]
\end{center}
\end{definition} 
\begin{proposition}(\cite{JIMENEZSEPUILCREMOISES})
	Every strongly Lipschitz p-nuclear operator from a pointed metric space to a Banach space is Lipschitz compact.
\end{proposition}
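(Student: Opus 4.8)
The plan is to exploit the factorization directly. Suppose $f:\mathcal{M}\to\mathcal{X}$ is strongly Lipschitz $p$-nuclear, so that $f=AM_\lambda g$ with $g\in\operatorname{Lip}_0(\mathcal{M},\ell^\infty(\mathbb{N}))$, $M_\lambda\in\mathcal{B}(\ell^\infty(\mathbb{N}),\ell^p(\mathbb{N}))$ the diagonal operator induced by $\lambda\in\ell^p(\mathbb{N})$, and $A\in\mathcal{B}(\ell^p(\mathbb{N}),\mathcal{X})$. Writing $L\coloneqq\|g\|_{\operatorname{Lip}_0}$, for every pair $x\neq y$ in $\mathcal{M}$ one has
\[
\frac{f(x)-f(y)}{d(x,y)}=AM_\lambda\!\left(\frac{g(x)-g(y)}{d(x,y)}\right),\qquad \left\|\frac{g(x)-g(y)}{d(x,y)}\right\|_\infty\leq L,
\]
so the Lipschitz image of $f$ (the set in Definition \ref{LIPSCHITZCOMPACTDEFINITION}) is contained in $AM_\lambda(LB)$, where $B$ denotes the closed unit ball of $\ell^\infty(\mathbb{N})$. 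Thus it suffices to show this last set is relatively compact.

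The heart of the matter is that the diagonal operator $M_\lambda:\ell^\infty(\mathbb{N})\to\ell^p(\mathbb{N})$ is compact when $\lambda\in\ell^p(\mathbb{N})$. I would prove this by finite-rank approximation: set $\lambda^{(N)}\coloneqq(\lambda_1,\dots,\lambda_N,0,0,\dots)$ and let $M_{\lambda^{(N)}}$ be the corresponding diagonal operator, which has finite rank and hence is compact. A one-line estimate gives $\|M_\lambda-M_{\lambda^{(N)}}\|=\big(\sum_{n>N}|\lambda_n|^p\big)^{1/p}$, and the right-hand side tends to $0$ as $N\to\infty$ since $\lambda\in\ell^p(\mathbb{N})$. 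Therefore $M_\lambda$ is a norm-limit of compact operators, so it is compact.

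With this in hand the conclusion is immediate. Since $LB$ is bounded and $M_\lambda$ is compact, $\overline{M_\lambda(LB)}$ is compact in $\ell^p(\mathbb{N})$; as $A$ is continuous, $K\coloneqq A\big(\overline{M_\lambda(LB)}\big)$ is compact in $\mathcal{X}$. The Lipschitz image of $f$ lies inside $K$, so its closure is a closed subset of the compact set $K$ and is therefore compact. By Definition \ref{LIPSCHITZCOMPACTDEFINITION}, $f$ is Lipschitz compact.

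I do not expect a genuine obstacle here: the only point that requires an actual argument is the compactness of $M_\lambda$, which the finite-rank approximation dispatches cleanly, and one should merely be careful that relative compactness passes to subsets — which holds because a closed subset of a compact metric space is compact.
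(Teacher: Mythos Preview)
Your argument is correct. The paper does not supply its own proof of this proposition---it is merely quoted from \cite{JIMENEZSEPUILCREMOISES}---so there is nothing to compare against; your factorization-plus-compactness-of-$M_\lambda$ approach is the standard route and is carried out cleanly.
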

Since the image of a linear operator is a subspace, the natural definition of finite rank operator is that image is a finite dimensional subspace. The image of Lipschitz map may not be a subspace. Thus care has to be taken while defining rank of such maps.
\begin{definition}(\cite{JIMENEZSEPUILCREMOISES})\label{LFR}
	If $(\mathcal{M}, 0)$ is a pointed metric space and $\mathcal{X}$ is a
	Banach space, then a Lipschitz function $f:\mathcal{M}\rightarrow \mathcal{X}$
	such that $f(0)=0$ is said to have \textbf{Lipschitz finite dimensional rank} if the linear
	hull of its Lipschitz image is a finite dimensional subspace of $\mathcal{X}$.
\end{definition}
\begin{definition}(\cite{JIMENEZSEPUILCREMOISES})\label{FR}
	If $\mathcal{M}$ is a  metric space and $\mathcal{X}$ is a Banach space, then a Lipschitz function $f:\mathcal{M}\rightarrow \mathcal{X}$  is said to have  \textbf{finite dimensional rank} if the linear hull of its  image is a finite dimensional subspace of $\mathcal{X}$.
\end{definition}
Next theorem shows that for pointed metric spaces, Definitions \ref{LFR} and \ref{FR}   are
equivalent. 
\begin{theorem}(\cite{JIMENEZSEPUILCREMOISES, ACHOUR})\label{LIPSCHITCOMPACTIFFLINEAR}
	Let  $(\mathcal{M}, 0)$ be a pointed metric space and $\mathcal{X}$ be a Banach space.  For  a Lipschitz function $f:\mathcal{M}\rightarrow \mathcal{X}$ such that $f(0)=0$, the following are equivalent.
	\begin{enumerate}[label=(\roman*)]
		\item $f$ has Lipschitz finite dimensional rank.
		\item $f$ has finite dimensional rank.
		\item There exist $ f_1, \dots, f_n$ in  $\operatorname{Lip}_0(\mathcal{M}, \mathbb{K})$ and $\tau_1, \dots, \tau_n$ in $\mathcal{X}$ such that
		\begin{align*}
		f(x)=\sum_{k=1}^{n}f_k(x)\tau_k, \quad \forall x \in \mathcal{M}.
		\end{align*} 
	\end{enumerate}	
\end{theorem}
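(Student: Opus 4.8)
The plan is to prove the cyclic chain $(i)\Rightarrow(ii)\Rightarrow(iii)\Rightarrow(i)$, with essentially all the content sitting in $(ii)\Rightarrow(iii)$; the other two implications are bookkeeping with linear hulls. First I would record the elementary relationship between the image of $f$ and its Lipschitz image. Since $f(0)=0$, for every $x\neq 0$ we may write $f(x)=d(x,0)\,\dfrac{f(x)-f(0)}{d(x,0)}$, so each $f(x)$ is a scalar multiple of a point of the Lipschitz image; conversely $\dfrac{f(x)-f(y)}{d(x,y)}=\dfrac{1}{d(x,y)}\big(f(x)-f(y)\big)$ lies in the linear span of the image. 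Hence the linear hull of the image of $f$ and the linear hull of the Lipschitz image of $f$ coincide. This immediately yields $(i)\Leftrightarrow(ii)$, and in particular $(i)\Rightarrow(ii)$.

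For $(ii)\Rightarrow(iii)$, assume the linear hull $V$ of the image of $f$ is finite dimensional and pick a basis $\tau_1,\dots,\tau_n$ of $V$. For each $x\in\mathcal{M}$ the vector $f(x)$ lies in $V$, so there are unique scalars $f_1(x),\dots,f_n(x)$ with $f(x)=\sum_{k=1}^n f_k(x)\tau_k$; this defines functions $f_k:\mathcal{M}\to\mathbb{K}$. Since $V$ is finite dimensional it is isomorphic to $\mathbb{K}^n$, so each coordinate functional $\pi_k:V\to\mathbb{K}$, $\pi_k\big(\sum_j c_j\tau_j\big)=c_k$, is bounded; fix $C_k>0$ with $|\pi_k(v)|\le C_k\|v\|$ for all $v\in V$. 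Then for $x,y\in\mathcal{M}$,
\[
|f_k(x)-f_k(y)|=\big|\pi_k(f(x)-f(y))\big|\le C_k\|f(x)-f(y)\|\le C_k\,\operatorname{Lip}(f)\,d(x,y),
\]
so each $f_k$ is Lipschitz; moreover $f(0)=0$ together with linear independence of $\{\tau_k\}$ forces $f_k(0)=0$, hence $f_k\in\operatorname{Lip}_0(\mathcal{M},\mathbb{K})$. This gives the representation in (iii).

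Finally, for $(iii)\Rightarrow(i)$: if $f=\sum_{k=1}^n f_k(\cdot)\tau_k$ with $f_k\in\operatorname{Lip}_0(\mathcal{M},\mathbb{K})$, then for all $x\neq y$
\[
\frac{f(x)-f(y)}{d(x,y)}=\sum_{k=1}^n\frac{f_k(x)-f_k(y)}{d(x,y)}\,\tau_k\in\operatorname{span}\{\tau_1,\dots,\tau_n\},
\]
so the linear hull of the Lipschitz image of $f$ is contained in the $n$-dimensional space $\operatorname{span}\{\tau_1,\dots,\tau_n\}$, i.e.\ $f$ has Lipschitz finite dimensional rank. The only step requiring any genuine argument is the boundedness of the coordinate functionals $\pi_k$ invoked in $(ii)\Rightarrow(iii)$, which is precisely the fact that all norms on a finite dimensional space are equivalent (equivalently, $V\cong\mathbb{K}^n$ topologically); everything else is a direct manipulation of spans and the Lipschitz estimate for $f$.
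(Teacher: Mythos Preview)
Your argument is correct and is the standard proof of this result. Note that the paper does not supply its own proof of this theorem; it is quoted from the literature (\cite{JIMENEZSEPUILCREMOISES, ACHOUR}) as background, so there is nothing to compare against beyond confirming that your reasoning is sound.
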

In Hilbert spaces (and not in Banach spaces), every compact operator is approximable by finite rank
operators in the operator norm (cf. \cite{FABIAN}). Following is the definition of approximable
operator for Lipschitz maps.
\begin{definition}(\cite{JIMENEZSEPUILCREMOISES})
	If $(\mathcal{M}, 0)$ is a pointed metric space and $\mathcal{X}$ is a Banach space, then a Lipschitz function $f:\mathcal{M}\rightarrow \mathcal{X}$ such that $f(0)=0$ is said to be \textbf{Lipschitz approximable} if it is the limit in the Lipschitz norm of a sequence of Lipschitz finite rank operators from  $\mathcal{M}$ to  $\mathcal{X}$.
\end{definition}
\begin{theorem}(\cite{JIMENEZSEPUILCREMOISES})\label{LIPSCHITZAPPROMABLEISCOMPACT}
	Every Lipschitz approximable operator from pointed metric space $(\mathcal{M}, 0)$ to  a Banach space $\mathcal{X}$ is Lipschitz compact.
\end{theorem}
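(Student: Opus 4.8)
The plan is to adapt the classical argument that a norm limit of compact linear operators is compact to the nonlinear (Lipschitz) setting. Write $f$ as the $\operatorname{Lip}_0$-limit of a sequence $\{f_k\}_k$ of Lipschitz finite rank operators $\mathcal{M}\to\mathcal{X}$ vanishing at $0$, and for a base-point preserving Lipschitz map $g:\mathcal{M}\to\mathcal{X}$ write
$$
L(g)\coloneqq\left\{\frac{g(x)-g(y)}{d(x,y)}:x,y\in\mathcal{M},\ x\neq y\right\}
$$
for its Lipschitz image. Since $\mathcal{X}$ is complete it is enough to show that $L(f)$ is totally bounded, and this I would deduce by an $\varepsilon/2$-argument.

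First I would observe that each $f_k$ is itself Lipschitz compact. By Definition \ref{LFR} the linear hull of $L(f_k)$ is a finite dimensional subspace $V_k\subseteq\mathcal{X}$; moreover every member of $L(f_k)$ has norm at most $\|f_k\|_{\operatorname{Lip}_0}<\infty$, so $L(f_k)$ is a bounded subset of the finite dimensional space $V_k$, whence $\overline{L(f_k)}$ is compact, in particular totally bounded.

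Next, given $\varepsilon>0$, I would pick $k$ with $\|f-f_k\|_{\operatorname{Lip}_0}<\varepsilon/2$. Since $f-f_k\in\operatorname{Lip}_0(\mathcal{M},\mathcal{X})$, for every $x\neq y$ in $\mathcal{M}$,
$$
\left\|\frac{f(x)-f(y)}{d(x,y)}-\frac{f_k(x)-f_k(y)}{d(x,y)}\right\|
=\frac{\|(f-f_k)(x)-(f-f_k)(y)\|}{d(x,y)}\le\|f-f_k\|_{\operatorname{Lip}_0}<\frac{\varepsilon}{2},
$$
so each point of $L(f)$ lies within $\varepsilon/2$ of a point of $L(f_k)$. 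Choosing a finite $\varepsilon/2$-net $\{v_1,\dots,v_m\}$ for the totally bounded set $\overline{L(f_k)}$, the balls $B(v_j,\varepsilon)$, $1\le j\le m$, then cover $L(f)$. As $\varepsilon>0$ was arbitrary, $L(f)$ is totally bounded, hence $\overline{L(f)}$ is compact, i.e., $f$ is Lipschitz compact.

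I do not expect a genuine obstacle: the whole content is the two-stage $\varepsilon/2$ estimate --- from $L(f)$ to $L(f_k)$ via Lipschitz-norm convergence, then from $L(f_k)$ to its finite net via finite-dimensionality --- which is precisely the Lipschitz analogue of the standard proof of closedness of the compact operators. The one point deserving a line of care is that the difference of the difference quotients for $f$ and $f_k$ is controlled by $\|f-f_k\|_{\operatorname{Lip}_0}$ exactly because $(f-f_k)(0)=0$, so the relevant quantity is the $\operatorname{Lip}_0$-norm appearing in the definition of Lipschitz approximability.
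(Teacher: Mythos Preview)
Your argument is correct: the Lipschitz image of a Lipschitz finite rank operator is a bounded subset of a finite dimensional subspace, hence totally bounded, and the $\varepsilon/2$-approximation of difference quotients via $\|f-f_k\|_{\operatorname{Lip}_0}$ then transfers total boundedness to $L(f)$ exactly as in the linear case. Note that the paper does not supply its own proof of this theorem---it is simply quoted from \cite{JIMENEZSEPUILCREMOISES}---so there is nothing to compare against; your proof stands on its own. One small remark: the estimate $\frac{\|(f-f_k)(x)-(f-f_k)(y)\|}{d(x,y)}\le\|f-f_k\|_{\operatorname{Lip}_0}$ follows directly from the definition of the Lipschitz norm and does not in itself require $(f-f_k)(0)=0$; the base-point condition matters only so that $\|\cdot\|_{\operatorname{Lip}_0}$ is a genuine norm on the space in which the convergence $f_k\to f$ is taking place.
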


\vspace{0.5cm}
{\onehalfspacing \section{OPERATOR-VALUED ORTHONORMAL BASES, RIESZ \\BASES, FRAMES AND BESSEL SEQUENCES IN HILBERT SPACES}
	Through a decade long research, the frame theory  for Hilbert spaces  was extended to larger extent by 
	Kaftal, Larson and Zhang  with the introduction of  the notion of operator-valued frame (OVF) in 2009. In the theory of operator-valued frames, the sequence $\{L_n\}_{n} $ of operators play an important role. These are defined as follows. 	
	\begin{definition}(\cite{KAFTALLARSONZHANG})\label{LJDEFINITION}
		 Given $n \in \mathbb{N}$, we define 
		\begin{align*}
		L_n : \mathcal{H}_0 \ni h \mapsto L_nh\coloneqq e_n\otimes h \in  \ell^2(\mathbb{N}) \otimes \mathcal{H}_0,
		\end{align*}  where $\{e_n\}_{n} $ is  the standard orthonormal basis for $\ell^2(\mathbb{N})$.
	\end{definition}
Following proposition shows properties of the operator $L_n$.	
	\begin{proposition}(\cite{KAFTALLARSONZHANG})\label{LJORTHO}
		The operators $L_n$ in Definition \ref{LJDEFINITION} satisfy the following.
		\begin{enumerate}[label=(\roman*)]
			\item  Each $L_n$ is an  isometry from  $\mathcal{H}_0 $ to $ \ell^2(\mathbb{N}) \otimes \mathcal{H}_0$, and  for
			$  n,m \in \mathbb{N}$ we have  
			\begin{align}\label{LEQUATION}
			L_n^*L_m =
			\left\{
			\begin{array}{ll}
			I_{\mathcal{H}_0 } & \mbox{if } n=m \\
			0 & \mbox{if } n\neq m
			\end{array}
			\right.
			~\text{and} \quad 
			\sum\limits_{n=1}^\infty L_nL_n^*=I_{\ell^2(\mathbb{N})}\otimes I_{\mathcal{H}_0}
			\end{align}
			where  the convergence is in the strong-operator topology.
			\item $L_m^*(\{a_n\}_{n}\otimes y) =a_my,$ $ \forall  \{a_n\}_{n} \in \ell^2(\mathbb{N}), \forall y \in \mathcal{H}_0$, for each $ m $ in $ \mathbb{N}.$
		\end{enumerate}
	\end{proposition}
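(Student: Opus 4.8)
The plan is to read both statements off directly from the definition of $L_n$ and the inner product on the Hilbert-space tensor product $\ell^2(\mathbb{N})\otimes\mathcal{H}_0$, which I identify throughout with $\ell^2(\mathbb{N},\mathcal{H}_0)$, the Hilbert space of $\mathcal{H}_0$-valued sequences $\{y_k\}_k$ with $\sum_k\|y_k\|^2<\infty$; under this identification $e_n\otimes y$ is the sequence supported at the index $n$ with value $y$, and a general element $\xi$ is written $\xi=\sum_{k=1}^\infty e_k\otimes y_k$.

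First I would dispose of the easy half of (i): $L_n$ is clearly linear, and $\|L_nh\|^2=\|e_n\otimes h\|^2=\|e_n\|^2\|h\|^2=\|h\|^2$, so $L_n$ is an isometry (in particular bounded). Next I would compute $L_n^*$, which is the engine for everything else. Fixing $h\in\mathcal{H}_0$ and an elementary tensor $a\otimes y$ with $a=\{a_k\}_k\in\ell^2(\mathbb{N})$ and $y\in\mathcal{H}_0$,
\begin{align*}
\langle L_nh,\,a\otimes y\rangle=\langle e_n\otimes h,\,a\otimes y\rangle=\langle e_n,a\rangle\langle h,y\rangle=\overline{a_n}\,\langle h,y\rangle=\langle h,\,a_ny\rangle,
\end{align*}
so $L_n^*(a\otimes y)=a_ny$; since the elementary tensors span a dense subspace and $L_n^*$ is bounded, this identity extends to $L_m^*(\{a_n\}_n\otimes y)=a_my$ for every $m$, which is exactly (ii).

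It then remains to finish (i). From (ii), $L_n^*L_mh=L_n^*(e_m\otimes h)=\delta_{n,m}h$ for all $h\in\mathcal{H}_0$, giving the first identity in (\ref{LEQUATION}). For the second, each $L_nL_n^*$ is idempotent and self-adjoint, hence is the orthogonal projection onto the range of $L_n$, namely $e_n\otimes\mathcal{H}_0$; these subspaces are mutually orthogonal with dense algebraic sum. Concretely, for $\xi=\sum_{k=1}^\infty e_k\otimes y_k$ one has, using the extended form of (ii), $L_nL_n^*\xi=L_ny_n=e_n\otimes y_n$, so $\sum_{n=1}^N L_nL_n^*\xi=\sum_{n=1}^N e_n\otimes y_n\to\xi$ as $N\to\infty$; this is precisely the assertion that $\sum_{n=1}^\infty L_nL_n^*=I_{\ell^2(\mathbb{N})}\otimes I_{\mathcal{H}_0}$ in the strong operator topology.

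The argument is short, and the only points needing care are bookkeeping ones: one must invoke the identification $\ell^2(\mathbb{N})\otimes\mathcal{H}_0\cong\ell^2(\mathbb{N},\mathcal{H}_0)$ so that every $\xi$ admits the expansion used above (this is what lets the adjoint formula pass from the algebraic tensor product to the whole space), and one must remember that $\sum_nL_nL_n^*$ converges only strongly and not in norm — indeed $\|I-\sum_{n=1}^N L_nL_n^*\|=1$ for every $N$ whenever $\mathcal{H}_0\neq\{0\}$, since the tail projection is nonzero. Beyond these routine observations I do not anticipate any genuine obstacle.
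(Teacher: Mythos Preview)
Your argument is correct and complete. The paper does not supply its own proof of this proposition; it is stated as a cited result from \cite{KAFTALLARSONZHANG}, so there is nothing to compare against beyond noting that your direct computation of $L_n^*$ on elementary tensors and the identification $\ell^2(\mathbb{N})\otimes\mathcal{H}_0\cong\ell^2(\mathbb{N},\mathcal{H}_0)$ constitute exactly the standard verification one would expect.
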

Orthonormal basis for Hilbert spaces are defined in Definition \ref{ONBDEFINITIONOLE}. Considering Definition \ref{ONBDEFINITIONOLE} and Parseval identity  ((iv) in Theorem \ref{CHARORTHONORMALINTRO}) for orthonormal basis, 	  \cite{SUN1} defined the notion of orthonormal basis for operators.
	\begin{definition}(\cite{SUN1})\label{ONBDEFINITIONOVHS}
		A collection  $ \{F_n \}_{n}$ in $ \mathcal{B}(\mathcal{H}, \mathcal{H}_0)$ is said to be an \textbf{orthonormal basis} or \textbf{a G-basis} in  $ \mathcal{B}(\mathcal{H},\mathcal{H}_0)$ if 
		$$\langle F_n ^*y, F_k^*z\rangle=\delta_{n,k}\langle y, z\rangle  , \quad \forall y, z \in \mathcal{H}_0, ~\forall n , k \in \mathbb{N}$$
		and 
		$$ \sum\limits_{n=1}^\infty\|F_n h\|^2=\|h\|^2, \quad \forall h \in \mathcal{H}.$$
	\end{definition}
	We observe $\langle F_n ^*y, F_k^*z\rangle=\delta_{n ,k}\langle y, z\rangle  , \forall y, z \in \mathcal{H}_0, $ $\forall n , k \in \mathbb{N}$ if and only if $F_n F_k^*=\delta_{n ,k}I_{\mathcal{H}_0} , $ $ \forall n , k \in \mathbb{N}$. Hence if $ \{F_n \}_{n}$ is an orthonormal basis, then $\|F_n \|^2=\|F_n F_n ^*\|=1, \forall n  \in \mathbb{N} $ and   $ \sum_{n=1}^\infty F_n ^*F_n =I_\mathcal{H}$. Further, using Proposition \ref{LJORTHO} we have 
	\begin{align*}
	\left (\sum_{n=1}^\infty A_n^*L^*_n\right)\left(\sum_{k=1}^\infty L_kA_k\right)=I_\mathcal{H}.
	\end{align*}
	Consider the case $\mathcal{H}_0=\mathbb{K}$. For each $n\in \mathbb{N}$, via Riesz representation theorem (cf. \cite{LIMAYE}), there  exists a unique $\tau_n\in \mathcal{H}$ such that $F_nh=\langle h, \tau_n\rangle, \forall h \in \mathcal{H}$. Now first condition in Definition \ref{ONBDEFINITIONOVHS} tells $\langle F_n^*y, F_k^*z\rangle=y\overline{z}\langle \tau_n, \tau_k\rangle=y\overline{z}\delta_{j,k}, \forall j,k\in \mathbb{N},\forall y, z \in \mathbb{K}$ which shows that    $  \{\tau_n\}_{n}$ is orthonormal. Second condition in Definition \ref{ONBDEFINITIONOVHS} says that $\sum_{n=1}^\infty|\langle h, \tau_n\rangle|^2=\|h\|^2,  \forall h \in \mathcal{H}$. Theorem \ref{CHARORTHONORMALINTRO} now tells that $  \{\tau_n\}_{n}$ is an orthonormal basis for $\mathcal{H}$.  Hence Definition \ref{ONBDEFINITIONOVHS} generalizes the definition of orthonormal basis.
	\begin{example}\label{ONBOVF}
		\begin{enumerate}[label=(\roman*)]
			\item (\cite{SUN1}) Let $ \{\tau_n\}_{n}$ be  an orthonormal basis    for   $\mathcal{H}$. Define $F_n : \mathcal{H} \ni  h \mapsto \langle h, \tau_n\rangle \in \mathbb{K} $, for each $ n \in \mathbb{N}$. Then   $ \{F_n\}_{n } $  is an operator-valued  orthonormal basis in    $ \mathcal{B}(\mathcal{H}, \mathbb{K})$.
			\item If $ U: \mathcal{H}\rightarrow \mathcal{H}_0$ is unitary, then $\{U\}$ is an orthonormal basis in  $ \mathcal{B}(\mathcal{H}, \mathcal{H}_0)$.
			\item \label{CUNTZ} Let $n\geq 2 $ and $A_1, \dots, A_n$ be $n$ isometries on $\ell^2(\mathbb{N})$ such that $A_1A_1^*+\cdots +A_nA_n^*=I_{\ell^2(\mathbb{N})}$ (these are Cuntz algebra generators (\cite{CUNTZ})). We then have 
			\begin{align*}
			(A_jA_j^*)^2=A_j(A_j^*A_j)A_j^*=A_jI_\mathcal{H}A_j^*=A_jA_j^*.
			\end{align*} 
			Hence $A_jA_j^*$'s are projections. Further $A_j^*A_k=0$, $ \forall j\neq k$. 
			 Therefore $ \{A_j^*\}_{j=1}^n $  is an operator-valued  orthonormal basis in    $ \mathcal{B}(\ell^2(\mathbb{N}))$.
			\item Equation (\ref{LEQUATION}) says  that $ \{L^*_n\}_{n}$  is an orthonormal basis in $ \mathcal{B}(\ell^2(\mathbb{N})\otimes\mathcal{H}_0, \mathcal{H}_0)$.
		\end{enumerate}
	\end{example}
Using Theorem \ref{RIESZBASISTHM}, \cite{SUN1} defined the notion of Riesz  basis for operators.
	\begin{definition}(\cite{SUN1})
	A	collection $\{A_n\}_{n}$ in $ \mathcal{B}(\mathcal{H}, \mathcal{H}_0)$ is 	said to be an  \textbf{operator-valued   Riesz basis} or \textbf{G-Riesz basis} in $ \mathcal{B}(\mathcal{H}, \mathcal{H}_0)$ if it satisfies the following.	
	\begin{enumerate}[label=(\roman*)]
		\item $\{h \in \mathcal{H}:A_nh=0, \forall n \in \mathbb{N}\}=\{0\}$.
		\item There exist $a,b>0$ such that for every finite subset $\mathbb{S}$ of $\mathbb{N}$, 
		\begin{align*}
		a\sum_{n \in \mathbb{S}}\|h_n\|^2\leq \left\|\sum_{n \in \mathbb{S}}A_n^*h_n\right\|^2 \leq  b \sum_{n \in \mathbb{S}}\|h_n\|^2, \quad\forall h_n \in \mathcal{H}_0.
		\end{align*}
	\end{enumerate}	
	\end{definition}
	\begin{example}
		\begin{enumerate}[label=(\roman*)]
			\item (\cite{SUN1}) Let $ \{\tau_n\}_{n}$ be  a Riesz  basis    for   $\mathcal{H}$. Define $A_n : \mathcal{H} \ni  h \mapsto \langle h, \tau_n\rangle \in \mathbb{K} $, for each $ n \in \mathbb{N}$.    Then $A_n^*y=y\tau_n, \forall y \in \mathbb{K}$. Now from Theorem \ref{RIESZBASISTHM},  $ \{A_n\}_{n} $  is an operator-valued  Riesz  basis in    $ \mathcal{B}(\mathcal{H}, \mathbb{K})$.
			\item If $ U: \mathcal{H}\rightarrow \mathcal{H}_0$ is invertible, then $\{U\}$ is an operator-valued  Riesz basis in  $ \mathcal{B}(\mathcal{H}, \mathcal{H}_0)$.
			\item Let $A_1,\dots, A_n$ be as in \ref{CUNTZ} of Example \ref{ONBOVF} and  let $ A,B \in \mathcal{B}(\mathcal{H})$ be invertible. Then $ \{AA_j^*B\}_{j=1}^n $  is an operator-valued  Riesz basis in    $ \mathcal{B}(\mathcal{H})$. In fact, if $AA_j^*Bh=0, \forall j$, then $A_j^*Bh=0, \forall j $ which gives
			\begin{align*}
			h=B^{-1}Bh=B^{-1}\left(\sum_{k=1}^nA_kA_k^*Bh\right)=0
			\end{align*}
			 and  every finite subset $ \mathbb{S}$ of $ \mathbb{N}$, 
			\begin{align*}
			(\|B^{-1}\|\|A^{-1}\|)^{-1}\sum\limits_{j\in \mathbb{S}}\|h_j\|^2\leq \left\| \sum\limits_{j\in\mathbb{S}}B^*A_jA^*h_j  \right \|^2 \leq \|B\|\|A\|\sum\limits_{j\in \mathbb{S}}\|h_j\|^2,
			\end{align*}
			for all $h_j \in \mathcal{H}$.
		\end{enumerate}
	\end{example}
	 \cite{SUN1} derived the following result which tells that we can define the notion of operator-valued Riesz basis in a manner similar to Definition \ref{RIESZBASISDEFINITION}. 
	\begin{theorem}(\cite{SUN1})
	A	collection $\{A_n\}_{n}$ in $ \mathcal{B}(\mathcal{H}, \mathcal{H}_0)$ is 	an operator-valued   Riesz basis  if and only if  there exist an operator-valued orthonormal basis  $ \{F_n\}_{n}$ in $ \mathcal{B}(\mathcal{H}, \mathcal{H}_0)$ and an invertible $T\in \mathcal{B}(\mathcal{H})$ such that $A_n=F_nU, \forall n$.	
	\end{theorem}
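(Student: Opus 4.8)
The plan is to prove the two implications separately, the backward one (sufficiency of the factorization) being a short direct verification and the forward one (necessity) requiring a synthesis-operator construction followed by a polar decomposition.

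\textbf{Sufficiency.} Suppose $A_n = F_n U$ for all $n$, where $\{F_n\}_n$ is an operator-valued orthonormal basis in $\mathcal{B}(\mathcal{H},\mathcal{H}_0)$ and $U \in \mathcal{B}(\mathcal{H})$ is invertible. For condition (i): if $A_n h = 0$ for all $n$, then $F_n(Uh) = 0$ for all $n$, so by the second identity in Definition \ref{ONBDEFINITIONOVHS} we get $\|Uh\|^2 = \sum_n\|F_n Uh\|^2 = 0$, hence $Uh = 0$ and $h = 0$. For condition (ii): given a finite $\mathbb{S} \subseteq \mathbb{N}$ and $h_n \in \mathcal{H}_0$, write $A_n^* = U^* F_n^*$; then $F_m F_n^* = \delta_{m,n} I_{\mathcal{H}_0}$ gives
\begin{align*}
\left\|\sum_{n\in\mathbb{S}} A_n^* h_n\right\|^2 = \left\|U^*\sum_{n\in\mathbb{S}} F_n^* h_n\right\|^2, \qquad \left\|\sum_{n\in\mathbb{S}} F_n^* h_n\right\|^2 = \sum_{n\in\mathbb{S}}\|h_n\|^2,
\end{align*}
so the operator-valued Riesz inequalities hold with $a = \|(U^*)^{-1}\|^{-2}$ and $b = \|U^*\|^2$.

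\textbf{Necessity.} Assume $\{A_n\}_n$ is an operator-valued Riesz basis with bounds $a,b$. First I would show the synthesis operator $D:\ell^2(\mathbb{N})\otimes\mathcal{H}_0 \to \mathcal{H}$, $D(\{h_n\}_n) \coloneqq \sum_n A_n^* h_n$, is a bounded invertible operator. On finitely supported sequences condition (ii) reads $a\sum\|h_n\|^2 \le \|D(\{h_n\})\|^2 \le b\sum\|h_n\|^2$; since these are dense, $D$ extends to a bounded operator with $\|D\|\le\sqrt b$, and the lower estimate propagates to all of $\ell^2(\mathbb{N})\otimes\mathcal{H}_0$ by continuity, so $D$ is injective with closed range. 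A direct computation on $L_n h = e_n\otimes h$ gives $DL_n = A_n^*$, whence $D^* g = \{A_n g\}_n$ and $\ker D^* = \bigcap_n\ker A_n = \{0\}$ by condition (i); thus $\overline{\operatorname{ran} D} = (\ker D^*)^\perp = \mathcal{H}$. Closed range plus dense range makes $D$ surjective, hence invertible by the bounded inverse theorem, and $DD^* = \sum_n A_n^* A_n$ is a positive invertible operator on $\mathcal{H}$.

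Now take the polar decomposition $D^* = V|D^*|$, where $|D^*| = (DD^*)^{1/2}$ is positive and invertible and $V:\mathcal{H}\to\ell^2(\mathbb{N})\otimes\mathcal{H}_0$ is unitary (it is an isometry since $V^*V = I_\mathcal{H}$, and invertible since $D^*$ and $|D^*|^{-1}$ are). Set $U \coloneqq |D^*| = \left(\sum_n A_n^* A_n\right)^{1/2}$ and $F_n \coloneqq L_n^* V \in \mathcal{B}(\mathcal{H},\mathcal{H}_0)$. Then, using $A_n = (DL_n)^* = L_n^* D^*$, we get $F_n U = L_n^* V|D^*| = L_n^* D^* = A_n$. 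Finally, $\{F_n\}_n$ is an operator-valued orthonormal basis: by Proposition \ref{LJORTHO} and unitarity of $V$,
\begin{align*}
F_n F_m^* = L_n^* V V^* L_m = L_n^* L_m = \delta_{n,m} I_{\mathcal{H}_0}, \qquad \sum_{n=1}^\infty F_n^* F_n = V^*\left(\sum_{n=1}^\infty L_n L_n^*\right)V = V^* V = I_\mathcal{H},
\end{align*}
and the second identity is equivalent to $\sum_n\|F_n h\|^2 = \|h\|^2$, matching Definition \ref{ONBDEFINITIONOVHS}.

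\textbf{Main obstacle.} The only genuinely delicate point is the invertibility of $D$ obtained from the purely finite Riesz inequalities: one must extend $D$ by density, carry the lower bound to the whole space by continuity, and combine closed range (from the lower bound) with dense range (from condition (i)) to force surjectivity. Once $D$ is an isomorphism, the polar-decomposition step is clean, precisely because invertibility of $D^*$ turns the partial isometry into a genuine unitary $V$.
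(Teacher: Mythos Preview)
Your proof is correct. The paper does not actually supply a proof of this theorem; it is quoted from \cite{SUN1} (Sun, 2006) as background material in the operator-valued frame section, so there is nothing in the paper to compare your argument against. Your approach---proving that the synthesis operator $D$ is a bounded isomorphism from the Riesz inequalities together with condition (i), then extracting the orthonormal basis and the invertible $U$ via the polar decomposition $D^* = V|D^*|$---is the standard route and matches what one finds in Sun's original paper. The one point worth being explicit about (and you already flagged it as the main obstacle) is that invertibility of $D$ is what promotes the partial isometry $V$ to a genuine unitary between $\mathcal{H}$ and $\ell^2(\mathbb{N})\otimes\mathcal{H}_0$, which in turn is exactly what makes $F_n F_m^* = L_n^* V V^* L_m = \delta_{n,m} I_{\mathcal{H}_0}$ go through.
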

	Historically, many generalizations of frames for Hilbert spaces are proposed such as frames for subspaces (\cite{CASAZZASUBSPACE}), fusion frames (\cite{CASAZZAFUSION}), outer frames (\cite{OUTER}), oblique frames (\cite{CHRISTENSENOBLIQUE}), pseudo frames (\cite{LIPSEUDO}), quasi-projectors (\cite{FORNASIERQUASI}). It was in 2006, when Sun gave the definition of G-frame which unified all these notions of frames for Hilbert spaces. 
	\begin{definition}(\cite{SUN1})\label{SUNDEF}
		A collection  $ \{A_n\}_{n} $  in $ \mathcal{B}(\mathcal{H}, \mathcal{H}_0)$ is said to be a \textbf{G-frame} in  $ \mathcal{B}(\mathcal{H}, \mathcal{H}_0)$ if there exist $ a, b >0$ such that 
		\begin{align*}
			a\|h\|^2\leq\sum_{n=1}^\infty\|A_nh\|^2 \leq b\|h\|^2  ,\quad \forall h \in \mathcal{H}.	
		\end{align*}
	\end{definition}
 Basic idea for the notion of OVF is the following.   Definition \ref{OLE} can be written in an equivalent form as 
	\begin{align}\label{SEQUENTIALEQUATION2}
		&\text{the map}~ S_\tau: \mathcal{H} \ni h \mapsto \sum_{n=1}^\infty\langle h, \tau_n\rangle \tau_n \in \mathcal{H} ~\text{is a well-defined bounded} \nonumber\\
		&\text{positive invertible operator}.
	\end{align}
	If we now define $ A_n : \mathcal{H} \ni  h \mapsto \langle h, x_n\rangle \in \mathbb{K} $, for each $ n \in \mathbb{N}$, then  Statement (\ref{SEQUENTIALEQUATION2}) can be rewritten as 
	\begin{align}\label{SEQUENTIALEQUATION3}
		&\sum_{n=1}^\infty A_n^*A_n ~\text{converges in the strong-operator topology  on } \mathcal{B}(\mathcal{H}) \text{ to a} \nonumber\\
		&\text{ bounded positive invertible operator.}
	\end{align}
	Now Statement (\ref{SEQUENTIALEQUATION3}) leads to 
	\begin{definition}(\cite{KAFTALLARSONZHANG})\label{KAFTAL}
		A collection  $ \{A_n\}_{n} $  in $ \mathcal{B}(\mathcal{H}, \mathcal{H}_0)$ is said to be an \textbf{operator-valued frame} (OVF) in $ \mathcal{B}(\mathcal{H}, \mathcal{H}_0)$ if the series 
		\begin{align*}
			\text{(\textbf{Operator-valued frame operator})}\quad 	S_A\coloneqq \sum_{n=1}^\infty A_n^*A_n
		\end{align*}
		converges in the strong-operator topology on $ \mathcal{B}(\mathcal{H})$ to a  bounded invertible operator.\\
		Constants $ a$ and $ b$ are called as lower and upper frame bounds, respectively. Supremum (resp. infimum) of the set of all lower (resp. upper) frame bounds is called optimal lower (resp. upper) frame bound. If the optimal frame bounds are equal, then the frame is called as tight operator-valued frame. A tight operator-valued frame whose optimal frame bound is one is termed as Parseval operator-valued frame.
	\end{definition}
Before proceeding, we first show that Definitions \ref{SUNDEF} and \ref{KAFTAL} are equivalent. For this, we first need a result. 
\begin{theorem}(\cite{SUN1})\label{SUNIMPO}
	If $ \{A_n\}_{n} $  in $ \mathcal{B}(\mathcal{H}, \mathcal{H}_0)$  is  a G-frame, then the map $S_A:\mathcal{H}\ni h  \mapsto \sum_{n=1}^\infty A^*_nA_nh \in \mathcal{H}$ is a well-defined bounded linear invertible operator.
\end{theorem}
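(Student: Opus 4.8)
The plan is to show that the partial sums $S_N := \sum_{n=1}^N A_n^*A_n$ form a bounded, increasing sequence of positive operators, to deduce from this that the series converges in the strong-operator topology to a positive operator $S_A$ whose quadratic form is exactly $h \mapsto \sum_{n=1}^\infty \|A_nh\|^2$, and then to read off boundedness and invertibility directly from the two G-frame inequalities.

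First I would record that for each $n$ the operator $A_n^*A_n$ is positive, since $\langle A_n^*A_nh, h\rangle = \|A_nh\|^2 \geq 0$; hence each partial sum $S_N$ is positive and the sequence $(S_N)_N$ is increasing in the usual order on self-adjoint operators. The upper G-frame bound gives $\langle S_Nh, h\rangle = \sum_{n=1}^N\|A_nh\|^2 \leq b\|h\|^2$ for all $N$ and all $h \in \mathcal{H}$, so $0 \leq S_N \leq bI_{\mathcal{H}}$ and in particular $\|S_N\| \leq b$ for every $N$.

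Next I would check that $(S_Nh)_N$ is a Cauchy sequence in $\mathcal{H}$ for each fixed $h$. For $M > N$ the operator $P := S_M - S_N = \sum_{n=N+1}^M A_n^*A_n$ is positive and satisfies $P \leq S_M \leq bI_{\mathcal{H}}$, so $\|P\| \leq b$; using the elementary inequality $\|Ph\|^2 \leq \|P\|\,\langle Ph, h\rangle$ valid for any positive operator $P$ (apply Cauchy--Schwarz to the vector $P^{1/2}h$), one gets $\|S_Mh - S_Nh\|^2 \leq b\,\langle (S_M - S_N)h, h\rangle = b\sum_{n=N+1}^M \|A_nh\|^2$, and the right-hand side tends to $0$ as $N \to \infty$ because $\sum_{n=1}^\infty\|A_nh\|^2$ converges by the upper bound. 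Thus $S_Nh$ converges; denote the limit by $S_Ah$. The map $S_A$ is linear, and $\|S_Ah\| = \lim_N\|S_Nh\| \leq b\|h\|$, so $S_A \in \mathcal{B}(\mathcal{H})$ and $\sum_{n=1}^\infty A_n^*A_n$ converges to $S_A$ in the strong-operator topology. Passing to the limit in $\langle S_Nh, h\rangle$ shows $\langle S_Ah, h\rangle = \sum_{n=1}^\infty\|A_nh\|^2$, so $S_A$ is positive and self-adjoint.

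Finally, the lower G-frame bound gives $\langle S_Ah, h\rangle = \sum_{n=1}^\infty\|A_nh\|^2 \geq a\|h\|^2$, that is, $aI_{\mathcal{H}} \leq S_A \leq bI_{\mathcal{H}}$. Invertibility then follows in the standard way: $\|I_{\mathcal{H}} - \tfrac{2}{a+b}S_A\| \leq \tfrac{b-a}{a+b} < 1$, so $\tfrac{2}{a+b}S_A$ is invertible by the Neumann series and hence so is $S_A$; equivalently, $S_A \geq aI_{\mathcal{H}}$ forces $S_A$ to be bounded below, hence injective with closed range, while self-adjointness gives $\overline{\operatorname{ran}S_A} = (\ker S_A)^\perp = \mathcal{H}$, so $S_A$ is a bijection with bounded inverse. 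There is no genuine obstacle here; the only step needing a little care is the passage from convergence of the scalar series $\sum\|A_nh\|^2$ to strong convergence of the operator series $\sum A_n^*A_n$, which is precisely where the positive-operator inequality $\|Ph\|^2 \leq \|P\|\,\langle Ph, h\rangle$ does the work.
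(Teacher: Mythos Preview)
Your proof is correct and complete. The paper does not give its own proof of this statement---it is quoted as a known result from \cite{SUN1}---so there is nothing to compare against; your argument is the standard one (monotone convergence of positive partial sums via the inequality $\|Ph\|^2 \leq \|P\|\langle Ph,h\rangle$, followed by $aI_{\mathcal{H}} \leq S_A \leq bI_{\mathcal{H}}$) and is precisely what one finds in Sun's original paper.
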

Following theorem will reflect the equivalence of  notions of OVF and G-frame. 
\begin{theorem}\label{OVFIFANDONLYIFGFRAME}
	A collection $ \{A_n\}_{n} $  in $ \mathcal{B}(\mathcal{H}, \mathcal{H}_0)$  is  a  G-frame in $ \mathcal{B}(\mathcal{H},  \mathcal{H}_0)$ if and only if $ \{A_n\}_{n}$ is an operator-valued frame in $ \mathcal{B}(\mathcal{H}, \mathcal{H}_0)$.
\end{theorem}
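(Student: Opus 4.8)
The plan is to reduce everything to the single elementary identity
\[
\sum_{n=1}^{N}\|A_nh\|^2=\left\langle\left(\sum_{n=1}^{N}A_n^*A_n\right)h,\,h\right\rangle,\qquad \forall h\in\mathcal{H},\ \forall N\in\mathbb{N},
\]
which converts statements about the scalar series $\sum_n\|A_nh\|^2$ into statements about the operator series $\sum_nA_n^*A_n$ and back. The two structural facts that make the equivalence run are that each partial sum $S_N\coloneqq\sum_{n=1}^{N}A_n^*A_n$ is a positive operator, and that $S_N\leq S_{N+1}$, so $\{S_N\}_N$ is a monotone increasing sequence of positive operators.

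For the forward implication, I would assume $\{A_n\}_n$ is a G-frame with bounds $a,b$. Then for every $h$ the estimate $\langle S_Nh,h\rangle=\sum_{n=1}^{N}\|A_nh\|^2\leq b\|h\|^2$ shows that $\{S_N\}_N$ is an increasing sequence of positive operators uniformly bounded by $bI_\mathcal{H}$; by the monotone convergence theorem for self-adjoint operators it converges in the strong-operator topology to a bounded positive operator $S_A$, and passing to the limit in the identity gives $\langle S_Ah,h\rangle=\sum_{n=1}^{\infty}\|A_nh\|^2$ for all $h$. The lower frame inequality then yields $S_A\geq aI_\mathcal{H}$, and a positive operator bounded below by a positive multiple of the identity is invertible; hence $\sum_nA_n^*A_n$ converges strongly to a bounded invertible operator, i.e.\ $\{A_n\}_n$ is an operator-valued frame. (One may instead simply quote Theorem~\ref{SUNIMPO}, which already supplies that $S_A$ is a well-defined bounded invertible operator.)

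For the converse, I would assume $\{A_n\}_n$ is an operator-valued frame, so $S_A=\sum_nA_n^*A_n$ converges strongly to a bounded invertible operator. Since each $S_N$ is positive and $S_Nh\to S_Ah$, we get $\langle S_Ah,h\rangle=\lim_N\langle S_Nh,h\rangle\geq 0$, so $S_A$ is positive; being positive and invertible it satisfies $\|S_A^{-1}\|^{-1}\,I_\mathcal{H}\leq S_A\leq\|S_A\|\,I_\mathcal{H}$. Passing to the limit in the identity gives $\sum_{n=1}^{\infty}\|A_nh\|^2=\langle S_Ah,h\rangle$, whence
\[
\|S_A^{-1}\|^{-1}\|h\|^2\leq\sum_{n=1}^{\infty}\|A_nh\|^2\leq\|S_A\|\,\|h\|^2,\qquad\forall h\in\mathcal{H},
\]
so $\{A_n\}_n$ is a G-frame with bounds $a=\|S_A^{-1}\|^{-1}$ and $b=\|S_A\|$.

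I do not expect a genuine obstacle here: the only points requiring care are the monotone convergence argument for increasing bounded sequences of positive operators and the implication ``$S_A\geq aI_\mathcal{H}\Rightarrow S_A$ invertible'', both standard in operator theory; the rest is bookkeeping with the displayed identity.
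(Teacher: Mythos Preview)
Your proposal is correct and follows essentially the same route as the paper. The paper's forward implication simply quotes Theorem~\ref{SUNIMPO} (which you also note as an option) and observes positivity of $S_A$; your monotone-convergence argument just unpacks what that theorem supplies. The converse direction is identical in substance: positivity plus invertibility of $S_A$ gives $aI_\mathcal{H}\leq S_A\leq bI_\mathcal{H}$, and the identity $\langle S_Ah,h\rangle=\sum_n\|A_nh\|^2$ yields the G-frame inequalities.
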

\begin{proof}
	$(\Rightarrow)$ Theorem \ref{SUNIMPO} says that $S_A$ is a bounded linear invertible operator. Since $A_n^*A_n\geq0, \forall n \in \mathbb{N}$,  $S_A$ is positive. Hence $ \{A_n\}_{n}$ is an OVF.\\
	$(\Leftarrow)$ Since 	$\sum_{n=1}^\infty A^*_nA_n$ is positive invertible, there are $a,b>0$ such that $aI_\mathcal{H}\leq\sum_{n=1}^\infty A^*_nA_n $ $\leq b I_\mathcal{H}$. This implies $a\|h\|^2\leq \langle\sum_{n=1}^\infty A^*_nA_nh, h\rangle =\sum_{n=1}^\infty\|A_nh\|^2\leq b \|h\|^2, \forall h \in \mathcal{H}$. Hence $ \{A_n\}_{n}$ is a G-frame.
\end{proof}
\begin{remark}
	Even though Sun's paper (\cite{SUN1}) published earlier than the paper  by \cite{KAFTALLARSONZHANG}, it is mentioned in the  introduction of paper  (\cite{KAFTALLARSONZHANG})  that authors of   paper (\cite{KAFTALLARSONZHANG}) started  the work in January 1999.
\end{remark}
\begin{example}
	\begin{enumerate}[label=(\roman*)]
		\item (\cite{SUN1}) Let $ \{\tau_n\}_{n}$ be  a  frame for   $\mathcal{H}$. Define $A_n : \mathcal{H} \ni  h \mapsto \langle h, \tau_n\rangle \in \mathbb{K} $, for each $ n \in \mathbb{N}$. Then $A_n^*y=y\tau_n, \forall y \in \mathbb{K}$. Now from Theorem \ref{MOSTIMPORTANT}, the map $\mathcal{H}\ni h \mapsto \sum_{n=1}^\infty A_n^*A_nh=\sum_{n=1}^\infty \langle h, \tau_n\rangle \tau_n\in \mathcal{H}$ is a well-defined positive invertible operator. Hence  $ \{A_n\}_{n} $  is an operator-valued frame in    $ \mathcal{B}(\mathcal{H}, \mathbb{K})$.
		\item  If $ A: \mathcal{H}\rightarrow \mathcal{H}_0$ is a  bounded below linear operator, then $\{A\}$ is an operator-valued frame in  $ \mathcal{B}(\mathcal{H}, \mathcal{H}_0)$.
		\item Let $A_1,\dots, A_n$ be as in \ref{CUNTZ} of Example \ref{ONBOVF} and  let $ A,B \in \mathcal{B}(\mathcal{H})$ be bounded below. Then $ \{AA_j^*B\}_{j=1}^n $  is an operator-valued  frame in    $ \mathcal{B}(\mathcal{H})$.
	\end{enumerate}
\end{example}
	The fundamental tool used in the study of OVF is the factorization of frame operator $S_A$. This and other important properties of OVFs are stated in the following theorem.  
\begin{theorem}(\cite{KAFTALLARSONZHANG}, \cite{SUN1})
		Let  $ \{A_n\}_{n} $  be an OVF in $ \mathcal{B}(\mathcal{H}, \mathcal{H}_0)$. Then
		\begin{enumerate}[label=(\roman*)]
			\item 	 $\overline{\operatorname{span}} \cup_{n=1}^\infty A_n^*(\mathcal{H}_0)=\mathcal{H}$.
			\item The analysis operator 
			\begin{align*}
				\theta_A:\mathcal{H} \ni h \mapsto   \theta_A h\coloneqq\sum_{n=1}^\infty L_nA_n h \in \ell^2(\mathbb{N}) \otimes \mathcal{H}_0
			\end{align*}
			is a well-defined bounded  linear  operator. Further, $\sqrt{a}\|h\|\leq \|\theta_A h\|\leq \sqrt{b}\|h\|, \forall h \in \mathcal{H}$. In particular, $\theta_A$ is injective and its range is closed.
		\item We have 
			\begin{align*}
			a\|h\|^2\leq \langle S_A h, h\rangle\leq b\|h\|^2, \forall h \in \mathcal{H}\text{ and } a\|h\|\leq \|S_A h\|\leq b\|h\|, \forall h \in \mathcal{H}.
			\end{align*}
			\item  $h=\sum_{n=1}^\infty(A_nS_A^{-1})^*A_nh=\sum_{n=1}^\infty A_n^*(A_nS_A^{-1})h, \forall h \in \mathcal{H}$.
		\item $\theta_A^*z=	\sum\limits_{n=1}^\infty A_n^*L_n^*z $, $ \forall z  \in \ell^2(\mathbb{N})\otimes \mathcal{H}_0$.
			\item Frame operator 
			factors  as $S_A=\theta_A^*\theta_A.$
	\item $\theta_A^*$ is surjective.
				\item $\|S_A^{-1}\|^{-1}$ is the optimal lower frame bound and  $\|S_A\|=\|\theta_A\|^2$ is the optimal upper frame bound.
				\item $ P_A \coloneqq \theta_A S_A^{-1} \theta_A^*:\ell^2(\mathbb{N})\otimes \mathcal{H}_0 \to \ell^2(\mathbb{N})\otimes \mathcal{H}_0$ is an orthogonal  projection onto $ \theta_A(\mathcal{H})$.
			\item $  \{A_n\}_{n}$ is Parseval if and only if $\theta_A$ is an isometry if and only if $\theta_A\theta_A^*$ is a projection.
			\item $ \{A_nS_A^{-1}\}_{n}$  is an OVF in  $ \mathcal{B}(\mathcal{H}, \mathcal{H}_0)$  with bounds $b^{-1}$ and $a^{-1}$.
			\item $ \{A_nS_A^{-1/2}\}_{n}$  is a Parseval OVF in  $ \mathcal{B}(\mathcal{H}, \mathcal{H}_0)$. 
			\item (\textbf{Best approximation}) If $ h \in \mathcal{H}$ has representation  $ h=\sum_{n=1}^\infty A_n^*y_n,$ for some  sequence  $ \{y_n\}_{n}$ in $\mathcal{H}_0$,  then 
			$$ \sum\limits_{n=1}^\infty \|y_n\|^2 =\sum\limits_{n=1}^\infty \|A_nS_A^{-1}h\|^2+\sum\limits_{n=1}^\infty \| y_n-A_nS_A^{-1}h\|^2. $$
		\end{enumerate}	
	\end{theorem}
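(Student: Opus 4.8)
The plan is to lift the proof of Theorem \ref{MOSTIMPORTANT} to the operator-valued setting, the pivot being the factorization $S_A=\theta_A^*\theta_A$. Throughout I use Proposition \ref{LJORTHO}: the $L_n$ are isometries with pairwise orthogonal ranges, $L_n^*L_m=\delta_{n,m}I_{\mathcal{H}_0}$ and $\sum_n L_nL_n^*=I$; these relations play exactly the role the standard orthonormal basis $\{e_n\}_n$ played in the scalar case. By Theorem \ref{OVFIFANDONLYIFGFRAME} fix frame bounds $a,b>0$, so that $aI_{\mathcal{H}}\le S_A\le bI_{\mathcal{H}}$, positivity of $S_A$ being clear since each $A_n^*A_n\ge 0$.

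First I would build $\theta_A$ and its adjoint. For $h\in\mathcal{H}$ and a finite $F\subseteq\mathbb{N}$, orthogonality of the $L_n$ gives $\big\|\sum_{n\in F}L_nA_nh\big\|^2=\sum_{n\in F}\|A_nh\|^2$, and since $\sum_n\|A_nh\|^2=\langle S_Ah,h\rangle\le b\|h\|^2<\infty$, the net of partial sums is Cauchy; hence $\theta_Ah\coloneqq\sum_n L_nA_nh$ is well defined with $\|\theta_Ah\|^2=\langle S_Ah,h\rangle$, giving $\sqrt a\,\|h\|\le\|\theta_Ah\|\le\sqrt b\,\|h\|$, so $\theta_A$ is bounded, injective, with closed range (this is (ii), (iii)). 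A dual tail estimate $\big\|\sum_{n\in F}A_n^*y_n\big\|\le\sqrt b\,(\sum_{n\in F}\|y_n\|^2)^{1/2}$ shows $\sum_n A_n^*L_n^*z$ converges for every $z\in\ell^2(\mathbb{N})\otimes\mathcal{H}_0$, and a straightforward pairing identifies this sum with $\theta_A^*z$, which is (v). Then $L_n^*L_m=\delta_{n,m}I_{\mathcal{H}_0}$ collapses the double sum $\theta_A^*\theta_Ah=\sum_n A_n^*L_n^*\sum_m L_mA_mh$ to $\sum_n A_n^*A_nh=S_Ah$, which is (vi); and from $\theta_A^*\theta_AS_A^{-1}=I_{\mathcal{H}}$ we read off that $\theta_A^*$ has a right inverse, hence is surjective, which is (vii).

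The remaining assertions are operator-theoretic consequences of $S_A=\theta_A^*\theta_A$ being positive and invertible. Part (iv) is $h=S_A^{-1}S_Ah=\sum_n(A_nS_A^{-1})^*A_nh$ and $h=S_AS_A^{-1}h=\sum_n A_n^*(A_nS_A^{-1})h$, using that $S_A^{-1}$ is self-adjoint. For (ix), $P_A\coloneqq\theta_AS_A^{-1}\theta_A^*$ is self-adjoint, and $P_A^2=\theta_AS_A^{-1}(\theta_A^*\theta_A)S_A^{-1}\theta_A^*=P_A$; its range lies in $\theta_A(\mathcal{H})$ and it fixes every $\theta_Ah$, so it is the orthogonal projection onto $\theta_A(\mathcal{H})$. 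Part (viii) combines $\|S_A\|=\|\theta_A^*\theta_A\|=\|\theta_A\|^2$ and $\|S_A^{-1}\|^{-1}=\inf\operatorname{spec}(S_A)$ with Theorem \ref{OVFIFANDONLYIFGFRAME}. For (x): Parseval means $S_A=I_{\mathcal{H}}$, i.e. $\theta_A^*\theta_A=I$, i.e. $\theta_A$ is an isometry, in which case $\theta_A\theta_A^*=P_A$ is a projection; conversely, if $\theta_A\theta_A^*$ is a (self-adjoint, hence orthogonal) projection then, since $\theta_A^*$ is onto it fixes all of $\theta_A(\mathcal{H})$, so $\theta_A\theta_A^*\theta_Ah=\theta_Ah$ and injectivity of $\theta_A$ forces $S_Ah=h$. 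Parts (xi), (xii) follow by computing the frame operators of $\{A_nS_A^{-1}\}_n$ and $\{A_nS_A^{-1/2}\}_n$ as $S_A^{-1}S_AS_A^{-1}=S_A^{-1}$ (positive invertible, with $b^{-1}I\le S_A^{-1}\le a^{-1}I$) and $S_A^{-1/2}S_AS_A^{-1/2}=I$. Part (i): if $h\perp A_n^*(\mathcal{H}_0)$ for all $n$, then $A_nh=0$ for all $n$, so $S_Ah=0$ and $h=0$. Finally (xiii): given $h=\sum_n A_n^*y_n$ with $\{y_n\}_n\in\ell^2(\mathbb{N})\otimes\mathcal{H}_0$, set $c_n\coloneqq A_nS_A^{-1}h$; then $\{c_n\}_n=\theta_AS_A^{-1}h\in\operatorname{ran}\theta_A$ while $\sum_n A_n^*(y_n-c_n)=h-h=0$ puts $\{y_n-c_n\}_n$ in $\ker\theta_A^*=\operatorname{ran}(\theta_A)^\perp$, so the Pythagorean identity in $\ell^2(\mathbb{N})\otimes\mathcal{H}_0$ gives $\sum_n\|y_n\|^2=\sum_n\|c_n\|^2+\sum_n\|y_n-c_n\|^2$.

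The main obstacle is technical rather than conceptual: carefully justifying the strong-operator and norm convergence of the series defining $\theta_A$ and $\theta_A^*$, and the interchange of the two infinite summations in $\theta_A^*\theta_A$, each instance of which reduces to the orthonormality relations of Proposition \ref{LJORTHO} together with the uniform Bessel bound $\sum_n\|A_nh\|^2\le b\|h\|^2$. Once $S_A=\theta_A^*\theta_A$ is established with $aI_{\mathcal{H}}\le S_A\le bI_{\mathcal{H}}$, the rest is bookkeeping that runs parallel to Theorem \ref{MOSTIMPORTANT} item by item.
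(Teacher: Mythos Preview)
Your proof is correct and complete. The paper does not supply its own proof of this theorem: it is stated in the introductory chapter as a background result cited from \cite{KAFTALLARSONZHANG} and \cite{SUN1}, so there is nothing to compare against beyond noting that your argument is the standard one, executed cleanly via the factorization $S_A=\theta_A^*\theta_A$ and the orthogonality relations of Proposition~\ref{LJORTHO}.
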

Similar to the notion of duality, orthogonality and similarity for frames in Hilbert spaces, there are similar notions for operator-valued frames. We now recall these notions and mention some results.
\begin{definition}(\cite{KAFTALLARSONZHANG})
	An  OVF  $ \{B_n\}_{n}$  in $\mathcal{B}(\mathcal{H}, \mathcal{H}_0)$ is said to be a \textbf{dual} for an   OVF $   \{A_n\}_{n}$ in $\mathcal{B}(\mathcal{H}, \mathcal{H}_0)$  if  $ \sum_{n=1}^\infty B^*_nA_n=I_{\mathcal{H}}.$
\end{definition}
\begin{definition}(\cite{KAFTALLARSONZHANG})
	An   OVF  $ \{B_n\}_{n}$  in $\mathcal{B}(\mathcal{H}, \mathcal{H}_0)$ is said to be \textbf{orthogonal} to an   OVF  $   \{A_n\}_{n} $ in $\mathcal{B}(\mathcal{H}, \mathcal{H}_0)$ if $ \sum_{n=1}^\infty B^*_nA_n=0. $
\end{definition}  
\begin{proposition}(\cite{KAFTALLARSONZHANG})
	Let $   \{A_n\}_{n}$ and $ \{B_n\}_{n}$ be  two Parseval  OVFs in   $\mathcal{B}(\mathcal{H}, \mathcal{H}_0)$ which are  orthogonal. If $C,D,E,F \in \mathcal{B}(\mathcal{H})$ are such that $ C^*C+D^*D=I_\mathcal{H}$, then  $	\{A_nC+B_nD\}_{n} $ is a  Parseval   OVF in  $\mathcal{B}(\mathcal{H}, \mathcal{H}_0)$. In particular,  if scalars $ c,d,$ satisfy $|c|^2+|d|^2 =1$, then $ \{cA_n+dB_n\}_{n} $ is   a Parseval   OVF.
\end{proposition}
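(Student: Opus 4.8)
The plan is to show that the operator-valued frame operator of $\{A_nC+B_nD\}_n$ equals $I_\mathcal{H}$; since the identity is bounded and invertible, this will at once establish that $\{A_nC+B_nD\}_n$ is an operator-valued frame and, its frame operator being $I_\mathcal{H}$, that it is Parseval. First I would record that the analysis operator of the candidate family decomposes nicely. If $\theta_A,\theta_B\colon\mathcal{H}\to\ell^2(\mathbb{N})\otimes\mathcal{H}_0$ denote the analysis operators of $\{A_n\}_n$ and $\{B_n\}_n$, then for every $h\in\mathcal{H}$ one has $\sum_n L_n(A_nC+B_nD)h=\theta_A(Ch)+\theta_B(Dh)$, so the analysis operator of $\{A_nC+B_nD\}_n$ is the bounded operator $\theta:=\theta_AC+\theta_BD$. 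Before invoking the factorization identity I would check that $\sum_n\|(A_nC+B_nD)h\|^2<\infty$ for each $h$, which is immediate from $\|(A_nC+B_nD)h\|^2\le 2\|A_nCh\|^2+2\|B_nDh\|^2$ together with the Parseval bounds of $\{A_n\}_n$ and $\{B_n\}_n$; this is exactly what is needed for the series defining the frame operator to converge in the strong operator topology and for the factorization $S_{AC+BD}=\theta^*\theta$ to hold.

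Next I would expand $S_{AC+BD}=(\theta_AC+\theta_BD)^*(\theta_AC+\theta_BD)=C^*\theta_A^*\theta_AC+C^*\theta_A^*\theta_BD+D^*\theta_B^*\theta_AC+D^*\theta_B^*\theta_BD$ and plug in the hypotheses. Because $\{A_n\}_n$ and $\{B_n\}_n$ are Parseval, $\theta_A$ and $\theta_B$ are isometries, so $\theta_A^*\theta_A=\theta_B^*\theta_B=I_\mathcal{H}$. Because $\{B_n\}_n$ is orthogonal to $\{A_n\}_n$, one computes $\theta_B^*\theta_A=\sum_nB_n^*A_n=0$ (strong-operator convergence of the partial sums, via $\theta_B^*L_m=B_m^*$), and taking adjoints $\theta_A^*\theta_B=0$ as well, so the two middle terms disappear and $S_{AC+BD}=C^*C+D^*D=I_\mathcal{H}$, as desired. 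A slightly more hands-on variant, if one wishes to bypass the analysis operator: the partial sums $\sum_{n=1}^N(A_nC+B_nD)^*(A_nC+B_nD)$ form an increasing sequence of positive operators bounded above by $2(\|C\|^2+\|D\|^2)I_\mathcal{H}$, hence converge strongly to a positive operator $S$ with $\langle Sh,h\rangle=\sum_n\|A_nCh\|^2+2\operatorname{Re}\sum_n\langle B_n^*A_nCh,Dh\rangle+\sum_n\|B_nDh\|^2=\|Ch\|^2+0+\|Dh\|^2=\langle(C^*C+D^*D)h,h\rangle=\|h\|^2$, the middle sum vanishing since $\sum_{n=1}^NB_n^*A_n(Ch)\to 0$; self-adjointness of $S$ then forces $S=I_\mathcal{H}$. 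The ``in particular'' claim is the special case $C=c\,I_\mathcal{H}$, $D=d\,I_\mathcal{H}$, where $C^*C+D^*D=(|c|^2+|d|^2)I_\mathcal{H}=I_\mathcal{H}$.

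The algebra here is short and the result is not hard; the only thing that needs genuine care — and the step I would treat as the main (mild) obstacle — is the bookkeeping of convergence. One must confirm that the series $\sum_n(A_nC+B_nD)^*(A_nC+B_nD)$ converges in the strong operator topology to a bounded invertible operator \emph{before} speaking of it being Parseval, and one must keep in mind that the orthogonality hypothesis $\sum_nB_n^*A_n=0$ is a statement about strong-operator convergence of partial sums, so the vanishing of the cross term should be obtained by applying those partial sums to the vector $Ch$ (equivalently, in the quadratic form) rather than by formally rearranging a non-absolutely-convergent double series. I do not expect any deeper difficulty.
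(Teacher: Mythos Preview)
Your proof is correct and follows essentially the same line as the paper's treatment of the analogous (more general) result in Chapter~5: expand the frame operator of $\{A_nC+B_nD\}_n$ into four terms, kill the diagonal terms with Parsevalness ($S_A=S_B=I_\mathcal{H}$), kill the cross terms with orthogonality ($\sum_n B_n^*A_n=0$ and its adjoint), and collapse to $C^*C+D^*D=I_\mathcal{H}$. Your packaging via $\theta=\theta_AC+\theta_BD$ and $S=\theta^*\theta$ is just a reformulation of the same computation, and your care about strong-operator convergence of the cross terms is appropriate.
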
 
\begin{proposition}(\cite{KAFTALLARSONZHANG})
	If $   \{A_n\}_{n},$  and $ \{B_n\}_{n} $ are   orthogonal  OVFs in $ \mathcal{B}(\mathcal{H}, \mathcal{H}_0)$, then  $\{A_n\oplus B_n\}_{n}$ is an   OVF in $ \mathcal{B}(\mathcal{H}\oplus \mathcal{H}, \mathcal{H}_0).$    Further, if both $ \{A_n\}_{n} $  and $ \{B_n\}_{n} $ are  Parseval, then $\{A_n\oplus B_n\}_{n}$ is Parseval.
\end{proposition}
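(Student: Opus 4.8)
The plan is to represent the family $\{A_n\oplus B_n\}_n$ concretely as operators $\mathcal{H}\oplus\mathcal{H}\to\mathcal{H}_0$, compute its operator-valued frame operator, and show it decomposes as the direct sum $S_A\oplus S_B$, the cross terms being annihilated precisely by the orthogonality hypothesis.

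First I would fix notation: elements of $\mathcal{H}\oplus\mathcal{H}$ are written $h\oplus g$, with $\|h\oplus g\|^2=\|h\|^2+\|g\|^2$, and I set
\begin{align*}
C_n:\mathcal{H}\oplus\mathcal{H}\ni h\oplus g\mapsto A_nh+B_ng\in\mathcal{H}_0 ,
\end{align*}
which is plainly bounded and linear. A one-line computation of the adjoint gives $C_n^{*}y=(A_n^{*}y)\oplus(B_n^{*}y)$ for $y\in\mathcal{H}_0$, whence
\begin{align*}
C_n^{*}C_n(h\oplus g)=\bigl(A_n^{*}A_nh+A_n^{*}B_ng\bigr)\oplus\bigl(B_n^{*}A_nh+B_n^{*}B_ng\bigr),\quad\forall h,g\in\mathcal{H}.
\end{align*}

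Next I would use that $\{B_n\}_n$ is orthogonal to $\{A_n\}_n$, i.e. $\sum_{n=1}^{\infty}B_n^{*}A_n=0$, which by passing to adjoints also forces $\sum_{n=1}^{\infty}A_n^{*}B_n=0$. The point requiring a little care is the strong-operator convergence of these mixed series; the cleanest route is via the analysis operators $\theta_A,\theta_B$ of the two given OVFs. Using $L_n^{*}L_m=\delta_{n,m}I_{\mathcal{H}_0}$ and $\theta_A^{*}z=\sum_n A_n^{*}L_n^{*}z$ (Proposition \ref{LJORTHO} and the structure theorem for OVFs), one checks that the partial sums satisfy $\sum_{n=1}^{N}A_n^{*}B_ng=\theta_A^{*}\bigl(\sum_{n=1}^{N}L_nB_ng\bigr)\to\theta_A^{*}\theta_Bg$, so $\sum_n A_n^{*}B_n=\theta_A^{*}\theta_B$ in SOT, and likewise $\sum_n B_n^{*}A_n=\theta_B^{*}\theta_A$; orthogonality says these are $0$. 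Summing the displayed identity for $C_n^{*}C_n$ over $n$ then shows that $S_C\coloneqq\sum_{n=1}^{\infty}C_n^{*}C_n$ converges in SOT and $S_C(h\oplus g)=(S_Ah)\oplus(S_Bg)$, that is, $S_C=S_A\oplus S_B$.

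To finish: since $S_A$ and $S_B$ are bounded, positive and invertible, so is $S_A\oplus S_B$ (with inverse $S_A^{-1}\oplus S_B^{-1}$), hence $\{A_n\oplus B_n\}_n$ is an OVF in $\mathcal{B}(\mathcal{H}\oplus\mathcal{H},\mathcal{H}_0)$ by Definition \ref{KAFTAL}. For the last assertion, if both $\{A_n\}_n$ and $\{B_n\}_n$ are Parseval then $S_A=I_{\mathcal{H}}=S_B$, so $S_C=I_{\mathcal{H}}\oplus I_{\mathcal{H}}=I_{\mathcal{H}\oplus\mathcal{H}}$ and $\{A_n\oplus B_n\}_n$ is Parseval. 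I do not anticipate a genuine obstacle; the only subtlety, as noted, is the bookkeeping for the SOT-convergence of the cross-term series, which is exactly the step where the orthogonality hypothesis is consumed.
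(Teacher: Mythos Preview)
Your proof is correct and follows essentially the same route as the paper: in Chapter~\ref{chap5} the paper proves the more general weak-OVF version by the identical direct computation, showing that $S_{A\oplus B}(h\oplus g)=(S_Ah)\oplus(S_Bg)$ after the cross terms $\sum_n A_n^{*}B_ng$ and $\sum_n B_n^{*}A_nh$ vanish by orthogonality. Your treatment is in fact slightly more careful than the paper's, since you explicitly justify the SOT-convergence of the mixed series via the identity $\sum_n A_n^{*}B_n=\theta_A^{*}\theta_B$, whereas the paper simply splits the sum without comment.
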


\begin{definition}(\cite{KAFTALLARSONZHANG})\label{SIMILARITYOVFKAFTALLARSONZHANG}
	An  OVF   $ \{B_n\}_{n} $  in $ \mathcal{B}(\mathcal{H}, \mathcal{H}_0)$    is said to be \textbf{similar} or \textbf{equivalent}  to an  OVF  $   \{A_n\}_{n} $ in $ \mathcal{B}(\mathcal{H}, \mathcal{H}_0)$  if there exists a bounded  invertible  $ R_{A,B} \in \mathcal{B}(\mathcal{H})$   such that $	B_n=A_nR_{A,B} ,  \forall n \in \mathbb{N}.$
\end{definition}
Similar frames share some  nice properties that knowing analysis, synthesis and frame operators of one give that of another.
\begin{lemma}(\cite{KAFTALLARSONZHANG})
	Let $   \{A_n\}_{n} $ and  $   \{B_n\}_{n}$ be similar  OVFs  and   $B_n=A_nR_{A,B} ,  \forall n \in \mathbb{N}$, for some invertible $ R_{A,B}  \in \mathcal{B}(\mathcal{H}).$ Then 
	\begin{enumerate}[label=(\roman*)]
		\item $ \theta_B=\theta_A R_{A,B}$.
		\item $S_{B}=R_{A,B}^*S_{A}R_{A,B}$.
		\item $P_{B}=P_{A}.$
	\end{enumerate}
\end{lemma}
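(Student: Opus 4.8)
The plan is to push every object through the intertwining relation $B_n=A_nR_{A,B}$ and then invoke the factorization $S=\theta^*\theta$ supplied by the structure theorem for operator-valued frames. Since by hypothesis both $\{A_n\}_n$ and $\{B_n\}_n$ are OVFs, the analysis operators $\theta_A,\theta_B$, the frame operators $S_A,S_B$ and the projections $P_A,P_B$ are all well defined, $S_A$ and $S_B$ are bounded invertible, and the identities $S_A=\theta_A^*\theta_A$, $S_B=\theta_B^*\theta_B$, $P_A=\theta_AS_A^{-1}\theta_A^*$, $P_B=\theta_BS_B^{-1}\theta_B^*$ may be used freely.

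First I would prove (i) directly from the definition of the analysis operator. For $h\in\mathcal{H}$,
\[
\theta_B h=\sum_{n=1}^\infty L_nB_nh=\sum_{n=1}^\infty L_nA_n(R_{A,B}h)=\theta_A(R_{A,B}h),
\]
where the middle series converges in $\ell^2(\mathbb{N})\otimes\mathcal{H}_0$ because $\{A_n\}_n$ is an OVF and $R_{A,B}h\in\mathcal{H}$. Hence $\theta_B=\theta_AR_{A,B}$ as bounded operators. For (ii) I would then use $S_B=\theta_B^*\theta_B$ together with (i):
\[
S_B=(\theta_AR_{A,B})^*(\theta_AR_{A,B})=R_{A,B}^*\theta_A^*\theta_AR_{A,B}=R_{A,B}^*S_AR_{A,B}.
\]
Equivalently one may compute $S_B=\sum_n B_n^*B_n=R_{A,B}^*\bigl(\sum_n A_n^*A_n\bigr)R_{A,B}=R_{A,B}^*S_AR_{A,B}$ term by term, using that strong-operator convergence is preserved under left and right multiplication by the fixed bounded operators $R_{A,B}^*$ and $R_{A,B}$.

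Finally, for (iii): invertibility of $R_{A,B}$ together with (ii) gives $S_B^{-1}=R_{A,B}^{-1}S_A^{-1}(R_{A,B}^*)^{-1}$, so substituting (i) into $P_B=\theta_BS_B^{-1}\theta_B^*$,
\[
P_B=\theta_AR_{A,B}\,R_{A,B}^{-1}S_A^{-1}(R_{A,B}^*)^{-1}\,R_{A,B}^*\theta_A^*=\theta_AS_A^{-1}\theta_A^*=P_A,
\]
since $(R_{A,B}^*)^{-1}R_{A,B}^*=I_\mathcal{H}$ and $R_{A,B}R_{A,B}^{-1}=I_\mathcal{H}$. There is no genuine obstacle here; the only points deserving a line of justification are the convergence of the series defining $\theta_B$ and $S_B$ (which is exactly the standing hypothesis that $\{B_n\}_n$ is an OVF, in particular licensing $S_B^{-1}$), the compatibility of adjoints with composition, and the fact that invertibility of $R_{A,B}$ is precisely what makes the cancellations in (iii) legitimate.
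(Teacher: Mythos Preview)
Your proof is correct and follows exactly the approach the paper uses: the paper states this particular lemma without proof (citing \cite{KAFTALLARSONZHANG}), but proves the generalized weak-OVF version (Lemma~\ref{SIM}) by the same direct computation---pushing $R_{A,B}$ through the series defining $\theta_B$, then using the factorization $S=\theta^*\theta$ and substituting into $P=\theta S^{-1}\theta^*$ so that the $R_{A,B}$ factors cancel. There is nothing to add or correct.
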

There is a complete classification of  similarity using operators.
\begin{theorem}(\cite{KAFTALLARSONZHANG})
	For two  OVFs  $   \{A_n\}_{n} $ and  $   \{B_n\}_{n} $, the following are equivalent.
	\begin{enumerate}[label=(\roman*)]
		\item $B_n=A_nR_{A,B}  ,  \forall n \in \mathbb{N},$ for some invertible  $ R_{A,B}  \in \mathcal{B}(\mathcal{H}). $
		\item $\theta_B=\theta_AR_{A,B}  $ for some invertible  $ R_{A,B}  \in \mathcal{B}(\mathcal{H}). $
		\item $P_{B}=P_{A}.$
	\end{enumerate}
	If one of the above conditions is satisfied, then  invertible operators in  $ \operatorname{(i)}$ and  $ \operatorname{(ii)}$ are unique and are given by $R_{A,B}=S_{A}^{-1}\theta_A^*\theta_B$. In the case that $   \{A_n\}_{n} $ is Parseval, then $  \{B_n\}_{n}$ is  Parseval if and only if $R_{A,B}$ is unitary.  
\end{theorem}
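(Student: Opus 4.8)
The plan is to establish the cycle (i) $\Rightarrow$ (ii) $\Rightarrow$ (iii) $\Rightarrow$ (i) and then treat the uniqueness claim and the Parseval refinement separately, in close analogy with the proof of Theorem \ref{BALANCHARSIM}. Throughout I will rely on two elementary identities coming from Proposition \ref{LJORTHO}: first, $L_n^*\theta_A=A_n$ for every $n$ (immediate from $\theta_Ah=\sum_m L_mA_mh$ and $L_n^*L_m=\delta_{n,m}I_{\mathcal{H}_0}$), and second, $\sum_n L_nL_n^*=I_{\ell^2(\mathbb{N})\otimes\mathcal{H}_0}$ in the strong-operator topology. Together these say that a bounded operator $V\colon\mathcal{H}\to\ell^2(\mathbb{N})\otimes\mathcal{H}_0$ is recovered from its ``components'' by $V=\sum_n L_nL_n^*V$, so that for any $R\in\mathcal{B}(\mathcal{H})$ one has $B_n=A_nR$ for all $n$ if and only if $\theta_B=\theta_AR$. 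This observation already reduces the entire theorem to producing a single invertible $R$ with $\theta_B=\theta_AR$ and identifying it with the stated formula.

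For (i) $\Rightarrow$ (ii): if $B_n=A_nR$ with $R$ invertible, then $\theta_Bh=\sum_n L_nA_nRh=\theta_A(Rh)$, i.e. $\theta_B=\theta_AR$. For (ii) $\Rightarrow$ (iii): invertibility of $R$ gives $\theta_B(\mathcal{H})=\theta_AR(\mathcal{H})=\theta_A(\mathcal{H})$, and since $P_A$ and $P_B$ are by construction the orthogonal projections of $\ell^2(\mathbb{N})\otimes\mathcal{H}_0$ onto these (now equal) closed ranges, $P_B=P_A$.

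The substantive implication is (iii) $\Rightarrow$ (i). Assuming $P_A=P_B$, I set $R\coloneqq S_A^{-1}\theta_A^*\theta_B$ and verify $\theta_AR=\theta_B$ directly: $\theta_AR=\theta_AS_A^{-1}\theta_A^*\theta_B=P_A\theta_B=P_B\theta_B=\theta_B$, using $P_A=\theta_AS_A^{-1}\theta_A^*$ together with the fact that $P_B$ fixes every vector of $\theta_B(\mathcal{H})$. By the component principle above this gives $B_n=A_nR$ for all $n$. The remaining point is that $R$ is invertible, and here I exploit that the hypothesis $P_A=P_B$ is symmetric in $A$ and $B$: the same construction produces $R'\coloneqq S_B^{-1}\theta_B^*\theta_A$ with $\theta_BR'=\theta_A$. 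Then $\theta_ARR'=\theta_BR'=\theta_A$ and $\theta_BR'R=\theta_AR=\theta_B$, and since $\theta_A$ and $\theta_B$ are injective (they are bounded below), cancellation yields $RR'=I_\mathcal{H}=R'R$, so $R$ is invertible with $R^{-1}=R'$. I expect this injectivity/cancellation step — and the bookkeeping needed to see that the $R$ just built is exactly the operator named in (i) and (ii) — to be the only place demanding care.

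Finally, for uniqueness: if $B_n=A_nR_1=A_nR_2$ with $R_1,R_2$ invertible, then applying $A_n^*$ and summing gives $S_A(R_1-R_2)=\sum_n A_n^*A_n(R_1-R_2)=0$, and invertibility of $S_A$ forces $R_1=R_2$; combined with the construction this pins the operator down as $R=S_A^{-1}\theta_A^*\theta_B$. For the Parseval case, $S_A=I_\mathcal{H}$ so $R=\theta_A^*\theta_B$, and from $\theta_B=\theta_AR$ we get $S_B=\theta_B^*\theta_B=R^*S_AR=R^*R$; hence $\{B_n\}_n$ is Parseval if and only if $S_B=I_\mathcal{H}$, i.e. $R^*R=I_\mathcal{H}$, which for the invertible operator $R$ is equivalent to $R$ being unitary.
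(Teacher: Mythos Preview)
Your proof is correct and follows essentially the same route as the paper's proof of the generalization in Theorem~\ref{RIGHTSIMILARITY} (specialized to $\Psi_n=A_n$): construct $R=S_A^{-1}\theta_A^*\theta_B$, verify $\theta_AR=P_A\theta_B=P_B\theta_B=\theta_B$, and exhibit the two-sided inverse $R'=S_B^{-1}\theta_B^*\theta_A$ by symmetry. The only cosmetic differences are that you conclude $RR'=R'R=I_\mathcal{H}$ via injectivity of $\theta_A,\theta_B$ whereas the paper computes $RR'$ and $R'R$ directly through the projection identities, and for (ii)~$\Rightarrow$~(iii) you invoke ``equal ranges give equal orthogonal projections'' while the paper (in Lemma~\ref{SIM}) does the algebraic cancellation $P_B=\theta_AR(R^*S_AR)^{-1}R^*\theta_A^*=P_A$; both are equally valid here since $P_A,P_B$ are genuine orthogonal projections in the OVF setting.
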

In the study of frames, rather indexing with natural numbers or other indexing sets, it is more useful in some cases to study frames indexed by groups and generated by a single operator. 

 Let $ G$ be a discrete topological group and  $ \{\chi_g\}_{g\in G}$ be the  standard orthonormal  basis for $\ell^2(G) $.  Let $\lambda $ be the left regular representation of $ G$ defined by $ \lambda_g\chi_q(r)=\chi_q(g^{-1}r), $ $ \forall  g, q, r \in G$ and     $\rho $ be the right regular representation of $ G$ defined by $ \rho_g\chi_q(r)=\chi_q(rg), $ $\forall g, q, r \in G.$ By $\mathscr{L}(G) $, we mean  the von Neumann algebra generated by unitaries $\{\lambda_g\}_{g\in G} $ in $ \mathcal{B}(\ell^2(G))$. Similarly $\mathscr{R}(G) $ denotes the von Neumann algebra generated by $\{\rho_g\}_{g\in G} $ in $ \mathcal{B}(\ell^2(G))$. We  recall that $\mathscr{L}(G)'=\mathscr{R}(G)$, $ \mathscr{R}(G)'=\mathscr{L}(G) $ (cf. \cite{CONWAY}), where $\mathcal{A}'$ denotes the commutant of $\mathcal{A}\subseteq  \mathcal{B}(\mathcal{H})$.
\begin{definition}(\cite{KAFTALLARSONZHANG})
	Let $ \pi$ be a unitary representation of a discrete 
	group $ G$ on  a Hilbert space $ \mathcal{H}.$ An operator $ A$ in $ \mathcal{B}(\mathcal{H}, \mathcal{H}_0)$ is called a    \textbf{operator frame generator} (resp. a  Parseval frame generator) w.r.t. an operator $ \Psi$ in $ \mathcal{B}(\mathcal{H}, \mathcal{H}_0)$ if $\{A_g\coloneqq A \pi_{g^{-1}}\}_{g\in G}$ is a factorable weak OVF (resp.  Parseval)  in $ \mathcal{B}(\mathcal{H}, \mathcal{H}_0)$. In this case, we say $ A$ is an operator  frame generator for $\pi$.
\end{definition}
Frames generated by groups have the remarkable property that the frame operator belongs to the commutant of $\pi(G)$. These and other properties are given in the following proposition.
\begin{proposition}(\cite{KAFTALLARSONZHANG})
	Let $ A$ and $ B$ be   operator frame generators    in $\mathcal{B}(\mathcal{H},  \mathcal{H}_0)$ for a unitary representation $ \pi$ of  $G$ on $ \mathcal{H}.$ Then
	\begin{enumerate}[label=(\roman*)]
		\item $ \theta_A\pi_g=(\lambda_g\otimes I_{\mathcal{H}_0})\theta_A,  \theta_B \pi_g=(\lambda_g\otimes I_{\mathcal{H}_0})\theta_B,  \forall g \in G.$
		\item $ \theta_A^*\theta_B$ is in the commutant $ \pi(G)'$ of $ \pi(G)''.$ Further, $ S_{A} \in \pi(G)'$. 
		\item $ \theta_AT\theta_A^*, \theta_AT\theta_B^* \in \mathscr{R}(G)\otimes \mathcal{B}(\mathcal{H}_0), \forall T \in \pi(G)'.$ In particular, $ P_{A} \in \mathscr{R}(G)$ $\otimes \mathcal{B}(\mathcal{H}_0). $
	\end{enumerate}
\end{proposition}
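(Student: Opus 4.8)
The plan is to prove (i) by a direct computation with the analysis operator and then to deduce (ii) and (iii) \emph{formally}, using only (i), the factorization $S_A=\theta_A^*\theta_A$, the formula $P_A=\theta_AS_A^{-1}\theta_A^*$, and the commutation theorem for the group von Neumann algebras. The only genuinely computational point — and the main place for error — is the reindexing step in (i), where one must keep straight the left versus right regular representations and the appearance of $\pi_{g^{-1}}$ rather than $\pi_g$ in $A_g$; once (i) and its adjoint form are in hand, everything else is pure manipulation of intertwiners, the single external ingredient being the identification of $\{\lambda_g\otimes I_{\mathcal{H}_0}:g\in G\}'$ with $\mathscr{R}(G)\otimes\mathcal{B}(\mathcal{H}_0)$.

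First I would settle (i). For a frame indexed by $G$ the analysis operator reads $\theta_Ah=\sum_{q\in G}L_qA_qh$, where $L_q\colon\mathcal{H}_0\ni y\mapsto\chi_q\otimes y\in\ell^2(G)\otimes\mathcal{H}_0$ is the $G$-indexed analogue of the operators of Definition \ref{LJDEFINITION}. Fix $g\in G$ and $h\in\mathcal{H}$. Since $A_q=A\pi_{q^{-1}}$, one has $\theta_A\pi_gh=\sum_{q\in G}L_qA\pi_{q^{-1}g}h$, and the substitution $r:=g^{-1}q$ (legitimate because the series converges unconditionally in the strong operator topology) turns this into $\sum_{r\in G}L_{gr}A\pi_{r^{-1}}h=\sum_{r\in G}L_{gr}A_rh$. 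Since $\lambda_g\chi_r=\chi_{gr}$ we have $(\lambda_g\otimes I_{\mathcal{H}_0})L_r=L_{gr}$, so the last sum equals $(\lambda_g\otimes I_{\mathcal{H}_0})\theta_Ah$; this proves the first identity in (i), and the same computation with $B$ in place of $A$ proves the second. Taking adjoints (and replacing $g$ by $g^{-1}$, using $\lambda_g^*=\lambda_{g^{-1}}$ and $\pi_g^*=\pi_{g^{-1}}$) gives the dual intertwining $\pi_g\theta_A^*=\theta_A^*(\lambda_g\otimes I_{\mathcal{H}_0})$ (and likewise for $B$), which the rest of the argument uses.

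For (ii): for each $g\in G$ one computes $\pi_g(\theta_A^*\theta_B)=\theta_A^*(\lambda_g\otimes I_{\mathcal{H}_0})\theta_B=(\theta_A^*\theta_B)\pi_g$, the first equality being the dual form of (i) for $A$ and the second being (i) for $B$. Hence $\theta_A^*\theta_B$ commutes with $\pi(G)$, that is $\theta_A^*\theta_B\in\pi(G)'$; and since $\pi(G)'=\pi(G)'''=(\pi(G)'')'$ this is precisely the stated commutant of $\pi(G)''$. Taking $B=A$ and using $S_A=\theta_A^*\theta_A$ gives $S_A\in\pi(G)'$.

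For (iii): the unitaries $\{\lambda_g\otimes I_{\mathcal{H}_0}:g\in G\}$ generate the von Neumann algebra $\mathscr{L}(G)\otimes\mathbb{C}I_{\mathcal{H}_0}$, whose commutant, by $\mathscr{L}(G)'=\mathscr{R}(G)$ and the commutation theorem for tensor products of von Neumann algebras, is $\mathscr{R}(G)\otimes\mathcal{B}(\mathcal{H}_0)$. So it suffices to check that $\theta_AT\theta_B^*$ commutes with every $\lambda_g\otimes I_{\mathcal{H}_0}$: for $T\in\pi(G)'$, $(\lambda_g\otimes I_{\mathcal{H}_0})\theta_AT\theta_B^*=\theta_A\pi_gT\theta_B^*=\theta_AT\pi_g\theta_B^*=\theta_AT\theta_B^*(\lambda_g\otimes I_{\mathcal{H}_0})$, using (i), then $T\in\pi(G)'$, then the dual form of (i) for $B$. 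Taking $B=A$ and $T=S_A^{-1}$ — which lies in the von Neumann algebra $\pi(G)'$ because $S_A$ does and is invertible — yields $P_A=\theta_AS_A^{-1}\theta_A^*\in\mathscr{R}(G)\otimes\mathcal{B}(\mathcal{H}_0)$, completing the proof.
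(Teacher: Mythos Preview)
Your proof is correct and follows essentially the same approach as the paper's own argument (given there for the more general weak OVF setting in Proposition~\ref{REPRESENATIONLEMMA}): the same reindexing computation for (i) via $L_{gr}=(\lambda_g\otimes I_{\mathcal{H}_0})L_r$, the same sandwich argument for (ii) and (iii), and the same identification $(\mathscr{L}(G)\otimes\{I_{\mathcal{H}_0}\})'=\mathscr{R}(G)\otimes\mathcal{B}(\mathcal{H}_0)$ via the commutation theorem. The only differences are cosmetic (you start the commutation checks from the left rather than the right, and you make the double-commutant step $\pi(G)'=(\pi(G)'')'$ explicit where the paper leaves it implicit).
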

Following theorem gives a characterization of frames generated by unitary representation without using representation.
\begin{theorem}(\cite{KAFTALLARSONZHANG})
	Let $ G$ be a discrete group,  $ e$ be the identity of $G$ and $ \{A_g\}_{g\in G}$ be a Parseval  OVF  in $ \mathcal{B}(\mathcal{H},\mathcal{H}_0).$ Then there is a  unitary representation $ \pi$  of $ G$ on  $ \mathcal{H}$  for which 
	$$ A_g=A_e\pi_{g^{-1}},  \quad\forall  g \in G$$
	if and only if 
	$$A_{gp}A_{gq}^*=A_pA_q^* ,\quad    \forall g,p,q \in G.$$
\end{theorem}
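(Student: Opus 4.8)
The plan is to phrase everything in terms of the analysis operator. Index the shift operators by $G$, writing $L_g\colon\mathcal H_0\to\ell^2(G)\otimes\mathcal H_0$, $L_gy=\chi_g\otimes y$, so that $\theta_Ah=\sum_{g\in G}L_gA_gh$, $\theta_A^*z=\sum_{g\in G}A_g^*L_g^*z$, $L_p^*L_q=\delta_{p,q}I_{\mathcal H_0}$, and $(\lambda_g\otimes I_{\mathcal H_0})L_q=L_{gq}$. Since $\{A_g\}_{g\in G}$ is Parseval, $\theta_A$ is an isometry, hence $\theta_A^*\theta_A=I_{\mathcal H}$ (so $S_A=I_{\mathcal H}$ and $\theta_A^*P_A=\theta_A^*$), and $P_A:=\theta_A\theta_A^*$ is the orthogonal projection of $\ell^2(G)\otimes\mathcal H_0$ onto $\mathcal R:=\theta_A(\mathcal H)$. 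The basic identity I will record first is that the compression $L_p^*P_AL_q=(L_p^*\theta_A)(\theta_A^*L_q)=A_pA_q^*$.

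The forward implication is a one-line manipulation: if $A_g=A_e\pi_{g^{-1}}$ for a unitary representation $\pi$, then, using $\pi_h^*=\pi_{h^{-1}}$ and $\pi_g\pi_h=\pi_{gh}$, one gets $A_{gp}A_{gq}^*=A_e\pi_{p^{-1}}(\pi_{g^{-1}}\pi_g)\pi_qA_e^*=A_e\pi_{p^{-1}}\pi_qA_e^*=A_pA_q^*$ for all $g,p,q\in G$.

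For the converse, the crux is the following translation: the hypothesis $A_{gp}A_{gq}^*=A_pA_q^*$ for all $g,p,q\in G$ holds \emph{if and only if} $P_A$ commutes with $\lambda_g\otimes I_{\mathcal H_0}$ for every $g\in G$. I would prove this by comparing compressions: since $L_p^*\big((\lambda_g\otimes I_{\mathcal H_0})P_A\big)L_q=L_{g^{-1}p}^*P_AL_q=A_{g^{-1}p}A_q^*$ and $L_p^*\big(P_A(\lambda_g\otimes I_{\mathcal H_0})\big)L_q=L_p^*P_AL_{gq}=A_pA_{gq}^*$, the two operators agree for all $p,q$ precisely when $A_{g^{-1}p}A_q^*=A_pA_{gq}^*$, which, after replacing $p$ by $gp$, is the stated relation (ranging over $g$). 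Granting this I define
$$\pi_g:=\theta_A^*(\lambda_g\otimes I_{\mathcal H_0})\theta_A\in\mathcal B(\mathcal H)\qquad(g\in G),$$
and check it is a unitary representation of $G$ on $\mathcal H$: using $\theta_A^*P_A=\theta_A^*$ and the commutation, $\pi_g\pi_h=\theta_A^*(\lambda_g\otimes I_{\mathcal H_0})P_A(\lambda_h\otimes I_{\mathcal H_0})\theta_A=\theta_A^*(\lambda_{gh}\otimes I_{\mathcal H_0})\theta_A=\pi_{gh}$, while $\pi_e=\theta_A^*\theta_A=I_{\mathcal H}$ and $\pi_g^*=\theta_A^*(\lambda_{g^{-1}}\otimes I_{\mathcal H_0})\theta_A=\pi_{g^{-1}}=\pi_g^{-1}$, so each $\pi_g$ is unitary (strong continuity being automatic, as $G$ is discrete). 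Finally, the commutation yields $\theta_A\pi_g=P_A(\lambda_g\otimes I_{\mathcal H_0})\theta_A=(\lambda_g\otimes I_{\mathcal H_0})\theta_A$, whence $A_p\pi_g=L_p^*(\lambda_g\otimes I_{\mathcal H_0})\theta_A=A_{g^{-1}p}$ for all $p\in G$; taking $p=e$ gives $A_e\pi_g=A_{g^{-1}}$, i.e. $A_e\pi_{g^{-1}}=A_g$, which is the required conclusion.

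The only genuinely non-routine point is the dictionary $[P_A,\lambda_g\otimes I_{\mathcal H_0}]=0\iff A_{gp}A_{gq}^*=A_pA_q^*$; once it is in hand the rest is formal computation with $\theta_A,\theta_A^*,P_A$ and the $L_g$'s, all of whose defining series converge in the strong operator topology by the Parseval–OVF properties quoted earlier. The places to be careful are exactly those index substitutions ($p\mapsto gp$, and $g\mapsto g^{-1}$ at the end) and making sure the compressions are taken with respect to the correct summands $L_q\mathcal H_0$ of $\ell^2(G)\otimes\mathcal H_0$.
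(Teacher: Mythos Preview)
Your proof is correct and follows essentially the same route as the paper (which proves the more general weak-OVF version in Theorem~\ref{gc1} and specializes to this statement with $\Psi=A$): both arguments translate the hypothesis $A_{gp}A_{gq}^*=A_pA_q^*$ into the commutation $(\lambda_g\otimes I_{\mathcal H_0})\theta_A\theta_A^*=\theta_A\theta_A^*(\lambda_g\otimes I_{\mathcal H_0})$, define $\pi_g:=\theta_A^*(\lambda_g\otimes I_{\mathcal H_0})\theta_A$, and then read off $A_e\pi_{g^{-1}}=A_g$ from $\theta_A\pi_g=(\lambda_g\otimes I_{\mathcal H_0})\theta_A$. Your compression viewpoint $L_p^*P_AL_q=A_pA_q^*$ is a clean way to package the equivalence that the paper obtains by expanding $(\lambda_g\otimes I_{\mathcal H_0})\theta_A\theta_A^*(\lambda_g\otimes I_{\mathcal H_0})^*$ as a double sum, but the substance is identical.
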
 
One of the most important results obtained by \cite{KAFTALLARSONZHANG} is the connectedness of the set of all generators of operator-valued frames generated by groups.
\begin{theorem}(\cite{KAFTALLARSONZHANG})\label{KAFTALPATHCONNECTED}
Let $\pi$  be a unitary representation of a discrete group $G$
on $ \mathcal{H}$. Suppose 
\begin{align*}
\emptyset \neq \mathscr{F}_{G}\coloneqq\{&A \in \mathcal{B}(\mathcal{H},\mathcal{H}_0): \{A\pi_{g^{-1}}\}_{g \in G} ~\text{is an  operator-valued frame  in}\\
& \mathcal{B}(\mathcal{H},\mathcal{H}_0)\}.
\end{align*}
\begin{enumerate}[label=(\roman*)]
\item  If $ \dim \mathcal{H}_0<\infty$, then $ \mathscr{F}_{G}$  is \textbf{path-connected}  in the \textbf{operator-norm topology} on $\mathcal{B}(\mathcal{H},\mathcal{H}_0)$.
\item If $ \dim \mathcal{H}_0=\infty$, then $ \mathscr{F}_{G}$ is path-connected  in the operator-norm topology on $\mathcal{B}(\mathcal{H},\mathcal{H}_0)$ if and only if the von Neumann algebra $\mathscr{R}(G)$ generated by the right regular representations of $G$ is \textbf{diffuse} (i.e., $\mathscr{R}(G)$ has no nonzero minimal projections).
\end{enumerate}
\end{theorem}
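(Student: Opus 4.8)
The plan is to convert the statement into a question about norm-homotopy of projections in the von Neumann algebra $\mathcal{M}:=\mathscr{R}(G)\otimes\mathcal{B}(\mathcal{H}_0)$, which is the commutant of $\{\lambda_g\otimes I_{\mathcal{H}_0}:g\in G\}$ on $\ell^2(G)\otimes\mathcal{H}_0$. First I would reduce to Parseval frame generators. Given $A\in\mathscr{F}_G$, the preceding proposition gives $S_A\in\pi(G)'$, hence $S_A^{-1/2}\in\pi(G)'$, so for $t\in[0,1]$ the operator $R_t:=(1-t)I_\mathcal{H}+tS_A^{-1/2}$ is a positive invertible element of $\pi(G)'$; since $R_t$ commutes with $\pi$, the family $\{(AR_t)\pi_{g^{-1}}\}_g=\{(A\pi_{g^{-1}})R_t\}_g$ is similar to $\{A\pi_{g^{-1}}\}_g$ and therefore again an operator-valued frame. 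Thus $t\mapsto AR_t$ is a norm-continuous path in $\mathscr{F}_G$ with $AR_0=A$ and $AR_1=AS_A^{-1/2}$ a Parseval frame generator, so it suffices to connect any two Parseval frame generators $A,B$.

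For Parseval frame generators the analysis operators are isometries with $\theta_A\theta_A^*=P_A$, $\theta_B\theta_B^*=P_B$, $\theta_A^*\theta_A=\theta_B^*\theta_B=I_\mathcal{H}$ and $\theta_A\pi_g=(\lambda_g\otimes I)\theta_A$, similarly for $B$; by the preceding proposition $P_A$, $P_B$ and $V:=\theta_B\theta_A^*$ all lie in $\mathcal{M}$, and a direct computation gives $V^*V=P_A$, $VV^*=P_B$, so $P_A\sim P_B$ in $\mathcal{M}$ for every pair of Parseval frame generators. Conversely, given a norm-continuous path $t\mapsto V_t$ of partial isometries in $\mathcal{M}$ with $V_t^*V_t=P_A$ for all $t$, $V_0=P_A$, $V_1=V$, I would set $A_t:=L_e^*V_t\theta_A$, where $L_e:\mathcal{H}_0\to\ell^2(G)\otimes\mathcal{H}_0$, $L_eh:=\chi_e\otimes h$ (as in Definition \ref{LJDEFINITION}). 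Then $\|A_t-A_s\|\le\|V_t-V_s\|$, and using $L_e^*L_g=\delta_{e,g}I_{\mathcal{H}_0}$, the fact that $\mathcal{M}$ commutes with each $\lambda_g\otimes I$, and $\theta_A\pi_g=(\lambda_g\otimes I)\theta_A$, one checks that the analysis operator of $\{A_t\pi_{g^{-1}}\}_g$ is exactly the isometry $V_t\theta_A$, whence each $A_t$ is a Parseval frame generator, with $A_0=A$ and $A_1=B$. Hence $\mathscr{F}_G$ is path-connected iff, for all Parseval $A,B$, the partial isometry $V$ can be joined to $P_A$ through partial isometries with initial projection $P_A$; extending $V$ to a unitary of $\mathcal{M}$ and using that the unitary group of a von Neumann algebra is norm-path-connected, this holds precisely when $1-P_A\sim 1-P_B$ in $\mathcal{M}$.

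It then remains to decide when $1-P_A\sim 1-P_B$ for all Parseval frame generators, knowing $P_A\sim P_B$ always. If $\dim\mathcal{H}_0<\infty$ then $\mathcal{M}=M_k(\mathscr{R}(G))$ is a finite von Neumann algebra (because $\mathscr{R}(G)$ carries the faithful trace $x\mapsto\langle x\chi_e,\chi_e\rangle$), so equivalence of $P_A$ and $P_B$ forces equivalence of their complements by cancellation; this proves (i). If $\dim\mathcal{H}_0=\infty$ then $\mathcal{M}=\mathscr{R}(G)\otimes\mathcal{B}(\ell^2)$ is properly infinite, cancellation may fail, and I would analyse which projections actually arise as $P_A$, namely those $P\in\operatorname{Proj}(\mathcal{M})$ with $(\lambda\otimes I)|_{P}$ unitarily equivalent to $\pi$. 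When $\mathscr{R}(G)$ is diffuse, $\lambda\otimes I$ has no atomic part and one shows $1-P_A$ always dominates a copy of $1_\mathcal{M}$, so all the $1-P_A$ are mutually equivalent and $\mathscr{F}_G$ is path-connected; when $\mathscr{R}(G)$ has a nonzero minimal projection $e_0$, the corner $(e_0\otimes I)\mathcal{M}(e_0\otimes I)\cong\mathcal{B}(\ell^2)$ is a type $\mathrm{I}_\infty$ factor, and one constructs two Parseval frame generators whose projections differ by a finite- versus infinite-dimensional amount inside this summand, so that $1-P_A\not\sim 1-P_B$.

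The reduction to Parseval generators and the dictionary between frame generators and partial isometries in $\mathcal{M}$ are routine, following from the already established behaviour of $\theta_A$, $S_A$ and $P_A$. The main obstacle is the last paragraph in the case $\dim\mathcal{H}_0=\infty$: in the diffuse case, verifying that $1-P_A$ always contains a copy of $1_\mathcal{M}$ requires a careful use of Murray–von Neumann comparison in $\mathscr{R}(G)\otimes\mathcal{B}(\ell^2)$ together with a description of the subrepresentations of $\lambda\otimes I$ that are copies of $\pi$; and in the non-diffuse case, one must not only produce the obstructing pair of generators but also exhibit a topological invariant of $\mathscr{F}_G$ that is preserved along every norm-continuous path (the map $A\mapsto P_A$ itself is not norm-continuous when $G$ is infinite, so the obstruction has to be argued directly). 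I expect essentially all the difficulty to be concentrated there.
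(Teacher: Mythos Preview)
The paper does not contain a proof of this theorem: it is stated in the introductory background chapter as a result of Kaftal, Larson and Zhang with a citation to \cite{KAFTALLARSONZHANG} and no argument is supplied. There is therefore nothing in the thesis to compare your proposal against.

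For what it is worth, your outline follows the same architecture as the original Kaftal--Larson--Zhang proof: reduction to Parseval generators via the path $t\mapsto A((1-t)I+tS_A^{-1/2})$, identification of Parseval generators with isometries intertwining $\pi$ and $\lambda\otimes I$, the translation of path-connectedness into a Murray--von Neumann equivalence question for the complementary projections $1-P_A$ in $\mathcal{M}=\mathscr{R}(G)\otimes\mathcal{B}(\mathcal{H}_0)$, and the dichotomy between the finite case (where the canonical trace on $\mathscr{R}(G)$ gives cancellation) and the infinite case (where diffuseness of $\mathscr{R}(G)$ is the decisive condition). You have correctly located the hard part: in the infinite-dimensional non-diffuse case, producing an explicit obstructing pair and showing the obstruction is genuinely topological, and in the diffuse case, the comparison argument showing $1-P_A$ is always properly infinite with central support $1$. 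Your remark that $A\mapsto P_A$ need not be norm-continuous is well taken; in the original paper the obstruction is handled by working in a type $\mathrm{I}_\infty$ direct summand where the Fredholm index of the corner of $\theta_A$ serves as a discrete invariant.
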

Stability result for OVFs is due to \cite{SUNSTABILITY}.
\begin{theorem}(\cite{SUNSTABILITY})
 	Let $ \{A_n\}_{n} $  be an OVF in $ \mathcal{B}(\mathcal{H}, \mathcal{H}_0) $ with frame bounds $ a$ and $b$. Suppose  $ \{B_n\}_{n } $ in $ \mathcal{B}(\mathcal{H}, \mathcal{H}_0) $ is such that  there exist $\alpha, \beta, \gamma \geq 0  $ with $ \max\{\alpha+\frac{\gamma}{\sqrt{a}}, \beta\}<1$ 
 	and for all $m=1,2, \dots, $
 	\begin{align*}
 	\left\|\sum\limits_{n=1}^m(A_n^*-B_n^*)y_n\right\|\leq \alpha\left\|\sum\limits_{n=1}^mA_n^*y_n\right\|+\beta\left\|\sum\limits_{n=1}^mB_n^*y_n\right\|+\gamma \left(\sum\limits_{n=1}^m\|y_n\|^2\right)^\frac{1}{2},\nonumber
 	\quad \forall y_n \in  \mathcal{H}_0.
 	\end{align*} 
 Then  $\{B_n\}_{n} $ is an operator-valued frame in $ \mathcal{B}(\mathcal{H}, \mathcal{H}_0) $   with frame bounds
 	\begin{align*}
 	a\left(1-\frac{\alpha+\beta+\frac{\gamma}{\sqrt{a}}}{1+\beta}\right)^2 \text{~and~} b\left(1+\frac{\alpha+\beta+\frac{\gamma}{\sqrt{b}}}{1-\beta}\right)^2.
 	\end{align*} 
 \end{theorem}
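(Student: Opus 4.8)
The plan is to reduce the assertion to the Casazza--Christensen perturbation theorem (Theorem~\ref{OLECAZASSA}) via the standard correspondence between operator-valued frames and ordinary frames. We may assume $\mathcal{H}_0\neq\{0\}$ (otherwise no OVF exists). Fix an orthonormal basis $\{e_m\}_{m\in M}$ of $\mathcal{H}_0$, with $M$ countable since $\mathcal{H}_0$ is separable, and fix a bijection $\mathbb{N}\leftrightarrow\mathbb{N}\times M$. The first step is the elementary remark that a family $\{C_n\}_n$ in $\mathcal{B}(\mathcal{H},\mathcal{H}_0)$ is an operator-valued frame with bounds $a,b$ \emph{if and only if} $\{C_n^*e_m\}_{n\in\mathbb{N},\,m\in M}$ is a frame for $\mathcal{H}$ with the same bounds: from $\langle h,C_n^*e_m\rangle=\langle C_nh,e_m\rangle$ and Parseval's identity in $\mathcal{H}_0$ we get $\sum_{n,m}|\langle h,C_n^*e_m\rangle|^2=\sum_n\|C_nh\|^2$, and by Theorem~\ref{OVFIFANDONLYIFGFRAME} the OVF condition is precisely $a\|h\|^2\le\sum_n\|C_nh\|^2\le b\|h\|^2$ for all $h\in\mathcal{H}$. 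Applied to the hypothesis, this says $\{A_n^*e_m\}_{n,m}$ is a frame for $\mathcal{H}$ with bounds $a$ and $b$.

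Next I would translate the perturbation hypothesis. Given finitely many scalars $(c_{n,m})$, set $y_n:=\sum_{m}c_{n,m}e_m\in\mathcal{H}_0$, so $y_n=0$ for all but finitely many $n$ and $\sum_{n,m}|c_{n,m}|^2=\sum_n\|y_n\|^2$. Since $\sum_{n,m}c_{n,m}C_n^*e_m=\sum_n C_n^*y_n$ for $C\in\{A,B\}$ and for $A-B$ as well, the hypothesis of the theorem, applied to the list $(y_n)_n$ (which after zero-padding is an initial segment), gives exactly
\begin{align*}
\Big\|\sum_{n,m}c_{n,m}(A_n^*e_m-B_n^*e_m)\Big\|\le\alpha\Big\|\sum_{n,m}c_{n,m}A_n^*e_m\Big\|+\beta\Big\|\sum_{n,m}c_{n,m}B_n^*e_m\Big\|+\gamma\Big(\sum_{n,m}|c_{n,m}|^2\Big)^{\frac12},
\end{align*}
which is the Casazza--Christensen perturbation inequality for the pair $\{A_n^*e_m\}_{n,m}$, $\{B_n^*e_m\}_{n,m}$, with the same constants $\alpha,\beta,\gamma$.

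Since $\max\{\alpha+\gamma/\sqrt a,\beta\}<1$, Theorem~\ref{OLECAZASSA} then shows that $\{B_n^*e_m\}_{n,m}$ is a frame for $\mathcal{H}$ with bounds $a\big(1-\tfrac{\alpha+\beta+\gamma/\sqrt a}{1+\beta}\big)^2$ and $b\big(1+\tfrac{\alpha+\beta+\gamma/\sqrt b}{1-\beta}\big)^2$. By the equivalence of the first step, $\{B_n\}_n$ is an operator-valued frame in $\mathcal{B}(\mathcal{H},\mathcal{H}_0)$ with exactly these bounds, which is the claim.

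The parts that need care --- and the only ones that are not purely formal --- are the bookkeeping in the correspondence: that $\sum_n\|C_nh\|^2$ really equals $\sum_{n,m}|\langle h,C_n^*e_m\rangle|^2$ for every $h$ (Parseval in $\mathcal{H}_0$); that passing through the reindexing $\mathbb{N}\leftrightarrow\mathbb{N}\times M$ is harmless, since both the frame bounds and the inequality of Theorem~\ref{OLECAZASSA} are insensitive to rearrangement; and that the hypothesis's inequality, stated only for initial segments $y_1,\dots,y_m$, may legitimately be applied to an arbitrary finite family by putting the remaining $y_n$ equal to $0$. All the genuine analytic work, including the delicate constant bookkeeping, is already inside Theorem~\ref{OLECAZASSA}, so no new estimate is required.
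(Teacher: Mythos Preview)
Your argument is correct. The reduction to the scalar Casazza--Christensen theorem via $\{C_n\}\leftrightarrow\{C_n^*e_m\}_{n,m}$ is exactly the standard route, and you have handled the only genuine subtleties (Parseval in $\mathcal{H}_0$, zero-padding to pass from initial segments of $(y_n)$ to arbitrary finite families under the reindexing). The bounds match because Theorem~\ref{OLECAZASSA} is doing all the analytic work.

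One remark on the comparison you asked for: the paper does not prove this statement itself---it is quoted from Sun as background. What the paper \emph{does} prove is the related perturbation result Theorem~\ref{PERTURBATION RESULT 1} for factorable weak OVFs, and that proof takes a different route: it stays at the operator level throughout, working with $\theta_A,\theta_B,\theta_\Psi$ and applying the operator-valued perturbation lemma (Theorem~\ref{cc1}) directly to show that $S_{B,\Psi}^*(S_{A,\Psi}^*)^{-1}$ is invertible. That approach avoids choosing a basis of $\mathcal{H}_0$ and any reindexing, and it generalizes cleanly to the weak (non-self-adjoint) setting, but it produces different frame-bound expressions. Your approach, by contrast, recovers exactly Sun's stated bounds because it inherits them verbatim from Theorem~\ref{OLECAZASSA}; the price is that it is tied to the self-adjoint OVF setting where the vectorization $C_n\mapsto\{C_n^*e_m\}_m$ preserves the frame inequality on the nose.
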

Like Bessel sequences for Hilbert spaces, there is a similar notion for operators.
\begin{definition}(\cite{SUN1})
A collection $\{A_n\}_{n} $ in $ \mathcal{B}(\mathcal{H}, \mathcal{H}_0) $  is said to be an \textbf{operator-valued Bessel sequence} (or \textbf{G-Bessel sequence}) if there exists $b>0$ such that 
\begin{align*}
\text{ (\textbf{Operator-valued   Bessel's inequality}) }\quad  \sum\limits_{n=1}^\infty \|A_nh\|^2\leq b\|h\|^2,\quad  \forall h \in \mathcal{H}.
\end{align*}
Constant $b$ is called as a Bessel bound for $ \{A_n\}_{n }$.
\end{definition}
Similar to Theorem \ref{OVFIFANDONLYIFGFRAME}    we have the following result.
\begin{theorem}
A collection $\{A_n\}_{n} $ in $ \mathcal{B}(\mathcal{H}, \mathcal{H}_0) $ is an operator-valued Bessel sequence if and only if   the series $ \sum_{n} A_n^*A_n$  converges in the strong-operator topology on $ \mathcal{B}(\mathcal{H})$ to a  bounded  operator.
\end{theorem}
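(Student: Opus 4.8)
The plan is to mimic the structure of the proof of Theorem~\ref{OVFIFANDONLYIFGFRAME}, splitting into the two implications; the only simplification here is that there is no invertibility (equivalently, no lower bound) to track, only an upper bound.

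For the forward implication, I would start from the Bessel inequality $\sum_{n=1}^\infty \|A_n h\|^2 \leq b\|h\|^2$ and consider the partial sums $S_N \coloneqq \sum_{n=1}^N A_n^*A_n$. Each $S_N$ is positive, and $\langle S_N h, h\rangle = \sum_{n=1}^N \|A_n h\|^2 \leq b\|h\|^2$ shows $0 \leq S_N \leq b I_{\mathcal{H}}$, so in particular $\|S_N\| \leq b$ for all $N$. To upgrade norm-boundedness of the $S_N$ to strong-operator convergence, the key point is the elementary inequality $\|Th\|^2 \leq \|T\|\,\langle Th, h\rangle$ valid for any positive $T \in \mathcal{B}(\mathcal{H})$, which follows from $T^2 \leq \|T\|\,T$. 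Applying it to the positive operator $S_N - S_M = \sum_{n=M+1}^N A_n^*A_n$ for $N > M$ gives
\[
\|(S_N - S_M)h\|^2 \leq \|S_N - S_M\|\,\langle (S_N - S_M)h, h\rangle \leq b\sum_{n=M+1}^N \|A_n h\|^2,
\]
and the right-hand side tends to $0$ as $M, N \to \infty$ because $\sum_n \|A_n h\|^2$ converges. Hence $\{S_N h\}_N$ is Cauchy in $\mathcal{H}$ for every $h$; defining $Sh \coloneqq \lim_N S_N h$ yields a linear map satisfying $\|Sh\| \leq b\|h\|$ (pass to the limit in $\|S_N h\| \leq b\|h\|$), so $S$ is a bounded operator and $\sum_n A_n^*A_n = S$ in the strong-operator topology.

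For the converse, I would simply take the given bounded operator $S$ with $\sum_{n=1}^N A_n^*A_n h \to Sh$ for every $h$, pair against $h$, and use continuity of the inner product: $\sum_{n=1}^N \|A_n h\|^2 = \langle \sum_{n=1}^N A_n^*A_n h, h\rangle \to \langle Sh, h\rangle$. Therefore $\sum_{n=1}^\infty \|A_n h\|^2 = \langle Sh, h\rangle \leq \|S\|\,\|h\|^2$ by Cauchy--Schwarz, so $\{A_n\}_n$ is an operator-valued Bessel sequence with bound $\|S\|$.

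The argument is essentially routine and I do not foresee a genuine obstacle. The one place requiring a little care is precisely the passage from norm-boundedness of the partial sums $S_N$ to their strong-operator convergence in the forward direction — this is where positivity of each $A_n^*A_n$ (monotonicity of $\{S_N\}$) together with the inequality $\|Th\|^2 \leq \|T\|\,\langle Th, h\rangle$ for positive $T$ does the work.
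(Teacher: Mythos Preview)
Your proof is correct. The paper does not actually write out a proof of this statement; it only remarks that the argument is ``similar to'' Theorem~\ref{OVFIFANDONLYIFGFRAME}, and in that theorem the nontrivial (forward) direction is handled by citing Sun's result (Theorem~\ref{SUNIMPO}) as a black box. Your forward implication is precisely a self-contained proof of that step, using the standard positivity trick $\|Th\|^2 \leq \|T\|\,\langle Th,h\rangle$ to upgrade the uniformly bounded increasing sequence $\{S_N\}$ to a strongly convergent one; your converse direction matches the paper's computation exactly. So you are following the intended route, just with the details filled in rather than outsourced.
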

Following are some examples of operator-valued Bessel sequences.
\begin{example}
\begin{enumerate}[label=(\roman*)]
	\item 	 (\cite{SUN1}) Let $ \{\tau_n\}_{n}$ be  a  Bessel sequence  for   $\mathcal{H}$. Define $A_n : \mathcal{H} \ni  h \mapsto \langle h, \tau_n\rangle \in \mathbb{K} $, for each $ n \in \mathbb{N}$. Then $A_n^*y=y\tau_n, \forall y \in \mathbb{K}$. Now from Theorem \ref{OLEBESSELCHARACTERIZATION12}, the map $\mathcal{H}\ni h \mapsto \sum_{n=1}^\infty A_n^*A_nh=\sum_{n=1}^\infty \langle h, \tau_n\rangle \tau_n\in \mathcal{H}$ is a well-defined positive  operator. Hence  $ \{A_n\}_{n} $  is an operator-valued Bessel sequence  in    $ \mathcal{B}(\mathcal{H}, \mathbb{K})$.
	\item From operator-norm inequality, we see that any finite collection of operators is an operator-valued Bessel sequence.
	\end{enumerate}
\end{example}




{\onehalfspacing \chapter{FRAMES FOR METRIC SPACES}\label{chap2} }
\vspace{0.5cm}
{\onehalfspacing \section{BASIC PROPERTIES}
In this chapter, we define frames for metric spaces and derive several fundamental properties. 
 \begin{definition}\label{PFRAMEFORMETRIC}(\textbf{p-frame for metric space})
	Let $\mathcal{M}$ be a metric space and $p \in [1,\infty)$. A collection $\{f_n\}_{n}$ of Lipschitz functions in    $ \operatorname{Lip}(\mathcal{M}, \mathbb{K})$  is said to be a \textbf{metric p-frame} or \textbf{Lipschitz p-frame} for  $\mathcal{M}$ if there exist $a,b>0$ such that 
	\begin{align*}
	a\,d(x,y)\leq \left(\sum_{n=1}^{\infty}|f_n(x)-f_n(y)|^p\right)^\frac{1}{p}\leq b\,d(x,y),\quad \forall x, y \in \mathcal{M}.
	\end{align*}
	If we do not demand the first inequality, then we say $\{f_n\}_{n}$ is a  metric p-Bessel sequence for  $\mathcal{M}$.
\end{definition}
We now see that whenever $\mathcal{M}$ is a Banach space and $f_n$'s are linear functionals, then Definition \ref{PFRAMEFORMETRIC} reduces to Definition \ref{FRAMEDEFINITIONBANACH}.  We now give  various examples.
\begin{example}
	Let $\{f_n\}_{n}$ be a p-frame for a Banach space $\mathcal{X}$. Choose any bi-Lipschitz function $A:\mathcal{X}\to \mathcal{X}$. Then it follows that $\{f_nA\}_{n}$ is a metric  p-frame for $\mathcal{X}$.
\end{example}
\begin{example}\label{1FRAMEFIRST}
	Let $1<a<\infty.$ Let us take $\mathcal{M}\coloneqq[a,\infty)$ and define $f_n:\mathcal{M}\to \mathbb{R}$ by 
	\begin{align*}
	f_0(x)&\coloneqq 1, \quad \forall x \in \mathcal{M}\\
	f_n(x)&\coloneqq \frac{(\log x)^n}{n!}, \quad \forall x \in \mathcal{M}, \forall n\geq 1.
	\end{align*}
	Then $f_n'(x)=\frac{(\log x)^{(n-1)}}{(n-1)!x}$, $\forall x \in \mathcal{M}, \forall n\geq1.$ Since   $f_n'$ is bounded on $\mathcal{M}$, $\forall n\geq1$, it follows that $f_n$ is Lipschitz on $\mathcal{M}$, $\forall n\geq1$. For $x, y \in \mathcal{M}$ with $x<y$, we now see that 
	\begin{align*}
	\sum_{n=0}^{\infty}|f_n(x)-f_n(y)|&=\sum_{n=0}^{\infty}\left|\frac{(\log
		x)^n}{n!}-\frac{(\log y)^n}{n!}\right|=\sum_{n=0}^{\infty}\frac{(\log
		y)^n}{n!}-\sum_{n=0}^{\infty}\frac{(\log x)^n}{n!}
	\\
	&=e^{\log y}-e^{\log x}=y-x=|x-y|.
	\end{align*}
	Hence $\{f_n\}_n$ is a metric 1-frame for $\mathcal{M}$.
\end{example}
\begin{example}\label{1FRAMESECOND}
	Let $1<a<b<\infty.$ We  take $\mathcal{M}\coloneqq[\frac{1}{1-a},\frac{1}{1-b}]$ and define $f_n:\mathcal{M}\to \mathbb{R}$ by 
	\begin{align*}
	f_n(x)&\coloneqq\left(1-\frac{1}{x}\right)^n, \quad \forall x \in \mathcal{M}, \forall n \geq 0.
	\end{align*}
	Then $f_n'(x)=\frac{n}{-x^2}\left(1-\frac{1}{x}\right)^{n-1}$, $\forall x \in \mathcal{M}, \forall n\geq1.$ Therefore $f_n$ is a Lipschitz function, for each $n\geq1.$ We now see that $\{f_n\}_n$ is a metric 1-frame for $\mathcal{M}$. In fact, for  $x, y \in \mathcal{M},$ with $x<y$, we have
	\begin{align*}
	\sum_{n=0}^{\infty}|f_n(x)-f_n(y)|&=\sum_{n=0}^{\infty}\left|\left(1-\frac{1}{x}\right)^n-\left(1-\frac{1}{y}\right)^n\right|=\sum_{n=0}^{\infty}\left(1-\frac{1}{y}\right)^n-\sum_{n=0}^{\infty}\left(1-\frac{1}{x}\right)^n
	\\
	&=y-x=|x-y|.
	\end{align*}
\end{example}
\begin{example}\label{LINEARGOOD}
	Let $\{f_n\}_{n}$ be a p-frame for a Banach space $\mathcal{X}$. Let $\phi: \mathbb{K}\to  \mathbb{K}$ be bi-Lipschitz  and define   $g_n (x)\coloneqq \phi (f_n(x)), $ $\forall x \in \mathcal{X}, $  $\forall n \in \mathbb{N}$. It then follows that $\{g_n\}_n  $ is a metric p-frame for $\mathcal{X}$.
\end{example}
By looking at Theorem \ref{pFRAMECHAR} we can ask whether there is a result similar for metric p-frames. We answer this partially through the following theorem.
\begin{theorem}\label{PBESSELCHAR}
	Let $(\mathcal{M},0)$ be  a pointed metric  space and $\{f_n\}_{n}$ be a sequence in $\operatorname{Lip}_0(\mathcal{M},
	\mathbb{K})$.  Then $\{f_n\}_{n}$ is a metric p-Bessel sequence for 
	$\mathcal{M}$ with bound $b$ if and only if 
	\begin{align}\label{LIPBASSELOPERATORCHARACTERIZATION}
	&T: \ell^q (\mathbb{N})\ni \{a_n\}_{n} \mapsto T\{a_n\}_{n} \in \operatorname{Lip}_0(\mathcal{M}\times \mathcal{M},
	\mathbb{K}),\\
	&T\{a_n\}_{n}: \mathcal{M}\times \mathcal{M} \ni (x,y) \mapsto  \sum_{n=1}^\infty a_n(f_n(x)-f_n(y)) \in \mathbb{K} \nonumber
	\end{align}
	is a well-defined (hence bounded) linear operator and $\|T\|\leq b$ (where $q$ is the conjugate
	index of $p$).
	\end{theorem}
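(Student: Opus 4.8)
The plan is to follow the template of Theorem~\ref{pFRAMECHAR}(i), with the dual space $\mathcal{X}^*$ replaced by $\operatorname{Lip}_0(\mathcal{M}\times\mathcal{M},\mathbb{K})$ --- equivalently, by the dual of the Lipschitz-free space $\mathcal{F}(\mathcal{M}\times\mathcal{M})$ --- and with $\ell^p$--$\ell^q$ duality doing the work of the norming computation. Throughout I equip the product $\mathcal{M}\times\mathcal{M}$ with the $\ell^1$-sum metric $d((x_1,y_1),(x_2,y_2))\coloneqq d(x_1,x_2)+d(y_1,y_2)$ and base point $(0,0)$; then each map $(x,y)\mapsto f_n(x)-f_n(y)$ lies in $\operatorname{Lip}_0(\mathcal{M}\times\mathcal{M},\mathbb{K})$ because $f_n(0)=0$, and $\operatorname{Lip}_0(\mathcal{M}\times\mathcal{M},\mathbb{K})$ is a Banach space by Theorem~\ref{LIPISABANACHALGEBRA}, which is what makes the quantity $\|T\|$ meaningful.

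\emph{Necessity.} Suppose $\{f_n\}_n$ is a metric $p$-Bessel sequence with bound $b$ and fix $\{a_n\}_n\in\ell^q(\mathbb{N})$. First I would establish pointwise absolute convergence of the defining series: by H\"older's inequality $\sum_n|a_n|\,|f_n(x)-f_n(y)|\le\|\{a_n\}_n\|_q\big(\sum_n|f_n(x)-f_n(y)|^p\big)^{1/p}\le b\,\|\{a_n\}_n\|_q\,d(x,y)<\infty$, so $T\{a_n\}_n(x,y)$ is well defined and vanishes at $(0,0)$. For the Lipschitz estimate I would rewrite the increment as $T\{a_n\}_n(x_1,y_1)-T\{a_n\}_n(x_2,y_2)=\sum_n a_n\big[(f_n(x_1)-f_n(x_2))-(f_n(y_1)-f_n(y_2))\big]$, apply H\"older, then Minkowski's inequality in $\ell^p$ together with the Bessel bound applied to the pairs $(x_1,x_2)$ and $(y_1,y_2)$ separately, obtaining $\|\{a_n\}_n\|_q\big(b\,d(x_1,x_2)+b\,d(y_1,y_2)\big)$. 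Hence $\|T\{a_n\}_n\|_{\operatorname{Lip}_0}\le b\|\{a_n\}_n\|_q$; linearity of $T$ being clear, $T$ is bounded with $\|T\|\le b$.

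\emph{Sufficiency.} Suppose $T$ is well defined, linear, and $\|T\|\le b$. Fix $x,y\in\mathcal{M}$ and put $c_n\coloneqq f_n(x)-f_n(y)$; the goal is $\big(\sum_n|c_n|^p\big)^{1/p}\le b\,d(x,y)$. Fix $N$; if $c_1=\dots=c_N=0$ there is nothing to prove, and otherwise I would pick, by $\ell^p$--$\ell^q$ duality, a norm-one sequence $\{a_n\}_n\in\ell^q(\mathbb{N})$ supported on $\{1,\dots,N\}$ with $\sum_n a_n c_n=\big(\sum_{n=1}^N|c_n|^p\big)^{1/p}$ (explicitly $a_n=\overline{\operatorname{sgn}(c_n)}\,|c_n|^{p-1}\big(\sum_{k\le N}|c_k|^p\big)^{-1/q}$ for $p>1$, and $a_n=\overline{\operatorname{sgn}(c_n)}$ for $p=1$). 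The crucial step is then to evaluate $T\{a_n\}_n\in\operatorname{Lip}_0(\mathcal{M}\times\mathcal{M},\mathbb{K})$ at $(x,y)$ and at $(x,x)$: since $T\{a_n\}_n(x,x)=0$ and $d((x,y),(x,x))=d(x,y)$,
\begin{align*}
\Big(\sum_{n=1}^N|c_n|^p\Big)^{1/p}&=\big|T\{a_n\}_n(x,y)-T\{a_n\}_n(x,x)\big|\\
&\le\|T\{a_n\}_n\|_{\operatorname{Lip}_0}\,d(x,y)\le\|T\|\,\|\{a_n\}_n\|_q\,d(x,y)\le b\,d(x,y).
\end{align*}
Letting $N\to\infty$ gives the claim.

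I do not expect a serious obstacle; the two points that need care are bookkeeping issues tied to the product metric. First, using the $\ell^1$-sum metric on $\mathcal{M}\times\mathcal{M}$ (rather than, say, the max metric) is exactly what keeps both the Minkowski step in the necessity part and the evaluation at $(x,x)$ in the sufficiency part sharp, so the constant comes out as $b$ and not $2b$. Second, in the sufficiency part one must test against the difference $T\{a_n\}_n(x,y)-T\{a_n\}_n(x,x)$ rather than against $T\{a_n\}_n(x,y)$ directly, since the latter would yield only $d(x,0)+d(y,0)$ on the right-hand side; the identity $T\{a_n\}_n(x,x)=0$ is precisely what converts the Lipschitz bound into the desired Bessel inequality.
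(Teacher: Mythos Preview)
Your proof is correct and follows essentially the same route as the paper: H\"older for convergence and Minkowski plus the Bessel bound for the Lipschitz estimate in the necessity direction, and $\ell^p$--$\ell^q$ duality tested at a suitable pair of points in $\mathcal{M}\times\mathcal{M}$ for sufficiency. The only cosmetic differences are that the paper tests at $(x,0)$ and $(y,0)$ (rather than your $(x,y)$ and $(x,x)$) and packages the norming step via the functional $\Phi_{x,y}\colon\{a_n\}_n\mapsto\sum_n a_n(f_n(x)-f_n(y))$ and its operator norm rather than by writing down the norming sequence explicitly; both choices yield the same constant for the same reason.
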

\begin{proof}
	$(\Rightarrow)$ Let $\{a_n\}_{n} \in \ell^q (\mathbb{N})$ and $n, m\in \mathbb{N}$ with $n<m$.  First we have to show that the series in (\ref{LIPBASSELOPERATORCHARACTERIZATION}) is convergent. For all $x, y \in \mathcal{M}$, 
	\begin{align*}
	\left|\sum_{k=n}^{m}a_k(f_k(x)-f_k(y))\right|&\leq \left(\sum_{k=n}^{m}|a_k|^q\right)^\frac{1}{q}\left(\sum_{k=n}^{m}|f_k(x)-f_k(y)|^p\right)^\frac{1}{p}\\
	&\leq b \left(\sum_{k=n}^{m}|a_k|^q\right)^\frac{1}{q}\, d(x,y).
	\end{align*}
	Therefore the series in (\ref{LIPBASSELOPERATORCHARACTERIZATION})  converges. We next show that the map $T\{a_n\}_{n}$ is Lipschitz. Consider 
	\begin{align*}
	&\left\|T\{a_n\}_{n}\right\|_{\operatorname{Lip}_0} =\sup_{(x, y), (u,v) \in \mathcal{M}\times \mathcal{M}, (x, y)\neq (u,v)}\frac{|T\{a_n\}_{n}(x,y)-T\{a_n\}_{n}(u,v)|}{d(x,u)+d(y,v)}\\
	&\quad=\sup_{(x, y), (u,v) \in \mathcal{M}\times \mathcal{M}, (x, y)\neq (u,v)}\frac{|\sum_{n=1}^{\infty}a_n(f_n(x)-f_n(u))-\sum_{n=1}^{\infty}a_n(f_n(y)-f_n(v))|}{d(x,u)+d(y,v)}\\
	&\quad\leq \sup_{(x, y), (u,v) \in \mathcal{M}\times \mathcal{M}, (x, y)\neq (u,v)}\frac{|\sum_{n=1}^{\infty}a_n(f_n(x)-f_n(u))|+|\sum_{n=1}^{\infty}a_n(f_n(y)-f_n(v))|}{d(x,u)+d(y,v)}\\
	&\quad\leq b\sup_{(x, y), (u,v) \in \mathcal{M}\times \mathcal{M}, (x, y)\neq (u,v)}\frac{\left(\sum_{n=1}^{\infty}|a_n|^q\right)^\frac{1}{q}\, d(x,u)+\left(\sum_{n=1}^{\infty}|a_n|^q\right)^\frac{1}{q}\, d(y,v)}{d(x,u)+d(y,v)}\\
	&\quad=b\left(\sum_{n=1}^{\infty}|a_n|^q\right)^\frac{1}{q}.
	\end{align*}
	Hence $T$ is well-defined. Clearly $T$ is linear.  Boundedness of $T$ with bound $b$ will follow from previous calculation.\\
	$(\Leftarrow)$ From the definition of $T$, it is bounded by 	Banach-Steinhaus. Given $x, y \in \mathcal{M}$,  we define a map 
	\begin{align*}
	\Phi_{x,y}: \ell^q (\mathbb{N})  \ni \{a_n\}_{n} \mapsto \Phi_{x,y}\{a_n\}_{n}\coloneqq \sum_{n=1}^{\infty}a_n(f_n(x)-f_n(y))\in \mathbb{K}
	\end{align*} 
	which is a bounded linear functional. Hence $\{f_n(x)-f_n(y)\}_{n}\in \ell^p (\mathbb{N})$. Let $\{e_n\}_{n}$ be the standard Schauder basis for $ \ell^p (\mathbb{N})$. Then 
	\begin{align*}
	\|\Phi_{x,y}\|=\left(\sum_{n=1}^{\infty}|\Phi_{x,y}\{e_n\}_{n}|^p\right)^\frac{1}{p}= \left(\sum_{n=1}^{\infty}|f_n(x)-f_n(y)|^p\right)^\frac{1}{p}.
	\end{align*}
	Now 
	\begin{align*}
	b\left(\sum_{n=1}^{\infty}|a_n|^q\right)^\frac{1}{q}&=b\|\{a_n\}_{n}\|\geq \|T\{a_n\}_{n}\|_{\operatorname{Lip}_0}\\
	&\geq \sup_{(x, 0), (y,0) \in \mathcal{M}\times \mathcal{M}, (x, 0)\neq (y,0)}\frac{|T\{a_n\}_{n}(x,0)-T\{a_n\}_{n}(y,0)|}{d(x,y)}\\
	&=\sup_{(x, 0), (y,0) \in \mathcal{M}\times \mathcal{M}, (x, 0)\neq (y,0)}\frac{|\sum_{n=1}^{\infty}a_n(f_n(x)-f_n(y))|}{d(x,y)}\\
	&=\sup_{(x, 0), (y,0) \in \mathcal{M}\times \mathcal{M}, (x, 0)\neq (y,0)}\frac{|\Phi_{x,y}\{a_n\}_{n}|}{d(x,y)}
	\end{align*}
	which implies 
	\begin{align*}
	|\Phi_{x,y}\{a_n\}_{n}|\leq b \left(\sum_{n=1}^{\infty}|a_n|^q\right)^\frac{1}{q}\,d(x,y), \quad \forall x, y \in \mathcal{M}.
	\end{align*}
	Using all these, we finally get 
	\begin{align*}
	\left(\sum_{n=1}^{\infty}|f_n(x)-f_n(y)|^p\right)^\frac{1}{p}=\|\Phi_{x,y}\|\leq b\, d(x,y), \quad \forall x, y \in \mathcal{M}.
	\end{align*}
	Hence 	$\{f_n\}_{n}$ is a metric p-Bessel sequence for 
	$\mathcal{M}$ with bound $b$.
\end{proof}
In the spirit of definition of $\mathcal{X}_d$-frame, Definition \ref{PFRAMEFORMETRIC} can be generalized.
\begin{definition}\label{XDMETRICFRAME}
	Let $\mathcal{M}$ be a metric space and $\mathcal{M}_d$ be an associated BK-space. Let
	$\{f_n\}_{n}$ be a sequence in $\operatorname{Lip}(\mathcal{M}, \mathbb{K})$. We say that $\{f_n\}_{n}$ is a  \textbf{metric $\mathcal{M}_d$-frame} (or \textbf{Lipschitz $\mathcal{M}_d$-frame}) for $\mathcal{M}$  if the following conditions hold.
	\begin{enumerate}[label=(\roman*)]
		\item $\{f_n(x)\}_{n} \in \mathcal{M}_d$, for each  $x \in \mathcal{M}$,
		\item There exist positive $a, b$  such that 
		$
		a\, d(x,y) \leq \|\{f_n(x)-f_n(y)\}_n\|_{\mathcal{M}_d} \leq b\, d(x,y), $ $ \forall x
		, y\in \mathcal{M}.
		$
	\end{enumerate}
 We say  constant $a$ as \textbf{lower metric $\mathcal{M}_d$-frame bound}
 and constant $b$ as \textbf{upper metric $\mathcal{M}_d$-frame bound}.	If we do not demand the first inequality, then we say $\{f_n\}_{n}$ is a  metric \textbf{$\mathcal{M}_d$-Bessel sequence}.
\end{definition}
An easier  way of producing metric $\mathcal{M}_d$-frame is the following. Let $\mathcal{M}_d$ be a BK-space which admits a Schauder basis $\{\tau_n\}_{n}$. Let  $\{f_n\}_{n}$ be the coefficient functionals associated with $\{\tau_n\}_{n}$. Let $\mathcal{M}$ be a metric space and $A:\mathcal{M} \rightarrow \mathcal{M}_d$ be bi-Lipschitz with bounds $a$ and $b$. Define $g_n\coloneqq f_n A, \forall n$. Then $g_n$ is a Lipschitz function for all $n$. Now 
\begin{align*}
a\, d(x,y) &\leq \|Ax-Ay\|_{\mathcal{M}_d}=\|\{f_n(Ax-Ay)\}_n\|_{\mathcal{M}_d}\\
&=\|\{f_n(Ax)-f_n(Ay)\}_n\|_{\mathcal{M}_d} 
=\|\{g_n(x)-g_n(y)\}_n\|_{\mathcal{M}_d} \leq b\, d(x,y), \quad \forall x
, y\in \mathcal{M}.
\end{align*}
Hence $\{g_n\}_{n}$ is a  metric $\mathcal{M}_d$-frame for $\mathcal{M}$.\\
Following result ensures that metric frames are universal in nature.
\begin{theorem}\label{METRICFRAMEEXISTS3}
	Every separable metric space $ \mathcal{M}$ admits a metric $\mathcal{M}_d$-frame.
\end{theorem}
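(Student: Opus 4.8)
The plan is to combine Aharoni's theorem (Theorem~\ref{AHARONITHEOREM}) with the construction of metric $\mathcal{M}_d$-frames from a Schauder basis that was described immediately before the statement. Concretely, Aharoni's theorem furnishes a bi-Lipschitz copy of any separable metric space inside the BK-space $c_0(\mathbb{N})$, and $c_0(\mathbb{N})$ carries the canonical unit vectors as a Schauder basis with continuous (hence Lipschitz) coordinate functionals; transporting these functionals back to $\mathcal{M}$ through the embedding yields the desired frame, with the choice $\mathcal{M}_d \coloneqq c_0(\mathbb{N})$.

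In detail, first I would fix $\mathcal{M}_d \coloneqq c_0(\mathbb{N})$, let $\{e_n\}_n$ be its standard Schauder basis, and let $\{\zeta_n\}_n \subseteq c_0(\mathbb{N})^*$ be the associated coordinate functionals; each $\zeta_n$ is a bounded linear functional, so it is Lipschitz. Since $\mathcal{M}$ is separable, Theorem~\ref{AHARONITHEOREM} gives a map $A \colon \mathcal{M} \to c_0(\mathbb{N})$ and a constant $b > 0$ with
\[
d(x,y) \leq \|A(x) - A(y)\|_{c_0(\mathbb{N})} \leq b\, d(x,y), \quad \forall x, y \in \mathcal{M};
\]
thus $A$ is bi-Lipschitz with lower constant $1$ and upper constant $b$. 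Now set $f_n \coloneqq \zeta_n \circ A$ for each $n \in \mathbb{N}$. Being the composition of a Lipschitz map with a bounded linear functional, $f_n \in \operatorname{Lip}(\mathcal{M}, \mathbb{K})$. For $x \in \mathcal{M}$ we have $\{f_n(x)\}_n = \{\zeta_n(A(x))\}_n = A(x) \in c_0(\mathbb{N}) = \mathcal{M}_d$, which is condition (i) of Definition~\ref{XDMETRICFRAME}. For condition (ii), for all $x, y \in \mathcal{M}$,
\[
\|\{f_n(x) - f_n(y)\}_n\|_{\mathcal{M}_d} = \|\{\zeta_n(A(x) - A(y))\}_n\|_{c_0(\mathbb{N})} = \|A(x) - A(y)\|_{c_0(\mathbb{N})},
\]
so the two-sided estimate above reads $1\cdot d(x,y) \leq \|\{f_n(x) - f_n(y)\}_n\|_{\mathcal{M}_d} \leq b\, d(x,y)$. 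Hence $\{f_n\}_n$ is a metric $\mathcal{M}_d$-frame for $\mathcal{M}$; this is exactly the construction recorded before the statement, applied to the Aharoni embedding.

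I do not expect a genuine obstacle here: the entire content is the invocation of Aharoni's theorem, and the only point meriting a line of care is that $c_0(\mathbb{N})$ really is an admissible BK-space in the sense of Definition~\ref{XDMETRICFRAME} --- which it is --- and that its coordinate functionals are Lipschitz, which is automatic from boundedness. If one prefers not to privilege $c_0(\mathbb{N})$, the same proof works verbatim with $\mathcal{M}_d$ replaced by any BK-space that admits a Schauder basis and a bi-Lipschitz copy of $\mathcal{M}$; Aharoni's theorem simply guarantees that such a space always exists.
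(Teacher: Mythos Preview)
Your proof is correct and follows exactly the paper's approach: invoke Aharoni's theorem to obtain a bi-Lipschitz embedding $A\colon \mathcal{M}\to c_0(\mathbb{N})$, then compose with the coordinate functionals to produce a metric $c_0(\mathbb{N})$-frame. The paper's proof is the same argument stated more tersely.
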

\begin{proof}
	From Theorem \ref{AHARONITHEOREM} it follows that there exists a bi-Lipschitz function $f: \mathcal{M} \to c_0(\mathbb{N})$. Let $ p_n: c_0(\mathbb{N}) \to \mathbb{K}$ be the coordinate projection, for each $n$. If we now set 	$f_n\coloneqq p_nf$, for each $n$, then $\{f_n\}_{n}$ is a metric $c_0(\mathbb{N})$-frame  for $\mathcal{M}$.
\end{proof}
Given metric $\mathcal{M}_d$-frames  $\{f_n\}_{n}$, $\{g_n\}_{n}$ and a nonzero  scalar $\lambda$, one can naturally ask whether we can scale and add them to get new frames? i.e.,  whether $\{f_n+\lambda g_n\}_{n}$ is a frame? In the case of Hilbert spaces, a use of Minkowski's inequality shows that whenever $\{\tau_n\}_{n}$ and $\{\omega_n\}_{n}$ are frames for a Hilbert space $\mathcal{H}$, then  $\{\tau_n+\lambda \omega_n\}_{n}$ is a Bessel sequence for $\mathcal{H}$. In general, this sequence need not be a frame for $\mathcal{H}$. Thus we have to impose extra conditions to ensure the existence of  lower frame bound. For Hilbert spaces this is done by  \cite{FAVIERZALIK}. We now obtain similar results for metric spaces.
\begin{theorem}
	Let 	$\{f_n\}_{n}$ be a  metric $\mathcal{M}_d$-frame for metric space $\mathcal{M}$ with bounds $a$ and $b$. Let $\lambda$ be a non-zero  scalar. Then 
	\begin{enumerate}[label=(\roman*)]
		\item   $\{\lambda f_n\}_{n}$ is a metric  $\mathcal{M}_d$-frame for  $\mathcal{M}$ with bounds $a\lambda$ and $b\lambda$.
		\item If $A:\mathcal{M} \rightarrow \mathcal{M}$ is bi-Lipschitz with bounds $c$ and $d$, then $\{f_nA\}_{n}$ is a metric  $\mathcal{M}_d$-frame for  $\mathcal{M}$ with bounds $ac$ and $bd$.
		\item If $\{g_n\}_{n}$ is a metric $\mathcal{M}_d$-Bessel sequence for $\mathcal{M}$ with bound $d$ and $|\lambda|<\frac{a}{d}$, then $\{f_n+\lambda g_n\}_{n}$ is a metric $\mathcal{M}_d$-frame for  $\mathcal{M}$ with bounds $a-|\lambda|d$ and $b+|\lambda|d$.
	\end{enumerate}
\end{theorem}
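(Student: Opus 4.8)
The plan is to verify, in each of the three cases, the two defining conditions of a metric $\mathcal{M}_d$-frame in Definition \ref{XDMETRICFRAME}: first, that the relevant sequence of values lies in $\mathcal{M}_d$ for every point of $\mathcal{M}$; and second, the two-sided estimate, which in each case will follow by manipulating the corresponding estimate for $\{f_n\}_n$ together with the vector-space and norm structure of the BK-space $\mathcal{M}_d$ (continuity of scalar multiplication, the triangle inequality, and its reverse form). Along the way I will also note that the new scalar functions are Lipschitz, so they indeed belong to $\operatorname{Lip}(\mathcal{M},\mathbb{K})$ as required: a scalar multiple of a Lipschitz function is Lipschitz, a composition $f_n\circ A$ of Lipschitz maps is Lipschitz, and a finite linear combination $f_n+\lambda g_n$ of Lipschitz functions is Lipschitz.

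For (i), since $\mathcal{M}_d$ is a vector space, $\{\lambda f_n(x)\}_n=\lambda\{f_n(x)\}_n\in\mathcal{M}_d$ for each $x\in\mathcal{M}$, and $\|\{\lambda f_n(x)-\lambda f_n(y)\}_n\|_{\mathcal{M}_d}=|\lambda|\,\|\{f_n(x)-f_n(y)\}_n\|_{\mathcal{M}_d}$, so the frame inequalities for $\{f_n\}_n$ immediately give bounds $|\lambda|a$ and $|\lambda|b$ (I read the stated ``$a\lambda,\,b\lambda$'' as $|\lambda|a,\,|\lambda|b$). For (ii), each $f_nA$ is Lipschitz and $\{f_n(Ax)\}_n\in\mathcal{M}_d$ because $Ax\in\mathcal{M}$; applying the frame inequalities of $\{f_n\}_n$ at the pair $(Ax,Ay)$ and then chaining with the bi-Lipschitz estimates $c\,d(x,y)\le d(Ax,Ay)\le d\,d(x,y)$ yields
\[
ac\,d(x,y)\le a\,d(Ax,Ay)\le \|\{f_n(Ax)-f_n(Ay)\}_n\|_{\mathcal{M}_d}\le b\,d(Ax,Ay)\le bd\,d(x,y),
\]
which is exactly the claim.

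For (iii), membership is clear since $\{f_n(x)+\lambda g_n(x)\}_n$ is a sum of two members of $\mathcal{M}_d$. For the upper bound I use the triangle inequality in $\mathcal{M}_d$ with the upper frame bound of $\{f_n\}_n$ and the Bessel bound of $\{g_n\}_n$:
\[
\|\{(f_n+\lambda g_n)(x)-(f_n+\lambda g_n)(y)\}_n\|_{\mathcal{M}_d}\le b\,d(x,y)+|\lambda|\,d\,d(x,y)=(b+|\lambda|d)\,d(x,y).
\]
For the lower bound I use the reverse triangle inequality:
\[
\|\{(f_n+\lambda g_n)(x)-(f_n+\lambda g_n)(y)\}_n\|_{\mathcal{M}_d}\ge \|\{f_n(x)-f_n(y)\}_n\|_{\mathcal{M}_d}-|\lambda|\,\|\{g_n(x)-g_n(y)\}_n\|_{\mathcal{M}_d}\ge (a-|\lambda|d)\,d(x,y),
\]
and the hypothesis $|\lambda|<a/d$ makes $a-|\lambda|d>0$, so this is a genuine lower frame bound.

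There is essentially no deep obstacle here; the points that need care are bookkeeping ones — confirming that each constructed family still consists of Lipschitz scalar functions and that the value sequences remain in $\mathcal{M}_d$ (which rely on $\mathcal{M}_d$ being a vector space closed under the operations used), and, in (iii), checking that the lower bound stays strictly positive, which is precisely why $|\lambda|<a/d$ is imposed. The closest thing to a subtlety is ensuring, in (ii), that the two pairs of inequalities are chained in the correct order so that the constants multiply as stated.
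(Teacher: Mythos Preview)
Your proof is correct and follows the same approach as the paper: parts (i) and (ii) are dismissed there as following ``easily,'' and for (iii) the paper likewise uses the triangle inequality for the upper bound and the reverse triangle inequality for the lower bound. Your write-up is in fact more careful than the paper's (you verify membership in $\mathcal{M}_d$ and Lipschitzness explicitly, and you correctly note that the bounds in (i) should carry $|\lambda|$).
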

\begin{proof}
	First two conclusions follow easily. For the upper frame bound of $\{f_n+\lambda g_n\}_{n}$ 	we use  triangle inequality. Now for lower frame bound, using reverse triangle inequality, we get 
		\begin{align*}
	&\|\{(f_n+\lambda g_n)(x)-(f_n+\lambda g_n)(y)\}_n\|_{\mathcal{M}_d}=\|\{f_n(x)-f_n(y)+ \lambda( g_n(x)- g_n(y))\}_n\|_{\mathcal{M}_d}\\
	&\quad\geq \|\{f_n(x)-f_n(y)\}_n\|_{\mathcal{M}_d}-\|\{ \lambda( g_n(x)- g_n(y))\}_n\|_{\mathcal{M}_d}\\
	&\quad \geq  a\, d(x,y)- |\lambda| \, d(x,y)
	=(a-|\lambda|)\, d(x,y), \quad \forall x
	, y\in \mathcal{M}.
	\end{align*}
\end{proof}
We next define  ``metric frame" which is stronger than    Definition \ref{XDMETRICFRAME} in light of definition of Banach frame.
 \begin{definition}\label{METRICBANACHFRAME}
	Let $\mathcal{M}$ be a metric space and $\mathcal{M}_d$ be an associated  BK-space. Let
	$\{f_n\}_{n}$ be a sequence in $\operatorname{Lip}(\mathcal{M}, \mathbb{K})$
	and $S: \mathcal{M}_d \rightarrow \mathcal{M}$. We say that  $(\{f_n\}_{n}, S)$ is a \textbf{metric frame} or \textbf{Lipschitz metric
		frame} for $\mathcal{M}$ if the following conditions hold.
	\begin{enumerate}[label=(\roman*)]
		\item $\{f_n(x)\}_{n} \in \mathcal{M}_d$, for each  $x \in \mathcal{M}$,
		\item There exist positive $a, b$  such that 
		$
		a\, d(x,y) \leq \|\{f_n(x)-f_n(y)\}_n\|_{\mathcal{M}_d} \leq b\, d(x,y), $ $  \forall x
		, y\in \mathcal{M},
		$
		\item $S$ is Lipschitz and $S(\{f_n(x)\}_{n})=x$, for each $x \in \mathcal{M}$.
	\end{enumerate}
	 Mapping $S$ is called as
	Lipschitz reconstruction operator. We say  constant $a$ as \textbf{lower frame bound}
	and constant $b$ as \textbf{upper frame bound}. If we do not demand the first inequality, then we say $(\{f_n\}_{n}, S)$ is a \textbf{metric  Bessel sequence}.
\end{definition}
We observe that if $(\{f_n\}_{n}, S)$ is a metric frame  for $\mathcal{M}$, then condition (i) in Definition \ref{METRICBANACHFRAME} tells 
that the mapping (we call as analysis map)
\begin{align*}
\theta_f:\mathcal{M} \ni x \mapsto \theta_f x\coloneqq \{f_n(x)\}_{n} \in \mathcal{M}_d
\end{align*}
is well-defined and condition (ii) in Definition \ref{METRICBANACHFRAME} tells that $\theta_f$ satisfies 
\begin{align*}
a\, d(x,y)\leq \|\theta_f x -\theta_fy \|\leq b\, d(x,y), \quad \forall x
, y\in \mathcal{M}.
\end{align*}
Hence $\theta_f$ is bi-Lipschitz and injective. Thus a  metric frame puts the space 
into $\mathcal{M}_d$ via $\theta_f$ and reconstructs 
every element by using reconstruction operator $S$. Now note that $S\theta_f =I_\mathcal{M}$. This operator description helps us to derive the following propositions easily.
\begin{proposition}
	If $(\{f_n\}_{n}, S)$ is a metric frame  for $\mathcal{M}$, then 	$P_{f, S}\coloneqq \theta_f S: \mathcal{M}_d \to \mathcal{M}_d$ is idempotent and $P_{f, S}(\mathcal{M}_d)=\theta_f(\mathcal{M}_d).$
\end{proposition}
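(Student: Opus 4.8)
The plan is to derive both assertions from the single algebraic identity $S\theta_f=I_{\mathcal{M}}$, which is exactly the reconstruction property (iii) of Definition~\ref{METRICBANACHFRAME} rephrased through the analysis map $\theta_f$ (this is the identity recorded just before the proposition). Recall that $\theta_f:\mathcal{M}\to\mathcal{M}_d$ and $S:\mathcal{M}_d\to\mathcal{M}$, so $P_{f,S}=\theta_f S$ is a self-map of $\mathcal{M}_d$ (Lipschitz, as a composition of Lipschitz maps), and the composite $P_{f,S}\circ P_{f,S}$ is legitimate. No frame bound will be used; only this identity and the domains/codomains of $\theta_f$ and $S$ matter.

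First I would prove idempotency by a one-line computation. Given $c\in\mathcal{M}_d$, set $x\coloneqq S(c)\in\mathcal{M}$; then
\[
P_{f,S}\big(P_{f,S}(c)\big)=\theta_f\big(S(\theta_f(x))\big)=\theta_f\big((S\theta_f)(x)\big)=\theta_f(x)=P_{f,S}(c),
\]
using $S\theta_f=I_{\mathcal{M}}$ in the third equality. Hence $P_{f,S}^2=P_{f,S}$.

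Next, for the range identity: the inclusion $P_{f,S}(\mathcal{M}_d)=\theta_f\big(S(\mathcal{M}_d)\big)\subseteq\theta_f(\mathcal{M})$ is immediate since $S$ takes values in $\mathcal{M}$. For the reverse inclusion, I would first note that $S$ is surjective onto $\mathcal{M}$: for any $x\in\mathcal{M}$ we have $x=S(\theta_f x)\in S(\mathcal{M}_d)$. Therefore $\theta_f(\mathcal{M})=\theta_f\big(S(\mathcal{M}_d)\big)=P_{f,S}(\mathcal{M}_d)$. (Here I read the asserted range $\theta_f(\mathcal{M}_d)$ as $\theta_f(\mathcal{M})$, the set on which $\theta_f$ is defined; this matches the Hilbert-space prototype of Theorem~\ref{MOSTIMPORTANT}(ix), where the corresponding projection $P_\tau$ has range $\theta_\tau(\mathcal{H})$, and also mirrors the operator-valued analogue.)

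There is essentially no analytic obstacle: everything reduces to the reconstruction identity $S\theta_f=I_{\mathcal{M}}$ together with the obvious domain bookkeeping. The only point genuinely worth stating is the surjectivity of $S$ onto $\mathcal{M}$ — this is what forces $P_{f,S}(\mathcal{M}_d)$ to be the full image $\theta_f(\mathcal{M})$ rather than a proper subset — and once that is observed each of the two claims is a single line.
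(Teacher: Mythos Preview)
Your proof is correct and follows exactly the route the paper intends: the paper gives no explicit proof of this proposition, merely remarking that the identity $S\theta_f=I_{\mathcal{M}}$ ``helps us to derive the following propositions easily,'' and your two computations are precisely the expected ones. Your observation that the stated range $\theta_f(\mathcal{M}_d)$ must be read as $\theta_f(\mathcal{M})$ (since $\theta_f$ is defined on $\mathcal{M}$) is also correct and worth noting.
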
 
\begin{proposition}
	Let $\{f_n\}_{n}$ be a  $\mathcal{M}_d$-frame  for $\mathcal{M}$ and $S: \mathcal{M}_d \rightarrow \mathcal{M}$ be Lipschitz. Then $(\{f_n\}_{n}, S)$ is a metric frame  for $\mathcal{M}$  if and only if $S$ is a left-Lipschitz inverse of $\theta_f$ if and only if 	$\theta_f$ is a  right-Lipschitz inverse of $S$.
\end{proposition}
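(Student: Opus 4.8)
The plan is to show that all three conditions in the statement are just different renderings of the single operator identity $S\theta_f=I_{\mathcal{M}}$, once we record that the various Lipschitz requirements are automatic. First I would note that since $\{f_n\}_n$ is an $\mathcal{M}_d$-frame for $\mathcal{M}$, conditions (i) and (ii) of Definition \ref{METRICBANACHFRAME} already hold; moreover, by the observation immediately following that definition (the same one that underlies the remark after Definition \ref{XDMETRICFRAME}), the analysis map $\theta_f:\mathcal{M}\ni x\mapsto\{f_n(x)\}_n\in\mathcal{M}_d$ is well-defined and bi-Lipschitz, in particular Lipschitz and injective. Hence the only extra content carried by the assertion ``$(\{f_n\}_n,S)$ is a metric frame for $\mathcal{M}$'' is requirement (iii): that $S$ be Lipschitz and $S(\{f_n(x)\}_n)=x$ for every $x\in\mathcal{M}$; and $S$ is Lipschitz by hypothesis.

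Next I would rewrite (iii) in terms of $\theta_f$. Since $\{f_n(x)\}_n=\theta_f x$ by the definition of the analysis map, the equation $S(\{f_n(x)\}_n)=x$ for all $x$ is literally $S\theta_f=I_{\mathcal{M}}$. Thus ``$(\{f_n\}_n,S)$ is a metric frame'' is equivalent to ``$S$ is Lipschitz and $S\theta_f=I_{\mathcal{M}}$''. A left inverse of $\theta_f$ is, by definition, a map $S:\mathcal{M}_d\to\mathcal{M}$ with $S\theta_f=I_{\mathcal{M}}$, and calling it a left-\emph{Lipschitz} inverse adds exactly the Lipschitz condition on $S$, which is in force; so this equivalence is precisely ``$S$ is a left-Lipschitz inverse of $\theta_f$''. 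Likewise, $\theta_f$ being a right-Lipschitz inverse of $S$ means $\theta_f$ is Lipschitz and $S\theta_f=I_{\mathcal{M}}$; since $\theta_f$ is automatically Lipschitz, this too reduces to $S\theta_f=I_{\mathcal{M}}$. Chaining these equivalences yields the proposition, and I would present them as a short cycle of implications or simply as a string of ``if and only if''s.

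There is essentially no obstacle here: the statement is a definitional unwinding. The one point deserving an explicit line is that the adjectives ``Lipschitz'' attached to ``left inverse'' and ``right inverse'' contribute nothing new — $S$ is Lipschitz by hypothesis and $\theta_f$ is Lipschitz because $\{f_n\}_n$ satisfies the upper $\mathcal{M}_d$-frame inequality — so that all three notions collapse onto the single identity $S\theta_f=I_{\mathcal{M}}$.
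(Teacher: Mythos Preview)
Your proposal is correct and is exactly the approach the paper intends: the paper does not spell out a proof but simply remarks that the operator identity $S\theta_f=I_{\mathcal{M}}$ ``helps us to derive the following propositions easily,'' which is precisely the reduction you carry out. Your extra care in noting that the Lipschitz adjectives on $S$ and $\theta_f$ are automatic (from the hypothesis and the upper frame inequality, respectively) makes the definitional unwinding fully explicit.
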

We now give some explicit examples of metric frames.
\begin{example}
	Let $\mathcal{M}$, $\{f_n\}_{n}$ be as in Example \ref{1FRAMEFIRST} and let $a=1$.	Take\\ $\mathcal{M}_d \coloneqq \ell^1(\{0\}\cup \mathbb{N})$ and define 
	\begin{align*}
	S:\mathcal{M}_d \ni \{a_n\}_{n} \mapsto S \{a_n\}_{n} \coloneqq 1+\left| \sum_{n=1}^{\infty} a_n\right| \in \mathcal{M}.
	\end{align*}
	Then 
	
	\begin{align*}
	|S \{a_n\}_{n}-S \{b_n\}_{n}|&=\left|| \sum_{n=1}^{\infty} a_n|-| \sum_{n=1}^{\infty} b_n| \right|\leq \left| \sum_{n=1}^{\infty} a_n- \sum_{n=1}^{\infty} b_n \right|\\
	&=\left| \sum_{n=1}^{\infty} (a_n-b_n)\right|\leq \sum_{n=1}^{\infty} |a_n-b_n|\leq \sum_{n=0}^{\infty} |a_n-b_n|\\
	&=\|\{a_n\}_{n}-\{b_n\}_{n}\|,\quad \forall \{a_n\}_{n}, \{b_n\}_{n} \in \ell^1(\{0\}\cup \mathbb{N}).
	\end{align*}
	Thus $S$ is Lipschitz. Further, 
	\begin{align*}
	S(\{f_n(x)\}_{n})=1+\left| \sum_{n=1}^{\infty} f_n(x)\right|=1+\sum_{n=1}^{\infty}\frac{(\log x)^n}{n!}=x,\quad \forall x \in \mathcal{M}. 
	\end{align*}
	Hence $(\{f_n\}_{n}, S)$ is a metric frame  for $\mathcal{M}$.
	Note that if we define \begin{align*}
	T:\mathcal{M}_d \ni \{a_n\}_{n} \mapsto S \{a_n\}_{n} \coloneqq 1+ \sum_{n=1}^{\infty} |a_n| \in \mathcal{M},
	\end{align*}
	then 
	\begin{align*}
	|T \{a_n\}_{n}-T \{b_n\}_{n}|&=\left| \sum_{n=1}^{\infty} |a_n|- \sum_{n=1}^{\infty} |b_n| \right|= \left| \sum_{n=1}^{\infty} (|a_n|-|b_n|)  \right|\\
	&\leq  \sum_{n=1}^{\infty} \bigg||a_n|-|b_n|\bigg|\leq \sum_{n=1}^{\infty} |a_n-b_n|\leq \sum_{n=0}^{\infty} |a_n-b_n|\\
	&=\|\{a_n\}_{n}-\{b_n\}_{n}\|,\quad \forall \{a_n\}_{n}, \{b_n\}_{n} \in \ell^1(\{0\}\cup \mathbb{N}).
	\end{align*}
	Thus $T$ is Lipschitz. Hence  $(\{f_n\}_{n}, T)$ is also  a metric frame  for $\mathcal{M}$.
\end{example}
\begin{example}
	Let $f_1: \mathbb{K}\to  \mathbb{K}$ be bi-Lipschitz and let  $f_2, \dots, f_m: \mathbb{K}\to  \mathbb{K}$ be $m$ Lipschitz maps such that 
	\begin{align*}
	f_1(x)+\dots+ f_m(x)=x, \quad \forall x \in \mathbb{K}.
	\end{align*}
	We now define $S: \mathbb{K}^m \ni (x_1, \dots, x_m)\mapsto  \sum_{j=1}^{m}x_j \in \mathbb{K}$. Then $(\{f_n\}_{n}, S)$ is   a metric frame  for $\mathbb{K}$. Note that the operator $S$ is linear. 	
\end{example}
After the definition of metric frame, the first question which comes is the
existence. In Theorem \ref{BANACHFRAMEEXISTSSEPARABLE} it was proved that every separable Banach
space admits a Banach frame. Even though this result is not known in metric space settings,  two results are derived one is  close to the definition of metric frame and another gives existence under certain assumptions. To do this
we prove a result which we derive from Mc-Shane extension theorem.
\begin{theorem}\label{MCSHANE}(\textbf{Mc-Shane extension theorem}) (cf. \cite{WEAVER})
	Let $\mathcal{M}$ be a metric space and $\mathcal{M}_0$  be a nonempty subset
	of $\mathcal{M}$. If $f_0:\mathcal{M}_0 \rightarrow \mathbb{R} $ is Lipschitz,
	then there exists a Lipschitz function $f:\mathcal{M} \rightarrow \mathbb{R}
	$ such that $f|{\mathcal{M}_0}=f_0$ and
	$\operatorname{Lip}(f)=\operatorname{Lip}(f_0)$.
\end{theorem}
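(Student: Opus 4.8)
The plan is to use the classical explicit formula due to McShane. Set $L \coloneqq \operatorname{Lip}(f_0)$ and define, for each $x \in \mathcal{M}$,
\[
f(x) \coloneqq \inf_{y \in \mathcal{M}_0} \big( f_0(y) + L\, d(x,y) \big).
\]
The first thing I would check is that this infimum is a genuine real number rather than $-\infty$: fixing an arbitrary base point $y_0 \in \mathcal{M}_0$ (which exists since $\mathcal{M}_0 \neq \emptyset$), the triangle inequality together with $|f_0(y_0) - f_0(y)| \leq L\, d(y_0,y)$ gives $f_0(y) + L\, d(x,y) \geq f_0(y_0) - L\, d(y_0, y) + L\, d(x,y) \geq f_0(y_0) - L\, d(x, y_0)$ for every $y \in \mathcal{M}_0$, so the infimum is bounded below and $f$ is well-defined on all of $\mathcal{M}$.

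Next I would verify that $f|_{\mathcal{M}_0} = f_0$. For $x \in \mathcal{M}_0$, taking $y = x$ in the infimum yields $f(x) \leq f_0(x)$; conversely, for every $y \in \mathcal{M}_0$ the bound $|f_0(x) - f_0(y)| \leq L\, d(x,y)$ gives $f_0(y) + L\, d(x,y) \geq f_0(x)$, hence $f(x) \geq f_0(x)$. Then I would show $\operatorname{Lip}(f) \leq L$: given $x_1, x_2 \in \mathcal{M}$ and any $y \in \mathcal{M}_0$, one has $f(x_1) \leq f_0(y) + L\, d(x_1, y) \leq f_0(y) + L\, d(x_2, y) + L\, d(x_1, x_2)$, and taking the infimum over $y \in \mathcal{M}_0$ gives $f(x_1) \leq f(x_2) + L\, d(x_1, x_2)$; by symmetry in $x_1, x_2$ we get $|f(x_1) - f(x_2)| \leq L\, d(x_1, x_2)$.

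Finally, since $f$ extends $f_0$, the supremum defining $\operatorname{Lip}(f)$ ranges over a superset of the pairs defining $\operatorname{Lip}(f_0)$, so $\operatorname{Lip}(f) \geq \operatorname{Lip}(f_0) = L$; combined with the previous inequality this yields $\operatorname{Lip}(f) = \operatorname{Lip}(f_0)$, as required. The only point that is not a purely formal manipulation of the triangle inequality is the boundedness-below of the infimum, which is exactly where the nonemptiness of $\mathcal{M}_0$ is used; I would also note the degenerate case $L = 0$, in which $f_0$ is constant and the corresponding constant function is the desired extension.
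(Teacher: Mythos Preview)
Your proof is correct and is in fact the classical McShane construction. The paper does not supply its own proof of this statement: it quotes the result from \cite{WEAVER} and then uses it as a black box to obtain Corollary~\ref{MCSHANECORO}. So there is nothing to compare against; your explicit infimum formula with the accompanying verifications is exactly the standard argument one would find in the cited reference.
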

Using Theorem \ref{MCSHANE} we derive the following.

\begin{corollary}\label{MCSHANECORO}
	If $(\mathcal{M}, 0)$ is a pointed metric space, then for every $x
	\in \mathcal{M}$, there exists a Lipschitz function $f:\mathcal{M} \rightarrow
	\mathbb{R}$ such that $f(x)=d(x,0)$, $f(0)=0$ and $\operatorname{Lip}(f)=1$.
\end{corollary}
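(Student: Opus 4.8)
The plan is to fix a pointed metric space $(\mathcal{M},0)$ and an arbitrary point $x\in\mathcal{M}$, and to produce the required Lipschitz function $f$ by applying Mc-Shane's extension theorem (Theorem \ref{MCSHANE}) to a carefully chosen function defined on a two-point subset of $\mathcal{M}$.

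\medskip

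First I would set $\mathcal{M}_0\coloneqq\{0,x\}\subseteq\mathcal{M}$, equipped with the restricted metric. If $x=0$ the claim is trivial (take $f\equiv 0$, noting that $d(x,0)=0$ and any Lipschitz constant works, so in particular one can take the zero function; if one insists on $\operatorname{Lip}(f)=1$ the statement is vacuous unless $\mathcal{M}$ has another point, in which case a nonconstant $1$-Lipschitz function exists by the construction below applied to a different point). So assume $x\neq 0$, hence $d(x,0)>0$. Define $f_0:\mathcal{M}_0\to\mathbb{R}$ by $f_0(0)\coloneqq 0$ and $f_0(x)\coloneqq d(x,0)$. Then $f_0$ is Lipschitz on $\mathcal{M}_0$ with
\[
\operatorname{Lip}(f_0)=\frac{|f_0(x)-f_0(0)|}{d(x,0)}=\frac{d(x,0)}{d(x,0)}=1,
\]
since $\mathcal{M}_0$ has only the single nontrivial pair $\{0,x\}$.

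\medskip

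Next I would invoke Theorem \ref{MCSHANE}: since $\mathcal{M}_0$ is a nonempty subset of the metric space $\mathcal{M}$ and $f_0:\mathcal{M}_0\to\mathbb{R}$ is Lipschitz, there exists a Lipschitz function $f:\mathcal{M}\to\mathbb{R}$ with $f|_{\mathcal{M}_0}=f_0$ and $\operatorname{Lip}(f)=\operatorname{Lip}(f_0)=1$. From $f|_{\mathcal{M}_0}=f_0$ we read off $f(x)=f_0(x)=d(x,0)$ and $f(0)=f_0(0)=0$, which are exactly the three asserted properties. This completes the argument.

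\medskip

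There is no real obstacle here; the only point requiring a moment's care is the verification that $\operatorname{Lip}(f_0)=1$ exactly (not merely $\leq 1$), which uses that $d(x,0)>0$ so the supremum defining the Lipschitz constant is attained and equals $1$, together with handling the degenerate case $x=0$ separately as indicated above. Everything else is a direct application of the quoted extension theorem.
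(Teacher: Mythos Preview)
Your proof is correct and follows essentially the same route as the paper: define $f_0$ on the two-point set $\{0,x\}$ by $f_0(0)=0$, $f_0(x)=d(x,0)$, observe $\operatorname{Lip}(f_0)=1$, and apply Mc-Shane's extension theorem; the degenerate case $x=0$ is handled separately by passing to another point of $\mathcal{M}$. Your write-up is in fact slightly more explicit than the paper's in verifying that $\operatorname{Lip}(f_0)$ equals $1$ exactly.
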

\begin{proof}
	Case (i) : $x\neq0$.
	Define $\mathcal{M}_0\coloneqq \{0,x\}$ and $f_0(0)=0$, $f_0(x)=d(x,0)$. Then
	$|f_0(x)-f_0(0)|=d(x,0)$ and hence $f_0$ is Lipschitz. Application of Theorem \ref{MCSHANE} now
	completes the proof.\\
	Case (ii) : $x=0$.
	Take a non-zero point $y
	\in \mathcal{M}$. We use the argument in case (i) by
	replacing $y$ in the place of $x$.
\end{proof}
\begin{theorem}\label{METRICFRAMEEXISTS}
	Let $ \mathcal{M}$ be a separable metric space.  Then there exist a BK-space $ \mathcal{M}_d$,  a sequence $\{f_n\}_{n}$  in $\operatorname{Lip}_0(\mathcal{M}, \mathbb{R})$
	and a function $S: \mathcal{M}_d \rightarrow \mathcal{M}$ such that 
	\begin{enumerate}[label=(\roman*)]
		\item $\{f_n(x)\}_{n} \in \mathcal{M}_d$, for each  $x \in \mathcal{M}$,
		\item 
		$
		\|\{f_n(x)-f_n(y)\}_n\|_{\mathcal{M}_d} \leq \, d(x,y), \forall x
		, y\in \mathcal{M},
		$
		\item  $S(\{f_n(x)\}_{n})=x$, for each $x \in \mathcal{M}$.
	\end{enumerate}
\end{theorem}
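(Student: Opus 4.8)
The plan is to exploit the separability of $\mathcal{M}$ together with the Mc-Shane–type extension (Corollary \ref{MCSHANECORO}) to build a sequence of base-point–preserving Lipschitz scalar functions that together encode the metric, and then to take for $\mathcal{M}_d$ the natural BK-space of the resulting coefficient sequences. First I would fix the base point $0 \in \mathcal{M}$ (every metric space is pointed, by fixing an arbitrary point) and choose a countable dense subset $\{x_k\}_{k=1}^\infty$ of $\mathcal{M}$. For each $k$, apply Corollary \ref{MCSHANECORO} at the point $x_k$ to obtain $f_k \in \operatorname{Lip}_0(\mathcal{M},\mathbb{R})$ with $f_k(x_k) = d(x_k,0)$, $f_k(0)=0$ and $\operatorname{Lip}(f_k)=1$. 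The key separation property I want is: for all $x,y\in\mathcal{M}$,
\begin{align*}
\sup_{k}|f_k(x)-f_k(y)| \leq d(x,y),
\end{align*}
which is immediate since each $f_k$ is $1$-Lipschitz, together with the reverse-type estimate $\sup_k |f_k(x)-f_k(y)| = d(x,y)$ obtained by a density/continuity argument: given $x\neq y$ one picks $x_k$ close to $x$ (using that $d(\cdot,0)$-type extensions are ``sharp'' at their defining point) and checks $|f_k(x)-f_k(y)|$ approximates $d(x,y)$; here one must be a little careful — the cleanest route is to additionally throw in, for each pair of indices $(j,k)$, the $1$-Lipschitz function $g_{j,k}(z) := d(z,x_j) - d(x_k,x_j)$ (shifted to vanish at $0$ by subtracting $g_{j,k}(0)$ and dividing nothing, since it is already $1$-Lipschitz after the constant shift), so that the whole family attains $d(x,y)$ in the sup by taking $x_j$ converging to $y$. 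So in practice the sequence $\{f_n\}_n$ is the re-enumeration of $\{g_{j,k}\}_{j,k}$ (each $1$-Lipschitz, base-point–preserving after a harmless constant shift).

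With $\{f_n\}_n$ in hand, define
\begin{align*}
\mathcal{M}_d := \Big\{ \{a_n\}_n : \{a_n\}_n = \{f_n(x)-f_n(y)\}_n \text{ for some } x,y\in\mathcal{M} \Big\} \subseteq \ell^\infty(\mathbb{N}),
\end{align*}
no — rather, to get a genuine BK-space that is complete, I would instead take $\mathcal{M}_d := c_0(\mathbb{N})$ (or $\ell^\infty$) and verify condition (i), namely $\{f_n(x)\}_n\in\mathcal{M}_d$ for each $x$: since $f_n(0)=0$ and each $f_n$ is $1$-Lipschitz, $|f_n(x)| = |f_n(x)-f_n(0)| \leq d(x,0)$, so $\{f_n(x)\}_n \in \ell^\infty$; to land in $c_0$ one uses that for the $g_{j,k}$-family with $j$ ranging over a dense set, $g_{j,k}(x)\to 0$ along suitable subsequences — if this is awkward, simply use $\ell^\infty(\mathbb{N})$, which is a BK-space, and the argument is unaffected. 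Then condition (ii) of the theorem holds in the required one-sided form: $\|\{f_n(x)-f_n(y)\}_n\|_{\mathcal{M}_d} = \sup_n |f_n(x)-f_n(y)| \leq d(x,y)$. Finally, for the reconstruction map $S$: since $\sup_n |f_n(x)-f_n(0)| = \sup_n |f_n(x)| = d(x,0)$ and, more importantly, the family separates points and realizes the metric, define $S$ on $\mathcal{M}_d$ by sending a sequence $\{a_n\}_n$ to the unique $x\in\mathcal{M}$ with $f_n(x)=a_n$ for all $n$ if such $x$ exists, and to $0$ otherwise; then $S(\{f_n(x)\}_n) = x$ for every $x$ by construction, giving (iii). (One does not need $S$ to be Lipschitz here — Theorem \ref{METRICFRAMEEXISTS} only asserts (i)–(iii), not the Lipschitz property of $S$, which is exactly why it is weaker than Definition \ref{METRICBANACHFRAME}.)

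The main obstacle is making the ``sharpness'' step rigorous: one must be sure the constructed family is rich enough that the sequence $\{f_n(x)\}_n$ uniquely determines $x$ (so that $S$ is well-defined on the range of the analysis map) while simultaneously keeping every $f_n$ exactly $1$-Lipschitz so that the upper bound in (ii) is $d(x,y)$ with constant $1$ rather than some larger constant. The functions $z\mapsto d(z,x_j)$ over a dense set $\{x_j\}$ do both jobs: they are $1$-Lipschitz, and $d(x,x_j)=d(y,x_j)$ for all $j$ in a dense set forces $x=y$ by continuity of the metric. Re-enumerating $\{d(\cdot,x_j)-d(0,x_j)\}_j$ as $\{f_n\}_n$ and taking $\mathcal{M}_d=\ell^\infty(\mathbb{N})$ therefore gives (i), (ii) with constant $1$, and (iii), completing the proof. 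A secondary bookkeeping point is ensuring $f_n\in\operatorname{Lip}_0(\mathcal{M},\mathbb{R})$, i.e. $f_n(0)=0$, which is handled by the constant shift $d(\cdot,x_j)-d(0,x_j)$; this shift does not affect differences $f_n(x)-f_n(y)$, so it costs nothing.
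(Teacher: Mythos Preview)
Your argument is correct, and the route you settle on in the final paragraph---taking $f_j(z) = d(z, x_j) - d(0, x_j)$ for a dense sequence $\{x_j\}$ with $\mathcal{M}_d = \ell^\infty(\mathbb{N})$---is genuinely different from the paper's and more elementary. The paper invokes Corollary \ref{MCSHANECORO}: for each dense point $x_n$ it produces a McShane extension $f_n$ with $f_n(x_n) = d(x_n,0)$, $f_n(0)=0$, $\operatorname{Lip}(f_n)=1$, and the bulk of the argument is a subsequence computation establishing the single-point identity $\sup_n |f_n(x)| = d(x,0)$ (Equation (\ref{ALMOST})); $S_0$ is then defined on the range of the analysis map inside $\ell^\infty(\mathbb{N})$ and extended. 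Your explicit distance functions bypass McShane entirely and deliver the stronger two-point identity $\sup_n |f_n(x)-f_n(y)| = d(x,y)$ directly from density and continuity of the metric, which makes the injectivity of $x \mapsto \{f_n(x)\}_n$---and hence the well-definedness of $S$, which you correctly flag as the main obstacle---completely transparent; the paper's abstract McShane extensions are less explicit on this point. The trade-off: the paper's route showcases the extension corollary in action, while yours needs only the triangle inequality.
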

\begin{proof}
	Let $\{x_n\}_{n}$ be a dense set in $ \mathcal{M}$.  Then for each $n \in \mathbb{N}$, from Corollary \ref{MCSHANECORO}
	there exists a Lipschitz function $f_n:\mathcal{M} \rightarrow
	\mathbb{R}$ such that $f_n(x_n)=d(x_n,0)$, $f_n(0)=0$ and
	$\operatorname{Lip}(f_n)=1$. Let $ x \in \mathcal{M}$ be fixed. Now for each $n
	\in\mathbb{N}$, 
	\begin{align*}
	|f_n(x)|=|f_n(x)-f_n(0)|\leq \|f_n\|_{\operatorname{Lip}_0}\, d (x,0)=d (x,0)
	\end{align*}
	which gives $\sup_{n \in\mathbb{N}}|f_n(x)|\leq d (x,0)$. Since $\{x_n\}_{n}$
	is dense, there exists a sub sequence $\{x_{n_k}\}_{k}$ of $\{x_n\}_{n}$ such
	that $x_{n_k} \rightarrow x$ as $n \to \infty.$ From the inequality 
	\begin{align*}
	|d(y,z)-d(y,w)|\leq d(z,w), \quad \forall y,z,w \in \mathcal{M}
	\end{align*}
	we see then that $d(x_{n_k}, 0) \rightarrow d(x,0)$ as $n \to \infty.$ Consider 
	\begin{align*}
	d(x_{n_k}, 0)&=f_{n_k}(x_{n_k})\leq 
	|f_{n_k}(x_{n_k})-f_{n_k}(x)|+|f_{n_k}(x)|\\
	&\leq 1 .d(x_{n_k},
	x)+|f_{n_k}(x)|,\quad \forall k \in\mathbb{N}\\
	\implies &\lim_{k \to \infty}(d(x_{n_k}, 0)-d(x_{n_k},
	x))\leq\sup_{k \in \mathbb{N} }(d(x_{n_k}, 0)-d(x_{n_k},
	x)) \leq \sup_{k \in\mathbb{N}}|f_{n_k}(x)|.
	\end{align*}
	Therefore 
	\begin{align*}
	\sup_{n \in\mathbb{N}}|f_n(x)|&\leq d (x,0)=\lim_{k \to \infty}d(x_{n_k},
	0)=\lim_{k \to \infty}(d(x_{n_k}, 0)-d(x_{n_k},x))\\
	&\leq  \sup_{k
		\in\mathbb{N}}|f_{n_k}(x)|\leq\sup_{n \in\mathbb{N}}|f_n(x)|.
	\end{align*}
	So we proved that 
	\begin{align}\label{ALMOST}
	d(x,0)=\sup_{n \in\mathbb{N}}|f_n(x)|, \quad \forall x \in \mathcal{M}.
	\end{align}
	Define 
	$
	\mathcal{M}^0_d\coloneqq \{\{f_n(x)\}_n: x \in \mathcal{M}\}.
	$
	Equality (\ref{ALMOST}) then tells that $\mathcal{M}^0_d$ is a subset of $\ell^\infty(\mathbb{N}).$ Now we define
	$S_0:\mathcal{M}_d^0 \ni \{f_n(x)\}_n \mapsto x \in \mathcal{M}$. Then from
	Equality (\ref{ALMOST}),
	\begin{align*}
	d(S_0(\{f_n(x)\}_n),S_0(\{f_n(y)\}_n))&=d(x,y)
	\leq d(x,0)+d(0,y)\\
	&=\sup_{n \in\mathbb{N}}|f_n(x)|+\sup_{n \in\mathbb{N}}|f_n(y)|\\
	&=\|\{f_n(x)\}_n\|+\|\{f_n(y)\}_n\|, \quad \forall x, y \in
	\mathcal{M}.
	\end{align*}
	We  also have 
	\begin{align*}
	\|\{f_n(x)-f_n(y)\}_n\|_{\mathcal{M}_d}&=\sup_{n \in\mathbb{N}}|f_n(x)-f_n(y)| \\
	&\leq \sup_{n \in\mathbb{N}}\|f_n\|_{\operatorname{Lip}_0}\, d (x,y)=d (x,y), \quad \forall  x
	, y\in \mathcal{M}.
	\end{align*}
	We can now take $S$ as Lipschitz extension 
	of $S_0$ to $\ell^\infty(\mathbb{N})$ and $\mathcal{M}_d=\ell^\infty(\mathbb{N})$ which completes the proof. 
\end{proof}
\begin{theorem}\label{METRICFRAMEEXISTS2}
	If 	$A:\mathcal{M} \to \mathcal{M}_d$ is bi-Lipschitz and there is a Lipschitz projection $P:\mathcal{M}_d \to A(\mathcal{M})$, then $\mathcal{M}$ admits a metric frame.
\end{theorem}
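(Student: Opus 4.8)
The plan is to construct the metric frame explicitly from the two given ingredients $A$ and $P$, in the same spirit as the passage from a bi-Lipschitz embedding plus a complemented image to a reconstruction operator (cf. the Banach-space analogue in Theorem \ref{CASAZZASEPARABLECHARACTERIZATION}). Let $a,b>0$ be bi-Lipschitz constants for $A$, so that $a\,d(x,y)\le\|Ax-Ay\|_{\mathcal{M}_d}\le b\,d(x,y)$ for all $x,y\in\mathcal{M}$.

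First I would define the functionals. Since $\mathcal{M}_d$ is a BK-space, each coordinate functional $p_n:\mathcal{M}_d\to\mathbb{K}$ is bounded linear, hence Lipschitz, so $f_n\coloneqq p_n\circ A\in\operatorname{Lip}(\mathcal{M},\mathbb{K})$. For every $x\in\mathcal{M}$ the sequence $\{f_n(x)\}_n$ is exactly the coordinate sequence of $Ax$, i.e.\ $\{f_n(x)\}_n=Ax\in\mathcal{M}_d$, which is condition (i) of Definition \ref{METRICBANACHFRAME}. Moreover
\[
\|\{f_n(x)-f_n(y)\}_n\|_{\mathcal{M}_d}=\|Ax-Ay\|_{\mathcal{M}_d},
\]
so the bi-Lipschitz bounds for $A$ give $a\,d(x,y)\le\|\{f_n(x)-f_n(y)\}_n\|_{\mathcal{M}_d}\le b\,d(x,y)$, which is condition (ii).

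Next I would build the reconstruction operator. As $A$ is bi-Lipschitz it is injective, so $S_0:A(\mathcal{M})\to\mathcal{M}$, $S_0(Ax)\coloneqq x$, is well-defined, and the lower estimate $d(x,y)\le\tfrac1a\|Ax-Ay\|_{\mathcal{M}_d}$ shows $S_0$ is Lipschitz with constant $1/a$. Put $S\coloneqq S_0\circ P:\mathcal{M}_d\to\mathcal{M}$; it is Lipschitz as a composition of Lipschitz maps, and since $P$ is a projection onto $A(\mathcal{M})$ it fixes $A(\mathcal{M})$ pointwise, whence $S(\{f_n(x)\}_n)=S(Ax)=S_0(P(Ax))=S_0(Ax)=x$ for every $x\in\mathcal{M}$, which is condition (iii). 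Thus $(\{f_n\}_n,S)$ is a metric frame for $\mathcal{M}$ with bounds $a$ and $b$. There is no real obstacle beyond bookkeeping; the only point needing care is the precise meaning of ``Lipschitz projection $P:\mathcal{M}_d\to A(\mathcal{M})$'', namely that $P$ restricts to the identity on $A(\mathcal{M})$ (equivalently, $P$ is idempotent with range $A(\mathcal{M})$), since that is exactly what makes $S\circ\theta_f=I_{\mathcal{M}}$. It is also worth remarking that, unlike Theorem \ref{METRICFRAMEEXISTS3}, this construction uses no separability hypothesis on $\mathcal{M}$.
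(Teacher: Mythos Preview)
Your proof is correct and follows exactly the paper's approach: the paper also sets $f_n\coloneqq h_nA$ with $h_n$ the coordinate functionals on $\mathcal{M}_d$ and $S\coloneqq A^{-1}P$, then checks $S(\{f_n(x)\}_n)=A^{-1}PAx=A^{-1}Ax=x$. Your version is simply more explicit in verifying conditions (i) and (ii) of Definition \ref{METRICBANACHFRAME} and in spelling out that $A^{-1}$ is Lipschitz on $A(\mathcal{M})$, but the construction and the key computation are identical.
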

\begin{proof}
	Let $\{h_n\}_n$ be the sequence of coordinate functionals associated with $\mathcal{M}_d$. 	Define $f_n\coloneqq h_nA$ and $S \coloneqq A^{-1}P$. Then 
	\begin{align*}
	S(\{f_n(x)\}_n)=A^{-1}P(\{h_n(Ax)\}_n)=A^{-1}PAx=A^{-1}Ax=x, \quad \forall x \in \mathcal{M}.
	\end{align*}
	Hence  $(\{f_n\}_{n}, S)$ is  a metric frame  for $\mathcal{M}$.
\end{proof}
It is well-known that Mc-Schane extension theorem fails for complex valued Lipschitz functions. Thus we may ask whether we can take a complex
sequence space in Theorem \ref{METRICFRAMEEXISTS}. It is possible for certain metric spaces due to the following theorem. 
\begin{theorem}(\textbf{Kirszbraun extension theorem}) (cf. \cite{VALENTINEL})
	Let $\mathcal{H}$ be a Hilbert  space and $\mathcal{M}_0$  be a nonempty subset
	of $\mathcal{H}$. If $f_0:\mathcal{M}_0 \rightarrow \mathbb{K} $ is Lipschitz,
	then there exists a Lipschitz function $f:\mathcal{H} \rightarrow \mathbb{K}
	$ such that $f|{\mathcal{M}_0}=f_0$ and
	$\operatorname{Lip}(f)=\operatorname{Lip}(f_0)$.
\end{theorem}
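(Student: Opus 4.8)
The plan is to observe that $\mathbb{K}$ (being $\mathbb{R}$ or $\mathbb{C}\cong\mathbb{R}^2$) is itself a Hilbert space, so that the statement is precisely the classical Kirszbraun extension theorem with both source and target Hilbert spaces, and to reproduce its standard proof. After rescaling we may assume $\operatorname{Lip}(f_0)=1$. (When $\mathbb{K}=\mathbb{R}$ one can in fact just write down $f(x)\coloneqq\inf_{a\in\mathcal{M}_0}\big(f_0(a)+\|x-a\|\big)$, as in the Mc-Shane argument, so the real content is the genuinely two-dimensional target.)

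The heart of the argument will be an intersection lemma for balls: given families $\{a_i\}_{i\in I}$ in $\mathcal{H}$ and $\{b_i\}_{i\in I}$ in $\mathbb{K}$ with $\|b_i-b_j\|\le\|a_i-a_j\|$ for all $i,j$, and radii $r_i>0$ with $\bigcap_{i\in I}\overline{B}(a_i,r_i)\ne\emptyset$, one must have $\bigcap_{i\in I}\overline{B}(b_i,r_i)\ne\emptyset$. Granting this, the theorem follows from a Zorn's lemma argument on the poset of $1$-Lipschitz extensions of $f_0$ to intermediate subsets of $\mathcal{H}$, ordered by graph inclusion: a maximal element $g$, defined on some $D\subseteq\mathcal{H}$, must have $D=\mathcal{H}$, since otherwise we could pick $x\in\mathcal{H}\setminus D$, apply the lemma with $\{a_d\}_{d\in D}=D$, $\{b_d\}_{d\in D}=g(D)$, $r_d\coloneqq\|x-d\|$ — noting that $\bigcap_{d\in D}\overline{B}(d,\|x-d\|)\ni x$ — and define $g(x)$ to be any point of the (thus nonempty) set $\bigcap_{d\in D}\overline{B}(g(d),\|x-d\|)$, contradicting maximality.

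To prove the lemma I would first treat the finite case $I=\{1,\dots,n\}$ and then deduce the general case using compactness of closed balls in the finite-dimensional target $\mathbb{K}$, so that the finite intersection property suffices. For the finite case: minimize the continuous coercive function $q\mapsto\max_i\|q-b_i\|^2/r_i^2$ over $\mathbb{K}$; let $q_0$ be a minimizer, $\lambda^2$ the minimum value, and $A\coloneqq\{i:\|q_0-b_i\|=\lambda r_i\}$ the active set. First-order optimality for a pointwise maximum of differentiable convex functions supplies weights $\mu_i\ge0$ ($i\in A$) with $\sum_{i\in A}\mu_i=1$ and $q_0=\sum_{i\in A}\mu_ib_i$. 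Placing the origin of $\mathbb{K}$ at $q_0$ and, independently, the origin of $\mathcal{H}$ at some $p\in\bigcap_i\overline{B}(a_i,r_i)$, and using the polarization identity $\tfrac12\sum_{i,j}\mu_i\mu_j\|v_i-v_j\|^2=\sum_i\mu_i\|v_i\|^2-\big\|\sum_i\mu_iv_i\big\|^2$ (valid since $\sum_i\mu_i=1$), I would compute
\begin{align*}
\lambda^2\sum_{i\in A}\mu_ir_i^2
&=\sum_{i\in A}\mu_i\|b_i\|^2
=\tfrac12\sum_{i,j\in A}\mu_i\mu_j\|b_i-b_j\|^2
\le\tfrac12\sum_{i,j\in A}\mu_i\mu_j\|a_i-a_j\|^2\\
&=\sum_{i\in A}\mu_i\|a_i\|^2-\Big\|\sum_{i\in A}\mu_ia_i\Big\|^2
\le\sum_{i\in A}\mu_i\|a_i\|^2
\le\sum_{i\in A}\mu_ir_i^2,
\end{align*}
forcing $\lambda\le1$ and hence $q_0\in\bigcap_i\overline{B}(b_i,r_i)$.

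I expect the displayed chain of inequalities to be the main obstacle: it invokes the parallelogram law of the Hilbert norm on both the target and the source, and this is exactly the structural feature that is unavailable for general Banach or metric targets. This is why the real-valued Mc-Shane theorem (Theorem~\ref{MCSHANE}) does not upgrade to two-dimensional (complex) targets, and why a Hilbert-space hypothesis is genuinely needed here rather than the arbitrary metric setting used earlier. The remaining points are routine but do require care: justifying the first-order optimality condition (a Danskin-type subdifferential computation for a finite pointwise maximum of differentiable convex functions) and the passage from arbitrarily many balls to finitely many via compactness of balls in $\mathbb{K}$.
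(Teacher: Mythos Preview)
The paper does not give a proof of this statement; it is quoted as a known result with a citation (cf.\ \cite{VALENTINEL}) and used only to remark that the preceding construction can sometimes be carried out with a complex sequence space. So there is no proof in the paper to compare against.

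Your proposal is the standard classical argument for Kirszbraun's theorem: the Zorn/one-point-extension reduction to a ball-intersection lemma, the finite subcase via a minimax optimization, and the key parallelogram-law computation. The chain of inequalities is correct as written (with origins placed at $q_0$ and at $p$, and using $\sum_{i\in A}\mu_i b_i=q_0=0$), and the reduction from the general to the finite case is justified here by the finite-dimensionality of the target $\mathbb{K}$, exactly as you note. One small point worth recording when you write it up: the first-order condition ``$q_0=\sum_{i\in A}\mu_i b_i$ for some convex weights'' needs the observation that the sublevel sets of $q\mapsto\max_i\|q-b_i\|^2/r_i^2$ are convex, so that a minimizer is characterized by $0$ lying in the convex hull of the active gradients $2(q_0-b_i)/r_i^2$; after absorbing the positive factors $2/r_i^2$ into the weights and renormalizing, this yields the stated barycentric relation. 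Apart from that, your outline is complete and correct, and is precisely the proof one finds in the references to which the paper defers.
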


Following proposition shows that given a metric frame, we can generate more metric frames.	
\begin{proposition}
	Let $(\{f_n\}_{n}, S)$ be a metric frame  for $\mathcal{M}$. If maps $A, B :\mathcal{M} \to \mathcal{M}$  are such that $A$ is 
	bi-Lipschitz, $B$ is Lipschitz and $BA=I_\mathcal{M}$, then $(\{f_nA\}_{n}, BS)$ is a metric frame  for $\mathcal{M}$. In particular, if $A :\mathcal{M} \to \mathcal{M}$   is 
	bi-Lipschitz invertible, then $(\{f_nA\}_{n}, A^{-1}S)$ is a metric frame  for $\mathcal{M}$.
\end{proposition}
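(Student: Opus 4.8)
The plan is to verify directly the three conditions of Definition \ref{METRICBANACHFRAME} for the pair $(\{f_nA\}_n, BS)$, transporting the corresponding properties of $(\{f_n\}_n, S)$ through the bi-Lipschitz map $A$. Throughout I would write $g_n \coloneqq f_n A$, let $\mathcal{M}_d$ be the BK-space associated with the metric frame $(\{f_n\}_n, S)$, let $a, b$ be its frame bounds, and let $c, d > 0$ be bi-Lipschitz constants for $A$, so that $c\, d(x,y) \le d(Ax, Ay) \le d\, d(x,y)$ for all $x, y \in \mathcal{M}$.

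First I would dispatch condition (i): for each $x \in \mathcal{M}$ we have $Ax \in \mathcal{M}$, hence $\{g_n(x)\}_n = \{f_n(Ax)\}_n \in \mathcal{M}_d$ because $\{f_n\}_n$ satisfies (i). Next, for condition (ii), fix $x, y \in \mathcal{M}$ and note that $\|\{g_n(x) - g_n(y)\}_n\|_{\mathcal{M}_d} = \|\{f_n(Ax) - f_n(Ay)\}_n\|_{\mathcal{M}_d}$; applying the frame inequalities of $\{f_n\}_n$ to the pair $(Ax, Ay)$ and then the bi-Lipschitz bounds of $A$ yields
\[
ac\, d(x,y) \;\le\; \|\{g_n(x) - g_n(y)\}_n\|_{\mathcal{M}_d} \;\le\; bd\, d(x,y),
\]
so $\{g_n\}_n$ is a metric $\mathcal{M}_d$-frame (in the sense of Definition \ref{XDMETRICFRAME}) with bounds $ac$ and $bd$. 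For condition (iii), the composition $BS \colon \mathcal{M}_d \to \mathcal{M}$ is Lipschitz since $S$ and $B$ are (with $\operatorname{Lip}(BS) \le \operatorname{Lip}(B)\operatorname{Lip}(S)$), and for every $x \in \mathcal{M}$,
\[
(BS)\big(\{g_n(x)\}_n\big) = B\big(S(\{f_n(Ax)\}_n)\big) = B(Ax) = (BA)(x) = x,
\]
using $S(\{f_n(z)\}_n) = z$ at $z = Ax$ together with $BA = I_{\mathcal{M}}$. This establishes that $(\{f_nA\}_n, BS)$ is a metric frame for $\mathcal{M}$, with reconstruction operator $BS$ and frame bounds $ac$ and $bd$.

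For the particular case, if $A \colon \mathcal{M} \to \mathcal{M}$ is bi-Lipschitz and invertible, then $A^{-1}$ is Lipschitz (the lower bi-Lipschitz bound of $A$ furnishes the Lipschitz constant of $A^{-1}$) and $A^{-1}A = I_{\mathcal{M}}$, so taking $B = A^{-1}$ in the first part gives that $(\{f_nA\}_n, A^{-1}S)$ is a metric frame for $\mathcal{M}$. I do not expect a genuine obstacle here: the argument is essentially a change-of-variable computation, and the only point requiring care is the bookkeeping of the bounds — precisely, the \emph{two-sided} control provided by $A$ being bi-Lipschitz is what preserves \emph{both} frame inequalities simultaneously, while $B$ recovers the original point because it is a left inverse of $A$.
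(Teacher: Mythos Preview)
Your proof is correct and follows exactly the same approach as the paper: verify the frame conditions directly using the bi-Lipschitz bounds of $A$ for condition (ii), and use $BA=I_\mathcal{M}$ together with the reconstruction property of $S$ for condition (iii). The paper's own proof is just a terse two-line version of what you wrote.
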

\begin{proof}
	Bi-Lipschitzness of $A$ tells that  condition (ii) in Definition \ref{METRICBANACHFRAME} holds. Now by using $BA=I_\mathcal{M}$ we get $BS (\{f_nAx\}_{n})=BAx=x, \forall x \in \mathcal{M}.$
\end{proof}
Previous proposition not only helps to generate metric frames from metric frames but also from Banach frames. Since there are  large number of examples of Banach frames for a variety of Banach spaces, just by operating with  bi-Lipschitz invertible functions on subsets, it produces metric frames for that subset. Next we  characterize metric frames using Lipschitz functions. The following theorem precisely says when an $\mathcal{M}_d$-frame can be converted into a metric frame.
\begin{theorem}\label{CHARLIPMETRIC}
	Let $\{f_n\}_{n}$ be a metric $\mathcal{M}_d$-frame  for $\mathcal{M}$. Then the following are equivalent. 
	\begin{enumerate}[label=(\roman*)]
		\item There exists a Lipschitz projection  $P:\mathcal{M}_d \to \theta_f(\mathcal{M})$. 
		\item There exists a Lipschitz map $V:\mathcal{M}_d \to \mathcal{M}$ such that $V|_{\theta_f(\mathcal{M})}=\theta_f^{-1}$.
		\item There exists a Lipschitz map $S:\mathcal{M}_d \to \mathcal{M}$ such that $(\{f_n\}_{n}, S)$ is  a metric frame  for $\mathcal{M}$.
	\end{enumerate}
\end{theorem}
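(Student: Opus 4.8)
The plan is to prove the three implications cyclically, following the same pattern used for Banach frames (Theorem \ref{CASAZZASEPARABLECHARACTERIZATION}), but with ``bounded linear'' everywhere replaced by ``Lipschitz'' and with the analysis map $\theta_f$ playing the role it has in Definition \ref{METRICBANACHFRAME}. Recall that, since $\{f_n\}_n$ is a metric $\mathcal{M}_d$-frame, the map $\theta_f:\mathcal{M}\ni x\mapsto\{f_n(x)\}_n\in\mathcal{M}_d$ is well-defined, Lipschitz, injective and bounded below, so $\theta_f^{-1}:\theta_f(\mathcal{M})\to\mathcal{M}$ exists and is itself Lipschitz (with Lipschitz constant at most $1/a$, where $a$ is the lower frame bound). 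This observation is the backbone of the whole argument.

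First I would prove $(i)\Rightarrow(ii)$. Given a Lipschitz projection $P:\mathcal{M}_d\to\theta_f(\mathcal{M})$, simply set $V\coloneqq\theta_f^{-1}\circ P:\mathcal{M}_d\to\mathcal{M}$. This is a composition of Lipschitz maps, hence Lipschitz, and for $w\in\theta_f(\mathcal{M})$ we have $Pw=w$, so $Vw=\theta_f^{-1}(w)$; thus $V|_{\theta_f(\mathcal{M})}=\theta_f^{-1}$. Next, $(ii)\Rightarrow(iii)$: given such a $V$, put $S\coloneqq V$. Conditions (i) and (ii) of Definition \ref{METRICBANACHFRAME} hold because $\{f_n\}_n$ is already a metric $\mathcal{M}_d$-frame, and for condition (iii) we note $S(\{f_n(x)\}_n)=V(\theta_f x)=\theta_f^{-1}(\theta_f x)=x$ for every $x\in\mathcal{M}$, since $\theta_f x\in\theta_f(\mathcal{M})$; $S$ is Lipschitz by hypothesis. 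Hence $(\{f_n\}_n,S)$ is a metric frame for $\mathcal{M}$.

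Finally $(iii)\Rightarrow(i)$, which I expect to be the only step requiring genuine care. Given a Lipschitz $S:\mathcal{M}_d\to\mathcal{M}$ with $S\theta_f=I_\mathcal{M}$, define $P\coloneqq\theta_f\circ S:\mathcal{M}_d\to\mathcal{M}_d$. This is Lipschitz as a composition; its range lies in $\theta_f(\mathcal{M})$; and it is idempotent because $P^2=\theta_f S\theta_f S=\theta_f(S\theta_f)S=\theta_f S=P$ (this is exactly the content of the earlier Proposition about $P_{f,S}=\theta_f S$). It remains to check that $P$ fixes $\theta_f(\mathcal{M})$ pointwise: for $w=\theta_f x\in\theta_f(\mathcal{M})$ we get $Pw=\theta_f(S(\theta_f x))=\theta_f(x)=w$. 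So $P$ is a Lipschitz projection of $\mathcal{M}_d$ onto $\theta_f(\mathcal{M})$, establishing (i) and closing the cycle.

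The main obstacle, such as it is, is purely bookkeeping: one must make sure that ``projection'' in (i) is read as a Lipschitz retraction onto the subset $\theta_f(\mathcal{M})$ (idempotent Lipschitz self-map whose image is $\theta_f(\mathcal{M})$ and which fixes that image), not a linear projection — there is no linear structure on $\mathcal{M}$ — and that ``$V|_{\theta_f(\mathcal{M})}=\theta_f^{-1}$'' is used in the direction that lets the reconstruction identity $S\theta_f=I_\mathcal{M}$ drop out. Once the Lipschitz analogues of the operators $\theta_f$, $S$, $P_{f,S}$ from Definition \ref{METRICBANACHFRAME} and the preceding propositions are in hand, each implication is a one-line composition argument; no estimates beyond ``composition of Lipschitz maps is Lipschitz'' and the already-established bi-Lipschitz bounds for $\theta_f$ are needed.
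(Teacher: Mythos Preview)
Your proposal is correct and matches the paper's approach almost exactly: the paper also uses $V=\theta_f^{-1}P$, $S=V$, and $P=\theta_f V$ (equivalently your $\theta_f S$) as the linking constructions, with the same one-line verifications. The only cosmetic difference is that the paper proves $(i)\Leftrightarrow(ii)$ and $(ii)\Leftrightarrow(iii)$ separately rather than your cycle $(i)\Rightarrow(ii)\Rightarrow(iii)\Rightarrow(i)$, and leaves the fact that $P$ fixes $\theta_f(\mathcal{M})$ pointwise implicit where you spell it out.
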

\begin{proof}
	(i)	$\Rightarrow$ (ii) Define $V\coloneqq \theta_f^{-1} P$. Then for $y=\theta_f(x),  x \in \mathcal{M}$ we get  $Vy=V\theta_f(x)=\theta_f^{-1} P\theta_f(x)=\theta_f^{-1}\theta_f(x)=\theta_f^{-1} y$.\\
	(ii)	$\Rightarrow$ (i) Set $P\coloneqq \theta_fV$. Now $P^2=\theta_fV\theta_fV=\theta_fI_\mathcal{M}V=P$.\\
	(ii)	$\Rightarrow$ (iii) Define $S\coloneqq V$. Then $S\{f_n(x)\}_n=S\theta_f (x)=V\theta_f (x)=x$, for all $x \in \mathcal{M}$. Hence  $(\{f_n\}_{n}, S)$ is  a metric frame  for $\mathcal{M}$.\\
	(iii)	$\Rightarrow$ (ii) Define $V\coloneqq S$. Then $V\theta_f (x)=S\theta_f (x)=S\{f_n(x)\}_n=x$, for all $x \in \mathcal{M}$.
\end{proof}
\section{METRIC FRAMES FOR BANACH SPACES}
Now we turn onto the representation of elements using metric frames. Naturally, to deal with sums we  must look in to Banach space structure. Following theorem can be compared with Theorem \ref{CASAZZASEPARABLECHARACTERIZATION}. 
\begin{theorem}\label{PALL}
	Let $\{f_n\}_{n}$ be a metric p-frame  for a Banach space $\mathcal{X}$. Assume that $f_n(0)=0$ for all $n$. Then the following are equivalent. 
	\begin{enumerate}[label=(\roman*)]
		\item There exists a bounded linear map $V:\mathcal{M}_d \to \mathcal{X}$ such that $V|_{\theta_f(\mathcal{M})}=\theta_f^{-1}$.
		\item There exists a bounded linear  map $S:\mathcal{M}_d \to \mathcal{X}$ such that $(\{f_n\}_{n}, S)$ is  a metric p-frame  for $\mathcal{X}$. 
		\item There exists a sequence $\{\tau_n\}_{n}$ in  $\mathcal{X}$ such that $\sum_{n=1}^{\infty}c_n\tau_n$ converges for all $\{c_n\}_{n}\in \ell^p(\mathbb{N})$ and 
		$
		x=\sum_{n=1}^{\infty}f_n(x)\tau_n,  \forall x \in \mathcal{X}.
		$
		\item There exists a  q-Bessel sequence $\{\tau_n\}_{n}$ in  $\mathcal{X}\subseteq \mathcal{X}^{**}$ such that 
		$	x=\sum_{n=1}^{\infty}f_n(x)\tau_n, $ $  \forall x \in \mathcal{X}.
		$
		\item   There exists a  q-Bessel sequence $\{\tau_n\}_{n}$ in  $\mathcal{X}\subseteq \mathcal{X}^{**}$ such that 
		$
		f=\sum_{n=1}^{\infty}f(\tau_n)f_n, $ $ \forall f \in \mathcal{X}^*.
		$
	\end{enumerate}	
	In each of the cases (iv) and (v), $\{\tau_n\}_n$ is actually a q-frame for $\mathcal{X}^*$. 	 
\end{theorem}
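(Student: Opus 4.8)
The plan is to mimic the proof of Theorem \ref{CASAZZASEPARABLECHARACTERIZATION} (the Casazza--Christensen--Stoeva characterization), exploiting the hypothesis $f_n(0)=0$ which forces the analysis map $\theta_f$ to be base-point preserving and, more importantly, lets us pass from the genuinely nonlinear setting of metric frames back to honest linear functional analysis. The key observation is that under $f_n(0)=0$ the map $\theta_f : \mathcal{X} \ni x \mapsto \{f_n(x)\}_n \in \mathcal{M}_d$ satisfies $a\|x\| \le \|\theta_f x\| \le b\|x\|$ (take $y=0$ in Definition \ref{XDMETRICFRAME}), so $\theta_f$ is a bi-Lipschitz, base-point preserving injection of $\mathcal{X}$ onto the subset $\theta_f(\mathcal{X}) \subseteq \mathcal{M}_d$, but it need not be linear; the content of (i) and (ii) is that this can be rectified on a complemented copy.

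First I would prove the cycle $(i)\Leftrightarrow(ii)$. For $(i)\Rightarrow(ii)$ set $S \coloneqq V$; since $S\theta_f x = \theta_f^{-1}\theta_f x = x$ for every $x\in\mathcal{X}$ and $S$ is bounded linear with the frame inequalities already given, $(\{f_n\}_n,S)$ is a metric $p$-frame with reconstruction operator, which is what (ii) asserts (one should note here that a bounded linear $S$ is automatically Lipschitz, so the ``metric frame'' requirement of Definition \ref{METRICBANACHFRAME} is met). For $(ii)\Rightarrow(i)$ simply take $V\coloneqq S$ and observe $V\theta_f x = S\{f_n(x)\}_n = x$ so $V$ restricts to $\theta_f^{-1}$ on $\theta_f(\mathcal{X})$. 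Next, $(iii)\Rightarrow(iv)$: if $\sum_n c_n\tau_n$ converges for all $\{c_n\}\in\ell^p(\mathbb{N})$, then the synthesis map $\{c_n\}\mapsto\sum_n c_n\tau_n$ is a well-defined bounded operator $\ell^p(\mathbb{N})\to\mathcal{X}$ by Banach--Steinhaus, and its adjoint shows $\{\tau_n\}_n$ is a $q$-Bessel sequence for $\mathcal{X}^*$ (this is exactly Theorem \ref{pFRAMECHAR}(i) applied in $\mathcal{X}^{**}$, or the Banach-space analogue of Theorem \ref{OLEBESSELCHARACTERIZATION12}); the reconstruction identity $x = \sum_n f_n(x)\tau_n$ is retained verbatim. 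The implication $(iv)\Rightarrow(iii)$ is the same Bessel-to-synthesis equivalence read in reverse. For $(iv)\Leftrightarrow(v)$ I would dualize: $x = \sum_n f_n(x)\tau_n$ for all $x\in\mathcal{X}$ means $f(x) = \sum_n f_n(x) f(\tau_n) = \sum_n f(\tau_n) f_n(x)$, i.e. $f = \sum_n f(\tau_n) f_n$ in $\mathcal{X}^*$ (weak-$*$, hence in norm if the relevant unit vectors form a Schauder basis), and conversely; the fact that in either case $\{\tau_n\}_n$ is a $q$-frame for $\mathcal{X}^*$ follows because $\theta_f$ being bounded below gives, by duality, the lower $q$-frame bound for $\{f_n\}$ restricted to $\mathcal{X}^*$ via $\{\tau_n\}$, while the Bessel property gives the upper bound (this is the reflexive part of Theorem \ref{pFRAMECHAR}, or Theorem \ref{CASAZZASEPARABLECHARACTERIZATION}'s final clause).

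The genuinely substantive link is $(ii)\Leftrightarrow(iii)$, i.e. tying the existence of a bounded linear reconstruction operator to the existence of an unconditionally-summable coefficient sequence $\{\tau_n\}$. The plan is: given $S$ as in (ii), define $\tau_n \coloneqq S e_n$ where $\{e_n\}$ are the canonical unit vectors of $\mathcal{M}_d$; since $S$ is bounded and linear and $\{f_n(x)\}_n = \sum_n f_n(x) e_n$ converges in $\mathcal{M}_d$ (here is where I would either invoke, or explicitly assume as in the hypotheses of Theorem \ref{CASAZZASEPARABLECHARACTERIZATION}, that the canonical unit vectors form a Schauder basis for $\mathcal{M}_d$), applying $S$ termwise gives $x = S\theta_f x = \sum_n f_n(x) S e_n = \sum_n f_n(x)\tau_n$, and $\sum_n c_n\tau_n = S(\sum_n c_n e_n)$ converges for every $\{c_n\}\in\mathcal{M}_d \supseteq \ell^p(\mathbb{N})$ (using that $\ell^p(\mathbb{N})$ embeds continuously in the relevant $\mathcal{M}_d$, or directly for $\mathcal{M}_d=\ell^p(\mathbb{N})$). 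Conversely, given $\{\tau_n\}$ as in (iii), the synthesis operator $S:\{c_n\}\mapsto\sum_n c_n\tau_n$ is bounded linear on $\mathcal{M}_d$ and $S\theta_f x = \sum_n f_n(x)\tau_n = x$, so $(\{f_n\}_n,S)$ is a metric $p$-frame, giving (ii).

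The main obstacle I anticipate is precisely this dependence on the structure of $\mathcal{M}_d$: the clean statements in Theorem \ref{CASAZZASEPARABLECHARACTERIZATION} separate into ``(iv) implies (i)--(iii)'' versus ``(iv) equivalent to (i)--(iii) when $\{e_n\}$ is a Schauder basis'' and ``(i)--(v) equivalent to (vi) when $\{e_n\}$ is a Schauder basis for both $\mathcal{M}_d$ and $\mathcal{M}_d^*$''. Since Theorem \ref{PALL} is stated with $\mathcal{M}_d$ fixed but not spelled out (it presumably is $\ell^p(\mathbb{N})$, whose canonical basis is indeed an unconditional Schauder basis and whose dual $\ell^q(\mathbb{N})$ likewise), I would make this explicit in the proof and invoke the $\ell^p$ case throughout, so that all the Schauder-basis hypotheses needed in the linear theory are automatically in force. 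The only other delicate point is unconditional convergence of $\sum_n f_n(x)\tau_n$: this comes for free because $\{f_n(x)\}_n\in\ell^p(\mathbb{N})$ and the summation is through a bounded operator on $\ell^p(\mathbb{N})$, whose canonical basis is unconditional --- so no separate argument is needed, and the whole theorem reduces, via $\theta_f$ and $f_n(0)=0$, to the already-established Banach-space characterization.
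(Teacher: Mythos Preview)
Your overall architecture matches the paper's proof closely: the equivalences (i)$\Leftrightarrow$(ii) via $S=V$, the link (i)$\Leftrightarrow$(iii) via $\tau_n\coloneqq Ve_n$ and the synthesis operator (with Banach--Steinhaus), and (iii)$\Leftrightarrow$(iv) via Theorem~\ref{pFRAMECHAR} are all exactly what the paper does, and your identification $\mathcal{M}_d=\ell^p(\mathbb{N})$ is correct and removes the Schauder-basis worries.

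The genuine gap is in your treatment of (iv)$\Leftrightarrow$(v). You write that $f(x)=\sum_n f(\tau_n)f_n(x)$ ``i.e.\ $f=\sum_n f(\tau_n)f_n$ in $\mathcal{X}^*$ (weak-$*$, hence in norm\dots)''. This is wrong on two counts. First, the $f_n$ are \emph{Lipschitz} maps, not elements of $\mathcal{X}^*$, so the partial sums $\sum_{k=1}^m f(\tau_k)f_k$ do not live in $\mathcal{X}^*$ at all; the only sensible norm in which to prove convergence is $\|\cdot\|_{\operatorname{Lip}_0}$. Second, pointwise equality at each $x$ does not by itself give norm convergence of the series; you need a quantitative tail estimate. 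The paper handles (iv)$\Rightarrow$(v) by directly estimating
\[
\Bigl\|f-\sum_{k=1}^m f(\tau_k)f_k\Bigr\|_{\operatorname{Lip}_0}
=\sup_{x\neq y}\frac{\bigl|\sum_{k>m}f(\tau_k)(f_k(x)-f_k(y))\bigr|}{\|x-y\|}
\le b\Bigl(\sum_{k>m}|f(\tau_k)|^q\Bigr)^{1/q},
\]
using H\"older together with the metric $p$-Bessel bound $(\sum_k|f_k(x)-f_k(y)|^p)^{1/p}\le b\|x-y\|$; the right side tends to $0$ because $\{f(\tau_k)\}_k\in\ell^q$. The converse (v)$\Rightarrow$(iv) is done symmetrically by estimating $\|x-\sum_{k=1}^m f_k(x)\tau_k\|$ via $\sup_{\|f\|=1}|f(\cdot)|$, which is where the hypothesis $f_n(0)=0$ is actually used (to write $f_k(x)=f_k(x)-f_k(0)$ and invoke the $p$-Bessel tail). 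Your ``dualize'' sketch misses both the correct ambient space for convergence and this explicit H\"older tail argument; once you supply it, the rest of your plan goes through as written.
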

\begin{proof}
	Proof of (i)	$\iff$ (ii) is similar to the proof of (ii)	$\iff$ (iii) in Theorem \ref{CHARLIPMETRIC}.\\
	(iii)	$\Rightarrow$ (i) Given information tells that the map 
	\begin{align*}
	V: \ell^p(\mathbb{N}) \ni \{c_n\}_{n} \to \sum_{n=1}^{\infty}c_n\tau_n \in \mathcal{X}
	\end{align*}
	is well-defined. Banach-Steinhaus theorem now asserts that $V$ is bounded. Now for $y=\theta_f(x), $ $ x \in \mathcal{X}$ we get 
	\begin{align*}
	Vy=V\theta_f(x)=V(\{f_n(x)\}_{n})=\sum_{n=1}^{\infty}f_n(x)\tau_n=x=\theta_f^{-1} \theta_f(x)=\theta_f^{-1} y.
	\end{align*} 
	(i)	$\Rightarrow$ (iii) Let  $\{e_n\}_{n}$ be the standard Schauder basis for $\ell^p(\mathbb{N})$ and define $\tau_n \coloneqq Ve_n$, for all $n$. Since $V$ is bounded linear and $\sum_{n=1}^{\infty}c_ne_n$ converges for all $\{c_n\}_{n}\in \ell^p(\mathbb{N})$, it follows that $\sum_{n=1}^{\infty}c_n\tau_n$ converges for all $\{c_n\}_{n}\in \ell^p(\mathbb{N})$. Moreover, 
	\begin{align*}
	x=V\theta_f(x)=V(\{f_n(x)\}_{n})=\sum_{n=1}^{\infty}f_n(x)\tau_n, \quad \forall x \in \mathcal{X}.
	\end{align*}
	(iii)	$\iff$ (iv) By considering $\tau_n$ in $\mathcal{X}^{**}$ through James embedding and using Theorem \ref{CASAZZASEPARABLECHARACTERIZATION} we get that  $\{\tau_n\}_{n}$ is a q-Bessel sequence  in  $\mathcal{X}$ if and only if $\sum_{n=1}^{\infty}c_n\tau_n$ converges for all $\{c_n\}_{n}\in \ell^p(\mathbb{N})$. \\
	(iv)	$\Rightarrow$ (v) Let $b$ be a Bessel bound for $\{\tau_n\}_{n}$. Then for all $f \in \mathcal{X}^*$ and $n\in \mathbb{N}$,
	\begin{align*}
	&\left\|f-\sum_{k=1}^{n}f(\tau_k)f_k\right\|_{\operatorname{Lip}_0}=\sup_{x, y \in \mathcal{X},~ x\neq y} \frac{\left|\left(f-\sum_{k=1}^{n}f(\tau_k)f_k\right)(x)-\left(f-\sum_{k=1}^{n}f(\tau_k)f_k\right)(y)\right|}{\|x-y\|}\\
	&\quad=\sup_{x, y \in \mathcal{X},~ x\neq y} \frac{\left|f\left(\sum_{k=1}^{\infty}f_k(x)\tau_k\right)-f\left(\sum_{k=1}^{\infty}f_k(y)\tau_k\right)-\sum_{k=1}^{\infty}f(\tau_k)(f_k(x)-f_k(y))\right|}{\|x-y\|}\\
	&\quad=\sup_{x, y \in \mathcal{X}, ~x\neq y} \frac{\left|\sum_{k=1}^{n}f(\tau_k)(f_k(x)-f_k(y))-\sum_{k=1}^{\infty}f(\tau_k)(f_k(x)-f_k(y))\right|}{\|x-y\|}\\
	&\quad=\sup_{x, y \in \mathcal{X}, ~x\neq y} \frac{\left|\sum_{k=n+1}^{\infty}f(\tau_k)(f_k(x)-f_k(y))\right|}{\|x-y\|}\\
	&\quad\leq \sup_{x, y \in \mathcal{X}, ~x\neq y} \frac{\left(\sum_{k=n+1}^{\infty}|f(\tau_k)|^q\right)^\frac{1}{q}\left(\sum_{k=n+1}^{\infty}|f_k(x)-f_k(y)|^p\right)^\frac{1}{p}}{\|x-y\|}\\
	&\quad\leq b \left(\sum_{k=n+1}^{\infty}|f_k(x)-f_k(y)|^p\right)^\frac{1}{p} \to 0 \text{ as } n \to \infty.
	\end{align*}
	(v)	$\Rightarrow$ (iv) Let $b$ be a Bessel bound for $\{\tau_n\}_{n}$. Let  $x \in \mathcal{X}$ and $n\in \mathbb{N}$. Then 
	\begin{align*}
	\left\|x-\sum_{k=1}^{n}f_k(x)\tau_k\right\|&=\sup_{f\in \mathcal{X}^*, \|f\|=1} 	\left|f(x)-\sum_{k=1}^{n}f_k(x)f(\tau_k)\right|\\
	&=\sup_{f\in \mathcal{X}^*, \|f\|=1} 	\left|\left(\sum_{k=1}^{\infty}f(\tau_k)f_k\right)(x)-\sum_{k=1}^{n}f_k(x)f(\tau_k)\right|\\
	&=\sup_{f\in \mathcal{X}^*, \|f\|=1} 	\left|\sum_{k=n+1}^{\infty}f_k(x)f(\tau_k)\right|\\
	&\leq \left(\sum_{k=n+1}^{\infty}|f(\tau_k)|^q\right)^\frac{1}{q}\left(\sum_{k=n+1}^{\infty}|f_k(x)-f_k(0)|^p\right)^\frac{1}{p}\\
	&\leq b \left(\sum_{k=n+1}^{\infty}|f_k(x)|^p\right)^\frac{1}{p} \to 0 \text{ as } n \to \infty.
	\end{align*}
	Now we are left with proving  that $\{\tau_n\}_{n}$ is a q-frame for $\mathcal{X}$. Assume (iv). Let $f \in \mathcal{X}^*.$ Then 
	\begin{align*}
	\|f\|&=\sup_{x\in \mathcal{X}, \|x\|=1} \left|f(x)\right|=\sup_{x\in \mathcal{X}, \|x\|=1} \left|f\left(\sum_{n=1}^{\infty}f_n(x)\tau_n\right)\right|\\
	&=\sup_{x\in \mathcal{X}, \|x\|=1} \left|\sum_{n=1}^{\infty}f_n(x)f(\tau_n)\right|\leq b\left(\sum_{n=1}^{\infty}|f_n(x)|^p\right)^\frac{1}{p} .
	\end{align*}
	Since $f$ was arbitrary, the conclusion follows.
\end{proof}
Theorem \ref{CASAZZASEPARABLECHARACTERIZATION} and Theorem \ref{PALL} suggest the following question. 
For which metric spaces and BK-spaces, does Theorem \ref{PALL} hold?  We next present a result which demands only reconstruction of elements using Lipschitz functions on a Banach space and not frame conditions. First we see a result for this purpose. 
\begin{lemma}
	(cf. \cite{CASAZZACHRISTENSENSTOEVA})\label{ANOTHER}
	Given  a Banach space $\mathcal{X}$ and a sequence $\{\tau_n\}_{n}$ of non-zero elements in $\mathcal{X}$, let 
	\begin{align*}
	\mathcal{Y}_d \coloneqq \left\{\{a_n\}_{n}:\sum_{n=1}^\infty a_n \tau_n \text{ converges in }   \mathcal{X}\right\}. 
	\end{align*}
	Then $\mathcal{Y}_d$ is a Banach space with respect to  the norm 
	\begin{align*}
	\| \{a_n\}_{n}\|\coloneqq \sup_{m }\left\|\sum_{n=1}^m a_n \tau_n\right\|.
	\end{align*}
	Further, the canonical unit vectors form a Schauder basis for $\mathcal{Y}_d$.
	\end{lemma}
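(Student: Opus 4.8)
The plan is to verify the three claims in order: $\mathcal{Y}_d$ is a normed space, it is complete, and the canonical unit vectors $\{e_n\}_n$ form a Schauder basis. First I would check that $\|\cdot\|$ is well-defined and finite on $\mathcal{Y}_d$: if $\sum_{n=1}^\infty a_n\tau_n$ converges, then its partial sums form a bounded sequence, so $\sup_m\|\sum_{n=1}^m a_n\tau_n\|<\infty$. The triangle inequality and homogeneity are immediate from those of $\|\cdot\|_{\mathcal{X}}$ and the fact that $\sup_m$ is subadditive; positivity requires that $\|\{a_n\}_n\|=0$ forces every partial sum to be $0$, hence (using $\tau_n\neq 0$ and telescoping $a_n\tau_n=\sum_{k=1}^n a_k\tau_k-\sum_{k=1}^{n-1}a_k\tau_k$) that every $a_n=0$. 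This is also where linearity of $\mathcal{Y}_d$ as a subspace of the sequence space gets recorded.

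Next I would prove completeness. Let $\{x^{(j)}\}_j$ be a Cauchy sequence in $\mathcal{Y}_d$, writing $x^{(j)}=\{a_n^{(j)}\}_n$. For each fixed $n$, the telescoping identity above together with the defining norm shows $|a_n^{(j)}-a_n^{(k)}|\,\|\tau_n\|\le 2\|x^{(j)}-x^{(k)}\|$, so $\{a_n^{(j)}\}_j$ is Cauchy in $\mathbb{K}$; let $a_n$ be its limit and set $x=\{a_n\}_n$. The standard argument then shows $x\in\mathcal{Y}_d$ and $x^{(j)}\to x$: given $\varepsilon>0$ pick $N$ with $\|x^{(j)}-x^{(k)}\|<\varepsilon$ for $j,k\ge N$, so $\|\sum_{n=1}^m(a_n^{(j)}-a_n^{(k)})\tau_n\|<\varepsilon$ for all $m$; letting $k\to\infty$ (finite sum, so termwise limit is legitimate) gives $\|\sum_{n=1}^m(a_n^{(j)}-a_n)\tau_n\|\le\varepsilon$ for all $m$ and all $j\ge N$. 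Taking $j=N$ and using that $\sum a_n^{(N)}\tau_n$ converges, a Cauchy-criterion estimate on tails $\|\sum_{n=p}^q a_n\tau_n\|\le\|\sum_{n=p}^q(a_n-a_n^{(N)})\tau_n\|+\|\sum_{n=p}^q a_n^{(N)}\tau_n\|$ shows $\sum a_n\tau_n$ is Cauchy in $\mathcal{X}$, hence convergent, so $x\in\mathcal{Y}_d$; and the same uniform-in-$m$ bound gives $\|x^{(j)}-x\|\le\varepsilon$ for $j\ge N$.

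Finally, for the Schauder basis claim: first observe $e_n\in\mathcal{Y}_d$ since $\sum_k (e_n)_k\tau_k=\tau_n$ converges trivially. Given $x=\{a_n\}_n\in\mathcal{Y}_d$, I must show $x=\sum_{n=1}^\infty a_n e_n$ \emph{in the norm of $\mathcal{Y}_d$}, i.e. $\|x-\sum_{n=1}^N a_n e_n\|=\sup_m\|\sum_{n=N+1}^{m} a_n\tau_n\|\to 0$ as $N\to\infty$ (the sum being empty, hence $0$, for $m\le N$); this is exactly the Cauchy tail condition for convergence of $\sum a_n\tau_n$ in $\mathcal{X}$, which holds by hypothesis. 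Uniqueness of the coefficients follows from the injectivity extracted in the first step (if $\sum b_n e_n=x$ in $\mathcal{Y}_d$-norm then in particular the $n$-th coordinate of the partial sums converges, forcing $b_n=a_n$). I expect the only mildly delicate point to be the completeness argument — specifically, justifying that the limit sequence $x$ actually lies in $\mathcal{Y}_d$ (that $\sum a_n\tau_n$ converges in $\mathcal{X}$), which is handled by the tail estimate comparing $x$ against a fixed $x^{(N)}$ known to be in $\mathcal{Y}_d$; everything else is routine bookkeeping mirroring the classical proof that such ``convergence domains'' are BK-spaces.
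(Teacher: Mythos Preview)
Your argument is correct and follows the standard route for this classical fact. Note, however, that the paper does not actually prove this lemma: it is stated with a citation (cf.\ Casazza--Christensen--Stoeva) and used as a known black box, so there is no ``paper's own proof'' to compare against. Your write-up is precisely the expected verification---positivity via telescoping and $\tau_n\neq 0$, completeness by coordinate-wise limits plus a tail comparison against a fixed Cauchy term, and the Schauder basis property reducing to the Cauchy criterion for $\sum a_n\tau_n$ in $\mathcal{X}$---and would serve as a self-contained proof of the cited result.
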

\begin{theorem}
	Let $\mathcal{X}$ be a Banach space and   $\{f_n\}_{n}$ be a sequence in   $\operatorname{Lip}_0(\mathcal{X}, \mathbb{K})$.   Then the following are equivalent. 
	\begin{enumerate}[label=(\roman*)]
		\item There exists a  sequence $\{\tau_n\}_{n}$ in  $\mathcal{X}$ such that 
		$
		x=\sum_{n=1}^{\infty}f_n(x)\tau_n,  \forall x \in \mathcal{X}.
		$
		\item Let $\{\tau_n\}_{n}$ be a sequence in $\mathcal{X}$ and  define $S_n(x)\coloneqq \sum_{k=1}^{n}f_k(x)\tau_k$, $\forall x \in \mathcal{X}$, for each $n \in \mathbb{N}$. Then $\sup_{n \in\mathbb{N}}\|S_n\|_{\operatorname{Lip}_0} <\infty$ and    there exist a BK-space $\mathcal{M}_d$ and a bounded linear  map $S:\mathcal{M}_d \to \mathcal{M}$ such that $(\{f_n\}_{n}, S)$ is  a metric frame  for $\mathcal{X}$. 
	\end{enumerate}
	Further, a choice for $\tau_n$ is $\tau_n=Se_n$ for each $n \in \mathbb{N}$, where $\{e_n\}_{n}$ is the standard Schauder basis for $\ell^p(\mathbb{N})$.
\end{theorem}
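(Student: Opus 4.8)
The plan is to prove both implications, with Lemma~\ref{ANOTHER} doing the heavy lifting in $(i)\Rightarrow(ii)$. For $(i)\Rightarrow(ii)$, take the sequence $\{\tau_n\}_n$ supplied by $(i)$; discarding any index with $\tau_n=0$ (those terms are trivial), we may assume each $\tau_n\neq 0$. Each $g_k\colon x\mapsto f_k(x)\tau_k$ lies in $\operatorname{Lip}_0(\mathcal X,\mathcal X)$ with $\operatorname{Lip}(g_k)=\|f_k\|_{\operatorname{Lip}_0}\|\tau_k\|$, so every $S_n=\sum_{k=1}^n g_k$ is Lipschitz with $S_n(0)=0$, and $(i)$ says $S_n(x)\to x$ for each $x\in\mathcal X$. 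The first task is to pin down $C\coloneqq\sup_n\|S_n\|_{\operatorname{Lip}_0}<\infty$: pass to the linearizations $\widehat{S_n}\colon\mathcal F(\mathcal X)\to\mathcal X$ furnished by Theorem~\ref{POINTEDSPLITS}, which satisfy $\|\widehat{S_n}\|=\|S_n\|_{\operatorname{Lip}_0}$ and $\widehat{S_n}e=S_n$, and run a Banach--Steinhaus argument using that $\{\widehat{S_n}(\mu)\}_n$ converges for $\mu\in e(\mathcal X)$ and hence on its linear span. \emph{This is the step I expect to be the main obstacle}: density of $e(\mathcal X)$ in $\mathcal F(\mathcal X)$ is not by itself enough to invoke uniform boundedness, so the finiteness of $\sup_n\|S_n\|_{\operatorname{Lip}_0}$ is precisely the quantitative datum that condition~(ii) isolates, and in the write-up one either records it as part of~(ii) or extracts it from the concrete form of the partial sums.

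Granting $C<\infty$, I would apply Lemma~\ref{ANOTHER} to the present $\{\tau_n\}_n$: the space
\[
\mathcal M_d\coloneqq\Big\{\{a_n\}_n:\textstyle\sum_{n=1}^\infty a_n\tau_n\text{ converges in }\mathcal X\Big\},\qquad \|\{a_n\}_n\|\coloneqq\sup_m\Big\|\textstyle\sum_{n=1}^m a_n\tau_n\Big\|,
\]
is a BK-space whose canonical unit vectors form a Schauder basis. By $(i)$, $\theta_f(x)\coloneqq\{f_n(x)\}_n\in\mathcal M_d$ for each $x$, and for $x,y\in\mathcal X$,
\[
\|\theta_f x-\theta_f y\|_{\mathcal M_d}=\sup_m\|S_m(x)-S_m(y)\|;
\]
taking the supremum yields the upper estimate $\le C\,\|x-y\|$, while passing to the limit $m\to\infty$ (using $S_m(x)\to x$, $S_m(y)\to y$) yields the lower estimate $\ge\|x-y\|$, so $\theta_f$ is bi-Lipschitz with constants $1$ and $C$. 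Define $S\colon\mathcal M_d\to\mathcal X$ by $S(\{a_n\}_n)\coloneqq\sum_{n=1}^\infty a_n\tau_n$; it is linear, well-defined by the definition of $\mathcal M_d$, and bounded with $\|S\|\le 1$ since $\|\sum_n a_n\tau_n\|=\lim_m\|\sum_{n=1}^m a_n\tau_n\|\le\|\{a_n\}_n\|_{\mathcal M_d}$. Finally $S\theta_f(x)=\sum_n f_n(x)\tau_n=x$ by $(i)$, so $(\{f_n\}_n,S)$ is a metric frame for $\mathcal X$ (with this $\mathcal M_d$), which is~(ii).

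For $(ii)\Rightarrow(i)$, let $(\{f_n\}_n,S)$ be the metric frame from~(ii), with $\mathcal M_d$ having canonical unit vectors $\{e_n\}_n$ forming a Schauder basis (as produced by the previous construction). Put $\tau_n\coloneqq Se_n$. For $x\in\mathcal X$ the $n$-th coordinate of $\theta_f(x)=\{f_n(x)\}_n\in\mathcal M_d$ is $f_n(x)$, so the Schauder expansion in $\mathcal M_d$ reads $\theta_f(x)=\sum_{n=1}^\infty f_n(x)e_n$; applying the bounded linear map $S$ and using $S\theta_f=I_{\mathcal X}$ gives $x=S(\theta_f x)=\sum_{n=1}^\infty f_n(x)\,Se_n=\sum_{n=1}^\infty f_n(x)\tau_n$, which is~(i), and the same computation exhibits $\tau_n=Se_n$ as the asserted admissible choice.
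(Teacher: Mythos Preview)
Your overall strategy matches the paper's: both routes build the BK-space via Lemma~\ref{ANOTHER} and define $S(\{a_n\}_n)=\sum_n a_n\tau_n$, obtaining the lower frame bound by passing to the limit of the partial sums and the upper bound from $\sup_n\|S_n\|_{\operatorname{Lip}_0}$. Your $(ii)\Rightarrow(i)$ is also in line with the paper (which simply invokes Theorem~\ref{PALL}); your argument via the Schauder expansion $\theta_f(x)=\sum_n f_n(x)e_n$ and $\tau_n=Se_n$ is exactly what makes the ``Further'' clause tick.

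Two points where your write-up diverges from the paper are worth recording. First, on the indices with $\tau_n=0$: you discard them, but then the sequence $\{f_n(x)\}_n$ still has those coordinates, and the space of Lemma~\ref{ANOTHER} built on the reduced index set does not see them, so it is no longer a bona fide BK-space for the \emph{full} sequence $\{f_n(x)\}_n$. The paper keeps the full index set by splitting $\mathbb{N}=A\sqcup B$ with $A=\{n:\tau_n=0\}$, uses Lemma~\ref{ANOTHER} on $B$ to get $\mathcal{Y}_d$, and on $A$ builds an auxiliary BK-space $\mathcal{Z}_d$ (a weighted $c_0$-type space with weights $\tfrac{1}{n(\|f_n\|_{\operatorname{Lip}_0}+1)}$) so that $\mathcal{M}_d=\mathcal{Y}_d\oplus\mathcal{Z}_d$ carries all coordinates and the $\mathcal{Z}_d$-part contributes at most $\|x-y\|$ to the upper bound.

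Second, on the bound $C=\sup_n\|S_n\|_{\operatorname{Lip}_0}<\infty$: your instinct that the linearization route does not close the gap is correct --- pointwise convergence of the $\widehat{S_n}$ on the span of $e(\mathcal{X})$, which is merely dense in $\mathcal{F}(\mathcal{X})$, does not feed Banach--Steinhaus. The paper does \emph{not} derive this bound either; in its upper estimate it simply writes $\sup_{n\in B}\|S_n(x)-S_n(y)\|\le \sup_{n\in B}\|S_n\|_{\operatorname{Lip}_0}\,\|x-y\|$ and uses $\sup_n\|S_n\|_{\operatorname{Lip}_0}$ as an input. So your diagnosis is on target: this quantity is effectively part of the data in~(ii) rather than a consequence of~(i), and your attempted Banach--Steinhaus argument cannot be completed as stated.
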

\begin{proof}
	(ii)	$\Rightarrow$ (i) This follows from Theorem \ref{PALL}.\\
	(i)	$(\Rightarrow)$ (ii) We give an argument which is similar to   the arguments given in \cite{CASAZZACHRISTENSENSTOEVA}. Define $A\coloneqq \{n \in \mathbb{N}: \tau_n=0\}$ and $B\coloneqq \mathbb{N} \setminus A$. Let $c_0(A) $ be the space of sequences converging to zero, indexed by $A$, equipped with sup-norm.  Let $\{e_n\}_{n\in A}$ be the canonical Schauder basis for $c_0(A) $. Since the norm is sup-norm, it easily follows that $\{\frac{1}{n(\|f_n\|_{\operatorname{Lip}_0}+1)}e_n\}_{n\in A}$ is also a Schauder basis for $c_0(A) $. Define 
	\begin{align*}
	\mathcal{Z}_d\coloneqq \left\{\{c_n\}_{n\in A}: \sum_{n\in A}\frac{c_n}{n(\|f_n\|_{\operatorname{Lip}_0}+1)}e_n \text{ converges in } A\right\}.
	\end{align*}
	We equip $\mathcal{Z}_d $ with the norm 
	\begin{align*}
	\|\{c_n\}_{n\in A}\|_{\mathcal{Z}_d}\coloneqq \left\|\frac{c_n}{n(\|f_n\|_{\operatorname{Lip}_0}+1)}\right\|_{c_0(A)}=\sup _{n\in A}\left|\frac{c_n}{n(\|f_n\|_{\operatorname{Lip}_0}+1)}\right|.
	\end{align*}
	Then $\{e_n\}_{n\in A}$ is a Schauder basis for $\mathcal{Z}_d $. Clearly $\mathcal{Z}_d $ is a BK-space. Let $\mathcal{Y}_d $ be as defined in  Lemma \ref{ANOTHER}, for the index set $B$. Now set $\mathcal{M}_d \coloneqq \mathcal{Y}_d \oplus \mathcal{Z}_d $ equipped with norm $\|y \oplus z \|_{\mathcal{M}_d}\coloneqq \|y\|_{\mathcal{Y}_d} +\|z\|_{\mathcal{Z}_d}$. It then follows that, for each $x \in \mathcal{X}$, $\{f_n(x)\}_{n\in B}\oplus \{f_n(x)\}_{n\in A} \in \mathcal{M}_d$. We next show that $\{f_n\}_{n}$ is a metric  $\mathcal{M}_d$-frame for $\mathcal{X}$. Let $x, y \in \mathcal{X}$. Then 
	\begin{align*}
	\|x-y\|&=\left\|\sum_{n=1}^{\infty}(f_n(x)-f_n(y))\tau_n\right\|=\lim_{n\to\infty}\left\|\sum_{k=1}^{n}(f_k(x)-f_k(y))\tau_k\right\|\\
	&\leq \sup _{n\in \mathbb{N}}\left\|\sum_{k=1}^{n}(f_k(x)-f_k(y))\tau_k\right\|=\sup _{n\in B}\left\|\sum_{k=1}^{n}(f_k(x)-f_k(y))\tau_k\right\|\\
	&=\|\{f_n(x)-f_n(y)\}_{n\in B}\|_{\mathcal{Y}_d}\\
	&\leq \|\{f_n(x)-f_n(y)\}_{n\in B}\|_{\mathcal{Y}_d}+\|\{f_n(x)-f_n(y)\}_{n\in A}\|_{\mathcal{Z}_d}\\
	&= \|\{f_n(x)-f_n(y)\}_{n\in B}\oplus \{f_n(x)-f_n(y)\}_{n\in A}\|_{\mathcal{M}_d}
	\end{align*}
	and 
	\begin{align*}
	&\|\{f_n(x)-f_n(y)\}_{n\in B}\oplus \{f_n(x)-f_n(y)\}_{n\in A}\|_{\mathcal{M}_d}\\
	&=\|\{f_n(x)-f_n(y)\}_{n\in B}\|_{\mathcal{Y}_d}+\|\{f_n(x)-f_n(y)\}_{n\in A}\|_{\mathcal{Z}_d}\\
	&=\sup _{n\in B}\left\|\sum_{k=1}^{n}(f_k(x)-f_k(y))\tau_k\right\|+\sup _{n\in A}\left|\frac{f_n(x)-f_n(y)}{n(\|f_n\|_{\operatorname{Lip}_0}+1)}\right|\\
	&=\sup _{n\in B}\left\|S_n(x)-S_n(y)\right\|+\sup _{n\in A}\left|\frac{f_n(x)-f_n(y)}{n(\|f_n\|_{\operatorname{Lip}_0}+1)}\right|\\
	&\leq \sup_{n \in B}\|S_n\|_{\operatorname{Lip}_0}\|x-y\|+\sup _{n\in A}\frac{\|f_n\|_{\operatorname{Lip}_0}\|x-y\|}{n(\|f_n\|_{\operatorname{Lip}_0}+1)}\\
	&\leq\left(\sup_{n \in B}\|S_n\|_{\operatorname{Lip}_0}+1\right)\|x-y\|.
	\end{align*}
	We now define 
	\begin{align*}
	S:\mathcal{M}_d \ni \{a_n\}_{n\in B}\oplus \{b_n\}_{n\in A}\mapsto \sum_{n\in B}a_n\tau_n \in \mathcal{X}.
	\end{align*}
	Clearly $S$ is linear. Boundedness of  $S$  follows from the following calculation.
	\begin{align*}
	\|S(\{a_n\}_{n\in B}\oplus \{b_n\}_{n\in A})\|&=\left\|\sum_{n\in B}a_n\tau_n\right\|\leq \sup _{n\in B}\left\|\sum_{k=1}^n a_k\tau_k\right\|\\
	&=\|\{a_n\}_{n\in B}\|_{\mathcal{Y}_d} \leq \|\{a_n\}_{n\in B}\oplus \{b_n\}_{n\in A}\|_{\mathcal{M}_d}.
	\end{align*}
\end{proof}
Using Theorem \ref{POINTEDSPLITS} we derive the following result which tells that given a metric frame for a metric space we can get a metric frame using linear functionals for a subset of the Banach space.
\begin{theorem}\label{LIPIFFLINEAR}
	Let $\{f_n\}_{n}$  be a sequence in $ \operatorname{Lip}_0(\mathcal{M}, \mathbb{K})$. For  each $n\in \mathbb{N}$, let  $T_{f_n}$ be linearization of $f_n$. Let $e$ and $\mathcal{F}(\mathcal{M})$ 
	be as in Theorem \ref{POINTEDSPLITS}. Then $\{f_n\}_{n}$ is a 
	metric frame  for $\mathcal{M}$ with bounds $a$ and $b$ if and only if $\{T_{f_n}\}_{n}$ is a 
	metric frame  for $e(\mathcal{M})$ with bounds $a$ and $b$. In particular, $(\{f_n\}_{n}, S)$ is a metric frame  for $\mathcal{M}$  if and only if $(\{T_{f_n}\}_{n}, eS)$ is a metric frame  for $e(\mathcal{M})$.
\end{theorem}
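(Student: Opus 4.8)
The plan is to transport all the structure along the isometric embedding $e\colon\mathcal{M}\to\mathcal{F}(\mathcal{M})$ supplied by Theorem \ref{POINTEDSPLITS}. Two facts do all the work: first, $e$ is an isometric embedding, so $d(x,y)=\|e(x)-e(y)\|_{\mathcal{F}(\mathcal{M})}$ for all $x,y\in\mathcal{M}$; second, each linearization satisfies $T_{f_n}\circ e=f_n$, i.e. $T_{f_n}(e(x))=f_n(x)$ for all $x$. Since $e$ preserves the base point, $e(0)=0$, and since $e$ is isometric it is injective, so $e\colon\mathcal{M}\to e(\mathcal{M})$ is a surjective isometry, i.e. a bijection; hence every pair $u,v\in e(\mathcal{M})$ is of the form $u=e(x)$, $v=e(y)$ for unique $x,y\in\mathcal{M}$. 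I would also record at the outset that each $T_{f_n}$ is a bounded linear functional on $\mathcal{F}(\mathcal{M})$ with $\|T_{f_n}\|=\|f_n\|_{\operatorname{Lip}_0}$, so it is Lipschitz on the subset $e(\mathcal{M})$ and vanishes at $0=e(0)$; thus $\{T_{f_n}\}_n\subseteq\operatorname{Lip}_0(e(\mathcal{M}),\mathbb{K})$ is a legitimate candidate for a metric $\mathcal{M}_d$-frame for $e(\mathcal{M})$, with $\mathcal{M}_d$ the same associated BK-space used for $\mathcal{M}$.

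Next I would carry out the substitution. For $x,y\in\mathcal{M}$ and $u=e(x)$, $v=e(y)$ we have $T_{f_n}(u)=f_n(x)$ and $T_{f_n}(v)=f_n(y)$, so condition (i) of Definition \ref{XDMETRICFRAME} transfers verbatim: $\{T_{f_n}(u)\}_n=\{f_n(x)\}_n\in\mathcal{M}_d$ for every $u\in e(\mathcal{M})$ if and only if $\{f_n(x)\}_n\in\mathcal{M}_d$ for every $x\in\mathcal{M}$. Moreover
\[
\|\{T_{f_n}(u)-T_{f_n}(v)\}_n\|_{\mathcal{M}_d}=\|\{f_n(x)-f_n(y)\}_n\|_{\mathcal{M}_d}
\quad\text{and}\quad
\|u-v\|_{\mathcal{F}(\mathcal{M})}=d(x,y),
\]
so the two-sided estimate $a\,d(x,y)\le\|\{f_n(x)-f_n(y)\}_n\|_{\mathcal{M}_d}\le b\,d(x,y)$ for all $x,y\in\mathcal{M}$ is, term by term, the same statement as $a\,\|u-v\|\le\|\{T_{f_n}(u)-T_{f_n}(v)\}_n\|_{\mathcal{M}_d}\le b\,\|u-v\|$ for all $u,v\in e(\mathcal{M})$. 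This proves the equivalence of the metric $\mathcal{M}_d$-frame conditions, with identical bounds, in both directions at once.

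For the \emph{in particular} clause I would use that $e\colon\mathcal{M}\to e(\mathcal{M})$ is a surjective isometry, hence bi-Lipschitz with a (Lipschitz) inverse $e^{-1}\colon e(\mathcal{M})\to\mathcal{M}$. If $(\{f_n\}_n,S)$ is a metric frame for $\mathcal{M}$, then $eS\colon\mathcal{M}_d\to e(\mathcal{M})$ is Lipschitz as a composition of Lipschitz maps, and for $u=e(x)\in e(\mathcal{M})$ one has $eS(\{T_{f_n}(u)\}_n)=e\big(S(\{f_n(x)\}_n)\big)=e(x)=u$; together with the frame inequalities already established, $(\{T_{f_n}\}_n,eS)$ is a metric frame for $e(\mathcal{M})$. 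Conversely, if $(\{T_{f_n}\}_n,R)$ is a metric frame for $e(\mathcal{M})$, then $S:=e^{-1}R$ is Lipschitz and $S(\{f_n(x)\}_n)=e^{-1}\big(R(\{T_{f_n}(e(x))\}_n)\big)=e^{-1}(e(x))=x$, so $(\{f_n\}_n,S)$ is a metric frame for $\mathcal{M}$; and taking $R=eS$ recovers the same $S$. There is no genuine obstacle here — the whole argument is a direct transfer of structure across $e$; the only points needing care are the base-point bookkeeping ($T_{f_n}(0)=0$ and $e(0)=0$, so that everything stays in $\operatorname{Lip}_0$ and the reconstruction identity makes sense at the reference point) and the observation that membership in $\mathcal{M}_d$ is literally preserved, not merely comparable, which is immediate from $T_{f_n}(e(x))=f_n(x)$.
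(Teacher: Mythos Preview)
Your proof is correct and follows essentially the same approach as the paper: both use the two key facts that $e$ is an isometry (so $d(x,y)=\|e(x)-e(y)\|$) and that $T_{f_n}\circ e=f_n$, then transfer the frame inequalities across $e$ via the substitution $u=e(x)$, $v=e(y)$. You are somewhat more thorough than the paper, which treats only the two-sided inequality and leaves condition (i) and the ``in particular'' clause implicit; your explicit handling of the reconstruction operator via $e^{-1}$ fills in details the paper omits.
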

\begin{proof}
	$(\Rightarrow)$ Let $u,v \in e(\mathcal{M})$. Then $u=e(x), v=e(y)$, for some $x, y \in \mathcal{M}$. Now using the fact that $e$ is an isometry,
	\begin{align*}
	a\|u-v\|&=a\|e(x)-e(y)\|=a\,d(x,y)\leq \|\{f_n(x)-f_n(y)\}_n\|\\
	&=\|\{(T_{f_n}e)(x)-(T_{f_n}e)(y)\}_n\|=\|\{T_{f_n}(e(x))-T_{f_n}(e(y))\}_n\|\\
	&=\|\{T_{f_n}(u)-T_{f_n}(v)\}_n\|\leq b \,d(x,y)=b\|e(x)-e(y)\|=b\|u-v\|.
	\end{align*}
	$(\Leftarrow)$ Let $x, y \in \mathcal{M}$. Then $e(x), e(y) \in e(\mathcal{M})$. Hence 
	\begin{align*}
	a\,d(x,y)&=a\|e(x)-e(y)\|\leq \|\{T_{f_n}(e(x))-T_{f_n}(e(y))\}_n\|\\
	&=\|\{f_n(x)-f_n(y)\}_n\|\leq b\|e(x)-e(y)\|=b\,d(x,y).
	\end{align*}
	Since $x,y$ were arbitrary, the result follows.
\end{proof}
\begin{remark}
	We can not use Theorem \ref{LIPIFFLINEAR}	 to view metric frames as Banach frames. The reason is that $e(\mathcal{M})$  is just a subset of $\mathcal{F}(\mathcal{M})$ and need not be a vector space. Moreover, the map $eS$ is Lipschitz and may not be linear. 	
\end{remark}
{\onehalfspacing \section{PERTURBATIONS}
 Here we present some stability results. These are important as they say that sequences which are close to metric frames are again metric frames. On the other hand, it asserts that if we perturb a metric frame we again get a metric frame.
\begin{theorem}\label{FIRSTPERTURB}
	Let $\{f_n\}_{n}$ be a
	p-metric frame  for $\mathcal{M}$ with bounds $a$ and $b$. Let $\{g_n\}_{n}$ be a sequence in $\operatorname{Lip}(\mathcal{M}, \mathbb{K})$ satisfying the following.
	\begin{enumerate}[label=(\roman*)]
		\item There exist $\alpha, \beta, \gamma \geq 0$ such that $\beta<1$, $\alpha<1$, $\gamma<(1-\alpha)a$.
			\item For all $x, y \in  \mathcal{M}$, and $ m=1, 2,\dots ,$ 
		\begin{align}\label{PERINEQUA}
		\left(\sum_{n=1}^m|(f_n-g_n)(x)-(f_n-g_n)(y)|^p\right)^\frac{1}{p}&\leq \alpha \left(\sum_{n=1}^m|f_n(x)-f_n(y)|^p\right)^\frac{1}{p}\nonumber\\
		&+
		\beta \left(\sum_{n=1}^m|g_n(x)-g_n(y)|^p\right)^\frac{1}{p}+\gamma \,d(x,y).
		\end{align}
	\end{enumerate}
	Then $\{g_n\}_{n}$ is a
	p-metric frame  for $\mathcal{M}$ with bounds
	$
	\frac{((1-\alpha)a-\gamma)}{1+\beta}$ and  $ \frac{((1+\alpha)b+\gamma)}{1-\beta}.
	$
\end{theorem}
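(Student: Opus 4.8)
The plan is to mimic the classical Casazza--Christensen perturbation argument (Theorem \ref{OLECAZASSA}) adapted to the metric, $\ell^p$-valued setting. The whole argument is an estimate on the quantity $\left(\sum_{n=1}^m|g_n(x)-g_n(y)|^p\right)^{1/p}$ in terms of $\left(\sum_{n=1}^m|f_n(x)-f_n(y)|^p\right)^{1/p}$, using the triangle inequality in $\ell^p$ together with hypothesis (ii), and then letting $m\to\infty$.

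First I would establish the \emph{upper} bound. For fixed $x,y\in\mathcal{M}$ and $m\in\mathbb{N}$, write $g_n(x)-g_n(y) = \big(f_n(x)-f_n(y)\big) - \big((f_n-g_n)(x)-(f_n-g_n)(y)\big)$ and apply Minkowski's inequality in $\ell^p$ to get
\begin{align*}
\left(\sum_{n=1}^m|g_n(x)-g_n(y)|^p\right)^\frac{1}{p}\leq \left(\sum_{n=1}^m|f_n(x)-f_n(y)|^p\right)^\frac{1}{p}+\left(\sum_{n=1}^m|(f_n-g_n)(x)-(f_n-g_n)(y)|^p\right)^\frac{1}{p}.
\end{align*}
Substituting (\ref{PERINEQUA}) and abbreviating $F_m:=\left(\sum_{n=1}^m|f_n(x)-f_n(y)|^p\right)^{1/p}$ and $G_m:=\left(\sum_{n=1}^m|g_n(x)-g_n(y)|^p\right)^{1/p}$, this yields $G_m\leq (1+\alpha)F_m+\beta G_m+\gamma\,d(x,y)$, hence $(1-\beta)G_m\leq (1+\alpha)F_m+\gamma\,d(x,y)$. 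Since $F_m\leq b\,d(x,y)$ for all $m$ by the upper frame bound of $\{f_n\}_n$, we obtain $G_m\leq \frac{(1+\alpha)b+\gamma}{1-\beta}\,d(x,y)$; letting $m\to\infty$ gives the upper bound and in particular shows $\{g_n\}_n$ is a metric $p$-Bessel sequence, so $\sum_n|g_n(x)-g_n(y)|^p$ converges.

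Next the \emph{lower} bound, which is the more delicate half. Using the reverse triangle inequality in $\ell^p$ on the same decomposition, $G_m\geq F_m-\left(\sum_{n=1}^m|(f_n-g_n)(x)-(f_n-g_n)(y)|^p\right)^{1/p}\geq F_m-\alpha F_m-\beta G_m-\gamma\,d(x,y)$, so $(1+\beta)G_m\geq (1-\alpha)F_m-\gamma\,d(x,y)$, i.e. $G_m\geq \frac{(1-\alpha)F_m-\gamma\,d(x,y)}{1+\beta}$. Now I would pass to the limit: since $F_m\uparrow F_\infty:=\left(\sum_{n=1}^\infty|f_n(x)-f_n(y)|^p\right)^{1/p}\geq a\,d(x,y)$ by the lower frame bound of $\{f_n\}_n$, taking $m\to\infty$ gives
\begin{align*}
\left(\sum_{n=1}^\infty|g_n(x)-g_n(y)|^p\right)^\frac{1}{p}\geq \frac{(1-\alpha)a\,d(x,y)-\gamma\,d(x,y)}{1+\beta}=\frac{(1-\alpha)a-\gamma}{1+\beta}\,d(x,y).
\end{align*}
Hypothesis (i) guarantees $(1-\alpha)a-\gamma>0$, so this is a genuine positive lower bound, and $\beta<1$ keeps all denominators positive; together with the upper estimate this shows $\{g_n\}_n$ is a metric $p$-frame with the claimed bounds.

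The only subtlety — the ``hard part'' — is the interchange of limits when passing from the finite-sum inequalities to the infinite-sum conclusion in the lower bound: one must be careful that $G_m$ need not be monotone a priori, so I would either argue via $\liminf_m G_m$ and note that the Bessel property already established forces $G_m\to G_\infty$, or simply observe that the inequality $(1+\beta)G_m\geq (1-\alpha)F_m-\gamma\,d(x,y)$ holds for every $m$ and take $\liminf$ on the left and $\lim$ on the right. Everything else is routine Minkowski/reverse-Minkowski bookkeeping in $\ell^p(\{1,\dots,m\})$, exactly parallel to the Hilbert-space and Banach-space perturbation theorems quoted earlier.
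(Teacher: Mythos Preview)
Your proof is correct and follows essentially the same Minkowski/reverse-Minkowski route as the paper's; the only cosmetic difference is that the paper first passes to the limit in (\ref{PERINEQUA}) and then estimates $F_\infty$ and $G_\infty$ directly, whereas you keep everything at level $m$ and let $m\to\infty$ at the end. Your worry about monotonicity of $G_m$ is unfounded, by the way: $G_m=\big(\sum_{n=1}^m|g_n(x)-g_n(y)|^p\big)^{1/p}$ is manifestly nondecreasing in $m$ since each summand is nonnegative, so the passage to the limit is immediate.
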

\begin{proof}
	Using Minkowski's inequality and Inequality (\ref{PERINEQUA}), we get, for all $x, y \in  \mathcal{M}$ and $m\in \mathbb{N}$,
	\begin{align*}
	&\left(\sum_{n=1}^m|g_n(x)-g_n(y)|^p\right)^\frac{1}{p}\\
	&\leq \left(\sum_{n=1}^m|(f_n-g_n)(x)-(f_n-g_n)(y)|^p\right)^\frac{1}{p}+ \left(\sum_{n=1}^m|f_n(x)-f_n(y)|^p\right)^\frac{1}{p}\\
	&\leq (1+\alpha) \left(\sum_{n=1}^m|f_n(x)-f_n(y)|^p\right)^\frac{1}{p}+\beta \left(\sum_{n=1}^m|g_n(x)-g_n(y)|^p\right)^\frac{1}{p}+\gamma \,d(x,y)
	\end{align*}
	which implies 
	\begin{align*}
	(1-\beta)\left(\sum_{n=1}^m|g_n(x)-g_n(y)|^p\right)^\frac{1}{p}\leq (1+\alpha)\left(\sum_{n=1}^m|f_n(x)-f_n(y)|^p\right)^\frac{1}{p}+\gamma  \,d(x,y), 
	\end{align*}
for all $x, y \in  \mathcal{M}$. 	Since the sum $\sum_{n=1}^\infty|f_n(x)-f_n(y)|^p$ converges, $\sum_{n=1}^\infty|g_n(x)-g_n(y)|^p$ will also converge. Inequality (\ref{PERINEQUA}) now gives 
	\begin{align}\label{PERINEQUA2}
	\left(\sum_{n=1}^\infty|(f_n-g_n)(x)-(f_n-g_n)(y)|^p\right)^\frac{1}{p}&\leq \alpha \left(\sum_{n=1}^\infty|f_n(x)-f_n(y)|^p\right)^\frac{1}{p}\nonumber\\
	&+
	\beta \left(\sum_{n=1}^\infty|g_n(x)-g_n(y)|^p\right)^\frac{1}{p}+\gamma \,d(x,y) .
	\end{align}
	By doing a similar calculation and using Inequality (\ref{PERINEQUA2}) we get  for all $x, y \in  \mathcal{M}$,
		\begin{align*}
	&\left(\sum_{n=1}^\infty|g_n(x)-g_n(y)|^p\right)^\frac{1}{p}\\
	&\quad \leq \left(\sum_{n=1}^\infty|(f_n-g_n)(x)-(f_n-g_n)(y)|^p\right)^\frac{1}{p}+ \left(\sum_{n=1}^\infty|f_n(x)-f_n(y)|^p\right)^\frac{1}{p}\\
	&\quad\leq (1+\alpha) \left(\sum_{n=1}^\infty|f_n(x)-f_n(y)|^p\right)^\frac{1}{p}+\beta \left(\sum_{n=1}^\infty|g_n(x)-g_n(y)|^p\right)^\frac{1}{p}+\gamma \,d(x,y)\\
	&\quad\leq (1+\alpha)b  \,d(x,y)+\beta \left(\sum_{n=1}^\infty|g_n(x)-g_n(y)|^p\right)^\frac{1}{p}+\gamma \,d(x,y)\\
	&\quad=((1+\alpha)b+\gamma)  \,d(x,y)+\beta \left(\sum_{n=1}^\infty|g_n(x)-g_n(y)|^p\right)^\frac{1}{p}
	\end{align*}
	which gives
	\begin{align*}
	(1-\beta)\left(\sum_{n=1}^\infty|g_n(x)-g_n(y)|^p\right)^\frac{1}{p}\leq ((1+\alpha)b+\gamma)  \,d(x,y), \quad \forall x, y \in  \mathcal{M}\\
	\text{i.e.,} \left(\sum_{n=1}^\infty|g_n(x)-g_n(y)|^p\right)^\frac{1}{p}\leq \frac{((1+\alpha)b+\gamma)}{1-\beta}\,d(x,y), \quad \forall x, y \in  \mathcal{M}.
	\end{align*}
	Hence we obtained upper frame bound for $\{g_n\}_n$. For lower frame bound, let  $x, y$ $  \in  \mathcal{M}$. Then 
	\begin{align*}
	&\left(\sum_{n=1}^\infty|f_n(x)-f_n(y)|^p\right)^\frac{1}{p}\\
	&\quad \leq \left(\sum_{n=1}^\infty|(f_n-g_n)(x)-(f_n-g_n)(y)|^p\right)^\frac{1}{p}+ \left(\sum_{n=1}^\infty|g_n(x)-g_n(y)|^p\right)^\frac{1}{p}\\
	&\quad \leq \alpha \left(\sum_{n=1}^\infty|f_n(x)-f_n(y)|^p\right)^\frac{1}{p}+(1+\beta)\left(\sum_{n=1}^\infty|g_n(x)-g_n(y)|^p\right)^\frac{1}{p}+\gamma \,d(x,y)
	\end{align*}
	which implies 
	\begin{align*}
	(1-\alpha)a\,d(x,y)&\leq (1-\alpha)\left(\sum_{n=1}^\infty|f_n(x)-f_n(y)|^p\right)^\frac{1}{p}\\
	&\leq (1+\beta)\left(\sum_{n=1}^\infty|g_n(x)-g_n(y)|^p\right)^\frac{1}{p}+\gamma \,d(x,y), \quad \forall x, y \in  \mathcal{M}\\
	\text{i.e.,} ~\frac{((1-\alpha)a-\gamma)}{1+\beta} &\leq \left(\sum_{n=1}^\infty|g_n(x)-g_n(y)|^p\right)^\frac{1}{p}, \quad \forall x, y \in  \mathcal{M}.
	\end{align*}
\end{proof}
 Using Theorem \ref{FIRSTPERTURB} we obtain the following   result.	
\begin{corollary}
	Let $\{f_n\}_{n}$ be a
	p-metric frame  for $\mathcal{M}$  with bounds $a$ and $b$. Let $\{g_n\}_{n}$ be a sequence in $\operatorname{Lip}(\mathcal{M}, \mathbb{K})$ such that 
	\begin{align*}
	r\coloneqq \left(\sum_{n=1}^\infty \operatorname{Lip}(f_n-g_n)^p\right)^\frac{1}{p} <a.
	\end{align*}
	Then $\{g_n\}_{n}$ is a
	p-metric frame  for $\mathcal{M}$  with bounds $a-r$ and $b+r$.
\end{corollary}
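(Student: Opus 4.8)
The plan is to deduce this corollary from Theorem \ref{FIRSTPERTURB} by choosing the parameters $\alpha,\beta,\gamma$ appropriately. The natural choice is $\alpha=\beta=0$ and $\gamma=r$, where $r\coloneqq\left(\sum_{n=1}^\infty\operatorname{Lip}(f_n-g_n)^p\right)^{1/p}$. With these values, condition (i) of Theorem \ref{FIRSTPERTURB} becomes $0<1$, $0<1$ and $r<a$, which is exactly the hypothesis $r<a$. So the only real work is to verify the perturbation inequality (\ref{PERINEQUA}) in the degenerate form
\begin{align*}
\left(\sum_{n=1}^m|(f_n-g_n)(x)-(f_n-g_n)(y)|^p\right)^{1/p}\leq r\,d(x,y),\quad\forall x,y\in\mathcal{M},\ \forall m.
\end{align*}

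First I would estimate each term: for each $n$, by definition of the Lipschitz number, $|(f_n-g_n)(x)-(f_n-g_n)(y)|\leq\operatorname{Lip}(f_n-g_n)\,d(x,y)$, hence $|(f_n-g_n)(x)-(f_n-g_n)(y)|^p\leq\operatorname{Lip}(f_n-g_n)^p\,d(x,y)^p$. Summing over $n=1,\dots,m$ gives
\begin{align*}
\sum_{n=1}^m|(f_n-g_n)(x)-(f_n-g_n)(y)|^p\leq\left(\sum_{n=1}^m\operatorname{Lip}(f_n-g_n)^p\right)d(x,y)^p\leq r^p\,d(x,y)^p,
\end{align*}
and taking $p$-th roots yields the required inequality with $\alpha=\beta=0$, $\gamma=r$. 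This also shows implicitly that each $g_n$ is Lipschitz (so that $\{g_n\}_n$ indeed lies in $\operatorname{Lip}(\mathcal{M},\mathbb{K})$): $\operatorname{Lip}(g_n)\leq\operatorname{Lip}(f_n)+\operatorname{Lip}(f_n-g_n)<\infty$, since $r<\infty$ forces each $\operatorname{Lip}(f_n-g_n)$ finite.

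Then I would simply invoke Theorem \ref{FIRSTPERTURB}: since all its hypotheses are met with $\alpha=\beta=0$, $\gamma=r$, we conclude that $\{g_n\}_n$ is a p-metric frame for $\mathcal{M}$ with bounds $\frac{(1-\alpha)a-\gamma}{1+\beta}=\frac{a-r}{1}=a-r$ and $\frac{(1+\alpha)b+\gamma}{1-\beta}=\frac{b+r}{1}=b+r$, exactly as claimed. There is no real obstacle here — the corollary is a clean specialization — but the one point to be careful about is confirming that the convergence of $\sum_{n}|g_n(x)-g_n(y)|^p$ is not an issue; this is already handled inside the proof of Theorem \ref{FIRSTPERTURB} (it follows from convergence of $\sum_n|f_n(x)-f_n(y)|^p$ together with the perturbation inequality), so nothing extra is needed. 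I would keep the write-up to a few lines: state the choice of parameters, give the one-line Lipschitz estimate, and cite Theorem \ref{FIRSTPERTURB}.
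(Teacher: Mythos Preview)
Your proposal is correct and follows essentially the same approach as the paper: set $\alpha=\beta=0$, $\gamma=r$, verify the perturbation inequality via the pointwise Lipschitz estimate $|(f_n-g_n)(x)-(f_n-g_n)(y)|\leq\operatorname{Lip}(f_n-g_n)\,d(x,y)$, and invoke Theorem \ref{FIRSTPERTURB}. Your version is in fact slightly more careful, since you verify the inequality for each finite $m$ (matching the hypothesis of Theorem \ref{FIRSTPERTURB} exactly), whereas the paper writes the infinite sum directly.
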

\begin{proof}
	Define $\alpha\coloneqq 0$, $\beta\coloneqq 0$ and $\gamma\coloneqq r$. Then
	for all $x, y \in  \mathcal{M}$, 
	\begin{align*}
&	\left(\sum_{n=1}^\infty|(f_n-g_n)(x)-(f_n-g_n)(y)|^p\right)^\frac{1}{p}\leq \left(\sum_{n=1}^\infty \operatorname{Lip}(f_n-g_n)^p\,d(x,y)^p\right)^\frac{1}{p}\\
	&\quad=\left(\sum_{n=1}^\infty \operatorname{Lip}(f_n-g_n)^p\right)^\frac{1}{p}\,d(x,y)=r\,d(x,y)\\
	&\quad=\alpha \left(\sum_{n=1}^\infty|f_n(x)-f_n(y)|^p\right)^\frac{1}{p}\nonumber
	+
	\beta \left(\sum_{n=1}^\infty|g_n(x)-g_n(y)|^p\right)^\frac{1}{p}+\gamma \,d(x,y).
	\end{align*}
	Thus the hypothesis in Theorem \ref{FIRSTPERTURB} holds. Hence the corollary.
\end{proof}
\begin{corollary}
	Let $\{f_n\}_{n}$ be a
	p-metric Bessel sequence   for $\mathcal{M}$   with bound $b$. Let $\{g_n\}_{n}$ be a sequence in $\operatorname{Lip}(\mathcal{M}, \mathbb{K})$ satisfying the following.
	\begin{enumerate}[label=(\roman*)]
		\item There exist $\alpha, \beta, \gamma \geq 0$ such that $\beta<1$.
		\item For all $x, y \in  \mathcal{M}$, and $m=1,2, \dots ,$ 
		\begin{align*}
		\left(\sum_{n=1}^m|(f_n-g_n)(x)-(f_n-g_n)(y)|^p\right)^\frac{1}{p}&\leq \alpha \left(\sum_{n=1}^m|f_n(x)-f_n(y)|^p\right)^\frac{1}{p}\\
		&+
		\beta \left(\sum_{n=1}^m|g_n(x)-g_n(y)|^p\right)^\frac{1}{p}+\gamma \,d(x,y).
		\end{align*}
	\end{enumerate}
	Then $\{g_n\}_{n}$ is a
	p-metric Bessel sequence for $\mathcal{M}$ with bound $	\frac{((1+\alpha)b+\gamma)}{1-\beta}.$
\end{corollary}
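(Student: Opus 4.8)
The plan is to isolate the upper-bound half of the argument already carried out in the proof of Theorem \ref{FIRSTPERTURB}; none of the extra hypotheses needed there ($\alpha<1$, $\gamma<(1-\alpha)a$) are required here, only $\beta<1$, because we are not chasing a lower frame bound. First I would fix $x,y\in\mathcal{M}$ and $m\in\mathbb{N}$ and apply Minkowski's inequality in $\ell^p$ to the decomposition $g_n=f_n-(f_n-g_n)$, obtaining
\[
\left(\sum_{n=1}^m|g_n(x)-g_n(y)|^p\right)^{\frac1p}\leq\left(\sum_{n=1}^m|(f_n-g_n)(x)-(f_n-g_n)(y)|^p\right)^{\frac1p}+\left(\sum_{n=1}^m|f_n(x)-f_n(y)|^p\right)^{\frac1p}.
\]
Plugging hypothesis (ii) into the first term on the right and moving the resulting $\beta\left(\sum_{n=1}^m|g_n(x)-g_n(y)|^p\right)^{1/p}$ to the left gives
\[
(1-\beta)\left(\sum_{n=1}^m|g_n(x)-g_n(y)|^p\right)^{\frac1p}\leq(1+\alpha)\left(\sum_{n=1}^m|f_n(x)-f_n(y)|^p\right)^{\frac1p}+\gamma\,d(x,y),
\]
and since $\beta<1$ we may divide through by $1-\beta$.

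Next I would invoke that $\{f_n\}_n$ is a p-metric Bessel sequence: the series $\sum_{n=1}^\infty|f_n(x)-f_n(y)|^p$ converges, with sum at most $b^p d(x,y)^p$, so the right-hand side of the last inequality is bounded above, uniformly in $m$, by $\big((1+\alpha)b+\gamma\big)d(x,y)$. Hence the increasing sequence of partial sums $\sum_{n=1}^m|g_n(x)-g_n(y)|^p$ is bounded, $\sum_{n=1}^\infty|g_n(x)-g_n(y)|^p$ converges, and letting $m\to\infty$ yields
\[
\left(\sum_{n=1}^\infty|g_n(x)-g_n(y)|^p\right)^{\frac1p}\leq\frac{(1+\alpha)b+\gamma}{1-\beta}\,d(x,y),\qquad\forall\, x,y\in\mathcal{M},
\]
which is exactly the claim that $\{g_n\}_n$ is a p-metric Bessel sequence with the stated bound.

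The argument has no genuine obstacle; the only point that needs care is the order of operations. One must use hypothesis (ii) at the level of the finite partial sums (which is all that is assumed), then deduce from the convergence of $\sum|f_n(x)-f_n(y)|^p$ that $\sum|g_n(x)-g_n(y)|^p$ converges, and only afterwards pass to the limit $m\to\infty$; trying to manipulate the infinite series from the outset would be circular, since convergence of the $g$-series is not available a priori. This is precisely why the corollary drops out of the Bessel-relevant lines of the proof of Theorem \ref{FIRSTPERTURB} once the lower-bound computation is simply omitted.
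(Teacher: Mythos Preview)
Your proof is correct and follows exactly the approach implicit in the paper: this corollary is stated without proof precisely because it is the upper-bound computation from the proof of Theorem \ref{FIRSTPERTURB}, which you have reproduced faithfully (Minkowski at the level of partial sums, absorb the $\beta$-term, divide by $1-\beta$, then pass to the limit using the Bessel bound for $\{f_n\}_n$). Your remark about handling partial sums first before passing to the limit is also the order used in the paper's proof of Theorem \ref{FIRSTPERTURB}.
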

We next derive a stability result in which we perturb the Lipschitz functions and then derive the existence of reconstruction operator.  This is motivated from Theorem  \ref{PERTURBATIONBANACH12}.	
\begin{theorem}\label{STABILITYMA}
	Let $(\{f_n\}_{n}, S)$ be a
	metric frame for a Banach space    $\mathcal{X}$. Assume that
	$f_n(0)=0,$ for all $n \in \mathbb{N}$, and $S(0)=0$. Let $\{g_n\}_{n}$ be a collection in 
	$\operatorname{Lip}_0(\mathcal{X}, \mathbb{K})$ satisfying the following.
	\begin{enumerate}[label=(\roman*)]
		\item There exist $\alpha, \gamma\geq 0$ such that 
		\begin{align}\label{MFPER}
		\|\{(f_n-g_n)(x)-(f_n-g_n)(y)\}_n\|\leq \alpha \|\{f_n(x)-f_n(y)\}_n\|+\gamma \|x-y\|, ~ \forall x, y \in  \mathcal{X}.
		\end{align}
		\item $\alpha \|\theta_f\|_{\operatorname{Lip}_0}+\gamma\leq \|S\|_{\operatorname{Lip}_0}^{-1}.$
	\end{enumerate}
	Then there exists a reconstruction Lipschitz operator $T$ such that $(\{f_n\}_{n}, T)$ is  a
	metric frame for     $\mathcal{X}$ with bounds 
	$
	\|S\|_{\operatorname{Lip}_0}^{-1}-(\alpha\|\theta_f\|_{\operatorname{Lip}_0}+\gamma) $ and $ 
	\|\theta_f\|_{\operatorname{Lip}_0}+(\alpha\|\theta_f\|_{\operatorname{Lip}_0}+\gamma).$
	\end{theorem}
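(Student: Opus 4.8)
The plan is to mimic the proof of the Christensen–Heil perturbation theorem (Theorem~\ref{PERTURBATIONBANACH12}) in the Lipschitz/metric setting, using the analysis map $\theta_f$ and the reconstruction operator $S$ of the given metric frame. First I would record the structural facts that a metric frame for a Banach space carries with it: $\theta_f\colon\mathcal{X}\to\mathcal{M}_d$ is bi-Lipschitz onto its image with $S\theta_f=I_\mathcal{X}$, both $\theta_f$ and $S$ are Lipschitz with $S(0)=0$, $\theta_f(0)=0$, and the hypotheses give $\|\theta_f\|_{\operatorname{Lip}_0}$ and $\|S\|_{\operatorname{Lip}_0}$ as finite quantities. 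Define the analysis map of $\{g_n\}_n$ by $\theta_g\colon\mathcal{X}\ni x\mapsto\{g_n(x)\}_n$; the first task is to show $\theta_g$ is a well-defined Lipschitz map into $\mathcal{M}_d$, which follows from Inequality~(\ref{MFPER}) together with $\theta_g=\theta_f-(\theta_f-\theta_g)$ and the triangle inequality in $\operatorname{Lip}_0(\mathcal{X},\mathcal{M}_d)$, giving $\|\theta_g\|_{\operatorname{Lip}_0}\le(1+\alpha)\|\theta_f\|_{\operatorname{Lip}_0}+\gamma$.

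Next I would produce the two metric frame inequalities for $\{g_n\}_n$. For the upper bound: for $x,y\in\mathcal{X}$,
\begin{align*}
\|\{g_n(x)-g_n(y)\}_n\|&\le\|\{(f_n-g_n)(x)-(f_n-g_n)(y)\}_n\|+\|\{f_n(x)-f_n(y)\}_n\|\\
&\le(\alpha\|\theta_f\|_{\operatorname{Lip}_0}+\gamma)\,\|x-y\|+\|\theta_f\|_{\operatorname{Lip}_0}\,\|x-y\|,
\end{align*}
which gives the claimed upper bound $\|\theta_f\|_{\operatorname{Lip}_0}+(\alpha\|\theta_f\|_{\operatorname{Lip}_0}+\gamma)$. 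For the lower bound I would use $S\theta_f=I_\mathcal{X}$: writing $\|x-y\|=\|S\theta_f(x)-S\theta_f(y)\|\le\|S\|_{\operatorname{Lip}_0}\,\|\theta_f(x)-\theta_f(y)\|$, so $\|\theta_f(x)-\theta_f(y)\|\ge\|S\|_{\operatorname{Lip}_0}^{-1}\|x-y\|$; then
\begin{align*}
\|\{g_n(x)-g_n(y)\}_n\|&\ge\|\{f_n(x)-f_n(y)\}_n\|-\|\{(f_n-g_n)(x)-(f_n-g_n)(y)\}_n\|\\
&\ge\|S\|_{\operatorname{Lip}_0}^{-1}\|x-y\|-(\alpha\|\theta_f\|_{\operatorname{Lip}_0}+\gamma)\|x-y\|,
\end{align*}
and hypothesis (ii) guarantees the coefficient is positive, yielding the stated lower bound. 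Combined with the well-definedness of $\theta_g$, this shows $\{g_n\}_n$ is a metric $\mathcal{M}_d$-frame.

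The remaining and most delicate point is constructing the Lipschitz reconstruction operator $T\colon\mathcal{M}_d\to\mathcal{X}$ with $T\theta_g=I_\mathcal{X}$. In the linear Banach case one inverts $S\theta_g$ (a perturbation of the identity) via a Neumann series and sets $T=(S\theta_g)^{-1}S$; here I would instead invoke Theorem~\ref{LIPISABANACHALGEBRA}(ii): the map $U\coloneqq S\theta_g\in\operatorname{Lip}_0(\mathcal{X})$ satisfies
\[
\|U-I_\mathcal{X}\|_{\operatorname{Lip}_0}=\|S\theta_g-S\theta_f\|_{\operatorname{Lip}_0}\le\|S\|_{\operatorname{Lip}_0}\|\theta_g-\theta_f\|_{\operatorname{Lip}_0}\le\|S\|_{\operatorname{Lip}_0}(\alpha\|\theta_f\|_{\operatorname{Lip}_0}+\gamma)\le 1,
\]
so if the inequality in hypothesis (ii) is strict then $U$ is invertible in the Banach algebra $\operatorname{Lip}_0(\mathcal{X})$ with $U^{-1}\in\operatorname{Lip}_0(\mathcal{X})$; then $T\coloneqq U^{-1}S=(S\theta_g)^{-1}S\colon\mathcal{M}_d\to\mathcal{X}$ is Lipschitz with $T(0)=0$, and $T\theta_g=(S\theta_g)^{-1}S\theta_g=I_\mathcal{X}$, i.e. $T(\{g_n(x)\}_n)=x$ for all $x\in\mathcal{X}$. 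This hands the third clause of Definition~\ref{METRICBANACHFRAME}. I expect the main obstacle to be exactly this last step: the boundary case $\|U-I_\mathcal{X}\|_{\operatorname{Lip}_0}=1$ in hypothesis (ii) as stated with ``$\le$'', since Theorem~\ref{LIPISABANACHALGEBRA}(ii) only guarantees invertibility under a strict inequality; I would either argue that the frame inequalities already rule out equality (the lower bound would otherwise degenerate), or note that the theorem should be read with the natural strict inequality, or add an $\varepsilon$-perturbation argument to reduce to the strict case. A secondary subtlety is checking that all the maps genuinely lie in the base-point-preserving classes so that $\operatorname{Lip}_0$-norms and the Banach algebra structure of Theorem~\ref{LIPISABANACHALGEBRA} apply — this is routine given the standing assumptions $f_n(0)=0$, $S(0)=0$, and $g_n(0)=0$.
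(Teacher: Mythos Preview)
Your approach is essentially the same as the paper's: both define $\theta_g$, bound $\|\theta_g\|_{\operatorname{Lip}_0}\le(1+\alpha)\|\theta_f\|_{\operatorname{Lip}_0}+\gamma$, show $\|I_\mathcal{X}-S\theta_g\|_{\operatorname{Lip}_0}\le\|S\|_{\operatorname{Lip}_0}(\alpha\|\theta_f\|_{\operatorname{Lip}_0}+\gamma)$, invoke the Banach-algebra invertibility of Theorem~\ref{LIPISABANACHALGEBRA}(ii), and set $T\coloneqq(S\theta_g)^{-1}S$. The only cosmetic difference is that you obtain the lower frame bound directly via the reverse triangle inequality, whereas the paper reads it off from the norm estimate $\|(S\theta_g)^{-1}\|_{\operatorname{Lip}_0}\le(1-\|S\|_{\operatorname{Lip}_0}(\alpha\|\theta_f\|_{\operatorname{Lip}_0}+\gamma))^{-1}$; both yield the same constant. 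Your observation about the boundary case is well taken: the paper's proof silently writes ``$<1$'' at the invertibility step even though hypothesis~(ii) is stated with ``$\le$'', so the strict inequality is indeed what is actually used.
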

\begin{proof}
	Let $ x\in \mathcal{X}$. Since $g_n(0)=0$ and $f_n(0)=0$ for all $n \in \mathbb{N}$, using Inequality (\ref{MFPER}),
	\begin{align*}
	\|\{g_n(x)\}_n\| &\leq \|\{(f_n-g_n)(x)\}_n\|+\|\{f_n(x)\}_n\|\\
	&\leq (\alpha+1)\|\{f_n(x)\}_n\|+\gamma \|x\|.
	\end{align*}
	Therefore if we define $\theta_g:\mathcal{X} \ni x \mapsto \{g_n(x)\}_{n} \in\mathcal{M}_d$, then this map is well-defined. Again 
	using Inequality (\ref{MFPER}), we show that $\theta_g$ is Lipschitz. For $x, y \in  \mathcal{X}$,
		\begin{align*}
	\|\theta_gx-\theta_gy\|&=\|\{g_n(x)-g_n(y)\}_n\|=\|\{-g_n(x)+g_n(y)\}_n\| \\
	&\leq \|\{(f_n-g_n)(x)-(f_n-g_n)(y)\}_n\|+\|\{f_n(x)-f_n(y)\}_n\|\\
	&\leq (1+\alpha)\|\{f_n(x)-f_n(y)\}_n\|+\gamma \|x-y\|=(1+\alpha)\|\theta_fx-\theta_fy\|+\gamma \|x-y\|\\
	&\leq (1+\alpha) \|\theta_f\|_{\operatorname{Lip}_0}\|x-y\|+\gamma \|x-y\|
	=((1+\alpha) \|\theta_f\|_{\operatorname{Lip}_0}+\gamma)\|x-y\|.
	\end{align*}
	Thus $\|\theta_g\|_{\operatorname{Lip}_0}\leq (1+\alpha) \|\theta_f\|_{\operatorname{Lip}_0}+\gamma$. Previous calculation
	also tells that upper frame bound is $((1+\alpha) \|\theta_f\|_{\operatorname{Lip}_0}+\gamma)$. We see further that  
	Inequality (\ref{MFPER}) can be written as 
	\begin{align}\label{THETAFTHETAG}
	\|(\theta_f-\theta_g)x-(\theta_f-\theta_g)y\|&\leq \alpha \|\theta_fx-\theta_fy\|+\gamma \|x-y\|\nonumber\\
	&\leq (\alpha \|\theta_f\|_{\operatorname{Lip}_0}+\gamma)\|x-y\|, \quad \forall x, y \in  \mathcal{X}.
	\end{align}
	Now noting 
	$S\theta_f=I_\mathcal{X}$ and using Inequality (\ref{THETAFTHETAG}) we see that 
	\begin{align*}
	\|I_\mathcal{X}-S\theta_g\|_{\operatorname{Lip}_0}&=\|S\theta_f-S\theta_g\|_{\operatorname{Lip}_0}\\
	&\leq \|S\|_{\operatorname{Lip}_0}\|\theta_f-\theta_g\|_{\operatorname{Lip}_0}\\
	&\leq \|S\|_{\operatorname{Lip}_0}(\alpha \|\theta_f\|_{\operatorname{Lip}_0}+\gamma)<1.
	\end{align*}
	Since $\operatorname{Lip}_0(\mathcal{X})$ is a unital Banach algebra (Theorem \ref{LIPISABANACHALGEBRA}), last inequality tells that $S\theta_g$ is invertible and its inverse 
	is also a Lipschitz operator and 
	\begin{align*}
	\|(S\theta_g)^{-1}\|_{\operatorname{Lip}_0}\leq 
	\frac{1}{1-\|S\|_{\operatorname{Lip}_0}(\alpha \|\theta_f\|_{\operatorname{Lip}_0}+\gamma)}.
	\end{align*}
	Define $T\coloneqq (S\theta_g)^{-1} S$. Then $T\theta_g=I_\mathcal{X}$ and 
	\begin{align*}
	\|x-y\|&=\|T\theta_gx-T\theta_gy\|\leq \|T\|_{\operatorname{Lip}_0}\|\theta_gx-\theta_gy\|\\
	&\leq \frac{1}{1-\|S\|_{\operatorname{Lip}_0}(\alpha \|\theta_f\|_{\operatorname{Lip}_0}+\gamma)}\|\theta_gx-\theta_gy\|, 
	\quad \forall x, y \in  \mathcal{X}
	\end{align*}
	which gives the lower bound stated in the theorem.
\end{proof}
\begin{corollary}
	Let $(\{f_n\}_{n}, S)$ be a
	metric Bessel sequence for a Banach space    $\mathcal{X}$. Assume that
	$f_n(0)=0,$ for all $n \in \mathbb{N}$, and $S(0)=0$. Let $\{g_n\}_{n}$ be a collection in 
	$\operatorname{Lip}_0(\mathcal{X}, \mathbb{K})$ satisfying the following.
	There exist $\alpha, \gamma\geq 0$ such that 
	\begin{align*}
	\|\{(f_n-g_n)(x)-(f_n-g_n)(y)\}_n\|\leq \alpha \|\{f_n(x)-f_n(y)\}_n\|+\gamma \|x-y\|, \quad \forall x, y \in  \mathcal{X}.
	\end{align*}
	Then there exists a reconstruction Lipschitz operator $T$ such that $(\{f_n\}_{n}, T)$ is  a
	metric Bessel sequence for      $\mathcal{X}$ with bound 
	$\|\theta_f\|_{\operatorname{Lip}_0}+(\alpha\|\theta_f\|_{\operatorname{Lip}_0}+\gamma)$.
\end{corollary}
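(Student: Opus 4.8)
The plan is to imitate the proof of Theorem~\ref{STABILITYMA}, while noticing that for a \emph{metric Bessel sequence} one needs only the upper estimate together with a Lipschitz reconstruction operator, so that neither hypothesis~(ii) of that theorem nor the inversion of $S\theta_g$ (which produced the lower bound there) plays any role. First I would record the perturbed analysis map $\theta_g\colon\mathcal{X}\ni x\mapsto\{g_n(x)\}_n$ and control its Lipschitz norm; then I would exhibit the reconstruction operator required by Definition~\ref{METRICBANACHFRAME} for the family $\{f_n\}_n$.

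Since $(\{f_n\}_n,S)$ is a metric Bessel sequence we have $\{f_n(x)\}_n\in\mathcal{M}_d$ for every $x$ and $\theta_f$ is Lipschitz. Using $f_n(0)=g_n(0)=0$ for all $n$, the hypothesis evaluated at $(x,0)$ gives $\|\{(f_n-g_n)(x)\}_n\|\le\alpha\|\{f_n(x)\}_n\|+\gamma\|x\|$, whence
\begin{align*}
\|\{g_n(x)\}_n\|\le\|\{(f_n-g_n)(x)\}_n\|+\|\{f_n(x)\}_n\|\le(1+\alpha)\|\{f_n(x)\}_n\|+\gamma\|x\|<\infty,
\end{align*}
so $\theta_g$ is well defined into $\mathcal{M}_d$. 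The same inequality applied to a general pair yields
\begin{align*}
\|\theta_g x-\theta_g y\|\le(1+\alpha)\|\theta_f x-\theta_f y\|+\gamma\|x-y\|\le\big(\|\theta_f\|_{\operatorname{Lip}_0}+(\alpha\|\theta_f\|_{\operatorname{Lip}_0}+\gamma)\big)\|x-y\|,
\end{align*}
which is exactly the upper-bound computation from the beginning of the proof of Theorem~\ref{STABILITYMA} and is the only place the perturbation inequality is used; it records that the perturbed family stays Bessel with the bound displayed in the statement.

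To finish, because the assertion concerns the pair $(\{f_n\}_n,T)$ and $(\{f_n\}_n,S)$ is already a metric Bessel sequence, I would take $T:=S$. Then $T$ is Lipschitz and $T(\{f_n(x)\}_n)=S(\{f_n(x)\}_n)=x$, i.e.\ $T\theta_f=I_\mathcal{X}$, so condition~(iii) of Definition~\ref{METRICBANACHFRAME} holds, while condition~(i) is inherited. For the Bessel bound, $\|\{f_n(x)-f_n(y)\}_n\|=\|\theta_f x-\theta_f y\|\le\|\theta_f\|_{\operatorname{Lip}_0}\,\|x-y\|$, and since $\alpha,\gamma\ge0$ we have $\|\theta_f\|_{\operatorname{Lip}_0}\le\|\theta_f\|_{\operatorname{Lip}_0}+(\alpha\|\theta_f\|_{\operatorname{Lip}_0}+\gamma)$; hence $(\{f_n\}_n,T)$ is a metric Bessel sequence with the stated bound. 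The point to watch — and the reason the conclusion is phrased for $\{f_n\}_n$ rather than $\{g_n\}_n$ — is that, lacking hypothesis~(ii), one cannot invert $S\theta_g$, so no reconstruction operator for the perturbed family is produced; only the original family retains one, namely $S$ itself, and the computation on $\theta_g$ serves merely to explain the form of the stated bound.
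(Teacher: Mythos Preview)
Your proof of the corollary as literally stated is correct, and the paper gives no separate proof (the corollary is left unproved, as an immediate consequence of the preceding Theorem~\ref{STABILITYMA}). Your upper-bound computation on $\theta_g$ is verbatim the opening calculation of that theorem's proof, so on that score you match the paper exactly.

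You are also right to flag the oddity: in the proof of Theorem~\ref{STABILITYMA} the operator $T\coloneqq(S\theta_g)^{-1}S$ satisfies $T\theta_g=I_\mathcal{X}$, not $T\theta_f=I_\mathcal{X}$, and the stated upper bound $(1+\alpha)\|\theta_f\|_{\operatorname{Lip}_0}+\gamma$ is precisely the Lipschitz bound obtained for $\theta_g$, not for $\theta_f$. So both the theorem and the corollary almost certainly intend $(\{g_n\}_n,T)$ rather than $(\{f_n\}_n,T)$. For the literal statement your choice $T:=S$ is the only sensible one and makes the conclusion trivially true; your commentary that the $\theta_g$ computation merely ``explains the form of the stated bound'' is exactly the right diagnosis. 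If one reads the corollary with the intended $\{g_n\}_n$, your observation that without hypothesis~(ii) one cannot invert $S\theta_g$ (and hence cannot produce a reconstruction operator for $\{g_n\}_n$) is a genuine gap in the paper's formulation, not in your argument.
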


{\onehalfspacing \chapter{MULTIPLIERS FOR  METRIC SPACES}\label{chap3} }

\section{DEFINITION AND BASIC PROPERTIES OF MULTIPLIERS}
In this chapter, we introduce and study multipliers  for metric spaces. 
We use the following  notation  in this chapter. Let $\mathcal{M}$ be a  metric
space and $\mathcal{X}$ be a  Banach space. Given $f \in
\operatorname{Lip}(\mathcal{M}, \mathbb{K})$ and $\tau \in \mathcal{X}$,
define 
\begin{align*}
\tau\otimes f:\mathcal{M} \ni x \mapsto (\tau\otimes f)(x)\coloneqq f(x)\tau \in \mathcal{X}.
\end{align*}
Then it follows that $\tau\otimes f$ is a Lipschitz operator and $\operatorname{Lip}(\tau\otimes f)=\|\tau\|\operatorname{Lip}(f)$.

We first derive a result which allows us to define multipliers for metric spaces. In the sequel, $1<p<\infty$ and $q$ denotes the conjugate index of $p$.
\begin{theorem}\label{DEFINITIONEXISTENCE}
	Let $\{f_n\}_{n}$ in $\operatorname{Lip}_0(\mathcal{M}, \mathbb{K})$  be a
	 Lipschitz p-Bessel sequence  for a pointed metric space $(\mathcal{M},
	0)$ with bound $b$ and $\{\tau_n\}_{n}$ in a Banach space $\mathcal{X}$  be a
 Lipschitz q-Bessel sequence for $\operatorname{Lip}_0(\mathcal{X},
	\mathbb{K})$ with bound $d$.  If
	$\{\lambda_n\}_n \in \ell^\infty(\mathbb{N})$, then the map
	\begin{align*}
	T: \mathcal{M} \ni x \mapsto \sum_{n=1}^{\infty}\lambda_n (\tau_n\otimes
	f_n) x \in \mathcal{X}
	\end{align*}
	is a well-defined Lipschitz  operator such that $T(0)=0$ with Lipschitz
	norm at most $bd\|\{\lambda_n\}_n\|_\infty.$
\end{theorem}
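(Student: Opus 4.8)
The plan is to follow the classical argument behind Theorem \ref{RAHIMIBALAZS}, transplanted to the Lipschitz setting, the only new ingredient being the elementary observation that every bounded linear functional $\phi$ on $\mathcal{X}$ is Lipschitz with $\phi(0)=0$ and $\|\phi\|_{\operatorname{Lip}_0}=\|\phi\|_{\mathcal{X}^*}$; hence $\mathcal{X}^*$ embeds isometrically into $\operatorname{Lip}_0(\mathcal{X},\mathbb{K})$, and the defining inequality of the Lipschitz $q$-Bessel sequence $\{\tau_n\}_n$ may be applied to any normalized $\phi\in\mathcal{X}^*$. This is what converts scalar H\"older estimates into norm estimates in $\mathcal{X}$ through the Hahn--Banach formula $\|z\|_{\mathcal{X}}=\sup\{|\phi(z)|:\phi\in\mathcal{X}^*,\ \|\phi\|\le1\}$.

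First I would show that, for each fixed $x\in\mathcal{M}$, the series defining $T(x)$ converges in $\mathcal{X}$. Writing $z_{N,M}\coloneqq\sum_{n=N}^{M}\lambda_n f_n(x)\tau_n\in\mathcal{X}$ and applying Hahn--Banach, then H\"older's inequality and the $q$-Bessel bound for $\{\tau_n\}_n$, one gets
\begin{align*}
\|z_{N,M}\|&=\sup_{\phi\in\mathcal{X}^*,\ \|\phi\|\le1}\Bigl|\sum_{n=N}^{M}\lambda_n f_n(x)\phi(\tau_n)\Bigr|\\
&\le\|\{\lambda_n\}_n\|_\infty\Bigl(\sum_{n=N}^{M}|f_n(x)|^p\Bigr)^{1/p}\sup_{\|\phi\|\le1}\Bigl(\sum_{n=N}^{M}|\phi(\tau_n)|^q\Bigr)^{1/q}\\
&\le d\,\|\{\lambda_n\}_n\|_\infty\Bigl(\sum_{n=N}^{M}|f_n(x)|^p\Bigr)^{1/p}.
\end{align*}
Since $f_n(0)=0$ for every $n$, the $p$-Bessel inequality for $\{f_n\}_n$ applied to the pair $(x,0)$ gives $\sum_{n=1}^{\infty}|f_n(x)|^p\le\bigl(b\,d(x,0)\bigr)^p<\infty$, so the tails above vanish as $N,M\to\infty$; thus the partial sums form a Cauchy sequence and, $\mathcal{X}$ being complete, the series converges. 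Consequently $T:\mathcal{M}\to\mathcal{X}$ is well defined, and $T(0)=\sum_{n=1}^{\infty}\lambda_n\cdot0\cdot\tau_n=0$.

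Next I would bound the Lipschitz number of $T$. Fix $x,y\in\mathcal{M}$. The same convergence argument, with $f_n(x)$ replaced by $f_n(x)-f_n(y)$ (which lies in $\ell^p$ by the $p$-Bessel property), shows that $\sum_{n=1}^{\infty}\lambda_n\bigl(f_n(x)-f_n(y)\bigr)\tau_n$ converges and equals $T(x)-T(y)$. For $\phi\in\mathcal{X}^*$ with $\|\phi\|\le1$, linearity and continuity of $\phi$ permit interchanging it with the norm-convergent series, and then H\"older together with the Lipschitz $q$-Bessel bound, valid since $\|\phi\|_{\operatorname{Lip}_0}=\|\phi\|\le1$, yield
\begin{align*}
|\phi(T(x)-T(y))|&=\Bigl|\sum_{n=1}^{\infty}\lambda_n\bigl(f_n(x)-f_n(y)\bigr)\phi(\tau_n)\Bigr|\\
&\le\|\{\lambda_n\}_n\|_\infty\Bigl(\sum_{n=1}^{\infty}|f_n(x)-f_n(y)|^p\Bigr)^{1/p}\Bigl(\sum_{n=1}^{\infty}|\phi(\tau_n)|^q\Bigr)^{1/q}\\
&\le bd\,\|\{\lambda_n\}_n\|_\infty\,d(x,y).
\end{align*}
Taking the supremum over all such $\phi$ and using Hahn--Banach gives $\|T(x)-T(y)\|\le bd\,\|\{\lambda_n\}_n\|_\infty\,d(x,y)$, so $T\in\operatorname{Lip}_0(\mathcal{M},\mathcal{X})$ with $\operatorname{Lip}(T)\le bd\,\|\{\lambda_n\}_n\|_\infty$, as claimed.

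I do not expect a genuine obstacle: the computation is a routine combination of H\"older's inequality, the two Bessel bounds, and Hahn--Banach duality. The only points needing care are recording that $\mathcal{X}^*\subseteq\operatorname{Lip}_0(\mathcal{X},\mathbb{K})$ isometrically (so that the Lipschitz $q$-Bessel hypothesis can legitimately be fed a bounded linear functional), and justifying the interchange of $\phi$ with the infinite series --- which is valid precisely because the series has first been shown to converge in norm and $\phi$ is continuous and \emph{linear}; for general nonlinear elements of $\operatorname{Lip}_0(\mathcal{X},\mathbb{K})$ this step would fail, so it matters that the Hahn--Banach supremum runs only over $\mathcal{X}^*$.
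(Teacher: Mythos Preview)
Your proof is correct and follows essentially the same route as the paper's: establish convergence of the series at each point via Hahn--Banach, H\"older, and the $q$-Bessel bound for $\{\tau_n\}_n$, then estimate $\|T(x)-T(y)\|$ by the same device combined with the $p$-Bessel bound for $\{f_n\}_n$. If anything, you are slightly more careful than the paper in recording explicitly that $\mathcal{X}^*$ sits isometrically inside $\operatorname{Lip}_0(\mathcal{X},\mathbb{K})$ and in justifying the interchange of $\phi$ with the norm-convergent series.
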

\begin{proof}
	Let $n , m \in \mathbb{N}$ with $n \leq m$. Then for each $x \in
	\mathcal{M}$, using Holder's inequality,
		\begin{align*}
	\left\|\sum_{k=n}^{m}\lambda_k(\tau_k\otimes f_k)(x)\right\|&=\left\|\sum_{k=n}^{m}\lambda_k f_k(x)\tau_k\right\|=\sup_{\phi \in \mathcal{X}^*,\|\phi\|\leq 1}\left|\phi\left(\sum_{k=n}^{m}\lambda_k f_k(x)\tau_k\right)\right|\\
	&=\sup_{\phi \in \mathcal{X}^*,\|\phi\|\leq 1}\left|\sum_{k=n}^{m}\lambda_k f_k(x)\phi(\tau_k)\right|\\
	&\leq\sup_{\phi \in \mathcal{X}^*,\|\phi\|\leq 1}\sum_{k=n}^{m}|\lambda_k| |f_k(x)||\phi(\tau_k)|\\
	&\leq \sup_{n \in \mathbb{N}}|\lambda_n|\sup_{\phi \in \mathcal{X}^*,\|\phi\|\leq 1}\sum_{k=n}^{m} |f_k(x)||\phi(\tau_k)|\\
	&\leq \sup_{n \in \mathbb{N}}|\lambda_n|\sup_{\phi \in \mathcal{X}^*,\|\phi\|\leq 1}\left(\sum_{k=n}^{m}|f_k(x)|^p\right)^\frac{1}{p}\left(\sum_{k=n}^{m}|\phi(\tau_k)|^q\right)^\frac{1}{q}\\
	&\leq \sup_{n \in \mathbb{N}}|\lambda_n|\sup_{\phi \in
		\mathcal{X}^*,\|\phi\|\leq
		1}\left(\sum_{k=n}^{m}|f_k(x)|^p\right)^\frac{1}{p}d\|\phi\|\\
	&=d\sup_{n \in \mathbb{N}}|\lambda_n|\left(\sum_{k=n}^{m}|f_k(x)|^p\right)^\frac{1}{p}.
	\end{align*}
	Since $\left(\sum_{k=1}^{\infty}|f_k(x)|^p\right)^\frac{1}{p}$
	converges, $\sum_{k=1}^{\infty}\lambda_k(\tau_k\otimes f_k)(x)$ also converges.
	Now for all $x,y \in \mathcal{M}$,
		\begin{align*}
	\|Tx-Ty\|&=\left\|\sum_{n=1}^{\infty}\lambda_n f_n(x)\tau_n-\sum_{n=1}^{\infty}\lambda_n f_n(y)\tau_n\right\|=\left\|\sum_{n=1}^{\infty}\lambda_n (f_n(x)-f_n(y))\tau_n\right\|\\
	&=\sup_{\phi \in \mathcal{X}^*,\|\phi\|\leq 1}\left|\phi\left(\sum_{n=1}^{\infty}\lambda_n (f_n(x)-f_n(y))\tau_k\right)\right|\\
	&=\sup_{\phi \in \mathcal{X}^*,\|\phi\|\leq 1}\left|\sum_{n=1}^{\infty}\lambda_n (f_n(x)-f_n(y))\phi(\tau_k)\right|\\
	&\leq \sup_{n \in \mathbb{N}}|\lambda_n|\sup_{\phi \in \mathcal{X}^*,\|\phi\|\leq 1}\left(\sum_{n=1}^{\infty}|f_n(x)-f_n(y)|^p\right)^\frac{1}{p}\left(\sum_{n=1}^{\infty}|\phi(\tau_n)|^q\right)^\frac{1}{q}\\
	&\leq \sup_{n \in \mathbb{N}}|\lambda_n|\sup_{\phi \in \mathcal{X}^*,\|\phi\|\leq 1}\left(\sum_{n=1}^{\infty}|f_n(x)-f_n(y)|^p\right)^\frac{1}{p}d\|\phi\|\\
	&=d\sup_{n \in
		\mathbb{N}}|\lambda_n|\left(\sum_{n=1}^{\infty}|f_n(x)-f_n(y)|^p\right)^\frac{1}
	{p}\leq bd\sup_{n \in \mathbb{N}}|\lambda_n|d(x,y).
	\end{align*}
	Hence 
	\begin{align*}
	\|T\|_{\operatorname{Lip}_0}=\sup_{x, y \in \mathcal{M},~ x\neq y}
	\frac{\|Tx-Ty\|}{d(x,y)}\leq bd\sup_{n \in \mathbb{N}}|\lambda_n|.
	\end{align*}
\end{proof}
\begin{corollary}
	Let $\{f_n\}_{n}$ in $\operatorname{Lip}(\mathcal{M}, \mathbb{K})$  be a
	Lipschitz p-Bessel sequence  for  a metric space $\mathcal{M}$ with
	bound $b$ and $\{\tau_n\}_{n}$ in a Banach space $\mathcal{X}$  be a
	 Lipschitz q-Bessel sequence for $\operatorname{Lip}_0(\mathcal{X},
	\mathbb{K})$ with bound $d$.  If
	$\{\lambda_n\}_n \in \ell^\infty(\mathbb{N})$, then for fixed $z \in
	\mathcal{M}$, the map
		\begin{align*}
	T: \mathcal{M} \ni x \mapsto \sum_{n=1}^{\infty}\lambda_n (\tau_n\otimes
	(f_n-f(z)) )x \in \mathcal{X}
	\end{align*}
	is a well-defined Lipschitz  operator  with Lipschitz
	number at most $bd\|\{\lambda_n\}_n\|_\infty.$
\end{corollary}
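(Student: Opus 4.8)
The plan is to reduce the corollary to Theorem~\ref{DEFINITIONEXISTENCE} by introducing, for the fixed point $z \in \mathcal{M}$, the shifted functionals $g_n \coloneqq f_n - f_n(z)$ for each $n \in \mathbb{N}$. The key observation is that subtracting the constant $f_n(z)$ from $f_n$ does not change any difference $f_n(x)-f_n(y)$, so $g_n(x)-g_n(y) = f_n(x)-f_n(y)$ for all $x,y \in \mathcal{M}$; in particular $\operatorname{Lip}(g_n) = \operatorname{Lip}(f_n)$, and the metric $p$-Bessel estimate for $\{f_n\}_n$ is inherited verbatim by $\{g_n\}_n$ with the same bound $b$. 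Moreover $g_n(z) = 0$ for every $n$, so taking $z$ as the base point makes $(\mathcal{M}, z)$ a pointed metric space and each $g_n$ lands in $\operatorname{Lip}_0(\mathcal{M}, \mathbb{K})$ with respect to this base point.

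Next I would observe that $\tau_n \otimes (f_n - f_n(z)) = \tau_n \otimes g_n$ by definition of the operator $\tau \otimes f$, so the map in the statement is exactly
\begin{align*}
T : \mathcal{M} \ni x \mapsto \sum_{n=1}^{\infty} \lambda_n (\tau_n \otimes g_n) x \in \mathcal{X}.
\end{align*}
Since $\{g_n\}_n$ is a Lipschitz $p$-Bessel sequence for the pointed metric space $(\mathcal{M}, z)$ with bound $b$, $\{\tau_n\}_n$ is a Lipschitz $q$-Bessel sequence for $\operatorname{Lip}_0(\mathcal{X}, \mathbb{K})$ with bound $d$, and $\{\lambda_n\}_n \in \ell^\infty(\mathbb{N})$, Theorem~\ref{DEFINITIONEXISTENCE} applies directly: $T$ is a well-defined Lipschitz operator with $T(z) = 0$ and $\|T\|_{\operatorname{Lip}_0} \leq bd\|\{\lambda_n\}_n\|_\infty$. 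The Lipschitz norm of $T$ computed with respect to base point $z$ coincides with the Lipschitz number of $T$ as a map on the (unpointed) metric space $\mathcal{M}$, since the Lipschitz number is a supremum of difference quotients that does not see the base point; this gives the claimed bound $\operatorname{Lip}(T) \leq bd\|\{\lambda_n\}_n\|_\infty$.

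There is essentially no obstacle here — the corollary is a cosmetic repackaging of the theorem, and the only points requiring a sentence of justification are (i) that shifting by a constant preserves the $p$-Bessel property and the Lipschitz constants, and (ii) that $\operatorname{Lip}(\cdot)$ on $\mathcal{M}$ agrees with $\|\cdot\|_{\operatorname{Lip}_0}$ once a base point is fixed. One mild subtlety worth a remark: the statement of the corollary is phrased for a general metric space $\mathcal{M}$ (not a priori pointed), and it contains the minor typo ``$f(z)$'' where ``$f_n(z)$'' is meant; I would silently read it as $f_n(z)$ and fix the base point to be $z$ itself, which is legitimate since every metric space becomes pointed by designating any of its points. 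So the proof is three lines: define $g_n = f_n - f_n(z)$, note the inherited Bessel bound, and invoke Theorem~\ref{DEFINITIONEXISTENCE}.
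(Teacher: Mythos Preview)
Your proposal is correct and matches the paper's proof essentially line for line: define $g_n \coloneqq f_n - f_n(z)$, observe that the differences $g_n(x)-g_n(y)=f_n(x)-f_n(y)$ so the $p$-Bessel bound $b$ carries over to $\{g_n\}_n$ on the pointed space $(\mathcal{M},z)$, and then invoke Theorem~\ref{DEFINITIONEXISTENCE}. Your additional remarks on the typo and on the agreement of $\operatorname{Lip}(\cdot)$ with $\|\cdot\|_{\operatorname{Lip}_0}$ are accurate and do not diverge from the paper's argument.
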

\begin{proof}
	Define $g_n\coloneqq f_n-f(z), \forall n \in \mathbb{N}$. Then for all $x, y 
	\in \mathcal{M}$,
	\begin{align*}
	\left(\sum_{n=1}^{\infty}|g_n(x)-g_n(y)|^p\right)^\frac{1}{p}=\left(\sum_{n=1}^
	{\infty}|f_n(x)-f_n(y)|^p\right)^\frac{1}{p}\leq b\,d(x,y).
	\end{align*}
	Hence $\{g_n\}_{n}$
	is a 
	Lipschitz p-Bessel sequence  for   the pointed metric space $(\mathcal{M},
	z)$ and we apply Theorem \ref{DEFINITIONEXISTENCE} to $\{g_n\}_{n}$ which gives the result.
\end{proof}
\begin{definition}\label{DEFINITION}
	Let $\{f_n\}_{n}$ in $\operatorname{Lip}_0(\mathcal{M}, \mathbb{K})$  be
	a  Lipschitz p-Bessel sequence  for a pointed metric space
	$(\mathcal{M},
	0)$ and $\{\tau_n\}_{n}$ in a Banach space $\mathcal{X}$  be a  Lipschitz
	q-Bessel sequence for $\operatorname{Lip}_0(\mathcal{X}, \mathbb{K})$.  Let 
	$\{\lambda_n\}_n \in \ell^\infty(\mathbb{N})$. The Lipschitz operator
	\begin{align*}
	M_{\lambda,f, \tau}\coloneqq  \sum_{n=1}^{\infty}\lambda_n  (\tau_n\otimes f_n)
	\end{align*}
	is called as the \textbf{Lipschitz $(p,q)$-Bessel multiplier}. The sequence $\{\lambda_n\}_n$ is called as \textbf{symbol} for $	M_{\lambda,f, \tau}.$
\end{definition} 
We easily see that 	Definition \ref{DEFINITION} generalizes Definition 3.2
in (\cite{RAHIMIBALAZSMUL}). By varying the symbol and fixing other parameters in
the multiplier we get map from $\ell^\infty(\mathbb{N})$ to  $\operatorname{Lip}_0(\mathcal{M}, \mathcal{X})$. Property of
this map for Hilbert space was derived by Balazs (Lemma 7.1 in (\cite{BALAZSBASIC}))
and for Banach spaces it is due to Rahimi and Balazs (Proposition 3.3 in 
(\cite{RAHIMIBALAZSMUL})). In the next proposition we study it in the context of
metric spaces.
\begin{proposition}\label{INJECTIVE}
	Let $\{f_n\}_{n}$ in $\operatorname{Lip}_0(\mathcal{M}, \mathbb{K})$  be
	a  Lipschitz p-Bessel sequence  for  $(\mathcal{M},0)$  with non-zero
	elements, $\{\tau_n\}_{n}$ in $\mathcal{X}$  be a  q-Riesz sequence  for 
	$\operatorname{Lip}_0(\mathcal{X}, \mathbb{K})$ and $\{\lambda_n\}_n \in
	\ell^\infty(\mathbb{N})$. Then the mapping 
	\begin{align*}
	T:\ell^\infty(\mathbb{N})\ni \{\lambda_n\}_n \mapsto M_{\lambda,f, \tau}
	\in \operatorname{Lip}_0(\mathcal{M}, \mathcal{X})
	\end{align*}
	is a well-defined injective bounded linear operator.	
\end{proposition}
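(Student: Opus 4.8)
The statement to prove is that the symbol-to-multiplier map $T:\ell^\infty(\mathbb{N})\ni\{\lambda_n\}_n\mapsto M_{\lambda,f,\tau}\in\operatorname{Lip}_0(\mathcal{M},\mathcal{X})$ is well-defined, injective, bounded and linear. Well-definedness and boundedness are essentially a packaging of Theorem \ref{DEFINITIONEXISTENCE}: for each $\{\lambda_n\}_n\in\ell^\infty(\mathbb{N})$ that theorem already tells us $M_{\lambda,f,\tau}=\sum_n\lambda_n(\tau_n\otimes f_n)$ is a Lipschitz operator vanishing at $0$ with $\|M_{\lambda,f,\tau}\|_{\operatorname{Lip}_0}\le bd\|\{\lambda_n\}_n\|_\infty$, where $b$ is a Lipschitz $p$-Bessel bound for $\{f_n\}_n$ and $d$ is a Lipschitz $q$-Bessel bound for $\{\tau_n\}_n$ (a $q$-Riesz sequence is in particular a $q$-Bessel sequence, so Theorem \ref{DEFINITIONEXISTENCE} applies). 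Thus $T$ lands in $\operatorname{Lip}_0(\mathcal{M},\mathcal{X})$ and $\|T\|\le bd$.

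Linearity is immediate once well-definedness is in hand: for symbols $\{\lambda_n\}_n,\{\mu_n\}_n$ and scalars $\alpha,\beta$, the series defining $M_{\alpha\lambda+\beta\mu,f,\tau}$ converges and, evaluating at any $x\in\mathcal{M}$, it equals $\alpha\sum_n\lambda_n f_n(x)\tau_n+\beta\sum_n\mu_n f_n(x)\tau_n=\alpha M_{\lambda,f,\tau}(x)+\beta M_{\mu,f,\tau}(x)$, using that both component series converge so the sum of the series is the series of the sums. Hence $T(\alpha\lambda+\beta\mu)=\alpha T(\lambda)+\beta T(\mu)$.

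The only substantive point — and the one I expect to be the main obstacle — is injectivity, which is exactly where the hypothesis that $\{f_n\}_n$ has \emph{non-zero} elements and $\{\tau_n\}_n$ is a \emph{$q$-Riesz} sequence (not merely $q$-Bessel) gets used. Suppose $M_{\lambda,f,\tau}=0$, i.e. $\sum_{n=1}^\infty\lambda_n f_n(x)\tau_n=0$ for every $x\in\mathcal{M}$. The plan is to fix an index $k$ and extract $\lambda_k$. First, since $f_k\ne 0$ and $f_k(0)=0$, there is a point $x_k\in\mathcal{M}$ with $f_k(x_k)\ne 0$. Now apply, to the vector identity $\sum_n \lambda_n f_n(x_k)\tau_n=0$, the lower $q$-Riesz inequality for $\{\tau_n\}_n$ on every finite subset $\mathbb{S}\ni k$: from $\left\|\sum_{n\in\mathbb{S}}\lambda_n f_n(x_k)\tau_n\right\|\ge a\left(\sum_{n\in\mathbb{S}}|\lambda_n f_n(x_k)|^q\right)^{1/q}$ and the fact that $\sum_{n}\lambda_n f_n(x_k)\tau_n$ converges (so its tails go to $0$), one concludes $\sum_{n}|\lambda_n f_n(x_k)|^q=0$; in particular $\lambda_k f_k(x_k)=0$, and since $f_k(x_k)\ne 0$ we get $\lambda_k=0$. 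As $k$ was arbitrary, $\{\lambda_n\}_n=0$, so $T$ is injective. I would double-check the one slightly delicate step here — passing from the finite-subset Riesz inequality to the statement about the full series — by using that convergence of $\sum_n\lambda_n f_n(x_k)\tau_n$ to $0$ forces $\left\|\sum_{n\in\mathbb{S}}\lambda_n f_n(x_k)\tau_n\right\|$ to be arbitrarily small for suitable large $\mathbb{S}$, hence each partial $\ell^q$-sum $\left(\sum_{n\in\mathbb{S}}|\lambda_n f_n(x_k)|^q\right)^{1/q}\le a^{-1}\cdot(\text{small})$; alternatively, and more cleanly, one can test against a functional: pick $\phi\in\mathcal{X}^*$, note $\sum_n\lambda_n f_n(x_k)\phi(\tau_n)=0$, and use that this holds for all $x$ together with the Riesz lower bound expressed via $\sup_{\|\phi\|\le 1}$ as in the proof of Theorem \ref{DEFINITIONEXISTENCE}. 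Either route should work; the functional-testing route is probably the smoothest to write.
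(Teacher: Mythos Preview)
Your proof is correct and follows essentially the same approach as the paper: well-definedness and boundedness come from Theorem \ref{DEFINITIONEXISTENCE}, and injectivity is obtained by evaluating at a point, applying the lower $q$-Riesz inequality to force all coefficients $\lambda_n f_n(x)$ to vanish, and then using $f_n\neq 0$ to conclude $\lambda_n=0$. You are actually more careful than the paper about the passage from the finite-subset Riesz inequality to the infinite sum---the paper applies it directly without comment, whereas your partial-sum/tail argument (noting that $\|\sum_{n\le N}c_n\tau_n\|\to 0$ together with the monotone lower bound $a(\sum_{n\le N}|c_n|^q)^{1/q}$ forces all $c_n=0$) fills that small gap cleanly.
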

\begin{proof}
	From the norm estimate of $M_{\lambda,f, \tau}$, we see that  $T$ is a 
	well-defined bounded linear operator. Let $\{\lambda_n\}_n, \{\mu_n\}_n \in
	\ell^\infty(\mathbb{N})$ be such that $M_{\lambda,f, \tau}
	=T\{\lambda_n\}_n=T\{\mu_n\}_n=M_{\mu,f, \tau} $. Then
	$\sum_{n=1}^{\infty}\lambda_n  f_n (x)\tau_n =M_{\lambda,f, \tau}x =M_{\mu,f,
		\tau}x=\sum_{n=1}^{\infty}\mu_n  f_n (x)\tau_n $, $\forall x \in
	\mathcal{M}$ $\Rightarrow$ $\sum_{n=1}^{\infty}(\lambda_n-\mu_n)  f_n
	(x)\tau_n=0$,  $\forall x \in \mathcal{M}$. Now using Inequality
	(\ref{RIESZSEQUENCEINEQUALITY}), 
	\begin{align*}
	&a \left(\sum_{n=1 }^\infty|(\lambda_n-\mu_n)  f_n
	(x)|^q\right)^\frac{1}{q}\leq \left\|\sum_{n=1}^\infty (\lambda_n-\mu_n)  f_n
	(x)\tau_n\right\|=0, \quad \forall x \in \mathcal{M}\\
	&\implies (\lambda_n-\mu_n)  f_n (x)=0, \quad \forall n \in \mathbb{N}, 
	\forall x \in \mathcal{M}.
	\end{align*}
	Let  $n \in \mathbb{N}$ be fixed. Since $f_n\neq 0$, there exists $x \in
	\mathcal{M}$ such that $f_n(x)\neq 0$. Therefore we get $\lambda_n-\mu_n=0$. By
	varying $n \in \mathbb{N}$ we arrive at $\lambda_n=\mu_n$, $\forall n \in
	\mathbb{N}$. Hence $T$ is injective.
\end{proof}
\section{CONTINUITY PROPERTIES OF MULTIPLIERS}
 In Proposition \ref{RAHIMIBALAZSMULTIPLIERCOMPACT},  it was obtained  that whenever the symbol is in $c_0(\mathbb{N})$, then the multiplier is compact. Using the notion of Lipschitz compact operator (Definition \ref{LIPSCHITZCOMPACTDEFINITION}), we derive non linear analogue of Proposition \ref{RAHIMIBALAZSMULTIPLIERCOMPACT}.
\begin{proposition}\label{MULTIPLIERISCOMPACT}
	Let $\{f_n\}_{n}$ in $\operatorname{Lip}_0(\mathcal{M}, \mathbb{K})$  be
	a  Lipschitz p-Bessel sequence  for  $(\mathcal{M}, 0)$ with bound $b$
	and $\{\tau_n\}_{n}$ in $\mathcal{X}$  be a  Lipschitz q-Bessel sequence 
	for $\operatorname{Lip}_0(\mathcal{X}, \mathbb{K})$ with bound $d$. 	If
	$\{\lambda_n\}_n \in c_0(\mathbb{N})$, then $M_{\lambda,f, \tau}$ is a Lipschitz
	compact operator.
\end{proposition}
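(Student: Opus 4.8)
The plan is to mimic the proof of Proposition \ref{RAHIMIBALAZSMULTIPLIERCOMPACT} in the linear setting, exploiting the fact that a finite-rank Lipschitz multiplier is Lipschitz compact and that $c_0(\mathbb{N})$ sequences are norm-limits of finitely supported sequences. First I would introduce, for each $N \in \mathbb{N}$, the truncated symbol $\lambda^{(N)} \coloneqq (\lambda_1, \dots, \lambda_N, 0, 0, \dots)$ and the associated multiplier $M_{\lambda^{(N)}, f, \tau} = \sum_{n=1}^{N}\lambda_n(\tau_n \otimes f_n)$. By Theorem \ref{LIPSCHITCOMPACTIFFLINEAR} (the equivalence there, applied with the functions $\lambda_n f_n \in \operatorname{Lip}_0(\mathcal{M}, \mathbb{K})$ and the vectors $\tau_n \in \mathcal{X}$), each $M_{\lambda^{(N)}, f, \tau}$ has finite dimensional rank, hence its Lipschitz image has finite-dimensional linear hull; being also bounded (its Lipschitz image is a bounded set by Theorem \ref{DEFINITIONEXISTENCE}), the closure of its Lipschitz image is a closed bounded subset of a finite-dimensional space, therefore compact. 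Thus each $M_{\lambda^{(N)}, f, \tau}$ is Lipschitz compact.

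Next I would show $\|M_{\lambda, f, \tau} - M_{\lambda^{(N)}, f, \tau}\|_{\operatorname{Lip}_0} \to 0$ as $N \to \infty$. The difference is exactly the Lipschitz $(p,q)$-Bessel multiplier with symbol $\lambda - \lambda^{(N)} = (0, \dots, 0, \lambda_{N+1}, \lambda_{N+2}, \dots)$. Applying the norm estimate from Theorem \ref{DEFINITIONEXISTENCE} (more precisely, re-running the Hölder estimate in its proof but starting the summation at $n = N+1$), one gets
\begin{align*}
\|M_{\lambda, f, \tau} - M_{\lambda^{(N)}, f, \tau}\|_{\operatorname{Lip}_0} \leq bd \sup_{n \geq N+1}|\lambda_n|.
\end{align*}
Since $\{\lambda_n\}_n \in c_0(\mathbb{N})$, the right-hand side tends to $0$ as $N \to \infty$, so $M_{\lambda, f, \tau}$ is the Lipschitz-norm limit of the sequence of Lipschitz finite rank operators $M_{\lambda^{(N)}, f, \tau}$; in particular it is Lipschitz approximable in the sense of the definition preceding Theorem \ref{LIPSCHITZAPPROMABLEISCOMPACT}.

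Finally I would invoke Theorem \ref{LIPSCHITZAPPROMABLEISCOMPACT}, which states that every Lipschitz approximable operator from a pointed metric space to a Banach space is Lipschitz compact, to conclude that $M_{\lambda, f, \tau}$ is Lipschitz compact. The one point that needs a little care — and which I regard as the main (minor) obstacle — is checking that $M_{\lambda^{(N)}, f, \tau}$ genuinely qualifies as a Lipschitz finite rank operator in the precise sense of Definition \ref{LFR}: this requires observing that $\lambda_n f_n \in \operatorname{Lip}_0(\mathcal{M}, \mathbb{K})$ for each $n$ (immediate, as a scalar multiple of a Lipschitz function vanishing at $0$), so that Theorem \ref{LIPSCHITCOMPACTIFFLINEAR}(iii)$\Rightarrow$(i) applies and gives finite-dimensional Lipschitz rank. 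Everything else is a direct transcription of the linear argument, with the operator norm replaced throughout by the Lipschitz norm $\|\cdot\|_{\operatorname{Lip}_0}$.
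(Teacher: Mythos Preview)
Your proposal is correct and follows essentially the same approach as the paper: truncate the symbol to get Lipschitz finite-rank operators via Theorem \ref{LIPSCHITCOMPACTIFFLINEAR}, use the $bd\sup_{n>N}|\lambda_n|$ estimate to show convergence in the Lipschitz norm, and conclude by Theorem \ref{LIPSCHITZAPPROMABLEISCOMPACT}. The only difference is cosmetic --- the paper writes the estimate directly rather than phrasing it in terms of a truncated symbol, and it does not spell out the boundedness-of-the-Lipschitz-image argument you include, but the logical structure is identical.
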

\begin{proof}
	For each  $m \in \mathbb{N}$, define $	M_{\lambda_m,f, \tau}\coloneqq   \sum_{n=1}^{m}\lambda_n  (\tau_n\otimes f_n )$. Then $	M_{\lambda_m,f, \tau}$ is a Lipschitz finite rank operator (Theorem \ref{LIPSCHITCOMPACTIFFLINEAR}). Now 
	\begin{align*}
	\|M_{\lambda_m,f, \tau}-M_{\lambda,f,
		\tau}\|_{\operatorname{Lip}_0}&=\sup_{x, y \in \mathcal{M}, ~x\neq y}
	\frac{\|(M_{\lambda_m,f, \tau}-M_{\lambda,f, \tau})x-(M_{\lambda_m,f,
			\tau}-M_{\lambda,f, \tau})y\|}{d(x,y)}\\
	&=\sup_{x, y \in \mathcal{M}, ~x\neq y}
	\frac{\left\|\sum_{n=m+1}^{\infty}\lambda_n  f_n
		(x)\tau_n-\sum_{n=m+1}^{\infty}\lambda_n  f_n (y)\tau_n\right\|}{d(x,y)}\\
	&=\sup_{x, y \in \mathcal{M}, ~x\neq y}
	\frac{\left\|\sum_{n=m+1}^{\infty}\lambda_n  (f_n
		(x)-f_n(y))\tau_n\right\|}{d(x,y)}\\
	&\leq bd\sup_{m+1\leq n<\infty }|\lambda_n| \to 0 \text{ as } m \to \infty.
	\end{align*}
	Hence $M_{\lambda,f, \tau}$ is the limit of a sequence
	of Lipschitz finite rank operators $\{M_{\lambda_m,f, \tau}\}_{m=1}^\infty$ with respect to the Lipschitz norm. Thus $M_{\lambda,f, \tau}$ is Lipschitz approximable and from Theorem
	\ref{LIPSCHITZAPPROMABLEISCOMPACT} it follows that $M_{\lambda,f, \tau}$ is
	Lipschitz compact.
\end{proof}
We now study the properties of multiplier by changing its parameters. 
Following result extends Theorem \ref{RAHIMIBALAZSMULTIPLIERCONTINUITY}.
\begin{theorem}\label{MULTIPLIERISWELL}
	Let $\{f_n\}_{n}$ in $\operatorname{Lip}_0(\mathcal{M}, \mathbb{K})$  be a
	 Lipschitz p-Bessel sequence  for  $\mathcal{M}$ with bound $b$ and
	$\{\tau_n\}_{n}$ in $\mathcal{X}$  be a  Lipschitz q-Bessel sequence  for
	$\operatorname{Lip}_0(\mathcal{X}, \mathbb{K})$ with bound $d$ and
	$\{\lambda_n\}_n \in \ell^\infty(\mathbb{N})$. 	Let $k \in \mathbb{N}$ and let 
	$\lambda^{(k)}=\{\lambda_1^{(k)},\lambda_2^{(k)}, \dots \}$,
	$\lambda=\{\lambda_1,\lambda_2, \dots \}$, 
	$\tau^{(k)}=\{\tau_1^{(k)}, \tau_2^{(k)}, \dots\}$,
	$\tau_n^{k} \in \mathcal{X}$, $\tau=\{\tau_1, \tau_2, \dots\}$. Assume that for
	each $k$, $\lambda^{(k)}\in \ell^\infty(\mathbb{N})$  and
	$\tau^{(k)}$  is  a pointed Lipschitz q-Bessel sequence  for
	$\operatorname{Lip}_0(\mathcal{X}, \mathbb{K})$.
	\begin{enumerate}[label=(\roman*)]
		\item If $\lambda^{(k)} \to \lambda $ as $k \rightarrow \infty $ in p-norm,  then
		\begin{align*}
		\|M_{\lambda^{(k)},f, \tau}-M_{\lambda,f, \tau}\|_{\operatorname{Lip}_0} \to 0 \text{ as } k \to \infty.
		\end{align*}
		\item If $\{\lambda_n\}_n \in \ell^p(\mathbb{N})$ and  $\sum_{n=1}^{\infty}\|\tau_n^{(k)}-\tau_n\|^q \to 0 \text{ as } k \to \infty $, then 
		\begin{align*}
		\|M_{\lambda, f, \tau^{(k)}}-M_{\lambda,f, \tau}\|_{\operatorname{Lip}_0} \to 0 \text{ as } k \to \infty.
		\end{align*}
	\end{enumerate}
\end{theorem}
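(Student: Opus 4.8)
The plan is to prove the two continuity statements in Theorem~\ref{MULTIPLIERISWELL} by estimating the Lipschitz norm of the difference of the relevant multipliers directly, exactly as in the proof of Theorem~\ref{DEFINITIONEXISTENCE}, but applied to a suitable ``residual'' Bessel sequence. Both parts reduce to a single computation: for sequences $\{g_n\}_n$ in $\operatorname{Lip}_0(\mathcal{M},\mathbb{K})$ that are Lipschitz $p$-Bessel with bound $b$ and sequences $\{\sigma_n\}_n$ in $\mathcal{X}$ that are Lipschitz $q$-Bessel for $\operatorname{Lip}_0(\mathcal{X},\mathbb{K})$ with bound $d$, and any $\{\mu_n\}_n\in\ell^\infty(\mathbb{N})$, one has $\|M_{\mu,g,\sigma}\|_{\operatorname{Lip}_0}\le bd\|\{\mu_n\}_n\|_\infty$. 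I will isolate this as the basic inequality, and then for each part choose the right $\{g_n\}$, $\{\sigma_n\}$, $\{\mu_n\}$.

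For part (i): write $M_{\lambda^{(k)},f,\tau}-M_{\lambda,f,\tau}=M_{\lambda^{(k)}-\lambda,f,\tau}$ by linearity in the symbol. Since $\lambda^{(k)}\to\lambda$ in the $p$-norm, the sequence $\lambda^{(k)}-\lambda$ lies in $\ell^p(\mathbb{N})\subseteq\ell^\infty(\mathbb{N})$. Here the subtle point is that the bound $bd\|\{\lambda^{(k)}_n-\lambda_n\}_n\|_\infty$ already gives the conclusion because $\|\cdot\|_\infty\le\|\cdot\|_p$, so $\|M_{\lambda^{(k)}-\lambda,f,\tau}\|_{\operatorname{Lip}_0}\le bd\|\lambda^{(k)}-\lambda\|_p\to0$. (One could also argue more sharply by putting the $\ell^p$ factor on the symbol and using the Bessel bound of $\{f_n\}$ differently via H\"older, but the crude $\ell^\infty$ estimate suffices.) This part is essentially immediate from Theorem~\ref{DEFINITIONEXISTENCE}.

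For part (ii): write $M_{\lambda,f,\tau^{(k)}}-M_{\lambda,f,\tau}=\sum_{n=1}^\infty\lambda_n\big((\tau_n^{(k)}-\tau_n)\otimes f_n\big)$, i.e.\ a multiplier with the \emph{same} symbol and analysis functionals but with the synthesis sequence replaced by the residual $\{\tau_n^{(k)}-\tau_n\}_n$. The main obstacle is that $\{\tau_n^{(k)}-\tau_n\}_n$ need not be a Lipschitz $q$-Bessel sequence for $\operatorname{Lip}_0(\mathcal{X},\mathbb{K})$ \emph{a priori}, so I cannot directly invoke the basic inequality; but the hypothesis $\sum_{n}\|\tau_n^{(k)}-\tau_n\|^q\to0$ is precisely what replaces the Bessel bound. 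Concretely, for $x,y\in\mathcal{M}$ with $x\neq y$ I estimate, using H\"older with exponents $p,q$ on the pairing $\phi\mapsto\phi(\tau_n^{(k)}-\tau_n)$ against $|f_n(x)-f_n(y)|$ and the trivial bound $|\phi(\tau_n^{(k)}-\tau_n)|\le\|\phi\|\,\|\tau_n^{(k)}-\tau_n\|$,
\begin{align*}
\|(M_{\lambda,f,\tau^{(k)}}-M_{\lambda,f,\tau})x-(M_{\lambda,f,\tau^{(k)}}-M_{\lambda,f,\tau})y\|
&\le\|\{\lambda_n\}_n\|_\infty\Big(\sum_{n=1}^\infty|f_n(x)-f_n(y)|^p\Big)^{1/p}\Big(\sum_{n=1}^\infty\|\tau_n^{(k)}-\tau_n\|^q\Big)^{1/q}\\
&\le b\,\|\{\lambda_n\}_n\|_\infty\Big(\sum_{n=1}^\infty\|\tau_n^{(k)}-\tau_n\|^q\Big)^{1/q}\,d(x,y).
\end{align*}
(The convergence of the series $\sum_n\lambda_n(\tau_n^{(k)}-\tau_n)\otimes f_n$ at each point, and hence well-definedness of the difference as a Lipschitz operator, follows from the same Cauchy-tail estimate, using $\sum_n\|\tau_n^{(k)}-\tau_n\|^q<\infty$.) Dividing by $d(x,y)$ and taking the supremum gives
\[
\|M_{\lambda,f,\tau^{(k)}}-M_{\lambda,f,\tau}\|_{\operatorname{Lip}_0}\le b\,\|\{\lambda_n\}_n\|_\infty\Big(\sum_{n=1}^\infty\|\tau_n^{(k)}-\tau_n\|^q\Big)^{1/q}\longrightarrow 0\quad\text{as }k\to\infty.
\]
Note the hypothesis $\{\lambda_n\}_n\in\ell^p(\mathbb{N})$ is only needed in as much as it is contained in $\ell^\infty(\mathbb{N})$ for this argument; if one wants the sharper dependence one instead distributes $\ell^p$ on the symbol. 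I expect the only real care needed is the justification of termwise manipulation and pointwise convergence of the residual series, which is handled by the tail estimate above; everything else is a direct transcription of the H\"older computation already used for Theorem~\ref{DEFINITIONEXISTENCE}.
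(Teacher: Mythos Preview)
Your proof is correct and follows essentially the same route as the paper's: part (i) is exactly the paper's argument via the basic bound from Theorem~\ref{DEFINITIONEXISTENCE} together with $\|\cdot\|_\infty\le\|\cdot\|_p$, and part (ii) is the same H\"older-plus-duality computation, the only cosmetic difference being that the paper bounds the symbol factor by $\|\{\lambda_n\}_n\|_p$ (via $\|\lambda\|_\infty\le\|\lambda\|_p$) whereas you stop at $\|\{\lambda_n\}_n\|_\infty$. Your observation that the $\ell^p$ hypothesis on $\lambda$ is used only through its $\ell^\infty$ consequence is correct and in fact gives a slightly sharper constant than the paper's displayed bound.
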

\begin{proof}
	\begin{enumerate}[label=(\roman*)]
		\item Using Theorem \ref{DEFINITIONEXISTENCE},
		\begin{align*}
		&\|M_{\lambda^{(k)},f, \tau}-M_{\lambda,f,
			\tau}\|_{\operatorname{Lip}_0}\\&=\sup_{x, y \in \mathcal{M}, ~x\neq y}
		\frac{\|(M_{\lambda^{(k)},f, \tau}-M_{\lambda,f, \tau})x-(M_{\lambda^{(k)},f,
				\tau}-M_{\lambda,f, \tau})y\|}{d(x,y)}\\
		&=\sup_{x, y \in \mathcal{M}, ~x\neq y}
		\frac{\left\|\sum_{n=1}^{\infty}(\lambda_n^{(k)}-\lambda_n)f_n(x)\tau_n-\sum_{
				n=1}^{\infty}(\lambda_n^{(k)}-\lambda_n)f_n(y)\tau_n\right\|}{d(x,y)}\\
		&=\sup_{x, y \in \mathcal{M}, ~x\neq y}
		\frac{\left\|\sum_{n=1}^{\infty}(\lambda_n^{(k)}
			-\lambda_n)(f_n(x)-f_n(y))\tau_n\right\|}{d(x,y)}\\
		&\leq bd \sup_{n\in \mathbb{N}}|\lambda_n^{(k)}-\lambda_n|=bd \|\{\lambda_n^{(k)}-\lambda_n\}_n\|_\infty \\
		& \leq bd \|\{\lambda_n^{(k)}-\lambda_n\}_n\|_p \to 0 \text{ as } k \to \infty.
		\end{align*}
		\item Using Holder's inequality,
		\begin{align*}
		&	\|M_{\lambda,f, \tau^{(k)}}-M_{\lambda,f, \tau}\|_{\operatorname{Lip}_0}\\
		&=\sup_{x, y \in \mathcal{M}, ~x\neq y} \frac{\|(M_{\lambda,f,
				\tau^{(k)}}-M_{\lambda,f, \tau})x-(M_{\lambda,f, \tau^{(k)}}-M_{\lambda,f,
				\tau})y\|}{d(x,y)}\\
		&=\sup_{x, y \in \mathcal{M}, ~x\neq y}
		\frac{\left\|\sum_{n=1}^{\infty}\lambda_nf_n(x)(\tau_n^{(k)}-\tau_n)-\sum_{n=1}^
			{\infty}\lambda_nf_n(y)(\tau_n^{(k)}-\tau_n)\right\|}{d(x,y)}\\
		&=\sup_{x, y \in \mathcal{M}, ~x\neq y}
		\frac{\left\|\sum_{n=1}^{\infty}\lambda_n(f_n(x)-f_n(y))(\tau_n^{(k)}
			-\tau_n)\right\|}{d(x,y)}\\
		&=\sup_{x, y \in \mathcal{M}, ~x\neq y} \ \ \sup _{\phi \in
			\mathcal{X}^*,\|\phi\|\leq 1}
		\frac{\left|\sum_{n=1}^{\infty}\lambda_n(f_n(x)-f_n(y))\phi(\tau_n^{(k)}
			-\tau_n)\right|}{d(x,y)}\\
		&\leq \sup_{x, y \in \mathcal{M}, ~x\neq y} \ \ \sup _{\phi
			\in \mathcal{X}^*,\|\phi\|\leq 1}
		\frac{\left(\sum_{n=1}^{\infty}|\lambda_n(f_n(x)-f_n(y))|^p\right)^\frac{1}{p}
			\left(\sum_{n=1}^{\infty}|\phi(\tau_n^{(k)}-\tau_n)|^q\right)^\frac{1}{q}}{d(x,
			y)}\\
		&\leq \sup_{x, y \in \mathcal{M}, ~x\neq y} \ \ \sup _{\phi
			\in \mathcal{X}^*,\|\phi\|\leq 1}\\
		&\quad 
		\left\{\frac{\left(\sum_{n=1}^{\infty}|\lambda_n|^p\right)^\frac{1}{p}\left(\sum_{n=1}^
			{\infty}|f_n(x)-f_n(y)|^p\right)^\frac{1}{p}\left(\sum_{n=1}^{\infty}
			|\phi(\tau_n^{(k)}-\tau_n)|^q\right)^\frac{1}{q}}{d(x,y)}\right\}\\
		&\leq b \|\{\lambda_n\}_n\|_p\left(\sum_{n=1}^{\infty}\|\tau_n^{(k)}-\tau_n\|^q\right)^\frac{1}{q} \to 0 \text{ as } k \to \infty.
		\end{align*}
	\end{enumerate}	
\end{proof}

{\onehalfspacing \chapter{p-APPROXIMATE SCHAUDER FRAMES FOR BANACH SPACES}\label{chap4} }
\section{p-APPROXIMATE SCHAUDER FRAMES}
Let $\mathcal{X}$ be a separable Banach space and $\mathcal{X}^*$ be its dual.  Equation (\ref{GFE}) motivated Casazza, Dilworth, Odell, Schlumprecht, and Zsak,  to define the notion of Schauder frame for $\mathcal{X}$ in 2008.
\begin{definition}(\cite{CASAZZA})\label{FRAMING}
	Let $\{\tau_n\}_n$ be a sequence in  $\mathcal{X}$ and 	$\{f_n\}_n$ be a sequence in  $\mathcal{X}^*.$ The pair $ (\{f_n \}_{n}, \{\tau_n \}_{n}) $ is said to be a \textbf{Schauder frame} for $\mathcal{X}$ if 
	\begin{align}\label{SFEQUA}
	x=\sum_{n=1}^\infty
	f_n(x)\tau_n, \quad \forall x \in
	\mathcal{X}.
	\end{align}
\end{definition} 
 Definition \ref{FRAMING} was generalized for $\mathbb{R}^n$  by Thomas in her Master's thesis and later to Banach spaces by  Freeman, Odell,  Schlumprecht, and Zsak. 
\begin{definition}(\cite{FREEMANODELL, THOMAS})\label{ASFDEF}
	Let $\{\tau_n\}_n$ be a sequence in  $\mathcal{X}$ and 	$\{f_n\}_n$ be a sequence in  $\mathcal{X}^*.$ The pair $ (\{f_n \}_{n}, \{\tau_n \}_{n}) $ is said to be an \textbf{approximate Schauder frame} (ASF) for $\mathcal{X}$ if 
	\begin{align}\label{ASFEQUA}
	\text {(\textbf{Frame operator})}\quad	S_{f, \tau}:\mathcal{X}\ni x \mapsto S_{f, \tau}x\coloneqq \sum_{n=1}^\infty
	f_n(x)\tau_n \in
	\mathcal{X}
	\end{align}
	is a well-defined bounded linear, invertible operator.
\end{definition} 
Note that whenever $S_{f, \tau}=I_\mathcal{X}$, the identity operator on $\mathcal{X}$, Definition \ref{ASFDEF} reduces to Definition \ref{FRAMING}. Since $S_{f, \tau}$ is invertible, it follows that there are $a,b>0$ such that 
\begin{align*}
a\|x\|\leq \left\|\sum_{n=1}^\infty
f_n(x)\tau_n \right\|\leq b\|x\|, \quad \forall x \in  \mathcal{X}.
\end{align*} 
We call $a$ as \textbf{lower ASF bound} and $b$ as \textbf{upper ASF bound}. Supremum (resp. infimum) of the set of all lower (resp. upper) ASF bounds is called \textbf{optimal lower} (resp.   \textbf{optimal upper}) ASF bound. From the theory of bounded linear operators between Banach spaces, one sees that optimal lower frame bound is $ \|S_{f, \tau}^{-1}\|^{-1}$ and optimal upper frame bound is $  \|S_{f,\tau}\|$. Advantage of ASF over Schauder frame is that it is more easier to get the operator in (\ref{ASFEQUA}) as invertible than obtaining Equation (\ref{SFEQUA}).
\begin{example}(\cite{FREEMANODELL})
Let  $2<p<\infty$ and $\{\lambda_n\}_n$  be an unbounded sequence of scalars. For $a\in \mathbb{R}$, define 
\begin{align*}
		T_a:	\mathcal{L}^p(\mathbb{R})\ni f \mapsto T_af \in \mathcal{L}^p(\mathbb{R});\quad  T_af: \mathbb{R} \ni x \mapsto ( T_af)(x)\coloneqq f(x-a)\in \mathbb{C}. 
\end{align*}
Then there exist  $\phi \in \mathcal{L}^p(\mathbb{R})$ and a sequence $\{f_n \}_{n}$, $f_n \in ( \mathcal{L}^p(\mathbb{R}))^*$, $\forall n \in \mathbb{N}$ such that $ (\{f_n \}_{n}, \{T_{\lambda_n}\phi \}_{n}) $ is an ASF for $	\mathcal{L}^p(\mathbb{R})$.
\end{example}
\begin{definition}\label{PASFDEF}
	An ASF $ (\{f_n \}_{n}, \{\tau_n \}_{n}) $  for $\mathcal{X}$	is said to be a \textbf{p-approximate Schauder frame} (p-ASF), $p \in [1, \infty)$ if both the maps 
	\begin{align}\label{ALIGNP}
	&\text{(\textbf{Analysis operator})} \quad \theta_f: \mathcal{X}\ni x \mapsto \theta_f x\coloneqq \{f_n(x)\}_n \in \ell^p(\mathbb{N}) \text{ and } \\
	&\text{(\textbf{Synthesis operator})} \quad\theta_\tau : \ell^p(\mathbb{N}) \ni \{a_n\}_n \mapsto \theta_\tau \{a_n\}_n\coloneqq \sum_{n=1}^\infty a_n\tau_n \in \mathcal{X}\label{AAA}
	\end{align}
	are well-defined bounded linear operators. A Schauder frame which is a p-ASF is called as a \textbf{simple  p-ASF} or \textbf{Parseval  p-ASF}.
\end{definition}
It can be easily observed  that a p-approximate Schauder frame is an approximate Schauder frame and a Schauder frame is an approximate Schauder frame. We now give an example to  show that the set of all p-approximate Schauder frames is strictly smaller than the set of all approximate Schauder frames. Let  $\mathcal{X}=\mathbb{K}$. Define $\tau_n\coloneqq \frac{1}{n^2}$, $f_n(x)=x, \forall x \in \mathbb{K}$, $\forall n \in \mathbb{N}$. Then $\sum_{n=1}^{\infty}f_n(x)\tau_n=\frac{\pi^2}{6}x$, $\forall x \in \mathbb{K}$. Therefore $ (\{f_n\}_{n}, \{\tau_n\}_{n}) $ is an  approximate Schauder frame for 	$\mathcal{X}$. Let $ x \in \mathbb{K}$ be non zero. Then for every $p\in[1,\infty)$,
\begin{align*}
\sum_{n=1}^{m}|f_n(x)|^p=m|x|^p \to \infty \quad \text{as} \quad m \to \infty.
\end{align*}
Thus $\{f_n(x)\}_n\notin \ell^p(\mathbb{N})$ for any $p\in[1,\infty)$ and hence $ (\{f_n\}_{n}, \{\tau_n\}_{n}) $ is not a p-ASF for  any $p\in[1,\infty)$. We next note that  there is a bijection between the set of approximate Schauder frames and the set of all Schauder frames (Lemma 3.1 in (\cite{FREEMANODELL})).
We observe that, in terms of inequalities, (\ref{ALIGNP}) and (\ref{AAA}) say that there exist $c,d>0$, such that 
\begin{align}
&\left(\sum_{n=1}^\infty
|f_n(x)|^p\right)^\frac{1}{p}\leq c \|x\|, \quad \forall x \in \mathcal{X} \text{ and }\label{FIRSTINEQUALITYPASF} \\
&\left\|\sum_{n=1}^\infty a_n\tau_n\right\|\leq d \left(\sum_{n=1}^\infty
|a_n|^p\right)^\frac{1}{p}, \quad \forall \{a_n\}_n  \in \ell^p(\mathbb{N})\label{SECONDINEQUALITYPASF}.
\end{align}
We now give various examples of p-ASFs.
\begin{example}
	Let $p\in[1,\infty)$ and $U:\mathcal{X} \rightarrow\ell^p(\mathbb{N})$, $ V: \ell^p(\mathbb{N})\to \mathcal{X}$ be bounded linear operators such that $VU$ is bounded invertible. Let $\{e_n\}_n$ denote the standard Schauder basis for  $\ell^p(\mathbb{N})$  and let $\{\zeta_n\}_n$ denote the coordinate functionals associated with $\{e_n\}_n$.	Define 
	\begin{align*}
	f_n\coloneqq \zeta_n U, \quad \tau_n\coloneqq Ve_n, \quad \forall n \in \mathbb{N}.
	\end{align*}
	Then $ (\{f_n\}_{n}, \{\tau_n\}_{n}) $ is a p-ASF for 	$\mathcal{X}$. In particular, if $U:\ell^p(\mathbb{N}) \rightarrow\ell^p(\mathbb{N})$ is bounded invertible, then  $(\{\zeta_nU\}_{n}, \{U^{-1}e_n\}_{n}) $ is a p-ASF for $\ell^p(\mathbb{N})$.
\end{example}
\begin{example}
	Let $p\in[1,\infty)$ and  $\{\tau_n\}_{n=1}^m $	be a basis for a finite dimensional Banach space $\mathcal{X}$. Choose any basis $\{f_n\}_{n=1}^m $	 for  $\mathcal{X}^*$. We claim that $(\{f_n\}_{n=1}^m, \{\tau_n\}_{n=1}^m) $ is a p-ASF for $\mathcal{X}$. To prove this, since $\mathcal{X}$ is finite dimensional, it suffices to prove that the map $\mathcal{X}\ni x \mapsto \sum_{n=1}^{m}f_n(x)\tau_n\in \mathcal{X}$ is injective. Let $x \in \mathcal{X}$ be such that $\sum_{n=1}^{m}f_n(x)\tau_n=0$. Since $\{\tau_n\}_{n=1}^m $	is a basis for   $\mathcal{X}$, we then have $f_1(x)=\cdots=f_n(x)=0$. We then have $f(x)=0, \forall f \in \mathcal{X}^*$. Hahn-Banach theorem now says that $x=0$. Hence the claim holds and consequently $(\{f_n\}_{n=1}^m, \{\tau_n\}_{n=1}^m) $ is a p-ASF for $\mathcal{X}$.
\end{example}
\begin{example}
	Recall that  a spanning set is a frame for a finite dimensional Hilbert space (\cite{HANKORNELSONLARSON}). We now generalize this for p-ASFs. Let $p\in[1,\infty)$, $\mathcal{X}$ be a finite dimensional Banach space and $\{\tau_n\}_{n=1}^m $	be a spanning set for $\mathcal{X}$. We claim that there exists a collection $\{f_n\}_{n=1}^m $ in  $\mathcal{X}^*$ such that $ (\{f_n\}_{n=1}^m, \{\tau_n\}_{n=1}^m) $ is a p-ASF for 	$\mathcal{X}$. Since $\{\tau_n\}_{n=1}^m $	spans $\mathcal{X}$, there exists a basis in the collection $\{\tau_n\}_{n=1}^m $. By rearranging, if necessary, we may assume that $\{\tau_n\}_{n=1}^r $ is a basis for $\mathcal{X}$. Let $\{f_n\}_{n=1}^r $ be the dual basis for $\{\tau_n\}_{n=1}^r $. Choose linear operators $U,V:\mathcal{X}\to \mathcal{X}$ such that $VU$ is injective or surjective. If we now set $f_{r+1}=\cdots=f_n=0$, it then follows that $(\{f_nU\}_{n=1}^m, \{V\tau_n\}_{n=1}^m) $ is a p-ASF for $\mathcal{X}$.
\end{example}
\begin{example}
	Let $\mathcal{X}$ be a Banach space which admits a Schauder basis $\{\omega_n\}_{n} $ and let $\{g_n\}_{n}$ be the coordinate functionals associated with $\{e_n\}_n$.  Let   $U, V:\mathcal{X} \rightarrow \mathcal{X}$ be bounded linear operators such that $VU$ is invertible.  	Define 
	\begin{align*}
	f_n\coloneqq g_n U, \quad \tau_n\coloneqq V\omega_n, \quad \forall n \in \mathbb{N}.
	\end{align*}
	Then $ (\{f_n\}_{n}, \{\tau_n\}_{n}) $ is an approximate Schauder frame  for 	$\mathcal{X}$.   	 If $VU=I_\mathcal{X}$, then $ (\{f_n\}_{n}, \{\tau_n\}_{n}) $ is a Schauder frame  for 	$\mathcal{X}$.  
\end{example}
\begin{example}\label{EXAMPLE2}
	Let $p\in[1,\infty)$ and $U:\mathcal{X} \rightarrow\ell^p(\mathbb{N})$, $ V: \ell^p(\mathbb{N})\to \mathcal{X}$ be bounded linear operators such that $VU$ is  invertible. Let $\{e_n\}_n$ denote the canonical Schauder basis for  $\ell^p(\mathbb{N})$  and let $\{\zeta_n\}_n$ denote the coordinate functionals associated with $\{e_n\}_n$.	Define 
	\begin{align*}
	f_n\coloneqq \zeta_n U, \quad \tau_n\coloneqq Ve_n, \quad \forall n \in \mathbb{N}.
	\end{align*}
	Then $ (\{f_n\}_{n}, \{\tau_n\}_{n}) $ is a p-ASF for 	$\mathcal{X}$. 
\end{example}
Now we have Banach space analogous of Theorem \ref{MOSTIMPORTANT}.
\begin{theorem}\label{OURS}
	Let $ (\{f_n \}_{n}, \{\tau_n \}_{n}) $ be a p-ASF for $\mathcal{X}$.  Then
	\begin{enumerate}[label=(\roman*)]
		\item We have 
		\begin{align}\label{REPBANACH}
		x=\sum_{n=1}^\infty (f_nS_{f, \tau}^{-1})(x) \tau_n=\sum_{n=1}^\infty
		f_n(x) S_{f, \tau}^{-1}\tau_n, \quad \forall x \in
		\mathcal{X}.
		\end{align}
		\item $ (\{f_nS_{f, \tau}^{-1} \}_{n}, \{S_{f, \tau}^{-1} \tau_n \}_{n}) $ is a p-ASF for $\mathcal{X}$.
		\item The analysis operator 
		$
		\theta_f: \mathcal{X} \ni x \mapsto \{f_n(x) \}_n \in \ell^p(\mathbb{N})
		$
		is injective. 
		\item 
		The synthesis operator 
		$
		\theta_\tau: \ell^p(\mathbb{N}) \ni \{a_n \}_n \mapsto \sum_{n=1}^\infty a_n\tau_n \in \mathcal{X}
		$
		is surjective.
		\item Frame operator 
		splits as $S_{f, \tau}=\theta_\tau\theta_f.$
		\item  $P_{f, \tau}\coloneqq\theta_fS_{f,\tau}^{-1}\theta_\tau:\ell^p(\mathbb{N})\to \ell^p(\mathbb{N})$ is a projection onto   $\theta_f(\mathcal{X})$.
	\end{enumerate}
\end{theorem}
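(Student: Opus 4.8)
The plan is to exploit the fact that $S_{f,\tau}$ is, by the definition of an approximate Schauder frame, a bounded linear invertible operator on $\mathcal{X}$, together with the factorization $S_{f,\tau}=\theta_\tau\theta_f$ that the $p$-ASF structure makes available. First I would establish (v): for $x\in\mathcal{X}$ we have $\theta_f x=\{f_n(x)\}_n\in\ell^p(\mathbb N)$ by (\ref{FIRSTINEQUALITYPASF}), and applying $\theta_\tau$ gives $\theta_\tau\theta_f x=\sum_{n=1}^\infty f_n(x)\tau_n=S_{f,\tau}x$ by (\ref{AAA}) and (\ref{ASFEQUA}); since $x$ was arbitrary, $S_{f,\tau}=\theta_\tau\theta_f$. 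Both $\theta_f$ and $\theta_\tau$ are bounded linear operators by Definition \ref{PASFDEF}, so this identity is legitimate.

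With the factorization in hand, (iii) and (iv) follow formally: if $\theta_f x=0$ then $S_{f,\tau}x=\theta_\tau\theta_f x=0$, and invertibility of $S_{f,\tau}$ forces $x=0$, so $\theta_f$ is injective; and since $S_{f,\tau}=\theta_\tau\theta_f$ is surjective (being invertible), $\theta_\tau$ is surjective as well. For (i), I would start from $x=S_{f,\tau}S_{f,\tau}^{-1}x$ and $x=S_{f,\tau}^{-1}S_{f,\tau}x$; expanding $S_{f,\tau}$ via (\ref{ASFEQUA}) gives $x=\sum_{n=1}^\infty f_n(S_{f,\tau}^{-1}x)\tau_n=\sum_{n=1}^\infty (f_nS_{f,\tau}^{-1})(x)\tau_n$ for the first equality, and for the second, $S_{f,\tau}^{-1}$ is bounded linear so it commutes with the convergent series, yielding $x=S_{f,\tau}^{-1}\big(\sum_{n=1}^\infty f_n(x)\tau_n\big)=\sum_{n=1}^\infty f_n(x)S_{f,\tau}^{-1}\tau_n$. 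This establishes (\ref{REPBANACH}).

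For (ii), I would check that the pair $(\{f_nS_{f,\tau}^{-1}\}_n,\{S_{f,\tau}^{-1}\tau_n\}_n)$ satisfies Definition \ref{PASFDEF}. Its frame operator is $x\mapsto\sum_{n=1}^\infty (f_nS_{f,\tau}^{-1})(x)(S_{f,\tau}^{-1}\tau_n)=S_{f,\tau}^{-1}\big(\sum_{n=1}^\infty f_n(S_{f,\tau}^{-1}x)\tau_n\big)=S_{f,\tau}^{-1}S_{f,\tau}S_{f,\tau}^{-1}x=S_{f,\tau}^{-1}x$, which is bounded linear and invertible, so the pair is an ASF. The analysis operator of the new pair is $\theta_f S_{f,\tau}^{-1}$, a composition of bounded operators hence bounded into $\ell^p(\mathbb N)$; the synthesis operator is $\{a_n\}_n\mapsto\sum_{n=1}^\infty a_n S_{f,\tau}^{-1}\tau_n=S_{f,\tau}^{-1}\theta_\tau\{a_n\}_n$, again bounded. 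So the new pair is a $p$-ASF. Finally, for (vi) I would verify idempotency by computing $P_{f,\tau}^2=\theta_fS_{f,\tau}^{-1}\theta_\tau\theta_fS_{f,\tau}^{-1}\theta_\tau=\theta_fS_{f,\tau}^{-1}S_{f,\tau}S_{f,\tau}^{-1}\theta_\tau=\theta_fS_{f,\tau}^{-1}\theta_\tau=P_{f,\tau}$, using (v); then the range of $P_{f,\tau}$ contains $\theta_f(\mathcal X)$ because $P_{f,\tau}\theta_f=\theta_fS_{f,\tau}^{-1}\theta_\tau\theta_f=\theta_fS_{f,\tau}^{-1}S_{f,\tau}=\theta_f$, and conversely every element of the range has the form $\theta_f(S_{f,\tau}^{-1}\theta_\tau a)\in\theta_f(\mathcal X)$.

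None of the six parts presents a genuine obstacle; the whole proof is bookkeeping once the factorization $S_{f,\tau}=\theta_\tau\theta_f$ is set up, which is why I would prove (v) first even though it is listed fifth. The only points demanding a little care are the interchanges of $S_{f,\tau}^{-1}$ (or $S_{f,\tau}^{-1}\theta_\tau$) with infinite series, all of which are justified by continuity and linearity of the operators involved, and keeping track of which composition plays the role of analysis versus synthesis operator for the dual pair in (ii). I expect the mild subtlety of confirming $\ell^p$-membership of the relevant coefficient sequences (so that $\theta_\tau$ may be applied) to be the most delicate bit, but it is immediate from (\ref{FIRSTINEQUALITYPASF}).
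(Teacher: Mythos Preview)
Your proof is correct and follows essentially the same approach as the paper, which merely sketches: (i) comes from continuity and linearity of $S_{f,\tau}^{-1}$, (ii)--(iv) from invertibility of $S_{f,\tau}$, and (v)--(vi) are routine calculations. Your choice to establish the factorization (v) first and derive the rest from it is a sensible ordering that makes the dependencies explicit, and your level of detail---particularly the verification that the dual pair in (ii) has bounded analysis and synthesis operators, and the two-sided range computation in (vi)---fills in steps the paper leaves to the reader.
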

\begin{proof}
	First follows from the continuity and linearity of $S_{f, \tau}^{-1}$.  Because $S_{f, \tau}$ is invertible, we have (ii). Again invertibility of  $S_{f, \tau}$ makes $\theta_f$  injective and $\theta_\tau$  surjective.  (v) and (vi) are routine calculations.
\end{proof}
Now we can derive a generalization of Theorem \ref{HOLUBTHEOREM} for Banach spaces.
\begin{theorem}\label{THAFSCHAR}
 Let $\{e_n\}_n$ denote the standard Schauder basis for  $\ell^p(\mathbb{N})$  and let $\{\zeta_n\}_n$ denote the coordinate functionals associated with $\{e_n\}_n$.	A pair  $ (\{f_n\}_{n}, \{\tau_n\}_{n}) $ is a p-ASF for 	$\mathcal{X}$
	if and only if 
	\begin{align*}
	f_n=\zeta_n U, \quad \tau_n=Ve_n, \quad \forall n \in \mathbb{N},
	\end{align*}  where $U:\mathcal{X} \rightarrow\ell^p(\mathbb{N})$, $ V: \ell^p(\mathbb{N})\to \mathcal{X}$ are bounded linear operators such that $VU$ is bounded invertible.
\end{theorem}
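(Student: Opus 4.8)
\textbf{Proof plan for Theorem \ref{THAFSCHAR}.}

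The plan is to verify the two directions separately, with the reverse (``if'') direction being essentially a special case of an example already recorded in the text and the forward (``only if'') direction requiring the explicit construction of the operators $U$ and $V$ from the frame data. I would begin with the reverse direction: suppose $f_n = \zeta_n U$ and $\tau_n = V e_n$ for bounded linear $U:\mathcal{X}\to\ell^p(\mathbb{N})$, $V:\ell^p(\mathbb{N})\to\mathcal{X}$ with $VU$ bounded invertible. For each $x\in\mathcal{X}$ we have $\{f_n(x)\}_n = \{\zeta_n(Ux)\}_n = Ux \in \ell^p(\mathbb{N})$, so $\theta_f = U$ is a well-defined bounded linear operator (this is exactly (\ref{FIRSTINEQUALITYPASF})). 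For the synthesis side, for $\{a_n\}_n\in\ell^p(\mathbb{N})$ we have $\sum_{n=1}^\infty a_n\tau_n = \sum_{n=1}^\infty a_n V e_n = V\big(\sum_{n=1}^\infty a_n e_n\big)$ by continuity of $V$, so $\theta_\tau = V$ is well-defined and bounded (this is (\ref{SECONDINEQUALITYPASF})). Consequently $S_{f,\tau} = \theta_\tau\theta_f = VU$, which is bounded invertible by hypothesis; hence $(\{f_n\}_n,\{\tau_n\}_n)$ is a p-ASF. This direction is really just a restatement of Example \ref{EXAMPLE2}.

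For the forward direction, assume $(\{f_n\}_n,\{\tau_n\}_n)$ is a p-ASF for $\mathcal{X}$. By Definition \ref{PASFDEF} the analysis operator $\theta_f:\mathcal{X}\to\ell^p(\mathbb{N})$, $\theta_f x = \{f_n(x)\}_n$, and the synthesis operator $\theta_\tau:\ell^p(\mathbb{N})\to\mathcal{X}$, $\theta_\tau\{a_n\}_n = \sum_n a_n\tau_n$, are both well-defined bounded linear operators, and by (v) of Theorem \ref{OURS} we have $S_{f,\tau} = \theta_\tau\theta_f$, which is bounded invertible. I would then simply set $U := \theta_f$ and $V := \theta_\tau$. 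It remains to check the formulas $f_n = \zeta_n U$ and $\tau_n = V e_n$. For the first: $(\zeta_n U)(x) = \zeta_n(\theta_f x) = \zeta_n(\{f_k(x)\}_k) = f_n(x)$ for all $x\in\mathcal{X}$, so $f_n = \zeta_n U$. For the second: $V e_n = \theta_\tau e_n = \sum_{k=1}^\infty (e_n)_k \tau_k = \tau_n$, using that the $m$-th coordinate of $e_n$ is $\delta_{n,m}$. Finally $VU = \theta_\tau\theta_f = S_{f,\tau}$ is bounded invertible, completing the proof.

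I do not anticipate a serious obstacle here: the statement is a Banach-space analogue of Holub's theorem (Theorem \ref{HOLUBTHEOREM}), and the whole content has been front-loaded into Definition \ref{PASFDEF} and part (v) of Theorem \ref{OURS}, so the argument is a bookkeeping exercise in unwinding definitions. The only point that deserves a word of care is the verification that $\theta_\tau e_n = \tau_n$ and $\zeta_n\theta_f = f_n$, i.e.\ that the candidate operators genuinely reproduce the original sequences; this is immediate from the definitions of the standard Schauder basis $\{e_n\}_n$ and its coordinate functionals $\{\zeta_n\}_n$. One might also remark that $U$ and $V$ are not unique (any factorization $U' = RU$, $V' = VR^{-1}$ with $R$ invertible on a suitable space works), but uniqueness is not claimed in the statement, so I would leave this as at most a closing remark.
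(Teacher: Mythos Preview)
Your proposal is correct and follows essentially the same approach as the paper: in both directions one identifies $U=\theta_f$ and $V=\theta_\tau$, verifies $\zeta_n U=f_n$ and $Ve_n=\tau_n$ directly from the definitions, and observes $VU=\theta_\tau\theta_f=S_{f,\tau}$. Your write-up is slightly more detailed (e.g.\ explicitly noting $\theta_f=U$, $\theta_\tau=V$ in the reverse direction and the closing remark on non-uniqueness), but the argument is the same.
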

\begin{proof}
	$(\Leftarrow)$ Clearly $\theta_f$ and $\theta_\tau$ are bounded linear operators. Now let $x\in \mathcal{X}$. Then 
	\begin{align}\label{PASFFIRSTTHEOREMEQUATION}
	S_{f, \tau}x= \sum_{n=1}^\infty
	f_n(x)\tau_n=\sum_{n=1}^\infty \zeta_n(Ux)Ve_n=V\left(\sum_{n=1}^\infty \zeta_n(Ux)e_n\right)=VUx.
	\end{align} 
	Hence $S_{f, \tau}$ is bounded invertible. \\
	$(\Rightarrow)$ Define $U\coloneqq \theta_f$, $V\coloneqq \theta_\tau$. Then $\zeta_nUx=\zeta_n\theta_fx=\zeta_n(\{f_k(x)\}_k)=f_n(x)$, $\forall x \in \mathcal{X}$, $Ve_n=\theta_\tau e_n=\tau_n$, $\forall n \in \mathbb{N}$ and $VU=\theta_\tau \theta_f=S_{f, \tau}$ which is bounded invertible.
\end{proof}
Note that Theorem \ref{THAFSCHAR}  generalizes Theorem \ref{HOLUBTHEOREM}. In fact, in the case of Hilbert spaces, Theorem \ref{THAFSCHAR} reads  as ``A sequence $\{\tau_n\}_n$ in  $\mathcal{H}$ is a
frame for $\mathcal{H}$	if and only if there exists a  bounded linear operator $T:\ell^2(\mathbb{N}) \to \mathcal{H}$ such that $Te_n=\tau_n$, for all $n \in \mathbb{N}$ and $TT^*$ is invertible". Now we know that $TT^*$ is invertible if and only if $T$ is surjective.\\
Since every separable Hilbert space admits an orthonormal basis, the existence of orthonormal basis in  Theorem \ref{OLECHA} is automatic. On the other hand, Enflo showed that there are  separable Banach spaces which do not  have Schauder basis (\cite{JAMES, ENFLO}). Thus to obtain analogous of Theorem \ref{OLECHA} for  Banach spaces, we need to impose condition on $\mathcal{X}$.
\begin{theorem}\label{SCHFRS}
	Assume that $\mathcal{X}$ admits a Schauder basis $\{\omega_n\}_n$. Let $\{g_n\}_n$ denote the coordinate functionals associated with $\{\omega_n\}_n$. Assume that 
	\begin{align}\label{ASSUMPTIONNEEDED}
	\{g_n(x)\}_n \in \ell^p(\mathbb{N}), \quad \forall x \in \mathcal{X}.
	\end{align} 
	Then a  pair  $ (\{f_n\}_{n}, \{\tau_n\}_{n}) $ is a p-ASF for 	$\mathcal{X}$
	if and only if 
	\begin{align*}
	f_n=g_n U, \quad \tau_n=V\omega_n, \quad \forall n \in \mathbb{N},
	\end{align*}  where $U, V:\mathcal{X} \rightarrow  \mathcal{X}$ are bounded linear operators such that $VU$ is bounded invertible.
\end{theorem}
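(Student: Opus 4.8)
\textbf{Proof proposal for Theorem \ref{SCHFRS}.}

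The plan is to mimic the proof of Theorem \ref{THAFSCHAR}, using the assumption \eqref{ASSUMPTIONNEEDED} to make the analysis-type operator land in $\ell^p(\mathbb{N})$ even though the underlying Schauder basis of $\mathcal{X}$ is not the canonical basis of $\ell^p(\mathbb{N})$. First I would set up two auxiliary operators attached to the Schauder basis $\{\omega_n\}_n$: the ``coordinate'' operator $\Phi:\mathcal{X}\ni x\mapsto\{g_n(x)\}_n$, which by \eqref{ASSUMPTIONNEEDED} maps into $\ell^p(\mathbb{N})$ and is bounded by the closed graph theorem (the coordinate functionals $g_n$ are continuous since $\{\omega_n\}_n$ is a Schauder basis, so $\Phi$ has closed graph as a map $\mathcal{X}\to\ell^p(\mathbb{N})$), and the ``expansion'' operator $\Psi:\ell^p(\mathbb{N})\ni\{a_n\}_n\mapsto\sum_{n=1}^\infty a_n\omega_n$; boundedness of $\Psi$ follows because partial sums are uniformly bounded via the basis constant together with a standard Hölder/Abel-summation estimate against $\ell^p$ sequences, exactly the kind of estimate already used implicitly in \eqref{SECONDINEQUALITYPASF}. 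Note $\Psi\Phi=I_\mathcal{X}$ since $\sum g_n(x)\omega_n=x$ for every $x$.

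For the $(\Leftarrow)$ direction I would assume $f_n=g_nU$, $\tau_n=V\omega_n$ with $U,V\in\mathcal{B}(\mathcal{X})$ and $VU$ invertible. Then $\theta_f=\Phi U$ and $\theta_\tau=V\Psi$ are bounded linear operators into/out of $\ell^p(\mathbb{N})$, so $(\{f_n\}_n,\{\tau_n\}_n)$ is at least a candidate p-ASF; the frame operator computes, just as in \eqref{PASFFIRSTTHEOREMEQUATION}, to
\begin{align*}
S_{f,\tau}x=\sum_{n=1}^\infty g_n(Ux)V\omega_n=V\Big(\sum_{n=1}^\infty g_n(Ux)\omega_n\Big)=VUx,\quad\forall x\in\mathcal{X},
\end{align*}
hence $S_{f,\tau}=VU$ is bounded invertible and $(\{f_n\}_n,\{\tau_n\}_n)$ is a p-ASF.

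For $(\Rightarrow)$ I would start from a p-ASF $(\{f_n\}_n,\{\tau_n\}_n)$ and try to recover $U,V$. The naive choice $U=\theta_f$, $V=\theta_\tau$ does not work here, because these are operators between $\mathcal{X}$ and $\ell^p(\mathbb{N})$, not endomorphisms of $\mathcal{X}$; I must conjugate by $\Phi,\Psi$. The right definitions are $U\coloneqq\Psi\theta_f$ and $V\coloneqq\theta_\tau\Phi$, both in $\mathcal{B}(\mathcal{X})$. Then $g_nU=g_n\Psi\theta_f$; since $g_m\Psi=\zeta_m$ (the $m$-th coordinate functional on $\ell^p(\mathbb{N})$) because $g_m(\sum a_n\omega_n)=a_m$, we get $g_nU x=\zeta_n(\theta_f x)=f_n(x)$, i.e.\ $f_n=g_nU$; and $V\omega_n=\theta_\tau\Phi\omega_n=\theta_\tau e_n=\tau_n$ because $\Phi\omega_n=\{g_k(\omega_n)\}_k=e_n$. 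Finally $VU=\theta_\tau\Phi\Psi\theta_f$, and here is the one subtle point: $\Phi\Psi$ is \emph{not} the identity on $\ell^p(\mathbb{N})$ in general, only a bounded idempotent onto $\Phi(\mathcal{X})$. However $\theta_f(\mathcal{X})\subseteq\Phi(\mathcal{X})$ is false in general too, so I need to be more careful — the clean fix is to observe $\Psi\Phi=I_\mathcal{X}$ and instead compute $VU$ after noting that $\theta_\tau\Phi\Psi\theta_f$ acting on $x$ gives $\theta_\tau\Phi(\Psi\theta_f x)$; write $y=\Psi\theta_f x\in\mathcal{X}$, then $\Phi y=\{g_n(y)\}_n$ and $\theta_\tau\Phi y=\sum_n g_n(y)\tau_n=\sum_n g_n(y)V\omega_n=V\sum_n g_n(y)\omega_n=Vy=V\Psi\theta_f x=Ux$. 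Wait — that gives $VU=U$, which is wrong; the correct bookkeeping is $\theta_\tau\Phi y=S_{g?}$ — so the honest statement is that $VU x=\sum_n f_n(x)\tau_n=S_{f,\tau}x$ only when $\Phi\Psi$ collapses correctly, and the legitimate route is: $VUx=\theta_\tau\Phi\Psi\theta_f x$, and since $\Psi\theta_f x\in\mathcal{X}$ has basis expansion with coefficients $g_n(\Psi\theta_f x)$, while also $\Psi\theta_f x=\sum_k (\theta_f x)_k\omega_k=\sum_k f_k(x)\omega_k$, uniqueness of basis coefficients gives $g_n(\Psi\theta_f x)=f_n(x)$, hence $\theta_\tau\Phi\Psi\theta_f x=\sum_n f_n(x)\tau_n=S_{f,\tau}x$. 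Thus $VU=S_{f,\tau}$ is bounded invertible, completing the proof.

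\textbf{Anticipated main obstacle.} The routine parts (boundedness of $\Phi$, $\Psi$) are standard. The genuine care-point, as the messy computation above signals, is verifying $VU=S_{f,\tau}$: one must track that $\Psi\theta_f$ reconstructs an element of $\mathcal{X}$ whose $\{g_n\}$-coordinates are exactly $\{f_n(x)\}$, which relies crucially on the uniqueness of Schauder-basis expansions and on $\theta_f x\in\ell^p(\mathbb{N})$. I would present this step cleanly by first proving the identity $g_n\circ\Psi=\zeta_n$ and $\Phi\circ\omega_n\mapsto e_n$ as a short lemma, then the composition identities fall out formally without the false turns above.
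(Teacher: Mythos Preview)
Your approach is essentially identical to the paper's: the paper defines the single operator $T:\mathcal{X}\ni\sum a_n\omega_n\mapsto\sum a_ne_n\in\ell^p(\mathbb{N})$ (your $\Phi$), asserts it is bounded and invertible with inverse $T^{-1}$ (your $\Psi$), and then sets $U\coloneqq T^{-1}\theta_f$, $V\coloneqq\theta_\tau T$, obtaining $VU=\theta_\tau TT^{-1}\theta_f=\theta_\tau\theta_f=S_{f,\tau}$ in one line. Your convoluted detour around $\Phi\Psi$ is unnecessary: once you have granted that $\Psi$ is well-defined and bounded on all of $\ell^p(\mathbb{N})$, uniqueness of Schauder-basis coefficients gives $g_n\circ\Psi=\zeta_n$, hence $\Phi\Psi=I_{\ell^p(\mathbb{N})}$ outright, and the computation collapses to the paper's single line---so your anticipated ``main obstacle'' dissolves the moment you prove the little lemma you yourself propose at the end.
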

\begin{proof}
	$(\Leftarrow)$ This is similar to the calculation done in (\ref{PASFFIRSTTHEOREMEQUATION}).\\
	$(\Rightarrow)$ Let $T$ be the map defined by 
	\begin{align*}
	T:\mathcal{X}\ni \sum_{n=1}^\infty a_n\omega_n\mapsto \sum_{n=1}^\infty a_ne_n\in \ell^p(\mathbb{N}).
	\end{align*}
	Assumption (\ref{ASSUMPTIONNEEDED}) then says  that $T$ is a bounded invertible operator with inverse $ T^{-1} :\ell^p(\mathbb{N}) \ni \sum_{n=1}^\infty b_ne_n \mapsto \sum_{n=1}^\infty b_n\omega_n \in \mathcal{X}$. Define $ U\coloneqq T^{-1}\theta_f$ and $V\coloneqq\theta_\tau T$. Then $ U,V$ are bounded  such that  $ VU=(\theta_\tau T)(T^{-1}\theta_f)=\theta_\tau\theta_f=S_{f,\tau}$ is invertible  and  for $ x \in \mathcal{X}$ we have 
	\begin{align*}
	(g_nU)(x)&= g_n(T^{-1}\theta_fx)=g_n(T^{-1}(\{f_k(x)\}_{k})) 
	= g_n\left(\sum_{k=1}^\infty f_k(x)T^{-1}e_k\right)
	\\&=g_n\left(\sum_{k=1}^\infty f_k(x)\omega_k\right)
	=\sum_{k=1}^\infty f_k(x)g_n(\omega_k)=f_n(x), \quad \forall x \in \mathcal{X}
	\end{align*} 
	and $ V\omega_n=\theta_\tau T\omega_n=\theta_\tau e_n=\tau_n, \forall  n \in \mathbb{N}$.
\end{proof}
\section{DUAL FRAMES FOR p-APPROXIMATE SCHAUDER FRA-MES}
Equation (\ref{REPBANACH}) motivates us to define the notion of dual frame as follows.
\begin{definition}\label{SIMILARITYMINE}
	Let $ (\{f_n\}_{n}, \{\tau_n\}_{n}) $ be a p-ASF for 	$\mathcal{X}$. 	A p-ASF $ (\{g_n \}_{n}, \{\omega_n \}_{n}) $ for $\mathcal{X}$ is a \textbf{dual p-ASF} for $ (\{f_n \}_{n}, \{\tau_n \}_{n}) $ if 
	\begin{align*}
	x=\sum_{n=1}^\infty g_n(x) \tau_n=\sum_{n=1}^\infty
	f_n(x) \omega_n, \quad \forall x \in
	\mathcal{X}.
	\end{align*}
\end{definition}
Note that dual frames always exist. In fact, the  Equation (\ref{REPBANACH}) shows that the frame $ (\{f_nS_{f, \tau}^{-1} \}_{n}, \{S_{f, \tau}^{-1} \tau_n \}_{n}) $ is a  dual for $ (\{f_n\}_{n}, \{\tau_n\}_{n}) $.  We call the  frame $ (\{f_nS_{f, \tau}^{-1} \}_{n}, $ $ \{S_{f, \tau}^{-1} \tau_n \}_{n}) $ as the  \textbf{canonical dual} for  $ (\{f_n\}_{n}, \{\tau_n\}_{n}) $. With this notion, the following theorem  follows easily.
\begin{theorem}
Let $ (\{f_n\}_{n}, \{\tau_n\}_{n}) $ be a  p-ASF for $ \mathcal{X}$ with frame bounds $ a$ and $ b.$ Then
\begin{enumerate}[label=(\roman*)]
	\item The canonical dual p-ASF for the canonical dual p-ASF  for  $ (\{f_n\}_{n}, \{\tau_n\}_{n}) $ is itself.
	\item$ \frac{1}{b}, \frac{1}{a}$ are frame bounds for the canonical dual for $ (\{f_n\}_{n}, \{\tau_n\}_{n}) $.
	\item If $ a, b $ are optimal frame bounds for $ (\{f_n\}_{n}, \{\tau_n\}_{n}) $, then $ \frac{1}{b}, \frac{1}{a}$ are optimal  frame bounds for its canonical dual.
\end{enumerate} 
\end{theorem}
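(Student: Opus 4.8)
The plan is to read everything off the operator identity $S_{g,\omega} = S_{f,\tau}^{-1}$, where $(\{g_n\}_n,\{\omega_n\}_n) = (\{f_nS_{f,\tau}^{-1}\}_n,\{S_{f,\tau}^{-1}\tau_n\}_n)$ is the canonical dual. First I would verify this identity directly: for $x\in\mathcal{X}$,
\begin{align*}
S_{g,\omega}x = \sum_{n=1}^\infty g_n(x)\omega_n = \sum_{n=1}^\infty (f_nS_{f,\tau}^{-1})(x)\,S_{f,\tau}^{-1}\tau_n = S_{f,\tau}^{-1}\Big(\sum_{n=1}^\infty f_n(S_{f,\tau}^{-1}x)\tau_n\Big) = S_{f,\tau}^{-1}S_{f,\tau}S_{f,\tau}^{-1}x = S_{f,\tau}^{-1}x,
\end{align*}
using that $(\{g_n\}_n,\{\omega_n\}_n)$ is genuinely a p-ASF by (ii) of Theorem \ref{OURS} so that the series converges and $S_{f,\tau}^{-1}$ may be pulled out by boundedness and linearity. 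This establishes $S_{g,\omega}=S_{f,\tau}^{-1}$.

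For (i), the canonical dual of $(\{g_n\}_n,\{\omega_n\}_n)$ is $(\{g_nS_{g,\omega}^{-1}\}_n,\{S_{g,\omega}^{-1}\omega_n\}_n)$; since $S_{g,\omega}^{-1} = S_{f,\tau}$, this is $(\{f_nS_{f,\tau}^{-1}S_{f,\tau}\}_n,\{S_{f,\tau}S_{f,\tau}^{-1}\tau_n\}_n) = (\{f_n\}_n,\{\tau_n\}_n)$, the original frame. For (ii), recall from the discussion after Definition \ref{ASFDEF} that the optimal lower ASF bound of any p-ASF is $\|S_{f,\tau}^{-1}\|^{-1}$ and the optimal upper bound is $\|S_{f,\tau}\|$. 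Applying this to $(\{g_n\}_n,\{\omega_n\}_n)$: its optimal upper bound is $\|S_{g,\omega}\| = \|S_{f,\tau}^{-1}\|$, and since $a$ is a (not necessarily optimal) lower bound for the original frame we have $a\le\|S_{f,\tau}^{-1}\|^{-1}$, hence $\|S_{f,\tau}^{-1}\|\le \tfrac1a$, so $\tfrac1a$ is an upper bound for the dual. Symmetrically its optimal lower bound is $\|S_{g,\omega}^{-1}\|^{-1} = \|S_{f,\tau}\|^{-1}$, and $b\ge\|S_{f,\tau}\|$ gives $\|S_{f,\tau}\|^{-1}\ge\tfrac1b$, so $\tfrac1b$ is a lower bound for the dual. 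Thus $\tfrac1b,\tfrac1a$ are frame bounds for the canonical dual. For (iii), if $a,b$ are the \emph{optimal} bounds for $(\{f_n\}_n,\{\tau_n\}_n)$, then $a=\|S_{f,\tau}^{-1}\|^{-1}$ and $b=\|S_{f,\tau}\|$, so the optimal upper bound of the dual is $\|S_{g,\omega}\|=\|S_{f,\tau}^{-1}\| = \tfrac1a$ and its optimal lower bound is $\|S_{g,\omega}^{-1}\|^{-1}=\|S_{f,\tau}\|^{-1}=\tfrac1b$.

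I expect no serious obstacle here; the only point requiring a little care is making sure that in each invocation of ``optimal lower bound $=\|S^{-1}\|^{-1}$, optimal upper bound $=\|S\|$'' the relevant operator is the frame operator of the \emph{right} frame, and that the inequalities relating arbitrary bounds $a,b$ to the optimal ones are used in the correct direction (lower bounds are $\le$ the optimal lower bound, upper bounds are $\ge$ the optimal upper bound). Everything else is bookkeeping with the identity $S_{g,\omega}=S_{f,\tau}^{-1}$ and elementary operator-norm facts already recorded in the excerpt.
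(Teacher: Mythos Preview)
Your proposal is correct and matches the approach the paper intends: the paper states this theorem ``follows easily'' without detailed proof, but the parallel result for weak OVFs (Theorem \ref{CANONICALDUALFRAMEPROPERTYOPERATORVERSIONWEAK}) is proved exactly as you do, by computing the frame operator of the canonical dual to be $S_{f,\tau}^{-1}$ and reading off (i)--(iii) from that identity together with the optimal-bound formulas $\|S^{-1}\|^{-1}$ and $\|S\|$.
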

One can naturally ask when  a p-ASF has unique dual? An affirmative answer is in the following result.
\begin{proposition}
	Let $ (\{f_n\}_{n}, \{\tau_n\}_{n}) $ be a  p-ASF for $ \mathcal{X}$. If $\{\tau_n\}_{n}$ is a Schauder basis for   $\mathcal{X}$ and $ f_k(\tau_n)=\delta_{k,n},\forall k,n \in \mathbb{N}$, then  $ (\{f_n\}_{n}, \{\tau_n\}_{n}) $ has unique dual.
\end{proposition}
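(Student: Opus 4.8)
The plan is to show that the hypotheses force the dual to coincide with the canonical dual, hence uniqueness follows from the fact that the canonical dual always exists. First I would observe that the conditions $f_k(\tau_n)=\delta_{k,n}$ together with the fact that $\{\tau_n\}_n$ is a Schauder basis for $\mathcal{X}$ mean that $\{f_n\}_n$ is exactly the sequence of coordinate functionals associated with $\{\tau_n\}_n$. Consequently, for every $x\in\mathcal{X}$ the expansion $x=\sum_{n=1}^\infty f_n(x)\tau_n$ holds, i.e.\ $S_{f,\tau}=I_{\mathcal{X}}$ and $(\{f_n\}_n,\{\tau_n\}_n)$ is in fact a Schauder frame (a Parseval p-ASF). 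So the canonical dual of $(\{f_n\}_n,\{\tau_n\}_n)$ is $(\{f_n\}_n,\{\tau_n\}_n)$ itself.

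Next I would take an arbitrary dual p-ASF $(\{g_n\}_n,\{\omega_n\}_n)$ for $(\{f_n\}_n,\{\tau_n\}_n)$ and deduce, from Definition \ref{SIMILARITYMINE}, that
\begin{align*}
x=\sum_{n=1}^\infty f_n(x)\omega_n \quad\text{and}\quad x=\sum_{n=1}^\infty g_n(x)\tau_n,\qquad \forall x\in\mathcal{X}.
\end{align*}
From the first identity, applying the coordinate functional $f_k$ and using $f_k(\tau_n)=\delta_{k,n}$ — or more directly, comparing $x=\sum_n f_n(x)\omega_n$ with the unique basis expansion $x=\sum_n f_n(x)\tau_n$ and invoking uniqueness of coefficients in a Schauder basis expansion after testing on basis vectors — I would conclude $\omega_n=\tau_n$ for all $n$. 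Concretely, put $x=\tau_m$: then $\sum_n f_n(\tau_m)\omega_n=\sum_n\delta_{n,m}\omega_n=\omega_m$, while the left side equals $\tau_m$, so $\omega_m=\tau_m$. For the second identity, I would use the already-established $\omega_n=\tau_n$: since $x=\sum_n g_n(x)\tau_n$ and also $x=\sum_n f_n(x)\tau_n$, uniqueness of the coefficients in the Schauder basis $\{\tau_n\}_n$ gives $g_n(x)=f_n(x)$ for all $n$ and all $x$, i.e.\ $g_n=f_n$.

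Combining the two steps, any dual p-ASF equals $(\{f_n\}_n,\{\tau_n\}_n)$, which is itself the canonical dual; hence the dual is unique. I do not expect a serious obstacle here: the only point requiring a little care is justifying that one may "read off" coefficients by evaluating at basis elements, which is exactly the defining uniqueness property of a Schauder basis (Definition \ref{ONBDEFINITIONOLE}(i) in the Hilbert case, and its Banach analogue), together with the biorthogonality $f_k(\tau_n)=\delta_{k,n}$. One should also note at the outset that $(\{f_n\}_n,\{\tau_n\}_n)$ being a p-ASF guarantees $\theta_f$ and $\theta_\tau$ are bounded, so all the series manipulations above are legitimate.
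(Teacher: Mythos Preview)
Your proof is correct and uses essentially the same ideas as the paper: biorthogonality $f_k(\tau_n)=\delta_{k,n}$ to identify the $\omega_n$'s by evaluating at basis vectors, and uniqueness of coefficients in a Schauder basis to identify the $g_n$'s. The paper's version is organized slightly differently---it takes two arbitrary duals $(\{g_n\}_n,\{\omega_n\}_n)$ and $(\{u_n\}_n,\{\rho_n\}_n)$ and subtracts them to get $\sum_n (g_n(x)-u_n(x))\tau_n = 0 = \sum_n f_n(x)(\omega_n-\rho_n)$, then applies the same two observations---whereas you compare an arbitrary dual directly to the canonical dual after first noting $S_{f,\tau}=I_{\mathcal{X}}$; this extra observation is correct but not needed for the argument to go through.
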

\begin{proof}
	Let $ (\{g_n \}_{n}, \{\omega_n \}_{n}) $  and $ (\{u_n \}_{n}, \{\rho_n \}_{n}) $ be two dual p-ASFs for $ (\{f_n\}_{n}, \{\tau_n\}_{n}) $. Then 
	\begin{align*}
	\sum_{n=1}^\infty(g_n(x)-u_n(x))\tau_n=0= \sum_{n=1}^\infty f_n(x)(\omega_n-\rho_n), \quad \forall x \in \mathcal{X}.
	\end{align*}
	First equality gives $ g_n=u_n, \forall n \in \mathbb{N}$ and  evaluating second equality at a fixed $ \tau_k$ gives $ \omega_k=\rho_k$. Since $k$ was arbitrary, proposition follows.
\end{proof}
We now characterize dual frames  by using analysis and synthesis operators.
\begin{proposition}\label{ORTHOGONALPRO}
	For two p-ASFs $ (\{f_n\}_{n}, \{\tau_n\}_{n}) $ and $ (\{g_n \}_{n}, \{\omega_n \}_{n}) $ for $\mathcal{X}$, the following are equivalent.
	\begin{enumerate}[label=(\roman*)]
		\item  $ (\{g_n \}_{n}, \{\omega_n \}_{n}) $ is a dual  for $ (\{f_n \}_{n}, \{\tau_n \}_{n}) $.
		\item $\theta_\tau\theta_g =\theta_\omega\theta_f =I_\mathcal{X}$.
	\end{enumerate}
\end{proposition}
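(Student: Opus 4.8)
The plan is to unwind both conditions directly in terms of the operators $\theta_f,\theta_\tau,\theta_g,\theta_\omega$ that are available once we recall Theorem \ref{OURS}(v), namely $S_{f,\tau}=\theta_\tau\theta_f$ and similarly $S_{g,\omega}=\theta_\omega\theta_g$. The key observation is that for a p-ASF the sums $\sum_{n=1}^\infty g_n(x)\tau_n$ and $\sum_{n=1}^\infty f_n(x)\omega_n$ are themselves compositions of the relevant analysis and synthesis maps: indeed $\sum_{n=1}^\infty g_n(x)\tau_n=\theta_\tau(\{g_n(x)\}_n)=\theta_\tau\theta_g x$ and $\sum_{n=1}^\infty f_n(x)\omega_n=\theta_\omega(\{f_n(x)\}_n)=\theta_\omega\theta_f x$, both of which are well-defined bounded linear operators on $\mathcal{X}$ because $\theta_f,\theta_g$ map into $\ell^p(\mathbb{N})$ and $\theta_\tau,\theta_\omega$ are bounded on $\ell^p(\mathbb{N})$ (Definition \ref{PASFDEF}).

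First I would prove (i)$\Rightarrow$(ii). Assuming $ (\{g_n\}_n,\{\omega_n\}_n) $ is a dual for $ (\{f_n\}_n,\{\tau_n\}_n) $, the defining identity in Definition \ref{SIMILARITYMINE} reads $x=\sum_{n=1}^\infty g_n(x)\tau_n=\sum_{n=1}^\infty f_n(x)\omega_n$ for all $x\in\mathcal{X}$. Rewriting the two series as $\theta_\tau\theta_g x$ and $\theta_\omega\theta_f x$ respectively, this says exactly $\theta_\tau\theta_g x=\theta_\omega\theta_f x=x$ for every $x$, i.e. $\theta_\tau\theta_g=\theta_\omega\theta_f=I_\mathcal{X}$.

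For (ii)$\Rightarrow$(i), I would simply reverse this: if $\theta_\tau\theta_g=I_\mathcal{X}$, then for each $x$, $\sum_{n=1}^\infty g_n(x)\tau_n=\theta_\tau\theta_g x=x$, and likewise $\theta_\omega\theta_f=I_\mathcal{X}$ gives $\sum_{n=1}^\infty f_n(x)\omega_n=\theta_\omega\theta_f x=x$; together these are precisely the conditions in Definition \ref{SIMILARITYMINE}, so $ (\{g_n\}_n,\{\omega_n\}_n) $ is a dual for $ (\{f_n\}_n,\{\tau_n\}_n) $. The only thing that needs care — and the one small technical point that is the closest thing here to an obstacle — is the interchange between the infinite-series form and the operator-composition form: one must check that $\{g_n(x)\}_n\in\ell^p(\mathbb{N})$ (which holds since $\theta_g$ is the analysis operator of a p-ASF) and that $\theta_\tau$ applied to this sequence genuinely equals $\sum_n g_n(x)\tau_n$ in $\mathcal{X}$, which is the definition of $\theta_\tau$ in Eq. (\ref{AAA}) together with boundedness; the same remarks apply with the roles of $f,\tau$ and $g,\omega$ swapped. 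Since both p-ASFs are assumed from the outset, no extra convergence hypothesis is required, and the proof is essentially a translation exercise.
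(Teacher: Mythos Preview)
Your proposal is correct; the paper states Proposition~\ref{ORTHOGONALPRO} without proof, and your argument --- rewriting the two defining series of Definition~\ref{SIMILARITYMINE} as $\theta_\tau\theta_g x$ and $\theta_\omega\theta_f x$ via the analysis and synthesis operators of Definition~\ref{PASFDEF} --- is exactly the intended straightforward verification.
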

Like Lemmas \ref{LILEMMA1}, \ref{LILEMMA2} and Theorem \ref{LITHM}, we now  characterize dual frames using standard Schauder basis for $\ell^p(\mathbb{N})$.
\begin{lemma}\label{ASFLEMMA1}
Let $\{e_n\}_n$ denote the standard Schauder basis for  $\ell^p(\mathbb{N})$  and let $\{\zeta_n\}_n$ denote the coordinate functionals associated with $\{e_n\}_n$.	Let  $ (\{f_n \}_{n}, \{\tau_n \}_{n}) $  be a  p-ASF for   $\mathcal{X}$. Then a  p-ASF  $ (\{g_n \}_{n}, \{\omega_n \}_{n}) $ for $\mathcal{X}$ is a dual  for $ (\{f_n \}_{n}, \{\tau_n \}_{n}) $ if and only if
	\begin{align*}
	g_n=\zeta_n U, \quad \omega_n=Ve_n, \quad \forall n \in \mathbb{N},
	\end{align*} 
	where $ U:\mathcal{X} \rightarrow\ell^p(\mathbb{N})$ is  a bounded right-inverse of $ \theta_\tau$, and  $V: \ell^p(\mathbb{N}) \rightarrow \mathcal{X}$ is a bounded left-inverse of $ \theta_f$ such that $ VU$ is bounded invertible.
\end{lemma}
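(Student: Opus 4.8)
The statement to prove is Lemma \ref{ASFLEMMA1}, which characterizes dual $p$-ASFs $(\{g_n\}_n,\{\omega_n\}_n)$ of a given $p$-ASF $(\{f_n\}_n,\{\tau_n\}_n)$ in terms of bounded one-sided inverses of the analysis and synthesis operators. The natural strategy is to combine Theorem \ref{THAFSCHAR} (the operator description of $p$-ASFs via $\{e_n\}_n$ and $\{\zeta_n\}_n$) with Proposition \ref{ORTHOGONALPRO} (the characterization of duality via $\theta_\tau\theta_g=\theta_\omega\theta_f=I_\mathcal{X}$). So the proof will be a translation exercise: ``$(\{g_n\}_n,\{\omega_n\}_n)$ is a dual $p$-ASF'' unpacks into ``$(\{g_n\}_n,\{\omega_n\}_n)$ is a $p$-ASF \emph{and} the duality identities hold,'' and each clause gets rewritten in terms of $U,V$.

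\textbf{Forward direction.} Assume $(\{g_n\}_n,\{\omega_n\}_n)$ is a dual $p$-ASF for $(\{f_n\}_n,\{\tau_n\}_n)$. Since it is in particular a $p$-ASF, Theorem \ref{THAFSCHAR} supplies bounded linear $U:\mathcal{X}\to\ell^p(\mathbb{N})$ and $V:\ell^p(\mathbb{N})\to\mathcal{X}$ with $g_n=\zeta_nU$, $\omega_n=Ve_n$ for all $n$, and $VU=S_{g,\omega}$ bounded invertible. In fact the proof of Theorem \ref{THAFSCHAR} identifies the canonical choice $U=\theta_g$ and $V=\theta_\omega$; I will use exactly this choice. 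It then remains to show $U=\theta_g$ is a right-inverse of $\theta_\tau$ and $V=\theta_\omega$ is a left-inverse of $\theta_f$. But by Proposition \ref{ORTHOGONALPRO}, duality is equivalent to $\theta_\tau\theta_g=I_\mathcal{X}$ and $\theta_\omega\theta_f=I_\mathcal{X}$, i.e.\ $\theta_\tau U=I_\mathcal{X}$ and $V\theta_f=I_\mathcal{X}$, which is precisely the asserted one-sided-inverse property. That $VU$ is bounded invertible is automatic since $VU=S_{g,\omega}$.

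\textbf{Converse direction.} Suppose $g_n=\zeta_nU$, $\omega_n=Ve_n$ with $U:\mathcal{X}\to\ell^p(\mathbb{N})$ a bounded right-inverse of $\theta_\tau$, $V:\ell^p(\mathbb{N})\to\mathcal{X}$ a bounded left-inverse of $\theta_f$, and $VU$ bounded invertible. By Theorem \ref{THAFSCHAR}, $(\{g_n\}_n,\{\omega_n\}_n)$ is a $p$-ASF for $\mathcal{X}$ (with frame operator $S_{g,\omega}=VU$). Next I need to verify the duality identities of Proposition \ref{ORTHOGONALPRO}. One checks directly that $\theta_g=U$ and $\theta_\omega=V$: indeed $\theta_g x=\{g_n(x)\}_n=\{\zeta_n(Ux)\}_n=Ux$ since $\{\zeta_n\}_n$ are the coordinate functionals of $\{e_n\}_n$, and $\theta_\omega\{a_n\}_n=\sum_n a_n\omega_n=\sum_n a_nVe_n=V(\sum_n a_ne_n)=V\{a_n\}_n$ by continuity and linearity of $V$. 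Hence $\theta_\tau\theta_g=\theta_\tau U=I_\mathcal{X}$ and $\theta_\omega\theta_f=V\theta_f=I_\mathcal{X}$, so Proposition \ref{ORTHOGONALPRO} gives that $(\{g_n\}_n,\{\omega_n\}_n)$ is a dual $p$-ASF for $(\{f_n\}_n,\{\tau_n\}_n)$.

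\textbf{Expected difficulty.} There is no deep obstacle here; the lemma is essentially a bookkeeping corollary of Theorem \ref{THAFSCHAR} and Proposition \ref{ORTHOGONALPRO}. The one point requiring a little care is the bookkeeping around which operator plays which role: the hypothesis ``$U$ is a right-inverse of $\theta_\tau$'' must be paired with ``$g_n=\zeta_nU$'' (not with $\omega_n$), and symmetrically for $V$ and $f$; getting these pairings straight, and confirming $\theta_g=U$, $\theta_\omega=V$ so that Proposition \ref{ORTHOGONALPRO} applies verbatim, is the only thing to watch. I would also remark that the canonical dual corresponds to the canonical choice $U=\theta_\tau S_{f,\tau}^{-1}$... no, more precisely $U=\theta_f S_{f,\tau}^{-1}$ composed appropriately, paralleling Lemma \ref{LILEMMA1}; but this is an aside, not needed for the proof.
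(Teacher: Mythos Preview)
Your proof is correct and takes essentially the same approach as the paper: in both directions you identify $U=\theta_g$, $V=\theta_\omega$, invoke Theorem \ref{THAFSCHAR} for the $p$-ASF structure, and use the duality identities $\theta_\tau\theta_g=\theta_\omega\theta_f=I_\mathcal{X}$ (which is exactly Proposition \ref{ORTHOGONALPRO}). The paper's argument is the same, only written slightly more tersely by not naming Proposition \ref{ORTHOGONALPRO} explicitly.
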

\begin{proof}
	$(\Leftarrow)$ From the `if' part of proof of Theorem \ref{THAFSCHAR}, we get that $ (\{g_n \}_{n}, \{\omega_n \}_{n}) $ is a p-ASF for $\mathcal{X}$. We have to check for duality of $ (\{g_n \}_{n}, \{\omega_n \}_{n}) $. Also, we have $\theta_\tau\theta_g=\theta_\tau U=I_\mathcal{X} $, $ \theta_\omega\theta_f=V\theta_f =I_\mathcal{X}$.\\
	$(\Rightarrow)$ Let $ (\{g_n \}_{n}, \{\omega_n \}_{n}) $ be a dual p-ASF for  $ (\{f_n \}_{n}, \{\tau_n \}_{n}) $.  Then $\theta_\tau\theta_g =I_\mathcal{X} $, $ \theta_\omega\theta_f =I_\mathcal{X}$. Define $ U\coloneqq\theta_g, V\coloneqq\theta_\omega.$ Then $ U:\mathcal{X} \rightarrow\ell^p(\mathbb{N})$ is a bounded right-inverse of $ \theta_\tau$, and  $V: \ell^p(\mathbb{N}) \rightarrow \mathcal{X}$ is  a bounded left-inverse of $ \theta_f$ such that the operator $ VU=\theta_\omega\theta_g=S_{g,\omega}$ is invertible. Further,
	\begin{align*}
	(\zeta_nU)x=\zeta_n\left(\sum_{k=1}^\infty g_k(x)e_k\right)=\sum_{k=1}^\infty g_k(x)\zeta_n(e_k)=g_n(x), \quad \forall x \in \mathcal{X}
	\end{align*} 
	and $Ve_n=\theta_\omega e_n=\omega_n, \forall n \in \mathbb{N} $.
\end{proof}
\begin{lemma}\label{ASFLEMMA2}
	Let $ (\{f_n \}_{n}, \{\tau_n \}_{n}) $ be a  p-ASF for   $\mathcal{X}$. Then 
	\begin{enumerate}[label=(\roman*)]
		\item $R: \mathcal{X} \rightarrow \ell^p(\mathbb{N})$ is a bounded right-inverse of $ \theta_\tau$  if and only if 
		\begin{align*}
		R=\theta_fS_{f,\tau}^{-1}+(I_{\ell^p(\mathbb{N})}-\theta_fS_{f,\tau}^{-1}\theta_\tau)U
		\end{align*} where $U:\mathcal{X} \to \ell^p(\mathbb{N})$ is a bounded linear operator.
		\item  $ L:\ell^p(\mathbb{N})\rightarrow \mathcal{X}$ is a bounded left-inverse of $ \theta_f$ if and only if 
		\begin{align*}
		L=S_{f,\tau}^{-1}\theta_\tau+V(I_{\ell^p(\mathbb{N})}-\theta_fS_{f,\tau}^{-1}\theta_\tau),
		\end{align*} 
		where $V:\ell^p(\mathbb{N}) \to \mathcal{X}$ is a bounded linear operator. 
	\end{enumerate}		
\end{lemma}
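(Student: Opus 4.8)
The statement to prove is Lemma \ref{ASFLEMMA2}, which characterizes all bounded right-inverses of $\theta_\tau$ and all bounded left-inverses of $\theta_f$ for a $p$-ASF $(\{f_n\}_n,\{\tau_n\}_n)$.

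\medskip

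The plan is to prove both parts by the standard operator-theoretic argument for one-sided inverses, using the factorization $S_{f,\tau}=\theta_\tau\theta_f$ from Theorem \ref{OURS}(v) together with the invertibility of $S_{f,\tau}$. First I would prove (i). For the $(\Leftarrow)$ direction, given any bounded linear $U:\mathcal{X}\to\ell^p(\mathbb{N})$, set $R=\theta_fS_{f,\tau}^{-1}+(I_{\ell^p(\mathbb{N})}-\theta_fS_{f,\tau}^{-1}\theta_\tau)U$ and compute $\theta_\tau R$ directly: $\theta_\tau\theta_fS_{f,\tau}^{-1}=S_{f,\tau}S_{f,\tau}^{-1}=I_\mathcal{X}$, and $\theta_\tau(I_{\ell^p(\mathbb{N})}-\theta_fS_{f,\tau}^{-1}\theta_\tau)U=(\theta_\tau-\theta_\tau\theta_fS_{f,\tau}^{-1}\theta_\tau)U=(\theta_\tau-S_{f,\tau}S_{f,\tau}^{-1}\theta_\tau)U=0$, so $\theta_\tau R=I_\mathcal{X}$. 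For the $(\Rightarrow)$ direction, suppose $R$ is a bounded right-inverse of $\theta_\tau$, i.e. $\theta_\tau R=I_\mathcal{X}$. Then I would take $U\coloneqq R$ and verify that the formula reproduces $R$: the right-hand side becomes $\theta_fS_{f,\tau}^{-1}+R-\theta_fS_{f,\tau}^{-1}\theta_\tau R=\theta_fS_{f,\tau}^{-1}+R-\theta_fS_{f,\tau}^{-1}=R$, using $\theta_\tau R=I_\mathcal{X}$. So every bounded right-inverse is of the stated form.

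\medskip

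The proof of (ii) is entirely dual (replacing the role of $\theta_\tau$ on the left by $\theta_f$ on the right). For $(\Leftarrow)$, given bounded linear $V:\ell^p(\mathbb{N})\to\mathcal{X}$, put $L=S_{f,\tau}^{-1}\theta_\tau+V(I_{\ell^p(\mathbb{N})}-\theta_fS_{f,\tau}^{-1}\theta_\tau)$ and compute $L\theta_f=S_{f,\tau}^{-1}\theta_\tau\theta_f+V(\theta_f-\theta_fS_{f,\tau}^{-1}\theta_\tau\theta_f)=S_{f,\tau}^{-1}S_{f,\tau}+V(\theta_f-\theta_fS_{f,\tau}^{-1}S_{f,\tau})=I_\mathcal{X}+0=I_\mathcal{X}$. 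For $(\Rightarrow)$, given $L$ with $L\theta_f=I_\mathcal{X}$, set $V\coloneqq L$; then $S_{f,\tau}^{-1}\theta_\tau+L-L\theta_fS_{f,\tau}^{-1}\theta_\tau=S_{f,\tau}^{-1}\theta_\tau+L-S_{f,\tau}^{-1}\theta_\tau=L$.

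\medskip

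There is essentially no obstacle here — the lemma is a routine one-sided-inverse parametrization, and everything follows from the factorization $S_{f,\tau}=\theta_\tau\theta_f$ and the boundedness and invertibility of $S_{f,\tau}$ already established in Theorem \ref{OURS}. The only point that needs a word of care is boundedness: in each formula the candidate operator is a composition and sum of bounded linear operators ($\theta_f$, $\theta_\tau$, $S_{f,\tau}^{-1}$, $U$ or $V$, and identities), hence bounded, so no extra argument is needed. I would present the two parts in sequence with the four short computations above, noting that the $(\Rightarrow)$ directions simply take $U=R$ (resp. $V=L$) and exploit the defining one-sided identity to collapse the expression.
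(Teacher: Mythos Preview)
Your proposal is correct and follows essentially the same approach as the paper's own proof: both directions in each part are handled by the same direct computations, and in the $(\Rightarrow)$ directions the paper also takes $U\coloneqq R$ and $V\coloneqq L$ and collapses the expression using $\theta_\tau R=I_\mathcal{X}$ and $L\theta_f=I_\mathcal{X}$, respectively.
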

 \begin{proof}
	\begin{enumerate}[label=(\roman*)]
		\item  $(\Leftarrow)$  $\theta_\tau(\theta_fS_{f,\tau}^{-1}+(I_{\ell^p(\mathbb{N})}-\theta_fS_{f,\tau}^{-1}\theta_\tau)U)=I_\mathcal{X}+\theta_\tau U-I_\mathcal{X}\theta_\tau U=I_\mathcal{X}$. Therefore $\theta_fS_{f,\tau}^{-1}+(I_{\ell^p(\mathbb{N})}-\theta_fS_{f,\tau}^{-1}\theta_\tau)U$ is a bounded right-inverse of $ \theta_\tau$.  
		
		$(\Rightarrow)$  Define $U\coloneqq R $. Then $\theta_fS_{f,\tau}^{-1}+(I_{\ell^p(\mathbb{N})}-\theta_fS_{f,\tau}^{-1}\theta_\tau)U=\theta_fS_{f,\tau}^{-1}+(I_{\ell^p(\mathbb{N})}-\theta_fS_{f,\tau}^{-1}\theta_\tau)R=\theta_fS_{f,\tau}^{-1}+R-\theta_fS_{f,\tau}^{-1}=R$.
		\item
		$(\Leftarrow)$  $(S_{f,\tau}^{-1}\theta_\tau+V(I_{\ell^p(\mathbb{N})}-\theta_fS_{f,\tau}^{-1}\theta_\tau))\theta_f=I_\mathcal{X}+V\theta_f-V\theta_fI_\mathcal{X}=I_\mathcal{X}$. Therefore  $S_{f,\tau}^{-1}\theta_\tau+V(I_{\ell^p(\mathbb{N})}-\theta_fS_{f,\tau}^{-1}\theta_\tau)$ is a bounded left-inverse of $\theta_f$.
		
		$(\Rightarrow)$  Define $V\coloneqq L$. Then $S_{f,\tau}^{-1}\theta_\tau+V(I_{\ell^p(\mathbb{N})}-\theta_fS_{f,\tau}^{-1}\theta_\tau) =S_{f,\tau}^{-1}\theta_\tau+L(I_{\ell^p(\mathbb{N})}-\theta_fS_{f,\tau}^{-1}\theta_\tau)=S_{f,\tau}^{-1}\theta_\tau+L-S_{f,\tau}^{-1}\theta_\tau= L$.
	\end{enumerate}		
\end{proof}
\begin{theorem}\label{ALLDUAL}
Let $\{e_n\}_n$ denote the standard Schauder basis for  $\ell^p(\mathbb{N})$  and let $\{\zeta_n\}_n$ denote the coordinate functionals associated with $\{e_n\}_n$.	Let $ (\{f_n \}_{n}, \{\tau_n \}_{n}) $ be a  p-ASF for   $\mathcal{X}$. Then a  p-ASF  $ (\{g_n \}_{n}, \{\omega_n \}_{n}) $ for $\mathcal{X}$ is a dual  for $ (\{f_n \}_{n}, \{\tau_n \}_{n}) $ if and only if
	\begin{align*}
	&g_n=f_nS_{f,\tau}^{-1}+\zeta_nU-f_nS_{f,\tau}^{-1}\theta_\tau U,\\
	&\omega_n=S_{f,\tau}^{-1}\tau_n+Ve_n-V\theta_fS_{f,\tau}^{-1}\tau_n, \quad \forall n \in \mathbb{N}
	\end{align*}
	such that the operator 
	\begin{align*}
	S_{f,\tau}^{-1}+VU-V\theta_fS_{f,\tau}^{-1}\theta_\tau U
	\end{align*}
	is bounded invertible, where   $U:\mathcal{X} \to \ell^p(\mathbb{N})$ and $ V:\ell^p(\mathbb{N})\to \mathcal{X}$ are bounded linear operators.
\end{theorem}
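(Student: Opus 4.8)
The plan is to mimic Li's characterization of dual frames (Lemma \ref{LILEMMA1}, Lemma \ref{LILEMMA2}, Theorem \ref{LITHM}) which we have already transported to the p-ASF setting in Lemmas \ref{ASFLEMMA1} and \ref{ASFLEMMA2}. First I would prove the forward implication. Suppose $(\{g_n\}_n,\{\omega_n\}_n)$ is a dual p-ASF for $(\{f_n\}_n,\{\tau_n\}_n)$. By Lemma \ref{ASFLEMMA1} there is a bounded right-inverse $R:\mathcal{X}\to\ell^p(\mathbb{N})$ of $\theta_\tau$ with $g_n=\zeta_n R$ and a bounded left-inverse $L:\ell^p(\mathbb{N})\to\mathcal{X}$ of $\theta_f$ with $\omega_n=Le_n$, such that $LR$ is bounded invertible. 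Now apply the two parametrizations of Lemma \ref{ASFLEMMA2}: write $R=\theta_fS_{f,\tau}^{-1}+(I_{\ell^p(\mathbb{N})}-\theta_fS_{f,\tau}^{-1}\theta_\tau)U$ for some bounded $U:\mathcal{X}\to\ell^p(\mathbb{N})$, and $L=S_{f,\tau}^{-1}\theta_\tau+V(I_{\ell^p(\mathbb{N})}-\theta_fS_{f,\tau}^{-1}\theta_\tau)$ for some bounded $V:\ell^p(\mathbb{N})\to\mathcal{X}$. Substituting these into $g_n=\zeta_n R$ and $\omega_n=Le_n$ and expanding — using $\zeta_n\theta_f=f_n$, $\theta_\tau e_n=\tau_n$, $\zeta_n(I_{\ell^p(\mathbb{N})}) = \zeta_n$, and $Ve_n$ being the obvious term — yields precisely
\begin{align*}
g_n&=f_nS_{f,\tau}^{-1}+\zeta_n U-f_nS_{f,\tau}^{-1}\theta_\tau U,\\
\omega_n&=S_{f,\tau}^{-1}\tau_n+Ve_n-V\theta_fS_{f,\tau}^{-1}\tau_n.
\end{align*}
Finally, compute $LR$: multiplying the two displayed formulas for $L$ and $R$, the cross terms collapse because $\theta_\tau\theta_f=S_{f,\tau}$ and $V(I-\theta_fS_{f,\tau}^{-1}\theta_\tau)\theta_fS_{f,\tau}^{-1}=0$, leaving $LR=S_{f,\tau}^{-1}+VU-V\theta_fS_{f,\tau}^{-1}\theta_\tau U$, which is bounded invertible by Lemma \ref{ASFLEMMA1}. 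This gives the stated form.

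For the converse, given bounded linear $U:\mathcal{X}\to\ell^p(\mathbb{N})$ and $V:\ell^p(\mathbb{N})\to\mathcal{X}$ such that $S_{f,\tau}^{-1}+VU-V\theta_fS_{f,\tau}^{-1}\theta_\tau U$ is bounded invertible, define $g_n$ and $\omega_n$ by the two formulas in the statement, and set $R\coloneqq\theta_fS_{f,\tau}^{-1}+(I_{\ell^p(\mathbb{N})}-\theta_fS_{f,\tau}^{-1}\theta_\tau)U$, $L\coloneqq S_{f,\tau}^{-1}\theta_\tau+V(I_{\ell^p(\mathbb{N})}-\theta_fS_{f,\tau}^{-1}\theta_\tau)$. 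By Lemma \ref{ASFLEMMA2}, $R$ is a bounded right-inverse of $\theta_\tau$ and $L$ is a bounded left-inverse of $\theta_f$; one checks $g_n=\zeta_n R$ and $\omega_n=Le_n$ by the same expansion as above; and $LR=S_{f,\tau}^{-1}+VU-V\theta_fS_{f,\tau}^{-1}\theta_\tau U$ is bounded invertible by hypothesis. Lemma \ref{ASFLEMMA1} then delivers that $(\{g_n\}_n,\{\omega_n\}_n)$ is a p-ASF dual to $(\{f_n\}_n,\{\tau_n\}_n)$.

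The only genuinely delicate point is the bookkeeping in the $LR$ computation — making sure that, after substituting the Lemma \ref{ASFLEMMA2} expressions, the cross terms involving $(I_{\ell^p(\mathbb{N})}-\theta_fS_{f,\tau}^{-1}\theta_\tau)$ annihilate correctly. The key identities are $\theta_\tau\theta_f=S_{f,\tau}$, hence $\theta_fS_{f,\tau}^{-1}\theta_\tau$ is idempotent on $\ell^p(\mathbb{N})$ (this is $P_{f,\tau}$ of Theorem \ref{OURS}(vi)), so $(I_{\ell^p(\mathbb{N})}-\theta_fS_{f,\tau}^{-1}\theta_\tau)$ is the complementary idempotent, and its composition on the right with $\theta_f$ and on the left with $\theta_\tau$ vanishes. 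With these in hand the algebra is routine. I would present the two implications compactly, citing Lemmas \ref{ASFLEMMA1} and \ref{ASFLEMMA2} for the structural input and carrying out only the substitution and the $LR$-product explicitly.
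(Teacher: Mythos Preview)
Your proposal is correct and follows essentially the same approach as the paper: combine Lemma \ref{ASFLEMMA1} with the parametrizations of right/left inverses in Lemma \ref{ASFLEMMA2}, substitute to obtain the formulas for $g_n$ and $\omega_n$, and expand the product $LR$ to get the invertibility condition $S_{f,\tau}^{-1}+VU-V\theta_fS_{f,\tau}^{-1}\theta_\tau U$. The paper's version is terser --- it treats both directions at once by phrasing the two lemmas as an ``if and only if'' characterization and then just writes ``by a direct expansion and simplification'' for the $LR$ computation --- whereas you spell out the forward and converse implications separately and identify the idempotent $P_{f,\tau}$ as the reason the cross terms vanish, but the substance is the same.
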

\begin{proof}
	Lemmas \ref{ASFLEMMA1} and  \ref{ASFLEMMA2} give the characterization of dual frame as 
	\begin{align*}
	&g_n=\zeta_n\theta_fS_{f,\tau}^{-1}+\zeta_nU-\zeta_n\theta_fS_{f,\tau}^{-1}\theta_\tau U=f_nS_{f,\tau}^{-1}+\zeta_nU-f_nS_{f,\tau}^{-1}\theta_\tau U,\\
	&\omega_n=S_{f,\tau}^{-1}\theta_\tau e_n+Ve_n-V\theta_fS_{f,\tau}^{-1}\theta_\tau e_n=S_{f,\tau}^{-1}\tau_n+Ve_n-V\theta_fS_{f,\tau}^{-1}\tau_n, \quad \forall n \in \mathbb{N}
	\end{align*}
	such that the operator 
	$$(S_{f,\tau}^{-1}\theta_\tau+V(I_{\ell^p(\mathbb{N})}-\theta_fS_{f,\tau}^{-1}\theta_\tau))(\theta_fS_{f,\tau}^{-1}+(I_{\ell^p(\mathbb{N})}-\theta_fS_{f,\tau}^{-1}\theta_\tau)U) $$
	is bounded invertible, where $U:\mathcal{X} \to \ell^p(\mathbb{N})$ and $ V:\ell^p(\mathbb{N})\to \mathcal{X}$ are bounded linear operators. By a direct expansion and simplification we get 
	\begin{align*}
	&(S_{f,\tau}^{-1}\theta_\tau+V(I_{\ell^p(\mathbb{N})}-\theta_fS_{f,\tau}^{-1}\theta_\tau))(\theta_fS_{f,\tau}^{-1}+(I_{\ell^p(\mathbb{N})}-\theta_fS_{f,\tau}^{-1}\theta_\tau)U)
	\\
	&\quad=S_{f,\tau}^{-1}+VU-V\theta_fS_{f,\tau}^{-1}\theta_\tau U.
	\end{align*}
\end{proof}
We know that a bounded linear operator from $\ell^2(\mathbb{N}) $ to $\mathcal{H} $ is given by a Bessel sequence (Theorem \ref{OLEBESSELCHARACTERIZATION12}). Thus, for Hilbert spaces, Theorem \ref{ALLDUAL} becomes Theorem \ref{LITHM}.  

\section{SIMILARITY FOR p-APPROXIMATE SCHAUDER FRAM-ES}
We define Definition \ref{SIMILARDEFHILBERT} to Banach spaces as follows.
\begin{definition}
	Two p-ASFs $ (\{f_n\}_{n}, \{\tau_n\}_{n}) $ and $ (\{g_n \}_{n}, \{\omega_n \}_{n}) $ for $\mathcal{X}$   are said to be \textbf{similar} or \textbf{equivalent} if there exist bounded invertible operators  $T_{f,g}, T_{\tau,\omega} :\mathcal{X} \to \mathcal{X}$ such that 
	\begin{align*}
	g_n=f_nT_{f,g},\quad  \omega_n= T_{\tau,\omega}\tau_n, \quad \forall  n \in \mathbb{N}.
	\end{align*}
\end{definition}
Since the operators giving similarity are bounded invertible, the notion of similarity is symmetric. Further, a routine calculation shows that it is an equivalence relation (hence the name equivalent) on the set 
\begin{align*}
\{(\{f_n\}_{n}, \{\tau_n\}_{n}): (\{f_n\}_{n}, \{\tau_n\}_{n}) \text{ is a p-ASF for } \mathcal{X}\}.
\end{align*}
We now characterize similarity using just operators. In the sequel, given a p-ASF $ (\{f_n\}_{n}, \{\tau_n\}_{n}) $,  we set $P_{f, \tau}\coloneqq \theta_fS_{f,\tau}^{-1}\theta_\tau$. 
\begin{theorem}\label{SEQUENTIALSIMILARITY}
	For two p-ASFs $ (\{f_n\}_{n}, \{\tau_n\}_{n}) $ and $ (\{g_n \}_{n}, \{\omega_n \}_{n}) $ for $\mathcal{X}$, the following are equivalent.
	\begin{enumerate}[label=(\roman*)]
		\item   $g_n=f_nT_{f, g} , \omega_n=T_{\tau,\omega}\tau_n,  \forall  n \in \mathbb{N}$, for some bounded invertible operators $T_{f,g}, T_{\tau,\omega}:\mathcal{X} \to \mathcal{X}.$ 
		\item $\theta_g=\theta_f T_{f,g}, \theta_\omega=T_{\tau,\omega}\theta_\tau$,  for some bounded invertible operators $T_{f,g}, T_{\tau,\omega}:\mathcal{X} \to \mathcal{X}.$
		\item $P_{g,\omega}=P_{f, \tau}.$
	\end{enumerate}
	If one of the above conditions is satisfied, then  invertible operators in  $\operatorname{(i)}$ and  $\operatorname{(ii)}$ are unique and are given by  $T_{f,g}= S_{f,\tau}^{-1}\theta_\tau\theta_g, T_{\tau, \omega}=\theta_\omega\theta_fS_{f,\tau}^{-1}.$ In the case that $ (\{f_n \}_{n}, \{\tau_n \}_{n}) $ is a simple  p-ASF, then $ (\{g_n \}_{n}, \{\omega_n \}_{n}) $ is  a simple p-ASF if and only if $T_{\tau, \omega}T_{f,g} =I_\mathcal{X}$   if and only if $ T_{f,g}T_{\tau, \omega} =I_\mathcal{X}$. 
\end{theorem}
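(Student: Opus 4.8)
The plan is to prove the chain (i) $\Rightarrow$ (ii) $\Rightarrow$ (iii) $\Rightarrow$ (i), mirroring the structure of Theorem \ref{BALANCHARSIM}, and then to establish the uniqueness formulas and the simple-frame characterization as a coda. First I would handle (i) $\Rightarrow$ (ii): assuming $g_n = f_n T_{f,g}$ and $\omega_n = T_{\tau,\omega}\tau_n$, for any $x \in \mathcal{X}$ we have $\theta_g x = \{g_n(x)\}_n = \{f_n(T_{f,g}x)\}_n = \theta_f T_{f,g} x$, which is exactly $\theta_g = \theta_f T_{f,g}$; and $\theta_\omega\{a_n\}_n = \sum_n a_n \omega_n = \sum_n a_n T_{\tau,\omega}\tau_n = T_{\tau,\omega}\sum_n a_n\tau_n = T_{\tau,\omega}\theta_\tau\{a_n\}_n$ (the interchange of $T_{\tau,\omega}$ with the sum being justified by boundedness of $T_{\tau,\omega}$ and convergence of the synthesis series, which holds by (\ref{SECONDINEQUALITYPASF})), giving $\theta_\omega = T_{\tau,\omega}\theta_\tau$. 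For (ii) $\Rightarrow$ (iii): from $\theta_g = \theta_f T_{f,g}$ and $\theta_\omega = T_{\tau,\omega}\theta_\tau$ I would first note $S_{g,\omega} = \theta_\omega\theta_g = T_{\tau,\omega}\theta_\tau\theta_f T_{f,g} = T_{\tau,\omega}S_{f,\tau}T_{f,g}$, hence $S_{g,\omega}^{-1} = T_{f,g}^{-1}S_{f,\tau}^{-1}T_{\tau,\omega}^{-1}$; then $P_{g,\omega} = \theta_g S_{g,\omega}^{-1}\theta_\omega = \theta_f T_{f,g}\,T_{f,g}^{-1}S_{f,\tau}^{-1}T_{\tau,\omega}^{-1}\,T_{\tau,\omega}\theta_\tau = \theta_f S_{f,\tau}^{-1}\theta_\tau = P_{f,\tau}$.

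The step I expect to be the main obstacle is (iii) $\Rightarrow$ (i), since unlike the Hilbert-space case (where $P_\tau$ is an orthogonal projection and similarity is governed by a single operator), here I must recover \emph{two} invertible operators $T_{f,g}, T_{\tau,\omega}$ from the single equation $P_{g,\omega} = P_{f,\tau}$, and $P_{f,\tau}$ is only an idempotent, not self-adjoint. The strategy will be to define the candidate operators by the formulas that uniqueness forces: set $T_{f,g} \coloneqq S_{f,\tau}^{-1}\theta_\tau\theta_g$ and $T_{\tau,\omega} \coloneqq \theta_\omega\theta_f S_{f,\tau}^{-1}$, both visibly bounded. To show $T_{f,g}$ is invertible I would exhibit a two-sided inverse, the natural guess being $S_{g,\omega}^{-1}\theta_\omega\theta_f$; computing $T_{f,g}\cdot S_{g,\omega}^{-1}\theta_\omega\theta_f = S_{f,\tau}^{-1}\theta_\tau\theta_g S_{g,\omega}^{-1}\theta_\omega\theta_f = S_{f,\tau}^{-1}\theta_\tau P_{g,\omega}\theta_f = S_{f,\tau}^{-1}\theta_\tau P_{f,\tau}\theta_f$, and here I use $P_{f,\tau}\theta_f = \theta_f S_{f,\tau}^{-1}\theta_\tau\theta_f = \theta_f S_{f,\tau}^{-1}S_{f,\tau} = \theta_f$, so this equals $S_{f,\tau}^{-1}\theta_\tau\theta_f = S_{f,\tau}^{-1}S_{f,\tau} = I_\mathcal{X}$; the reverse composition is handled symmetrically using $\theta_\tau P_{g,\omega} = \theta_\tau$ (from $\theta_\tau P_{f,\tau} = \theta_\tau S_{f,\tau}^{-1}\theta_\tau \cdot$... more precisely $\theta_\tau \theta_f S_{f,\tau}^{-1}\theta_\tau = S_{f,\tau}S_{f,\tau}^{-1}\theta_\tau = \theta_\tau$). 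An entirely analogous computation gives invertibility of $T_{\tau,\omega}$. Then $f_n T_{f,g} = \zeta_n \theta_f S_{f,\tau}^{-1}\theta_\tau\theta_g = \zeta_n P_{f,\tau}\theta_g = \zeta_n P_{g,\omega}\theta_g$, and since $P_{g,\omega}\theta_g = \theta_g$ this is $\zeta_n\theta_g = g_n$; similarly $T_{\tau,\omega}\tau_n = \theta_\omega\theta_f S_{f,\tau}^{-1}\theta_\tau e_n = \theta_\omega P_{f,\tau}e_n$... I would instead write $T_{\tau,\omega}\tau_n = \theta_\omega\theta_f S_{f,\tau}^{-1}\tau_n$ and use $\theta_\omega\theta_f S_{f,\tau}^{-1} = T_{\tau,\omega}$ together with $\tau_n = \theta_\tau e_n$, reducing to showing $\theta_\omega P_{f,\tau} = \theta_\omega P_{g,\omega} = \theta_\omega$, which holds because $P_{g,\omega}^2 = P_{g,\omega}$ and $\theta_\omega$ factors through its range appropriately — concretely $\theta_\omega \theta_f S_{f,\tau}^{-1}\theta_\tau\theta_g S_{g,\omega}^{-1}\theta_\omega = \theta_\omega P_{g,\omega}\theta_g S_{g,\omega}^{-1}\theta_\omega = \theta_\omega\theta_g S_{g,\omega}^{-1}\theta_\omega = S_{g,\omega}S_{g,\omega}^{-1}\theta_\omega = \theta_\omega$.

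For the uniqueness assertion, if $g_n = f_n T$ for a bounded invertible $T$, then $\theta_g = \theta_f T$, so $\theta_\tau\theta_g = \theta_\tau\theta_f T = S_{f,\tau}T$, forcing $T = S_{f,\tau}^{-1}\theta_\tau\theta_g$; symmetrically $\omega_n = T'\tau_n$ gives $\theta_\omega = T'\theta_\tau$, so $\theta_\omega\theta_f = T'\theta_\tau\theta_f = T'S_{f,\tau}$, forcing $T' = \theta_\omega\theta_f S_{f,\tau}^{-1}$. Finally, for the simple-frame part: $(\{g_n\}_n,\{\omega_n\}_n)$ is simple (i.e.\ $S_{g,\omega} = I_\mathcal{X}$) iff $T_{\tau,\omega}S_{f,\tau}T_{f,g} = I_\mathcal{X}$ by the factorization established in (ii) $\Rightarrow$ (iii); when $(\{f_n\}_n,\{\tau_n\}_n)$ is itself simple, $S_{f,\tau} = I_\mathcal{X}$, so this reads $T_{\tau,\omega}T_{f,g} = I_\mathcal{X}$, and since both operators are invertible this is equivalent to $T_{f,g}T_{\tau,\omega} = I_\mathcal{X}$. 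I would close by remarking, as the text after Theorem \ref{ALLDUAL} does for the Hilbert case, that in the simple sub-case the two similarity operators are mutual inverses, recovering the single-operator picture of Theorem \ref{BALANCHARSIM}.
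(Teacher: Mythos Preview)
Your proposal is correct and follows essentially the same approach as the paper: the implications (i)$\Rightarrow$(ii)$\Rightarrow$(iii) are identical, and for (iii)$\Rightarrow$(i) you define the same candidates $T_{f,g}=S_{f,\tau}^{-1}\theta_\tau\theta_g$, $T_{\tau,\omega}=\theta_\omega\theta_f S_{f,\tau}^{-1}$ and verify invertibility via the same two-sided inverses $S_{g,\omega}^{-1}\theta_\omega\theta_f$ and $\theta_\tau\theta_g S_{g,\omega}^{-1}$, using the identities $P_{f,\tau}\theta_f=\theta_f$, $\theta_\omega P_{g,\omega}=\theta_\omega$, etc. The only cosmetic difference is that the paper routes through (iii)$\Rightarrow$(ii) (writing $\theta_g=P_{g,\omega}\theta_g=P_{f,\tau}\theta_g=\theta_f T_{f,g}$ and $\theta_\omega=\theta_\omega P_{g,\omega}=\theta_\omega P_{f,\tau}=T_{\tau,\omega}\theta_\tau$ directly) before descending to (i), which streamlines your somewhat exploratory verification that $T_{\tau,\omega}\tau_n=\omega_n$.
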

\begin{proof}
	(i) $\Rightarrow $ (ii) $ \theta_gx=\{g_n(x)\}_{n}=\{f_n(T_{f,g}x)\}_{n}=\theta_f(T_{f,g}x), \forall x \in \mathcal{X}$, 	 $ \theta_\omega(\{a_n\}_{n})=\sum_{n=1}^\infty a_n\omega_n=\sum_{n=1}^\infty a_nT_{\tau,\omega}\tau_n=T_{\tau,\omega}( \theta_\tau(\{a_n\}_{n})) , \forall \{a_n\}_{n} \in \ell^p(\mathbb{N})$.\\
	(ii) $\Rightarrow $ (iii) $  S_{g,\omega}= \theta_\omega\theta_g=T_{\tau,\omega} \theta_\tau\theta_f T_{f,g} =T_{\tau,\omega} S_{f, \tau}T_{f,g}$ and 
	\begin{align*}
	P_{g,\omega}=\theta_g S_{g,\omega}^{-1} \theta_\omega=(\theta_f T_{f,g})(T_{\tau,\omega} S_{f, \tau}T_{f,g})^{-1}(T_{\tau,\omega} \theta_\tau)= P_{f, \tau}.
	\end{align*}  
	(ii) $\Rightarrow $ (i)  $ \sum_{n=1}^\infty g_n(x)e_n=\theta_g(x)=\theta_f(T_{f,g}x)=\sum_{n=1}^\infty f_n(T_{f,g}x)e_n, \forall x \in \mathcal{X}.$ This clearly gives (i).\\
	(iii) $\Rightarrow $ (ii) $\theta_g=P_{g,\omega} \theta_g= P_{f,\tau}\theta_g=\theta_f(S_{f,\tau}^{-1}\theta_{\tau}\theta_g)$, and $$\theta_\omega=\theta_\omega P_{g,\omega}=\theta_\omega P_{f,\tau}=(\theta_\omega\theta_fS_{f,\tau}^{-1})\theta_\tau .$$ We  show that $S_{f,\tau}^{-1}\theta_{\tau}\theta_g$ and $\theta_\omega\theta_fS_{f,\tau}^{-1} $ are invertible. For,
	\begin{align*}
	&(S_{f,\tau}^{-1}\theta_{\tau}\theta_g)(S_{g,\omega}^{-1}\theta_{\omega}\theta_f)=S_{f,\tau}^{-1}\theta_{\tau}P_{g,\omega}\theta_f=S_{f,\tau}^{-1}\theta_{\tau} P_{f,\tau}\theta_f=I_\mathcal{X},\\
	&(S_{g,\omega}^{-1}\theta_{\omega}\theta_f)(S_{f,\tau}^{-1}\theta_{\tau}\theta_g)=S_{g,\omega}^{-1}\theta_{\omega} P_{f,\tau}\theta_g=S_{g,\omega}^{-1}\theta_{\omega}P_{g,\omega}\theta_g=I_\mathcal{X} 
	\end{align*}
	and 
	\begin{align*}
	&(\theta_\omega\theta_fS_{f,\tau}^{-1})(\theta_\tau\theta_gS_{g,\omega}^{-1})=\theta_\omega P_{f,\tau}\theta_gS_{g,\omega}^{-1}=\theta_\omega P_{g,\omega}\theta_gS_{g,\omega}^{-1}=I_\mathcal{X},\\
	&(\theta_\tau\theta_gS_{g,\omega}^{-1})(\theta_\omega\theta_fS_{f,\tau}^{-1})=\theta_\tau P_{g,\omega}\theta_fS_{f,\tau}^{-1}=\theta_\tau P_{f,\tau}\theta_fS_{f,\tau}^{-1}=I_\mathcal{X}.
	\end{align*}    
	Let $T_{f,g}, T_{\tau,\omega}:\mathcal{X} \to \mathcal{X}$ be bounded invertible and $g_n=f_nT_{f, g}, \omega_n=T_{\tau,\omega}\tau_n,  \forall  n \in \mathbb{N}$. Then $\theta_g=\theta_fT_{f, g} $ says that $\theta_\tau\theta_g=\theta_\tau\theta_fT_{f, g}=S_{f,\tau}T_{f, g}  $ which implies $ T_{f, g} =S_{f,\tau}^{-1}\theta_\tau\theta_g$, and $\theta_\omega=T_{\tau,\omega}\theta_\tau $ says $\theta_\omega\theta_f=T_{\tau,\omega}\theta_\tau\theta_f=T_{\tau,\omega}S_{f,\tau} $. Hence $T_{\tau,\omega}=\theta_\omega\theta_fS_{f,\tau}^{-1} $. 
\end{proof}
It is easy to see that for Hilbert spaces,  Theorem \ref{SEQUENTIALSIMILARITY} reduces to Theorem \ref{BALANCHARSIM}. \\
 Definition \ref{SIMILARITYMINE} introduced the notion of dual frames. A twin notion associated is the notion of orthogonality.
\begin{definition}\label{ORTHOGONALDEF}
	Let $ (\{f_n\}_{n}, \{\tau_n\}_{n}) $ be a p-ASF for 	$\mathcal{X}$. 	A p-ASF $ (\{g_n \}_{n}, \{\omega_n \}_{n}) $ for $\mathcal{X}$ is \textbf{orthogonal}   for $ (\{f_n \}_{n}, \{\tau_n \}_{n}) $ if 
	\begin{align*}
	0=\sum_{n=1}^\infty g_n(x) \tau_n=\sum_{n=1}^\infty
	f_n(x) \omega_n, \quad \forall x \in
	\mathcal{X}.
	\end{align*}
\end{definition}
Unlike duality, the notion orthogonality is symmetric but not reflexive. Further, dual p-ASFs cannot be orthogonal to each other and orthogonal p-ASFs cannot be dual to each other. Moreover,  if $ (\{g_n\}_{n}, \{\omega_n\}_n)$ is orthogonal for $ (\{f_n\}_{n}, \{\tau_n\}_n)$, then  both $ (\{f_n\}_{n}, \{\omega_n\}_n)$ and $ (\{g_n\}_{n}, \{\tau_n\}_n)$ are not p-ASFs. Similar to Proposition \ref{ORTHOGONALPRO} we have the following proposition.
\begin{proposition}
	For two p-ASFs $ (\{f_n\}_{n}, \{\tau_n\}_{n}) $ and $ (\{g_n \}_{n}, \{\omega_n \}_{n}) $ for $\mathcal{X}$, the following are equivalent.
	\begin{enumerate}[label=(\roman*)]
		\item  $ (\{g_n \}_{n}, \{\omega_n \}_{n}) $ is  orthogonal  for $ (\{f_n \}_{n}, \{\tau_n \}_{n}) $.
		\item $\theta_\tau\theta_g =\theta_\omega\theta_f =0$.
	\end{enumerate}
\end{proposition}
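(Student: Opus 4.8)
The plan is to prove the equivalence of conditions (i) and (ii) by a direct unwinding of the definitions of the analysis operator $\theta_f$ and synthesis operator $\theta_\tau$ for a p-ASF, exactly in the style of the preceding Proposition \ref{ORTHOGONALPRO} on dual frames, where the same computation appears with $I_{\mathcal{X}}$ in place of $0$. Recall that $\theta_g\colon\mathcal{X}\to\ell^p(\mathbb{N})$ sends $x$ to $\{g_n(x)\}_n$ and $\theta_\tau\colon\ell^p(\mathbb{N})\to\mathcal{X}$ sends $\{a_n\}_n$ to $\sum_n a_n\tau_n$, so for any $x\in\mathcal{X}$ one has $\theta_\tau\theta_g x=\sum_{n=1}^\infty g_n(x)\tau_n$; symmetrically $\theta_\omega\theta_f x=\sum_{n=1}^\infty f_n(x)\omega_n$. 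This is the single observation that drives the whole argument.

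First I would prove (i) $\Rightarrow$ (ii): assuming $(\{g_n\}_n,\{\omega_n\}_n)$ is orthogonal for $(\{f_n\}_n,\{\tau_n\}_n)$, Definition \ref{ORTHOGONALDEF} gives $\sum_{n=1}^\infty g_n(x)\tau_n=0$ and $\sum_{n=1}^\infty f_n(x)\omega_n=0$ for every $x\in\mathcal{X}$; by the identities just noted these read $\theta_\tau\theta_g x=0$ and $\theta_\omega\theta_f x=0$ for all $x$, hence $\theta_\tau\theta_g=0$ and $\theta_\omega\theta_f=0$ as operators on $\mathcal{X}$. Here I use that $\theta_f,\theta_g,\theta_\tau,\theta_\omega$ are all well-defined bounded linear operators, which is guaranteed because both pairs are p-ASFs (Definition \ref{PASFDEF}), so the compositions $\theta_\tau\theta_g$ and $\theta_\omega\theta_f$ genuinely make sense as maps $\mathcal{X}\to\mathcal{X}$. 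Conversely, for (ii) $\Rightarrow$ (i), from $\theta_\tau\theta_g=0$ I evaluate at an arbitrary $x$ to recover $\sum_{n=1}^\infty g_n(x)\tau_n=\theta_\tau\theta_g x=0$, and likewise $\sum_{n=1}^\infty f_n(x)\omega_n=\theta_\omega\theta_f x=0$; since $x$ was arbitrary, this is precisely the defining condition of orthogonality, so $(\{g_n\}_n,\{\omega_n\}_n)$ is orthogonal for $(\{f_n\}_n,\{\tau_n\}_n)$.

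There is essentially no obstacle here: the proof is a two-line reformulation, completely parallel to Proposition \ref{ORTHOGONALPRO}, with $=I_\mathcal{X}$ replaced by $=0$ throughout. The only thing worth a sentence of care is the justification that the series $\sum_n g_n(x)\tau_n$ converges and equals $\theta_\tau\theta_g x$ — but this is immediate from the well-definedness of $\theta_g$ (so that $\{g_n(x)\}_n\in\ell^p(\mathbb{N})$) together with the continuity of $\theta_\tau$ on $\ell^p(\mathbb{N})$, both of which are part of the hypothesis that $(\{g_n\}_n,\{\omega_n\}_n)$ is a p-ASF. I would therefore keep the written proof to two or three short sentences, mirroring the surrounding propositions, and not belabor the convergence point beyond a parenthetical remark.
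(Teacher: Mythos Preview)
Your proposal is correct and matches the paper's approach exactly: the paper does not write out a proof at all but simply says ``Similar to Proposition \ref{ORTHOGONALPRO} we have the following proposition,'' which is precisely the parallel you identify and execute. Your two-line computation $\theta_\tau\theta_g x=\sum_n g_n(x)\tau_n$ and $\theta_\omega\theta_f x=\sum_n f_n(x)\omega_n$ is the entire content, and your parenthetical remark on convergence via the p-ASF hypotheses is appropriate.
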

Usefulness of orthogonal frames is that we have interpolation result, i.e., these frames can be stitched along certain curves (in particular, on the unit circle centered at the origin) to get new frames.
\begin{theorem}\label{INTERPOLATION}
	Let $ (\{f_n\}_{n}, \{\tau_n\}_{n}) $ and $ (\{g_n \}_{n}, \{\omega_n \}_{n}) $ be  two Parseval p-ASFs for  $\mathcal{X}$ which are  orthogonal. If $A,B,C,D :\mathcal{X}\to \mathcal{X}$ are bounded linear  operators and  $ CA+DB=I_\mathcal{X}$, then  
	\begin{align*}
	(\{f_nA+g_nB\}_{n}, \{C\tau_n+D\omega_n\}_{n})
	\end{align*}
	is a  simple  p-ASF for  $\mathcal{X}$. In particular,  if scalars $ a,b,c,d$ satisfy $ca+db =1$, then 
	$ (\{af_n+bg_n\}_{n}, \{c\tau_n+d\omega_n\}_{n}) $ is a  simple  p-ASF for  $\mathcal{X}$.
\end{theorem}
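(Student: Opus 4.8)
The plan is to work entirely through the analysis and synthesis operators and use the operator-level characterizations already established. First I would recall that by Theorem \ref{THAFSCHAR} (or more precisely the computation in \eqref{PASFFIRSTTHEOREMEQUATION}) a pair is a p-ASF precisely when its analysis operator maps boundedly into $\ell^p(\mathbb{N})$, its synthesis operator maps boundedly out of $\ell^p(\mathbb{N})$, and the composite (the frame operator) is bounded invertible; moreover it is a \emph{simple} (Parseval) p-ASF exactly when that composite equals $I_\mathcal{X}$. So the whole theorem reduces to: (1) identify the analysis operator of $(\{f_nA+g_nB\}_n, \{C\tau_n+D\omega_n\}_n)$ as $\theta_fA+\theta_gB$ and its synthesis operator as $C\theta_\tau+D\theta_\omega$ — both bounded since $\theta_f,\theta_g,\theta_\tau,\theta_\omega$ are bounded and $A,B,C,D\in\mathcal{B}(\mathcal{X})$; (2) compute the frame operator $(C\theta_\tau+D\theta_\omega)(\theta_fA+\theta_gB)$ and show it equals $I_\mathcal{X}$.

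For step (2) I would expand the product into four terms:
\begin{align*}
(C\theta_\tau+D\theta_\omega)(\theta_fA+\theta_gB)=C\theta_\tau\theta_fA+C\theta_\tau\theta_gB+D\theta_\omega\theta_fA+D\theta_\omega\theta_gB.
\end{align*}
Now invoke the Parseval hypothesis: $S_{f,\tau}=\theta_\tau\theta_f=I_\mathcal{X}$ and $S_{g,\omega}=\theta_\omega\theta_g=I_\mathcal{X}$. Invoke orthogonality of the two frames, which by the proposition preceding Theorem \ref{INTERPOLATION} gives $\theta_\tau\theta_g=0$ and $\theta_\omega\theta_f=0$. Substituting, the middle two terms vanish and the expression collapses to $CA+DB$, which equals $I_\mathcal{X}$ by hypothesis. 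Hence the frame operator of the interpolated pair is the identity, so by Definition \ref{PASFDEF} it is a simple p-ASF (the well-definedness and boundedness of $\theta_{fA+gB}$ and $\theta_{C\tau+D\omega}$ having already been noted). One small point to verify carefully is that $\theta_{fA+gB}=\theta_fA+\theta_gB$ literally: for $x\in\mathcal{X}$, $\theta_{fA+gB}x=\{(f_nA+g_nB)(x)\}_n=\{f_n(Ax)\}_n+\{g_n(Bx)\}_n=\theta_f(Ax)+\theta_g(Bx)$, and similarly $\theta_{C\tau+D\omega}\{a_n\}_n=\sum_n a_n(C\tau_n+D\omega_n)=C\sum_n a_n\tau_n+D\sum_n a_n\omega_n$, using continuity of $C,D$ to pull them through the (convergent, since $\{a_n\}_n\in\ell^p$) series.

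The scalar special case is then immediate: taking $A=aI_\mathcal{X}$, $B=bI_\mathcal{X}$, $C=cI_\mathcal{X}$, $D=dI_\mathcal{X}$, the condition $CA+DB=I_\mathcal{X}$ becomes $(ca+db)I_\mathcal{X}=I_\mathcal{X}$, i.e. $ca+db=1$, and the general statement yields that $(\{af_n+bg_n\}_n,\{c\tau_n+d\omega_n\}_n)$ is a simple p-ASF. I do not expect any genuine obstacle here; the only mild care needed is bookkeeping in the operator identities and making sure the orthogonality relations are applied in the correct composition order ($\theta_\tau\theta_g$ and $\theta_\omega\theta_f$ are the ones that vanish, not $\theta_g\theta_\tau$ etc.), which is exactly why the cross terms rather than the diagonal terms drop out.
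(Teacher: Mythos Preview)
Your proof is correct and follows essentially the same approach as the paper: identify $\theta_{fA+gB}=\theta_fA+\theta_gB$ and $\theta_{C\tau+D\omega}=C\theta_\tau+D\theta_\omega$, expand the frame operator into four terms, and use Parsevalness to reduce the diagonal terms to $CA$ and $DB$ while orthogonality kills the cross terms. The paper's argument is line-for-line the same computation.
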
 
\begin{proof}
	By a calculation we find  
	\begin{align*}
	\theta_{fA+gB} x = \{(f_nA+g_nB)(x) \}_{n}=\{f_n(Ax) \}_{n}+\{g_n(Bx) \}_{n}=\theta_f(Ax)+\theta_g(Bx), \quad \forall x \in \mathcal{X}
	\end{align*}
	and 
	\begin{align*}
	\theta_{C\tau+D\omega}(\{a_n \}_{n})=\sum_{n=1}^\infty a_n(C\tau_n+D\omega_n)=C\theta_\tau(\{a_n \}_{n})+D\theta_\omega(\{a_n \}_{n}), \quad  \forall \{a_n\}_n  \in \ell^p(\mathbb{N}). 
	\end{align*} 
	So 	
	\begin{align*}
	S_{fA+gB,C\tau+D\omega} &=\theta_{C\tau+D\omega} \theta_{fA+gB}= ( C\theta_\tau+ D\theta_\omega)(\theta_fA+\theta_gB)\\
	&=C\theta_\tau\theta_fA+C\theta_\tau\theta_gB+D\theta_\omega\theta_fA+D\theta_\omega\theta_gB\\
	&=CS_{f,\tau}A+0+0+DS_{g,\omega}B
	=CI_\mathcal{X}A+DI_\mathcal{X}B=I_\mathcal{X}.
	\end{align*}
\end{proof} 
Using Theorem \ref{SEQUENTIALSIMILARITY} we finally relate three notions duality, similarity and orthogonality.
\begin{proposition}\label{LASTONE}
	For every p-ASF $(\{f_n\}_{n}, \{\tau_n\}_{n})$, the canonical dual for $(\{f_n\}_{n}, \{\tau_n\}_{n})$ is the only dual p-ASF that is similar to $(\{f_n\}_{n}, \{\tau_n\}_{n})$.
\end{proposition}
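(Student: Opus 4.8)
The plan is to combine two facts already available in the excerpt: first, that the canonical dual $(\{f_nS_{f,\tau}^{-1}\}_n, \{S_{f,\tau}^{-1}\tau_n\}_n)$ \emph{is} a dual p-ASF for $(\{f_n\}_n, \{\tau_n\}_n)$ (this is the content of Theorem \ref{OURS}(i)--(ii) together with Definition \ref{SIMILARITYMINE}); and second, the similarity characterization in Theorem \ref{SEQUENTIALSIMILARITY}, especially the uniqueness clause $T_{f,g} = S_{f,\tau}^{-1}\theta_\tau\theta_g$, $T_{\tau,\omega} = \theta_\omega\theta_f S_{f,\tau}^{-1}$. So I would prove the statement in two halves: (a) the canonical dual is similar to $(\{f_n\}_n, \{\tau_n\}_n)$, and (b) no \emph{other} dual is.

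For half (a), I would check that the operators $T_{f,g} \coloneqq S_{f,\tau}^{-1}$ and $T_{\tau,\omega} \coloneqq S_{f,\tau}^{-1}$ witness the similarity: indeed $f_n S_{f,\tau}^{-1} = f_n T_{f,g}$ and $S_{f,\tau}^{-1}\tau_n = T_{\tau,\omega}\tau_n$ for all $n$, and both operators are bounded invertible since $S_{f,\tau}$ is. Hence the canonical dual is similar to the original frame, and by Theorem \ref{OURS} it is a dual. This part is immediate.

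For half (b), suppose $(\{g_n\}_n, \{\omega_n\}_n)$ is a dual p-ASF for $(\{f_n\}_n, \{\tau_n\}_n)$ that is also similar to it. Similarity plus the uniqueness clause of Theorem \ref{SEQUENTIALSIMILARITY} forces $T_{f,g} = S_{f,\tau}^{-1}\theta_\tau\theta_g$ and $T_{\tau,\omega} = \theta_\omega\theta_f S_{f,\tau}^{-1}$, and these must be bounded invertible. Meanwhile, duality gives (via Proposition \ref{ORTHOGONALPRO}) $\theta_\tau\theta_g = I_\mathcal{X}$ and $\theta_\omega\theta_f = I_\mathcal{X}$. Substituting, $T_{f,g} = S_{f,\tau}^{-1}$ and $T_{\tau,\omega} = S_{f,\tau}^{-1}$. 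Then from $g_n = f_n T_{f,g} = f_n S_{f,\tau}^{-1}$ and $\omega_n = T_{\tau,\omega}\tau_n = S_{f,\tau}^{-1}\tau_n$ for every $n$, the frame $(\{g_n\}_n, \{\omega_n\}_n)$ equals the canonical dual. This closes the argument.

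The only place requiring care — the "main obstacle," though it is mild — is making sure the uniqueness clause of Theorem \ref{SEQUENTIALSIMILARITY} is being invoked legitimately: that theorem's formula for $T_{f,g}$ and $T_{\tau,\omega}$ is asserted under the hypothesis that $(\{g_n\}_n,\{\omega_n\}_n)$ is similar to $(\{f_n\}_n,\{\tau_n\}_n)$, which is exactly our standing assumption, so the substitution $\theta_\tau\theta_g = I_\mathcal{X}$, $\theta_\omega\theta_f = I_\mathcal{X}$ from duality is valid and collapses both operators to $S_{f,\tau}^{-1}$. One should also note in passing that this shows $S_{g,\omega} = T_{\tau,\omega} S_{f,\tau} T_{f,g} = S_{f,\tau}^{-1}$, consistent with the canonical dual having frame operator $S_{f,\tau}^{-1}$, but this is not needed for the proof. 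I would write the whole thing in about half a page.
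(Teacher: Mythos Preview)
Your proof is correct and follows essentially the same route as the paper: invoke the uniqueness formula from Theorem \ref{SEQUENTIALSIMILARITY} for the similarity operators, then use the duality relations $\theta_\tau\theta_g = \theta_\omega\theta_f = I_\mathcal{X}$ to collapse both to $S_{f,\tau}^{-1}$. Your explicit half (a) verifying that the canonical dual is itself similar is a small addition the paper leaves implicit, but the core argument is identical.
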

\begin{proof}
	Let us suppose that  two p-ASFs $(\{f_n\}_{n}, \{\tau_n\}_{n})$ and $ (\{g_n \}_{n}, \{\omega_n \}_{n}) $  are similar and dual to each other. Then there exist bounded invertible operators  $T_{f,g}, T_{\tau,\omega} :\mathcal{X}\to \mathcal{X}$  such that $ g_n=f_nT_{f,g},\omega_n=T_{\tau,\omega}\tau_n ,\forall n \in \mathbb{N}$. Theorem \ref{SEQUENTIALSIMILARITY} then gives
	\begin{align*}
	T_{f,g}=S_{f,\tau}^{-1}\theta_\tau\theta_g=S_{f,\tau}^{-1}I_\mathcal{X}=S_{f,\tau}^{-1}\text{ and }T_{\tau, \omega}=\theta_\omega\theta_fS_{f,\tau}^{-1}=I_\mathcal{X}S_{f,\tau}^{-1}=S_{f,\tau}^{-1}.
	\end{align*} 
	Hence $ (\{g_n \}_{n}, \{\omega_n \}_{n}) $ is the canonical dual for  $(\{f_n\}_{n}, \{\tau_n\}_{n})$.	
\end{proof}
\begin{proposition}\label{LASTTWO}
	Two similar  p-ASFs cannot be orthogonal.
\end{proposition}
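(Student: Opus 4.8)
The plan is to argue by contradiction, exploiting the characterization of similarity in Theorem \ref{SEQUENTIALSIMILARITY} together with the operator-level description of orthogonality. Suppose two p-ASFs $(\{f_n\}_{n}, \{\tau_n\}_{n})$ and $(\{g_n\}_{n}, \{\omega_n\}_{n})$ for $\mathcal{X}$ are simultaneously similar and orthogonal. Similarity gives bounded invertible operators $T_{f,g}, T_{\tau,\omega}:\mathcal{X}\to\mathcal{X}$ with $g_n=f_nT_{f,g}$ and $\omega_n=T_{\tau,\omega}\tau_n$ for all $n$, and hence (by the equivalence (i) $\Leftrightarrow$ (ii) in Theorem \ref{SEQUENTIALSIMILARITY}) $\theta_g=\theta_fT_{f,g}$ and $\theta_\omega=T_{\tau,\omega}\theta_\tau$. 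Orthogonality of $(\{g_n\}_{n}, \{\omega_n\}_{n})$ to $(\{f_n\}_{n}, \{\tau_n\}_{n})$ means (by the proposition just before Theorem \ref{INTERPOLATION}) that $\theta_\tau\theta_g=0$ and $\theta_\omega\theta_f=0$.

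The key step is then a short computation: from $\theta_\tau\theta_g=0$ and $\theta_g=\theta_fT_{f,g}$ we get $\theta_\tau\theta_f T_{f,g}=0$, i.e.\ $S_{f,\tau}T_{f,g}=0$. But $S_{f,\tau}=\theta_\tau\theta_f$ is invertible (it is the frame operator of a p-ASF) and $T_{f,g}$ is invertible, so their product is invertible, contradicting $S_{f,\tau}T_{f,g}=0$ (equivalently, applying $S_{f,\tau}^{-1}$ on the left forces $T_{f,g}=0$, which is impossible since $T_{f,g}$ is invertible on a nonzero space $\mathcal{X}$). This already yields the contradiction; alternatively one can run the symmetric argument with $\theta_\omega\theta_f=0$ and $\theta_\omega=T_{\tau,\omega}\theta_\tau$ to obtain $T_{\tau,\omega}S_{f,\tau}=0$, again absurd.

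I do not anticipate a genuine obstacle here; the only point requiring a word of care is the implicit assumption $\mathcal{X}\neq\{0\}$, so that ``invertible'' operators are nonzero and the conclusion $T_{f,g}=0$ is actually contradictory — this is harmless since the whole theory of p-ASFs is vacuous otherwise. The proof is thus essentially one line once the translations ``similar $\Rightarrow$ intertwining of analysis/synthesis operators'' and ``orthogonal $\Rightarrow$ $\theta_\tau\theta_g=\theta_\omega\theta_f=0$'' are invoked, so I would write it as a direct two-sentence contradiction argument rather than developing any machinery.

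\begin{proof}
	Suppose, for contradiction, that the p-ASFs $ (\{f_n\}_{n}, \{\tau_n\}_{n}) $ and $ (\{g_n \}_{n}, \{\omega_n \}_{n}) $ for $\mathcal{X}$ are similar and orthogonal. By similarity there exist bounded invertible operators $T_{f,g}, T_{\tau,\omega}:\mathcal{X}\to\mathcal{X}$ with $g_n=f_nT_{f,g}$ and $\omega_n=T_{\tau,\omega}\tau_n$ for all $n \in \mathbb{N}$, and hence $\theta_g=\theta_fT_{f,g}$. Orthogonality gives $\theta_\tau\theta_g=0$. Therefore
	\begin{align*}
	0=\theta_\tau\theta_g=\theta_\tau\theta_fT_{f,g}=S_{f,\tau}T_{f,g}.
	\end{align*}
	Since $ (\{f_n\}_{n}, \{\tau_n\}_{n}) $ is a p-ASF, $S_{f,\tau}$ is bounded invertible, and $T_{f,g}$ is bounded invertible; thus $S_{f,\tau}T_{f,g}$ is bounded invertible, which contradicts $S_{f,\tau}T_{f,g}=0$. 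Hence similar p-ASFs cannot be orthogonal.
\end{proof}
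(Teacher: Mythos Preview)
Your proof is correct and follows essentially the same approach as the paper: both use similarity to write $\theta_g=\theta_fT_{f,g}$ and then compute $\theta_\tau\theta_g=S_{f,\tau}T_{f,g}$, observing this is nonzero (being a product of invertible operators), which rules out orthogonality. The paper states this directly rather than phrasing it as a contradiction, but the argument is the same.
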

\begin{proof}
	Let $(\{f_n\}_{n}, \{\tau_n\}_{n})$ and $ (\{g_n \}_{n}, \{\omega_n \}_{n}) $  be two p-ASFs which are similar.  Then there exist bounded  invertible  operators $T_{f,g}, T_{\tau,\omega} :\mathcal{X}\to \mathcal{X}$  such that $ g_n=f_nT_{f,g},\omega_n=T_{\tau,\omega}\tau_n ,\forall n \in \mathbb{N}$.  Theorem \ref{SEQUENTIALSIMILARITY} then says   $\theta_g=\theta_f T_{f,g}, \theta_\omega=T_{\tau,\omega}\theta_\tau  $.    Therefore 
	\begin{align*}
	\theta_\tau \theta_g=\theta_\tau\theta_f T_{f,g}=S_{f,\tau}T_{f,g}\neq0. 
	\end{align*}
\end{proof}
\begin{remark}
	For every p-ASF  $(\{f_n\}_{n}, \{\tau_n\}_{n}),$ both  p-ASFs
	\begin{align*}
	(\{f_n{S}_{f, \tau}^{-1}\}_{n}, \{\tau_n\}_{n})	 \text{ and }  (\{f_n\}_{n}, \{{S}_{f, \tau}^{-1}\tau_n\}_{n})
	\end{align*}   
	are  simple p-ASFs and are  similar to  $(\{f_n\}_{n}, \{\tau_n\}_{n})$.  Therefore  each p-ASF is similar to  simple  p-ASFs.
\end{remark}

\section{DILATION THEOREM FOR p-APPROXIMATE SCHAUDER FRAMES}
Here we derive a generalization of  Theorem \ref{DILATIONTHEOREMHILBERTSPACE} (Naimark-Han-Larson dilation theorem) for frames in Hilbert spaces  to p-ASFs for Banach spaces.  In order to derive the  dilation result we must have a notion of Riesz basis for Banach space. Theorem  \ref{RIESZBASISTHM}  gives various characterizations for Riesz bases for Hilbert spaces  but all uses (implicitly or explicitly) inner product structures and orthonormal bases. These characterizations lead to the   notion of p-Riesz basis for Banach spaces using a single sequence in the Banach space (Definition \ref{RIESZBASISDEFINITIONBANACHSPACE}) but we  consider a different notion in this chapter. \\
To define the notion of Riesz basis, which is compatible with Hilbert space situation, we first derive an operator-theoretic  characterization for Riesz basis in Hilbert spaces, which does not use the inner product of Hilbert space. To do so, we need a result from Hilbert space frame theory.

\begin{theorem}\label{RIESZBASISCHAROURS}
	For  sequence $\{\tau_n\}_n$ in  $\mathcal{H}$, the following are equivalent.
	\begin{enumerate}[label=(\roman*)]
		\item $\{\tau_n\}_n$ is a Riesz basis for  $ \mathcal{H}$. 
		\item $\{\tau_n\}_n$ is a frame  for  $ \mathcal{H}$ and 
		\begin{align}\label{RIESZEQUATIONTHEOREM}
		\theta_\tau S_\tau^{-1} \theta_\tau^*=I_{\ell^2(\mathbb{N})}.
		\end{align}
	\end{enumerate}	
\end{theorem}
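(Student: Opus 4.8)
The plan is to prove the equivalence by going through the synthesis/analysis operators and the known Hilbert-space frame machinery recalled in Theorem \ref{MOSTIMPORTANT}. Recall that for a frame $\{\tau_n\}_n$ the synthesis operator is $\theta_\tau^*$ (with $\theta_\tau^* e_n = \tau_n$), the analysis operator is $\theta_\tau$, and $S_\tau = \theta_\tau^*\theta_\tau$. The operator $P_\tau := \theta_\tau S_\tau^{-1}\theta_\tau^*$ is the orthogonal projection of $\ell^2(\mathbb{N})$ onto $\theta_\tau(\mathcal{H})$ (part (ix) of Theorem \ref{MOSTIMPORTANT}), so the content of (\ref{RIESZEQUATIONTHEOREM}) is exactly that this projection is the identity, i.e. $\theta_\tau(\mathcal{H}) = \ell^2(\mathbb{N})$, i.e. $\theta_\tau$ is surjective. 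Since $\theta_\tau$ is always bounded below on a frame (part (ii)), surjectivity of $\theta_\tau$ is equivalent to $\theta_\tau$ being invertible, equivalently to $\theta_\tau^*$ being invertible.

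For the direction (i) $\Rightarrow$ (ii): if $\{\tau_n\}_n$ is a Riesz basis, then by Theorem \ref{RIESZISAFRAME}(ii) it is a frame, and by Definition \ref{RIESZBASISDEFINITION} there is an orthonormal basis $\{\sigma_n\}_n$ and an invertible $T \in \mathcal{B}(\mathcal{H})$ with $\tau_n = T\sigma_n$. Let $\theta_\sigma$ be the (unitary) analysis operator of $\{\sigma_n\}_n$; then a direct check gives $\theta_\tau^* = T\theta_\sigma^*$ on basis vectors $e_n$ (both send $e_n$ to $\tau_n = T\sigma_n$), hence $\theta_\tau^* = T\theta_\sigma^*$ is a composition of invertibles and so invertible; equivalently $\theta_\tau$ is invertible, so $P_\tau = \theta_\tau S_\tau^{-1}\theta_\tau^*$ is a projection onto all of $\ell^2(\mathbb{N})$, giving (\ref{RIESZEQUATIONTHEOREM}). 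Conversely, for (ii) $\Rightarrow$ (i): assume $\{\tau_n\}_n$ is a frame and $\theta_\tau S_\tau^{-1}\theta_\tau^* = I_{\ell^2(\mathbb{N})}$; since $P_\tau$ is the orthogonal projection onto $\theta_\tau(\mathcal{H})$, this forces $\theta_\tau(\mathcal{H}) = \ell^2(\mathbb{N})$, so $\theta_\tau$ is a bounded bijection with bounded inverse, and hence $\theta_\tau^{-1}$ is invertible. Then with $\{e_n\}_n$ the standard orthonormal basis of $\ell^2(\mathbb{N})$ we have $\tau_n = \theta_\tau^* e_n = (\theta_\tau^{-1})^*{}^{-1}\!$… more cleanly: set $U := (\theta_\tau^*)$; since $\theta_\tau^*$ is invertible and $\theta_\tau^* e_n = \tau_n$, the family $\{\tau_n\}_n$ is the image of the orthonormal basis $\{e_n\}_n$ under the invertible operator $\theta_\tau^* \colon \ell^2(\mathbb{N}) \to \mathcal{H}$. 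To match Definition \ref{RIESZBASISDEFINITION} verbatim (which wants an invertible operator on $\mathcal{H}$ applied to an orthonormal basis of $\mathcal{H}$), fix any orthonormal basis $\{\sigma_n\}_n$ of $\mathcal{H}$, let $W \colon \mathcal{H} \to \ell^2(\mathbb{N})$ be the unitary with $W\sigma_n = e_n$, and put $T := \theta_\tau^* W$, which is invertible and satisfies $T\sigma_n = \theta_\tau^* e_n = \tau_n$. Hence $\{\tau_n\}_n$ is a Riesz basis.

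The main point requiring care — really the only nonroutine step — is the identification of the hypothesis (\ref{RIESZEQUATIONTHEOREM}) with surjectivity of $\theta_\tau$, i.e. using that $P_\tau$ is precisely the orthogonal projection onto $\theta_\tau(\mathcal{H})$; everything else is bookkeeping with invertible operators and the standard frame identities. I would therefore open the proof by recalling parts (ii), (vi), (ix) of Theorem \ref{MOSTIMPORTANT} and the identity $\theta_\tau^* e_n = \tau_n$, then run the two implications as above. A secondary subtlety is that one must verify $\{\tau_n\}_n$ is indeed indexed so that $\theta_\tau$ maps into the \emph{same} $\ell^2(\mathbb{N})$; since the excerpt fixes all Hilbert spaces as separable and indexes frames by $\mathbb{N}$, this is automatic and needs no comment.
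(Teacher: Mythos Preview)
Your argument is correct and rests on the same pivot as the paper's: both directions reduce to the invertibility of $\theta_\tau$ (equivalently of $\theta_\tau^*$). Your execution is somewhat cleaner than the paper's. For (i)$\Rightarrow$(ii) the paper computes $S_\tau = TT^*$ and then expands $\theta_\tau S_\tau^{-1}\theta_\tau^*\{a_n\}_n$ term by term on sequences, whereas you use the factorization $\theta_\tau^* = T\theta_\sigma^*$ to conclude invertibility in one line. For (ii)$\Rightarrow$(i) the paper invokes Holub's theorem (Theorem~\ref{HOLUBTHEOREM}) to produce a surjective $T$ with $Te_n=\tau_n$, identifies $\theta_\tau = T^*$, and shows $\operatorname{Ker}(T)=\theta_\tau(\mathcal{H})^\perp=\{0\}$; you instead read off $\theta_\tau(\mathcal{H})=\ell^2(\mathbb{N})$ directly from the projection identity $P_\tau=I$ and then compose $\theta_\tau^*$ with a unitary $W$ to land in Definition~\ref{RIESZBASISDEFINITION}. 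Both routes are sound; yours leans more on the structural fact (ix) of Theorem~\ref{MOSTIMPORTANT}, while the paper's is more computational. The only cosmetic issue is the aborted sentence (``$(\theta_\tau^{-1})^*{}^{-1}\!$\ldots more cleanly:''); delete that false start and keep the clean version via $T:=\theta_\tau^* W$.
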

\begin{proof}
	(i) $\implies$ (ii) From Theorem \ref{RIESZISAFRAME}  that a Riesz basis is a frame.  Now there exist an orthonormal basis $\{\omega_n\}_n$ for  $\mathcal{H}$ and   a bounded invertible operator $T: \mathcal{H}\to \mathcal{H}$ such that $T\omega_n=\tau_n$, for all $n \in \mathbb{N}$. We then have 
		\begin{align*}
		S_\tau h&= \sum_{n=1}^\infty \langle h, \tau_n\rangle\tau_n= \sum_{n=1}^\infty \langle h, T\omega_n\rangle T \omega_n\\
		&=T\left(\sum_{n=1}^\infty \langle T^*h, \omega_n\rangle  \omega_n\right)=TT^*h, \quad \forall h \in \mathcal{H}.
		\end{align*}
		Therefore 
		\begin{align*}
		\theta_\tau S_\tau^{-1} \theta_\tau^*\{a_n\}_n&=\theta_\tau (TT^*)^{-1} \theta_\tau^*\{a_n\}_n=\theta_\tau (T^*)^{-1}T^{-1} \theta_\tau^*\{a_n\}_n\\
		&=\theta_\tau (T^*)^{-1}T^{-1}\left(\sum_{n=1}^\infty a_n\tau_n\right)=\theta_\tau (T^*)^{-1}T^{-1}\left(\sum_{n=1}^\infty a_nT\omega_n\right)\\
		&=\theta_\tau \left(\sum_{n=1}^\infty a_n(T^*)^{-1}\omega_n\right)=\sum_{k=1}^{\infty}\left\langle \sum_{n=1}^\infty a_n(T^*)^{-1}\omega_n, \tau_k\right\rangle e_k \\
		&=\sum_{k=1}^{\infty}\left\langle \sum_{n=1}^\infty a_n(T^*)^{-1}\omega_n, T\omega_k\right\rangle e_k\\
		&=\sum_{k=1}^{\infty}\left\langle \sum_{n=1}^\infty a_n\omega_n, \omega_k\right\rangle e_k=\{a_k\}_k, \quad\forall\{a_n\}_n \in  \ell^2(\mathbb{N}).
		\end{align*}
	(ii) $\implies$ (i) From Holub's theorem  (Theorem \ref{HOLUBTHEOREM}), there exists a surjective bounded linear operator $T:\ell^2(\mathbb{N}) \to \mathcal{H}$ such that $Te_n=\tau_n$, for all $n \in \mathbb{N}$. Since all separable Hilbert spaces are isometrically isomorphic to one another and orthonormal bases map into orthonormal bases, without loss of generality we may assume that $\{e_n\}_n$ is an orthonormal basis for $\mathcal{H}$ and the domain of $T$ is $\mathcal{H}$. Our job now reduces in showing $T$ is invertible. Since $T$ is already surjective, to show it is invertible, it suffices to show it is injective. Let $\{a_n\}_n \in  \ell^2(\mathbb{N}).$ Then $\{a_n\}_n=\theta_\tau (S_\tau^{-1} \theta_\tau^*\{a_n\}_n)$. Hence $\theta_\tau$ is surjective. We now find 
		\begin{align*}
		\theta_\tau h=\sum_{n=1}^{\infty}\langle h, \tau_n\rangle e_n=\sum_{n=1}^{\infty}\langle h, Te_n\rangle e_n=T^*h, \quad \forall h \in \mathcal{H}.
		\end{align*}
		Therefore 
		\begin{align*}
		\operatorname{Kernel} (T)=T^*(\mathcal{H})^\perp=\theta_\tau(\mathcal{H})^\perp=\mathcal{H}^\perp=\{0\}.
		\end{align*}
		Hence $T$ is injective.	
\end{proof}
Theorem \ref{RIESZBASISCHAROURS} leads to the following definition of p-approximate Riesz basis.
\begin{definition}
	A pair $ (\{f_n \}_{n}, \{\tau_n \}_{n}) $ is said to be a \textbf{p-approximate Riesz basis}   for $\mathcal{X}$ if 	it is a p-ASF for $ \mathcal{X}$ and $\theta_fS_{f,\tau}^{-1}\theta_\tau=I_{\ell^p(\mathbb{N})}$.
\end{definition}
\begin{example}\label{EXAMPLE3}
	Let $p\in[1,\infty)$ and $U:\mathcal{X} \rightarrow\ell^p(\mathbb{N})$, $ V: \ell^p(\mathbb{N})\to \mathcal{X}$ be bounded invertible linear operators.  Let $\{e_n\}_n$, $\{\zeta_n\}_n$, $ \{f_n\}_{n}$, and $ \{\tau_n\}_{n}$ be as in Example \ref{EXAMPLE2}.	Then $ (\{f_n\}_{n}, \{\tau_n\}_{n}) $ is a p-approximate Riesz basis for 	$\mathcal{X}$. 
\end{example}
We now derive the dilation theorem. 
\begin{theorem}\label{DILATIONTHEOREMPASF}
	(\textbf{Dilation theorem for p-approximate Schauder frames})	Let   $ (\{f_n \}_{n}$, $\{\tau_n \}_{n}) $ be a p-ASF 	for  $\mathcal{X}$. Then there exist a Banach space $\mathcal{X}_1$	which contains $\mathcal{X}$ isometrically and a p-approximate Riesz basis  $ (\{g_n \}_{n}, \{\omega_n \}_{n}) $   	for  $\mathcal{X}_1$ such that 
	\begin{align*}
	f_n=g_nP_{|\mathcal{X}}, \quad\tau_n=P\omega_n, \quad \forall n \in \mathbb{N},
	\end{align*}
	where $P:\mathcal{X}_1\rightarrow \mathcal{X}$ is onto  projection. 
\end{theorem}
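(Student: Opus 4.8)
The plan is to imitate the classical Naimark--Han--Larson dilation argument, transported through the operator characterization of $p$-ASFs given in Theorem \ref{THAFSCHAR}. By that theorem there are bounded linear operators $U=\theta_f:\mathcal{X}\to\ell^p(\mathbb{N})$ and $V=\theta_\tau:\ell^p(\mathbb{N})\to\mathcal{X}$ with $VU=S_{f,\tau}$ bounded invertible, $f_n=\zeta_nU$ and $\tau_n=Ve_n$. First I would normalize: replacing $U$ by $S_{f,\tau}^{-1}U$ (equivalently passing to the canonical dual on the analysis side) we may assume $VU=I_\mathcal{X}$, i.e.\ $V$ is a bounded surjection with bounded right inverse $U$; the genuine frame $(\{f_n\},\{\tau_n\})$ will be recovered at the end by composing with $S_{f,\tau}$, which only rescales and does not affect the projection/isometry structure. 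Then $Q\coloneqq UV:\ell^p(\mathbb{N})\to\ell^p(\mathbb{N})$ is an idempotent ($Q^2=U(VU)V=UV=Q$) with range $U(\mathcal{X})$ and kernel $\ker V$, and $P_{f,\tau}=\theta_fS_{f,\tau}^{-1}\theta_\tau$ corresponds to this idempotent under the identification $\mathcal{X}\cong U(\mathcal{X})$.

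Next I would build the dilation space. The natural candidate is $\mathcal{X}_1\coloneqq\ell^p(\mathbb{N})$ itself, with $\mathcal{X}$ embedded isometrically as a subspace. To get an \emph{isometric} embedding I would renorm: put on $U(\mathcal{X})$ the norm transported from $\mathcal{X}$ via $U$ (so $U:\mathcal{X}\to U(\mathcal{X})$ is an isometric isomorphism), and then define $\mathcal{X}_1$ to be $\ell^p(\mathbb{N})$ carrying a norm that restricts to this norm on the complemented subspace $U(\mathcal{X})$ and is equivalent to the $\ell^p$ norm on the complement $\ker V$ — for instance $\|z\|_{\mathcal{X}_1}\coloneqq\|Qz\|_{U(\mathcal{X})}+\|(I-Q)z\|_p$, where $\|w\|_{U(\mathcal{X})}\coloneqq\|U^{-1}w\|_{\mathcal{X}}$ for $w\in U(\mathcal{X})$. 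Under this norm $\mathcal{X}_1$ is a Banach space containing $\mathcal{X}$ isometrically (via $U$), the standard unit vectors $\{e_n\}_n$ still form a Schauder basis for $\mathcal{X}_1$ (as the norm is equivalent to the original one), $P\coloneqq U^{-1}Q:\mathcal{X}_1\to\mathcal{X}$ is a bounded surjective projection, and the inclusion $\iota\colon\mathcal{X}\hookrightarrow\mathcal{X}_1$ is $z\mapsto Uz$. Now set $\omega_n\coloneqq e_n$ and $g_n\coloneqq\zeta_n$, the coordinate functionals, restricted appropriately; more precisely take $g_n=\zeta_n\in\mathcal{X}_1^*$ and $\omega_n=e_n\in\mathcal{X}_1$. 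Then $(\{g_n\},\{\omega_n\})$ is, by the $p$-approximate Riesz basis example (Example \ref{EXAMPLE3}) applied with the identity operators, a $p$-approximate Riesz basis for $\mathcal{X}_1$, because its frame operator is $\theta_{\omega}\theta_{g}=I$ and $\theta_g S^{-1}\theta_\omega=I_{\ell^p(\mathbb{N})}$. Finally I would verify the two commuting relations: $g_n P|_{\mathcal X}=\zeta_n(U^{-1}Q)(U\,\cdot\,)=\zeta_n U V U\,\cdot\,=\zeta_n U=f_n$ (using $VU=I$), and $P\omega_n=U^{-1}Qe_n=U^{-1}UVe_n=Ve_n=\tau_n$. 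Reinstating $S_{f,\tau}$ (replace $g_n$ by $g_nS_{f,\tau}$ on the embedded copy, equivalently adjust $P$) gives the statement for the original frame.

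The main obstacle I anticipate is the \emph{isometric} requirement $\mathcal{X}\subseteq\mathcal{X}_1$: a bare operator-theoretic dilation only yields an isomorphic embedding, so the renorming of $\ell^p(\mathbb{N})$ described above must be checked to genuinely produce a Banach space in which $\mathcal{X}$ sits isometrically while the candidate pair is still a bona fide $p$-ASF (one must confirm that $\theta_g$ and $\theta_\omega$ remain bounded for the new norm, which holds because it is equivalent to the $\ell^p$ norm, and that the frame-operator identity survives). A secondary point requiring care is that $\{e_n\}_n$ should remain a Schauder basis of $\mathcal{X}_1$ and that $g_n,\omega_n$ are exactly its coordinate functionals/basis vectors so that the ``Riesz basis'' (not merely ASF) conclusion — i.e.\ $\theta_g S_{g,\omega}^{-1}\theta_\omega=I_{\ell^p(\mathbb{N})}$ — actually holds; this is immediate once $\theta_\omega$ and $\theta_g$ are mutually inverse, which they are by construction since $\omega_n=e_n$ and $g_n=\zeta_n$. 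Everything else is routine bookkeeping with the factorization $S_{f,\tau}=\theta_\tau\theta_f$ from Theorem \ref{OURS}.
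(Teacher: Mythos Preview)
Your construction and the paper's are built on the same idea---both realize $\mathcal{X}_1$ as $\ell^p(\mathbb{N})$ split along the idempotent $P_{f,\tau}$, with $\mathcal{X}$ sitting in the range---but the paper proceeds \emph{without} normalizing. It sets $\mathcal{X}_1\coloneqq\mathcal{X}\oplus(I_{\ell^p}-P_{f,\tau})(\ell^p)$ directly, takes $\omega_n\coloneqq\tau_n\oplus(I-P_{f,\tau})e_n$ and $g_n\coloneqq f_n\oplus\zeta_n(I-P_{f,\tau})$, and then computes explicitly that $S_{g,\omega}=S_{f,\tau}\oplus I$ (so the dilated Riesz basis is \emph{not} Parseval in general) and $P_{g,\omega}=I_{\ell^p}$. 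The identities $f_n=g_nP|_{\mathcal{X}}$ and $\tau_n=P\omega_n$ fall out immediately from the first-coordinate projection, with no undoing step.

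Your route---normalize to $VU=I$, take $\omega_n=e_n$, $g_n=\zeta_n$---only dilates the \emph{Parseval} frame $(\{f_nS_{f,\tau}^{-1}\},\{\tau_n\})$, and the genuine gap is the sentence ``Reinstating $S_{f,\tau}$ (replace $g_n$ by $g_nS_{f,\tau}$ on the embedded copy, equivalently adjust $P$).'' Here $g_n\in\mathcal{X}_1^*$ while $S_{f,\tau}\in\mathcal{B}(\mathcal{X})$, so ``$g_nS_{f,\tau}$'' is undefined until you extend $S_{f,\tau}$ to an invertible $\hat S\in\mathcal{B}(\mathcal{X}_1)$ (e.g.\ $\hat S\coloneqq S_{f,\tau}\oplus I$ on $\mathcal{X}_1=\mathcal{X}\oplus\ker P$), set $\tilde g_n\coloneqq g_n\hat S$, and re-verify that $(\{\tilde g_n\},\{\omega_n\})$ is a $p$-approximate Riesz basis: $S_{\tilde g,\omega}=\hat S$ is invertible and $P_{\tilde g,\omega}=\theta_g\hat S\hat S^{-1}\theta_\omega=I_{\ell^p}$. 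This works, but it is precisely the computation the paper performs directly without the detour. Two minor slips: your ``replace $U$ by $S_{f,\tau}^{-1}U$'' should read $US_{f,\tau}^{-1}$ (the other order does not compose), and in your verification $g_nP|_{\mathcal X}=\zeta_n(U^{-1}Q)(U\cdot)=\zeta_nUVU\cdot=\zeta_nU=f_n$ the right-hand ``$f_n$'' is the \emph{normalized} $f_nS_{f,\tau}^{-1}$, not the original, which is exactly why the reinstating step cannot be skipped.
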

\begin{proof}
	Let  $\{e_n\}_n$ denote the standard Schauder basis for  $\ell^p(\mathbb{N})$ and let $\{\zeta_n\}_n$ denote the coordinate functionals associated with $\{e_n\}_n$. 	Define
	\begin{align*}
	\mathcal{X}_1\coloneqq\mathcal{X}\oplus(I_{\ell^p(\mathbb{N})}-P_{f, \tau})(\ell^p(\mathbb{N})), \quad P:\mathcal{X}_1 \ni x\oplus y\mapsto x\oplus 0 \in \mathcal{X}_1
	\end{align*}
	and 
	\begin{align*}
	\omega_n\coloneqq	\tau_n\oplus (I_{\ell^p(\mathbb{N})}-P_{f, \tau})e_n \in \mathcal{X}_1, \quad \quad g_n\coloneqq f_n \oplus \zeta_n (I_{\ell^p(\mathbb{N})}-P_{f, \tau})\in \mathcal{X}_1^*, \quad \forall  n \in \mathbb{N}.
	\end{align*}
	Then clearly $\mathcal{X}_1$	 contains $\mathcal{X}$ isometrically, $P:\mathcal{X}_1\rightarrow \mathcal{X}$ is onto projection and 
	\begin{align*}
	&(g_nP_{|\mathcal{X}})(x)=g_n(P_{|\mathcal{X}}x)=g_n(x)=(f_n \oplus \zeta_n (I_{\ell^p(\mathbb{N})}-P_{f, \tau}))(x\oplus 0)=f_n(x),\quad \forall x \in \mathcal{X},\\
	& P\omega_n=P(\tau_n\oplus (I_{\ell^p(\mathbb{N})}-P_{f, \tau})e_n)=\tau_n, \quad \forall n \in \mathbb{N}.
	\end{align*}
	Since the operator $I_{\ell^p(\mathbb{N})}-P_{f, \tau}$ is idempotent, it follows that $(I_{\ell^p(\mathbb{N})}-P_{f, \tau})(\ell^p(\mathbb{N}))$ is a closed subspace of $\ell^p(\mathbb{N})$ and hence a Banach space. Therefore $\mathcal{X}_1$ is a Banach space.  Let $x\oplus y \in \mathcal{X}_1$ and we shall write  $y=\{a_n\}_n \in \ell^p(\mathbb{N})$. We then see that 
	\begin{align*}
	&\sum_{n=1}^{\infty}(\zeta_n (I_{\ell^p(\mathbb{N})}-P_{f, \tau}))(y)\tau_n=\sum_{n=1}^{\infty}\zeta_n(y)\tau_n-\sum_{n=1}^{\infty}\zeta_n(P_{f, \tau}(y))\tau_n\\
	&\quad=\sum_{n=1}^{\infty}\zeta_n(\{a_k\}_{k})\tau_n-\sum_{n=1}^{\infty}\zeta_n(\theta_fS_{f, \tau}^{-1}\theta_\tau(\{a_k\}_{k}))\tau_n\\
	&\quad=\sum_{n=1}^{\infty}a_n\tau_n-\sum_{n=1}^{\infty}\zeta_n\left(\theta_fS_{f, \tau}^{-1}\left(\sum_{k=1}^{\infty}a_k\tau_k\right)\right)\tau_n\\
	&\quad
	=\sum_{n=1}^{\infty}a_n\tau_n-\sum_{n=1}^{\infty}\zeta_n\left(\sum_{k=1}^{\infty}a_k\theta_fS_{f, \tau}^{-1}\tau_k\right)\tau_n\\
	&\quad=\sum_{n=1}^{\infty}a_n\tau_n-\sum_{n=1}^{\infty}\zeta_n\left(\sum_{k=1}^{\infty}a_k\sum_{r=1}^{\infty}f_r(S_{f, \tau}^{-1}\tau_k)e_r\right)\tau_n\\
	&\quad
	=\sum_{n=1}^{\infty}a_n\tau_n-\sum_{n=1}^{\infty}\sum_{k=1}^{\infty}a_k\sum_{r=1}^{\infty}f_r(S_{f, \tau}^{-1}\tau_k)\zeta_n(e_r)\tau_n\\
	&\quad=\sum_{n=1}^{\infty}a_n\tau_n-\sum_{n=1}^{\infty}\sum_{k=1}^{\infty}a_kf_n(S_{f, \tau}^{-1}\tau_k)\tau_n\\
	&\quad=\sum_{n=1}^{\infty}a_n\tau_n-\sum_{k=1}^{\infty}a_k\sum_{n=1}^{\infty}f_n(S_{f, \tau}^{-1}\tau_k)\tau_n\\
	&\quad=\sum_{n=1}^{\infty}a_n\tau_n-\sum_{k=1}^{\infty}a_k\tau_k=0  
	\end{align*}
	and 
	\begin{align*}
	& \sum_{n=1}^{\infty}f_n(x)(I_{\ell^p(\mathbb{N})}-P_{f, \tau})e_n=\sum_{n=1}^{\infty}f_n(x)e_n-\sum_{n=1}^{\infty}f_n(x)P_{f, \tau}e_n\\
	&=\sum_{n=1}^{\infty}f_n(x)e_n-\sum_{n=1}^{\infty}f_n(x)\theta_fS_{f, \tau}^{-1}\theta_\tau e_n\\
	&=\sum_{n=1}^{\infty}f_n(x)e_n-\sum_{n=1}^{\infty}f_n(x)\theta_fS_{f, \tau}^{-1}\tau_n\\
	&=\sum_{n=1}^{\infty}f_n(x)e_n-\sum_{n=1}^{\infty}f_n(x)\sum_{k=1}^{\infty}f_k(S_{f, \tau}^{-1}\tau_n)e_k
	\\
	&=\sum_{n=1}^{\infty}f_n(x)e_n-\sum_{n=1}^{\infty}\sum_{k=1}^{\infty}f_n(x)f_k(S_{f, \tau}^{-1}\tau_n)e_k\\
	&=\sum_{n=1}^{\infty}f_n(x)e_n-\sum_{k=1}^{\infty}\sum_{n=1}^{\infty}f_n(x)f_k(S_{f, \tau}^{-1}\tau_n)e_k\\
	&=\sum_{n=1}^{\infty}f_n(x)e_n-\sum_{k=1}^{\infty}f_k\left(\sum_{n=1}^{n}f_n(x)S_{f, \tau}^{-1}\tau_n\right)e_k\\
	&=\sum_{n=1}^{\infty}f_n(x)e_n-\sum_{k=1}^{\infty}f_k(x)e_k=0.
	\end{align*}
	By using previous two calculations, we  get 
	\begin{align*}
&	S_{g, \omega}(x\oplus y) =\sum_{n=1}^{\infty}g_n(x\oplus y)\omega_n\\
	&=\sum_{n=1}^{\infty}(f_n \oplus \zeta_n (I_{\ell^p(\mathbb{N})}-P_{f, \tau}))(x\oplus y)(\tau_n\oplus (I_{\ell^p(\mathbb{N})}-P_{f, \tau})e_n)\\
	&=\sum_{n=1}^{\infty}(f_n(x) + (\zeta_n (I_{\ell^p(\mathbb{N})}-P_{f, \tau}))(y))(\tau_n\oplus (I_{\ell^p(\mathbb{N})}-P_{f, \tau})e_n)\\
	&=\left(\sum_{n=1}^{\infty}f_n(x)\tau_n+\sum_{n=1}^{\infty}(\zeta_n (I_{\ell^p(\mathbb{N})}-P_{f, \tau}))(y)\tau_n\right)\oplus\\
	&\quad \left(\sum_{n=1}^{\infty}f_n(x)(I_{\ell^p(\mathbb{N})}-P_{f, \tau})e_n+\sum_{n=1}^{\infty}(\zeta_n (I_{\ell^p(\mathbb{N})}-P_{f, \tau}))(y)(I_{\ell^p(\mathbb{N})}-P_{f, \tau})e_n\right)\\
	&=(S_{f, \tau}x+0)\oplus \left(0+(I_{\ell^p(\mathbb{N})}-P_{f, \tau})\sum_{n=1}^{\infty}\zeta_n ((I_{\ell^p(\mathbb{N})}-P_{f, \tau})y)e_n\right)\\
	&=S_{f, \tau}x\oplus (I_{\ell^p(\mathbb{N})}-P_{f, \tau})(I_{\ell^p(\mathbb{N})}-P_{f, \tau})y=S_{f, \tau}x\oplus (I_{\ell^p(\mathbb{N})}-P_{f, \tau})y\\
	&=(S_{f, \tau}\oplus (I_{\ell^p(\mathbb{N})}-P_{f, \tau}))(x\oplus y).
	\end{align*}
	Since the operator $I_{\ell^p(\mathbb{N})}-P_{f, \tau}$ is idempotent, $I_{\ell^p(\mathbb{N})}-P_{f, \tau}$ becomes the identity operator on the space $(I_{\ell^p(\mathbb{N})}-P_{f, \tau})(\ell^p(\mathbb{N}))$.  Hence we get that the operator  $S_{g, \omega}=S_{f, \tau}\oplus (I_{\ell^p(\mathbb{N})}-P_{f, \tau})$ is bounded invertible from  $\mathcal{X}_1$ onto itself. We next show that $ (\{g_n \}_{n}, \{\omega_n \}_{n}) $ is a p-approximate Riesz basis for $\mathcal{X}_1$. For this, first we find $\theta_g$ and $\theta_\omega$. Consider
	\begin{align*}
	\theta_g(x\oplus y)&=\{g_n(x\oplus y)\}_{n}=\{(f_n \oplus \zeta_n (I_{\ell^p(\mathbb{N})}-P_{f, \tau}))(x\oplus y)\}_{n}\\
	&=\{f_n (x) +\zeta_n ((I_{\ell^p(\mathbb{N})}-P_{f, \tau}) y)\}_{n}=\{f_n (x)\}_{n} +\{\zeta_n ((I_{\ell^p(\mathbb{N})}-P_{f, \tau}) y)\}_{n}\\
	&=\theta_fx+\sum_{n=1}^{\infty}\zeta_n ((I_{\ell^p(\mathbb{N})}-P_{f, \tau}) y )e_n=\theta_fx+(I_{\ell^p(\mathbb{N})}-P_{f, \tau}) y , \quad\forall x\oplus y \in \mathcal{X}_1
	\end{align*}
	and 
	\begin{align*}
	\theta_\omega\{a_n\}_n&=\sum_{n=1}^{\infty}a_n\omega_n=\sum_{n=1}^{\infty}a_n(\tau_n\oplus (I_{\ell^p(\mathbb{N})}-P_{f, \tau})e_n)\\
	&= \left(\sum_{n=1}^{\infty}a_n\tau_n\right) \oplus \left(\sum_{n=1}^{\infty}a_n(I_{\ell^p(\mathbb{N})}-P_{f, \tau})e_n\right)\\
	&=\theta_\tau\{a_n\}_n\oplus (I_{\ell^p(\mathbb{N})}-P_{f, \tau})\left(\sum_{n=1}^{\infty}a_ne_n\right)\\
	&=\theta_\tau\{a_n\}_n\oplus (I_{\ell^p(\mathbb{N})}-P_{f, \tau})\{a_n\}_n, \quad\forall \{a_n\}_n \in \ell^p(\mathbb{N}).
	\end{align*}
	Therefore
	\begin{align*}
	P_{g, \omega}\{a_n\}_n&=\theta_g S_{g, \omega}^{-1}\theta_\omega\{a_n\}_n=\theta_gS_{g, \omega}^{-1}(\theta_\tau\{a_n\}_n\oplus (I_{\ell^p(\mathbb{N})}-P_{f, \tau})\{a_n\}_n)\\
	&=\theta_g(S_{f, \tau}^{-1}\oplus (I_{\ell^p(\mathbb{N})}-P_{f, \tau}) )(\theta_\tau\{a_n\}_n\oplus (I_{\ell^p(\mathbb{N})}-P_{f, \tau})\{a_n\}_n)\\
	&=\theta_g(S_{f, \tau}^{-1} \theta_\tau\{a_n\}_n\oplus  (I_{\ell^p(\mathbb{N})}-P_{f, \tau})^2\{a_n\}_n)\\
	&=\theta_g(S_{f, \tau}^{-1} \theta_\tau\{a_n\}_n\oplus  (I_{\ell^p(\mathbb{N})}-P_{f, \tau})\{a_n\}_n)\\
	&=\theta_f(S_{f, \tau}^{-1} \theta_\tau\{a_n\}_n)+(I_{\ell^p(\mathbb{N})}-P_{f, \tau})(I_{\ell^p(\mathbb{N})}-P_{f, \tau})\{a_n\}_n\\
	&=P_{f, \tau}\{a_n\}_n+(I_{\ell^p(\mathbb{N})}-P_{f, \tau})\{a_n\}_n=\{a_n\}_n, \quad \forall \{a_n\}_n \in \ell^p(\mathbb{N}).
	\end{align*}	
\end{proof}
\begin{corollary}(\cite{HANLARSON, KASHINKULIKOVA})
Let $\{\tau_n\}_n$ be  a  frame  for  $ \mathcal{H}$.  Then there exist a Hilbert space $ \mathcal{H}_1 $ which contains $ \mathcal{H}$ isometrically and  a Riesz basis $\{\omega_n\}_n$ for  $ \mathcal{H}_1$ such that 
\begin{align*}
\tau_n=P\omega_n, \quad\forall n \in \mathbb{N},
\end{align*}
where $P$ is the orthogonal projection from $\mathcal{H}_1$ onto $\mathcal{H}$. 		
\end{corollary}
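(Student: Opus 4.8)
The plan is to derive the corollary directly from Theorem \ref{DILATIONTHEOREMPASF} by specializing to the Hilbert space case $p=2$, and then checking that in this setting the abstract Banach-space constructions collapse to the familiar orthogonal constructions. First I would take a frame $\{\tau_n\}_n$ for $\mathcal{H}$ and form the associated $2$-ASF: set $f_n(h) \coloneqq \langle h, S_\tau^{-1}\tau_n\rangle$ (the canonical dual functionals), so that $\sum_{n=1}^\infty f_n(h)\tau_n = h$ for all $h$, i.e. $S_{f,\tau}=I_\mathcal{H}$. Since $\{\tau_n\}_n$ and $\{S_\tau^{-1}\tau_n\}_n$ are both frames, the analysis and synthesis operators $\theta_f,\theta_\tau$ are bounded into/out of $\ell^2(\mathbb{N})$, so $(\{f_n\}_n,\{\tau_n\}_n)$ is indeed a $2$-ASF for $\mathcal{H}$. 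I would then apply Theorem \ref{DILATIONTHEOREMPASF} to get a Banach space $\mathcal{X}_1 = \mathcal{H}\oplus (I_{\ell^2(\mathbb{N})}-P_{f,\tau})(\ell^2(\mathbb{N}))$, a projection $P$, and a $2$-approximate Riesz basis $(\{g_n\}_n,\{\omega_n\}_n)$ with $\tau_n = P\omega_n$ and $f_n = g_n P_{|\mathcal{H}}$.

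Next I would identify the pieces in the Hilbert setting. The key observation is that $P_{f,\tau} = \theta_f S_{f,\tau}^{-1}\theta_\tau = \theta_f\theta_\tau$ is exactly the orthogonal projection $P_\tau$ of Theorem \ref{MOSTIMPORTANT}(ix) onto $\theta_\tau(\mathcal{H})$ — here using $S_{f,\tau}=I$ and that $\theta_f = $ analysis operator of the canonical dual, whose adjoint relationships make $\theta_f\theta_\tau$ self-adjoint and idempotent. Consequently $I_{\ell^2(\mathbb{N})}-P_{f,\tau}$ is the orthogonal projection onto the orthogonal complement $\theta_\tau(\mathcal{H})^\perp$, so $(I_{\ell^2(\mathbb{N})}-P_{f,\tau})(\ell^2(\mathbb{N}))$ is a closed subspace of the Hilbert space $\ell^2(\mathbb{N})$, hence itself a Hilbert space. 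Therefore $\mathcal{X}_1$ is a genuine Hilbert space $\mathcal{H}_1$ with $\mathcal{H}$ sitting inside it isometrically as an orthogonal summand, and the projection $P:\mathcal{H}_1\to\mathcal{H}$ is the orthogonal projection. This handles the claim that $\tau_n = P\omega_n$ with $P$ orthogonal.

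Finally I would argue that $\{\omega_n\}_n$ is a Riesz basis for $\mathcal{H}_1$ in the classical sense. Theorem \ref{DILATIONTHEOREMPASF} gives that $(\{g_n\}_n,\{\omega_n\}_n)$ is a $2$-approximate Riesz basis, meaning it is a $2$-ASF with $\theta_g S_{g,\omega}^{-1}\theta_\omega = I_{\ell^2(\mathbb{N})}$. I would invoke Theorem \ref{RIESZBASISCHAROURS}: a sequence in a Hilbert space is a Riesz basis if and only if it is a frame and $\theta_\omega S_\omega^{-1}\theta_\omega^* = I_{\ell^2(\mathbb{N})}$. So I need to check that in this Hilbert-space instance $g_n$ coincides with $\langle\,\cdot\,, S_\omega^{-1}\omega_n\rangle$ (equivalently $\theta_g = \theta_\omega S_\omega^{-1}$ and $S_{g,\omega}=S_\omega$, $\theta_\omega=\theta_\omega$), which follows from tracking through the explicit formulas $g_n = f_n\oplus \zeta_n(I_{\ell^2}-P_{f,\tau})$ and $\omega_n = \tau_n\oplus(I_{\ell^2}-P_{f,\tau})e_n$ together with $f_n = \langle\,\cdot\,,S_\tau^{-1}\tau_n\rangle$ and the self-adjointness of $P_{f,\tau}$. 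The main obstacle is precisely this last bookkeeping step: verifying that the abstract synthesis/analysis operators produced by the Banach-space dilation are adjoints of each other in the Hilbert case, so that "$2$-approximate Riesz basis" upgrades to "Riesz basis" via Theorem \ref{RIESZBASISCHAROURS}, and that $P_{f,\tau}$ really is the orthogonal (not merely a bounded) projection. Once that is in hand, the corollary is immediate.
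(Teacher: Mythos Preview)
Your approach is essentially the paper's: apply Theorem \ref{DILATIONTHEOREMPASF}, check that in the Hilbert case the construction is orthogonal, and finish via Theorem \ref{RIESZBASISCHAROURS}. The one genuine difference is your choice of functionals. The paper takes $f_n(h)=\langle h,\tau_n\rangle$ (so $\theta_f=\theta_\tau$ and $S_{f,\tau}=S_\tau$), whereas you take the canonical dual $f_n(h)=\langle h,S_\tau^{-1}\tau_n\rangle$ (so $S_{f,\tau}=I_\mathcal{H}$). Both choices give the same $P_{f,\tau}=P_\tau=\theta_\tau S_\tau^{-1}\theta_\tau^*$, hence the same $\omega_n$ and the same $\mathcal{H}_1$. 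The paper's choice buys the clean identity $g_n(\cdot)=\langle\cdot,\omega_n\rangle$, so that $S_{g,\omega}=S_\omega$ and the invertibility of $S_\omega$ comes for free from the dilation theorem. Your choice instead gives $S_{g,\omega}=I_{\mathcal{H}_1}$ and $g_n(\cdot)=\langle\cdot,S_\omega^{-1}\omega_n\rangle$; to establish that identification (and the frame property of $\{\omega_n\}_n$) you must first compute $S_\omega=S_\tau\oplus I$ independently, so the ``bookkeeping step'' you flag is a genuine extra computation, not a formality. In particular your parenthetical ``$S_{g,\omega}=S_\omega$'' is a slip: with your normalization $S_{g,\omega}=I_{\mathcal{H}_1}$, not $S_\omega$. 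Once $S_\omega$ is in hand, both routes finish the same way by verifying $\theta_\omega S_\omega^{-1}\theta_\omega^*=I_{\ell^2(\mathbb{N})}$.
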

\begin{proof}
	Let $\{\tau_n\}_n$ be a frame for  $\mathcal{H}$. Define
	\begin{align*}
	f_n:\mathcal{H} \ni h \mapsto f_n(h)\coloneqq \langle h, \tau_n\rangle \in \mathbb{K}, \quad \forall n \in \mathbb{N}.
	\end{align*}	
	Then $\theta_f=\theta_\tau$. Note that now $ (\{f_n \}_{n}, \{\tau_n \}_{n}) $ is  a 2-approximate frame   for $\mathcal{H}$. Theorem  \ref{DILATIONTHEOREMPASF} now says that  there exist a Banach space $\mathcal{X}_1$	which contains $\mathcal{H}$ isometrically and a 2-approximate Riesz basis  $ (\{g_n \}_{n}, \{\omega_n \}_{n}) $   	for  $\mathcal{X}_1=\mathcal{H}\oplus(I_{\ell^2(\mathbb{N})}-P_{\tau})(\ell^2(\mathbb{N}))$ such that 
	\begin{align*}
	f_n=g_nP_{|\mathcal{H}}, \quad\tau_n=P\omega_n, \quad \forall n \in \mathbb{N},
	\end{align*}
	where $P:\mathcal{X}_1\rightarrow \mathcal{H}$ is onto  projection. Since   $(I_{\ell^2(\mathbb{N})}-P_{\tau})(\ell^2(\mathbb{N}))$ is a closed  subspace of the Hilbert space $\ell^2(\mathbb{N})$, $\mathcal{X}_1$ now becomes a Hilbert space. From the definition of $P$ we get that it is an  orthogonal projection.  To prove $\{\omega_n\}_n$ is a Riesz basis for  $\mathcal{X}_1$, we use  Theorem \ref{RIESZBASISCHAROURS}. Since $\{\tau_n\}_n$  is  a
	frame for $\mathcal{H}$,  there exist $a,b>0$ such that 
	\begin{align*}
	a\|h\|^2 \leq \sum_{n=1}^\infty |\langle h, \tau_n\rangle|^2\leq b\|h\|^2, \quad \forall h \in \mathcal{H}.
	\end{align*}
	Let $h\oplus  (I_{\ell^2(\mathbb{N})}-P_{f, \tau})\{a_k\}_k\in \mathcal{X}_1$. Then by noting $b\geq1$, we get 
	\begin{align*}
	&\sum_{n=1}^{\infty}|\langle h\oplus (I_{\ell^2(\mathbb{N})}-P_{ \tau})\{a_k\}_k, \omega_n\rangle|^2\\
	&\quad=\sum_{n=1}^{\infty}|\langle h\oplus (I_{\ell^2(\mathbb{N})}-P_{ \tau})\{a_k\}_k, \tau_n\oplus (I_{\ell^2(\mathbb{N})}-P_{ \tau})e_n\rangle|^2\\
	&\quad=\sum_{n=1}^{\infty}|\langle h, \tau_n\rangle|^2+\sum_{n=1}^{\infty}|\langle  (I_{\ell^2(\mathbb{N})}-P_{ \tau})\{a_k\}_k,  (I_{\ell^2(\mathbb{N})}-P_{ \tau})e_n\rangle|^2\\
	&\quad=\sum_{n=1}^{\infty}|\langle h, \tau_n\rangle|^2+\sum_{n=1}^{\infty}|\langle    (I_{\ell^2(\mathbb{N})}-P_{ \tau})(I_{\ell^2(\mathbb{N})}-P_{ \tau})\{a_k\}_k, e_n\rangle|^2\\
	&\quad=\sum_{n=1}^{\infty}|\langle h, \tau_n\rangle|^2+\sum_{n=1}^{\infty}|\langle    (I_{\ell^2(\mathbb{N})}-P_{ \tau})\{a_k\}_k, e_n\rangle|^2\\
	&\quad=\sum_{n=1}^{\infty}|\langle h, \tau_n\rangle|^2+\|(I_{\ell^2(\mathbb{N})}-P_{ \tau})\{a_k\}_k\|^2\\
	&\quad\leq b\|h\|^2+\|(I_{\ell^2(\mathbb{N})}-P_{\tau})\{a_k\}_k\|^2\\
	&\quad\leq b(\|h\|^2+\|(I_{\ell^2(\mathbb{N})}-P_{ \tau})\{a_k\}_k\|^2)\\
	&\quad=b\| h\oplus (I_{\ell^2(\mathbb{N})}-P_{\tau})\{a_k\}_k\|^2.
	\end{align*}
	Previous calculation tells that $\{\omega_n\}_n$  is  a Bessel sequence
	for $\mathcal{X}_1$. Hence $S_\omega:\mathcal{X}_1 \ni x\oplus\{a_k\}_k\mapsto \sum_{n=1}^{\infty} \langle x\oplus\{a_k\}_k, \omega_n\rangle \omega_n\in \mathcal{X}_1$ is a well-defined bounded linear operator. Next we claim that 
	\begin{align}\label{CLAIM}
	g_n(x\oplus\{a_k\}_k)=\langle x\oplus\{a_k\}_k, \omega_n\rangle, \quad \forall x\oplus\{a_k\}_k \in \mathcal{X}_1, \forall n \in \mathbb{N}.
	\end{align}
	Consider 
	\begin{align*}
	&g_n(x\oplus\{a_k\}_k)=(f_n \oplus \zeta_n (I_{\ell^2(\mathbb{N})}-P_{ \tau}))(x\oplus\{a_k\}_k)\\
	&=f_n(x)+\zeta_n ((I_{\ell^2(\mathbb{N})}-P_{ \tau})\{a_k\}_k)
	=f_n(x)+\zeta_n\left(\{a_k\}_k\right)-\zeta_n (P_{ \tau}\{a_k\}_k)\\
	&=f_n(x)+\zeta_n\left(\{a_k\}_k\right)-\zeta_n (\theta_\tau S_{ \tau}^{-1}\theta_\tau^*\{a_k\}_k)
	=f_n(x)+a_n-\zeta_n\left(\theta_\tau S_{ \tau}^{-1}\left(\sum_{k=1}^{\infty}a_k\tau_k\right)\right)\\
	&=f_n(x)+a_n-\zeta_n\left(\sum_{k=1}^{\infty}a_k\theta_\tau S_{ \tau}^{-1}\tau_k\right)
	=f_n(x)+a_n-\zeta_n\left(\sum_{k=1}^{\infty}a_k\sum_{r=1}^{\infty}\langle S_{ \tau}^{-1}\tau_k, \tau_r \rangle e_r\right)\\
	&=f_n(x)+a_n-\sum_{k=1}^{\infty}a_k\langle S_{ \tau}^{-1}\tau_k, \tau_n \rangle= \langle x, \tau_n\rangle+a_n-\sum_{k=1}^{\infty}a_k\langle S_{ \tau}^{-1}\tau_k, \tau_n \rangle \quad \text{ and }
	\end{align*}
	\begin{align*}
	&\langle x\oplus\{a_k\}_k, \omega_n\rangle=\langle x\oplus\{a_k\}_k, \tau_n\oplus (I_{\ell^2(\mathbb{N})}-P_{ \tau})e_n\rangle\\
	&\quad=\langle x, \tau_n\rangle+\langle \{a_k\}_k,  (I_{\ell^2(\mathbb{N})}-P_{ \tau})e_n\rangle
	=\langle x, \tau_n\rangle+\langle \{a_k\}_k,  e_n\rangle+\langle \{a_k\}_k,  P_{ \tau}e_n\rangle\\
	&\quad=\langle x, \tau_n\rangle+a_n-\left \langle\{a_k\}_k, \theta_\tau S_{ \tau}^{-1}\theta_\tau^*e_n\right\rangle=\langle x, \tau_n\rangle+a_n-\left \langle\{a_k\}_k, \theta_\tau S_{ \tau}^{-1}\tau_n\right\rangle\\
	&\quad=\langle x, \tau_n\rangle+a_n- \langle\{a_k\}_k,  \{ \langle S_{ \tau}^{-1}\tau_n, \tau_k \rangle\}_k \rangle=\langle x, \tau_n\rangle+a_n-\sum_{k=1}^{\infty}a_k\overline{\langle S_{ \tau}^{-1}\tau_n, \tau_k \rangle}\\
	&\quad=\langle x, \tau_n\rangle+a_n-\sum_{k=1}^{\infty}a_k\langle\tau_k,S_{ \tau}^{-1}\tau_n \rangle=\langle x, \tau_n\rangle+a_n-\sum_{k=1}^{\infty}a_k\langle S_{ \tau}^{-1}\tau_k, \tau_n \rangle.
	\end{align*}
	Thus Equation (\ref{CLAIM}) holds. Therefore for all $x\oplus\{a_k\}_k\in \mathcal{X}_1$,
	\begin{align*}
	S_{g,\omega}(x\oplus\{a_k\}_k)=\sum_{n=1}^{\infty}g_n(x\oplus\{a_k\}_k)\omega_n=\sum_{n=1}^{\infty}\langle x\oplus\{a_k\}_k, \omega_n\rangle\omega_n=S_{\omega}(x\oplus\{a_k\}_k).
	\end{align*}
	Since $ S_{g,\omega}$ is invertible, $S_{\omega}$ becomes invertible. Clearly $S_{\omega}$ is positive. Therefore 
	\begin{align*}
	\frac{1}{\|S_{\omega}\|^{-1}}\|g\|^2\leq \langle S_{\omega}g, g\rangle\leq \|S_\omega\| \|g\|^2, \quad \forall g \in \mathcal{X}_1. 
	\end{align*}
	Hence
	\begin{align*}
	\frac{1}{\|S_{\omega}\|^{-1}}\|g\|^2\leq \sum_{n=1}^{\infty}|\langle g, \omega_n\rangle|^2\leq \|S_\omega\| \|g\|^2, \quad \forall g \in \mathcal{X}_1. 
	\end{align*}
	Hence  $\{\omega_n\}_n$  is  a frame 
	for $\mathcal{X}_1$.
	
	Finally we  show Equation (\ref{RIESZEQUATIONTHEOREM}) in  Theorem \ref{RIESZBASISCHAROURS} for the frame $\{\omega_n\}_n$. Consider 
	\begin{align*}
	&\theta_\omega S_\omega^{-1} \theta_\omega^*\{a_n\}_n=\theta_\omega S_\omega^{-1}\left(\sum_{n=1}^{\infty}a_n\omega_n\right)=\theta_\omega \left(\sum_{n=1}^{\infty}a_nS_\omega^{-1}\omega_n\right)\\
	&=\sum_{k=1}^{\infty} \left\langle \sum_{n=1}^{\infty}a_nS_\omega^{-1}\omega_n, \omega_k\right \rangle=\sum_{k=1}^{\infty}  \sum_{n=1}^{\infty}a_n\langle S_\omega^{-1}\omega_n, \omega_k \rangle\\
	&=\sum_{k=1}^{\infty}  \sum_{n=1}^{\infty}a_n\langle (S_\tau^{-1}\oplus(I_{\ell^2(\mathbb{N})}-P_{ \tau}) )(\tau_n\oplus (I_{\ell^2(\mathbb{N})}-P_{ \tau})e_n), \tau_k\oplus (I_{\ell^2(\mathbb{N})}-P_{ \tau})e_k \rangle\\
	&=\sum_{k=1}^{\infty}  \sum_{n=1}^{\infty}a_n\langle S_\tau^{-1}\tau_n\oplus(I_{\ell^2(\mathbb{N})}-P_{ \tau}) ^2e_n, \tau_k\oplus (I_{\ell^2(\mathbb{N})}-P_{ \tau})e_k \rangle\\
	&=\sum_{k=1}^{\infty}  \sum_{n=1}^{\infty}a_n(\langle S_\tau^{-1}\tau_n, \tau_k \rangle+\langle (I_{\ell^2(\mathbb{N})}-P_{ \tau}) e_n,  (I_{\ell^2(\mathbb{N})}-P_{ \tau})e_k \rangle)\\
	&=\sum_{k=1}^{\infty} \left\langle \sum_{n=1}^{\infty}a_nS_\tau^{-1}\tau_n, \tau_k\right \rangle+\sum_{k=1}^{\infty}  \sum_{n=1}^{\infty}a_n\langle (I_{\ell^2(\mathbb{N})}-P_{f, \tau}) e_n,  (I_{\ell^2(\mathbb{N})}-P_{ \tau})e_k \rangle\\
	&=P_\tau\{a_n\}_n+\sum_{k=1}^{\infty}  \sum_{n=1}^{\infty}a_n\langle (I_{\ell^2(\mathbb{N})}-P_{ \tau}) e_n,e_k \rangle\\
	&=P_\tau\{a_n\}_n+\sum_{k=1}^{\infty}\sum_{n=1}^{\infty}a_n\langle  e_n,e_k \rangle-\sum_{k=1}^{\infty}\sum_{n=1}^{\infty}a_n\langle  P_\tau e_n,e_k \rangle\\
	&=P_\tau\{a_n\}_n+\sum_{k=1}^{\infty}a_ke_k-\sum_{k=1}^{\infty}\sum_{n=1}^{\infty}a_n\langle  \theta_\tau S^{-1}_\tau  \theta_\tau ^*e_n,e_k \rangle\\
	&=P_\tau\{a_n\}_n+\sum_{k=1}^{\infty}a_ke_k-\sum_{k=1}^{\infty}\sum_{n=1}^{\infty}a_n\langle   S^{-1}_\tau  \tau_n,\theta_\tau^*e_k \rangle\\
	&=P_\tau\{a_n\}_n+\sum_{k=1}^{\infty}a_ke_k-\sum_{k=1}^{\infty}\sum_{n=1}^{\infty}a_n\langle   S^{-1}_\tau  \tau_n,\tau_k \rangle\\
	&=P_\tau\{a_n\}_n+\sum_{k=1}^{\infty}a_ke_k-P_\tau\{a_n\}_n= \{a_n\}_n,\quad\forall  \{a_n\}_n \in \ell^2(\mathbb{N}).
	\end{align*}
	Thus  $\{\omega_n\}_n$  is  a Riesz basis 
	for $\mathcal{X}_1$  which completes the proof.
\end{proof}
We now illustrate Theorem \ref{DILATIONTHEOREMPASF} with an example.
\begin{example}
	Let $p\in[1,\infty)$. Let $\{e_n\}_n$ denote the canonical Schauder basis for  $\ell^p(\mathbb{N})$  and let $\{\zeta_n\}_n$ denote the coordinate functionals associated with $\{e_n\}_n$ respectively. Define 
	\begin{align*}
	R: \ell^p(\mathbb{N}) \ni (x_n)_{n=1}^\infty\mapsto (0,x_1,x_2, \dots)\in \ell^p(\mathbb{N}),\\
	L: \ell^p(\mathbb{N}) \ni (x_n)_{n=1}^\infty\mapsto (x_2,x_3,x_4, \dots)\in \ell^p(\mathbb{N}).
	\end{align*}
	Then $LR=I_{\ell^p(\mathbb{N})}$. Example \ref{EXAMPLE3} says that $ (\{f_n\coloneqq \zeta_nR\}_{n}, \{\tau_n\coloneqq Le_n\}_{n}) $ is a p-ASF for 	$\ell^p(\mathbb{N})$. Note that $\theta_f=R$ and $\theta_\tau=L$. Therefore $S_{f, \tau}=LR=I_{\ell^p(\mathbb{N})}$ and $P_{f, \tau}=RL.$ Then 
	\begin{align*}
	(I_{\ell^p(\mathbb{N})}-P_{f, \tau})(x_n)_{n=1}^\infty&=(x_n)_{n=1}^\infty-
	RL(x_n)_{n=1}^\infty\\
	&=(x_n)_{n=1}^\infty-(0,x_2,x_3, \dots)=(x_1, 0, 0,  \dots),\quad \forall (x_n)_{n=1}^\infty\in \ell^p(\mathbb{N})
	\end{align*}
	which says that $(I_{\ell^p(\mathbb{N})}-P_{f, \tau})(\ell^p(\mathbb{N}))\cong \mathbb{K}$. Using Theorem \ref{DILATIONTHEOREMPASF},
	\begin{align*}
	&\mathcal{X}_1=\ell^p(\mathbb{N})\oplus (I_{\ell^p(\mathbb{N})}-P_{f, \tau})(\ell^p(\mathbb{N}))\cong\ell^p(\mathbb{N})\oplus\mathbb{K}\cong \ell^p(\mathbb{N}\cup \{0\})\\
	&P:\ell^p(\mathbb{N}\cup \{0\})\ni (x_n)_{n=0}^\infty \mapsto (x_n)_{n=1}^\infty \in \ell^p(\mathbb{N}),
	\end{align*} 
	\begin{align*}
	\omega_1&=\tau_1\oplus (I_{\ell^p(\mathbb{N})}-P_{f, \tau})\tau_1=Le_1\oplus (I_{\ell^p(\mathbb{N})}-P_{f, \tau})Le_1
	=0\oplus 0,\\
	\omega_2&=\tau_2\oplus (I_{\ell^p(\mathbb{N})}-P_{f, \tau})\tau_2=Le_n\oplus (I_{\ell^p(\mathbb{N})}-P_{f, \tau})Le_2\\
	&=e_{1}\oplus (I_{\ell^p(\mathbb{N})}-P_{f, \tau})e_{1}=e_{1}\oplus RLe_{1}=e_{1}\oplus 0,\\
	\omega_n&=\tau_n\oplus (I_{\ell^p(\mathbb{N})}-P_{f, \tau})\tau_n=Le_n\oplus (I_{\ell^p(\mathbb{N})}-P_{f, \tau})Le_n\\
	&=e_{n-1}\oplus (I_{\ell^p(\mathbb{N})}-P_{f, \tau})e_{n-1}=e_{n-1}\oplus RLe_{n-1}=e_{n-1}\oplus e_{n-1}, \quad \forall n \geq 3,\\
	g_n&=f_n\oplus \zeta_n(I_{\ell^p(\mathbb{N})}-P_{f, \tau})=\zeta_nR\oplus \zeta_nRL=\zeta_nR(I_{\ell^p(\mathbb{N})}\oplus L), \quad \forall n \in \mathbb{N}
	\end{align*} 
	and $ (\{g_n\}_{n}, \{\omega_n\}_{n}) $ is a p-approximate Riesz basis for $\ell^p(\mathbb{N})$.	
\end{example}
\section{NEW IDENTITY FOR PARSEVAL p-APPROXIMATE SCH-AUDER FRAMES}
Certain classes of Banach spaces known as homogeneous semi-inner product spaces admit a kind of inner product and can be studied with certain  similarities with Hilbert spaces. These spaces are introduced by  \cite{LUMER} and studied extensively by  \cite{GILES}. We now recall the fundamentals of semi-inner products. Let $\mathcal{X}$ be a vector space over $\mathbb{K}$. A map $[\cdot, \cdot]:\mathcal{X}\times \mathcal{X} \to \mathbb{K}$ is said to be a \textbf{homogeneous semi-inner product} if it satisfies the following.
\begin{enumerate}[label=(\roman*)]
	\item $[x,x]>0$, for all $x \in \mathcal{X}, x\neq 0$.
	\item $[\lambda x,y]=\lambda [x,y]$, for all $x,y \in \mathcal{X}$, for all $\lambda \in \mathbb{K}$.
	\item 	$[x, \lambda y]=\overline{\lambda} [x,y]$, for all $x,y \in \mathcal{X}$, for all $\lambda \in \mathbb{K}$.
	\item $[x+y, z]=[x, z]+[y, z]$, for all $x,y,z \in \mathcal{X}$.
	\item $| [x,y]|^2\leq [x,x][y,y]$, for all $x,y \in \mathcal{X}$.
\end{enumerate}
A homogeneous semi-inner product $[\cdot, \cdot]$ induces a \textbf{norm} which is defined as $\|x\|\coloneqq \sqrt{[x,x]}$. A prototypical example of homogeneous semi-inner product spaces is the standard $\ell^p(\mathbb{N})$ space, $1<p<\infty$,  equipped with semi-inner product defined as follows. For $x=\{x_n\}_n,$  $y=\{y_n\}_n\in \ell^p(\mathbb{N})$, define 
\begin{align*}
[x,y]\coloneqq \begin{cases*}
\frac{\sum\limits_{n=1}^{\infty}x_n\overline{y_n}|y_n|^{p-2}}{\|y\|_p^{p-2}} \quad& if $ y \neq 0 $ \\
0 & if $ y=0.$ \\
\end{cases*}
\end{align*}
For certain classes of Banach spaces we have  Riesz  representation theorem. 
\begin{theorem}(\cite{GILES})\label{RIESZ} (\textbf{Riesz  representation theorem for Banach spaces})
	Let $\mathcal{X}$ be a complete homogeneous semi-inner product space. If $\mathcal{X}$ is continuous and uniformly convex, then for every bounded linear functional $f:  \mathcal{X}\to \mathbb{K}$, there exists a unique $y \in \mathcal{X}$  such that $f(x)=[x,y]$, for all $x \in \mathcal{X}$.
\end{theorem}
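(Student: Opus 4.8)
The plan is to adapt the classical Hilbert-space proof of Riesz representation, using the geometric hypotheses (continuity and uniform convexity of the semi-inner product space) as substitutes for the Hilbert-space machinery that is not available. First I would dispose of the trivial case: if $f = 0$, take $y = 0$. So assume $f \neq 0$, and consider the hyperplane $\mathcal{N} \coloneqq \ker f$, which is a closed proper subspace of $\mathcal{X}$. The key geometric step is to produce a nonzero vector $z_0$ that is ``orthogonal'' to $\mathcal{N}$ in the semi-inner product sense, i.e. $[n, z_0] = 0$ for all $n \in \mathcal{N}$. In a uniformly convex Banach space, every closed convex subset admits a unique nearest point to any given point; applying this to a point $x_0 \notin \mathcal{N}$ and the subspace $\mathcal{N}$, I would let $w$ be the nearest point of $\mathcal{N}$ to $x_0$ and set $z_0 \coloneqq x_0 - w$, which is nonzero and satisfies $\|z_0\| = \operatorname{dist}(x_0, \mathcal{N})$. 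The minimality of the norm, together with the fact that for fixed $z_0$ the map $x \mapsto [x, z_0]$ is linear (properties (ii), (iv) of the semi-inner product) and continuity of $[\cdot,\cdot]$ in the first variable, forces the Birkhoff--James orthogonality $\|z_0\| \le \|z_0 + t n\|$ for all $n \in \mathcal{N}$, $t$ scalar, and this in turn yields $[n, z_0] = 0$ for all $n \in \mathcal{N}$ (this is where the continuity hypothesis on the semi-inner product is essential, since it links Birkhoff--James orthogonality to vanishing of $[\cdot, z_0]$).

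Once such a $z_0$ is in hand, the candidate is $y \coloneqq \overline{\lambda}\, z_0$ where $\lambda \coloneqq \dfrac{f(z_0)}{[z_0, z_0]}$ is chosen so that the representation is correct on the line through $z_0$. To verify $f(x) = [x, y]$ for all $x$, I would write an arbitrary $x \in \mathcal{X}$ as $x = \left(x - \dfrac{f(x)}{f(z_0)} z_0\right) + \dfrac{f(x)}{f(z_0)} z_0$, noting the first summand lies in $\mathcal{N}$. Applying $f$ and using linearity gives $f(x) = f(x)$ trivially, while applying $[\,\cdot\,, z_0]$ and using additivity and homogeneity in the first slot together with $[n, z_0] = 0$ gives $[x, z_0] = \dfrac{f(x)}{f(z_0)}[z_0, z_0]$, hence $f(x) = \dfrac{f(z_0)}{[z_0,z_0]}[x, z_0] = \lambda [x, z_0] = [x, \overline{\lambda} z_0] = [x, y]$, as desired.

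For uniqueness, suppose $[x, y] = [x, y']$ for all $x \in \mathcal{X}$. Then $[x, y] - [x, y'] = 0$ for all $x$; taking $x = y$ and $x = y'$ and using the Cauchy--Schwarz-type inequality (property (v)) together with strict positivity (property (i)), I would deduce $\|y - y'\|$-type estimates that force $y = y'$ — more precisely, from $[y, y] = [y, y']$ and $[y', y] = [y', y']$ one gets $\|y\|^2 = [y, y'] \le \|y\|\,\|y'\|$ and symmetrically, so $\|y\| = \|y'\|$, and then equality in Cauchy--Schwarz in a strictly convex (uniformly convex) space forces $y$ and $y'$ to be positively proportional, hence equal. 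The main obstacle I anticipate is the orthogonality step: carefully justifying that the nearest-point vector $z_0$ genuinely satisfies $[n, z_0] = 0$ for all $n \in \mathcal{N}$, since unlike the Hilbert case there is no parallelogram law and one must instead differentiate the function $t \mapsto \|z_0 + tn\|^2$ at $t = 0$ and invoke the continuity of the semi-inner product to identify the derivative with $2\operatorname{Re}[n, z_0]$ (and then also handle the imaginary part by replacing $n$ with $in$ in the complex case). Establishing that this one-sided derivative exists and has the claimed form is exactly what the ``continuous'' hypothesis on $\mathcal{X}$ buys us, and it is the technical heart of the argument; the existence and uniqueness of the nearest point is a standard consequence of uniform convexity plus completeness.
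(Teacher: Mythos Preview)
The paper does not supply its own proof of this theorem; it is simply quoted from Giles and used as a tool in the subsequent section on Parseval p-ASF identities. So there is nothing in the paper to compare your approach against.

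That said, your proposal is essentially Giles' original argument and is correct in outline. Two small technical points worth tightening. First, in the orthogonality step you write that the derivative of $t\mapsto\|z_0+tn\|^2$ at $t=0$ should be identified with $2\operatorname{Re}[n,z_0]$; be careful here, since the semi-inner product is \emph{not} additive in the second slot, so you cannot expand $[z_0+tn,z_0+tn]$ as in a Hilbert space. The correct route is to differentiate the norm itself: Giles' ``continuity'' hypothesis is precisely G\^ateaux smoothness of the norm, and the unique supporting functional at $z_0/\|z_0\|$ is $[\,\cdot\,,z_0]/\|z_0\|$, so the minimality $\|z_0\|\le\|z_0+tn\|$ for all real $t$ forces $\operatorname{Re}[n,z_0]=0$ (and replacing $n$ by $in$ handles the imaginary part). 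Second, in the uniqueness argument, equality in the Cauchy--Schwarz-type inequality (v) does not by itself force proportionality in a general semi-inner product space; what you actually use is that the single norm-one functional $[\,\cdot\,,y]/\|y\|=[\,\cdot\,,y']/\|y'\|$ norms both $y/\|y\|$ and $y'/\|y'\|$, and strict convexity (from uniform convexity) then gives $y/\|y\|=y'/\|y'\|$. With these clarifications the proof goes through.
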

Theorem \ref{RIESZ} leads to the notion of generalized adjoint  whose existence is assured by the  following theorem. Two state the result we need two definitions.
\begin{definition}(cf. \cite{GILES})
Let $\mathcal{X}$ be a complete homogeneous semi-inner product space. Space 	$\mathcal{X}$ is said to be \textbf{continuous} if 
\begin{align*}
\text{Re}([x,y+\lambda x])\to \text{Re}([x,y]), \text{ for all real } \lambda \to 0, \forall x,y \in \mathcal{X} \text{ such that } \|x\|=\|y\|=1.
\end{align*}
\end{definition}
\begin{definition}(\cite{GILES})
	A Banach space is said to be \textbf{uniformly convex} if given $\epsilon >0$, there exists an $\delta>0$ such that if $ x,y \in \mathcal{X}$ are such that $\|x\|=\|y\|=1$ and $\|x-y\|>\epsilon$, then $\|x+y\|\leq 2(1-\delta)$.
\end{definition}
\begin{theorem}(\cite{KOEHLER})\label{THEOREMK}
	Let $\mathcal{X}$ be a complete homogeneous semi-inner product space. If $\mathcal{X}$ is continuous and uniformly convex, then for every bounded linear operator  $A:  \mathcal{X}\to \mathcal{X}$, there exists a unique map $A^\dagger:\mathcal{X}\to \mathcal{X}$,  which may not be linear or continuous (called as \textbf{generalized adjoint} of $A$)   such that 
	\begin{align*}
	[Ax,y]=[x,A^\dagger y],\quad \forall x,y \in \mathcal{X}.
	\end{align*}
	Moreover, the following statements hold.
	\begin{enumerate}[label=(\roman*)]
		\item $(\lambda A)^\dagger=\overline{\lambda}A^\dagger$, for all $\lambda \in \mathbb{K}$.
		\item $A^\dagger$ is injective if and only if $\overline{ A(\mathcal{X})}=\mathcal{X}$.
		\item If the norm of $\mathcal{X}$ is strongly (Frechet) differentiable, then $A^\dagger$ is continuous. 
	\end{enumerate}	
\end{theorem}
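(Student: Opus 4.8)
The plan is to construct $A^\dagger$ one vector at a time using the Riesz representation theorem for Banach spaces (Theorem~\ref{RIESZ}), and then to read off the three additional properties from the uniqueness of the representing vector together with the reflexivity and smoothness that continuity of the semi-inner product and uniform convexity provide.

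For existence, I would fix $y \in \mathcal{X}$ and consider $\varphi_y \colon \mathcal{X} \ni x \mapsto [Ax,y] \in \mathbb{K}$. Additivity of $[\cdot,\cdot]$ in the first slot and linearity of $A$ make $\varphi_y$ linear, while the Cauchy--Schwarz inequality for the semi-inner product gives $|\varphi_y(x)| \le \|Ax\|\,\|y\| \le \|A\|\,\|y\|\,\|x\|$, so $\varphi_y \in \mathcal{X}^*$. Since $\mathcal{X}$ is complete, continuous and uniformly convex, Theorem~\ref{RIESZ} produces a unique $z_y \in \mathcal{X}$ with $\varphi_y(x) = [x,z_y]$ for all $x$, and I set $A^\dagger y \coloneqq z_y$; this gives a map $A^\dagger \colon \mathcal{X} \to \mathcal{X}$ with $[Ax,y] = [x,A^\dagger y]$ for all $x,y$. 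Uniqueness of the map is immediate: any $B$ with $[Ax,y] = [x,By]$ for all $x,y$ has, for each $y$, that $By$ and $A^\dagger y$ both represent $\varphi_y$, so $By = A^\dagger y$ by the uniqueness clause of Theorem~\ref{RIESZ}. I also note $A^\dagger 0 = 0$ for later use. Property (i) then follows from homogeneity in the first slot and conjugate-homogeneity in the second: $[(\lambda A)x,y] = \lambda[Ax,y] = \lambda[x,A^\dagger y] = [x,\overline{\lambda}\,A^\dagger y]$ for all $x$, so uniqueness of the representing vector gives $(\lambda A)^\dagger y = \overline{\lambda}\,A^\dagger y$.

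For property (ii), put $M \coloneqq \overline{A(\mathcal{X})}$; since $w \mapsto [w,y]$ is continuous, $A^\dagger y = 0$ holds exactly when $[w,y] = 0$ for every $w \in M$. If $M = \mathcal{X}$ then, more generally, $A^\dagger y_1 = A^\dagger y_2$ forces $[w,y_1] = [w,y_2]$ for all $w \in \mathcal{X}$, hence $y_1 = y_2$ by uniqueness in Theorem~\ref{RIESZ}, so $A^\dagger$ is injective. Conversely, if $M \subsetneq \mathcal{X}$, I would choose $y_0 \notin M$; uniform convexity makes $\mathcal{X}$ reflexive (Milman--Pettis), so $y_0$ has a best approximation $m_0 \in M$, and since ``continuous'' means the norm is G\^ateaux differentiable, the first-order minimality condition for $t \mapsto \|(y_0 - m_0) + tm\|$ yields $[m,y_0 - m_0] = 0$ for all $m \in M$; then $y \coloneqq y_0 - m_0 \ne 0$ satisfies $A^\dagger y = 0 = A^\dagger 0$, so $A^\dagger$ is not injective. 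For property (iii), I would introduce the conjugate-homogeneous, norm-preserving, bijective duality map $J \colon \mathcal{X} \to \mathcal{X}^*$, $J(y) \coloneqq [\cdot,y]$ (its bijectivity is precisely Theorem~\ref{RIESZ}); the defining identity rewrites as $J(A^\dagger y) = A^* J(y)$, i.e.\ $A^\dagger = J^{-1} \circ A^* \circ J$ with $A^*$ the ordinary bounded Banach-space adjoint. Strong (Fr\'echet) differentiability of the norm makes $J$ norm-to-norm continuous, while uniform convexity of $\mathcal{X}$ is equivalent to uniform smoothness, hence Fr\'echet differentiability of the norm, of $\mathcal{X}^*$, which makes $J^{-1}$ norm-continuous; composing the three continuous maps shows $A^\dagger$ is continuous.

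I expect the real obstacle to be the ``only if'' half of (ii): manufacturing, from a proper closed range-closure $M$, a genuinely nonzero vector $y$ that annihilates $M$ on the right, $[m,y] = 0$ for all $m \in M$. This needs the full reflexivity-and-smoothness package (existence of best approximations and G\^ateaux differentiability of the norm) and, crucially, care with the asymmetry of the semi-inner product, so that the error vector of the best approximation ends up in the second slot rather than the first.
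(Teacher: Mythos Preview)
The paper does not supply its own proof of this theorem: it is simply quoted from \cite{KOEHLER} as background for the semi-inner-product machinery used later, so there is nothing in the paper to compare your argument against.

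That said, your proposal is a correct and standard reconstruction of Koehler's argument. The existence and uniqueness of $A^\dagger$ via Theorem~\ref{RIESZ} is exactly right, and property~(i) is immediate from conjugate-homogeneity. For~(ii) your handling of both directions is sound; the key point you correctly identify is that in a smooth space Birkhoff--James orthogonality $y\perp_{BJ} m$ coincides with $[m,y]=0$, so the error vector of the best approximation (which exists by reflexivity) annihilates $M$ in the second slot. For~(iii) the factorization $A^\dagger = J^{-1}\circ A^*\circ J$ is the right idea; the one place to tighten is the continuity of $J^{-1}$, where what you actually need is that uniform convexity of $\mathcal{X}$ forces $J^{-1}$ to be norm-to-norm continuous on bounded sets (via the Kadec--Klee property, or equivalently the uniform-smoothness-of-the-dual route you sketch), which then composes with the Fr\'echet-differentiability-driven continuity of $J$ to give continuity of $A^\dagger$.
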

Throughout this section we assume that  $\mathcal{X}$ is a continuous, uniformly convex, homogeneous semi-inner product space. Let $ (\{f_n \}_{n}, \{\tau_n \}_{n}) $ be a p-ASF for $\mathcal{X}$. Theorem \ref{RIESZ} then says that each $f_n$ can be identified with unique $\omega_n \in \mathcal{X}$ which satisfies $f_n(x)=[x,\omega_n]$, for all $x \in \mathcal{X}$. Note that 
\begin{align*}
\sum_{n=1}^{\infty}[x, (S_{\omega, \tau}^{-1})^\dagger\omega_n]S_{\omega, \tau}^{-1}\tau_n=S_{\omega, \tau}^{-1}\left(\sum_{n=1}^{\infty}[ S_{\omega, \tau}^{-1}x, \omega_n]\tau_n\right)=S_{\omega, \tau}^{-1}x, \quad \forall x \in \mathcal{X}.
\end{align*}
Hence $ (\{\tilde{\omega}_n\coloneqq(S_{\omega, \tau}^{-1})^\dagger\omega_n \}_{n}, \{\tilde{\tau}_n\coloneqq S_{\omega, \tau}^{-1}\tau_n \}_{n}) $ is a p-ASF for $\mathcal{X}$ which is called as canonical dual frame for $ (\{\omega_n \}_{n}, \{\tau_n \}_{n}) $. 
Given $\mathbb{M} \subseteq \mathbb{N}$, we define $
S_\mathbb{M} :\mathcal{X} \ni x \mapsto \sum_{n\in \mathbb{M}} \langle x, \omega_n\rangle\tau_n\in
\mathcal{X}$. Because of Inequalities (\ref{FIRSTINEQUALITYPASF}) and (\ref{SECONDINEQUALITYPASF}), the map $S_\mathbb{M}$ is a well-defined bounded linear operator. Note that the  operator $S_\mathbb{M}$  may not be invertible. In Proposition 2.2 in   (\cite{BALACASAZZAEDIDINKUTYNIOK}) it is derived that if operators $U, V:\mathcal{H} \to \mathcal{H}$ satisfy $U+V=I_\mathcal{H}$, then $U-V=U^2-V^2$. This remains valid for Banach spaces. 
\begin{lemma}\label{LEMMA}
	If operators $U, V:\mathcal{X} \to \mathcal{X}$ satisfy $U+V=I_\mathcal{X}$, then $U-V=U^2-V^2$.
\end{lemma}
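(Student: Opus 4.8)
The statement is a purely algebraic identity about two bounded linear operators on $\mathcal{X}$ satisfying $U+V=I_\mathcal{X}$, so the plan is simply to manipulate the relation $U+V=I_\mathcal{X}$ algebraically; no Banach space structure beyond the ring structure of $\mathcal{B}(\mathcal{X})$ is needed. First I would rewrite the hypothesis as $V=I_\mathcal{X}-U$, which lets me express everything in terms of $U$ alone.

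The key computation is then $U^2-V^2=U^2-(I_\mathcal{X}-U)^2$. Expanding $(I_\mathcal{X}-U)^2=I_\mathcal{X}-2U+U^2$ (valid since $I_\mathcal{X}$ commutes with $U$), one gets $U^2-V^2=U^2-I_\mathcal{X}+2U-U^2=2U-I_\mathcal{X}$. On the other hand $U-V=U-(I_\mathcal{X}-U)=2U-I_\mathcal{X}$. Comparing the two expressions gives $U-V=U^2-V^2$, which is exactly the claim. Alternatively, and perhaps more transparently, I would factor $U^2-V^2=(U+V)(U-V)$ — which requires $U$ and $V$ to commute, and they do because $UV=U(I_\mathcal{X}-U)=U-U^2=(I_\mathcal{X}-U)U=VU$ — and then substitute $U+V=I_\mathcal{X}$ to conclude $U^2-V^2=I_\mathcal{X}(U-V)=U-V$. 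Either route is a two-line calculation.

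There is essentially no obstacle here; the only point requiring a word of care is the commutativity of $U$ and $V$ if one uses the factorization approach (for the direct-expansion approach even that is unnecessary). The proof is identical to the Hilbert space case cited from \cite{BALACASAZZAEDIDINKUTYNIOK}, since that argument never used the inner product. I would write it out in the factored form for brevity, noting explicitly that $U+V=I_\mathcal{X}$ forces $UV=VU$.
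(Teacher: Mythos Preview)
Your proposal is correct and your first (direct-expansion) argument is exactly the paper's proof: substitute $V=I_\mathcal{X}-U$, compute both $U-V$ and $U^2-V^2$ as $2U-I_\mathcal{X}$, and compare. Your alternative factorization route via $UV=VU$ is a pleasant variant, but the paper sticks to the direct expansion.
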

\begin{proof}
	We follow the ideas in  the proof of Proposition 2.2 in (\cite{BALACASAZZAEDIDINKUTYNIOK}): 
	\begin{align*}
	U-V&=U-(I_\mathcal{X}-U)=2U-I_\mathcal{X}=U^2-(I_\mathcal{X}-2U+U^2)\\
	&=U^2-(I_\mathcal{X}-U)^2=U^2-V^2.
	\end{align*}
\end{proof}
We now have Banach space version of Theorem \ref{CASAZZAGENERAL}.
\begin{theorem}\label{OURGENERAL}
	Let $ (\{\omega_n \}_{n}, \{\tau_n \}_{n}) $  be a p-ASF for $\mathcal{X}$. 	Then for every $\mathbb{M} \subseteq \mathbb{N}$, and for all  	$x \in \mathcal{X}$, 
	\begin{align*}
	\sum_{n\in \mathbb{M}}[x, \omega_n][\tau_n,x]-\sum_{n=1}^{\infty}[S_\mathbb{M}x,\tilde{\omega}_n][\tilde{\tau}_n,S_\mathbb{M}^\dagger x]&=\sum_{n\in \mathbb{M}^\text{c}}[x, \omega_n][\tau_n,x]-\sum_{n=1}^{\infty}[S_{\mathbb{M}^\text{c}}x,\tilde{\omega}_n][\tilde{\tau}_n,S_{\mathbb{M}^\text{c}}^\dagger x] .
\end{align*}
\end{theorem}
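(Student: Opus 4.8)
The plan is to mimic the Hilbert-space argument of Casazza--Edidin--Kutyniok (Theorem \ref{CASAZZAGENERAL}) but replacing inner products by the homogeneous semi-inner product and ordinary adjoints by generalized adjoints. The starting point is the decomposition of the identity coming from the dual frame: since $(\{\tilde\omega_n\}_n,\{\tilde\tau_n\}_n)$ is the canonical dual of $(\{\omega_n\}_n,\{\tau_n\}_n)$, we have $\sum_{n=1}^\infty [x,\tilde\omega_n]\tau_n = x$ and $\sum_{n=1}^\infty [x,\omega_n]\tilde\tau_n = x$ for all $x\in\mathcal{X}$ (the reconstruction formula (\ref{REPBANACH}) expressed via the Riesz representatives). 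Writing $I_\mathcal{X} = S_{\mathbb{M}} S_{\omega,\tau}^{-1} + S_{\mathbb{M}^{\mathrm c}} S_{\omega,\tau}^{-1}$ — equivalently $I_\mathcal{X} = \widetilde S_{\mathbb M} + \widetilde S_{\mathbb M^{\mathrm c}}$ where $\widetilde S_{\mathbb M} := \sum_{n\in\mathbb M}[\,\cdot\,,\tilde\omega_n]\tau_n$ is the ``dual partial frame operator'' — I would apply Lemma \ref{LEMMA} with $U = \widetilde S_{\mathbb M}$, $V = \widetilde S_{\mathbb M^{\mathrm c}}$, obtaining $\widetilde S_{\mathbb M} - \widetilde S_{\mathbb M^{\mathrm c}} = \widetilde S_{\mathbb M}^2 - \widetilde S_{\mathbb M^{\mathrm c}}^2$.

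Next I would pair this operator identity with a fixed $x$ using the semi-inner product. On the left, $[\widetilde S_{\mathbb M} x, x] - [\widetilde S_{\mathbb M^{\mathrm c}} x, x]$ expands, by linearity of $[\cdot,y]$ in the first slot, to $\sum_{n\in\mathbb M}[x,\omega_n][\tau_n,x] - \sum_{n\in\mathbb M^{\mathrm c}}[x,\omega_n][\tau_n,x]$ after inserting the definitions of $\tilde\omega_n,\tilde\tau_n$ and using $[S_{\omega,\tau}^{-1}x,\omega_n] = [x,(S_{\omega,\tau}^{-1})^\dagger\omega_n] = [x,\tilde\omega_n]$; one must track carefully that $\widetilde S_{\mathbb M} x = S_{\mathbb M} S_{\omega,\tau}^{-1}x$ and that $[\,\cdot\,,x]$ is conjugate-linear so the $\tau_n$-coefficients land as $[\tau_n,x]$. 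On the right, $[\widetilde S_{\mathbb M}^2 x, x] = [\widetilde S_{\mathbb M} x, \widetilde S_{\mathbb M}^\dagger x]$ by the defining property of the generalized adjoint (Theorem \ref{THEOREMK}), and then expanding the outer $\widetilde S_{\mathbb M}$ against the vector $\widetilde S_{\mathbb M}^\dagger x$ via the dual-frame reconstruction gives $\sum_{n=1}^\infty [\widetilde S_{\mathbb M} x,\tilde\omega_n][\tilde\tau_n, \widetilde S_{\mathbb M}^\dagger x]$; identifying $\widetilde S_{\mathbb M} = S_{\mathbb M}S_{\omega,\tau}^{-1}$ and absorbing $S_{\omega,\tau}^{-1}$ into the tilde's as in the statement yields $\sum_{n=1}^\infty [S_{\mathbb M}x,\tilde\omega_n][\tilde\tau_n, S_{\mathbb M}^\dagger x]$ (this is the bookkeeping step where the notation $S_{\mathbb M}$ versus $\widetilde S_{\mathbb M}$ in the theorem statement must be reconciled — the statement uses $S_{\mathbb M}$ defined with $\omega_n$, and $\tilde{}$ already carries the $S_{\omega,\tau}^{-1}$). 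Doing the same for $\mathbb M^{\mathrm c}$ and subtracting gives exactly the claimed identity.

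I expect the main obstacle to be the adjoint manipulation on the right-hand side: unlike the Hilbert case, $\dagger$ is not linear and not multiplicative, so I cannot freely write $(\widetilde S_{\mathbb M}^2)^\dagger = (\widetilde S_{\mathbb M}^\dagger)^2$ or move $\dagger$ across sums. The safe route is to never iterate $\dagger$ — apply $[\widetilde S_{\mathbb M}^2 x,x]=[\widetilde S_{\mathbb M}x,\widetilde S_{\mathbb M}^\dagger x]$ once and then expand the \emph{remaining} copy of $\widetilde S_{\mathbb M}$ (which is honestly linear) using the frame expansion, rather than trying to pull $\widetilde S_{\mathbb M}^\dagger$ inside. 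A second, more delicate point is justifying the interchange of the infinite sum defining the frame expansion with the finite/partial sums and with the semi-inner product; this is legitimate because all the relevant operators ($\theta_f,\theta_\tau,S_{\mathbb M}$, and hence $\widetilde S_{\mathbb M}$) are bounded linear (by (\ref{FIRSTINEQUALITYPASF}), (\ref{SECONDINEQUALITYPASF})) and $[\cdot,y]$ is a bounded linear functional, so continuity does the job — but it should be stated. Everything else is routine expansion using bilinearity/conjugate-linearity of $[\cdot,\cdot]$ and the two reconstruction formulas, so once these two points are handled carefully the proof is a short symmetric computation, and the $\mathbb M \leftrightarrow \mathbb M^{\mathrm c}$ symmetry is automatic from $U+V=I_\mathcal{X}$.
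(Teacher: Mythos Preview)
Your strategy---apply Lemma \ref{LEMMA} to a decomposition of the identity and then pair with $x$---is the right idea, but the specific choices you make prevent the bookkeeping from closing, and the ``reconciliation'' step you flag is not just bookkeeping: it fails.

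First, with $\widetilde S_{\mathbb M}=S_{\mathbb M}S_{\omega,\tau}^{-1}$ and pairing against $x$, the linear term is
\[
[\widetilde S_{\mathbb M}x,x]=\sum_{n\in\mathbb M}[S_{\omega,\tau}^{-1}x,\omega_n][\tau_n,x]=\sum_{n\in\mathbb M}[x,\tilde\omega_n][\tau_n,x],
\]
which carries $\tilde\omega_n$, not $\omega_n$; your claimed expansion to $\sum_{n\in\mathbb M}[x,\omega_n][\tau_n,x]$ is simply wrong. Second, for the quadratic term you need to pass from $\widetilde S_{\mathbb M}^\dagger=(S_{\mathbb M}S_{\omega,\tau}^{-1})^\dagger$ to $S_{\mathbb M}^\dagger$, and you correctly note that $\dagger$ is not multiplicative---but then you cannot ``absorb $S_{\omega,\tau}^{-1}$ into the tildes'' as you propose; there is no legal move that converts $[\widetilde S_{\mathbb M}x,\widetilde S_{\mathbb M}^\dagger x]$ into $\sum_n[S_{\mathbb M}x,\tilde\omega_n][\tilde\tau_n,S_{\mathbb M}^\dagger x]$.

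The paper avoids both problems by two coupled changes. It applies Lemma \ref{LEMMA} to $U=S^{-1}S_{\mathbb M}$ (inverse on the \emph{left}) rather than $S_{\mathbb M}S^{-1}$, and then pairs the resulting operator identity not with $x$ but with $S^\dagger x$. The point is that each use of $\dagger$ then moves exactly one bounded linear operator across the bracket: $[S^{-1}S_{\mathbb M}x,S^\dagger x]=[S_{\mathbb M}x,x]$ gives the linear term with the correct $\omega_n$, and
\[
[(S^{-1}S_{\mathbb M})^2x,S^\dagger x]=[S_{\mathbb M}S^{-1}S_{\mathbb M}x,x]=[S^{-1}S_{\mathbb M}x,S_{\mathbb M}^\dagger x]
\]
produces $S_{\mathbb M}^\dagger$ honestly, after which the elementary identity $\sum_n[y,\tilde\omega_n][\tilde\tau_n,z]=[S^{-1}y,z]$ (with $y=S_{\mathbb M}x$, $z=S_{\mathbb M}^\dagger x$) finishes the job. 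Your outline becomes a proof once you make these two adjustments; without them the symbols do not match the statement.
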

\begin{proof}
	For notational convenience, we denote $S_{f, \tau}$ by $S$. We clearly have $S_\mathbb{M}+S_{\mathbb{M}^\text{c}}=S$. Using 	$S^{-1}S_\mathbb{M}+S^{-1}S_{\mathbb{M}^\text{c}}=I_\mathcal{X}$ and Lemma \ref{LEMMA},  we get $S^{-1}S_\mathbb{M}-S^{-1}S_{\mathbb{M}^\text{c}}=(S^{-1}S_\mathbb{M})^2-(S^{-1}S_{\mathbb{M}^\text{c}})^2=S^{-1}S_\mathbb{M}S^{-1}S_\mathbb{M}-S^{-1}S_{\mathbb{M}^\text{c}}S^{-1}S_{\mathbb{M}^\text{c}}$ which gives 
	\begin{align*}
	S^{-1}S_\mathbb{M}-S^{-1}S_\mathbb{M}S^{-1}S_\mathbb{M}=S^{-1}S_{\mathbb{M}^\text{c}}-S^{-1}S_{\mathbb{M}^\text{c}}S^{-1}S_{\mathbb{M}^\text{c}}.
	\end{align*}
	Therefore for all $x, y \in \mathcal{X}$,
	\begin{align*}
	[S^{-1}S_\mathbb{M}x,y]-[S^{-1}S_\mathbb{M}S^{-1}S_\mathbb{M}x, y]=[S^{-1}S_{\mathbb{M}^\text{c}}x,y]-[S^{-1}S_{\mathbb{M}^\text{c}}S^{-1}S_{\mathbb{M}^\text{c}}x,y].
	\end{align*}
	In particular,  for all  	$x \in \mathcal{X}$,
	\begin{align*}
	[S^{-1}S_\mathbb{M}x,S^\dagger x]-[S^{-1}S_\mathbb{M}S^{-1}S_\mathbb{M}x, S^\dagger x]&=[S^{-1}S_{\mathbb{M}^\text{c}}x,S^\dagger x]-[S^{-1}S_{\mathbb{M}^\text{c}}S^{-1}S_{\mathbb{M}^\text{c}}x,S^\dagger x]
	\end{align*}
	which gives 
	\begin{align}\label{FIRST}
	[S_\mathbb{M}x,x]-[S^{-1}S_\mathbb{M}x, S_\mathbb{M}^\dagger x]=[S_{\mathbb{M}^\text{c}}x,x]-[S^{-1}S_{\mathbb{M}^\text{c}}x,S_{\mathbb{M}^\text{c}}^\dagger x], \quad \forall x \in \mathcal{X}.
	\end{align}
	Now note that 
	\begin{align*}
	\sum_{n=1}^{\infty}[x,\tilde{\omega}_n][\tilde{\tau}_n,y]&=\sum_{n=1}^{\infty}[x,(S^{-1})^\dagger\omega_n][S^{-1}\tau_n,y]=\sum_{n=1}^{\infty}[S^{-1}x, \omega_n][S^{-1}\tau_n,y]\\
	&=\left[\sum_{n=1}^{\infty}[S^{-1}x, \omega_n]S^{-1}\tau_n,y \right]=\left[S^{-1}\left(\sum_{n=1}^{\infty}[S^{-1}x, \omega_n]\tau_n\right),y \right]\\
	&=[S^{-1}x,y],
	 \quad \forall x,y \in \mathcal{X}.
	\end{align*}
	Equation (\ref{FIRST}) now gives 
	\begin{align*}
	\sum_{n\in \mathbb{M}}[x, \omega_n][\tau_n,x]-\sum_{n=1}^{\infty}[S_\mathbb{M}x,\tilde{\omega}_n][\tilde{\tau}_n,S_\mathbb{M}^\dagger x]&=\sum_{n\in \mathbb{M}^\text{c}}[x, \omega_n][\tau_n,x]\\
	&\quad -\sum_{n=1}^{\infty}[S_{\mathbb{M}^\text{c}}x,\tilde{\omega}_n][\tilde{\tau}_n,S_{\mathbb{M}^\text{c}}^\dagger x] ,
	& \quad \forall x \in \mathcal{X}.
	\end{align*}
\end{proof}
A look at Theorem \ref{SECOND} makes  a guess of the following statement  for Banach spaces.  Let $ (\{\omega_n \}_{n}, \{\tau_n \}_{n}) $  be a Parseval p-ASF for $\mathcal{X}$. 	Then for every $\mathbb{M} \subseteq \mathbb{N}$,
\begin{align}\label{WRONGEQUATION}
\sum_{n\in \mathbb{M}}[x, \omega_n][\tau_n,x]-\left\|\sum_{n\in \mathbb{M}}[ x, \omega_n] \tau_n\right\|^2=\sum_{n\in \mathbb{M}^\text{c}}[x, \omega_n][\tau_n,x]-\left\|\sum_{n\in \mathbb{M}^\text{c}}[ x, \omega_n] \tau_n\right\|^2, \quad \forall x \in \mathcal{X}.
\end{align}	
However, the correct Banach space version  of Theorem \ref{SECOND} is not Equation (\ref{WRONGEQUATION}) but it is stated in the next theorem. 
\begin{theorem} (\textbf{Parseval p-ASF identity}) \label{PASFID}
	Let $ (\{\omega_n \}_{n}, \{\tau_n \}_{n}) $  be a Parseval p-ASF for $\mathcal{X}$. 	Then for every $\mathbb{M} \subseteq \mathbb{N}$,
	\begin{align*}
	&\sum_{n\in \mathbb{M}}[x, \omega_n][\tau_n,x]-\sum_{n\in \mathbb{M}}\sum_{k\in \mathbb{M}}[x, \omega_n][\tau_n,\omega_k][\tau_k,x]\\
	&\quad=\sum_{n\in \mathbb{M}^\text{c}}[x, \omega_n][\tau_n,x]-\sum_{n\in \mathbb{M}^\text{c}}\sum_{k\in \mathbb{M}^\text{c}}[x, \omega_n][\tau_n,\omega_k][\tau_k,x], \quad \forall x \in \mathcal{X}.
	\end{align*}	
\end{theorem}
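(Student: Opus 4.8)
The plan is to mimic the proof of Theorem \ref{OURGENERAL} but now exploit that the frame is Parseval, i.e. $S_{\omega,\tau}=I_\mathcal{X}$. First I would record the consequences of Parsevalness: since $S\coloneqq S_{\omega,\tau}=I_\mathcal{X}$, we have $S_\mathbb{M}+S_{\mathbb{M}^\text{c}}=I_\mathcal{X}$, and moreover the canonical dual coincides with the frame itself, so $\tilde\omega_n=\omega_n$ and $\tilde\tau_n=\tau_n$. Thus the reconstruction formula reads $\sum_{n=1}^\infty [x,\omega_n]\tau_n=x$ for all $x\in\mathcal{X}$, and likewise $\sum_{n=1}^\infty [x,\omega_n][\tau_n,y]=[x,y]$ for all $x,y\in\mathcal{X}$ (this last identity is exactly the computation at the end of the proof of Theorem \ref{OURGENERAL} with $S^{-1}=I_\mathcal{X}$).

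Next I would apply Lemma \ref{LEMMA} to the pair $U=S_\mathbb{M}$, $V=S_{\mathbb{M}^\text{c}}$, which satisfy $U+V=I_\mathcal{X}$, to obtain $S_\mathbb{M}-S_{\mathbb{M}^\text{c}}=S_\mathbb{M}^2-S_{\mathbb{M}^\text{c}}^2$, equivalently
\begin{align*}
S_\mathbb{M}-S_\mathbb{M}^2=S_{\mathbb{M}^\text{c}}-S_{\mathbb{M}^\text{c}}^2.
\end{align*}
Pairing both sides against $x$ in the second slot via the semi-inner product gives, for every $x\in\mathcal{X}$,
\begin{align*}
[S_\mathbb{M}x,x]-[S_\mathbb{M}^2 x,x]=[S_{\mathbb{M}^\text{c}}x,x]-[S_{\mathbb{M}^\text{c}}^2 x,x].
\end{align*}
Then I would expand each of the four terms in coordinates. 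We have $[S_\mathbb{M}x,x]=\big[\sum_{n\in\mathbb{M}}[x,\omega_n]\tau_n,\,x\big]=\sum_{n\in\mathbb{M}}[x,\omega_n][\tau_n,x]$ by additivity and homogeneity of $[\cdot,\cdot]$ in the first argument. For the quadratic term, $S_\mathbb{M}^2 x=S_\mathbb{M}\big(\sum_{k\in\mathbb{M}}[x,\omega_k]\tau_k\big)=\sum_{n\in\mathbb{M}}\sum_{k\in\mathbb{M}}[x,\omega_k][\tau_k,\omega_n]\tau_n$, again using linearity of $S_\mathbb{M}$ and additivity, so $[S_\mathbb{M}^2 x,x]=\sum_{n\in\mathbb{M}}\sum_{k\in\mathbb{M}}[x,\omega_k][\tau_k,\omega_n][\tau_n,x]$; after relabeling the summation indices $n\leftrightarrow k$ this is precisely the double sum appearing in the statement. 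The same expansion applies verbatim with $\mathbb{M}$ replaced by $\mathbb{M}^\text{c}$, and substituting these four expressions into the displayed identity yields the claimed equality.

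The only genuine subtlety — and the step I expect to require the most care — is the manipulation of $[\cdot,\cdot]$ in the \emph{second} slot, since a homogeneous semi-inner product is only conjugate-homogeneous and additive in the first argument and need not be additive in the second. I would therefore make sure that every interchange of summation with $[\cdot,\cdot]$ is performed in the first slot (where additivity and continuity under the norm convergence of the frame series are legitimate), and that wherever a sum sits in the second slot I either leave it there untouched as a finite/convergent quantity or move it to the first slot by symmetry considerations valid for the specific scalars involved. Concretely, in $[S_\mathbb{M}^2x,x]$ I expand $S_\mathbb{M}^2x$ entirely in the first slot before pairing, so no second-slot additivity is ever invoked; the scalars $[\tau_k,\omega_n]$ and $[\tau_n,x]$ are just numbers. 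Convergence of all series is guaranteed by inequalities (\ref{FIRSTINEQUALITYPASF}) and (\ref{SECONDINEQUALITYPASF}) together with Hölder's inequality, exactly as in the well-definedness of $S_\mathbb{M}$. With these precautions the proof is a clean bookkeeping argument built on Lemma \ref{LEMMA}.
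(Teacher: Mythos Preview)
Your argument is correct and rests on the same operator identity as the paper, namely $S_\mathbb{M}-S_\mathbb{M}^2=S_{\mathbb{M}^\text{c}}-S_{\mathbb{M}^\text{c}}^2$ coming from Lemma~\ref{LEMMA}, followed by pairing with $x$ and expanding in coordinates. The paper, however, does not invoke Lemma~\ref{LEMMA} directly in this proof: it first rewrites the double sum as $[S_\mathbb{M}^2x,x]$, then passes through the generalized adjoint via $[S_\mathbb{M}^2x,x]=[S_\mathbb{M}x,S_\mathbb{M}^\dagger x]$, inserts the Parseval reconstruction $S_\mathbb{M}x=\sum_n[S_\mathbb{M}x,\omega_n]\tau_n$, and then applies Theorem~\ref{OURGENERAL} (the general, non-Parseval identity) to swap $\mathbb{M}$ for $\mathbb{M}^\text{c}$, before reversing the manipulations on the other side. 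Your route is more economical: by specializing to $S=I_\mathcal{X}$ from the outset you avoid both the generalized adjoint $S_\mathbb{M}^\dagger$ and the appeal to Theorem~\ref{OURGENERAL}, going straight from Lemma~\ref{LEMMA} to the coordinate form. The paper's detour buys nothing extra here, though it does make transparent that Theorem~\ref{PASFID} is the Parseval specialization of Theorem~\ref{OURGENERAL}. Your care about only using first-slot additivity of $[\cdot,\cdot]$ is well placed and matches how the paper handles the expansions.
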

\begin{proof}
	Using  	Theorem \ref{OURGENERAL}, for all $ x \in \mathcal{X}$,
	\begin{align*}
	&\sum_{n\in \mathbb{M}}[x, \omega_n][\tau_n,x]-\sum_{n\in \mathbb{M}}\sum_{k\in \mathbb{M}}[x, \omega_n][\tau_n,\omega_k][\tau_k,x]\\
	&= \sum_{n\in \mathbb{M}}[x, \omega_n][\tau_n,x]-\left[\sum_{n\in \mathbb{M}}[x, \omega_n]\sum_{k\in \mathbb{M}}[\tau_n,\omega_k]\tau_k, x\right]\\
	&=\sum_{n\in \mathbb{M}}[x, \omega_n][\tau_n,x]-\left[\sum_{n\in \mathbb{M}}[x, \omega_n]S_\mathbb{M}\tau_n, x\right]\\
	&=\sum_{n\in \mathbb{M}}[x, \omega_n][\tau_n,x]-\left[S_\mathbb{M}\left(\sum_{n\in \mathbb{M}}[x, \omega_n]\tau_n\right), x\right]\\
	&=\sum_{n\in \mathbb{M}}[x, \omega_n][\tau_n,x]-\left[S_\mathbb{M} S_\mathbb{M}x, x\right]\\
	&=\sum_{n\in \mathbb{M}}[x, \omega_n][\tau_n,x]-\left[ S_\mathbb{M}x, S_\mathbb{M}^\dagger x\right]\\
	&=\sum_{n\in \mathbb{M}}[x, \omega_n][\tau_n,x]-\left[\sum_{n=1}^{\infty}[ S_\mathbb{M}x, \omega_n]\tau_n,S_\mathbb{M}^\dagger x\right]\\
	&=\sum_{n\in \mathbb{M}}[x, \omega_n][\tau_n,x]-\sum_{n=1}^{\infty}[ S_\mathbb{M}x, \omega_n][\tau_n,S_\mathbb{M}^\dagger x]\\
	&=\sum_{n\in \mathbb{M}^\text{c}}[x, \omega_n][\tau_n,x]-\sum_{n=1}^{\infty}[S_{\mathbb{M}^\text{c}}x,\omega_n][\tau_n,S_{\mathbb{M}^\text{c}}^\dagger x]
	\\
	&=\sum_{n\in \mathbb{M}^\text{c}}[x, \omega_n][\tau_n,x]-\sum_{n\in \mathbb{M}^\text{c}}\sum_{k\in \mathbb{M}^\text{c}}[x, \omega_n][\tau_n,\omega_k][\tau_k,x].
	\end{align*}
\end{proof}
In terms of $S_\mathbb{M}$ and $S_\mathbb{M}^\text{c}$, Theorem \ref{PASFID} can be written as 
\begin{align}\label{OPERATORDESCRPTION}
S_\mathbb{M}-S_\mathbb{M}^2=S_{\mathbb{M}^\text{c}}-S_{\mathbb{M}^\text{c}}^2\quad \text{ or } \quad S_\mathbb{M}+S_{\mathbb{M}^\text{c}}^2=S_{\mathbb{M}^\text{c}}+S_\mathbb{M}^2.
\end{align}
We now give an application of Theorem \ref{PASFID}. This is Banach space version of Theorem \ref{THIRD}. 
\begin{theorem}\label{LOWER234}
	Let $ (\{\omega_n \}_{n}, \{\tau_n \}_{n}) $  be a Parseval p-ASF for $\mathcal{X}$. 	Let  $\mathbb{M} \subseteq \mathbb{N}$. If $x \in \mathcal{X}$ is such that $[(S_\mathbb{M}-\frac{1}{2}I_\mathcal{X})^2x, x]\geq 0$, then 
	\begin{align*}
	&\sum_{n\in \mathbb{M}}[x, \omega_n][\tau_n,x]+\sum_{n\in \mathbb{M}^\text{c}}\sum_{k\in \mathbb{M}^\text{c}}[x, \omega_n][\tau_n,\omega_k][\tau_k,x]\\
	&=\sum_{n\in \mathbb{M}^\text{c}}[x, \omega_n][\tau_n,x]+\sum_{n\in \mathbb{M}}\sum_{k\in \mathbb{M}}[x, \omega_n][\tau_n,\omega_k][\tau_k,x]
	\geq \frac{3}{4}\|x\|^2 , \quad \text{ for that } x.
	\end{align*}
\end{theorem}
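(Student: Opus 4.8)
The plan is to run the same ``operator identity plus positivity'' argument that gives Theorem \ref{THIRD} in the Hilbert space case, now powered by the identity \eqref{OPERATORDESCRPTION} and Theorem \ref{PASFID}. First I would rewrite the two quantities appearing in the statement in operator form. Exactly as in the proof of Theorem \ref{PASFID} one checks
\begin{align*}
\sum_{n\in \mathbb{M}}[x, \omega_n][\tau_n,x]=[S_\mathbb{M}x,x], \quad \sum_{n\in \mathbb{M}^\text{c}}[x, \omega_n][\tau_n,x]=[S_{\mathbb{M}^\text{c}}x,x],
\end{align*}
and, using that $\sum_{k\in\mathbb{M}}[\tau_n,\omega_k]\tau_k=S_\mathbb{M}\tau_n$ together with $\sum_{n\in\mathbb{M}}[x,\omega_n]S_\mathbb{M}\tau_n=S_\mathbb{M}S_\mathbb{M}x$, one gets $\sum_{n\in \mathbb{M}}\sum_{k\in \mathbb{M}}[x, \omega_n][\tau_n,\omega_k][\tau_k,x]=[S_\mathbb{M}^2x,x]$, and similarly with $\mathbb{M}^\text{c}$. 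Hence the left-hand expression of the claim equals $[S_\mathbb{M}x,x]+[S_{\mathbb{M}^\text{c}}^2x,x]$ and the middle expression equals $[S_{\mathbb{M}^\text{c}}x,x]+[S_\mathbb{M}^2x,x]$; their equality is precisely the second form of \eqref{OPERATORDESCRPTION} paired against $x$, so the first ``='' in the statement is immediate from Theorem \ref{PASFID}.

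Next I would establish the lower bound $\tfrac34\|x\|^2$ for the common value $[S_\mathbb{M}x,x]+[S_{\mathbb{M}^\text{c}}^2x,x]$. Using $S_{\mathbb{M}^\text{c}}=S-S_\mathbb{M}=I_\mathcal{X}-S_\mathbb{M}$ (here $S=S_{f,\tau}=I_\mathcal{X}$ because the frame is Parseval), write $S_\mathbb{M}=:T$, so $S_{\mathbb{M}^\text{c}}=I_\mathcal{X}-T$, and the common value becomes $[Tx,x]+[(I_\mathcal{X}-T)^2x,x]$. Expanding,
\begin{align*}
[Tx,x]+[(I_\mathcal{X}-T)^2x,x]=[Tx,x]+\|x\|^2-2[Tx,x]+[T^2x,x]=\|x\|^2-[Tx,x]+[T^2x,x].
\end{align*}
On the other hand $\left[\left(T-\tfrac12 I_\mathcal{X}\right)^2x,x\right]=[T^2x,x]-[Tx,x]+\tfrac14\|x\|^2$, so
\begin{align*}
[Tx,x]+[(I_\mathcal{X}-T)^2x,x]=\tfrac34\|x\|^2+\left[\left(T-\tfrac12 I_\mathcal{X}\right)^2x,x\right].
\end{align*}
By the hypothesis $\left[(S_\mathbb{M}-\tfrac12 I_\mathcal{X})^2x,x\right]\geq 0$ the extra term is nonnegative, giving the asserted inequality $\geq \tfrac34\|x\|^2$ for that $x$.

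One caveat to handle carefully is that in a semi-inner product space the map $[\cdot,\cdot]$ need not be additive or conjugate-linear in the \emph{second} slot, so I must make sure every manipulation above only uses additivity/linearity in the \emph{first} argument (axioms (ii),(iv) of the homogeneous semi-inner product). Tracking this, all the expansions of $[(I_\mathcal{X}-T)^2x,x]$ and $[(T-\tfrac12 I_\mathcal{X})^2x,x]$ are expansions of a \emph{first-argument} sum against the fixed vector $x$, which is legitimate; this is the one place the argument must not be done naively, and I expect it to be the main (though mild) obstacle. Finally, I would note that the term $\left[(S_\mathbb{M}-\tfrac12 I_\mathcal{X})^2x,x\right]$ plays the role that $\big\|(S_\mathbb{M}-\tfrac12 I_\mathcal{H})x\big\|^2$ plays in the Hilbert space proof; in the Hilbert case it is automatically nonnegative, which is exactly why Theorem \ref{THIRD} needs no hypothesis on $x$, whereas here positivity of the generalized adjoint bilinear form is not automatic and must be assumed, matching the statement. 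This completes the plan; assembling the displays above in order yields the proof.
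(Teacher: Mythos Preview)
Your proposal is correct and follows essentially the same approach as the paper: both reduce the sums to $[S_\mathbb{M}x,x]+[S_{\mathbb{M}^\text{c}}^2x,x]$, invoke \eqref{OPERATORDESCRPTION} for the equality, and then use the algebraic identity expressing this quantity as $\tfrac34\|x\|^2+\big[(S_\mathbb{M}-\tfrac12 I_\mathcal{X})^2x,x\big]$ (the paper routes this through $S_\mathbb{M}^2+S_{\mathbb{M}^\text{c}}^2=2(S_\mathbb{M}-\tfrac12 I_\mathcal{X})^2+\tfrac12 I_\mathcal{X}$ and a doubling, while you expand directly, but the computations are the same). Your explicit check that every expansion uses only first-argument linearity of the semi-inner product is exactly the care the argument requires.
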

\begin{proof}
	We first compute 
	\begin{align*}
	S_\mathbb{M}^2+S_{\mathbb{M}^\text{c}}^2 &= S_\mathbb{M}^2+ (I_\mathcal{X}-S_\mathbb{M})^2=2 S_\mathbb{M}^2-2S_\mathbb{M}+I_\mathcal{X}\\
	&
	=2\left(S_\mathbb{M}-\frac{1}{2}I_\mathcal{X}\right)^2+\frac{1}{2}I_\mathcal{X}.
	\end{align*} 
	Hence if $x \in \mathcal{X}$ satisfies $[(S_\mathbb{M}-\frac{1}{2}I_\mathcal{X})^2x, x]\geq 0$, then 
	\begin{align*}
	[( S_\mathbb{M}^2+S_{\mathbb{M}^\text{c}}^2)x,x]\geq \frac{1}{2}\|x\|^2.
	\end{align*}
	Now using Equation (\ref{OPERATORDESCRPTION}) we get 
	\begin{align*}
	&2\sum_{n\in \mathbb{M}}[x, \omega_n][\tau_n,x]+2\sum_{n\in \mathbb{M}^\text{c}}\sum_{k\in \mathbb{M}^\text{c}}[x, \omega_n][\tau_n,\omega_k][\tau_k,x]=2[S_\mathbb{M}x,x]+2[ S_{\mathbb{M}^\text{c}}^2x,x]\\
	&=[2(S_\mathbb{M}+S_{\mathbb{M}^\text{c}}^2)x,x]=[((S_\mathbb{M}+S_{\mathbb{M}^\text{c}}^2)+(S_\mathbb{M}+S_{\mathbb{M}^\text{c}}^2))x,x]\\
	&=[((S_\mathbb{M}+S_{\mathbb{M}^\text{c}}^2)+(S_{\mathbb{M}^\text{c}}+S_\mathbb{M}^2))x,x]=[(I_\mathcal{X}+S_{\mathbb{M}^\text{c}}^2+S_\mathbb{M}^2)x,x]\\
	&=\|x\|^2 +[( S_\mathbb{M}^2+S_{\mathbb{M}^\text{c}}^2)x,x] \geq \frac{3}{2}\|x\|^2 , \quad \forall x \in \mathcal{X}.
	\end{align*}
\end{proof}

\section{PALEY-WIENER THEOREM FOR p-APPROXIMATE  \\
	SCHAUDER FRAMES}\label{SECTIONTWO}

In order to derive Paley-Wiener theorem for p-ASFs, we need a generalization of result of    \cite{HILDING}.
\begin{theorem}(\cite{CASAZZACHRISTENSTENPERTURBATION, CASAZZAKALTON, VANEIJNDHOVEN})\label{cc1} (\textbf{Casazza-Christensen-Kalton-van Eijndhoven perturbation})
	Let $ \mathcal{X}, \mathcal{Y}$ be Banach spaces and  $ A: \mathcal{X}\rightarrow \mathcal{Y}$ be a bounded invertible operator. If  a bounded  operator $ B : \mathcal{X}\rightarrow \mathcal{Y}$ is  such that there exist  $ \alpha, \beta \in \left [0, 1  \right )$ with 
	$$ \|Ax-Bx\|\leq\alpha\|Ax\|+\beta\|Bx\|,\quad \forall x \in  \mathcal{X},$$
	then $ B $ is invertible and 
	$$ \frac{1-\alpha}{1+\beta}\|Ax\|\leq\|Bx\|\leq\frac{1+\alpha}{1-\beta} \|Ax\|, \quad\forall x \in  \mathcal{X};$$
	$$ \frac{1-\beta}{1+\alpha}\frac{1}{\|A\|}\|y\|\leq\|B^{-1}y\|\leq\frac{1+\beta}{1-\alpha} \|A^{-1}\|\|y\|, \quad\forall y \in  \mathcal{Y}.$$
\end{theorem}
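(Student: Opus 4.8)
The statement to prove is the Casazza--Christensen--Kalton--van Eijndhoven perturbation theorem: given a bounded invertible operator $A:\mathcal{X}\to\mathcal{Y}$ and a bounded operator $B$ with $\|Ax-Bx\|\leq\alpha\|Ax\|+\beta\|Bx\|$ for all $x$, where $\alpha,\beta\in[0,1)$, one concludes $B$ is invertible with the stated two-sided norm estimates.

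The plan is to first extract the two-sided comparison of $\|Bx\|$ with $\|Ax\|$ directly from the hypothesis. Starting from $\|Ax-Bx\|\leq\alpha\|Ax\|+\beta\|Bx\|$ and the triangle inequality $\|Bx\|\leq\|Ax\|+\|Ax-Bx\|$, I would substitute to get $\|Bx\|\leq(1+\alpha)\|Ax\|+\beta\|Bx\|$, hence $(1-\beta)\|Bx\|\leq(1+\alpha)\|Ax\|$, which gives the upper estimate $\|Bx\|\leq\frac{1+\alpha}{1-\beta}\|Ax\|$. Symmetrically, $\|Ax\|\leq\|Bx\|+\|Ax-Bx\|\leq\|Bx\|+\alpha\|Ax\|+\beta\|Bx\|$ yields $(1-\alpha)\|Ax\|\leq(1+\beta)\|Bx\|$, i.e.\ $\|Bx\|\geq\frac{1-\alpha}{1+\beta}\|Ax\|$. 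These are the first displayed inequalities, and they already hold without invertibility of $B$.

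Next I would establish that $B$ is invertible. Since $A$ is invertible, $\|Ax\|\geq\|A^{-1}\|^{-1}\|x\|$, so the lower estimate above shows $\|Bx\|\geq\frac{1-\alpha}{1+\beta}\|A^{-1}\|^{-1}\|x\|$; as $\alpha<1$ this means $B$ is bounded below, hence injective with closed range. The real work is surjectivity: I would consider $C:=A^{-1}B:\mathcal{X}\to\mathcal{X}$ and show $C$ is invertible, equivalently that $I-C$ has norm considerations allowing a Neumann-type argument — but a cleaner route is to show $B(\mathcal{X})$ is both closed (from boundedness below) and dense. Density follows because $B(\mathcal{X})\supseteq$ a set whose closure is all of $\mathcal{Y}$: one shows $A^{-1}B$ has dense range by a connectedness/continuity path argument, or more simply by observing that the map $t\mapsto A+t(B-A)$, $t\in[0,1]$, consists of operators each bounded below uniformly (the same estimate applies to $A_t=(1-t)A+tB$ versus $A$ since $\|Ax-A_tx\|=t\|Ax-Bx\|\leq t\alpha\|Ax\|+t\beta\|Bx\|\leq\cdots$), and the set of invertible operators that are images of a norm-continuous path starting at the invertible $A$ remains invertible — invertibility is open and the bound-below constant is uniform along the path, so the range stays all of $\mathcal{Y}$. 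This path/homotopy step is the main obstacle; I would phrase it carefully using that the invertible operators form an open set and that a norm-continuous family of bounded-below operators with connected image cannot jump from surjective to non-surjective.

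Finally, with $B$ invertible, I would derive the estimates on $B^{-1}$ by substituting $x=B^{-1}y$ into the already-proven two-sided comparison. From $\|Bx\|\geq\frac{1-\alpha}{1+\beta}\|Ax\|\geq\frac{1-\alpha}{1+\beta}\|A^{-1}\|^{-1}\|x\|$ with $x=B^{-1}y$ one gets $\|y\|\geq\frac{1-\alpha}{1+\beta}\|A^{-1}\|^{-1}\|B^{-1}y\|$, i.e.\ $\|B^{-1}y\|\leq\frac{1+\beta}{1-\alpha}\|A^{-1}\|\,\|y\|$. Likewise $\|Bx\|\leq\frac{1+\alpha}{1-\beta}\|Ax\|\leq\frac{1+\alpha}{1-\beta}\|A\|\,\|x\|$, but for the lower bound on $\|B^{-1}y\|$ I instead use $\|Ax\|\leq\frac{1+\beta}{1-\alpha}\|Bx\|$, put $x=B^{-1}y$, and combine with $\|Ax\|\geq\|A\|^{-1}\cdots$ — more directly, $\|B^{-1}y\|=\|x\|$ and $\|Bx\|\leq\frac{1+\alpha}{1-\beta}\|Ax\|\leq\frac{1+\alpha}{1-\beta}\|A\|\|x\|$ gives $\|y\|\leq\frac{1+\alpha}{1-\beta}\|A\|\|B^{-1}y\|$, so $\|B^{-1}y\|\geq\frac{1-\beta}{1+\alpha}\|A\|^{-1}\|y\|$. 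This yields the second displayed chain of inequalities and completes the proof. The only substantive step is the surjectivity of $B$; everything else is algebraic manipulation of the hypothesis inequality.
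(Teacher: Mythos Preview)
The paper does not prove this theorem; it is cited from the literature and used as a black box. So there is no paper proof to compare against, and I evaluate your sketch on its own merits.

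Your derivations of the two-sided estimate $\frac{1-\alpha}{1+\beta}\|Ax\|\le\|Bx\|\le\frac{1+\alpha}{1-\beta}\|Ax\|$ and of the bounds on $\|B^{-1}y\|$ (once invertibility is known) are correct and standard. The bounded-below conclusion for $B$ is also fine.

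The gap is in the surjectivity step. You assert that along the path $A_t=(1-t)A+tB$ ``the bound-below constant is uniform'', but the calculation you indicate, $\|Ax-A_tx\|=t\|Ax-Bx\|\le t\alpha\|Ax\|+t\beta\|Bx\|\le\cdots$, does not by itself yield this: if you substitute the already-proved upper bound $\|Bx\|\le\frac{1+\alpha}{1-\beta}\|Ax\|$ you get $\|Ax-A_tx\|\le t\frac{\alpha+\beta}{1-\beta}\|Ax\|$, and the resulting lower bound $\|A_tx\|\ge\bigl(1-t\frac{\alpha+\beta}{1-\beta}\bigr)\|Ax\|$ becomes useless once $t\ge\frac{1-\beta}{\alpha+\beta}$, which can happen well before $t=1$. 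Without a uniform lower bound, the connectedness argument breaks: you cannot conclude the set of $t$ with $A_t$ invertible is closed in $[0,1]$.

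The fix is to rewrite the perturbation in terms of $A_t$ itself. For $t>0$ write $Bx=\frac{1}{t}A_tx-\frac{1-t}{t}Ax$ and substitute into $t\beta\|Bx\|$ to obtain
\[
\|Ax-A_tx\|\le\bigl(t\alpha+(1-t)\beta\bigr)\|Ax\|+\beta\|A_tx\|.
\]
Since $\alpha_t:=t\alpha+(1-t)\beta$ is a convex combination of $\alpha,\beta$, it lies in $[0,1)$, so your first-step manipulation now gives $\|A_tx\|\ge\frac{1-\alpha_t}{1+\beta}\|Ax\|\ge\frac{1-\max(\alpha,\beta)}{1+\beta}\|A^{-1}\|^{-1}\|x\|$, uniformly in $t$. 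With this uniform bound, $\|A_t^{-1}\|$ is uniformly bounded whenever $A_t$ is invertible, and then the openness of invertibles together with a standard Neumann-series perturbation gives closedness of $\{t:A_t\text{ invertible}\}$ in $[0,1]$; connectedness finishes the job. Once you supply this missing estimate, your outline is complete.
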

In the sequel, the standard Schauder basis for $\ell^p(\mathbb{N})$ is denoted by $\{e_n \}_{n}$.
\begin{theorem}\label{OURPERTURBATION}
	Let $ (\{f_n \}_{n}, \{\tau_n \}_{n}) $ be a p-ASF for $\mathcal{X}$. Assume that a collection $\{\tau_n \}_{n} $ in $\mathcal{X}$ is such that there exist $\alpha, \beta, \gamma \geq 0$ with  $ \max\{\alpha+\gamma\|\theta_f S_{f,\tau}^{-1}\|, \beta\}<1$ and 
	\begin{align}\label{PEREQUATIONA}
	\left\|\sum_{n=1}^{m}c_n(\tau_n-\omega_n)\right\|\leq\nonumber& \alpha\left\|\sum_{n=1}^{m}c_n\tau_n\right \|+\gamma \left(\sum_{n=1}^{m}|c_n|^p\right)^\frac{1}{p}+\beta\left\|\sum_{n=1}^{m}c_n\omega_n\right \|,  \\
	&\quad\forall c_1,  \dots, c_m \in \mathbb{K}, ~ m=1,2,  \dots.
	\end{align}
	Then $ (\{f_n \}_{n}, \{\omega_n \}_{n}) $ is a p-ASF for $\mathcal{X}$ with bounds 
	\begin{align*}
	\frac{1-(\alpha+\gamma\|\theta_f S_{f,\tau}^{-1}\|)}{(1+\beta)\|S_{f,\tau}^{-1}\|} \quad \text{and} \quad  \left(\frac{1+\alpha}{1-\beta}\|\theta_\tau\|+\frac{\gamma}{1-\beta}\right)\|\theta_f\|.
	\end{align*}
\end{theorem}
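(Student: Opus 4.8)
The plan is to reduce this to Theorem~\ref{cc1} (the Casazza--Christensen--Kalton--van~Eijndhoven perturbation) applied to the synthesis operators. First I would record what the hypothesis~(\ref{PEREQUATIONA}) says about the operators $\theta_\tau, \theta_\omega : \ell^p(\mathbb{N}) \to \mathcal{X}$. The inequality holds for all finite scalar sequences, so by density (and using that $\theta_\tau$ is already bounded, which is part of $ (\{f_n \}_{n}, \{\tau_n \}_{n}) $ being a $p$-ASF) it passes to all $\{c_n\}_n \in \ell^p(\mathbb{N})$, giving
\begin{align*}
\|\theta_\tau c - \theta_\omega c\| \leq \alpha \|\theta_\tau c\| + \gamma \|c\|_p + \beta \|\theta_\omega c\|, \quad \forall c \in \ell^p(\mathbb{N}).
\end{align*}
This simultaneously shows $\theta_\omega$ is well-defined and bounded on $\ell^p(\mathbb{N})$ (bounding $\|\theta_\omega c\|$ by $\frac{1+\alpha}{1-\beta}\|\theta_\tau c\| + \frac{\gamma}{1-\beta}\|c\|_p$), which already establishes condition~(\ref{SECONDINEQUALITYPASF}) for $\{\omega_n\}_n$, and hence that $\theta_\omega$ is a legitimate synthesis operator; note $\theta_f$ is unchanged so~(\ref{FIRSTINEQUALITYPASF}) is automatic.

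Next I would feed this into Theorem~\ref{cc1} with a twist: the natural bounded invertible operator to compare against is not $\theta_\tau$ (which need not be invertible) but $\theta_\tau S_{f,\tau}^{-1}\theta_\tau$ versus $\theta_\omega S_{f,\tau}^{-1}\theta_\tau$ --- or, cleaner, I would work on $\mathcal{X}$ directly by composing with $S_{f,\tau}^{-1}\theta_f$. Precisely: set $A \coloneqq S_{f,\tau} = \theta_\tau\theta_f : \mathcal{X}\to\mathcal{X}$, which is bounded invertible, and $B \coloneqq \theta_\omega\theta_f : \mathcal{X}\to\mathcal{X}$, which is bounded. For $x \in \mathcal{X}$, apply the $\ell^p$-inequality above to $c = \theta_f S_{f,\tau}^{-1}(S_{f,\tau}x) = \theta_f x$... but to get the constant $\gamma\|\theta_f S_{f,\tau}^{-1}\|$ appearing in the statement I should instead apply it to $c = \theta_f S_{f,\tau}^{-1} y$ for $y \in \mathcal{X}$ and compose appropriately. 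Writing $B = \theta_\omega\theta_f$ and $A = \theta_\tau\theta_f$, for any $x\in\mathcal{X}$ put $c = \theta_f x \in \ell^p(\mathbb{N})$; then $\|Ax - Bx\| = \|\theta_\tau c - \theta_\omega c\| \leq \alpha\|\theta_\tau c\| + \gamma\|c\|_p + \beta\|\theta_\omega c\| = \alpha\|Ax\| + \gamma\|\theta_f x\|_p + \beta\|Bx\|$. To convert $\|\theta_f x\|_p$ into a multiple of $\|Ax\| = \|S_{f,\tau}x\|$, write $x = S_{f,\tau}^{-1}(S_{f,\tau}x)$ so $\theta_f x = \theta_f S_{f,\tau}^{-1}(Ax)$, giving $\|\theta_f x\|_p \leq \|\theta_f S_{f,\tau}^{-1}\|\,\|Ax\|$. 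Hence
\begin{align*}
\|Ax - Bx\| \leq \big(\alpha + \gamma\|\theta_f S_{f,\tau}^{-1}\|\big)\|Ax\| + \beta\|Bx\|, \quad \forall x\in\mathcal{X},
\end{align*}
and the assumption $\max\{\alpha + \gamma\|\theta_f S_{f,\tau}^{-1}\|,\ \beta\} < 1$ is exactly what Theorem~\ref{cc1} requires.

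Therefore $B = \theta_\omega\theta_f = S_{f,\omega}$ is bounded invertible, so $ (\{f_n \}_{n}, \{\omega_n \}_{n}) $ is an ASF, and combined with the boundedness of $\theta_f$ and $\theta_\omega$ already noted, it is a $p$-ASF. For the frame bounds: the optimal lower ASF bound is $\|S_{f,\omega}^{-1}\|^{-1} = \|B^{-1}\|^{-1}$, and Theorem~\ref{cc1} gives $\|B^{-1}y\| \leq \frac{1+\beta}{1-(\alpha+\gamma\|\theta_f S_{f,\tau}^{-1}\|)}\|A^{-1}\|\,\|y\|$ with $\|A^{-1}\| = \|S_{f,\tau}^{-1}\|$, yielding the stated lower bound $\frac{1-(\alpha+\gamma\|\theta_f S_{f,\tau}^{-1}\|)}{(1+\beta)\|S_{f,\tau}^{-1}\|}$. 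For the upper bound one does not even need Theorem~\ref{cc1}: directly estimate $\|S_{f,\omega}x\| = \|\theta_\omega\theta_f x\| \leq \|\theta_\omega\|\,\|\theta_f\|\,\|x\|$, and bound $\|\theta_\omega\|$ using the $\ell^p$-inequality --- for $c\in\ell^p(\mathbb{N})$, $\|\theta_\omega c\| \leq \|\theta_\tau c\| + \|\theta_\tau c - \theta_\omega c\| \leq (1+\alpha)\|\theta_\tau c\| + \gamma\|c\|_p + \beta\|\theta_\omega c\|$, so $\|\theta_\omega\| \leq \frac{1+\alpha}{1-\beta}\|\theta_\tau\| + \frac{\gamma}{1-\beta}$, giving the claimed upper bound $\big(\frac{1+\alpha}{1-\beta}\|\theta_\tau\| + \frac{\gamma}{1-\beta}\big)\|\theta_f\|$.

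The main obstacle I anticipate is bookkeeping rather than conceptual: getting the precise constant $\gamma\|\theta_f S_{f,\tau}^{-1}\|$ (as opposed to a cruder $\gamma\|\theta_f\|\,\|S_{f,\tau}^{-1}\|$) requires choosing the comparison operators $A, B$ on $\mathcal{X}$ and the test element $c = \theta_f x$ correctly, and one must be careful that the extension of~(\ref{PEREQUATIONA}) from finite sequences to all of $\ell^p(\mathbb{N})$ is justified --- this uses that both $\theta_\tau$ is bounded and that the partial-sum sequences $\sum_{n=1}^m c_n\tau_n$ and $\sum_{n=1}^m c_n\omega_n$ converge, the latter following a posteriori once $\theta_\omega$ is shown bounded, so a small limiting argument or a direct Cauchy estimate on the tails is needed first. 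Everything else is a routine application of the cited perturbation theorem together with the identity $S_{f,\tau} = \theta_\tau\theta_f$ from Theorem~\ref{OURS}(v).
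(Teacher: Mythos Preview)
Your proposal is correct and follows essentially the same approach as the paper. The only cosmetic difference is in how Theorem~\ref{cc1} is invoked: the paper takes $A=I_\mathcal{X}$ and $B=S_{f,\omega}S_{f,\tau}^{-1}$ (substituting $c=\theta_f S_{f,\tau}^{-1}x$ into the extended inequality), whereas you take $A=S_{f,\tau}$ and $B=S_{f,\omega}$ directly (substituting $c=\theta_f x$ and then writing $\theta_f x=\theta_f S_{f,\tau}^{-1}(Ax)$); both routes yield the identical bounds and the same invertibility conclusion.
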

\begin{proof}
	For $ m=1,2,  \dots$   and for every $c_1,  \dots, c_m \in \mathbb{K}$, 
	\begin{align*}
	\left\|\sum_{n=1}^{m}c_n\omega_n\right\|&\leq \left\|\sum_{n=1}^{m}c_n(\tau_n-\omega_n)\right\|+\left\|\sum_{n=1}^{m}c_n\tau_n\right\|\\
	&\leq (1+\alpha)\left\|\sum_{n=1}^{m}c_n\tau_n\right \|+\gamma \left(\sum_{n=1}^{m}|c_n|^p\right)^\frac{1}{p}+\beta\left\|\sum_{n=1}^{m}c_n\omega_n\right \|.
	\end{align*}
	Hence 
	\begin{align*}
	\left\| \sum\limits_{n=1}^mc_n\omega_n\right\|\leq\frac{1+\alpha}{1-\beta}\left\| \sum\limits_{n=1}^mc_n\tau_n\right\|+\frac{\gamma}{1-\beta}\left( \sum\limits_{n=1}^m|c_n|^p\right)^\frac{1}{p},  \quad\forall c_1,  \dots, c_m \in \mathbb{K}, m=1,2, \dots.
	\end{align*}
	Therefore   $\theta_\omega$ is well-defined bounded linear operator with 
	\begin{align*}
 \|\theta_\omega\|\leq\frac{1+\alpha}{1-\beta}\|\theta_\tau\|+\frac{\gamma}{1-\beta}.
	\end{align*} 
	Now Equation (\ref{PEREQUATIONA}) gives 
	\begin{align*}
	\left\|\sum_{n=1}^{\infty}c_n(\tau_n-\omega_n)\right\|\leq \alpha\left\|\sum_{n=1}^{\infty}c_n\tau_n\right \|+\gamma \left(\sum_{n=1}^{\infty}|c_n|^p\right)^\frac{1}{p}+\beta\left\|\sum_{n=1}^{\infty}c_n\omega_n\right \|,  \quad\forall  \{c_n\}_n \in \ell^p(\mathbb{N}).
	\end{align*}
	That is, 
	\begin{align}\label{PEREQUATIONB}
	\|\theta_\tau \{c_n\}_n-\theta_\omega \{c_n\}_n\|&\leq \alpha \|\theta_\tau \{c_n\}_n\|+\gamma\left( \sum\limits_{n=1}^\infty|c_n|^p\right)^\frac{1}{p}+\beta \|\theta_\omega \{c_n\}_n\|,\nonumber\\
	& \quad\forall \{c_n\}_n \in \ell^p(\mathbb{N}).
	\end{align}
	By taking $\{c_n\}_n =\{f_n(S_{f,\tau}^{-1}x)\}_n=\theta_fS_{f,\tau}^{-1}x$ in Equation (\ref{PEREQUATIONB}), we get 
	\begin{align*}
	\|\theta_\tau \theta_fS_{f,\tau}^{-1}x-\theta_\omega \theta_fS_{f,\tau}^{-1}x\|\leq \alpha \|\theta_\tau \theta_fS_{f,\tau}^{-1}x\|+\gamma\left( \sum\limits_{n=1}^\infty|f_n(S_{f,\tau}^{-1}x)|^p\right)^\frac{1}{p}+\beta \|\theta_\omega\theta_fS_{f,\tau}^{-1}x\|, 
	\end{align*}
for all  $x \in \mathcal{X}.$	That is, 
	\begin{align*}
	\|x-S_{f,\omega}S_{f,\tau}^{-1}x\|&\leq \alpha \| x\|+\gamma\|\theta_fS_{f,\tau}^{-1}x\|+\beta \|S_{f,\omega}S_{f,\tau}^{-1}x\|\\
	&\leq  (\alpha +\gamma\|\theta_fS_{f,\tau}^{-1}\|)\|x\|+\beta \|S_{f,\omega}S_{f,\tau}^{-1}x\|, \quad\forall  x \in \mathcal{X}.
	\end{align*}
	Since $ \max\{\alpha+\gamma\|\theta_f S_{f,\tau}^{-1}\|, \beta\}<1$, we can use Theorem \ref{cc1} to get the operator $S_{f,\omega}S_{f,\tau}^{-1}$ to be  invertible and 
	\begin{align*}
	\|(S_{g,\omega} S_{f,\tau}^{-1})^{-1}\| \leq \frac{1+\beta}{1-(\alpha+\gamma\|\theta_f S_{f,\tau}^{-1}\|)}.
	\end{align*}
	Hence the operator $S_{f,\omega}=(S_{f,\omega}S_{f,\tau}^{-1})S_{f,\tau}$ is invertible. Therefore $ (\{f_n \}_{n}, \{\omega_n \}_{n}) $ is a p-ASF for $\mathcal{X}$. We get the frame bounds from the following calculations:
	\begin{align*}
	&\| S_{f,\omega}^{-1}\|\leq\|S_{f,\tau}^{-1}\|\| S_{f,\tau}S_{f,\omega}^{-1}\| \leq \frac{\|S_{f,\tau}^{-1}\|(1+\beta)}{1-(\alpha+\gamma\|\theta_f S_{f,\tau}^{-1}\|)}\quad\text{ and }\\
	&\|S_{f,\omega}\|\leq \|\theta_\omega\|\|\theta_f\|\leq \left(\frac{1+\alpha}{1-\beta}\|\theta_\tau\|+\frac{\gamma}{1-\beta}\right)\|\theta_f\|.
	\end{align*}
\end{proof}
\begin{remark}\label{OURCOROLLARY}
	Theorem \ref{OLECAZASSA} is a corollary of Theorem \ref{OURPERTURBATION}. In particular, Theorems \ref{FIRSTPER} and \ref{SECONDPER} are corollaries of Theorem  \ref{OURPERTURBATION}. Indeed, 
	let $\{\tau_n\}_n$ be a frame for  $\mathcal{H}$. We define
	\begin{align*}
	f_n:\mathcal{H} \ni h \mapsto f_n(h)\coloneqq \langle h, \tau_n\rangle \in \mathbb{K}, \quad \forall n \in \mathbb{N}.
	\end{align*}	
	Then $\theta_f=\theta_\tau$ and  $ (\{f_n \}_{n}, \{\tau_n \}_{n}) $ is  a 2-approximate frame   for $\mathcal{H}$.	Theorem \ref{OURPERTURBATION} now says that $ (\{f_n \}_{n}, \{\omega_n \}_{n}) $ is a 2-ASF for $\mathcal{H}$. To prove Theorem \ref{OLECAZASSA}, it now suffices to prove that $\{\omega_n\}_n$ is a frame for  $\mathcal{H}$. Since $ (\{f_n \}_{n}, \{\omega_n \}_{n}) $ is a 2-ASF for $\mathcal{H}$, it follows that $\theta_\omega$ is surjective.  We now  use the following result to conclude that  $\{\omega_n\}_n$ is a frame for $\mathcal{H}$.
\end{remark}
\begin{theorem}(\cite{OCPSEUDOINVERSE})
A collection $\{\tau_n\}_n$ is a frame for $\mathcal{H}$ if and only if the map 
\begin{align*}
T:\ell^2(\mathbb{N})\ni \{c_n \}_{n}\mapsto  \sum_{n=1}^{\infty}c_n\tau_n \in \mathcal{H}
\end{align*}
is a well-defined bounded linear surjective operator.
\end{theorem}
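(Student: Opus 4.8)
The final statement to prove is the characterization that a collection $\{\tau_n\}_n$ in a Hilbert space $\mathcal{H}$ is a frame if and only if the synthesis map $T:\ell^2(\mathbb{N})\ni\{c_n\}_n\mapsto\sum_{n=1}^\infty c_n\tau_n\in\mathcal{H}$ is a well-defined bounded linear surjective operator. I will treat this as an equivalence and prove both directions using results already available in the excerpt, particularly Theorem \ref{MOSTIMPORTANT} (the structural theorem for frames), Theorem \ref{OLEBESSELCHARACTERIZATION12} (the Bessel characterization), and Theorem \ref{HOLUBTHEOREM} (Holub's theorem).

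The plan is as follows. For the forward direction, assume $\{\tau_n\}_n$ is a frame. Then it is in particular a Bessel sequence, so by Theorem \ref{OLEBESSELCHARACTERIZATION12} the map $T$ is a well-defined bounded linear operator; moreover, $T$ is precisely the synthesis operator $\theta_\tau^*$ of Theorem \ref{MOSTIMPORTANT}, since $\theta_\tau^*(\{a_n\}_n)=\sum_n a_n\tau_n$ by item (v) there, and item (vii) of that theorem states $\theta_\tau^*$ is surjective. Hence $T$ is surjective. Alternatively, one can cite Theorem \ref{HOLUBTHEOREM} directly, which already asserts this implication. For the converse, suppose $T$ is a well-defined bounded linear surjective operator with $Te_n=\tau_n$. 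Since $T$ is bounded, $\{\tau_n\}_n$ is a Bessel sequence (again by Theorem \ref{OLEBESSELCHARACTERIZATION12}, reading the equivalence in reverse). It remains to extract the lower frame bound. Here I would use that $T=\theta_\tau^*$: the adjoint $T^*:\mathcal{H}\to\ell^2(\mathbb{N})$ satisfies $T^*h=\{\langle h,\tau_n\rangle\}_n=\theta_\tau h$, and since $T$ is surjective between Hilbert spaces, $T^*$ is bounded below, i.e.\ there exists $a>0$ with $\|T^*h\|^2\geq a\|h\|^2$ for all $h$. Writing this out gives $\sum_n|\langle h,\tau_n\rangle|^2\geq a\|h\|^2$, which together with the Bessel bound yields the frame inequalities (\ref{SEQUENTIALEQUATION1}).

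The one technical point requiring care is the standard fact that a bounded surjective operator between Hilbert spaces has an adjoint that is bounded below. This follows from the open mapping theorem: $T$ surjective implies $T$ is open, hence there is $\delta>0$ with $\delta B_{\mathcal{H}}\subseteq T(B_{\ell^2(\mathbb{N})})$, and a short duality argument (or the closed range theorem) then gives $\|T^*h\|\geq\delta\|h\|$. I expect this to be the main obstacle only in the bookkeeping sense — it is a routine but essential lemma, and I would either invoke it directly or cite the reference already used in the excerpt (\cite{OCPSEUDOINVERSE}, or \cite{CHRISTENSEN}), since the whole theorem is itself quoted from \cite{OCPSEUDOINVERSE} and a self-contained one-paragraph argument suffices.

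In summary, the proof decomposes cleanly: (i) frame $\Rightarrow$ $T$ well-defined bounded (Bessel) and surjective ($T=\theta_\tau^*$, apply Theorem \ref{MOSTIMPORTANT}(v),(vii)); (ii) $T$ well-defined bounded surjective $\Rightarrow$ Bessel bound from boundedness, lower bound from $T^*=\theta_\tau$ being bounded below (open mapping theorem), hence frame. No step involves a genuinely hard estimate; the content is entirely in correctly identifying $T$ with the synthesis operator and invoking the adjoint-of-a-surjection principle.
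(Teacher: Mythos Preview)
Your proof is correct. Note, however, that the paper does not actually prove this theorem: it is quoted as a known result from \cite{OCPSEUDOINVERSE} (Christensen) and invoked without argument inside Remark~\ref{OURCOROLLARY} to conclude that $\{\omega_n\}_n$ is a frame once $\theta_\omega$ has been shown surjective. So there is no ``paper's own proof'' to compare against.

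That said, your argument is exactly the standard one and aligns with the machinery the paper has already set up: the forward direction is immediate from Theorem~\ref{MOSTIMPORTANT}(v),(vii) (or equivalently Holub's Theorem~\ref{HOLUBTHEOREM}), and the converse is the adjoint-of-a-surjection principle you describe. The only minor remark is that your proof and Holub's theorem are essentially the same statement in different packaging --- Holub phrases it as the \emph{existence} of a surjective $T$ with $Te_n=\tau_n$, while here $T$ is specified explicitly as the synthesis map; since any such $T$ must agree with the synthesis operator on the basis vectors and hence everywhere (by boundedness and density of finite sequences), the two formulations are equivalent, and the paper could just as well have cited Theorem~\ref{HOLUBTHEOREM} at that point.
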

\begin{corollary}
	Let $q$ be the conjugate index of $p$. Let $ (\{f_n \}_{n}, \{\tau_n \}_{n}) $ be a p-ASF for $\mathcal{X}$. Assume that a collection $\{\tau_n \}_{n} $ in $\mathcal{X}$ is    such that 
	and
	$$ \lambda \coloneqq \sum_{n=1}^\infty\|\tau_n-\omega_n\|^q <\frac{1}{\|\theta_f S_{f,\tau}^{-1}\|^q}.$$
	Then $ (\{f_n \}_{n}, \{\omega_n \}_{n}) $ is a p-ASF for $\mathcal{X}$ with bounds 
	\begin{align*}
	\frac{1-\lambda^{1/p}\|\theta_f S_{f,\tau}^{-1}\|}{\|S_{f,\tau}^{-1}\|} \quad \text{ and  } \quad  (\|\theta_\tau\|+\lambda^{1/p}).
	\end{align*}
\end{corollary}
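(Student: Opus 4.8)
The plan is to deduce this corollary from Theorem~\ref{OURPERTURBATION} by choosing the free parameters $\alpha,\beta,\gamma$ appropriately so that the hypothesis (\ref{PEREQUATIONA}) follows from the summability condition $\lambda\coloneqq\sum_{n=1}^\infty\|\tau_n-\omega_n\|^q<\|\theta_f S_{f,\tau}^{-1}\|^{-q}$. The natural choice is $\alpha\coloneqq 0$, $\beta\coloneqq 0$, and $\gamma\coloneqq\lambda^{1/q}=\left(\sum_{n=1}^\infty\|\tau_n-\omega_n\|^q\right)^{1/q}$. With these values, $\max\{\alpha+\gamma\|\theta_f S_{f,\tau}^{-1}\|,\beta\}=\lambda^{1/q}\|\theta_f S_{f,\tau}^{-1}\|<1$ precisely because $\lambda<\|\theta_f S_{f,\tau}^{-1}\|^{-q}$, so the hypothesis $\max\{\dots\}<1$ of Theorem~\ref{OURPERTURBATION} is met.

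The remaining point is to verify inequality (\ref{PEREQUATIONA}), i.e. that for all scalars $c_1,\dots,c_m$,
\begin{align*}
\left\|\sum_{n=1}^{m}c_n(\tau_n-\omega_n)\right\|\leq \gamma\left(\sum_{n=1}^{m}|c_n|^p\right)^{1/p}.
\end{align*}
This is a direct application of H\"older's inequality: writing $\|\sum_{n=1}^m c_n(\tau_n-\omega_n)\|\leq \sum_{n=1}^m |c_n|\,\|\tau_n-\omega_n\|$ and then applying H\"older with conjugate exponents $p$ and $q$ gives the bound $\left(\sum_{n=1}^m|c_n|^p\right)^{1/p}\left(\sum_{n=1}^m\|\tau_n-\omega_n\|^q\right)^{1/q}\leq \gamma\left(\sum_{n=1}^m|c_n|^p\right)^{1/p}$, since the tail sum is dominated by $\lambda$. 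Hence (\ref{PEREQUATIONA}) holds with the chosen parameters.

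Finally, I would substitute $\alpha=\beta=0$, $\gamma=\lambda^{1/q}$ into the frame bounds delivered by Theorem~\ref{OURPERTURBATION}. The lower bound becomes
\begin{align*}
\frac{1-(\alpha+\gamma\|\theta_f S_{f,\tau}^{-1}\|)}{(1+\beta)\|S_{f,\tau}^{-1}\|}=\frac{1-\lambda^{1/q}\|\theta_f S_{f,\tau}^{-1}\|}{\|S_{f,\tau}^{-1}\|},
\end{align*}
and the upper bound becomes $\left(\frac{1+\alpha}{1-\beta}\|\theta_\tau\|+\frac{\gamma}{1-\beta}\right)\|\theta_f\|=(\|\theta_\tau\|+\lambda^{1/q})\|\theta_f\|$. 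I note that the statement of the corollary as written displays the exponent $1/p$ in $\lambda^{1/p}$ and omits the trailing $\|\theta_f\|$; strictly these should read $\lambda^{1/q}$ and carry the factor $\|\theta_f\|$ to match Theorem~\ref{OURPERTURBATION}, and I would present the bounds in that corrected form (this is the only real subtlety — everything else is a mechanical specialization). There is no genuine obstacle here; the ``hard part'' is merely bookkeeping the conjugate index correctly and confirming that the H\"older estimate is uniform in $m$, which it is since the $q$-sum of $\|\tau_n-\omega_n\|$ converges.
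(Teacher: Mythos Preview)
Your approach is exactly the paper's: set $\alpha=\beta=0$, choose $\gamma$ to be the $q$-norm of the perturbation, verify (\ref{PEREQUATIONA}) via H\"older, and read off the bounds from Theorem~\ref{OURPERTURBATION}. You have also correctly diagnosed the two slips in the printed statement --- the paper writes $\gamma=\lambda^{1/p}$ (and hence $\lambda^{1/p}$ in the bounds) where the H\"older step actually produces $\lambda^{1/q}$, and it drops the factor $\|\theta_f\|$ from the upper bound --- so your corrected form is the right one.
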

\begin{proof}
	Take $  \alpha =0, \beta=0, \gamma=\lambda^{1/p}$. Then $ \max\{\alpha+\gamma\|\theta_f S_{f,\tau}^{-1}\|, \beta\}<1$ and 
	\begin{align*}
	&\left\|\sum\limits_{n=1}^{m}c_n(\tau_n-\omega_n)\right\|\leq \left(\sum\limits_{n=1}^{m}\|\tau_n-\omega_n\|^q \right)^\frac{1}{q}\left(\sum\limits_{n=1}^{m}|c_n|^p\right)^\frac{1}{p}\leq \gamma\left(\sum\limits_{n=1}^{m}|c_n|^p\right)^\frac{1}{p}, \\
	& \quad \quad \quad \quad \quad \quad\quad \quad\quad \quad\forall c_1,  \dots, c_m \in \mathbb{K},~ m=1, 2,\dots.
	\end{align*}
	By using Theorem \ref{OURPERTURBATION} we now get the result.
\end{proof}
We next derive stability result which does not demand  maximum condition on parameters $\alpha$ and $\gamma$.
\begin{theorem}
	Let $ (\{f_n \}_{n}, \{\tau_n \}_{n}) $ be a p-ASF for $\mathcal{X}$. Assume that a collection $\{\tau_n \}_{n} $ in $\mathcal{X}$ and a collection $ \{g_n \}_{n}	$ in $\mathcal{X}^*$  are such that there exist $r,s,t,\alpha, \beta, \gamma \geq 0$ with  $ \max\{ \beta,s\}<1$ and
	\begin{align*}
	&\left\|\sum_{n=1}^{m}(f_n-g_n)(x)e_n\right\|\leq r\left\|\sum_{n=1}^{m}f_n(x)e_n\right \|+t \|x\|+s\left\|\sum_{n=1}^{m}g_n(x)e_n\right \|,   \\
	&\quad \quad \quad \quad \quad \quad\quad \quad\quad \quad \quad \forall x  \in \mathcal{X}, m=1, 2, \dots,\\
	&\left\|\sum_{n=1}^{m}c_n(\tau_n-\omega_n)\right\|\leq \alpha\left\|\sum_{n=1}^{m}c_n\tau_n\right \|+\gamma \left(\sum_{n=1}^{m}|c_n|^p\right)^\frac{1}{p}+\beta\left\|\sum_{n=1}^{m}c_n\omega_n\right \|, \\
	& \quad \quad \quad \quad \quad \quad\quad \quad\quad \quad  \quad\forall c_1,  \dots, c_m \in \mathbb{K}, m=1,2, \dots.
	\end{align*}	
	Assume that one of the following holds. 
	\begin{enumerate}[label=(\roman*)]
		\item $\sum_{n=1}^{\infty}(\|f_n-g_n\|\|S_{f,\tau}^{-1}\tau_n\|+\|g_n\|\|S_{f,\tau}^{-1}(\tau_n-\omega_n)\|)<1.$
		\item $\sum_{n=1}^{\infty}(\|f_n-g_n\|\|S_{f,\tau}^{-1}\omega_n\|+\|f_n\|\|S_{f,\tau}^{-1}(\tau_n-\omega_n)\|)<1.$
		\item $\sum_{n=1}^{\infty}(\|(f_n-g_n)S_{f,\tau}^{-1}\|\|\tau_n\|+\|g_nS_{f,\tau}^{-1}\|\|\tau_n-\omega_n\|)<1.$
		\item $\sum_{n=1}^{\infty}(\|(f_n-g_n)S_{f,\tau}^{-1}\|\|\omega_n\|+\|f_nS_{f,\tau}^{-1}\|\|\tau_n-\omega_n\|)<1$.
	\end{enumerate}
	Then $ (\{g_n \}_{n}, \{\omega_n \}_{n}) $ is a p-ASF for $\mathcal{X}$. Moreover, an upper bound is  
	\begin{align*}
	\left(\frac{1+\alpha}{1-\beta}\|\theta_\tau\|+\frac{\gamma}{1-\beta}\right)\left(\frac{1+r}{1-s}\|\theta_f\|+\frac{t}{1-s}\right).
	\end{align*}
\end{theorem}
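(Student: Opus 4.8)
The plan is to build the frame operator $S_{g,\omega}$ in two stages, controlling the synthesis side and the analysis side separately, and then to combine the two estimates into a single perturbation inequality to which Theorem~\ref{cc1} applies. First I would handle the synthesis side: exactly as in the proof of Theorem~\ref{OURPERTURBATION}, the inequality
\begin{align*}
\left\|\sum_{n=1}^{m}c_n(\tau_n-\omega_n)\right\|\leq \alpha\left\|\sum_{n=1}^{m}c_n\tau_n\right \|+\gamma \left(\sum_{n=1}^{m}|c_n|^p\right)^\frac{1}{p}+\beta\left\|\sum_{n=1}^{m}c_n\omega_n\right \|
\end{align*}
together with $\beta<1$ shows that $\theta_\omega$ is a well-defined bounded linear operator with $\|\theta_\omega\|\leq \frac{1+\alpha}{1-\beta}\|\theta_\tau\|+\frac{\gamma}{1-\beta}$. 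Symmetrically, the hypothesis on $\{f_n-g_n\}$ together with $s<1$ gives that $\theta_g$ is a well-defined bounded linear operator with $\|\theta_g\|\leq \frac{1+r}{1-s}\|\theta_f\|+\frac{t}{1-s}$; this already yields the claimed upper bound $\|S_{g,\omega}\|\leq\|\theta_\omega\|\|\theta_g\|$ once invertibility is established.

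For invertibility, the plan is to estimate $\|S_{f,\tau}-S_{g,\omega}\|$ after conjugating by $S_{f,\tau}^{-1}$, i.e.\ to show $\|x-S_{g,\omega}S_{f,\tau}^{-1}x\|\leq (\text{const}<1)\|x\|+\beta'\|S_{g,\omega}S_{f,\tau}^{-1}x\|$ for suitable constants. I would write the telescoping decomposition
\begin{align*}
S_{f,\tau}-S_{g,\omega}=\sum_{n=1}^{\infty}(f_n-g_n)\otimes\tau_n+\sum_{n=1}^{\infty}g_n\otimes(\tau_n-\omega_n),
\end{align*}
where $(f\otimes\tau)(x)=f(x)\tau$, so that $(S_{f,\tau}-S_{g,\omega})S_{f,\tau}^{-1}=\sum_n (f_n-g_n)(S_{f,\tau}^{-1}\,\cdot\,)\tau_n+\sum_n g_n(S_{f,\tau}^{-1}\,\cdot\,)(\tau_n-\omega_n)$. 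Each of the four alternative hypotheses (i)--(iv) is precisely what is needed to bound one grouping of this sum in operator norm by a constant strictly less than $1$: for instance under (iii), $\|(S_{f,\tau}-S_{g,\omega})S_{f,\tau}^{-1}\|\leq\sum_n\|(f_n-g_n)S_{f,\tau}^{-1}\|\|\tau_n\|+\sum_n\|g_nS_{f,\tau}^{-1}\|\|\tau_n-\omega_n\|<1$, whereas under (i) one instead factors as $S_{f,\tau}^{-1}(S_{f,\tau}-S_{g,\omega})$ and uses $\|S_{f,\tau}^{-1}(S_{f,\tau}-S_{g,\omega})\|\leq\sum_n\|f_n-g_n\|\|S_{f,\tau}^{-1}\tau_n\|+\sum_n\|g_n\|\|S_{f,\tau}^{-1}(\tau_n-\omega_n)\|<1$. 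In either case we get $\|Ax-Bx\|\leq\delta\|x\|$ with $\delta<1$, where $A=I_\mathcal{X}$ and $B=S_{g,\omega}S_{f,\tau}^{-1}$ (or $B=S_{f,\tau}^{-1}S_{g,\omega}$), so Theorem~\ref{cc1} with $\alpha=\delta$, $\beta=0$ shows $B$ is invertible, hence $S_{g,\omega}=B\,S_{f,\tau}$ (resp.\ $S_{f,\tau}B$) is invertible, and $(\{g_n\}_n,\{\omega_n\}_n)$ is a p-ASF.

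The main obstacle is bookkeeping rather than any deep difficulty: one must check that the two-stage telescoping of $S_{f,\tau}-S_{g,\omega}$ converges in the strong operator topology term by term (which follows from the p-ASF hypotheses on $\{f_n\},\{g_n\}$ and the $\ell^p$-$\ell^q$ Hölder pairing, exactly as in the well-definedness arguments for $\theta_f,\theta_\tau$), and that the grouping chosen matches the particular alternative (i)--(iv) being invoked so that the correct side ($S_{f,\tau}^{-1}$ on the left versus on the right) is factored out. A minor subtlety is that $\alpha$ and $\gamma$ appear in the hypotheses only through the synthesis estimate feeding into the bound for $\|\theta_\omega\|$ (and hence the upper frame bound) and are \emph{not} subject to a ``$\max$'' smallness condition here; the smallness that guarantees invertibility comes entirely from conditions (i)--(iv), which is the point of this refinement over Theorem~\ref{OURPERTURBATION}. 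Once the convergence and the factorization are in place, the frame-bound formulas are a direct substitution into $\|S_{g,\omega}\|\leq\|\theta_\omega\|\|\theta_g\|$ and the estimate from Theorem~\ref{cc1}.
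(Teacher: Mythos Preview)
Your proposal is correct and follows the same approach as the paper's proof: bound $\theta_g$ and $\theta_\omega$ via the two perturbation inequalities (using $s<1$ and $\beta<1$ respectively), then prove invertibility of $S_{g,\omega}$ by showing that $\|I_\mathcal{X}-S_{f,\tau}^{-1}S_{g,\omega}\|<1$ or $\|I_\mathcal{X}-S_{g,\omega}S_{f,\tau}^{-1}\|<1$ via a telescoping decomposition of $S_{f,\tau}-S_{g,\omega}$. One point to make explicit: the single telescoping you wrote, $\sum_n(f_n-g_n)\otimes\tau_n+\sum_n g_n\otimes(\tau_n-\omega_n)$, serves cases (i) and (iii), but for cases (ii) and (iv) you must use the alternative telescoping $\sum_n(f_n-g_n)\otimes\omega_n+\sum_n f_n\otimes(\tau_n-\omega_n)$; the paper does exactly these four pairings explicitly, and your remark about ``the grouping chosen'' should be read as covering this.
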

\begin{proof}
	Following the initial lines in the proof of Theorem \ref{OURPERTURBATION}, we see that $\theta_g$ and $\theta_\omega$ are well-defined bounded linear operators. 	We now consider four cases.\\
	Assume (i). Then 
	\begin{align*}
	&\left\|x-\sum_{n=1}^{\infty}g_n(x)S_{f,\tau}^{-1}\omega_n\right\|=\left\|\sum_{n=1}^{\infty}f_n(x)S_{f,\tau}^{-1}\tau_n-\sum_{n=1}^{\infty}g_n(x)S_{f,\tau}^{-1}\omega_n\right\|\\
	&\quad\leq \sum_{n=1}^{\infty}\|f_n(x)S_{f,\tau}^{-1}\tau_n-g_n(x)S_{f,\tau}^{-1}\omega_n\|\\
	&\quad\leq \sum_{n=1}^{\infty}\bigg\{\|f_n(x)S_{f,\tau}^{-1}\tau_n-g_n(x)S_{f,\tau}^{-1}\tau_n\|+\|g_n(x)S_{f,\tau}^{-1}\tau_n-g_n(x)S_{f,\tau}^{-1}\omega_n\|\bigg\}\\
	&\quad=\sum_{n=1}^{\infty}\bigg\{\|(f_n-g_n)(x)S_{f,\tau}^{-1}\tau_n\|+\|g_n(x)S_{f,\tau}^{-1}(\tau_n-\omega_n)\|\bigg\}\\
	&\quad\leq \left(\sum_{n=1}^{\infty}\bigg\{\|f_n-g_n\|\|S_{f,\tau}^{-1}\tau_n\|+\|g_n\|\|S_{f,\tau}^{-1}(\tau_n-\omega_n)\|\bigg\}\right)\|x\|.
	\end{align*}
	Therefore the operator  $S_{f,\tau}^{-1}S_{g,\omega}$ is invertible.\\
	Assume (ii). Then 
		\begin{align*}
	&\left\|x-\sum_{n=1}^{\infty}g_n(x)S_{f,\tau}^{-1}\omega_n\right\|=\left\|\sum_{n=1}^{\infty}f_n(x)S_{f,\tau}^{-1}\tau_n-\sum_{n=1}^{\infty}g_n(x)S_{f,\tau}^{-1}\omega_n\right\|\\
	&\quad\leq \sum_{n=1}^{\infty}\|f_n(x)S_{f,\tau}^{-1}\tau_n-g_n(x)S_{f,\tau}^{-1}\omega_n\|\\
	&\quad\leq \sum_{n=1}^{\infty}\bigg\{\|f_n(x)S_{f,\tau}^{-1}\tau_n-f_n(x)S_{f,\tau}^{-1}\omega_n\|+\|f_n(x)S_{f,\tau}^{-1}\omega_n-g_n(x)S_{f,\tau}^{-1}\omega_n\|\bigg\}\\
	&\quad=\sum_{n=1}^{\infty}\bigg\{\|f_n(x)S_{f,\tau}^{-1}(\tau_n-\omega_n)\|+\|(f_n-g_n)(x)S_{f,\tau}^{-1}\omega_n\|\bigg\}\\
	&\quad\leq \left(\sum_{n=1}^{\infty}\bigg\{\|f_n\|\|S_{f,\tau}^{-1}(\tau_n-\omega_n)\|+\|f_n-g_n\|\|S_{f,\tau}^{-1}\omega_n\|\bigg\}\right)\|x\|.
	\end{align*}
	Therefore the operator  $S_{f,\tau}^{-1}S_{g,\omega}$ is invertible.\\
	Assume (iii). Then 
	\begin{align*}
	&\left\|x-\sum_{n=1}^{\infty}g_n(S_{f,\tau}^{-1}x)\omega_n\right\|=\left\|\sum_{n=1}^{\infty}f_n(S_{f,\tau}^{-1}x)\tau_n-\sum_{n=1}^{\infty}g_n(S_{f,\tau}^{-1}x)\omega_n\right\|\\
	&\leq\sum_{n=1}^{\infty}\|f_n(S_{f,\tau}^{-1}x)\tau_n-g_n(S_{f,\tau}^{-1}x)\omega_n\| \\
	&\leq \sum_{n=1}^{\infty}\bigg\{\|f_n(S_{f,\tau}^{-1}x)\tau_n-g_n(S_{f,\tau}^{-1}x)\tau_n\|+\|g_n(S_{f,\tau}^{-1}x)\tau_n-g_n(S_{f,\tau}^{-1}x)\omega_n\|\bigg\}\\
	&=\sum_{n=1}^{\infty}\bigg\{\|(f_n-g_n)(S_{f,\tau}^{-1}x)\tau_n\|+\|g_n(S_{f,\tau}^{-1}x)(\tau_n-\omega_n)\|\bigg\}\\
	&\leq \left(\sum_{n=1}^{\infty}\bigg\{\|(f_n-g_n)S_{f,\tau}^{-1}\|\|\tau_n\|+\|g_nS_{f,\tau}^{-1}\|\|\tau_n-\omega_n\|\bigg\}\right)\|x\|.
	\end{align*}
	Therefore the operator  $S_{g,\omega}S_{f,\tau}^{-1}$ is invertible.\\
	Assume (iv). Then 
	\begin{align*}
	&\left\|x-\sum_{n=1}^{\infty}g_n(S_{f,\tau}^{-1}x)\omega_n\right\|=\left\|\sum_{n=1}^{\infty}f_n(S_{f,\tau}^{-1}x)\tau_n-\sum_{n=1}^{\infty}g_n(S_{f,\tau}^{-1}x)\omega_n\right\|\\
	&\quad\leq\sum_{n=1}^{\infty}\|f_n(S_{f,\tau}^{-1}x)\tau_n-g_n(S_{f,\tau}^{-1}x)\omega_n\|\\
	&\quad\leq \sum_{n=1}^{\infty}\bigg\{\|f_n(S_{f,\tau}^{-1}x)\tau_n-f_n(S_{f,\tau}^{-1}x)\omega_n\|+\|f_n(S_{f,\tau}^{-1}x)\omega_n-g_n(S_{f,\tau}^{-1}x)\omega_n\|\bigg\}\\
	&\quad=\sum_{n=1}^{\infty}\bigg\{\|f_n(S_{f,\tau}^{-1}x)(\tau_n-\omega_n)\|+\|(f_n-g_n)(S_{f,\tau}^{-1}x)\omega_n\|\bigg\}\\
	&\quad\leq \left(\sum_{n=1}^{\infty}\bigg\{\|f_nS_{f,\tau}^{-1}\|\|\tau_n-\omega_n\|+\|(f_n-g_n)S_{f,\tau}^{-1}\|\|\omega_n\|\bigg\}\right)\|x\|.
	\end{align*}
	Therefore the operator  $S_{g,\omega}S_{f,\tau}^{-1}$ is invertible.
	
	Hence in each of the assumptions we get that  $ (\{g_n \}_{n}, \{\omega_n \}_{n}) $ is a p-ASF for $\mathcal{X}$.
\end{proof} 
We end this chapter by deriving results on the expansion of sequences to approximate Schauder frames.

A routine Hilbert space argument shows that a sequence $\{\tau_n\}_n$ is  a Bessel sequence for  Hilbert space  $\mathcal{H}$ if and only if the map $ S_\tau :\mathcal{H} \ni h \mapsto \sum_{n=1}^\infty \langle h, \tau_n\rangle\tau_n\in
\mathcal{H} $ is a well-defined bounded linear operator. In fact, if $\{\tau_n\}_n$ is a Bessel sequence, then both maps $\theta_\tau:\mathcal{H} \ni h \mapsto \theta_\tau h \coloneqq\{\langle h, \tau_n\rangle \}_n \in \ell^2(\mathbb{N})$ and $\theta_\tau^*:\ell^2(\mathbb{N}) \ni \{a_n\}_n \mapsto \theta_\tau^*\{a_n\}_n\coloneqq \sum_{n=1}^{\infty}a_n\tau_n \in \mathcal{H}$  are well-defined bounded linear operators (Chapter 3   in \cite{CHRISTENSEN}). Now $\theta_\tau^*\theta_\tau=S_\tau$ and hence $S_\tau$ is a  well-defined bounded linear operator. Conversely, let $S_\tau$ be a  well-defined bounded linear operator. Definition of $S_\tau$ says that it is a positive operator. Thus there exists $b>0$ such that $\langle S_\tau h, h \rangle \leq b\|h\|^2$, $\forall h \in \mathcal{H}$. Again using the definition of $S_\tau$ gives that  $\{\tau_n\}_n$ is a Bessel sequence. This observation and Definition \ref{ASFDEF} make us to define the following.
\begin{definition}
	Let $\{\tau_n\}_n$ be a sequence in a Banach space  $\mathcal{X}$ and 	$\{f_n\}_n$ be a sequence in  $\mathcal{X}^*$. The pair $ (\{f_n \}_{n}, \{\tau_n \}_{n}) $ is said to be a \textbf{weak reconstruction sequence}  or \textbf{approximate Bessel sequence} (ABS) for $\mathcal{X}$ if  $ S_{f, \tau}:\mathcal{X}\ni x \mapsto S_{f, \tau}x\coloneqq \sum_{n=1}^\infty
	f_n(x)\tau_n \in
	\mathcal{X}$	is a well-defined bounded linear operator.
\end{definition} 
We next recall the reconstruction property of Banach spaces.
\begin{definition}(\cite{CASAZZARECONSTRUCTION})
	A Banach space $\mathcal{X}$ is said to have the \textbf{reconstruction property}  if there exists a sequence $\{\tau_n\}_n$   in  $\mathcal{X}$ and a sequence 	$\{f_n\}_n$  in  $\mathcal{X}^*$ such that $x=\sum_{n=1}^\infty
	f_n(x)\tau_n ,  \forall x \in \mathcal{X}.$
\end{definition}
Using \textbf{approximation property of Banach spaces}   (cf. \cite{CASAZZAAPPROXIMATION}), Casazza and Christensen proved the following result.
\begin{theorem}(\cite{CASAZZARECONSTRUCTION})\label{RECTHEOREM}
	There exists a Banach space $\mathcal{X}$ such that $\mathcal{X}$ does not have the reconstruction property.	
\end{theorem}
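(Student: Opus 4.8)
The final statement to prove is Theorem~\ref{RECTHEOREM} (due to Casazza and Christensen): there exists a Banach space $\mathcal{X}$ which does not have the reconstruction property.

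The plan is to produce, as a counterexample, a Banach space without the approximation property and to show that the reconstruction property implies the approximation property. First I would recall the classical fact (Enflo's theorem, already alluded to in the excerpt via \cite{JAMES, ENFLO}) that there exists a separable Banach space $\mathcal{X}$ failing the approximation property; indeed one may even take a closed subspace of $c_0$ or of some $\ell^p$, $p \neq 2$. So the whole content reduces to the implication ``reconstruction property $\Rightarrow$ approximation property.''

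The key steps, in order: (1) Suppose $\mathcal{X}$ has the reconstruction property, so there are $\{f_n\}_n \subseteq \mathcal{X}^*$ and $\{\tau_n\}_n \subseteq \mathcal{X}$ with $x = \sum_{n=1}^\infty f_n(x)\tau_n$ for all $x \in \mathcal{X}$. (2) For each $N$ define the finite-rank operator $S_N : \mathcal{X} \to \mathcal{X}$ by $S_N x = \sum_{n=1}^N f_n(x)\tau_n$; by hypothesis $S_N x \to x$ pointwise. (3) Invoke the Banach--Steinhaus theorem to conclude $\sup_N \|S_N\| < \infty$, i.e. the $S_N$ are uniformly bounded finite-rank operators converging strongly to the identity. (4) Observe that on any compact subset $K \subseteq \mathcal{X}$, pointwise convergence of a uniformly bounded sequence of operators is automatically uniform (a standard $\varepsilon/3$ argument with a finite net of $K$), so $S_N \to I_\mathcal{X}$ uniformly on compacta. (5) This is exactly the statement that $\mathcal{X}$ has the approximation property (in fact the bounded approximation property). (6) Contrapositively, the Enflo space from step~(1) cannot have the reconstruction property, which yields the theorem.

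The main obstacle is essentially bookkeeping rather than a deep difficulty: one must be careful that the reconstruction property as stated only gives \emph{pointwise} convergence $S_N x \to x$, with no a priori control of $\|S_N\|$, so the Banach--Steinhaus step is genuinely needed, and then the passage from strong convergence to uniform-on-compacta convergence must be done explicitly. I would also note that the argument does not require separability of $\mathcal{X}$, though the Enflo counterexample is separable; and I would cite \cite{CASAZZAAPPROXIMATION} for the equivalence between ``uniformly bounded finite-rank operators converging to the identity uniformly on compacta'' and the (bounded) approximation property. The only other thing worth flagging is that Theorem~\ref{RECTHEOREM} as stated is an existence assertion, so it suffices to exhibit one such space; no optimality or additional structure needs to be verified.
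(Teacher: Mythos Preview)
Your proposal is correct and follows exactly the route the paper indicates: the text preceding Theorem~\ref{RECTHEOREM} states that Casazza and Christensen proved it ``using approximation property for Banach spaces (cf.~\cite{CASAZZAAPPROXIMATION}),'' and the paper then simply cites the result without reproducing the argument. Your reconstruction --- that the partial sums $S_N$ are finite-rank, uniformly bounded by Banach--Steinhaus, and converge to $I_\mathcal{X}$ uniformly on compacta, hence $\mathcal{X}$ has the (bounded) approximation property, contradicting Enflo's example --- is precisely the standard proof behind the citation.
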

Now we have the following characterization. This is a result which is in contrast with Theorem \ref{BESSELEXPANSIONHILBERT}.
\begin{theorem}\label{CHARBESSELTOFRAME}
	Let 	$ (\{f_n \}_{n}, \{\tau_n \}_{n}) $ be a weak reconstruction sequence for $\mathcal{X}$. Then the following are equivalent.
	\begin{enumerate}[label=(\roman*)]
		\item $ (\{f_n \}_{n}, \{\tau_n \}_{n}) $ can be expanded to an ASF for $\mathcal{X}$.
		\item $\mathcal{X}$ has the reconstruction property. 
	\end{enumerate}	 
\end{theorem}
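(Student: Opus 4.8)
The statement to prove is an equivalence characterizing when a weak reconstruction sequence (ABS) $(\{f_n\}_n, \{\tau_n\}_n)$ for a Banach space $\mathcal{X}$ can be expanded to an ASF for $\mathcal{X}$, and the claim is that this happens if and only if $\mathcal{X}$ has the reconstruction property. The plan is to prove each implication directly, with the nontrivial direction being (ii) $\Rightarrow$ (i).

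For (i) $\Rightarrow$ (ii), I would start from an expansion of $(\{f_n\}_n, \{\tau_n\}_n)$ to an ASF, say $(\{f_n\}_n \cup \{g_k\}_k, \{\tau_n\}_n \cup \{\omega_k\}_k)$, so that the frame operator $S$ of this enlarged pair is bounded, linear and invertible on $\mathcal{X}$. Then for each $x \in \mathcal{X}$ we have $x = S^{-1}Sx = \sum_n f_n(S^{-1}x)\tau_n + \sum_k g_k(S^{-1}x)\omega_k$. Relabelling the functionals $f_n \circ S^{-1}$, $g_k \circ S^{-1}$ and the vectors $\tau_n$, $\omega_k$ as a single sequence gives a pair $(\{h_j\}_j, \{\rho_j\}_j)$ with $x = \sum_j h_j(x)\rho_j$ for all $x$, which is precisely the reconstruction property. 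I should double-check that the reordering/merging of the two index sets into $\mathbb{N}$ does not disturb convergence; since the ASF frame operator series converges (in the topology implicit in Definition~\ref{ASFDEF}) this is routine.

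For (ii) $\Rightarrow$ (i), assume $\mathcal{X}$ has the reconstruction property, so there are sequences $\{g_k\}_k$ in $\mathcal{X}^*$ and $\{\omega_k\}_k$ in $\mathcal{X}$ with $x = \sum_k g_k(x)\omega_k$ for all $x \in \mathcal{X}$; that is, the Schauder-frame operator of $(\{g_k\}_k, \{\omega_k\}_k)$ is $I_\mathcal{X}$. The idea is to adjoin $(\{g_k\}_k, \{\omega_k\}_k)$ to $(\{f_n\}_n, \{\tau_n\}_n)$ after a suitable scaling that kills the contribution of the original ABS. Since $(\{f_n\}_n, \{\tau_n\}_n)$ is a weak reconstruction sequence, $S_{f,\tau} = \sum_n f_n(\cdot)\tau_n$ is a bounded linear operator on $\mathcal{X}$; it need not be invertible. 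I would consider the enlarged pair with functionals $\{f_n\}_n \cup \{g_k\}_k$ and vectors $\{\tau_n\}_n \cup \{(I_\mathcal{X}-S_{f,\tau})\omega_k\}_k$, so that its frame operator is $S_{f,\tau} + (I_\mathcal{X}-S_{f,\tau})\sum_k g_k(\cdot)\omega_k = S_{f,\tau} + (I_\mathcal{X}-S_{f,\tau}) = I_\mathcal{X}$, which is trivially bounded invertible. Interleaving the two families into a single sequence indexed by $\mathbb{N}$ then exhibits an ASF expanding $(\{f_n\}_n, \{\tau_n\}_n)$.

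The main obstacle I anticipate is bookkeeping rather than conceptual: one must make precise what ``expanded to an ASF'' means (interleaving two sequences into one index set while preserving that the $f_n$ and $\tau_n$ appear as a subfamily in the same positions), and verify that the frame-operator series of the merged pair converges to the claimed operator $I_\mathcal{X}$ in the appropriate sense — this uses that the $\sum_k g_k(x)\omega_k$ series converges unconditionally enough to be split off and composed with the bounded operator $I_\mathcal{X}-S_{f,\tau}$. I would also note explicitly that Theorem~\ref{RECTHEOREM} then shows (i) genuinely fails for some $\mathcal{X}$, so the characterization is not vacuous, contrasting with the Hilbert-space situation of Theorem~\ref{BESSELEXPANSIONHILBERT}. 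I do not expect to need the approximation property directly; it enters only through the cited existence of a space without the reconstruction property.
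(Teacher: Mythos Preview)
Your proposal is correct and follows essentially the same approach as the paper: for (ii) $\Rightarrow$ (i) you use exactly the paper's construction of adjoining $(\{g_k\}_k, \{(I_\mathcal{X}-S_{f,\tau})\omega_k\}_k)$ so that the enlarged frame operator becomes $I_\mathcal{X}$, and for (i) $\Rightarrow$ (ii) the paper applies $S^{-1}$ to the vectors rather than to the functionals as you do, but this is an immaterial variant. Your bookkeeping concerns about interleaving and convergence are reasonable but, as in the paper, are routine and need no special argument.
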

\begin{proof}
	\begin{enumerate}[label=(\roman*)]
		\item $\Rightarrow $ (ii) Let $\{\omega_n\}_n$ be a sequence in   $\mathcal{X}$ and 	$\{g_n\}_n$ be a sequence in  $\mathcal{X}^*$ such that $ (\{f_n \}_{n}\cup \{g_n \}_{n}, \{\tau_n \}_{n}\cup\{\omega_n \}_{n} ) $ is an ASF   for $\mathcal{X}$. Let $S_{(f,g), (\tau, \omega)}$ be the frame operator for $ (\{f_n \}_{n}\cup \{g_n \}_{n}, \{\tau_n \}_{n}\cup\{\omega_n \}_{n} ) $. Then 
		\begin{align*}
			x&=S_{(f,g), (\tau, \omega)}^{-1}S_{(f,g), (\tau, \omega)}x=S_{(f,g), (\tau, \omega)}^{-1}\left( \sum_{n=1}^{\infty}f_n(x)\tau_n+\sum_{n=1}^{\infty}g_n(x)\omega_n\right)\\
			&=\sum_{n=1}^{\infty}f_n(x)S_{(f,g), (\tau, \omega)}^{-1}\tau_n+\sum_{n=1}^{\infty}g_n(x)S_{(f,g), (\tau, \omega)}^{-1}\omega_n, \quad \forall x \in \mathcal{X} 
		\end{align*}
		which shows that $\mathcal{X}$ has the reconstruction property.
		\item $\Rightarrow  $ (i) $\{\omega_n\}_n$ be a sequence in   $\mathcal{X}$ and  	$\{g_n\}_n$  be a sequence in  $\mathcal{X}^*$  such that 
		$x=\sum_{n=1}^\infty
		g_n(x)\omega_n $,  $ \forall x \in \mathcal{X}.$  Define $h_n\coloneqq g_n $, $\rho_n \coloneqq (I_\mathcal{X}-S_{f, \tau})\omega_n $, for all $n \in \mathbb{N}$. Then 
		\begin{align*}
			\sum_{n=1}^{\infty}f_n(x)\tau_n+\sum_{n=1}^{\infty}h_n(x)\rho_n&=\sum_{n=1}^{\infty}f_n(x)\tau_n+\sum_{n=1}^{\infty}g_n(x)(I_\mathcal{X}-S_{f, \tau})\omega_n\\
			&=S_{f, \tau}x+(I_\mathcal{X}-S_{f, \tau})\left(\sum_{n=1}^{\infty}g_n(x)\omega_n\right)\\
			&=S_{f, \tau}x+(I_\mathcal{X}-S_{f, \tau})x=x, \quad \forall x \in \mathcal{X}. 
		\end{align*}
		Therefore $ (\{f_n \}_{n}\cup \{h_n \}_{n}, \{\tau_n \}_{n}\cup\{\rho_n \}_{n} ) $ is an ASF   for $\mathcal{X}$.	
	\end{enumerate}
\end{proof}
We now show that there are  infinitely many ways to expand a weak reconstruction sequence into an ASF. 
\begin{corollary}\label{NOT}
	There exists a Banach space $\mathcal{X}$ such that given any weak reconstruction sequence $ (\{f_n \}_{n}, \{\tau_n \}_{n}) $  for $\mathcal{X}$, $ (\{f_n \}_{n}, \{\tau_n \}_{n}) $ can not be expanded to an ASF for $\mathcal{X}$.
\end{corollary}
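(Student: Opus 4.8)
\textbf{Proof proposal for Corollary \ref{NOT}.}

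The plan is to combine Theorem \ref{RECTHEOREM} with Theorem \ref{CHARBESSELTOFRAME}. Theorem \ref{RECTHEOREM} gives us a concrete Banach space $\mathcal{X}$ that does not have the reconstruction property; this $\mathcal{X}$ is the space we will use. The key point is that the conclusion of Theorem \ref{CHARBESSELTOFRAME} is an ``if and only if'': a weak reconstruction sequence $(\{f_n\}_n, \{\tau_n\}_n)$ for $\mathcal{X}$ can be expanded to an ASF for $\mathcal{X}$ \emph{precisely when} $\mathcal{X}$ has the reconstruction property. Since our $\mathcal{X}$ fails the reconstruction property, no weak reconstruction sequence for it can be expanded to an ASF.

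The steps, in order, would be: (1) invoke Theorem \ref{RECTHEOREM} to fix a Banach space $\mathcal{X}$ without the reconstruction property; (2) let $(\{f_n\}_n, \{\tau_n\}_n)$ be an arbitrary weak reconstruction sequence for this $\mathcal{X}$ (note such sequences exist trivially, e.g.\ taking all $f_n = 0$ and all $\tau_n = 0$, so the statement is not vacuous); (3) apply the implication (i) $\Rightarrow$ (ii) of Theorem \ref{CHARBESSELTOFRAME}: if $(\{f_n\}_n, \{\tau_n\}_n)$ could be expanded to an ASF for $\mathcal{X}$, then $\mathcal{X}$ would have the reconstruction property; (4) conclude by contradiction with (1) that $(\{f_n\}_n, \{\tau_n\}_n)$ cannot be expanded to an ASF for $\mathcal{X}$.

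There is really no significant obstacle here: the corollary is a direct logical consequence of the two cited theorems, and the only mild subtlety is making sure the statement has content — that is, confirming there is at least one weak reconstruction sequence for the chosen $\mathcal{X}$, which is immediate since the zero sequences form one. The ``hardest'' part, if any, is purely expository: phrasing the argument so that the universal quantifier over weak reconstruction sequences is handled correctly, i.e.\ the argument must begin ``let $(\{f_n\}_n, \{\tau_n\}_n)$ be \emph{any} weak reconstruction sequence'' and then derive the impossibility of expansion for that arbitrary choice. No calculation is needed; the proof is three or four lines citing Theorem \ref{RECTHEOREM} and Theorem \ref{CHARBESSELTOFRAME}.
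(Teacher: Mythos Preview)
Your proposal is correct and follows essentially the same approach as the paper: invoke Theorem \ref{RECTHEOREM} to obtain a Banach space $\mathcal{X}$ without the reconstruction property, take an arbitrary weak reconstruction sequence for $\mathcal{X}$, and use the implication (i) $\Rightarrow$ (ii) of Theorem \ref{CHARBESSELTOFRAME} to conclude it cannot be expanded to an ASF. Your additional remark on non-vacuousness (via the zero sequences) is a small but nice touch not present in the paper's proof.
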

\begin{proof}
	From Theorem \ref{RECTHEOREM}, there exists a Banach space $\mathcal{X}$ which does not have the reconstruction property. Let $ (\{f_n \}_{n}, \{\tau_n \}_{n}) $ be any weak reconstruction sequence  for $\mathcal{X}$. Theorem \ref{CHARBESSELTOFRAME} now says that $ (\{f_n \}_{n}, \{\tau_n \}_{n}) $ can not be expanded to an ASF for $\mathcal{X}$.	
\end{proof}
Following corollary is an easy consequence of Theorem \ref{CHARBESSELTOFRAME}.
\begin{corollary}
	Let 	$ (\{f_n \}_{n}, \{\tau_n \}_{n}) $ be a weak reconstruction sequence for $\mathcal{X}$. If 	$\mathcal{X}$ admits a Schauder basis, then $ (\{f_n \}_{n}, \{\tau_n \}_{n}) $ can be expanded to an ASF for $\mathcal{X}$.	
\end{corollary}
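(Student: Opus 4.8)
The plan is to reduce the statement to Theorem \ref{CHARBESSELTOFRAME}, which already characterizes exactly when a weak reconstruction sequence can be expanded to an ASF: it can be so expanded if and only if $\mathcal{X}$ has the reconstruction property. So the entire task becomes showing that a Banach space with a Schauder basis has the reconstruction property.

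First I would fix a Schauder basis $\{\omega_n\}_n$ for $\mathcal{X}$ and let $\{g_n\}_n \subseteq \mathcal{X}^*$ denote the associated coordinate functionals; a standard fact (each coordinate functional of a Schauder basis is bounded linear) gives $g_n \in \mathcal{X}^*$ for every $n$. By definition of a Schauder basis, for each $x \in \mathcal{X}$ we have the convergent expansion $x = \sum_{n=1}^\infty g_n(x)\omega_n$. This is precisely the condition in Definition \ref{RECTHEOREM}'s predecessor (the definition of the reconstruction property): there exist $\{\omega_n\}_n$ in $\mathcal{X}$ and $\{g_n\}_n$ in $\mathcal{X}^*$ with $x = \sum_{n=1}^\infty g_n(x)\omega_n$ for all $x \in \mathcal{X}$. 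Hence $\mathcal{X}$ has the reconstruction property.

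Now apply Theorem \ref{CHARBESSELTOFRAME}: since $(\{f_n\}_n, \{\tau_n\}_n)$ is a weak reconstruction sequence for $\mathcal{X}$ and $\mathcal{X}$ has the reconstruction property, condition (ii) of that theorem holds, so condition (i) holds as well, i.e., $(\{f_n\}_n, \{\tau_n\}_n)$ can be expanded to an ASF for $\mathcal{X}$. This completes the argument. There is no real obstacle here; the only thing to be a little careful about is invoking the correct direction of Theorem \ref{CHARBESSELTOFRAME} and making sure the Schauder basis expansion is stated in the exact form required by the reconstruction property (a convergent series $\sum_n g_n(x)\omega_n$ equal to $x$, with the $g_n$ genuinely in $\mathcal{X}^*$). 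If one wanted, one could even make the expansion explicit following the (ii)$\Rightarrow$(i) construction in the proof of Theorem \ref{CHARBESSELTOFRAME}, namely taking $h_n \coloneqq g_n$ and $\rho_n \coloneqq (I_\mathcal{X} - S_{f,\tau})\omega_n$, so that $(\{f_n\}_n \cup \{h_n\}_n, \{\tau_n\}_n \cup \{\rho_n\}_n)$ is the desired ASF.
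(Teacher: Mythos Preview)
Your proposal is correct and matches the paper's intended approach exactly: the paper states this corollary as ``an easy consequence of Theorem \ref{CHARBESSELTOFRAME}'' without giving an explicit proof, and your argument --- observing that a Schauder basis with its coordinate functionals witnesses the reconstruction property, then invoking the (ii)$\Rightarrow$(i) direction of that theorem --- is precisely what is meant. The optional explicit expansion you write at the end, $h_n \coloneqq g_n$ and $\rho_n \coloneqq (I_\mathcal{X}-S_{f,\tau})\omega_n$, also mirrors the construction in the proof of Theorem \ref{CHARBESSELTOFRAME}.
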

Note that  Theorem \ref{CHARBESSELTOFRAME} may not add  countably many  elements to a weak reconstruction sequence to get an ASF. In the following example we show that  it adds just  one element to a weak reconstruction sequence and yields an ASF. 
\begin{example}\label{EXAMPLEOME}
	Let $p\in[1,\infty)$ and let $\{e_n\}_n$ denote the standard  Schauder basis for  $\ell^p(\mathbb{N})$  and  let $\{\zeta_n\}_n$ denote the coordinate functionals associated with $\{e_n\}_n$. Define 
	\begin{align*}
	&R: \ell^p(\mathbb{N}) \ni (x_n)_{n=1}^\infty\mapsto (0,x_1,x_2, \dots)\in \ell^p(\mathbb{N}),\\
	&L: \ell^p(\mathbb{N}) \ni (x_n)_{n=1}^\infty\mapsto (x_2,x_3,x_4, \dots)\in \ell^p(\mathbb{N}).
	\end{align*}
	Clearly  $ (\{f_n\coloneqq \zeta_nL\}_{n}, \{\tau_n\coloneqq Re_n\}_{n}) $ is a weak reconstruction sequence  for 	$\ell^p(\mathbb{N})$. Note that  $S_{f, \tau}=RL$ and 
	\begin{align*}
	&	(I_{\ell^p(\mathbb{N})}-S_{f, \tau})e_1=e_1-RLe_1=e_1-0=e_1,\\
	&	(I_{\ell^p(\mathbb{N})}-S_{f, \tau})e_n=e_n-RLe_n=e_n-Re_{n-1}=e_n-e_n=0, \quad \forall n \geq 2.
	\end{align*}
	Let $g_n\coloneqq \zeta_n$ and $\omega_n\coloneqq e_n$, $\forall n \in \mathbb{N}$. Theorem \ref{CHARBESSELTOFRAME} now says that 
	$ (\{f_n \}_{n}\cup \{h_1 \}, \{\tau_n \}_{n}\cup\{\rho_1 \} ) $ is an ASF   for $\ell^p(\mathbb{N})$.
\end{example}
It may be possible to expand a weak reconstruction sequence to a tight  ASF by adding finitely many elements Hilbert space. In this case, we can estimate the number of elements added to a tight ASF. This is given in the following theorem which can be compared with Theorem \ref{BESSELNUMBERHILBERT}.  
\begin{theorem}\label{NUMBERINEQUALITY}
	Let 	$ (\{f_n \}_{n}, \{\tau_n \}_{n}) $ be a weak reconstruction sequence for $\mathcal{X}$. If  $ (\{f_n \}_{n}\cup \{g_k\}_{k=1}^N, \{\tau_n \}_{n}\cup \{\omega_k\}_{k=1}^N) $ is a $\lambda$-tight ASF for $\mathcal{X}$, then 
	\begin{align}\label{KSNUMBER}
	N\geq \dim (\lambda I_\mathcal{X}-S_{f, \tau}) (\mathcal{X}).
	\end{align}
	Further, the Inequality (\ref{KSNUMBER}) can not be improved.	
\end{theorem}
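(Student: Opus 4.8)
The plan is to prove the dimension inequality by an operator-theoretic argument analogous to the Hilbert space proof of Theorem~\ref{BESSELNUMBERHILBERT}. Suppose $ (\{f_n \}_{n}\cup \{g_k\}_{k=1}^N, \{\tau_n \}_{n}\cup \{\omega_k\}_{k=1}^N) $ is a $\lambda$-tight ASF for $\mathcal{X}$, so its frame operator equals $\lambda I_\mathcal{X}$. Splitting the frame operator of the enlarged system into the part coming from the original pair and the part coming from the added elements, I would first establish the identity
\begin{align*}
\lambda I_\mathcal{X}=S_{f,\tau}+\sum_{k=1}^{N}g_k(\cdot)\omega_k,
\end{align*}
that is, $\sum_{k=1}^{N}g_k(x)\omega_k=(\lambda I_\mathcal{X}-S_{f,\tau})x$ for all $x\in\mathcal{X}$. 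This shows that the operator $\lambda I_\mathcal{X}-S_{f,\tau}$ factors through the finite-dimensional space $\mathbb{K}^N$: it equals $V\circ U$ where $U:\mathcal{X}\to\mathbb{K}^N$, $Ux=(g_1(x),\dots,g_N(x))$, and $V:\mathbb{K}^N\to\mathcal{X}$, $V(a_1,\dots,a_N)=\sum_{k=1}^{N}a_k\omega_k$. Hence the range of $\lambda I_\mathcal{X}-S_{f,\tau}$ is contained in the range of $V$, which has dimension at most $N$, giving $N\geq\dim(\lambda I_\mathcal{X}-S_{f,\tau})(\mathcal{X})$.

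Next I would show the inequality is sharp by constructing, for a given weak reconstruction sequence $ (\{f_n \}_{n}, \{\tau_n \}_{n}) $ with $\dim(\lambda I_\mathcal{X}-S_{f,\tau})(\mathcal{X})=N<\infty$, an explicit expansion by exactly $N$ elements producing a $\lambda$-tight ASF. The idea is to take a basis $\{\rho_1,\dots,\rho_N\}$ of the finite-dimensional subspace $(\lambda I_\mathcal{X}-S_{f,\tau})(\mathcal{X})$, choose coordinate functionals $g_1,\dots,g_N$ on $\mathcal{X}$ representing the components of $x\mapsto(\lambda I_\mathcal{X}-S_{f,\tau})x$ in this basis (these exist as bounded linear functionals since $(\lambda I_\mathcal{X}-S_{f,\tau})$ is bounded and its range is finite-dimensional, so one composes with the coordinate projections), and set $\omega_k\coloneqq\rho_k$. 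Then by construction $\sum_{k=1}^N g_k(x)\omega_k=(\lambda I_\mathcal{X}-S_{f,\tau})x$, so the enlarged frame operator is $S_{f,\tau}+(\lambda I_\mathcal{X}-S_{f,\tau})=\lambda I_\mathcal{X}$, which is invertible, and the enlarged pair is a $\lambda$-tight ASF with exactly $N$ new elements. This matches the lower bound.

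The main obstacle, and the point requiring care, is the direction showing sharpness: one must verify that the functionals $g_k$ can indeed be chosen bounded and that the resulting enlarged pair genuinely satisfies the definition of an ASF (Definition~\ref{ASFDEF}), i.e.\ that $S_{(f,g),(\tau,\omega)}=\lambda I_\mathcal{X}$ is a well-defined \emph{bounded} operator — this follows since finitely many extra rank-one terms are automatically bounded, but it should be stated. A secondary subtlety in the first direction is ensuring that the series $\sum_n f_n(x)\tau_n$ and the finite sum $\sum_{k=1}^N g_k(x)\omega_k$ may be separated inside the frame operator of the union; this is legitimate because the union frame operator is by definition the sum $S_{f,\tau}+\sum_{k=1}^N g_k(\cdot)\omega_k$ (the tail sum converges by the weak reconstruction hypothesis and the finite head is trivially convergent), so no rearrangement issue arises. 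With these points addressed the proof is complete.
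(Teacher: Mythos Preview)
Your argument for the inequality is essentially identical to the paper's: both split the frame operator of the enlarged system as $\lambda I_\mathcal{X}=S_{f,\tau}+S_{g,\omega}$ with $S_{g,\omega}x=\sum_{k=1}^N g_k(x)\omega_k$, and then observe that the range of $\lambda I_\mathcal{X}-S_{f,\tau}=S_{g,\omega}$ has dimension at most $N$. Your explicit factorization through $\mathbb{K}^N$ is a slightly more detailed way of saying the same thing.

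For sharpness the two proofs genuinely diverge. The paper does not give a general construction; it simply points to Example~\ref{EXAMPLEOME} (the right/left shift pair on $\ell^p(\mathbb{N})$, where $S_{f,\tau}=RL$ and $(I-RL)(\ell^p)=\operatorname{span}\{e_1\}$ is one-dimensional, and a single added pair $(\zeta_1,e_1)$ yields a $1$-tight ASF). Your approach is stronger: you show that \emph{whenever} $(\lambda I_\mathcal{X}-S_{f,\tau})$ has finite rank $N$, one can always expand by exactly $N$ pairs to obtain a $\lambda$-tight ASF, by choosing a basis of the range and pulling back the coordinate functionals through the bounded operator $\lambda I_\mathcal{X}-S_{f,\tau}$. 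This gives a uniform sharpness statement rather than a single witness, at the cost of being slightly less concrete. Both arguments are valid; the paper's is shorter, yours is more informative.
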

\begin{proof}
	Let $S_{(f,g), (\tau, \omega)}$ be the frame operator for $ (\{f_n \}_{n}\cup \{g_k\}_{k=1}^N, \{\tau_n \}_{n}\cup \{\omega_k\}_{k=1}^N) $. Set $S_{g, \omega}(x)\coloneqq\sum_{k=1}^{N}g_k(x)\omega_k,  \forall x \in \mathcal{X}$. Then 
	\begin{align*}
	\lambda x =S_{(f,g), (\tau, \omega)}x=\sum_{n=1}^{\infty}f_n(x)\tau_n+\sum_{k=1}^{N}g_k(x)\omega_k=S_{f, \tau}x+S_{g,\omega}x,\quad \forall x \in \mathcal{X}.
	\end{align*}
	Therefore 
	\begin{align*}
	N \geq \dim S_{g, \omega} (\mathcal{X}) = \dim (\lambda I_\mathcal{X}-S_{f, \tau}) (\mathcal{X}).
	\end{align*}
	Example \ref{EXAMPLEOME}  says that inequality in Theorem \ref{NUMBERINEQUALITY} can not be improved.	
\end{proof}
We now state the definition of a p-weak reconstruction sequence and give  a extension theorem for p-weak reconstruction sequences.
\begin{definition}
	Let $p \in [1, \infty)$.	A weak reconstruction sequence   $ (\{f_n \}_{n}, \{\tau_n \}_{n}) $  for $\mathcal{X}$	is said to be a \textbf{p-weak reconstruction sequence} or \textbf{p-approximate Bessel sequence} (p-ABS) for $\mathcal{X}$ if both the maps $
	\theta_f: \mathcal{X}\ni x \mapsto \theta_f x\coloneqq \{f_n(x)\}_n \in \ell^p(\mathbb{N}) $ and $
	\theta_\tau : \ell^p(\mathbb{N}) \ni \{a_n\}_n \mapsto \theta_\tau \{a_n\}_n\coloneqq \sum_{n=1}^\infty a_n\tau_n \in \mathcal{X}$
	are well-defined bounded linear operators. 
\end{definition}
\begin{theorem}
	Let $p \in [1, \infty)$.	If    $ (\{f_n \}_{n}, \{\tau_n \}_{n}) $ is a p-weak reconstruction sequence for $\ell^p(\mathbb{N}), $ then 	$ (\{f_n \}_{n}, \{\tau_n \}_{n}) $ can be expanded to a p-ASF.
\end{theorem}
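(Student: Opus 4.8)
The plan is to reduce this to the corresponding extension result for weak reconstruction sequences, namely Theorem~\ref{CHARBESSELTOFRAME}, together with the fact that $\ell^p(\mathbb{N})$ admits a Schauder basis. Recall that Theorem~\ref{CHARBESSELTOFRAME} says a weak reconstruction sequence for a Banach space $\mathcal{X}$ can be expanded to an ASF for $\mathcal{X}$ if and only if $\mathcal{X}$ has the reconstruction property. Since $\ell^p(\mathbb{N})$ has the standard Schauder basis $\{e_n\}_n$ with coordinate functionals $\{\zeta_n\}_n$, we have $x=\sum_{n=1}^\infty \zeta_n(x)e_n$ for all $x\in\ell^p(\mathbb{N})$, so $\ell^p(\mathbb{N})$ has the reconstruction property. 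Hence a p-weak reconstruction sequence $(\{f_n\}_n,\{\tau_n\}_n)$, being in particular a weak reconstruction sequence, can be expanded to an ASF $(\{f_n\}_n\cup\{h_n\}_n,\{\tau_n\}_n\cup\{\rho_n\}_n)$ for $\ell^p(\mathbb{N})$. The remaining task is to check that this expansion can in fact be chosen to be a p-ASF, i.e., that the associated analysis and synthesis operators are well-defined bounded linear operators into and out of $\ell^p(\mathbb{N})$.

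For this I would not invoke Theorem~\ref{CHARBESSELTOFRAME} as a black box but instead re-run its ``(ii)$\Rightarrow$(i)'' construction with the canonical choice adapted to $\ell^p$. Concretely, take $g_n\coloneqq\zeta_n$ and $\omega_n\coloneqq e_n$ (the standard basis data), and set $h_n\coloneqq g_n=\zeta_n$ and $\rho_n\coloneqq(I_{\ell^p(\mathbb{N})}-S_{f,\tau})e_n$ for all $n$. As in the proof of Theorem~\ref{CHARBESSELTOFRAME}, one computes
\begin{align*}
\sum_{n=1}^\infty f_n(x)\tau_n+\sum_{n=1}^\infty h_n(x)\rho_n
&=S_{f,\tau}x+(I_{\ell^p(\mathbb{N})}-S_{f,\tau})\Bigl(\sum_{n=1}^\infty\zeta_n(x)e_n\Bigr)\\
&=S_{f,\tau}x+(I_{\ell^p(\mathbb{N})}-S_{f,\tau})x=x,
\end{align*}
so the combined pair has frame operator equal to $I_{\ell^p(\mathbb{N})}$, hence is an ASF; in fact it is a Schauder frame.

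It then remains to verify the $p$-boundedness conditions for the combined system. Index the combined sequence by $\mathbb{N}\sqcup\mathbb{N}$ (or interleave), and consider the combined analysis map $x\mapsto(\{f_n(x)\}_n,\{h_n(x)\}_n)$ and synthesis map $(\{a_n\}_n,\{b_n\}_n)\mapsto\sum a_n\tau_n+\sum b_n\rho_n$; these land in $\ell^p(\mathbb{N}\sqcup\mathbb{N})\cong\ell^p(\mathbb{N})$. By hypothesis $\theta_f:\mathcal{X}\to\ell^p(\mathbb{N})$ and $\theta_\tau:\ell^p(\mathbb{N})\to\mathcal{X}$ are bounded; the extra analysis map is $x\mapsto\{\zeta_n(x)\}_n$, which is exactly the identity on $\ell^p(\mathbb{N})$ and hence bounded with norm $1$; and the extra synthesis map is $\{b_n\}_n\mapsto\sum b_n\rho_n=(I_{\ell^p(\mathbb{N})}-S_{f,\tau})\sum b_n e_n=(I_{\ell^p(\mathbb{N})}-\theta_\tau\theta_f)\{b_n\}_n$, which is bounded since $S_{f,\tau}=\theta_\tau\theta_f$ is bounded. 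Therefore both combined operators are well-defined bounded linear operators, so the expansion is a p-ABS whose frame operator is invertible, i.e.\ a p-ASF.

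The only genuinely delicate point — and the step I would be most careful about — is the bookkeeping when merging the two index sets into a single copy of $\ell^p(\mathbb{N})$: one must confirm that the norm on $\ell^p(\mathbb{N})\oplus_p\ell^p(\mathbb{N})$ (or the interleaved $\ell^p$) is equivalent to the $\ell^p$ norm on a single copy, which is standard, and that the boundedness estimates for $\theta_f$, $\theta_\tau$, the identity, and $I-\theta_\tau\theta_f$ combine additively under this identification. No reconstruction-property obstruction arises here precisely because $\ell^p(\mathbb{N})$ is the ambient space and carries a Schauder basis; this is what makes the $\ell^p$ case unconditionally solvable, in contrast with the general Banach space situation governed by Theorem~\ref{CHARBESSELTOFRAME} and the counterexample in Corollary~\ref{NOT}.
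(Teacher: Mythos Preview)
Your proposal is correct and follows essentially the same construction as the paper: define $h_n\coloneqq\zeta_n$ and $\rho_n\coloneqq(I_{\ell^p(\mathbb{N})}-S_{f,\tau})e_n$, then $(\{f_n\}_n\cup\{h_n\}_n,\{\tau_n\}_n\cup\{\rho_n\}_n)$ is a p-ASF for $\ell^p(\mathbb{N})$. The paper's proof is a one-line sketch that omits the verification of the $p$-boundedness of the combined analysis and synthesis operators, which you carry out explicitly and correctly.
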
 
\begin{proof}
	Let $ \{e_n \}_{n}$ and $ \{\zeta_n \}_{n}$ be as in Example \ref{EXAMPLEOME}.  Define $h_n\coloneqq \zeta_n $, $\rho_n \coloneqq (I_{\ell^p(\mathbb{N})}-S_{f, \tau})e_n $, for all $n \in \mathbb{N}$. Then  it follows that $ (\{f_n \}_{n}\cup \{h_n \}_{n}, \{\tau_n \}_{n}\cup\{\rho_n \}_{n} ) $ is a p-ASF for  $\ell^p(\mathbb{N}) $.
\end{proof}

{\onehalfspacing \chapter{WEAK OPERATOR-VALUED FRAMES}\label{chap5}}
\vspace{0.5cm}

\section{BASIC PROPERTIES}
Let $\mathcal{H},\mathcal{H}_0$   be Hilbert spaces and $ \mathcal{B}(\mathcal{H}, \mathcal{H}_0)$ be the collection of all bounded linear operators from $\mathcal{H}$ to $\mathcal{H}_0$.  In this  chapter, we study   a generalization of the notion of operator-valued frame by studying the convergence of the series 
$
\sum_{n=1}^\infty \Psi_n^*A_n
$
to a  bounded  invertible operator in  $ \mathcal{B}(\mathcal{H}, \mathcal{H}_0)$.  

\begin{definition}
	Let  $ \{A_n\}_{n} $ and  $ \{\Psi_n\}_{n} $ be collections in $ \mathcal{B}(\mathcal{H}, \mathcal{H}_0)$. The pair $( \{A_n\}_{n}, $ $  \{\Psi_n\}_{n} )$ is said to be a \textbf{weak operator-valued frame}  (weak OVF) in $ \mathcal{B}(\mathcal{H}, \mathcal{H}_0) $   if  the series 
	\begin{align*}
	\text{(\textbf{Operator-valued frame operator})}\quad S_{A, \Psi} \coloneqq  \sum_{n=1}^\infty \Psi_n^*A_n
	\end{align*}
	converges in the strong-operator topology on $ \mathcal{B}(\mathcal{H})$ to a  bounded  invertible operator. If $	S_{A, \Psi}=I_\mathcal{H}$, then it is called as a Parseval weak OVF. 
\end{definition}
We now give various examples of weak OVFs. 
\begin{example}
	\begin{enumerate}[label=(\roman*)]
		\item By taking $\Psi_n\coloneqq A_n$, for all $n \in \mathbb{N}$, it follows that an \textbf{operator-valued frame} is a weak OVF. In particular, a \textbf{G-frame} is a weak OVF. 
		\item Let $( \{\tau_n\}_{n},  \{f_n\}_{n} )$ be a \textbf{pseudo-frame} for $\mathcal{H}$. If we define  $\Psi_n\coloneqq f_n$ and $A_nh\coloneqq \langle h, \tau_n \rangle $,  for all $n \in \mathbb{N}$, for all $h \in \mathcal{H}$, then $( \{A_n\}_{n},  \{\Psi_n\}_{n} )$ is a weak OVF in $ \mathcal{B}(\mathcal{H}, \mathbb{K}) $. Similarly it follows that \textbf{frames for subspaces}, \textbf{fusion frames}, \textbf{outer frames}, \textbf{oblique frames} and  \textbf{quasi-projectors}  are all weak OVFs.
		\item Let $ C \in \mathcal{B}(\mathcal{H})$ be invertible and $ \{\tau_n\}_{n} $ be a \textbf{C-controlled frame} for $\mathcal{H}$ (\cite{BALAZSGRYBOS}). If we define $\Psi_n h \coloneqq \langle h, C\tau_n \rangle$ and $A_nh\coloneqq \langle h, \tau_n \rangle$,  for all $n \in \mathbb{N}$, for all $h \in \mathcal{H}$, then $( \{A_n\}_{n},  \{\Psi_n\}_{n} )$ is a weak OVF in $ \mathcal{B}(\mathcal{H}, \mathbb{K}) $. In particular, every \textbf{weighted frame} is a weak OVF.
		\item  Let $( \{\tau_n\}_{n},  \{f_n\}_{n} )$ be an \textbf{approximate  Schauder  frame} for $\mathcal{H}$ (\cite{FREEMANODELL}). Note that it is possible for Hilbert spaces to have approximate Schauder frames which are not frames. If we define $\Psi_n\coloneqq f_n$ and $A_nh\coloneqq \langle h, \tau_n \rangle $,  for all $n \in \mathbb{N}$, for all $h \in \mathcal{H}$, then  $( \{A_n\}_{n},  \{\Psi_n\}_{n} )$ is a weak OVF in $ \mathcal{B}(\mathcal{H}, \mathbb{K}) $. In particular, \textbf{atomic decompositions} (\cite{CASAZZAHANLARSONFRAMEBANACH}),  \textbf{framings} (\cite{CASAZZAHANLARSONFRAMEBANACH}), \textbf{cb-frames} (\cite{LIURUAN}) and \textbf{Schauder frames} (\cite{CASAZZA}) for Hilbert spaces are all weak OVFs.
		\item Let $ \{\tau_n\}_{n}$ be a \textbf{signed frame} for $\mathcal{H}$ with signs $\{\sigma_n\}_{n}$ (\cite{PENGWALDRON}). If we define  $\Psi_n\coloneqq h\coloneqq \langle h, \sigma_n\tau_n \rangle$ and $A_nh\coloneqq \langle h, \tau_n \rangle $,  for all $n \in \mathbb{N}$, for all $h \in \mathcal{H}$, then $( \{A_n\}_{n},  \{\Psi_n\}_{n} )$ is a weak OVF in $ \mathcal{B}(\mathcal{H}, \mathbb{K}) $.
	\end{enumerate}	
\end{example}
Unlike in the case of OVFs,  the frame operator $S_{A, \Psi}$ need not be positive.  Since $S_{A, \Psi}$ is invertible, there are $a,b>0$ such that 
\begin{align*}
a\|h\|\leq \|S_{A, \Psi}h\|\leq b \|h\|, \quad \forall h \in \mathcal{H}.
\end{align*}
We call such $a,b$ as lower and upper frame bounds, respectively. Supremum of the set of all lower frame bounds is called as optimal lower frame bound and infimum of the set of all upper frame bounds is called as optimal upper frame bound. We easily get that 
\begin{align*}
&\text{ optimal lower frame bound }=\|S_{A,\Psi}^{-1}\|^{-1},\\
&  \text{ optimal upper frame bound } = \|S_{A,\Psi}\|.
\end{align*}
 We now define the notion of dual   weak OVFs.
\begin{definition}
	A weak  OVF  $ (\{B_n\}_{n} , \{\Phi_n\}_{n} )$  in $\mathcal{B}(\mathcal{H}, \mathcal{H}_0)$ is said to be a  \textbf{dual} for a weak  OVF $  ( \{A_n\}_{n},  \{\Psi_n\}_{n} )$ in $\mathcal{B}(\mathcal{H}, \mathcal{H}_0)$  if  
	\begin{align*}
	\sum_{n=1}^\infty \Psi_n^*B_n= \sum_{n=1}^\infty\Phi^*_nA_n=I_{\mathcal{H}}.
	\end{align*}
\end{definition}
Note that  dual always exists for a given weak OVF.  In fact, a direct calculation shows that
\begin{align*}
( \{\widetilde{A}_n\coloneqq A_nS_{A,\Psi}^{-1}\}_{n},\{\widetilde{\Psi}_n\coloneqq\Psi_n(S_{A,\Psi}^{-1})^*\}_{n})
\end{align*}
is a weak OVF and is a dual for $  ( \{A_n\}_{n},  \{\Psi_n\}_{n} )$.  This weak  OVF  is  called as the \textbf{canonical dual} for $  ( \{A_n\}_{n},  \{\Psi_n\}_{n} )$. Canonical dual has two nice properties. Following two results establish them.

\begin{proposition}
	Let $( \{A_n\}_{n},  \{\Psi_n\}_{n} )$ be a weak  OVF in $ \mathcal{B}(\mathcal{H}, \mathcal{H}_0).$  If $ h \in \mathcal{H}$ has representation  $ h=\sum_{n=1}^\infty A_n^*y_n= \sum_{n=1}^\infty\Psi_n^*z_n, $ for some sequences $ \{y_n\}_{n},\{z_n\}_{n}$ in $ \mathcal{H}_0$, then 
	$$ \sum_{n=1}^\infty\langle y_n,z_n\rangle =\sum_{n=1}^\infty\langle \widetilde{\Psi}_nh,\widetilde{A}_nh\rangle+\sum_{n=1}^\infty\langle y_n-\widetilde{\Psi}_nh,z_n-\widetilde{A}_nh\rangle. $$
\end{proposition}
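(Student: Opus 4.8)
The plan is to mimic the classical ``best approximation'' identity for frames (part (xiii) of Theorem~\ref{MOSTIMPORTANT} and the analogous statement for OVFs), working in the sesquilinear rather than the norm setting because the frame operator $S_{A,\Psi}$ need not be positive here. First I would record the key orthogonality fact: for any sequences $\{u_n\}_n,\{v_n\}_n$ in $\mathcal{H}_0$ that sum to zero in the appropriate sense, the ``cross term'' vanishes against the canonical dual data. Concretely, set $h=\sum_{n=1}^\infty A_n^*y_n=\sum_{n=1}^\infty \Psi_n^*z_n$ and write $p_n\coloneqq \widetilde{\Psi}_n h$, $q_n\coloneqq \widetilde{A}_n h$, so that $\widetilde{\Psi}_n=\Psi_n(S_{A,\Psi}^{-1})^*$ and $\widetilde{A}_n=A_nS_{A,\Psi}^{-1}$. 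The right-hand side expands as
\begin{align*}
\sum_{n=1}^\infty\langle p_n,q_n\rangle+\sum_{n=1}^\infty\langle y_n-p_n,\,z_n-q_n\rangle
&=\sum_{n=1}^\infty\langle y_n,z_n\rangle-\sum_{n=1}^\infty\langle y_n,q_n\rangle-\sum_{n=1}^\infty\langle p_n,z_n\rangle+2\sum_{n=1}^\infty\langle p_n,q_n\rangle,
\end{align*}
so the identity reduces to showing $\sum_n\langle y_n,q_n\rangle+\sum_n\langle p_n,z_n\rangle=2\sum_n\langle p_n,q_n\rangle$. I would prove each of $\sum_n\langle y_n,q_n\rangle=\sum_n\langle p_n,q_n\rangle$ and $\sum_n\langle p_n,z_n\rangle=\sum_n\langle p_n,q_n\rangle$ separately.

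For the first: $\sum_n\langle y_n,q_n\rangle=\sum_n\langle y_n, A_nS_{A,\Psi}^{-1}h\rangle=\sum_n\langle A_n^*y_n, S_{A,\Psi}^{-1}h\rangle=\langle \sum_n A_n^*y_n, S_{A,\Psi}^{-1}h\rangle=\langle h, S_{A,\Psi}^{-1}h\rangle$, using the convergence of $\sum_n A_n^*y_n=h$ and continuity of the inner product; interchanging sum and the bounded functional $\langle\,\cdot\,,S_{A,\Psi}^{-1}h\rangle$ is legitimate. On the other hand $\sum_n\langle p_n,q_n\rangle=\sum_n\langle \Psi_n(S_{A,\Psi}^{-1})^*h,\,A_nS_{A,\Psi}^{-1}h\rangle=\sum_n\langle \Psi_n^*A_n S_{A,\Psi}^{-1}h,\,(S_{A,\Psi}^{-1})^*h\rangle=\langle S_{A,\Psi}S_{A,\Psi}^{-1}h,(S_{A,\Psi}^{-1})^*h\rangle=\langle h,(S_{A,\Psi}^{-1})^*h\rangle=\langle S_{A,\Psi}^{-1}h,h\rangle^{\!-}$; being careful with which slot is conjugate-linear, this equals $\langle h,(S_{A,\Psi}^{-1})^{*}h\rangle=\langle S_{A,\Psi}^{-1}h, h\rangle$, which agrees with $\langle h, S_{A,\Psi}^{-1}h\rangle$ only after taking the correct adjoint identity $\langle h,(S_{A,\Psi}^{-1})^*h\rangle=\langle S_{A,\Psi}^{-1}h,h\rangle$. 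The second equality $\sum_n\langle p_n,z_n\rangle=\sum_n\langle p_n,q_n\rangle$ is handled symmetrically, now using $\sum_n\Psi_n^*z_n=h$: $\sum_n\langle p_n,z_n\rangle=\sum_n\langle (S_{A,\Psi}^{-1})^*h,\Psi_n^*z_n\rangle^{\!-}$... more cleanly, $\sum_n\langle \widetilde{\Psi}_nh,z_n\rangle=\sum_n\langle \widetilde{\Psi}_n h,z_n\rangle$ and $\sum_n\langle \widetilde{\Psi}_n^* z_n,\,?\rangle$, so I would instead write $\overline{\sum_n\langle p_n,z_n\rangle}=\sum_n\langle z_n,\widetilde{\Psi}_nh\rangle=\sum_n\langle \widetilde{\Psi}_n^*z_n,h\rangle=\langle \sum_n \widetilde{\Psi}_n^*z_n,h\rangle$ and note $\sum_n\widetilde{\Psi}_n^*z_n=S_{A,\Psi}^{-1}\sum_n\Psi_n^*z_n=S_{A,\Psi}^{-1}h$, giving $\langle S_{A,\Psi}^{-1}h,h\rangle$, and similarly $\overline{\sum_n\langle p_n,q_n\rangle}=\langle S_{A,\Psi}^{-1}h,h\rangle$ by the computation above. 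Taking conjugates back matches the two quantities.

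The main obstacle I anticipate is purely bookkeeping: getting the conjugate-linearity conventions and the adjoint $(S_{A,\Psi}^{-1})^*=(S_{A,\Psi}^*)^{-1}$ placed correctly, and justifying every interchange of an infinite sum with an inner product. For the latter, the essential point is that all the relevant series ($\sum A_n^*y_n$, $\sum \Psi_n^*z_n$, $\sum \Psi_n^*A_n x$ for each $x$) converge by hypothesis, and pairing a convergent series in $\mathcal{H}$ against a fixed vector is continuous, while $\sum_n\langle A_n^*y_n,w\rangle=\sum_n\langle y_n,A_nw\rangle$ holds termwise. The numerical-series convergences on each side then follow a posteriori from the identity itself once the two telescoping computations are in place. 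No deep input beyond the definitions of weak OVF and canonical dual, plus $S_{A,\Psi}\widetilde{\Psi}_n^*=\Psi_n^*$ and $S_{A,\Psi}^{*}\widetilde{A}_n^*=A_n^*$ rewritten as $\sum_n\Psi_n^*\widetilde{A}_n=I$ and $\sum_n\widetilde{\Psi}_n^*A_n=I$, is required; these are exactly the statements that the canonical dual is a dual, already asserted in the excerpt.
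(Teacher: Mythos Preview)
Your approach is essentially the paper's: expand the right-hand side, move each $A_n$ or $\Psi_n$ across the inner product, and collapse the resulting sums using $\sum_n\Psi_n^*A_n=S_{A,\Psi}$, $\sum_nA_n^*y_n=h$, $\sum_n\Psi_n^*z_n=h$ to see that each of the three cross terms $\sum_n\langle y_n,\widetilde{A}_nh\rangle$, $\sum_n\langle \widetilde{\Psi}_nh,z_n\rangle$, $\sum_n\langle \widetilde{\Psi}_nh,\widetilde{A}_nh\rangle$ equals $\langle (S_{A,\Psi}^{-1})^*h,h\rangle=\langle h,S_{A,\Psi}^{-1}h\rangle$. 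One concrete slip to fix: in your computation of $\sum_n\langle p_n,q_n\rangle$ you wrote $\langle \Psi_n^*A_nS_{A,\Psi}^{-1}h,(S_{A,\Psi}^{-1})^*h\rangle$, but moving $\Psi_n$ to the right slot gives $\langle (S_{A,\Psi}^{-1})^*h,\Psi_n^*A_nS_{A,\Psi}^{-1}h\rangle$; summing then yields $\langle (S_{A,\Psi}^{-1})^*h,h\rangle=\langle h,S_{A,\Psi}^{-1}h\rangle$ directly, with no conjugate mismatch to worry about.
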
  
\begin{proof}
	We start from the right side and see 	
	\begin{align*}
	&\sum\limits_{n=1}^\infty\langle \widetilde{\Psi}_nh,\widetilde{A}_nh\rangle+\sum\limits_{n=1}^\infty\langle y_n, z_n\rangle -\sum\limits_{n=1}^\infty\langle y_n, \widetilde{A}_nh\rangle-\sum\limits_{n=1}^\infty\langle \widetilde{\Psi}_nh, z_n\rangle +\sum\limits_{n=1}^\infty\langle \widetilde{\Psi}_nh,\widetilde{A}_nh\rangle\\
	&=2\sum\limits_{n=1}^\infty\langle \widetilde{\Psi}_nh,\widetilde{A}_nh\rangle+ \sum\limits_{n=1}^\infty\langle y_n, z_n\rangle-\sum\limits_{n=1}^\infty\langle y_n,A_nS_{A,\Psi}^{-1}h\rangle-\sum\limits_{n=1}^\infty\langle \Psi_n(S_{A,\Psi}^{-1})^*h, z_n\rangle\\
	&= 2\left\langle\sum\limits_{n=1}^\infty(S_{A,\Psi}^{-1})^*A_n^*\Psi_n(S_{A,\Psi}^{-1})^*h, h \right\rangle+ \sum\limits_{n=1}^\infty\langle y_n, z_n\rangle-\left\langle \sum\limits_{n=1}^\infty A_n^*y_n,S_{A,\Psi}^{-1}h\right \rangle \\
	&\quad -\left\langle (S_{A,\Psi}^{-1})^*h , \sum\limits_{n=1}^\infty\Psi_n^*z_n\right \rangle\\
	&=2 \langle (S_{A,\Psi}^{-1})^*h,h \rangle + \sum\limits_{n=1}^\infty\langle y_n, z_n\rangle -\langle h, S_{A,\Psi}^{-1}h\rangle-\langle (S_{A,\Psi}^{-1})^*h, h\rangle
	\end{align*}
	which gives the left side.	
\end{proof}
\begin{theorem}\label{CANONICALDUALFRAMEPROPERTYOPERATORVERSIONWEAK}
	Let $( \{A_n\}_{n},  \{\Psi_n\}_{n} )$ be a weak  OVF with frame bounds $ a$ and $ b.$ 
	\begin{enumerate}[label=(\roman*)]
		\item The canonical dual weak  OVF for the canonical dual weak  OVF  for $( \{A_n\}_{n},  \{\Psi_n\}_{n} )$ is itself.
		\item$ \frac{1}{b}, \frac{1}{a}$ are frame bounds for the canonical dual of $( \{A_n\}_{n},  \{\Psi_n\}_{n} )$.
		\item If $ a, b $ are optimal frame bounds for $( \{A_n\}_{n},  \{\Psi_n\}_{n} )$, then $ \frac{1}{b}, \frac{1}{a}$ are optimal  frame bounds for its canonical dual.
	\end{enumerate} 
\end{theorem}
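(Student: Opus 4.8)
The plan is to prove all three parts directly from the definition of the canonical dual and the factorization $S_{A,\Psi} = \sum_{n=1}^\infty \Psi_n^* A_n$, mirroring the corresponding arguments for operator-valued frames and for p-ASFs that appear earlier in the excerpt (e.g. Theorem on canonical duals for p-ASFs). The key computational fact I would establish first is the frame operator of the canonical dual: if $\widetilde{A}_n = A_n S_{A,\Psi}^{-1}$ and $\widetilde{\Psi}_n = \Psi_n (S_{A,\Psi}^{-1})^*$, then
\begin{align*}
S_{\widetilde{A},\widetilde{\Psi}} = \sum_{n=1}^\infty \widetilde{\Psi}_n^* \widetilde{A}_n = \sum_{n=1}^\infty \big(\Psi_n (S_{A,\Psi}^{-1})^*\big)^* A_n S_{A,\Psi}^{-1} = \sum_{n=1}^\infty S_{A,\Psi}^{-1} \Psi_n^* A_n S_{A,\Psi}^{-1} = S_{A,\Psi}^{-1} S_{A,\Psi} S_{A,\Psi}^{-1} = S_{A,\Psi}^{-1},
\end{align*}
where the series manipulations are justified because $S_{A,\Psi}^{-1}$ is bounded and the convergence of $\sum \Psi_n^* A_n$ is in the strong operator topology. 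In particular $S_{\widetilde{A},\widetilde{\Psi}} = S_{A,\Psi}^{-1}$ is bounded and invertible, so the canonical dual is indeed a weak OVF (this was asserted before the theorem; I would include the one-line verification here for completeness).

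For part (i), I would apply the canonical-dual construction a second time. The canonical dual of $(\{\widetilde{A}_n\}_n, \{\widetilde{\Psi}_n\}_n)$ has $n$-th components $\widetilde{A}_n S_{\widetilde{A},\widetilde{\Psi}}^{-1} = A_n S_{A,\Psi}^{-1} (S_{A,\Psi}^{-1})^{-1} = A_n$ and $\widetilde{\Psi}_n (S_{\widetilde{A},\widetilde{\Psi}}^{-1})^* = \Psi_n (S_{A,\Psi}^{-1})^* ((S_{A,\Psi}^{-1})^{-1})^* = \Psi_n (S_{A,\Psi}^{-1})^* S_{A,\Psi}^* = \Psi_n$, using $(S^{-1})^{-1} = S$ and the fact that adjoint commutes with inversion. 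Hence the double canonical dual returns $(\{A_n\}_n, \{\Psi_n\}_n)$.

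For parts (ii) and (iii), I would convert frame bounds to operator norm statements via the identities recorded right before the theorem: the optimal lower frame bound of a weak OVF equals $\|S^{-1}\|^{-1}$ and the optimal upper equals $\|S\|$. If $a, b$ are any frame bounds for $(\{A_n\}_n,\{\Psi_n\}_n)$, then $a \le \|S_{A,\Psi}^{-1}\|^{-1}$ and $\|S_{A,\Psi}\| \le b$, equivalently $\|S_{A,\Psi}^{-1}\| \le 1/a$ and $1/b \le \|S_{A,\Psi}\|^{-1}$. Since $S_{\widetilde{A},\widetilde{\Psi}} = S_{A,\Psi}^{-1}$, we get $\|S_{\widetilde{A},\widetilde{\Psi}}\| = \|S_{A,\Psi}^{-1}\| \le 1/a$ and $\|S_{\widetilde{A},\widetilde{\Psi}}^{-1}\|^{-1} = \|S_{A,\Psi}\|^{-1} \ge 1/b$, which says exactly that $1/b$ and $1/a$ are lower and upper frame bounds for the canonical dual, giving (ii). For (iii), if $a, b$ are the \emph{optimal} bounds, then $a = \|S_{A,\Psi}^{-1}\|^{-1}$ and $b = \|S_{A,\Psi}\|$, so the optimal lower bound of the canonical dual is $\|S_{\widetilde{A},\widetilde{\Psi}}^{-1}\|^{-1} = \|S_{A,\Psi}\|^{-1} = 1/b$ and its optimal upper bound is $\|S_{\widetilde{A},\widetilde{\Psi}}\| = \|S_{A,\Psi}^{-1}\| = 1/a$.

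I do not anticipate a serious obstacle; the only point requiring care is the justification of pulling the bounded operators $S_{A,\Psi}^{-1}$ and $(S_{A,\Psi}^{-1})^*$ in and out of the strong-operator-topology sums, and confirming that $S_{A,\Psi}$ being merely invertible (not positive, unlike the OVF case) causes no trouble — it does not, since all the arguments above use only boundedness and invertibility of $S_{A,\Psi}$ and the norm formulas for the frame bounds, never positivity.
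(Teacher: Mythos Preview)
Your proof is correct and follows essentially the same approach as the paper: compute the frame operator of the canonical dual as $S_{\widetilde{A},\widetilde{\Psi}} = S_{A,\Psi}^{-1}$, then apply the canonical-dual construction once more to recover $(\{A_n\}_n,\{\Psi_n\}_n)$. The paper dismisses (ii) and (iii) in one line as following ``from the property of invertible operators on Banach spaces,'' so your explicit derivation via the norm identities $\|S^{-1}\|^{-1}$ and $\|S\|$ for the optimal bounds is a welcome elaboration rather than a different route.
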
 
\begin{proof}
	Since (ii) and (iii)  follow from the property of invertible operators on Banach spaces, we have to argue for (i): frame operator for $(\{A_nS_{A,\Psi}^{-1}\}_{n}, \{\Psi_n(S_{A,\Psi}^{-1})^*\}_{n} )$ is 
	$$ \sum\limits_{n=1}^\infty(\Psi_n(S_{A,\Psi}^{-1})^*)^* (A_nS_{A,\Psi}^{-1}) =S_{A,\Psi}^{-1}\left(\sum\limits_{n=1}^\infty\Psi_n ^*A_n\right)S_{A,\Psi}^{-1} =S_{A,\Psi}^{-1}S_{A,\Psi}S_{A,\Psi}^{-1}= S_{A,\Psi}^{-1}.$$
	Therefore, its canonical dual is $(\{(A_nS_{A,\Psi}^{-1})S_{A,\Psi}\}_{n} , \{(\Psi_n(S_{A,\Psi}^{-1})^*)S_{A,\Psi}^*\}_{n})$ which is the original frame.
\end{proof}
For the further study of weak OVFs, we impose some conditions so that the frame operator splits.  
\begin{definition}
	A weak OVF $( \{A_n\}_{n},  \{\Psi_n\}_{n} )$ is said to be \textbf{factorable} if both the maps (called \textbf{analysis operator})
	\begin{align*}
	  \theta_A:\mathcal{H} \ni h \mapsto   \theta_A h\coloneqq\sum_{n=1}^\infty L_nA_n h \in \ell^2(\mathbb{N}) \otimes \mathcal{H}_0\\
		 \theta_\Psi:\mathcal{H}\ni h \mapsto  \theta_\Psi h\coloneqq \sum_{n=1}^\infty L_n\Psi_n h \in \ell^2(\mathbb{N}) \otimes \mathcal{H}_0
	\end{align*}
	are well-defined bounded linear operators. 
\end{definition}
We next give an example which shows that a weak OVF need not be factorable.
\begin{example}
	On $ \mathbb{C},$ define $ A_nx\coloneqq\frac{x}{\sqrt{n}},  \forall x \in \mathbb{C}, \forall n \in \mathbb{N}$, and $\Psi_1x\coloneqq x, \Psi_nx\coloneqq0, \forall x \in \mathbb{C}, \forall n \in \mathbb{N}\setminus\{1\} $. Then $ \sum_{n=1}^\infty\Psi_n^*A_nx$ converges to an  invertible operator but  $ \sum_{n=1}^\infty L_nA_nx$ does not converge. In fact, using Equation (\ref{LEQUATION}),
	\begin{align*}
	\left\| \sum_{n=1}^mL_nA_n1\right\|^2=\sum_{n=1}^m\|A_n1\|^2=\sum_{n=1}^m\frac{1}{n} \to \infty \quad \text{ as } \quad m \to \infty.
	\end{align*}
\end{example}
Equation (\ref{LEQUATION}) gives the following theorem  easily.
\begin{theorem}
	Let   $( \{A_n\}_{n},  \{\Psi_n\}_{n} )$  be a factorable weak  OVF in $ \mathcal{B}(\mathcal{H}, \mathcal{H}_0)$. 
	\begin{enumerate}[label=(\roman*)]
		\item Analysis operator 
		\begin{align*}
		\theta_A:\mathcal{H} \ni h \mapsto   \theta_A h\coloneqq\sum_{n=1}^\infty L_nA_n h \in \ell^2(\mathbb{N}) \otimes \mathcal{H}_0
		\end{align*}
		is a well-defined bounded  linear injective operator.
		\item Synthesis operator 
		\begin{align*}
		\theta_\Psi^*:\ell^2(\mathbb{N})\otimes \mathcal{H}_0 \ni z\mapsto\sum\limits_{n=1}^\infty \Psi_n^*L_n^*z \in \mathcal{H} 
		\end{align*}
		is a well-defined bounded  linear surjective operator.
		\item Frame operator 
		factors  as $S_{A,\Psi}=\theta_\Psi^*\theta_A.$
		\item $ P_{A,\Psi} \coloneqq \theta_A S_{A,\Psi}^{-1} \theta_\Psi^*:\ell^2(\mathbb{N})\otimes \mathcal{H}_0 \to \ell^2(\mathbb{N})\otimes \mathcal{H}_0$ is an  idempotent onto $ \theta_A(\mathcal{H})$.
	\end{enumerate}	
\end{theorem}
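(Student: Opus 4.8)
The plan is to mimic, mutatis mutandis, the proof of the corresponding statement for ordinary operator-valued frames, the key new ingredient being the \emph{factorability} hypothesis, which is exactly what is needed to make the two operators $\theta_A$ and $\theta_\Psi$ bounded, since without positivity of $S_{A,\Psi}$ one cannot recover boundedness of $\theta_A$ from boundedness of $S_{A,\Psi}$ alone. First I would recall from Proposition~\ref{LJORTHO} the two identities $L_n^*L_m=\delta_{n,m}I_{\mathcal{H}_0}$ and $\sum_n L_nL_n^*=I_{\ell^2(\mathbb{N})}\otimes I_{\mathcal{H}_0}$ (convergence in the strong-operator topology), as these will be used repeatedly.

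For part (i), boundedness and linearity of $\theta_A$ are immediate from the definition of factorable weak OVF; injectivity follows because $S_{A,\Psi}=\theta_\Psi^*\theta_A$ is invertible, so $\theta_A h=0$ forces $S_{A,\Psi}h=0$, hence $h=0$. For part (ii), I would first check that the stated formula $\theta_\Psi^* z=\sum_n \Psi_n^* L_n^* z$ really is the Hilbert-space adjoint of $\theta_\Psi$: for $h\in\mathcal{H}$ and $z\in\ell^2(\mathbb{N})\otimes\mathcal{H}_0$ one computes $\langle \theta_\Psi h, z\rangle=\sum_n\langle L_n\Psi_n h,z\rangle=\sum_n\langle \Psi_n h,L_n^* z\rangle=\langle h,\sum_n\Psi_n^*L_n^* z\rangle$, using boundedness of $\theta_\Psi$ to justify the interchange of the sum and the inner product; so $\theta_\Psi^*$ is automatically bounded and linear, and surjectivity follows from the factorization $S_{A,\Psi}=\theta_\Psi^*\theta_A$ together with invertibility of $S_{A,\Psi}$ (any $h$ equals $\theta_\Psi^*(\theta_A S_{A,\Psi}^{-1}h)$). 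Part (iii) is the routine verification $\theta_\Psi^*\theta_A h=\sum_m\Psi_m^*L_m^*\big(\sum_n L_nA_n h\big)=\sum_{m,n}\Psi_m^*L_m^*L_nA_n h=\sum_n\Psi_n^*A_n h=S_{A,\Psi}h$, where the orthogonality relation $L_m^*L_n=\delta_{m,n}I_{\mathcal{H}_0}$ collapses the double sum; some care is needed to justify moving $\theta_\Psi^*$ inside the (strong-operator) series defining $\theta_A$, but this is standard given boundedness of all operators involved.

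For part (iv), set $P_{A,\Psi}\coloneqq\theta_A S_{A,\Psi}^{-1}\theta_\Psi^*$. Idempotency is a one-line computation using (iii): $P_{A,\Psi}^2=\theta_A S_{A,\Psi}^{-1}(\theta_\Psi^*\theta_A)S_{A,\Psi}^{-1}\theta_\Psi^*=\theta_A S_{A,\Psi}^{-1}S_{A,\Psi}S_{A,\Psi}^{-1}\theta_\Psi^*=\theta_A S_{A,\Psi}^{-1}\theta_\Psi^*=P_{A,\Psi}$. For the range claim, the inclusion $P_{A,\Psi}(\ell^2(\mathbb{N})\otimes\mathcal{H}_0)\subseteq\theta_A(\mathcal{H})$ is obvious from the leftmost factor $\theta_A$, and conversely every $\theta_A h$ is fixed by $P_{A,\Psi}$ since $P_{A,\Psi}\theta_A h=\theta_A S_{A,\Psi}^{-1}\theta_\Psi^*\theta_A h=\theta_A S_{A,\Psi}^{-1}S_{A,\Psi}h=\theta_A h$, so $\theta_A(\mathcal{H})\subseteq P_{A,\Psi}(\ell^2(\mathbb{N})\otimes\mathcal{H}_0)$. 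Note that $P_{A,\Psi}$ is only asserted to be idempotent, not an orthogonal projection — this is the key structural difference from the OVF case and is precisely because $S_{A,\Psi}$ need not be positive (equivalently $\theta_A$ and $\theta_\Psi$ may differ). The main obstacle, such as it is, is bookkeeping: carefully justifying the term-by-term manipulations of strong-operator-convergent series (interchanging a bounded operator with the sum, and collapsing the double series via the $L_n$-relations); everything else is formal manipulation of the factorization identity.
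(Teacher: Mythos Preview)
Your proposal is correct and is exactly the argument the paper has in mind: the paper's proof consists of the single line ``Equation~(\ref{LEQUATION}) gives the following theorem easily,'' and you have simply spelled out the routine verifications (adjoint formula, collapse of the double sum via $L_m^*L_n=\delta_{m,n}I_{\mathcal{H}_0}$, idempotency, and the range equality) that this sentence summarizes. The only cosmetic point is that you invoke the factorization $S_{A,\Psi}=\theta_\Psi^*\theta_A$ while arguing injectivity in (i) and surjectivity in (ii), so in a clean write-up you would establish (iii) first; since (iii) is independent of (i) and (ii) this is a harmless reordering, not a gap.
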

We next define the notions of Riesz and orthonormal factorable weak OVFs. 
\begin{definition}\label{RIESZOVF}
	A factorable weak OVF  $( \{A_n\}_{n},  \{\Psi_n\}_{n} )$  in $\mathcal{B}(\mathcal{H}, \mathcal{H}_0)$ is said to be a \textbf{Riesz OVF}   if $ P_{A,\Psi}= I_{\ell^2(\mathbb{N})}\otimes I_{\mathcal{H}_0}$. A Parseval and  Riesz OVF, i.e., $\theta_\Psi^*\theta_A=I_\mathcal{H} $ and  $\theta_A\theta_\Psi^*=I_{\ell^2(\mathbb{N})}\otimes I_{\mathcal{H}_0} $ is called as an \textbf{orthonormal OVF}. 
\end{definition}
\begin{proposition}\label{ORTHORESULT}
	A factorable weak OVF  $( \{A_n\}_{n},  \{\Psi_n\}_{n} )$ in $ \mathcal{B}(\mathcal{H}, \mathcal{H}_0)$ is an orthonormal OVF  if and only if it is a Parseval OVF and $ A_n\Psi_m^*=\delta_{n,m}I_{\mathcal{H}_0},\forall n,m \in \mathbb{N}$. 
\end{proposition}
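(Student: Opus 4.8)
\textbf{Proof plan for Proposition \ref{ORTHORESULT}.}

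The plan is to unpack both sides in terms of the analysis operators $\theta_A$ and $\theta_\Psi$ and the operators $L_n$ from Definition \ref{LJDEFINITION}, and to translate the pointwise condition $A_n\Psi_m^*=\delta_{n,m}I_{\mathcal{H}_0}$ into the single operator identity $\theta_A\theta_\Psi^*=I_{\ell^2(\mathbb{N})}\otimes I_{\mathcal{H}_0}$, at which point the equivalence with Definition \ref{RIESZOVF} (an orthonormal OVF is exactly a Parseval OVF that is also a Riesz OVF, i.e.\ $\theta_\Psi^*\theta_A=I_\mathcal{H}$ and $\theta_A\theta_\Psi^*=I_{\ell^2(\mathbb{N})}\otimes I_{\mathcal{H}_0}$) becomes essentially immediate. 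Throughout I would use the relations $L_n^*L_m=\delta_{n,m}I_{\mathcal{H}_0}$ and $\sum_{n=1}^\infty L_nL_n^*=I_{\ell^2(\mathbb{N})}\otimes I_{\mathcal{H}_0}$ from Proposition \ref{LJORTHO}, together with $\theta_A=\sum_n L_nA_n$, $\theta_\Psi=\sum_n L_n\Psi_n$, so that $\theta_\Psi^*=\sum_n\Psi_n^*L_n^*$ and $S_{A,\Psi}=\theta_\Psi^*\theta_A$.

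First I would prove the forward implication. Suppose $( \{A_n\}_{n}, \{\Psi_n\}_{n})$ is an orthonormal OVF. By Definition \ref{RIESZOVF} this means, in particular, $S_{A,\Psi}=\theta_\Psi^*\theta_A=I_\mathcal{H}$, which is exactly the statement that it is a Parseval (weak) OVF, and also $\theta_A\theta_\Psi^*=I_{\ell^2(\mathbb{N})}\otimes I_{\mathcal{H}_0}$. To extract the pointwise condition, I would sandwich $\theta_A\theta_\Psi^*$ between $L_n^*$ on the left and $L_m$ on the right: using $L_n^*\theta_A=L_n^*\sum_k L_kA_k=\sum_k(L_n^*L_k)A_k=A_n$ and similarly $\theta_\Psi^*L_m=\sum_k\Psi_k^*(L_k^*L_m)=\Psi_m^*$, we get
\begin{align*}
A_n\Psi_m^*=L_n^*\theta_A\theta_\Psi^*L_m=L_n^*(I_{\ell^2(\mathbb{N})}\otimes I_{\mathcal{H}_0})L_m=L_n^*L_m=\delta_{n,m}I_{\mathcal{H}_0},
\end{align*}
which is the desired relation.

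For the converse, assume $( \{A_n\}_{n}, \{\Psi_n\}_{n})$ is a Parseval OVF (so $S_{A,\Psi}=\theta_\Psi^*\theta_A=I_\mathcal{H}$) which is factorable (this is part of being a weak OVF for which $P_{A,\Psi}$, hence $\theta_A$ and $\theta_\Psi$, make sense — I would note that a Parseval OVF appearing in Definition \ref{RIESZOVF} is understood to be factorable) and that $A_n\Psi_m^*=\delta_{n,m}I_{\mathcal{H}_0}$ for all $n,m$. The only thing left to check is $\theta_A\theta_\Psi^*=I_{\ell^2(\mathbb{N})}\otimes I_{\mathcal{H}_0}$; combined with $\theta_\Psi^*\theta_A=I_\mathcal{H}$ this is precisely the definition of an orthonormal OVF. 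I would compute, for $z\in\ell^2(\mathbb{N})\otimes\mathcal{H}_0$,
\begin{align*}
\theta_A\theta_\Psi^*z=\left(\sum_{n=1}^\infty L_nA_n\right)\left(\sum_{m=1}^\infty\Psi_m^*L_m^*z\right)=\sum_{n=1}^\infty\sum_{m=1}^\infty L_n(A_n\Psi_m^*)L_m^*z=\sum_{n=1}^\infty\sum_{m=1}^\infty L_n(\delta_{n,m}I_{\mathcal{H}_0})L_m^*z=\sum_{n=1}^\infty L_nL_n^*z=z,
\end{align*}
using $\sum_n L_nL_n^*=I_{\ell^2(\mathbb{N})}\otimes I_{\mathcal{H}_0}$ from Proposition \ref{LJORTHO}. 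Hence $P_{A,\Psi}=\theta_A S_{A,\Psi}^{-1}\theta_\Psi^*=\theta_A\theta_\Psi^*=I_{\ell^2(\mathbb{N})}\otimes I_{\mathcal{H}_0}$, so the frame is both Parseval and Riesz, i.e.\ orthonormal.

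The only genuine subtlety — the step I would be most careful about — is the interchange of the two infinite sums and the insertion of $A_n\Psi_m^*$ inside the double series. I would justify this by working first with the partial sums $\theta_A^{(N)}=\sum_{n=1}^N L_nA_n$, which converge to $\theta_A$ in the strong operator topology by definition of factorability, noting that $\theta_A^{(N)}\to\theta_A$ and $\theta_\Psi^{*(M)}\to\theta_\Psi^*$ strongly while these are bounded operators, so that $\theta_A^{(N)}\theta_\Psi^{*(M)}z\to\theta_A\theta_\Psi^*z$; for fixed finite $N,M$ the rearrangement $\theta_A^{(N)}\theta_\Psi^{*(M)}=\sum_{n=1}^N\sum_{m=1}^M L_n(A_n\Psi_m^*)L_m^*$ is just finite linearity, and then $\sum_{n,m}L_n\delta_{n,m}L_m^* z=\sum_{n=1}^{\min(N,M)}L_nL_n^*z\to z$. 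Everything else is a routine consequence of the $L_n$-orthogonality relations in Proposition \ref{LJORTHO}.
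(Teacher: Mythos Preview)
Your proof is correct and follows essentially the same approach as the paper: both directions reduce to the identity $\theta_A\theta_\Psi^*=I_{\ell^2(\mathbb{N})}\otimes I_{\mathcal{H}_0}$ via the $L_n$-relations from Proposition \ref{LJORTHO}. Your forward direction is slightly slicker --- you sandwich $\theta_A\theta_\Psi^*$ between $L_n^*$ and $L_m$, while the paper evaluates $\theta_A\theta_\Psi^*$ on the vector $e_m\otimes y$ and reads off the tensor components --- but these are the same computation in different clothing; the converse direction and its justification are identical to the paper's.
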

\begin{proof}
	$(\Rightarrow)$	
	We have $\theta_A\theta_\Psi^*=I_{\ell^2(\mathbb{N})}\otimes I_{\mathcal{H}_0}.$ Hence 
	\begin{align*}
	e_m\otimes y&=\theta_A\theta_\Psi^*(e_m\otimes y)=\sum_{n=1}^\infty L_nA_n\left(\sum_{k=1}^\infty\Psi^*_kL_k^*(e_m\otimes y)\right)\\
	&=\sum_{n=1}^\infty L_nA_n\Psi^*_my=\sum_{n=1}^\infty(e_n\otimes A_n\Psi^*_my)\\
	&= e_m\otimes( A_m\Psi^*_m y)+\sum_{n=1, n\neq m}^\infty(e_n\otimes A_n\Psi^*_my),\forall m \in \mathbb{N}, \quad y \in\mathcal{H}_0 .
	\end{align*}
	 We then have  $A_n\Psi^*_my=\delta_{n,m}y,\forall y \in \mathcal{H}_0$.
	
	$(\Leftarrow)$ $ \theta_A\theta_\Psi^*=\sum_{n=1}^\infty L_nA_n(\sum_{k=1}^\infty\Psi_k^*L_k^*)=\sum_{n=1}^\infty L_nL_n^*=I_{\ell^2(\mathbb{N})}\otimes I_{\mathcal{H}_0}.$
	
\end{proof}
 We now derive  a dilation result for  factorable weak OVFs. First we need a lemma for this. 
\begin{lemma}\label{DILATIONLEMMA}
	Let $( \{A_n\}_{n},  \{\Psi_n\}_{n} )$ be  a factorable  weak OVF  in $ \mathcal{B}(\mathcal{H}, \mathcal{H}_0)$. Then the range of 	$\theta_A $ is closed.
\end{lemma}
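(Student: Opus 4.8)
\textbf{Proof plan for Lemma \ref{DILATIONLEMMA}.} The goal is to show that $\theta_A(\mathcal{H})$ is a closed subspace of $\ell^2(\mathbb{N})\otimes\mathcal{H}_0$, given that $(\{A_n\}_n,\{\Psi_n\}_n)$ is a factorable weak OVF. The natural approach is to exhibit a bounded linear left inverse of $\theta_A$, since any bounded operator that admits a bounded left inverse is automatically bounded below, and an operator bounded below on a Banach space has closed range. The obvious candidate for the left inverse is built from the factorization $S_{A,\Psi}=\theta_\Psi^*\theta_A$ together with the invertibility of $S_{A,\Psi}$.

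First I would recall from the factorability hypothesis that both $\theta_A:\mathcal{H}\to\ell^2(\mathbb{N})\otimes\mathcal{H}_0$ and $\theta_\Psi:\mathcal{H}\to\ell^2(\mathbb{N})\otimes\mathcal{H}_0$ are well-defined bounded linear operators, so that $\theta_\Psi^*$ is a well-defined bounded operator and the identity $S_{A,\Psi}=\theta_\Psi^*\theta_A$ holds (this is exactly item (iii) of the theorem immediately preceding the lemma, which I may invoke). Next, set $Q\coloneqq S_{A,\Psi}^{-1}\theta_\Psi^*:\ell^2(\mathbb{N})\otimes\mathcal{H}_0\to\mathcal{H}$. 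This is bounded since $S_{A,\Psi}^{-1}$ is bounded (the weak OVF frame operator is invertible) and $\theta_\Psi^*$ is bounded. Then for every $h\in\mathcal{H}$,
\begin{align*}
Q\theta_A h = S_{A,\Psi}^{-1}\theta_\Psi^*\theta_A h = S_{A,\Psi}^{-1}S_{A,\Psi}h = h,
\end{align*}
so $Q$ is a bounded left inverse of $\theta_A$, i.e.\ $Q\theta_A=I_\mathcal{H}$.

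From this the closedness follows by a standard argument, which I would spell out briefly: if $\{h_k\}_k$ is a sequence in $\mathcal{H}$ with $\theta_A h_k\to z$ in $\ell^2(\mathbb{N})\otimes\mathcal{H}_0$, then by continuity of $Q$ we get $h_k=Q\theta_A h_k\to Qz$ in $\mathcal{H}$, and then by continuity of $\theta_A$ we get $\theta_A h_k\to\theta_A(Qz)$, so $z=\theta_A(Qz)\in\theta_A(\mathcal{H})$; hence $\theta_A(\mathcal{H})$ is closed. Equivalently, one notes $\|h\|=\|Q\theta_A h\|\le\|Q\|\,\|\theta_A h\|$, so $\theta_A$ is bounded below by $\|Q\|^{-1}$, which immediately gives closed range. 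I do not anticipate a genuine obstacle here — the only point requiring minor care is making sure that the factorization identity $S_{A,\Psi}=\theta_\Psi^*\theta_A$ is available, but that is precisely what factorability secures and what the preceding theorem records, so the argument is essentially a two-line application of the bounded-left-inverse principle.
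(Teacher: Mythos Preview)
Your proof is correct and follows essentially the same approach as the paper: both use the bounded left inverse $S_{A,\Psi}^{-1}\theta_\Psi^*$ of $\theta_A$ and the sequential argument that if $\theta_A h_k\to y$ then $h_k\to S_{A,\Psi}^{-1}\theta_\Psi^* y$ and hence $y=\theta_A(S_{A,\Psi}^{-1}\theta_\Psi^* y)$. You package this slightly more abstractly by naming $Q$ and invoking the bounded-left-inverse principle, whereas the paper writes out the chain of continuities directly, but the content is identical.
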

\begin{proof}
	Let $ \{h_n\}_{n=1}^\infty$ in $\mathcal{H} $  be such that  $ \{\theta _Ah_n\}_{n=1}^\infty$ converges to $ y \in \mathcal{H}_0$. This gives $ S_{A,\Psi}h_n \rightarrow \theta_\Psi^*y$  as $ n \rightarrow \infty$ and this in turn gives $h_n \rightarrow S_{A,\Psi}^{-1} \theta_\Psi^*y $  as $ n \rightarrow \infty.$ An application of $ \theta_A$ gives $\theta_Ah_n \rightarrow \theta_AS_{A,\Psi}^{-1} \theta_\Psi^*y $ as $ n \rightarrow \infty.$ Therefore $ y=\theta_A(S_{A,\Psi}^{-1} \theta_\Psi^*y).$	
\end{proof}
\begin{theorem}\label{OPERATORDILATION}
	Let $( \{A_n\}_{n},  \{\Psi_n\}_{n} )$ be  a Parseval factorable weak OVF  in $ \mathcal{B}(\mathcal{H}, \mathcal{H}_0)$ such that $ \theta_A(\mathcal{H})=\theta_\Psi(\mathcal{H})$ and $ P_{A,\Psi}$ is projection. Then there exist a Hilbert space $ \mathcal{H}_1 $ which contains $ \mathcal{H}$ isometrically and  bounded linear operators $B_n,\Phi_n:\mathcal{H}_1\rightarrow \mathcal{H}_0, \forall n  $ such that $(\{B_n\}_{n} ,\{\Phi_n\}_{n})$ is an orthonormal OVF in $ \mathcal{B}(\mathcal{H}_1, \mathcal{H}_0)$ and $B_n|_{\mathcal{H}}=  A_n,\Phi_n|_{\mathcal{H}}=\Psi_n, \forall n \in \mathbb{N}$. 
\end{theorem}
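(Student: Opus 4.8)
The plan is to mimic the Naimark--Han--Larson dilation in the form already used for p-ASFs (Theorem \ref{DILATIONTHEOREMPASF}): the complementary space is the kernel of the frame projection acting on the big sequence space, and the dilated operators are built by stacking the original ones with the coordinate operators on that complement. Concretely, set $\mathcal{K}\coloneqq(I_{\ell^2(\mathbb{N})\otimes\mathcal{H}_0}-P_{A,\Psi})(\ell^2(\mathbb{N})\otimes\mathcal{H}_0)$; since $P_{A,\Psi}$ is assumed to be a (genuine, self-adjoint) projection, $\mathcal{K}$ is a closed subspace of the Hilbert space $\ell^2(\mathbb{N})\otimes\mathcal{H}_0$, hence a Hilbert space. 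Define $\mathcal{H}_1\coloneqq\mathcal{H}\oplus\mathcal{K}$, which contains $\mathcal{H}=\mathcal{H}\oplus\{0\}$ isometrically, and for each $n$ put
\begin{align*}
B_n\coloneqq A_n\oplus L_n^*(I-P_{A,\Psi})\big|_{\mathcal{K}}\colon \mathcal{H}_1\to\mathcal{H}_0,\qquad
\Phi_n\coloneqq \Psi_n\oplus L_n^*(I-P_{A,\Psi})\big|_{\mathcal{K}}\colon \mathcal{H}_1\to\mathcal{H}_0,
\end{align*}
where $L_n$ is as in Definition \ref{LJDEFINITION}. Then $B_n|_{\mathcal{H}}=A_n$ and $\Phi_n|_{\mathcal{H}}=\Psi_n$ are immediate.

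The bulk of the argument is a bookkeeping computation verifying $(\{B_n\}_n,\{\Phi_n\}_n)$ is an orthonormal OVF, i.e.\ $\theta_\Phi^*\theta_B=I_{\mathcal{H}_1}$ and $\theta_B\theta_\Phi^*=I_{\ell^2(\mathbb{N})\otimes\mathcal{H}_0}$ (Definition \ref{RIESZOVF}); by Proposition \ref{ORTHORESULT} it is enough to check Parsevalness plus $B_n\Phi_m^*=\delta_{n,m}I_{\mathcal{H}_0}$, which is often cleaner. First I would compute $\theta_B$ and $\theta_\Phi$ on $\mathcal{H}_1$: writing $k\in\mathcal{K}$ and using $\sum_n L_nL_n^*=I$ (Equation (\ref{LEQUATION})), one gets $\theta_B(h\oplus k)=\theta_A h+k$ and $\theta_\Phi(h\oplus k)=\theta_\Psi h+k$, where the second summand sits in $\mathcal{K}$. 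Then $S_{B,\Phi}=\theta_\Phi^*\theta_B$ is block-diagonal: on $\mathcal{H}$ it gives $S_{A,\Psi}=I_\mathcal{H}$ (Parseval hypothesis), and on $\mathcal{K}$ it gives $(I-P_{A,\Psi})^2|_{\mathcal{K}}=I_{\mathcal{K}}$ since $I-P_{A,\Psi}$ restricts to the identity on $\mathcal{K}$. Hence $S_{B,\Phi}=I_{\mathcal{H}_1}$, so $(\{B_n\}_n,\{\Phi_n\}_n)$ is a Parseval factorable weak OVF. For the Riesz part, $P_{B,\Phi}=\theta_B S_{B,\Phi}^{-1}\theta_\Phi^*=\theta_B\theta_\Phi^*$, and a direct expansion using the formulas for $\theta_B,\theta_\Phi^*$ together with the projection identities $P_{A,\Psi}\theta_A=\theta_A$, $\theta_\Psi^* P_{A,\Psi}=\theta_\Psi^*$ and $P_{A,\Psi}(I-P_{A,\Psi})=0$ should collapse to $I_{\ell^2(\mathbb{N})\otimes\mathcal{H}_0}$. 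The hypothesis $\theta_A(\mathcal{H})=\theta_\Psi(\mathcal{H})$ enters here to guarantee that the ``off-diagonal'' terms mixing the $\mathcal{H}$-part and the $\mathcal{K}$-part vanish --- this is where one must be careful, because unlike the OVF case $P_{A,\Psi}$ need not be self-adjoint in general, so the assumptions that it \emph{is} a projection and that the two ranges coincide are exactly what make the block decomposition orthogonal and the cross terms disappear.

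The main obstacle I anticipate is precisely controlling those cross terms in $\theta_B\theta_\Phi^*$ and in showing $\mathcal{H}$ sits \emph{orthogonally} inside $\mathcal{H}_1$ in the way required for an \emph{orthonormal} (not merely Riesz) OVF: one needs $\theta_A(\mathcal{H})\perp\mathcal{K}$ and $\theta_\Psi(\mathcal{H})\perp\mathcal{K}$ as subspaces of $\ell^2(\mathbb{N})\otimes\mathcal{H}_0$, which follows from $\theta_A(\mathcal{H})=\mathrm{range}(P_{A,\Psi})$ being the orthogonal complement of $\mathcal{K}=\mathrm{range}(I-P_{A,\Psi})$ once $P_{A,\Psi}$ is an orthogonal projection, combined with $\theta_\Psi(\mathcal{H})=\theta_A(\mathcal{H})$. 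I would also need Lemma \ref{DILATIONLEMMA} to know $\theta_A(\mathcal{H})$ is closed so that $P_{A,\Psi}$ really is the orthogonal projection onto it. Once these orthogonality facts are in place, checking $B_n\Phi_m^*=\delta_{n,m}I_{\mathcal{H}_0}$ reduces to $A_n\Psi_m^*$ plus $L_n^*(I-P_{A,\Psi})(I-P_{A,\Psi})^*L_m|_{\mathcal{H}_0}$-type terms, and using that $(\{A_n\}_n,\{\Psi_n\}_n)$ is Parseval together with the projection algebra should yield $\delta_{n,m}I_{\mathcal{H}_0}$, completing the proof via Proposition \ref{ORTHORESULT}.
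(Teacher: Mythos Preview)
Your proposal is correct and follows essentially the same approach as the paper: the paper also takes $\mathcal{H}_1=\mathcal{H}\oplus\theta_A(\mathcal{H})^\perp$ (which is your $\mathcal{H}\oplus\mathcal{K}$, since $\mathcal{K}=\operatorname{range}(I-P_{A,\Psi})=\theta_A(\mathcal{H})^\perp$ once $P_{A,\Psi}$ is an orthogonal projection), defines $B_n,\Phi_n$ exactly as you do, and verifies orthonormality by computing $\theta_B(h\oplus g)=\theta_Ah+P_{A,\Psi}^\perp g$, $\theta_\Phi(h\oplus g)=\theta_\Psi h+P_{A,\Psi}^\perp g$, then checking $S_{B,\Phi}=I_{\mathcal{H}_1}$ and $P_{B,\Phi}=I_{\ell^2(\mathbb{N})\otimes\mathcal{H}_0}$ directly, using $\theta_A(\mathcal{H})=\theta_\Psi(\mathcal{H})$ and $\theta_\Psi^*P_{A,\Psi}^\perp=0=P_{A,\Psi}^\perp\theta_A$ to kill the cross terms. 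Your anticipation of where the hypotheses enter is spot-on.
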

\begin{proof}
	We first see  that  $P_{A,\Psi}$ is the  orthogonal projection from $ \ell^2(\mathbb{N})\otimes \mathcal{H}_0$ onto $\theta_A(\mathcal{H})=\theta_\Psi(\mathcal{H})$. Define $ \mathcal{H}_1\coloneqq\mathcal{H}\oplus \theta_A(\mathcal{H})^\perp$. From Lemma \ref{DILATIONLEMMA}, $\mathcal{H}_1$ becomes a Hilbert space. Then $\mathcal{H} \ni h \mapsto h\oplus 0 \in \mathcal{H}_1 $ is an isometry. Set $P_{A,\Psi}^\perp\coloneqq I_{\ell^2(\mathbb{N})\otimes \mathcal{H}_0}-P_{A,\Psi}$ and  define 
	\begin{align*}
	&B_n:\mathcal{H}_1\ni h\oplus g\mapsto A_nh+L_n^*P_{A,\Psi}^\perp g \in \mathcal{H}_0,  \\
	& \Phi_n:\mathcal{H}_1\ni h\oplus g\mapsto \Psi_nh+L_n^*P_{A,\Psi}^\perp g \in \mathcal{H}_0 , \quad \forall n \in \mathbb{N}.
	\end{align*}
	Then  clearly $B_n|_{\mathcal{H}}=  A_n,\Phi_n|_{\mathcal{H}}=\Psi_n, \forall n \in \mathbb{N}$. Now 
	\begin{align*}
	\theta_B(h\oplus g)=\sum_{n=1}^\infty L_nA_nh+\sum_{n=1}^\infty L_nL_n^*P_{A,\Psi}^\perp g=\theta_Ah+P_{A,\Psi}^\perp g, \quad \forall  h\oplus g \in \mathcal{H}_1.
	\end{align*}
	Similarly $\theta_\Phi(h\oplus g)=\theta_\Psi h+P_{A,\Psi}^\perp g, \forall h\oplus g\in \mathcal{H}_1 $.  Also 
	\begin{align*}
	\langle \theta_B^*z,h\oplus g \rangle&= \langle z,  \theta_B(h\oplus g) \rangle = \langle \theta_A^*z,  h\rangle+\langle P_{A,\Psi}^\perp z,  g\rangle \\
	&= \langle \theta_A^*z\oplus P_{A,\Psi}^\perp z, h\oplus g\rangle , \quad \forall z \in \ell^2(\mathbb{N})\otimes \mathcal{H}_0 , \forall  h\oplus g \in \mathcal{H}_1.
	\end{align*}
	Hence $\theta_B^*z=\theta_A^*z\oplus P_{A,\Psi}^\perp z, \forall z \in \ell^2(\mathbb{N})\otimes \mathcal{H}_0 $ and similarly $\theta_\Phi^*z=\theta_\Psi^*z\oplus P_{A,\Psi}^\perp z $,  $\forall z \in \ell^2(\mathbb{N})\otimes \mathcal{H}_0$.
	By  using $\theta_A(\mathcal{H})=\theta_\Psi(\mathcal{H}) $ and $\theta_\Psi^*P_{A,\Psi}^\perp=0=P_{A,\Psi}^\perp\theta_A ,$ we get  
	\begin{align*}
	S_{B,\Phi}(h\oplus g)&= \theta_\Phi^*(\theta_Ah+ P_{A,\Psi}^\perp g)=\theta_\Psi^*(\theta_Ah+P_{A,\Psi}^\perp g)\oplus P_{A,\Psi}^\perp(\theta_Ah+P_{A,\Psi}^\perp g)\\
	&=(S_{A,\Psi}h+0)\oplus(0+P_{A,\Psi}^\perp g)=S_{A,\Psi}h\oplus P_{A,\Psi}^\perp g\\
	&=I_\mathcal{H}h\oplus I_{\theta_A(\mathcal{H})^\perp}g, \quad \forall h\oplus g\in \mathcal{H}_1.
	\end{align*}
	Hence $(\{B_n\}_{n},\{\Phi_n\}_{n} )$ is  a Parseval weak OVF in $ \mathcal{B}(\mathcal{H}_1, \mathcal{H}_0)$. We further find 
	\begin{align*}
	P_{B,\Phi}z&=\theta_BS_{B,\Phi}^{-1}\theta_\Phi^*z=\theta_B\theta_\Phi^*z=\theta_B(\theta_\Psi^*z\oplus P_{A,\Psi}^\perp z)\\
	&=\theta_A(\theta_\Psi^*z)+ P_{A,\Psi}^\perp(P_{A,\Psi}^\perp z)=P_{A,\Psi} z+P_{A,\Psi}^\perp z\\
	&=P_{A,\Psi} z+((I_{\ell^2(\mathbb{N})}\otimes I_{\mathcal{H}_0})-P_{A,\Psi})z =(I_{\ell^2(\mathbb{N})}\otimes I_{\mathcal{H}_0} )z, \quad\forall z \in  \ell^2(\mathbb{N})\otimes \mathcal{H}_0.
	\end{align*}
	Therefore $(\{B_n\}_{n} ,\{\Phi_n\}_{n})$ is a Riesz weak OVF in $ \mathcal{B}(\mathcal{H}_1, \mathcal{H}_0)$. Thus $(\{B_n\}_{n} ,\{\Phi_n\}_{n})$ is  an orthonormal weak OVF in $ \mathcal{B}(\mathcal{H}_1, \mathcal{H}_0)$.
\end{proof}
\begin{theorem}\label{THAFSCHAROVF}
	A pair  $( \{A_n\}_{n},  \{\Psi_n\}_{n} )$ is a factorable weak OVF  in $ \mathcal{B}(\mathcal{H}, \mathcal{H}_0)$
	if and only if 
	\begin{align*}
	A_n=L_n^* U, \quad \Psi_n=L_n^*V, \quad \forall n \in \mathbb{N},
	\end{align*}  
	where $U, V:\mathcal{H} \rightarrow \ell^2(\mathbb{N}) \otimes \mathcal{H}_0$ are bounded linear operators such that $V^*U$ is bounded invertible.
\end{theorem}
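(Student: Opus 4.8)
The plan is to prove this as an operator-theoretic characterization in direct analogy with Theorem \ref{THAFSCHAR} for p-ASFs, using the operators $L_n$ and their properties from Proposition \ref{LJORTHO}. There are two implications to establish.

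For the ``if'' direction, suppose $A_n = L_n^*U$ and $\Psi_n = L_n^*V$ with $U,V:\mathcal{H}\to\ell^2(\mathbb{N})\otimes\mathcal{H}_0$ bounded linear and $V^*U$ bounded invertible. First I would check that the pair is factorable: the analysis operator $\theta_A h = \sum_{n=1}^\infty L_nA_n h = \sum_{n=1}^\infty L_nL_n^*Uh$, and since $\sum_{n=1}^\infty L_nL_n^* = I_{\ell^2(\mathbb{N})}\otimes I_{\mathcal{H}_0}$ in the strong-operator topology by (\ref{LEQUATION}), this converges to $Uh$; hence $\theta_A = U$ is a well-defined bounded linear operator, and similarly $\theta_\Psi = V$. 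Then the frame operator computation runs exactly parallel to (\ref{PASFFIRSTTHEOREMEQUATION}): for $h\in\mathcal{H}$,
\begin{align*}
S_{A,\Psi}h = \sum_{n=1}^\infty \Psi_n^*A_n h = \sum_{n=1}^\infty V^*L_n L_n^* U h = V^*\left(\sum_{n=1}^\infty L_nL_n^*\right)Uh = V^*Uh,
\end{align*}
so $S_{A,\Psi} = V^*U$, which is bounded invertible by hypothesis. Hence $(\{A_n\}_n,\{\Psi_n\}_n)$ is a factorable weak OVF.

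For the ``only if'' direction, suppose $(\{A_n\}_n,\{\Psi_n\}_n)$ is a factorable weak OVF. Set $U\coloneqq\theta_A$ and $V\coloneqq\theta_\Psi$, which are well-defined bounded linear operators from $\mathcal{H}$ to $\ell^2(\mathbb{N})\otimes\mathcal{H}_0$ by definition of factorability. Then applying $L_m^*$ to the series defining $\theta_A$ and using $L_m^*L_n = \delta_{m,n}I_{\mathcal{H}_0}$ from (\ref{LEQUATION}), I get $L_m^*\theta_A h = L_m^*\sum_{n=1}^\infty L_nA_n h = A_m h$ for all $h$, so $A_m = L_m^*U$; similarly $\Psi_m = L_m^*V$. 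Finally, $V^*U = \theta_\Psi^*\theta_A = S_{A,\Psi}$ by the factorization property of the frame operator stated in the theorem just before this one, and $S_{A,\Psi}$ is bounded invertible by the definition of weak OVF. This completes both directions.

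I do not anticipate a genuine obstacle here — the result is essentially a bookkeeping consequence of the properties of $\{L_n\}_n$ in Proposition \ref{LJORTHO} together with the factorization $S_{A,\Psi}=\theta_\Psi^*\theta_A$. The only point requiring a little care is justifying the interchange of the bounded operators $V^*$ (respectively $L_m^*$) with the infinite sums, which is legitimate because the series converge in the strong-operator topology and $V^*$, $L_m^*$ are bounded and hence strong-operator continuous on bounded nets; I would state this explicitly rather than grinding through an $\epsilon$-argument. One should also note that the operator $V^*U$ being invertible automatically forces consistency with the earlier remark that $\theta_A$ is injective and $\theta_\Psi^*$ surjective, so no separate verification of those is needed.
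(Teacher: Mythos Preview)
Your proof is correct and follows essentially the same approach as the paper: both directions are handled by identifying $U=\theta_A$, $V=\theta_\Psi$ and using the orthogonality relations $L_m^*L_n=\delta_{m,n}I_{\mathcal{H}_0}$ and $\sum_n L_nL_n^*=I_{\ell^2(\mathbb{N})}\otimes I_{\mathcal{H}_0}$ to recover $A_n=L_n^*U$, $\Psi_n=L_n^*V$ and $S_{A,\Psi}=V^*U$. Your version is in fact slightly more explicit about why $\theta_A$ and $\theta_\Psi$ are well-defined in the ``if'' direction, which the paper leaves as ``clearly''.
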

\begin{proof}
	$(\Leftarrow)$ Clearly $\theta_A$ and $\theta_\Psi$ are well-defined bounded linear operators. Let $h\in \mathcal{H}$. Then using Equation (\ref{LEQUATION}), we have 
	\begin{align}\label{ORIGINALEQA}
	S_{A, \Psi}h= \sum_{n=1}^\infty
	(L_n^*V)^*L_n^*Uh=V^*\left(\sum_{n=1}^\infty L_nL_n^*\right)Uh=V^*Uh.
	\end{align} 
	Hence $S_{A, \Psi}$ is bounded invertible. \\
	$(\Rightarrow)$ Define $U\coloneqq \sum_{n=1}^\infty L_nA_n$, $V\coloneqq \sum_{n=1}^\infty L_n\Psi_n$. Then
	\begin{align*}
	&L_n^* U=L_n^* \left(\sum_{k=1}^\infty L_kA_k\right)=\sum_{k=1}^\infty L_n^*L_kA_k=A_n,\\
	&L_n^* V=L_n^* \left(\sum_{k=1}^\infty L_k\Psi_k\right)=\sum_{k=1}^\infty L_n^*L_k\Psi_k=\Psi_n, \quad \forall n \in \mathbb{N}
	\end{align*}
	and 
	\begin{align*}
	V^*U=\left(\sum_{n=1}^\infty \Psi_n^*L_n^*\right)\left(\sum_{k=1}^\infty L_kA_k\right) =  \sum_{n=1}^\infty \Psi_n^*A_n=S_{A, \Psi}
	\end{align*}
	which is bounded invertible.
\end{proof}
Using Theorem \ref{THAFSCHAROVF} we can characterize Riesz and orthonormal factorable weak OVFs.
\begin{corollary}\label{FIRSTCOROLLARY}
	A pair  $( \{A_n\}_{n},  \{\Psi_n\}_{n} )$ is a Riesz factorable weak OVF  in $ \mathcal{B}(\mathcal{H}, \mathcal{H}_0)$
	if and only if 
	\begin{align*}
	A_n=L_n^* U, \quad \Psi_n=L_n^*V, \quad \forall n \in \mathbb{N},
	\end{align*}  
	where $U, V:\mathcal{H} \rightarrow \ell^2(\mathbb{N}) \otimes \mathcal{H}_0$ are bounded linear operators such that $V^*U$ is bounded invertible and $ U(V^*U)^{-1}V^* =I_{\ell^2(\mathbb{N}) \otimes \mathcal{H}_0}$.	
\end{corollary}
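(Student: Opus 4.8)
The plan is to combine the characterization of factorable weak OVFs from Theorem \ref{THAFSCHAROVF} with the definition of a Riesz OVF (Definition \ref{RIESZOVF}), which requires $P_{A,\Psi} = I_{\ell^2(\mathbb{N})} \otimes I_{\mathcal{H}_0}$. First I would observe that by Theorem \ref{THAFSCHAROVF}, a pair $( \{A_n\}_{n}, \{\Psi_n\}_{n} )$ is a factorable weak OVF if and only if $A_n = L_n^* U$, $\Psi_n = L_n^* V$ for all $n$, where $U, V : \mathcal{H} \to \ell^2(\mathbb{N}) \otimes \mathcal{H}_0$ are bounded linear with $V^*U$ bounded invertible. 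So the only extra content to establish is that, under this parametrization, the Riesz condition $P_{A,\Psi} = I_{\ell^2(\mathbb{N})} \otimes I_{\mathcal{H}_0}$ is equivalent to $U(V^*U)^{-1}V^* = I_{\ell^2(\mathbb{N}) \otimes \mathcal{H}_0}$.

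The key computation is to express $\theta_A$, $\theta_\Psi^*$ and $S_{A,\Psi}$ in terms of $U$ and $V$. From the proof of Theorem \ref{THAFSCHAROVF} (specifically the definitions $U = \sum_n L_n A_n = \theta_A$ and $V = \sum_n L_n \Psi_n = \theta_\Psi$), one reads off $\theta_A = U$, $\theta_\Psi = V$, hence $\theta_\Psi^* = V^*$, and from Equation (\ref{ORIGINALEQA}), $S_{A,\Psi} = V^*U$. Therefore
\begin{align*}
P_{A,\Psi} = \theta_A S_{A,\Psi}^{-1} \theta_\Psi^* = U (V^*U)^{-1} V^*.
\end{align*}
Thus $P_{A,\Psi} = I_{\ell^2(\mathbb{N}) \otimes \mathcal{H}_0}$ holds if and only if $U(V^*U)^{-1}V^* = I_{\ell^2(\mathbb{N}) \otimes \mathcal{H}_0}$, which is exactly the stated condition. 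So the forward direction: if $( \{A_n\}_{n}, \{\Psi_n\}_{n} )$ is a Riesz factorable weak OVF, apply Theorem \ref{THAFSCHAROVF} to get $U, V$ with $V^*U$ invertible and $A_n = L_n^*U$, $\Psi_n = L_n^*V$, then use the displayed identity together with $P_{A,\Psi} = I$ to conclude $U(V^*U)^{-1}V^* = I$. The converse: given such $U, V$, Theorem \ref{THAFSCHAROVF} already tells us $( \{L_n^*U\}_n, \{L_n^*V\}_n )$ is a factorable weak OVF, and the extra hypothesis $U(V^*U)^{-1}V^* = I$ combined with the displayed identity gives $P_{A,\Psi} = I$, so it is a Riesz OVF.

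I do not anticipate a genuine obstacle here; the result is essentially a transcription of Theorem \ref{THAFSCHAROVF} with the projection formula substituted in. The only point requiring a little care is making sure the identifications $\theta_A = U$ and $\theta_\Psi = V$ are invoked consistently with the ``$\Rightarrow$'' construction in the proof of Theorem \ref{THAFSCHAROVF} (where $U$ and $V$ are literally defined as these analysis operators), and that in the ``$\Leftarrow$'' direction one uses $\theta_A = \sum_n L_n A_n = \sum_n L_n L_n^* U = U$ via Equation (\ref{LEQUATION}), and similarly $\theta_\Psi = V$, so the same projection formula applies. I would present the proof in two short paragraphs, one per implication, each about two lines long, citing Theorem \ref{THAFSCHAROVF}, Equation (\ref{LEQUATION}), and Equation (\ref{ORIGINALEQA}).
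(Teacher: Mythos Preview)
Your proposal is correct and takes essentially the same approach as the paper: both directions reduce to the identity $P_{A,\Psi} = U(V^*U)^{-1}V^*$ (via $\theta_A = U$, $\theta_\Psi = V$, $S_{A,\Psi} = V^*U$) and then invoke the Riesz condition $P_{A,\Psi} = I_{\ell^2(\mathbb{N}) \otimes \mathcal{H}_0}$. The paper's proof is a two-line transcription of exactly this, so your plan matches it.
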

\begin{proof}
	$(\Leftarrow)$ $P_{A,\Psi}= U(V^*U)^{-1}V^* =I_{\ell^2(\mathbb{N}) \otimes \mathcal{H}_0}$.
	
	$(\Rightarrow)$ Let $U$ and $V$ be as in Theorem \ref{THAFSCHAROVF}. Then 
	$U(V^*U)^{-1}V^*=P_{A,\Psi}=I_{\ell^2(\mathbb{N}) \otimes \mathcal{H}_0}$.
\end{proof}
\begin{corollary}
	A pair  $( \{A_n\}_{n},  \{\Psi_n\}_{n} )$ is an orthonormal  factorable weak OVF  in $ \mathcal{B}(\mathcal{H}, \mathcal{H}_0)$
	if and only if 
	\begin{align*}
	A_n=L_n^* U, \quad \Psi_n=L_n^*V, \quad \forall n \in \mathbb{N},
	\end{align*}  
	where $U, V:\mathcal{H} \rightarrow \ell^2(\mathbb{N}) \otimes \mathcal{H}_0$ are bounded linear operators such that $V^*U$ is bounded invertible and $ V^*U=I_\mathcal{H}$, $I_{\ell^2(\mathbb{N}) \otimes \mathcal{H}_0}= UV^*$.	
\end{corollary}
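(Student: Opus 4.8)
The plan is to combine the characterization of orthonormal factorable weak OVFs with the already-proven characterization of Riesz factorable weak OVFs in Corollary~\ref{FIRSTCOROLLARY}. Recall that an orthonormal weak OVF is by Definition~\ref{RIESZOVF} a weak OVF that is simultaneously Parseval ($\theta_\Psi^*\theta_A=I_\mathcal{H}$) and Riesz ($\theta_A\theta_\Psi^*=I_{\ell^2(\mathbb{N})}\otimes I_{\mathcal{H}_0}$). So the statement to prove is really the conjunction of ``Parseval'' and ``Riesz,'' and I already have a clean operator-level translation of each.

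First I would prove the forward direction $(\Rightarrow)$. Suppose $( \{A_n\}_{n},  \{\Psi_n\}_{n} )$ is an orthonormal factorable weak OVF. Since it is in particular a factorable weak OVF, Theorem~\ref{THAFSCHAROVF} already gives bounded linear operators $U, V:\mathcal{H} \rightarrow \ell^2(\mathbb{N}) \otimes \mathcal{H}_0$ with $A_n=L_n^*U$, $\Psi_n=L_n^*V$, and $V^*U=S_{A,\Psi}$ bounded invertible; in fact the proof of that theorem identifies $U=\theta_A$ and $V=\theta_\Psi$. Being Parseval means $S_{A,\Psi}=I_\mathcal{H}$, i.e. $V^*U=I_\mathcal{H}$. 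Being Riesz means $P_{A,\Psi}=I_{\ell^2(\mathbb{N})}\otimes I_{\mathcal{H}_0}$; computing $P_{A,\Psi}=\theta_A S_{A,\Psi}^{-1}\theta_\Psi^* = U(V^*U)^{-1}V^* = UV^*$ (using $V^*U=I_\mathcal{H}$), this reads $UV^*=I_{\ell^2(\mathbb{N}) \otimes \mathcal{H}_0}$. That gives both required identities.

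Next I would prove the converse $(\Leftarrow)$. Given $U,V$ with $A_n=L_n^*U$, $\Psi_n=L_n^*V$, $V^*U$ bounded invertible, $V^*U=I_\mathcal{H}$, and $UV^*=I_{\ell^2(\mathbb{N}) \otimes \mathcal{H}_0}$: by Theorem~\ref{THAFSCHAROVF} the pair is a factorable weak OVF with $S_{A,\Psi}=V^*U=I_\mathcal{H}$, so it is Parseval. For the Riesz condition I compute, exactly as above, $P_{A,\Psi}=U(V^*U)^{-1}V^*=UI_\mathcal{H}V^*=UV^*=I_{\ell^2(\mathbb{N}) \otimes \mathcal{H}_0}$; hence it is Riesz, hence orthonormal by Definition~\ref{RIESZOVF}. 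One small bookkeeping point I should be careful about: since in the proof of Theorem~\ref{THAFSCHAROVF} the operators are reconstructed as $U=\sum L_nA_n=\theta_A$ and $V=\sum L_n\Psi_n=\theta_\Psi$, I should note that the $U,V$ furnished by the statement agree with $\theta_A,\theta_\Psi$ after applying $L_n^*$ and using $\sum_n L_nL_n^*=I_{\ell^2(\mathbb{N})}\otimes I_{\mathcal{H}_0}$ from Equation~(\ref{LEQUATION}) — this is the only place where uniqueness of the factorization is used, and it is the same argument already present in Corollary~\ref{FIRSTCOROLLARY}. There is no real obstacle here; the proof is a two-line specialization of the two preceding corollaries, and I would present it as such, citing Theorem~\ref{THAFSCHAROVF}, Corollary~\ref{FIRSTCOROLLARY}, and Definition~\ref{RIESZOVF}.
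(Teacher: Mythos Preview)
Your proposal is correct and follows essentially the same route as the paper: specialize Theorem~\ref{THAFSCHAROVF}/Corollary~\ref{FIRSTCOROLLARY} and translate ``Parseval'' and ``Riesz'' into $V^*U=I_\mathcal{H}$ and $UV^*=I_{\ell^2(\mathbb{N})\otimes\mathcal{H}_0}$. The only minor difference is that in the forward direction the paper computes $UV^*$ via Proposition~\ref{ORTHORESULT} (using $A_n\Psi_m^*=\delta_{n,m}I_{\mathcal{H}_0}$ to collapse $\sum_n L_nA_n\sum_k\Psi_k^*L_k^*$ to $\sum_n L_nL_n^*$), whereas you read it off directly from $P_{A,\Psi}=U(V^*U)^{-1}V^*=UV^*$; both are immediate.
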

\begin{proof}
	We use Corollary \ref{FIRSTCOROLLARY}. 
	
	$ (\Leftarrow)$ $S_{A,\Psi}=V^*U=I_\mathcal{H}, P_{A,\Psi}=\theta_AS_{A,\Psi}^{-1}\theta_\Psi^*=\theta_A\theta_\Psi^*= \theta_FUV^*\theta_F^*=I_{\ell^2(\mathbb{N})}\otimes I_{\mathcal{H}_0}.$
	
	$(\Rightarrow)$ $V^*U=S_{A,\Psi}=I_\mathcal{H},$ and by using Proposition \ref{ORTHORESULT},
	\begin{align*}
	UV^*&= \left(\sum_{n=1}^\infty L_n^*A_n\right)\left( \sum_{k=1}^\infty\Psi_k^*L_k\right) =
	\sum_{n=1}^\infty L_nL_n^*=I_{\ell^2(\mathbb{N}) \otimes \mathcal{H}_0}. 
	\end{align*}
\end{proof}
 \begin{theorem}\label{SECONDCHAROV}
	Let $ \{F_n\}_{n}$ be an  orthonormal basis in $ \mathcal{B}(\mathcal{H},\mathcal{H}_0).$ Then 	a pair  $( \{A_n\}_{n}, $ $ \{\Psi_n\}_{n} )$ is a factorable weak OVF  in $ \mathcal{B}(\mathcal{H}, \mathcal{H}_0)$
	if and only if 
	\begin{align*}
	A_n=F_n U, \quad \Psi_n=F_nV, \quad \forall n \in \mathbb{N},
	\end{align*}  
	where $ U,V :\mathcal{H}\to \mathcal{H} $ are bounded linear operators such that  $ V^*U$ is bounded  invertible.	
\end{theorem}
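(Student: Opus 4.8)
\textbf{Proof plan for Theorem \ref{SECONDCHAROV}.}
The plan is to mimic the argument used for Theorem \ref{THAFSCHAROVF}, replacing the ``coordinate'' operators $L_n^*$ with the operators $F_n$ coming from a fixed operator-valued orthonormal basis $\{F_n\}_n$ in $\mathcal{B}(\mathcal{H},\mathcal{H}_0)$. The two facts about $\{F_n\}_n$ that will do all the work are the biorthogonality-type identity $F_nF_m^*=\delta_{n,m}I_{\mathcal{H}_0}$ and the resolution of identity $\sum_{n=1}^\infty F_n^*F_n=I_{\mathcal{H}}$ (strong-operator convergence), both recorded in the discussion following Definition \ref{ONBDEFINITIONOVHS}.

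\textbf{The $(\Leftarrow)$ direction.} Suppose $A_n=F_nU$, $\Psi_n=F_nV$ for all $n$, with $U,V\in\mathcal{B}(\mathcal{H})$ and $V^*U$ bounded invertible. First I would check factorability: for $h\in\mathcal{H}$,
\[
\sum_{n=1}^m\|A_nh\|^2=\sum_{n=1}^m\|F_nUh\|^2\le \|Uh\|^2
\]
by the Parseval property of $\{F_n\}_n$, so the partial sums are Cauchy in $\ell^2(\mathbb{N})\otimes\mathcal{H}_0$ and $\theta_A$ is a well-defined bounded operator; the same argument with $V$ gives $\theta_\Psi$. Then, exactly as in Equation (\ref{ORIGINALEQA}), I would compute
\[
S_{A,\Psi}h=\sum_{n=1}^\infty(F_nV)^*(F_nU)h=V^*\Big(\sum_{n=1}^\infty F_n^*F_n\Big)Uh=V^*Uh,
\]
where the interchange of the (strongly convergent) sum with the bounded operators $V^*$ (on the left) and $U$ (inside) is justified by continuity. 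Since $V^*U$ is bounded invertible, $S_{A,\Psi}$ is bounded invertible, so $(\{A_n\}_n,\{\Psi_n\}_n)$ is a factorable weak OVF.

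\textbf{The $(\Rightarrow)$ direction.} Given a factorable weak OVF $(\{A_n\}_n,\{\Psi_n\}_n)$, I would set $U\coloneqq\sum_{n=1}^\infty F_n^*A_n$ and $V\coloneqq\sum_{n=1}^\infty F_n^*\Psi_n$, where these series converge in the strong-operator topology because $\{F_n^*\}_n$ plays for $\{F_n\}_n$ the role that $\{L_n\}_n$ plays for $\{L_n^*\}_n$ (concretely, $\|\sum_{n=1}^m F_n^*A_nh\|^2=\sum_{n=1}^m\|A_nh\|^2=\|\theta_Ah_{\le m}\|^2$ is bounded, by $F_nF_m^*=\delta_{n,m}I_{\mathcal{H}_0}$). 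Using $F_mF_n^*=\delta_{m,n}I_{\mathcal{H}_0}$ I would then recover $F_mU=\sum_n F_mF_n^*A_n=A_m$ and similarly $F_mV=\Psi_m$ for every $m$, and finally
\[
V^*U=\Big(\sum_{n=1}^\infty \Psi_n^*F_n\Big)\Big(\sum_{k=1}^\infty F_k^*A_k\Big)=\sum_{n=1}^\infty\Psi_n^*A_n=S_{A,\Psi},
\]
which is bounded invertible by hypothesis. The main obstacle — really the only point requiring care — is the rigorous justification of these manipulations of strong-operator-convergent series: one must verify that $\sum F_n^*A_n$ actually converges (not merely formally), that $F_m(\sum_n F_n^*A_nh)=\sum_n F_mF_n^*A_nh$ (legitimate since $F_m$ is bounded, hence SOT-continuous), and that the ``product of two sums'' collapses correctly; all of this is the operator-valued-orthonormal-basis analogue of what was done for $\{L_n\}_n$ via Equation (\ref{LEQUATION}), so no new idea beyond Definition \ref{ONBDEFINITIONOVHS} and its consequences is needed.
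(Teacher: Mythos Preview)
Your proposal is correct and follows essentially the same approach as the paper: the paper also defines $U\coloneqq\sum_n F_n^*A_n$, $V\coloneqq\sum_n F_n^*\Psi_n$, verifies convergence via $\|\sum_{n=1}^m F_n^*A_nh\|^2=\sum_{n=1}^m\|A_nh\|^2$, recovers $A_n=F_nU$, $\Psi_n=F_nV$, and computes $V^*U=S_{A,\Psi}$, while the $(\Leftarrow)$ direction is the same $V^*(\sum_n F_n^*F_n)U=V^*U$ calculation you give.
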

\begin{proof}
	$(\Leftarrow)$ $ \sum_{n=1}^\infty L_n(F_nU)= (\sum_{n=1}^\infty L_nF_n)U,$ $ \sum_{n=1}^\infty L_n(F_nV)= (\sum_{n=1}^\infty L_nF_n)V.$ These show analysis operators for $ (\{F_nU\}_{n},\{F_nV\}_{n})$ are well-defined bounded linear operators  and the equality $$\sum_{n=1}^\infty(F_nV)^*(F_nU)=V^*U$$
	shows that it is a factorable weak  OVF.
	
	$(\Rightarrow)$ Let $( \{A_n\}_{n},  \{\Psi_n\}_{n} )$ be a factorable weak OVF. Note that the series $  \sum_{n=1}^\infty F_n^*A_n$ and $  \sum_{n=1}^\infty F_n^*\Psi_n$ converge. In fact,  for each $h \in \mathcal{H}$, 
	\begin{align*}
	\left\|\sum_{n=1}^mF_n ^*A_n h\right\|^2&=\left\langle\sum_{n=1}^mF_n^*A_nh, \sum_{k=1}^mF_k^*A_kh\right\rangle\\
	&= \sum_{n=1}^m\left\langle A_nh, F_n\left(\sum_{k=1}^mF_k^*A_kh\right) \right\rangle=\sum_{n=1}^m\|A_nh\|^2.
	\end{align*}
	which  converges to $\| \theta_Ah\|^2=\|\sum_{n=1}^\infty L_nA_nh\|^2=\sum_{n=1}^\infty\|A_nh\|^2$. Define $U\coloneqq \sum_{n=1}^\infty F_n^*A_n$ and $V\coloneqq \sum_{n=1}^\infty F_n^*\Psi_n$. Then $F_nU=A_n, F_nV=\Psi_n ,  \forall n \in \mathbb{N} $ and 
	\begin{align*}
	V^*U=\left(\sum_{n=1}^\infty\Psi_n^*F_n\right)\left(\sum_{k=1}^\infty F_k^*A_k\right)=\sum_{n=1}^\infty\Psi_n^*A_n=S_{A,\Psi}
	\end{align*}
	which is bounded invertible.	
\end{proof}	 
\begin{corollary}
	Let $ \{F_n\}_{n}$ be an  orthonormal basis in $ \mathcal{B}(\mathcal{H},\mathcal{H}_0).$ Then 	a pair  $( \{A_n\}_{n},  \{\Psi_n\}_{n} )$ is  		
	\begin{enumerate}[label=(\roman*)]
		\item a Riesz factorable weak OVF  in $ \mathcal{B}(\mathcal{H}, \mathcal{H}_0)$
		if and only if 
		\begin{align*}
		A_n=F_n U, \quad \Psi_n=F_nV, \quad \forall n \in \mathbb{N},
		\end{align*}  
		where $ U,V :\mathcal{H}\to \mathcal{H} $ are bounded linear operators such that  $ V^*U$ is bounded  invertible and $ U(V^*U)^{-1}V^* =I_{\mathcal{H}}$.	
		\item an orthonormal  factorable weak OVF  in $ \mathcal{B}(\mathcal{H}, \mathcal{H}_0)$
		if and only if 
		\begin{align*}
		A_n=F_n U, \quad \Psi_n=F_nV, \quad \forall n \in \mathbb{N},
		\end{align*}  
		where $ U,V :\mathcal{H}\to \mathcal{H} $ are bounded linear operators such that  $ V^*U$ is bounded  invertible and $ V^*U=I_\mathcal{H}= UV^*$. 	
	\end{enumerate}
\end{corollary}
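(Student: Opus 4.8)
The plan is to deduce this corollary directly from Theorem \ref{SECONDCHAROV}, exactly as Corollary \ref{FIRSTCOROLLARY} was deduced from Theorem \ref{THAFSCHAROVF}. The only new ingredient needed is a formula for $P_{A,\Psi}$ (in part (i)) and for $S_{A,\Psi}$ together with $P_{A,\Psi}$ (in part (ii)) in terms of the operators $U,V$ of Theorem \ref{SECONDCHAROV}. So the first step is to record that if $( \{A_n\}_{n},  \{\Psi_n\}_{n} )$ is a factorable weak OVF with $A_n=F_nU$, $\Psi_n=F_nV$, then $S_{A,\Psi}=V^*U$ (this is already contained in the proof of Theorem \ref{SECONDCHAROV}), and that $\theta_A=\theta_F U$, $\theta_\Psi=\theta_F V$ where $\theta_F:\mathcal H\ni h\mapsto\sum_{n=1}^\infty L_nF_nh$; consequently $\theta_\Psi^*=V^*\theta_F^*$ and hence $P_{A,\Psi}=\theta_A S_{A,\Psi}^{-1}\theta_\Psi^*=\theta_F U(V^*U)^{-1}V^*\theta_F^*$. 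Here I would use that $\theta_F$ is an isometry onto $\ell^2(\mathbb N)\otimes\mathcal H_0$ and $\theta_F^*\theta_F=I_\mathcal H$, $\theta_F\theta_F^*=I_{\ell^2(\mathbb N)}\otimes I_{\mathcal H_0}$ (this follows from $\{F_n\}_n$ being an orthonormal basis in $\mathcal B(\mathcal H,\mathcal H_0)$, i.e. $F_nF_m^*=\delta_{n,m}I_{\mathcal H_0}$ and $\sum_n F_n^*F_n=I_\mathcal H$, together with Proposition \ref{LJORTHO}).

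For part (i): by Definition \ref{RIESZOVF}, $( \{A_n\}_{n},  \{\Psi_n\}_{n} )$ is a Riesz factorable weak OVF iff it is a factorable weak OVF and $P_{A,\Psi}=I_{\ell^2(\mathbb N)}\otimes I_{\mathcal H_0}$. Using the formula above, $P_{A,\Psi}=\theta_F\,U(V^*U)^{-1}V^*\,\theta_F^*$, and since $\theta_F$ is a unitary between $\mathcal H$ and $\ell^2(\mathbb N)\otimes\mathcal H_0$ onto its range $=$ whole space, $P_{A,\Psi}=I$ iff $\theta_F^*P_{A,\Psi}\theta_F=I_\mathcal H$ iff $U(V^*U)^{-1}V^*=I_\mathcal H$. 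Combined with Theorem \ref{SECONDCHAROV} this gives the ``if and only if'' in (i); the $(\Leftarrow)$ direction is the one-line verification $P_{A,\Psi}=\theta_F U(V^*U)^{-1}V^*\theta_F^*=\theta_F\theta_F^*=I_{\ell^2(\mathbb N)}\otimes I_{\mathcal H_0}$.

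For part (ii): by Definition \ref{RIESZOVF} an orthonormal OVF is a Parseval and Riesz OVF, i.e. $S_{A,\Psi}=I_\mathcal H$ and $P_{A,\Psi}=I_{\ell^2(\mathbb N)}\otimes I_{\mathcal H_0}$. With $S_{A,\Psi}=V^*U$ the first condition becomes $V^*U=I_\mathcal H$, and then $P_{A,\Psi}=\theta_F U(V^*U)^{-1}V^*\theta_F^*=\theta_F UV^*\theta_F^*$, which equals $I_{\ell^2(\mathbb N)}\otimes I_{\mathcal H_0}$ iff $UV^*=I_{\ell^2(\mathbb N)}\otimes I_{\mathcal H_0}$ (again using that $\theta_F$ is unitary onto the whole space). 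This is exactly the stated characterization; for $(\Leftarrow)$ one checks $S_{A,\Psi}=V^*U=I_\mathcal H$ and $P_{A,\Psi}=\theta_FUV^*\theta_F^*=I$, invoking Theorem \ref{SECONDCHAROV} to know $( \{A_n\}_{n},  \{\Psi_n\}_{n} )$ is a factorable weak OVF in the first place.

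I do not expect any serious obstacle here: the work is entirely bookkeeping with $\theta_F$, $\theta_A=\theta_FU$, $\theta_\Psi=\theta_FV$ and the identities for $L_n$. The only point requiring a little care is making sure that $\theta_F$ is genuinely a surjective isometry onto $\ell^2(\mathbb N)\otimes\mathcal H_0$ (so that conjugation by $\theta_F$ transports ``$=I$'' statements faithfully); this is where the hypothesis that $\{F_n\}_n$ is an orthonormal \emph{basis} (not merely an orthonormal system) is used, via $\sum_n F_n^*F_n=I_\mathcal H$. Everything else parallels the proof of Corollary \ref{FIRSTCOROLLARY} verbatim with $L_n^*$ replaced by $F_n$.
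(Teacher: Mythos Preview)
Your proposal is correct and follows essentially the same route as the paper: both establish $\theta_A=\theta_FU$, $\theta_\Psi=\theta_FV$, $S_{A,\Psi}=V^*U$, and hence $P_{A,\Psi}=\theta_F\,U(V^*U)^{-1}V^*\,\theta_F^*$, and then use that $\theta_F$ is unitary (equivalently $\theta_F\theta_F^*=I_{\ell^2(\mathbb N)\otimes\mathcal H_0}$ and $\theta_F^*\theta_F=I_\mathcal H$) to translate the Riesz/orthonormal conditions into the stated conditions on $U,V$. One small slip: in part~(ii) you wrote $UV^*=I_{\ell^2(\mathbb N)\otimes\mathcal H_0}$, but $U,V:\mathcal H\to\mathcal H$, so the correct identity is $UV^*=I_\mathcal H$; the paper's $(\Rightarrow)$ in (ii) obtains this via Proposition~\ref{ORTHORESULT} ($A_n\Psi_m^*=\delta_{n,m}I_{\mathcal H_0}$) rather than by conjugating with $\theta_F$, but either route works.
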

\begin{proof}
	\begin{enumerate}[label=(\roman*)]
		\item $ (\Leftarrow)$ 
		\begin{align*}
		P_{A,\Psi}&=\theta_AS_{A,\Psi}^{-1}\theta_\Psi^*=\left(\sum_{n=1}^\infty L_nF_nU\right)(V^*U)^{-1}\left(\sum_{k=1}^\infty V^*F^*_kL_k^*\right)\\
		&=\theta_FU(V^*U)^{-1}V^*\theta_F^*=\theta_FI_\mathcal{H}\theta_F^* =\sum_{n=1}^\infty L_nF_n\left(\sum_{k=1}^\infty F_k^*L^*_k\right)\\
		&=\sum_{n=1}^\infty L_nL_n^*=I_{\ell^2(\mathbb{N})}\otimes I_{\mathcal{H}_0}.
		\end{align*} 
		$ (\Rightarrow)$ Let $U$ and $V$ be as in Theorem \ref{SECONDCHAROV}. Then 
		\begin{align*}
		U(V^*U)^{-1}V^*&=\left(\sum_{k=1}^\infty F_k^*A_k\right)S_{A,\Psi}^{-1}\left(\sum_{n=1}^\infty \Psi_n^*F_n\right)\\
		&=\left(\sum_{r=1}^\infty F_r^*L_r^*\right)\left(\sum_{k=1}^\infty L_kA_k\right)S_{A,\Psi}^{-1}\left(\sum_{n=1}^\infty \Psi_n^*L_n^*\right)\left(\sum_{m=1}^\infty L_mF_m\right)\\
		&=\left(\sum_{r=1}^\infty F_r^*L_r^*\right)\theta_AS_{A,\Psi}^{-1}\theta_\Psi^*\left(\sum_{m=1}^\infty L_mF_m\right)\\
		&=\left(\sum_{r=1}^\infty F_r^*L_r^*\right)P_{A,\Psi}\left(\sum_{m=1}^\infty L_mF_m\right) \\
		&=\left(\sum_{r=1}^\infty F_r^*L_r^*\right)(I_{\ell^2(\mathbb{N})}\otimes I_{\mathcal{H}_0})\left(\sum_{m=1}^\infty L_mF_m\right) \\
		&=\left(\sum_{r=1}^\infty F_r^*L_r^*\right)\left(\sum_{m=1}^\infty L_mF_m\right) 
		=\sum_{r=1}^\infty F_r^*F_r=I_{\mathcal{H}}.
		\end{align*}
		\item We use (i). 
		
		$ (\Leftarrow)$ $S_{A,\Psi}=V^*U=I_\mathcal{H}, P_{A,\Psi}=\theta_AS_{A,\Psi}^{-1}\theta_\Psi^*=\theta_A\theta_\Psi^*= \theta_FUV^*\theta_F^*=\theta_FI_\mathcal{H}\theta_F^*=I_{\ell^2(\mathbb{N})}\otimes I_{\mathcal{H}_0}.$
		
		$(\Rightarrow)$ $V^*U=S_{A,\Psi}=I_\mathcal{H}$ and using Proposition \ref{ORTHORESULT}, 
		\begin{align*}
			UV^*= \left(\sum_{n=1}^\infty F_n^*A_n\right)\left( \sum_{k=1}^\infty\Psi_k^*F_k\right) =\sum_{n=1}^\infty F_n^*F_n=I_\mathcal{H}.
		\end{align*}
	\end{enumerate}	
\end{proof}
We next derive another characterization which is free from natural numbers.
\begin{theorem}\label{OPERATORCHARACTERIZATIONHILBERT2}
	Let $\{A_n\}_{n},\{\Psi_n\}_{n}$ be in $ \mathcal{B}(\mathcal{H}, \mathcal{H}_0).$ Then  $( \{A_n\}_{n},  \{\Psi_n\}_{n} )$  is a factorable weak OVF  
	\begin{enumerate}[label=(\roman*)]
		\item   if and only if $$U:\ell^2(\mathbb{N})\otimes \mathcal{H}_0 \ni y\mapsto\sum\limits_{n=1}^\infty A_n^*L_n^*y \in \mathcal{H}, ~\text{and} ~ V:\ell^2(\mathbb{N})\otimes \mathcal{H}_0 \ni z\mapsto\sum\limits_{n=1}^\infty \Psi_n^*L^*_nz \in \mathcal{H} $$ 
		are well-defined bounded linear operators   such that  $  VU^*$ is bounded invertible.  
		\item    if and only if $$U:\ell^2(\mathbb{N})\otimes \mathcal{H}_0 \ni y\mapsto\sum\limits_{n=1}^\infty A_n^*L_n^*y \in \mathcal{H}, ~\text{and} ~ S: \mathcal{H} \ni g\mapsto \sum\limits_{n=1}^\infty L_n\Psi_ng \in \ell^2(\mathbb{N})\otimes \mathcal{H}_0 $$ 
		are well-defined bounded linear operators  such that  $  S^*U^*$  is bounded invertible.
		\item  if and only if  $$R:   \mathcal{H} \ni h\mapsto \sum\limits_{n=1}^\infty L_nA_nh \in \ell^2(\mathbb{N})\otimes \mathcal{H}_0, ~\text{and} ~ V: \ell^2(\mathbb{N})\otimes \mathcal{H}_0 \ni z\mapsto\sum\limits_{n=1}^\infty \Psi_n^*L_n^*z \in \mathcal{H} $$ 
		are well-defined bounded linear operators   such that  $  VR$ is bounded invertible.
		\item  if and only if  $$ R:   \mathcal{H} \ni h\mapsto \sum\limits_{n=1}^\infty L_nA_nh \in \ell^2(\mathbb{N})\otimes \mathcal{H}_0, ~\text{and} ~  S:   \mathcal{H} \ni g\mapsto \sum\limits_{n=1}^\infty L_n\Psi_ng \in \ell^2(\mathbb{N})\otimes \mathcal{H}_0 $$
		are well-defined bounded linear operators  such that  $ S^*R $ is bounded invertible.  
	\end{enumerate}
	
\end{theorem}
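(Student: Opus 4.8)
The plan is to prove Theorem \ref{OPERATORCHARACTERIZATIONHILBERT2} by reducing all four equivalences to Theorem \ref{THAFSCHAROVF}, which already characterizes factorable weak OVFs via the existence of bounded operators $U,V:\mathcal{H}\to\ell^2(\mathbb{N})\otimes\mathcal{H}_0$ with $A_n=L_n^*U$, $\Psi_n=L_n^*V$ and $V^*U$ invertible. The key observation is that the operators appearing in (i)--(iv) are precisely the operators $U,V$ of Theorem \ref{THAFSCHAROVF} and their adjoints. Concretely, if $R,S:\mathcal{H}\to\ell^2(\mathbb{N})\otimes\mathcal{H}_0$ are the maps $h\mapsto\sum_n L_nA_nh$ and $g\mapsto\sum_n L_n\Psi_ng$, then $R=\theta_A$, $S=\theta_\Psi$; while the maps $U,V:\ell^2(\mathbb{N})\otimes\mathcal{H}_0\to\mathcal{H}$ given by $y\mapsto\sum_n A_n^*L_n^*y$ and $z\mapsto\sum_n\Psi_n^*L_n^*z$ are exactly $\theta_A^*$ and $\theta_\Psi^*$ (this adjoint computation is the same one carried out in the proof of Theorem \ref{OPERATORDILATION}, using Proposition \ref{LJORTHO}). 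So the content to verify is bookkeeping: a series of the form $\sum_n A_n^*L_n^*$ converges to a bounded operator on $\ell^2(\mathbb{N})\otimes\mathcal{H}_0$ if and only if $\theta_A$ is a well-defined bounded operator, because $\sum_n A_n^*L_n^*=\theta_A^*$ whenever either side makes sense; and similarly for $\Psi$.

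First I would handle part (iv), which is the cleanest: $( \{A_n\}_{n}, \{\Psi_n\}_{n})$ is a factorable weak OVF precisely when $\theta_A$ and $\theta_\Psi$ are well-defined bounded linear operators (the definition of factorable) \emph{and} $S_{A,\Psi}$ is invertible; by part (iii) of the theorem following the definition of factorable (the one giving $S_{A,\Psi}=\theta_\Psi^*\theta_A$), invertibility of $S_{A,\Psi}$ is invertibility of $\theta_\Psi^*\theta_A=S^*R$. Conversely, if $R$ and $S$ are well-defined bounded operators, then by Proposition \ref{LJORTHO} we recover $A_n=L_n^*R$ and $\Psi_n=L_n^*S$ (since $L_n^*\theta_A h=L_n^*\sum_k L_kA_kh=A_nh$), so $R,S$ play the role of $U,V$ in Theorem \ref{THAFSCHAROVF} and $S^*R$ invertible is exactly the hypothesis $V^*U$ invertible there. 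Thus (iv) follows directly.

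Next I would do part (i): here the two maps go $\ell^2(\mathbb{N})\otimes\mathcal{H}_0\to\mathcal{H}$, namely $U\colon y\mapsto\sum_n A_n^*L_n^*y$ and $V\colon z\mapsto\sum_n\Psi_n^*L_n^*z$. The claim is that $U$ is a well-defined bounded operator iff $\theta_A$ is, in which case $U=\theta_A^*$. For this I would argue exactly as in the proof of Theorem \ref{SECONDCHAROV} or the adjoint computation in Theorem \ref{OPERATORDILATION}: compute $\langle Uy,h\rangle$ against the candidate $\langle y,\theta_Ah\rangle$ using $L_n^*$ being the adjoint of $L_n$, and use that partial sums of $\sum_n A_n^*L_n^*y$ and the Bessel-type estimate $\sum_n\|A_nh\|^2$ control each other. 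Once $U=\theta_A^*$ and $V=\theta_\Psi^*$ are identified, the condition ``$VU^*$ bounded invertible'' becomes ``$\theta_\Psi^*\theta_A$ bounded invertible'' $=$ ``$S_{A,\Psi}$ invertible'', and we are back to (iv). Parts (ii) and (iii) are the two ``mixed'' cases, one map of each type, and are proved by combining the two identifications above ($R=\theta_A$, $U=\theta_A^*$, $S=\theta_\Psi$, $V=\theta_\Psi^*$); in (ii) the operator $S^*U^*=\theta_\Psi^*(\theta_A^*)^*=\theta_\Psi^*\theta_A=S_{A,\Psi}$, and in (iii) $VR=\theta_\Psi^*\theta_A=S_{A,\Psi}$, so each reduces to invertibility of $S_{A,\Psi}$.

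The only mild obstacle is making the convergence/adjoint identifications fully rigorous without circularity: one must show that $\sum_n A_n^*L_n^*y$ converges in $\mathcal{H}$ for all $y\in\ell^2(\mathbb{N})\otimes\mathcal{H}_0$ \emph{if and only if} $\theta_A=\sum_n L_nA_n$ is a bounded operator $\mathcal{H}\to\ell^2(\mathbb{N})\otimes\mathcal{H}_0$, which is the operator-valued analogue of the standard fact (Theorem \ref{OLEBESSELCHARACTERIZATION12}) that a sequence is Bessel iff the synthesis map is bounded. I expect this to go through by the boundedness principle argument used repeatedly in the paper: if $\sum_n A_n^*L_n^*y$ converges for all $y$, the partial-sum operators are pointwise bounded, hence uniformly bounded by Banach--Steinhaus, and the limit operator's adjoint is $\sum_n L_nA_n$; conversely boundedness of $\theta_A$ gives boundedness of $\theta_A^*=\sum_n A_n^*L_n^*$. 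After that the rest is purely formal manipulation with $L_n^*L_m=\delta_{n,m}I_{\mathcal{H}_0}$ and $\sum_n L_nL_n^*=I_{\ell^2(\mathbb{N})}\otimes I_{\mathcal{H}_0}$.
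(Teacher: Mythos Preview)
Your proposal is correct and follows essentially the same approach as the paper: identify the four operators as $\theta_A$, $\theta_\Psi$ and their adjoints, then observe that each invertibility condition reduces to invertibility of $\theta_\Psi^*\theta_A=S_{A,\Psi}$. The paper's proof is even terser---it simply writes ``$U=\theta_A^*$, $V=\theta_\Psi^*$ and hence $VU^*=\theta_\Psi^*\theta_A=S_{A,\Psi}$'' for (i) and declares the others similar---so your additional care with the Banach--Steinhaus argument for the convergence/adjoint equivalence actually fills in a step the paper takes for granted.
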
 
\begin{proof}
	We prove (i) and others are similar. 
	
	 $(\Rightarrow)$ Now $U=\theta_A^*$, $V=\theta_\Psi^*$ and hence $VU^*=\theta_\Psi^*\theta_A=S_{A,\Psi}$.
	
	$(\Leftarrow)$ Now $\theta_A=U^*$, $\theta_\Psi=V^*$ and hence $S_{A,\Psi}=\theta_\Psi^*\theta_A=VU^*$.
\end{proof}
Now we try to characterize all dual OVFs. 
\begin{lemma}\label{FIRSTLEMMA}
	Let $( \{A_n\}_{n},  \{\Psi_n\}_{n} )$ be a factorable weak OVF in    $\mathcal{B}(\mathcal{H}, \mathcal{H}_0)$. Then a factorable weak  OVF  $ (\{B_n\}_{n} , \{\Phi_n\}_{n} )$  in $\mathcal{B}(\mathcal{H}, \mathcal{H}_0)$	is a dual for $( \{A_n\}_{n},  \{\Psi_n\}_{n} )$ if and only if 
	\begin{align*}
	B_n=L_n^* U, \quad \Phi_n=L_n^*V^*, \quad \forall n \in \mathbb{N}
	\end{align*}
	where $U:\mathcal{H} \rightarrow \ell^2(\mathbb{N}) \otimes \mathcal{H}_0$  is a bounded right-inverse of $\theta_\Psi^* $ and $V: \ell^2(\mathbb{N}) \otimes \mathcal{H}_0\to \mathcal{H}$ is a bounded left-inverse of $\theta_A $ such that $VU$ is bounded invertible.
\end{lemma}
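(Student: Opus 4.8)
The plan is to mimic the characterization of dual frames via analysis/synthesis operators (Proposition \ref{ORTHOGONALPRO} and the reasoning in the proof of Theorem \ref{THAFSCHAROVF} and Lemma \ref{ASFLEMMA1}), transferred to the factorable weak OVF setting using the operators $L_n$ and their relations in Equation (\ref{LEQUATION}). The key fact I would use throughout is that for a factorable weak OVF $(\{A_n\}_n,\{\Psi_n\}_n)$ one has $A_n = L_n^*\theta_A$ and $\Psi_n = L_n^*\theta_\Psi$ (as shown in the proof of Theorem \ref{THAFSCHAROVF}), together with $\theta_\Psi^*\theta_A = S_{A,\Psi}$ and the identity $L_n^*L_m = \delta_{n,m}I_{\mathcal{H}_0}$, $\sum_n L_nL_n^* = I_{\ell^2(\mathbb{N})}\otimes I_{\mathcal{H}_0}$.

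First I would establish the characterization of dual weak OVFs in operator form: a factorable weak OVF $(\{B_n\}_n,\{\Phi_n\}_n)$ is a dual for $(\{A_n\}_n,\{\Psi_n\}_n)$ if and only if $\theta_\Psi^*\theta_B = \theta_\Phi^*\theta_A = I_{\mathcal{H}}$. This is immediate from $\sum_{n=1}^\infty \Psi_n^* B_n = \sum_{n=1}^\infty \Psi_n^* L_n^* \theta_B = \theta_\Psi^*\theta_B$ and symmetrically for $\theta_\Phi^*\theta_A$, using $B_n = L_n^*\theta_B$. With this in hand, the $(\Rightarrow)$ direction of the lemma is essentially bookkeeping: given a dual $(\{B_n\}_n,\{\Phi_n\}_n)$, I set $U := \theta_B : \mathcal{H}\to \ell^2(\mathbb{N})\otimes\mathcal{H}_0$ and $V := \theta_\Phi^* : \ell^2(\mathbb{N})\otimes\mathcal{H}_0 \to \mathcal{H}$. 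Then $L_n^* U = L_n^*\theta_B = B_n$ and $L_n^* V^* = L_n^*\theta_\Phi = \Phi_n$; the duality relations give $\theta_\Psi^* U = \theta_\Psi^*\theta_B = I_{\mathcal{H}}$, so $U$ is a bounded right-inverse of $\theta_\Psi^*$, and $V\theta_A = \theta_\Phi^*\theta_A = I_{\mathcal{H}}$, so $V$ is a bounded left-inverse of $\theta_A$; finally $VU = \theta_\Phi^*\theta_B = S_{B,\Phi}$, which is bounded invertible since $(\{B_n\}_n,\{\Phi_n\}_n)$ is a weak OVF.

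For the $(\Leftarrow)$ direction, suppose $B_n = L_n^*U$ and $\Phi_n = L_n^* V^*$ with $U$ a bounded right-inverse of $\theta_\Psi^*$, $V$ a bounded left-inverse of $\theta_A$, and $VU$ bounded invertible. I would first check that $(\{B_n\}_n,\{\Phi_n\}_n)$ is a factorable weak OVF: the analysis operators are $\theta_B = \sum_n L_n B_n = \sum_n L_n L_n^* U = U$ and $\theta_\Phi = \sum_n L_n \Phi_n = \sum_n L_n L_n^* V^* = V^*$, both well-defined and bounded by hypothesis, and the frame operator is $S_{B,\Phi} = \sum_n \Phi_n^* B_n = \theta_\Phi^*\theta_B = VU$, which is bounded invertible. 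Then the duality relations follow directly: $\sum_n \Psi_n^* B_n = \theta_\Psi^*\theta_B = \theta_\Psi^* U = I_{\mathcal{H}}$ (right-inverse property) and $\sum_n \Phi_n^* A_n = \theta_\Phi^*\theta_A = V\theta_A = I_{\mathcal{H}}$ (left-inverse property).

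I do not anticipate a serious obstacle here — the result is structurally parallel to Lemma \ref{ASFLEMMA1} and Theorem \ref{THAFSCHAROVF}. The one point requiring a little care is the bookkeeping with adjoints: in the statement $\Phi_n = L_n^* V^*$ with $V$ a left-inverse of $\theta_A$, so that $\theta_\Phi = V^*$ and $\theta_\Phi^* = V$; one must be consistent about whether $V$ or $V^*$ plays the role of the synthesis-side operator, and verify that the invertibility of $VU$ (rather than, say, $UV$) is exactly what makes $S_{B,\Phi}$ invertible. I would also make explicit at the outset that $B_n = L_n^*\theta_B$ holds for any factorable weak OVF (by the computation in the proof of Theorem \ref{THAFSCHAROVF}), since this is what lets one pass freely between the sequence $(\{B_n\}_n,\{\Phi_n\}_n)$ and the operator pair $(U,V)$.
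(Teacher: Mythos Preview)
Your proposal is correct and follows essentially the same approach as the paper's proof: set $U=\theta_B$, $V=\theta_\Phi^*$ for the forward direction, and for the converse use $\theta_B=U$, $\theta_\Phi=V^*$ together with the relations from Equation~(\ref{LEQUATION}) to verify factorability and duality. Your handling of the adjoints is in fact cleaner than the paper's (which writes $\theta_\Phi^*\theta_A = V^*\theta_A$ where it should read $V\theta_A$).
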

\begin{proof}
	$(\Leftarrow)$  ``If" part of proof of Theorem \ref{THAFSCHAROVF}, says that $ (\{B_n\}_{n} , \{\Phi_n\}_{n} )$ is a factorable weak OVF in  $\mathcal{B}(\mathcal{H}, \mathcal{H}_0)$. We now  check for the duality of $ (\{B_n\}_{n} , \{\Phi_n\}_{n} )$. Consider $\theta_\Phi^*\theta_A=V^* \theta_A=I_\mathcal{H} $, $ \theta_\Psi^*\theta_B=\theta_\Psi^* U =I_\mathcal{H}$.\\
	$(\Rightarrow)$ Let $ (\{B_n\}_{n} , \{\Phi_n\}_{n} )$  be a dual factorable weak OVF for  $( \{A_n\}_{n},  \{\Psi_n\}_{n} )$.  Then $\theta_\Psi^*\theta_B =I_\mathcal{H}= \theta_\Phi^*\theta_A $. Define $ U\coloneqq\theta_B, V\coloneqq\theta_\Phi^*.$ Then $U:\mathcal{H} \rightarrow \ell^2(\mathbb{N}) \otimes \mathcal{H}_0$ is a bounded  right-inverse of $\theta_\Psi^* $ and $V: \ell^2(\mathbb{N}) \otimes \mathcal{H}_0\to \mathcal{H}$ is a left inverse of $\theta_A $ such that $VU=\theta_\Phi^*\theta_B=S_{B,\Phi}$ is bounded invertible. We now see 
	\begin{align*}
	L_n^* U=L_n^*\left(\sum\limits_{k=1}^\infty L_kB_k\right)=B_n, \quad L_n^*V^*=L_n^*\left(\sum\limits_{k=1}^\infty L_k\Phi_k\right)=\Phi_n, \quad \forall n \in \mathbb{N}.
	\end{align*}
\end{proof}
\begin{lemma}\label{SECONDLEMMA}
	Let $( \{A_n\}_{n},  \{\Psi_n\}_{n} )$ be a  factorable weak OVF in    $\mathcal{B}(\mathcal{H}, \mathcal{H}_0)$. Then  
	\begin{enumerate}[label=(\roman*)]
		\item $R:\mathcal{H} \to \ell^2(\mathbb{N})\otimes\mathcal{H}_0 $ is a bounded right-inverse of $ \theta_\Psi^*$ if and only if 
		\begin{align*}
		R=\theta_AS_{A,\Psi}^{-1}+(I_{\ell^2(\mathbb{N})\otimes\mathcal{H}_0}-\theta_AS_{A,\Psi}^{-1}\theta_\Psi^*)U,
		\end{align*}
		where $U :\mathcal{H} \to \ell^2(\mathbb{N})\otimes\mathcal{H}_0$ is a bounded linear operator.
		\item $L:\ell^2(\mathbb{N})\otimes\mathcal{H}_0\rightarrow \mathcal{H} $ is a bounded left-inverse of $ \theta_A$ if and only if 
		\begin{align*}
		L=S_{A,\Psi}^{-1}\theta_\Psi^*+V(I_{\ell^2(\mathbb{N})\otimes\mathcal{H}_0}-\theta_A S_{A,\Psi}^{-1}\theta_\Psi^*),
		\end{align*}  
		where $V:\ell^2(\mathbb{N})\otimes\mathcal{H}_0\to\mathcal{H}$ is a bounded linear operator.
	\end{enumerate}		
\end{lemma}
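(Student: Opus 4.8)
\textbf{Proof proposal for Lemma \ref{SECONDLEMMA}.}

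The plan is to treat this exactly as the operator-theoretic ``all left/right inverses'' lemma familiar from the Hilbert space frame literature (this is the direct analogue of Lemma \ref{LILEMMA2} and of Lemma \ref{ASFLEMMA2}, which were proved by the same template), replacing the synthesis/analysis maps $\theta_\tau,\theta_f$ by $\theta_\Psi^*,\theta_A$ and using the factorization $S_{A,\Psi}=\theta_\Psi^*\theta_A$ from the factorability hypothesis. For part (i), I would first recall that by the definition of a factorable weak OVF, $\theta_A:\mathcal H\to\ell^2(\mathbb N)\otimes\mathcal H_0$ and $\theta_\Psi:\mathcal H\to\ell^2(\mathbb N)\otimes\mathcal H_0$ are bounded linear, that $S_{A,\Psi}=\theta_\Psi^*\theta_A$ is bounded invertible, and hence that $S_{A,\Psi}^{-1}$ is bounded. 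Thus the operator $R$ displayed in (i) is a well-defined bounded linear operator $\mathcal H\to\ell^2(\mathbb N)\otimes\mathcal H_0$ for any bounded linear $U$.

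For the ``$\Leftarrow$'' direction of (i), the computation is a one-liner: apply $\theta_\Psi^*$ on the left and use $\theta_\Psi^* \theta_A S_{A,\Psi}^{-1}=S_{A,\Psi}S_{A,\Psi}^{-1}=I_\mathcal H$, so
\begin{align*}
\theta_\Psi^* R=\theta_\Psi^*\theta_A S_{A,\Psi}^{-1}+\big(\theta_\Psi^*-\theta_\Psi^*\theta_A S_{A,\Psi}^{-1}\theta_\Psi^*\big)U=I_\mathcal H+\big(\theta_\Psi^*-\theta_\Psi^*\big)U=I_\mathcal H,
\end{align*}
showing $R$ is a bounded right-inverse of $\theta_\Psi^*$. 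For the ``$\Rightarrow$'' direction, given any bounded right-inverse $R$ of $\theta_\Psi^*$, I would simply set $U\coloneqq R$ and verify that the right-hand side of the displayed formula collapses to $R$: indeed $\theta_A S_{A,\Psi}^{-1}+(I-\theta_A S_{A,\Psi}^{-1}\theta_\Psi^*)R=\theta_A S_{A,\Psi}^{-1}+R-\theta_A S_{A,\Psi}^{-1}(\theta_\Psi^*R)=\theta_A S_{A,\Psi}^{-1}+R-\theta_A S_{A,\Psi}^{-1}=R$, using $\theta_\Psi^*R=I_\mathcal H$. Part (ii) is completely symmetric: for ``$\Leftarrow$'' apply the displayed $L$ on the right to $\theta_A$ and use $S_{A,\Psi}^{-1}\theta_\Psi^*\theta_A=I_\mathcal H$ to get $L\theta_A=I_\mathcal H$; for ``$\Rightarrow$'', given a bounded left-inverse $L$ of $\theta_A$, set $V\coloneqq L$ and observe $S_{A,\Psi}^{-1}\theta_\Psi^*+L(I-\theta_A S_{A,\Psi}^{-1}\theta_\Psi^*)=S_{A,\Psi}^{-1}\theta_\Psi^*+L-(L\theta_A)S_{A,\Psi}^{-1}\theta_\Psi^*=L$.

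I do not expect any genuine obstacle here; the only points requiring care are bookkeeping ones — making sure the identity operators are on the correct space ($I_\mathcal H$ versus $I_{\ell^2(\mathbb N)\otimes\mathcal H_0}$) in each composition, and explicitly invoking factorability to guarantee $\theta_A,\theta_\Psi$ (hence the formulas) make sense, since for a non-factorable weak OVF these synthesis/analysis operators need not exist. If anything counts as ``the hard part,'' it is purely expository: stating clearly that the parametrizations in (i) and (ii) are exactly what is needed to feed into Lemma \ref{FIRSTLEMMA} to obtain the full description of all dual factorable weak OVFs (the analogue of Theorem \ref{LITHM} / Theorem \ref{ALLDUAL}), which is presumably the next result in the chapter.
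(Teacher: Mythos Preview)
Your proposal is correct and is essentially identical to the paper's own proof: for each direction you perform the same one-line computations (apply $\theta_\Psi^*$ on the left for (i)$\Leftarrow$, set $U\coloneqq R$ for (i)$\Rightarrow$, and symmetrically for (ii)), using only the factorization $S_{A,\Psi}=\theta_\Psi^*\theta_A$. Your remarks about the analogy with Lemma~\ref{ASFLEMMA2} and the role this plays in the subsequent dual-frame characterization are also exactly how the paper proceeds.
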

\begin{proof}
	\begin{enumerate}[label=(\roman*)]
		\item $(\Leftarrow)$ Let $U :\mathcal{H} \to \ell^2(\mathbb{N})\otimes\mathcal{H}_0$ be a bounded linear operator. Then 
		\begin{align*}
		\theta_\Psi^*(\theta_AS_{A,\Psi}^{-1}+(I_{\ell^2(\mathbb{N})\otimes\mathcal{H}_0}-\theta_AS_{A,\Psi}^{-1}\theta_\Psi^*)U)=I_\mathcal{H}+\theta_\Psi^*U-\theta_\Psi^*U=I_\mathcal{H}.
		\end{align*}  Therefore  $\theta_AS_{A,\Psi}^{-1}+(I_{\ell^2(\mathbb{N})\otimes\mathcal{H}_0}-\theta_AS_{A,\Psi}^{-1}\theta_\Psi^*)U$ is a bounded right-inverse of $\theta_\Psi^*$.
		
		$(\Rightarrow)$ Let $R:\mathcal{H} \to \ell^2(\mathbb{N})\otimes\mathcal{H}_0 $ be a bounded right-inverse of $ \theta_\Psi^*$. Define $U\coloneqq R$. Then $	\theta_AS_{A,\Psi}^{-1}+(I_{\ell^2(\mathbb{N})\otimes\mathcal{H}_0}-\theta_AS_{A,\Psi}^{-1}\theta_\Psi^*)U=	\theta_AS_{A,\Psi}^{-1}+(I_{\ell^2(\mathbb{N})\otimes\mathcal{H}_0}-\theta_AS_{A,\Psi}^{-1}\theta_\Psi^*)R=\theta_AS_{A,\Psi}^{-1}+R-\theta_AS_{A,\Psi}^{-1}=R$.
		\item $(\Leftarrow)$ Let $V: \ell^2(\mathbb{N})\otimes\mathcal{H}_0\rightarrow \mathcal{H}$ be a bounded linear operator. Then
		\begin{align*}
		(S_{A,\Psi}^{-1}\theta_\Psi^*+V(I_{\ell^2(\mathbb{N})\otimes\mathcal{H}_0}-\theta_A S_{A,\Psi}^{-1}\theta_\Psi^*))\theta_A=I_\mathcal{H}+V\theta_A-V\theta_A I_\mathcal{H}=I_\mathcal{H}.
		\end{align*}  Therefore  $S_{A,\Psi}^{-1}\theta_\Psi^*+V(I_{\ell^2(\mathbb{N})\otimes\mathcal{H}_0}-\theta_A S_{A,\Psi}^{-1}\theta_\Psi^*)$ is a bounded left-inverse of $\theta_A$.
		
		$(\Rightarrow)$ Let $ L:\ell^2(\mathbb{N})\otimes\mathcal{H}_0\rightarrow \mathcal{H}$ be a bounded left-inverse of $ \theta_A$. Define $V\coloneqq L$. Then $S_{A,\Psi}^{-1}\theta_\Psi^*+V(I_{\ell^2(\mathbb{N})\otimes\mathcal{H}_0}-\theta_A S_{A,\Psi}^{-1}\theta_\Psi^*) =S_{A,\Psi}^{-1}\theta_\Psi^*+L(I_{\ell^2(\mathbb{N})\otimes\mathcal{H}_0}-\theta_A S_{A,\Psi}^{-1}\theta_\Psi^*)=S_{A,\Psi}^{-1}\theta_\Psi^*+L-I_{\mathcal{H}}S_{A,\Psi}^{-1}\theta_\Psi^*= L$. 	
	\end{enumerate}	
\end{proof}
\begin{theorem}
	Let $( \{A_n\}_{n},  \{\Psi_n\}_{n} )$ be a  factorable  weak OVF in $\mathcal{B}(\mathcal{H}, \mathcal{H}_0)$. Then a  factorable weak OVF  $ (\{B_n\}_{n} , \{\Phi_n\}_{n} )$   in $\mathcal{B}(\mathcal{H}, \mathcal{H}_0)$ is a dual  for $( \{A_n\}_{n},  \{\Psi_n\}_{n} )$ if and only if
	\begin{align*}
	&B_n=A_nS_{A,\Psi}^{-1}+L_n^*U-A_nS_{A,\Psi}^{-1}\theta_\Psi^*U,\\
	&\Phi_n=\Psi_n(S_{A,\Psi}^{-1})^*+L_n^*V^*-\Psi_n (S_{A,\Psi}^{-1})^*\theta_A^*V^*, \quad \forall n \in \mathbb{N}
	\end{align*}
	such that the operator 
	\begin{align*}
	S_{A, \Psi}^{-1}+VU-V\theta_AS_{A, \Psi}^{-1}\theta_\Psi^* U
	\end{align*}
	is bounded invertible, where   $U :\mathcal{H} \to \ell^2(\mathbb{N})\otimes\mathcal{H}_0$  and $V:\ell^2(\mathbb{N})\otimes\mathcal{H}_0\to\mathcal{H}$ are bounded linear operators.	
\end{theorem}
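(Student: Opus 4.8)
The plan is to mimic the proof of Theorem \ref{ALLDUAL} (the $p$-ASF dual characterization) almost verbatim, replacing the Schauder-basis characterization there by the $L_n^*$-characterization of factorable weak OVFs (Theorem \ref{THAFSCHAROVF}) together with the two ``left/right inverse'' lemmas just proved (Lemma \ref{FIRSTLEMMA} and Lemma \ref{SECONDLEMMA}). The key observation that makes everything line up is the identity $L_n^* U = A_n S_{A,\Psi}^{-1} + L_n^* U - A_n S_{A,\Psi}^{-1}\theta_\Psi^* U$ when $U$ is written in the canonical form coming from Lemma \ref{SECONDLEMMA}; so the heart of the argument is just an algebraic substitution and simplification, exactly as in the final display of the proof of Theorem \ref{ALLDUAL}.

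First I would invoke Lemma \ref{FIRSTLEMMA}: a factorable weak OVF $(\{B_n\}_n,\{\Phi_n\}_n)$ is a dual for $(\{A_n\}_n,\{\Psi_n\}_n)$ if and only if $B_n = L_n^* U$ and $\Phi_n = L_n^* V^*$ for all $n$, where $U:\mathcal{H}\to\ell^2(\mathbb{N})\otimes\mathcal{H}_0$ is a bounded right-inverse of $\theta_\Psi^*$ and $V:\ell^2(\mathbb{N})\otimes\mathcal{H}_0\to\mathcal{H}$ is a bounded left-inverse of $\theta_A$, with $VU$ bounded invertible. Then I would feed in Lemma \ref{SECONDLEMMA}, which parametrizes \emph{all} such $U$ and $V$: every bounded right-inverse of $\theta_\Psi^*$ has the form $U = \theta_A S_{A,\Psi}^{-1} + (I-\theta_A S_{A,\Psi}^{-1}\theta_\Psi^*)\,U_0$ for some bounded $U_0:\mathcal{H}\to\ell^2(\mathbb{N})\otimes\mathcal{H}_0$, and every bounded left-inverse of $\theta_A$ has the form $V = S_{A,\Psi}^{-1}\theta_\Psi^* + V_0(I-\theta_A S_{A,\Psi}^{-1}\theta_\Psi^*)$ for some bounded $V_0:\ell^2(\mathbb{N})\otimes\mathcal{H}_0\to\mathcal{H}$. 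Substituting, $B_n = L_n^* U = L_n^*\theta_A S_{A,\Psi}^{-1} + L_n^* U_0 - L_n^*\theta_A S_{A,\Psi}^{-1}\theta_\Psi^* U_0$; since $L_n^*\theta_A = L_n^*\sum_k L_k A_k = A_n$, this is exactly $A_n S_{A,\Psi}^{-1} + L_n^* U_0 - A_n S_{A,\Psi}^{-1}\theta_\Psi^* U_0$. Similarly $\Phi_n = L_n^* V^* = L_n^*(\theta_\Psi S_{A,\Psi}^{-*} + (I-\theta_\Psi S_{A,\Psi}^{-*}\theta_A^*) V_0^*) = \Psi_n (S_{A,\Psi}^{-1})^* + L_n^* V_0^* - \Psi_n (S_{A,\Psi}^{-1})^* \theta_A^* V_0^*$, using $L_n^*\theta_\Psi = \Psi_n$ and taking adjoints carefully in Lemma \ref{SECONDLEMMA}(ii). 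After relabelling $U_0 \mapsto U$, $V_0 \mapsto V$ this is precisely the claimed form of $B_n$ and $\Phi_n$.

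Finally, the invertibility condition: by Lemma \ref{FIRSTLEMMA} the pair is a dual weak OVF precisely when $VU$ (in the notation of that lemma) is bounded invertible, and $VU = S_{B,\Phi}$. A direct expansion of
\begin{align*}
\big(S_{A,\Psi}^{-1}\theta_\Psi^* + V(I-\theta_A S_{A,\Psi}^{-1}\theta_\Psi^*)\big)\big(\theta_A S_{A,\Psi}^{-1} + (I-\theta_A S_{A,\Psi}^{-1}\theta_\Psi^*)U\big)
\end{align*}
using $\theta_\Psi^*\theta_A = S_{A,\Psi}$ collapses the cross terms and yields $S_{A,\Psi}^{-1} + VU - V\theta_A S_{A,\Psi}^{-1}\theta_\Psi^* U$, exactly as in the $p$-ASF case (Theorem \ref{ALLDUAL}), so the dual exists iff this operator is bounded invertible. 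For the converse direction one simply reads the computations backwards: given $B_n,\Phi_n$ of the stated form with that operator invertible, set $U' = \theta_A S_{A,\Psi}^{-1} + (I-\theta_A S_{A,\Psi}^{-1}\theta_\Psi^*)U$ and $V' = S_{A,\Psi}^{-1}\theta_\Psi^* + V(I-\theta_A S_{A,\Psi}^{-1}\theta_\Psi^*)$, check $\theta_\Psi^* U' = I$, $V'\theta_A = I$, $B_n = L_n^* U'$, $\Phi_n = L_n^* (V')^*$ — wait, here care is needed to present $\Phi_n$ as $L_n^*$ applied to a left-inverse of $\theta_A$; I would record $V' = (V^*)$-type identifications explicitly — and that $V'U' = S_{A,\Psi}^{-1}+VU-V\theta_A S_{A,\Psi}^{-1}\theta_\Psi^* U$ is invertible, then apply Lemma \ref{FIRSTLEMMA}.

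The main obstacle I anticipate is purely bookkeeping: keeping the adjoints straight in the $\Phi_n$ computation, since Lemma \ref{FIRSTLEMMA} is stated with $\Phi_n = L_n^* V^*$ (so $V$ there is a \emph{left}-inverse of $\theta_A$ but enters transposed), whereas Lemma \ref{SECONDLEMMA}(ii) parametrizes left-inverses of $\theta_A$ directly. I would reconcile these by applying Lemma \ref{SECONDLEMMA}(ii) to the operator $V$ of Lemma \ref{FIRSTLEMMA}, writing $V = S_{A,\Psi}^{-1}\theta_\Psi^* + V_0(I-\theta_A S_{A,\Psi}^{-1}\theta_\Psi^*)$, then taking adjoints to get $V^* = \theta_\Psi (S_{A,\Psi}^{-1})^* + (I - \theta_\Psi (S_{A,\Psi}^{-1})^*\theta_A^*)V_0^*$, and only then computing $\Phi_n = L_n^* V^*$. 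Everything else is routine operator algebra with the two identities $\theta_\Psi^*\theta_A = S_{A,\Psi}$, $L_n^*\theta_A = A_n$, $L_n^*\theta_\Psi = \Psi_n$, and no genuinely new idea is required beyond those already used in Theorem \ref{ALLDUAL}.
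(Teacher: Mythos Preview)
Your proposal is correct and follows essentially the same route as the paper: invoke Lemma \ref{FIRSTLEMMA} to reduce duality to $B_n=L_n^*U$, $\Phi_n=L_n^*V^*$ with $U$ a right-inverse of $\theta_\Psi^*$, $V$ a left-inverse of $\theta_A$, and $VU$ invertible; then plug in the parametrizations from Lemma \ref{SECONDLEMMA}, simplify via $L_n^*\theta_A=A_n$, $L_n^*\theta_\Psi=\Psi_n$, and expand the product to obtain $S_{A,\Psi}^{-1}+VU-V\theta_A S_{A,\Psi}^{-1}\theta_\Psi^* U$. The adjoint bookkeeping you flag for $\Phi_n$ is exactly how the paper handles it too, so there is no genuine obstacle.
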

\begin{proof}
	Lemmas \ref{FIRSTLEMMA} and  \ref{SECONDLEMMA}
	give the characterization of dual weak OVF as 
	\begin{align*}
	&B_n=L_n^*(\theta_AS_{A,\Psi}^{-1}+(I_{\ell^2(\mathbb{N})\otimes\mathcal{H}_0}-\theta_AS_{A,\Psi}^{-1}\theta_\Psi^*)U)\\
	& \quad=A_nS_{A,\Psi}^{-1}+L_n^*U-A_nS_{A,\Psi}^{-1}\theta_\Psi^*U,\\
	&\Phi_n=L_n^*(\theta_\Psi(S_{A,\Psi}^{-1})^*+(I_{\ell^2(\mathbb{N})\otimes\mathcal{H}_0}-\theta_\Psi (S_{A,\Psi}^{-1})^*\theta_A^*)V^*)\\
	&\quad=\Psi_n(S_{A,\Psi}^{-1})^*+L_n^*V^*-\Psi_n (S_{A,\Psi}^{-1})^*\theta_A^*V^*, \quad \forall n \in \mathbb{N}
	\end{align*}
	such that the operator 
	$$(S_{A,\Psi}^{-1}\theta_\Psi^*+V(I_{\ell^2(\mathbb{N})\otimes\mathcal{H}_0}-\theta_A S_{A,\Psi}^{-1}\theta_\Psi^*))(\theta_AS_{A,\Psi}^{-1}+(I_{\ell^2(\mathbb{N})\otimes\mathcal{H}_0}-\theta_AS_{A,\Psi}^{-1}\theta_\Psi^*)U) $$
	is bounded invertible, where $U :\mathcal{H} \to \ell^2(\mathbb{N})\otimes\mathcal{H}_0$ and $V:\ell^2(\mathbb{N})\otimes\mathcal{H}_0\to\mathcal{H}$  are bounded linear operators. We expand and  get 
	\begin{align*}
	&(S_{A,\Psi}^{-1}\theta_\Psi^*+V(I_{\ell^2(\mathbb{N})\otimes\mathcal{H}_0}-\theta_A S_{A,\Psi}^{-1}\theta_\Psi^*))(\theta_AS_{A,\Psi}^{-1}+(I_{\ell^2(\mathbb{N})\otimes\mathcal{H}_0}-\theta_AS_{A,\Psi}^{-1}\theta_\Psi^*)U)\\
	&=S_{A, \Psi}^{-1}+VU-V\theta_AS_{A, \Psi}^{-1}\theta_\Psi^* U.
	\end{align*}
\end{proof}
 We now define the orthogonality for weak OVFs.
\begin{definition}
	A weak  OVF  $ (\{B_n\}_{n} , \{\Phi_n\}_{n} )$  in $\mathcal{B}(\mathcal{H}, \mathcal{H}_0)$ is said to be \textbf{orthogonal} to a weak  OVF  $  ( \{A_n\}_{n},  \{\Psi_n\}_{n} ) $ in $\mathcal{B}(\mathcal{H}, \mathcal{H}_0)$ if 
	\begin{align*}
	\sum_{n=1}^\infty\Psi_n^*B_n= \sum_{n=1}^\infty\Phi^*_nA_n=0.
	\end{align*}
\end{definition}  
Remarkable property of orthogonal frames is that we can interpolate as well as  we can take direct sum of them to get new frames. These are illustrated in the following two results.
\begin{proposition}
	Let $  ( \{A_n\}_{n},  \{\Psi_n\}_{n} ) $ and $ (\{B_n\}_{n} , \{\Phi_n\}_{n} )$ be  two Parseval  OVFs in   $\mathcal{B}(\mathcal{H}, \mathcal{H}_0)$ which are  orthogonal. If $C,D,E,F \in \mathcal{B}(\mathcal{H})$ are such that $ C^*E+D^*F=I_\mathcal{H}$, then  
	\begin{align*}
	(\{A_nC+B_nD\}_{n}, \{\Psi_nE+\Phi_nF\}_{n})
	\end{align*}
	is a  Parseval weak  OVF in  $\mathcal{B}(\mathcal{H}, \mathcal{H}_0)$. In particular,  if scalars $ c,d,e,f$ satisfy $\bar{c}e+\bar{d}f =1$, then $ (\{cA_n+dB_n\}_{n}, \{e\Psi_n+f\Phi_n\}_{n}) $ is   a Parseval weak  OVF.
\end{proposition}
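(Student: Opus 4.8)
The plan is to verify directly that the proposed pair $(\{A_nC+B_nD\}_{n}, \{\Psi_nE+\Phi_nF\}_{n})$ has frame operator equal to $I_\mathcal{H}$, which is exactly the Parseval condition. The strategy mimics the analogous results for orthogonal frames in Hilbert spaces (the interpolation proposition cited earlier) and for orthogonal OVFs; the only new wrinkle is that here we have \emph{two} sequences rather than one, so the cross terms must be tracked carefully.

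First I would compute $S_{A C + B D,\, \Psi E + \Phi F} = \sum_{n=1}^\infty (\Psi_n E + \Phi_n F)^*(A_n C + B_n D)$. Expanding the product inside the sum gives four families of terms:
\begin{align*}
\sum_{n=1}^\infty (\Psi_n E)^*(A_n C), \quad \sum_{n=1}^\infty (\Psi_n E)^*(B_n D), \quad \sum_{n=1}^\infty (\Phi_n F)^*(A_n C), \quad \sum_{n=1}^\infty (\Phi_n F)^*(B_n D).
\end{align*}
Since $C,D,E,F$ are fixed bounded operators, they pull out of the strong-operator-topology sums: the first becomes $E^*\bigl(\sum_n \Psi_n^* A_n\bigr)C = E^* S_{A,\Psi} C = E^* C$ (using that $(\{A_n\}_n,\{\Psi_n\}_n)$ is Parseval), the fourth becomes $F^*\bigl(\sum_n \Phi_n^* B_n\bigr)D = F^* S_{B,\Phi} D = F^* D$, and the two cross terms become $E^*\bigl(\sum_n \Psi_n^* B_n\bigr)D = 0$ and $F^*\bigl(\sum_n \Phi_n^* A_n\bigr)C = 0$ by the orthogonality hypothesis. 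Here one must be slightly careful: the sums $\sum_n \Psi_n^* B_n$ and $\sum_n \Phi_n^* A_n$ are exactly the two defining series in the orthogonality definition, both equal to $0$, so the cross terms vanish identically. Summing the four contributions yields $E^*C + F^*D = (C^*E + D^*F)^* = I_\mathcal{H}^* = I_\mathcal{H}$.

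Before concluding, I would note that convergence in the strong-operator topology of the combined series is automatic: each of the four sub-series converges in SOT (being, up to composition with fixed bounded operators, one of the four given convergent defining series), and a finite sum of SOT-convergent series is SOT-convergent. Hence $S_{A C + B D,\,\Psi E + \Phi F}$ is a well-defined bounded operator equal to $I_\mathcal{H}$, which is invertible, so the pair is indeed a weak OVF and, having frame operator $I_\mathcal{H}$, it is Parseval. For the ``in particular'' clause, one takes $C = cI_\mathcal{H}$, $D = dI_\mathcal{H}$, $E = eI_\mathcal{H}$, $F = fI_\mathcal{H}$; then $C^*E + D^*F = (\bar c e + \bar d f)I_\mathcal{H} = I_\mathcal{H}$, and the general statement applies. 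The main ``obstacle'' — really the only point requiring attention — is keeping track of the adjoints and the order of composition, since unlike the scalar case the operators $C,D,E,F$ need not commute with the frame operators; but because $S_{A,\Psi} = S_{B,\Phi} = I_\mathcal{H}$ and the cross sums are $0$, no commutation is actually needed and the computation collapses cleanly.
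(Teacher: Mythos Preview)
Your proof is correct and follows essentially the same approach as the paper: expand the frame operator into four terms, use the Parseval conditions $S_{A,\Psi}=S_{B,\Phi}=I_\mathcal{H}$ on the diagonal terms and the orthogonality $\sum_n \Psi_n^*B_n = \sum_n \Phi_n^*A_n = 0$ on the cross terms, and conclude $E^*C + F^*D = I_\mathcal{H}$. Your treatment is in fact slightly more careful than the paper's, since you explicitly address SOT convergence of the combined series and spell out the scalar specialization.
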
 
\begin{proof}
	We use the definition of frame operator and get 
	\begin{align*}
	S_{AC+BD,\Psi E+\Phi F} &=\sum_{n=1}^\infty(\Psi_nE+\Phi_nF)^*(A_nC+B_nD)\\
	&=E^*S_{A,\Psi}C+E^*\left(\sum_{n=1}^\infty\Psi_n^*B_n\right)D+F^*\left(\sum_{n=1}^\infty\Phi_n^*A_n\right)C+F^*S_{B,\Phi}D\\
	&=E^*I_\mathcal{H}C+E^*0D+F^*0C+F^*I_\mathcal{H}D=I_\mathcal{H}.
	\end{align*}
\end{proof}
\begin{proposition}
	If $  ( \{A_n\}_{n},  \{\Psi_n\}_{n} ) $  and $ (\{B_n\}_{n} , \{\Phi_n\}_{n} )$ are   orthogonal weak OVFs in $ \mathcal{B}(\mathcal{H}, \mathcal{H}_0)$, then  $(\{A_n\oplus B_n\}_{n},\{\Psi_n\oplus \Phi_n\}_{n})$ is a  weak  OVF in $ \mathcal{B}(\mathcal{H}\oplus \mathcal{H}, \mathcal{H}_0).$    Further, if both $  ( \{A_n\}_{n},  \{\Psi_n\}_{n} ) $  and $ (\{B_n\}_{n} , \{\Phi_n\}_{n} )$ are  Parseval, then $(\{A_n\oplus B_n\}_{n},\{\Psi_n\oplus \Phi_n\}_{n})$ is Parseval.
\end{proposition}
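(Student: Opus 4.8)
The plan is to compute the frame operator of the pair $(\{A_n\oplus B_n\}_{n},\{\Psi_n\oplus \Phi_n\}_{n})$ directly on the space $\mathcal{H}\oplus \mathcal{H}$ and show it is bounded invertible. First I would fix $h\oplus g \in \mathcal{H}\oplus \mathcal{H}$ and observe that $(\Psi_n\oplus \Phi_n)^*(A_n\oplus B_n)$ acting on $h\oplus g$ equals $(\Psi_n^*A_nh)\oplus(\Phi_n^*B_ng)$, since the adjoint of $\Psi_n\oplus \Phi_n$ as an operator from $\mathcal{H}\oplus\mathcal{H}$ to $\mathcal{H}_0$ is $z\mapsto (\Psi_n^*z)\oplus(\Phi_n^*z)$ — here one must be slightly careful that we are forming $\Psi_n^*A_n$ and $\Phi_n^*B_n$ separately on each summand, which is exactly what the direct-sum structure forces. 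Summing over $n$ and using that both $\sum_{n=1}^\infty \Psi_n^*A_n = S_{A,\Psi}$ and $\sum_{n=1}^\infty \Phi_n^*B_n = S_{B,\Phi}$ converge in the strong-operator topology to bounded invertible operators, the series $\sum_{n=1}^\infty (\Psi_n\oplus \Phi_n)^*(A_n\oplus B_n)$ converges in the strong-operator topology on $\mathcal{B}(\mathcal{H}\oplus\mathcal{H})$ to $S_{A,\Psi}\oplus S_{B,\Phi}$.

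Next I would note that the direct sum of two bounded invertible operators is bounded invertible on the direct-sum Hilbert space, with inverse $S_{A,\Psi}^{-1}\oplus S_{B,\Phi}^{-1}$; this is a routine fact. Hence $S_{A\oplus B,\Psi\oplus \Phi} = S_{A,\Psi}\oplus S_{B,\Phi}$ is bounded invertible, which is precisely the definition of $(\{A_n\oplus B_n\}_{n},\{\Psi_n\oplus \Phi_n\}_{n})$ being a weak OVF in $\mathcal{B}(\mathcal{H}\oplus\mathcal{H},\mathcal{H}_0)$. I should remark here that the orthogonality hypothesis is not actually needed for this first conclusion — the direct sum of any two weak OVFs is a weak OVF — but it is harmless to state it, and it becomes essential for the Parseval part.

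For the second assertion, assume in addition that both pairs are Parseval, i.e. $S_{A,\Psi}=I_\mathcal{H}$ and $S_{B,\Phi}=I_\mathcal{H}$. Then by the computation just performed, $S_{A\oplus B,\Psi\oplus \Phi}=I_\mathcal{H}\oplus I_\mathcal{H}=I_{\mathcal{H}\oplus\mathcal{H}}$, so the direct-sum pair is Parseval. (It is worth noting that for this step too the orthogonality is not literally invoked in the final line, but the statement is presented alongside the interpolation proposition where orthogonality is genuinely used, so I would keep the hypothesis as stated.)

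The only real subtlety — and the step I would be most careful about — is the bookkeeping of adjoints and the mixed cross terms: when one writes out $(\Psi_n\oplus\Phi_n)^*(A_n\oplus B_n)$ one must confirm that there are no cross terms of the form $\Psi_n^*B_n$ or $\Phi_n^*A_n$ coupling the two summands, which is exactly why the operators are arranged as diagonal direct sums rather than, say, $2\times 2$ operator matrices; once that is verified the rest is a clean strong-operator-topology limit argument together with the elementary invertibility of $S_{A,\Psi}^{-1}\oplus S_{B,\Phi}^{-1}$. I would also record that the convergence of the partial sums $\sum_{n=1}^m(\Psi_n\oplus\Phi_n)^*(A_n\oplus B_n)(h\oplus g)$ reduces, componentwise, to the convergence of $\sum_{n=1}^m\Psi_n^*A_nh$ and $\sum_{n=1}^m\Phi_n^*B_ng$, which hold by hypothesis, so no new convergence estimate is required.
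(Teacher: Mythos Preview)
Your computation of $(\Psi_n\oplus\Phi_n)^*(A_n\oplus B_n)$ is incorrect, and this is precisely the point where orthogonality enters. In the statement, $A_n\oplus B_n$ is an operator in $\mathcal{B}(\mathcal{H}\oplus\mathcal{H},\mathcal{H}_0)$, so it is the \emph{row} operator $(h\oplus g)\mapsto A_nh+B_ng\in\mathcal{H}_0$, not a block-diagonal operator into $\mathcal{H}_0\oplus\mathcal{H}_0$. Its adjoint composed with $(A_n\oplus B_n)$ therefore gives
\[
(\Psi_n\oplus\Phi_n)^*(A_n\oplus B_n)(h\oplus g)=\bigl(\Psi_n^*(A_nh+B_ng)\bigr)\oplus\bigl(\Phi_n^*(A_nh+B_ng)\bigr),
\]
which as a $2\times 2$ operator matrix on $\mathcal{H}\oplus\mathcal{H}$ has off-diagonal entries $\Psi_n^*B_n$ and $\Phi_n^*A_n$. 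Summing over $n$, these cross terms become $\sum_n\Psi_n^*B_n$ and $\sum_n\Phi_n^*A_n$, and it is exactly the orthogonality hypothesis that makes both of these vanish. Only then does the frame operator reduce to $S_{A,\Psi}\oplus S_{B,\Phi}$.

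So your claim that ``the orthogonality hypothesis is not actually needed for this first conclusion'' is false: without it, the frame operator is the full $2\times 2$ matrix $\begin{pmatrix} S_{A,\Psi} & \sum_n\Psi_n^*B_n\\ \sum_n\Phi_n^*A_n & S_{B,\Phi}\end{pmatrix}$, and there is no reason for this to be invertible (or even for the off-diagonal series to converge). The paper's proof carries out exactly this calculation and uses orthogonality to kill the cross terms; once you correct this step, the rest of your argument (invertibility of $S_{A,\Psi}\oplus S_{B,\Phi}$, and the Parseval conclusion) goes through as you wrote it.
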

\begin{proof}
	Let $ h \oplus g \in \mathcal{H}\oplus \mathcal{H}$. Then 
	\begin{align*}
	S_{A\oplus B, \Psi\oplus \Phi}(h\oplus g)&=\sum_{n=1}^\infty(\Psi_n\oplus \Phi_n)^*(A_n\oplus B_n)(h\oplus g)=\sum_{n=1}^\infty(\Psi_n\oplus \Phi_n)^*(A_nh+ B_ng)\\
	&=\sum_{n=1}^\infty(\Psi_n^*(A_nh+B_ng)\oplus \Phi_n^*(A_nh+B_ng))
	\\
	&=\left(\sum_{n=1}^\infty\Psi_n^*A_nh+\sum_{n=1}^\infty\Psi_n^*B_ng\right)\oplus \left(\sum_{n=1}^\infty\Phi_n^*A_nh+\sum_{n=1}^\infty\Phi_n^*B_ng\right)\\
	&=(S_{A,\Psi}h+0)\oplus(0+S_{B,\Phi}g) =(S_{A,\Psi}\oplus S_{B,\Phi})(h\oplus g).
	\end{align*}	
\end{proof}  
\section{EQUIVALENCE OF WEAK OPERATOR-VALUED FRAMES}\label{SIMILARITYCOMPOSITIONANDTENSORPRODUCT}
Definition \ref{SIMILARITYOVFKAFTALLARSONZHANG} introduced similarity for OVFs. Here is the  similar notion for factorable weak OVFs.
\begin{definition}
	A factorable weak OVF   $( \{B_n\}_{n},  \{\Phi_n\}_{n} ) $  in $ \mathcal{B}(\mathcal{H}, \mathcal{H}_0)$    is said to be \textbf{similar}  to a factorable  weak OVF  $  ( \{A_n\}_{n},  \{\Psi_n\}_{n} ) $ in $ \mathcal{B}(\mathcal{H}, \mathcal{H}_0)$  if there exist bounded  invertible  $ R_{A,B}, R_{\Psi, \Phi} \in \mathcal{B}(\mathcal{H})$   such that 
	\begin{align*}
	B_n=A_nR_{A,B} , \quad \Phi_n=\Psi_nR_{\Psi, \Phi}, \quad \forall n \in \mathbb{N}.
	\end{align*} 
\end{definition}
Since  $ R_{A,B} $ and $ R_{\Psi, \Phi}$ are bounded invertible, it easily follows that the notion similarity is symmetric. We further have that the relation ``similarity" is an equivalence relation on the set 
\begin{align*}
\left\{( \{A_n\}_{n},  \{\Psi_n\}_{n} ):( \{A_n\}_{n},  \{\Psi_n\}_{n} ) \text{  is a factorable weak OVF}\right\}.
\end{align*} 
Similar frames have nice property that knowing analysis, synthesis and frame operators of one give that of another.
\begin{lemma}\label{SIM}
	Let $  ( \{A_n\}_{n},  \{\Psi_n\}_{n} ) $ and  $  ( \{B_n\}_{n},  \{\Phi_n\}_{n} ) $ be similar factorable weak OVFs  and   $B_n=A_nR_{A,B} ,\Phi_n=\Psi_nR_{\Psi, \Phi},  \forall n \in \mathbb{N}$, for some invertible $ R_{A,B} ,R_{\Psi, \Phi} \in \mathcal{B}(\mathcal{H}).$ Then 
	\begin{enumerate}[label=(\roman*)]
		\item $ \theta_B=\theta_A R_{A,B}, \theta_\Phi=\theta_\Psi R_{\Psi,\Phi}$.
		\item $S_{B,\Phi}=R_{\Psi,\Phi}^*S_{A, \Psi}R_{A,B}$.
		\item $P_{B,\Phi}=P_{A, \Psi}.$
	\end{enumerate}
\end{lemma}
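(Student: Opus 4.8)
The plan is to prove the three identities in order, exploiting the definitions of the analysis operators $\theta_A$, $\theta_\Psi$, $\theta_B$, $\theta_\Phi$ together with the relations $B_n = A_n R_{A,B}$ and $\Phi_n = \Psi_n R_{\Psi,\Phi}$. Everything is a direct computation once these relations are inserted into the definitions; no deep input beyond the fact (proved earlier) that a factorable weak OVF has well-defined bounded analysis operators and a factorization $S_{A,\Psi} = \theta_\Psi^*\theta_A$.

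First I would establish (i). For $h \in \mathcal{H}$,
\begin{align*}
\theta_B h = \sum_{n=1}^\infty L_n B_n h = \sum_{n=1}^\infty L_n A_n R_{A,B} h = \theta_A R_{A,B} h,
\end{align*}
where interchanging the sum with the (bounded, linear) operator $R_{A,B}$ is legitimate because $R_{A,B}h$ is a fixed vector and the series defining $\theta_A$ converges for every element of $\mathcal{H}$. Hence $\theta_B = \theta_A R_{A,B}$, and the same argument with $A,B$ replaced by $\Psi,\Phi$ and $R_{A,B}$ by $R_{\Psi,\Phi}$ gives $\theta_\Phi = \theta_\Psi R_{\Psi,\Phi}$. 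Taking adjoints also yields $\theta_B^* = R_{A,B}^*\theta_A^*$ and $\theta_\Phi^* = R_{\Psi,\Phi}^*\theta_\Psi^*$, which I will reuse below.

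Next, (ii) follows from (i) and the factorization $S_{B,\Phi} = \theta_\Phi^*\theta_B$: substituting,
\begin{align*}
S_{B,\Phi} = \theta_\Phi^*\theta_B = (\theta_\Psi R_{\Psi,\Phi})^*(\theta_A R_{A,B}) = R_{\Psi,\Phi}^*\,\theta_\Psi^*\theta_A\, R_{A,B} = R_{\Psi,\Phi}^* S_{A,\Psi} R_{A,B}.
\end{align*}
In particular $S_{B,\Phi}$ is invertible with $S_{B,\Phi}^{-1} = R_{A,B}^{-1} S_{A,\Psi}^{-1} (R_{\Psi,\Phi}^*)^{-1}$, since $R_{A,B}$, $R_{\Psi,\Phi}$ and $S_{A,\Psi}$ are all invertible; this fact is needed to make sense of $P_{B,\Phi}$. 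Then for (iii), using $P_{B,\Phi} = \theta_B S_{B,\Phi}^{-1}\theta_\Phi^*$ and the formulas just derived,
\begin{align*}
P_{B,\Phi} &= (\theta_A R_{A,B})\big(R_{A,B}^{-1} S_{A,\Psi}^{-1} (R_{\Psi,\Phi}^*)^{-1}\big)(R_{\Psi,\Phi}^*\theta_\Psi^*) \\
&= \theta_A S_{A,\Psi}^{-1}\theta_\Psi^* = P_{A,\Psi},
\end{align*}
the middle $R$-factors cancelling in pairs. This completes the proof. There is no real obstacle here: the only points requiring a word of care are the interchange of the infinite series with the bounded operators $R_{A,B}$, $R_{\Psi,\Phi}$ in (i) (handled by applying them to fixed vectors, where convergence is already known) and the invertibility of $S_{B,\Phi}$ needed to define $P_{B,\Phi}$ in (iii) (which drops out of (ii)). I would state these two remarks explicitly and otherwise let the computations above stand.
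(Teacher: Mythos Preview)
Your proof is correct and follows essentially the same route as the paper: compute $\theta_B=\theta_A R_{A,B}$ and $\theta_\Phi=\theta_\Psi R_{\Psi,\Phi}$ from the series definition, then derive (ii) and (iii) by substitution and cancellation. The only cosmetic difference is that the paper obtains (ii) directly from the series $S_{B,\Phi}=\sum_n \Phi_n^*B_n$ rather than via the factorization $S_{B,\Phi}=\theta_\Phi^*\theta_B$, but these are equivalent once (i) is in hand.
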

\begin{proof}
	$ \theta_B=\sum_{n=1}^\infty L_nB_n=\sum_{n=1}^\infty L_nA_nR_{A,B}=\theta_AR_{A,B} $. Similarly $ \theta_\Phi=\theta_\Psi R_{\Psi,\Phi}$. Now using operators $\theta_B$ and $\theta_\Phi$ we get $S_{B,\Phi}=\sum_{n=1}^\infty \Phi_n^*B_n=\sum_{n=1}^\infty(\Psi_nR_{\Psi,\Phi})^*(A_nR_{A,B})=R_{\Psi, \Phi}^*\left (\sum_{n=1}^\infty\Psi_n^*A_n\right )R_{A,B}=R_{\Psi,\Phi}^*S_{A, \Psi}R_{A,B}$. We now use (i) and (ii) to get 
	\begin{align*}
	P_{B,\Phi}=\theta_BS_{B,\Phi}^{-1}\theta_\Phi^*=(\theta_AR_{A,B})(R_{\Psi,\Phi}^*S_{A, \Psi}R_{A,B})^{-1}(\theta_\Psi R_{\Psi,\Phi})^*=P_{A,\Psi}.
	\end{align*}
\end{proof}
We now classify similarity using operators.
\begin{theorem}\label{RIGHTSIMILARITY}
	For two factorable weak OVFs  $  ( \{A_n\}_{n},  \{\Psi_n\}_{n} ) $ and  $  ( \{B_n\}_{n},  \{\Phi_n\}_{n} ) $, the following are equivalent.
	\begin{enumerate}[label=(\roman*)]
		\item $B_n=A_nR_{A,B} , \Phi_n=\Psi_nR_{\Psi, \Phi} ,  \forall n \in \mathbb{N},$ for some invertible  $ R_{A,B} ,R_{\Psi, \Phi} \in \mathcal{B}(\mathcal{H}). $
		\item $\theta_B=\theta_AR_{A,B} , \theta_\Phi=\theta_\Psi R_{\Psi, \Phi} $ for some invertible  $ R_{A,B} ,R_{\Psi, \Phi} \in \mathcal{B}(\mathcal{H}). $
		\item $P_{B,\Phi}=P_{A,\Psi}.$
	\end{enumerate}
	If one of the above conditions is satisfied, then  invertible operators in  $ \operatorname{(i)}$ and  $ \operatorname{(ii)}$ are unique and are given by $R_{A,B}=S_{A,\Psi}^{-1}\theta_\Psi^*\theta_B$,   $R_{\Psi, \Phi}=(S_{A,\Psi}^{-1})^*\theta_A^*\theta_\Phi.$ In the case that $  ( \{A_n\}_{n},  \{\Psi_n\}_{n} ) $ is Parseval, then $  ( \{B_n\}_{n},  \{\Phi_n\}_{n} ) $ is  Parseval if and only if $R_{\Psi, \Phi}^*R_{A,B}=I_\mathcal{H} $   if and only if $R_{A,B}R_{\Psi, \Phi}^*=I_\mathcal{H} $.
\end{theorem}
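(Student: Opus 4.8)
The plan is to prove the cycle $(i)\Rightarrow(ii)\Rightarrow(iii)\Rightarrow(ii)\Rightarrow(i)$, with the uniqueness and the Parseval addendum handled at the end. The implications $(i)\Rightarrow(ii)$ and $(ii)\Rightarrow(iii)$ are essentially computational and are already packaged in Lemma~\ref{SIM}: if $B_n=A_nR_{A,B}$ and $\Phi_n=\Psi_nR_{\Psi,\Phi}$ then by definition of the analysis operator $\theta_B=\sum_n L_nB_n=\sum_n L_nA_nR_{A,B}=\theta_AR_{A,B}$ and similarly $\theta_\Phi=\theta_\Psi R_{\Psi,\Phi}$; and part~(iii) of that same lemma gives $P_{B,\Phi}=P_{A,\Psi}$. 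So for $(i)\Rightarrow(ii)\Rightarrow(iii)$ I would just invoke Lemma~\ref{SIM}.

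For $(ii)\Rightarrow(i)$ I would argue as in the analogous step of Theorem~\ref{SEQUENTIALSIMILARITY}: from $\theta_B=\theta_AR_{A,B}$ apply $L_n^*$ on the left and use $L_n^*L_m=\delta_{n,m}I_{\mathcal{H}_0}$ (Proposition~\ref{LJORTHO}) together with the fact that $L_n^*\theta_A=A_n$ (since $\theta_A h=\sum_k L_kA_kh$ and $L_n^*L_k=\delta_{n,k}I$), yielding $B_n=L_n^*\theta_B=L_n^*\theta_AR_{A,B}=A_nR_{A,B}$; likewise $\Phi_n=\Psi_nR_{\Psi,\Phi}$. The step $(iii)\Rightarrow(ii)$ is the substantive one. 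Given $P_{B,\Phi}=P_{A,\Psi}$, I would first observe that $P_{A,\Psi}$ is an idempotent onto $\theta_A(\mathcal{H})$ and $P_{B,\Phi}$ is an idempotent onto $\theta_B(\mathcal{H})$, hence $\theta_A(\mathcal{H})=\theta_B(\mathcal{H})$; dually, reading $P_{A,\Psi}=\theta_A S_{A,\Psi}^{-1}\theta_\Psi^*$ one sees the \emph{adjoint} ranges of $\theta_\Psi^*$ and $\theta_\Phi^*$ match as well. Then I would set $R_{A,B}\coloneqq S_{A,\Psi}^{-1}\theta_\Psi^*\theta_B$ and $R_{\Psi,\Phi}\coloneqq (S_{A,\Psi}^{-1})^*\theta_A^*\theta_\Phi$ and verify: using $\theta_B=P_{A,\Psi}\theta_B$ (because $\theta_B$ maps into $\theta_B(\mathcal{H})=\theta_A(\mathcal{H})=\mathrm{ran}\,P_{A,\Psi}$) we get $\theta_A R_{A,B}=\theta_A S_{A,\Psi}^{-1}\theta_\Psi^*\theta_B=P_{A,\Psi}\theta_B=\theta_B$; and similarly $\theta_\Psi R_{\Psi,\Phi}=\theta_\Phi$. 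Invertibility of $R_{A,B}$ and $R_{\Psi,\Phi}$ I would establish by exhibiting two-sided inverses built symmetrically from the other frame — precisely the computation in Theorem~\ref{SEQUENTIALSIMILARITY}, with products such as $(S_{A,\Psi}^{-1}\theta_\Psi^*\theta_B)(S_{B,\Phi}^{-1}\theta_\Phi^*\theta_A)=S_{A,\Psi}^{-1}\theta_\Psi^* P_{B,\Phi}\theta_A=S_{A,\Psi}^{-1}\theta_\Psi^* P_{A,\Psi}\theta_A=I_\mathcal{H}$ and its three companions, reducing each time via $P_{B,\Phi}=P_{A,\Psi}$ and the idempotent identities $\theta_\Psi^*P_{A,\Psi}=\theta_\Psi^*$, $P_{A,\Psi}\theta_A=\theta_A$.

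For the uniqueness clause: if $B_n=A_n\widetilde R$ with $\widetilde R$ invertible then $\theta_B=\theta_A\widetilde R$, so $\theta_\Psi^*\theta_B=\theta_\Psi^*\theta_A\widetilde R=S_{A,\Psi}\widetilde R$, giving $\widetilde R=S_{A,\Psi}^{-1}\theta_\Psi^*\theta_B=R_{A,B}$; the formula for $R_{\Psi,\Phi}$ follows from $\theta_A^*\theta_\Phi=\theta_A^*\theta_\Psi R_{\Psi,\Phi}=(\theta_\Psi^*\theta_A)^*R_{\Psi,\Phi}=S_{A,\Psi}^*R_{\Psi,\Phi}$. For the Parseval addendum, when $S_{A,\Psi}=I_\mathcal{H}$, Lemma~\ref{SIM}(ii) gives $S_{B,\Phi}=R_{\Psi,\Phi}^*S_{A,\Psi}R_{A,B}=R_{\Psi,\Phi}^*R_{A,B}$, so $(\{B_n\},\{\Phi_n\})$ is Parseval iff $R_{\Psi,\Phi}^*R_{A,B}=I_\mathcal{H}$; since both factors are invertible this is equivalent to $R_{A,B}R_{\Psi,\Phi}^*=I_\mathcal{H}$ (a left inverse of an invertible operator is its two-sided inverse). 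The main obstacle I anticipate is purely bookkeeping rather than conceptual: unlike the ordinary OVF case, here the two operator sequences $\{A_n\}$ and $\{\Psi_n\}$ play asymmetric roles ($S_{A,\Psi}=\theta_\Psi^*\theta_A$ is not self-adjoint), so one must be careful to pair $R_{A,B}$ with $\theta_\Psi^*$ and $R_{\Psi,\Phi}$ with $\theta_A^*$ in the right order and to track adjoints throughout the four invertibility computations; once the correct formulas are fixed, each reduction is a one-line application of $P_{B,\Phi}=P_{A,\Psi}$ and the idempotent relations.
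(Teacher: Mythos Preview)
Your proposal is correct and follows essentially the same route as the paper's proof: Lemma~\ref{SIM} for $(i)\Rightarrow(ii)\Rightarrow(iii)$, the $L_n^*$-reduction for $(ii)\Rightarrow(i)$, and for $(iii)\Rightarrow(ii)$ the definitions $R_{A,B}=S_{A,\Psi}^{-1}\theta_\Psi^*\theta_B$, $R_{\Psi,\Phi}=(S_{A,\Psi}^{-1})^*\theta_A^*\theta_\Phi$ together with the four invertibility verifications via $P_{B,\Phi}=P_{A,\Psi}$. The only cosmetic difference is that the paper obtains $P_{A,\Psi}\theta_B=\theta_B$ directly from $\theta_B=P_{B,\Phi}\theta_B$ (and $P_{A,\Psi}^*\theta_\Phi=\theta_\Phi$ from $\theta_\Phi=P_{B,\Phi}^*\theta_\Phi$) rather than through the range-of-idempotent argument you sketch, but the two justifications are equivalent.
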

\begin{proof}
	The implications (i) $\Rightarrow$ (ii) $\Rightarrow$ (iii) follow from Lemma \ref{SIM}.  Assume (ii) holds. We show (i) holds. Using Equation (\ref{LEQUATION}), $ B_n=L_n^*\theta_B=L_n^*\theta_AR_{A,B}'=A_nR_{A,B}'$; the same procedure gives $ \Phi_n$ also.   Assume (iii). We note the following $ \theta_B=P_{B,\Phi}\theta_B$ and $ \theta_\Phi=P_{B,\Phi}^*\theta_\Phi.$ Using these, $ \theta_B=P_{A,\Psi}\theta_B=\theta_A(S_{A,\Psi}^{-1}\theta_\Psi^*\theta_B)$ and $ \theta_\Phi=P_{A,\Psi}^*\theta_\Phi=(\theta_AS_{A,\Psi}^{-1}\theta_\Psi^*)^*\theta_\Phi=\theta_\Psi((S_{A,\Psi}^{-1})^*\theta_A^*\theta_\Phi).$ We now try to show that both $S_{A,\Psi}^{-1}\theta_\Psi^*\theta_B$  and $(S_{A,\Psi}^{-1})^*\theta_A^*\theta_\Phi$ are invertible. This is achieved via, 
	\begin{align*}
	(S_{A,\Psi}^{-1}\theta_\Psi^*\theta_B)(S_{B,\Phi}^{-1}\theta_\Phi^*\theta_A)=S_{A,\Psi}^{-1}\theta_\Psi^*P_{B,\Phi}\theta_A= S_{A,\Psi}^{-1}\theta_\Psi^*P_{A,\Psi}\theta_A= S_{A,\Psi}^{-1}\theta_\Psi^*\theta_A=I_\mathcal{H},\\
	( S_{B,\Phi}^{-1}\theta_\Phi^*\theta_A)(S_{A,\Psi}^{-1}\theta_\Psi^*\theta_B)= S_{B,\Phi}^{-1}\theta_\Phi^*P_{A,\Psi}\theta_B=S_{B,\Phi}^{-1}\theta_\Phi^*P_{B,\Phi}\theta_B=S_{B,\Phi}^{-1}\theta_\Phi^*\theta_B=I_\mathcal{H}
	\end{align*}
	and 
	\begin{align*}
	((S_{A,\Psi}^{-1})^*\theta_A^*\theta_\Phi)((S_{B,\Phi}^{-1})^*\theta_B^*\theta_\Psi)&=(S_{A,\Psi}^{-1})^*\theta_A^*P_{B,\Phi}^*\theta_\Psi
	=(S_{A,\Psi}^{-1})^*\theta_A^*P_{A,\Psi}^*\theta_\Psi\\
	&=(S_{A,\Psi}^{-1})^*\theta_A^*\theta_\Psi=I_\mathcal{H},\\
	((S_{B,\Phi}^{-1})^*\theta_B^*\theta_\Psi)((S_{A,\Psi}^{-1})^*\theta_A^*\theta_\Phi)&=(S_{B,\Phi}^{-1})^*\theta_B^*P_{A,\Psi}^* \theta_\Phi=(S_{B,\Phi}^{-1})^*\theta_B^*P_{B,\Phi}^* \theta_\Phi\\
	&=(S_{B,\Phi}^{-1})^*\theta_B^* \theta_\Phi= I_\mathcal{H}.
	\end{align*} 
	Let $ R_{A,B}, R_{\Psi,\Phi} \in \mathcal{B}(\mathcal{H}) $ be invertible. From the previous arguments, $ R_{A,B}$ and $R_{\Psi,\Phi} $ satisfy (i) if and only if  they satisfy (ii). Let $B_n=A_nR_{A,B} , \Phi_n=\Psi_nR_{\Psi, \Phi} ,  \forall n \in \mathbb{N}.$ Using (ii), $\theta_B=\theta_AR_{A,B} , \theta_\Phi=\theta_\Psi R_{\Psi, \Phi}$  $\implies$  $\theta_\Psi^*\theta_B=\theta_\Psi^*\theta_AR_{A,B}=S_{A,\Psi}R_{A,B} , \theta_A^*\theta_\Phi=\theta_A^*\theta_\Psi R_{\Psi, \Phi}=S_{A,\Psi}^*R_{\Psi, \Phi}$. These imply the formula for $R_{A,B}$ and $ R_{\Psi, \Phi}.$ For the last,  we recall $ S_{B,\Phi}=R_{\Psi,\Phi}^*S_{A, \Psi}R_{A,B}$.  
\end{proof}
\begin{corollary}
	For any given factorable weak OVF $  ( \{A_n\}_{n},  \{\Psi_n\}_{n} ) $, the canonical dual of $  ( \{A_n\}_{n},  \{\Psi_n\}_{n} ) $ is the only dual factorable weak OVF  that is similar to $  ( \{A_n\}_{n}, $ $ \{\Psi_n\}_{n} ) $.
\end{corollary}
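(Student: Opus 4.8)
The plan is to transcribe the argument of Proposition \ref{LASTONE} (the $p$-ASF case) to the weak-OVF setting, using Theorem \ref{RIGHTSIMILARITY} in place of Theorem \ref{SEQUENTIALSIMILARITY} and the description of the canonical dual recorded just after the definition of a dual weak OVF. First I would suppose that $(\{B_n\}_{n}, \{\Phi_n\}_{n})$ is a factorable weak OVF that is simultaneously similar and dual to $(\{A_n\}_{n}, \{\Psi_n\}_{n})$. Similarity supplies bounded invertible operators $R_{A,B}, R_{\Psi,\Phi}\in\mathcal{B}(\mathcal{H})$ with $B_n=A_nR_{A,B}$ and $\Phi_n=\Psi_nR_{\Psi,\Phi}$ for all $n$, and by the uniqueness clause of Theorem \ref{RIGHTSIMILARITY} these operators are forced to equal $R_{A,B}=S_{A,\Psi}^{-1}\theta_\Psi^*\theta_B$ and $R_{\Psi,\Phi}=(S_{A,\Psi}^{-1})^*\theta_A^*\theta_\Phi$.

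Next I would unpack the duality hypothesis. Using $L_n^*L_k=\delta_{n,k}I_{\mathcal{H}_0}$ from Proposition \ref{LJORTHO}, one computes $\theta_\Psi^*\theta_B=\sum_{n=1}^\infty\Psi_n^*B_n$ and $\theta_\Phi^*\theta_A=\sum_{n=1}^\infty\Phi_n^*A_n$, so the two defining equations of a dual weak OVF become the operator identities $\theta_\Psi^*\theta_B=I_\mathcal{H}$ and $\theta_\Phi^*\theta_A=I_\mathcal{H}$; taking the adjoint of the latter gives $\theta_A^*\theta_\Phi=I_\mathcal{H}$. Substituting these into the formulas of the previous paragraph yields $R_{A,B}=S_{A,\Psi}^{-1}$ and $R_{\Psi,\Phi}=(S_{A,\Psi}^{-1})^*$, whence $B_n=A_nS_{A,\Psi}^{-1}=\widetilde{A}_n$ and $\Phi_n=\Psi_n(S_{A,\Psi}^{-1})^*=\widetilde{\Psi}_n$ for every $n$; that is, $(\{B_n\}_{n}, \{\Phi_n\}_{n})$ is precisely the canonical dual of $(\{A_n\}_{n}, \{\Psi_n\}_{n})$. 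For the converse (existence) half, I would note that the canonical dual is itself factorable, since $\theta_{\widetilde A}=\theta_AS_{A,\Psi}^{-1}$ and $\theta_{\widetilde\Psi}=\theta_\Psi(S_{A,\Psi}^{-1})^*$ are well-defined bounded operators, and it is similar to $(\{A_n\}_{n}, \{\Psi_n\}_{n})$ via the bounded invertible operators $S_{A,\Psi}^{-1}$ and $(S_{A,\Psi}^{-1})^*$, so it does qualify as a dual factorable weak OVF that is similar; the displayed computation shows it is the only one.

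The argument is essentially bookkeeping once Theorem \ref{RIGHTSIMILARITY} is in hand, so I do not expect a genuine obstacle. The one point that requires a little care is the translation of the two summation conditions $\sum_{n=1}^\infty\Psi_n^*B_n=\sum_{n=1}^\infty\Phi_n^*A_n=I_\mathcal{H}$ into the clean identities $\theta_\Psi^*\theta_B=\theta_A^*\theta_\Phi=I_\mathcal{H}$ (including the passage through the adjoint), since it is these identities, rather than the sums themselves, that feed into the uniqueness formulas for $R_{A,B}$ and $R_{\Psi,\Phi}$.
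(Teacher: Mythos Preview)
Your proposal is correct and follows essentially the same route as the paper: assume a factorable weak OVF is both dual and similar, invoke the uniqueness formulas $R_{A,B}=S_{A,\Psi}^{-1}\theta_\Psi^*\theta_B$ and $R_{\Psi,\Phi}=(S_{A,\Psi}^{-1})^*\theta_A^*\theta_\Phi$ from Theorem~\ref{RIGHTSIMILARITY}, and then use the duality identities $\theta_\Psi^*\theta_B=I_\mathcal{H}=\theta_A^*\theta_\Phi$ to conclude $R_{A,B}=S_{A,\Psi}^{-1}$ and $R_{\Psi,\Phi}=(S_{A,\Psi}^{-1})^*$. Your additional remark that the canonical dual is itself factorable and similar (the existence half) is a nice completeness touch that the paper leaves implicit.
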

\begin{proof}
	Let  $ (\{B_n\}_{n} , \{\Phi_n\}_{n} )$ be a factorable  weak OVF which is both dual and similar for  $  ( \{A_n\}_{n},  \{\Psi_n\}_{n} ) $.  Then  we have $ \theta_B^*\theta_\Psi=I_\mathcal{H}=\theta_\Phi^*\theta_A$ and  there exist invertible $ R_{A,B},R_{\Psi,\Phi}\in \mathcal{B}(\mathcal{H})$ such that  $B_n=A_nR_{A,B} , \Phi_n=\Psi_nR_{\Psi, \Phi} ,  \forall n \in \mathbb{N} $. Theorem \ref{RIGHTSIMILARITY} gives  $R_{A,B}=S_{A,\Psi}^{-1}\theta_\Psi^*\theta_B, R_{\Psi, \Phi}=S_{A,\Psi}^{-1}\theta_A^*\theta_\Phi.$ But then $R_{A,B}=S_{A,\Psi}^{-1}I_\mathcal{H}=S_{A,\Psi}^{-1}$,  $ R_{\Psi, \Phi}=(S_{A,\Psi}^{-1})^*I_\mathcal{H}=(S_{A,\Psi}^{-1})^*.$ Therefore  $ (\{B_n\}_{n} , \{\Phi_n\}_{n} )$ is the  canonical  dual for  $  ( \{A_n\}_{n}, $ $ \{\Psi_n\}_{n} ) $.
\end{proof}
\begin{corollary}
	Two similar factorable weak OVF   cannot be orthogonal.
\end{corollary}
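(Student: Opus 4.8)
The statement to prove is: \emph{Two similar factorable weak OVFs cannot be orthogonal.} The plan is to argue by contradiction using the operator-theoretic characterizations of similarity and orthogonality developed in the preceding results, mirroring the proof of Proposition \ref{LASTTWO} in the p-ASF chapter.

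First I would suppose that $( \{A_n\}_{n}, \{\Psi_n\}_{n} )$ and $( \{B_n\}_{n}, \{\Phi_n\}_{n} )$ are two factorable weak OVFs that are both similar and orthogonal. Similarity gives bounded invertible operators $R_{A,B}, R_{\Psi,\Phi} \in \mathcal{B}(\mathcal{H})$ with $B_n = A_nR_{A,B}$ and $\Phi_n = \Psi_nR_{\Psi,\Phi}$ for all $n$. By Lemma \ref{SIM} (part (i)), this yields $\theta_B = \theta_A R_{A,B}$ and $\theta_\Phi = \theta_\Psi R_{\Psi,\Phi}$ at the level of the analysis operators.

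Next I would compute the cross frame operator appearing in the orthogonality condition. Orthogonality of $( \{B_n\}_{n}, \{\Phi_n\}_{n} )$ to $( \{A_n\}_{n}, \{\Psi_n\}_{n} )$ requires $\sum_{n=1}^\infty \Psi_n^*B_n = 0$, i.e. $\theta_\Psi^*\theta_B = 0$ (using the factorization, as in the proof of Lemma \ref{FIRSTLEMMA}). But substituting $\theta_B = \theta_A R_{A,B}$ gives
\begin{align*}
\theta_\Psi^*\theta_B = \theta_\Psi^*\theta_A R_{A,B} = S_{A,\Psi}R_{A,B}.
\end{align*}
Since $( \{A_n\}_{n}, \{\Psi_n\}_{n} )$ is a weak OVF, $S_{A,\Psi}$ is bounded invertible, and $R_{A,B}$ is bounded invertible by hypothesis; hence $S_{A,\Psi}R_{A,B}$ is bounded invertible and in particular nonzero. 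This contradicts $\theta_\Psi^*\theta_B = 0$, completing the argument.

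The main obstacle is essentially a bookkeeping one: making sure that the orthogonality condition is correctly translated into the operator identity $\theta_\Psi^*\theta_B = 0$ under the factorability assumption, and that Lemma \ref{SIM} is legitimately applicable (which it is, since both frames are assumed factorable, so their analysis operators are well-defined bounded linear maps). No genuine analytic difficulty arises; the proof is a two-line contradiction once the operator translations are in place, exactly parallel to Proposition \ref{LASTTWO}.
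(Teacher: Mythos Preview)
Your proof is correct and follows essentially the same approach as the paper: both use similarity to write $\theta_B=\theta_A R_{A,B}$ and then observe that the cross term $\theta_\Psi^*\theta_B=S_{A,\Psi}R_{A,B}$ (the paper uses its adjoint $\theta_B^*\theta_\Psi=R_{A,B}^*S_{A,\Psi}^*$) is a product of invertible operators and hence nonzero, contradicting orthogonality.
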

\begin{proof}
	Let a factorable weak OVF $  ( \{B_n\}_{n},  \{\Phi_n\}_{n} ) $ be similar to $  ( \{A_n\}_{n},  \{\Psi_n\}_{n} ) $. Choose invertible $ R_{A,B},R_{\Psi,\Phi}\in \mathcal{B}(\mathcal{H})$ such that  $B_n=A_nR_{A,B} , \Phi_n=\Psi_nR_{\Psi, \Phi} ,  \forall n \in \mathbb{N} $. Using 	Theorem \ref{RIGHTSIMILARITY} and the invertibility of $R_{A,B}^* $ and $S_{A,\Psi}^* $, we get 
	\begin{align*}
	\theta_B^*\theta_\Psi=(\theta_AR_{A,B})^*\theta_\Psi=R_{A,B}^*\theta_A^*\theta_\Psi=R_{A,B}^*S_{A,\Psi}^*\neq 0.
	\end{align*}
\end{proof}
For every factorable weak OVF $  ( \{A_n\}_{n},  \{\Psi_n\}_{n} ) $, each  of `OVFs'  $( \{A_nS_{A, \Psi}^{-1}\}_{n}, \{\Psi_n\}_{n})$    and  $ (\{A_n \}_{n}, \{\Psi_n(S_{A,\Psi}^{-1})^*\}_{n})$ is a Parseval OVF which is similar to  $  ( \{A_n\}_{n},  \{\Psi_n\}_{n} ) $.  Thus every OVF is similar to  Parseval OVFs.

\section{WEAK OPERATOR-VALUED FRAMES GENERATED BY GROUPS AND GROUP LIKE UNITARY SYSTEMS} \label{FRAMESANDDISCRETEGROUPREPRESENTATIONS}
In this section $G$ denotes discrete group and $\pi$ denotes unitary representation of $G$. Identity element of $G$ is denoted by $e$.
\begin{definition}
	Let $ \pi$ be a unitary representation of a discrete 
	group $ G$ on  a Hilbert space $ \mathcal{H}.$ An operator $ A$ in $ \mathcal{B}(\mathcal{H}, \mathcal{H}_0)$ is called a \textbf{factorable   operator frame generator} (resp. a  Parseval frame generator) w.r.t. an operator $ \Psi$ in $ \mathcal{B}(\mathcal{H}, \mathcal{H}_0)$ if $(\{A_g\coloneqq A \pi_{g^{-1}}\}_{g\in G}, \{\Psi_g\coloneqq \Psi \pi_{g^{-1}}\}_{g\in G})$ is a factorable weak OVF (resp.  Parseval)  in $ \mathcal{B}(\mathcal{H}, \mathcal{H}_0)$. In this case, we write $ (A,\Psi)$ is an operator  frame generator for $\pi$.
\end{definition}
\begin{proposition}\label{REPRESENATIONLEMMA}
	Let $ (A,\Psi)$ and $ (B,\Phi)$ be   operator frame generators    in $\mathcal{B}(\mathcal{H},  \mathcal{H}_0)$ for a unitary representation $ \pi$ of  $G$ on $ \mathcal{H}.$ Then
	\begin{enumerate}[label=(\roman*)]
		\item $ \theta_A\pi_g=(\lambda_g\otimes I_{\mathcal{H}_0})\theta_A,  \theta_\Psi \pi_g=(\lambda_g\otimes I_{\mathcal{H}_0})\theta_\Psi,  \forall g \in G.$
		\item $ \theta_A^*\theta_B,   \theta_\Psi^*\theta_\Phi,\theta_A^*\theta_\Phi$ are in the commutant $ \pi(G)'$ of $ \pi(G)''.$ Further, $ S_{A,\Psi} \in \pi(G)'$. 
		\item $ \theta_AT\theta_\Psi^*, \theta_AT\theta_B^*, \theta_\Psi T\theta_\Phi^* \in \mathscr{R}(G)\otimes \mathcal{B}(\mathcal{H}_0), \forall T \in \pi(G)'.$ In particular, $ P_{A, \Psi} \in \mathscr{R}(G)$ $\otimes \mathcal{B}(\mathcal{H}_0). $
	\end{enumerate}
\end{proposition}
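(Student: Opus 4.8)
The plan is to establish the three claims in order, exploiting the factorization machinery already built for factorable weak OVFs together with the defining covariance relation $A_g = A\pi_{g^{-1}}$, $\Psi_g = \Psi\pi_{g^{-1}}$. Throughout I will use the identity $\lambda_g\chi_q = \chi_{gq}$ for the left regular representation (equivalently $\lambda_g\chi_q(r)=\chi_q(g^{-1}r)$), and the fact that $\{L_n\}_n$ indexed by $G$ satisfies $L_pL_q^* \otimes$-type relations coming from Proposition \ref{LJORTHO}, now read with the orthonormal basis $\{\chi_g\}_{g\in G}$ of $\ell^2(G)$ in place of $\{e_n\}_n$.

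First I would prove (i). Fix $g\in G$ and $h\in\mathcal H$. By definition $\theta_A\pi_g h = \sum_{q\in G} L_q A_q \pi_g h = \sum_{q\in G} L_q A \pi_{q^{-1}}\pi_g h = \sum_{q\in G} L_q A \pi_{q^{-1}g} h$. Re-indexing the sum by $r = g^{-1}q$ (so $q = gr$ and $q^{-1}g = r^{-1}$) gives $\sum_{r\in G} L_{gr} A\pi_{r^{-1}} h = \sum_{r\in G} L_{gr} A_r h$. Now one checks directly from the definitions that $L_{gr} = (\lambda_g \otimes I_{\mathcal H_0}) L_r$, since $L_{gr} y = \chi_{gr}\otimes y = (\lambda_g\chi_r)\otimes y = (\lambda_g\otimes I_{\mathcal H_0})(\chi_r\otimes y)$. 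Substituting, $\theta_A\pi_g h = (\lambda_g\otimes I_{\mathcal H_0})\sum_{r\in G} L_r A_r h = (\lambda_g\otimes I_{\mathcal H_0})\theta_A h$. The identity for $\theta_\Psi$ is verbatim the same with $\Psi$ in place of $A$. This is mostly bookkeeping with the re-indexing and the covariance $L_{gr} = (\lambda_g\otimes I)L_r$.

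Next, (ii) follows formally from (i). Taking adjoints in (i) and using that $\lambda_g$ is unitary (so $(\lambda_g\otimes I_{\mathcal H_0})^* = \lambda_{g^{-1}}\otimes I_{\mathcal H_0} = (\lambda_g\otimes I_{\mathcal H_0})^{-1}$), we get $\pi_g^*\theta_A^* = \theta_A^*(\lambda_g\otimes I_{\mathcal H_0})^*$, hence $\theta_A^*(\lambda_g\otimes I_{\mathcal H_0}) = \pi_g\theta_A^*$ and similarly for $\theta_\Psi^*,\theta_\Phi^*$. Then for any $g$,
\[
\pi_g(\theta_A^*\theta_B) = \theta_A^*(\lambda_g\otimes I_{\mathcal H_0})\theta_B = \theta_A^*\theta_B\,\pi_g,
\]
using (i) applied to $\theta_B$ in the middle step; thus $\theta_A^*\theta_B\in\pi(G)'$. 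The same computation with the pairs $(\Psi,\Phi)$ and $(A,\Phi)$ gives $\theta_\Psi^*\theta_\Phi,\theta_A^*\theta_\Phi\in\pi(G)'$. In particular $S_{A,\Psi} = \theta_\Psi^*\theta_A\in\pi(G)'$ by the factorization $S_{A,\Psi}=\theta_\Psi^*\theta_A$ established for factorable weak OVFs.

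Finally, (iii). For $T\in\pi(G)'$ and $g\in G$, compute
\[
(\lambda_g\otimes I_{\mathcal H_0})(\theta_A T\theta_\Psi^*)(\lambda_g\otimes I_{\mathcal H_0})^* = \theta_A \pi_g T \pi_g^* \theta_\Psi^* = \theta_A T \theta_\Psi^*,
\]
using (i) and its adjoint form on the outside and $\pi_g T\pi_g^* = T$ in the middle; so $\theta_A T\theta_\Psi^*$ commutes with every $\lambda_g\otimes I_{\mathcal H_0}$, i.e.\ lies in $(\{\lambda_g\otimes I_{\mathcal H_0} : g\in G\})' = \mathscr L(G)'\,\bar\otimes\,\mathcal B(\mathcal H_0) = \mathscr R(G)\,\bar\otimes\,\mathcal B(\mathcal H_0)$, invoking $\mathscr L(G)' = \mathscr R(G)$. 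The same argument handles $\theta_A T\theta_B^*$ and $\theta_\Psi T\theta_\Phi^*$. Since $S_{A,\Psi}^{-1}\in\pi(G)'$ (as $S_{A,\Psi}\in\pi(G)'$ and the commutant is an algebra closed under inversion of invertibles), taking $T = S_{A,\Psi}^{-1}$ gives $P_{A,\Psi} = \theta_A S_{A,\Psi}^{-1}\theta_\Psi^* \in \mathscr R(G)\,\bar\otimes\,\mathcal B(\mathcal H_0)$.

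\textbf{Main obstacle.} The computational content is light; the one point needing care is the identification of the commutant of $\{\lambda_g\otimes I_{\mathcal H_0}\}_{g\in G}$ inside $\mathcal B(\ell^2(G)\otimes\mathcal H_0)$ as $\mathscr R(G)\,\bar\otimes\,\mathcal B(\mathcal H_0)$ — this is a standard tensor-commutant fact, but one should cite it cleanly (it is the commutation theorem for von Neumann algebra tensor products, together with $\mathscr L(G)'=\mathscr R(G)$ already recalled in the text). Everything else is re-indexing of group sums and transporting the covariance relation of (i) through adjoints.
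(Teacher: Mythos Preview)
Your proposal is correct and follows essentially the same route as the paper: the re-indexing $q\mapsto gr$ together with $L_{gr}=(\lambda_g\otimes I_{\mathcal H_0})L_r$ for part (i), the transport through adjoints for (ii), and the commutation with $\lambda_g\otimes I_{\mathcal H_0}$ plus the tensor-commutant identification $(\mathscr L(G)\otimes\{I_{\mathcal H_0}\})'=\mathscr R(G)\otimes\mathcal B(\mathcal H_0)$ for (iii) are exactly what the paper does. The only cosmetic difference is that in (iii) you phrase it as conjugation-invariance while the paper writes the commutation relation directly; since $\lambda_g\otimes I_{\mathcal H_0}$ is unitary these are equivalent.
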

\begin{proof} Let $ g,p,q \in G $ and $ h \in \mathcal{H}_0.$
	\begin{enumerate}[label=(\roman*)]
		\item  From the definition of $ \lambda_g $ and $ \chi_q$,  we get $ \lambda_g\chi_q=\chi_{gq}.$ Therefore $ L_{gq}h=\chi_{gq}\otimes h= \lambda_g\chi_q\otimes h= (\lambda_g\otimes I_{\mathcal{H}_0})(\chi_q\otimes h)=(\lambda_g\otimes I_{\mathcal{H}_0})L_qh.$  Using this, 
		\begin{align*}
		\theta_A\pi_g&=\sum\limits_{p\in G} L_pA_p\pi_g=\sum\limits_{p\in G} L_pA\pi_{p^{-1}}\pi_g=\sum\limits_{p\in G} L_pA\pi_{{p^{-1}}g}\\
		&=\sum\limits_{q\in G} L_{gq}A\pi_{q^{-1}}=\sum\limits_{q\in G}(\lambda_g\otimes I_{\mathcal{H}_0}) L_{q}A\pi_{q^{-1}}=(\lambda_g\otimes I_{\mathcal{H}_0})\theta_A. 
		\end{align*}
		Similarly $ \theta_\Psi \pi_g=(\lambda_g\otimes I_{\mathcal{H}_0})\theta_\Psi.$
		\item $ \theta_A^*\theta_B\pi_g=\theta_A^* (\lambda_g\otimes I_{\mathcal{H}_0})\theta_B=((\lambda_{g^{-1}}\otimes I_{\mathcal{H}_0})\theta_A)^*\theta_B=(\theta_A\pi_{g^{-1}})^*\theta_B=\pi_g\theta_A^*\theta_B.$ In the same way, $ \theta_\Psi^*\theta_\Phi, \theta_A^*\theta_\Phi\in \pi(G)'.$ By taking $ B=A$ and $ \Phi=\Psi$ we get  $ S_{A,\Psi} \in \pi(G)'.$ 
		\item Let  $ T \in \pi(G)'.$ Then 
	\begin{align*}
	\theta_AT\theta_\Psi^*(\lambda_g\otimes I_{\mathcal{H}_0})&= \theta_AT((\lambda_{g^{-1}}\otimes I_{\mathcal{H}_0})\theta_\Psi)^*=\theta_AT\pi_g\theta_\Psi^*\\
	&=\theta_A\pi_gT\theta_\Psi^*=(\lambda_g\otimes I_{\mathcal{H}_0})\theta_AT\theta_\Psi^*.
	\end{align*}
		From the construction of $ \mathscr{L}(G),$ we now get $\theta_AT\theta_\Psi^* \in (\mathscr{L}(G)\otimes \{I_{\mathcal{H}_0}\})'=\mathscr{L}(G)'\otimes \{I_{\mathcal{H}_0}\}'=\mathscr{R}(G)\otimes \mathcal{B}(\mathcal{H}_0).$ Similarly $\theta_AT\theta_B^*, \theta_\Psi S\theta_\Phi^* \in \mathscr{R}(G)\otimes \mathcal{B}(\mathcal{H}_0), \forall  S \in \pi (G)'.$ For the choice  $ T=S_{A,\Psi}^{-1}$ we get $ P_{A, \Psi} \in \mathscr{R}(G)\otimes \mathcal{B}(\mathcal{H}_0). $
	\end{enumerate}
\end{proof}
\begin{theorem}\label{gc1}
	Let $ G$ be a discrete group and $( \{A_g\}_{g\in G}, $ $ \{\Psi_g\}_{g\in G})$ be a Parseval  factorable weak OVF  in $ \mathcal{B}(\mathcal{H},\mathcal{H}_0).$ Then there is a  unitary representation $ \pi$  of $ G$ on  $ \mathcal{H}$  for which 
	$$ A_g=A_e\pi_{g^{-1}}, \quad\Psi_g=\Psi_e\pi_{g^{-1}}, \quad\forall  g \in G$$
	if and only if 
	$$A_{gp}A_{gq}^*=A_pA_q^* ,\quad  A_{gp}\Psi_{gq}^*=A_p\Psi_q^*,\quad  \Psi_{gp}\Psi_{gq}^*=\Psi_p\Psi_q^*, \quad  \forall g,p,q \in G.$$
\end{theorem}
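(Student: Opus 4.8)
The plan is to mimic the proof of Theorem \ref{gc1}'s classical antecedent (\cite{KAFTALLARSONZHANG}, as recorded in the excerpt for ordinary OVFs) but carry both sequences $\{A_g\}$ and $\{\Psi_g\}$ through the argument in parallel. For the forward direction, suppose a unitary representation $\pi$ of $G$ on $\mathcal{H}$ exists with $A_g = A_e\pi_{g^{-1}}$ and $\Psi_g = \Psi_e\pi_{g^{-1}}$ for all $g$. Then for any $g,p,q\in G$ one simply computes
\begin{align*}
A_{gp}A_{gq}^* &= A_e\pi_{(gp)^{-1}}(A_e\pi_{(gq)^{-1}})^* = A_e\pi_{p^{-1}g^{-1}}\pi_{gq}A_e^* = A_e\pi_{p^{-1}}\pi_q A_e^* \\
&= A_e\pi_{p^{-1}}(A_e\pi_{q^{-1}})^* = A_pA_q^*,
\end{align*}
using only that $\pi$ is a homomorphism into the unitary group (so $\pi_{g^{-1}g}=I$ and $\pi_r^* = \pi_{r^{-1}}$). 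The same line with $\Psi_e$ in place of one or both copies of $A_e$ yields $A_{gp}\Psi_{gq}^* = A_p\Psi_q^*$ and $\Psi_{gp}\Psi_{gq}^* = \Psi_p\Psi_q^*$. This direction is routine.

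For the converse, assume the three covariance identities. The key step is to build $\pi$. Since $( \{A_g\}_{g\in G}, \{\Psi_g\}_{g\in G})$ is a Parseval factorable weak OVF, the analysis operators $\theta_A$ and $\theta_\Psi$ are well-defined bounded linear operators with $\theta_\Psi^*\theta_A = S_{A,\Psi} = I_\mathcal{H}$, so $\theta_\Psi^*$ is a bounded left-inverse-type operator for $\theta_A$ and $\theta_A$ is injective with closed range (Lemma \ref{DILATIONLEMMA}). For each $g\in G$ I would define $\pi_g := \theta_\Psi^*(\lambda_g\otimes I_{\mathcal{H}_0})\theta_A$, where $\lambda$ is the left regular representation of $G$ on $\ell^2(G)$ as in the excerpt (replacing the index $\mathbb{N}$ by $G$ throughout $L_n$, $\theta_A$, etc.). The covariance hypotheses are exactly what is needed to show that this family is multiplicative and unitary: one checks $\pi_g\pi_p = \pi_{gp}$ and $\pi_g^*\pi_g = I_\mathcal{H} = \pi_g\pi_g^*$ by expanding in terms of the $L_p$'s and using $L_p^*L_q = \delta_{p,q}I_{\mathcal{H}_0}$ together with identities such as $\sum_{g}\Psi_{rg}^*A_{rg} = \sum_g \Psi_g^*A_g = I_\mathcal{H}$ (a consequence of $S_{A,\Psi}=I$ and the third/second covariance relations) and $A_{gq}A_e^* = A_qA_e^*\cdot(\text{appropriate translate})$. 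I would also verify directly that $A_e\pi_{g^{-1}} = A_e\theta_\Psi^*(\lambda_{g^{-1}}\otimes I_{\mathcal{H}_0})\theta_A$ collapses to $A_g$: expanding $A_e\theta_\Psi^* = \Psi_e\cdots$, no—more carefully, $L_e^*\theta_A = A_e$ suggests instead defining $\pi_g$ so that $A_e\pi_{g^{-1}}$ telescopes correctly; the cleanest route is $\pi_g := \theta_\Psi^*(\lambda_g\otimes I_{\mathcal{H}_0})\theta_A$ and then $A_e\pi_{g^{-1}} = L_e^*\theta_A\theta_\Psi^*(\lambda_{g^{-1}}\otimes I)\theta_A = L_e^*P_{A,\Psi}(\lambda_{g^{-1}}\otimes I)\theta_A$, and the covariance gives $P_{A,\Psi}(\lambda_{g^{-1}}\otimes I)\theta_A = (\lambda_{g^{-1}}\otimes I)\theta_A$ on the relevant subspace (Proposition \ref{REPRESENATIONLEMMA}(i) type computation), so $A_e\pi_{g^{-1}} = L_e^*(\lambda_{g^{-1}}\otimes I)\theta_A = L_{g^{-1}}^*\theta_A = A_{g^{-1}}$—and here I must be careful with $g$ versus $g^{-1}$ bookkeeping, choosing the definition of $\pi$ so the indices land on $A_g$ and $\Psi_g$ as stated.

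The main obstacle I anticipate is precisely this index/normalization bookkeeping in the converse: getting $\pi_g := \theta_\Psi^*(\lambda_g\otimes I_{\mathcal{H}_0})\theta_A$ (or a left/right variant) to be simultaneously (a) a genuine unitary, (b) a homomorphism, and (c) to satisfy $A_g = A_e\pi_{g^{-1}}$ and $\Psi_g = \Psi_e\pi_{g^{-1}}$, using only the three covariance identities and Parsevalness. Unitarity and the homomorphism property will each require manipulating doubly-indexed sums $\sum_{p,q}$ and reindexing by $p\mapsto gp$, and the asymmetry between $A$ and $\Psi$ means I cannot simply quote the self-dual ($\Psi=A$) computation; I expect to need all three hypotheses $A_{gp}A_{gq}^*=A_pA_q^*$, $A_{gp}\Psi_{gq}^*=A_p\Psi_q^*$, $\Psi_{gp}\Psi_{gq}^*=\Psi_p\Psi_q^*$ at different points (the first for $\pi_g\pi_g^*=I$, the third for $\pi_g^*\pi_g=I$, the mixed one to relate the two representations and to recover $A_g,\Psi_g$). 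Once $\pi$ is constructed and shown unitary and multiplicative, verifying $A_g=A_e\pi_{g^{-1}}$, $\Psi_g=\Psi_e\pi_{g^{-1}}$ is a short computation with $L_e^*\theta_A=A_e$, $L_e^*\theta_\Psi=\Psi_e$, and then $(\{A_g\}_g,\{\Psi_g\}_g)$ being a Parseval factorable weak OVF generated by $\pi$ follows by definition.
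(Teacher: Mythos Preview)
Your approach is essentially the paper's: define $\pi_g\coloneqq\theta_\Psi^*(\lambda_g\otimes I_{\mathcal{H}_0})\theta_A$ and verify it is a unitary representation recovering $A_g,\Psi_g$. The paper organizes the converse more cleanly by first proving, from the three covariance hypotheses, the three commutation identities
\[
(\lambda_g\otimes I_{\mathcal{H}_0})\theta_A\theta_A^*=\theta_A\theta_A^*(\lambda_g\otimes I_{\mathcal{H}_0}),\quad
(\lambda_g\otimes I_{\mathcal{H}_0})\theta_A\theta_\Psi^*=\theta_A\theta_\Psi^*(\lambda_g\otimes I_{\mathcal{H}_0}),\quad
(\lambda_g\otimes I_{\mathcal{H}_0})\theta_\Psi\theta_\Psi^*=\theta_\Psi\theta_\Psi^*(\lambda_g\otimes I_{\mathcal{H}_0}),
\]
which then make the homomorphism, unitarity, and recovery steps one-line computations and eliminate the doubly-indexed sum bookkeeping you worry about. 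One correction: $L_e^*(\lambda_{g^{-1}}\otimes I_{\mathcal{H}_0})=((\lambda_g\otimes I_{\mathcal{H}_0})L_e)^*=L_g^*$, not $L_{g^{-1}}^*$, so $A_e\pi_{g^{-1}}=L_g^*\theta_A=A_g$ comes out with the right index.
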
 
\begin{proof}
	$(\Rightarrow)$
	$$A_{gp}\Psi_{gq}^*= A_e \pi_{(gp)^{-1}}(\Psi_e\pi_{(gq)^{-1}})^*=A_e\pi_{p^{-1}}\pi_{g^{-1}}\pi_g\pi_q\Psi_e^*=A_p\Psi_q^*, \quad\forall g,p,q \in G.$$
	Similarly we get other two equalities.
	
	$(\Leftarrow)$ Using  assumptions, we use the following three equalities in the proof, among them  we derive the second, remainings are similar.
	For all $ g \in G,$
	\begin{align*}
	&	(\lambda_g\otimes I_{\mathcal{H}_0})\theta_A\theta_A^*=\theta_A\theta_A^*(\lambda_g\otimes I_{\mathcal{H}_0}), ~ (\lambda_g\otimes I_{\mathcal{H}_0})\theta_A\theta_\Psi^*=\theta_A\theta_\Psi^*(\lambda_g\otimes I_{\mathcal{H}_0}),\\
	&	(\lambda_g\otimes I_{\mathcal{H}_0})\theta_\Psi\theta_\Psi^*=\theta_\Psi\theta_\Psi^*(\lambda_g\otimes I_{\mathcal{H}_0}).
	\end{align*}
	Noticing $ \lambda_g$ is unitary, we get  $(\lambda_g\otimes I_{\mathcal{H}_0})^{-1}=(\lambda_g\otimes I_{\mathcal{H}_0})^*$; also we observed in the proof of Proposition \ref{REPRESENATIONLEMMA} that  $(\lambda_g\otimes I_{\mathcal{H}_0})L_q=L_{gq}.$ So
	\begin{align*}
	(\lambda_g\otimes I_{\mathcal{H}_0})\theta_A\theta_\Psi^*(\lambda_g\otimes I_{\mathcal{H}_0})^*&=\left(\sum\limits_{p\in G}(\lambda_g\otimes I_{\mathcal{H}_0})L_pA_p\right)\left(\sum\limits_{q\in G}(\lambda_g\otimes I_{\mathcal{H}_0})L_q\Psi_q\right)^*\\
	&=\sum\limits_{p\in G} L_{gp}\left(\sum\limits_{q\in G}A_p\Psi_q^*L_{gq}^*\right)
	=\sum\limits_{r\in G} L_r\left(\sum\limits_{s\in G}A_{g^{-1}r}\Psi_{g^{-1}s}^*L_s^*\right)\\
	& =\sum\limits_{r\in G} L_r\left(\sum\limits_{s\in G}A_r\Psi_s^*L_s^*\right)=\theta_A\theta_\Psi^*.
	\end{align*}
	Define $ \pi : G \ni g  \mapsto \pi_g\coloneqq \theta_\Psi^*(\lambda_g\otimes I_{\mathcal{H}_0})\theta_A  \in \mathcal{B}(\mathcal{H}).$ By using the  Parsevalness, 
	\begin{align*}
	\pi_g\pi_h&=\theta_\Psi^*(\lambda_g\otimes I_{\mathcal{H}_0})\theta_A \theta_\Psi^*(\lambda_h\otimes I_{\mathcal{H}_0})\theta_A =\theta_\Psi^*\theta_A \theta_\Psi^*(\lambda_g\otimes I_{\mathcal{H}_0}) (\lambda_h\otimes I_{\mathcal{H}_0})\theta_A \\
	&= \theta_\Psi^*(\lambda_{gh}\otimes I_{\mathcal{H}_0})\theta_A =\pi_{gh}, \quad \forall g, h \in G
	\end{align*}
	and 
	\begin{align*}
	\pi_g\pi_g^*&=\theta_\Psi^*(\lambda_g\otimes I_{\mathcal{H}_0})\theta_A\theta_A^*(\lambda_{g^{-1}}\otimes I_{\mathcal{H}_0})\theta_\Psi\\
	&=\theta_\Psi^*\theta_A\theta_A^*(\lambda_g\otimes I_{\mathcal{H}_0})(\lambda_{g^{-1}}\otimes I_{\mathcal{H}_0})\theta_\Psi=I_\mathcal{H}, \\ \pi_g^*\pi_g&=\theta_A^*(\lambda_{g^{-1}}\otimes I_{\mathcal{H}_0})\theta_\Psi\theta_\Psi^*(\lambda_{g}\otimes I_{\mathcal{H}_0})\theta_A\\
	&=\theta_A^*(\lambda_{g^{-1}}\otimes I_{\mathcal{H}_0})(\lambda_{g}\otimes I_{\mathcal{H}_0})\theta_\Psi\theta_\Psi^*\theta_A=I_\mathcal{H},   \quad \forall  g \in G.
	\end{align*}
	Since $ G $ has the discrete topology, this proves $ \pi$ is a unitary representation. It remains to prove  $ A_g=A_e\pi_{g^{-1}}, \Psi_g=\Psi_e\pi_{g^{-1}}  $ for all $ g \in G$. Indeed,
	\begin{align*}
	A_e\pi_{g^{-1}}&= L_e^*\theta_A\theta_\Psi^*(\lambda_{g^{-1}}\otimes I_{\mathcal{H}_0})\theta_A=L_e^*(\lambda_{g^{-1}}\otimes I_{\mathcal{H}_0})\theta_A\theta_\Psi^*\theta_A\\
	&=((\lambda_g\otimes I_{\mathcal{H}_0})L_e)^*\theta_A=L_{ge}^*\theta_A=A_g,
	\end{align*}
and
\begin{align*}
\Psi_e\pi_{g^{-1}}&=L_e^*\theta_\Psi \theta_\Psi^*(\lambda_{g^{-1}}\otimes I_{\mathcal{H}_0})\theta_A=L_e^*(\lambda_{g^{-1}}\otimes I_{\mathcal{H}_0})\theta_\Psi\theta_\Psi^*\theta_A\\
&=((\lambda_g\otimes I_{\mathcal{H}_0})L_e)^*\theta_\Psi=L_{ge}^*\theta_\Psi=\Psi_g.
\end{align*}
\end{proof}
In the direct part of Theorem \ref{gc1},   we can remove the word  `Parseval' since it has not been used in the proof;  same is true in the following corollary.
\begin{corollary}
	Let $ G$ be a discrete group and $( \{A_g\}_{g\in G},  \{\Psi_g\}_{g\in G})$ be a factorable weak OVF  in $ \mathcal{B}(\mathcal{H},\mathcal{H}_0).$ Then there is a  unitary representation $ \pi$  of $ G$ on  $ \mathcal{H}$  for which
	\begin{enumerate}[label=(\roman*)]
		\item  $ A_g=A_eS_{A,\Psi}^{-1}\pi_{g^{-1}}S_{A,\Psi}, \Psi_g=\Psi_e\pi_{g^{-1}}  $ for all $ g \in G$  if and only if
		\begin{align*}
	&	A_{gp}S_{A,\Psi}^{-1}(S_{A,\Psi}^{-1})^*A_{gq}^*=A_pS_{A,\Psi}^{-1}(S_{A,\Psi}^{-1})^*A_q^* ,\quad A_{gp}S_{A,\Psi}^{-1}\Psi_{gq}^*=A_pS_{A,\Psi}^{-1}\Psi_q^*,\\ 
		& \Psi_{gp}\Psi_{gq}^*=\Psi_p\Psi_q^*, \quad  \forall  g,p,q \in G.
		\end{align*} 
		\item  $ A_g=A_e\pi_{g^{-1}}, \Psi_g=\Psi_e(S_{A,\Psi}^{-1})^*\pi_{g^{-1}}S_{A,\Psi}  $ for all $ g \in G$  if and only if 
		\begin{align*}
		&A_{gp}A_{gq}^*=A_pA_q^* ,\quad A_{gp}S_{A,\Psi}^{-1}\Psi_{gq}^*=A_pS_{A,\Psi}^{-1}\Psi_q^*,\\ 
		& \Psi_{gp}(S_{A,\Psi}^{-1})^*S_{A,\Psi}^{-1}\Psi_{gq}^*=\Psi_p(S_{A,\Psi}^{-1})^*S_{A,\Psi}^{-1}\Psi_q^*, \quad \forall  g,p,q \in G.
		\end{align*}
\end{enumerate}	
\end{corollary}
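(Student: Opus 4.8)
The plan is to derive both parts from Theorem \ref{gc1} by replacing the given factorable weak OVF $(\{A_g\}_{g\in G},\{\Psi_g\}_{g\in G})$, which need not be Parseval, by a \emph{similar Parseval} factorable weak OVF, applying the Parseval result to it, and then translating the conditions and the representation identities back through the substitution. The needed input is the observation recorded at the close of the previous section: for any factorable weak OVF, both
$$(\{A_g S_{A,\Psi}^{-1}\}_{g\in G},\{\Psi_g\}_{g\in G}) \quad \text{and} \quad (\{A_g\}_{g\in G},\{\Psi_g (S_{A,\Psi}^{-1})^*\}_{g\in G})$$
are Parseval factorable weak OVFs similar to the original. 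Indeed, their analysis operators $\theta_A S_{A,\Psi}^{-1},\theta_\Psi$ and $\theta_A,\theta_\Psi (S_{A,\Psi}^{-1})^*$ are bounded, and their frame operators compute to $S_{A,\Psi}S_{A,\Psi}^{-1}=I_\mathcal{H}$ and $S_{A,\Psi}^{-1}S_{A,\Psi}=I_\mathcal{H}$ respectively.

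For part (i), put $B_g:=A_g S_{A,\Psi}^{-1}$ and $\Phi_g:=\Psi_g$. Theorem \ref{gc1} applied to the Parseval pair $(\{B_g\},\{\Phi_g\})$ says that a unitary representation $\pi$ with $B_g=B_e\pi_{g^{-1}}$ and $\Phi_g=\Phi_e\pi_{g^{-1}}$ exists if and only if $B_{gp}B_{gq}^*=B_pB_q^*$, $B_{gp}\Phi_{gq}^*=B_p\Phi_q^*$ and $\Phi_{gp}\Phi_{gq}^*=\Phi_p\Phi_q^*$ for all $g,p,q\in G$. Using $(A_g S_{A,\Psi}^{-1})^*=(S_{A,\Psi}^{-1})^*A_g^*$, these three identities become exactly the three conditions listed in (i). On the representation side, $B_g=B_e\pi_{g^{-1}}$ reads $A_g S_{A,\Psi}^{-1}=A_e S_{A,\Psi}^{-1}\pi_{g^{-1}}$; right-multiplying by $S_{A,\Psi}$ yields $A_g=A_e S_{A,\Psi}^{-1}\pi_{g^{-1}}S_{A,\Psi}$, and $\Phi_g=\Phi_e\pi_{g^{-1}}$ is simply $\Psi_g=\Psi_e\pi_{g^{-1}}$, which is the assertion of (i).

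For part (ii), run the same argument with $B_g:=A_g$ and $\Phi_g:=\Psi_g (S_{A,\Psi}^{-1})^*$. Since $((S_{A,\Psi}^{-1})^*)^*=S_{A,\Psi}^{-1}$, the substitution gives $\Phi_{gq}^*=S_{A,\Psi}^{-1}\Psi_{gq}^*$ and $\Phi_{gp}\Phi_{gq}^*=\Psi_{gp}(S_{A,\Psi}^{-1})^*S_{A,\Psi}^{-1}\Psi_{gq}^*$, so the three theorem-conditions for $(\{B_g\},\{\Phi_g\})$ turn into the three conditions of (ii) and $B_g=B_e\pi_{g^{-1}}$ becomes $A_g=A_e\pi_{g^{-1}}$. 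The formula for $\Psi_g$ is then obtained by solving $\Psi_g (S_{A,\Psi}^{-1})^*=\Psi_e (S_{A,\Psi}^{-1})^*\pi_{g^{-1}}$ for $\Psi_g$, that is, by right-multiplying by the inverse of $(S_{A,\Psi}^{-1})^*$.

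The argument is almost entirely bookkeeping; the one point that demands care — and where I expect the only real risk of error — is the adjoint algebra in pushing the two substitutions through the $*$-operations. Since $S_{A,\Psi}$ is in general not self-adjoint for a weak OVF, one must keep each factor $S_{A,\Psi}^{-1}$ and $(S_{A,\Psi}^{-1})^*$ on its correct side throughout, and in particular the conjugating operator appearing in the final representation formula for $\Psi_g$ in (ii) is precisely the inverse of $(S_{A,\Psi}^{-1})^*$ produced in the last step. Confirming that this matches the stated form is the crux of completing part (ii).
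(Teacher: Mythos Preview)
Your proposal is correct and is exactly the approach the paper takes: in each part it applies Theorem \ref{gc1} to the Parseval normalization $(\{A_gS_{A,\Psi}^{-1}\},\{\Psi_g\})$ or $(\{A_g\},\{\Psi_g(S_{A,\Psi}^{-1})^*\})$ and then simply unwinds the substitution. Your caution about the adjoint bookkeeping in (ii) is well placed---the inverse of $(S_{A,\Psi}^{-1})^*$ is indeed $S_{A,\Psi}^*$, so the right-hand factor you recover is $S_{A,\Psi}^*$, which you may note when comparing with the displayed formula.
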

\begin{proof}
\begin{enumerate}[label=(\roman*)]
		\item 	We apply Theorem  \ref{gc1} to the factorable Parseval OVF $(\{A_gS_{A,\Psi}^{-1}\}_{g\in G}, $ $ \{\Psi_g\}_{g\in G})$ to get: there is a  unitary representation $ \pi$  of $ G$ on  $ \mathcal{H}$  for which $ A_gS_{A,\Psi}^{-1}=(A_eS_{A,\Psi}^{-1})\pi_{g^{-1}}, \Psi_g=\Psi_e\pi_{g^{-1}}  $ for all $ g \in G$  if and only if 
		\begin{align*}
		&(A_{gp}S_{A,\Psi}^{-1})(A_{gq}S_{A,\Psi}^{-1})^*=(A_pS_{A,\Psi}^{-1})(A_qS_{A,\Psi}^{-1})^*, \quad (A_{gp}S_{A,\Psi}^{-1})\Psi_{gq}^*= (A_pS_{A,\Psi}^{-1})\Psi_q^*,\\
		&\Psi_{gp}\Psi_{gq}^*=\Psi_p\Psi_q^*, \quad \forall  g,p,q \in G.
		\end{align*}
		\item 	We apply Theorem  \ref{gc1} to the factorable Parseval OVF $( \{A_g\}_{g\in G}$, $  \{\Psi_g(S_{A,\Psi}^{-1})^*\}_{g\in G})$ to get: there is a  unitary representation $ \pi$  of $ G$ on  $ \mathcal{H}$  for which $ A_g=A_e\pi_{g^{-1}}$, $  \Psi_gS_{A,\Psi}^{-1}=(\Psi_e(S_{A,\Psi}^{-1})^*)\pi_{g^{-1}}  $ for all $ g \in G$  if and only if 
		\begin{align*}
		&A_{gp}A_{gq}^*=A_pA_q^* , \quad A_{gp}(\Psi_{gq}(S_{A,\Psi}^{-1})^*)^*=A_p(\Psi_q(S_{A,\Psi}^{-1})^*)^*,\\
		&(\Psi_{gp}(S_{A,\Psi}^{-1})^*)(\Psi_{gq}(S_{A,\Psi}^{-1})^*)^*=(\Psi_p(S_{A,\Psi}^{-1})^*)(\Psi_q(S_{A,\Psi}^{-1})^*)^* , \quad \forall  g,p,q \in G.
		\end{align*}
	\end{enumerate}		
\end{proof}
We next address the situation of factorable weak OVF whenever it is indexed by group-like unitary systems. Group-like unitary systems arose from the study of Weyl-Heisenberg frames. This was first formally defined by  \cite{GABARDO}. In the sequel, by $\mathbb{T}$, we mean the standard unit circle group centered at the origin equipped with usual multiplication.
\begin{definition}(\cite{GABARDO})
	A collection   $ \mathcal{U}\subseteq \mathcal{B}(\mathcal{H})$  containing $I_\mathcal{H}$ is called as a \textbf{unitary system}.  If the group generated by  unitary system $ \mathcal{U}$, denoted by $ \operatorname{group}(\mathcal{U})$ is such that 
	\begin{enumerate}[label=(\roman*)]
		\item $\operatorname{group}(\mathcal{U}) \subseteq \mathbb{T}\mathcal{U}\coloneqq \{\alpha U : \alpha \in \mathbb{T}, U\in \mathcal{U}  \}$, and 
		\item $\mathcal{U}$ is linearly independent, i.e.,  $\mathbb{T}U\ne\mathbb{T}V $ whenever $ U, V \in \mathcal{U}$ are such that $ U\ne V,$ 
	\end{enumerate}
	then $\mathcal{U}$ is called as a \textbf{group-like unitary system}.
\end{definition}
Let $ \mathcal{U}$ be a group-like unitary system. As in (\cite{GABARDOHANGROUPLIKE}), we  define mappings 
\begin{align*}
f:\operatorname{group}(\mathcal{U})\rightarrow \mathbb{T} \quad \text{ and } \quad \sigma:\operatorname{group}(\mathcal{U})\rightarrow \mathcal{U}.
\end{align*}
in the following way. For each  $ U \in  \operatorname{group}(\mathcal{U}) $ there are unique $\alpha\in \mathbb{T}, V \in \mathcal{U} $  such that $ U=\alpha V$. Define $ f(U)=\alpha$ and $\sigma(U)=V $. These  $ f, \sigma $ are well-defined and satisfy 
\begin{align*}
U=f(U)\sigma(U), \quad \forall U \in \operatorname{group}(\mathcal{U}).
\end{align*}
These mappings are called as \textbf{corresponding mappings} associated to $ \mathcal{U}$. We can picturize these maps as follows. 
\begin{center}
	\[
	\begin{tikzcd}
	\operatorname{group}(\mathcal{U}) \arrow[d,"\sigma"] \arrow[dr,"f"]\subseteq\mathbb{T}\mathcal{U}\\
	\mathcal{U}  & \mathbb{T}\arrow[l,] \\
	\end{tikzcd}
	\]
\end{center}
Next result gives certain fundamental properties of corresponding mappings associated with group-like unitary systems.
\begin{proposition}(\cite{GABARDOHANGROUPLIKE})\label{PER} 
	For a group-like unitary system $\mathcal{U}$ and $ f, \sigma $ as above,
	\begin{enumerate}[label=(\roman*)]
		\item $ f(U\sigma(VW))f(VW)=f(\sigma(UV)W)f(UV), \forall U,V,W \in \operatorname{group}(\mathcal{U}).$
		\item $ \sigma(U\sigma(VW))=\sigma(\sigma(UV)W), \forall U,V,W \in \operatorname{group} (\mathcal{U}).$
		\item $ \sigma(U)=U$ and $ f(U)=1$ for all $ U \in \mathcal{U}.$
		\item If $  V, W \in \operatorname{group} (\mathcal{U}),$ then
		\begin{align*}
		\mathcal{U}&=\{\sigma(UV) : U \in \mathcal{U}\}=\{\sigma(VU^{-1}) : U \in \mathcal{U}\}\\
		&=\{\sigma(VU^{-1}W) : U \in \mathcal{U}\}=\{\sigma(V^{-1}U) : U \in \mathcal{U}\}.
		\end{align*}
		\item For fixed  $  V, W \in \mathcal{U}$, the following mappings are  injective from  $ \mathcal{U} $ to itself:
		\begin{align*}
		U\mapsto \sigma(VU) \quad (\text{resp.} ~ \sigma(UV), \sigma(UV^{-1}), \sigma(V^{-1}U),\\
		 \sigma(VU^{-1}), \sigma(U^{-1}V), \sigma(VU^{-1}W)).
		\end{align*}
\end{enumerate}
\end{proposition}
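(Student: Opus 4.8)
The plan is to derive the whole proposition from two structural facts about a group-like unitary system $\mathcal{U}$: every $U\in\operatorname{group}(\mathcal{U})$ factors as $U=f(U)\sigma(U)$ with $f(U)\in\mathbb{T}$ and $\sigma(U)\in\mathcal{U}$, and this factorization is \emph{unique}. Uniqueness is exactly where linear independence of $\mathcal{U}$ is used: if $\alpha V=\beta W$ with $\alpha,\beta\in\mathbb{T}$ and $V,W\in\mathcal{U}$, then $\mathbb{T}V=\mathbb{T}W$ forces $V=W$ and hence $\alpha=\beta$. I would also record at the outset the elementary consequences that a scalar $\gamma\in\mathbb{T}$ is central (so it may be pulled freely across any product of operators) and that $\sigma(\gamma X)=\sigma(X)$, $f(\gamma X)=\gamma f(X)$ for $X\in\operatorname{group}(\mathcal{U})$, both immediate from uniqueness. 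Item (iii) then follows at once: for $U\in\mathcal{U}$ the trivial factorization $U=1\cdot U$ and uniqueness give $f(U)=1$ and $\sigma(U)=U$.

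For (i) and (ii) I would expand the triple product $UVW\in\operatorname{group}(\mathcal{U})$ in its two associations. Writing $(UV)W=f(UV)\,\sigma(UV)\,W$ and then factoring $\sigma(UV)W=f(\sigma(UV)W)\,\sigma(\sigma(UV)W)$, and pulling the two scalars together, gives $UVW=\big(f(UV)f(\sigma(UV)W)\big)\,\sigma(\sigma(UV)W)$, a genuine $\mathbb{T}\mathcal{U}$-factorization. Symmetrically, $U(VW)=f(VW)\,U\,\sigma(VW)$ and a further factorization of $U\sigma(VW)$ give $UVW=\big(f(VW)f(U\sigma(VW))\big)\,\sigma(U\sigma(VW))$. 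Matching the $\mathbb{T}$-parts of these two factorizations of the same element yields (i), and matching the $\mathcal{U}$-parts yields (ii).

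For (iv) and (v), injectivity is the easy half. If $\sigma(VU_1)=\sigma(VU_2)$ then $VU_1=\frac{f(VU_1)}{f(VU_2)}VU_2$, so cancelling $V$ gives $U_1=\frac{f(VU_1)}{f(VU_2)}U_2$, and linear independence forces $U_1=U_2$; the identical one-line computation, after isolating $U$, handles $\sigma(UV)$, $\sigma(UV^{-1})$, $\sigma(V^{-1}U)$, $\sigma(VU^{-1})$, $\sigma(U^{-1}V)$ and $\sigma(VU^{-1}W)$. Surjectivity (needed for (iv)) does not follow from injectivity when $\mathcal{U}$ is infinite, so instead I would exhibit an explicit two-sided inverse: for $g(U):=\sigma(UV)$ take $h(W):=\sigma(WV^{-1})$ (legitimate since $V^{-1}\in\operatorname{group}(\mathcal{U})\subseteq\mathbb{T}\mathcal{U}$, so $h$ is again of the form covered by (v)), and verify $g(h(W))=\sigma(\sigma(WV^{-1})V)=W$ using that $\sigma(WV^{-1})V=f(WV^{-1})^{-1}W$ is a $\mathbb{T}\mathcal{U}$-factorization of an element of $\mathcal{U}$, together with uniqueness; the check $h(g(U))=U$ is symmetric, and the other maps in (v) are inverted by the corresponding conjugate expressions. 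This gives that each map is a bijection of $\mathcal{U}$ onto itself, which is precisely the content of (iv). The only real obstacle is bookkeeping: keeping the several variants in (v) and their inverses straight, and making sure every rearrangement only moves honest $\mathbb{T}$-scalars through operator products — every step ultimately reduces to uniqueness of the $\mathbb{T}\mathcal{U}$-factorization.
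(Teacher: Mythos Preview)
Your argument is correct and complete. Note, however, that the paper does not actually prove this proposition: it is quoted from \cite{GABARDOHANGROUPLIKE} as a known result and no proof is supplied. Your approach---deriving everything from uniqueness of the $\mathbb{T}\mathcal{U}$-factorization, handling (i) and (ii) via the two associations of $UVW$, and establishing (iv) by exhibiting explicit two-sided inverses rather than appealing to injectivity alone---is the natural one and matches the standard treatment in the literature. Your care in distinguishing injectivity from bijectivity when $\mathcal{U}$ may be infinite is well placed.
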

Since $\operatorname{group} (\mathcal{U}) $ is a group, we note that, in (iv) of Proposition \ref{PER},  we can replace $V$ by $V^{-1}$. Hence, whenever  $V \in \operatorname{group} (\mathcal{U})$, we have $\sum_{U \in \mathcal{U}}x_U=\sum_{U \in \mathcal{U}}x_{\sigma(VU)}$.
\begin{definition}(\cite{GABARDOHANGROUPLIKE})
	A \textbf{unitary representation} $ \pi$ of a group-like unitary system $   \mathcal{U}$ on $ \mathcal{H}$ is an injective mapping from $  \mathcal{U}$ into the set of unitary operators on $ \mathcal{H}$ such that 
	$$\pi(U)\pi(V)=f(UV)\pi(\sigma(UV)) , \quad {\pi(U)}^{-1}=f(U^{-1})\pi(\sigma(U^{-1})), ~ \forall U,V \in \mathcal{U}, $$
	where $ f$ and $ \sigma $  are the corresponding mappings associated with $  \mathcal{U}.$ 
\end{definition}
Since $\pi $ is injective, once we have a unitary representation of a group-like unitary system $   \mathcal{U}$ on $\mathcal{H}$, then $ \pi(\mathcal{U})$ is also a group-like unitary system.

Let $ \mathcal{U}$ be a  group-like unitary system and  $ \{\chi_U\}_{U\in \mathcal{U}}$ be the  standard orthonormal  basis for $\ell^2(\mathcal{U}) $.  We define $\lambda $ on  $ \mathcal{U}$  by $ \lambda_U\chi_V=f(UV)\chi_{\sigma(UV)}, \forall   U,V \in \mathcal{U}.$ Then $ \lambda $ is a unitary  representation which we call as  left  regular representation of $ \mathcal{U}$. Similarly, we define right regular representation of $ \mathcal{U}$ by $ \rho_U\chi_V=f(VU^{-1})\chi_{\sigma(VU^{-1})}, \forall U,V \in \mathcal{U}$ (\cite{GABARDOHANGROUPLIKE}).  
Like frame generators for groups, we now define the frame generator for group-like unitary systems.
\begin{definition}
	Let $ \mathcal{U}$  be a group-like unitary system.  An operator $ A$ in $ \mathcal{B}(\mathcal{H}, \mathcal{H}_0)$ is called an  \textbf{operator frame generator} (resp. a  Parseval  frame generator) w.r.t. $ \Psi$ in $ \mathcal{B}(\mathcal{H}, \mathcal{H}_0)$  if $(\{A_U\coloneqq A\pi(U)^{-1}\}_{U\in \mathcal{U}},\{\Psi_U\coloneqq \Psi\pi(U)^{-1}\}_{U\in \mathcal{U}})$ is a factorable weak OVF  (resp. a Parseval) in $ \mathcal{B}(\mathcal{H}, \mathcal{H}_0)$.  We write $ (A,\Psi)$ is an operator frame generator for $\pi$.
\end{definition}
\begin{theorem}\label{CHARACTERIZATIONGROUPLIKE}
	Let $ \mathcal{U}$ be a  group-like unitary system, $ I$ be the identity of $ \mathcal{U}$ and $(\{A_U\}_{U\in \mathcal{U}},\{\Psi_U\}_{U\in \mathcal{U}})$ be a factorable Parseval weak OVF  in $ \mathcal{B}(\mathcal{H},\mathcal{H}_0)$ with $ \theta_A^*$ injective. Then there is a unitary representation $ \pi$  of $ \mathcal{U}$ on  $ \mathcal{H}$  for which 
	$$ A_U=A_I\pi(U)^{-1}, \quad\Psi_U=\Psi_I\pi(U)^{-1}, \quad\forall  U \in \mathcal{U}$$
	if and only if 
	\begin{align*}
	A_{\sigma(UV)}A_{\sigma(UW)}^*&=f(UV)\overline{f(UW)} A_VA_W^* ,\\
	A_{\sigma(UV)}\Psi_{\sigma(UW)}^*&=f(UV)\overline{f(UW)} A_V\Psi_W^*,\\ \Psi_{\sigma(UV)}\Psi_{\sigma(UW)}^*&=f(UV)\overline{f(UW)} \Psi_V\Psi_W^*, \quad \forall  U,V,W \in \mathcal{U}.
	\end{align*}
\end{theorem}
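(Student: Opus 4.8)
The plan is to mimic the proof of Theorem \ref{gc1} for discrete groups, translating every step through the algebraic identities for the corresponding mappings $f,\sigma$ recorded in Proposition \ref{PER}. I would first dispose of the easy direction $(\Rightarrow)$. Assuming $A_U=A_I\pi(U)^{-1}$ and $\Psi_U=\Psi_I\pi(U)^{-1}$ for all $U\in\mathcal{U}$, I would compute $A_{\sigma(UV)}A_{\sigma(UW)}^*$ directly: using $\pi(U)\pi(V)=f(UV)\pi(\sigma(UV))$, so $\pi(\sigma(UV))=\overline{f(UV)}\pi(U)\pi(V)$, and the analogous expression for $\sigma(UW)$, the products of unitaries telescope (the $\pi(U)\pi(U)^*=I_\mathcal{H}$ cancellation) and leave exactly $f(UV)\overline{f(UW)}A_VA_W^*$. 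The same computation with $\Psi_I$ in place of one or both factors gives the other two identities. As in the remark after Theorem \ref{gc1}, Parsevalness is not needed here.

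For the converse $(\Leftarrow)$, following the group case, the key is to show that conjugation by the left regular representation fixes the relevant operators on $\ell^2(\mathcal{U})\otimes\mathcal{H}_0$. Concretely I would establish, for every $U\in\mathcal{U}$,
\begin{align*}
(\lambda_U\otimes I_{\mathcal{H}_0})\,\theta_A\theta_\Psi^*\,(\lambda_U\otimes I_{\mathcal{H}_0})^*=\theta_A\theta_\Psi^*,
\end{align*}
and the analogous equalities with $\theta_A\theta_A^*$ and $\theta_\Psi\theta_\Psi^*$. Here $\theta_A=\sum_{V\in\mathcal{U}}L_V A_V$ and $\theta_\Psi=\sum_{V\in\mathcal{U}}L_V\Psi_V$, where $L_V:\mathcal{H}_0\to\ell^2(\mathcal{U})\otimes\mathcal{H}_0$ sends $h\mapsto \chi_V\otimes h$; the relation $(\lambda_U\otimes I_{\mathcal{H}_0})L_V=\overline{f(UV)}\,$—more precisely $(\lambda_U\otimes I_{\mathcal{H}_0})L_V h=f(UV)\chi_{\sigma(UV)}\otimes h=f(UV)L_{\sigma(UV)}h$—is the group-like analogue of $(\lambda_g\otimes I)L_q=L_{gq}$. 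Expanding the conjugated operator produces a double sum $\sum_{V,W}f(UV)\overline{f(UW)}\,L_{\sigma(UV)}A_V\Psi_W^* L_{\sigma(UW)}^*$; the hypothesis $A_{\sigma(UV)}\Psi_{\sigma(UW)}^*=f(UV)\overline{f(UW)}A_V\Psi_W^*$ lets me replace $f(UV)\overline{f(UW)}A_V\Psi_W^*$ by $A_{\sigma(UV)}\Psi_{\sigma(UW)}^*$, and then the reindexing $V\mapsto\sigma(UV)$, $W\mapsto\sigma(UW)$ (a bijection of $\mathcal{U}$ by Proposition \ref{PER}(iv)–(v), so $\sum_{V\in\mathcal{U}}x_{\sigma(UV)}=\sum_{V\in\mathcal{U}}x_V$) recovers $\theta_A\theta_\Psi^*$. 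Then I define $\pi(U):=\theta_\Psi^*(\lambda_U\otimes I_{\mathcal{H}_0})\theta_A\in\mathcal{B}(\mathcal{H})$ and verify it is a unitary representation of the group-like system: the commutation relations just proved plus Parsevalness ($\theta_\Psi^*\theta_A=I_\mathcal{H}$, $\theta_A\theta_A^*\le$ something — actually $\theta_\Psi^*\theta_A=I_\mathcal{H}$ and the orthogonal-projection identities) give $\pi(U)\pi(V)=f(UV)\pi(\sigma(UV))$ after inserting $\theta_A\theta_\Psi^*$ between the two $\lambda$'s and using $(\lambda_U\otimes I)(\lambda_V\otimes I)$ conjugation to reduce to $\lambda_{\sigma(UV)}$ with the scalar $f(UV)$; similarly $\pi(U)\pi(U)^*=\pi(U)^*\pi(U)=I_\mathcal{H}$. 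Injectivity of $\pi$ is where the extra hypothesis that $\theta_A^*$ is injective enters — it guarantees $\pi(U)=\pi(V)$ forces the corresponding unitaries on $\ell^2(\mathcal{U})\otimes\mathcal{H}_0$ to agree and hence $U=V$ by linear independence of the group-like system $\lambda(\mathcal{U})$. Finally $A_I\pi(U)^{-1}=L_I^*\theta_A\theta_\Psi^*(\lambda_U^{-1}\otimes I)\theta_A=L_I^*(\lambda_U^{-1}\otimes I)\theta_A\theta_\Psi^*\theta_A=((\lambda_U\otimes I)L_I)^*\theta_A=\overline{f(UI)}L_{\sigma(UI)}^*\theta_A=A_U$ (using $\sigma(UI)=U$, $f(UI)=1$ from Proposition \ref{PER}(iii)), and the same for $\Psi$.

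The main obstacle I anticipate is bookkeeping the $\mathbb{T}$-valued cocycle $f$ correctly: unlike the plain group case, every product $\lambda_U\lambda_V$ and $\pi(U)\pi(V)$ picks up a scalar $f(\cdot)$, and one must check that these scalars cancel in exactly the right pattern — this is precisely what the first two identities of Proposition \ref{PER} (the cocycle relations for $f$ and the associativity for $\sigma$) are designed to guarantee, so the verification is mechanical but must be done carefully. A secondary subtlety is justifying the interchange of the strong-operator-convergent series $\theta_A,\theta_\Psi$ with the bounded operators $\lambda_U\otimes I$ and the various reindexings; these are legitimate because $\lambda_U\otimes I$ is bounded and the reindexing maps are bijections of $\mathcal{U}$, but the argument should note it. Everything else is a faithful transcription of the discrete-group proof, with $\operatorname{group}(G)$ replaced by $\operatorname{group}(\mathcal{U})$, sums over $g\in G$ replaced by sums over $U\in\mathcal{U}$, and the identity $L_{gq}=(\lambda_g\otimes I)L_q$ replaced by $L_{\sigma(UV)}=\overline{f(UV)}(\lambda_U\otimes I)L_V$.
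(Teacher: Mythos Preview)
Your proposal is correct and follows essentially the same route as the paper's own proof: both directions are handled identically, the forward direction by direct unpacking of $\pi(\sigma(UV))=\overline{f(UV)}\pi(U)\pi(V)$, the converse by defining $\pi(U)\coloneqq\theta_\Psi^*(\lambda_U\otimes I_{\mathcal{H}_0})\theta_A$ after establishing the conjugation invariance of $\theta_A\theta_A^*$, $\theta_A\theta_\Psi^*$, $\theta_\Psi\theta_\Psi^*$, and then using injectivity of $\theta_A^*$ to deduce that $\pi$ is injective. The only item the paper verifies explicitly that you leave implicit is the relation $\pi(U)^{-1}=f(U^{-1})\pi(\sigma(U^{-1}))$, required by the definition of a unitary representation of a group-like system; this indeed follows from the multiplicativity you do check together with the cocycle identities of Proposition~\ref{PER}, so your sketch is complete.
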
 
\begin{proof}
	$(\Rightarrow)$ For all $U,V,W \in \mathcal{U}$, we have
	\begin{align*}
	A_{\sigma(UV)}A_{\sigma(UW)}^*&= A_I\pi(\sigma(UV))^{-1}( A_I\pi(\sigma(UW))^{-1} )^*\\
	&=A_I(\overline{f(UV)}\pi(U)\pi(V))^{-1} \overline{f(UW)}\pi(U)\pi(W)A^*_I\\
	&=f(UV)\overline{f(UW)} A_I\pi(V)^{-1}(A_I\pi(W)^{-1})^*\\
	&=f(UV)\overline{f(UW)} A_VA_W^*. 
	\end{align*}
	Others can be shown similarly. 
	
	$(\Leftarrow)$  We have to construct unitary representation which satisfies the stated conditions. Following observation plays an important role in this part. Let $ h\in \mathcal{H}.$ Then 
	\begin{align*}
	L_{\sigma(UV)}h&=\chi_{\sigma(UV)}\otimes h=\overline{f(UV)}\lambda_U\chi_V\otimes h=\overline{f(UV)}(\lambda_U\chi_V\otimes h)\\
	&=\overline{f(UV)}(\lambda_U\otimes I_{\mathcal{H}_0})(\chi_V\otimes h)=\overline{f(UV)}(\lambda_U\otimes I_{\mathcal{H}_0})L_V h.
	\end{align*}
	As in the proof of Theorem \ref{gc1},  we argue the following, for which now we prove the first.
	For all $ U \in \mathcal{U},$
	\begin{align*}
	&(\lambda_U\otimes I_{\mathcal{H}_0})\theta_A\theta_A^*=\theta_A\theta_A^*(\lambda_U\otimes I_{\mathcal{H}_0}), \quad (\lambda_U\otimes I_{\mathcal{H}_0})\theta_A\theta_\Psi^*=\theta_A\theta_\Psi^*(\lambda_U\otimes I_{\mathcal{H}_0}),\\
	&(\lambda_U\otimes I_{\mathcal{H}_0})\theta_\Psi\theta_\Psi^*=\theta_\Psi\theta_\Psi^*(\lambda_U\otimes I_{\mathcal{H}_0}).
	\end{align*}
	Consider 
	\begin{align*}
	(\lambda_U\otimes I_{\mathcal{H}_0})\theta_A\theta_A^*(\lambda_U\otimes I_{\mathcal{H}_0})^*&=\left(\sum\limits_{V\in \mathcal{U}}(\lambda_U\otimes I_{\mathcal{H}_0})L_VA_V\right)\left(\sum\limits_{W\in \mathcal{U}}(\lambda_U\otimes I_{\mathcal{H}_0})L_WA_W\right)^*\\ &=\left(\sum\limits_{V\in \mathcal{U}} f(UV)L_{\sigma(UV)}A_V\right)\left(\sum\limits_{W\in \mathcal{U}}f(UW)L_{\sigma(UW)}A_W\right)^*\\
	&=\sum\limits_{V\in \mathcal{U}} L_{\sigma(UV)}\left(\sum\limits_{W\in \mathcal{U}}f(UV)\overline{f(UW)}A_VA_W^*L_{\sigma(UW)}^*\right)\\
	&= \sum\limits_{V\in \mathcal{U}} L_{\sigma(UV)}\left(\sum\limits_{W\in \mathcal{U}}A_{\sigma(UV)}A_{\sigma(UW)}^*L_{\sigma(UW)}^*\right)\\
	&=\left(\sum\limits_{V\in \mathcal{U}} L_{\sigma(UV)}A_{\sigma(UV)}\right)\left(\sum\limits_{W\in \mathcal{U}}L_{\sigma(UW)}A_{\sigma(UW)}\right)^*\\
	&=\theta_A\theta_A^*
	\end{align*}
	where last part of Proposition \ref{PER} is used in the last equality.
	
	Define $ \pi : \mathcal{U} \ni U  \mapsto \pi(U)\coloneqq \theta_\Psi^*(\lambda_U\otimes I_{\mathcal{H}_0})\theta_A  \in \mathcal{B}(\mathcal{H}).$  Then 
	\begin{align*}
	 \pi(U)\pi(V)&=\theta_\Psi^*(\lambda_U\otimes I_{\mathcal{H}_0})\theta_A \theta_\Psi^*(\lambda_V\otimes I_{\mathcal{H}_0})\theta_A \\
	 &=\theta_\Psi^*\theta_A \theta_\Psi^*(\lambda_U\otimes I_{\mathcal{H}_0}) (\lambda_V\otimes I_{\mathcal{H}_0})\theta_A \\
	 &= \theta_\Psi^*(\lambda_U\lambda_V\otimes I_{\mathcal{H}_0})\theta_A \\
	 &=\theta_\Psi^*(f(UV)\lambda_{\sigma(UV)}\otimes I_{\mathcal{H}_0})\theta_A  \\
	 &=f(UV) \theta_\Psi^*(\lambda_{\sigma(UV)}\otimes I_{\mathcal{H}_0})\theta_A \\
	 &=f(UV)\pi({\sigma(UV)}), \quad \forall U, V \in \mathcal{U}
	\end{align*}
	and 
	\begin{align*}
	\pi(U)\pi(U)^*&=\theta_\Psi^*(\lambda_U\otimes I_{\mathcal{H}_0})\theta_A\theta_A^*(\lambda_U^*\otimes I_{\mathcal{H}_0})\theta_\Psi
	\\
	&=\theta_\Psi^*\theta_A\theta_A^*(\lambda_U\otimes I_{\mathcal{H}_0})(\lambda_U^*\otimes I_{\mathcal{H}_0})\theta_\Psi
	=I_\mathcal{H},\\
	  \pi(U)^*\pi(U)&=\theta_A^*(\lambda_U^*\otimes I_{\mathcal{H}_0})\theta_\Psi\theta_\Psi^*(\lambda_{U}\otimes I_{\mathcal{H}_0})\theta_A\\
	  &=\theta_A^*(\lambda_U^*\otimes I_{\mathcal{H}_0})(\lambda_U\otimes I_{\mathcal{H}_0})\theta_\Psi\theta_\Psi^*\theta_A=I_\mathcal{H}, \quad \forall U \in \mathcal{U}.
	\end{align*}
  Further, 
	\begin{align*}
	\pi(U)f(U^{-1})\pi(\sigma(U^{-1}))&=\theta_\Psi^*(\lambda_U\otimes I_{\mathcal{H}_0})\theta_Af(U^{-1})\theta_\Psi^*(\lambda_{\sigma(U^{-1})}\otimes I_{\mathcal{H}_0})\theta_A \\
	&=f(U^{-1})\theta_\Psi^*\theta_A\theta_\Psi^*(\lambda_U\otimes I_{\mathcal{H}_0})(\lambda_{\sigma(U^{-1})}\otimes I_{\mathcal{H}_0})\theta_A \\
	&=f(U^{-1})\theta_\Psi^*(\lambda_U\otimes I_{\mathcal{H}_0})(\lambda_{\sigma(U^{-1})}\otimes I_{\mathcal{H}_0})\theta_A\\
	&=f(U^{-1})\theta_\Psi^*(\lambda_U\lambda_{\sigma(U^{-1})}\otimes I_{\mathcal{H}_0})\theta_A\\
	&=f(U^{-1})\theta_\Psi^*(f(U\sigma(U^{-1}))\lambda_{\sigma(U\sigma (U^{-1}))}\otimes I_{\mathcal{H}_0})\theta_A\\
	&=\theta_\Psi^*(f(U\sigma(U^{-1}I))f(U^{-1}I)\lambda_{\sigma(U\sigma (U^{-1}I))}\otimes I_{\mathcal{H}_0})\theta_A\\
	&=\theta_\Psi^*(f(\sigma(UU^{-1})I)f(UU^{-1})\lambda_{\sigma({\sigma(UU^{-1})I})}\otimes I_{\mathcal{H}_0})\theta_A\\
	&=\theta_\Psi^*(\lambda_I\otimes I_{\mathcal{H}_0})\theta_A=I_\mathcal{H}
	\end{align*}
	$\Rightarrow {\pi(U)}^{-1}=f(U^{-1})\pi(\sigma(U^{-1}))$ for all $ U \in \mathcal{U}$. We shall now use $ \theta_A^*$ is injective   to show $ \pi$ is injective and thereby to  get $ \pi$ is a unitary representation. Let $ \pi(U)=\pi(V).$ Then 
	\begin{align*}
	&\theta_\Psi^*(\lambda_U\otimes I_{\mathcal{H}_0})\theta_A =\theta_\Psi^*(\lambda_V\otimes I_{\mathcal{H}_0})\theta_A  \Rightarrow \theta_\Psi^*(\lambda_U\otimes I_{\mathcal{H}_0})\theta_A \theta_A^*=\theta_\Psi^*(\lambda_V\otimes I_{\mathcal{H}_0})\theta_A  \theta_A^* \\
	&\Rightarrow \theta_\Psi^*\theta_A  \theta_A^*(\lambda_U\otimes I_{\mathcal{H}_0}) =\theta_\Psi^*\theta_A  \theta_A^*(\lambda_V\otimes I_{\mathcal{H}_0}) \Rightarrow \lambda_U\otimes I_{\mathcal{H}_0}=\lambda_V\otimes I_{\mathcal{H}_0}.
	\end{align*}
	We show $ U$ and $ V$ are identical at elementary tensors. For $ h \in \ell^2(\mathcal{U}),  y \in \mathcal{H}_0,  $ we get, $(\lambda_U\otimes I_{\mathcal{H}_0})(h\otimes y)=(\lambda_V\otimes I_{\mathcal{H}_0})(h\otimes y)\Rightarrow \lambda_Uh\otimes y=\lambda_Vh\otimes y \Rightarrow (\lambda_U-\lambda_V)h\otimes y=0 \Rightarrow 0= \langle  (\lambda_U-\lambda_V)h\otimes y, (\lambda_U-\lambda_V)h\otimes y\rangle= \|(\lambda_U-\lambda_V)h\|^2 \|y\|^2 .$ We may assume $y\neq0$ (if  $y=0$, then  $h\otimes y=0$). But then $ (\lambda_U-\lambda_V)(h)=0,$ and $ \lambda$ is a unitary representation (it is injective) gives $ U=V.$ We now show   $ A_U=A_I\pi(U)^{-1} $ and $ \Psi_U=\Psi_I\pi(U)^{-1}  $ for all $ U \in \mathcal{U}$ in the following:
	\begin{align*}
	A_I\pi(U)^{-1}&= L_I^*\theta_A(\theta_\Psi^*(\lambda_U\otimes I_{\mathcal{H}_0})\theta_A)^*
	= L_I^*(\theta_\Psi^*(\lambda_U\otimes I_{\mathcal{H}_0})\theta_A\theta_A^*)^*\\
	&=L_I^*(\theta_\Psi^*\theta_A\theta_A^*(\lambda_U\otimes I_{\mathcal{H}_0}))^*
	= L_I^*(\theta_A^*(\lambda_U\otimes I_{\mathcal{H}_0}))^*\\
	&=(\theta_A^*(\lambda_U\otimes I_{\mathcal{H}_0})L_I)^*= (\theta_A^*\overline{f(UI)}(\lambda_U\otimes I_{\mathcal{H}_0})L_I)^*\\
	&= (\theta_A^*L_{\sigma({UI})})^*=L_U^*\theta_A=A_U
	\end{align*}
	and
	\begin{align*}
	\Psi_I\pi(U)^{-1}&= L_I^*\theta_\Psi(\theta_\Psi^*(\lambda_U\otimes I_{\mathcal{H}_0})\theta_A)^*
	= L_I^*(\theta_\Psi^*(\lambda_U\otimes I_{\mathcal{H}_0})\theta_A\theta_\Psi^*)^*\\
	&=L_I^*(\theta_\Psi^*\theta_A\theta_\Psi^*(\lambda_U\otimes I_{\mathcal{H}_0}))^*
	= L_I^*(\theta_\Psi^*(\lambda_U\otimes I_{\mathcal{H}_0}))^*\\
	&=(\theta_\Psi^*(\lambda_U\otimes I_{\mathcal{H}_0})L_I)^*= (\theta_\Psi^*\overline{f(UI)}(\lambda_U\otimes I_{\mathcal{H}_0})L_I)^*\\
	&= (\theta_\Psi^*L_{\sigma({UI})})^*=L_U^*\theta_\Psi=\Psi_U.
	\end{align*}
\end{proof}
Note that neither  Parsevalness of the frame nor $ \theta_A^*$  is injective was used  in the direct part of Theorem \ref{CHARACTERIZATIONGROUPLIKE}. Since $ \theta_A$ acts between Hilbert spaces, we know that $ \overline{\theta_A(\mathcal{H})}=\operatorname{Ker}(\theta_A^*)^\perp$ and $ \operatorname{Ker}(\theta_A^*)=\theta_A(\mathcal{H})^\perp.$ From Lemma \ref{DILATIONLEMMA}, the range of $\theta_A$  is closed. Therefore $ \theta_A(\mathcal{H})=\operatorname{Ker}(\theta_A^*)^\perp.$ Thus the condition $ \theta_A^*$ is injective in the Theorem \ref{CHARACTERIZATIONGROUPLIKE} can be replaced by $ \theta_A$ is onto. 
\begin{corollary}
	Let $ \mathcal{U}$ be a  group-like unitary system,  $ I$ be the identity of $ \mathcal{U}$ and $(\{A_U\}_{U\in \mathcal{U}},\{\Psi_U\}_{U\in \mathcal{U}})$ be a factorable weak OVF   in $ \mathcal{B}(\mathcal{H},\mathcal{H}_0)$  with $ \theta_A^*$   is injective. Then there is a unitary representation $ \pi$  of  $ \mathcal{U}$ on  $ \mathcal{H}$  for which
	\begin{enumerate}[label=(\roman*)]
		\item    $ A_U=A_IS^{-1}_{A,\Psi}\pi(U)^{-1}S_{A, \Psi}, \Psi_U=\Psi_I\pi(U)^{-1}  $ for all $ U \in \mathcal{U}$  if and only if 
		\begin{align*}
		&A_{\sigma(UV)}S^{-1}_{A, \Psi}(S_{A,\Psi}^{-1})^*A_{\sigma(UW)}^*=f(UV)\overline{f(UW)} A_VS^{-1}_{A, \Psi}(S_{A,\Psi}^{-1})^*A_W^*,\\
		&	A_{\sigma(UV)}S_{A, \Psi}^{-1}\Psi_{\sigma(UW)}^*=f(UV)\overline{f(UW)} A_VS^{-1}_{A, \Psi}\Psi_W^*,\\
		&\Psi_{\sigma(UV)}\Psi_{\sigma(UW)}^*=f(UV)\overline{f(UW)} \Psi_V\Psi_W^*, \quad \forall U,V,W \in \mathcal{U}.
		\end{align*}
		\item   $ A_U=A_I\pi(U)^{-1}, \Psi_U =\Psi_I(S_{A,\Psi}^{-1})^*\pi(U)^{-1}S_{A, \Psi}  $ for all $ U \in \mathcal{U}$  if and only if 
		\begin{align*}
		&A_{\sigma(UV)}A_{\sigma(UW)}^*=f(UV)\overline{f(UW)} A_VA_W^*,\\
		&A_{\sigma(UV)}S^{-1}_{A, \Psi}\Psi_{\sigma(UW)}^*=f(UV)\overline{f(UW)}A_VS^{-1}_{A, \Psi}\Psi_W^*,\\
		&\Psi_{\sigma(UV)}(S_{A,\Psi}^{-1})^*S^{-1}_{A, \Psi}\Psi_{\sigma(UW)} ^*
		=f(UV)\overline{f(UW)} \Psi_V(S_{A,\Psi}^{-1})^*S^{-1}_{A, \Psi}\Psi_W^*, \quad \forall U,V,W \in \mathcal{U}.
		\end{align*}
	\end{enumerate}
\end{corollary}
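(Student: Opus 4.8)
The statement to prove is the corollary at the very end, which has two parts (i) and (ii), each an "if and only if" characterizing when a factorable weak OVF indexed by a group-like unitary system arises from a unitary representation via twisted translation of a generator, but now with a twist by the frame operator $S_{A,\Psi}$ (or its adjoint-inverse) in one slot. This is the group-like analogue of the earlier corollary (the one after Theorem \ref{gc1} which did the same thing for ordinary discrete groups), so the plan is to mimic that proof exactly, substituting Theorem \ref{CHARACTERIZATIONGROUPLIKE} for Theorem \ref{gc1}.

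\textbf{The plan.} For part (i): I would apply Theorem \ref{CHARACTERIZATIONGROUPLIKE} not to the pair $(\{A_U\}_{U\in\mathcal{U}},\{\Psi_U\}_{U\in\mathcal{U}})$ itself, but to the modified pair $(\{A_U S_{A,\Psi}^{-1}\}_{U\in\mathcal{U}},\{\Psi_U\}_{U\in\mathcal{U}})$. The first task is to check that this modified pair is a \emph{factorable Parseval} weak OVF with $\theta^*$ of its first sequence injective, so that Theorem \ref{CHARACTERIZATIONGROUPLIKE} applies. Factorability is clear since its analysis operator is $\theta_A S_{A,\Psi}^{-1}$ (a composition of bounded operators) and similarly $\theta_\Psi$ is unchanged; Parsevalness follows because its frame operator is $\sum_U (\Psi_U)^*(A_U S_{A,\Psi}^{-1}) = S_{A,\Psi}S_{A,\Psi}^{-1} = I_\mathcal{H}$; and injectivity of $(\theta_A S_{A,\Psi}^{-1})^* = (S_{A,\Psi}^{-1})^*\theta_A^*$ follows from injectivity of $\theta_A^*$ together with invertibility of $(S_{A,\Psi}^{-1})^*$. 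Then Theorem \ref{CHARACTERIZATIONGROUPLIKE} tells me there is a unitary representation $\pi$ of $\mathcal{U}$ on $\mathcal{H}$ with $A_U S_{A,\Psi}^{-1} = (A_I S_{A,\Psi}^{-1})\pi(U)^{-1}$ and $\Psi_U = \Psi_I \pi(U)^{-1}$ for all $U$, if and only if the three twisted-covariance relations hold with $A_V$ replaced by $A_V S_{A,\Psi}^{-1}$ (and likewise $A_W$). Rearranging $A_U S_{A,\Psi}^{-1} = A_I S_{A,\Psi}^{-1}\pi(U)^{-1}$ by right-multiplying by $S_{A,\Psi}$ gives exactly $A_U = A_I S_{A,\Psi}^{-1}\pi(U)^{-1}S_{A,\Psi}$, which is the desired form. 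Expanding the three relations $(A_{\sigma(UV)}S_{A,\Psi}^{-1})(A_{\sigma(UW)}S_{A,\Psi}^{-1})^* = f(UV)\overline{f(UW)}(A_V S_{A,\Psi}^{-1})(A_W S_{A,\Psi}^{-1})^*$, etc., and writing $(A_{\sigma(UW)}S_{A,\Psi}^{-1})^* = (S_{A,\Psi}^{-1})^* A_{\sigma(UW)}^*$ yields precisely the three conditions listed in (i). Part (ii) is identical in spirit: apply Theorem \ref{CHARACTERIZATIONGROUPLIKE} to $(\{A_U\}_{U\in\mathcal{U}},\{\Psi_U(S_{A,\Psi}^{-1})^*\}_{U\in\mathcal{U}})$, verify it is factorable, Parseval (its frame operator is $\sum_U (\Psi_U(S_{A,\Psi}^{-1})^*)^* A_U = S_{A,\Psi}^{-1}\sum_U \Psi_U^* A_U = S_{A,\Psi}^{-1}S_{A,\Psi} = I_\mathcal{H}$), and has $\theta_A^*$ injective (here the first sequence is unchanged, so $\theta_A^*$ is literally the same operator, hence still injective); then read off the conclusion and unwind the twisted relations.

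\textbf{Main obstacle.} The only subtlety I anticipate — and it is minor — is bookkeeping with the adjoints when translating the three covariance identities from the modified frame back to the original one, particularly making sure $S_{A,\Psi}^{-1}(S_{A,\Psi}^{-1})^*$ appears in the right places in (i) and $(S_{A,\Psi}^{-1})^* S_{A,\Psi}^{-1}$ in (ii). One must be careful that in (i) it is the \emph{first} slot that gets the $S_{A,\Psi}^{-1}$ factor, so both $A_{\sigma(UV)}$ and $A_V$ acquire $S_{A,\Psi}^{-1}$ on the right and their adjoints acquire $(S_{A,\Psi}^{-1})^*$ on the left, producing the product $A_{\sigma(UV)}S_{A,\Psi}^{-1}(S_{A,\Psi}^{-1})^*A_{\sigma(UW)}^*$; the mixed $A\Psi^*$ relation then only gets a single $S_{A,\Psi}^{-1}$; and the pure $\Psi\Psi^*$ relation is untouched. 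In (ii) the roles are mirrored. Since Theorem \ref{CHARACTERIZATIONGROUPLIKE} is quoted as a black box and nothing about group-like structure needs to be re-derived, there is no deeper difficulty: the corollary is a purely formal consequence obtained by substituting the "pre-conditioned" frames, exactly as the discrete-group corollary after Theorem \ref{gc1} was obtained. I would also remark, as the paper does for the earlier corollary, that the Parsevalness and injectivity hypotheses are not used in the forward ("only if") direction, so those implications hold for any factorable weak OVF generated in this way.
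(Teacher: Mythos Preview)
Your proposal is correct and follows exactly the paper's approach: apply Theorem \ref{CHARACTERIZATIONGROUPLIKE} to the Parseval pair $(\{A_U S_{A,\Psi}^{-1}\}_{U\in\mathcal{U}},\{\Psi_U\}_{U\in\mathcal{U}})$ for (i) and to $(\{A_U\}_{U\in\mathcal{U}},\{\Psi_U(S_{A,\Psi}^{-1})^*\}_{U\in\mathcal{U}})$ for (ii), then unwind the resulting relations. Your verification of the hypotheses (factorability, Parsevalness, and injectivity of the relevant adjoint analysis operator) is in fact more explicit than the paper's own proof, which simply asserts that these modified pairs are factorable Parseval OVFs.
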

\begin{proof}
\begin{enumerate}[label=(\roman*)]
		\item 	We  apply Theorem \ref{CHARACTERIZATIONGROUPLIKE} to the factorable  Parseval OVF  $(\{A_US_{A,\Psi}^{-1}\}_{U\in \mathcal{U}} ,$ $ \{\Psi_U\}_{U\in \mathcal{U}})$. There is a unitary representation $ \pi$  of  $ \mathcal{U}$ on  $ \mathcal{H}$  for which  $ A_US_{A,\Psi}^{-1}=(A_IS^{-1}_{A,\Psi})\pi(U)^{-1}, \Psi_U=\Psi_I\pi(U)^{-1}  $ for all $ U \in \mathcal{U}$  if and only if 
		\begin{align*}
		&(A_{\sigma(UV)}S^{-1}_{A, \Psi})(A_{\sigma(UW)}S_{A,\Psi}^{-1})^*=f(UV)\overline{f(UW)}( A_VS^{-1}_{A, \Psi})(A_WS_{A,\Psi}^{-1})^*,\\
		&(A_{\sigma(UV)}S_{A, \Psi}^{-1}) \Psi_{\sigma(UW)}^*=
		f(UV)\overline{f(UW)}( A_VS^{-1}_{A, \Psi})\Psi_W^*,\\
		&\Psi_{\sigma(UV)}\Psi_{\sigma(UW)}^*=f(UV)\overline{f(UW)} \Psi_V\Psi_W^*, \quad \forall U,V,W \in \mathcal{U}.
		\end{align*}
		\item 	We  apply Theorem \ref{CHARACTERIZATIONGROUPLIKE} to the factorable  Parseval OVF   $(\{A_U\}_{U\in \mathcal{U}} , \{\Psi_U(S_{A,\Psi}^{-1})^*\}_{U\in \mathcal{U}})$. There is a unitary representation $ \pi$  of  $ \mathcal{U}$ on  $ \mathcal{H}$  for which  $ A_U=A_I\pi(U)^{-1}, \Psi_U(S_{A,\Psi}^{-1})^*=(\Psi_I(S_{A,\Psi}^{-1})^*)\pi(U)^{-1}  $ for all $ U \in \mathcal{U}$  if and only if 
		\begin{align*}
		&A_{\sigma(UV)}A_{\sigma(UW)}^*=
		f(UV)\overline{f(UW)} A_VA_W^*,\\
		&A_{\sigma(UV)}(\Psi_{\sigma(UW)}(S_{A,\Psi}^{-1})^*)^*=f(UV)\overline{f(UW)}A_V(\Psi_W(S_{A,\Psi}^{-1})^*)^*,\\
		&(\Psi_{\sigma(UV)}(S_{A,\Psi}^{-1})^*)(\Psi_{\sigma(UW)}(S_{A,\Psi}^{-1})^*)^*=f(UV)\overline{f(UW)} (\Psi_V(S_{A,\Psi}^{-1})^*)(\Psi_W(S_{A,\Psi}^{-1})^*)^*,\\
		& \quad \quad \quad \quad \quad \quad \quad \quad \quad \quad \quad \quad \quad \quad \quad \quad\forall U,V,W \in \mathcal{U}.
		\end{align*}
	\end{enumerate}
\end{proof}

\section{PERTURBATIONS OF WEAK OPERATOR-VALUED FRAMES}\label{PERTURBATIONS}
In this section we derive stability results for factorable weak operator-valued frames. 
\begin{theorem}\label{PERTURBATION RESULT 1}
	Let $  ( \{A_n\}_{n},  \{\Psi_n\}_{n} ) $  be  a factorable weak OVF in $ \mathcal{B}(\mathcal{H}, \mathcal{H}_0)$. Suppose  $\{B_n\}_{n} $ in $ \mathcal{B}(\mathcal{H}, \mathcal{H}_0)$ is such that  there exist $\alpha, \beta, \gamma \geq 0  $ with $ \max\{\alpha+\gamma\|\theta_\Psi (S_{A,\Psi}^*)^{-1}\|, \beta\}$ $<1$ and for all $m=1,2, \dots, $
	\begin{align}\label{p3}
	\left\|\sum\limits_{n=1}^m(A_n^*-B_n^*)L_n^*y\right\|&\leq \alpha\left\|\sum\limits_{n=1}^mA_n^*L_n^*y\right\|+\beta\left\|\sum\limits_{n=1}^mB_n^*L_n^*y\right\|+\gamma \left(\sum\limits_{n=1}^m\|L_n^*y\|^2\right)^\frac{1}{2},\nonumber\\
	&\quad \forall y \in \ell^2(\mathbb{N})\otimes \mathcal{H}_0.
	\end{align} 
	Then  $  ( \{B_n\}_{n},  \{\Psi_n\}_{n} ) $ is a factorable weak OVF  with bounds 
	\begin{align*}
	\frac{1-(\alpha+\gamma\|\theta_\Psi (S_{A,\Psi}^*)^{-1}\|)}{(1+\beta)\|(S_{A,\Psi}^*)^{-1}\|} \quad \text{ and } \quad \frac{\|\theta_\Psi\|((1+\alpha)\|\theta_A\|+\gamma)}{1-\beta}.
	\end{align*}
\end{theorem}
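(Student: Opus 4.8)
The plan is to follow closely the scheme of Theorem~\ref{OURPERTURBATION} (the p-ASF perturbation result), adapting it to the operator-valued setting where the roles of $\theta_f$, $\theta_\tau$, $S_{f,\tau}$ are now played by $\theta_A$, $\theta_\Psi^*$, $S_{A,\Psi}$, and where the ``coefficient sequences'' $\{c_n\}$ are replaced by vectors $L_n^*y$, $y\in\ell^2(\mathbb{N})\otimes\mathcal{H}_0$. The key identity $\sum_n L_nL_n^* = I_{\ell^2(\mathbb{N})\otimes\mathcal{H}_0}$ from Proposition~\ref{LJORTHO}, which gives $\theta_A^*z = \sum_n A_n^*L_n^*z$ and lets us recognise $\sum_n A_n^*L_n^*y$ as $\theta_A^*y$, will be used repeatedly.

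First I would show $\{B_n\}_n$ is factorable, i.e. that $\theta_B: h\mapsto \sum_n L_nB_nh$ is a well-defined bounded operator. From the triangle inequality applied to Inequality~(\ref{p3}), one gets
\begin{align*}
\left\|\sum_{n=1}^m B_n^*L_n^*y\right\|\le (1+\alpha)\left\|\sum_{n=1}^m A_n^*L_n^*y\right\| + \gamma\left(\sum_{n=1}^m\|L_n^*y\|^2\right)^{1/2} + \beta\left\|\sum_{n=1}^m B_n^*L_n^*y\right\|,
\end{align*}
hence $\left\|\sum_{n=1}^m B_n^*L_n^*y\right\|\le \frac{1+\alpha}{1-\beta}\left\|\sum_{n=1}^m A_n^*L_n^*y\right\| + \frac{\gamma}{1-\beta}\left(\sum_{n=1}^m\|L_n^*y\|^2\right)^{1/2}$. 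Since $\left(\sum_n\|L_n^*y\|^2\right)^{1/2}=\|y\|$ (the $L_n^*$ are coisometries with orthogonal ranges summing to the identity, by Proposition~\ref{LJORTHO}) and $\sum_n A_n^*L_n^*y=\theta_A^*y$ converges, the partial sums $\sum_n B_n^*L_n^*y$ are Cauchy; this gives $\theta_B^*$ well-defined and bounded with $\|\theta_B^*\|\le\frac{1+\alpha}{1-\beta}\|\theta_A\|+\frac{\gamma}{1-\beta}$, whence $\|\theta_B\|\le\frac{1+\alpha}{1-\beta}\|\theta_A\|+\frac{\gamma}{1-\beta}$. This also immediately yields the claimed upper frame bound, since once $S_{B,\Psi}=\theta_\Psi^*\theta_B$ is shown to exist we have $\|S_{B,\Psi}\|\le\|\theta_\Psi\|\,\|\theta_B\|\le\frac{\|\theta_\Psi\|((1+\alpha)\|\theta_A\|+\gamma)}{1-\beta}$.

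Next I would pass Inequality~(\ref{p3}) to the infinite sum (legitimate once $\theta_B$ exists), obtaining $\|\theta_A^*y-\theta_B^*y\|\le(\alpha\|\theta_A^*\|+\gamma)\|y\|$ for all $y$, i.e. $\|\theta_A^* -\theta_B^*\|\le\alpha\|\theta_A\|+\gamma$. Taking adjoints, $\|\theta_A-\theta_B\|\le\alpha\|\theta_A\|+\gamma$. Now specialise $y$: plug $y = \theta_\Psi(S_{A,\Psi}^*)^{-1}x$ into the inequality $\|\theta_A^*y-\theta_B^*y\|\le\alpha\|\theta_A^*y\|+\gamma\|y\|$ (and use $\theta_A^*\theta_\Psi = S_{A,\Psi}^*$, $\theta_B^*\theta_\Psi = S_{B,\Psi}^*$), to get
\begin{align*}
\|x - S_{B,\Psi}^*(S_{A,\Psi}^*)^{-1}x\| \le \alpha\|x\| + \gamma\|\theta_\Psi(S_{A,\Psi}^*)^{-1}x\| \le \big(\alpha + \gamma\|\theta_\Psi(S_{A,\Psi}^*)^{-1}\|\big)\|x\| + \beta\|S_{B,\Psi}^*(S_{A,\Psi}^*)^{-1}x\|.
\end{align*}
Since $\max\{\alpha+\gamma\|\theta_\Psi(S_{A,\Psi}^*)^{-1}\|,\beta\}<1$, Theorem~\ref{cc1} (Casazza--Christensen--Kalton--van Eijndhoven, with $A=I$ and $B=S_{B,\Psi}^*(S_{A,\Psi}^*)^{-1}$) makes $S_{B,\Psi}^*(S_{A,\Psi}^*)^{-1}$ invertible with $\|(S_{B,\Psi}^*(S_{A,\Psi}^*)^{-1})^{-1}\|\le\frac{1+\beta}{1-(\alpha+\gamma\|\theta_\Psi(S_{A,\Psi}^*)^{-1}\|)}$. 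Hence $S_{B,\Psi}^* = (S_{B,\Psi}^*(S_{A,\Psi}^*)^{-1})\,S_{A,\Psi}^*$ is invertible, so $S_{B,\Psi}=\theta_\Psi^*\theta_B$ is invertible and $(\{B_n\}_n,\{\Psi_n\}_n)$ is a factorable weak OVF. The lower bound follows from $\|S_{B,\Psi}^{-1}\|=\|(S_{B,\Psi}^*)^{-1}\| \le \|(S_{A,\Psi}^*)^{-1}\|\,\|(S_{B,\Psi}^*(S_{A,\Psi}^*)^{-1})^{-1}\| \le \frac{(1+\beta)\|(S_{A,\Psi}^*)^{-1}\|}{1-(\alpha+\gamma\|\theta_\Psi(S_{A,\Psi}^*)^{-1}\|)}$, so the optimal lower bound $\|S_{B,\Psi}^{-1}\|^{-1}$ is at least $\frac{1-(\alpha+\gamma\|\theta_\Psi(S_{A,\Psi}^*)^{-1}\|)}{(1+\beta)\|(S_{A,\Psi}^*)^{-1}\|}$.

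The main obstacle I anticipate is purely bookkeeping: correctly converting the partial-sum hypothesis~(\ref{p3}), which is phrased with the synthesis-type operators $\sum_n A_n^*L_n^*$, into a clean operator inequality $\|\theta_A^*-\theta_B^*\|\le\alpha\|\theta_A\|+\gamma$ and then ensuring the substitution $y=\theta_\Psi(S_{A,\Psi}^*)^{-1}x$ interacts correctly with the factorization $S_{A,\Psi}=\theta_\Psi^*\theta_A$ (note the adjoints: one must be careful that $\theta_A^*\theta_\Psi=S_{A,\Psi}^*$ rather than $S_{A,\Psi}$, which is why $(S_{A,\Psi}^*)^{-1}$ and not $S_{A,\Psi}^{-1}$ appears in the hypothesis and the bounds). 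The rest is a direct transcription of the Banach-space argument. I would also remark, as in Remark~\ref{OURCOROLLARY}, that this recovers Sun's stability theorem for OVFs by taking $\Psi_n=A_n$.
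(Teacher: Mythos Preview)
Your approach is essentially identical to the paper's: bound $\theta_B$ via the triangle inequality on (\ref{p3}), pass to the infinite sum, substitute $y=\theta_\Psi(S_{A,\Psi}^*)^{-1}x$, and apply Theorem~\ref{cc1} to $S_{B,\Psi}^*(S_{A,\Psi}^*)^{-1}$. The only slip is the bookkeeping you yourself flagged: when you pass (\ref{p3}) to the limit you write $\|\theta_A^*y-\theta_B^*y\|\le(\alpha\|\theta_A^*\|+\gamma)\|y\|$ and then ``plug $y$ into the inequality $\|\theta_A^*y-\theta_B^*y\|\le\alpha\|\theta_A^*y\|+\gamma\|y\|$'', both of which drop the $\beta\|\theta_B^*y\|$ term that is present in (\ref{p3}); the correct limiting inequality is $\|\theta_A^*y-\theta_B^*y\|\le\alpha\|\theta_A^*y\|+\beta\|\theta_B^*y\|+\gamma\|y\|$, which is exactly what you need (and in fact use) to land on the displayed estimate with the $\beta\|S_{B,\Psi}^*(S_{A,\Psi}^*)^{-1}x\|$ term. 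With that correction the argument is the paper's proof.
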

\begin{proof}
	For $m=1,2,\dots, $ and for every $ y$ in $ \ell^2(\mathbb{N})\otimes \mathcal{H}_0$, 
	\begin{align*}
	\left\| \sum\limits_{n=1}^mB_n^*L_n^*y\right\|&\leq \left\| \sum\limits_{n=1}^m(A_n^*-B_n^*)L_n^*y\right\|+\left\| \sum\limits_{n=1}^mA_n^*L_n^*y\right\|\\
	&\leq(1+\alpha)\left\| \sum\limits_{n=1}^mA_n^*L_n^*y\right\|+\beta\left\| \sum\limits_{n=1}^mB_n^*L_n^*y\right\|+\gamma\left( \sum\limits_{n=1}^m\|L_n^*y\|^2\right)^\frac{1}{2}
	\end{align*}
	which implies 
	\begin{equation}\label{p1}
	\left\| \sum\limits_{n=1}^mB_n^*L_n^*y\right\|\leq\frac{1+\alpha}{1-\beta}\left\| \sum\limits_{n=1}^mA_n^*L_n^*y\right\|+\frac{\gamma}{1-\beta}\left( \sum\limits_{n=1}^m\|L_n^*y\|^2\right)^\frac{1}{2}, \quad \forall y \in \ell^2(\mathbb{N})\otimes \mathcal{H}_0.
	\end{equation}
	Since  
	$$ \langle y,y\rangle =\langle (I_{\ell^2(\mathbb{N})}\otimes I_{\mathcal{H}_0})y,y\rangle=\left\langle\sum\limits_{n=1}^\infty L_nL_n^* y,y\right\rangle=\sum\limits_{n=1}^\infty\|L_n^* y\|^2 , \quad \forall y \in \ell^2(\mathbb{N})\otimes \mathcal{H}_0,$$
	Inequality   (\ref{p1})  shows that $\sum_{n=1}^\infty B_n^*L_n^*y $ exists for all  $   y \in \ell^2(\mathbb{N})\otimes \mathcal{H}_0.$ 
	From the continuity of norm, Inequality (\ref{p1}) gives
	\begin{align}\label{p2}
	\left\| \sum\limits_{n=1}^\infty B_n^*L_n^*y\right\|&\leq\frac{1+\alpha}{1-\beta}\left\| \sum\limits_{n=1}^\infty A_n^*L_n^*y\right\|+\frac{\gamma}{1-\beta}\left( \sum\limits_{n=1}^\infty\|L_n^*y\|^2\right)^\frac{1}{2}\nonumber \\
	&=\frac{1+\alpha}{1-\beta}\left\| \theta_A^*y\right\|+\frac{\gamma}{1-\beta}\|y\| , \quad \forall y \in \ell^2(\mathbb{N})\otimes \mathcal{H}_0
	\end{align}
	and this gives $ \sum_{n=1}^\infty B_n^*L_n^* $   is bounded; therefore its adjoint exists, which is $ \theta_B$; Inequality (\ref{p2}) now produces $\|\theta_B^*y\|\leq \frac{1+\alpha}{1-\beta}\left\| \theta_A^*y\right\|+\frac{\gamma}{1-\beta}\|y\| , \forall y \in \ell^2(\mathbb{N})\otimes \mathcal{H}_0 $ and from this $\|\theta_B\|=\|\theta_B^*\|\leq \frac{1+\alpha}{1-\beta}\left\| \theta_A^*\right\|+\frac{\gamma}{1-\beta} =\frac{1+\alpha}{1-\beta}\left\| \theta_A\right\|+\frac{\gamma}{1-\beta}.$
	Thus  we derived $ S_{B, \Psi}$ is a  bounded linear operator. 
	Continuity of the norm, existence of frame  operators together with Inequality (\ref{p3}) give
	$$ \|\theta_A^*y-\theta_B^*y\|\leq \alpha\|\theta_A^*y\|+\beta\|\theta_B^*y\|+\gamma\|y\|, \quad \forall y \in \ell^2(\mathbb{N})\otimes \mathcal{H}_0$$
		which implies
		\begin{align*}
	 \|\theta_A^*(\theta_\Psi (S_{A,\Psi}^*)^{-1} h)-\theta_B^*(\theta_\Psi (S_{A,\Psi}^*)^{-1}h)\|&\leq \alpha\|\theta_A^*(\theta_\Psi (S_{A,\Psi}^*)^{-1} h)\|\\
	 &\quad +\beta\|\theta_B^*(\theta_\Psi (S_{A,\Psi}^*)^{-1} h)\| +\gamma\|\theta_\Psi (S_{A,\Psi}^*)^{-1} h\|, \\
	 &\quad \forall h \in  \mathcal{H}.	
		\end{align*}
	But $ \theta_A^*\theta_\Psi (S_{A,\Psi}^*)^{-1}=I_\mathcal{H}$ and $\theta_B^*\theta_\Psi (S_{A,\Psi}^*)^{-1}= S_{B,\Psi}^* (S_{A,\Psi}^*)^{-1}.$ Therefore 
	\begin{align*}
	\| h- S_{B,\Psi}^*(S_{A,\Psi}^*)^{-1}h\|
	&\leq \alpha\| h\|+\beta\|S_{B,\Psi}^* (S_{A,\Psi}^*)^{-1} h\|+\gamma\|\theta_\Psi (S_{A,\Psi}^*)^{-1} h\|\\
	&\leq(\alpha+\gamma\|\theta_\Psi (S_{A,\Psi}^*)^{-1}\|)\|h\|+\beta\|S_{B,\Psi}^* (S_{A,\Psi}^*)^{-1} h\|, \quad \forall h \in  \mathcal{H}.
	\end{align*}
	Since $ \max\{\alpha+\gamma\|\theta_\Psi (S_{A,\Psi}^*)^{-1}\|, \beta\}<1$, Theorem \ref{cc1} tells that  $S_{B,\Psi}^* (S_{A,\Psi}^*)^{-1} $ is invertible and 
	$\|(S_{B,\Psi}^* (S_{A,\Psi}^*)^{-1})^{-1}\| \leq \frac{1+\beta}{1-(\alpha+\gamma\|\theta_\Psi (S_{A,\Psi}^*)^{-1}\|)}.$ From these, we get 
	$$(S_{B,\Psi}^* (S_{A,\Psi}^*)^{-1})S_{A,\Psi}^*=S_{B,\Psi}^* $$ is invertible and 
	\begin{align*}
	\| S_{B,\Psi}^{-1}\|\leq\|(S_{A,\Psi}^*)^{-1}\|\| S_{A,\Psi}^*S_{B,\Psi}^{-1}\| \leq \frac{\|(S_{A,\Psi}^*)^{-1}\|(1+\beta)}{1-(\alpha+\gamma\|\theta_\Psi (S_{A,\Psi}^*)^{-1}\|)}.
	\end{align*}
	Therefore $  ( \{B_n\}_{n},  \{\Psi_n\}_{n} ) $ is a factorable weak  OVF.  Observing that 
	\begin{align*}
	\|S_{B,\Psi}\|\leq \|\theta_\Psi\|\|\theta_B\|\leq \frac{\|\theta_\Psi\|((1+\alpha)\|\theta_A\|+\gamma)}{1-\beta}
	\end{align*}
	and  $ \|S_{B,\Psi}^{-1}\|^{-1}$ and $\|S_{B,\Psi}\| $ are optimal lower and upper frame bounds for $  ( \{B_n\}_{n},  \{\Psi_n\}_{n} ) $,  we get the frame bounds stated in the theorem.
\end{proof}
\begin{corollary}
	Let $  ( \{A_n\}_{n},  \{\Psi_n\}_{n} ) $  be  a factorable weak OVF  in $ \mathcal{B}(\mathcal{H}, \mathcal{H}_0)$. Suppose  $\{B_n\}_{n} $ in $ \mathcal{B}(\mathcal{H}, \mathcal{H}_0)$ is such that 
	$$ r \coloneqq \sum_{n=1}^\infty\|A_n-B_n\|^2 <\frac{1}{\|\theta_\Psi (S_{A,\Psi}^*)^{-1}\|^2}.$$
	Then $  ( \{B_n\}_{n},  \{\Psi_n\}_{n} ) $ is   a factorable weak OVF  with bounds 
	\begin{align*}
	\frac{1-\sqrt{r}\|\theta_\Psi (S_{A,\Psi}^*)^{-1}\|}{\|(S_{A,\Psi}^*)^{-1}\|} \quad \text{ and }\quad {\|\theta_\Psi\|(\|\theta_A\|+\sqrt{r})}. 
	\end{align*}
\end{corollary}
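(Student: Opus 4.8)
The plan is to deduce this corollary directly from Theorem \ref{PERTURBATION RESULT 1} by the standard device of choosing the parameters $\alpha=0$, $\beta=0$, and $\gamma=\sqrt{r}$, where $r=\sum_{n=1}^\infty\|A_n-B_n\|^2$. With these choices the hypothesis $\max\{\alpha+\gamma\|\theta_\Psi(S_{A,\Psi}^*)^{-1}\|,\beta\}<1$ becomes exactly $\sqrt{r}\,\|\theta_\Psi(S_{A,\Psi}^*)^{-1}\|<1$, which is precisely the assumption $r<1/\|\theta_\Psi(S_{A,\Psi}^*)^{-1}\|^2$. So the only thing that actually needs checking is that the square-summability condition on $\{\|A_n-B_n\|\}_n$ implies the Hilding-type inequality \eqref{p3} with $\alpha=\beta=0$ and $\gamma=\sqrt{r}$.

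First I would establish that inequality. Fix $m\in\mathbb{N}$ and $y\in\ell^2(\mathbb{N})\otimes\mathcal{H}_0$. Since $L_n(A_n^*-B_n^*)L_n^*$ has operator norm at most $\|A_n-B_n\|$ (using that $L_n$ is an isometry, Proposition \ref{LJORTHO}), and since the vectors $L_n(A_n^*-B_n^*)L_n^*y$ sit in mutually orthogonal subspaces $L_n(\mathcal{H}_0)$ of $\ell^2(\mathbb{N})\otimes\mathcal{H}_0$, I can write
\begin{align*}
\left\|\sum_{n=1}^m L_n(A_n^*-B_n^*)L_n^*y\right\|^2=\sum_{n=1}^m\|(A_n^*-B_n^*)L_n^*y\|^2\leq\sum_{n=1}^m\|A_n-B_n\|^2\,\|L_n^*y\|^2.
\end{align*}
Applying $\theta_\Psi^*$ or rather working with the synthesis-side operators $\sum_{n=1}^m(A_n^*-B_n^*)L_n^*y=\theta_\Psi^*$-free combination, one bounds $\left\|\sum_{n=1}^m(A_n^*-B_n^*)L_n^*y\right\|$ by $\left(\sum_{n=1}^m\|A_n-B_n\|^2\right)^{1/2}\left(\sum_{n=1}^m\|L_n^*y\|^2\right)^{1/2}\leq\sqrt{r}\left(\sum_{n=1}^m\|L_n^*y\|^2\right)^{1/2}$, which is exactly \eqref{p3} with $\alpha=\beta=0$, $\gamma=\sqrt{r}$. (Here I should be slightly careful: the left side of \eqref{p3} is $\left\|\sum_{n=1}^m(A_n^*-B_n^*)L_n^*y\right\|$ as an element of $\mathcal{H}$, not of the tensor product; but the orthogonality computation still applies after pairing against an arbitrary unit vector, or one simply uses Cauchy--Schwarz on $\sum_{n=1}^m\|(A_n^*-B_n^*)L_n^*y\|$ directly.)

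Having verified the hypotheses of Theorem \ref{PERTURBATION RESULT 1}, I would then just substitute $\alpha=0$, $\beta=0$, $\gamma=\sqrt{r}$ into the frame-bound formulas $\frac{1-(\alpha+\gamma\|\theta_\Psi(S_{A,\Psi}^*)^{-1}\|)}{(1+\beta)\|(S_{A,\Psi}^*)^{-1}\|}$ and $\frac{\|\theta_\Psi\|((1+\alpha)\|\theta_A\|+\gamma)}{1-\beta}$, which collapse to $\frac{1-\sqrt{r}\,\|\theta_\Psi(S_{A,\Psi}^*)^{-1}\|}{\|(S_{A,\Psi}^*)^{-1}\|}$ and $\|\theta_\Psi\|(\|\theta_A\|+\sqrt{r})$ respectively, matching the statement. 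I do not anticipate a serious obstacle here; the only delicate point is the bookkeeping with the $L_n$ operators to get the clean Cauchy--Schwarz estimate in the form \eqref{p3} demands, and making sure the orthogonality/isometry properties from Proposition \ref{LJORTHO} are invoked correctly rather than, say, overestimating by the triangle inequality (which would give $\sum\|A_n-B_n\|$ instead of its $\ell^2$ norm and would be too weak).
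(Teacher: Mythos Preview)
Your approach is correct and matches the paper's: set $\alpha=0$, $\beta=0$, $\gamma=\sqrt{r}$, verify \eqref{p3}, and substitute into the bounds of Theorem \ref{PERTURBATION RESULT 1}. One small clarification: your detour through the orthogonality of the $L_n(A_n^*-B_n^*)L_n^*y$ in the tensor product is unnecessary, since the norm in \eqref{p3} lives in $\mathcal{H}$, where those terms need not be orthogonal; the paper simply applies the triangle inequality followed by Cauchy--Schwarz on scalars,
\[
\left\|\sum_{n=1}^m(A_n^*-B_n^*)L_n^*y\right\|\leq\sum_{n=1}^m\|A_n-B_n\|\,\|L_n^*y\|\leq\left(\sum_{n=1}^m\|A_n-B_n\|^2\right)^{1/2}\left(\sum_{n=1}^m\|L_n^*y\|^2\right)^{1/2},
\]
which is exactly the alternative you flag in your parenthetical and is not ``too weak'' --- it already yields the $\ell^2$ bound, not $\ell^1$.
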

\begin{proof}
	We apply Theorem \ref{PERTURBATION RESULT 1} by taking $ \alpha =0, \beta=0, \gamma=\sqrt{r}$. Then $ \max\{\alpha+\gamma\|\theta_\Psi (S_{A,\Psi}^*)^{-1}\|, \beta\}<1$ and  for all $m=1,2, \dots, $
	\begin{align*}
	\left\|\sum\limits_{n=1}^m(A_n^*-B_n^*)L_n^*y\right\|&\leq \left(\sum\limits_{n=1}^m\|A_n^*-B_n^*\|^2 \right)^\frac{1}{2}\left(\sum\limits_{n=1}^m\|L_n^*y\|^2\right)^\frac{1}{2}\\
	&\leq\gamma\left(\sum\limits_{n=1}^m\|L_n^*y\|^2\right)^\frac{1}{2}, \quad\forall y \in \ell^2(\mathbb{N})\otimes \mathcal{H}_0.
	\end{align*}
\end{proof}
We next derive another stability result with different condition.
\begin{theorem}\label{OVFQUADRATICPERTURBATION}
	Let $  ( \{A_n\}_{n},  \{\Psi_n\}_{n} ) $  be a factorable weak OVF  in $ \mathcal{B}(\mathcal{H}, \mathcal{H}_0)$. Suppose  $\{B_n\}_{n} $ in $ \mathcal{B}(\mathcal{H}, \mathcal{H}_0)$ is such that  $   \sum_{n=1}^\infty\|A_n-B_n\|^2$ converges, and 
	$\sum_{n=1}^\infty\|A_n-B_n\|\|\Psi_n(S_{A,\Psi}^*)^{-1}\|<1.$
	Then  $  ( \{B_n\}_{n},  \{\Psi_n\}_{n} ) $ is a factorable weak OVF  with bounds 
	\begin{align*}
	\frac{1-\sum_{n=1}^\infty\|A_n-B_n\|\|\Psi_n(S_{A,\Psi}^*)^{-1}\|}{\|(S_{A,\Psi}^*)^{-1}\|}\quad \text{ and } \quad \|\theta_\Psi\|\left(\left(\sum_{n=1}^\infty\|A_n-B_n\|^2\right)^{1/2}+\|\theta_A\|\right) .
	\end{align*}  
\end{theorem}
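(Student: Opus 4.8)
The plan is to reduce this to the previously established perturbation result, Theorem \ref{PERTURBATION RESULT 1}, by verifying its hypotheses with a suitable choice of the parameters $\alpha,\beta,\gamma$. Since $\sum_{n=1}^\infty\|A_n-B_n\|^2$ converges, set $r\coloneqq\sum_{n=1}^\infty\|A_n-B_n\|^2$ and take $\alpha\coloneqq 0$, $\beta\coloneqq 0$, $\gamma\coloneqq\sqrt{r}$. The main point to check is the inequality \eqref{p3}: for every $m$ and every $y\in\ell^2(\mathbb{N})\otimes\mathcal{H}_0$,
\begin{align*}
\left\|\sum_{n=1}^m(A_n^*-B_n^*)L_n^*y\right\|&\leq \sum_{n=1}^m\|A_n^*-B_n^*\|\,\|L_n^*y\|\\
&\leq \left(\sum_{n=1}^m\|A_n-B_n\|^2\right)^{1/2}\left(\sum_{n=1}^m\|L_n^*y\|^2\right)^{1/2}\leq \gamma\left(\sum_{n=1}^m\|L_n^*y\|^2\right)^{1/2},
\end{align*}
using the triangle inequality, Cauchy--Schwarz, and $\|A_n^*-B_n^*\|=\|A_n-B_n\|$. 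This is exactly \eqref{p3} with the chosen $\alpha,\beta,\gamma$.

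Next I would check the condition $\max\{\alpha+\gamma\|\theta_\Psi(S_{A,\Psi}^*)^{-1}\|,\beta\}<1$, which in our case reads $\sqrt{r}\,\|\theta_\Psi(S_{A,\Psi}^*)^{-1}\|<1$. Here is the one genuinely delicate point: the hypothesis of the present theorem is $\sum_{n=1}^\infty\|A_n-B_n\|\,\|\Psi_n(S_{A,\Psi}^*)^{-1}\|<1$, not directly $\sqrt{r}\,\|\theta_\Psi(S_{A,\Psi}^*)^{-1}\|<1$. So I must relate these two quantities. Using $\|\Psi_n(S_{A,\Psi}^*)^{-1}\| = \|L_n^*\theta_\Psi (S_{A,\Psi}^*)^{-1}\| \le \|\theta_\Psi(S_{A,\Psi}^*)^{-1}\|$ together with Cauchy--Schwarz gives $\sum_n\|A_n-B_n\|\,\|\Psi_n(S_{A,\Psi}^*)^{-1}\| \le \big(\sum_n\|A_n-B_n\|^2\big)^{1/2}\big(\sum_n\|\Psi_n(S_{A,\Psi}^*)^{-1}\|^2\big)^{1/2}$, but this bounds the wrong way. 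Instead I expect the cleaner route is: apply Theorem \ref{PERTURBATION RESULT 1} not with $\gamma=\sqrt r$ but argue directly. Actually the honest fix is to mirror the proof of Theorem \ref{PERTURBATION RESULT 1} with the sharper estimate. So the plan is to prove it \emph{ab initio} following that proof's skeleton: show $\sum_n B_n^*L_n^*y$ converges and $\theta_B$ is bounded with $\|\theta_B\|\le\|\theta_A\|+\sqrt r$ (from the $\ell^2$-bound just displayed, taking $m\to\infty$ and using $\sum_n\|L_n^*y\|^2=\|y\|^2$), hence $S_{B,\Psi}=\theta_\Psi^*\theta_B$ is a bounded operator with $\|S_{B,\Psi}\|\le\|\theta_\Psi\|(\|\theta_A\|+\sqrt r)$.

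For invertibility of $S_{B,\Psi}$, I would estimate $\|h - S_{B,\Psi}^*(S_{A,\Psi}^*)^{-1}h\|$ directly. Writing $h=S_{A,\Psi}^*(S_{A,\Psi}^*)^{-1}h=\theta_A^*\theta_\Psi(S_{A,\Psi}^*)^{-1}h$ and $S_{B,\Psi}^*(S_{A,\Psi}^*)^{-1}h=\theta_B^*\theta_\Psi(S_{A,\Psi}^*)^{-1}h$, the difference is $\sum_{n=1}^\infty (A_n^*-B_n^*)L_n^*\theta_\Psi(S_{A,\Psi}^*)^{-1}h = \sum_{n=1}^\infty(A_n^*-B_n^*)\Psi_n(S_{A,\Psi}^*)^{-1}h$, whose norm is at most $\big(\sum_{n=1}^\infty\|A_n-B_n\|\,\|\Psi_n(S_{A,\Psi}^*)^{-1}\|\big)\|h\|$. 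By hypothesis this coefficient is $<1$, so by a Neumann-series argument (or Theorem \ref{cc1} with $\alpha$ equal to that coefficient, $\beta=0$, $\gamma=0$) the operator $S_{B,\Psi}^*(S_{A,\Psi}^*)^{-1}$ is invertible with $\|(S_{B,\Psi}^*(S_{A,\Psi}^*)^{-1})^{-1}\|\le (1-\sum_n\|A_n-B_n\|\|\Psi_n(S_{A,\Psi}^*)^{-1}\|)^{-1}$; therefore $S_{B,\Psi}^*$, and hence $S_{B,\Psi}$, is invertible, with $\|S_{B,\Psi}^{-1}\|\le \|(S_{A,\Psi}^*)^{-1}\|\,(1-\sum_n\|A_n-B_n\|\|\Psi_n(S_{A,\Psi}^*)^{-1}\|)^{-1}$. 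Since $\|S_{B,\Psi}^{-1}\|^{-1}$ and $\|S_{B,\Psi}\|$ are the optimal lower and upper frame bounds, this yields exactly the two bounds stated. The main obstacle, as noted, is that one cannot simply invoke Theorem \ref{PERTURBATION RESULT 1} verbatim because its $\gamma$-condition $\sqrt r\,\|\theta_\Psi(S_{A,\Psi}^*)^{-1}\|<1$ is generally stronger than the hypothesis here; the refined estimate using the individual norms $\|\Psi_n(S_{A,\Psi}^*)^{-1}\|$ in the invertibility step is what makes the sharper statement go through, and getting that bookkeeping right is the crux.
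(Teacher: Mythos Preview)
Your proposal is correct and, once you abandon the attempt to invoke Theorem \ref{PERTURBATION RESULT 1} verbatim, it follows essentially the same route as the paper: bound $\|\theta_B\|\le\|\theta_A\|+\sqrt{r}$ via Cauchy--Schwarz, then estimate $\|I_\mathcal{H}-S_{B,\Psi}^*(S_{A,\Psi}^*)^{-1}\|$ (equivalently your pointwise version) by $\sum_n\|A_n-B_n\|\|\Psi_n(S_{A,\Psi}^*)^{-1}\|<1$ and conclude invertibility by a Neumann-series argument. Your observation that the hypothesis here is genuinely weaker than the $\gamma$-condition of Theorem \ref{PERTURBATION RESULT 1}, and that the refinement comes from using the individual norms $\|\Psi_n(S_{A,\Psi}^*)^{-1}\|$ in the invertibility step, is exactly the point.
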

\begin{proof}
	Let $ \alpha =\sum_{n=1}^\infty\|A_n-B_n\|^2 $ and $\beta =\sum_{n=1}^\infty\|A_n-B_n\|\|\Psi_n(S_{A,\Psi}^*)^{-1}\|$. For  $m=1,2,\dots $ and for every $ y$ in $ \ell^2(\mathbb{N})\otimes \mathcal{H}_0$, 
	\begin{align*}
	\left\| \sum\limits_{n=1}^mB_n^*L_n^*y\right\|&\leq \left\| \sum\limits_{n=1}^m(A_n^*-B_n^*)L_n^*y\right\|+\left\| \sum\limits_{n=1}^mA_n^*L_n^*y\right\|\\
	&\leq \sum\limits_{n=1}^m\|A_n-B_n\|\|L_n^*y\|+\left\| \sum\limits_{n=1}^mA_n^*L_n^*y\right\| \\
	&\leq \left( \sum\limits_{n=1}^m\|A_n-B_n\|^2\right)^\frac{1}{2}\left( \sum\limits_{n=1}^m\|L_n^*y\|^2\right)^\frac{1}{2}+\left\| \sum\limits_{n=1}^mA_n^*L_n^*y\right\|\\
	&\leq \alpha^\frac{1}{2} \left( \sum\limits_{n=1}^m\|L_n^*y\|^2\right)^\frac{1}{2}+\left\| \sum\limits_{n=1}^mA_n^*L_n^*y\right\|\\
	&=\alpha^\frac{1}{2} \left\langle  \sum\limits_{n=1}^mL_nL_n^*y, y\right\rangle ^\frac{1}{2}+\left\| \sum\limits_{n=1}^mA_n^*L_n^*y\right\|,
	\end{align*}
	which converges to $\sqrt{\alpha}\|y\|+\|\theta_A^*y\|$. Hence 
	$\theta_B$ exists and $\|\theta_B\|\leq \sqrt{\alpha}+\|\theta_A\|$. Therefore  $S_{B,\Psi}=\theta_\Psi^*\theta_B=\sum_{n=1}^\infty\Psi^*_nB_n$ exists.
	Now 
	\begin{align*}
	\|I_\mathcal{H}-S_{B,\Psi}(S_{A,\Psi}^*)^{-1}\|&=\left\|\sum_{n=1}^\infty A_n^*\Psi_n (S_{A,\Psi}^*)^{-1}-\sum_{n=1}^\infty B_n^*\Psi_n (S_{A,\Psi}^*)^{-1}\right\|\\
	&=\left\|\sum_{n=1}^\infty(A_n^*-B_n^*)\Psi_n (S_{A,\Psi}^*)^{-1}\right\|\\
	&\leq \sum_{n=1}^\infty\|A_n-B_n\|\|\Psi_n (S_{A,\Psi}^*)^{-1}\| =\beta<1.
	\end{align*}
	Therefore $S_{B,\Psi}(S_{A,\Psi}^*)^{-1}$ is invertible and $ \|(S_{B,\Psi}(S_{A,\Psi}^*)^{-1})^{-1}\|\leq 1/(1-\beta)$. Calculation of frame bounds is similar to proof of Theorem \ref{PERTURBATION RESULT 1}.
\end{proof}

{\onehalfspacing \chapter{CONCLUSION AND FUTURE WORK}\label{chap8} }
In Chapter \ref{chap2} we initiated the study of frames for metric spaces. Since metric spaces are more general objects and have less structure than Banach spaces, study of frames for metric spaces goes in a different way than that of frames for Hilbert as well as for Banach spaces. Arens-Eells space is used as a tool which allows to use the functional analysis technique  to Lipschitz functions. However, this works good only when the co domain of Lipschitz functions is a Banach space. Most of the results in Chapter \ref{chap2} are concentrated whenever the codomain of Lipschitz function is a Banach space. In future we are interested to work on  frames for arbitrary metric spaces.

In Chapter \ref{chap3} we defined multipliers for metric spaces. We obtained some fundamental properties of multipliers. One of the future work is to explore further on multipliers in metric spaces.

In Chapter \ref{chap4} we studied a special class of approximate Schauder frames. We characterized a class of approximate Schauder frames and its duals. It is planned to obtain the description of frames and its duals for Banach spaces.

In Chapter \ref{chap5} we initiated the study of  the series $\sum_{n=1}^{\infty}\Psi_n^*A_n$. We mainly obtained results whenever this series is factored as the product of two bounded linear operators. In the future we are planning to study the series without factorability condition. We are also interested in studying path-connectedness of weak OVFs and try to get a result similar to Theorem \ref{KAFTALPATHCONNECTED}.

\leavevmode\newpage
\leavevmode\newpage
\addcontentsline{toc}{chapter}{APPENDIX A: DILATIONS OF LINEAR MAPS ON VECTOR SPACES}
\par~
\begin{center}
	\textbf{{\fontsize{16}{1em}\selectfont APPENDIX A: DILATIONS OF LINEAR MAPS ON VECTOR SPACES}} \\
\end{center}

{\onehalfspacing \section{DILATIONS OF FUNCTIONS ON SETS}
	One of the most useful results in the study of isometries on Hilbert spaces is the Wold decomposition. It describes the structure of an isometry. It uses the notion of a shift. 
	\begin{definition}(cf. \cite{NAGY})\label{SHIFTDEFINITION}
		Let $\mathcal{H}$ be a Hilbert space. An operator  $T:\mathcal{H}\to \mathcal{H}$ is called  a \textbf{shift} if   $\cap _{n=0}^\infty T^n(\mathcal{H})=\{0\}$.	
	\end{definition}
	\begin{theorem}(cf. \cite{NAGY, WOLD})
		(\textbf{Wold decomposition}) Let $T$ be an isometry on a Hilbert space $\mathcal{H}$. Then  $\mathcal{H}$ decomposes uniquely as $\mathcal{H}=\mathcal{H}_u\oplus \mathcal{H}_s$, where $\mathcal{H}_u$ and $\mathcal{H}_s$ are   $T$-reducing subspaces of $\mathcal{H}$, $T_{|\mathcal{H}_u}:\mathcal{H}_u\to \mathcal{H}_u$ is a  unitary and $T_{|\mathcal{H}_s}:\mathcal{H}_s \to \mathcal{H}_s$ is a shift.
	\end{theorem}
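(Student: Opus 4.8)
The plan is to build the two reducing subspaces explicitly from the isometry $T$ and then verify that they have the stated properties. First I would introduce the ``wandering subspace'' $\mathcal{L}\coloneqq \mathcal{H}\ominus T(\mathcal{H}) = T(\mathcal{H})^\perp$; since $T$ is an isometry, $T(\mathcal{H})$ is a closed subspace of $\mathcal{H}$, so $\mathcal{L}$ is well-defined. I would then set
\begin{align*}
\mathcal{H}_s\coloneqq \bigoplus_{n=0}^\infty T^n(\mathcal{L}), \qquad \mathcal{H}_u\coloneqq \bigcap_{n=0}^\infty T^n(\mathcal{H}).
\end{align*}
The first key step is to check that the spaces $T^n(\mathcal{L})$, $n\ge 0$, are pairwise orthogonal: because $T$ is an isometry, $\langle T^n\ell, T^m\ell'\rangle = \langle \ell, T^{m-n}\ell'\rangle$ for $m\ge n$, and $T^{m-n}\ell' \in T(\mathcal{H})$ while $\ell\perp T(\mathcal{H})$, so the inner product vanishes. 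Hence the orthogonal sum defining $\mathcal{H}_s$ makes sense.

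Next I would prove the orthogonal decomposition $\mathcal{H}=\mathcal{H}_u\oplus \mathcal{H}_s$. The standard route is to establish, by induction on $N$, the finite decomposition $\mathcal{H} = T^{N+1}(\mathcal{H}) \oplus \bigl(\bigoplus_{n=0}^{N} T^n(\mathcal{L})\bigr)$: the base case $N=0$ is just $\mathcal{H}=T(\mathcal{H})\oplus\mathcal{L}$, and the inductive step applies $T$ (an isometry, hence preserving orthogonal decompositions) to $\mathcal{H}=T(\mathcal{H})\oplus\mathcal{L}$ and substitutes. Letting $N\to\infty$ and taking the intersection of the nested subspaces $T^{N+1}(\mathcal{H})$ gives $\mathcal{H}=\mathcal{H}_u\oplus\mathcal{H}_s$. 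Along the way I would note $T(\mathcal{H}_s)=\bigoplus_{n=1}^\infty T^n(\mathcal{L})\subseteq \mathcal{H}_s$ and $T(\mathcal{H}_u)=\mathcal{H}_u$, which shows both subspaces are $T$-invariant; combined with $T^*$-invariance (checked directly, or deduced from the orthogonal complement being invariant under an isometry), this gives that $\mathcal{H}_u,\mathcal{H}_s$ are $T$-reducing.

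Then I would identify the restrictions. On $\mathcal{H}_u$, since $T(\mathcal{H}_u)=\mathcal{H}_u$ and $T$ is an isometry, $T_{|\mathcal{H}_u}$ is a surjective isometry, hence unitary. On $\mathcal{H}_s$, I would show $\bigcap_{n=0}^\infty T^n(\mathcal{H}_s)=\{0\}$: any vector in this intersection lies in $\bigoplus_{k\ge n} T^k(\mathcal{L})$ for every $n$, so its component in each $T^k(\mathcal{L})$ is zero, forcing it to be $0$; thus $T_{|\mathcal{H}_s}$ is a shift in the sense of Definition \ref{SHIFTDEFINITION}. Finally, for uniqueness, I would observe that any $T$-reducing decomposition $\mathcal{H}=\mathcal{M}_u\oplus\mathcal{M}_s$ with $T_{|\mathcal{M}_u}$ unitary and $T_{|\mathcal{M}_s}$ a shift must satisfy $\mathcal{M}_u\subseteq \bigcap_n T^n(\mathcal{H})$ (because $T_{|\mathcal{M}_u}$ unitary gives $\mathcal{M}_u=T^n(\mathcal{M}_u)\subseteq T^n(\mathcal{H})$), and conversely $\bigcap_n T^n(\mathcal{H})\subseteq \mathcal{M}_u$ using that the $\mathcal{M}_s$-component of such a vector would lie in $\bigcap_n T^n(\mathcal{M}_s)=\{0\}$; hence $\mathcal{M}_u=\mathcal{H}_u$ and $\mathcal{M}_s=\mathcal{H}_s$. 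The main obstacle I anticipate is bookkeeping the orthogonality and the induction cleanly — in particular making sure the infinite orthogonal sum converges and that the intersection argument for $\mathcal{H}_s$ being shift-like is rigorous — rather than any deep conceptual difficulty.
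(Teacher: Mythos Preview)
Your proof is correct and is the standard classical argument for the Wold decomposition. However, note that the paper does \emph{not} supply its own proof of this theorem: it is stated with the citation ``(cf.\ \cite{NAGY, WOLD})'' and no proof environment follows, so there is no paper proof to compare against.

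That said, it is worth contrasting your approach with the proof the paper \emph{does} give for the vector space analogue (``Wold decomposition for vector spaces''). There the author simply sets $\mathcal{V}_b\coloneqq\bigcap_{n=0}^\infty T^n(\mathcal{V})$ and takes $\mathcal{V}_s$ to be an arbitrary algebraic complement, then checks directly that $T_{|\mathcal{V}_b}$ is a bijection and $T_{|\mathcal{V}_s}$ is a shift. Your Hilbert space argument is richer: you build $\mathcal{H}_s$ constructively from the wandering subspace $\mathcal{L}=\mathcal{H}\ominus T(\mathcal{H})$ as $\bigoplus_{n\ge 0}T^n(\mathcal{L})$, and this explicit orthogonal structure is exactly what delivers the reducing (not merely invariant) property and the uniqueness --- two features the paper explicitly drops in the vector space version (``Since vector space complements are not unique, we do not have uniqueness in Wold decomposition for vector spaces''). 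So your approach uses more of the Hilbert space geometry, and buys more in return.
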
	
	Using functional calculus and Weierstrass polynomial approximation theorem, Halmos in 1950 proved an important result that every contraction on a Hilbert space can be lifted to unitary. 
	\begin{theorem}(\cite{HALMOSORIGINAL}) \label{HALMOSDILATION}(\textbf{Halmos dilation})
		Let $\mathcal{H}$ be a Hilbert space and $T:\mathcal{H}\to \mathcal{H}$ be a contraction. Then the operator 
		\begin{align*}
		U\coloneqq \begin{pmatrix}
		T & \sqrt{I-TT^*}   \\
		\sqrt{I-T^*T} & -T^*   \\
		\end{pmatrix}
		\end{align*}is unitary on 	$\mathcal{H}\oplus \mathcal{H}$. In other words,
		\begin{align*}
		T=P_\mathcal{H}U_{|\mathcal{H}},
		\end{align*}
		where $P_\mathcal{H}:\mathcal{H}\oplus \mathcal{H}\to \mathcal{H}\oplus \mathcal{H}$ is the orthogonal projection onto $\mathcal{H}$.
	\end{theorem}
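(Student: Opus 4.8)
\textbf{Proof plan for Halmos dilation (Theorem \ref{HALMOSDILATION}).}

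The plan is to verify directly that the block operator $U$ is unitary on $\mathcal{H}\oplus\mathcal{H}$, and then read off the compression formula $T=P_\mathcal{H}U_{|\mathcal{H}}$ as an immediate consequence of the block structure. First I would record the two operator identities that make everything work: since $T$ is a contraction, $I-T^*T$ and $I-TT^*$ are positive operators, so their positive square roots $C:=\sqrt{I-T^*T}$ and $D:=\sqrt{I-TT^*}$ exist and are self-adjoint. The key intertwining relation is $TC=DT$, equivalently $T\,p(T^*T)=p(TT^*)\,T$ for every polynomial $p$ (proved first for monomials $p(x)=x^n$ by induction using $T(T^*T)=(TT^*)T$, then extended to all polynomials by linearity, then to $\sqrt{\cdot}$ via the Weierstrass approximation theorem on the spectra, which lie in $[0,1]$). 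Similarly $T^*D=CT^*$. These are the only nontrivial ingredients.

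Next I would compute $U^*U$ and $UU^*$ as $2\times 2$ block products. We have
\begin{align*}
U^*=\begin{pmatrix} T^* & C \\ D & -T \end{pmatrix},
\end{align*}
and multiplying out $U^*U$ gives the four blocks $T^*T+C^2$, $T^*D-CT$... wait, I must be careful with which square root sits where; with $U=\begin{pmatrix} T & D \\ C & -T^* \end{pmatrix}$ the $(1,1)$ block of $U^*U$ is $T^*T+C^2=T^*T+(I-T^*T)=I$, the $(2,2)$ block is $D^2+TT^*=(I-TT^*)+TT^*=I$, and the off-diagonal blocks are $T^*D-CT^*$ and its adjoint, which vanish precisely by the intertwining relation $CT^*=T^*D$ established above. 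The computation of $UU^*$ is symmetric and uses $TC=DT$. Hence $U^*U=UU^*=I_{\mathcal{H}\oplus\mathcal{H}}$, so $U$ is unitary.

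Finally, for the compression statement, identify $\mathcal{H}$ with the first summand $\mathcal{H}\oplus\{0\}\subseteq\mathcal{H}\oplus\mathcal{H}$ and let $P_\mathcal{H}$ be the orthogonal projection onto it. For $h\in\mathcal{H}$ we have $U(h\oplus 0)=(Th)\oplus(Ch)$, so $P_\mathcal{H}U(h\oplus 0)=(Th)\oplus 0$, which is exactly $Th$ under the identification; thus $T=P_\mathcal{H}U_{|\mathcal{H}}$. I expect no real obstacle here: the only place requiring genuine care is the passage from the polynomial identity $T\,p(T^*T)=p(TT^*)\,T$ to the square-root identity $TC=DT$, where one invokes Weierstrass approximation on $[0,1]$ together with norm-continuity of the functional calculus (uniform approximation of $\sqrt{x}$ by polynomials on the compact set containing the spectra of $T^*T$ and $TT^*$). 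Everything else is bookkeeping with $2\times 2$ operator matrices.
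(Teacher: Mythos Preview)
Your argument is correct and is the standard proof of Halmos dilation: establish the intertwining $T\sqrt{I-T^*T}=\sqrt{I-TT^*}\,T$ via polynomial approximation, then check $U^*U=UU^*=I$ block by block. However, the paper does not actually supply a proof of this theorem; it is quoted as a classical background result with a citation to \cite{HALMOSORIGINAL}, and the appendix only proves new vector-space analogues (Theorem~\ref{HALMOSVECTORSPACE} onward). So there is nothing in the paper to compare against, but what you wrote is exactly the expected proof.
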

	Three years later, Sz. Nagy extended the result of Halmos which reads as follows.
	\begin{theorem}(\cite{NAGYPAPER})\label{NAGYTHEOREM} (\textbf{Sz. Nagy dilation})
		Let $\mathcal{H}$ be a Hilbert space and $T:\mathcal{H}\to \mathcal{H}$ be a contraction. Then there exists a Hilbert space $\mathcal{K}$	which contains $\mathcal{H}$ isometrically and a unitary $U:\mathcal{K}\to \mathcal{K}$ such that 
		\begin{align*}
		T^n=P_\mathcal{H}U_{|\mathcal{H}},^n \quad \forall n=1, 2,\dots, 
		\end{align*}
		where $P_\mathcal{H}:\mathcal{K}\to \mathcal{K}$  is the orthogonal projection onto $\mathcal{H}$.
	\end{theorem}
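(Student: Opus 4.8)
The statement to prove is the Sz.~Nagy dilation theorem: every contraction $T$ on a Hilbert space $\mathcal{H}$ admits a unitary power dilation on a larger Hilbert space $\mathcal{K}\supseteq\mathcal{H}$, with $T^n=P_\mathcal{H}U^n_{|\mathcal{H}}$ for all $n\ge1$.

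My plan is to build $\mathcal{K}$ as a two-sided infinite orthogonal sum of copies of $\mathcal{H}$ and realize $U$ as a weighted bilateral shift whose ``middle block'' is exactly the Halmos unitary from Theorem~\ref{HALMOSDILATION}. Concretely, I would first set $D_T\coloneqq\sqrt{I-T^*T}$ and $D_{T^*}\coloneqq\sqrt{I-TT^*}$ (the defect operators, well-defined positive operators since $\|T\|\le1$), and take $\mathcal{K}\coloneqq\cdots\oplus\mathcal{H}\oplus\mathcal{H}\oplus\underline{\mathcal{H}}\oplus\mathcal{H}\oplus\cdots$ where the underlined copy (index $0$) is identified with the original $\mathcal{H}$, so the inclusion $\mathcal{H}\hookrightarrow\mathcal{K}$ is isometric and $P_\mathcal{H}$ is the orthogonal projection onto coordinate $0$. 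Then I would define $U$ on $\mathcal{K}$ by the rule that it acts as the identity shift on the copies with index $\le-1$ and index $\ge1$, except that it couples coordinates $0$ and $1$ through the $2\times2$ block $\left(\begin{smallmatrix}T & D_{T^*}\\ D_T & -T^*\end{smallmatrix}\right)$ exactly as in Theorem~\ref{HALMOSDILATION}. Writing a vector of $\mathcal{K}$ as $(\dots,h_{-1},h_0,h_1,\dots)$, the image under $U$ in coordinate $1$ is $D_Th_0-T^*h_1$, in coordinate $0$ is $Th_0+D_{T^*}h_1$, in coordinate $n$ for $n\le-1$ is $h_{n-1}$, and in coordinate $n$ for $n\ge2$ is $h_{n-1}$.

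The verification then splits into two routine-but-careful steps. First, $U$ is unitary: using the two defect identities $TD_T=D_{T^*}T$ and the fact that the Halmos $2\times2$ block is unitary on $\mathcal{H}\oplus\mathcal{H}$ (Theorem~\ref{HALMOSDILATION}), together with the obvious unitarity of the bilateral shift on the remaining coordinates, one checks $U^*U=UU^*=I_\mathcal{K}$ directly. Second, and this is the crux, one shows by induction on $n$ that for $h\in\mathcal{H}$ viewed as the vector with $h$ in coordinate $0$ and zeros elsewhere,
\begin{align*}
U^nh=T^nh \ \text{in coordinate }0,\quad U^nh \ \text{has coordinate }k=0 \text{ for all }k\le -1,
\end{align*}
so that $P_\mathcal{H}U^nh=T^nh$. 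The inductive mechanism is that applying $U$ to a vector supported on coordinates $0,1,2,\dots$ keeps it supported on coordinates $0,1,2,\dots$ (the negative coordinates stay zero because nothing feeds into them from coordinate $0$), and the coordinate-$0$ component gets multiplied by $T$ after projecting away the ``overflow'' into coordinate~$1$; spelling this out is where one must be attentive to bookkeeping but no deep idea is needed.

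The main obstacle I anticipate is purely organizational rather than conceptual: keeping the index conventions consistent so that the negative-index copies genuinely never contribute to $P_\mathcal{H}U^nh$, and making sure the one coupled block is placed so that $U$ is a genuine dilation (not merely a co-isometric or isometric one). An alternative, perhaps cleaner, route would be to first dilate $T$ to an isometry $V$ on $\mathcal{H}\oplus\mathcal{H}\oplus\cdots$ (the minimal isometric dilation, $V(h_0,h_1,\dots)=(Th_0,D_Th_0,h_1,h_2,\dots)$, for which $V^n$ has coordinate-$0$ component $T^n$ is immediate), and then apply the Wold decomposition together with a bilateral-shift extension of the shift part to enlarge $V$ to a unitary $U$ on a still larger space while preserving the relation $P_\mathcal{H}U^n_{|\mathcal{H}}=P_\mathcal{H}V^n_{|\mathcal{H}}=T^n$; I would mention this as the conceptually transparent proof and carry out whichever is shorter to write in full.
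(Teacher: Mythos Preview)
The paper does not prove this theorem; it is cited as a classical result of Sz.\ Nagy, and the explicit Schaffer construction is quoted (also without proof) in the subsequent Theorem~\ref{SCHAFFERTHEOREM}. So the relevant comparison is between your construction and Schaffer's.

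Your plan is the right one, but the explicit operator you wrote down is not unitary and is not a power dilation. With your indexing, $(Uh)_n=h_{n-1}$ for $n\ge 2$ means $(Uh)_2=h_1$, while $h_1$ is \emph{also} consumed by the $2\times2$ block in coordinates $0$ and $1$; meanwhile $(Uh)_{-1}=h_{-2}$, so $h_{-1}$ is never used at all. A norm computation gives
\[
\|Uh\|^2=\cdots+\|h_{-2}\|^2+\bigl(\|h_0\|^2+\|h_1\|^2\bigr)+\|h_1\|^2+\|h_2\|^2+\cdots,
\]
which double-counts $\|h_1\|^2$ and omits $\|h_{-1}\|^2$. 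Even ignoring unitarity, the power-dilation identity fails: starting from $h$ supported at coordinate $0$, $Uh$ has $D_Th_0$ at coordinate $1$, and since coordinate $1$ feeds back into coordinate $0$ via $D_{T^*}$, one gets $(U^2h)_0=T^2h_0+D_{T^*}D_Th_0\neq T^2h_0$ in general.

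The Schaffer matrix in Theorem~\ref{SCHAFFERTHEOREM} avoids exactly this: the $2\times 2$ Halmos block occupies rows $\{-1,0\}$ and columns $\{0,1\}$, so the defect output $D_Th_0$ lands at coordinate $-1$ and is then carried \emph{away} from coordinate $0$ by the shift, never feeding back. Your anticipated ``organizational obstacle'' is precisely the error; once you reorient the shift (or equivalently place the block as Schaffer does), your inductive verification goes through. Your alternative route via the minimal isometric dilation followed by a unitary extension of the shift part is correct and standard, and would be a safe fallback.
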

	Unitary operator $U$ in Theorem \ref{NAGYTHEOREM} is known as dilation operator and the space $\mathcal{K}$ is called as dilation space. If 
	\begin{align*}
	\mathcal{K}=\overline{\text{span}}\{U^nh:, n \in \mathbb{Z}_+, h \in \mathcal{H}\},
	\end{align*}
	then $(\mathcal{K},U)$ is said to be a minimal dilation.  It is known that in Theorem \ref{NAGYTHEOREM}, the space $\mathcal{K}$ can be taken as a minimal space.
	It was \cite{SCHAFFER} who gave a proof of Sz. Nagy dilation theorem using infinite matrices. In the following theorem, $\oplus_{n=-\infty}^{\infty} \mathcal{H}$ is the Hilbert space defined by 
	\begin{align*}
	\oplus_{n=-\infty}^{\infty} \mathcal{H}\coloneqq \left\{ \{h_n\}_{n=-\infty}^\infty, h_n \in \mathcal{H}, \forall n \in \mathbb{Z}, \sum_{n=-\infty}^{\infty}\|h_n\|^2<\infty\right\}
	\end{align*}
	with respect to  the inner product 
	\begin{align*}
	\langle \{h_n\}_{n=-\infty}^\infty, \{g_n\}_{n=-\infty}^\infty\rangle \coloneqq \sum_{n=-\infty}^{\infty}\langle h_n, g_n \rangle, \quad \forall \{h_n\}_{n=-\infty}^\infty, \{g_n\}_{n=-\infty}^\infty \in \oplus_{n=-\infty}^{\infty} \mathcal{H}.
	\end{align*}
	\begin{theorem}(\cite{SCHAFFER})\label{SCHAFFERTHEOREM}
		Let $\mathcal{H}$ be a Hilbert space and $T:\mathcal{H}\to \mathcal{H}$ be a contraction.  Let $U\coloneqq [u_{n,m}]_{-\infty < n,m< \infty}$ be the \textbf{Schaffer operator} defined on 
		$\oplus_{n=-\infty}^{\infty} \mathcal{H}$ given by  the infinite matrix defined as follows:
		\begin{align*}
		&u_{0,0}\coloneqq T, \quad u_{0,1}\coloneqq 	\sqrt{I-TT^*}, \quad u_{-1,0}\coloneqq 	\sqrt{I-T^*T},\\
		&  u_{-1,1}\coloneqq -T^*~,  \quad u_{n,n+1}\coloneqq I, ~\forall n \in \mathbb{Z}, n\neq 0,1, \quad u_{n,m} \coloneqq 0, \quad  \text{ otherwise},
		\end{align*}
		i.e., 
		\begin{align*}
		U=\begin{pmatrix}
		&\vdots &\vdots & \vdots & \vdots & \vdots & \\
		\cdots & 0 & I& 0 & 0&  0& \cdots & \\
		\cdots & 0 & 0& \sqrt{I-T^*T}& -T^*  & 0&\cdots  & \\
		\cdots & 0&0&\boxed{T}&\sqrt{I-TT^*}& 0&\cdots&\\
		\cdots & 0&0&0&0& I&\cdots &\\
		\cdots & 0&0&0&0& 0&\cdots &\\
		& \vdots &\vdots &\vdots &\vdots  & \vdots & \\
		\end{pmatrix}_{\infty\times \infty}
		\end{align*}
		where $T$ is in the $(0,0)$  position (which is in the box), is invertible  on 	$\oplus_{n=-\infty}^{\infty} \mathcal{H}$ and 
		\begin{align*}
		T^n=P_\mathcal{H}U_{|\mathcal{H}}^n,\quad \forall n\in \mathbb{N},
		\end{align*}
		where $P_\mathcal{H}:\oplus_{n=-\infty}^{\infty} \mathcal{H}\to \oplus_{n=-\infty}^{\infty} \mathcal{H}$ is the orthogonal   projection onto $\mathcal{H}$.
	\end{theorem}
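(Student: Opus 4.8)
The plan is to verify directly that the Schaffer operator $U$ is a bounded invertible operator on $\bigoplus_{n=-\infty}^{\infty}\mathcal{H}$ and that compressing $U^n$ to $\mathcal{H}$ (sitting in the $(0,0)$-slot) recovers $T^n$ for every $n\in\mathbb{N}$. First I would set up notation: write a generic vector of $\bigoplus_{n=-\infty}^{\infty}\mathcal{H}$ as $\{h_n\}_{n\in\mathbb{Z}}$, identify $\mathcal{H}$ with the subspace of sequences supported at index $0$, and let $P_\mathcal{H}$ be the orthogonal projection onto it. The matrix $U=[u_{n,m}]$ acts by $(U\{h_m\})_n=\sum_m u_{n,m}h_m$, which because each row has at most two nonzero entries is a finite sum, so $U$ is well-defined on finitely supported sequences; boundedness then follows from the structure of the rows together with the fact that $T$ is a contraction (so $I-TT^*\geq0$ and $I-T^*T\geq0$ have well-defined positive square roots).

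The core computation is to show $U$ is unitary, which I would do by checking $U^*U=I=UU^*$ on the standard generators $e_k\otimes h$ (i.e. sequences supported at index $k$). Here $U^*$ is the matrix $[\,\overline{u}_{m,n}\,]=[u_{m,n}^*]$. The only interesting columns/rows are those near indices $0,1$; away from them $U$ just acts as the bilateral shift, which is obviously unitary on its invariant part. Near the origin the identity $U^*U=I$ reduces precisely to the two Halmos identities
\begin{align*}
T^*T+(I-T^*T)=I,\qquad (I-TT^*)+TT^*=I,
\end{align*}
together with the orthogonality relations $T^*\sqrt{I-TT^*}=\sqrt{I-T^*T}\,T^*$ (equivalently $f(T^*T)T^*=T^*f(TT^*)$ for $f(x)=\sqrt{1-x}$, obtained by polynomial approximation as in Halmos/Nagy), which make the cross terms cancel. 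The same relations, used the other way, give $UU^*=I$. Thus $U$ is invertible with $U^{-1}=U^*$.

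Finally I would establish the dilation identity $T^n=P_\mathcal{H}U^n|_{\mathcal{H}}$ by induction on $n$. For $n=1$ this is immediate from the matrix: $U(e_0\otimes h)=e_0\otimes Th+e_{-1}\otimes\sqrt{I-T^*T}\,h$, and applying $P_\mathcal{H}$ kills the $e_{-1}$ component, leaving $Th$. For the inductive step, the key structural fact is that for $h\in\mathcal{H}$ the vector $U^n h$ has its index-$0$ component equal to $T^n h$ and all its other components supported at indices $<0$ (never at positive indices), because the only way the $U$-action moves mass to the $0$-slot is through the $(0,0)$-entry $T$ acting on the $0$-slot or through the shift entries feeding from index $1$, and starting from $\mathcal{H}$ nothing ever reaches positive indices. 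So $U^{n+1}h=U(U^nh)$ again has $0$-component $T(T^nh)=T^{n+1}h$, and compressing gives the claim. The main obstacle I anticipate is bookkeeping: carefully tracking which matrix entries contribute when multiplying $U$ by a vector supported at negative indices, and making the ``mass never reaches positive indices'' invariant precise; the operator-theoretic content (the two Halmos identities plus the intertwining $f(T^*T)T^*=T^*f(TT^*)$) is routine once isolated. Minimality, if desired, then follows by noting $\bigoplus_{n=-\infty}^\infty\mathcal{H}=\overline{\operatorname{span}}\{U^n h: n\in\mathbb{Z},\,h\in\mathcal{H}\}$, but this is not needed for the stated conclusion.
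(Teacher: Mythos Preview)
The paper does not supply its own proof of this theorem; it is quoted as a classical result of Schaffer and stated without argument. Your approach is correct and is essentially the standard one: verify unitarity of $U$ by checking $U^*U=UU^*=I$ (which, as you note, reduces near the origin to the Halmos identities together with the intertwining $\sqrt{I-T^*T}\,T^*=T^*\sqrt{I-TT^*}$), and then prove the dilation identity by induction, maintaining the invariant that $U^n h$ is supported on indices $\leq 0$ with $0$-component $T^n h$.

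For comparison, the paper does prove the analogous vector-space statement (Theorem~\ref{SCHAFFERVECTOR}), and there the method is different: since no inner product is available, unitarity is replaced by writing down an explicit two-sided inverse matrix, and the dilation identity is obtained by direct computation of powers rather than by your support-tracking induction. Your argument buys you more (unitarity, not just invertibility) by exploiting the Hilbert-space structure; the paper's method in the vector-space case is more bare-handed and transfers to settings without adjoints.
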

	After a year of work of Sz. Nagy, it was Egervary  who observed that Halmos dilation of contraction can be extended finitely so that power of dilation will be dilation of power of contraction. 
	\begin{theorem}(\cite{EGERVARY}) \label{EGERVARY}(\textbf{N-dilation})
		Let $\mathcal{H}$ be a Hilbert space and $T:\mathcal{H}\to \mathcal{H}$ be a contraction. Let $N$ be a natural number. Then the operator 
		\begin{align*}
		U\coloneqq \begin{pmatrix}
		T & 0& 0 & \cdots &0 & \sqrt{I-TT^*}   \\
		\sqrt{I-T^*T} & 0& 0 & \cdots &0& -T^*   \\
		0&I&0&\cdots &0& 0\\
		0&0&I&\cdots &0 & 0\\
		\vdots &\vdots &\vdots & & \vdots &\vdots \\
		0&0&0&\cdots &0 & 0\\
		0&0&0&\cdots &I & 0\\
		\end{pmatrix}_{(N+1)\times (N+1)}
		\end{align*}is unitary on 	$\oplus_{k=1}^{N+1} \mathcal{H}$ and 
		\begin{align*}
		T^k=P_\mathcal{H}U_{|\mathcal{H}}^k,\quad \forall k=1, \dots, N,
		\end{align*}
		where $P_\mathcal{H}:\oplus_{k=1}^{N+1} \mathcal{H}\to \oplus_{k=1}^{N+1} \mathcal{H}$ is the orthogonal projection onto $\mathcal{H}$. 		
	\end{theorem}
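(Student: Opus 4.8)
The statement to prove is Egerv\'ary's $N$-dilation theorem (Theorem \ref{EGERVARY}): the explicit $(N+1)\times(N+1)$ operator matrix $U$ built from a contraction $T$ is unitary on $\oplus_{k=1}^{N+1}\mathcal{H}$ and satisfies $T^k = P_\mathcal{H} U_{|\mathcal{H}}^k$ for $k=1,\dots,N$. My plan is to verify the two claims separately: first that $U$ is unitary, then that the compression of $U^k$ to the first copy of $\mathcal{H}$ recovers $T^k$ in the stated range.

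For unitarity, the first step is to recognize that the top-left $2\times2$ block of $U$ is exactly the Halmos dilation matrix $\begin{pmatrix} T & \sqrt{I-TT^*}\\ \sqrt{I-T^*T} & -T^*\end{pmatrix}$ from Theorem \ref{HALMOSDILATION}, which is already known to be unitary on $\mathcal{H}\oplus\mathcal{H}$; the rest of $U$ is a cyclic-shift pattern of identity blocks. Rather than invoke Halmos as a black box, I would compute $U^*U$ and $UU^*$ directly by block multiplication. The key algebraic facts needed are the standard intertwining identities $T\sqrt{I-T^*T} = \sqrt{I-TT^*}\,T$ and the Pythagorean relations $T^*T + (I-T^*T) = I$, $TT^* + (I-TT^*) = I$, together with self-adjointness of the square roots. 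Writing out $U^*U$, the $(1,1)$ entry gives $T^*T + (I-T^*T) = I$ from the two nonzero entries in the first block-column of $U$ (namely $T$ and $\sqrt{I-T^*T}$), the $(N+1,N+1)$ entry gives $(I-TT^*) + T^*T + (\text{identity contributions})$ which must collapse to $I$, and all cross terms vanish because the supports of the block-columns are disjoint except where the Halmos block forces the cancellation $T^*\sqrt{I-TT^*} - \sqrt{I-T^*T}\,T^* = 0$. The computation for $UU^*$ is symmetric. This is routine but must be organized carefully; the subtlety is bookkeeping the index shifts in the lower-right $(N-1)\times(N-1)$ identity-shift block so that every row and every column of $U$ has unit-norm, mutually orthogonal entries.

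For the compression identity, the plan is to track the vector $U^k(h\oplus 0\oplus\cdots\oplus 0)$ for $h\in\mathcal{H}$ and $1\le k\le N$. Applying $U$ once sends $h\oplus 0\oplus\cdots$ to $Th \oplus \sqrt{I-T^*T}\,h \oplus 0\oplus\cdots\oplus 0$. The point is that the first component is $Th$ and the ``error'' term $\sqrt{I-T^*T}\,h$ sits in the second coordinate; applying $U$ again, the shift structure pushes this error term down into the third coordinate (via the identity blocks) without it ever feeding back into the first coordinate, while the first coordinate evolves as $Th\mapsto T^2h$. By a straightforward induction on $k$ one shows $U^k(h\oplus0\oplus\cdots\oplus0) = T^kh \oplus (\text{stuff supported in coordinates } 2,\dots,k+1)$ for $k\le N$; the condition $k\le N$ is exactly what guarantees the error term has not yet wrapped around the cyclic shift back to position $1$. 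Compressing with $P_\mathcal{H}$ kills everything but the first coordinate, yielding $P_\mathcal{H}U^k_{|\mathcal{H}} h = T^k h$.

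The main obstacle I anticipate is the induction step for the compression identity: one must set up a clean invariant describing precisely which coordinates carry the ``remainder'' after $k$ applications of $U$, and verify that the cyclic identity-shift block never returns that remainder to the first slot before step $N+1$. Getting the off-diagonal block arithmetic in $U^*U = I$ exactly right — in particular confirming that the single potentially-nonzero cross term in the Halmos corner cancels and that no other cross terms survive the disjoint-support structure — is the other place where care is required, though conceptually it is just the unitarity of Halmos's $2\times2$ block plus the obvious unitarity of a permutation-type shift.
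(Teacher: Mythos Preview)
The paper does not supply its own proof of Theorem~\ref{EGERVARY}; it is stated as a cited background result from Egerv\'ary, so there is nothing to compare your proposal against. Your plan is the standard correct one: verify $U^*U=UU^*=I$ block-by-block using the intertwining identity $T\sqrt{I-T^*T}=\sqrt{I-TT^*}\,T$, and prove the compression identity by the induction $U^k(h\oplus 0\oplus\cdots)=T^kh\oplus\sqrt{I-T^*T}\,T^{k-1}h\oplus\cdots\oplus\sqrt{I-T^*T}\,h\oplus 0\oplus\cdots$, which is valid precisely while $k\le N$ before the last coordinate feeds back through column $N{+}1$.

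One small slip: the Halmos block is \emph{not} the top-left $2\times 2$ block of $U$ (that block is $\begin{pmatrix}T&0\\ \sqrt{I-T^*T}&0\end{pmatrix}$). The Halmos unitary sits in rows $1,2$ and columns $1,N{+}1$. This does not affect your argument --- the orthonormality of the columns still reduces to the Halmos relations for columns $1$ and $N{+}1$ plus disjoint-support trivialities for the others --- but the wording should be corrected.
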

	A very useful result which can be derived using Theorem \ref{EGERVARY} is the von Neumann's inequality. It was derived by \cite{VONNEUMANNINEQUALITYPAPER}  using the theory of analytic functions.
	\begin{theorem}(\textbf{von Neumann inequality}) (\cite{VONNEUMANNINEQUALITYPAPER, ORRGUIDED, RAINONE})
		Let $\mathcal{H}$ be a Hilbert space and $T:\mathcal{H}\to \mathcal{H}$ be a contraction. Then for every polynomial $p\in \mathbb{C}[z]$,
		\begin{align*}
		\|p(T)\|\leq \sup_{|z|=1}|p(z)|.
		\end{align*}	
	\end{theorem}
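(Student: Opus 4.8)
The plan is to prove the von Neumann inequality by reducing it to the $N$-dilation theorem (Theorem \ref{EGERVARY}) and the elementary fact that for a unitary operator $U$ on a Hilbert space, $\|q(U)\| \le \sup_{|z|=1}|q(z)|$ for every polynomial $q$. This last fact is itself a consequence of the spectral theorem: the spectrum of a unitary lies on the unit circle, so $\|q(U)\| = \sup_{\lambda \in \sigma(U)}|q(\lambda)| \le \sup_{|z|=1}|q(z)|$ by the spectral mapping theorem and the fact that $q(U)$ is normal. So the real content is transferring a bound about a unitary dilation back down to the contraction $T$.

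First I would fix a polynomial $p(z) = \sum_{k=0}^{N} c_k z^k$ of degree $N$ and let $T : \mathcal{H} \to \mathcal{H}$ be the given contraction. Applying Theorem \ref{EGERVARY} with this $N$, I obtain a Hilbert space $\mathcal{K} = \oplus_{k=1}^{N+1}\mathcal{H}$ containing $\mathcal{H}$ isometrically, a unitary $U : \mathcal{K} \to \mathcal{K}$, and the orthogonal projection $P_\mathcal{H} : \mathcal{K} \to \mathcal{K}$ onto $\mathcal{H}$ such that $T^k = P_\mathcal{H} U^k_{|\mathcal{H}}$ for all $k = 1, \dots, N$. The key observation is that this dilation identity is linear in $k$ and also holds trivially for $k = 0$ (since $T^0 = I_\mathcal{H} = P_\mathcal{H} U^0_{|\mathcal{H}}$ when restricted to $\mathcal{H}$), so taking the linear combination $\sum_{k=0}^N c_k(\cdot)$ gives
\begin{align*}
p(T) = \sum_{k=0}^{N} c_k T^k = P_\mathcal{H}\left(\sum_{k=0}^{N} c_k U^k\right)_{\big|\mathcal{H}} = P_\mathcal{H}\, p(U)_{|\mathcal{H}}.
\end{align*}

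Then I would estimate: for any $h \in \mathcal{H}$ with $\|h\| \le 1$, viewing $h$ as an element of $\mathcal{K}$ via the isometric embedding, $\|p(T)h\| = \|P_\mathcal{H} p(U) h\| \le \|p(U) h\| \le \|p(U)\|\,\|h\| \le \|p(U)\|$, because $P_\mathcal{H}$ is a contraction. Hence $\|p(T)\| \le \|p(U)\|$, and combining with the unitary bound $\|p(U)\| \le \sup_{|z|=1}|p(z)|$ established above yields $\|p(T)\| \le \sup_{|z|=1}|p(z)|$. The main obstacle, such as it is, is not the dilation step but making the unitary estimate $\|p(U)\| \le \sup_{|z|=1}|p(z)|$ rigorous without invoking heavy machinery; I would handle this by noting $p(U)$ is normal (a polynomial in the normal operator $U$), so its norm equals its spectral radius, and then applying the spectral mapping theorem $\sigma(p(U)) = p(\sigma(U))$ together with $\sigma(U) \subseteq \{|z| = 1\}$. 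One should also double-check that Theorem \ref{EGERVARY} as stated indeed handles all exponents $1, \dots, N$ simultaneously with a single $U$ — it does, so no iteration or limiting argument is needed, and the proof is finite-dimensional in the dilation direction, which is the whole point of using the $N$-dilation rather than Sz.\ Nagy's.
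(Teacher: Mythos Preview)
Your proof is correct and follows precisely the approach the paper indicates: the paper does not spell out a proof of the von Neumann inequality, but states just before the theorem that it ``can be derived using Theorem \ref{EGERVARY}'', which is exactly the $N$-dilation argument you carry out. Your handling of the $k=0$ term, the compression estimate via the projection, and the spectral bound for the unitary $p(U)$ are all standard and correct.
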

	Sz. Nagy's dilation theorem leads to the study  of dilating more than one operator which are commuting. After a decade of work of Sz. Nagy, Ando derived the following result.
	\begin{theorem}(\cite{ANDO})\label{ANDOTHEOREM} (\textbf{Ando dilation})
		Let $\mathcal{H}$ be a Hilbert space and $T_1, T_2:\mathcal{H}\to \mathcal{H}$ be commuting contractions.	Then there exist a Hilbert space $\mathcal{K}$	which contains $\mathcal{H}$ isometrically and a pair of commuting unitaries   $U_1, U_2:\mathcal{K}\to \mathcal{K}$ such that 
		\begin{align*}
		T_1^nT_2^m=P_\mathcal{H}U_1^n{U_2}_\mathcal{H}^m, \quad \forall n,m=1, 2, \dots, 
		\end{align*}
		where $P_\mathcal{H}:\mathcal{K}\to \mathcal{K}$  is the orthogonal projection onto $\mathcal{H}$.
	\end{theorem}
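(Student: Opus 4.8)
The plan is to deduce Ando's theorem from Sz.~Nagy's dilation theorem (Theorem~\ref{NAGYTHEOREM}) together with a clever factorization trick, following the classical route. First I would dilate each contraction separately using Sz.~Nagy's theorem applied to $T_1$, obtaining a Hilbert space $\mathcal{H}_1 \supseteq \mathcal{H}$ and an isometry (or, in the minimal Sch\"affer-type construction, a shift-plus-unitary picture) $V_1$ on $\mathcal{H}_1$ with $T_1^n = P_\mathcal{H} V_{1}{}_{|\mathcal{H}}^n$; the point, however, is that one wants to simultaneously carry along a second commuting contraction. So the cleaner approach is the Sch\"affer-matrix construction (Theorem~\ref{SCHAFFERTHEOREM}): realize the dilation space as a bi-infinite (or one-sided, then its unitary extension) direct sum $\mathcal{K} = \bigoplus_{n \in \mathbb{Z}} \mathcal{H}$ and build $U_1$ there explicitly.

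The key step is then to produce $U_2$ on the \emph{same} space $\mathcal{K}$ commuting with $U_1$ and dilating $T_2$. Here I would use Ando's original device: define $U_2$ on $\mathcal{K}$ block-wise so that on the ``core'' block it acts essentially as $T_2$ and on the shifted copies it is transported by $U_1$-conjugation, i.e.\ force $U_2 U_1 = U_1 U_2$ by construction, and then verify that the defect operators $D_{T_1} = \sqrt{I - T_1^*T_1}$, $D_{T_2} = \sqrt{I - T_2^*T_2}$ and the commutation relation $T_1 T_2 = T_2 T_1$ make the resulting block operator a well-defined isometry. The commutativity $T_1 T_2 = T_2 T_1$ is exactly what is needed to check that the off-diagonal defect terms match up; without it the two partial isometries built from the defect spaces cannot be glued into commuting unitaries. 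Once $U_1, U_2$ are commuting isometries on $\mathcal{K}$, one passes to their minimal unitary extension (Wold decomposition / standard unitary-extension-of-commuting-isometries argument) on a possibly larger space, still denoted $\mathcal{K}$, preserving commutativity, and compresses: $T_1^n T_2^m = P_\mathcal{H} U_1^n U_2^m{}_{|\mathcal{H}}$ follows because $\mathcal{H}$ is co-invariant for both $U_1$ and $U_2$ in the right triangular sense.

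The main obstacle I anticipate is the explicit construction of $U_2$ commuting with $U_1$: one must choose the matrix entries of $U_2$ on the infinitely many shifted copies of $\mathcal{H}$ so that (i) $U_2$ is isometric, (ii) $U_2 U_1 = U_1 U_2$ exactly, and (iii) the $(0,0)$-compression recovers $T_2$. Ando's solution realizes the relevant ``one-step'' dilation on $\mathcal{H} \oplus \mathcal{H}$ as a $2\times 2$ unitary built from $T_1, T_2$ and their defects via an intertwining unitary between $\overline{D_{T_1}\mathcal{H}}$ and $\overline{D_{T_2}\mathcal{H}}$-type subspaces, and the verification that this intertwiner exists uses $T_1T_2 = T_2T_1$ in the form $D_{T_1}^2 - T_2^* D_{T_1}^2 T_2$-style identities. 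I would isolate that finite-dimensional (two-block) lemma first, prove it by a direct defect-operator computation, and only then propagate it along the shift to get the full commuting pair on $\mathcal{K}$. Everything after the construction — isometry of $U_1, U_2$, passage to commuting unitary extensions, and the compression formula — is routine and parallels the single-operator case in Theorem~\ref{NAGYTHEOREM}.
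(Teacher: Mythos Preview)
The paper does not prove this theorem at all: it is stated with a citation to \cite{ANDO} as a classical background result in Appendix~A, immediately after the Sz.~Nagy dilation (Theorem~\ref{NAGYTHEOREM}), and no argument is given. So there is no ``paper's own proof'' to compare your proposal against.

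That said, your sketch is broadly in the spirit of Ando's original argument, but it is vague exactly at the hard step. The genuine difficulty in Ando's theorem is \emph{not} building $U_2$ by ``$U_1$-conjugation along shifted copies'' --- that recipe does not by itself produce an isometry, and the commutation $T_1T_2=T_2T_1$ alone does not force the defect pieces to align. Ando's actual construction first builds commuting \emph{isometric} dilations $V_1,V_2$ on $\mathcal{H}\oplus\bigoplus_{n\geq 1}(\mathcal{D}\oplus\mathcal{D})$ (with $\mathcal{D}$ a common defect space), where the crucial ingredient is a unitary ``twist'' $W$ on $\mathcal{D}\oplus\mathcal{D}$ chosen so that $WV_1$ and $V_2$ commute; the existence of $W$ is where $T_1T_2=T_2T_1$ is used, via the identity $\|D_{T_1}T_2h\|^2+\|D_{T_2}h\|^2=\|D_{T_2}T_1h\|^2+\|D_{T_1}h\|^2$. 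Your proposal gestures at ``an intertwining unitary between defect-type subspaces'' but does not identify this step precisely, and the phrase ``force $U_2U_1=U_1U_2$ by construction'' hides the entire content of the proof. If you intend to include a proof, you should isolate the existence of this unitary $W$ as a lemma and verify the isometry equations explicitly; the passage from commuting isometries to commuting unitaries afterward is indeed routine.
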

	An easy consequence of Ando dilation  is generalization of von Neumann inequality.
	\begin{theorem}(\textbf{Ando-von Neumann inequality}) (cf. \cite{BHATTACHARYYA, ANDO})
		Let $\mathcal{H}$ be a Hilbert space and $T_1,T_2:\mathcal{H}\to \mathcal{H}$ be  commuting contractions. Then for every polynomial $p\in \mathbb{C}[z, w]$,
		\begin{align*}
		\|p(T_1, T_2)\|\leq \sup_{|z|=|w|=1}|p(z,w)|.
		\end{align*}	
	\end{theorem}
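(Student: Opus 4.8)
The statement to prove is the Ando--von Neumann inequality: for commuting contractions $T_1, T_2$ on a Hilbert space $\mathcal{H}$ and every polynomial $p \in \mathbb{C}[z,w]$, one has $\|p(T_1,T_2)\| \le \sup_{|z|=|w|=1}|p(z,w)|$. The plan is to deduce this directly from Ando's dilation theorem (Theorem \ref{ANDOTHEOREM}), which is stated just above, in essentially the same way one deduces the single-variable von Neumann inequality from Sz.~Nagy's dilation theorem.

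First I would invoke Theorem \ref{ANDOTHEOREM} to obtain a Hilbert space $\mathcal{K} \supseteq \mathcal{H}$ (isometrically) and commuting unitaries $U_1, U_2$ on $\mathcal{K}$ with $T_1^n T_2^m = P_\mathcal{H} U_1^n U_2^m{}_{|\mathcal{H}}$ for all $n,m \ge 1$. The next step is to upgrade this power-relation to a relation for arbitrary two-variable polynomials: writing $p(z,w) = \sum_{n,m} c_{n,m} z^n w^m$, linearity of the compression $X \mapsto P_\mathcal{H} X_{|\mathcal{H}}$ gives $p(T_1,T_2) = P_\mathcal{H}\, p(U_1,U_2)_{|\mathcal{H}}$, where $p(U_1,U_2) = \sum_{n,m} c_{n,m} U_1^n U_2^m$ makes sense because $U_1, U_2$ commute. (One must be mildly careful about the constant term, handled by taking $n,m \ge 0$ and noting $T_1^0 T_2^0 = I_\mathcal{H} = P_\mathcal{H} I_\mathcal{K}{}_{|\mathcal{H}}$.) Since $P_\mathcal{H}$ and the inclusion are contractions, $\|p(T_1,T_2)\| \le \|p(U_1,U_2)\|$.

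It then remains to bound $\|p(U_1,U_2)\|$ by $\sup_{|z|=|w|=1}|p(z,w)|$. Here I would use the spectral theorem for the commuting pair of unitaries $(U_1,U_2)$: there is a joint spectral measure $E$ supported on the joint spectrum $\sigma \subseteq \mathbb{T}\times\mathbb{T}$ (the bitorus) such that $p(U_1,U_2) = \int_\sigma p(z,w)\, dE(z,w)$, whence $\|p(U_1,U_2)\| \le \sup_{(z,w)\in\sigma}|p(z,w)| \le \sup_{|z|=|w|=1}|p(z,w)|$. Chaining the inequalities yields $\|p(T_1,T_2)\| \le \|p(U_1,U_2)\| \le \sup_{|z|=|w|=1}|p(z,w)|$, which is the claim.

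I do not anticipate a genuine obstacle, since the substantive content (the existence of the commuting unitary dilation) is exactly Theorem \ref{ANDOTHEOREM}, which we are permitted to assume. The only points requiring a little care are bookkeeping ones: verifying the compression formula extends from monomials to polynomials (including the constant term), and justifying the norm estimate for $p(U_1,U_2)$ --- either via the joint spectral measure as above, or, if one wishes to avoid multivariate spectral theory, by noting that the commutative $C^*$-algebra generated by $U_1, U_2$ is isometrically isomorphic to $C(\sigma)$ for $\sigma$ the joint spectrum, under which $p(U_1,U_2)$ corresponds to the restriction of $p$ to $\sigma$. Either route is routine, so the main ``work'' is simply assembling these standard pieces in the right order.
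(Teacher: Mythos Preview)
Your proposal is correct and is precisely the standard argument the paper has in mind: the paper does not write out a proof but simply states that the inequality is ``an easy consequence of Ando dilation,'' and your reduction via $p(T_1,T_2)=P_\mathcal{H}\,p(U_1,U_2)_{|\mathcal{H}}$ followed by the spectral estimate for the commuting unitaries is exactly that consequence. One tiny bookkeeping remark: besides the constant term you flagged, you should also note that the cases $n\ge 1,\,m=0$ and $n=0,\,m\ge 1$ (needed for monomials involving only one variable) are covered, since Ando's theorem as usually stated gives $T_1^nT_2^m=P_\mathcal{H}U_1^nU_2^m{}_{|\mathcal{H}}$ for all $n,m\ge 0$.
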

	It is known that Ando dilation theorem can not be extended for more than two commuting contractions (cf. \cite{BHATTACHARYYA, PARROTT, VAROPOULOS, CRABBDAVIE, DRURY}). We next consider inter-twining lifting theorem. This says that any operator which intertwins contractions can be lifted so that the lifted operator intertwins dilation operator.
	\begin{theorem}(\cite{NAGYLIFTING})\label{ILT} (\textbf{Inter-twining lifting theorem}) Let $T_1:\mathcal{H}_1\to \mathcal{H}_1$, $T_2:\mathcal{H}_2\to \mathcal{H}_2$  be contractions, where $\mathcal{H}_1$, $\mathcal{H}_2$ are Hilbert spaces. Let $V_1:\mathcal{K}_1\to \mathcal{K}_1$, $V_2:\mathcal{K}_2\to \mathcal{K}_2$ be minimal isometric dilations of $T_1,T_2$, respectively. Assume that $S:\mathcal{H}_2\to \mathcal{H}_1$ is a bounded linear operator such that $T_1S=ST_2$. Then there exists a bounded linear operator $R:\mathcal{K}_2\to \mathcal{K}_1$ such that 
		\begin{align*}
		V_1R=RV_2, \quad P_{\mathcal{H}_1}R_{\mathcal{H}_2^\perp}=0, \quad P_{\mathcal{H}_1}R_{\mathcal{H}_2}=S, \quad  \|R\|=\|S\|.
		\end{align*}	
		Conversely if $R:\mathcal{K}_2\to \mathcal{K}_1$ is a bounded linear operator such that $V_1R=RV_2$ and $P_{\mathcal{H}_1}R_{\mathcal{H}_2^\perp}=0$, then $S\coloneqq P_{\mathcal{H}_1}R_{\mathcal{H}_2}$ satisfies $T_1S=ST_2$. 
	\end{theorem}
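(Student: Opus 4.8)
The plan is to prove the two implications separately, the forward direction being the substantial one. Throughout I will use only the defining property of an isometric dilation, $P_{\mathcal{H}_i}V_i^n|_{\mathcal{H}_i}=T_i^n$ for all $n\ge 0$, together with minimality, which gives $\overline{\operatorname{span}}\{V_i^n h: n\ge 0,\ h\in\mathcal{H}_i\}=\mathcal{K}_i$, and the fact (valid for minimal isometric dilations, as is visible from Sch\"affer's model in Theorem \ref{SCHAFFERTHEOREM}) that $V_i^*\mathcal{H}_i\subseteq\mathcal{H}_i$, i.e. $\mathcal{H}_i^\perp$ is $V_i$-invariant.

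For the forward direction, first I would normalize to $\|S\|\le 1$ (if $S=0$ take $R=0$; the general case follows by scaling). On the dense subspace $\mathcal{M}_2:=\operatorname{span}\{V_2^n h: n\ge0,\ h\in\mathcal{H}_2\}$ of $\mathcal{K}_2$ I set
\[
R_0\Big(\sum_j V_2^{n_j}h_j\Big):=\sum_j V_1^{n_j}(Sh_j)\in\mathcal{K}_1 .
\]
The key step is to show $\big\|\sum_j V_1^{n_j}Sh_j\big\|^2\le\big\|\sum_j V_2^{n_j}h_j\big\|^2$, which simultaneously forces $R_0$ to be well defined and contractive. Expanding both sides into Gram sums and using that each $V_i$ is an isometry with $P_{\mathcal{H}_i}V_i^k|_{\mathcal{H}_i}=T_i^k$, and that $T_1S=ST_2$ (hence $T_1^kS=ST_2^k$ and $S^*T_1^{*k}=T_2^{*k}S^*$), one rewrites the wanted inequality in terms of the defect operator $I-S^*S\ge0$; since this does not commute with powers of $T_2$, verifying the inequality is the one genuinely nontrivial point and is addressed below. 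Granting it, $R_0$ extends by continuity to a contraction $R:\mathcal{K}_2\to\mathcal{K}_1$. The relation $V_1R=RV_2$ is immediate on $\mathcal{M}_2$ (it merely raises an exponent), hence on $\mathcal{K}_2$. For the remaining conditions, the $n=0$ term gives $Rh=Sh\in\mathcal{H}_1$ for $h\in\mathcal{H}_2$, so $P_{\mathcal{H}_1}R|_{\mathcal{H}_2}=S$; and for a general $k=\lim\sum_j V_2^{n_j}h_j$,
\[
P_{\mathcal{H}_1}Rk=\lim\sum_j P_{\mathcal{H}_1}V_1^{n_j}Sh_j=\lim\sum_j T_1^{n_j}Sh_j=\lim\sum_j S\,T_2^{n_j}h_j=S\,P_{\mathcal{H}_2}k ,
\]
so in particular $P_{\mathcal{H}_1}R|_{\mathcal{H}_2^\perp}=0$. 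Finally, after normalization $\|R\|\le 1=\|S\|\le\|P_{\mathcal{H}_1}R|_{\mathcal{H}_2}\|=\|R\|$, giving $\|R\|=\|S\|$ in general.

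The converse is a short direct check. Given $R$ with $V_1R=RV_2$ and $P_{\mathcal{H}_1}R|_{\mathcal{H}_2^\perp}=0$, put $S:=P_{\mathcal{H}_1}R|_{\mathcal{H}_2}$. For $h\in\mathcal{H}_2$ write $V_2h=T_2h+w$ with $w=V_2h-T_2h\in\mathcal{H}_2^\perp$; then $R(T_2h)=RV_2h-Rw=V_1Rh-Rw$, and applying $P_{\mathcal{H}_1}$ (using $P_{\mathcal{H}_1}Rw=0$ and the identity $P_{\mathcal{H}_1}V_1\xi=T_1P_{\mathcal{H}_1}\xi$, which holds because $\mathcal{H}_1^\perp$ is $V_1$-invariant) gives $S T_2 h=P_{\mathcal{H}_1}V_1 Rh=T_1P_{\mathcal{H}_1}Rh=T_1Sh$, as required.

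The main obstacle is the contractivity estimate for $R_0$: unlike the other steps it is not a formal manipulation, precisely because $I-S^*S$ does not commute with the $T_2^k$, so the naive Gram-matrix comparison does not close. I would handle it either by the classical Sz.-Nagy--Foia\c{s} route of building $R$ through an increasing chain of subspaces exhausting $\mathcal{K}_2$, with each one-step extension supplied by a Parrott-type $2\times2$ operator-matrix completion lemma, or, equivalently, by working inside the Sch\"affer model of Theorem \ref{SCHAFFERTHEOREM}: writing $R=\left(\begin{smallmatrix}S&0\\ X&Y\end{smallmatrix}\right)$ relative to $\mathcal{K}_i=\mathcal{H}_i\oplus\mathcal{F}_i$, translating $V_1R=RV_2$ into recursions for the off-diagonal blocks $X,Y$, and choosing the free parameters by the same completion lemma so that the operator norm stays bounded by $1$.
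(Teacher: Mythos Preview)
The paper does not prove this theorem; it is quoted as a classical result (Sz.-Nagy--Foia\c{s}, cited as \cite{NAGYLIFTING}) to motivate the set-theoretic and vector-space analogues that follow. So there is no ``paper's proof'' to compare against, and your proposal must stand on its own.

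Your converse argument is correct and cleanly written.

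The forward direction, however, has a genuine gap: the operator $R_0$ you write down is in general \emph{not well-defined}, so the issue is not merely that contractivity is ``the one genuinely nontrivial point''---the inequality you hope to verify is simply false. Here is a concrete counterexample. Take $\mathcal{H}_1=\mathcal{H}_2=\mathbb{C}^2$, $T_1=0$, $T_2=\left(\begin{smallmatrix}0&1\\0&0\end{smallmatrix}\right)$, and $S=\left(\begin{smallmatrix}0&1\\0&0\end{smallmatrix}\right)$; then $T_1S=0=ST_2$ and $\|S\|=1$. Since $T_2e_2=e_1$ has unit norm, the minimal isometric dilation satisfies $V_2e_2=e_1$, so $e_1-V_2e_2=0$ in $\mathcal{K}_2$. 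Your formula would then force $Se_1-V_1Se_2=0$, i.e.\ $V_1e_1=0$, contradicting that $V_1$ is an isometry. Thus the naive assignment $\sum_j V_2^{n_j}h_j\mapsto\sum_j V_1^{n_j}Sh_j$ does not define a map at all.

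The two alternatives you mention at the end (the step-by-step Parrott completion, or the block-matrix analysis in the Sch\"affer model) are indeed the standard routes, but they do \emph{not} establish contractivity of your $R_0$; they construct a \emph{different} operator $R$. In the Sch\"affer picture, for instance, the off-diagonal blocks $X,Y$ are built inductively and are not the ones your $R_0$ would dictate. So the fix is not a matter of completing the estimate you set up, but of abandoning the explicit formula for $R_0$ and carrying out one of those inductive constructions from the start.
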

	Next theorem gives a characterization which gives a condition that a given operator in a larger space becomes a dilation of compression of it to a smaller space.
	\begin{theorem}(\cite{SARASON})  \label{SL}(\textbf{Sarason's lemma})
		Let $\mathcal{H}$ be a closed subspace of 	a Hilbert space $\mathcal{K}$ and $V:\mathcal{K}\to \mathcal{K}$ be a bounded linear operator. Define $T\coloneqq P_\mathcal{H} V_{|\mathcal{H}}$. Then $T^n= P_\mathcal{H} V^n_{|\mathcal{H}}$, for all $n\in \mathbb{N}$  if and only if there are closed subspaces $\mathcal{M}\subseteq \mathcal{N}\subseteq\mathcal{K}$ both are invariant for $V$  such that 
		\begin{align*}
		\mathcal{H}=\mathcal{N}\ominus\mathcal{M},
		\end{align*}
		where $\mathcal{N}\ominus\mathcal{M}$ denotes the orthogonal complement of $\mathcal{M}$ in $\mathcal{N}$.
	\end{theorem}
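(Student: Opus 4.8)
The plan is to prove both directions of the equivalence by exploiting the geometry of invariant subspaces and the compression operator $T = P_{\mathcal{H}}V_{|\mathcal{H}}$. For the forward direction, suppose $T^n = P_{\mathcal{H}}V^n_{|\mathcal{H}}$ for all $n \in \mathbb{N}$. The natural candidates for $\mathcal{M}$ and $\mathcal{N}$ are built directly from $V$ and $\mathcal{H}$: I would set $\mathcal{N} \coloneqq \overline{\operatorname{span}}\{V^n h : n \geq 0,\ h \in \mathcal{H}\}$, the smallest $V$-invariant subspace containing $\mathcal{H}$, and $\mathcal{M} \coloneqq \mathcal{N} \ominus \mathcal{H} = \mathcal{N} \cap \mathcal{H}^{\perp}$ (working inside $\mathcal{N}$). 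The two things to check are that $\mathcal{M}$ is $V$-invariant and that $\mathcal{H} = \mathcal{N} \ominus \mathcal{M}$; the latter is immediate from the definition once we know $\mathcal{H} \subseteq \mathcal{N}$. The $V$-invariance of $\mathcal{M}$ is where the hypothesis $T^n = P_{\mathcal{H}}V^n_{|\mathcal{H}}$ gets used: I would show that for $m \in \mathcal{M}$ and any $h \in \mathcal{H}$, $\langle Vm, h\rangle = 0$, by writing elements of $\mathcal{M}$ as limits of combinations $V^k h' - (\text{its projection onto }\mathcal{H}\text{-pieces})$ and using the power condition to push projections through. Concretely, the power condition is equivalent to the statement that $P_{\mathcal{H}}V^n(I - P_{\mathcal{H}})V^m|_{\mathcal{H}} = 0$ for all $n,m \geq 0$ — i.e., the "defect" vectors $(I-P_{\mathcal{H}})V^m h$ stay orthogonal to $V^{-n}$-pullbacks of $\mathcal{H}$ — and this is precisely what forces $V\mathcal{M} \subseteq \mathcal{M}$.

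For the converse, suppose we are given $\mathcal{M} \subseteq \mathcal{N} \subseteq \mathcal{K}$, both $V$-invariant, with $\mathcal{H} = \mathcal{N} \ominus \mathcal{M}$. I would compute $P_{\mathcal{H}}V^n_{|\mathcal{H}}$ directly using the orthogonal decomposition $\mathcal{K} = \mathcal{M} \oplus \mathcal{H} \oplus (\mathcal{N})^{\perp}$ — note $\mathcal{N} = \mathcal{M} \oplus \mathcal{H}$ so $\mathcal{N}^{\perp}$ is the remaining summand. Take $h \in \mathcal{H} \subseteq \mathcal{N}$. Since $\mathcal{N}$ is $V$-invariant, $V^n h \in \mathcal{N}$, so $V^n h = m_n + h_n$ with $m_n \in \mathcal{M}$, $h_n \in \mathcal{H}$, and hence $P_{\mathcal{H}}V^n h = h_n$. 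Then one shows $T^n h = h_n$ by induction: $Th = P_{\mathcal{H}}(m_1 + h_1) = h_1$, and for the inductive step, $T^{n+1}h = T(T^n h) = T h_n = P_{\mathcal{H}}V h_n$; since $V h_n = V(V^n h - m_n) = V^{n+1}h - Vm_n$ and $Vm_n \in \mathcal{M}$ by invariance of $\mathcal{M}$, we get $P_{\mathcal{H}}V h_n = P_{\mathcal{H}}V^{n+1}h - P_{\mathcal{H}}Vm_n = h_{n+1} - 0 = h_{n+1}$. This direction is essentially a bookkeeping argument and should go through cleanly.

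I expect the main obstacle to be the forward direction, specifically the proof that $\mathcal{M}$ defined as above is $V$-invariant. The subtlety is that $\mathcal{M} = \mathcal{N} \cap \mathcal{H}^{\perp}$ is a priori only $V^*$-type-friendly (intersections of invariant subspaces with orthogonal complements are not automatically invariant), and one genuinely needs the full strength of the power hypothesis, not just $n=1$. I would handle this by first reformulating the hypothesis as the family of "moment" identities $P_{\mathcal{H}}V^n(I-P_{\mathcal{H}})V^m P_{\mathcal{H}} = 0$ for all $n, m \geq 0$ (obtained by expanding $(P_{\mathcal{H}}V_{|\mathcal{H}})^{n+m} = P_{\mathcal{H}}V^{n+m}_{|\mathcal{H}}$ and telescoping), then using these to verify that $V$ maps a spanning set of $\mathcal{M}$ into $\mathcal{H}^{\perp}$ while staying in $\mathcal{N}$. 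A cleaner alternative worth trying is to define $\mathcal{M}$ instead as $\{x \in \mathcal{N} : P_{\mathcal{H}}V^n x = 0 \ \forall n \geq 0\}$ and check this is the same space and is manifestly $V$-invariant — I would pursue whichever of these two formulations makes the invariance transparent with the least computation.
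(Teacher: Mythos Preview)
The paper does not prove this theorem; it is quoted from \cite{SARASON} as background in the appendix on dilations, with no proof supplied. So there is no ``paper's own proof'' to compare against.

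That said, your proposal is correct and is essentially the classical argument. The converse direction you give is exactly the standard induction. For the forward direction, your choice $\mathcal{N} = \overline{\operatorname{span}}\{V^n h : n \geq 0,\ h \in \mathcal{H}\}$ and $\mathcal{M} = \mathcal{N} \ominus \mathcal{H}$ is the canonical one, and your moment identity $P_{\mathcal{H}}V^n(I-P_{\mathcal{H}})V^m|_{\mathcal{H}} = 0$ (from $T^{n+m} = T^n T^m$) is the right tool. One small sharpening: you only need the $n=1$ case of that identity to get $V\mathcal{M} \subseteq \mathcal{M}$, once you observe that $\mathcal{M}$ is exactly $\overline{\operatorname{span}}\{(I-P_{\mathcal{H}})V^m h : m \geq 0,\ h \in \mathcal{H}\}$ --- this equality follows because $\mathcal{H}$ and that span are orthogonal and together generate $\mathcal{N}$. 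Your alternative definition $\mathcal{M} = \{x \in \mathcal{N} : P_{\mathcal{H}}V^n x = 0 \text{ for all } n \geq 0\}$ also works and makes $V$-invariance immediate; either route is fine.
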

	Following  Theorems \ref{HALMOSDILATION},  \ref{NAGYTHEOREM},  \ref{EGERVARY}, \ref{ANDOTHEOREM}, \ref{ILT} and \ref{SL},  extension of contractions on Hilbert spaces became an active area of research, known as  dilation theory   (\cite{NAGY, LEVYSHALIT, ARVESON, ORRGUIDED, AMBROZIE, AGLERCARTHY, PAULSENCOMPLETELY, PISIERSIMILARITY}).  This study of contractions motivated  the study of  contractions and other classes of operators not only on Hilbert spaces, but also on Banach spaces (\cite{FACKLER, STROESCU, AKCOGLU}).

	Recently,  Bhat, De, and Rakshit abstracted the key ingredients in Halmos and Sz. Nagy dilation theorem and set up a set theoretic version of dilation theory. Following is the fundamental observation which lead Bhat, De, and Rakshit to set up a  set theoretic notion of dilation theory. 
	\begin{enumerate}[label=(\roman*)]
		\item \textbf{There is an embedding $i$ of the given space in a larger space}. 
		\item  \textbf{There is a nice map in the larger space}.
		\item \textbf{There is an idempotent from the larger space onto the given space}.
	\end{enumerate}
	These observations can be picturized using the following commutative diagram.
	\begin{center}
		\[
		\begin{tikzcd}
		&\mathcal{K}  \arrow[r,"U^n"]& \mathcal{K} \arrow[d,"P_{\mathcal{H}}"]\\
		&\mathcal{H} \arrow[u,"i"] \arrow[r,"T^n"] & \mathcal{H}
		\end{tikzcd}
		\]
	\end{center}
	\begin{definition}(\cite{BHATDERAKSHITH})\label{BHATSET}
		Let $\mathscr{A}$ be a (non empty)	set and $f:\mathscr{A}\to \mathscr{A}$ be a map. An \textbf{injective power dilation} of $f$ is a quadruple $(\mathscr{B}, i, g,p)$, where $\mathscr{B}$ is a set, $i:\mathscr{A}\to \mathscr{B}$,  $v:\mathscr{B}\to \mathscr{B}$ are injective maps,  $p:\mathscr{B}\to \mathscr{B}$ is an idempotent map such that $p(\mathscr{B})=i(\mathscr{A})$ and 
		\begin{align}\label{SETDILATIONEQUATION}
		i(f^n(a))=p(g^n(i(a))), \quad \forall a \in \mathscr{A}, \forall n \in \mathbb{Z}_+.
		\end{align}
		A dilation $(\mathscr{B}, i, g,p)$ of $f$ is said to be \textbf{minimal} if 
		\begin{align*}
		\mathscr{B}=\bigcup\limits_{n=0}^\infty g^n(i(\mathscr{A})).
		\end{align*}
	\end{definition}
	Equation \ref{SETDILATIONEQUATION} says that the following diagram commutes for all $n$. 
	\begin{center}
		\[
		\begin{tikzcd}
		\mathscr{B}	 \arrow[r,"g^n"]&\mathscr{B}  \arrow[r,"p"]& \mathscr{B}\\
		&\mathscr{A} \arrow[ul,"i"] \arrow[r,"f^n"] & \mathscr{A}\arrow[u,"i"]
		\end{tikzcd}
		\]
	\end{center}
	Bhat, De, and Rakshit succeeded in obtaining fundamental theorems of dilations. We now recall these results.
	\begin{definition}(\cite{BHATDERAKSHITH}) (\textbf{Set shifts})
		Let $\mathscr{A}$ be a 	set. A map $f:\mathscr{A}\to \mathscr{A}$ is said to be \textbf{shift} if $\cap_{n=0}^\infty f^n(\mathscr{A})=\emptyset$.
	\end{definition}
	\begin{theorem}(\cite{BHATDERAKSHITH}) (\textbf{Wold decomposition for sets})
		Let  $f:\mathscr{A}\to \mathscr{A}$	be an injective map. Then $\mathscr{A}$ decomposes uniquely as $\mathscr{A}=\mathscr{A}_b \sqcup \mathscr{A}_s$, where $\mathscr{A}_b$, $\mathscr{A}_s$ are invariant for $f$,  $f_{|\mathscr{A}_b}$ is a bijection and $f_{|\mathscr{A}_s}$ is a shift.
	\end{theorem}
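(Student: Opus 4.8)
The plan is to write down the candidate decomposition explicitly, in direct analogy with the Hilbert space Wold decomposition, and then verify the four required structural properties and uniqueness. I would set
\[
\mathscr{A}_b \coloneqq \bigcap_{n=0}^{\infty} f^{n}(\mathscr{A}), \qquad \mathscr{A}_s \coloneqq \mathscr{A}\setminus \mathscr{A}_b,
\]
with the convention $f^{0}=\mathrm{id}_{\mathscr{A}}$, so that trivially $\mathscr{A}=\mathscr{A}_b\sqcup\mathscr{A}_s$. The point is that $f^{0}(\mathscr{A})\supseteq f^{1}(\mathscr{A})\supseteq f^{2}(\mathscr{A})\supseteq\cdots$ is a decreasing chain, so $\mathscr{A}_b$ is the natural "stable" (eventual range) part, and $\mathscr{A}_s$ plays the role of the wandering part.

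First I would record the elementary but crucial fact that an injective map commutes with arbitrary intersections: $f\bigl(\bigcap_i S_i\bigr)=\bigcap_i f(S_i)$ for any family of subsets. Applying this to $\mathscr{A}_b$ gives $f(\mathscr{A}_b)=\bigcap_{n\geq 0}f^{n+1}(\mathscr{A})=\bigcap_{n\geq 1}f^{n}(\mathscr{A})=\mathscr{A}_b$, the last step because deleting the largest term $n=0$ from a decreasing intersection changes nothing. Hence $\mathscr{A}_b$ is invariant, and $f|_{\mathscr{A}_b}\colon\mathscr{A}_b\to\mathscr{A}_b$ is onto; being the restriction of an injection it is also one-to-one, so it is a bijection. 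For invariance of $\mathscr{A}_s$: if some $a\in\mathscr{A}_s$ had $f(a)\in\mathscr{A}_b=f(\mathscr{A}_b)$, then $f(a)=f(b)$ for some $b\in\mathscr{A}_b$, and injectivity forces $a=b\in\mathscr{A}_b$, a contradiction. To see that $f|_{\mathscr{A}_s}$ is a shift, note that invariance gives $(f|_{\mathscr{A}_s})^{n}(\mathscr{A}_s)=f^{n}(\mathscr{A}_s)\subseteq f^{n}(\mathscr{A})$, so $\bigcap_{n\geq 0}(f|_{\mathscr{A}_s})^{n}(\mathscr{A}_s)$ lies in $\bigcap_{n\geq 0}f^{n}(\mathscr{A})=\mathscr{A}_b$ (using the $n\geq 1$ terms) and also in $\mathscr{A}_s$ (the $n=0$ term); as $\mathscr{A}_b\cap\mathscr{A}_s=\emptyset$, this intersection is empty.

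For uniqueness I would take a second decomposition $\mathscr{A}=\mathscr{C}_b\sqcup\mathscr{C}_s$ with the stated properties and show $\mathscr{C}_b=\mathscr{A}_b$. Since $f|_{\mathscr{C}_b}$ is a bijection, induction gives $f^{n}(\mathscr{C}_b)=\mathscr{C}_b\subseteq f^{n}(\mathscr{A})$ for every $n$, so $\mathscr{C}_b\subseteq\mathscr{A}_b$. Conversely, given $a\in\mathscr{A}_b$, the identity $\mathscr{A}_b=f(\mathscr{A}_b)$ lets me choose inductively $a_n\in\mathscr{A}_b$ with $f^{n}(a_n)=a$; if some $a_n$ belonged to $\mathscr{C}_b$ then $a=f^{n}(a_n)\in f^{n}(\mathscr{C}_b)=\mathscr{C}_b$, while if every $a_n$ belonged to the invariant set $\mathscr{C}_s$ then $a\in\bigcap_{n\geq 0}(f|_{\mathscr{C}_s})^{n}(\mathscr{C}_s)=\emptyset$ because $f|_{\mathscr{C}_s}$ is a shift — the latter being impossible, we conclude $a\in\mathscr{C}_b$. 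Thus $\mathscr{A}_b=\mathscr{C}_b$ and hence $\mathscr{A}_s=\mathscr{C}_s$.

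I do not expect a serious obstacle; the argument is short. The only points deserving care are that injectivity of $f$ is used essentially twice — once to pass $f$ through the defining intersection (this step genuinely fails for non-injective maps, which is why the theorem assumes injectivity) and once to keep iterated preimages inside $\mathscr{A}_b$ — and that the shift hypothesis on $\mathscr{C}_s$ is precisely what eliminates the bad alternative in the uniqueness step. I would also double-check the set-theoretic commutation of $f$ with intersections is stated cleanly as a lemma, since everything hinges on it.
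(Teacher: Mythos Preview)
Your proof is correct. The paper itself does not prove this theorem; it only cites it from \cite{BHATDERAKSHITH}. The closest the paper comes is its own proof of the \emph{vector space} Wold decomposition immediately afterward, which uses exactly your definition $\mathcal{V}_b=\bigcap_{n\geq 0}T^n(\mathcal{V})$ and the same surjectivity and shift arguments, so your approach matches the paper's method on the analogous result. Your treatment is actually more complete: you supply a uniqueness argument (required by the set-theoretic statement but absent in the paper, since uniqueness fails in the vector space setting where complements are not unique), and you verify invariance of $\mathscr{A}_s$, which the paper's vector space version does not even claim.
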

	\begin{theorem}(\cite{BHATDERAKSHITH}) (\textbf{Halmos dilation for sets})
		Let  $f:\mathscr{A}\to \mathscr{A}$	be a map. Define $\mathscr{B}\coloneqq\mathscr{A}\times \{0,1\}$,
		\begin{align*}
		&i:\mathscr{A}\ni a \mapsto  (a,0)\in \mathscr{B}\\
		&g:\mathscr{B}\ni (a,m)\mapsto (a,1-m)\in \mathscr{B}
		\end{align*}
		and $p:\mathscr{B}\to \mathscr{B}$ by 
		\begin{align*}
		p(a,m)\coloneqq \bigg\{\begin{array}{ll}
		(a,0) & \text{ if } m=0\\
		(f(a),0) & \text{ if } m=1. \\
		\end{array} 
		\end{align*}
		Then $i$ is injective, $g$ is bijective, $p$ is idempotent and 	
		\begin{align*}
		i(f(a))= p(g(i(a))),\quad \forall a \in \mathscr{A}.
		\end{align*}
	\end{theorem}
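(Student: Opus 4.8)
The statement to prove is the ``Halmos dilation for sets'' theorem: given a map $f:\mathscr{A}\to\mathscr{A}$, the explicit quadruple $(\mathscr{B},i,g,p)$ with $\mathscr{B}=\mathscr{A}\times\{0,1\}$, the embedding $i(a)=(a,0)$, the flip $g(a,m)=(a,1-m)$, and the idempotent $p$ defined by cases, satisfies: $i$ is injective, $g$ is bijective, $p$ is idempotent, and $i(f(a))=p(g(i(a)))$ for all $a$. The plan is simply to verify each of these four assertions directly from the definitions, since everything here is purely set-theoretic bookkeeping and no deep structural input is needed.

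First I would check that $i$ is injective: if $i(a)=i(b)$ then $(a,0)=(b,0)$, so $a=b$ immediately. Next, $g$ is a bijection because $g\circ g$ is the identity on $\mathscr{B}$: indeed $g(g(a,m))=g(a,1-m)=(a,1-(1-m))=(a,m)$, so $g$ is its own inverse and hence bijective. For idempotency of $p$, I would compute $p(p(a,m))$ in the two cases: if $m=0$, $p(a,0)=(a,0)$ and $p(a,0)=(a,0)$ again, so $p(p(a,0))=p(a,0)$; if $m=1$, $p(a,1)=(f(a),0)$, and then $p(f(a),0)=(f(a),0)$ since the first coordinate is now in the ``$m=0$'' branch, so $p(p(a,1))=(f(a),0)=p(a,1)$. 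Thus $p^2=p$. I would also note in passing that $p(\mathscr{B})=i(\mathscr{A})=\mathscr{A}\times\{0\}$, which is the image condition in Definition \ref{BHATSET} (every element of $\mathscr{B}$ maps under $p$ into $\mathscr{A}\times\{0\}$, and every $(a,0)$ is fixed).

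Finally, the intertwining identity: for any $a\in\mathscr{A}$ we have $g(i(a))=g(a,0)=(a,1)$, and then $p(a,1)=(f(a),0)=i(f(a))$, which is exactly $i(f(a))=p(g(i(a)))$. This completes all the required verifications. I do not expect any genuine obstacle here; the only thing to be slightly careful about is the case split in the definition of $p$ and ensuring that after applying $p$ once the output always lands in the ``$m=0$'' branch so that a second application of $p$ acts as the identity — which is precisely what makes $p$ idempotent. The theorem in the excerpt asserts only the single-step relation $i(f(a))=p(g(i(a)))$ rather than the full power-dilation identity $i(f^n(a))=p(g^n(i(a)))$ of Definition \ref{BHATSET}; one could remark that the latter fails for $n\ge 2$ with this particular two-point fibre construction (since $g^2=\mathrm{id}$), which is exactly why the Egerv\'ary/Sz.-Nagy style constructions use larger fibres, but proving the stated single-step version requires nothing beyond the elementary computations above.
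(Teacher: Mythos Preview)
Your proof is correct and complete: each of the four claims (injectivity of $i$, bijectivity of $g$, idempotency of $p$, and the intertwining identity) is verified directly from the definitions, and your case analysis for $p^2=p$ is clean. The paper itself does not supply a proof of this theorem---it is quoted from \cite{BHATDERAKSHITH} as a cited result---so there is no in-paper argument to compare against; your elementary verification is exactly what one would expect and is the only reasonable approach here.
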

	\begin{theorem}(\cite{BHATDERAKSHITH})\label{SZNAGYSET} (\textbf{Sz. Nagy dilation  for sets})
		Every map $f:\mathscr{A}\to \mathscr{A}$ admits a minimal injective power dilation.	
	\end{theorem}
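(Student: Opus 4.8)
The plan is to build the dilation explicitly, by a set-theoretic streamlining of Schäffer's matrix realization of the minimal isometric dilation (Theorem~\ref{SCHAFFERTHEOREM}). The crucial simplification is that in Definition~\ref{BHATSET} the map $g$ carries no metric constraint -- it need only be injective -- so there is no isometry identity to balance, no ``defect set'' is needed, and a bare unilateral shift on $\mathscr{A}\times\mathbb{Z}_{+}$ will do, with the entire action of $f$ offloaded onto the idempotent $p$. Concretely I would set $\mathscr{B}\coloneqq\mathscr{A}\times\mathbb{Z}_{+}$ and define
\begin{align*}
i\colon\mathscr{A}\ni a\mapsto (a,0)\in\mathscr{B},\qquad g\colon\mathscr{B}\ni(a,n)\mapsto(a,n+1)\in\mathscr{B},\qquad p\colon\mathscr{B}\ni(a,n)\mapsto(f^{n}(a),0)\in\mathscr{B},
\end{align*}
so that $g$ merely advances the counter while $p$ applies $f$ the appropriate number of times and resets the counter to $0$.

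Next I would check the four structural requirements of Definition~\ref{BHATSET}. Injectivity of $i$ is immediate, and injectivity of $g$ follows since $g(a,n)=g(b,m)$ forces $(a,n+1)=(b,m+1)$. The map $p$ is idempotent because $p(p(a,n))=p(f^{n}(a),0)=(f^{0}(f^{n}(a)),0)=(f^{n}(a),0)=p(a,n)$, and $p(\mathscr{B})=i(\mathscr{A})$ since $p(\mathscr{B})\subseteq\mathscr{A}\times\{0\}$ while the choice $n=0$ produces every $(a,0)$. For the power-dilation identity I would first observe, by an immediate induction, that $g^{n}(i(a))=(a,n)$ for all $a\in\mathscr{A}$ and all $n\in\mathbb{Z}_{+}$; applying $p$ then gives $p(g^{n}(i(a)))=(f^{n}(a),0)=i(f^{n}(a))$, which is exactly the defining relation~\eqref{SETDILATIONEQUATION}.

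Finally, minimality is transparent: $g^{n}(i(\mathscr{A}))=\mathscr{A}\times\{n\}$, hence $\bigcup_{n=0}^{\infty}g^{n}(i(\mathscr{A}))=\mathscr{A}\times\mathbb{Z}_{+}=\mathscr{B}$. Since everything is explicit, there is no analytic obstacle; the only point demanding any care is bookkeeping of conventions, in particular that $\mathbb{Z}_{+}$ contains $0$ and $f^{0}=\mathrm{id}_{\mathscr{A}}$, which is used both for the $n=0$ instance of the dilation identity and for the verification $p(\mathscr{B})=i(\mathscr{A})$. The genuine ``hard part'' is therefore purely conceptual: recognizing that, unlike in the Hilbert-space Sz.\ Nagy theorem where the isometry constraint forces a two-sided shift and a defect space, in the set-theoretic category one may let $g$ be the trivial shift and absorb all of $f$'s complexity into $p$.
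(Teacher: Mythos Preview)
Your construction is exactly the standard dilation recorded in the paper immediately after the theorem (and reproduced for vector spaces in the proof of Theorem~\ref{STANDARDDILATION}): $\mathscr{B}=\mathscr{A}\times\mathbb{Z}_{+}$ with $i(a)=(a,0)$, $g(a,n)=(a,n+1)$, $p(a,n)=(f^{n}(a),0)$. The verifications of injectivity, idempotency, the dilation identity, and minimality are all correct and match the intended argument.
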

	In \cite{BHATDERAKSHITH}, a particular type of minimal injective dilation, called as \textbf{standard dilation} was defined. This dilation is defined as follows. Let $f:\mathscr{A}\to \mathscr{A}$ be a map. Define
	\begin{align*}
	&\mathscr{B} \coloneqq \mathscr{A}\times \mathbb{Z}_+,\\
	&i(a)\coloneqq (a,0), \quad \forall a \in \mathscr{A},\\
	&g(a,m)\coloneqq (a,m+1), \quad \forall (a,m) \in \mathscr{B},\\
	&p(a,m)\coloneqq (f^m(a),0), \quad \forall (a,m) \in \mathscr{B}.
	\end{align*}
	Then $(\mathscr{B}, i, g,p)$ is a minimal dilation of $f$.
	\begin{theorem}(\cite{BHATDERAKSHITH}) (\textbf{Inter-twining lifting theorem for sets})
		Let $f_1:\mathscr{A}_1\to \mathscr{A}_1$, $f_2:\mathscr{A}_2\to \mathscr{A}_2$	be maps and $(\mathscr{B}_1, i_1, g_1,p_1)$, $(\mathscr{B}_2, i_2, g_2,p_2)$ be their standard dilations, respectively. Suppose $s:\mathscr{A}_2\to \mathscr{A}_1$ is a function such that $sf_2=f_1s$. Then there exists a map $r:\mathscr{B}_2\to \mathscr{B}_1$ such that 
		\begin{align*}
		rg_2=g_1r,\quad rp_2=p_1r, \quad ri_2=i_1s.
		\end{align*}
		Conversely if $r:\mathscr{B}_2\to \mathscr{B}_1$is a map such that $rg_2=g_1r$, $ rp_2=p_1r$, then there exists a map  $s:\mathscr{A}_2\to \mathscr{A}_1$ such that $ri_2=i_1s$ and $sf_2=f_1s$.
	\end{theorem}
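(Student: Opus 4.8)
The plan is to exploit the very explicit form of the standard dilation: for a map $f:\mathscr{A}\to\mathscr{A}$ one has $\mathscr{B}=\mathscr{A}\times\mathbb{Z}_+$, $i(a)=(a,0)$, $g(a,m)=(a,m+1)$ and $p(a,m)=(f^m(a),0)$; so $\mathscr{B}_1=\mathscr{A}_1\times\mathbb{Z}_+$ and $\mathscr{B}_2=\mathscr{A}_2\times\mathbb{Z}_+$ carry the analogous structure maps. For the forward implication I would simply guess the ``level-preserving'' extension of $s$, namely
\[
r:\mathscr{B}_2\ni(a,m)\longmapsto (s(a),m)\in\mathscr{B}_1 ,
\]
and verify the three required identities directly.

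First I would record that the hypothesis $sf_2=f_1s$ upgrades by an immediate induction to $sf_2^{\,n}=f_1^{\,n}s$ for every $n\in\mathbb{Z}_+$. With this in hand the checks are one-liners: $rg_2(a,m)=r(a,m+1)=(s(a),m+1)=g_1(s(a),m)=g_1r(a,m)$; $ri_2(a)=r(a,0)=(s(a),0)=i_1(s(a))$; and $rp_2(a,m)=r(f_2^{\,m}(a),0)=(s(f_2^{\,m}(a)),0)=(f_1^{\,m}(s(a)),0)=p_1(s(a),m)=p_1r(a,m)$, the middle equality being precisely the intertwining relation for $f_2^{\,m}$ and $f_1^{\,m}$. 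This settles the existence part.

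For the converse, the key observation I would isolate is that for the \emph{standard} dilation the image $i_j(\mathscr{A}_j)=\mathscr{A}_j\times\{0\}$ coincides with the set of fixed points of the idempotent $p_j$ (indeed $p_j(b,n)=(b,n)$ forces $n=0$, and $p_j(a,0)=(a,0)$). Given $r$ with $rg_2=g_1r$ and $rp_2=p_1r$, take $(a,0)\in i_2(\mathscr{A}_2)$; since $p_2(a,0)=(a,0)$ we get $r(a,0)=r\big(p_2(a,0)\big)=p_1\big(r(a,0)\big)$, so $r(a,0)$ is a $p_1$-fixed point, hence lies in $i_1(\mathscr{A}_1)$. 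Thus there is a unique $s(a)\in\mathscr{A}_1$ with $r(a,0)=(s(a),0)=i_1(s(a))$, which both defines $s:\mathscr{A}_2\to\mathscr{A}_1$ and yields $ri_2=i_1s$. Finally, applying $g_2$ and comparing via $rg_2=g_1r$ gives $r(a,1)=g_1(s(a),0)=(s(a),1)$, whence $rp_2(a,1)=r(f_2(a),0)=(s(f_2(a)),0)$ and $p_1r(a,1)=p_1(s(a),1)=(f_1(s(a)),0)$; equating these forces $s(f_2(a))=f_1(s(a))$, i.e.\ $sf_2=f_1s$.

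I do not expect a serious obstacle here: every step is a routine verification once two structural facts are in place — that the intertwining relation propagates to all powers (used for $rp_2=p_1r$ in the forward direction), and that $i_j(\mathscr{A}_j)$ is exactly the $p_j$-fixed-point set (used to even define $s$ in the converse). The one point deserving care is that the second fact relies on the explicit form of the standard dilation and would fail for a general minimal dilation, so the statement is genuinely tailored to that construction. I would present the argument in the order above: build $r$ from $s$ and check the three identities, then read $s$ off from $r$ on level $0$ and check $sf_2=f_1s$.
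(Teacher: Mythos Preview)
Your proof is correct. The paper itself does not supply a proof of this set-theoretic statement---it is quoted as background from \cite{BHATDERAKSHITH}---so there is no ``paper's own proof'' to compare against directly. However, the paper \emph{does} prove the analogous inter-twining lifting theorem for vector spaces, and your argument is essentially the set-level counterpart of that proof: the forward map $r(a,m)=(s(a),m)$ is exactly the sequence-space map $R(x_n)=(Sx_n)$ specialised to the set setting, and the three verifications proceed identically. For the converse, the paper (in the vector-space case) defines $S$ by noting that $P_1R(y,0,\dots)$ lands in the range $I_1(\mathcal{V}_1)$ of the idempotent $P_1$ and using injectivity of $I_1$; your fixed-point characterisation of $i_j(\mathscr{A}_j)$ is the same observation phrased slightly differently (for an idempotent, the range equals the fixed-point set). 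Your final step, reading off $sf_2=f_1s$ from $rp_2=p_1r$ at level $1$, matches the paper's computation with $U_2$ and $P_2$ at the second coordinate. So your approach and the paper's (for the parallel result) coincide.
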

	\begin{theorem}(\cite{BHATDERAKSHITH}) (\textbf{Ando dilation for sets})
		Let $\mathbb{J}$ be an index set, $\{f_j\}_{j\in \mathbb{J}}$ be a family of commuting functions on $\mathscr{A}$. Then there exists a quadruple $(\mathscr{B}, i, \{g_j\}_{j\in \mathbb{J}},p)$, where $\mathscr{B}$ is a set, $i:\mathscr{A}\to \mathscr{B}$ is an injective map, $\{v_j\}_{j\in \mathbb{J}}$ be a family of commuting functions on $\mathscr{B}$, $p:\mathscr{B}\to \mathscr{B}$ is idempotent such that 
		\begin{align*}
		i(f_{j_1}f_{j_2}\cdots f_{j_k}(a))=p(g_{j_1}f_{g_2}\cdots f_{g_k}(i(a))),\quad \forall j_1, \dots, j_k \in \mathbb{J}, \forall a \in \mathscr{A}.
		\end{align*}
	\end{theorem}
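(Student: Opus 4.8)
The final statement to prove is the set-theoretic Ando dilation: given a family $\{f_j\}_{j\in\mathbb{J}}$ of commuting self-maps of a set $\mathscr{A}$, there is a quadruple $(\mathscr{B}, i, \{g_j\}_{j\in\mathbb{J}}, p)$ with $\mathscr{B}$ a set, $i$ an injection, the $g_j$ pairwise commuting, $p$ idempotent with $p(\mathscr{B})=i(\mathscr{A})$, and the intertwining identity $i(f_{j_1}\cdots f_{j_k}(a)) = p(g_{j_1}\cdots g_{j_k}(i(a)))$ for all words and all $a$. The natural model to build is the ``many-parameter standard dilation''. The plan is to mimic the one-variable standard dilation $\mathscr{B}=\mathscr{A}\times\mathbb{Z}_+$, $g(a,m)=(a,m+1)$, $p(a,m)=(f^m(a),0)$, but now indexing the ``shift coordinate'' by the free commutative monoid on $\mathbb{J}$.

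Concretely, first I would let $M \coloneqq \bigoplus_{j\in\mathbb{J}}\mathbb{Z}_+$ be the set of finitely supported functions $\mathbb{J}\to\mathbb{Z}_+$, i.e.\ the free commutative monoid on $\mathbb{J}$ written additively, with standard basis vectors $\{\delta_j\}_{j\in\mathbb{J}}$. Set $\mathscr{B}\coloneqq \mathscr{A}\times M$. Define $i:\mathscr{A}\ni a \mapsto (a,0)\in\mathscr{B}$; this is visibly injective. For each $j$ define $g_j:\mathscr{B}\ni (a,m)\mapsto (a, m+\delta_j)\in\mathscr{B}$. Since addition in $M$ is commutative, $g_jg_k = g_kg_j$ for all $j,k$, so the family $\{g_j\}$ commutes; more generally $g_{j_1}\cdots g_{j_k}(a,m) = (a, m+\delta_{j_1}+\cdots+\delta_{j_k})$. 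Because the $f_j$ commute, for any $m\in M$ the composite $f^m \coloneqq \prod_{j}f_j^{m(j)}$ (finite product, well-defined and order-independent) is an unambiguous self-map of $\mathscr{A}$, and $f^{m+n} = f^m\circ f^n$. Now define $p:\mathscr{B}\ni (a,m)\mapsto (f^m(a), 0)\in\mathscr{B}$. Then $p$ maps into $\mathscr{A}\times\{0\} = i(\mathscr{A})$, and $p^2(a,m) = p(f^m(a),0) = (f^0(f^m(a)),0) = (f^m(a),0) = p(a,m)$ since $f^0 = \mathrm{id}_\mathscr{A}$, so $p$ is idempotent with $p(\mathscr{B}) = i(\mathscr{A})$.

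The remaining step is to verify the intertwining identity. Fix $a\in\mathscr{A}$ and indices $j_1,\dots,j_k$. Then
\begin{align*}
p\bigl(g_{j_1}\cdots g_{j_k}(i(a))\bigr) &= p\bigl(g_{j_1}\cdots g_{j_k}(a,0)\bigr) = p\bigl(a,\ \delta_{j_1}+\cdots+\delta_{j_k}\bigr)\\
&= \bigl(f^{\,\delta_{j_1}+\cdots+\delta_{j_k}}(a),\ 0\bigr) = \bigl(f_{j_1}\cdots f_{j_k}(a),\ 0\bigr) = i\bigl(f_{j_1}\cdots f_{j_k}(a)\bigr),
\end{align*}
where the penultimate equality is exactly the statement that $f^m$ for $m = \delta_{j_1}+\cdots+\delta_{j_k}$ equals the composite $f_{j_1}\circ\cdots\circ f_{j_k}$, which follows from commutativity of the $f_j$ and the multiplicativity $f^{m+n}=f^m f^n$. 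This completes the construction.

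I do not anticipate a serious obstacle here; the one-variable argument of Bhat--De--Rakshit globalizes cleanly once the index monoid is taken to be the free commutative monoid on $\mathbb{J}$. The only points requiring a little care are (a) checking that $f^m$ is genuinely well-defined independently of the order in which one composes the $f_j^{m(j)}$ and of how one groups them — this is a routine induction using pairwise commutativity — and (b) making sure the bookkeeping $f^{m+n} = f^m\circ f^n$ is stated and used correctly, since it is the algebraic heart of why $p$ is idempotent and why the intertwining identity holds. If one wanted the analogue of minimality, one could replace $M$ by the submonoid actually reached, but the statement as given does not ask for it, so I would not pursue that. I would also remark in passing that specializing $\mathbb{J}$ to a single element recovers Theorem~\ref{SZNAGYSET} (the standard dilation), which is a useful sanity check on the construction.
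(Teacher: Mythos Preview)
Your construction is correct and is exactly the natural multi-parameter standard dilation: the paper does not supply its own proof of this theorem (it is quoted from \cite{BHATDERAKSHITH}), so there is nothing to compare against directly. As a consistency check, the paper's later ``Ando like dilation for vector spaces'' carries out precisely the $|\mathbb{J}|=2$ case of your recipe, taking $\mathscr{B}$ to be $\mathscr{A}$-valued matrices indexed by $\mathbb{Z}_+\times\mathbb{Z}_+$ with the two shifts acting as row-shift and column-shift; your free-commutative-monoid formulation is the clean way to handle arbitrary $\mathbb{J}$.
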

	\begin{theorem}(\cite{BHATDERAKSHITH})\label{SARASONLEMMASET} (\textbf{Sarason's lemma for sets})
		Let $g:\mathscr{B}\to \mathscr{B}$ be an injective map and let $\mathscr{A}\subseteq\mathscr{B}$. Suppose $f:\mathscr{A}\to \mathscr{A}$ is a map such that $f(a)=g(a)$ for all $a \in \mathscr{A}$ with $g(a) \in \mathscr{A}$. Suppose $\mathscr{A}=\mathscr{A}_2\setminus \mathscr{A}_1$, where $\mathscr{A}_1,$ and $\mathscr{A}_2$ are invariant under $g$. Then there exists a map $p:\mathscr{B}\to \mathscr{B}$ such that $p^2=p$, $p(\mathscr{B})=\mathscr{A}$ and 
		\begin{align*}
		pg^n(a)=f^n(a), \quad \forall n \in \mathbb{N}, \forall a \in \mathscr{A}.
		\end{align*}
	\end{theorem}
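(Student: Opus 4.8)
The statement to prove is Sarason's lemma for sets (Theorem \ref{SARASONLEMMASET}): given an injective $g:\mathscr{B}\to\mathscr{B}$, a subset $\mathscr{A}\subseteq\mathscr{B}$, and a map $f:\mathscr{A}\to\mathscr{A}$ that agrees with $g$ on those points of $\mathscr{A}$ whose $g$-image stays in $\mathscr{A}$, together with a decomposition $\mathscr{A}=\mathscr{A}_2\setminus\mathscr{A}_1$ where $\mathscr{A}_1\subseteq\mathscr{A}_2$ are both $g$-invariant, construct an idempotent $p:\mathscr{B}\to\mathscr{B}$ with $p(\mathscr{B})=\mathscr{A}$ such that $pg^n(a)=f^n(a)$ for all $n\in\mathbb{N}$ and all $a\in\mathscr{A}$. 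The model to keep in mind is the Hilbert-space Sarason lemma (Theorem \ref{SL}) and its set-theoretic ancestor, the standard dilation construction: there $p$ acts by ``pushing a point back down'' via powers of the base map, and the $g$-invariance of the two nested pieces is exactly what guarantees this is well-defined.

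First I would record the elementary consequences of the hypotheses that make the construction run. Since $\mathscr{A}_1$ is $g$-invariant, for $a\in\mathscr{A}=\mathscr{A}_2\setminus\mathscr{A}_1$ we never have $g^k(a)\in\mathscr{A}_1$ (if $g^k(a)\in\mathscr{A}_1$ then, running invariance the other way is not automatic, so instead I argue: $g^k(a)\in\mathscr{A}_1$ together with injectivity of $g$ and $g(\mathscr{A}_1)\subseteq\mathscr{A}_1$ forces $a$ into the ``backward orbit'' of $\mathscr{A}_1$; one checks this cannot happen for $a\in\mathscr{A}_2\setminus\mathscr{A}_1$ because $\mathscr{A}_2$ is also invariant and the relevant preimage computations stay inside $\mathscr{A}_2$). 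More usefully, I would prove by induction on $n$ the key orbit identity: for every $a\in\mathscr{A}$ and every $n\ge 0$, $g^n(a)\in\mathscr{A}$ if and only if $g^k(a)\in\mathscr{A}$ for all $0\le k\le n$, and in that case $f^n(a)=g^n(a)$. The inductive step uses the defining compatibility of $f$ and $g$ on $\mathscr{A}$: if $g^{k}(a)\in\mathscr{A}$ for all $k\le n$, then in particular $g^{n-1}(a)\in\mathscr{A}$ has $g$-image $g^n(a)\in\mathscr{A}$, so $f(g^{n-1}(a))=g^n(a)$, i.e.\ $f^n(a)=f(f^{n-1}(a))=f(g^{n-1}(a))=g^n(a)$. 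This identity says that within $\mathscr{A}$ the dynamics of $f$ is literally a ``slice'' of the dynamics of $g$, cut off at the first exit time.

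Next I would define $p$. For $b\in\mathscr{B}$, consider its backward behaviour under $g$. The natural candidate is: if there exists $n\ge 0$ and $a\in\mathscr{A}$ with $b=g^n(a)$ and with $g^k(a)\notin\mathscr{A}$ for $0<k\le n$ (so $n$ is the ``time since $b$ last left $\mathscr{A}$''), set $p(b):=f^n(a)$; if no such representation exists, set $p(b):=a_0$ for a fixed basepoint $a_0\in\mathscr{A}$ (or handle $\mathscr{A}$-points directly: $p(a)=a$ for $a\in\mathscr{A}$, which is the $n=0$ case). I must check this is well-defined — two representations $b=g^n(a)=g^m(a')$ with $a,a'\in\mathscr{A}$: injectivity of $g$ forces (say $m\le n$) $g^{n-m}(a)=a'\in\mathscr{A}$, but then by the orbit identity all intermediate iterates $g^k(a)$ for $k\le n-m$ lie in $\mathscr{A}$, contradicting $n$ being minimal unless $n=m$ and $a=a'$; so the representation, when it exists, is unique. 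Then $p(\mathscr{B})\subseteq\mathscr{A}$ since $f^n(a)\in\mathscr{A}$; $p(a)=a$ for $a\in\mathscr{A}$ gives $p(\mathscr{B})=\mathscr{A}$ and $p^2=p$ (because $p$ restricted to $\mathscr{A}$ is the identity). Finally, for $a\in\mathscr{A}$ and $n\in\mathbb{N}$: if $g^k(a)\in\mathscr{A}$ for all $k\le n$, then $p(g^n(a))=g^n(a)=f^n(a)$ by the orbit identity; otherwise let $0<j\le n$ be the first exit time, write $g^n(a)=g^{n-j}(g^j(a))$ with $g^j(a)\in\mathscr{A}$ and no return in between, so $p(g^n(a))=f^{n-j}(g^j(a))=f^{n-j}(f^j(a))=f^n(a)$, using $f^j(a)=g^j(a)$ from the orbit identity at the last interior point.

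The converse direction — from an idempotent $p$ with $p(\mathscr{B})=\mathscr{A}$ satisfying the intertwining, recover the invariant decomposition $\mathscr{A}_1\subseteq\mathscr{A}_2$ — I would handle by setting $\mathscr{A}_1:=\{b\in\mathscr{B}: g^n(b)\notin\mathscr{A}\text{ for all }n\ge 0\text{ and } b\text{ is not a forward image of }\mathscr{A}\}$-type complement, or more cleanly $\mathscr{A}_2:=\bigcup_{n\ge 0}g^{-n}(\mathscr{A})$ (the set of points whose orbit ever enters $\mathscr{A}$) and $\mathscr{A}_1:=\mathscr{A}_2\setminus(\text{the part sitting "above" }\mathscr{A})$; invariance is then a direct check. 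I expect the main obstacle to be the well-definedness argument for $p$ and, dually, getting the two invariant subsets in the converse to be genuinely $g$-invariant (not just forward-closed) — this is precisely where injectivity of $g$ is indispensable, and it is the set-theoretic shadow of the Hilbert-space fact that dilations are built on isometries rather than mere contractions. Everything else is bookkeeping with the first-exit-time decomposition of orbits.
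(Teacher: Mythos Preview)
The paper does not prove this theorem; it is quoted as background from \cite{BHATDERAKSHITH}, so there is no in-paper argument to compare against. Your approach---use the invariance of $\mathscr{A}_1$ and $\mathscr{A}_2$ to show that an orbit starting in $\mathscr{A}=\mathscr{A}_2\setminus\mathscr{A}_1$ can only leave $\mathscr{A}$ by entering $\mathscr{A}_1$ and then never returns, and define $p$ by sending $b$ to $f^{n}(a)$ where $a$ is the last $\mathscr{A}$-visit on the backward $g$-orbit of $b$---is the natural one and is essentially correct.

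Two small points. First, your Case~2 has an index slip: you write ``let $0<j\le n$ be the first exit time'' and then immediately assert $g^j(a)\in\mathscr{A}$, which is contradictory. What you actually want (and what your final phrase ``last interior point'' confirms) is that $j$ be the \emph{last time in} $\mathscr{A}$, i.e.\ the largest index with $0\le j<n$ and $g^j(a)\in\mathscr{A}$; then $g^{n}(a)=g^{n-j}(g^{j}(a))$ with $g^{j}(a)\in\mathscr{A}$ and $g^{k}(g^{j}(a))\notin\mathscr{A}$ for $0<k\le n-j$, so by your definition of $p$ and the orbit identity $p(g^{n}(a))=f^{n-j}(g^{j}(a))=f^{n-j}(f^{j}(a))=f^{n}(a)$. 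Second, the theorem as stated is a one-way implication (existence of $p$), so your closing paragraph on recovering an invariant pair $\mathscr{A}_1\subseteq\mathscr{A}_2$ from a given $p$ is not required here, though it is the correct shape of the full Sarason-type equivalence.
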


\section{WOLD DECOMPOSITION, HALMOS DILATION AND N-DILATION FOR VECTOR SPACES}
In this appendix we consider vector spaces (need not be finite dimensional) over arbitrary fields. We note that the Definition \ref{SHIFTDEFINITION} of shift of an operator on a Hilbert space  does not use the Hilbert space structure. Thus it can be formulated for vector spaces without modifications.
\begin{definition}
	Let $\mathcal{V}$ be a  vector space and $T:\mathcal{V}\to  \mathcal{V}$ be a linear map. The map $T$ is said to be a \textbf{shift} if $\cap _{n=0}^\infty T^n(\mathcal{V})=\{0\} $.	
\end{definition}
\begin{theorem}(\textbf{Wold decomposition for vector spaces})
	Let $T$ be an injective linear map on a vector  space $\mathcal{V}$. Then  $\mathcal{V}$ decomposes  as $\mathcal{V}=\mathcal{V}_b\oplus \mathcal{V}_s$, where $\mathcal{V}_b$ is a  $T$-invariant subspace of $\mathcal{V}$, $T_{|\mathcal{V}_b}:\mathcal{V}_b\to \mathcal{V}_b$ is a bijection and $T_{|\mathcal{V}_s}:\mathcal{V}_s \to \mathcal{V}$ is a shift.	
\end{theorem}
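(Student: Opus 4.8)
The plan is to mimic the classical Hilbert-space Wold decomposition, but replacing orthogonal complements by algebraic complements (which exist for arbitrary vector spaces by a Zorn's lemma argument), and to replace the intersection $\bigcap_n T^n(\mathcal H)$ playing the role of the "unitary part" by $\mathcal V_b \coloneqq \bigcap_{n=0}^\infty T^n(\mathcal V)$. First I would set $\mathcal V_b \coloneqq \bigcap_{n=0}^\infty T^n(\mathcal V)$ and check the two easy structural facts: $\mathcal V_b$ is a subspace (intersection of subspaces $T^n(\mathcal V)$, each a subspace since $T$ is linear), and $\mathcal V_b$ is $T$-invariant, i.e. $T(\mathcal V_b)\subseteq \mathcal V_b$. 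For the latter, if $v\in\mathcal V_b$ then for each $n$ there is $w_n$ with $v=T^n w_n$, hence $Tv=T^{n+1}w_n\in T^{n+1}(\mathcal V)$ for every $n$, and also $Tv\in T^0(\mathcal V)=\mathcal V$, so $Tv\in\mathcal V_b$. Then I would show $T_{|\mathcal V_b}\colon\mathcal V_b\to\mathcal V_b$ is a bijection: injectivity is inherited from $T$ on all of $\mathcal V$; for surjectivity, given $v\in\mathcal V_b$ write $v=T^{n}w_{n}$ for each $n\geq 1$. One wants a single preimage in $\mathcal V_b$. The natural candidate is $u\coloneqq w_1$ (so $Tu=v$); the work is to show $u\in\mathcal V_b$, i.e. $w_1\in T^n(\mathcal V)$ for all $n$. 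Since $T$ is injective, from $v=T^1 w_1 = T^1(T^{n-1}w_n)$ (using $v=T^n w_n$) we get $w_1=T^{n-1}w_n\in T^{n-1}(\mathcal V)$ for every $n\geq 1$, hence $w_1\in\bigcap_m T^m(\mathcal V)=\mathcal V_b$. So $T_{|\mathcal V_b}$ is onto.

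Next I would produce the complement $\mathcal V_s$. Unlike the Hilbert-space case there is no canonical orthogonal complement, so I would invoke the standard fact (equivalent to the axiom of choice) that every subspace of a vector space has an algebraic complement: choose a subspace $\mathcal V_s$ with $\mathcal V=\mathcal V_b\oplus \mathcal V_s$. The remaining claim is that the compression $T_{|\mathcal V_s}$, viewed as a map $\mathcal V_s\to\mathcal V$ (note the statement deliberately writes $T_{|\mathcal V_s}\colon\mathcal V_s\to\mathcal V$, not into $\mathcal V_s$, so I do not need $\mathcal V_s$ to be invariant), is a shift: $\bigcap_{n=0}^\infty (T_{|\mathcal V_s})^n(\mathcal V_s)=\{0\}$. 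Actually one should be slightly careful about what $(T_{|\mathcal V_s})^n$ means when $\mathcal V_s$ is not invariant; the cleanest reading, matching the Hilbert-space template, is to interpret the shift condition for the non-invariant piece via the decomposition, or simply to note that any vector in $\bigcap_n T^n(\mathcal V_s)$ lies in $\bigcap_n T^n(\mathcal V)=\mathcal V_b$, while it also lies in $\mathcal V_s$ (or in the image of $\mathcal V_s$ under a projection argument), and $\mathcal V_b\cap\mathcal V_s=\{0\}$.

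Concretely, the argument for the shift property is: suppose $v\in \bigcap_{n=0}^\infty T^n(\mathcal V_s)$. Then in particular $v\in T^n(\mathcal V_s)\subseteq T^n(\mathcal V)$ for all $n$, so $v\in\mathcal V_b$. On the other hand $v\in T^0(\mathcal V_s)=\mathcal V_s$. Since the sum $\mathcal V_b\oplus\mathcal V_s$ is direct, $v\in\mathcal V_b\cap\mathcal V_s=\{0\}$, hence $v=0$. This closes the proof. I expect the main obstacle — really the only delicate point — to be pinning down the exact meaning of "$T_{|\mathcal V_s}$ is a shift" when $\mathcal V_s$ need not be $T$-invariant, and making sure the iterated-image condition is the right one; once the definitions are set as above, each step is a two-line verification. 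A secondary point worth flagging explicitly in the write-up is the reliance on the axiom of choice for the existence of the algebraic complement $\mathcal V_s$, in contrast to the Hilbert-space statement where the orthogonal complement is canonical; unlike the set-theoretic Wold decomposition of Bhat–De–Rakshit, the vector-space decomposition here is not unique because the complement is not unique, which is consistent with the statement using "decomposes as" rather than "decomposes uniquely as."
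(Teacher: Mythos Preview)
Your proposal is correct and follows essentially the same route as the paper: define $\mathcal{V}_b=\bigcap_{n\ge 0}T^n(\mathcal{V})$, check $T$-invariance and bijectivity of $T|_{\mathcal{V}_b}$ via injectivity of $T$, take $\mathcal{V}_s$ to be any algebraic complement, and obtain the shift condition from $\bigcap_n T^n(\mathcal{V}_s)\subseteq \mathcal{V}_b\cap\mathcal{V}_s=\{0\}$. Your observations about the meaning of $(T|_{\mathcal{V}_s})^n$ when $\mathcal{V}_s$ need not be invariant and about non-uniqueness of the complement are well taken; the paper glosses over the former and records the latter immediately after its proof.
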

\begin{proof}
	Define $\mathcal{V}_b\coloneqq \cap _{n=0}^\infty T^n(\mathcal{V})$ and let $\mathcal{V}_s$ be a vector space complement of $\mathcal{V}_b$ in $\mathcal{V}$. We clearly have 	$\mathcal{V}=\mathcal{V}_b\oplus \mathcal{V}_s$. Now $T(\mathcal{V}_b)=T (\cap _{n=0}^\infty T^n(\mathcal{V}))\subseteq \cap _{n=0}^\infty T^n(\mathcal{V})=\mathcal{V}_b$. Thus  $\mathcal{V}_b$ is a  $T$-invariant subspace of $\mathcal{V}$. We now try to show  that $T_{|\mathcal{V}_b}$ is a bijection. Since $T$ is already injective, it suffices to show that $T_{|\mathcal{V}_b}$ is surjective. Let $y\in \mathcal{V}_b$. Then there exists a sequence $\{x_n\}_{n=1}^\infty$ in $\mathcal{V}$ such that $y=Tx_1=T^2x_2=T^3x_3=\cdots .$ Since $T$ is injective, we then have $x_1=Tx_2=T^2x_2=\cdots $. Therefore  $y=Tx_1$ and $x_1\in \mathcal{V}_b$. Thus  $T_{|\mathcal{V}_b}$ is surjective. We are now left with proving that $T_{|\mathcal{V}_s}$ is a shift. Let $y\in \cap _{n=0}^\infty (T_{|\mathcal{V}_s})^n(\mathcal{V}_s)\subseteq (\cap _{n=0}^\infty T^n(\mathcal{V}))\cap \mathcal{V}_s= \mathcal{V}_b\cap \mathcal{V}_s$.  Hence $y=0$ which completes the proof. 
\end{proof}
Since vector space complements are not unique, we do not have uniqueness in Wold decomposition for vector spaces. We next derive Halmos dilation for vector spaces.
\begin{theorem}(\textbf{Halmos dilation for vector spaces})\label{HALMOSVECTORSPACE}
	Let $\mathcal{V}$ be a vector space  and 	$T: \mathcal{V} \to \mathcal{V}$ be a linear map. Then the operator 
	\begin{align*}
	U\coloneqq \begin{pmatrix}
	T & I   \\
	I & 0  \\
	\end{pmatrix}
	\end{align*}
	is invertible  on 	$\mathcal{V}\oplus \mathcal{V}$. In other words,
	\begin{align*}
	T=P_\mathcal{V}U_{|\mathcal{V}},
	\end{align*}
	where $P_\mathcal{V}:\mathcal{V}\oplus \mathcal{V}\to \mathcal{V}\oplus \mathcal{V}$ is the first coordinate  projection onto $\mathcal{V}$.
\end{theorem}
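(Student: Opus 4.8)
The plan is to verify directly that the block operator $U=\begin{pmatrix} T & I \\ I & 0\end{pmatrix}$ on $\mathcal{V}\oplus\mathcal{V}$ is invertible, and then to read off the compression identity $T=P_\mathcal{V}U_{|\mathcal{V}}$. This is the vector-space analogue of Theorem \ref{HALMOSDILATION}, and since there is no inner product or norm to worry about, the ``defect operators'' $\sqrt{I-TT^*}$ and $\sqrt{I-T^*T}$ that appear in the Hilbert-space version simply get replaced by copies of the identity; the cost is that $U$ is merely a linear bijection rather than a unitary.

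First I would exhibit an explicit inverse. A natural guess, obtained by solving $U\begin{pmatrix}x\\y\end{pmatrix}=\begin{pmatrix}a\\b\end{pmatrix}$, i.e. $Tx+y=a$ and $x=b$, is
\begin{align*}
U^{-1}=\begin{pmatrix} 0 & I \\ I & -T\end{pmatrix}.
\end{align*}
Then I would check both composites: $U\,U^{-1}=\begin{pmatrix} T & I \\ I & 0\end{pmatrix}\begin{pmatrix} 0 & I \\ I & -T\end{pmatrix}=\begin{pmatrix} I & T-T \\ 0 & I\end{pmatrix}=\begin{pmatrix} I & 0 \\ 0 & I\end{pmatrix}$, and similarly $U^{-1}U=\begin{pmatrix} 0 & I \\ I & -T\end{pmatrix}\begin{pmatrix} T & I \\ I & 0\end{pmatrix}=\begin{pmatrix} I & 0 \\ T-T & I\end{pmatrix}=\begin{pmatrix} I & 0 \\ 0 & I\end{pmatrix}$. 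Since $U^{-1}$ is visibly a linear map on $\mathcal{V}\oplus\mathcal{V}$, this establishes that $U$ is invertible in the category of vector spaces.

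Next I would identify $\mathcal{V}$ with its image $\{x\oplus 0 : x\in\mathcal{V}\}$ under the embedding $i:x\mapsto x\oplus 0$, and let $P_\mathcal{V}:\mathcal{V}\oplus\mathcal{V}\to\mathcal{V}\oplus\mathcal{V}$ be the idempotent $x\oplus y\mapsto x\oplus 0$ (the ``first coordinate projection''). For $x\in\mathcal{V}$ one computes $U(x\oplus 0)=Tx\oplus x$, and applying $P_\mathcal{V}$ gives $P_\mathcal{V}U(x\oplus 0)=Tx\oplus 0$, which is exactly $i(Tx)$. Hence $T=P_\mathcal{V}U_{|\mathcal{V}}$ under the identification, completing the proof.

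I do not anticipate a genuine obstacle here: the only subtlety, compared with the Hilbert-space statement, is purely bookkeeping about what ``invertible'' and ``projection'' mean in the absence of a norm or inner product — one must be careful to state that $U$ is a \emph{linear} bijection (equivalently, invertible in $\mathrm{End}(\mathcal{V}\oplus\mathcal{V})$) and that $P_\mathcal{V}$ is an \emph{idempotent} linear map rather than an orthogonal projection. The algebraic identities $U^{-1}U=UU^{-1}=I$ and $P_\mathcal{V}U_{|\mathcal{V}}=T$ are one-line matrix multiplications and need no field-specific hypotheses, so the argument works verbatim over an arbitrary field.
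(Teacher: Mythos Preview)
Your proposal is correct and follows essentially the same approach as the paper: the paper's proof simply exhibits the inverse $V=\begin{pmatrix}0 & I\\ I & -T\end{pmatrix}$ and declares that a direct calculation verifies it. You have actually written out those calculations and additionally checked the compression identity $T=P_\mathcal{V}U_{|\mathcal{V}}$, which the paper leaves implicit.
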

\begin{proof}
	It suffices to produce inverse map for $U$. A direct calculation says that 	
	\begin{align*}
	V\coloneqq \begin{pmatrix}
	0 & I   \\
	I & -T  \\
	\end{pmatrix}
	\end{align*}
	is the inverse of $U$.
\end{proof}
In the sequel, any invertible operator of the form 
\begin{align*}
\begin{pmatrix}
T & B   \\
C & D  \\
\end{pmatrix},
\end{align*}
where  $B,C,D:\mathcal{V} \to \mathcal{V}$ are linear operators, will be called as a \textbf{Halmos dilation} of $T$.
Now we observe that Halmos dilation for vector spaces  is not unique. Using the theory of \textbf{block matrices} (\cite{LUSHIOU}) we can produce a variety of Halmos dilations for a given operator. Following are some classes of Halmos dilations.
\begin{enumerate}[label=(\roman*)]
	\item If $T: \mathcal{V} \to \mathcal{V}$ is an invertible linear map and the linear operators $B,C,D:\mathcal{V} \to \mathcal{V}$ are such that $D-CT^{-1}B$ is invertible, then 
	the operator 
	\begin{align*}
	U\coloneqq \begin{pmatrix}
	T & B  \\
	C & D  \\
	\end{pmatrix}
	\text{ is a Halmos dilation of $T$ on $\mathcal{V}\oplus \mathcal{V}$ whose inverse is }
	\end{align*}
	\begin{align*}
	\begin{pmatrix}
	T^{-1} +T^{-1}B(D-CT^{-1}B)^{-1}& -T^{-1}B(D-CT^{-1}B)^{-1}   \\
	-(D-CT^{-1}B)^{-1}CT^{-1} & (D-CT^{-1}B)^{-1}  \\
	\end{pmatrix}.
	\end{align*}
	\item $D: \mathcal{V} \to \mathcal{V}$ is an invertible linear map and the linear operators $B,C:\mathcal{V} \to \mathcal{V}$ are such that $T-BD^{-1}C$ is invertible, then 
	the operator 
	\begin{align*}
	\begin{pmatrix}
	T & B  \\
	C & D  \\
	\end{pmatrix}
	\text{ is a Halmos dilation of $T$ on $\mathcal{V}\oplus \mathcal{V}$ whose inverse is }
	\end{align*}
	\begin{align*}
	\begin{pmatrix}
	(T-BD^{-1}C)^{-1} & -(T-BD^{-1}C)^{-1}BD^{-1}   \\
	-D^{-1}C(T-BD^{-1}C)^{-1} & D^{-1}+D^{-1}C(T-BD^{-1}C)^{-1}BD^{-1}  \\
	\end{pmatrix}.
	\end{align*}
	\item $B: \mathcal{V} \to \mathcal{V}$ is an invertible linear map and the linear operators $C,D:\mathcal{V} \to \mathcal{V}$ are such that $C-DB^{-1}T$ is invertible, then 
	the operator 
	\begin{align*}
	\begin{pmatrix}
	T & B  \\
	C & D  \\
	\end{pmatrix}
	\text{ is a Halmos dilation of $T$ on $\mathcal{V}\oplus \mathcal{V}$ whose inverse is }
	\end{align*}
	\begin{align*}
	\begin{pmatrix}
	-(C-DB^{-1}T)^{-1}DB^{-1} &  (C-DB^{-1}T)^{-1}  \\
	B^{-1}+B^{-1}T(C-DB^{-1}T)^{-1}DB^{-1} & -B^{-1}T(C-DB^{-1}T)^{-1} \\
	\end{pmatrix}.
	\end{align*}	
	\item $C: \mathcal{V} \to \mathcal{V}$ is an invertible linear map and the linear operators $B,D:\mathcal{V} \to \mathcal{V}$ are such that $B-TC^{-1}D$ is invertible, then 
	the operator 
	\begin{align*}
	\begin{pmatrix}
	T & B  \\
	C & D  \\
	\end{pmatrix}
	\text{ is a Halmos dilation of $T$ on $\mathcal{V}\oplus \mathcal{V}$ whose inverse is }
	\end{align*}
	\begin{align*}
	\begin{pmatrix}
	-C^{-1}D	(B-TC^{-1}D)^{-1} &  C^{-1}+C^{-1}D(B-TC^{-1}D)^{-1}TC^{-1}  \\
	(B-TC^{-1}D)^{-1} &  -(B-TC^{-1}D)^{-1}TC^{-1}\\
	\end{pmatrix}.
	\end{align*}
\end{enumerate}
Recently,  \cite{BHATMUKHERJEE} proved that there is certain kind of uniqueness of Halmos dilation for strict contractions in Hilbert spaces, as shown below.
\begin{theorem}(\cite{BHATMUKHERJEE})\label{BM}
	Let $\mathcal{H}$ be a finite dimensional Hilbert space and $T:\mathcal{H}\to \mathcal{H}$ be a strict contraction (i.e., $\|T\|<1$).	Then Halmos dilation of $T$ on $\mathcal{H}\oplus \mathcal{H}$ is unitarily equivalent to 
	\begin{align*}
	\begin{pmatrix}
	T & -\sqrt{I-TT^*}W   \\
	\sqrt{I-T^*T} & T^*W  \\
	\end{pmatrix}, \quad \text{ for some unitary operator } W:\mathcal{H}\to \mathcal{H}.
	\end{align*}
\end{theorem}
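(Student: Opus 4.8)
\textbf{Proof proposal for Theorem \ref{BM}.}
The plan is to mimic the structure of the Hilbert-space proof of uniqueness of the Halmos (unitary) dilation, but to note first that over a general vector space there is no ``canonical'' Halmos dilation because there is no inner product, no adjoint, and no notion of unitary equivalence. Hence the statement genuinely needs the Hilbert-space hypotheses of Theorem \ref{BM}: finite dimensionality and strict contractivity. So the first thing I would do is record that, since $\|T\|<1$, the operators $I-TT^{*}$ and $I-T^{*}T$ are positive and invertible, so the ``defect operators'' $D_{T}\coloneqq \sqrt{I-T^{*}T}$ and $D_{T^{*}}\coloneqq\sqrt{I-TT^{*}}$ are invertible, which is the key quantitative input that makes the dilation rigid.

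Next I would write a general Halmos dilation of $T$ on $\mathcal H\oplus\mathcal H$ as a block unitary
\begin{align*}
U=\begin{pmatrix} T & B\\ C & D\end{pmatrix},
\end{align*}
and extract the four equations coming from $U^{*}U=I_{\mathcal H\oplus\mathcal H}$ and $UU^{*}=I_{\mathcal H\oplus\mathcal H}$: namely $T^{*}T+C^{*}C=I$, $T^{*}B+C^{*}D=0$, $B^{*}B+D^{*}D=I$, $TT^{*}+BB^{*}=I$, $TC^{*}+BD^{*}=0$, $CC^{*}+DD^{*}=I$. From $T^{*}T+C^{*}C=I$ I get $C^{*}C=I-T^{*}T=D_{T}^{2}>0$, so $C$ is injective, hence (finite dimensions) invertible, and by the polar decomposition there is a unitary $W_{1}$ with $C=W_{1}D_{T}$. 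Similarly from $TT^{*}+BB^{*}=I$ I get $BB^{*}=I-TT^{*}=D_{T^{*}}^{2}>0$, so $B=D_{T^{*}}W_{2}$ for a unitary $W_{2}$. I would then plug these into the cross relation $T^{*}B+C^{*}D=0$, i.e. $T^{*}D_{T^{*}}W_{2}+D_{T}W_{1}^{*}D=0$; using the standard intertwining identity $T^{*}D_{T^{*}}=D_{T}T^{*}$ (which holds because $T^{*}f(TT^{*})=f(T^{*}T)T^{*}$ for any polynomial, hence for the square root by functional calculus) this becomes $D_{T}(T^{*}W_{2}+W_{1}^{*}D)=0$, and invertibility of $D_{T}$ forces $D=-W_{1}T^{*}W_{2}$. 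Finally I would conjugate $U$ by the unitary $I_{\mathcal H}\oplus W_{1}^{*}$ on $\mathcal H\oplus\mathcal H$; this replaces $(B,C,D)$ by $(B,W_{1}^{*}C,W_{1}^{*}DW_{1})=(D_{T^{*}}W_{2},\,D_{T},\,-T^{*}W_{2}W_{1})$, and writing $W\coloneqq -W_{2}W_{1}$ (note $W_{2}W_{1}$ is unitary, and I can absorb the sign; comparing with the claimed normal form I would instead take $W\coloneqq W_{2}W_{1}$ and carry the sign explicitly, landing on exactly
\begin{align*}
\begin{pmatrix} T & -D_{T^{*}}W\\ D_{T} & T^{*}W\end{pmatrix}
\end{align*}
after also conjugating the first block by the identity, with one more harmless adjustment of $W$). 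A short check that this block matrix is indeed unitary for every unitary $W$ closes the ``converse'' direction and confirms the normal form is attained.

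The main obstacle I anticipate is bookkeeping of the unitaries and signs rather than anything conceptual: the polar decompositions give $W_{1},W_{2}$ only up to the kernels of the defect operators (here trivial, since strict contraction), and one must be careful that a single conjugating unitary $I\oplus W_{1}^{*}$ simultaneously normalizes $B$, $C$, and $D$ — this is exactly where the cross relation $T^{*}B+C^{*}D=0$ is needed, so that $D$ is not free. I would also double-check the intertwining identity $T^{*}D_{T^{*}}=D_{T}T^{*}$ carefully, since it is the one place functional calculus (Weierstrass approximation of $\sqrt{\cdot}$ on the spectrum, which lies in a compact subset of $(0,\infty)$ by strictness) enters; in finite dimensions this is elementary via diagonalization of the positive operators $T^{*}T$ and $TT^{*}$. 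No other step should require more than routine linear algebra.
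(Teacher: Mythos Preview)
The paper does not supply its own proof of this theorem; it is quoted verbatim from \cite{BHATMUKHERJEE} and used only as context for the subsequent negative result in the vector-space setting. So there is no proof in the paper to compare against, and your proposal must be judged on its own merits.

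Your route is the standard one and is essentially correct. The strict contraction hypothesis makes $D_T$ and $D_{T^*}$ invertible, the polar decompositions $C=W_1D_T$ and $B=D_{T^*}W_2$ with $W_1,W_2$ unitary follow, and the intertwining identity $T^*D_{T^*}=D_TT^*$ together with $T^*B+C^*D=0$ forces $D=-W_1T^*W_2$. One small slip: conjugating $U$ by $V=I_{\mathcal H}\oplus W_1^{*}$ does \emph{not} leave the $(1,2)$ block fixed; one has
\[
VUV^{*}=\begin{pmatrix} T & BW_1\\ W_1^{*}C & W_1^{*}DW_1\end{pmatrix}
=\begin{pmatrix} T & D_{T^*}W_2W_1\\ D_T & -T^{*}W_2W_1\end{pmatrix},
\]
so the correct substitution is $W\coloneqq -W_2W_1$ (not $-W_2$ alone), which then gives exactly the claimed normal form. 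With that correction the argument is complete; the remaining checks (that the displayed matrix is unitary for every unitary $W$, and that the intertwining identity holds) are routine, as you note.
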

We next derive a negative result to Theorem \ref{BM} for Halmos dilation in vector spaces.
\begin{theorem}\label{HALMOSTHEOREMVS}
	Let $\mathcal{V}$ be a finite dimensional vector space and $T:\mathcal{V} \to \mathcal{V}$ be a linear operator with nonzero trace. Then there are Halmos dilations of $T$ which are not similar. 	
\end{theorem}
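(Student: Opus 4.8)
The statement to prove is Theorem~\ref{HALMOSTHEOREMVS}: if $\mathcal{V}$ is a finite dimensional vector space and $T:\mathcal{V}\to\mathcal{V}$ is a linear operator with nonzero trace, then there exist Halmos dilations of $T$ that are not similar to one another. The plan is to exhibit two explicit invertible block operators on $\mathcal{V}\oplus\mathcal{V}$ of the prescribed form $\begin{pmatrix} T & B\\ C & D\end{pmatrix}$ and show that no invertible operator on $\mathcal{V}\oplus\mathcal{V}$ conjugates one into the other. The cleanest invariant under similarity is the trace, so I would arrange for the two dilations to have different traces: since the trace of a block operator $\begin{pmatrix} T & B\\ C & D\end{pmatrix}$ is $\operatorname{tr}(T)+\operatorname{tr}(D)$, it suffices to produce two Halmos dilations with different lower-right corners $D_1,D_2$ satisfying $\operatorname{tr}(D_1)\neq\operatorname{tr}(D_2)$.

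First I would recall from the discussion following Theorem~\ref{HALMOSVECTORSPACE} that $U=\begin{pmatrix} T & I\\ I & 0\end{pmatrix}$ is always a Halmos dilation of $T$, with $\operatorname{tr}(U)=\operatorname{tr}(T)+0=\operatorname{tr}(T)$. Next I would produce a second Halmos dilation with a nonzero-trace corner. The simplest candidate is $U'=\begin{pmatrix} T & I\\ I & T\end{pmatrix}$ provided it is invertible; using the block-matrix inverse formulas listed after Theorem~\ref{HALMOSVECTORSPACE}, $U'$ is invertible whenever, say, the corner $D=T$ is invertible and $T-IT^{-1}I=T-T^{-1}$ is invertible — but to avoid imposing invertibility hypotheses on $T$, a safer choice is $U'=\begin{pmatrix} T & I\\ I & \lambda I\end{pmatrix}$. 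By the block-inverse formula (case with invertible lower-right corner $D=\lambda I$, $\lambda\neq 0$), $U'$ is invertible iff $T-\lambda^{-1}I$ is invertible, which holds for all but finitely many scalars $\lambda$ (those with $\lambda^{-1}$ an eigenvalue of $T$). Fix any such admissible $\lambda\neq 0$; then $U'$ is a genuine Halmos dilation of $T$ with $\operatorname{tr}(U')=\operatorname{tr}(T)+\lambda\dim\mathcal{V}$.

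Then the key step: since $\operatorname{tr}$ is invariant under conjugation by invertible operators (over any field, $\operatorname{tr}(SAS^{-1})=\operatorname{tr}(A)$), if $U$ and $U'$ were similar they would have equal trace, forcing $\lambda\dim\mathcal{V}=0$. As $\dim\mathcal{V}\geq 1$ and $\lambda\neq 0$, this is a contradiction in characteristic zero; if the ground field has positive characteristic $p$ one must additionally ensure $\lambda\dim\mathcal{V}\neq 0$ in the field, which is arranged by choosing $\lambda$ so that $\lambda\cdot(\dim\mathcal{V}\bmod p)\neq 0$ — possible as long as $p\nmid\dim\mathcal{V}$; in the residual case $p\mid\dim\mathcal{V}$ I would instead take $U'=\begin{pmatrix} T & I\\ I & D\end{pmatrix}$ with $D$ a diagonalizable operator whose trace differs from $0$ (e.g.\ a single Jordan-type block with one nonzero diagonal entry), again checking invertibility via the block formula, so that $\operatorname{tr}(D)\neq 0$ in the field. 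Note the hypothesis $\operatorname{tr}(T)\neq 0$ is not actually needed for this trace argument, so I would also keep in mind an alternative route — if one wants an invariant genuinely sensitive to $T$, one could instead distinguish the two dilations by comparing $\operatorname{tr}$ of $\bigl(\begin{smallmatrix} T & B\\ C & D\end{smallmatrix}\bigr)^2 = \operatorname{tr}(T^2+BC)+\operatorname{tr}(CB+D^2)$, where the cross term $\operatorname{tr}(BC)$ varies with the off-diagonal data while $\operatorname{tr}(T^2)$ stays fixed.

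The only real obstacle is bookkeeping around the ground field: over an arbitrary field the trace argument can degenerate if the characteristic divides $\dim\mathcal{V}$, so the main care is in choosing the corner $D$ (and verifying invertibility of the resulting block operator via the already-recorded block-matrix inverse formulas) so that the two trace values are provably distinct as field elements. Once that choice is pinned down, the non-similarity is immediate from trace-invariance, and I would finish by remarking that this shows Theorem~\ref{BM}-type uniqueness genuinely fails in the purely algebraic setting.
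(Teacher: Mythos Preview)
Your approach is correct and rests on the same invariant as the paper's proof (trace invariance under similarity), but the paper's execution is tidier and explains the hypothesis. The paper compares the standard dilation $\begin{pmatrix} T & I\\ I & 0\end{pmatrix}$ with the dilation $\begin{pmatrix} T & T-I\\ T+I & T\end{pmatrix}$, which is invertible over any field unconditionally (one checks directly that $\begin{pmatrix} T & I-T\\ -I-T & T\end{pmatrix}$ is a two-sided inverse). The traces are $\operatorname{tr}(T)$ and $2\operatorname{tr}(T)$, whose difference is exactly $\operatorname{tr}(T)$, so the hypothesis $\operatorname{tr}(T)\neq 0$ is precisely what separates them --- and this works uniformly over every field, including characteristic~$2$ and finite fields.

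By contrast, your second dilation $\begin{pmatrix} T & I\\ I & \lambda I\end{pmatrix}$ yields a trace difference $\lambda\dim\mathcal{V}$ that does not involve $\operatorname{tr}(T)$ at all (hence your correct observation that the hypothesis seems unused), and this forces you into the characteristic and field-size bookkeeping you describe: you need an infinite (or large enough) field to choose $\lambda$ with $T-\lambda^{-1}I$ invertible, and you need $p\nmid\dim\mathcal{V}$ or a further workaround. The paper's choice sidesteps all of that by letting $T$ itself sit in the lower-right block, so the hypothesis does the work directly.
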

\begin{proof}
	Note that 	\begin{align*}
	\begin{pmatrix}
	T & T-I   \\
	T+I & T \\
	\end{pmatrix}
	\end{align*}
	is an invertible operator and hence is a Halmos dilation of $T$. It is now enough to show that the matrices 
	\begin{align*}
	\begin{pmatrix}
	T & T-I   \\
	T+I & T \\
	\end{pmatrix} \quad \text{ and } \quad \begin{pmatrix}
	T & I   \\
	I & 0 \\
	\end{pmatrix} 
	\end{align*} 
	are not similar. Since $\mathcal{V}$ is finite dimensional, we can use the property of trace map to conclude that these matrices are not similar.
\end{proof}
Theorem \ref{HALMOSVECTORSPACE} can be generalized which gives vector space version of Theorem \ref{EGERVARY}.
\begin{theorem}(\textbf{N-dilation for vector spaces})\label{NDILATIONVECTOR}
	Let $\mathcal{V}$ be a vector space  and 	$T: \mathcal{V} \to \mathcal{V}$ be a linear map. Let $N$ be a natural number. Then the operator 
	\begin{align*}
	U\coloneqq \begin{pmatrix}
	T & 0& 0 & \cdots &0 & I   \\
	I & 0& 0 & \cdots &0& 0   \\
	0&I&0&\cdots &0& 0\\
	0&0&I&\cdots &0 & 0\\
	\vdots &\vdots &\vdots & & \vdots &\vdots \\
	0&0&0&\cdots &0 & 0\\
	0&0&0&\cdots &I & 0\\
	\end{pmatrix}_{(N+1)\times (N+1)}
	\end{align*}is invertible  on 	$\oplus_{k=1}^{N+1} \mathcal{V}$ and 
	\begin{align}\label{FINITEDILATIONEQUATION}
	T^k=P_\mathcal{V}U_{|\mathcal{V}}^k,\quad \forall k=1, \dots, N,
	\end{align}
	where $P_\mathcal{V}:\oplus_{k=1}^{N+1} \mathcal{V}\to \oplus_{k=1}^{N+1} \mathcal{V}$ is the first coordinate  projection onto $\mathcal{V}$.
\end{theorem}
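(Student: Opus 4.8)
The plan is to verify the claim by a direct matrix computation, entirely in the spirit of the proof of Theorem \ref{HALMOSVECTORSPACE}: exhibit an explicit two-sided inverse for $U$ and then compute the compressions $P_\mathcal{V}U^k_{|\mathcal{V}}$ for $k=1,\dots,N$. Since no inner product or norm is used anywhere in the statement, all the analytic machinery of Sch\"affer-type proofs can be discarded, and everything reduces to bookkeeping with block matrices over $\oplus_{k=1}^{N+1}\mathcal{V}$.

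First I would write $U$ in the form $U = \begin{pmatrix} T & e_N^{\mathsf T}\otimes I \\ e_1\otimes I & S \end{pmatrix}$ where $S:\oplus_{k=1}^{N}\mathcal{V}\to\oplus_{k=1}^{N}\mathcal{V}$ is the (backward or forward, depending on convention) shift given by the subdiagonal identities, and $e_1,e_N$ are the appropriate coordinate inclusions; this matches exactly the $N=1$ case $\begin{pmatrix}T&I\\I&0\end{pmatrix}$ of Theorem \ref{HALMOSVECTORSPACE}. Then I would guess the inverse by analogy with the $N=1$ case: the inverse should send the first coordinate forward around the cycle and absorb one factor of $T$ at the last step, so that $U^{-1}$ again has a single nonzero subdiagonal/corner pattern with one entry equal to $-T$. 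Concretely, one checks that the operator which cyclically permutes the coordinates in the opposite direction, with the entry coupling back into the first coordinate replaced by $-T$ times a projection, satisfies $UU^{-1}=U^{-1}U=I_{\oplus_{k=1}^{N+1}\mathcal{V}}$. This is a routine block multiplication; the only thing to be careful about is that the subdiagonal of $I$'s together with the single corner $I$ already forms a cyclic permutation of the $N+1$ summands, so $U$ differs from an invertible permutation by a strictly upper-triangular perturbation supported in the first row, and invertibility is automatic — but I would still record the explicit inverse since the statement asserts invertibility as a conclusion.

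Next I would establish the dilation identity \eqref{FINITEDILATIONEQUATION}. The clean way is induction on $k$: embed $v\in\mathcal{V}$ as $v\oplus 0\oplus\cdots\oplus 0$, apply $U$ once and observe $U(v\oplus 0\oplus\cdots) = (Tv)\oplus v\oplus 0\oplus\cdots\oplus 0$, so $P_\mathcal{V}U(v\oplus 0\oplus\cdots)=Tv$. Applying $U$ again pushes the "$v$" into the third slot while the first two slots become $T^2v$ and $Tv$; inductively, for $1\le k\le N$ the vector $U^k(v\oplus 0\oplus\cdots)$ has first coordinate $T^kv$ and the only reason the induction stops at $N$ is that the embedded copy of $v$ reaches the last slot exactly when $k=N$, after which one more application of $U$ feeds it back through the top-right corner and then through $T$, contaminating the first coordinate; that is precisely why the identity is only claimed for $k\le N$. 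Carrying out this induction and reading off the first coordinate gives $P_\mathcal{V}U^k_{|\mathcal{V}}=T^k$ for $k=1,\dots,N$, completing the proof.

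I do not expect a genuine obstacle here; the content of the statement is combinatorial rather than analytic. The one point requiring mild care is fixing conventions so that the displayed matrix, the shift direction, and the placement of the corner $I$ are mutually consistent — a sign error or a transposed block would break either invertibility or the dilation identity. If one wanted a slicker presentation, an alternative is to note that $U$ is conjugate (by a permutation of summands) to the $N=1$ Sch\"affer-type block plus a nilpotent shift block, and then quote Theorem \ref{HALMOSVECTORSPACE} together with the elementary fact that $P_\mathcal{V}U^k_{|\mathcal{V}}$ only sees the first $k$ applications; but the direct induction above is short enough that I would simply present it in full.
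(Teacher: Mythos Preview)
Your approach is essentially identical to the paper's: the paper exhibits an explicit inverse matrix $V$ (the opposite cyclic shift with a single $-T$ entry, exactly as you guessed) and checks $UV=VU=I$, then says the dilation identity follows from a ``direct calculation of power of $U$,'' which is precisely your induction on $k$. Your extra commentary on why the identity breaks at $k=N+1$ is correct and more informative than the paper's one-line dismissal, but the underlying method is the same.
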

\begin{proof}
	A direct calculation of power of $U$ gives Equation (\ref{FINITEDILATIONEQUATION}). To complete the proof, now we need show that $U$ is invertible. Define
	\begin{align*}
	V\coloneqq \begin{pmatrix}
	0 & I& 0& 0 &  \cdots &0 & 0   \\
	0 & 0& I& 0 &  \cdots &0& 0   \\
	0&0&0& I&\cdots &0& 0\\
	0&0&0& 0&\cdots &0 & 0\\
	\vdots &\vdots  &\vdots & \vdots& & \vdots &\vdots \\
	0&0&0& 0&\cdots &0 & I\\
	I&-T&0& 0&\cdots &0 & 0\\
	\end{pmatrix}_{(N+1)\times (N+1)}.
	\end{align*}
	Then $UV=VU=I$. Thus $V$ is the inverse of $U$.	
\end{proof}
Note that the Equation (\ref{FINITEDILATIONEQUATION}) holds only upto $N$ and not for $N+1$ and higher natural numbers. We next derive vector space version of Theorem \ref{SCHAFFERTHEOREM}.  In the following theorem, $\oplus_{n=-\infty}^{\infty} \mathcal{V}$ is the vector space defined by 
\begin{align*}
\oplus_{n=-\infty}^{\infty} \mathcal{V}\coloneqq \left\{ \{x_n\}_{n=-\infty}^\infty, x_n \in \mathcal{V}, \forall n \in \mathbb{Z}, x_n\neq 0 
\text{ only for finitely many } n' \text{s}\right\}
\end{align*}
with respect to  natural operations.
\begin{theorem}\label{SCHAFFERVECTOR}(\textbf{Sz. Nagy dilation  for vector spaces})
	Let $\mathcal{V}$ be a vector space  and 	$T: \mathcal{V} \to \mathcal{V}$ be a linear map.  Let $U\coloneqq[u_{n,m}]_{-\infty \leq n,m\leq \infty}$ be the operator defined on 
	$\oplus_{n=-\infty}^{\infty} \mathcal{V}$ given by  the infinite matrix defined as follows:
	\begin{align*}
	u_{0,0}\coloneqq T, \quad u_{n,n+1}\coloneqq I, \quad \forall n \in \mathbb{Z},  \quad u_{n,m}\coloneqq 0 \quad  \text{ otherwise},
	\end{align*}
	i.e., 
	\begin{align*}
	U=\begin{pmatrix}
	&\vdots &\vdots & \vdots & \vdots & \vdots & \\
	\cdots & 0 & I& 0 & 0&  0& \cdots & \\
	\cdots & 0 & 0& I & 0& 0&\cdots  & \\
	\cdots & 0&0&\underline{T}&I& 0&\cdots&\\
	\cdots & 0&0&0&0& I&\cdots &\\
	\cdots & 0&0&0&0& 0&\cdots &\\
	& \vdots &\vdots &\vdots &\vdots  & \vdots & \\
	\end{pmatrix}_{\infty\times \infty}
	\end{align*}
	where $T$ is in the $(0,0)$  position (which is underlined), which is invertible  on 	$\oplus_{n=-\infty}^{\infty} \mathcal{V}$ and 
	\begin{align}\label{INFINITEDILATIONEQUATION}
	T^n=P_\mathcal{V}U_\mathcal{V}^n,\quad \forall n\in \mathbb{N},
	\end{align}
	where $P_\mathcal{V}:\oplus_{n=-\infty}^{\infty} \mathcal{V}\to \oplus_{n=-\infty}^{\infty} \mathcal{V}$ is the first coordinate  projection onto $\mathcal{V}$.
\end{theorem}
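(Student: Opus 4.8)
The plan is to mimic the Schaffer construction from Theorem \ref{SCHAFFERTHEOREM}, but exploit the fact that in the vector space setting we only need $U$ to be \emph{invertible} rather than unitary, so none of the ``adjoint-type'' entries ($-T^*$, $\sqrt{I-TT^*}$, $\sqrt{I-T^*T}$) are needed and the inverse is a genuine explicit matrix. First I would check that $U$ is a well-defined linear operator on $\oplus_{n=-\infty}^{\infty}\mathcal{V}$: writing an element as a finitely supported sequence $\{x_n\}_{n}$, the matrix formula gives $(Ux)_n = x_{n+1}$ for every $n\neq 0$ and $(Ux)_0 = Tx_0 + x_1$, so $Ux$ is again finitely supported and depends linearly on $x$.

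Next I would produce an explicit two-sided inverse. Solving $y = Ux$ coordinate-wise suggests defining $V=[v_{n,m}]$ by $v_{n,n-1}=I$ for all $n\neq 1$, $v_{1,0}=I$, $v_{1,-1}=-T$, and $v_{n,m}=0$ otherwise; concretely $(Vy)_m = y_{m-1}$ for $m\neq 1$ and $(Vy)_1 = y_0 - Ty_{-1}$. This $V$ preserves the finite-support condition by the same argument as for $U$, and a direct, finite, index-chasing computation shows $UV=VU=I$ on $\oplus_{n=-\infty}^{\infty}\mathcal{V}$ (the only nontrivial coordinates to check are the ones near index $0$ and $1$, where the $T$ terms cancel exactly). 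Hence $U$ is invertible.

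Finally, to obtain Equation (\ref{INFINITEDILATIONEQUATION}), I would identify $\mathcal{V}$ with the $0$-th coordinate subspace, so that $P_\mathcal{V}U^n_{|\mathcal{V}}$ means: place $x$ in position $0$, apply $U^n$, read off the $0$-th coordinate. One computes $Ux = e_0\otimes Tx + e_{-1}\otimes x$, and then an easy induction on $n$ — using that $U(e_{-k}\otimes z)=e_{-(k+1)}\otimes z$ for $k\geq 1$ — gives $U^n x = \sum_{k=0}^{n} e_{-k}\otimes T^{n-k}x$ for every $n\in\mathbb{N}$. Applying the first-coordinate projection $P_\mathcal{V}$ annihilates every term except $k=0$, leaving $P_\mathcal{V}U^n_{|\mathcal{V}}x = T^n x$, which is the assertion.

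All computations here are elementary; the only points demanding care are the bookkeeping of indices in the bi-infinite matrix products and the verification that each operator involved preserves the finite-support condition defining $\oplus_{n=-\infty}^{\infty}\mathcal{V}$. It is worth remarking that this argument genuinely uses finite support: if one replaced the algebraic direct sum by a completed sum (an $\ell^2$-type space) there would be no reason for $U$ or the proposed $V$ to be bounded, which is exactly why the Hilbert space version in Theorem \ref{SCHAFFERTHEOREM} must assume $T$ is a contraction and insert the extra entries.
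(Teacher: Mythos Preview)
Your proof is correct and follows exactly the paper's approach: the paper also exhibits the same explicit inverse matrix $V$ (with $v_{n,n-1}=I$ and $v_{1,-1}=-T$) and dismisses the dilation equation with a one-line ``by calculation of powers of $U$''. Your argument is in fact more complete than the paper's, since you supply the closed-form induction $U^n x = \sum_{k=0}^{n} e_{-k}\otimes T^{n-k}x$ and explicitly verify the finite-support preservation, both of which the paper leaves to the reader.
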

\begin{proof}
	We  get Equation (\ref{INFINITEDILATIONEQUATION}) by calculation of powers of operator $U$. The matrix   $V\coloneqq [v_{n,m}]_{-\infty < n,m< \infty}$ defined by  
	\begin{align*}
	v_{0,0}\coloneqq 0, \quad v_{1,-1}\coloneqq -T, \quad v_{n,n-1}\coloneqq I, \quad \forall n \in \mathbb{Z},  \quad v_{n,m}\coloneqq 0 \quad  \text{ otherwise},
	\end{align*}
	i.e., 
	\begin{align*}
	V=\begin{pmatrix}
	&\vdots &\vdots & \vdots & \vdots & \vdots & \\
	\cdots & I & 0& 0 & 0&  0& \cdots & \\
	\cdots & 0 & I& \underline{0} & 0& 0&\cdots  & \\
	\cdots & 0&-T&I&0& 0&\cdots&\\
	\cdots & 0&0&0&I& 0&\cdots &\\
	\cdots & 0&0&0&0& I&\cdots &\\
	& \vdots &\vdots &\vdots &\vdots  & \vdots & \\
	\end{pmatrix}_{\infty\times \infty}
	\end{align*}
	where $0$ is in the $(0,0)$  position (which is underlined), satisfies $UV=VU=I$ and hence $U$ is invertible which completes the proof.
\end{proof}

\section{MINIMAL DILATION, INTERTWINING LIFTING THEOREM AND VARIANT OF ANDO DILATION  FOR VECTOR SPACES}
 An important observation associated with Theorems \ref{HALMOSTHEOREMVS}, \ref{NDILATIONVECTOR} and \ref{SCHAFFERVECTOR} is that the dilation is not optimal, i.e., even if the given operator is invertible, then also $U$ is not same as $T$. To overcome this, next we move on with the definition of dilation given by Bhat, De, and Rakshit (\cite{BHATDERAKSHITH}). Set theoretic definition of dilation, given in Definition \ref{BHATSET} motivated  Bhat, De, and Rakshit, to introduce the dilation of linear maps on vector spaces.
\begin{definition}(\cite{BHATDERAKSHITH})
Let $\mathcal{V}$ be a vector space  and 	$T: \mathcal{V} \to \mathcal{V}$ be a linear map. A \textbf{linear injective dilation} of $T$ is a quadruple $(\mathcal{W}, I, U,P)$, where $\mathcal{W}$ is a vector space,   and 	$I: \mathcal{V} \to \mathcal{W}$ is an  injective  linear map, $U: \mathcal{W} \to \mathcal{W}$ is an injective  linear map, $P: \mathcal{W} \to \mathcal{W}$ is an idempotent linear map such that $P(\mathcal{W})=I(\mathcal{W})
$ and 
\begin{align*}
\text{(\textbf{Dilation equation})} \quad IT^nx=PU^nIx, \quad \forall n\in \mathbb{Z}_+,  \forall x \in  \mathcal{V}.
\end{align*}
A dilation $(\mathcal{W}, I, U,P)$ of $T$ is said to be \textbf{minimal} if 
\begin{align*}
\mathcal{W}=\operatorname{span}\{U^nIx:  n\in \mathbb{Z}_+,   x \in  \mathcal{V}\}.
\end{align*}	
\end{definition}
An easier way to remember the dilation equation is the following commutative diagram. 
\begin{center}
	\[
	\begin{tikzcd}
	\mathcal{W}	 \arrow[r,"U^n"]&\mathcal{W}  \arrow[r,"P"]& \mathcal{W}\\
	&\mathcal{V} \arrow[ul,"I"] \arrow[r,"T^n"] & \mathcal{V}\arrow[u,"I"]
	\end{tikzcd}
	\]
\end{center}
 Following result is the vector space version of Theorem \ref{SZNAGYSET}.
\begin{theorem}(\cite{BHATDERAKSHITH})\label{STANDARDDILATION} (\textbf{Minimal Sz. Nagy dilation  for sets})
	Every linear map $T: \mathcal{V} \to \mathcal{V}$ admits  minimal injective linear dilation.
\end{theorem}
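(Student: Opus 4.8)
The plan is to construct the dilation explicitly by mimicking the ``standard dilation'' used in the set-theoretic case (Definition after Theorem~\ref{SARASONLEMMASET}), but now carrying the linear structure. First I would take the vector space
\begin{align*}
\mathcal{W} \coloneqq \bigoplus_{n=0}^{\infty} \mathcal{V},
\end{align*}
the algebraic direct sum (only finitely many nonzero coordinates), so that a generic element is a finitely supported sequence $(x_0, x_1, x_2, \dots)$. Then I would define $I: \mathcal{V} \to \mathcal{W}$ by $Ix \coloneqq (x, 0, 0, \dots)$, which is clearly linear and injective. The shift-type operator $U: \mathcal{W} \to \mathcal{W}$ is defined by
\begin{align*}
U(x_0, x_1, x_2, \dots) \coloneqq (0, x_0, x_1, x_2, \dots),
\end{align*}
which is linear and injective (it is the right shift, killing nothing). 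Finally the idempotent $P: \mathcal{W} \to \mathcal{W}$ should collapse the tail onto the first coordinate using powers of $T$:
\begin{align*}
P(x_0, x_1, x_2, \dots) \coloneqq \Bigl(\sum_{n=0}^{\infty} T^n x_n,\ 0,\ 0,\ \dots\Bigr),
\end{align*}
which makes sense since the sum is finite. One checks $P$ is linear, $P(\mathcal{W}) = I(\mathcal{V})$, and $P^2 = P$ (because $P$ applied to something already in $I(\mathcal{V})$ is the identity: $P(y,0,0,\dots) = (y,0,\dots)$).

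The key computation is the dilation equation. For $x \in \mathcal{V}$ and $n \in \mathbb{Z}_+$ we have $U^n Ix = U^n(x,0,0,\dots) = (0,\dots,0,x,0,\dots)$ with $x$ in slot $n$. Applying $P$ gives $(T^n x, 0, 0, \dots) = I(T^n x)$, which is exactly $PU^nIx = IT^nx$. So the dilation equation holds. For minimality, note that $U^n Ix$ ranges over all $(0,\dots,0,x,0,\dots)$ with $x$ in an arbitrary slot $n$; the span of these is precisely the set of finitely supported sequences, i.e. all of $\mathcal{W}$. Hence $\mathcal{W} = \operatorname{span}\{U^n Ix : n \in \mathbb{Z}_+, x \in \mathcal{V}\}$, so the dilation is minimal.

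I expect the main obstacle to be purely bookkeeping: verifying that $P$ is a well-defined linear idempotent with $P(\mathcal{W}) = I(\mathcal{V})$ and checking the direct-sum conventions (algebraic vs. topological) are used consistently, since the excerpt deals with vector spaces over arbitrary fields with no topology, so every series appearing must genuinely be a finite sum. There is no analytic content here, so once the definitions are pinned down the verification is routine. An alternative, if one wants to invoke the set-theoretic result as a black box, would be to take the set-theoretic standard dilation $(\mathscr{B}, i, g, p)$ of the underlying map $T$ on the set $\mathcal{V}$ and then transport the linear structure through it, but I find the direct construction above cleaner and more self-contained, so that is the route I would write up.
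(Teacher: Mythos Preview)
Your construction is exactly the paper's: the same space $\mathcal{W}=\bigoplus_{n=0}^\infty\mathcal{V}$, the same $I$, the same right-shift $U$, and your $P$ coincides with the paper's $P(x_n)_{n=0}^\infty=\sum_{n=0}^\infty IT^nx_n$. The paper states these definitions and asserts the conclusion without writing out the verifications, so your proposal is in fact more detailed than the original.
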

\begin{proof}
	We reproduce the proof given by \cite{BHATDERAKSHITH} for the sake of future use. Define 
	\begin{align*}
	\mathcal{W}\coloneqq \left\{(x_n)_{n=0}^\infty :x_n \in \mathcal{V}, \forall n \in  \mathbb{Z}_+, x_n\neq 0 \text{ only for finitely many } n'\text{s}\right\}.
	\end{align*} 
	Clearly $\mathcal{W}$ is a vector space w.r.t. natural operations. Now define 
	\begin{align*}
	&	I:\mathcal{V} \ni x \mapsto (x, 0, \dots ) \in  \mathcal{W},\\
	&	U: \mathcal{W} \ni (x_n)_{n=0}^\infty \mapsto (0, x_0, \dots) \in \mathcal{W},\\
	&	P:\mathcal{W} \ni (x_n)_{n=0}^\infty \mapsto \sum_{n=0}^{\infty}IT^nx_n\in \mathcal{W}.
	\end{align*}
	Then $(\mathcal{W}, I, U,P)$ is a minimal injective linear dilation of $T$.
\end{proof}
We call the dilation $(\mathcal{W}, I, U,P)$ constructed in Theorem \ref{STANDARDDILATION} as the \textbf{standard dilation} of $T$. We next consider inter-twining lifting theorem.
\begin{theorem}(\textbf{Inter-twining lifting theorem for vector spaces}) Let $\mathcal{V}_1$, $\mathcal{V}_2$ be vector spaces,   $T_1: \mathcal{V}_1 \to \mathcal{V}_1$, $T_2: \mathcal{V}_2 \to \mathcal{V}_2$ be linear maps. Let $(\mathcal{W}_1, I_1, U_1,P_1)$, $(\mathcal{W}_2, I_2, U_2,P_2)$ be standard dilations of   $T_1$, $T_2$, respectively. If  $S: \mathcal{V}_2 \to \mathcal{V}_1$ is a linear map such that $T_1S=ST_2$, then there exists a linear map  $R: \mathcal{W}_2 \to \mathcal{W}_1$ such that 
	\begin{align}\label{INTERIN}
	U_1R=RU_2, \quad RP_2=P_1R, \quad RI_2=I_1S.
	\end{align}
	Conversely if $R: \mathcal{W}_2 \to \mathcal{W}_1$ is a linear map such that $U_1R=RU_2,  RP_2=P_1R$, then there exists a linear map   $S: \mathcal{V}_2 \to \mathcal{V}_1$ such that 
	\begin{align}\label{INTERSECOND}
	RI_2=I_1S, \quad T_1S=ST_2.
	\end{align}
\end{theorem}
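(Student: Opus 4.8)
The plan is to build the lifting $R$ explicitly from the standard dilation construction and then verify the three intertwining identities by unwinding the definitions of $I_j$, $U_j$, and $P_j$ given in the proof of Theorem \ref{STANDARDDILATION}. Recall that $\mathcal{W}_j$ consists of finitely supported sequences $(x_n)_{n=0}^\infty$ with $x_n \in \mathcal{V}_j$, with $I_j x = (x,0,0,\dots)$, $U_j(x_n)_{n=0}^\infty = (0,x_0,x_1,\dots)$, and $P_j(x_n)_{n=0}^\infty = \sum_{n=0}^\infty I_j T_j^n x_n$. The natural candidate is
\begin{align*}
R: \mathcal{W}_2 \ni (x_n)_{n=0}^\infty \mapsto (Sx_n)_{n=0}^\infty \in \mathcal{W}_1,
\end{align*}
i.e. $R$ acts coordinatewise by $S$. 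This is well-defined (finite support is preserved), linear, and bounded in the purely algebraic sense required here.

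First I would check $U_1 R = R U_2$: both sides send $(x_n)_{n=0}^\infty$ to $(0, Sx_0, Sx_1, \dots)$, which is immediate since the shift commutes with any coordinatewise map. Next I would check $RI_2 = I_1 S$: for $x \in \mathcal{V}_2$, $RI_2 x = R(x,0,0,\dots) = (Sx,0,0,\dots) = I_1(Sx)$, again immediate. The identity requiring the hypothesis $T_1 S = S T_2$ is $RP_2 = P_1 R$. Here I would compute, for $(x_n)_{n=0}^\infty \in \mathcal{W}_2$,
\begin{align*}
R P_2 (x_n)_{n=0}^\infty = R\!\left(\sum_{n=0}^\infty I_2 T_2^n x_n\right) = \sum_{n=0}^\infty R I_2 T_2^n x_n = \sum_{n=0}^\infty I_1 S T_2^n x_n,
\end{align*}
while
\begin{align*}
P_1 R (x_n)_{n=0}^\infty = P_1 (Sx_n)_{n=0}^\infty = \sum_{n=0}^\infty I_1 T_1^n S x_n.
\end{align*}
So the two agree provided $S T_2^n = T_1^n S$ for all $n \in \mathbb{Z}_+$, which follows by induction from $T_1 S = S T_2$ (the base case $n=0$ being trivial and the inductive step $T_1^{n+1} S = T_1 (T_1^n S) = T_1 (S T_2^n) = (T_1 S) T_2^n = S T_2^{n+1}$). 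This establishes \eqref{INTERIN}.

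For the converse, suppose $R: \mathcal{W}_2 \to \mathcal{W}_1$ is linear with $U_1 R = R U_2$ and $R P_2 = P_1 R$. The idea is to recover $S$ as the compression $S \coloneqq J_1 R I_2$, where $J_1: \mathcal{W}_1 \to \mathcal{V}_1$ is the first-coordinate projection $J_1(y_n)_{n=0}^\infty = y_0$ (note $J_1 I_1 = I_{\mathcal{V}_1}$ and $P_1 = I_1 J_1$ on the range, so $J_1$ is the canonical left inverse of $I_1$). One then needs $R I_2 = I_1 S$, i.e. that $R I_2 x$ has all coordinates beyond the zeroth equal to zero; this should come from the relation $R P_2 = P_1 R$ together with $P_2 I_2 = I_2$ (since $I_2 x = (x,0,\dots)$ and $P_2(x,0,\dots) = I_2 T_2^0 x = I_2 x$), giving $R I_2 = R P_2 I_2 = P_1 R I_2$, and $P_1$ has range $I_1(\mathcal{V}_1)$, forcing $R I_2 x \in I_1(\mathcal{V}_1)$, hence $R I_2 = I_1 (J_1 R I_2) = I_1 S$. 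Finally $T_1 S = S T_2$ follows by applying $U_1 R = R U_2$ and the dilation equation: one computes $I_1 T_1 S = P_1 U_1 I_1 S = P_1 U_1 R I_2 = P_1 R U_2 I_2$ and, using $P_2 U_2 I_2 = I_2 T_2$ from the $n=1$ dilation equation for $T_2$ together with $RP_2 = P_1 R$, identifies this with $I_1 S T_2$; injectivity of $I_1$ then yields $T_1 S = S T_2$. I expect this converse direction — specifically getting the coordinate structure of $R I_2$ right and chaining the dilation equations cleanly — to be the main place where care is needed, though no deep obstruction arises; everything reduces to bookkeeping with the explicit standard-dilation formulas.
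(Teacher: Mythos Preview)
Your proof is correct and follows essentially the same route as the paper: the same coordinatewise lift $R(x_n)_n = (Sx_n)_n$ for the forward direction, and for the converse the same mechanism of using $RP_2 = P_1R$ and $P_2I_2 = I_2$ to force $RI_2(\mathcal{V}_2) \subseteq I_1(\mathcal{V}_1)$, defining $S$ via the unique preimage under $I_1$, and then chaining $I_1T_1S = P_1U_1RI_2 = P_1RU_2I_2 = RP_2U_2I_2 = RI_2T_2 = I_1ST_2$. Your presentation is in fact slightly cleaner than the paper's (you name the left inverse $J_1$ explicitly and spell out the induction $ST_2^n = T_1^nS$), but the argument is the same.
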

\begin{proof}
	Define $R:\mathcal{W}_2 \ni (x_n)_{n=0}^\infty \mapsto (Sx_n)_{n=0}^\infty \in \mathcal{W}_1 $. We now verify three equalities in Equation (\ref{INTERIN}). Let $ (x_n)_{n=0}^\infty \in \mathcal{W}_2$. Then 
	\begin{align*}
	&	U_1R(x_n)_{n=0}^\infty=	U_1(Sx_n)_{n=0}^\infty=(0, S x_0, Sx_1, \dots), \\ 
	&RU_2(x_n)_{n=0}^\infty=R(0, x_0, x_1,\dots)=(0, S x_0, Sx_1, \dots), 
	\end{align*}
	\vspace{-1cm}
	\begin{align*}	
	RP_2(x_n)_{n=0}^\infty&=R\left(\sum_{n=0}^{\infty}I_2T_2^nx_n\right)=\sum_{n=0}^{\infty}RI_2T_2^nx_n\\
	&=\sum_{n=0}^{\infty}R(T_2^nx_n, 0, 0, \dots)=\sum_{n=0}^{\infty}(ST_2^nx_n, 0, 0, \dots), 
	\end{align*}
	\vspace{-1cm}
	\begin{align*}
	P_1R(x_n)_{n=0}^\infty&= P_1(Sx_n)_{n=0}^\infty=\sum_{n=0}^{\infty}I_1T_1^nSx_n\\
	&=\sum_{n=0}^{\infty}I_1ST_2^nx_n=\sum_{n=0}^{\infty}(ST_2^nx_n, 0, 0, \dots), 
	\end{align*}
	\vspace{-1cm}
	\begin{align*}
	& RI_2x=R(x, 0, 0, \dots)=(Sx, 0, 0, \dots), \quad I_1Sx=(Sx, 0, 0, \dots).
	\end{align*}
	We now consider the converse part. For this, first we have to define linear map $S$. Let $y \in \mathcal{V}_2$. Now $RP_2(y, 0, \dots)=P_1R(y, 0, \dots)\in I_1(\mathcal{V}_1)$ and  $I_1$ is injective implies that there exists a unique $x \in \mathcal{V}_2$ such that $RP_2(y, 0, \dots)=P_1R(y, 0, \dots)=I_1(x)$. We now define $Sy\coloneqq x.$ Then $S$ is well-defined and linear. Let $y \in \mathcal{V}_2$ and $x \in \mathcal{V}_2$ be such that $Sy=x$. Then $I_1Sy=RP_2(y, 0, \dots)=RI_2y$. Thus we verified first equality in  (\ref{INTERSECOND}). We are left with verification of second equality. We now calculate 
	\begin{align}\label{CONVERSE-1}
	RP_2U_2(x, 0, \dots)=RP_2(0, x, 0, \dots)=RI_2T_2x
	\end{align}
	and 
	\begin{align}\label{CONVERSEZERO}
	P_1U_1R(x, 0, \dots)&=P_1RU_2(x, 0, \dots)=P_1R(0,x,0, \dots)\\
	&=RP_2(0,x,0, \dots)=RI_2T_2x, \quad \forall x \in \mathcal{V}_2.
	\end{align}
	Given conditions produce
	\begin{align}\label{CONVERSE}
	RP_2U_2=P_1RU_2=P_1U_1R.
	\end{align}
	Equation (\ref{CONVERSE}) says that (\ref{CONVERSE-1}) and (\ref{CONVERSEZERO}) are equal which completes the proof.
\end{proof}
Following is a variant of Ando dilation for vector spaces.
\begin{theorem}(\textbf{Ando like dilation for vector spaces})
	Let $\mathcal{V}$ be a vector space  and 	$T, S: \mathcal{V} \to \mathcal{V}$ be commuting  linear maps.	Then there are dilations $(\mathcal{W}, I, U_1,P)$ and $(\mathcal{W}, I, U_2,P)$ of $T,S$ respectively, such that 
	\begin{align*}
	\begin{pmatrix}
	0_c& U
	\end{pmatrix}
	=\begin{pmatrix}
	0_r\\
	V
	\end{pmatrix}
	\end{align*}
	and 
	\begin{align*}
	\quad IT^nS^mx=PU^nV^mIx, \quad \forall n,m\in \mathbb{Z}_+,  \forall x \in  \mathcal{V},
	\end{align*}
	where $0_c$ denotes the infinite column matrix of zero vectors and $0_r$ denotes the infinite row matrix of zero vectors.
\end{theorem}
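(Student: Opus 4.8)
The plan is to transcribe the minimal Sz.\ Nagy dilation of Theorem~\ref{STANDARDDILATION} into two coordinate directions simultaneously. Concretely, I would take
\[
\mathcal{W}\coloneqq\left\{(x_{n,m})_{(n,m)\in\mathbb{Z}_+\times\mathbb{Z}_+}:x_{n,m}\in\mathcal{V},\ x_{n,m}\neq 0\text{ for only finitely many }(n,m)\right\},
\]
a vector space under coordinatewise operations, and define $I:\mathcal{V}\to\mathcal{W}$ by letting $I(x)$ be the element supported at $(0,0)$ with value $x$; the first-coordinate shift $U_1:\mathcal{W}\to\mathcal{W}$, $(U_1x)_{n,m}=x_{n-1,m}$ with $x_{-1,m}\coloneqq 0$; the second-coordinate shift $U_2:\mathcal{W}\to\mathcal{W}$, $(U_2x)_{n,m}=x_{n,m-1}$ with $x_{n,-1}\coloneqq 0$; and $P:\mathcal{W}\to\mathcal{W}$ by $P\big((x_{n,m})\big)\coloneqq I\!\left(\sum_{n,m}T^nS^mx_{n,m}\right)$, the sum being finite. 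Writing $U\coloneqq U_1$ and $V\coloneqq U_2$, this quadruple is the obvious two-dimensional analogue of the standard dilation.

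First I would verify the routine structural facts: $I$ is linear and injective; $U_1,U_2$ are linear and injective; $U_1U_2=U_2U_1$, since the two shifts act on disjoint coordinates (only the shift structure of $\mathcal{W}$, not commutativity of $T,S$, is needed here); and $P$ is a linear idempotent with $P(\mathcal{W})=I(\mathcal{V})$, because $P(I(y))=I(y)$ for all $y\in\mathcal{V}$. Next I would compute, for $x\in\mathcal{V}$ and $n,m\in\mathbb{Z}_+$, that $U_1^nU_2^mI(x)$ is the element of $\mathcal{W}$ supported at $(n,m)$ with value $x$, whence
\[
PU_1^nU_2^mI(x)=I(T^nS^mx).
\]
Specialising $m=0$ shows $(\mathcal{W},I,U_1,P)$ is a linear injective dilation of $T$, and $n=0$ shows $(\mathcal{W},I,U_2,P)$ is one of $S$, while the general case is precisely the asserted identity $IT^nS^mx=PU^nV^mIx$. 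The hypothesis $TS=ST$ enters only to ensure $IT^nS^m=IS^mT^n$, so that the same quadruple dilates the pair consistently regardless of the order in which the two families are applied; all of this is immediate once the formulas above are unwound.

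The one genuinely delicate step is the displayed matrix identity $\begin{pmatrix}0_c & U\end{pmatrix}=\begin{pmatrix}0_r\\ V\end{pmatrix}$. To interpret it I would decompose $\mathcal{W}$ along its first coordinate as $\mathcal{W}=\bigoplus_{n\geq 0}\mathcal{W}_n$, each block $\mathcal{W}_n$ identified via $(m\mapsto x_{n,m})$ with the fixed space $\mathcal{W}_0$ of finitely supported functions $\mathbb{Z}_+\to\mathcal{V}$; in this block picture $U_1$ is the block unilateral shift and $U_2$ is block-diagonal with each diagonal entry the one-coordinate shift $\sigma$ on $\mathcal{W}_0$, and $\sigma$, $U_1$ are themselves the operators obtained from the identity by padding a zero column, respectively a zero row. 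Under the canonical identification $\mathcal{V}\oplus\mathcal{W}_0\cong\mathcal{W}_0$ the two block matrices $\begin{pmatrix}0_c & U\end{pmatrix}$ and $\begin{pmatrix}0_r\\ V\end{pmatrix}$ become literally the same operator, which is the content to be checked. In essence this identity records that $U$ and $V$ are one and the same shift viewed along the two perpendicular axes of the grid $\mathbb{Z}_+\times\mathbb{Z}_+$, and its proof is entirely a matter of fixing the right block decompositions and re-indexing; the symmetry of $\mathbb{Z}_+\times\mathbb{Z}_+$ under interchange of coordinates is what makes it work. I expect this bookkeeping — not the dilation-theoretic core, which is a direct adaptation of Theorem~\ref{STANDARDDILATION} — to be the main obstacle, mostly because the notation $0_c$, $0_r$ in the statement has to be pinned down precisely before the equality can even be formulated.
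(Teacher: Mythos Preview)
Your construction is exactly the one in the paper: the same doubly-indexed finitely supported space $\mathcal{W}$, the same embedding $I$, the two coordinate shifts $U,V$, and the same idempotent $P((x_{n,m}))=\sum_{n,m}IT^nS^mx_{n,m}$. The paper verifies the matrix identity simply by writing out both sides as the matrix with a zero first row and zero first column and the original array shifted into the remaining block, whereas you reach the same conclusion via a block decomposition along the first coordinate; either way the content is identical.
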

\begin{proof}
	We extend the construction in the proof of Theorem \ref{STANDARDDILATION}.	Define 
	\begin{align*}
	\mathcal{W}\coloneqq \bigg\{
	\begin{pmatrix}
	x_{0,0} & x_{0,1} & x_{0,2}&\cdots \\
	x_{1,0} & x_{1,1} & x_{1,2}&\cdots\\
	x_{2,0} & x_{2,1} & x_{2,2}&\cdots\\
	\vdots &\vdots &\vdots &\ddots 
	\end{pmatrix}_{\infty \times \infty }
	:x_{n,m} \in \mathcal{V}, \forall n,m \in  \mathbb{Z}_+, x_{n,m}\neq 0 \\
	\text{ only for finitely many } (n,m)'\text{s}\bigg\}.
	\end{align*}
	Then $\mathcal{W}$ becomes a vector space with respect to natural operations.  We now define the following four linear maps:
	\begin{align*}
	&	I:	\mathcal{V}\ni x \mapsto 	\begin{pmatrix}
	x & 0 & 0&\cdots \\
	0 & 0 & 0&\cdots\\
	0 & 0 & 0&\cdots\\
	\vdots &\vdots &\vdots &\ddots 
	\end{pmatrix} 
	\in \mathcal{W}\\
	&	U: \mathcal{W} \ni \begin{pmatrix}
	x_{0,0} & x_{0,1} & x_{0,2}&\cdots \\
	x_{1,0} & x_{1,1} & x_{1,2}&\cdots\\
	x_{2,0} & x_{2,1} & x_{2,2}&\cdots\\
	\vdots &\vdots &\vdots &\ddots 
	\end{pmatrix} \mapsto \begin{pmatrix}
	0&0&0\\
	x_{0,0} & x_{0,1} & x_{0,2}&\cdots \\
	x_{1,0} & x_{1,1} & x_{1,2}&\cdots\\
	\vdots &\vdots &\vdots &\ddots 
	\end{pmatrix} \in \mathcal{W}\\
	&	V: \mathcal{W} \ni \begin{pmatrix}
	x_{0,0} & x_{0,1} & x_{0,2}&\cdots \\
	x_{1,0} & x_{1,1} & x_{1,2}&\cdots\\
	x_{2,0} & x_{2,1} & x_{2,2}&\cdots\\
	\vdots &\vdots &\vdots &\ddots 
	\end{pmatrix} \mapsto \begin{pmatrix}
	0&x_{0,0} & x_{0,1} &\cdots \\
	0&	x_{1,0} & x_{1,1}&\cdots\\
	0& 	x_{2,0} & x_{2,1} &\cdots \\
	\vdots &	\vdots &\vdots  &\ddots 
	\end{pmatrix} \in \mathcal{W}\\
	&	P :\mathcal{W} \ni \begin{pmatrix} x_{0,0} & x_{0,1} & x_{0,2}&\cdots \\
	x_{1,0} & x_{1,1} & x_{1,2}&\cdots\\
	x_{2,0} & x_{2,1} & x_{2,2}&\cdots\\
	\vdots &\vdots &\vdots &\ddots 
	\end{pmatrix} \mapsto \sum_{m=0}^{\infty}\sum_{n=0}^{\infty}IT^nS^mx_{n,m} \in  \mathcal{W}.
	\end{align*}
	We then have 
	\begin{align*}
	\begin{pmatrix}
	0_c& U
	\end{pmatrix}=
	\begin{pmatrix}
	0&	0 & 0 & 0&\cdots \\
	0&	x_{0,0} & x_{0,1} & x_{0,2}&\cdots \\
	0&	x_{1,0} & x_{1,1} & x_{1,2}&\cdots\\
	0&	x_{2,0} & x_{2,1} & x_{2,2}&\cdots\\
	0&	\vdots &\vdots &\vdots &\ddots 
	\end{pmatrix}
	=\begin{pmatrix}
	0_r\\
	V
	\end{pmatrix}.
	\end{align*}
	Now $PU^nIx=IT^nx, $ $ 	PV^nIx=IS^mx$, $\forall x \in \mathcal{V}$, $\forall n,m\in \mathbb{Z}_+$. Hence  $(\mathcal{W}, I, U_1,P)$ and $(\mathcal{W}, I, U_2,P)$ are dilations of $T,S$, respectively. A calculation now shows that $ IT^nS^mx=PU^nV^mIx,  \forall n,m\in \mathbb{Z}_+,  \forall x \in  \mathcal{V}$.
\end{proof}
\textbf{Conclusion and future work : } In this appendix  we derived some basic results on dilation of linear maps. Since vector spaces are more general than Hilbert spaces and tools of Hilbert spaces will not work in vector space, we are interested to explore algebraic aspects of dilation theory.

\leavevmode\newpage
\addcontentsline{toc}{chapter}{APPENDIX B: COMMUTATORS CLOSE TO THE IDENTITY}
\par~
\begin{center}
	\textbf{{\fontsize{16}{1em}\selectfont APPENDIX B: COMMUTATORS CLOSE TO THE IDENTITY}} \\
\end{center}

\section{C*-ALGEBRAS}
Israel M. Gelfand  defined abstractly the notion of a complete  algebra (\cite{GELFAND}). These are Banach spaces in which we can multiply the elements and the multiplication enjoys continuity.
\begin{definition}(cf. \cite{ZHUBOOK})
	A Banach space $ \mathcal{A}$  over $ \mathbb{C} $ is said to be a unital \textbf{Banach algebra} if it is a unital algebra  and the multiplication satisfies the following:
	\begin{enumerate} [label=(\roman*)]
	\item $ \|xy\|\leq \|x\|\|y\|$,  $ \forall x, y \in\mathcal{A}.  $
	\item $\|e\|=1$, where $e$ is the multiplicative identity of $ \mathcal{A}$.
	\end{enumerate}
\end{definition}

\begin{example}(cf. \cite{ZHUBOOK})
	\begin{enumerate}[label=(\roman*)]
	\item If $K$ is a compact Hausdorff space, then the space $\mathcal{C}(K)$ of all complex-valued continuous functions on $K$ is a commutative unital Banach algebra w.r.t. sup-norm and pointwise multiplication.
	\item If  $\mathcal{X} $ is a Banach space, then    the collection $\mathcal{B}(\mathcal{X})$ of all bounded linear operators on $\mathcal{X}$  is a noncommutative unital Banach algebra w.r.t. operator-norm and operator composition.
	\end{enumerate}
\end{example}
\begin{proposition}(cf. \cite{ALLAN})
	Every unital Banach algebra $\mathcal{A}$ can be isometrically embedded in $\mathcal{B}(\mathcal{A})$.
\end{proposition}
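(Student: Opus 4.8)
The statement to prove is the classical Gelfand--Naimark style embedding: every unital Banach algebra $\mathcal{A}$ embeds isometrically into $\mathcal{B}(\mathcal{A})$ via its left regular representation. The plan is to define the map $L:\mathcal{A}\to\mathcal{B}(\mathcal{A})$ by $L(a)\coloneqq L_a$, where $L_a$ is the left multiplication operator $L_a:\mathcal{A}\ni x\mapsto ax\in\mathcal{A}$. First I would check that each $L_a$ is indeed a bounded linear operator on $\mathcal{A}$: linearity is immediate from the distributive law, and submultiplicativity of the norm ($\|ax\|\le\|a\|\,\|x\|$) gives $\|L_a\|\le\|a\|$, so $L_a\in\mathcal{B}(\mathcal{A})$. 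Next I would verify that $L$ is itself linear (trivial) and multiplicative, i.e. $L_{ab}=L_aL_b$, which follows from associativity: $(L_{ab})(x)=(ab)x=a(bx)=L_a(L_b x)$. Thus $L$ is an algebra homomorphism.

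The crux is the isometry claim, $\|L_a\|=\|a\|$ for all $a\in\mathcal{A}$. One inequality, $\|L_a\|\le\|a\|$, is already in hand from the previous paragraph. For the reverse inequality I would exploit the unit $e$ of $\mathcal{A}$, together with $\|e\|=1$: evaluating $L_a$ at $e$ gives $L_a(e)=ae=a$, hence
\[
\|L_a\|=\sup_{\|x\|\le 1}\|L_a x\|\ge \frac{\|L_a(e)\|}{\|e\|}=\frac{\|a\|}{1}=\|a\|.
\]
Combining the two inequalities yields $\|L_a\|=\|a\|$, so $L$ is norm-preserving; in particular $L$ is injective, and its image $L(\mathcal{A})$ is a subspace of $\mathcal{B}(\mathcal{A})$ isometrically isomorphic (and isomorphic as an algebra) to $\mathcal{A}$. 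That establishes the isometric embedding.

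I do not anticipate a serious obstacle here — the argument is short and uses only the defining axioms of a unital Banach algebra (submultiplicativity and $\|e\|=1$). The only point requiring the slightest care is making sure the unit is genuinely used: without $\|e\|=1$ one would only get $\|L_a\|\ge\|a\|/\|e\|$, so the normalization of the identity is exactly what pins down the isometry. If one wanted to handle the non-unital case (not needed for this statement), one would instead take a supremum over an approximate identity or pass to the unitization, but since the hypothesis already supplies a unit of norm one, the direct computation above suffices and I would present it essentially as written.
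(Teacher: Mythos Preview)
Your proof is correct and is precisely the standard left regular representation argument: define $L_a x = ax$, use submultiplicativity for $\|L_a\|\le\|a\|$, and evaluate at the unit (with $\|e\|=1$) for the reverse inequality. The paper does not supply its own proof of this proposition --- it is stated with a reference to \cite{ALLAN} --- so there is nothing to compare against beyond noting that what you wrote is exactly the classical argument one finds in that reference.
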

One of the most important notion associated with the study of Banach algebras is the notion of spectrum.
\begin{definition}(cf. \cite{ZHUBOOK})
	Let $ \mathcal{A}$  be a unital Banach algebra with the identity $ e$. \textbf{Spectrum} of an element $ x $ in $ \mathcal{A}$ is the set of all complex numbers 
	$ \lambda$ such that $ \lambda e-x$ is not invertible. 
\end{definition}
\begin{theorem}(cf. \cite{ZHUBOOK})
	Spectrum of every  element of a unital Banach algebra is a nonempty compact subset of $\mathbb{C}$.
\end{theorem}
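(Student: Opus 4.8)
The plan is to establish the three asserted properties of $\sigma(x)$ separately for a fixed element $x$ of a unital Banach algebra $\mathcal{A}$ with identity $e$: boundedness, closedness (these two together giving compactness), and non-emptiness. The recurring tool will be the Neumann series, whose convergence is exactly where the completeness of $\mathcal{A}$ enters.

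First I would prove boundedness. Given $\lambda \in \mathbb{C}$ with $|\lambda| > \|x\|$, write $\lambda e - x = \lambda(e - \lambda^{-1}x)$. Since $\|\lambda^{-1}x\| < 1$ and $\|(\lambda^{-1}x)^n\| \le (|\lambda|^{-1}\|x\|)^n$, the series $\sum_{n=0}^\infty (\lambda^{-1}x)^n$ is absolutely convergent, hence convergent by completeness, and a short computation shows its sum is a two-sided inverse of $e - \lambda^{-1}x$. Thus $\lambda e - x$ is invertible, so $\lambda \notin \sigma(x)$, giving $\sigma(x) \subseteq \{\lambda : |\lambda| \le \|x\|\}$; the same estimate yields $\|(\lambda e - x)^{-1}\| \le (|\lambda| - \|x\|)^{-1}$ for such $\lambda$, which I will reuse later. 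Next, for closedness, the same idea shows the set $G(\mathcal{A})$ of invertible elements is open: if $y \in G(\mathcal{A})$ and $\|z - y\| < \|y^{-1}\|^{-1}$, then $z = y\bigl(e - y^{-1}(y-z)\bigr)$ with $\|y^{-1}(y-z)\| < 1$, so $z \in G(\mathcal{A})$. Since $\lambda \mapsto \lambda e - x$ is continuous, the set $\{\lambda : \lambda e - x \in G(\mathcal{A})\} = \mathbb{C} \setminus \sigma(x)$ is open, so $\sigma(x)$ is closed, and being closed and bounded in $\mathbb{C}$ it is compact.

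For non-emptiness I would argue by contradiction. Suppose $\sigma(x) = \emptyset$, so the resolvent $R(\lambda) := (\lambda e - x)^{-1}$ is defined for all $\lambda \in \mathbb{C}$. Using the resolvent identity $R(\lambda) - R(\mu) = -(\lambda - \mu)R(\lambda)R(\mu)$, together with continuity of inversion on $G(\mathcal{A})$, one checks that $R$ is norm-analytic on $\mathbb{C}$; hence for every bounded linear functional $\phi$ on $\mathcal{A}$ the scalar function $\lambda \mapsto \phi(R(\lambda))$ is entire. From the estimate $\|R(\lambda)\| \le (|\lambda| - \|x\|)^{-1} \to 0$ as $|\lambda| \to \infty$, this entire function is bounded and vanishes at infinity, so by Liouville's theorem it is identically zero. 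As this holds for every $\phi$, the Hahn–Banach theorem forces $R(\lambda) = 0$ for all $\lambda$, contradicting the invertibility of $R(\lambda)$. Therefore $\sigma(x) \neq \emptyset$.

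I expect the non-emptiness step to be the main obstacle: the boundedness and closedness parts are immediate consequences of the convergent Neumann series, whereas non-emptiness requires the genuinely analytic-function input — verifying analyticity of $R$ via the resolvent identity, and then the scalarization device of composing with functionals, applying the classical Liouville theorem, and undoing the scalarization through Hahn–Banach.
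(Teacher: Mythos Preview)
Your proof is correct and follows the standard textbook argument. However, the paper does not supply its own proof of this theorem: it is stated with the citation ``(cf.\ \cite{ZHUBOOK})'' and no proof is given, as the result is quoted as background material for Appendix~B. Your argument is essentially the classical one found in the cited reference, so there is nothing to compare against in the paper itself.
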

Following is the first fundamental theorem in the study of Banach algebras which characterizes Banach algebras using the information of spectrum.
\begin{theorem}(cf. \cite{ZHUBOOK}) (\textbf{Gelfand-Mazur theorem})
	If every nonzero element of a Banach algebra is invertible, then it is isometrically isomorphic to $\mathbb{C}$.
\end{theorem}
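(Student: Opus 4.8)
The plan is to work directly from the preceding material: we are given a complex Banach algebra $\mathcal{A}$ in which every nonzero element is invertible, and we want to produce an isometric isomorphism $\mathcal{A}\to\mathbb{C}$. First I would fix the multiplicative identity $e$ of $\mathcal{A}$ (a unital Banach algebra by hypothesis, with $\|e\|=1$). The natural candidate for the isomorphism is the map $\lambda e\mapsto\lambda$; the content of the theorem is that this map is onto, i.e.\ that \emph{every} element of $\mathcal{A}$ is a scalar multiple of $e$. So the heart of the argument is the claim: for each $x\in\mathcal{A}$ there is a unique $\lambda\in\mathbb{C}$ with $x=\lambda e$.

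To prove that claim I would invoke the result, already stated in the excerpt, that the spectrum $\sigma(x)$ of any element of a unital Banach algebra is a nonempty compact subset of $\mathbb{C}$. Pick any $\lambda\in\sigma(x)$ (nonemptiness is what makes this step possible). By definition of the spectrum, $\lambda e - x$ is not invertible. But by hypothesis the only non-invertible element of $\mathcal{A}$ is $0$. Hence $\lambda e - x = 0$, that is, $x=\lambda e$. Uniqueness of $\lambda$ is immediate: if $\lambda e=\mu e$ then $(\lambda-\mu)e=0$, and since $\|e\|=1\neq 0$ we get $\lambda=\mu$; equivalently, $\sigma(x)$ is the single point $\{\lambda\}$.

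Finally I would assemble the map. Define $\varphi:\mathcal{A}\to\mathbb{C}$ by $\varphi(x)=\lambda$ where $x=\lambda e$ (well-defined and unambiguous by the previous paragraph). One checks routinely that $\varphi$ is linear, multiplicative ($\varphi(xy)=\lambda\mu$ when $x=\lambda e$, $y=\mu e$, since $xy=\lambda\mu e$), unital ($\varphi(e)=1$), and bijective — surjectivity because $\varphi(\lambda e)=\lambda$ for all $\lambda$, injectivity because $\varphi(x)=0$ forces $x=0e=0$. Isometry follows from $\|x\|=\|\lambda e\|=|\lambda|\,\|e\|=|\lambda|=|\varphi(x)|$. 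I do not expect any genuine obstacle here: the only non-formal input is the nonemptiness and compactness of the spectrum, which the excerpt supplies as a cited theorem, so the proof is essentially a two-line deduction dressed up with the verification that the obvious map is an isometric algebra isomorphism. If anything needs care, it is making sure the spectrum theorem is applied to the given $\mathcal{A}$ (it is unital, as required) and noting explicitly that "not invertible $\Rightarrow$ equals $0$" is exactly the hypothesis being used.
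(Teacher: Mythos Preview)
Your proof is correct and is the standard argument for Gelfand--Mazur. Note, however, that the paper does not supply its own proof of this theorem: it is stated as a cited result (cf.\ \cite{ZHUBOOK}) without proof, so there is no paper-proof to compare against; your argument is precisely the classical one that would appear in the cited reference.
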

A subclass of Banach algebras known as C*-algebras allows to do most of the things which hold good for complex numbers. Notion of C*-algebras, for first time,  appeared in the work of  \cite{GELFANDNEUMARK}.
\begin{definition}(cf. \cite{ZHUBOOK})
	A unital Banach algebra $ \mathcal{A}$ is called a unital \textbf{C*-algebra} if there exists a map $*:\mathcal{A}\ni x \mapsto x^* \in \mathcal{A}$ such that following conditions hold.
	\begin{enumerate}[label=(\roman*)]
	\item $ ((x)^*)^*=x, \forall x \in\mathcal{A}. $
	\item $ (x+y)^*=x^*+y^*, \forall x, y \in\mathcal{A}. $
	\item $(\alpha x)^*=\overline{\alpha}x^*$, $\forall \alpha \in \mathbb{K}, \forall x \in\mathcal{A}$.
	\item $ (xy)^*=y^*x^*, \forall x, y \in\mathcal{A}. $
	\item $\|x^*x\|=\|x\|^2, \forall x \in\mathcal{A}. $
	\end{enumerate}
	A map $*:\mathcal{A}\ni x \mapsto x^* \in \mathcal{A}$ satisfying (i)-(iii) is called as  \textbf{involution}.
\end{definition}
Segal called the term C*-algebra; the letter `C' stands for uniformly closed. C*-algebras are also known as Gelfand-Naimark algebras (cf. \cite{PIETSCH}).
\begin{example}(cf. \cite{ZHUBOOK})
	\begin{enumerate}[label=(\roman*)]
		\item  If $K$ is a compact Hausdorff space, then $\mathcal{C}(K)$ is a commutative unital C*-algebra w.r.t. involution $f^*(x)\coloneqq\overline{f(x)}, \forall x \in K$.
		\item If $\mathcal{H}$ is a Hilbert space, then $\mathcal{B}(\mathcal{H})$ is a noncommutative unital  C*-algebra  w.r.t. operator adjoint. 
		\item If $\mathcal{H}$ is a Hilbert space, the the space $\mathcal{K}(\mathcal{H})$ of compact operators is a noncommutative C*-subalgebra of $\mathcal{B}(\mathcal{H})$. If $\mathcal{H}$ is infinite dimensional, then this algebra is non unital.
	\end{enumerate}
\end{example}
Following two results characterize unital C*-algebras.
\begin{theorem}(cf. \cite{ZHUBOOK}) (\textbf{Gelfand-Naimark theorem})
	If  $ \mathcal{A}$ is a commutative unital C*-algebra, then  $ \mathcal{A}$ is isometrically $*$-isomorphic to 
	$\mathcal{C}(K)$ for some compact Hausdorff space $K$.
\end{theorem}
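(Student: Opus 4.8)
The plan is to follow the classical route through the commutative Gelfand theory, identifying a commutative unital C*-algebra $\mathcal{A}$ with the algebra of continuous functions on its maximal ideal space. First I would introduce the spectrum (maximal ideal space) $\Omega$ of $\mathcal{A}$, namely the set of all nonzero multiplicative linear functionals $\varphi : \mathcal{A} \to \mathbb{C}$, equipped with the weak-$*$ topology inherited from $\mathcal{A}^*$. The first step is to check that $\Omega$ is a compact Hausdorff space: it is weak-$*$ closed in the closed unit ball of $\mathcal{A}^*$ (the multiplicativity and unitality conditions are preserved under weak-$*$ limits, and every character has norm $1$), so compactness follows from the Banach--Alaoglu theorem. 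This will be the compact Hausdorff space $K = \Omega$ in the statement.

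Next I would define the Gelfand transform $\Gamma : \mathcal{A} \to \mathcal{C}(\Omega)$ by $\Gamma(x)(\varphi) := \varphi(x)$, and verify it is a unital algebra homomorphism with $\Gamma(x)$ continuous on $\Omega$ for each $x$. The key algebraic fact to record here is that, for a commutative unital Banach algebra, the spectrum of $x$ coincides with the range of $\Gamma(x)$, i.e. $\sigma(x) = \{\varphi(x) : \varphi \in \Omega\}$; this uses that every maximal ideal is the kernel of a character (via Gelfand--Mazur applied to the quotient $\mathcal{A}/\mathfrak{m}$) together with the fact that $\lambda e - x$ is non-invertible iff it lies in some maximal ideal. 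Consequently $\|\Gamma(x)\|_\infty = r(x)$, the spectral radius of $x$.

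Then comes the step where the C*-structure is essential. I would first show that every self-adjoint element $x = x^*$ has real spectrum, hence $\Gamma(x)$ is real-valued, and moreover that $\Gamma$ is $*$-preserving: writing a general element as $x = a + ib$ with $a,b$ self-adjoint, one gets $\Gamma(x^*) = \overline{\Gamma(x)}$. The crucial norm computation is that for self-adjoint $x$ one has $\|x^2\| = \|x\|^2$ (from the C*-identity $\|x^*x\| = \|x\|^2$), and iterating gives $\|x^{2^n}\| = \|x\|^{2^n}$, so the spectral radius formula $r(x) = \lim \|x^n\|^{1/n}$ yields $r(x) = \|x\|$ for self-adjoint $x$. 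For general $x$, applying this to $x^*x$ gives $\|\Gamma(x)\|_\infty^2 = \|\Gamma(x^*x)\|_\infty = r(x^*x) = \|x^*x\| = \|x\|^2$, so $\Gamma$ is isometric; in particular it is injective and has closed range. Finally, the range $\Gamma(\mathcal{A})$ is a closed $*$-subalgebra of $\mathcal{C}(\Omega)$ that contains the constants and separates the points of $\Omega$ (two distinct characters differ on some $x$), so by the Stone--Weierstrass theorem $\Gamma(\mathcal{A}) = \mathcal{C}(\Omega)$. Thus $\Gamma$ is the desired isometric $*$-isomorphism onto $\mathcal{C}(K)$ with $K = \Omega$.

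The main obstacle — and the part deserving the most care — is the verification that $\Gamma$ is $*$-preserving, which is equivalent to showing self-adjoint elements have real spectrum. The slick argument uses the C*-identity applied to elements of the form $e + ity$ for $y$ self-adjoint and real $t$, estimating $\|\varphi(e+ity)\|^2 = \|(e+ity)^*(e+ity)\| = \|e + t^2 y^2\| \le 1 + t^2\|y\|^2$; writing $\varphi(y) = \alpha + i\beta$ and expanding $|1 + it(\alpha+i\beta)|^2 = (1-t\beta)^2 + t^2\alpha^2 \le 1 + t^2\|y\|^2$ forces, after rearranging and letting $t$ vary over all reals, $\beta = 0$. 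I would also need to be slightly careful that the same circle of ideas (Gelfand--Mazur, existence of characters, the identification $\sigma(x) = \operatorname{ran}\Gamma(x)$) is invoked cleanly, since all of these are quoted as "standard" but must be assembled in the right order; everything else is routine once the isometry and $*$-preservation are in hand.
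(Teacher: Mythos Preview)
The paper does not prove this theorem; it is stated with a citation to \cite{ZHUBOOK} as background material in Appendix~B and no proof is given. Your proposal is the standard Gelfand-theory argument and is essentially correct, so there is nothing in the paper to compare it against.

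One small slip worth fixing: in your verification that characters are real on self-adjoints, you write $\|\varphi(e+ity)\|^2 = \|(e+ity)^*(e+ity)\|$. This should be an inequality, not an equality: for any character $\varphi$ one has $|\varphi(x)| \le \|x\|$, hence $|\varphi(e+ity)|^2 \le \|e+ity\|^2 = \|(e+ity)^*(e+ity)\| = \|e + t^2 y^2\| \le 1 + t^2\|y\|^2$. The rest of the argument (expanding $|1 + it(\alpha+i\beta)|^2$ and forcing $\beta = 0$) then goes through exactly as you describe.
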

\begin{theorem}(cf. \cite{ZHUBOOK}) (\textbf{Gelfand-Naimark-Segal theorem})\label{GELFANDNAIMARKSEGAL}
		Let $ \mathcal{A}$ be a  unital C*-algebra. Then  there exists a  Hilbert space $ \mathcal{H}$ such that  $ \mathcal{A}$  is isometrically $ *$-isomorphic 
		to a C*-subalgebra of $ \mathcal{B}(\mathcal{H})$.
	\end{theorem}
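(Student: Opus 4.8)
The plan is to produce a faithful $*$-representation of the unital C*-algebra $\mathcal{A}$ on a Hilbert space and then invoke the fact that a faithful $*$-representation of a C*-algebra is automatically isometric onto its image. The construction proceeds in two stages: first I would build, for each state $\varphi$ on $\mathcal{A}$, a cyclic $*$-representation $(\pi_\varphi, \mathcal{H}_\varphi, \xi_\varphi)$ via the GNS construction; then I would take the direct sum of these representations over a family of states large enough to separate points.

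First I would recall (or establish) the basic spectral machinery already available in the excerpt: every element of a unital Banach algebra has nonempty compact spectrum, and in a unital C*-algebra a self-adjoint element has real spectrum and $x^*x$ has spectrum in $[0,\infty)$, so that $x^*x$ is a \emph{positive} element. From this I would define a \textbf{state} on $\mathcal{A}$ to be a positive linear functional $\varphi$ with $\varphi(e)=1$; positivity means $\varphi(x^*x)\ge 0$ for all $x$. The key input here is that states exist in abundance: for any nonzero self-adjoint $a\in\mathcal{A}$ one can use the Hahn--Banach theorem together with the Gelfand--Naimark theorem applied to the commutative C*-subalgebra generated by $a$ and $e$ to produce a state $\varphi$ with $\varphi(a)\neq 0$. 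For a general nonzero $x$, apply this to the nonzero self-adjoint element $x^*x$ to get a state $\varphi$ with $\varphi(x^*x)>0$.

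Next, for a fixed state $\varphi$, I would carry out the GNS construction. The sesquilinear form $\langle x,y\rangle_\varphi \coloneqq \varphi(y^*x)$ is a positive semidefinite inner product on $\mathcal{A}$; by the Cauchy--Schwarz inequality the set $N_\varphi\coloneqq\{x:\varphi(x^*x)=0\}$ is a left ideal, so $\mathcal{A}/N_\varphi$ carries a genuine inner product, and I would let $\mathcal{H}_\varphi$ be its completion. Left multiplication $\pi_\varphi(a)(x+N_\varphi)\coloneqq ax+N_\varphi$ is well-defined because $N_\varphi$ is a left ideal, and the estimate $\varphi((ax)^*(ax))=\varphi(x^*a^*ax)\le\|a^*a\|\,\varphi(x^*x)$ (using that $\|a^*a\|e - a^*a$ is positive, hence of the form $b^*b$, so $\varphi$ of it is nonnegative) shows $\pi_\varphi(a)$ is bounded with $\|\pi_\varphi(a)\|\le\|a\|$; it extends to $\mathcal{H}_\varphi$. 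One checks $\pi_\varphi$ is linear, multiplicative, and $*$-preserving ($\langle\pi_\varphi(a)x,y\rangle_\varphi=\varphi(y^*ax)=\langle x,\pi_\varphi(a^*)y\rangle_\varphi$), and that $\xi_\varphi\coloneqq e+N_\varphi$ is a cyclic vector with $\langle\pi_\varphi(a)\xi_\varphi,\xi_\varphi\rangle_\varphi=\varphi(a)$.

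Finally I would assemble the \textbf{universal representation}: let $S$ be the set of all states, put $\mathcal{H}\coloneqq\bigoplus_{\varphi\in S}\mathcal{H}_\varphi$, and $\pi\coloneqq\bigoplus_{\varphi\in S}\pi_\varphi$, which is a $*$-homomorphism of $\mathcal{A}$ into $\mathcal{B}(\mathcal{H})$ with $\|\pi(a)\|\le\|a\|$. Injectivity follows from the abundance of states: if $\pi(x)=0$ then $\pi_\varphi(x)=0$ for every state $\varphi$, so $\varphi(x^*x)=\langle\pi_\varphi(x)\xi_\varphi,\pi_\varphi(x)\xi_\varphi\rangle_\varphi=0$ for all $\varphi$, forcing $x^*x=0$ and hence $x=0$. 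The last step is to upgrade injectivity to isometry: for a $*$-homomorphism $\pi$ between C*-algebras one has $\|\pi(a)\|^2=\|\pi(a^*a)\|=r(\pi(a^*a))\le r(a^*a)=\|a^*a\|=\|a\|^2$ automatically, and when $\pi$ is injective the reverse inequality holds because the spectrum of $\pi(a^*a)$ equals that of $a^*a$ (an injective $*$-homomorphism cannot shrink spectra, since $\lambda e - a^*a$ invertible would be forced if $\lambda\pi(e)-\pi(a^*a)$ were, using injectivity on the continuous-functional-calculus element witnessing non-invertibility). Thus $\pi$ is an isometric $*$-isomorphism onto the C*-subalgebra $\pi(\mathcal{A})\subseteq\mathcal{B}(\mathcal{H})$, which is what is claimed.

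The main obstacle I anticipate is the interface between the order structure and the functionals: specifically, proving that positive elements have the form $b^*b$ (equivalently, that the self-adjoint elements with nonnegative spectrum form a cone closed under addition) and that this yields the continuity estimate $\varphi(x^*a^*ax)\le\|a\|^2\varphi(x^*x)$, plus the separation statement that states detect every nonzero positive element. These facts rest on the continuous functional calculus for self-adjoint elements, which in turn needs the commutative Gelfand--Naimark theorem already quoted in the excerpt; organizing this cleanly, rather than any single hard estimate, is where the real work lies. The final isometry argument and the direct-sum bookkeeping are then routine.
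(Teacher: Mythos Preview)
The paper does not actually prove this theorem; it is quoted as a standard result with a citation to Zhu's book (the ``cf.'' marker). Your proposal is the standard GNS construction followed by the universal representation and the injective-$*$-homomorphism-is-isometric argument, which is exactly the proof one finds in the cited reference and is correct as outlined.
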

	\begin{example}(cf. \cite{KADISONRINGROSE})
		Consider the unital C*-algebra $ \mathcal{C}[0, 1]$. The map 
		\begin{align*}
	 &\pi :  \mathcal{C}[0, 1] \ni f \mapsto  \pi (f) \in  \mathcal{B}(\mathcal{L}^2[0, 1]);\\
	 & \pi (f):\mathcal{L}^2[0, 1]\ni g \mapsto (\pi (f))(g)	\coloneqq fg \in \mathcal{L}^2[0, 1]
		\end{align*}
	is an  isometric $ *$-isomorphism to a C*-subalgebra of $\mathcal{B}(\mathcal{L}^2[0, 1])$.
	\end{example}

{\onehalfspacing \section{COMMUTATORS CLOSE TO THE IDENTITY IN $\mathcal{B}(\mathcal{H})$}
	Let $n\in \mathbb{N}$ and $M_n(\mathbb{K})$ be the ring of $n$ by $n$ matrices over $\mathbb{K}$. Using the property of trace map
	we easily get that there does not exist $D, X   \in M_n(\mathbb{K})$ such that $DX-XD=1_{M_n(\mathbb{K})}$ (\cite{HALMOS}). This argument will not  work for bounded linear operators 
	on infinite dimensional Hilbert space since the map trace is not defined on the algebra $\mathcal{B}(\mathcal{H})$ of all bounded linear operators on an infinite dimensional Hilbert space $\mathcal{H}$ (it is defined for a proper subalgebra of $\mathcal{B}(\mathcal{H})$  known as the trace class operators (\cite{SCHATTEN})). Operators of the form $DX-XD$ are called as \textbf{commutator} of $D$ and $X$ and are denoted by $[D,X]$. An operator $T\in \mathcal{B}(\mathcal{H})$ is said to be a commutator if $T=[D,X]$, for some $D, X \in \mathcal{B}(\mathcal{H})$.\\
	Using 
	the property of spectrum of bounded linear operator,  Winter in 1947  proved that the following result.
	\begin{theorem}(\cite{WINTNER})\label{WINTNERTHEOREM}
		Let $\mathcal{H}$ be an infinite dimensional Hilbert space. Then there does not exist $D, X \in \mathcal{B}(\mathcal{H})$ such that  
		\begin{align}\label{COMMUTATORIDENTITY}
		[D,X]=1_{\mathcal{B}(\mathcal{H})}.
		\end{align}
	\end{theorem}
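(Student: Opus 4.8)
The plan is to argue by contradiction, following the classical route that uses the spectral behavior of the commutator relation. Suppose $D,X\in\mathcal{B}(\mathcal{H})$ satisfy $[D,X]=DX-XD=1_{\mathcal{B}(\mathcal{H})}$. First I would establish the key algebraic identity relating powers of $X$ to the commutator: by induction on $n\geq 1$ one shows
\begin{align*}
[D,X^n]=DX^n-X^nD=nX^{n-1}.
\end{align*}
The base case is the hypothesis; the inductive step uses the Leibniz rule $[D,X^{n+1}]=[D,X^n]X+X^n[D,X]=nX^{n-1}X+X^n=(n+1)X^n$. This identity is the engine of the whole argument.

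Next I would extract a norm inequality from this identity. Assuming $X^{n-1}\neq 0$ for all $n$ (which I should first argue: if $X^{m}=0$ for some minimal $m\geq 2$, then $0=[D,X^m]=mX^{m-1}$ forces $X^{m-1}=0$, contradicting minimality; and $X^1=0$ would give $1=[D,0]=0$), I get
\begin{align*}
n\|X^{n-1}\|=\|[D,X^n]\|=\|DX^n-X^nD\|\leq 2\|D\|\,\|X^n\|\leq 2\|D\|\,\|X\|\,\|X^{n-1}\|.
\end{align*}
Dividing by $\|X^{n-1}\|>0$ yields $n\leq 2\|D\|\,\|X\|$ for every $n\in\mathbb{N}$, which is absurd since the right-hand side is a fixed finite constant. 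This contradiction proves that no such $D,X$ exist, establishing Theorem \ref{WINTNERTHEOREM}.

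The argument is short and essentially self-contained, so there is no single hard obstacle; the only point requiring care is the bookkeeping about non-vanishing of the powers $X^{n-1}$, which is needed to justify the division step. An alternative approach the paper may prefer is the genuinely ``spectral'' one alluded to in the text: one shows that $[D,X]=1$ forces the spectra of $DX$ and $XD$ to be related by $\sigma(DX)\setminus\{0\}=\sigma(XD)\setminus\{0\}$ while simultaneously $\sigma(DX)=\sigma(1+XD)=1+\sigma(XD)$, and a nonempty compact subset of $\mathbb{C}$ cannot equal its own translate by $1$ up to the point $0$; this also produces a contradiction. I would present the power-identity proof as the main line since it is the most elementary and transparent, and mention the spectral shift argument as a remark.
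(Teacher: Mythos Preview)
Your argument is correct; it is exactly Wielandt's classical proof. The paper does not give its own proof of this theorem: it simply cites Wintner's original spectral argument and then remarks that Wielandt later gave a ``simple proof'' two years afterward. What you have written is precisely that simple proof (induction to $[D,X^n]=nX^{n-1}$, the nilpotence check, and the norm estimate $n\leq 2\|D\|\|X\|$), and your remark about the spectral-shift alternative matches the paper's description of Wintner's original approach. So there is nothing to compare beyond noting that you have supplied the Wielandt proof the paper only alludes to.
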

	After two years,  \cite{WIELANDT}  gave a 
	simple proof for the failure of  Equation \ref{COMMUTATORIDENTITY}. We note that the boundedness of operators is crucial in Theorem \ref{WINTNERTHEOREM}. Following example shows that Theorem \ref{WINTNERTHEOREM} fails for unbounded operators
	\begin{example}(\cite{HALMOS})
		Let $\mathcal{H}\coloneqq \mathcal{L}^2(\mathbb{R})$ and define 	
		\begin{align*}
		(Df)(x)\coloneqq \frac{d}{dx}f, \quad (Xf)(x)\coloneqq xf(x)
		\end{align*}
		Then $[D,X]=1_{\mathcal{B}(\mathcal{H})}$.
	\end{example}
	Theorem \ref{WINTNERTHEOREM} leads to the question that which operators on infinite dimensional Hilbert spaces can be written as commutators of operators? First partial answer was given by Halmos. 
	\begin{theorem}(cf. \cite{PUTNAM})
		Let $\mathcal{H}$ be an infinite dimensional Hilbert space. If $C\in  \mathcal{B}(\mathcal{H})$ is compact, then $I_{\mathcal{B}(\mathcal{H})}+C$ is not a commutator.
	\end{theorem}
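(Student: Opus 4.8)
The statement to prove is that if $\mathcal{H}$ is an infinite dimensional Hilbert space and $C \in \mathcal{B}(\mathcal{H})$ is compact, then $I_{\mathcal{B}(\mathcal{H})} + C$ is not a commutator, i.e., there do not exist $D, X \in \mathcal{B}(\mathcal{H})$ with $[D,X] = I_{\mathcal{B}(\mathcal{H})} + C$. The plan is to argue by contradiction using the structure of the spectrum of a commutator together with spectral theory of compact operators (Riesz--Schauder theory). First I would suppose, for contradiction, that $I + C = [D,X] = DX - XD$ for some bounded operators $D, X$.

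The key idea is to pass to the Calkin algebra, or equivalently to exploit the fact that a commutator in $\mathcal{B}(\mathcal{H})$ cannot be of the form ``invertible plus something small'' in a way that survives modulo the compacts. Concretely: since $C$ is compact, the operator $I + C$ is a Fredholm operator of index zero, and more importantly its image $\pi(I+C)$ in the Calkin algebra $\mathcal{B}(\mathcal{H})/\mathcal{K}(\mathcal{H})$ equals $\pi(I) = 1$, the identity of the Calkin algebra. But $\pi(I + C) = \pi([D,X]) = [\pi(D), \pi(X)]$ is a commutator in the Calkin algebra. The Calkin algebra is a unital Banach algebra (indeed a unital C*-algebra by the Gelfand--Naimark--Segal theorem, Theorem \ref{GELFANDNAIMARKSEGAL}), and Theorem \ref{WINTNERTHEOREM} together with Wielandt's spectral argument shows that in any unital Banach algebra the identity is never a commutator: if $ab - ba = 1$, then an induction gives $a^n b - b a^n = n a^{n-1}$, forcing $\|a^{n-1}\| \le \tfrac{2}{n}\|a^n\|\|b\| \le \tfrac{2}{n}\|a\|\|a^{n-1}\|\|b\|$, hence $\|a^{n-1}\|(1 - \tfrac{2}{n}\|a\|\|b\|) \le 0$, which for large $n$ forces $a^{n-1} = 0$ and then $a^{n-2} = 0$ and so on down to $1 = 0$, a contradiction. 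Applying this in the Calkin algebra with $a = \pi(D)$, $b = \pi(X)$ yields $\pi(I+C) \ne [\pi(D),\pi(X)]$, contradicting the assumption.

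The steps in order would be: (1) recall that $\mathcal{K}(\mathcal{H})$ is a closed two-sided ideal of $\mathcal{B}(\mathcal{H})$, so the quotient $\mathcal{B}(\mathcal{H})/\mathcal{K}(\mathcal{H})$ is a unital Banach algebra with quotient map $\pi$; (2) observe $\pi(I + C) = 1$ because $C$ is compact; (3) state and prove the Wielandt-style lemma that the unit of a nonzero unital Banach algebra is never a commutator (this is the heart, and is just the infinite-dimensional argument behind Theorem \ref{WINTNERTHEOREM} carried out abstractly); (4) combine: if $I + C = [D,X]$ then applying $\pi$ gives $1 = [\pi(D), \pi(X)]$ in the Calkin algebra, contradicting step (3) since the Calkin algebra is unital and nonzero (here we use that $\mathcal{H}$ is infinite dimensional, so $\mathcal{K}(\mathcal{H}) \ne \mathcal{B}(\mathcal{H})$).

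The main obstacle — or rather the point requiring care — is verifying that the Calkin algebra is genuinely a unital Banach algebra with $1 \ne 0$, which is exactly where the infinite-dimensionality of $\mathcal{H}$ enters: one needs $I_{\mathcal{B}(\mathcal{H})} \notin \mathcal{K}(\mathcal{H})$, which holds iff $\dim \mathcal{H} = \infty$. Everything else is the standard quotient-algebra bookkeeping plus the elementary norm estimate in step (3); no delicate spectral theory of the particular operators $D$ and $X$ is needed, since the Calkin algebra already ``forgets'' the compact perturbation $C$. An alternative, more self-contained route that avoids naming the Calkin algebra is to run the estimate $\|[D,X^n]\| = n\|X^{n-1} + (\text{compact})\|$ directly and use that the essential spectral radius / essential norm of $X^{n-1}$ cannot shrink like $1/n$ unless $X^{n-1}$ is itself compact, leading to $I$ being compact — but the Calkin-algebra formulation is cleaner, so I would present that one.
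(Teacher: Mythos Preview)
Your proof is correct. The paper does not actually prove this theorem; it is simply cited (with attribution to Putnam) as a known result that motivates the discussion, so there is no ``paper's own proof'' to compare against. Your Calkin-algebra argument is the standard and cleanest route: reducing modulo $\mathcal{K}(\mathcal{H})$ sends $I+C$ to the identity of a nonzero unital Banach algebra, and the Wielandt induction $[a^n,b]=na^{n-1}$ then yields the contradiction exactly as you outline. The only cosmetic point is that the inequality step is more transparently stated as ``$n\|a^{n-1}\|\le 2\|a\|\|b\|\,\|a^{n-1}\|$ forces $a^{n-1}=0$ for $n>2\|a\|\|b\|$, and then the recursion collapses to $1=0$,'' which is what you wrote; just be sure when writing it up formally to state the induction on $n$ for the identity $a^nb-ba^n=na^{n-1}$ explicitly, since that is the one nontrivial algebraic step.
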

	\cite{BROWNPEARCY} characterized the set of bounded operators which can be written as commutators. 
	\begin{theorem}(\cite{BROWNPEARCY})
		Let $\mathcal{H}$ be an infinite dimensional separable Hilbert space. Then an operator in $  \mathcal{B}(\mathcal{H})$ is a commutator if and only if it is not of the form $\lambda I_{\mathcal{B}(\mathcal{H})}+C$, where $\lambda$ is a nonzero scalar and $C$ is a compact operator. 
	\end{theorem}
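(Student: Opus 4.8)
The statement to prove is the Brown--Pearcy theorem: on an infinite dimensional separable Hilbert space $\mathcal{H}$, an operator $T \in \mathcal{B}(\mathcal{H})$ is a commutator $[D,X] = DX - XD$ if and only if $T$ is not of the form $\lambda I + C$ with $\lambda \neq 0$ a scalar and $C$ compact. The plan is to handle the two implications separately, the easy direction first.

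\textbf{Necessity (the ``only if'' direction).} Here I would show that no operator of the form $\lambda I + C$, $\lambda \neq 0$, $C$ compact, is a commutator. Suppose for contradiction that $\lambda I + C = [D,X]$ for some $D, X \in \mathcal{B}(\mathcal{H})$. The idea is to pass to the Calkin algebra $\mathcal{Q}(\mathcal{H}) = \mathcal{B}(\mathcal{H})/\mathcal{K}(\mathcal{H})$, which is a unital C*-algebra (invoking that $\mathcal{K}(\mathcal{H})$ is a closed two-sided ideal, as recorded in the excerpt's discussion of compact operators). Under the quotient map $\pi$, the relation becomes $\lambda \pi(I) = \pi(\lambda I + C) = [\pi(D), \pi(X)]$, i.e.\ the nonzero scalar multiple of the identity $\lambda 1_{\mathcal{Q}(\mathcal{H})}$ is a commutator in a unital Banach algebra. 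But this is impossible by the same spectral argument as in Wielandt's proof of Theorem~\ref{WINTNERTHEOREM}: if $ab - ba = \mu 1$ with $\mu \neq 0$ in any unital Banach algebra, an induction shows $a^n b - b a^n = n\mu a^{n-1}$, hence $n|\mu|\,\|a^{n-1}\| \le 2\|a\|\,\|b\|\,\|a^{n-1}\|$, forcing $a^{n-1} = 0$ for large $n$ and then $\mu 1$ nilpotent, a contradiction. (One must be careful that $\lambda$ being a nonzero scalar in $\mathbb{C}$ stays a nonzero element of $\mathcal{Q}(\mathcal{H})$, which holds because $I$ is not compact on an infinite dimensional space.) This disposes of necessity.

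\textbf{Sufficiency (the ``if'' direction).} This is the substantive half and the main obstacle. Given $T$ not of the form $\lambda I + C$, I would produce $D, X$ with $[D,X] = T$. The strategy, following Brown--Pearcy, is a reduction: first reduce to showing that every operator which is ``not scalar-plus-compact'' is similar (or unitarily equivalent) to an operator with a suitable $2\times 2$ or $3\times 3$ block structure having zero ``diagonal blocks'' in an appropriate sense; then exhibit commutators in block form. The key structural lemma is that if $T$ is not of the form $\lambda I + C$, then there is an infinite dimensional closed subspace $\mathcal{M}$ with $\mathcal{M}$, $\mathcal{M}^\perp$ both infinite dimensional, such that the compression of $T - \lambda I$ to $\mathcal{M}$ has norm bounded below away from $0$ for a carefully chosen $\lambda$ — equivalently, one can split $\mathcal{H}$ into infinitely many mutually orthogonal copies of a fixed infinite dimensional space and arrange $T$ to look, modulo similarity, like a weighted shift-type operator on this decomposition. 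Once $T$ is written relative to $\mathcal{H} \cong \bigoplus_{n \in \mathbb{Z}} \mathcal{H}_0$ in a form where the ``main diagonal'' vanishes, one writes $T = [D,X]$ with $D$ a diagonal operator with distinct entries $d_n$ and $X$ the operator whose matrix entries are $T_{nm}/(d_n - d_m)$ off the diagonal, checking boundedness of $X$ via the geometric decay built into the construction of $D$. I expect the hard part to be the case analysis in this structural reduction — separating operators whose essential spectrum is a single point (where one argues $T$ is scalar-plus-compact, the excluded case) from those whose essential spectrum contains two points or is connected and nondegenerate, and in each admissible case building the orthogonal decomposition with the norm control needed to make the division by $d_n - d_m$ yield a bounded $X$. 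I would assume the Gelfand--Naimark--Segal representation (Theorem~\ref{GELFANDNAIMARKSEGAL}) and standard facts about the Calkin algebra freely, and treat the block-matrix commutator computations as routine once the decomposition is in hand.
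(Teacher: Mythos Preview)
The paper does not actually prove this theorem; it is stated with a citation to Brown and Pearcy and no proof environment follows. There is therefore nothing in the paper to compare your proposal against.

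That said, your outline is the standard route. The necessity direction via the Calkin algebra and Wielandt's induction is correct and complete as written. For sufficiency your sketch captures the right architecture (structural reduction to a block form with vanishing diagonal, then a diagonal $D$ with distinct entries and $X$ obtained by dividing off-diagonal blocks by differences $d_n - d_m$), but be aware that the actual Brown--Pearcy argument is considerably more delicate than your sketch suggests: the reduction is not to a bilateral shift decomposition but rather uses that any operator not of the form $\lambda I + C$ is unitarily equivalent to one with a $2\times 2$ block matrix whose $(1,1)$ entry is not a commutator-obstructed operator on the first summand, and the construction of $D,X$ proceeds by a somewhat intricate case analysis rather than a single formula. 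Your ``division by $d_n - d_m$'' picture is closer to the finite-dimensional zero-diagonal argument and does not directly survive the passage to infinite dimensions without the additional machinery in the original paper.
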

	Following the paper of \cite{BROWNPEARCY}  there is a series of papers devoted to the  study of commutators
	on   sequence spaces, $\mathcal{L}^p$-spaces,  Banach spaces, C*-algebras, von Neumann algebras,   Banach *-algebras etc (\cite{SCHNEEBERGER, LAUSTSEN, DYKEMAFIGIELGARYWODZICKI, MARCOUX2006, STASINSKI, DESOVJOHNSON2010, KADISONLIUTHOM, YOOD, DESOVJOHNSONSCHECHTMAN, DYKEMASKRIPKA, DOSEV2009, KAFTALNGZHANG, MARCOU2010}).

	 It was \cite{POPA} who started  a quantitative study of commutators close to the identity operator.  He gave the 
	following quantitative bound given by Popa  for the product of norm of operators whenever the commutator is close to the identity.
	\begin{theorem}(\cite{POPA})\label{POPAFIRST}
		Let $\mathcal{H}$ be an infinite dimensional  Hilbert space. Let 	$D,X \in \mathcal{B}(\mathcal{H})$ be such that 
		\begin{align*}
		\|[D,X]-1_{\mathcal{B}(\mathcal{H})}\|\leq \varepsilon
		\end{align*} 
		for some $\varepsilon>0$. Then  
		\begin{align*}
		\|D\|\|X\|\geq\frac{1}{2}\log\frac{1}{\varepsilon}.
		\end{align*}
	\end{theorem}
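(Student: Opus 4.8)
\textbf{Proof proposal for Theorem \ref{POPAFIRST} (Popa's estimate).}

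The plan is to reduce the quantitative bound to the classical impossibility of $[D,X]=1$ (Theorem \ref{WINTNERTHEOREM}) by an iteration/amplification trick, exploiting the polynomial identity that controls commutators of powers. First I would set $E\coloneqq [D,X]$, so by hypothesis $\|E-1_{\mathcal{B}(\mathcal{H})}\|\le\varepsilon<1$; in particular $E$ is invertible with $\|E^{-1}\|\le(1-\varepsilon)^{-1}$, and $\|E\|\le 1+\varepsilon\le 2$. The key algebraic input is the Leibniz-type formula for the commutator of $D$ with a monomial in $X$: writing $\operatorname{ad}_D(\cdot)=[D,\cdot]$, one has $\operatorname{ad}_D(X^n)=\sum_{k=0}^{n-1}X^k\,\operatorname{ad}_D(X)\,X^{n-1-k}=\sum_{k=0}^{n-1}X^kEX^{n-1-k}$. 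When $E$ is close to $1$ this says $[D,X^n]$ is close to $nX^{n-1}$; more precisely $\big\|[D,X^n]-nX^{n-1}\big\|\le\sum_{k=0}^{n-1}\|X^k(E-1)X^{n-1-k}\|\le n\varepsilon\|X\|^{n-1}$.

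Next I would feed this into a Wielandt-style spectral argument. Fix $n$ and consider the operator $Y_n\coloneqq X^n$. From the displayed estimate, $[D,Y_n]=nX^{n-1}+R_n$ with $\|R_n\|\le n\varepsilon\|X\|^{n-1}$. Now multiply on the right by a suitable power of $X$, or rather iterate: the cleanest route is to consider, for each $n$, the commutator relation and take operator norms after one more multiplication, obtaining $\|[D,X^{n}]\|\ge n\|X\|^{n-1}-n\varepsilon\|X\|^{n-1}$ on one hand (lower bound on the "main term" requires care — see below), while on the other hand $\|[D,X^n]\|\le 2\|D\|\,\|X^n\|\le 2\|D\|\,\|X\|^n$. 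Comparing these for the value of $n$ that optimizes the ratio, and using that $\|X^{n-1}\|$ and $\|X\|^{n-1}$ cannot both collapse, should yield $n\lesssim \|D\|\,\|X\|$ up to an $\varepsilon$-dependent correction; choosing $n$ roughly of order $\log(1/\varepsilon)$ makes the remainder term controllable and produces $\|D\|\|X\|\gtrsim \log(1/\varepsilon)$. The constant $\tfrac12$ comes out of optimizing the geometric/arithmetic balance in the two-sided estimate.

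The main obstacle is precisely the lower bound on the "main term" $\|nX^{n-1}\|=n\|X\|^{n-1}$: a priori the spectral radius of $X$ could be much smaller than $\|X\|$, so $\|X^{n-1}\|^{1/(n-1)}\to r(X)$ could be far below $\|X\|$, and then the bound $\|[D,X^n]\|\ge n\|X^{n-1}\|-n\varepsilon\|X\|^{n-1}$ is too weak. The fix is to avoid norms of powers of $X$ altogether and instead argue spectrally: from $\operatorname{ad}_D(X^n)=\sum_{k}X^kEX^{n-1-k}$ one deduces that $\operatorname{ad}_D$ applied to the subalgebra generated by $X$ behaves like formal differentiation, and then a resolvent/Liouville argument (exactly as in Wielandt's proof of Theorem \ref{WINTNERTHEOREM}, tracking the $\varepsilon$) shows the spectrum of $DX-XD$ cannot be squeezed into a tiny neighbourhood of $1$ unless $\|D\|\|X\|$ is large. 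Concretely I would estimate $\|[D,p(X)]\|$ for $p$ a polynomial via the Leibniz formula, apply this to $p$ being a high power, and use the holomorphic functional calculus together with the inequality $\|[D,X]-1\|\le\varepsilon$ to bound the norm of the resolvent $(\lambda-[D,X])^{-1}$ on a circle, finally invoking a maximum-modulus / Liouville step to force the quantitative conclusion. Assembling the constants carefully gives exactly $\|D\|\|X\|\ge\tfrac12\log(1/\varepsilon)$.
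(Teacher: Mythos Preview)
The paper does not give a proof of this theorem; it is quoted from Popa as background for the upper-bound constructions that follow, so there is no proof in the paper to compare against directly.

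Your setup via the Leibniz identity $[D,X^n]=\sum_{k=0}^{n-1}X^k[D,X]X^{n-1-k}$ is precisely Popa's starting point, and after normalising $\|X\|=1$ you do reach the key estimate
\[
n\|X^{n-1}\|\le\|[D,X^n]\|+n\varepsilon\le 2\|D\|\,\|X^n\|+n\varepsilon.
\]
The gap is in what comes next. You diagnose the difficulty as needing a lower bound on $\|X^{n-1}\|$ comparable to $\|X\|^{n-1}$, and then reach for resolvents, holomorphic functional calculus and a Liouville/maximum-modulus step; none of this is made concrete, and Wielandt's proof you invoke contains no such ingredients. In fact no lower bound on the individual $\|X^{n-1}\|$ is needed at all, and the finish is completely elementary. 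Rewrite the displayed inequality as $\|X^{n-1}\|\le\frac{2\|D\|}{n}\|X^n\|+\varepsilon$ and iterate it from $n=1$ up to $N$:
\[
1=\|X^0\|\le\frac{(2\|D\|)^N}{N!}\|X^N\|+\varepsilon\sum_{k=0}^{N-1}\frac{(2\|D\|)^k}{k!}\le\frac{(2\|D\|)^N}{N!}+\varepsilon\, e^{2\|D\|}.
\]
Letting $N\to\infty$ kills the first term and yields $1\le\varepsilon\, e^{2\|D\|}$, i.e.\ $\|D\|\|X\|=\|D\|\ge\tfrac12\log(1/\varepsilon)$, with exactly the stated constant. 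So the ``obstacle'' you identified dissolves once you keep $\|X^n\|$ (rather than the cruder $\|X\|^n$) in the upper bound and iterate downward to $\|X^0\|=1$; the functional-calculus detour is both vague and unnecessary.
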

	Now the problem in  Theorem \ref{POPAFIRST},   is the existence of $D,X \in \mathcal{B}(\mathcal{H})$ such that the commutator $[D,X]$ is close to the identity operator. This was again obtained by Popa which is stated in the following result. Given real $r $ and positive $s$, by $r=O(s)$ we mean that there is positive $\gamma$ such that $|r|\leq \gamma s$.
	\begin{theorem}(\cite{POPA, TAO})\label{POPASECOND}
		Let $\mathcal{H}$ be an infinite dimensional  Hilbert space.
		Then 	for each $0<\varepsilon\leq 1$,  there exist $D,X \in \mathcal{B}(\mathcal{H})$ with 
		\begin{align*}
		\|[D,X]-1_{\mathcal{B}(\mathcal{H})}\|\leq \varepsilon
		\end{align*} 
		and
		\begin{align*}
		\|D\|\|X\|=O(\varepsilon^{-2}).
		\end{align*}
	\end{theorem}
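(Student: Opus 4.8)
\textbf{Proof proposal for Theorem \ref{POPASECOND}.}

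The plan is to explicitly construct the pair $D, X$ on a model Hilbert space and verify the two required estimates by hand. First I would fix a convenient concrete realization: since $\mathcal{H}$ is infinite dimensional and separable we may assume $\mathcal{H} = \ell^2(\mathbb{Z})$ (or a direct sum of countably many copies of a fixed space, which only helps), with standard orthonormal basis $\{e_n\}_{n \in \mathbb{Z}}$. The natural candidates come from a truncated/weighted shift construction: take $X$ to be a weighted bilateral shift $Xe_n = w_n e_{n+1}$ and $D$ to be a diagonal operator $De_n = d_n e_n$, so that the commutator $[D,X] = DX - XD$ is again a weighted shift with weights $(d_{n+1} - d_n) w_n$. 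The point of this choice is that $[D,X] - 1_{\mathcal{B}(\mathcal{H})}$ is \emph{not} going to be small in this naive form — a weighted shift can never be close to the identity — so the actual construction must be more clever: one works on a finite block of size $N \approx \varepsilon^{-2}$ (or $\log(1/\varepsilon)$ depending on the sharp scaling), builds a pair of $N \times N$ matrices whose commutator is $I_N$ off a low-rank correction, and then transplants this block structure into $\mathcal{B}(\mathcal{H})$ using the infinite dimensionality to absorb the defect. Concretely, I would take $D_N$ to be the diagonal matrix $\operatorname{diag}(1, 2, \dots, N)$ scaled appropriately and $X_N$ to be a companion-type or shift-type matrix so that $[D_N, X_N]$ differs from $I_N$ only in one corner entry of size comparable to $N$; after rescaling $D_N \mapsto D_N / \sqrt{N}$ and $X_N \mapsto X_N / \sqrt{N}$ the product of the norms becomes $O(N) = O(\varepsilon^{-2})$ while the commutator defect, concentrated on a rank-one or rank-$O(1)$ piece, has norm $O(1/N) = O(\varepsilon^2) \le \varepsilon$.

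Second, I would carry out the norm bookkeeping carefully. The diagonal part contributes $\|D\| \le C\sqrt{N}$ trivially; the shift-type part $X$ requires estimating the operator norm of the weighted shift, which is $\sup_n |w_n|$, again $O(\sqrt{N})$ after scaling; hence $\|D\|\,\|X\| = O(N)$. For the commutator estimate I would compute $[D,X] - 1_{\mathcal{B}(\mathcal{H})}$ entrywise on the block and check that all entries cancel except a boundary term, bound that term's contribution using the fact that it acts as (a small multiple of) a partial isometry, and then match $N$ to $\varepsilon$ so that this contribution is at most $\varepsilon$. Choosing $N = \lceil \varepsilon^{-2} \rceil$ (or the sharp constant dictated by the computation) gives both $\|[D,X] - 1_{\mathcal{B}(\mathcal{H})}\| \le \varepsilon$ and $\|D\|\,\|X\| = O(\varepsilon^{-2})$ simultaneously.

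Third, I would handle the passage from the single finite block back to all of $\mathcal{H}$. Since $\mathcal{H}$ is infinite dimensional, one can either (a) amplify: set $D = \bigoplus_{k} D_N$, $X = \bigoplus_k X_N$ on $\mathcal{H} \cong \bigoplus_k \mathbb{K}^N$, so the defect is still block-diagonal with each block of norm $\le \varepsilon$, hence the total defect has norm $\le \varepsilon$ and the norms of $D, X$ are unchanged; or (b) pad the block with an infinite "reservoir" of basis vectors on which $D$ and $X$ act by a genuine shift so that the leftover rank-one defect is pushed off to infinity and vanishes. Either route is routine once the finite model is in place.

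The main obstacle, and the heart of the argument, is the explicit finite-dimensional construction: one must exhibit $N \times N$ matrices $D_N, X_N$ with $\|D_N\|\,\|X_N\| = O(N)$ and $[D_N, X_N]$ equal to $I_N$ plus a correction of norm $O(1/N)$ — and the trace obstruction ($\operatorname{tr}[D_N,X_N] = 0 \ne N = \operatorname{tr} I_N$) forces the correction to have norm at least $1$ before rescaling, i.e. exactly $\Theta(1)$ of the "mass" of $I_N$ must be dumped into a low-rank piece, which is then killed by the $1/N$ rescaling. Getting the constants to line up so that the correction after rescaling is genuinely $\le \varepsilon$ while keeping $\|D\|\,\|X\| = O(\varepsilon^{-2})$ is the delicate quantitative step; I would model it on the weighted-shift example of Popa, where $D$ is (a rescaled) number operator restricted to a window and $X$ is a rescaled shift, and verify the corner-cancellation by direct computation. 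Everything after that — the amplification to infinite dimensions and the norm estimates — is mechanical.
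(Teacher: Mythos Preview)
The paper does not give its own proof of this statement; Theorem~\ref{POPASECOND} is simply quoted from \cite{POPA, TAO} as background for the later C*-algebraic results. So there is no ``paper's proof'' to compare against, and your proposal must stand on its own. It does not: the central quantitative step is wrong.

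Your rescaling argument is broken. If $D_N,X_N\in M_N(\mathbb{C})$ are such that $[D_N,X_N]=I_N+E_N$ with $E_N$ a rank-one corner of size $\Theta(N)$, and you replace $D_N\mapsto D_N/\sqrt{N}$, $X_N\mapsto X_N/\sqrt{N}$, then the commutator becomes
\[
\bigl[D_N/\sqrt{N},\,X_N/\sqrt{N}\bigr]=\tfrac{1}{N}[D_N,X_N]=\tfrac{1}{N}I_N+\tfrac{1}{N}E_N,
\]
which is close to $\tfrac{1}{N}I_N$, not to $I_N$. The defect $\bigl\|\tfrac{1}{N}I_N+\tfrac{1}{N}E_N-I_N\bigr\|$ is of order $1$, not $O(1/N)$. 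More generally, the trace obstruction you yourself mention is fatal: for \emph{any} $D_N,X_N\in M_N(\mathbb{C})$ one has $\operatorname{tr}\bigl([D_N,X_N]-I_N\bigr)=-N$, so some eigenvalue of $[D_N,X_N]-I_N$ has modulus at least $1$, and hence $\|[D_N,X_N]-I_N\|\ge 1$ no matter what (this is exactly the content of the final Remark in the appendix). No choice of $D_N,X_N$ and no rescaling can make a finite block have commutator $\varepsilon$-close to the identity of that block.

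Consequently your option (a), the block-diagonal amplification $D=\bigoplus_k D_N$, $X=\bigoplus_k X_N$, fails immediately: each block contributes a defect of norm at least $1$, and the direct sum inherits that. Your option (b), ``push the rank-one defect off to infinity using a reservoir on which $D,X$ act as a genuine shift'', is pointing in the right direction, but you have not said what $D$ and $X$ actually do on that reservoir, how the finite block couples to it, or why the coupling does not spoil either the norm estimates or the commutator estimate. That coupling is the entire content of the theorem: Popa's construction uses the infinite-dimensionality of $\mathcal{H}$ essentially---for instance via the isomorphism $\mathcal{B}(\mathcal{H})\cong M_N(\mathcal{B}(\mathcal{H}))$ together with a genuine (infinite-dimensional) shift or Cuntz isometry in the corner entry, so that the ``bad'' corner lands in an infinite-dimensional summand where it can be made small without violating any trace constraint. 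Your finite $N\times N$ scalar block plus padding does not implement this; the block must itself have operator entries from $\mathcal{B}(\mathcal{H})$, as in Lemma~\ref{COMMUTATORLEMMA} and Corollary~\ref{COROLLARYLEMMA} later in the appendix.
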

	Terence Tao improved Theorem \ref{POPASECOND} and obtained the following theorem.
	\begin{theorem}(\cite{TAO})\label{TAOTHEOREM}
		Let $\mathcal{H}$ be an infinite dimensional  Hilbert space.
		Then 	for each $0<\varepsilon\leq 1/2$,  there exist $D,X \in \mathcal{B}(\mathcal{H})$ with 
		\begin{align*}
		\|[D,X]-1_{\mathcal{B}(\mathcal{H})}\|\leq \varepsilon
		\end{align*} 
		such that 
		\begin{align*}
		\|D\|\|X\|=O\left(\log^5\frac{1}{\varepsilon}\right).
		\end{align*}
	\end{theorem}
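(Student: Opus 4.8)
The plan is to exhibit, for each small $\varepsilon>0$, an explicit pair $D,X\in\mathcal{B}(\mathcal{H})$ whose commutator is within $\varepsilon$ of the identity, with the product $\|D\|\|X\|$ growing only polylogarithmically in $1/\varepsilon$. Since $\mathcal{H}$ is infinite dimensional and separable, I would fix an orthonormal basis $\{e_n\}_{n\in\mathbb{Z}_+}$ and work on $\ell^2(\mathbb{Z}_+)$; the construction is combinatorial in this basis. The guiding idea, following Tao, is that Popa's Theorem \ref{POPASECOND} already gives a pair with $\|D\|\|X\|=O(\varepsilon^{-2})$ using a ``one-scale'' construction (roughly, $X$ a weighted shift and $D$ a weighted number operator supported on a block of size $O(\varepsilon^{-2})$), and that one can \emph{iterate this across dyadic scales} to replace the polynomial loss by a polylogarithmic one. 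So first I would write down the single-scale gadget carefully: on a finite block, take $X$ to be a shift-type operator and $D$ a diagonal operator with linearly growing entries, so that $[D,X]$ is the identity on the block except for boundary defects of controlled size.

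Second, I would set up the multiscale recursion. The point is to build $D=\sum_k D_k$ and $X=\sum_k X_k$ as (essentially) block-diagonal sums over scales $k=1,\dots,N$ with $N\asymp\log(1/\varepsilon)$, where the $k$-th gadget handles an error of size $\sim 2^{-k}$ left over by the previous scales, arranged so that the blocks at different scales either act on orthogonal subspaces or interact in a way whose cross terms telescope. The norms should be arranged so that $\|D_k\|$ and $\|X_k\|$ each grow only polynomially in $k$ (a small power, giving the exponent $5$ after optimizing the split between how much of the ``work'' is placed in $D$ versus $X$), so that $\|D\|=\max_k\|D_k\|=O(\operatorname{poly}(N))$ and similarly for $X$, hence $\|D\|\|X\|=O(N^5)=O(\log^5(1/\varepsilon))$. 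The commutator estimate then becomes $\|[D,X]-1_{\mathcal{B}(\mathcal{H})}\|\le\|[D_N,X_N]-(\text{residual after }N-1\text{ scales})\|+\sum(\text{cross-scale commutators})$, each of which is bounded by a geometric series summing to $O(2^{-N})=O(\varepsilon)$.

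Third, I would need two technical inputs freely available from the excerpt: the failure of $[D,X]=1_{\mathcal{B}(\mathcal{H})}$ (Theorem \ref{WINTNERTHEOREM}) only tells us the $\varepsilon=0$ case is impossible, so it plays no positive role; the real ingredient I would lean on is Theorem \ref{POPASECOND} itself as the base case, and Theorem \ref{POPAFIRST} as a sanity check that polylogarithmic is essentially optimal (it shows $\|D\|\|X\|\gtrsim\log(1/\varepsilon)$, so $\log^5$ is within a power of the truth). I would also want to verify boundedness of the infinite sums $D=\sum_k D_k$, $X=\sum_k X_k$ on $\mathcal{H}$: this is where the block-orthogonality of supports across scales (or at worst bounded overlap) is used, giving $\|D\|\le\sup_k\|D_k\|$ rather than $\sum_k\|D_k\|$.

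The hard part will be the bookkeeping of the cross-scale interaction terms: when $D_k$ and $X_j$ do not act on orthogonal subspaces, $[D_k,X_j]$ must be shown to either vanish or be small, and getting the telescoping to close while simultaneously keeping every $\|D_k\|$ and $\|X_k\|$ of size only $O(k^{c})$ is the delicate optimization that forces the exponent $5$ and not something worse. I expect the cleanest route is to follow Tao's explicit recursive definition verbatim on $\ell^2$, defining at each stage an operator that is ``almost identity minus a rescaled copy of the previous residual,'' and then proving by induction on $N$ that after $N$ scales the residual has norm $O(2^{-N})$ and the accumulated operator norms are $O(N^{c})$; the base case is exactly Popa's single-scale construction. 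Once the recursion is in place, choosing $N=\lceil\log_2(1/\varepsilon)\rceil$ yields the claimed bound, completing the proof.
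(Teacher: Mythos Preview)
Your proposal does not match the approach actually used in the paper (which follows Tao's original argument), and the multiscale scheme you sketch has a genuine gap.

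The paper's proof does \emph{not} iterate Popa's single-scale gadget across dyadic scales. Instead it makes one construction inside $M_n(\mathcal{B}(\mathcal{H}))$ for a single integer $n\asymp\log(1/\varepsilon)$, and then uses the isometric $*$-isomorphism $M_n(\mathcal{B}(\mathcal{H}))\cong\mathcal{B}(\mathcal{H})$ (Theorem~\ref{ALLC}) to pull the pair back. Concretely: one takes $X$ to be a companion-type matrix (subdiagonal of identities together with a last column $\delta b_1,\dots,\delta b_n$ of unknowns) and $D$ a tridiagonal-plus-last-column matrix built from two fixed Cuntz isometries $u,v$ (Lemma~\ref{COMMUTATORLEMMA}). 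The commutator $[D,X]-1_{M_n}$ then has all its nonzero entries in the last column; forcing all but the $(1,n)$ entry to vanish yields a nonlinear system $Tb=a+\delta F(b)+\delta G(b,b)$ for the vector $b=(b_i)$ (equation~\eqref{SOLUTIONEQUATION}). The linear part $T$ admits a bounded right inverse with $\|R\|=O(n^2)$ (Proposition~\ref{RIGHTPROPOSI}, proved via a clever equivalent norm so that the relevant operator is a contraction), and an abstract perturbation lemma (Lemma~\ref{TAOTHEOREMABST}) then gives a solution $b$ with $\|b\|=O(n^3)$. Finally, conjugating by a diagonal scaling $S_\mu$ with $\mu=1/2$ (Corollary~\ref{COROLLARYLEMMA}) yields $\|D_\mu\|=O(n^5)$, $\|X_\mu\|=O(1)$, and error $O(n^3 2^{-n})\le\varepsilon$; choosing $n\sim\log(1/\varepsilon)$ finishes.

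The gap in your approach is exactly the part you flag as ``the hard part.'' If the supports of the $D_k,X_k$ are genuinely orthogonal, then $[D,X]=\bigoplus_k[D_k,X_k]$ and the blocks cannot correct one another's errors --- you simply get Popa's bound on each block. If the supports overlap, then $\|D\|\le\sup_k\|D_k\|$ fails and, worse, the bilinearity of the commutator forces you to control all the cross terms $[D_j,X_k]$, for which you offer no mechanism (no recursion, no identity, no cancellation structure). Tao's actual proof avoids this entirely: there is no hierarchy of scales, only a single nonlinear algebraic system whose solvability comes from a Neumann-series/fixed-point argument, and the exponent $5$ arises from the bound $\|R\|=O(n^2)$ combined with $\|b\|=O(n^3)$ and the $1/\delta=O(n^5)$ factor in $D_\mu$.
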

	In (\cite{POPA}) there is another result about commutators. Let $\mathcal{K}(\mathcal{H})$ be the ideal of compact operators in $\mathcal{B}(\mathcal{H})$ and define 
	\begin{align*}
	\mathbb{C}+\mathcal{K}(\mathcal{H})\coloneqq \{\lambda. 1_{\mathcal{B}(\mathcal{H})}+T: \lambda \in \mathbb{C}, T \in \mathcal{K}(\mathcal{H})\}.
	\end{align*}
	\begin{theorem}(\cite{POPA})\label{POPACOMPACT}
		If 	$K \in \mathcal{B}(\mathcal{H})$ is such that 
		\begin{align*}
		\|A\|=O(1), \quad \|A\|=O(\text{dist}(A, \mathbb{C}+\mathcal{K}(\mathcal{H}))^\frac{2}{3}),
		\end{align*}
		then  there exist $D,X \in \mathcal{B}(\mathcal{H})$ with 
		\begin{align*}
		\|D\|\|X\|=O(1)\quad \text{ such that } \quad A=[D,X].
		\end{align*}
	\end{theorem}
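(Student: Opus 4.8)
The plan is to mimic Popa's original construction (Theorem \ref{POPAFIRST} and the proof technique behind Theorem \ref{POPASECOND}), but steered toward the compact-operator setting so that the target operator $A$ with $\operatorname{dist}(A, \mathbb{C}+\mathcal{K}(\mathcal{H}))$ small can be realized exactly as a commutator with small factor norms. First I would reduce to a normalized model: since $A$ with $\|A\|=O(1)$ and $\|A\| = O(\operatorname{dist}(A,\mathbb{C}+\mathcal{K}(\mathcal{H}))^{2/3})$, write $\delta \coloneqq \operatorname{dist}(A,\mathbb{C}+\mathcal{K}(\mathcal{H}))$ and observe $\delta$ is bounded below by a constant multiple of $\|A\|^{3/2}$. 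The key point is that $A$ being far from $\mathbb{C}+\mathcal{K}(\mathcal{H})$ means $A$ has ``large essential behaviour'' in many orthogonal directions simultaneously; quantitatively, one can extract an infinite orthonormal-like system on which $A$ acts with controlled lower bounds, after a compact perturbation. I would make this precise using the standard fact that $\operatorname{dist}(A,\mathbb{C}+\mathcal{K}(\mathcal{H}))$ equals (up to constants) the essential norm of $A$ minus its ``essential centre'', which lets me split $\mathcal{H} = \bigoplus_{k} \mathcal{H}_k$ into infinitely many infinite-dimensional pieces on each of which a compressed/corrected copy of $A$ has norm comparable to $\delta$.

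Next I would build the commutator by a telescoping / weighted-shift argument across these blocks, exactly in the spirit of Popa's construction for Theorem \ref{POPASECOND}. On $\bigoplus_k \mathcal{H}_k$ one sets $X$ to be a block weighted shift sending $\mathcal{H}_k \to \mathcal{H}_{k+1}$ with weights $w_k$, and $D$ a block-diagonal operator with entries $d_k$ chosen so that the commutator $DX - XD$ picks out, block by block, the pieces of $A$. The algebraic identity here is the familiar one: if $X$ shifts with weight $w_k$ and $D$ is diagonal with $d_k$, then the $(k{+}1,k)$ entry of $[D,X]$ is $(d_{k+1}-d_k)w_k$, and by a partial-summation choice of $d_k$ one can arrange the compression of $[D,X]$ to a finite window to agree with the corresponding compression of $A$ while keeping $\sup_k |d_k|$ and $\sup_k |w_k|$ controlled. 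The role of the hypothesis $\|A\| = O(\delta^{2/3})$ is precisely to make the required number of blocks, and hence the growth of $|d_k|$, scale so that $\|D\|\,\|X\| = O(1)$ rather than blowing up. I would carry this out so that the ``error'' — the difference between $[D,X]$ and $A$ — is supported on a compact piece that can be absorbed, and then iterate/correct to kill the error entirely using the Brown--Pearcy-type flexibility (every operator not of the form $\lambda I + \text{compact}$ with $\lambda \neq 0$ is a commutator), applied on a cofinite-dimensional summand.

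The main obstacle I anticipate is the bookkeeping that turns the qualitative decomposition ``$A$ far from $\mathbb{C}+\mathcal{K}(\mathcal{H})$'' into a quantitative one with the right exponent: I need the block decomposition and the weight choices calibrated so that the product $\|D\|\,\|X\|$ is genuinely $O(1)$ under the hypothesis $\|A\| = O(\delta^{2/3})$, and getting the exponent $2/3$ exactly (rather than, say, $1/2$ or $3/4$) is where Popa's estimates must be followed carefully. A secondary technical point is handling the exact realization $A = [D,X]$ with no leftover error: the telescoping construction naturally gives $[D,X] = A + (\text{small/compact})$, and converting this to an exact equality requires either an absorption trick (choosing the blocks so the leftover is itself of the allowable commutator form and splicing) or a fixed-point/successive-approximation scheme on the ideal of compacts. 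I expect the absorption route to be cleaner: reserve one infinite-dimensional summand of $\mathcal{H}$ at the outset, run the main construction on its complement to produce $A$ up to a compact error, and then use that reserved summand together with the Brown--Pearcy criterion to write the residual compact correction as a commutator with $O(1)$ factor norms, finally combining the two commutators block-diagonally.
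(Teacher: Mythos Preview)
The paper does not supply a proof of this theorem: it is stated with attribution to \cite{POPA} and no argument is given; the paper immediately moves on to its own contribution (extending Tao's result to unital C*-algebras containing Cuntz isometries). There is therefore no ``paper's own proof'' to compare your proposal against.

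As for the proposal itself, it is a plausible-sounding outline but not a proof. Two concrete gaps: first, your reading of the hypothesis is inverted in places --- the condition $\|A\|=O(\delta^{2/3})$ with $\delta=\operatorname{dist}(A,\mathbb{C}+\mathcal{K}(\mathcal{H}))\le\|A\|$ forces $\|A\|$ (and hence $\delta$) to be uniformly bounded, so the regime is not ``$A$ far from $\mathbb{C}+\mathcal{K}(\mathcal{H})$'' but rather $\|A\|$ small compared to $\delta^{2/3}$; the construction must exploit that $\delta$ is \emph{relatively large} compared to $\|A\|$, not the reverse. Second, the final absorption step is not as innocent as you suggest: Brown--Pearcy gives a commutator realization of a non-scalar-plus-compact operator, but with no norm control whatsoever on the factors, so invoking it to write a ``residual compact correction'' as $[D',X']$ with $\|D'\|\|X'\|=O(1)$ is exactly the kind of quantitative statement that is at stake and cannot be taken for granted. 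Popa's actual argument requires a careful quantitative construction from the start rather than a qualitative-plus-cleanup scheme; your outline does not yet contain the mechanism that produces the $2/3$ exponent.
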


\section{COMMUTATORS CLOSE TO THE IDENTITY IN \\
	UNITAL C*-ALGEBRAS}
We   
recall fundamentals of matrices over unital C*-algebras as given in (\cite{MURPHY}).  

Let  $\mathcal{A}$ be a unital C*-algebra. For $n\in \mathbb{N}$, $M_n(\mathcal{A})$ is defined as the set of all $n$ by $n$ matrices over $\mathcal{A}$. It is clearly an algebra with respect to natural matrix operations. We define the involution of an element  $A\coloneqq [a_{i,j}]_{1\leq i,j \leq n}\in M_n(\mathcal{A})$ as $A^*\coloneqq [a_{j,i}^*]_{1\leq i,j \leq n}$. Then $M_n(\mathcal{A})$ is a *-algebra.  From the Gelfand-Naimark-Segal theorem (Theorem \ref{GELFANDNAIMARKSEGAL}) there exists unique universal representation $(\mathcal{H}, \pi) $, where $\mathcal{H}$ is a Hilbert space, $\pi:M_n(\mathcal{A})\to M_n(\mathcal{B}(\mathcal{H}))$ is an isometric *-homomorphism. This gives a norm on $M_n(\mathcal{A})$ defined as 
\begin{align*}
\|A\|\coloneqq \|\pi(A)\|, \quad \forall A \in M_n(\mathcal{A}).
\end{align*}
This norm makes $M_n(\mathcal{A})$ as a C*-algebra.\\
In the sequel, $\mathcal{A}$ is a unital C*-algebra. We first derive a lemma followed by a corollary for unital C*-algebras. Proof of the lemma is a direct algebraic calculation.
\begin{lemma}(\textbf{Commutator calculation})\label{COMMUTATORLEMMA}
	Let $u,v, b_1, \dots, b_n \in \mathcal{A}	$ and $\delta>0$. Let 
	\begin{align*}
	X\coloneqq  \begin{pmatrix}
	0 & 0 & 0& \cdots & 0 &\delta b_1\\
	1_\mathcal{A} & 0 & 0& \cdots& 0 &\delta b_2\\
	0 & 1_\mathcal{A} & 0& \cdots& 0 &\delta b_3\\
	\vdots &\vdots  & \vdots& \cdots& \vdots &\vdots\\
	0&0&0&\cdots& 0& \delta b_{n-1}\\
	0&0&0&\cdots& 1_\mathcal{A}& \delta b_n
	\end{pmatrix} \in M_n(\mathcal{A})
	\end{align*}
	and 
	\begin{align*}
	D\coloneqq  \begin{pmatrix}
	\frac{v}{\delta} &1_\mathcal{A} & 0& \cdots& 0 & \delta b_1u\\
	\frac{u}{\delta} & \frac{v}{\delta} & 2.1_\mathcal{A}& \cdots& 0 & \delta b_2u\\
	0 & \frac{u}{\delta} & \frac{v}{\delta}& \cdots& 0 & \delta b_3u\\
	\vdots &\vdots  & \vdots& \cdots& \vdots &\vdots\\
	0&0&0&\cdots& \frac{v}{\delta}&  (n-1)1_\mathcal{A}+\delta b_{n-1}u\\
	0&0&0&\cdots& \frac{u}{\delta}&  \frac{v}{\delta}+\delta b_nu
	\end{pmatrix} \in M_n(\mathcal{A}).
	\end{align*}
	Then 
	\begin{align*}
	[D,X]=1_{M_n(\mathcal{A})}+\begin{pmatrix}
	0 & 0 & 0& \cdots& 0 & [v,b_1]+0+\delta b_2+\delta b_1[u,b_n]\\
	0 & 0 & 0& \cdots& 0 &[v,b_2]+[u,b_1]+2\delta b_3+\delta b_2[u,b_n]\\
	0 & 0 & 0& \cdots& 0 &[v,b_3]+[u,b_2]+3\delta b_4+\delta b_3[u,b_n]\\
	\vdots &\vdots  & \vdots& \cdots& \vdots &\vdots\\
	0&0&0&\cdots& 0& [v,b_{n-1}]+[u,b_{n-2}]+(n-1)\delta b_{n}+\delta b_{n-1}[u,b_n]\\
	0&0&0&\cdots& 0& [v,b_n]+[u,b_{n-1}]+0+\delta b_n[u,b_n]-n.1_\mathcal{A}
	\end{pmatrix}.
	\end{align*}
\end{lemma}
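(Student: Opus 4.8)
The statement is purely computational: it asserts an identity in the C*-algebra $M_n(\mathcal{A})$ between the commutator $[D,X]$ and an explicit matrix. The plan is simply to compute the two products $DX$ and $XD$ block-entry by block-entry and subtract. First I would set up notation: write $D=[d_{i,j}]$ and $X=[x_{i,j}]$ with $1\le i,j\le n$, reading off the entries from the displayed matrices. The matrix $X$ is nearly the shift: $x_{i,i-1}=1_\mathcal{A}$ for $2\le i\le n$, $x_{i,n}=\delta b_i$ for $1\le i\le n$, and all other entries are zero. The matrix $D$ is tridiagonal-plus-last-column: $d_{i,i}=v/\delta$ for all $i$, $d_{i,i-1}=u/\delta$ for $2\le i\le n$, $d_{i,i+1}=i\cdot 1_\mathcal{A}$ for $1\le i\le n-1$, $d_{i,n}=(i-1)1_\mathcal{A}+\delta b_iu$ for $1\le i\le n$ (with the convention that the ``$(i-1)1_\mathcal{A}$'' term is absorbed into $d_{i,n}$ for $i<n$ and equals the diagonal-style term as written; care is needed at the overlap $d_{n-1,n}$ and $d_{n,n}$), and all other entries zero.

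Next I would carry out the two matrix multiplications. For $DX$, the $(i,j)$ entry is $\sum_k d_{i,k}x_{k,j}$; since $x_{k,j}$ is nonzero only when $j=k-1$ (value $1_\mathcal{A}$) or $j=n$ (value $\delta b_k$), this collapses to $d_{i,j+1}+d_{i,n}\delta b_n\,[j=n]$ — a short sum. Similarly for $XD$, the $(i,j)$ entry is $\sum_k x_{i,k}d_{k,j}$, and $x_{i,k}$ is nonzero only for $k=i-1$ (value $1_\mathcal{A}$) and $k=n$ (value $\delta b_i$), giving $d_{i-1,j}+\delta b_i d_{n,j}$. Subtracting, the bulk of the terms telescope: the contributions of the $v/\delta$ diagonal and the $u/\delta$ subdiagonal of $D$ combine with the shift structure of $X$ to produce exactly $1_{M_n(\mathcal{A})}$ in the first $n-1$ columns (the columns $j<n$), because there the factors of $\delta$ cancel against the $1/\delta$'s and the $b_i$'s do not enter. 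The only surviving discrepancy sits in the last column $j=n$, where the three sources — the $v/\delta$ term hitting $\delta b$, the $u/\delta$ term hitting $\delta b$, and the integer superdiagonal entries $d_{i,i+1}=i\cdot 1_\mathcal{A}$ hitting the $\delta b_{\cdot}$ column of $X$ — produce the commutators $[v,b_i]$, $[u,b_{i-1}]$, the integer multiples $i\delta b_{i+1}$, the tail term $\delta b_i[u,b_n]$, and the correction $-n\cdot 1_\mathcal{A}$ in the bottom entry. I would organize this column computation as its own display, tracking each row $i=1,\dots,n$ separately, and confirm it matches the stated matrix row by row.

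The one genuinely delicate point — and the main obstacle — is bookkeeping at the three ``corners'': row $1$ (no subdiagonal term, so $[u,b_0]$ is absent, matching the $0$ in the first row of the claimed matrix), the overlap between the integer superdiagonal entry $d_{n-1,n}=(n-1)1_\mathcal{A}$ and the last-column formula $d_{n-1,n}=(n-1)1_\mathcal{A}+\delta b_{n-1}u$ (these must be read as the same entry, not added twice), and row $n$ where $d_{n,n}=v/\delta+\delta b_nu$ carries both the diagonal and the last-column data, and where the $-n\cdot 1_\mathcal{A}$ appears — this arises because the integer superdiagonal of $D$ has entries $1,2,\dots,n-1$ summing into the column in a shifted way while $X$ has no entry feeding row $n$ from a superdiagonal, so the ``missing'' term shows up as the negative integer. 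I would double-check this by verifying the identity for small $n$, say $n=2$ and $n=3$, by brute force, since those cases already exhibit all three corner phenomena; once the small cases check out, the general telescoping argument is routine and the displayed formula follows. No C*-algebra structure beyond the ring operations and the fact that $1_{M_n(\mathcal{A})}$ is the diagonal identity is needed; the involution plays no role in this particular lemma.
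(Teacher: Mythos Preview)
Your proposal is correct and is exactly the approach the paper takes: the paper simply states that ``proof of the lemma is a direct algebraic calculation'' and gives no further details. Your entry-by-entry computation, together with the careful corner bookkeeping and the small-$n$ sanity check, is precisely what that phrase entails.
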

\begin{corollary}\label{COROLLARYLEMMA}
	Let $u,v, b_1, \dots, b_n \in \mathcal{A}	$. Assume that for some $\delta>0$, we have equations 
	\begin{align}\label{CORASSUMPTION1}
	[v,b_i]+[u,b_{i-1}]+i\delta b_{i+1}+\delta b_i[u,b_n]=0, \quad \forall i =2, \dots, n-1
	\end{align} 
	and
	\begin{align}\label{CORASSUMPTION2}
	[v,b_n]+[u,b_{n-1}]+\delta b_n[u,b_n]=n\cdot 1_{{M_n(\mathcal{A})}}.
	\end{align}
	Then for any $\mu>0$, there exist matrices $D_\mu, X_\mu \in M_n(\mathcal{A})$ such that 
	\begin{align*}
	&\|	D_\mu\| \leq \frac{\|u\|}{\mu^2\delta}+\frac{\|v\|}{\mu\delta}+(n-1)+\delta \sum_{i=1}^{n}\mu^{n-i-1}\|b_i\|\|u\|,\\
	& \|	X_\mu\| \leq 1+ \delta \sum_{i=1}^{n}\mu^{n-i+1}\|b_i\| \quad \text{ and }\\
	&\|[D_\mu,X_\mu]-1_{M_n(\mathcal{A})}\|\leq\mu^{n-1}\|[v,b_1]+\delta b_2+\delta b_1 [u,b_n]\|.
	\end{align*}
\end{corollary}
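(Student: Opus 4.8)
The plan is to pass from Lemma~\ref{COMMUTATORLEMMA} to Corollary~\ref{COROLLARYLEMMA} by conjugating the matrices $D$ and $X$ constructed there with a suitable diagonal matrix built from powers of the scalar $\mu$, and then reading off the stated norm bounds from the resulting entrywise formulas. Concretely, first I would set
\[
E_\mu\coloneqq \operatorname{diag}(\mu^{n},\mu^{n-1},\dots,\mu^{2},\mu)\in M_n(\mathcal{A}),
\]
(identifying the scalar $\mu^k$ with $\mu^k 1_\mathcal{A}$), which is invertible with $E_\mu^{-1}=\operatorname{diag}(\mu^{-n},\dots,\mu^{-1})$. Then define $D_\mu\coloneqq E_\mu D E_\mu^{-1}$ and $X_\mu\coloneqq E_\mu X E_\mu^{-1}$, where $D,X$ are exactly the matrices of Lemma~\ref{COMMUTATORLEMMA}. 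Since conjugation by an invertible element is an algebra automorphism, $[D_\mu,X_\mu]=E_\mu[D,X]E_\mu^{-1}$, and since $E_\mu$ commutes with $1_{M_n(\mathcal{A})}$, we get $[D_\mu,X_\mu]-1_{M_n(\mathcal{A})}=E_\mu\big([D,X]-1_{M_n(\mathcal{A})}\big)E_\mu^{-1}$.

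Next I would compute $D_\mu$ and $X_\mu$ entrywise: conjugating by $E_\mu$ multiplies the $(i,j)$ entry by $\mu^{n-i+1}\cdot\mu^{-(n-j+1)}=\mu^{j-i}$. Applying this to $X$ from Lemma~\ref{COMMUTATORLEMMA}: the subdiagonal entries $1_\mathcal{A}$ (in positions $(i+1,i)$) pick up a factor $\mu^{-1}$, but — and this is the point of the particular choice of $E_\mu$ — I would instead take $E_\mu$ with the opposite gradient if needed so that the $1_\mathcal{A}$'s are unchanged and the last column entries $\delta b_i$ (position $(i,n)$) get multiplied by $\mu^{n-i}$; rechecking the index bookkeeping against the desired bound $\|X_\mu\|\le 1+\delta\sum_i\mu^{n-i+1}\|b_i\|$ will fix the exact exponents. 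Granting the correct convention, the last column of $X_\mu$ has entries $\delta\mu^{n-i+1}b_i$ and the subdiagonal stays $1_\mathcal{A}$, so by splitting $X_\mu$ as a sum of the shift part (norm $\le 1$) and the last-column part (norm $\le\delta\sum_i\mu^{n-i+1}\|b_i\|$ by the triangle inequality, since a single nonzero column has norm at most the sum of the norms of its entries) we obtain the claimed bound on $\|X_\mu\|$. The same conjugation applied to $D$ turns its entries $v/\delta$, $u/\delta$, $i\cdot 1_\mathcal{A}$ and $\delta b_iu$ into $\mu^{\pm}$-scaled versions; with the right convention the diagonal $v/\delta$ is unchanged, the sub/superdiagonals become $\mu^{\mp1}\cdot(u/\delta)$ and $\mu^{\pm1}\cdot i$, and the last-column $\delta b_iu$ becomes $\delta\mu^{n-i-1}b_iu$ (roughly). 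Estimating these three groups separately by the triangle inequality and observing that the superdiagonal integer entries $i\le n-1$ while $\mu>0$ can be chosen so the $\mu$-powers there are bounded by $1$ (or absorbed), I would arrive at $\|D_\mu\|\le \tfrac{\|u\|}{\mu^2\delta}+\tfrac{\|v\|}{\mu\delta}+(n-1)+\delta\sum_i\mu^{n-i-1}\|b_i\|\|u\|$; I would double-check the $\mu^{-2}$ and $\mu^{-1}$ exponents on $\|u\|$, $\|v\|$ against where $u/\delta$ and $v/\delta$ sit relative to the diagonal.

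Finally, for the commutator estimate, I would invoke the hypotheses \eqref{CORASSUMPTION1} and \eqref{CORASSUMPTION2}: they say that in the error matrix displayed in Lemma~\ref{COMMUTATORLEMMA}, rows $2$ through $n$ vanish identically (rows $2,\dots,n-1$ by \eqref{CORASSUMPTION1} and row $n$ by \eqref{CORASSUMPTION2}, which exactly cancels the $-n\cdot 1_\mathcal{A}$). Hence $[D,X]-1_{M_n(\mathcal{A})}$ has only its $(1,n)$ entry nonzero, equal to $[v,b_1]+\delta b_2+\delta b_1[u,b_n]$. Conjugating by $E_\mu$ multiplies this single entry by $\mu^{n-1}$ (again modulo the sign convention on the gradient of $E_\mu$), and the norm of a matrix with one nonzero entry equals the norm of that entry, giving $\|[D_\mu,X_\mu]-1_{M_n(\mathcal{A})}\|\le \mu^{n-1}\|[v,b_1]+\delta b_2+\delta b_1[u,b_n]\|$, as required. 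The main obstacle I anticipate is purely bookkeeping: getting the direction of the grading in $E_\mu$ and all the resulting exponents of $\mu$ to match the three asymmetric bounds in the statement simultaneously — in particular ensuring the $\mu^{-2}$ appears only on the $\|u\|$ term in $\|D_\mu\|$ and the $\mu^{n-i+1}$ (not $\mu^{n-i}$) appears in $\|X_\mu\|$ — which forces the precise choice of conjugator; once that is pinned down everything else is the triangle inequality and the fact that conjugation is a $*$-algebra automorphism.
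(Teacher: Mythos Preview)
Your approach is essentially the paper's: conjugate the $D,X$ of Lemma~\ref{COMMUTATORLEMMA} by a diagonal matrix with graded powers of $\mu$, then read off the bounds. But there is one concrete gap. Pure conjugation $D_\mu=E_\mu DE_\mu^{-1}$, $X_\mu=E_\mu XE_\mu^{-1}$ cannot produce the stated exponents, no matter which gradient you pick for $E_\mu$: the subdiagonal $1_\mathcal{A}$'s of $X$ sit at positions $(i+1,i)$ and will always acquire a factor $\mu^{\pm1}$, never $1$, so the shift part of $X_\mu$ cannot have norm $\le 1$. Reversing the gradient only flips the sign of the exponent. This is not a bookkeeping issue you can fix by choosing $E_\mu$ differently.

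The paper resolves it by inserting opposite scalar factors: with $S_\mu=\operatorname{diag}(\mu^{n-1},\mu^{n-2},\dots,\mu,1)$ it sets
\[
D_\mu\coloneqq \tfrac{1}{\mu}\,S_\mu D S_\mu^{-1},\qquad X_\mu\coloneqq \mu\, S_\mu X S_\mu^{-1}.
\]
The scalars cancel in the commutator, so $[D_\mu,X_\mu]=S_\mu[D,X]S_\mu^{-1}$ and your argument for the $(1,n)$-entry goes through unchanged, giving the factor $\mu^{n-1}$. Meanwhile the extra $\mu$ on $X_\mu$ restores its subdiagonal to $1_\mathcal{A}$ and pushes the last column to $\mu^{n-i+1}\delta b_i$; the extra $\tfrac{1}{\mu}$ on $D_\mu$ keeps its superdiagonal at the integers $1,\dots,n-1$, turns the diagonal $v/\delta$ into $v/(\mu\delta)$, the subdiagonal $u/\delta$ into $u/(\mu^2\delta)$, and the last column into $\mu^{n-i-1}\delta b_iu$. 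All three asymmetric bounds then fall out exactly as you describe, via the triangle inequality applied to the obvious splittings.
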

\begin{proof}
	Let $D$ and $X$ be as in Lemma \ref{COMMUTATORLEMMA}. Define 
	\begin{align*}
	S_\mu &\coloneqq \begin{pmatrix}
	\mu^{n-1}& 0 & 0& \cdots& 0 &0\\
	0 & \mu^{n-2} & 0& \cdots& 0 &0\\
	0 &0 & \mu^{n-3}  &\cdots&  0&0\\
	\vdots &\vdots  & \vdots& \cdots& \vdots &\vdots\\
	0&0&0&\cdots& \mu& 0\\
	0&0&0&\cdots& 0& 1
	\end{pmatrix} \in M_n(\mathbb{K}), \\
	D_\mu &\coloneqq \frac{1}{\mu}S_\mu DS_\mu^{-1}, \quad X_\mu \coloneqq \mu S_\mu XS_\mu^{-1}.
	\end{align*}
	Then 
	\begin{align*}
	\|D_\mu \|&=\left\| \begin{pmatrix}
	\frac{v}{\mu\delta} &1_\mathcal{A} & 0& \cdots& 0 & \mu^{n-2}\delta b_1u\\
	\frac{u}{\mu^2\delta} & \frac{v}{\mu\delta} & 2.1_\mathcal{A}& \cdots& 0 & \mu^{n-3}\delta b_2u\\
	0 & \frac{u}{\mu^2\delta} & \frac{v}{\mu\delta}& \cdots& 0 & \mu^{n-4}\delta b_3u\\
	\vdots &\vdots  & \vdots& \cdots& \vdots &\vdots\\
	0&0&0&\cdots &\frac{v}{\mu\delta}&  (n-1)1_\mathcal{A}+\delta b_{n-1}u\\
	0&0&0&\cdots& \frac{u}{\mu^2\delta}&  \frac{v}{\mu\delta}+\mu^{-1}\delta b_nu
	\end{pmatrix}\right\|\\
	&\leq \left\|\frac{u}{\mu^2\delta}\right\|+\left\|\frac{v}{\mu\delta}\right\|+\|(n-1)1_\mathcal{A}\|+\left\| \begin{pmatrix}
	0 &0 & 0& \cdots& 0 & \mu^{n-2}\delta b_1u\\
	0 & 0& 0& \cdots& 0 & \mu^{n-3}\delta b_2u\\
	0 & 0 & 0& \cdots& 0 & \mu^{n-4}\delta b_3u\\
	\vdots &\vdots  & \vdots& \cdots& \vdots &\vdots\\
	0&0&0&\cdots& 0&  \delta b_{n-1}u\\
	0&0&0&\cdots& 0&  \mu^{-1}\delta b_nu
	\end{pmatrix}\right\|\\
	&\leq \frac{\|u\|}{\mu^2\delta}+\frac{\|v\|}{\mu\delta}+(n-1)+\delta \sum_{i=1}^{n}\mu^{n-i-1}\|b_i\|\|u\|
	\end{align*}
	and 
	\begin{align*}
	\|X_\mu\| &=\left\|\begin{pmatrix}
	0 &0 & 0& \cdots& 0 & \mu^{n}\delta b_1\\
	1_\mathcal{A} & 0& 0& \cdots& 0 & \mu^{n-1}\delta b_2\\
	0 & 1_\mathcal{A} & 0& \cdots& 0 & \mu^{n-2}\delta b_3\\
	\vdots &\vdots  & \vdots& \cdots& \vdots &\vdots\\
	0&0&0&\cdots& 0&  \mu^2 \delta b_{n-1}\\
	0&0&0&\cdots& 1_\mathcal{A}&  \mu\delta b_n
	\end{pmatrix}\right\|\\
	&\leq \|1_\mathcal{A}\|+\left\|\begin{pmatrix}
	0 &0 & 0& \cdots& 0 & \mu^{n}\delta b_1\\
	0 & 0& 0& \cdots& 0 & \mu^{n-1}\delta b_2\\
	0 & 0 & 0& \cdots& 0 & \mu^{n-2}\delta b_3\\
	\vdots &\vdots&   \vdots& \cdots& \vdots &\vdots\\
	0&0&0&\cdots& 0&  \mu^2 \delta b_{n-1}\\
	0&0&0&\cdots& 0&  \mu\delta b_n
	\end{pmatrix}\right\|\\
	&\leq 1+ \delta \sum_{i=1}^{n}\mu^{n-i+1}\|b_i\|.
	\end{align*}
	Now using (\ref{CORASSUMPTION1}) and (\ref{CORASSUMPTION2})	 we get 
	\begin{align*}
	\|[D_\mu,X_\mu]-1_{M_n(\mathcal{A})}\|&=\left\|\begin{pmatrix}
	0 &0 & 0& \cdots& 0 & \mu^{n-1}([v,b_1]+\delta b_2+\delta b_1 [v,b_n])\\
	0 & 0& 0& \cdots& 0 & 0\\
	0 & 0 & 0& \cdots& 0 & 0\\
	\vdots &\vdots  & \vdots& \cdots& \vdots &\vdots\\
	0&0&0&\cdots& 0&  0\\
	0&0&0&\cdots& 0&  0
	\end{pmatrix}\right\|\\
	&\leq\mu^{n-1}\|[v,b_1]+\delta b_2+\delta b_1 [u,b_n]\|.
	\end{align*}
\end{proof}
Let $\mathcal{A}$ be a unital C*-algebra. Assume that  there are isometries $u, v \in \mathcal{A}$ such that
\begin{align}\label{IMPORTANTEQUATION}
u^*u=v^*v=uu^*+vv^*=1_\mathcal{A} \quad \text{and} \quad  u^*v=v^*u=0.
\end{align}
Examples of such unital C*-algebras are $\mathcal{B}(\mathcal{H})$ (where $\mathcal{H}$ is an infinite dimensional Hilbert space) as well as any unital  C*-algebra which contains the Cuntz algebra $\mathcal{O}_2$ (\cite{CUNTZ}). Note that whenever a unital C*-algebra admits a trace map there are no isometries satisfying Equation (\ref{IMPORTANTEQUATION}). In particular, any finite dimensional unital C*-algebra does not have such elements. It is also clear that no commutative unital C*-algebra can have isometries satisfying Equation (\ref{IMPORTANTEQUATION}).

It is shown in \cite{TAO} that whenever  $\mathcal{H}$ is  an infinite dimensional Hilbert space, then the Banach algebras $\mathcal{B}(\mathcal{H})$ and $M_2(\mathcal{B}(\mathcal{H})) $ are isometrically isomorphic. We now do these results for C*-algebras whenever they have isometries satisfying Equation (\ref{IMPORTANTEQUATION}). To do so we first need a result from the theory of C*-algebras.
\begin{theorem}(cf. \cite{TAKESAKI, PEDERSEN})\label{INJECTIOVEHOMOISISO}
	\begin{enumerate}[label=(\roman*)]
		\item Every *-homomorphism between C*-algebras is norm decreasing.
		\item If a *-homomorphism between C*-algebras is   injective, then it is isometric.
	\end{enumerate}
\end{theorem}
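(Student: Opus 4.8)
The plan is to derive both statements from the interplay between the norm and the spectrum in a C*-algebra, the crucial fact being that for a self-adjoint (more generally normal) element $h$ one has $\|h\|=r(h)$, where $r(h)=\sup\{|\lambda|:\lambda\in\sigma(h)\}$ denotes the spectral radius. Throughout I would write $\varphi:\mathcal{A}\to\mathcal{B}$ for the given $*$-homomorphism and treat (i) first, since (ii) will rest on it.

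For (i), I would first reduce to the unital case by adjoining units $\tilde{\mathcal{A}},\tilde{\mathcal{B}}$ and extending $\varphi$ to a unital $*$-homomorphism $\tilde\varphi$ via $\tilde\varphi(a+\lambda 1)\coloneqq\varphi(a)+\lambda 1$. The key observation is that $*$-homomorphisms preserve invertibility: if $\lambda 1-a$ is invertible in $\tilde{\mathcal{A}}$ with inverse $c$, then $\tilde\varphi(c)$ is a two-sided inverse of $\lambda 1-\tilde\varphi(a)$, so $\lambda 1-\tilde\varphi(a)$ is invertible in $\tilde{\mathcal{B}}$. Hence $\sigma(\tilde\varphi(a))\subseteq\sigma(a)$ and therefore $r(\tilde\varphi(a))\le r(a)$ for every $a$. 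Now for self-adjoint $h$ the element $\varphi(h)$ is again self-adjoint, since $\varphi(h)^*=\varphi(h^*)=\varphi(h)$, so $\|\varphi(h)\|=r(\varphi(h))\le r(h)=\|h\|$. For an arbitrary $a\in\mathcal{A}$ I would then invoke the C*-identity twice: $\|\varphi(a)\|^2=\|\varphi(a)^*\varphi(a)\|=\|\varphi(a^*a)\|\le\|a^*a\|=\|a\|^2$, where the middle inequality is the self-adjoint case applied to the positive element $a^*a$. This yields $\|\varphi(a)\|\le\|a\|$.

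For (ii) I would argue by contraposition. Suppose $\varphi$ is not isometric, so $\|\varphi(a)\|<\|a\|$ for some $a$; passing to $b\coloneqq a^*a$ and using the C*-identity as above produces a positive element $b$ with $m\coloneqq\|\varphi(b)\|<\|b\|\eqqcolon M$. Since $b\ge 0$ one has $\sigma(b)\subseteq[0,M]$ with $M\in\sigma(b)$, while $\sigma(\varphi(b))\subseteq[0,m]$. I would then choose a continuous $f:[0,M]\to\mathbb{R}$ with $f(0)=0$, $f\equiv 0$ on $[0,m]$, and $f(M)=1$, and apply continuous functional calculus. The essential compatibility is $\varphi(f(b))=f(\varphi(b))$: this is immediate for polynomials vanishing at $0$ and passes to the uniform limit because $\varphi$ is continuous by (i). Then $f(\varphi(b))=0$ since $f$ vanishes on $\sigma(\varphi(b))$, whereas the spectral mapping theorem gives $1=f(M)\in\sigma(f(b))$, so $f(b)\neq 0$. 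Thus $f(b)$ is a nonzero element in the kernel of $\varphi$, contradicting injectivity.

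The main obstacle is not any single computation but the careful handling of the non-unital case and of the continuous functional calculus: I must ensure that adjoining units does not distort the spectral radius inequality, that $f(b)$ genuinely lives in the (possibly non-unital) algebra, which is exactly why $f(0)=0$ is imposed, and that the identity $\varphi(f(b))=f(\varphi(b))$ holds for the chosen separating function $f$ and not merely for polynomials. The background facts $\|h\|=r(h)$ for self-adjoint $h$, spectral permanence, and the spectral mapping theorem are standard C*-algebra results that I would cite rather than reprove.
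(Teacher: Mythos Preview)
The paper does not supply its own proof of this theorem; it is stated with a citation to the standard references (Takesaki, Pedersen) and used as background. Your argument is correct and is essentially the classical proof one finds in those texts: reduce (i) to self-adjoint elements via the C*-identity and use $\sigma(\varphi(a))\subseteq\sigma(a)$ together with $\|h\|=r(h)$ for self-adjoint $h$; for (ii) exploit a strict norm drop on a positive element and separate the spectra with a continuous $f$ vanishing near $0$ so that $f(b)\neq 0$ lies in $\ker\varphi$. The care you take with unitization and with $f(0)=0$ so that $f(b)\in\mathcal{A}$ in the non-unital case is exactly what is needed.
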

\begin{theorem}\label{ALGEBRAMATRIX}
	Let  $\mathcal{A}$ be a unital C*-algebra. If  there are isometries $u, v \in \mathcal{A}$ such that Equation (\ref{IMPORTANTEQUATION}) holds, then the map 
	\begin{align}\label{FIRSTMAP}
	\phi:\mathcal{A} \ni x \mapsto 
	\begin{pmatrix}
	u^*xu & u^*xv \\
	v^*xu & v^*xv
	\end{pmatrix} \in M_2(\mathcal{A})
	\end{align}
	is a C*-algebra  isomorphism with the inverse map 
	\begin{align}\label{SECONDMAP}
	\psi:M_2(\mathcal{A})\ni   \begin{pmatrix}
	a & b \\
	c & d
	\end{pmatrix} \mapsto uau^*+ubv^*+vcu^*+vdv^* \in \mathcal{A}.
	\end{align}
\end{theorem}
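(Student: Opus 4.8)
\textbf{Proof plan for Theorem \ref{ALGEBRAMATRIX}.}
The plan is to verify directly that the two maps $\phi$ and $\psi$ displayed in Equations (\ref{FIRSTMAP}) and (\ref{SECONDMAP}) are mutually inverse $*$-homomorphisms, and then invoke Theorem \ref{INJECTIOVEHOMOISISO} to upgrade a bijective $*$-homomorphism to an isometric $*$-isomorphism. The only structural facts I will use are the five relations in Equation (\ref{IMPORTANTEQUATION}): $u^*u=v^*v=1_\mathcal{A}$, $uu^*+vv^*=1_\mathcal{A}$, and $u^*v=v^*u=0$ (together with their adjoints). First I would check that $\phi$ is linear (immediate, since each of the four entries $x\mapsto u^*xu$, etc., is linear) and $*$-preserving: the $(i,j)$ entry of $\phi(x)^*$ is the adjoint of the $(j,i)$ entry of $\phi(x)$, and, e.g., $(u^*xv)^* = v^*x^*u$, which is exactly the $(2,1)$ entry of $\phi(x^*)$; the diagonal entries are handled the same way.

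Next I would verify multiplicativity of $\phi$. Writing $\phi(x)\phi(y)$ and using matrix multiplication in $M_2(\mathcal{A})$, the $(1,1)$ entry is $u^*xu\,u^*yu + u^*xv\,v^*yu = u^*x(uu^*+vv^*)yu = u^*xyu$ by Equation (\ref{IMPORTANTEQUATION}), which is precisely the $(1,1)$ entry of $\phi(xy)$; the other three entries are identical computations with $u$ and $v$ swapped in the appropriate slots, each time collapsing the middle factor via $uu^*+vv^*=1_\mathcal{A}$. One should also note $\phi(1_\mathcal{A})=\begin{pmatrix} u^*u & u^*v \\ v^*u & v^*v\end{pmatrix} = \begin{pmatrix} 1_\mathcal{A} & 0 \\ 0 & 1_\mathcal{A}\end{pmatrix} = 1_{M_2(\mathcal{A})}$, using $u^*u=v^*v=1_\mathcal{A}$ and $u^*v=v^*u=0$, so $\phi$ is unital.

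Then I would confirm that $\psi$ is a two-sided inverse of $\phi$, which is the computational heart of the argument. For $\psi\circ\phi$: $\psi(\phi(x)) = u(u^*xu)u^* + u(u^*xv)v^* + v(v^*xu)u^* + v(v^*xv)v^* = (uu^*)x(uu^*) + (uu^*)x(vv^*) + (vv^*)x(uu^*) + (vv^*)x(vv^*) = (uu^*+vv^*)x(uu^*+vv^*) = x$, using Equation (\ref{IMPORTANTEQUATION}) twice. For $\phi\circ\psi$, apply $\phi$ to $m = uau^*+ubv^*+vcu^*+vdv^*$ and compute each entry: the $(1,1)$ entry is $u^*mu = (u^*u)a(u^*u) + (u^*u)b(v^*u) + (u^*v)c(u^*u) + (u^*v)d(v^*u)$; the last three terms vanish because $u^*v=v^*u=0$, and the first is $a$ since $u^*u=1_\mathcal{A}$. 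The $(1,2)$ entry $u^*mv$ similarly reduces to $b$, the $(2,1)$ entry $v^*mu$ to $c$, and the $(2,2)$ entry $v^*mv$ to $d$, so $\phi(\psi(m))=m$. Since $\psi = \phi^{-1}$ and $\phi$ is a unital $*$-homomorphism, it is in particular injective, so Theorem \ref{INJECTIOVEHOMOISISO}(ii) gives that $\phi$ is isometric; hence $\phi$ is a C*-algebra isomorphism with inverse $\psi$. The main obstacle here is purely bookkeeping — keeping the $2\times 2$ matrix entries and the orthogonality/completeness relations straight — rather than any conceptual difficulty; one could alternatively observe that $\psi$ is manifestly a $*$-homomorphism and derive everything from that side, whichever is cleaner to typeset.
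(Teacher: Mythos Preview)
Your proof is correct and follows essentially the same route as the paper: verify $\phi$ and $\psi$ are mutually inverse, check $\phi$ preserves the $*$-operation, and invoke Theorem \ref{INJECTIOVEHOMOISISO} to get the isometry. Your version is in fact slightly tighter than the paper's: you explicitly verify multiplicativity of $\phi$ (which the paper omits but needs to legitimately apply Theorem \ref{INJECTIOVEHOMOISISO}), and you deduce injectivity immediately from $\psi\circ\phi=\mathrm{id}$ rather than giving a separate element-level argument as the paper does.
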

\begin{proof}
	Using 	Equation (\ref{IMPORTANTEQUATION}), a direct computation gives 
	\begin{align*}
	\phi\psi\begin{pmatrix}
	a & b \\
	c & d
	\end{pmatrix}&=\phi(uau^*+ubv^*+vcu^*+vdv^*)\\
	&= \begin{pmatrix}
	u^*(uau^*+ubv^*+vcu^*+vdv^*)u & u^*(uau^*+ubv^*+vcu^*+vdv^*)v \\
	v^*(uau^*+ubv^*+vcu^*+vdv^*)u & v^*(uau^*+ubv^*+vcu^*+vdv^*)v
	\end{pmatrix}\\
	&=\begin{pmatrix}
	1_\mathcal{A} a1_\mathcal{A}+1_\mathcal{A}b0+0c1_\mathcal{A}+0d0 & 1_\mathcal{A}a0+1_\mathcal{A}b1_\mathcal{A}+0c0+0d1_\mathcal{A} \\
	0a1_\mathcal{A}+0b0+1_\mathcal{A}c1_\mathcal{A}+1_\mathcal{A}d0& 0a0+0b1_\mathcal{A}+1_\mathcal{A}c0+1_\mathcal{A}d1_\mathcal{A}
	\end{pmatrix}\\
	&=\begin{pmatrix}
	a & b \\
	c & d
	\end{pmatrix}, \quad \forall \begin{pmatrix}
	a & b \\
	c & d
	\end{pmatrix} \in M_2(\mathcal{A})
	\end{align*}
	and 
	\begin{align*}
	\psi\phi x&=\psi \begin{pmatrix}
	u^*xu & u^*xv \\
	v^*xu & v^*xv
	\end{pmatrix}\\
	&=u(u^*xu)u^*+u(u^*xv)v^*+v(v^*xu)u^*+v(v^*xv)v^*\\
	&=uu^*x(uu^*+vv^*)+vv^*x(uu^*+vv^*)\\
	&=uu^*x1_\mathcal{A}+vv^*x1_\mathcal{A}=(uu^*+vv^*)x=x, \quad \forall x \in \mathcal{A}.
	\end{align*}
	Further, 
	\begin{align*}
	(\phi(x))^*&= \begin{pmatrix}
	u^*xu & u^*xv \\
	v^*xu & v^*xv
	\end{pmatrix}^*=\begin{pmatrix}
	u^*x^*u & (v^*xu)^* \\
	(u^*xv)^* & v^*x^*v
	\end{pmatrix}\\
&=\begin{pmatrix}
	u^*x^*u & u^*x^*v \\
	v^*x^*u & v^*x^*v
	\end{pmatrix}=\phi(x^*), \quad \forall x \in \mathcal{A}.
	\end{align*}
	Hence $\phi$ is a *-isomorphism. Using Theorem \ref{INJECTIOVEHOMOISISO},  to show $\phi$ is a C*-algebra isomorphism (i.e., isometric isomorphism), it suffices to show that $\phi$ is injective. Let $ x \in \mathcal{A}$ be such that $\phi x=0$. Then 
	\begin{align*}
	u^*xu=u^*xv=0, \quad v^*xv=v^*xu=0.
	\end{align*}
	Using the first equation we get $uu^*xuu^*=uu^*xvv^*=0$ which implies  $uu^*x=uu^*x(uu^*+vv^*)=0$. Similarly using the second equation we get $vv^*x=0$. Therefore $x=(uu^*+vv^*)x=0$. Hence $\phi$ is injective which completes the proof.
\end{proof}
Along with the lines of Theorem \ref{ALGEBRAMATRIX} we can easily derive the following result.
\begin{theorem}\label{ALLC}
	Let  $\mathcal{A}$ be a unital C*-algebra and $n \in \mathbb{N}$. If  there are isometries $u, v \in \mathcal{A}$ such that Equation (\ref{IMPORTANTEQUATION}) holds, then the map 
	\begin{align*}
	\phi:M_n(\mathcal{A}) \ni X \mapsto 
	\begin{pmatrix}
	u^*Xu & u^*Xv \\
	v^*Xu & v^*Xv
	\end{pmatrix} \in M_{2n}(\mathcal{A})
	\end{align*}
	is a C*-algebra  isomorphism with the inverse map 
	\begin{align*}
	\psi:M_{2n}(\mathcal{A})\ni   \begin{pmatrix}
	A & B \\
	C & D
	\end{pmatrix} \mapsto uAu^*+uBv^*+vCu^*+vDv^* \in M_n(\mathcal{A}), 
	\end{align*}
	where if $X\coloneqq [x_{i,j}]_{i,j}$ is a matrix, and $a,b\in \mathcal{A}$, by $aXb$ we mean the matrix $[ax_{i,j}b]_{i,j}$. In particular, the C*-algebras $\mathcal{A}, M_{2}(\mathcal{A}), M_{4}(\mathcal{A}), \dots, M_{2n}(\mathcal{A}), \dots $ are all *-isometrically isomorphic.
\end{theorem}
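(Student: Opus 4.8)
The plan is to verify directly that the map $\phi$ and the claimed inverse $\psi$ are mutually inverse, that $\phi$ is a $*$-homomorphism, and that $\phi$ is injective, and then invoke Theorem~\ref{INJECTIOVEHOMOISISO} exactly as in the proof of Theorem~\ref{ALGEBRAMATRIX}. The point is that the identities $u^*u=v^*v=1_{\mathcal{A}}$, $uu^*+vv^*=1_{\mathcal{A}}$, $u^*v=v^*u=0$ are all the algebra we need; they hold in $\mathcal{A}$ and therefore, applied entrywise, they hold equally well when $u,v$ act on elements of $M_n(\mathcal{A})$ via the ``scalar-like'' multiplications $aXb=[ax_{i,j}b]_{i,j}$ appearing in the statement. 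So the first thing I would do is record the bookkeeping convention: for $a,b\in\mathcal{A}$ and $X\in M_n(\mathcal{A})$, the operation $X\mapsto aXb$ is additive in $X$, and $(a_1Xb_1)(a_2Yb_2)$ behaves as $a_1(Xb_1a_2Y)b_2$ only when $b_1a_2$ is central-free — but in our use $b_1a_2$ is always one of $u^*u,u^*v,v^*v,v^*u$, i.e.\ $1_{\mathcal{A}}$ or $0$, so the entrywise products collapse cleanly. With this in hand the computation $\phi\psi=\mathrm{id}_{M_{2n}(\mathcal{A})}$ and $\psi\phi=\mathrm{id}_{M_n(\mathcal{A})}$ is a block-matrix repetition of the $2\times 2$ computation already carried out in the proof of Theorem~\ref{ALGEBRAMATRIX}, now with each scalar slot replaced by an $n\times n$ block.

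Next I would check that $\phi$ is a $*$-homomorphism. Multiplicativity: $\phi(X)\phi(Y)$ has $(1,1)$ block $u^*Xu\,u^*Yu+u^*Xv\,v^*Yu=u^*X(uu^*+vv^*)Yu=u^*XYu$, and similarly for the other three blocks, so $\phi(X)\phi(Y)=\phi(XY)$; linearity is immediate from linearity of $Z\mapsto u^*Zu$ etc.; and $\phi$ preserves the adjoint because $(u^*Xv)^*=v^*X^*u$, so the off-diagonal blocks swap correctly, exactly as in Theorem~\ref{ALGEBRAMATRIX}. Then I would check injectivity: if $\phi(X)=0$ then $u^*Xu=u^*Xv=v^*Xu=v^*Xv=0$, whence $uu^*Xuu^*=uu^*Xvv^*=0$ gives $uu^*X(uu^*+vv^*)=uu^*X=0$, similarly $vv^*X=0$, so $X=(uu^*+vv^*)X=0$. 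By Theorem~\ref{INJECTIOVEHOMOISISO}(ii) an injective $*$-homomorphism between C*-algebras is isometric, hence $\phi$ is a C*-algebra isomorphism; $\psi=\phi^{-1}$ is then automatically a C*-algebra isomorphism as well.

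Finally, for the ``in particular'' clause I would iterate: applying the isomorphism $\phi$ with $n=1$ gives $\mathcal{A}\cong M_2(\mathcal{A})$; applying it with $n=2$ gives $M_2(\mathcal{A})\cong M_4(\mathcal{A})$; and in general $M_{2^k}(\mathcal{A})\cong M_{2^{k+1}}(\mathcal{A})$ for all $k$, since $M_{2^k}(\mathcal{A})$ is itself of the form $M_m(\mathcal{A})$ to which the theorem applies (the isometries $u,v$ still live inside $\mathcal{A}\hookrightarrow M_m(\mathcal{A})$ as $u\cdot 1_{M_m},\,v\cdot 1_{M_m}$, or one simply reuses the formula for $M_n(\mathcal{A})$ directly). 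Composing these isomorphisms yields $\mathcal{A}\cong M_2(\mathcal{A})\cong M_4(\mathcal{A})\cong\cdots$, which is the asserted chain. I do not anticipate a genuine obstacle here — the statement is essentially the Theorem~\ref{ALGEBRAMATRIX} argument ``tensored with $M_n$'' — so the only thing requiring care is the entrywise-multiplication bookkeeping, i.e.\ making sure every product $b_1a_2$ that arises in expanding $\phi(X)\phi(Y)$ or $\phi\psi$ or $\psi\phi$ is one of $1_{\mathcal{A}}$ or $0$ so that the block arithmetic reduces to the scalar case. That is the step I would write out most carefully; everything else is a transcription of the $2\times 2$ proof.
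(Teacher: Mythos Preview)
Your proposal is correct and matches the paper's approach exactly: the paper does not give a separate proof but simply states that the result follows ``along the lines of Theorem~\ref{ALGEBRAMATRIX},'' which is precisely the block-by-block transcription you outline. In fact you are slightly more careful than the paper's proof of Theorem~\ref{ALGEBRAMATRIX}, since you explicitly verify multiplicativity of $\phi$ (via $u^*X(uu^*+vv^*)Yu=u^*XYu$), a step the paper leaves implicit; and you correctly note that once $\psi\phi=\mathrm{id}$ is established the separate injectivity check is redundant, though harmless.
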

In the rest of this chapter, we assume that unital C*-algebra  $\mathcal{A}$ has isometries $u,v$ satisfying Equation (\ref{IMPORTANTEQUATION}). In the next result we use the following notation. Given a vector $x \in \mathcal{A}^n$, $x_i$ means its $i^{\text{th}}$ coordinate.
\begin{proposition}\label{RIGHTPROPOSI}
	Let $n\geq2$ and $T:\mathcal{A}^n\to \mathcal{A}^{n-1}$ be the bounded linear operator defined by 
	\begin{align*}
	T(b_i)_{i=1}^n\coloneqq ([v,b_i]+[u,b_{i-1}])_{i=2}^n, \quad \forall (b_i)_{i=1}^n \in \mathcal{A}^n.
	\end{align*}	
	Then there exists a bounded linear right-inverse  $R:\mathcal{A}^{n-1}\to \mathcal{A}^{n}$ for $T$ such that 
	\begin{align*}
	\|Rb\|&= \sup_{1\leq i \leq n}\|(Rb)_i\|\leq 8 \sqrt{2}n^2\sup_{2\leq i \leq n}\|b_i\|\\
	&\leq 8 \sqrt{2}n^2\sup_{1\leq i \leq n}\|b_i\|=8 \sqrt{2}n^2\|b\|, \quad \forall b \in \mathcal{A}^{n}.
	\end{align*}
\end{proposition}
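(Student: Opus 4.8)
The plan is to build the right-inverse $R$ recursively, solving the ``triangular'' system $T(b_i)_{i=1}^n = (c_i)_{i=2}^n$ one equation at a time, starting from the top. The $i$-th equation reads $[v,b_i] + [u,b_{i-1}] = c_i$ for $i = 2, \dots, n$. If I temporarily fix $b_1 = 0$, the first of these equations, $[v,b_2] = c_2 - [u,b_1] = c_2$, is a single equation in the unknown $b_2$; once $b_2$ is chosen, the next equation $[v,b_3] = c_3 - [u,b_2]$ determines $b_3$, and so on. Thus everything reduces to the single scalar task: given $c \in \mathcal{A}$, find $b \in \mathcal{A}$ with $[v,b] = c$ and a controlled norm bound $\|b\| \le K\|c\|$. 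This is exactly where the hypothesis that $\mathcal{A}$ carries isometries $u,v$ satisfying Equation~(\ref{IMPORTANTEQUATION}) enters: the isometry $v$ (together with its ``shift'' behaviour $v^*v = 1_\mathcal{A}$, $vv^* \neq 1_\mathcal{A}$) allows one to invert the operator $b \mapsto [v,b] = vb - bv$ in a bounded way. I would expect the construction to produce $b = \sum_{k\ge 0} v^{k+1} c (v^*)^k$ or a similar telescoping series — and here the factorization $\mathcal{A} \cong M_2(\mathcal{A}) \cong \cdots \cong M_{2^m}(\mathcal{A})$ from Theorem~\ref{ALLC} is the right vehicle, since in the matrix picture the ``creation operator'' $v$ acts as a genuine unilateral shift and the equation $[v,b]=c$ becomes a solvable triangular matrix equation with an explicit bounded solution.

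The key steps, in order, are: (1) establish the scalar lemma — for $c \in \mathcal{A}$ there is $b \in \mathcal{A}$ with $[v,b]=c$ and $\|b\| \le K_0 \|c\|$ for an absolute constant $K_0$ (I anticipate $K_0$ of modest size, something like $4$ or $4\sqrt 2$, coming from splitting $c$ into its ``diagonal'' and ``off-diagonal'' pieces via $\phi$ in Theorem~\ref{ALGEBRAMATRIX} and summing a geometric-type series); (2) run the recursion: set $b_1 := 0$, then for $i = 2, \dots, n$ solve $[v, b_i] = c_i - [u, b_{i-1}]$ using step (1), noting $\|[u,b_{i-1}]\| \le 2\|b_{i-1}\|$ since $u$ is an isometry hence $\|u\| = 1$; (3) track the norm growth through the recursion — each step multiplies the previous bound by a factor controlled by $K_0$ and the factor $2$ from the commutator with $u$, so after $n$ steps one gets a bound of the form $\|b_i\| \le (\text{const})^i \sup_j \|c_j\|$, and then one must be careful to arrange the bookkeeping so that the final estimate is polynomial $8\sqrt{2}\,n^2$ rather than exponential; (4) define $Rb$ to be this tuple $(b_i)_{i=1}^n$, check linearity and that $TR = I_{\mathcal{A}^{n-1}}$ by construction, and conclude with the stated norm bound together with the trivial inequalities $\sup_{2\le i\le n}\|b_i\| \le \sup_{1\le i\le n}\|b_i\| = \|b\|$.

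The main obstacle will be step (3): a naive recursion gives an exponential constant $C^n$, not the polynomial $n^2$ claimed. To get $n^2$ I expect one cannot simply cascade the scalar lemma; instead one should solve the whole system $T(b)=c$ in one stroke inside the matrix amplification. Concretely, after applying the isomorphism $\mathcal{A}\cong M_N(\mathcal{A})$ for a suitable $N$ (so that $v$ becomes a shift with enough ``room''), the operator $T$ becomes an honest banded/triangular operator on $(M_N(\mathcal{A}))^n$, and its right-inverse can be written down as a single explicit formula whose norm is estimated by counting the number of nonzero band-entries — which is $O(n)$ in each row and $O(n)$ rows, giving $O(n^2)$ after one more factor from the off-diagonal split (hence the $8\sqrt2\, n^2$). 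So the real content is: (i) the scalar inversion of $[v,\,\cdot\,]$ with an absolute constant, and (ii) a careful, simultaneous rather than iterative assembly of $R$ so that the dependence on $n$ stays quadratic. Once those two points are in hand, verifying $TR = I$ and the displayed norm inequality is a routine direct computation which I would not belabor.
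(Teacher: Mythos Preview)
Your recursive plan has a genuine gap at step (1): the ``scalar lemma'' asserting a bounded solution of $[v,b]=c$ with $\|b\|\le K_0\|c\|$ does not hold. Try your own ansatz $b=-cv^*$: then $[v,-cv^*]=c-vcv^*$, and the error $vcv^*$ has the same norm as $c$, so no Neumann series converges. More fundamentally, $[v,\cdot]$ annihilates every element commuting with $v$, and there is no reason its range should be all of $\mathcal{A}$; your telescoping series $\sum_k v^{k+1}c(v^*)^k$ does not telescope to $c$ either. Since the scalar lemma fails, the whole recursion collapses, and your fallback plan via matrix amplification is too vague to rescue it (you never say what the ``explicit formula'' for the right inverse is or why counting band entries gives an operator-norm bound).

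The paper's approach avoids inverting $[v,\cdot]$ entirely. Instead it writes down an \emph{approximate} right inverse $L:\mathcal{A}^{n-1}\to\mathcal{A}^n$, $(Lx)_i=-\tfrac12 x_iv^*-\tfrac12 x_{i+1}u^*$ (with $x_1=x_{n+1}=0$), and computes directly that $TL=1-E$ where
\[
(Ex)_i=\tfrac12\bigl(vx_iv^*+vx_{i+1}u^*+ux_{i-1}v^*+ux_iu^*\bigr).
\]
The point is that $(Ex)_i$ is, via the isomorphism $\psi$ of Theorem~\ref{ALGEBRAMATRIX}, the image of the $2\times 2$ matrix $\tfrac12\begin{pmatrix}x_i&x_{i+1}\\x_{i-1}&x_i\end{pmatrix}$, so $\|(Ex)_i\|$ is controlled by a convex combination of $\|x_{i-1}\|,\|x_i\|,\|x_{i+1}\|$. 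In the sup norm this only gives $\|E\|\le 1$, which is useless; the decisive trick is to introduce the weighted norm $\|(x_i)\|'=\sup_i(2-i^2/n^2)^{-1/2}\|x_i\|$, under which one gets $\|E\|'\le 1-\tfrac{1}{8n^2}$, whence $\|(1-E)^{-1}\|'\le 8n^2$ by Neumann series. Setting $R=L(1-E)^{-1}$ and converting back to the original norm produces the factor $8\sqrt{2}\,n^2$. Both ingredients---the approximate inverse $L$ and the quadratic weight giving a spectral gap of order $n^{-2}$---are absent from your proposal.
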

\begin{proof}
	Define 
	\begin{align*}
	L:\mathcal{A}^{n-1} \ni (x_i)_{i=2}^n\mapsto \left(-\frac{1}{2}x_iv^*-\frac{1}{2}x_{i+1}u^*\right)_{i=1}^n\in \mathcal{A}^{n}, \quad \text{ where } x_1\coloneqq 0, x_{n+1}\coloneqq 0
	\end{align*}
	and 
	\begin{align*}
	E:\mathcal{A}^{n-1}\ni (x_i)_{i=2}^n\mapsto \left(\frac{1}{2}(vx_iv^*+vx_{i+1}u^*+ux_{i-1}v^*+ux_iu^*)\right)_{i=2}^n \in \mathcal{A}^{n-1}.
	\end{align*}
	Then 
	\begin{align*}
	&TL(x_i)_{i=2}^n=T\left(-\frac{1}{2}x_iv^*-\frac{1}{2}x_{i+1}u^*\right)_{i=1}^n=-\frac{1}{2}(T(x_iv^*)_{i=2}^n+T(x_{i+1}u^*)_{i=2}^n)\\
	&=-\frac{1}{2}(([v, x_iv^*]+[u, x_{i-1}v^*])_{i=2}^n+([v,x_{i+1}u^*]+[u,x_{i}u^*])_{i=2}^n)\\
	&=-\frac{1}{2}(vx_iv^*-x_iv^*v+ux_{i-1}v^*-x_{i-1}v^*u+vx_{i+1}u^*-x_{i+1}u^*v+ux_{i}u^*-x_{i}u^*u)_{i=2}^n\\
	&=-\frac{1}{2}(vx_iv^*-x_i+ux_{i-1}v^*-x_{i-1}v^*u+vx_{i+1}u^*+ux_{i}u^*-x_{i})_{i=2}^n\\
	&=(x_i)_{i=2}^n-\frac{1}{2}(vx_iv^*+vx_{i+1}u^*+ux_{i-1}v^*+ux_iu^*)_{i=1}^n \\
	&=(1-E)(x_i)_{i=2}^n, \quad \forall (x_i)_{i=2}^n \in \mathcal{A}^{n-1}, \quad \text{ where } 1(x_i)_{i=2}^n\coloneqq (x_i)_{i=2}^n.
	\end{align*}
	We next try to show that the operator $1-E$ is bounded invertible with the help of Neumann series. First step is to change the  norm on $\mathcal{A}^{n}$ to an equivalent norm so that  invertibility property will not  affect in both norms. Define a new norm on  $\mathcal{A}^{n-1}$ by 
	\begin{align*}
	\|(x_i)_{i=2}^n\|'\coloneqq\sup _{2\leq i \leq n}\left(2-\frac{i^2}{n^2}\right)^\frac{-1}{2}\|x_i\|.
	\end{align*}
	Let $x=(x_i)_{i=2}^n\in	\mathcal{A}^{n-1}$ be such that $	\|(x_i)_{i=2}^n\|'\leq1$. Then
	\begin{align*}
	\left(2-\frac{i^2}{n^2}\right)^\frac{-1}{2}\|x_i\|\leq \sup _{2\leq i \leq n}\left(2-\frac{i^2}{n^2}\right)^\frac{-1}{2}\|x_i\|\leq 1, \quad \forall 2\leq i \leq n.
	\end{align*} 
	Hence $\|x_i\|\leq  \left(2-\frac{i^2}{n^2}\right)^\frac{1}{2}$ for all $2\leq i \leq n$. Using Theorem \ref{ALGEBRAMATRIX} we now get 
	\begin{align*}
	\|(Ex)_i\|&=\frac{1}{2}\|vx_iv^*+vx_{i+1}u^*+ux_{i-1}v^*+ux_iu^*\|\\
	&=\frac{1}{2} \left\|\begin{pmatrix}
	x_{i} & x_{i+1} \\
	x_{i-1} & x_{i}
	\end{pmatrix}\right\|\leq \frac{1}{2}\left\|\begin{pmatrix}
	\|x_{i}\| & \|x_{i+1}\| \\
	\|x_{i-1}\| & \|x_{i}\|
	\end{pmatrix}\right\|\\
	&\leq \frac{1}{2} \left(\|x_{i}\|^2+\|x_{i+1}\|^2+\|x_{i-1}\|^2+\|x_{i}\|^2\right)^\frac{1}{2}\\
	&\leq \frac{1}{2} \left(\left(2-\frac{i^2}{n^2}\right)+\left(2-\frac{(i+1)^2}{n^2}\right)+\left(2-\frac{(i-1)^2}{n^2}\right)+\left(2-\frac{i^2}{n^2}\right)\right)^\frac{1}{2}\\
	&=  \left(2-\frac{i^2}{n^2}-\frac{1}{2n^2}\right)^\frac{1}{2}\leq \left(1-\frac{1}{8n^2}\right)^\frac{1}{2}\left(2-\frac{i^2}{n^2}\right)^\frac{1}{2}\\
	&\leq \left(1-\frac{1}{8n^2}\right)\left(2-\frac{i^2}{n^2}\right)^\frac{1}{2}, \quad \forall 2 \leq i \leq n.
	\end{align*}
	Hence 
	\begin{align*}
	\|Ex\|'=\sup _{2\leq i \leq n}\left(2-\frac{i^2}{n^2}\right)^\frac{-1}{2}\|(Ex)_i\|\leq \left(1-\frac{1}{8n^2}\right)\|x\|', \quad \forall x \in \mathcal{A}^{n-1}.
	\end{align*}
	Since $1-\frac{1}{8n^2}<1$, $1-E$ is invertible and $\|(1-E)^{-1}x\|'\leq 8n^2\|x\|'$. Now going back to the original norm, we get 
	\begin{align*}
	\frac{1}{\sqrt{2}}\|((1-E)^{-1}x)_i\|&\leq \sup _{2\leq i \leq n}\left(2-\frac{i^2}{n^2}\right)^\frac{-1}{2}\|((1-E)^{-1}x)_i\|\\
	&=\|(1-E)^{-1}x\|'\leq 8n^2\|x\|'\\
	&= 8n^2\sup _{2\leq i \leq n}\left(2-\frac{i^2}{n^2}\right)^\frac{-1}{2}\|x_i\|\\
	&\leq 8n^2\sup _{2\leq i \leq n}\|x_i\|=8n^2\|x\|, \quad \forall x \in \mathcal{A}^{n-1}.
	\end{align*}
	Define $R\coloneqq L(1-E)^{-1}$. Then $TR=TL(1-E)^{-1}=(1-E)(1-E)^{-1}=1$ and 
	\begin{align*}
	\|Rb\|&=\sup _{1\leq i \leq n}\|(Rb)_i\|=\|L(1-E)^{-1}b\|\leq \|L\|\|(1-E)^{-1}b\|\leq \|(1-E)^{-1}b\|\\
	&=\sup _{2\leq i \leq n}\|((1-E)^{-1}b)_i\|\leq 8\sqrt{2} n^2\|b\|= 8\sqrt{2} n^2\sup _{2\leq i \leq n}\|b_i\|,\quad \forall b \in \mathcal{A}^{n-1}.
	\end{align*}
\end{proof}
As given in \cite{TAO} we try to shift from the systems of equations (\ref{CORASSUMPTION1}) and  (\ref{CORASSUMPTION2}) to the solution of single equation.  Let $n\geq 2$. Define $a\coloneqq (0, \dots, n)\in \mathcal{A}^n$, 
\begin{align*}
F:\mathcal{A}^n \ni (b_i)_{i=1}^n \mapsto (-2b_3, \dots, -(n-1)b_n,0)\in \mathcal{A}^{n-1}
\end{align*}
and 
\begin{align*}
G:\mathcal{A}^n\times \mathcal{A}^n \ni ((b_i)_{i=1}^n, (c_i)_{i=1}^n)\mapsto (-b_2[u,c_n], \dots, -b_n[u,c_n])\in \mathcal{A}^{n-1}.
\end{align*}
We then have $\|F\|\leq n-1$ and $\|G\|\leq 2$. 
\begin{proposition}
	Systems  (\ref{CORASSUMPTION1}) and  (\ref{CORASSUMPTION2}) have a solution $b$ if and only if 
	\begin{align}\label{SOLUTIONEQUATION}
	Tb=a+\delta F(b)+\delta G(b,b).
	\end{align}
\end{proposition}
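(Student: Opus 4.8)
The plan is to show that equation (\ref{SOLUTIONEQUATION}) is literally the systems (\ref{CORASSUMPTION1}) and (\ref{CORASSUMPTION2}) rewritten as a single vector equation, by comparing the two sides of (\ref{SOLUTIONEQUATION}) coordinate by coordinate. The one preliminary remark I would make is a notational one: $Tb$ lives in $\mathcal{A}^{n-1}$, so I would agree to read $a$, $F(b)$ and $G(b,b)$ as elements of $\mathcal{A}^{n-1}$ with coordinates labelled by $i\in\{2,\dots,n\}$; with this convention $a_i=0$ for $2\le i\le n-1$ and $a_n=n\cdot 1_{\mathcal{A}}$. This matches the index set over which the two systems are posed.

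First I would record, directly from the definitions of $T$, $F$ and $G$, the coordinates
\begin{align*}
(Tb)_i&=[v,b_i]+[u,b_{i-1}],\qquad 2\le i\le n,\\
(F(b))_i&=-i\,b_{i+1}\quad(2\le i\le n-1),\qquad (F(b))_n=0,\\
(G(b,b))_i&=-b_i[u,b_n],\qquad 2\le i\le n.
\end{align*}
Then I would equate the $i$-th coordinates of the two sides of (\ref{SOLUTIONEQUATION}). For $2\le i\le n-1$ this gives
\[
[v,b_i]+[u,b_{i-1}]=0-i\delta\,b_{i+1}-\delta b_i[u,b_n],
\]
which, after transposing all terms to the left-hand side, is precisely (\ref{CORASSUMPTION1}); and for $i=n$ it gives
\[
[v,b_n]+[u,b_{n-1}]=n\cdot 1_{\mathcal{A}}+\delta\cdot 0-\delta b_n[u,b_n],
\]
which rearranges to (\ref{CORASSUMPTION2}). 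Since the coordinate index runs over exactly $\{2,\dots,n\}$, the vector identity (\ref{SOLUTIONEQUATION}) holds for a given $b$ if and only if every one of the equations in (\ref{CORASSUMPTION1})--(\ref{CORASSUMPTION2}) holds, which is the claimed equivalence.

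There is essentially no obstacle: this is a bookkeeping exercise rather than a genuine argument. The only place to be careful is the index shifts hidden in $F$ and $G$ — namely the shift $b_i\mapsto b_{i+1}$ in $F$ and the fact that $b_n$ sits in a fixed slot of $G$ regardless of $i$ — together with keeping the $\mathcal{A}^{n}$ versus $\mathcal{A}^{n-1}$ labelling straight. Once the indexing convention above is fixed at the outset, the coordinatewise identification displayed above is the entire proof.
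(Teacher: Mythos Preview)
Your proposal is correct and follows essentially the same approach as the paper: both unpack the definitions of $T$, $a$, $F$, $G$ and observe that the vector equation, read coordinate by coordinate over $i\in\{2,\dots,n\}$, is nothing but the two systems rearranged. Your treatment of the indexing convention is in fact more careful than the paper's own exposition.
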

\begin{proof}
	Systems  (\ref{CORASSUMPTION1}) and  (\ref{CORASSUMPTION2}) have a solution $b$ if and only if 	
	\begin{align*}
	[v,b_i]+[u,b_{i-1}]=-i\delta b_{i+1}-\delta b_i[u,b_n], \quad \forall i =2, \dots, n-1
	\end{align*} 
	and
	\begin{align*}
	[v,b_n]+[u,b_{n-1}]=-\delta b_n[u,b_n]+n\cdot 1_{{M_n(\mathcal{A})}}
	\end{align*}
	if and only if 
	\begin{align*}
	&([v,b_i]+[u,b_{i-1}])_{i=2}^n=\\
	&\quad (0, \dots, n)+\delta (-2b_3, \dots, -(n-1)b_n,0)+\delta (-b_2[u,b_n], \dots, -b_n[u,b_n])
	\end{align*}
	if and only if 
	\begin{align*}
	Tb=a+\delta F(b)+\delta G(b,b).
	\end{align*}
\end{proof}
The above proposition reduces the work of solving systems (\ref{CORASSUMPTION1}) and  (\ref{CORASSUMPTION2}) to a single operator equation. To solve (\ref{SOLUTIONEQUATION}) we need an abstract lemma from \cite{TAO}.
\begin{lemma}(\cite{TAO})\label{TAOTHEOREMABST}
	Let $\mathcal{X}$, $\mathcal{Y}$ be Banach spaces,  $T,F:\mathcal{X}\to \mathcal{Y}$ be bounded linear operators, and  let 
	$G:\mathcal{X}\times\mathcal{X} \to \mathcal{Y}$ be a bounded bilinear operator with bound $r>0$ and let $ a \in\mathcal{Y}$. Suppose that
	$T$ has a bounded linear right inverse $R:\mathcal{Y}\to \mathcal{X}$. If $\delta>0$ is such that 
	\begin{align}\label{LEMMACONDITION}
	\delta(2\|F\|\|R\|+4r\|R\|^2\|a\|)<1,
	\end{align}
	then there exists  $ b \in\mathcal{X}$ with $\|b\|\leq 2 \|R\|\|a\|$ that solves the equation 
	\begin{align*}
	Tb=a+\delta F(b)+\delta G(b,b).
	\end{align*}
\end{lemma}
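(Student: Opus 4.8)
\textbf{Proof plan for Lemma~\ref{TAOTHEOREMABST} (Tao's abstract fixed-point lemma).}
The plan is to recast the equation $Tb=a+\delta F(b)+\delta G(b,b)$ as a fixed-point equation on $\mathcal{X}$ and apply the Banach contraction principle on a suitable closed ball. First I would substitute $b=Rc$ is \emph{not} quite the right move; instead, since $R$ is a right inverse of $T$, I would look for $b$ in the form $b = Ra + \delta R\big(F(b)+G(b,b)\big)$, i.e.\ I would define the map
\[
\Phi:\mathcal{X}\to\mathcal{X},\qquad \Phi(b)\coloneqq Ra+\delta R\big(F(b)+G(b,b)\big).
\]
Any fixed point $b$ of $\Phi$ satisfies $Tb = TRa + \delta TR\big(F(b)+G(b,b)\big) = a+\delta F(b)+\delta G(b,b)$, using $TR=\mathrm{id}_{\mathcal{Y}}$; so it suffices to produce a fixed point of $\Phi$ with $\|b\|\le 2\|R\|\|a\|$.

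Next I would verify that $\Phi$ maps the closed ball $B\coloneqq\{b\in\mathcal{X}:\|b\|\le 2\|R\|\|a\|\}$ into itself and is a contraction there, under hypothesis \eqref{LEMMACONDITION}. Write $\rho\coloneqq 2\|R\|\|a\|$. For $b\in B$,
\[
\|\Phi(b)\|\le \|R\|\|a\|+\delta\|R\|\big(\|F\|\,\|b\|+r\|b\|^2\big)\le \|R\|\|a\|+\delta\|R\|\big(\|F\|\rho+r\rho^2\big),
\]
and one checks that $\delta\|R\|(\|F\|\rho+r\rho^2)=\delta\big(2\|F\|\|R\|^2\|a\|+4r\|R\|^3\|a\|^2\big)\le \rho\cdot\delta(2\|F\|\|R\|+4r\|R\|^2\|a\|)<\rho$ by \eqref{LEMMACONDITION} — wait, I must be careful with one power of $\|R\|$: the clean way is to note $\delta\|R\|\|F\|\rho\le \tfrac12\rho\cdot\delta\cdot 2\|F\|\|R\|$ and $\delta\|R\|r\rho^2 = \tfrac12\rho\cdot\delta\cdot 4r\|R\|^2\|a\|$, so $\|\Phi(b)\|\le \tfrac{\rho}{2}+\tfrac{\rho}{2}\,\delta(2\|F\|\|R\|+4r\|R\|^2\|a\|)\le\rho$. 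For the contraction estimate, for $b,b'\in B$ use bilinearity $G(b,b)-G(b',b')=G(b-b',b)+G(b',b-b')$ to get
\[
\|\Phi(b)-\Phi(b')\|\le \delta\|R\|\big(\|F\|+r(\|b\|+\|b'\|)\big)\|b-b'\|\le \delta\|R\|\big(\|F\|+2r\rho\big)\|b-b'\|,
\]
and $\delta\|R\|(\|F\|+2r\rho)=\delta(\|F\|\|R\|+4r\|R\|^2\|a\|)\le \delta(2\|F\|\|R\|+4r\|R\|^2\|a\|)<1$. Hence $\Phi$ is a contraction of the complete metric space $B$, so it has a unique fixed point $b\in B$, which by construction solves the equation and satisfies $\|b\|\le 2\|R\|\|a\|$.

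The only real subtlety — and the step I would double-check most carefully — is the precise bookkeeping of the powers of $\|R\|$ and the factors of $2$ so that the single smallness hypothesis \eqref{LEMMACONDITION} does double duty, guaranteeing both the self-mapping property of the ball and the contraction constant being $<1$; the natural radius $\rho=2\|R\|\|a\|$ is exactly what makes both work simultaneously. Everything else (completeness of the closed ball, continuity of $\Phi$, the algebraic identity $TR=\mathrm{id}$) is routine. No appeal to the C*-algebra structure is needed here — this is a purely Banach-space statement — and it will later be applied with $\mathcal{X}=\mathcal{A}^n$, $\mathcal{Y}=\mathcal{A}^{n-1}$, $T$ as in Proposition~\ref{RIGHTPROPOSI}, and $F,G,a$ as constructed above, using the explicit right inverse $R$ with $\|R\|\le 8\sqrt{2}\,n^2$.
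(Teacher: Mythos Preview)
Your proof is correct: the contraction-mapping argument on the closed ball of radius $\rho=2\|R\|\|a\|$ is exactly the intended approach, and your bookkeeping of the constants (the self-map estimate $\|\Phi(b)\|\le \tfrac{\rho}{2}+\tfrac{\rho}{2}\,\delta(2\|F\|\|R\|+4r\|R\|^2\|a\|)\le\rho$ and the Lipschitz constant $\delta(\|F\|\|R\|+4r\|R\|^2\|a\|)<1$) checks out. The paper does not supply its own proof of this lemma---it is quoted from \cite{TAO}, where the same fixed-point/iteration scheme is used---so your argument fills in precisely what the original source does.
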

\begin{theorem}
	For each $n\geq2$, there exists a solution $b$ to  Equation  (\ref{SOLUTIONEQUATION}) such that $\|b\|\leq 16 \sqrt{2}n^3$. 	
\end{theorem}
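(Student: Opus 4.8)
The goal is to solve the fixed-point equation $Tb = a + \delta F(b) + \delta G(b,b)$ in $\mathcal{A}^n$, where $T,F,G,a$ are the explicit maps introduced just above the statement. The natural tool is Lemma \ref{TAOTHEOREMABST} (Tao's abstract implicit-function-type lemma): we have a bounded linear operator $T:\mathcal{A}^n\to\mathcal{A}^{n-1}$, a bounded linear $F:\mathcal{A}^n\to\mathcal{A}^{n-1}$ with $\|F\|\le n-1$, a bounded bilinear $G:\mathcal{A}^n\times\mathcal{A}^n\to\mathcal{A}^{n-1}$ with bound $r\le 2$, and $a=(0,\dots,n)$ so $\|a\|=n$. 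The crucial ingredient is that $T$ has a bounded linear right inverse, which is exactly the content of Proposition \ref{RIGHTPROPOSI}: there is $R:\mathcal{A}^{n-1}\to\mathcal{A}^n$ with $TR=1$ and $\|R\|\le 8\sqrt 2\,n^2$.

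\textbf{First steps.} I would begin by recording the constants: $\|F\|\le n-1$, $r\le 2$, $\|a\|=n$, $\|R\|\le 8\sqrt 2\,n^2$. Then the condition \eqref{LEMMACONDITION} of Lemma \ref{TAOTHEOREMABST}, namely $\delta\bigl(2\|F\|\|R\|+4r\|R\|^2\|a\|\bigr)<1$, is implied by choosing $\delta$ small enough — concretely one can take $\delta$ of order $n^{-5}$ (since $2\|F\|\|R\|=O(n^3)$ and $4r\|R\|^2\|a\|=O(n^5)$, the dominant term is $O(n^5)$, so any $\delta<c/n^5$ with a suitable absolute constant $c$ works). With $\delta$ so chosen, Lemma \ref{TAOTHEOREMABST} produces $b\in\mathcal{A}^n$ with $Tb=a+\delta F(b)+\delta G(b,b)$ and, more importantly for the norm bound, $\|b\|\le 2\|R\|\|a\|\le 2\cdot 8\sqrt 2\,n^2\cdot n = 16\sqrt 2\,n^3$. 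That is precisely the claimed estimate $\|b\|\le 16\sqrt 2\,n^3$.

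\textbf{Main obstacle and how to handle it.} The only genuine subtlety is verifying that all hypotheses of Lemma \ref{TAOTHEOREMABST} apply in the Banach space $\mathcal{A}^n$ (with the sup norm on coordinates used throughout Proposition \ref{RIGHTPROPOSI}): that $T,F$ are bounded linear and $G$ is bounded bilinear with the stated bounds — these are routine from the explicit formulas and the $C^*$-identity — and that $\|F\|\le n-1$, $r\le 2$ as already noted in the text. One must also be slightly careful that the norm on $\mathcal{A}^{n-1}$ used for $R$ in Proposition \ref{RIGHTPROPOSI} is the same sup norm, which it is; the auxiliary weighted norm $\|\cdot\|'$ appearing inside that proof is internal and does not affect the final operator norm bound $\|R\|\le 8\sqrt 2\,n^2$. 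I do not expect any deeper difficulty: once the constants are in place the choice of $\delta$ is a one-line inequality and the norm bound on $b$ falls straight out of the conclusion of Lemma \ref{TAOTHEOREMABST}.

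\textbf{Remark on the sequel.} After establishing this, the solution $b$ feeds back — via the proposition identifying \eqref{SOLUTIONEQUATION} with the systems \eqref{CORASSUMPTION1}--\eqref{CORASSUMPTION2}, and then Corollary \ref{COROLLARYLEMMA} — into an explicit pair $D_\mu,X_\mu\in M_n(\mathcal{A})$ with $[D_\mu,X_\mu]$ close to $1_{M_n(\mathcal{A})}$; but that is the content of later statements, not of the one to be proved here, so the proof ends with the existence of $b$ and the bound $\|b\|\le 16\sqrt 2\,n^3$.
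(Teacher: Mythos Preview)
Your proposal is correct and follows essentially the same approach as the paper: apply Lemma~\ref{TAOTHEOREMABST} with the bounds $\|F\|\le n-1$, $r\le 2$, $\|a\|=n$, $\|R\|\le 8\sqrt{2}\,n^2$ from Proposition~\ref{RIGHTPROPOSI}, choose $\delta$ of order $n^{-5}$ (the paper takes $\delta=\tfrac{1}{2000n^5}$) to satisfy condition~\eqref{LEMMACONDITION}, and read off $\|b\|\le 2\|R\|\|a\|\le 16\sqrt{2}\,n^3$. You are in fact slightly more explicit than the paper in spelling out the final norm bound.
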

\begin{proof}
	We apply Lemma 	\ref{TAOTHEOREMABST} for 
	\begin{align*}
	\delta \coloneqq \frac{1}{2000n^5}.
	\end{align*}
	Then using Proposition \ref{RIGHTPROPOSI}, we get 
	\begin{align*}
	\delta(2\|F\|\|R\|+4r\|R\|^2\|a\|)&\leq \frac{1}{2000n^5}(2(n-1)8 \sqrt{2}n^2+4.2.128.n^4.n)\\
	&\leq \frac{1}{2000n^5}(16\sqrt{2}n^3+1024n^5)<1.
	\end{align*}
	Lemma \ref{TAOTHEOREMABST} now says that  there exists a $b$ which satisfies (\ref{SOLUTIONEQUATION}).
\end{proof} 
\begin{theorem}\label{BBERF}
	For each $n\geq2$, let $b$  be an element satisfying   Equation  (\ref{SOLUTIONEQUATION}) and $\|b\|\leq 16 \sqrt{2}n^3$. Then for $\mu=\frac{1}{2}$, $D_\mu, X_\mu \in M_n(\mathcal{A})$ such that  
	\begin{align*}
	\|	D_\mu\|=O(n^5), \quad \|	X_\mu\|=O(1), \quad \|[D_\mu,X_\mu]-1_{M_n(\mathcal{A})}\|=O(n^32^{-n}).
	\end{align*}
\end{theorem}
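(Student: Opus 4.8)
\textbf{Proof plan for Theorem \ref{BBERF}.}
The strategy is to plug the element $b$ guaranteed by the previous theorem into the machinery of Corollary \ref{COROLLARYLEMMA}, taking the specific value $\mu=\tfrac12$, and then bound each of the three quantities $\|D_\mu\|$, $\|X_\mu\|$, $\|[D_\mu,X_\mu]-1_{M_n(\mathcal{A})}\|$ using the three estimates supplied there, together with $\|b\|\le 16\sqrt2\,n^3$ and the fixed choice $\delta=\tfrac1{2000n^5}$. First I would record that since $b$ solves Equation (\ref{SOLUTIONEQUATION}), by the preceding proposition it also satisfies the systems (\ref{CORASSUMPTION1}) and (\ref{CORASSUMPTION2}); hence the hypotheses of Corollary \ref{COROLLARYLEMMA} hold for this $b$, this $\delta$, and we may apply it with $\mu=\tfrac12$. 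The isometries $u,v$ in $\mathcal{A}$ are the ones fixed for this chapter, and they have $\|u\|=\|v\|=1$.

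Next I would carry out the three bounds in turn. For $\|D_\mu\|$, Corollary \ref{COROLLARYLEMMA} gives
\[
\|D_\mu\|\le \frac{\|u\|}{\mu^2\delta}+\frac{\|v\|}{\mu\delta}+(n-1)+\delta\sum_{i=1}^n\mu^{\,n-i-1}\|b_i\|\|u\|.
\]
With $\mu=\tfrac12$ the first two terms are $4/\delta+2/\delta=6/\delta=12000n^5$, which is $O(n^5)$; the term $n-1$ is negligible; and since $\|b_i\|\le\|b\|\le 16\sqrt2\,n^3$ and $\sum_{i=1}^n\mu^{n-i-1}=\sum_{j}2^{j}$ is bounded by $2^{n-1}\cdot 2$, the last term is $\delta\cdot O(2^n n^3)=O(2^n n^{-2})$, which is also $O(n^5)$ — indeed dominated by the $1/\delta$ term. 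Hence $\|D_\mu\|=O(n^5)$. For $\|X_\mu\|$, the bound is $\|X_\mu\|\le 1+\delta\sum_{i=1}^n\mu^{\,n-i+1}\|b_i\|$; here $\sum_{i=1}^n\mu^{n-i+1}=\sum_{i=1}^n 2^{-(n-i+1)}\le 1$, so the correction is $\le\delta\cdot 1\cdot 16\sqrt2\,n^3=O(n^{-2})=O(1)$, giving $\|X_\mu\|=O(1)$. For the commutator defect, Corollary \ref{COROLLARYLEMMA} yields $\|[D_\mu,X_\mu]-1_{M_n(\mathcal{A})}\|\le \mu^{\,n-1}\|[v,b_1]+\delta b_2+\delta b_1[u,b_n]\|$; bounding $\|[v,b_1]\|\le 2\|v\|\|b_1\|\le 2\|b\|$, $\|\delta b_2\|\le\delta\|b\|$, $\|\delta b_1[u,b_n]\|\le 2\delta\|b\|^2$, and using $\|b\|\le 16\sqrt2\,n^3$, $\delta=1/(2000n^5)$, the bracket is $O(n^3)$, so the whole expression is $O(n^3\,2^{-n})$.

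The routine part is just these three norm estimates, which are immediate once Corollary \ref{COROLLARYLEMMA} is invoked with the right parameters; there is no real obstacle, only bookkeeping of the powers of $n$ and the geometric sums in $\mu$. The one point that requires a moment's care — and which I would state explicitly — is the passage from ``$b$ solves (\ref{SOLUTIONEQUATION})'' to ``$b$ satisfies (\ref{CORASSUMPTION1}) and (\ref{CORASSUMPTION2})'', since Corollary \ref{COROLLARYLEMMA} is phrased in terms of the latter; this is exactly the content of the proposition proved just before, so I would cite it. I would also note in passing that the choice $\mu=\tfrac12$ is what makes the geometric sum $\sum\mu^{n-i+1}$ in the $X_\mu$ bound absolutely summable (uniformly in $n$), while still keeping $\mu^{n-1}=2^{-(n-1)}$ in the commutator bound exponentially small, which is precisely the balance needed to get both $\|X_\mu\|=O(1)$ and exponential decay of the defect simultaneously.
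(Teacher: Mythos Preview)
Your approach is exactly the paper's: invoke Corollary \ref{COROLLARYLEMMA} with $\mu=\tfrac12$ and $\delta=\tfrac{1}{2000n^5}$ (noting via the preceding proposition that $b$ satisfies (\ref{CORASSUMPTION1})--(\ref{CORASSUMPTION2})), then bound each of the three quantities directly. The $\|X_\mu\|$ and commutator estimates are fine.

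There is, however, a computational slip in your $\|D_\mu\|$ bound that makes the argument fail as written. With $\mu=\tfrac12$ the sum $\sum_{i=1}^n\mu^{n-i-1}=\sum_{i=1}^n 2^{-(n-i-1)}=\sum_{j=-1}^{n-2}2^{-j}<4$ is a \emph{bounded} geometric series, not $O(2^n)$: the exponents $n-i-1$ are mostly positive, so $\mu^{n-i-1}$ is mostly small. Your overestimate produces a last term of order $2^n n^{-2}$, and the claim that this is $O(n^5)$ is false --- $2^n/n^2$ grows faster than any polynomial --- so the chain of inequalities does not close. Once you replace the sum by its correct bound $\le 4$, that last term becomes $\delta\cdot 4\cdot 16\sqrt2\,n^3=O(n^{-2})$, which is negligible next to the $6/\delta=O(n^5)$ contribution, and the desired bound follows. (The paper's own proof uses exactly this corrected estimate.)
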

\begin{proof}
	Let $D_\mu, X_\mu \in M_n(\mathcal{A})$ be as in  Corollary \ref{COROLLARYLEMMA}. We then have  
	\begin{align*}
	&\|	D_\mu\| \leq  4. 2000n^5\|u\|+ 2. 2000n^5\|v\|+(n-1)+\frac{1}{2000n^5} \sum_{i=1}^{n}\frac{1}{2^{n-i-1}}16 \sqrt{2}n^3\|u\|\\
	&\quad \quad =O(n^5),\\
	& \|	X_\mu\| \leq 1+ \frac{1}{2000n^5} \sum_{i=1}^{n}\frac{1}{2^{n-i-1}}16 \sqrt{2}n^3  =O(1),\\
	&\|[D_\mu,X_\mu]-1_{M_n(\mathcal{A})}\|\leq 2\mu^{n-1}(\|v\|\|b_1\|+\delta \|b_2\|+\delta \|b_1 \|\|u\|\|b_n\|).\\
	&\quad \leq 2\frac{1}{2^{n-1}}(\|u\|16 \sqrt{2}n^3+\frac{1}{2000n^5} 16 \sqrt{2}n^3+\frac{1}{2000n^5} 16 \sqrt{2}n^3 \|u\|16 \sqrt{2}n^3)\\
	&\quad \leq 2\frac{n^3}{2^{n-1}}(\|v\|16 \sqrt{2}+\frac{1}{2000n^5} 16 \sqrt{2}+\frac{1}{2000n^5} 16 \sqrt{2} n^3\|u\|16 \sqrt{2})=O(n^32^{-n}).
	\end{align*}
\end{proof}
\begin{theorem}\label{BEFORE}
	Let $0<\varepsilon\leq 1/2$. Then  there exist an  even integer $n$  and   $D,X \in M_n(\mathcal{A})$ with 
	\begin{align*}
	\|[D,X]-1_{M_n(\mathcal{A})}\|\leq \varepsilon
	\end{align*} 
	such that 
	\begin{align*}
	\|D\|\|X\|=O\left(\log^5\frac{1}{\varepsilon}\right).
	\end{align*}
\end{theorem}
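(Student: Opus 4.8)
\textbf{Proof proposal for Theorem \ref{BEFORE}.}

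The plan is to reduce the statement directly to the work already done in Theorem \ref{BBERF}. That theorem produces, for each integer $n\geq 2$, operators $D_\mu, X_\mu \in M_n(\mathcal{A})$ (taking $\mu=\tfrac12$) with
$$
\|D_\mu\|=O(n^5),\qquad \|X_\mu\|=O(1),\qquad \|[D_\mu,X_\mu]-1_{M_n(\mathcal{A})}\|=O(n^3 2^{-n}).
$$
So the only remaining task is to choose $n$ as a function of $\varepsilon$ so that the commutator error drops below $\varepsilon$ while the product of the norms stays $O(\log^5(1/\varepsilon))$. First I would fix the implied constant: there is an absolute $C>0$ with $\|[D_\mu,X_\mu]-1_{M_n(\mathcal{A})}\|\leq C n^3 2^{-n}$ for all $n\geq 2$. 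Given $0<\varepsilon\leq 1/2$, I would set $n$ to be the smallest even integer with $C n^3 2^{-n}\leq \varepsilon$; concretely one can take $n = 2\lceil K\log(1/\varepsilon)\rceil$ for a suitable absolute constant $K$, since $n^3 2^{-n}$ is eventually dominated by any fixed negative power of $2^{n}$ and hence, for $n$ a constant multiple of $\log(1/\varepsilon)$, is $\leq \varepsilon$ once $\varepsilon$ is small; the finitely many remaining values of $\varepsilon$ near $1/2$ are handled by absorbing them into the $O(\cdot)$ constant. With this choice $n = O(\log(1/\varepsilon))$, and then $\|D_\mu\|\,\|X_\mu\| = O(n^5)\cdot O(1) = O(n^5) = O(\log^5(1/\varepsilon))$, which is exactly the claimed bound. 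Setting $D:=D_\mu$, $X:=X_\mu$ finishes the argument, and $n$ is even by construction as required.

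The steps in order: (i) extract from Theorem \ref{BBERF} the three explicit asymptotic bounds with a named absolute constant controlling the commutator error; (ii) given $\varepsilon$, define $n$ to be the least even integer making $C n^3 2^{-n}\leq \varepsilon$, and verify $n=O(\log(1/\varepsilon))$ by a routine estimate comparing $n^3 2^{-n}$ with $\varepsilon$ (splitting into "small $\varepsilon$" where the logarithmic bound holds cleanly and "$\varepsilon$ bounded away from $0$", finitely many cases, absorbed into the constant); (iii) substitute this $n$ into the bounds on $\|D_\mu\|$ and $\|X_\mu\|$ to get $\|D\|\,\|X\| = O(\log^5(1/\varepsilon))$; (iv) conclude. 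No structural C*-algebra input beyond Theorem \ref{BBERF} is needed here, since all the hard analysis — the commutator identity of Lemma \ref{COMMUTATORLEMMA}, the reduction to the single operator equation \eqref{SOLUTIONEQUATION}, the right-inverse construction of Proposition \ref{RIGHTPROPOSI}, and Tao's abstract fixed-point Lemma \ref{TAOTHEOREMABST} — has already been carried out upstream.

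The main obstacle is essentially bookkeeping rather than a genuine difficulty: one must be careful that the implied constants in "$O(n^5)$" and "$O(n^3 2^{-n})$" are genuinely absolute (independent of $n$ and of the choice of isometries $u,v$ satisfying \eqref{IMPORTANTEQUATION}), which they are because every estimate in the chain Lemma \ref{COMMUTATORLEMMA} $\to$ Corollary \ref{COROLLARYLEMMA} $\to$ Theorem \ref{BBERF} is polynomial in $n$ with numerical coefficients and uses only $\|u\|=\|v\|=1$. The one place to double-check is the passage from "$C n^3 2^{-n}\le\varepsilon$" to "$n=O(\log(1/\varepsilon))$": since $2^{-n/2}$ eventually beats $C n^3 2^{-n/2}\to 0$, for $n\ge n_0$ (an absolute threshold) we have $C n^3 2^{-n}\le 2^{-n/2}$, so it suffices that $2^{-n/2}\le \varepsilon$, i.e. $n\ge 2\log_2(1/\varepsilon)$; taking $n$ to be the least even integer $\ge \max\{n_0,\,2\log_2(1/\varepsilon)\}$ gives both the error bound and $n=O(\log(1/\varepsilon))$, and enlarging the final $O(\cdot)$ constant absorbs the threshold $n_0$ and the even-rounding. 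This yields the theorem.
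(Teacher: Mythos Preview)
Your proposal is correct and follows essentially the same approach as the paper: both invoke Theorem \ref{BBERF} to obtain the bounds $\|D_\mu\|=O(n^5)$, $\|X_\mu\|=O(1)$, $\|[D_\mu,X_\mu]-1\|=O(n^3 2^{-n})$, and then choose $n$ to be an even integer of order $\log(1/\varepsilon)$ so that the commutator error falls below $\varepsilon$ while $\|D\|\|X\|=O(n^5)=O(\log^5(1/\varepsilon))$. Your treatment of the threshold $n_0$ and the even-rounding is in fact more careful than the paper's sketch.
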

\begin{proof}
	Let $D_\mu, X_\mu \in M_n(\mathcal{A})$ be as in  Corollary \ref{COROLLARYLEMMA}. Theorem \ref{BBERF} says that there are  $\alpha, \beta, \gamma>0$ be such that  	
	\begin{align*}
	\|	D_\mu\|\leq \alpha n^5, \quad \|	X_\mu\|\leq \beta, \quad \|[D_\mu,X_\mu]-1_{M_n(\mathcal{A})}\|\leq \gamma n^32^{-n}.
	\end{align*}
	Since  $2^n>n^4$ all but finitely many $n$'s,  $ \gamma n^32^{-n}<\varepsilon$ all but finitely many $n$'s. 
	We now choose  real $c$ such that $n=c \log\frac{1}{\varepsilon}$ is even $ \gamma n^32^{-n}<\varepsilon$. We then have $\|	D_\mu\|=O(\log^5(\frac{1}{\varepsilon}))$ and $ \|[D_\mu,X_\mu]-1_{M_n(\mathcal{A})}\|\leq \varepsilon$.
\end{proof}
Theorem \ref{BEFORE} and Theorem \ref{ALLC} easily give the following.
\begin{theorem}\label{LASTTHEOREM}
	Let $\mathcal{A}$ be a unital C*-algebra. Suppose  there are isometries $u, v \in \mathcal{A}$ such that Equation (\ref{IMPORTANTEQUATION}) holds. Then 
	for each $0<\varepsilon\leq 1/2$,  there exist $d,x \in \mathcal{A}$ with 
	\begin{align*}
	\|[d,x]-1_{\mathcal{A}}\|\leq \varepsilon
	\end{align*} 
	such that 
	\begin{align*}
	\|d\|\|x\|=O\left(\log^5\frac{1}{\varepsilon}\right).
	\end{align*}	
\end{theorem}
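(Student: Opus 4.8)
The statement is the ``transfer'' of Theorem~\ref{BEFORE} from the matrix algebra $M_n(\mathcal{A})$ back to $\mathcal{A}$ itself, and the plan is simply to combine Theorem~\ref{BEFORE} with Theorem~\ref{ALLC}. First I would fix $0<\varepsilon\leq 1/2$ and invoke Theorem~\ref{BEFORE} to produce an even integer $n$ and elements $D,X \in M_n(\mathcal{A})$ with $\|[D,X]-1_{M_n(\mathcal{A})}\|\leq \varepsilon$ and $\|D\|\|X\|=O\!\left(\log^5\frac{1}{\varepsilon}\right)$. The point is that all the hard analytic work — the commutator calculation of Lemma~\ref{COMMUTATORLEMMA}, the scaling argument of Corollary~\ref{COROLLARYLEMMA}, the construction of the bounded right inverse in Proposition~\ref{RIGHTPROPOSI}, the reduction to a single operator equation, and the application of Tao's abstract Lemma~\ref{TAOTHEOREMABST} — has already been carried out to obtain Theorem~\ref{BEFORE}. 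So the remaining task is purely structural: descend from $M_n(\mathcal{A})$ to $\mathcal{A}$.

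\textbf{The descent step.} For the descent I would use Theorem~\ref{ALLC}, which says that whenever $\mathcal{A}$ contains isometries $u,v$ satisfying Equation~(\ref{IMPORTANTEQUATION}), the C*-algebras $\mathcal{A}, M_2(\mathcal{A}), M_4(\mathcal{A}),\dots, M_{2n}(\mathcal{A}),\dots$ are all $*$-isometrically isomorphic. Since $n$ is even, $n=2m$ for some $m$, and Theorem~\ref{ALLC} (applied iteratively, or directly in the form ``$M_{2m}(\mathcal{A})\cong M_{2m}(\mathcal{A})$'' followed by the chain of isomorphisms $M_{2m}(\mathcal{A})\cong M_m(\mathcal{A})\cong\cdots\cong\mathcal{A}$ when $m$ is itself a power of two; in general one notes that $M_n(\mathcal{A})\cong M_n(M_2(\mathcal{A}))\cong M_{2n}(\mathcal{A})$, so all the algebras $M_{2^k n}(\mathcal{A})$ are mutually isomorphic and in particular $M_n(\mathcal{A})\cong M_{2n}(\mathcal{A})\cong\cdots$, and one can arrange $n$ in Theorem~\ref{BEFORE} to be a power of $2$ by simply choosing the constant $c$ there appropriately — Theorem~\ref{BEFORE} only requires $n$ even, and its proof lets us take $n$ to be any sufficiently large even integer comparable to $\log\frac1\varepsilon$, so we may take $n$ a power of $2$) gives a $*$-isometric isomorphism $\Phi:M_n(\mathcal{A})\to\mathcal{A}$. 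Set $d\coloneqq\Phi(D)$ and $x\coloneqq\Phi(X)$. Because $\Phi$ is a homomorphism it commutes with forming commutators, $\Phi([D,X])=[d,x]$, and because $\Phi$ is unital, $\Phi(1_{M_n(\mathcal{A})})=1_{\mathcal{A}}$; because $\Phi$ is isometric, $\|[d,x]-1_{\mathcal{A}}\|=\|\Phi([D,X]-1_{M_n(\mathcal{A})})\|=\|[D,X]-1_{M_n(\mathcal{A})}\|\leq\varepsilon$ and $\|d\|\|x\|=\|D\|\|X\|=O\!\left(\log^5\frac1\varepsilon\right)$. This yields exactly the claimed conclusion.

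\textbf{Main obstacle.} There is no serious analytic obstacle; the only point requiring a little care is making sure the integer $n$ supplied by Theorem~\ref{BEFORE} can be taken in the form needed to apply Theorem~\ref{ALLC} to get all the way down to $\mathcal{A}$ (rather than merely to some $M_k(\mathcal{A})$). As noted, Theorem~\ref{BEFORE} permits $n$ to be any sufficiently large even integer of size $\Theta(\log\frac1\varepsilon)$, so we are free to choose $n$ a power of $2$, and then the chain $\mathcal{A}\cong M_2(\mathcal{A})\cong M_4(\mathcal{A})\cong\cdots\cong M_n(\mathcal{A})$ from Theorem~\ref{ALLC} closes the argument. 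Alternatively, one can bypass this by observing that for \emph{any} $n$, Theorem~\ref{ALLC} gives $M_n(\mathcal{A})\cong M_{2n}(\mathcal{A})\cong M_n(M_2(\mathcal{A}))$, and iterating $M_2$ shows $M_n(\mathcal{A})$ contains a unital copy of $\mathcal{A}$ with a unital conditional-expectation-type splitting — but the cleanest route is simply to arrange $n$ to be a power of two in the statement of Theorem~\ref{BEFORE}, which its proof allows. Once $\Phi$ is in hand the verification is three one-line identities (homomorphism property, unitality, isometry), so the proof is short.
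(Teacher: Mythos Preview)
Your proposal is correct and follows exactly the paper's route: the paper simply states that Theorem~\ref{BEFORE} and Theorem~\ref{ALLC} ``easily give'' Theorem~\ref{LASTTHEOREM}, and you have spelled out the descent via the isometric $*$-isomorphism precisely as intended. Your observation that $n$ should be taken to be a power of~$2$ (so that the chain $\mathcal{A}\cong M_2(\mathcal{A})\cong M_4(\mathcal{A})\cong\cdots\cong M_n(\mathcal{A})$ from Theorem~\ref{ALLC} reaches all the way back to $\mathcal{A}$) is a point the paper glosses over, and your fix---choosing $n$ a power of $2$ of size $\Theta(\log\frac{1}{\varepsilon})$ in the proof of Theorem~\ref{BEFORE}---is exactly right.
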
 
\begin{remark}
Let $\mathcal{A}$ be a finite dimensional unital C*-algebra. From the structure theory (\cite{DAVIDSON})  we have 
\begin{align*}
\mathcal{A}\cong M_{n_1}(\mathbb{C})\oplus \cdots \oplus  M_{n_r}(\mathbb{C}),
\end{align*}
for unique (upto permutation) natural numbers $n_1, \dots, n_r$. This result says that normalized trace map (a trace map $\text{Tr} $ such that $\text{Tr}(1_\mathcal{A})=1$) exists on $\mathcal{A}$. Using this we make the following two observations. 
\begin{enumerate}[label=(\roman*)]
	\item $\mathcal{A}$ can not have isometries satisfying Equation (\ref{IMPORTANTEQUATION}). Suppose that there are such isometries. Then 
	\begin{align*}
	1=\text{Tr}(uu^*+vv^*)=\text{Tr}(uu^*)+\text{Tr}(vv^*)=\text{Tr}(u^*u)+\text{Tr}(v^*v)=2
	\end{align*}
	which is impossible.
	\item In (\cite{TAO}), Tao observed that if $\mathcal{H}$ is a finite dimensional Hilbert space, then there are no  $D,X \in \mathcal{B}(\mathcal{H})$ satisfying $ \|[D,X]-1_{\mathcal{B}(\mathcal{H})}\|<1$. We elaborate this for any finite dimensional unital C*-algebra $\mathcal{A}$, namely, there do not exist $d,x \in \mathcal{A}$ 
	satisfying $\|[d,x]-1_{\mathcal{A}}\|<1$.  In other words, Theorem \ref{LASTTHEOREM} fails for  every finite dimensional unital C*-algebra. Let $d,x \in \mathcal{A}$ be arbitrary. From the structure theory,  we identify  that $d$ as $D$ and $x$ as $X$ for some matrices $D, X \in M_n(\mathbb{C})$ and for some $n$. Using the commutativity of trace we then have $\text{Tr}([D,X])=0$. Let $\lambda_1, \dots, \lambda_n$ be eigenvalues of $[D,X]$. Then $\sum_{j=1}^{n}\lambda_j=\text{Tr}([D,X])=0$. This gives 
	\begin{align*}
	n=\left|\sum_{j=1}^{n}(\lambda_j-1)\right|\leq \sum_{j=1}^{n}|\lambda_j-1|.
	\end{align*}
	Previous inequality says that there is atleast one $j$ such that $|\lambda_j-1|\geq 1$. We next see that all the eigenvalues of 
	$[D,X]-1_{M_n(\mathbb{C})}$ are $\lambda_1-1, \dots, \lambda_n-1$. Using the property of operator norm we finally get 
	\begin{align*}
	\|[d,x]-1_{\mathcal{A}}\|=\|[D,X]-1_{M_n(\mathbb{C})}\|\geq \sup_{1\leq j \leq n}|\lambda_j-1|\geq 1.
	\end{align*}
\end{enumerate}  	
\end{remark}

\textbf{Conclusion and future work : }
In this appendix  we showed that the result of Tao's is valid in more general spaces. One of the future objectives is to improve the bounds in Theorem \ref{LASTTHEOREM} and Theorem \ref{POPACOMPACT}.

\leavevmode\newpage
\bibliographystyle{apalike}
\cleardoublepage

\phantomsection

\addcontentsline{toc}{chapter}{BIBIOGRAPHY}

\bibliography{reference.bib}
\leavevmode\newpage
\leavevmode\newpage
\addcontentsline{toc}{chapter}{LIST OF SYMBOLS AND ABBREVIATIONS}
\par~
\begin{center}
	\textbf{{\fontsize{16}{1em}\selectfont LIST OF SYMBOLS AND ABBREVIATIONS}} \\
	\end{center}
\begin{tabular}{lcl}
$\mathbb{N}$ &:& Set of natural numbers\\
$\mathbb{R}$ &:& Field of real numbers\\
$\mathbb{C}$ &:& Field of complex numbers\\
$\{\cdot\}_n$ &:& Collections/sequences indexed by $\mathbb{N}$\\
$\mathbb{M}$ &:& Subset  of natural numbers\\
$\mathbb{M}^c$ &:& Complement of $\mathbb{M}$\\
$\mathbb{K}$ &:& $\mathbb{R}$ or $\mathbb{C}$\\
$\alpha, \beta, \gamma$ &:& Elements of $\mathbb{K}$\\
 $\mathcal{H}$, $\mathcal{H}_0$, \dots &:& Separable Hilbert spaces\\
 $h$, $h_0$, \dots &:& Elements of  Hilbert spaces\\
 $\mathcal{H}\otimes \mathcal{H}_0$, \dots &:& Tensor product of $\mathcal{H}$ and  $\mathcal{H}_0$\\
 $\langle \cdot, \cdot \rangle $ &:& Inner product which is linear in first variable and conjugate \\
 & & linear in second variable\\
 $\mathcal{X}$, $\mathcal{Y}$, \dots &:& Separable Banach spaces\\
 $\|\cdot\|$ &:& Norm\\
 $\mathcal{X}^*$ &:& Dual of  Banach space $\mathcal{X}$ equipped with operator norm\\
 $I_\mathcal{X}$	 &:& Identity operator on $\mathcal{X}$\\ 
 $\mathcal{B}(\mathcal{X}, \mathcal{Y})$ &:&  Banach space of bounded linear operators from $\mathcal{X}$ to $\mathcal{Y}$\\ 
 & &equipped with operator-norm\\
cf.	 &:& Cross reference (reference may not be the first reference \\
& &where the notion/result arose)\\
 ASF &:& Approximate Schauder frame\\
	OVF &:& Operator-valued frame\\
	p  &:& A real number in $[1, \infty)$\\
	$\ell^p(\mathbb{N})$ &:& $\{\{a_n\}_n: a_n \in \mathbb{K}, \forall n \in \mathbb{N}, \sum_{n=1}^{\infty}|a_n|^p<\infty\}$\\
	$\ell^\infty(\mathbb{N})$ &:& $\{\{a_n\}_n: a_n \in \mathbb{K}, \forall n \in \mathbb{N}, \sup_{n\in \mathbb{N}}|a_n|<\infty\}$\\
	$\mathcal{L}^p(\mathbb{R})$ &:& $\{f: \mathbb{R} \in  \mathbb{C}, f  \text{ measurable },  \int_{\mathbb{R}}|f(x)|^p\,dx<\infty\}$\\
$\{e_n\}_n $&:& Standard Schauder basis for $\ell^p(\mathbb{N})$\\
	$\mathcal{A}$ &:& C*-algebra\\
		$\mathcal{M}$, $\mathcal{N}$ &:& Metric spaces\\
		$d(\cdot, \cdot)$ &:& Metric on a metric space\\
		\end{tabular}\\
\begin{tabular}{lcl}	
	 $G$&:& Locally compact group\\
	 $\mu_G$ &:& Left Haar measure on $G$\\
	 $\pi$&:& Unitary representation of $G$\\
	$\mathcal{U}$ &:& Group-like unitary system\\
	$\mathcal{X}_d$ &:& BK-space\\
	$\operatorname{Lip}(\cdot)$ &:& Lipschitz number\\
	$(\mathcal{M}, 0)$ &:& Pointed metric space\\
	$\|\cdot\|_{\operatorname{Lip}_0}$&:& Lipschitz norm\\
	$\operatorname{Lip}(\mathcal{M},
	\mathcal{X})$ &:& Space of Lipschitz functions from $\mathcal{M}$ to $\mathcal{X}$ equipped \\
	& & with Lipschitz number\\
	$\operatorname{Lip}_0(\mathcal{M},
	\mathcal{X})$ &:& Banach space of base point preserving Lipschitz functions \\
	& & from $\mathcal{M}$ to $\mathcal{X}$ equipped with Lipschitz norm \\
	$\operatorname{Lip}_0(\mathcal{X})$ & & $\operatorname{Lip}_0(\mathcal{X},
	\mathcal{X})$\\
$\mathcal{F}(\mathcal{M})$&: &	Lipschitz free Banach space of $\mathcal{M}$\\
$c_0(\mathbb{N})$ &:& $\{\{a_n\}_n: a_n \in \mathbb{K}, \forall n \in \mathbb{N}, \lim_{n\to\infty} a_n=0\}$\\
$c(\mathbb{N})$ &:& $\{\{a_n\}_n: a_n \in \mathbb{K}, \forall n \in \mathbb{N}, \{a_n\}_n \text{ converges in } \mathbb{K}\}$\\
$T^*$ &:& Adjoint of the operator $T$\\
$\theta_\tau$ &:& Analysis  operator for frame $\{\tau_n\}_n$ for  Hilbert space\\
$\theta_\tau^*$ &:& Synthesis   operator for frame $\{\tau_n\}_n$ for  Hilbert space\\
$S_\tau$ &:& Frame  operator for frame $\{\tau_n\}_n$ for  Hilbert space\\
$\theta_f$  &:& Analysis  operator for p-ASF  $(\{f_n\}_n, \{\tau_n\}_n)$ for  Banach space\\
$S_{f, \tau}$ &:&  Frame operator for p-ASF $(\{f_n\}_n, \{\tau_n\}_n)$ for  Banach space\\
$\theta_A$ &:& Analysis operator for OVF $\{A_n\}_n$\\
$\theta_A^*$ &:& Synthesis operator for OVF $\{A_n\}_n$\\
$S_A$ &:& Frame operator for OVF $\{A_n\}_n$\\
$S_{A, \Psi}$ &:&  Frame operator for weak OVF  $(\{A_n\}_n, \{\Psi_n\}_n)$\\
$M_{\lambda, f , \tau}$&:& Multiplier \\
$\tau\otimes f$&:& Map defined by $(\tau\otimes f)(x)\coloneqq f(x)\tau$\\
$f^{-1}$&:& Inverse of map $f$ \\
$\chi_{[0,1]}$&:& Characteristic function on $[0,1]$\\
dim &:& Dimension of a subspace of a vector space\\
$P^\perp$ &:& $I_\mathcal{H}-P$, whenever $P$ is a projection on $\mathcal{H}$\\
$[\cdot, \cdot]$ &:& Semi-inner product\\
$A^\dagger$ &:& Generalized adjoint of the operator $A$ in a semi-inner product space\\
$\mathcal{A}'$ &:& Commutant of a set in an algebra\\
	\end{tabular}\\
\begin{tabular}{lcl}
ker &:& Kernel of a linear operator\\
$W^\perp$ &:& Orthogonal complement of a subspace $W$ of a Hilbert space\\
$\mathbb{T}$ &:& Unit circle group centered at origin\\
group$(\cdot)$ &:& Group generated by a subset of a group \\
$\mathcal{B}(\mathcal{H})$ &:& $\mathcal{B}(\mathcal{H}, \mathcal{H})$\\
$\mathcal{H}\oplus \mathcal{H}_0$ &: & Direct sum of Hilbert spaces $\mathcal{H} $ and $\mathcal{H}_0$\\
$\overline{\text{span}} ~W$ &:& Closure of span of subset $W$ of a Banach space $\mathcal{X}$\\
 $M_n(\mathcal{A})$ &:& C*-algebra of $n$ by $n$ matrices over unital C*-algebra $\mathcal{A}$\\
 $[A, B]$ &:& $AB-BA$, commutator of $A$ and $B$\\
 $\mathcal{O}_2$ &:&  Cuntz algebra generated by two isometries\\
 $\mathcal{K}(\mathcal{H})$ &:& C*-algebra of compact operators in $\mathcal{B}(\mathcal{H})$\\
 dist$(x,Y)$ &:& Distance between an element $x$ and a subset $Y$ of a metric space\\
  $r=O(s)$ &:& Asymptotic notation\\
  $\mathcal{C}(K)$ &:& C*-algebra of  all complex-valued continuous functions \\
  & & on compact Hausdorff space $K$\\
  $\mathscr{A}$ &:& Non empty set \\
  $\mathcal{V}$ &:& Vector space \\
   $\mathbb{J}$ &:& Index set \\
   $\mathbb{Z}_+$ &:& $\{0\}\cup \mathbb{N}$ \\
  $ \delta_{n,m}$ &:&  Kronecker delta \\
\end{tabular}\\
\leavevmode\newpage
\leavevmode\newpage
\begin{center}
	{\onehalfspacing \section*{PUBLICATIONS}}
\end{center}

\addcontentsline{toc}{chapter}{PUBLICATIONS}


\begin{enumerate}
		\item K. Mahesh Krishna  and P. Sam Johnson. \textbf{Frames for metric spaces}. \textit{Results in Mathematics} 77, 49 (2022). 
		
			\vspace{-.1cm} 
			https://doi.org/10.1007/s00025-021-01583-3.
\item {K. Mahesh Krishna  and P. Sam Johnson.  \textbf{Towards characterizations of approximate Schauder frame and its duals for Banach spaces}. \textit{ Journal of Pseudo-Differential Operators and Applications}  12, 9 (2021).
	
			\vspace{-.1cm} 
	https://doi.org/10.1007/s11868-021-00379-x.} 
\item {K. Mahesh Krishna  and P. Sam Johnson.  \textbf{Dilation theorem for p-approximate Schauder frames for separable Banach spaces}. \textit{Palestine Journal of Mathematics} (accepted).
	
			\vspace{-.1cm} 
https://arXiv.org/2011.12188v1 [math.FA] 23 November 2020.}
\item {K. Mahesh Krishna and P. Sam Johnson.  \textbf{Expansion of weak reconstruction sequences to approximate Schauder
frames for Banach spaces}. \textit{Asian-European Journal of Mathematics}   2250060  (2022).

		\vspace{-.1cm} 
	https://doi.org/10.1142/S1793557122500607.}
\item 	{K. Mahesh Krishna  and P. Sam Johnson.  \textbf{New Identity on Parseval p-Approxim-ate Schauder Frames and
	Applications}. \textit{ Journal of Interdisciplinary Mathematics} 1-10 (2021).
	
			\vspace{-.1cm} 
	https://doi.org/10.1080/09720502.2021.1891698.}
\item {K. Mahesh Krishna  and P. Sam Johnson.  \textbf{Perturbation of p-approximate Schau-der frames for separable Banach spaces}. \textit{Poincare Journal of Analysis and Applications} (accepted). 
	
			\vspace{-.1cm}  https://arXiv.org/2012.030054v1  [math.FA] 5 December 2020.}
\item{K. Mahesh Krishna  and P. Sam Johnson.  \textbf{Factorable weak operator-valued frames}.
	 \textit{Annals of Functional Analysis}, 13, 11 (2022). 
	 
			\vspace{-.1cm} 
		https://doi.org/10.1007/s43034-021-00155-4.}
\item{K. Mahesh Krishna and   P. Sam Johnson.	 \textbf{Commutators close to the identity in unital C*-algebras}. \textit{Proceedings - Mathematical Sciences} (accepted).
	
	\vspace{-.1cm}  https://arXiv.org/2104.02035v1 [math.FA] 5 April 2021.}
\item{K. Mahesh Krishna  and P. Sam Johnson.  \textbf{Dilations of linear maps on vector spaces}. \textit{Operators and Matrices} (accepted).
	
	\vspace{-.1cm}  
	https://arXiv.org/2104.07544v1 [math.FA] 14 April 2021.}	
	\item {K. Mahesh Krishna and P. Sam Johnson.  \textbf{Multipliers for Lipschitz p-Bessel sequences in metric spaces}.
	
	\vspace{-.1cm} 
	https://arXiv.org/2007.03209v1 [math.FA] 7 July 2020.}
\end{enumerate}
	


\leavevmode\newpage


	
	
	
	

\newpage
\newpage 
\newpage 
\end{document}